\def\N{\mathbb{N}}
\def\Z{\mathbb{Z}}
\def\Q{\mathbb{Q}}
\def\R{\mathbb{R}}
\def\C{\mathbb{C}}
\def\A{\mathbb{A}}
\def\E{\mathbb{E}}
\def\F{\mathbb{F}}
\def\T{\mathbb{T}}
\def\PP{\mathbb{P}}
\theoremstyle{plain}
\newtheorem{thm}{Theorem}[section]
\newtheorem{cor}[thm]{Corollary}
\newtheorem{lem}[thm]{Lemma}
\newtheorem{prop}[thm]{Proposition}
\newtheorem{propdef}[thm]{Proposition-Definition}
\newtheorem{defn}[thm]{Definition}
\newtheorem{nota}[thm]{Notation}
\newtheorem{rem}[thm]{Remark}
\newtheorem{exam}[thm]{Example}
\newtheorem{quest}[thm]{Question}
\numberwithin{equation}{section}
\def\clin{\mathrm {in}}
\def\div{\mathrm{ div}}
\def\inf{\mathop{\mathrm {inf}}}
\def\mod{\mathrm{ mod}}
\def\Vdir{\mathop{\mathrm{ Vdir}}}
\def\V{\mathop{\mathbf{ V}}}
\date{}
\begin{document}

% ------------------------------------------------------------------------

\title{Resolution of  Singularities of Arithmetical
Threefolds.}

\author{Vincent Cossart \footnotemark[1] \hbox{   } Olivier Piltant \footnotemark[1]}
\footnotetext[1]{Laboratoire de Math\'{e}matiques LMV UMR 8100, UVSQ, CNRS, Universit\'e Paris-Saclay,
45, avenue des \'{E}tats-Unis, 78035 VERSAILLES Cedex, France\\
{\tt cossart@math.uvsq.fr, piltant@math.uvsq.fr}}

\maketitle

{\it \hskip 25mm Dedicated to Shreeram Shankar Abhyankar, in memoriam.}

\vskip 5mm

% ------------------------------------------------------------------------
% ------------------------------------------------------------------------

% ------------------------------------------------------------------------
%GATHER{xBib.bib}   % For Gather Purpose Only
%GATHER{Thesis.bbl} % For Gather Purpose Only
\bibliographystyle{amsplain}
%\bibliography{xbib}

\begin{abstract}
We prove Grothendieck's  { conjecture} on Resolution of Singularities  for quasi-excellent
schemes ${\cal X}$ of dimension three and of arbitrary characteristic.
This applies in particular to ${\cal X}=\mathrm{Spec}A$,
$A$ a reduced complete Noetherian local ring  of dimension three and to algebraic
or arithmetical varieties of dimension three.
Similarly, if $F$ is a number field, a complete discretely valued field or
more generally the quotient field of any excellent Dedekind domain ${\cal O}$,
any regular projective surface $X/F$ has a proper and flat model ${\cal X}$
over ${\cal O}$ which is everywhere regular.
\end{abstract}

\medskip
AMS Classification: 11G25,  11G35, 14B05, 14E15.

\tableofcontents

\section{Introduction.}

The Resolution of Singularities conjecture \index{Resolution of Singularities conjecture @ Resolution of Singularities conjecture} has been, and still is a long standing  {open problem}
since it was formulated by A. Grothendieck in the 1960's \cite{EGA2}(7.9.6). Grothendieck emphasized
its importance for studying homo\-lo\-gical and homotopical properties of schemes.
Even since H. Hironaka's celebrated Theorem \cite{H1} proved fifty years ago,
some new results have bettered our understanding of the problem in equal characteristic zero
\cite{BM}\cite{Vi1}\cite{W}. These results focus on the constructivity and functoriality
of their algorithms for Resolution in contrast with Hironaka's.

\smallskip

In arbitrary characteristic, a major advance towards Grothendieck's conjecture is due to
A.J. de Jong \cite{dJ} Theorem 4.1 and Theorem 6.5. He proved a weaker form of the above conjecture for
varieties $X$ over a field or a complete discrete valuation ring.
A significant difference with Grothendieck's formulation is that de Jong's alterations allow
a finite extension of the function field. Furthermore, de Jong's result does not in general provide
a regular compactification $\overline{X}$ of some \'etale covering $U$ of the regular locus
$\mathrm{Reg}X$.

\smallskip

Resolution of Singularities in its full birational form was to this date restricted to surfaces
\cite{Ab1}\cite{Ab6}\cite{H2}\cite{L3}\cite{Cu9}\cite{Cu8}\cite{CoJS}, only to mention some contributions.
In dimension three, some partial results do exist for algebraic
varieties over an algebraically closed field $k$ of positive characteristic $p\geq 7$ \cite{Ab5}\cite{Cu5}.
These results extend to all characteristics $p>0$ and fields $k$ with $[k:k^p]<+\infty$ \cite{CoP1}\cite{CoP2}
Theorem on p. 1839. For arithmetical schemes (unequal residue characteristic), birational Resolution of Singularities
was  {so far} restricted to surfaces. The first and main purpose of this article
is to prove:
\begin{thm}\label{mainthm}
Let ${\cal X}$ be a reduced and separated Noetherian scheme which is quasi-excellent and
of dimension at most three.
There exists a proper birational morphism $\pi : \ {\cal X}' \rightarrow {\cal X}$
with the following properties:
\begin{itemize}
    \item [(i)] ${\cal X}'$ is everywhere regular;
    \item [(ii)] $\pi$ induces an isomorphism $\pi^{-1}(\mathrm{Reg}{\cal X}) \simeq \mathrm{Reg}{\cal X}$;
    \item [(iii)] $\pi^{-1}(\mathrm{Sing}{\cal X})$ is a strict normal crossings divisor on ${\cal X}'$.
\end{itemize}
If furthermore a finite affine covering ${\cal X}={\cal U}_1\cup {\cal U}_2 \cup \cdots \cup {\cal U}_n$
is specified, one may take $\pi^{-1}({\cal U}_i)\rightarrow {\cal U}_i$ projective, $1\leq i \leq n$.
\end{thm}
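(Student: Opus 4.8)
The plan is to reduce the global statement to a local, even residually local, one and then invoke the known Resolution results in each characteristic regime. The first step is a standard reduction: since $\mathcal{X}$ is Noetherian and quasi-excellent, $\mathrm{Sing}\,\mathcal{X}$ is a closed set of codimension $\ge 1$; working one affine chart $\mathcal{U}_i=\mathrm{Spec}\,A_i$ at a time (and gluing afterward via condition (ii), which forces the modifications to agree over $\mathrm{Reg}\,\mathcal{X}$), I may assume $\mathcal{X}=\mathrm{Spec}\,A$ with $A$ reduced, Noetherian, quasi-excellent of dimension $\le 3$. A further devissage by the generic points of $\mathrm{Spec}\,A$ and by normalization (which is finite by excellence) lets me treat the problem componentwise; blowing up the non-normal locus, one is reduced to understanding the singularities of a normal quasi-excellent threefold, hence of its completions.

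The heart of the argument is the \emph{local uniformization $\Rightarrow$ global resolution} mechanism: by quasi-excellence the local rings $\mathcal{O}_{\mathcal{X},x}$ have geometrically regular formal fibres, so it suffices to resolve after completion, and the classification of complete local rings of dimension $\le 3$ splits into (a) the equicharacteristic case, where $\widehat{\mathcal{O}}$ contains a field, handled by \cite{CoP1}\cite{CoP2} when $[k:k^p]<\infty$ and by Hironaka/Abhyankar-type results in characteristic zero; and (b) the unequal-characteristic (arithmetical) case, where $\widehat{\mathcal{O}}$ is a finite extension of a power series ring over a complete DVR of mixed characteristic $(0,p)$. Case (b) is precisely the new input: one runs the arithmetic analogue of the Hironaka resolution game, reducing to the hypersurface-like situation $\mathrm{Spec}\,\mathcal{O}[[X,Y]]/(f)$ and bounding the relevant invariant (multiplicity, then the secondary invariant of the directrix / the characteristic polyhedron) so that a finite sequence of permissible blowing-ups along regular centres makes it drop. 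This is where I expect the main obstacle to lie: controlling the behaviour of the resolution invariant under blowing up in mixed characteristic, where the residue characteristic $p$ interacts with the generators of the ideal — the standard \emph{kangaroo}/wild-jump phenomena and the failure of naive upper-semicontinuity arguments must be defeated, presumably by the machinery of Hironaka's polyhedra adapted to the arithmetic setting (this is the technical core that the body of the paper develops).

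Once local uniformization is available at every point of the (quasi-compact) scheme, I would pass from local to global by a compactness/patching argument: extract a finite set of local modifications, dominate them by a common projective modification on each affine chart $\mathcal{U}_i$ (using Nagata-type compactification and flattening à la Raynaud–Gruson to make the blow-up centres global and the morphism projective over $\mathcal{U}_i$), then use functoriality of the resolution invariant to check that on overlaps the charts glue into a global proper birational $\pi:\mathcal{X}'\to\mathcal{X}$. Finally, to upgrade "regular" to the stronger conclusions (ii) and (iii), I would continue blowing up: condition (ii) is automatic from the construction (all centres lie over $\mathrm{Sing}\,\mathcal{X}$), and for (iii) I apply embedded resolution / principalization to the reduced preimage $\pi^{-1}(\mathrm{Sing}\,\mathcal{X})$ on the now-regular threefold $\mathcal{X}'$ — a three-dimensional embedded resolution problem over a regular ambient space, again solved by the same local-to-global resolution engine — to make it a strict normal crossings divisor, preserving the isomorphism over $\mathrm{Reg}\,\mathcal{X}$.
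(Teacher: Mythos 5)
Your high-level shape (reduce to affine, reduce to local uniformization along valuations, then reduce to completed local rings and a structured local model, then patch and clean up) is broadly the shape of the paper's argument, but several of the claimed passages would fail as written. The most serious is the patching: you invoke ``functoriality of the resolution invariant'' to glue the local modifications. There is no such functoriality in residue characteristic $p>0$ or in mixed characteristic; the resolution constructed here is not canonical, and two local uniformizations above the same center do not spontaneously agree on overlaps. The patching is instead achieved by Zariski's Patching Theorem, which rests on two concrete, characteristic-free inputs the paper makes explicit: Abhyankar's factorization theorem for dominating inclusions of two-dimensional regular local rings (Proposition~\ref{factbir}), and principalization of ideal sheaves on excellent regular threefolds (Proposition~\ref{principsurf}, so that closures of graphs of birational maps can be dominated by blowing up regular centers). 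Your appeal to Raynaud--Gruson flattening and Nagata compactification does not replace these: it does not tell you how to make the birational correspondence between two given local resolutions a morphism along a chain of \emph{Hironaka-permissible} blowing ups.

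There are two other genuine gaps. First, ``it suffices to resolve after completion'' is not a consequence of quasi-excellence alone; descending a resolution of $\mathrm{Spec}\,\widehat{A}$ to a finitely generated $A$-algebra inside $K$ is a real argument (Proposition~\ref{redtoLUcomplete}), done by choosing explicit algebraic approximants $g_j$ of the formal regular parameters $\hat u_j$, forming the integral closure of $A[g_1,\dots,g_d]$, and checking regularity via faithfully flat base change to $\widehat{A}$. Second, ``(ii) is automatic from the construction'' is false: the projective modification produced by patching may have centers lying partly over $\mathrm{Reg}\,{\cal X}$; the paper has a dedicated step (Step~4 in the proof of Proposition~\ref{redtoLU}, with diagram (\ref{eq503})) to disconnect that part of the fundamental locus from the locus lying over $\mathrm{Sing}\,{\cal X}$ before gluing. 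Finally, the key structural reduction is missing: the paper does not reduce to a generic hypersurface ${\cal O}[[X,Y]]/(f)$, but, via ramification theory of valuations (inertia/ramification towers plus Galois approximation), to degree-$p$ cyclic or purely inseparable coverings of a regular $S$ of dimension $3$ (Theorem~\ref{luthm}); it is precisely the assumption~{\bf (G)} encoding this cyclic structure that makes the characteristic-polyhedron arguments run, and a bare hypersurface reduction would not supply it.
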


We emphasize that no assumption is made on the characteristic of ${\cal X}$ in this theorem.
A proper birational morphism $\pi$ with property (i) was called a
resolution of singularities by Grothendieck \cite{EGA2}(7.9.1), though more recent terminology (this
article included) tends to require property (ii) as well. When property (iii) also holds, one says that $\pi$ is a
good resolution or a log-resolution. In dimension three, the hard part is to prove (i).
The following gives a strong basis for the local study of three dimensional singularities
via Resolution of Singularities:
\begin{cor}\label{completeresolution}
Let $A$ be a reduced complete Noetherian local ring of dimension three. Then
${\cal X}:=\mathrm{Spec}A$ has a good resolution of singularities which is projective.
\end{cor}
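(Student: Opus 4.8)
The plan is to deduce Corollary~\ref{completeresolution} from Theorem~\ref{mainthm} by a short reduction. First, since $A$ is a reduced complete Noetherian local ring, it is in particular a quasi-excellent ring (complete local rings are excellent), so ${\cal X}:=\mathrm{Spec}A$ is a reduced, separated, Noetherian, quasi-excellent scheme of dimension three. Hence Theorem~\ref{mainthm} applies and produces a proper birational morphism $\pi:{\cal X}'\to{\cal X}$ satisfying (i), (ii), (iii). It remains only to promote ``proper'' to ``projective''.

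Here is where I would use the freedom in the last sentence of Theorem~\ref{mainthm}: take the trivial affine covering ${\cal X}={\cal U}_1$ with ${\cal U}_1={\cal X}=\mathrm{Spec}A$. Then the theorem already guarantees that $\pi^{-1}({\cal U}_1)={\cal X}'\to{\cal X}$ is projective. Thus ${\cal X}'\to\mathrm{Spec}A$ is a projective good resolution of singularities, which is exactly the assertion of the corollary.

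The only point requiring a word of care is the verification that a reduced complete Noetherian local ring is quasi-excellent; this is classical (complete Noetherian local rings are excellent by the theorems of Nagata/Grothendieck, \cite{EGA2}, hence a fortiori quasi-excellent, and quasi-excellence passes to the spectrum as a scheme property). I do not anticipate a genuine obstacle here: the corollary is essentially a specialization of the main theorem to the affine local complete case, using the ``projective over each chart'' refinement with a one-element covering. If one wished to be slightly more self-contained, one could alternatively note that any proper birational morphism to $\mathrm{Spec}A$ with $A$ local complete can be dominated by a projective one (Chow's lemma together with the fact that, after possibly blowing up further in the regular locus in a way that does not disturb (i)--(iii), one stays regular with strict normal crossings exceptional locus), but invoking the statement of Theorem~\ref{mainthm} directly with $n=1$ is cleaner and is what I would write.
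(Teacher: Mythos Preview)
Your proposal is correct and is essentially the paper's own proof: the paper simply notes that $A$ is excellent by \cite{EGA2}(7.8.3)(iii) and applies Theorem~\ref{mainthm} with the trivial one-chart affine covering ${\cal X}={\cal U}_1=\mathrm{Spec}A$, which is exactly what you do. Your additional remarks about Chow's lemma are unnecessary but harmless.
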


 {Since the class of quasi-excellent schemes is stable by morphisms of finite type, Theorem \ref{mainthm}
applies in particular to algebraic varieties and to arithmetical varieties over excellent Dedekind rings.
Another application of Theorem \ref{mainthm} concerns formal geometry.  Indeed, Theorem \ref{mainthm} applies to reduced completions of affine  Noetherian schemes along
quasi-excellent subschemes (O.~Gabber \cite{ILO} p.~17, Theorem~9.2,
see also C.~Rotthaus \cite{R} Theorem~3 in the semilocal case).}

\begin{cor}\label{integralmodel}
Let ${\cal O}$ be an excellent Dedekind domain with quotient field $F$ and
$\Sigma /F$ be a regular projective surface. There exists a
proper and flat ${\cal O}$-scheme ${\cal X}$ with generic fiber ${\cal X}_F=\Sigma$
which is everywhere regular.
\end{cor}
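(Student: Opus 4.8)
\medskip
\noindent\textbf{Proof proposal.}
When $\mathcal{O}$ is a field the statement is trivial (take $\mathcal{X}=\Sigma$), so I will assume $\dim\mathcal{O}=1$. The plan is to first spread $\Sigma$ out to a projective model of dimension three over $\mathcal{O}$, and then to invoke Theorem~\ref{mainthm}. Concretely, I would fix a closed immersion $\Sigma\hookrightarrow\PP^N_F$ and let $\overline{\mathcal{X}}$ be the scheme-theoretic image (closure) of $\Sigma$ in $\PP^N_{\mathcal{O}}$. One then checks that $\overline{\mathcal{X}}$ is projective over $\mathcal{O}$; that it is reduced and irreducible, being the scheme-theoretic closure of the integral scheme $\Sigma$; that it dominates $\Spec\mathcal{O}$, hence is $\mathcal{O}$-torsion-free and therefore flat over the Dedekind domain $\mathcal{O}$; that its generic fibre is exactly $\overline{\mathcal{X}}_F=\Sigma$ (scheme-theoretic image commutes with the flat base change $\Spec F\to\Spec\mathcal{O}$); that $\dim\overline{\mathcal{X}}\le\dim\mathcal{O}+\dim\Sigma=3$; and that, being of finite type over the excellent ring $\mathcal{O}$, it is Noetherian, separated and quasi-excellent. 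Thus $\overline{\mathcal{X}}$ will satisfy the hypotheses of Theorem~\ref{mainthm}.

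Applying Theorem~\ref{mainthm} to $\overline{\mathcal{X}}$ --- say with the affine covering induced by the standard charts of $\PP^N_{\mathcal{O}}$ --- will produce a proper birational morphism $\pi\colon\mathcal{X}'\to\overline{\mathcal{X}}$ with $\mathcal{X}'$ everywhere regular and $\pi$ an isomorphism over $V:=\mathrm{Reg}\,\overline{\mathcal{X}}$, which is open (by quasi-excellence) and dense (by reducedness). Composing with the projective, hence proper, structure morphism $\overline{\mathcal{X}}\to\Spec\mathcal{O}$ shows that $\mathcal{X}'\to\Spec\mathcal{O}$ is proper. Since $\mathcal{X}'$ is regular, it is the disjoint union of its connected components, each of them integral; as $\pi^{-1}(V)\cong V$ is irreducible, it lies in a single such component $\mathcal{X}_1$, which is then the unique component of $\mathcal{X}'$ dominating $\overline{\mathcal{X}}$. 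I would replace $\mathcal{X}'$ by $\mathcal{X}_1$: it is open and closed in $\mathcal{X}'$, hence still regular, it is integral, and $\pi$ is now surjective while remaining an isomorphism over $V$ and proper over $\overline{\mathcal{X}}$ and over $\mathcal{O}$.

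It then remains to identify the generic fibre and to check flatness. For every point $x$ of $\Sigma=\overline{\mathcal{X}}_F$, each nonzero element of $\mathcal{O}$ becomes invertible in $\mathcal{O}_{\overline{\mathcal{X}},x}$, so $\mathcal{O}_{\overline{\mathcal{X}},x}=\mathcal{O}_{\Sigma,x}$ is regular; hence $\Sigma\subseteq V$ as a subspace of $\overline{\mathcal{X}}$, and therefore $\overline{\mathcal{X}}_F=V\times_{\overline{\mathcal{X}}}\overline{\mathcal{X}}_F$. Base-changing the isomorphism $\pi^{-1}(V)\cong V$ along $\overline{\mathcal{X}}_F\to\overline{\mathcal{X}}$ will then give an isomorphism $\mathcal{X}'_F\cong\Sigma$ induced by $\pi$. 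Finally, $\mathcal{X}'$ is integral and dominates the Dedekind domain $\mathcal{O}$, so its structure sheaf is $\mathcal{O}$-torsion-free and $\mathcal{X}'\to\Spec\mathcal{O}$ is flat. Taking $\mathcal{X}:=\mathcal{X}'$ will finish the proof.

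Granting Theorem~\ref{mainthm}, the argument is essentially formal and I expect no serious obstacle. The two points that deserve care are the integrality of the spread-out model $\overline{\mathcal{X}}$ --- which is why one takes the scheme-theoretic closure of the integral $\Sigma$ rather than an arbitrary model --- and the fact that resolving $\overline{\mathcal{X}}$ does not perturb the generic fibre. This last point is the only place the hypothesis that $\Sigma$ be regular is used: it guarantees that the whole generic fibre of $\overline{\mathcal{X}}$ lies in $\mathrm{Reg}\,\overline{\mathcal{X}}$, over which $\pi$ is an isomorphism, so that $\mathcal{X}_F=\Sigma$.
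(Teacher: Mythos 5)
Your argument is correct and follows the same strategy as the paper's brief sketch: spread $\Sigma$ out to an integral projective model over $\mathcal{O}$ (the paper takes an arbitrary projective model, uses generic flatness, and passes to a closure; you take the scheme-theoretic image in $\PP^N_\mathcal{O}$ directly), apply Theorem~\ref{mainthm}, and observe that regularity of $\Sigma$ keeps the generic fibre inside the regular locus and hence unchanged, while integrality and dominance over the Dedekind base give flatness. Your write-up is considerably more detailed than the paper's three-line proof but is the same argument in substance.
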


\begin{rem}\label{remHironaka}
The morphism $\pi$ provided by Theorem \ref{mainthm} is {\it not} constructed
as a composition of {\it Hironaka-permissible} blowing ups\index{Hironaka-permissible blowing ups, see also Definition~\ref{Hironakapermis}}, i.e. with  regular centers
along which the successive strict transforms of ${\cal X}$ are normally flat ( {called Hironaka Resolution\index{Hironaka Resolution @ Hironaka Resolution}
for short). Taking global sections of an appropriate exceptional divisor with exceptional support, a
Hironaka Resolution provides an ideal sheaf ${\cal I}\subseteq {\cal O}_{\cal X}$ whose blowing up is regular,
with zero locus $V({\cal I})=\mathrm{Sing}{\cal X}$. When ${\cal X}$ is affine,
our theorem states that there exists  an ideal sheaf ${\cal I}'\subseteq {\cal O}_{\cal X}$ whose blowing up is regular.
In contrast, ${\cal I}'{\cal O}_{\mathrm{Reg}{\cal X}}$  is locally principal but not necessarily trivial.}

\smallskip

On the other hand,
a certain local version of Theorem \ref{mainthm} is proved using only local Hironaka-permissible blowing ups
in Theorem \ref{luthm} below. This fact appears to be a piece of evidence that Hironaka Resolution
could be true for threefolds of nonzero residue characteristic, {\it vid.} also \cite{Co5}\cite{Moh1} in
positive characteristic. It is however restricted to certain hypersurface threefolds of multiplicity not bigger
than the residue characteristic and the problem remains widely open even in dimension three.
\end{rem}

\smallskip

In higher dimensions $n\geq 4$, the Resolution of Singularities conjecture for algebraic varieties
over a field is considered in several recent papers
\cite{BeV1}\cite{BeV2}\cite{BrV}\cite{H7}\cite{H8}\cite{Ka}\cite{KaM}\cite{Moh2}
but remains open to this date. Its local variant for valuations is also considered in
\cite{ILO}\cite{KnKu}\cite{NSp}\cite{T1}\cite{T2}\cite{Te} but remains equally unsolved.
The case of arithmetical schemes has apparently attracted less attention.\\

The second purpose of this article is to explore the Resolution of Singularities Conjecture
as formulated by A. Grothendieck \cite{EGA2}(7.9.6). The text includes numerous examples and prospective
remarks aimed at preparing the ground for further research in higher dimension.
For this purpose, we consider finite morphisms $\eta : \ {\cal X} \rightarrow \mathrm{Spec}S$, where
$S$ is an arbitrary excellent regular local ring. A test case for Resolution if $S$ has positive
characteristic $p>0$ is when $\eta$ is purely inseparable; this was already recognized by O.Zariski \cite{Z6} p.88
and S. Abhyankar \cite{Ab5} and recently confirmed by M. Temkin's purely inseparable Local Uniformization Theorem
\cite{Te} Theorem 1.3.2, {\it vid.} Remark 1.3.5 (iii). In residue characteristic $p>0$,
we also include Galois coverings of degree $p$ to this test case. The main step in proving Theorem \ref{mainthm}
consists in proving the following result.  {Assumption (i) below is the purely
inseparable case for $\mathrm{char}S=p$. Assumption (ii) below is the cyclic Galois case. For $\mathrm{char}S=p$,
Artin-Schreier polynomials  $h=X^p -g^{p-1}X+f$, $f,g\in S$, $g\neq 0$ satisfy assumption (ii). For
$\mathrm{char}S=0$, $S$ containing the group $\mathbf{\mu}_p$ of $p^{th}$-roots of unity, cyclic
polynomials $h=X^p -f$, $f\in S$, $f\neq 0$ satisfy assumption (ii).}  {The total quotient ring  $L=\mathrm{Tot}(S[X]/(h))$ is a direct product of fields. By a valuation of $L$, we mean a valuation of one of these fields.}

\begin{thm}\label{luthm}
Let $(S,m_S,k)$ be an excellent regular local ring of dimension $n=3$,
quotient field $K:=QF(S)$ and residue characteristic $\mathrm{char}k=p>0$. Let
\begin{equation}\label{eq101}
    h:=X^p+f_1X^{p-1}+ \cdots +f_p \in S[X],  \ f_1, \ldots , f_p \in S
\end{equation}
be a reduced polynomial, ${\cal X} :=\mathrm{Spec}(S[X]/(h))$ and $L:=\mathrm{Tot}(S[X]/(h))$
be its total quotient ring. Assume that $h$ satisfies one of the following assumptions:
\begin{itemize}
    \item [(i)] $\mathrm{char}K=p$ and $f_1= \cdots =f_{p-1}=0$, or
    \item [(ii)] ${\cal X}$ is $G$-invariant, where $G:=\mathrm{Aut}_K(L)=\Z/p $.
\end{itemize}

Let $\mu$ be a valuation of $L$  which is centered in $m_S$.
There exists a composition of local Hironaka-permissible blowing ups:
\begin{equation}\label{eq102}
    ({\cal X}=:{\cal X}_0,x_0) \leftarrow ({\cal X}_1,x_1) \leftarrow \cdots \leftarrow ({\cal X}_r,x_r),
\end{equation}
where $x_i \in {\cal X}_i$ is the center of $\mu$, such that $({\cal X}_r,x_r)$ is regular.
\end{thm}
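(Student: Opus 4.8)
The plan is to adapt the local uniformization strategy used for threefolds in positive characteristic: attach to $({\cal X},x)$ a numerical invariant $\iota(x)$ built from Hironaka's characteristic polyhedron, prove that $\iota$ cannot increase under a Hironaka-permissible blowing up, and show that along the sequence of centres of the given valuation $\mu$ this invariant must strictly decrease until $({\cal X}_r,x_r)$ is regular.

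Concretely, let $x_0\in{\cal X}$ be the centre of $\mu$ on ${\cal X}$ — it is the unique point lying over $m_S$, and it is closed since ${\cal X}\to\mathrm{Spec}\,S$ is finite. Set $m(x_0):=\ord_{x_0}(h)\in\{1,\dots,p\}$; the local ring of ${\cal X}$ at $x_0$ is regular precisely when $m(x_0)\le 1$, so the essential case is $m(x_0)=p$, where the characteristic phenomena appear. Choose a regular system of parameters $(u_1,u_2,u_3)$ of $S$ and, when $m(x_0)=p$, normalise $h$ by a translation $X\mapsto X-\theta$, $\theta\in S$, so that $\ord_{x_0}(f_i)\ge i$ for all $i$ (in case (i) this is just $\ord_{x_0}(f_p)\ge p$). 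To this data one attaches the characteristic polyhedron $\Delta:=\Delta_S(h;u_1,u_2,u_3;X)\subseteq\R^3$, well prepared by the usual vertex preparation, and the lexicographically ordered tuple
\begin{equation}
\iota(x_0):=\bigl(m(x_0),\,\omega(x_0),\,\delta(x_0),\dots\bigr),
\end{equation}
where $\omega(x_0)$ is the bounded integer measuring the directrix of $h$ at $x_0$ in the $u$-directions and the remaining entries record the relevant vertices of $\Delta$ — most importantly the smallest vertex and, in mixed characteristic, its location relative to $\ord_{x_0}(p)$. The first tasks are to check that $\iota$ is intrinsic (independent of the residual freedom in $\theta$ and in the $u_i$) and upper semicontinuous, and that it does not increase under the blowing up of $x_0$ or of a Hironaka-permissible regular curve through $x_0$; this last point is the explicit transformation law of $\Delta$ under such blowing ups, combined with Hironaka's inequality $m(x')\le m(x)$ and the monotonicity of the directrix.

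For the descent, the valuation $\mu$ determines a unique infinite sequence of Hironaka-permissible centres ($x_i$ itself, or the unique $\mu$-dominated permissible curve through $x_i$), and one shows $\iota$ strictly drops after finitely many steps unless regularity is reached. There are two regimes. In the \emph{unstable} regime the directrix is not preserved by the blowing up: then $\omega(x)$, or failing that $m(x)$, drops within a bounded number of point blowing ups, by the classical analysis of the behaviour of the directrix/ridge under blowing up. In the \emph{stable} regime the problem localises, along $\mu$, to a monomialisation problem in essentially two variables governed by a projection of $\Delta$; since $\mu$ is a genuine valuation, its successive centres cannot remain forever inside the shrinking ``bad'' subpolyhedron, so the later entries of $\iota$ are forced down and eventually $m(x)$ falls below $p$.

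The main obstacle — and where essentially all of the work lies — is the characteristic-$p$ pathology in the stable regime: the ``very near'' (kangaroo) points at which the naive invariant fails to drop, or even jumps, after blowing up, because in characteristic $p$ there is no Tschirnhausen transformation available to absorb the low-order terms of $h$. Controlling this requires the secondary polyhedral entries of $\iota$ together with the combinatorial lemmas classifying which vertices of a well-prepared $\Delta$ can be solvable and persist under iterated blowing up. Superimposed on this, case (ii) brings the genuinely mixed-characteristic difficulty $p\in m_S$: the quantity $\ord_x(p)$ interacts with $\Delta$ and produces additional subcases with no equal-characteristic analogue, and the whole process must be kept compatible with the $\Z/p$-action — in practice by first bringing $h$ into a normal form as close as possible to that of case (i). Verifying that the descent still terminates in each of these subcases is the crux of the theorem.
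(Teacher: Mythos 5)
Your high-level strategy matches the paper's: build a polyhedral invariant beginning with $(m(x),\omega(x))$, prove monotonicity under Hironaka-permissible blowing ups, and force descent along $\mu$ using a projection of the characteristic polyhedron down to dimension two. But the proposal leaves open precisely the part that the paper spends Chapters~6--9 on, and two of the structural claims you make on the way are wrong or missing key ideas.

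First, $\iota$ is \emph{not} upper semicontinuous in general: the paper shows (corollary~\ref{constructible} and example~\ref{exampleconstructible}) that in case~(ii), where $i_0(x)=p-1$ can occur, the locus $\{\iota\ge c\}$ need not be closed, only constructible. Relying on semicontinuity would break the patching of local centres. Second, you omit the preparatory step of achieving condition~\textbf{(E)} --- that the discriminant locus (or image of $\mathrm{Sing}_p{\cal X}$) is carried by a normal crossings divisor $E$ of residue characteristic $p$ --- by preliminary blowing ups via embedded resolution of surfaces (corollary~\ref{EEfait}). Without this the initial form structure theorem~\ref{initform}, which is what makes $\omega$ and the subsequent invariants well defined and integral, is unavailable. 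Third, and most substantively, the ``remaining entries of $\iota$'' and the ``shrinking bad subpolyhedron'' are left as placeholders, whereas the paper's mechanism is the projection number $\kappa(x)\in\{1,2,3,4\}$ refining $(m,\omega)$: descent is proved separately for each value of $\kappa$, the geometric configurations (maximal contact with a component of $E$ for $\kappa=1$, the very transverse case $\kappa=2$, transverse/tangent $\kappa=3,4$) need genuinely different secondary invariants $\gamma(x)$ built from different projected polygons $\Delta_2$, and for $\kappa\ge3$ the termination argument requires Hironaka-permissible curve centres that are \emph{not} permissible of the first or second kind and relies on a strengthened condition \textbf{(E)'} on the singular locus. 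The difficulty you correctly flag --- kangaroo points where $\kappa$ or $\gamma$ jumps --- is exactly what forces this casework, and it is not resolved by ``a bounded number of point blowing ups by the classical analysis''; in characteristic $p$ the directrix may misbehave even for $\tau=3$ when $p=2$ (remark~\ref{ridgedimthree}), and the paper has to treat that separately.
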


We develop an approach to the Resolution of Singularities Conjecture for hypersurface singularities defined
by an equation (\ref{eq101}) such that (i) or (ii) holds (condition {\bf (G)} in the text)
{\it in any dimension} $n:=\mathrm{dim}S\geq 1$. No other assumption on $S$ is required here
than excellence of $S$; we do not even assume that $[k:k^p]<+\infty$ as suggested by
A. Grothendieck {\it loc.cit.} An extra condition {\bf (E)} on $\eta$ (Definition \ref{conditionE}) is also assumed:

\smallskip

\noindent  {(i) purely inseparable case: the image in $\mathrm{Spec}S$ of the locus
$\mathrm{Sing}_p{\cal X}$ of multiplicity $p$, is contained in a normal crossings
divisor $E$;}

\smallskip

\noindent  {(ii) cyclic Galois case: the discriminant locus of ${\cal X}\rightarrow \mathrm{Spec}S$ is contained
in a normal crossings divisor $E$. If $\mathrm{char}S=0$} ( {so \ref{luthm}(ii) holds}),
 {$E$ has characteristic $p$.}

\smallskip

This extra condition {\bf (E)} can be achieved by preparatory blowing ups in dimension three (Corollary
\ref {EEfait}), applying known Resolution theorems for two-dimensional schemes.

\smallskip

The basic structure we work with is the triple $(S,h,E)$ thus defined.
The main combinatorial data
attached with the singularity ${\cal X}$ is a {\it characteristic polyhedron} \cite{H3}\cite{CoP3}:
\begin{equation}\label{eq1021}
    \Delta_S(h;u_1, \ldots ,u_n;Z)\subseteq \R^n_{\geq 0},
\end{equation}
where $Z:=X-\phi$, $\phi \in S$, is a linear coordinate change minimizing this polyhedron (beginning of chapter 2).

\smallskip

Resolution for hypersurface singularities in residue characteristic zero uses two primary
invariants: the multiplicity function $x \mapsto m(x)$ and the (normalized) slope function $x \mapsto \epsilon (x)$.
The latter is not well-behaved in residue characteristic $p>0$: it is in general not a constructible
function on ${\cal X}$; the pair $(m(x), \epsilon (x))$ in general increases after
performing Hironaka-permissible blowing ups. This pair is denoted $(\nu , \tilde{\epsilon})$ for
surfaces in \cite{H3} p.253.

\smallskip

In contrast, we construct a numerical function (Definition \ref{defomega})
\begin{equation}\label{eq103}
\iota : {\cal X} \rightarrow \{1, \ldots ,p\}\times \N \times \{1, \geq 2\}: \ x \mapsto (m(x),\omega (x), \kappa (x))
\end{equation}
which refines the multiplicity function at those points $x \in {\cal X}$ such that
$m(x)=p$. This function is differential in nature and has ``expected'' properties: $\iota$ is invariant
by regular base change $S \subset \tilde{S}$, $\tilde{S}$ excellent (Theorem \ref{omegageomreg})
and is constructible on ${\cal X}$ (Corollary \ref{constructible}).

\begin{rem}
The differential multiplicity $\omega (x)$ sprouts from Hironaka's $\epsilon (x)$
if one requires invariance by smooth base change, {\it vid.} Theorem \ref{omegageomreg}.
A difference takes place between (i) the purely inseparable case, and (ii) the Galois case
considered in Theorem \ref{luthm}: eventually $\iota$ is uppersemicontinuous in case (i) but only
constructible in general in case (ii), {\it vid.} Corollary \ref{constructible} and following
Example \ref{Exampleconstructible}.

 { The proof of Theorem 1.4 relies mainly on the properties of our function $\omega$: constructiblity and behavior under a family of permissible blowing ups. It is defined using equation~\eqref{eq101} and both  (i) and (ii).
We know no analogue of $\omega$ with these properties in the apparently similar case
$$h=X^{p^e}+f_{p^e}\in S[X],\  e\geq 2.$$
%This is related to Moh's problem
This equation is studied by Moh
in  [56] with a related open problem ``On the bound of $d_2$'' and by H.~Hauser and S.~Perlega in \cite{HP}.}
\end{rem}

We develop a notion of permissible blowing up for $\iota$ refining that of H. Hironaka.
Permissible centers ${\cal Y} \subset {\cal X}$ are of two different kinds (Definitions
\ref{deffirstkind} and \ref{defsecondkind}), first kind being ``$\epsilon$-constant''.
They also extend to permissible centers under regular base change (Theorem \ref{geomregpermis}).
The function $\iota$ is nonincreasing with respect to permissible blowing ups (Theorem \ref{bupthm}).
Differential multiplicities and permissible centers have a similar behavior to adapted multiplicities
and permissible blowing ups considered in Resolution of Singularities
for differential forms and vector fields \cite{Se}\cite{Ca1}\cite{Ca2}\cite{CaRSp}\cite{MQPa}\cite{Pa}
and for toroidalization of morphisms \cite{Cu6}\cite{Cu7}.

\begin{rem}
Our notion of permissible blowing up also sprouts from Hironaka's $\epsilon$-constant
blowing ups if one requires invariance by smooth base change, {\it vid.} Theorem  \ref{geomregpermis}.
Permissibility at a point $y\in {\cal X}$ implies permissibility on a nonempty
Zariski open subset ${\cal U}\subseteq {\cal Y}:=\overline{ \{y\}}$ (Theorem \ref{Zariskiopen}).
Example \ref{examsecondkind} shows the relevance of permissible blowing ups of
the second kind whenever ${\cal X}$ has dimension $n\geq 3$. Section 3.3 includes further results
intended to serve as a guideline for $n\geq 4$.
\end{rem}

Beginning from chapter 4, dimension $n=3$ is assumed and we focus on the proof of Theorem \ref{mainthm}.
Chapter 4 reduces the proof of Theorem \ref{mainthm} to that of Theorem \ref{luthm}
and is adapted from \cite{CoP1} to our arbitrary characteristic context. The main issue for proving
Theorem \ref{luthm} is to achieve $m(x_{r_1})<p$ for some $r_1\geq 0$; achieving $({\cal X}_r,x_r)$ regular,
i.e. $m(x_r)=1$, is then relatively easy and has been proved in \cite{CoP4}.

\smallskip

The last four chapters contain the technical bulk of this article. In chapter 5,
the function $\kappa$ in (\ref{eq103}) is refined with values in $\{1,2,3,4\}$.
For fixed $\iota (x)$, we attach a generic projection from $\mathrm{Spec}S$ to dimension two.
In contrast with residue characteristic zero, there is no
obvious way to attach a projected two-dimensional structure similar to $(S,h,E)$. This
difficulty (no reasonable notion of ``maximal contact'') seems to be inherent
to residue characteristic $p>0$ and has proved to be quite a match. Our method consists
in projecting only the combinatorial structure provided by the characteristic polyhedron given
in (\ref{eq1021}), say:
\begin{equation}\label{eq106}
\mathbf{p}_2: \ \lbrack \Delta_S(h;u_1, u_2 , v;Z)\subseteq \R^3_{\geq 0}\rbrack \mapsto
\lbrack \Delta_2(h;u_1,u_2;v;Z)\subseteq \R^2_{\geq 0}\rbrack .
\end{equation}
Here, $\mathbf{p}_2$ is a linear projection and $v:=u_3-\phi_2$, $\phi_2 \in S$, is a linear coordinate change
minimizing the image polygon. New combinatorial invariants are associated to the right-hand side polygon; their
control under permissible blowing ups eventually leads to a smaller value $\iota (x')<\iota (x)$.
This is the content of the Projection Theorem \ref{projthm} from which Theorem \ref{luthm} follows
easily by induction on $\iota (x)$ (Corollary \ref{projthmcor}). The strategy follows that of \cite{CoP2}
but also contains very substantial improvements:
\begin{itemize}
  \item  the sequence (\ref{eq102}) which is constructed involves Hironaka-permissible blowing ups only,
  in contrast with  \cite{CoP2}. It does {\it not} depend on the given
valuation $\mu$ and can be considered as a version of Hironaka's Local Control (Hironaka's A/B Game, in residue
characteristic zero) for equations (\ref{eq101}). Precise statements use the notion of independent sequence
(Definition \ref{indepseq}) and Theorem \ref{projthm} is stated in these terms.
The authors hope that Theorem \ref{luthm} could be extended
to a Resolution of Singularities $\pi : {\cal X}' \rightarrow {\cal X}$,
$\pi$ a composition of Hironaka-permissible (global) blowing ups (and with $G$-invariant
centers under assumption (ii)).
  \item all resolution invariants used in this text are defined in terms of initial form
polynomials $\mathrm{in}_\sigma h$  w.r.t. certain faces $\sigma$ of the characteristic polyhedron
attached to $h$. Furthermore, these initial form polynomials
provide control for the invariants under blowing up. These facts are the main reason why our
proof is characteristic free: $\mathrm{in}_\sigma h$ is a polynomial with coefficients
in the residue field $k(x)$. They are also the reason why the extra assumption
$[k(x):k(x)^p]<+\infty$ is not required in the proof.
  \item the role played by small residue characteristics is very minor (essentially the extra twist
in Lemma \ref{kappa2fin25} for $p=2$). Difficulties caused by nonperfect residue fields $k(x)$
appear mostly technical in nature, because one is led to carry along (absolute)
$p$-bases $(\lambda_l)_{l\in \Lambda}$ in the construction (section 2.4). Nontrivial
issues are related to regular base change (Proposition \ref{Deltageomreg}, Theorem \ref{omegageomreg}
and Theorem \ref{geomregpermis}), the Hilbert-Samuel stratum (Proposition \ref{conedirectrix}) and
Zariski closure of formal centers (Proposition \ref{permisarc}) in arbitrary dimension $n\geq 1$.
For $n=3$, {\it vid.} Remark \ref{ridgedimthree}, Proposition \ref{tausup2}  and section 7.5;
real difficulties come from Lemma \ref{gamma2*12}(3)(3') for inseparable extensions of degree $d=p=2$.
\end{itemize}

The proof of Theorem \ref{projthm} is spread along chapters 6 ($\kappa (x)=1$), 7 ($\kappa (x)=2$),
8 and 9 ($\kappa (x)=3,4$). Chapter 9 uses blowing ups along Hironaka-permissible curves which are
not necessarily of the first or second kind. The authors do not know if such blowing ups are required
in general in order to achieve Resolution (in contrast with permissible blowing ups of the
second kind, {\it vid.} Example \ref{examsecondkind}). They do not appear in \cite{Co5}.\\

Quoting H. Hironaka's euphemism from \cite{H3} p.254: ``in the case of dimension 3 or more, the behavior of
[the characteristic polyhedron] appears to be far more complicated and has not yet been fully investigated
[...] a little experiments lead us to an aphorism: Reduction of singularities is
sharpening of polyhedra.''

When the hypersurface singularity ${\cal X}$ has dimension 3 and satisfies the assumptions of Theorem \ref{luthm},
our results give a precise content to this aphorism:
\begin{itemize}
  \item [(1)] the numerical character $\iota (x)=(m(x), \omega (x),\kappa (x))$ is attached to the initial form polynomial
  $\mathrm{in}_{m_S}h$ w.r.t. the initial face of the characteristic polyhedron;
  \item [(2)] permissible blowing ups produce a smaller value $\iota (x')$, or a monic form for
  the new initial $\mathrm{in}_{m_{S'}}h'$, with $(m(x'),\omega (x'))=(m(x),\omega (x))$. This monic form
  corresponds to a certain vertex $\mathbf{v}'$ of the characteristic polyhedron;
  \item [(3)] projecting from $\mathbf{v}'$ produces a characteristic {\it polygon} with
  numerical character $\gamma (x')\in \N$;
  \item [(4)] further Hironaka-permissible blowing ups either produce a smaller value $\iota (x'')<\iota (x)$, or
  achieve
  $$
  \iota (x'')=\iota (x'), \ \mathrm{in}_{m_{S''}}h'' \ \mathrm{in} \  \mathrm{monic} \
  \mathrm{form} \  \mathrm{with}  \ \gamma (x'')<\gamma (x').
  $$
\end{itemize}

\noindent {\it Acknowledgement:} the authors acknowledge many stimulating discussions held during the ``Fall School
on Resolution of Threefolds in Positive Characteristic'', University of Regensburg, October 1-11/2013.
They hereby thank H. Kawanoue, S. Perlega, S. Saito, M. Spivakovsky, A. Voitovitch, A. Weber and J. W{\l}odarczyk for
numerous questions and suggestions, with very special thanks to the organizers U. Jannsen and B. Schober.

Very special thanks also to H.~Mourtada, D.~Rydh and B.~Schober who organized the workshop ``Two approaches towards local uniformization and resolution of singularities in characteristic $p > 0$'', at the Mittag-Leffler Institute, Djursholm,  May 23-27/2016.

Extending the proof of Corollary \ref{integralmodel} to that of Theorem \ref{mainthm}
was suggested to the authors by D. Cutkosky who is thanked warmly about it.

Both authors thank the referees for their accurate readings, for the lot of time and energy they have devoted to this paper: 36 pages of reports. Their comments have been very helpful for amending the text,  clarifying its content and making it more accessible to readers.

\subsection{Overview of the content and proof of Theorem \ref{mainthm}.}

This article is organized as follows: in chapter 2, we introduce our main tool which is
the Hironaka Characteristic Polyhedron \cite{H3}  {(Definition \ref{defminimal})}. This is performed
for any polynomial equation
$$
    h:=X^m+f_{1,X}X^{m-1}+ \cdots +f_{m,X} \in S[X], \ f_{1,X}, \ldots , f_{m,X} \in S
$$
where $S$ is an excellent regular local ring of dimension $n\geq 1$.

Our notation $\Delta_S (h;\{u_j\}_{j\in J};X)$ for polyhedra (Definition \ref{defDelta}) slightly differs
from Hironaka's because we focus our attention on the {\it variation}
of the characteristic polyhedron along regular subschemes
$$
W:=(\{u_j\}_{j\in J})\subseteq \mathrm{Spec}S, \ J\subseteq \{1,\ldots ,n\}.
$$
A basic algebraic object attached to $W$ is the graded algebra:
\begin{equation}\label{eq1071}
G(W):=\bigoplus_{i\geq 0}I(W)^i/I(W)^{i+1}\simeq S/(\{u_j\}_{j\in J})[\{U_j\}_{j\in J}].
\end{equation}
To a given face $\sigma = \sigma_\alpha$ defined by a weight vector $\alpha \in \R^n_{\geq 0}$,
an initial form polynomial  $\mathrm{in}_\alpha h$ is attached (Definition \ref{definh}).
Proposition \ref{Deltaalg} is imported from \cite{CoP3} and is an essential
tool for studying these variations along $W$. It states that $\Delta_S(h;u_1, \ldots ,u_n;X)\subseteq \R^n_{\geq 0}$
can be made minimal by a suitable linear coordinate change $Z:=X-\phi$, $\phi \in S$. Denote
$$
{\cal X}:=\mathrm{Spec}(S[Z]/(h)), \ \eta : {\cal X} \longrightarrow \mathrm{Spec}S.
$$

If $x\in \eta^{-1}(m_S)$ is a point of multiplicity $m(x)=m$, then
$$
\eta^{-1}(m_S)=\{x\}, \ k(x)=S/m_S.
$$
Hironaka's slope for $\Delta_S (h;u_1, \ldots ,u_n;Z)$ is denoted by $\delta (x)\geq 1$
when this polyhedron is minimal (Proposition \ref{deltainv} and Definition \ref{defdelta}).

\smallskip

Assume that a reduced normal crossings divisor
\begin{equation}\label{eq107}
E=\mathrm{div}(u_1 \cdots u_e)\subseteq \mathrm{Spec}S
\end{equation}
is specified. Well adapted coordinates $(u_1, \ldots ,u_n;Z)$ are coordinates such that (\ref{eq107}) holds and
$\Delta_S (h;u_1, \ldots ,u_n;Z)$ is minimal (Definition \ref{defwelladapted}). Relevant numerical data
are defined for well adapted coordinates only. For such coordinates, $h$ has weights
$$
d_j:=\min\{x_j : (x_1,\ldots ,x_n)\in \Delta_S (h;u_1, \ldots ,u_n;Z)\}, \ 1 \leq j \leq e.
$$
When $m=p$, assumptions (i) or (ii) of Theorem \ref{luthm} (condition {\bf (G)} in the text) and {\bf (E)}
(Definition \ref{conditionE}) imply that
\begin{equation}\label{eq108}
p\delta (x), \ H_j:=pd_j \in \N \ (\mathrm{Corollary} \ \ref{cordeltaint})
\end{equation}
and provide the structure Theorem \ref{initform} for the
initial form polynomials $\mathrm{in}_\alpha h$ with respect to its compact faces (Definition \ref{definh}).
This fact allows us to reproduce part of the equicharacteristic $p>0$
constructions used in \cite{CoP2}. Note that $E$ is always assumed to be equicharacteristic $p>0$
(Definition \ref{conditionE}).

For example when $\alpha =\mathbf{1}:=(1, \ldots ,1)$,
$\sigma_\mathbf{1}$ is the {\it initial face}\index{initial face of the polyhedron} of the polyhedron $\Delta_S (h;u_1, \ldots ,u_n;Z)$;
the corresponding homogeneous polynomial
$$
\mathrm{in}_{\mathbf{1}} h \in G(m_S)[Z], \ G(m_S):=\mathrm{gr}_{m_S}S \simeq k(x)[U_1,\ldots ,U_n]
$$
(denoted by $\mathrm{in}_{m_S} h$ in the text) has degree $p\delta (x)$, setting $\mathrm{deg}Z:=\delta (x)$.
Theorem \ref{initform} can be stated as follows: assume that $\Delta_S (h;u_1, \ldots ,u_n;Z)$ is {\it not}
an orthant with vertex in $\R^e$ ($\epsilon (x)\neq 0$ in the text); then
\begin{equation}\label{eq104}
    \mathrm{in}_{m_S} h=Z^p -G^{p-1}Z +F_{p,Z}\in G(m_S)[Z].
\end{equation}
Let $H:=\prod_{j=1}^eU_j^{H_j}\in G(m_S)$ with notations as in (\ref{eq108}).
We denote (Definition \ref{defepsilon}):
$$
\epsilon (x):=\mathrm{deg}(\mathrm{in}_{m_S} h)- \mathrm{deg}H= p\delta (x)- \sum_{j=1}^eH_j \in \N.
$$

This leads us to define the function $\iota$ in (\ref{eq103}) (Definition \ref{defomega}).
The function $\omega$ is a differential version of Hironaka's $\epsilon$-function \cite{H3}
and requires introducing a differential structure $(S,h,E)$ adapted to the normal crossings divisor
$E \subset \mathrm{Spec}S$ (section 2.5). This is done by considering the $G(m_S)$-module
$\Omega_{G(m_S)} (\log U_1 \cdots U_e)$ of absolute logarithmic differentials and its
dual space of derivatives ${\cal D}(m_S)$. The derivatives
\begin{equation}\label{eq109}
H^{-1}{\partial \hfill{} \over \partial Z}, \ \{H^{-1}D\}_{D \in {\cal D}(m_S)}
\end{equation}
act on $\mathrm{in}_{m_S} h$. If $G=0$, we simply let $\kappa (x)\geq 2$, {\it vid.} (\ref{eq103}),
and
\begin{equation}\label{eq110}
\omega (x) :=\left\{
  \begin{array}{cc}
    \epsilon (x) \hfill{}& \mathrm{if} \ {\partial F_{p,Z} \over \partial U_j}=0, \ e+1 \leq j \leq n \\
     &  \\
    \epsilon (x)-1 &  \mathrm{otherwise} \hfill{} \\
  \end{array}
\right.
.
\end{equation}
If $G\neq 0$, the definition is more delicate but only relies on elementary linear algebra. We then have
\begin{equation}\label{eq111}
(\omega (x)=\epsilon (x), \ \kappa (x)=1) \ \mathrm{or} \
(\omega (x)=\epsilon (x)-1, \ \kappa (x)\geq 2)  .
\end{equation}

In order to deal with blowing ups along Hironaka-permissible subschemes ${\cal Y}\subset {\cal X}$, the
above construction is performed in a more general setup; we introduce logarithmic Nagata derivatives
${\cal D}(W)$ on the graded algebras $\widehat{G(W)}=G(\hat{W})$ for $W\subset E$ having normal crossings with $E$
(note that $\mathrm{char}W=p$ since $\mathrm{char}E=p$). The main definitions are given in (\ref{eq244}): homogeneous
submodules
$$
V(F,E,W)\subset G(W)_{d-d_W-1}, \ J(F,E,W)\subset \widehat{G(W)}_{d-d_W}
$$
are attached to a homogeneous element $F\in G(W)_d$ and  {a} monomial ideal $H_W\subset G(W)_{d_W}$. This
construction plays a fundamental role in this article and is used {\it passim.}

Another important notion is that of the affine cone $\mathrm{Max}(x)$ and  {the} affine space $\mathrm{Dir}(x)$
(Definition \ref{deftauprime}).
These are respectively the stratum and  {the} directrix of the space of forms of degree $\omega (x)$ obtained
by applying those derivatives in (\ref{eq109}). Once again, the definition is more delicate when $G\neq 0$
but elementary in nature. For applications to dimension three, we always have
$\mathrm{Max}(x)=\mathrm{Dir}(x)$, {\it vid.} Remark \ref{ridgedimthree}.

When $\omega (x)=0$ in (\ref{eq103}), a simple combinatorial blowing up
algorithm (similar to residue characteristic zero) makes the value of the multiplicity function
smaller than $p$ at all points of the blown up space mapping to $x$ (Theorem \ref{omegazero}).
 {It remains} to deal with points $x \in {\cal X}$ such that $m(x)=p$, $\omega (x)>0$.

\smallskip

Chapter 3 develops a notion of permissible blowing up
$\pi : {\cal X}'\rightarrow {\cal X}$ which refines that of H. Hironaka.
Roughly speaking, a Hironaka permissible center ${\cal Y}\subset {\cal X}$
is permissible in our sense if ${\cal X}$ is ``differentially equimultiple" along ${\cal Y}$ (Definition
\ref{deffirstkind} and Definition \ref{defsecondkind}). The notion is somewhat subtle but has good properties,
the main result being Theorem \ref{bupthm}: $\iota$ is nonincreasing along permissible blowing ups.
Furthermore, $\iota$ decreases except possibly at
exceptional points $x' \in \pi^{-1}(x)$ belonging to some embedded projective cone
$$
PC(x,{\cal Y})\subset \pi^{-1}(x)
$$
given in Definition \ref{defcone}. The cone $PC(x,{\cal Y})$ is the projectivization
of a certain cone containing  $\mathrm{Max}(x)$ and coincides with it when $\omega (x)=\epsilon (x)$.
We also mention:
\begin{itemize}
  \item  persistence of permissibility under regular base change
(Theorem \ref{geomregpermis});
  \item   the strict transform ${\cal Z}'\subset {\cal X}'$  of a permissible center
${\cal Z}\subset {\cal X}$  under a permissible blowing up $\pi$ with center ${\cal Y} \subset {\cal Z}$
is permissible (Theorem \ref{transfstricte});
  \item  the support of a formal arc can be made permissible at its special point
  by performing permissible blowing ups (Proposition \ref{permisarc});
  \item  Hironaka permissible centers are permissible in a dense open subset of their support
(Theorem \ref{Zariskiopen}).
\end{itemize}

\begin{rem}
Example \ref{exampermisarc} points out a substantial difference between permissibility for $\iota$
and Hironaka-permissibility when $n\geq 4$. It states that the support ${\cal Z}\subseteq {\cal X}$
of a formal arc cannot in general be made permissible for $\iota$ at its special point $x$
by iterated quadratic transforms. This phenomenon also occurs for $n=3$ but only for $\omega (x)=1$;
it is then easily dealt with.
\end{rem}

The section concludes with the constructibility on ${\cal X}$ of the function $\iota$
(Corollary \ref{constructible}). Dimension $n=3$ is assumed in the next chapters.

\smallskip

Chapter 4 contains what can be deduced from known Embedded Resolution results
in excellent regular threefolds. We also adapt some of the equal characteristic $p>0$ material from \cite{CoP1}
to our arbitrary characteristic context and prove:
\begin{itemize}
    \item [(4.1)] reduction of Theorem \ref{mainthm} to its Local
    Uniformization form along valuations;
    \item [(4.2)] reduction of Local Uniformization to Theorem \ref{luthm};
    \item [(4.3)] the normal crossings condition {\bf (E)} can be achieved (Corollary \ref{EEfait}).
\end{itemize}

Chapter 5 collects together all previous results. A projection
number $\kappa (x)\in \{1,2,3,4\}$  (Definition \ref{defkappa})
is associated to a singular point $x \in {\cal X}$ such that $m(x)=p$, $\omega (x)>0$.
This function basically expresses the transverseness or tangency of
the initial form (\ref{eq104})
of the characteristic polyhedron with respect to the initial face.
For convenience of the reader, we give {\it a sample} of the main types of initial form polynomials
occurring when $E=\mathrm{div}(u_1)$; we take $\omega (x)>0$, $\lambda  \in k(x)$ and all exponents are integers
in these formul{\ae}. Furthermore, we have $\lambda \neq 0$,  $\lambda \not \in k(x)^p$ if
$$
(d_1,\omega (x)/p) \in \N^2 \  (\mathrm{resp.}\ \mathrm{if} \ d_1 +\omega (x)/p \in \N)
$$
in the second (resp. fifth) formula:
$$
\mathrm{in}_{m_S} h=
\left\{
  \begin{array}{ccc}
    Z^p -\left (\lambda U_1^{d_1+{\omega (x) \over p}}\right )^{p-1}Z  & & \kappa (x)=1 \\
     & & \\
    Z^p + \lambda U_1^{pd_1}U_3^{\omega (x)} \hfill{}& \omega (x)\equiv 0 \mathrm{mod}p & \kappa (x)=2 \\
     & & \\
    Z^p + \lambda U_1^{pd_1}U_2U_3^{\omega (x)} \hfill{}& \omega (x)\equiv 0 \mathrm{mod}p &  \kappa (x)=2 \\
     & & \\
    Z^p + \lambda U_1^{pd_1}U_3^{1+\omega (x)} \hfill{}& 1+\omega (x)\not \equiv 0 \mathrm{mod}p & \kappa (x)=3\\
     & & \\
    Z^p + \lambda U_1^{pd_1+\omega (x)} \hfill{}& & \kappa (x)=4\\
     & &  \\
    Z^p + \lambda U_1^{pd_1+\omega (x)}U_2 \hfill{}& & \kappa (x)=4 \\
  \end{array}
\right.
$$

The complete definition of $\kappa (x)$ takes into account all possible $\mathrm{in}_{m_S} h$
and $E$ which may occur. The simpler forms listed above are  ``monic forms'' in the sense
that a certain monomial computing  $\omega (x)$ occurs in $\mathrm{in}_{m_S} h$. We now explain
these definitions and the hierarchy between them: for fixed $\omega (x)$, the singularity is
considered as milder as $\kappa (x)$ decreases. To begin with, $\omega (x)$ is computed
from $\mathrm{in}_{m_S} h$ by applying certain derivatives (\ref{eq109})-(\ref{eq111}).

\smallskip

\noindent $\bullet$ when this derivative is transverse to the
base $\mathrm{Spec}S$, i.e. applying $H^{-1}{\partial \hfill{} \over \partial Z}$ in
(\ref{eq109}), we set $\kappa (x)=1$; otherwise $\kappa (x)\geq 2$.

\smallskip

\noindent $\bullet$ when $\kappa (x)\geq 2$, we set $\kappa (x)=4$ if the {\it directrix} affine space
$\mathrm{Dir}(x)$ has equations in  $U_1, \ldots ,U_e$, i.e. in those coordinates
corresponding to $E$. Otherwise, $\mathrm{Dir}(x)$ has an equation which is  transverse
to $E$, say $U_3=0$ with $e=1$ or $e=2$. The very transverse case $\kappa (x)=2$
means that a derivative transverse to $U_3$ is involved in (\ref{eq109}), i.e.
a derivative w.r.t. another variable $U_1,U_2$ or to a constant in $k(x)$:
$$
D=H^{-1}U_1{\partial \hfill{} \over \partial U_1},
\ D=H^{-1}{\partial \hfill{} \over \partial U_2} \ (e=1),
\ \mathrm{or} \ D=H^{-1}{\partial \hfill{} \over \partial \lambda}.
$$

 {We set $\kappa(x)=3$ if  none of the cases before holds.}

Theorem \ref{projthm} states that $\iota (x)$ can be made smaller
by performing local Hironaka permissible blowing ups. Theorem \ref{luthm} then follows easily
by descending induction on $\iota (x)$. \\

The proof of Theorem \ref{projthm} is very long and intricate. For $\kappa (x)=1$ (resp. $\kappa (x)=2,3,4$),
the proof is given in Corollary \ref{projthmkappa1} (resp. Theorem \ref{proofkkappa2},
Theorem \ref{proofkappa34}, {\it ibid.}). Three main phenomena are responsible for these intricacies:

\begin{itemize}
  \item [(i)] no obvious way shows up for reducing Theorem \ref{projthm} for $(S,h,E)$ to
  some statement on the {\it coefficients} of the polynomial $h$. When this is possible
  (for $\kappa (x)=1$ and in part for $\kappa (x)=3,4$), the proofs are notably simplified.
  This is done in section 6 where some weak form of maximal contact with a component of $E$ is assumed for $\iota$.
  \item [(ii)] reducing Theorem \ref{projthm} to the  ``monic forms'' corresponding to $\kappa (x)$ is achieved by a
  casuistic analysis which seems for the moment out of reach in higher dimensions. Sections 7.2, 8.3
  and part of 8.1, 8.2 are concerned with this problem.
  \item [(iii)] blowing up a monic form along a permissible center (e.g. a closed point)
  may lead to a bigger value $\iota (x')=(p,\omega (x),4)>\iota (x)$ when $\kappa (x)=2,3$.
  These situations are also dealt with by a casuistic analysis whose extension to higher
  dimensions seems out of reach. Section 7.1 and part of 8.1, 8.2 are concerned with this problem.
\end{itemize}

Chapter 6 proves Theorem \ref{projthm} for sequences of permissible blowing ups with centers lying
inside the successive strict transforms of a fixed irreducible component of $E$. This proves Theorem \ref{projthm} in the case
$\kappa (x)=1$ and prepares the ground in the cases $\kappa (x)=3,4$. The proof is similar to
that of Resolution for excellent surfaces \cite{H3}\cite{Co2}\cite{Co3}, but does not follow from it.

\smallskip

Chapter 7 proves Theorem \ref{projthm} when $\kappa (x)=2$. The above phenomenon (iii) is studied
in section 7.1. The proofs are essentially the same as in \cite{CoP2} chapter 2.{\bf II} except that
all statements and proofs are phrased only in terms of initial form polynomials $\mathrm{in}_\alpha h$
w.r.t. certain faces $\sigma_\alpha$ of $\Delta_S(h;u_1, u_2 ,u_3;Z)$. Section 7.2 defines the
``monic forms'' (Definition \ref{*kappadeux}) and deals with the above phenomenon (ii) in
Proposition \ref{redto*}.

No obvious reduction to Resolution for surfaces is available (phenomenon (i)). The proof then follows
our strategy as indicated at the end of the previous section (3) and (4).
Section 7.3 builds up the projected polygon $\Delta_2(h;u_1,u_2;v;Z)$
of (\ref{eq106}) (Theorem \ref{well2prepared}) and defines secondary numerical invariants
(Definition \ref{definvariants2}). The main invariant is denoted by $\gamma (x)\in \N$.
Two main difficulties arise here: rationality over $S$ (i.e. $v$
can be chosen in $S$ and not only in $\hat{S}$), and independence of choices of coordinates.
Section 7.4 studies the behavior of the invariants under blowing up a closed point. Finally,
section 7.5 proves that permissible blowing ups produce some point $x'$
with $\iota (x')\leq (p,\omega (x),1)$ (Theorem \ref{proofkkappa2}). The algorithm blows up
permissible curves only when $\gamma (x)=0,1$.

\smallskip

Chapters 8 and 9 prove Theorem \ref{projthm} for $\kappa (x)=3,4$. Since only Hironaka-permissible
centers are used, this chapter contains many new features in comparison with the corresponding \cite{CoP2}
chapter 3.{\bf II}. Definition \ref{**} states what is required of the ``monic forms'', called
respectively (**) ($\kappa (x)=3,4$) and (T**) ($\kappa (x)=4$). Phenomenon (iii) seems to be untractable
here and is the reason for these stronger conditions imposed on $h$. Reduction to these monic
forms is harder than in chapter 7 and is spread along sections 8.1, 8.2 and 8.3 (Propositions \ref{redto**3}
and \ref{redto**4}).

Section 9.2 reduces a monic form (T**) to (**) or to $\kappa (x)\leq 2$
(Proposition \ref{redto**casT**}). The proof is an application of Theorem  \ref{contactmaxFIN}
since a weak form of maximal contact with a component of $E$ holds for this reduction.
Section 9.3  finally proves that monic forms (**) can be reduced to $\kappa (x)\leq 2$
(Proposition \ref{END}). When $\omega (x)\geq p$, this reduction is achieved by blowing up along
Hironaka-permissible curves, not necessarily permissible of the first or second kind,
but contained in the locus
$$
\Omega_+({\cal X}):=\{y\in {\cal X} : \omega (y)>0\}.
$$
In order to ensure Hironaka-permissibility, the condition $E=\eta (\mathrm{Sing}_p{\cal X})$ is required
(section 9.2.1, condition ({\bf E'}) in the text). Section 9.2.2  builds up
the projected polygon $\Delta_2(h;u_1,u_2;v;Z)$ (Definition \ref{kappa3preparation}
and Proposition \ref{kappa3prepatot}) and defines
secondary numerical invariants (Definition \ref{kappa3invariants}). Said blowing ups
along Hironaka-permissible curves are performed mostly in Propositions \ref{redto3**casii}
and \ref{**gamma}.

\section{Adapted structure and primary invariants.}

All along this article, we will denote by $S$ a regular
local ring of arbitrary dimension $n\geq 1$, and by $(u_1, \ldots,
u_n)$ a regular system of parameters (r.s.p. for short) of $S$.
Its maximal ideal is denoted by $m_S:=(u_1, \ldots,u_n)$ and its formal completion
w.r.t. $m_S$ by $\hat{S}$. The order function $\mathrm{ord}_{m_S}$ on $S$ is defined by:
$$
\mathrm{ord}_{m_S}f:=\sup \{n \in \N : f\in m_S^n\}\in \N \cup \{+ \infty\}, \ f \in S.
$$
This order function extends to a discrete valuation on the quotient field $K:=QF (S)$ of $S$.

\smallskip

We will assume that $\mathrm{char}(S/m_S)>0$ except for the next three sections.
We also assume that $S$ is {\it excellent} beginning from Proposition \ref{Deltaalg} on.
The basic reference for excellent rings is \cite{EGA2} 7.8 and 7.9. A useful {\it compendium} is
\cite{Ma} pp. 255-260; some extensions and examples of non excellent regular local rings
can be found in \cite{ILO} pp. 7-22.
Let
\begin{equation}\label{eq201}
    h:=X^m+f_{1,X}X^{m-1}+ \cdots +f_{m,X} \in S[X], \ f_{1,X}, \ldots ,
f_{m,X} \in S
\end{equation}
be a  {monic} polynomial of degree $m\geq 2$. We denote by
\begin{equation}\label{eq202}
{\cal X}:= \mathrm{Spec}(S[X]/(h)) \  \mathrm{and} \  \eta: \ {\cal X} \longrightarrow \mathrm{Spec}S
\end{equation}
respectively the corresponding hypersurface and induced projection.

The total ring of fractions ${\cal X}$ is denoted by $L:=\mathrm{Tot}(S[X]/(h))$.
Given a point $y \in {\cal X}$, its ideal, residue field and multiplicity are respectively
denoted by $m_y$, $k(y)$ and $m(y)$.

\smallskip

For convenience of the reader, we make the definition of $m(y)$ explicit.
Let $s:=\eta (y)\in \mathrm{Spec}S$, $k(s)$ be the residue field of $S_s$ and
$\overline{h}\in k(s)[X]$ be the reduction of $h$. The point $y$ corresponds to a certain irreducible factor
$T$ of $\overline{h}$ with $k(y)=k(s)[X]/(T)$. One defines $m(y)$ by:
$$
m(y):=\mathrm{ord}_{m_{S[X]_y}}h\geq 1.
$$
By definition of regular local rings, we thus have:
$$
{\cal O}_{{\cal X},y} \ \mathrm{is} \ \mathrm{a} \ \mathrm{regular} \ \mathrm{local} \ \mathrm{ring}
\Leftrightarrow m(y)=1.
$$
The singular (i.e. not regular) locus (resp. {\it locus of multiplicity} $m$) of ${\cal X}$ is denoted by :
$$
\mathrm{Sing}{\cal X}=\{y \in {\cal X} : m(y)\geq 2\} \ (\mathrm{resp.}
\ \mathrm{Sing}_m {\cal X}:=\{ y \in {\cal X} : m(y)=m\}).
$$
Both are  viewed as reduced embedded subschemes of ${\cal X}$. Nontrivial material concerning regularity
and the multiplicity function is normally accompanied with a reference to \cite{EGA2} or \cite{Ma}.
A basic, but especially important property is Proposition \ref{deltainv}.\\

Given a ``linear change of" (one also says ``translation on") the
$X$-coor\-di\-nate, say $X':=X-\phi$, $\phi \in \hat{S}$, we still denote by
$$
h={X'}^m+f_{1,X'}{X'}^{m-1}+ \cdots +f_{m,X'}\in \hat{S}[X']
$$
the corresponding expansion of $h(X'+\phi)$, $f_{1,X'}, \ldots , f_{m,X'} \in \hat{S}$.
The explicit formula for this change of coordinate is :
\begin{equation}\label{eq2011}
f_{i,X'}= \begin{pmatrix}
  m \\
  i \\
\end{pmatrix}
\phi^i +\sum_{j=1}^i{ \begin{pmatrix}
  m -j\\
  i-j \\
\end{pmatrix}
f_{j,X}\phi^{i-j}}, \ 1 \leq i \leq m.
\end{equation}

Given $\phi \in S$ and a rational number $d \leq
\mathrm{ord}_{m_S}\phi$, we denote by $\mathrm{cl}_d\phi$ the {\it
initial form} of $\phi$ in $\mathrm{gr}_{m_S}S \simeq S/m_S
[U_1,\ldots,U_n]$ (resp. the null form) if $d =
\mathrm{ord}_{m_S}\phi$ (resp. otherwise). Similarly, if $I
\subseteq S$ and $d \leq \mathrm{ord}_{m_S}I$, we denote
$$
\mathrm{cl}_d I:= \mathrm{Vect}(\{\mathrm{cl}_d\phi \}_{\phi \in
I})\subseteq S/m_S [U_1,\ldots,U_n]_d.
$$

Suppose that a weight vector $\alpha =(\alpha_1 , \ldots , \alpha_n)\in {\R}^n_{\geq 0}$
is given. Let $\Gamma_\alpha: =\Z \alpha_1 + \cdots + \Z \alpha_n \subset \R$. For
$\mathbf{x}=(x_1, \ldots , x_n) \in {\R}^n_{\geq 0}$, denote
$$
\mid \mathbf{x} \mid_\alpha :=\alpha_1 x_1+ \cdots + \alpha_n x_n \in (\Gamma_\alpha )_{\geq 0} .
$$
An associated valuation
$\mu_\alpha$ of $K$ is defined by setting for $f \in S$, $f \neq 0$:
$$
\mu_\alpha (f):= \mathrm{max}\{ a \in \Gamma_\alpha : f \in
I_\alpha (a):=(\{u_1^{x_1} \cdots u_n^{x_n} : \mid \mathbf{x} \mid_\alpha  \geq a\})\}.
$$
It easily follows from the Noetherianity of $S$ that $\mu_\alpha (f)$ is well defined. One sets
$$
\mu_\alpha (f/g):= \mu_\alpha (f)- \mu_\alpha (g) \ \mathrm{for} \  f, g \in S, fg\neq 0.
$$
Note that $\mathrm{ord}_{m_S}=\mu_\mathbf{1}$, where $\mathbf{1}= (1, 1 , \ldots ,1)\in {\R}^n_{> 0}$. We
will systematically use the graded ring $\mathrm{gr}_\alpha S$ of $S$ w.r.t. $\mu_\alpha$: \index{$\mathrm{gr}_\alpha S$! the graded ring $\mathrm{gr}_\alpha S$ of $S$ w.r.t. $\mu_\alpha$}
\begin{equation}\label{eq:gradue}
    \mathrm{gr}_\alpha S \simeq S /(\{u_i : \alpha_i >0\}) [\{U_i : \alpha_i>0\}].
\end{equation}
If $a \in \Gamma_\alpha$ and $\phi \in S$
is given with $a \leq \mu_\alpha (\phi )$, its initial form
$\mathrm{cl}_{\alpha , a}\phi \in \mathrm{gr}_\alpha S$ is defined as before. Similarly, if $I
\subset S$ and $a \leq \mu_\alpha (I)$, we associate a $(\mathrm{gr}_\alpha S)_0$-module
denoted by
$$
\mathrm{cl}_{\alpha , a} I:=\mathrm{Span}(\{\mathrm{cl}_{\alpha , a}\phi \}_{\phi \in
I})\subseteq (\mathrm{gr}_\alpha S)_a .
$$

\subsection{Characteristic polyhedron and first invariants.}

\smallskip

 {Polygons have been used since Newton  to compute the leading terms in Puiseux parametrizations of plane branches.}
  {Nowadays, Newton polyhedra are classical tools for the study of singularities. They encode certain numerical data attached with a given singular germ and provide a rough approximation of the geometry of the singularity. }
 { For toric singularities  \cite{Mu}, resolution of singularities is recovered from their polyhedron.}
 {In general, it only provides invariants for bettering singularities by using a stepwise blowing up process \cite{Sc}, \cite{L4} for quasi-ordinary singularities. }

%In dimension~1, the ordinate of a vertex and the slope of an edge Newton polyhedra

 {Hironaka showed how Newton polyhedra can be used to construct resolution of singularities for surfaces \cite{H3}.
For singularities of any dimension, one first projects the Newton polyhedron from a special face related to a transversal projection, then minimizes the image by suitable changes of coordinates. Hironaka's characteristic polyhedron is defined to be the closure of the image. %This section is devoted to define invariants from some essential vertices.
}

 {Each face of the characteristic polyhedron is defined by some  monomial valuation which leads to associated graded rings and initial forms. From these data we will define our main invariants. We also study the behavior by base change and blowing ups.}

\medskip
Let $S$ and $(u_1, \ldots ,u_n)$ be fixed as above. Given a subset $J \subseteq \{1, \ldots ,n\}$,
we denote by
$$
I_J:=(\{u_j\}_{j\in J})\subset S \  \mathrm{and} \ \overline{S}^J:=S/I_J.
$$
We also use the notation $s^J \in  \mathrm{Spec}S$ to denote the point $s^J=I_J$, reserving the idealistic
notation $I_J$ to commutative algebraic formul{\ae}. The next proposition will be applied to each coefficient of $h$ in \eqref{eq201}.

\begin{prop}\label{monomexp}
Let $f \in S$. There exists a unique finite set $\mathbf{S}^J(f)\subset \N^J$
such that the following holds:
\begin{itemize}
    \item [(i)] the set of monomials $\{\prod_{j\in J}u_j^{a_j} : \mathbf{a}=(\{a_j\}_{j\in J})
    \in \mathbf{S}^J (f) \}$ forms a minimal system of generators of the ideal
    $$
    I(f):= \left (\left \{\prod_{j\in J}u_j^{a_j} : \mathbf{a}=(\{a_j\}_{j\in J}) \in \mathbf{S}^J (f)
    \right \}\right );
    $$
    \item [(ii)] there is an expansion
    \begin{equation}\label{eq2036}
    f=\sum_{\mathbf{a}\in \mathbf{S}^J (f)}\gamma(f,\mathbf{a}) \prod_{j\in J}u_j^{a_j}\in S,
    \ \gamma(f,\mathbf{a})\in S
    \end{equation}
    such that $\gamma(f,\mathbf{a})\not \in I_J$  {(i.e. $\gamma(f,\mathbf{a})$ is a unit in $S_{I_J}$)}
for every  $\mathbf{a}\in \mathbf{S}^J(f)$.
\end{itemize}
\end{prop}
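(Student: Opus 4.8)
The plan is to construct $\mathbf{S}^J(f)$ as the set of exponents of a minimal generating system of a naturally attached monomial ideal, and then to verify that the two asserted properties hold and force uniqueness. First I would introduce, for each $\mathbf{a} = (a_j)_{j\in J} \in \N^J$, the monomial $u^{\mathbf{a}} := \prod_{j\in J} u_j^{a_j}$, and consider the ideal
$$
\mathfrak{I}(f) := \bigl( \{\, u^{\mathbf{a}} : \mathbf{a} \in \N^J,\ f \in (u^{\mathbf{a}}) + I_{J'}\ \text{for the complementary-variable ideal}\,\} \bigr),
$$
but more cleanly: write $f$ in the $I_J$-adic-type expansion obtained by repeatedly factoring out the $u_j$, $j\in J$. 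Concretely, since $\overline{S}^J = S/I_J$ is a regular local ring and $(u_j)_{j\in J}$ is part of a r.s.p., $S$ is (after completion, or already as a graded object) a polynomial-like ring over $\overline{S}^J$ in the variables $\{u_j\}_{j\in J}$ in the sense that every $f\in S$ has a well-defined support
$$
\mathrm{Supp}^J(f) := \{\, \mathbf{a} \in \N^J : \text{the ``coefficient'' of } u^{\mathbf{a}} \text{ in } f \text{ is a unit-times-nonzero element of } S/I_J \,\}.
$$
The monomial ideal generated by $\{u^{\mathbf{a}} : \mathbf{a}\in \mathrm{Supp}^J(f)\}$ is a monomial ideal in finitely many variables, hence by Dickson's lemma has a unique finite minimal monomial generating system; let $\mathbf{S}^J(f)$ be the corresponding finite set of exponents.

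Next I would establish (ii): I need an expansion $f = \sum_{\mathbf{a}\in \mathbf{S}^J(f)} \gamma(f,\mathbf{a}) u^{\mathbf{a}}$ with each $\gamma(f,\mathbf{a}) \notin I_J$. This is where the argument needs care. Starting from the full support expansion $f = \sum_{\mathbf{a}\in \mathrm{Supp}^J(f)} c_{\mathbf{a}} u^{\mathbf{a}}$ (with $c_{\mathbf{a}} \in S$ and $c_{\mathbf{a}} \notin I_J$ for $\mathbf{a}$ in the support — this is the defining property of the support expansion, and its existence/uniqueness modulo $I_J$ should be a standard fact about expansions along a regular subsystem of parameters, or can be imported from the $m_S$-adic / graded structure), I would observe that each $u^{\mathbf{a}}$ with $\mathbf{a}\in \mathrm{Supp}^J(f)$ lies in $(u^{\mathbf{b}})$ for some $\mathbf{b}\in \mathbf{S}^J(f)$, i.e. $\mathbf{a} \geq \mathbf{b}$ componentwise; writing $u^{\mathbf{a}} = u^{\mathbf{a}-\mathbf{b}}\cdot u^{\mathbf{b}}$ and collecting terms gives an expansion supported on $\mathbf{S}^J(f)$. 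The only remaining point is that after this collecting, the new coefficient $\gamma(f,\mathbf{b})$ of $u^{\mathbf{b}}$ is still not in $I_J$: for the minimal generators $\mathbf{b}\in \mathbf{S}^J(f)$ themselves, minimality means $\mathbf{b}$ is not strictly above any other support exponent contributing to the $u^{\mathbf{b}}$-coefficient, so the collecting process does not add anything lying in $I_J$ to the genuine coefficient $c_{\mathbf{b}}$, and $c_{\mathbf{b}}\notin I_J$ to begin with. This last verification — that minimality of the monomial generators is exactly what prevents the collected coefficients from falling into $I_J$ — is the technical heart and the step I expect to be the main obstacle, because one must be careful that terms $c_{\mathbf{a}} u^{\mathbf{a}}$ with $\mathbf{a}$ strictly above $\mathbf{b}$ contribute to $\gamma(f,\mathbf{b})$ only multiples of some $u_j$, $j\in J$, hence land in $I_J$ and do not spoil $\gamma(f,\mathbf{b})\notin I_J$.

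Finally, for uniqueness: given any finite set $\mathbf{S}\subset \N^J$ satisfying (i) and (ii), property (i) says the monomials $\{u^{\mathbf{a}} : \mathbf{a}\in \mathbf{S}\}$ are a minimal monomial generating set of the ideal $I(f) = \mathfrak{m}$ they generate, so by uniqueness of minimal monomial generators of a monomial ideal (Dickson / Nakayama in the monomial setting) $\mathbf{S}$ is determined by that ideal $I(f)$; it then suffices to show $I(f)$ depends only on $f$, which follows because property (ii) forces $I(f)$ to equal the monomial ideal $\mathfrak{I}$ generated by $\mathrm{Supp}^J(f)$: the expansion in (ii) shows $f\in I(f)\cdot S$ hence every support monomial of $f$ lies in $I(f)$ (giving $\mathfrak{I}\subseteq I(f)$), while each generator $u^{\mathbf{a}}$, $\mathbf{a}\in\mathbf{S}$, is a support monomial of $f$ by the non-vanishing in (ii) of its coefficient mod $I_J$ together with a minimality/leading-term argument (giving $I(f)\subseteq \mathfrak{I}$). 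Hence $\mathbf{S} = \mathbf{S}^J(f)$, which proves the proposition. I would present the support-expansion existence either by citing the standard structure of expansions along part of a r.s.p. in a regular local ring, or by a short induction on $|J|$ peeling off one variable $u_j$ at a time via the $u_j$-adic filtration of $S_{(u_j)}$.
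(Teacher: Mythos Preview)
Your overall strategy --- attach a monomial ideal to $f$, take its minimal monomial generators via Dickson, then verify the coefficient condition --- is the right one and matches the paper's. But you misplace the difficulty. The step you flag as the ``technical heart'' (checking that collected coefficients stay outside $I_J$) is routine once the expansion exists: for $\mathbf{b}\in\mathbf{S}^J(f)$ minimal, every other support exponent $\mathbf{a}$ assigned to $\mathbf{b}$ satisfies $\mathbf{a}>\mathbf{b}$ strictly in some coordinate, so $u^{\mathbf{a}-\mathbf{b}}\in I_J$ and the only contribution to $\gamma(f,\mathbf{b})\bmod I_J$ is $c_{\mathbf{b}}$ itself.

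The actual gap is that your ``full support expansion'' $f=\sum_{\mathbf{a}\in\mathrm{Supp}^J(f)}c_{\mathbf{a}}u^{\mathbf{a}}$ is in general an \emph{infinite} sum (the support need not be finite), so it and the collected coefficients $\gamma(f,\mathbf{b})$ live only in the $I_J$-adic completion $\widehat{S}^J$, not in $S$. Nothing in your argument produces an expansion with coefficients in $S$ itself. This is precisely what the paper addresses: it notes that $S\to\widehat{S}^J$ is faithfully flat (so $f\in I(f)\widehat{S}^J$ forces $f\in I(f)$ in $S$, giving \emph{some} expansion over $S$), and uses that each $S/I_J^{n+1}$ is a free $\overline{S}^J$-module on the monomials $u^{\mathbf{a}}$ with $|\mathbf{a}|\leq n$ to show the class $\gamma(f,\mathbf{a})+I_J$ is independent of the expansion --- so the condition $\gamma(f,\mathbf{a})\notin I_J$ descends from $\widehat{S}^J$ to $S$ as well. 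The same free-module fact gives uniqueness (check in $S/I_J^{n+1}$ for $n\gg 0$). Your proposed induction on $|J|$ does not bypass this: after factoring $u_{j_0}^{b}$ out of a coefficient $\gamma'\notin I_{J\setminus\{j_0\}}$, the cofactor is only guaranteed to lie outside $(u_{j_0})$, not outside $I_J$ (take $\gamma'=u_{j_0}+u_{j_1}$ with $j_1\in J\setminus\{j_0\}$), so the one-variable peeling does not terminate without passing to the completion.
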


\begin{proof}
Let $\widehat{S}^J$ be the formal completion of $S$ {\it along} $I_J$.
Since $I_J \subseteq m_S$,  $\widehat{S}^J$ is faithfully flat over $S$ \cite{Ma} Theorem 8.14(3).
Thus $I\widehat{S}^J \cap S=I$ for any ideal $I \subseteq S$, in particular for any monomial ideal in
$\{u_j\}_{j\in J}$. One deduces that property (i) and existence of an expansion (\ref{eq2036})
descend from $\widehat{S}^J$ to $S$.

\smallskip

Suppose that an expansion (\ref{eq2036}) exists for a given $\mathbf{S}^J (f)$ satisfying (i).
Each $S/I_J^{n+1}$, $n\geq 0$ has a structure of free $\overline{S}^J$-module with basis
$$
\left \{\prod_{j\in J}u_j^{a_j} : \mathbf{a}=(\{a_j\}_{j\in J})  \ \mathrm{and} \ \sum_{j\in J}a_j \leq n \right \}.
$$

Therefore the class $\gamma(f,\mathbf{a})+I_J$ is independent
of the chosen expansion (\ref{eq2036}) by the minimality property in (i).
This proves that the property $\gamma(f,\mathbf{a})\not \in I_J$ in (ii)
also descends from $\widehat{S}^J$ to $S$.
In other terms, we may assume that $S$ is $I_J$-adically complete.

\smallskip

Independent monomial generators in $S/I_J^n$ lift to independent monomial
generators in $S/I_J^{n+1}$ for every $n \geq 1$. One easily deduces the existence
of an expansion (ii) satisfying (i) for some finite subset $\mathbf{S}^J(f)\subset \N^J$,
since $S$ is $I_J$-adically complete and Noetherian.

\smallskip

Uniqueness of $\mathbf{S}^J(f)$ is also checked by taking images in  $S/I_J^{n+1}$ for
some $n>>0$.
\end{proof}

 {Given an equation $h \in S[X]$ (\ref{eq201}) and a r.s.p. $(u_1,\ldots ,u_n)$ of $S$, let us write a finite expansion:
\begin{equation}\label{eq000}
  h:=\sum_{i,A}c_{i,A}X^i u_1^{a_1} \cdots u_n^{a_n},\ A=(a_1,\cdots,a_n)\subset\N^n,\ c_{i,A}\in S
\end{equation}
$c_{i,A}$ invertible in $S$.
The Newton polyhedron $NP (h;u_1,\ldots,u_n;X)$ w.r.t. the variables $(u_1, \ldots ,u_n,X)$  associated to $h$ is defined as:
$$
NP (h;u_1,\ldots,u_n;X):=\hbox{convex hull of}\ \bigcup_{c_{i,A}} (A,i)+\R^{n+1}_{\geq 0} \subseteq \R^{n+1}_{\geq 0}.
$$
}
Let $P:=(0, \ldots ,0,1) \in \R^{n+1}_{\geq 0}$, so $ P \in {1 \over m}NP (h;u_1,\ldots,u_n;X)$, and
$$
\mathbf{p}: \R^{n+1} \ \backslash \{P\} \longrightarrow \R^{n}
$$
be the projection on the $(u_1,\ldots ,u_n)$-space  {from the point $P$}. We define a polyhedron by:
$$
\Delta_S (h;u_1,\ldots,u_n;X):=\mathbf{p}\left ({1 \over m}NP (h;u_1,\ldots,u_n;X) \cap \{x_{n+1}<1\}\right )
\subseteq {\R}^n_{\geq 0}.
$$
The {\it characteristic polyhedron} is introduced in a more general context in \cite{H3}. In
our setting, it consists in minimizing $\Delta_S (h;u_1,\ldots,u_n;X')$ over all linear
changes of coordinates $X'=X-\phi$, $\phi \in \hat{S}$ (\ref{eq2011}):  {see Definition~\ref{defminimal} below.}
In this section, we review and adapt notations to fit our purposes.
A fundamental algebraicity result is borrowed from \cite{CoP3}
in  Proposition \ref{Deltaalg} below. %Then some of the
%invariance properties of the characteristic polyhedron under base change are listed.\\

\begin{defn}\label{defDelta}\textbf{(Associated Polyhedron).}
\index{Associated Polyhedron, Definition~\ref{defDelta}}Given an equation $h\in S[X]$  (\ref{eq201}) and $J \subseteq \{1, \ldots ,n\}$,
we define a rational polyhedron:
$$
\Delta_S (h;\{u_j\}_{j\in J};X):=\mathrm{Conv} \left ( \bigcup_{i=1}^m
\bigcup_{\mathbf{a}\in \mathbf{S}^J (f_{i,X})}
\left \{{\mathbf{a} \over i} + {\R}^J_{\geq 0}\right \} \right )\subseteq \R^J_{\geq 0}.
$$\index{$\Delta_S$,  Definition~\ref{defDelta}}
\end{defn}

\begin{defn}\label{definh}\textbf{(Initial forms).}
\index{Initial forms, Definition~\ref{definh}! Initial forms defined by a weight vector $\alpha$}Let $\alpha =(\{\alpha_j\}_{j \in J})\in {\R}^J_{> 0}$ be a weight vector.
We define
$$
\delta_\alpha (h;\{u_j\}_{j\in J};X):=
\mathrm{min}\{ \mid  {\mathbf{y}} \mid_\alpha  :  {\mathbf{y}} \in \Delta_S (h;\{u_j\}_{j\in J};X)\}.
$$ \index{$\delta_\alpha $, Definition~\ref{definh} ! $\delta_\alpha (h;\{u_j\}_{j\in J};X)$ where $\alpha$ is  a weight vector}

The weight vector defines a {\it compact face} $\sigma_\alpha$ of $\Delta_S (h;\{u_j\}_{j\in J};X)$ \index{$\sigma_\alpha $,  Definition~\ref{definh} !  compact face defined by a weight vector $\alpha$} compact face
by:
$$
\sigma_\alpha := \{  {\mathbf{y}} \in \Delta_S (h;\{u_j\}_{j\in J};X) : \
\mid  {\mathbf{y}} \mid_\alpha   =\delta_\alpha (h;\{u_j\}_{j\in J};X)\}.
$$

Given $h$ and $\alpha$, the grading
of $\mathrm{gr}_\alpha S$ \eqref{eq:gradue} can be extended to $\mathrm{gr}_\alpha (S [X])=(\mathrm{gr}_\alpha S) [X]$ by setting:
$$
\mathrm{deg}X:= \delta_\alpha (h;\{u_j\}_{j\in J};X).
$$
Then the {\it initial form} $\mathrm{in}_\alpha h$ of $h$ w.r.t. $\alpha$ is the polynomial
\begin{equation}\label{eq2035}
    \mathrm{in}_\alpha h:=X^m + \sum_{i=1}^mF_{i,X,\alpha} X^{m-i} \in (\mathrm{gr}_\alpha S) [X],
\end{equation}
where
$$
F_{i,X,\alpha}:= \sum_{ {\mathbf{y}}\in \sigma_\alpha}{\overline{\gamma} (f_{i,X},i {\mathbf{y}}) U^{i {\mathbf{y}}}},
$$
and bars  {denote} images in $(\mathrm{gr}_\alpha S)_0=\overline{S}^J$, i.e.
$$
\overline{\gamma} (f_{i,X},i {\mathbf{y}}):= \mathrm{cl}_{\alpha , 0}\gamma (f_{i,X},i {\mathbf{y}})\in
(\mathrm{gr}_\alpha S)_0=\overline{S}^J .
$$
By convention, we take $\overline{\gamma} (f_{i,X},i {\mathbf{y}})=0$ in these formul{\ae} whenever
$i {\mathbf{y}} \not \in \mathbf{S}^J (f_{i,X})$. Note that the  polynomial $\mathrm{in}_\alpha h$ is {\it homogeneous}
for this grading of degree $m\delta_\alpha (h;\{u_j\}_{j\in J};X)$.
\end{defn}

\begin{rem}
Any vertex of $\Delta_S (h;\{u_j\}_{j\in J};X)$ has coordinates in ${1 \over m!}\N$. We have:
$$
\Delta_S (h;\{u_j\}_{j\in J};X) = \emptyset \Leftrightarrow h = X^m.
$$
\end{rem}
\begin{rem}\label{rem:initsommet}
It is worth emphasizing that the polynomial $\mathrm{in}_\alpha h$ only depends on the face
$\sigma_\alpha $ and not on the specific weight vector $\alpha$ defining it.
 {Given a vertex $\mathbf{y}\in \Delta_S (h;\{u_j\}_{j\in J};X)$,  $ \mathrm{in}_\alpha h$ is the \emph{$\mathbf{y}$-initial}
$ \mathrm{in}_\mathbf{y} h$ defined in \cite{H3}~Definition~(3.7) for any $\alpha \in {\R}^J_{> 0}$ such that $\sigma_\alpha =\mathbf{y}$.
This motivates the need to consider weights $\alpha\not=\mathbf{1}$.}

\end{rem}

We now briefly review the behaviour of polyhedra and initial forms under basic operations
such as formal completion, localization and projection onto a regular subscheme. The case of
regular local morphisms $S \subset \tilde{S}$, $\tilde{S}$ excellent will be considered further on. \\

With notations as above, let $\alpha \in {\R}^J_{> 0}$ be a weight vector and
$$
\sigma_\alpha \subset \Delta_S (h;\{u_j\}_{j\in J};X), \ \mathrm{in}_\alpha h \in (\mathrm{gr}_\alpha S) [X].
$$

\noindent {\it Formal Completion:} $\hat{S}$ is excellent \cite{EGA2} Theorem 7.8.3(iii).
Proposition \ref{monomexp} and Definition \ref{defDelta} give an identification
\begin{equation}\label{eq2034}
\Delta_S (h;\{u_j\}_{j\in J};X)=\Delta_{\hat{S}} (h;\{u_j\}_{j\in J};X).
\end{equation}
This identification preserves the initial form $\mathrm{in}_\alpha h $ for each weight vector $\alpha$ via the inclusion
$ \mathrm{gr}_\alpha S \subseteq \mathrm{gr}_\alpha \hat{S} \simeq \mathrm{gr}_\alpha S \otimes_S \hat{S}$.

\smallskip

\noindent {\it Localization:} the regular local ring $S_{s^J}$ is excellent if $S$ is excellent \cite{EGA2} Theorem 7.4.4.
Similarly, the identifications
\begin{equation}\label{eq2031}
    \Delta_S (h;\{u_j\}_{j\in J};X) = \Delta_{S_{s^J}}(h;\{u_j\}_{j\in J};X)
\end{equation}
also preserve the initial form $\mathrm{in}_\alpha h $ (\ref{eq2035}) via the inclusion
$$
\mathrm{gr}_\alpha S\subseteq \mathrm{gr}_\alpha S_{s^J} \simeq
(\mathrm{gr}_\alpha S) \otimes_S QF(\overline{S}^J).
$$

\noindent {\it Projection:} let $J \subseteq \{1, \ldots ,n\}$ and denote by $J':= \{1, \ldots ,n\} \backslash J$ its
complement. The regular local ring $\overline{S}^J$ is excellent if $S$ is excellent. A r.s.p.
of $\overline{S}^J$ is $(\{\overline{u}_{j'}\}_{j'\in J'})$, where bars denote images in $\overline{S}^J$.
With notations as above, we have:
\begin{equation}\label{eq2032}
\Delta_S (h;\{u_j\}_{j\in J};X)=\mathrm{pr}^J \Delta_S (h;u_1,\ldots,u_n;X),
\end{equation}
where $\mathrm{pr}^J: \R^n \rightarrow \R^J, \ \mathbf{x} \mapsto \mathbf{y}=(\{x_j\}_{j\in J})$
denotes the projection. Let
$$
f_{i,X}=\sum_{\mathbf{a}\in \mathbf{S}(f_{i,X})}\gamma(f_{i,X},\mathbf{a}){u_1^{a_1} \cdots u_n^{a_n}}\in S,
$$
be an expansion (\ref{eq2036}) (for the subset $\{1, \ldots ,n\}$, where $\mathbf{S}(f_{i,X})$ here stands for
$\mathbf{S}^{\{1, \ldots ,n\}}(f_{i,X})$), $1 \leq i \leq m$. Then (\ref{eq2035}) is given by
\begin{equation}\label{eq2033}
F_{i,X,\alpha}:= \sum_{\mathbf{y}\in \sigma_\alpha}
\left ( \sum_{\mathrm{pr}^J(\mathbf{x})=\mathbf{y}}
{\overline{\gamma} (f_{i,X},i\mathbf{x}) \prod_{j'\in J'}\overline{u}_{j'}^{ix_{j'}}} \right )\prod_{j\in J}U_j^{iy_j},
\end{equation}
where bars   {denote} images in $(\mathrm{gr}_\alpha S)_0=\overline{S}^J$ as before (recall that by convention, we take
$\overline{\gamma} (f_{i,X},i\mathbf{x}):=0$ in this formula if $i\mathbf{x} \not \in \mathbf{S}(f_{i,X})$).  {In other terms: }
 {$$  \overline{\gamma} (f_{i,X},i\mathbf{y})=  \sum_{\mathrm{pr}^J(\mathbf{x})=\mathbf{y}}
{\overline{\gamma} (f_{i,X},i\mathbf{x}) \prod_{j'\in J'}\overline{u}_{j'}^{ix_{j'}}} . $$}

% {\mathbf{y}}

\begin{defn}\label{defsolvable}\textbf{(Solvable vertices).}
\index{solvable vertices, Definition~\ref{defsolvable}}Let $ {\mathbf{y}}\in \R^J$ be a vertex of the polyhedron $\Delta_S (h;\{u_j\}_{j\in J};X)$, that is, a 0-dimensional
face $\sigma=\{ {\mathbf{y}}\}$.  {Following Hironaka (cf. Remark~\ref{rem:initsommet}), we denote by}
$$
\mathrm{in}_{ {\mathbf{y}}} h =X^m + \sum_{i=1}^mF_{i,X, {\mathbf{y}}} X^{m-i} \in (\mathrm{gr}_\alpha S) [X]
$$
the initial form polynomial (\ref{eq2035}) w.r.t. any defining weight vector $\alpha $.
We will say that $ {\mathbf{y}}$ is solvable if $ {\mathbf{y}}\in \N^J$ and there exists
$\overline{\lambda} \in \overline{S}^J$ such that
$$
\mathrm{in}_{ {\mathbf{y}}} h=(X - \overline{\lambda} U^{ {\mathbf{y}}})^m.
$$
\end{defn}

Explicitly, with notations as in (\ref{eq2035}) {\it sqq.}, the latter equality means that
$$
\overline{\gamma} (f_{i,X},i {\mathbf{y}}) = (-1)^i \begin{pmatrix}
  m \\
  i \\
\end{pmatrix}
\overline{\lambda}^i \in \overline{S}^J , \ 1 \leq i \leq m .
$$
Note that $\begin{pmatrix}
  m \\
  i \\
\end{pmatrix}
\in \overline{S}^J$ is not a unit in general when $\mathrm{char}(S/m_S)>0$.\\

The following result is a rewriting of \cite{H3} in this hypersurface situation.

\smallskip

\begin{prop}\label{Deltamin}\textbf{(Hironaka).}
There exists a linear change of the
$X$-coordinate $Z:=X-\theta$, with $\theta \in \hat {S}$, such
that
\begin{equation}\label{eq204}
    \Delta_{\hat{S}} (h;\{u_j\}_{j\in J};Z)=
    \min_{X'}\Delta_{\hat{S}} (h;\{u_j\}_{j\in J};X'),
\end{equation}
where the minimum is taken w.r.t. inclusions and over all possible
linear changes of coordinates $X':=X-\phi$, $\phi \in \hat{S}$.

\smallskip

Given $X':=X-\phi$, $\phi \in \hat{S}$, $\Delta_{\hat{S}} (h;\{u_j\}_{j\in J};X')$
achieves equality in (\ref{eq204}) if and only if it has no solvable vertex.

\smallskip

If $S$ is excellent, there is an equivalence
$$
\Delta_{\hat{S}} (h;\{u_j\}_{j\in J};Z)=\emptyset \Leftrightarrow  \exists g \in S
 : h = (X - g)^m.
$$
\end{prop}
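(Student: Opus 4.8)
The plan is to follow Hironaka's original argument, specialized to the hypersurface setting, and to exploit the combinatorial bookkeeping provided by proposition \ref{monomexp}. The first point to establish is that the set of polyhedra $\{\Delta_{\hat{S}}(h;\{u_j\}_{j\in J};X') : X'=X-\phi, \phi\in\hat{S}\}$ is closed under (finite) intersection and is bounded below, so that it has a unique minimal element. Concretely, each polyhedron is determined by its (finitely many) vertices, all lying in $\frac{1}{m!}\N^J$ by the remark following definition \ref{definh}; hence the set of candidate polyhedra is, up to the positive orthant, a subset of a countable lattice, and one shows by a direct computation using the transformation formula (\ref{eq2011}) that composing two translations $\phi_1,\phi_2$ produces a polyhedron contained in the intersection $\Delta(h;X-\phi_1)\cap\Delta(h;X-\phi_2)$ — this is the key stability lemma. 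A descending-chain/Noetherian argument then yields the existence of $\theta\in\hat{S}$ achieving the minimum (\ref{eq204}).

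The second point is the characterization of minimality via solvable vertices. One direction is immediate: if $\Delta_{\hat{S}}(h;\{u_j\}_{j\in J};X')$ has a solvable vertex $\mathbf{x}$, meaning $\mathrm{in}_\mathbf{x}h=(X'-\overline{\lambda}U^\mathbf{x})^m$ with $\mathbf{x}\in\N^J$, then lifting $\overline{\lambda}$ to $\lambda\in\hat{S}$ and performing the further translation $X'':=X'-\lambda u^\mathbf{x}$ kills the vertex $\mathbf{x}$ without creating any new vertex of weight $\le|\mathbf{x}|_\alpha$; this strictly shrinks the polyhedron, so it was not minimal. For the converse — if there is no solvable vertex, the polyhedron is already minimal — the argument goes by contradiction: suppose $X'=X-\phi$ gives a strictly smaller polyhedron; expand $\phi$ in the $\{u_j\}_{j\in J}$-adic filtration and look at the lowest-degree ``effective'' term of $\phi$. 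Since this translation strictly decreases the polyhedron, that lowest term must be responsible for solving off some vertex $\mathbf{x}$, and tracking through (\ref{eq2011}) shows that the corresponding initial form $\mathrm{in}_\mathbf{x}h$ had to be an $m$-th power $(X-\overline{\lambda}U^\mathbf{x})^m$ with $\mathbf{x}\in\N^J$ — i.e. $\mathbf{x}$ was solvable, contradiction. Here is where the main obstacle lies: one must carefully control that the translation does not merely shift the polyhedron around but genuinely removes a vertex, and that this forces the initial form to be a pure power; the bookkeeping with the binomial coefficients $\binom{m}{i}$ (which are not units in residue characteristic $p$) and with the minimal generator sets $\mathbf{S}^J(f_{i,X'})$ from proposition \ref{monomexp} is the delicate part, and is exactly the content one imports from \cite{H3}.

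The third point is the excellence statement: $\Delta_{\hat{S}}(h;\{u_j\}_{j\in J};Z)=\emptyset$ if and only if $h=(X-g)^m$ for some $g\in S$. The implication $\Leftarrow$ is trivial from the remark after definition \ref{definh}. For $\Rightarrow$, emptiness of the minimal polyhedron over $\hat{S}$ means $h=(X-\hat{g})^m$ for some $\hat{g}\in\hat{S}$; so $f_{1,X}=-m\hat{g}$, and one must descend $\hat{g}$ to $S$. If $\mathrm{char}(S/m_S)\nmid m$ this is immediate since $\hat{g}=-f_{1,X}/m\in S$; in general one invokes excellence of $S$ — the locus in $\mathrm{Spec}S$ over which $h$ fails to be an $m$-th power is closed (it is the support of a suitable Fitting-type ideal, or one argues via $\mathrm{Sing}_m\mathcal{X}$ being closed as stated in the text), and emptiness of the polyhedron says this locus misses the closed point after completion, hence (by faithful flatness and excellence, so that formal fibers are geometrically regular) misses it over $S$ itself; thus $h=(X-g)^m$ with $g\in S$. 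I expect this last descent to be routine given the tools already cited in the excerpt, with the genuine difficulty concentrated in the solvable-vertex characterization of the middle paragraph.
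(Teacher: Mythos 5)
The paper itself offers no argument here: the stated proof is a pointer to Hironaka's vertex preparation lemma (3.10) and theorem (4.8) in \cite{H3} for the first two assertions, and to \cite{CoP3} lemma II.1 for the third. Your proposal is an attempt to reconstruct that argument, and it goes off track in two places.

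The stability lemma you put at the heart of part one is false. You claim that the family $\{\Delta_{\hat S}(h;\{u_j\};X-\phi)\}_\phi$ is closed under intersection because $\Delta(h;X-\phi_1-\phi_2)\subseteq\Delta(h;X-\phi_1)\cap\Delta(h;X-\phi_2)$. Take $S=k[[u_1,u_2]]$ with $\operatorname{char}k=2$, $m=p=2$, $h=X^2-u_1u_2$, $J=\{1,2\}$, $\phi_1=u_1$, $\phi_2=u_2$. Then by (\ref{eq2011}), $f_{2,X-\phi}=\phi^2-u_1u_2$, so $\Delta(h;X-u_1)$ has vertices $(1,0),(\tfrac12,\tfrac12)$, $\Delta(h;X-u_2)$ has vertices $(0,1),(\tfrac12,\tfrac12)$, and their intersection is $(\tfrac12,\tfrac12)+\R^2_{\ge0}$; but $f_{2,X-u_1-u_2}=u_1^2+u_1u_2+u_2^2$, so $\Delta(h;X-u_1-u_2)=\{a+b\ge1\}$, which is strictly larger. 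The minimum therefore cannot be obtained as an intersection of the whole family. The actual mechanism in \cite{H3}(3.10) is iterative: one dissolves solvable vertices one at a time, in increasing weight order, checks that each dissolution shrinks the polyhedron without introducing lower-weight vertices, and shows that the accumulated translations form an $m_S$-adic Cauchy sequence whose limit lives in $\hat S$. No intersection-closure or descending-chain argument is available, and the example above shows why.

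The descent in part three is also not a matter of constructibility of a bad locus. Emptiness of the minimal polyhedron over $\hat S$ gives $h=(X-\hat g)^m$ with $\hat g\in\hat S$, and the issue is precisely to land $\hat g$ in $S$. Knowing that some closed subscheme of $\mathrm{Spec}\,S$ (a discriminant or $\mathrm{Sing}_m$-type locus) is empty does not produce an algebraic $m$-th root: $X^p-u\in k[[u]][X]$ in characteristic $p$ has vanishing discriminant and all subdiscriminants at every point, yet is not a $p$-th power in $K[X]$; what saves the day in the actual case is that $f_{m,X}=\pm\hat g^m\in S$, so $\hat g$ is integral over $S$, and one needs a statement of the form $\hat S\cap S^{1/m}=S$ (equivalently $\hat S^p\cap S=S^p$ when $p\mid m$), which relies on $S$ being excellent and regular and is exactly what \cite{CoP3} lemma II.1 establishes. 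Your second paragraph on the solvable-vertex characterization is a fair description of the direction of the argument and correctly flags the binomial-coefficient bookkeeping as the delicate point to import from \cite{H3}.
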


\begin{proof}
This is respectively \cite{H3} Hironaka's Vertex Preparation Lemma (3.10) and Theorem (4.8),
and \cite{CoP3} Lemma II.1.
\end{proof}

\begin{defn}\label{defminimal} \textbf{(Characteristic Polyhedron).}
\index{Characteristic Polyhedron, Definition~\ref{defminimal}} For $X':=X-\phi$, $\phi \in \hat{S}$, we will say that the polyhedron
$\Delta_{\hat{S}} (h;\{u_j\}_{j\in J};X')$ is minimal if it has no solvable vertex.
\end{defn}

\begin{exam}
Let $p$ be a prime number and $n\in \Z$ not divisible by $p$. We take:
$$
S:=\Z_{(p)} \ \mathrm{and} \ h:=X^p -np^a \in S[X], \ a\geq 0.
$$
The following holds:
\begin{itemize}
  \item [(1)] if $a \not \in p\Z$, then $\Delta_{\Z_p}(h; p ;X)=\lbrack a/p, +\infty \lbrack$ is minimal;
  \item [(2)]if $a \in p\Z$, then $\Delta_{\Z_p}(h; p ;Z)$ is minimal, where $Z:=X-np^{a/p}$ and we have:
  $$
\Delta_{\Z_p}(h; p ;Z)=
\left\{
\begin{array}{ccc}
  \lbrack {a+1 \over p}, +\infty \lbrack  \hfill{} & \mathrm{if} &  n^p-n \not \in p^2\Z \\
   & & \\
  \lbrack {a \over p}+{1\over p-1}, +\infty \lbrack & \mathrm{if} &  n^p-n \in p^2\Z\\
\end{array}
\right .
.
$$
\end{itemize}
\end{exam}

\begin{prop} \label{deltainv}
 {With notations and conventions as in (\ref{eq201}) and (\ref{eq202}), assume that  $J=\{1, \ldots ,n\}$ and $\alpha =\mathbf{1}$ (so $\mu_\mathbf{1}=\mathrm{ord}_{m_S}$)
\cite{H3} \cite{Co6}.} Then
the rational number $\delta_\mathbf{1}(h;u_1,\ldots,u_n;Z)$ is
independent of the r.s.p. $(u_1, \ldots, u_n)$ and $Z=X-\theta$,
$\theta \in \hat{S}$ such that $\Delta_{\hat{S}}(h; u_1,\ldots,u_n;Z)$ is
minimal.

If $\Delta_{\hat{S}}(h; u_1,\ldots,u_n;Z)$ is minimal,
the following characterizations hold:
\begin{itemize}
    \item [(i)] $\delta_\mathbf{1}(h;u_1,\ldots,u_n;Z)>0 \Leftrightarrow
    (\eta^{-1}(m_S)=\{x\} \ \mathrm{and} \ k(x)=S/m_S)$;
    \item [(ii)] $\delta_\mathbf{1}(h;u_1,\ldots,u_n;Z)\geq 1 \Leftrightarrow
    \eta^{-1}(m_S)\cap  \mathrm{Sing}_m{\cal X}\neq \emptyset$.
\end{itemize}
\end{prop}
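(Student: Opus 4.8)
The plan is to prove Proposition \ref{deltainv} by relating the rational number $\delta_\mathbf{1}(h;u_1,\ldots,u_n;Z)$ to the order of vanishing of $h$ along the fiber $\eta^{-1}(m_S)$ and to intrinsic multiplicity data of $\cal X$, which are manifestly independent of coordinate choices. First I would recall that by Proposition \ref{Deltamin} (Hironaka) a minimal polyhedron exists, so the statement at least makes sense; the content is that $\delta_\mathbf{1}$ takes the same value for \emph{any} minimal presentation. For the independence, the standard route (following \cite{H3} and \cite{Co6}) is to give a characterization of $\delta:=\delta_\mathbf{1}(h;u_1,\ldots,u_n;Z)$ purely in terms of the hypersurface $\cal X = \Spec(S[Z]/(h))$ and the ideal $m_S S[Z]$: namely
$$
\delta = \sup\left\{\frac{\ord_{m_S}f_{i,Z}}{i} : 1 \le i \le m\right\}
$$
when the polyhedron is minimal — this is immediate from Definition \ref{definh} with $\alpha=\mathbf{1}$ and the fact that for a minimal polyhedron no solvable vertex exists, so the initial face cannot be ``absorbed'' by a further translation. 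The point is then to show this supremum is unchanged under any linear change $Z'=Z-\psi$ preserving minimality and under any change of r.s.p. $(u_1,\ldots,u_n)$ of $S$. The coordinate-change formula (\ref{eq2011}) controls how the $f_{i,Z}$ transform, and minimality (no solvable vertex) is exactly what prevents the relevant orders from jumping; I would argue that two minimal polyhedra differ by a translation $\psi$ of order $\ge \delta$, and that such a translation cannot strictly increase any $\ord_{m_S}f_{i,Z}/i$ beyond $\delta$ while keeping the polyhedron minimal, and symmetrically. Invariance under change of r.s.p. of $S$ follows because $\ord_{m_S}$ itself is intrinsic to the local ring $S$.

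Next I would prove the two characterizations (i) and (ii) assuming $\Delta_{\hat S}(h;u_1,\ldots,u_n;Z)$ minimal. For (i): $\delta>0$ means every vertex of the polyhedron has positive $|\cdot|_\mathbf{1}$, equivalently $f_{i,Z}\in m_S$ for all $i$, i.e. $h \equiv Z^m \bmod m_S S[Z]$. Passing to $\gr_{m_S}S[Z]$, this says the image of $h$ in $(S/m_S)[U_1,\ldots,U_n][Z]$ is $Z^m$, hence $\eta^{-1}(m_S)=\Spec((S/m_S)[Z]/(Z^m))$ is a single point $x$ with residue field $S/m_S$ (the nilpotent $Z$ contributing nothing to the residue field). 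Conversely, if some $f_{i,Z}\notin m_S$, then $\bar h \in (S/m_S)[Z]$ is a nonconstant polynomial properly dividing — one gets either several points in the fiber or a residue extension, contradicting the hypothesis; here one should be slightly careful and use minimality to rule out the degenerate case where $\bar h$ is an $m$-th power of a linear form, which would be a solvable vertex at the origin. For (ii): $\delta\ge 1$ means the polyhedron is contained in $\{|\mathbf{x}|_\mathbf{1}\ge 1\}$, i.e. $f_{i,Z}\in m_S^i$ for all $i$; by the explicit formula $m(x)=\ord_{m_{S[Z]_x}}h$, this is precisely the statement that $\ord h = m$ at the closed point $x=(m_S,Z)$ of $S[Z]$, i.e. $x\in \Sing_m{\cal X}$. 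The converse is the same computation read backwards, again invoking minimality (a solvable vertex at an integral point on $|\mathbf{x}|_\mathbf{1}=1$ would otherwise have to be excluded).

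The main obstacle I anticipate is the independence of $\delta_\mathbf{1}$ from the choice of minimal presentation, specifically handling the interaction between translations $Z'=Z-\psi$ with $\ord_{m_S}\psi = \delta$ and simultaneous changes of r.s.p.; one must show that although individual $f_{i,Z}$ and the full polyhedron can change, the minimal value $\delta_\mathbf{1}$ is rigid. The clean way is to invoke Hironaka's vertex preparation lemma (\ref{eq204} in Proposition \ref{Deltamin}) which already asserts that the minimal polyhedron is \emph{unique}, not merely that it exists — once one has uniqueness of the minimal $\Delta_{\hat S}$ for a fixed r.s.p., and then uses that $\ord_{m_S}$ is intrinsic to note that a change of r.s.p. of $S$ is an automorphism fixing $m_S$ hence fixing $\gr_{m_S}S$ up to a linear change of the $U_i$, the value $\delta_\mathbf{1}=\min\{|\mathbf{x}|_\mathbf{1}:\mathbf{x}\in\Delta\}$ is visibly preserved since $|\cdot|_\mathbf{1}$ only depends on the sum of coordinates. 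Thus the proof reduces to citing Proposition \ref{Deltamin} for the rigidity of the minimal polyhedron and then performing the elementary translation to $\gr_{m_S}S[Z]$ for parts (i) and (ii). I would write the argument in roughly that order: (a) reduce to the minimal polyhedron via Proposition \ref{Deltamin} and deduce coordinate-independence of $\delta_\mathbf{1}$; (b) compute $\bar h \bmod m_S$ to get (i); (c) compute orders $\ord_{m_S^i} f_{i,Z}$ to get (ii), in each case using minimality to discard the degenerate solvable configurations.
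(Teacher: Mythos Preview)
Your approach is essentially the same as the paper's, and is correct in outline. Two remarks:

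First, a slip: you wrote $\delta = \sup\{\ord_{m_S}f_{i,Z}/i\}$; it should be the minimum (this is how $\delta_{\mathbf{1}}$ is defined from the polyhedron, and is stated in Definition~\ref{defdelta}).

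Second, for the independence statement the paper gives a somewhat cleaner argument than your decomposition into ``fix r.s.p., vary $Z$'' followed by ``vary r.s.p.''. It simply takes two minimal presentations $(u_j,Z)$ and $(u'_j,Z')$, supposes $\delta' := \delta_{\mathbf{1}}(h;u'_j;Z') > \delta := \delta_{\mathbf{1}}(h;u_j;Z)$, and observes that since $\ord_{m_S}$ is intrinsic one has $\ord_{m_S}f_{i,Z'} \ge i\delta'$, hence $\delta_{\mathbf{1}}(h;u_j;Z') \ge \delta' > \delta$, contradicting minimality of $\Delta_{\hat S}(h;u_j;Z)$. This one-step contradiction handles the simultaneous change of r.s.p.\ and $Z$ without needing uniqueness of the minimal polyhedron for fixed r.s.p.\ as an intermediate step. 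Your route works too, but note that what you need from Proposition~\ref{Deltamin} is not just existence but that the minimum in (\ref{eq204}) is a genuine minimum (a single polyhedron), which is indeed what is asserted there.

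Your treatment of (i) and (ii) matches the paper: reduce $h$ modulo $m_S$ to get $\bar h \in (S/m_S)[Z]$, read off the fiber, and use minimality (no solvable vertex at $\mathbf{0}$) to exclude $\bar h = (Z-\lambda)^m$ with $\lambda \ne 0$.
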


\begin{proof}
Let $(Z',u'_1,\ldots,u'_n)$ and $(Z,u_1,\ldots,u_n)$ be two systems of coordinates such that both
polyhedra $\Delta_{\hat{S}} (h;u'_1,\ldots,u'_n;Z')$ and $\Delta_{\hat{S}}
(h;u_1,\ldots,u_n;Z)$ are minimal. Suppose that
$\delta_\mathbf{1}(h;u'_1,\ldots,u'_n;Z')>\delta_\mathbf{1}(h;u_1,\ldots,u_n;Z)$. Then
 {$$
f_{i,Z'}^{m!\over i} \in m_S^{{m! }\delta_\mathbf{1}(h;u'_1,\ldots,u'_n;Z')}
$$}
for each $i$, $1 \leq i \leq m$, hence
$$
\delta_\mathbf{1}(h;u_1,\ldots,u_n;Z')\geq \delta_\mathbf{1}(h;u'_1,\ldots,u'_n;Z')>
\delta_\mathbf{1}(h;u_1,\ldots,u_n;Z).
$$
This contradicts the assumption $\Delta_{\hat{S}} (h;u_1,\ldots,u_n;Z)$
minimal. The first assertion follows by symmetry.

\smallskip

Let $\overline{h}\in S/m_S[Z]$ be the reduction of $h$ modulo $m_S$.
Since
$$
\eta^{-1}(m_S)=\mathrm{Spec}(S/m_S[Z]/(\overline{h})),
$$
(i) and the ``only if'' part in (ii) are immediate from the definitions. We have
$$
\mathrm{ord}_{x}h(Z) \leq \mathrm{ord}_{x}\overline{h}(Z)\leq m ,
$$
hence $x \in \mathrm{Sing}_m{\cal X}$ implies $\overline{h}(Z)=(Z- \lambda)^m$
for some $\lambda \in S/m_S$. Since $\Delta_{\hat{S}} (h;u_1,\ldots,u_n;Z)$
is minimal,  $\mathbf{0} \in \R^n$ is not a solvable vertex and therefore we have $\lambda =0$.
This proves that (i) holds, the ``if'' part in (ii) being then obvious.
\end{proof}

\begin{defn}\label{defdelta}
Let $s \in \mathrm{Spec}S$, $(v_1,\ldots ,v_{n(s)})$ be a r.s.p. of $S_s$ and $y
\in \eta^{-1}(s)$. Let $Z:=X-\theta$, $\theta \in \widehat{S_s}$ be such that
$\Delta_{\widehat{S_s}} (h;v_1,\ldots,v_{n(s)};Z)$ is minimal, where $\widehat{S_s}$ denotes
the formal completion of $S_s$ w.r.t. its maximal ideal. We let:
$$
\delta (y):=\delta_\mathbf{1}(h;v_1,\ldots,v_{n(s)};Z)=\min_{1 \leq i \leq
m}\left \{{\mathrm{ord}_{m_{\widehat{S_s}}}f_{i,Z} \over i}\right \}\in {1\over m!}\N.\\
$$
\end{defn}

This invariant is classical and appears in e.g. \cite{Co1}, \cite{Co2} and
\cite{BeV1} Definition 4.2 and Proposition 4.8 in an equal characteristic context.
Our main resolution invariants will be defined in terms of
coordinates $(u_1, \ldots ,u_n)$ and $Z=X -\theta$, $\theta \in
\hat{S}$ such that $\Delta_{\hat{S}} (h;u_1,\ldots,u_n;Z)$ is minimal.
Since minimizing polyhedra involves in principle choosing {\it formal}
coordinates, an {\it algebraic} version will be useful for proving
the constructibility of our invariants. The following proposition is
fundamental for this purpose. When $\mathrm{char}S/m_S=0$,
the first statement in the proposition easily
follows from Proposition \ref{Deltamin} by applying the Tschirnhausen
transformation (take $\theta =-{1 \over m}f_{1,X}$ below). \\

{\it We assume from this point on that $S$ is excellent.}\\

\begin{prop}\label{Deltaalg}\cite{CoP3}
Given $h\in S[X]$ (\ref{eq201}) and a r.s.p. $(u_1,\ldots ,u_n)$ of $S$, there exists $Z:=X-\theta$,
$\theta \in S$ such that $\Delta_{\hat{S}}(h;u_1,\ldots, u_n;Z)$ is minimal.

\smallskip

For any such $Z$, the following holds: for every subset $J \subseteq \{1, \ldots ,n\}$, the polyhedron
$\Delta_{\widehat{S_{s^J}}}(h;\{u_j\}_{j\in J};Z)$ is also minimal and is computed by:
\begin{equation}\label{eq2044}
\Delta_{\widehat{S_{s^J}}} (h;\{u_j\}_{j\in J};Z)=\mathrm{pr}^J \Delta_{\hat{S}} (h;u_1,\ldots,u_n;Z),
\end{equation}
where $\mathrm{pr}^J: \R^n \rightarrow \R^J, \ \mathbf{x} \mapsto \mathbf{y}=(\{x_j\}_{j\in J})$
denotes the projection. In particular, we have
$$
\delta (y)=\min \left \{{1 \over i}\sum_{j\in J}a_j ,
\ \mathbf{a} \in \mathbf{S}^{\{1, \ldots ,n\}}(f_{i,Z}), \ 1 \leq i \leq m\right \}, \ y \in \eta^{-1}(s^J).
$$
\end{prop}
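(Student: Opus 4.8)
The plan is to quote the first assertion -- the existence of an \emph{algebraic} $\theta \in S$ well-preparing $h$ -- from \cite{CoP3}, and to obtain everything else from the invariance properties already recorded above; the one genuinely new ingredient is the minimality of the projected polyhedron. So fix $Z=X-\theta$, $\theta\in S$, with $\Delta_{\hat{S}}(h;u_1,\ldots,u_n;Z)$ minimal. Equality (\ref{eq2044}) is obtained by concatenating formal completion (\ref{eq2034}), which identifies $\Delta_S(h;\{u_j\}_{j\in J};Z)$ with $\Delta_{\hat{S}}(h;\{u_j\}_{j\in J};Z)$ and, applied over $S_{s^J}$, identifies $\Delta_{S_{s^J}}(h;\{u_j\}_{j\in J};Z)$ with $\Delta_{\widehat{S_{s^J}}}(h;\{u_j\}_{j\in J};Z)$; localization (\ref{eq2031}), which identifies $\Delta_S(h;\{u_j\}_{j\in J};Z)$ with $\Delta_{S_{s^J}}(h;\{u_j\}_{j\in J};Z)$; and projection (\ref{eq2032}), which identifies the common value with $\mathrm{pr}^J\Delta_S(h;u_1,\ldots,u_n;Z)=\mathrm{pr}^J\Delta_{\hat{S}}(h;u_1,\ldots,u_n;Z)$. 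Granting minimality of $\Delta_{\widehat{S_{s^J}}}(h;\{u_j\}_{j\in J};Z)$, the formula for $\delta(y)$ follows: by definition \ref{defdelta} and the independence part of proposition \ref{deltainv}, applied over the (excellent, complete) ring $\widehat{S_{s^J}}$ with regular system of parameters $(u_j)_{j\in J}$, one has $\delta(y)=\delta_{\mathbf 1}(h;\{u_j\}_{j\in J};Z)=\min\{|\mathbf{x}|_{\mathbf 1}:\mathbf{x}\in\Delta_{\widehat{S_{s^J}}}(h;\{u_j\}_{j\in J};Z)\}$ for $y\in\eta^{-1}(s^J)$; since the linear form $\mathbf{x}\mapsto\sum_{j\in J}x_j$ has nonnegative coefficients it attains this minimum at one of the generating points $\mathbf{a}/i$, $\mathbf{a}\in\mathbf{S}^{\{1,\ldots,n\}}(f_{i,Z})$, of the polyhedron on the right of (\ref{eq2044}), which gives the displayed expression.

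It remains to see that $\Delta_{\widehat{S_{s^J}}}(h;\{u_j\}_{j\in J};Z)$ has no solvable vertex (definition \ref{defsolvable}). Suppose, for contradiction, that $\mathbf{y}\in\N^J$ is one, say $\mathrm{in}_{\mathbf{y}}h=(X-\overline{\mu}\,U^{\mathbf{y}})^m$ with $U^{\mathbf{y}}:=\prod_{j\in J}U_j^{y_j}$ and $\overline{\mu}$ in the residue field $QF(\overline{S}^J)$ of $\widehat{S_{s^J}}$. Computing $\mathrm{in}_{\mathbf{y}}h$ from the expansions (\ref{eq2036}) through formula (\ref{eq2033}) shows that $F_{i,X,\mathbf{y}}=c_i\,U^{i\mathbf{y}}$ with all $c_i\in\overline{S}^J$; substituting $X=T\,U^{\mathbf{y}}$ exhibits $\overline{\mu}$ as a root of the monic polynomial $T^m+\sum_{i=1}^m c_i\,T^{m-i}\in\overline{S}^J[T]$, so $\overline{\mu}$ is integral over the regular (hence normal) local domain $\overline{S}^J$, and therefore $\overline{\mu}\in\overline{S}^J$. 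Choose $\mu\in S$ lifting $\overline{\mu}$ under $S\twoheadrightarrow\overline{S}^J$, and put $u^{\mathbf{y}}:=\prod_{j\in J}u_j^{y_j}\in S$ (possible since $\mathbf{y}\in\N^J$), $\theta':=\theta+\mu\,u^{\mathbf{y}}\in S$, $Z':=X-\theta'=Z-\mu\,u^{\mathbf{y}}$. As $\mu$ reduces to $\overline{\mu}$ in the residue field of $\widehat{S_{s^J}}$, Hironaka's vertex preparation lemma over $\widehat{S_{s^J}}$ (\cite{H3}; see the proof of proposition \ref{Deltamin}) applies to this change of coordinate: it solves the vertex $\mathbf{y}$, so that $\Delta_{\widehat{S_{s^J}}}(h;\{u_j\}_{j\in J};Z')\subseteq\Delta_{\widehat{S_{s^J}}}(h;\{u_j\}_{j\in J};Z)$ -- by convexity, each exponent occurring in the new $f_{i,Z'}$ lies in $i$ times the old polyhedron, $\mathbf{y}$ being one of its points -- and the inclusion is strict because the identity $\sum_{j=0}^i(-1)^j\binom mj\binom{m-j}{i-j}=\binom mi(1-1)^i=0$ forces all new initial coefficients along $\mathbf{y}$ to vanish, so $\mathbf{y}$ no longer belongs to the polyhedron. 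Applying $\mathrm{pr}^J$ (via (\ref{eq2032}) and (\ref{eq2034})) gives $\mathrm{pr}^J\Delta_{\hat{S}}(h;u_1,\ldots,u_n;Z')\subsetneq\mathrm{pr}^J\Delta_{\hat{S}}(h;u_1,\ldots,u_n;Z)$. But $Z'=X-\theta'$ with $\theta'\in S\subseteq\hat{S}$, so minimality of $\Delta_{\hat{S}}(h;u_1,\ldots,u_n;Z)$ gives $\Delta_{\hat{S}}(h;u_1,\ldots,u_n;Z')\supseteq\Delta_{\hat{S}}(h;u_1,\ldots,u_n;Z)$, whence $\mathrm{pr}^J\Delta_{\hat{S}}(h;u_1,\ldots,u_n;Z')\supseteq\mathrm{pr}^J\Delta_{\hat{S}}(h;u_1,\ldots,u_n;Z)$ -- a contradiction. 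This establishes the minimality, and with it the proposition.

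The main obstacle is precisely this minimality step: one must be able to solve, by a translation \emph{already defined over $S$}, the vertex that would otherwise spoil minimality after localizing and projecting, so that the control available upstairs -- minimality of $\Delta_{\hat{S}}(h;u_1,\ldots,u_n;Z)$ governs all translations in $\hat{S}$, a fortiori in $S$ -- can be brought to bear; this is exactly where the normality of $\overline{S}^J$ and the integrality of the vertex $\mathbf{y}$ enter. The other, more delicate, point -- the existence of the algebraic $\theta$ asserted first -- is taken from \cite{CoP3} without reproducing its proof.
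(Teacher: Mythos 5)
Your proof is correct, and the minimality step takes a genuinely different route from the paper. Both arguments begin the same way: assume $\mathbf{y}\in\N^J$ is a solvable vertex of $\Delta_{\widehat{S_{s^J}}}(h;\{u_j\}_{j\in J};Z)$, extract the element $\overline{\lambda}\in QF(\overline{S}^J)$ from the solvability equation, and use $\overline{\lambda}^m\in\overline{S}^J$ together with normality of the regular ring $\overline{S}^J$ to conclude $\overline{\lambda}\in\overline{S}^J$. From there the two proofs diverge. The paper lifts the \emph{vertex}: it chooses a vertex $\mathbf{x}$ of $\Delta_{\hat{S}}(h;u_1,\ldots,u_n;Z)$ projecting to $\mathbf{y}$, constructs the auxiliary valuation $\mu_{\alpha'}$ on $\overline{S}^J$ from the complementary component of a defining weight vector, analyzes the $\mu_{\alpha'}$-initial form of $\overline{\lambda}$ (which must be a single monomial in the $\overline{U}_{j'}$, $j'\in J'$), and combines this with (\ref{eq2033}) and (\ref{eq2045}) to show $\mathbf{x}$ itself is solvable — contradicting minimality via the characterization of proposition \ref{Deltamin} in terms of solvable vertices. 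You instead lift the \emph{coefficient}: take $\mu\in S$ over $\overline{\lambda}$, translate $Z'=Z-\mu u^{\mathbf{y}}$, and observe that this dissolves $\mathbf{y}$ over $\widehat{S_{s^J}}$ while the containment from minimality of $\Delta_{\hat{S}}(h;u_1,\ldots,u_n;Z)$ propagates through the projection formula (\ref{eq2044}) to give the opposite inclusion. Your route uses the "achieves the minimum" side of proposition \ref{Deltamin} directly and avoids the auxiliary valuation and the transfer of solvability from $\mathbf{y}$ to $\mathbf{x}$; the paper's route is closer in spirit to the vertex-preparation machinery it later reuses. One small quibble: your closing paragraph stresses that the translation must be "already defined over $S$," but the contradiction only needs $\theta'\in\hat{S}$ — what is really essential is that $\overline{\lambda}$ lives in $\overline{S}^J$ rather than merely in its fraction field, so the monomial translation lifts across the map from $\hat{S}$ to $\widehat{S_{s^J}}$; lifting to $S$ rather than $\hat{S}$ is convenient but not the crux.
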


\begin{proof}
The proposition is trivial if $\mathbf{0} \in \R^n$ is a nonsolvable vertex of
the polyhedron $\Delta_{\hat{S}}(h;u_1,\ldots, u_n;Z)$, taking $Z:=X$. Otherwise it can be assumed that
$f_{i,X}\in m_S$, $1 \leq i \leq m$. The first statement is \cite{CoP3} Corollary II.4.

\smallskip

Formula (\ref{eq2044}) follows from (\ref{eq2034}) (\ref{eq2031}) (\ref{eq2032}). To prove minimality,
suppose that $\mathbf{y} \in \N^J$ is a
solvable vertex of $\Delta_{\widehat{S_{s^J}}} (h;\{u_j\}_{j\in J};Z)$ defined by some $\alpha \in \R^J_{>0}$.
By definition,
\begin{equation}\label{eq2045}
\exists \overline{\lambda} \in QF(\overline{S}^J) : \mathrm{in}_\mathbf{y} h=(Z - \overline{\lambda} U^\mathbf{y})^m.
\end{equation}
By (\ref{eq2033}), we have $\overline{\lambda}^m =(-1)^mU^{-m\mathbf{y}}F_{m,Z,\alpha} \in \overline{S}^J$. Hence
$\overline{\lambda} \in \overline{S}^J$, since the regular ring $\overline{S}^J$ is integrally closed.
By (\ref{eq2044}), there exists a  {\it vertex} $\mathbf{x} \in \Delta_{\hat{S}}(h;u_1,\ldots, u_n;Z)$
such that $\mathbf{y}=\mathrm{pr}^J(\mathbf{x})$. Lifting up,  there exists $\beta \in \R^n_{>0}$,
$\alpha = \mathrm{pr}^J(\beta)$ defining $\mathbf{x}$, and we let $\alpha ':= \mathrm{pr}^{J'}(\beta)$. There
is an induced valuation $\mu_{\alpha '}$ on $\overline{S}^J$. The initial form of $\overline{\lambda}$ in
$\mathrm{gr}_{\alpha '}\overline{S}^J$  has the form
$$
\lambda \prod_{j' \in J'}\overline{U}_{j'}^{x_{j'}},
\ \lambda \in S/m_S, \ \lambda \neq 0, \ \{x_{j'}\}_{j'\in J'}\in \N^{J'}.
$$
Collecting together (\ref{eq2033}) and (\ref{eq2045}), we get
$\mathrm{in}_\mathbf{x} h = (Z - \lambda  U^\mathbf{x})^m$,
i.e. $\mathbf{x}$ is a solvable vertex: a contradiction. Therefore
$\Delta_{\widehat{S_{s^J}}} (h;\{u_j\}_{j\in J};Z)$ has no solvable vertex, hence
is minimal by the second statement in Proposition \ref{Deltamin}.
The last statement is a rewriting of Definition \ref{defdelta}.
\end{proof}

\begin{rem}
This proposition allows us to skip the reference to formal completion when stating that
a certain polyhedron is minimal, i.e. given $Z:=X-\phi$, $\phi \in S$,
the statement ``$\Delta_{S}(h;u_1,\ldots, u_n;Z)$ is minimal" stands for
``$\Delta_{\hat{S}}(h;u_1,\ldots, u_n;Z)$ is minimal". On the other hand, we will keep the
reference to the regular local ring $S$ since we are also interested in base change.
\end{rem}

Let $S \subseteq \tilde{S}$ be a {\it local} base change which is {\it regular}, i.e. flat with
geometrically regular fibers \cite{EGA2} Definition 6.8.1(iv). In particular $\tilde{S}$ is
regular \cite{EGA2} Proposition 6.5.1(ii) and faithfully flat over $S$. The ring $\tilde{S}$ is not
excellent in general, but this certainly holds in the following cases:

\begin{itemize}
  \item [(i)] $\tilde{S}=\hat{S}$ \cite{EGA2} 7.8.3(iii);
  \item [(ii)] $\tilde{S}$ is ind-\'etale over $S$ \cite{ILO} Theorem I.8.1(iv), or
  \item [(iii)] $\tilde{S}$ is essentially of finite type over $S$, i.e. smooth over $S$ \cite{EGA2}
Proposition 7.8.6(i).
\end{itemize}

An important special case of (ii) is when $\tilde{S}$ is the Henselization
or strict Henselization of $S$. When regular base changes are concerned, we always assume that
$\tilde{S}$ is excellent. These conditions are preserved by localizing, i.e. replacing
$S \subseteq \tilde{S}$ by $S_s \subseteq \tilde{S}_{\tilde{s}}$, $\tilde{s}\in \mathrm{Spec}\tilde{S}$
and $s \in \mathrm{Spec}S$ its image.\\

\begin{nota}\label{notageomreg1}
Let $S \subseteq \tilde{S}$ be a local base change which is regular, $\tilde{S}$ excellent,
$\tilde{s}\in \mathrm{Spec}\tilde{S}$ with image $m_S \in \mathrm{Spec}S$. Any r.s.p. $(u_1,\ldots,u_n)$
of $S$ can be extended to a r.s.p.  $(u_1,\ldots ,u_{\tilde{n}})$ of $\tilde{S}$. We let
$\tilde{h}\in \tilde{S}[X]$ be the image of $h$ and
$$
\tilde{\eta}: \ \tilde{{\cal X}}={\cal X}\times_S \mathrm{Spec}\tilde{S} \rightarrow \mathrm{Spec}\tilde{S}.
$$
\end{nota}

It follows from Definition \ref{defsolvable} that, if $\mathbf{x}\in \R^n_{\geq 0}$ is a
nonsolvable vertex of $\Delta_S(h; u_1,\ldots,u_n;Z)$, the vertex
$$
(\mathbf{x},\underbrace{0, \ldots ,0}_{\tilde{n} -n}) \in
\Delta_{\tilde{S}}(h; u_1,\ldots,u_{\tilde{n}};Z)\subseteq \R^{\tilde{n}}_{\geq 0}
$$
is nonsolvable provided that    {$ (S/m_S) \cap (\tilde{S}/m_{\tilde{S}})^p=(S/m_S)^p$}. This is of course
always satisfied when $S/m_S$ is perfect (e.g. $\mathrm{char}S/m_S=0$). An obvious
consequence of the second statement in Proposition \ref{Deltamin} is:

\begin{prop}\label{Deltageomreg} {\textbf{(Behavior under regular base change).}}
Let $S \subseteq \tilde{S}$ be a local base change which is regular, $\tilde{S}$ excellent.
Assume that
 {$$ (S/m_S) \cap (\tilde{S}/m_{\tilde{S}})^p=(S/m_S)^p.$$}
Let $Z=X-\theta$, $\theta \in S$, be such that $\Delta_{S}(h; u_1,\ldots,u_n;Z)$ is
minimal. Then
$$
\Delta_{\tilde{S}}(h; u_1,\ldots,u_{\tilde{n}};Z)=
\Delta_{S}(h; u_1,\ldots,u_n;Z)\times \R^{\tilde{n}-n}_{\geq 0}\subseteq \R^{\tilde{n}}_{\geq 0}
$$
and this polyhedron is minimal.
\end{prop}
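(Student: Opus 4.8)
The plan is to split the claim into two parts: (a) the equality of polyhedra
$\Delta_{\tilde{S}}(h;u_1,\ldots,u_{\tilde{n}};Z)=\Delta_{S}(h;u_1,\ldots,u_n;Z)\times\R^{\tilde{n}-n}_{\geq 0}$,
which is purely formal and only uses that $f_{1,Z},\ldots,f_{m,Z}\in S$; and (b) minimality of the product polyhedron, which is where the inseparable-closedness hypothesis on $S/m_S\subseteq\tilde{S}/m_{\tilde{S}}$ enters. Granting (a) and (b), the statement is then immediate (and is essentially the observation recorded just before the proposition, combined with the second assertion of Proposition \ref{Deltamin}).

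For (a), I would compare the monomial expansions of Proposition \ref{monomexp}. For each $i$, writing $f_{i,Z}=\sum_{\mathbf{a}\in\mathbf{S}^{\{1,\ldots,n\}}(f_{i,Z})}\gamma(f_{i,Z},\mathbf{a})\prod_{j=1}^{n}u_j^{a_j}$ in $S$, each coefficient $\gamma(f_{i,Z},\mathbf{a})$ lies outside $m_S$ by Proposition \ref{monomexp}(ii), hence is a unit of $S$ and \emph{a fortiori} of $\tilde{S}$. So this same formula exhibits $f_{i,Z}$ in $\tilde{S}$ relative to the monomial set $\{(\mathbf{a},\mathbf{0}):\mathbf{a}\in\mathbf{S}^{\{1,\ldots,n\}}(f_{i,Z})\}$; since divisibility of monomials in the regular local ring $\tilde{S}$ with regular system of parameters $(u_1,\ldots,u_{\tilde{n}})$ is coordinatewise on exponents, appending zero exponents in $u_{n+1},\ldots,u_{\tilde{n}}$ keeps this a minimal generating set, and the uniqueness in Proposition \ref{monomexp} gives $\mathbf{S}^{\{1,\ldots,\tilde{n}\}}(f_{i,Z})=\mathbf{S}^{\{1,\ldots,n\}}(f_{i,Z})\times\{\mathbf{0}\}$. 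Substituting into Definition \ref{defDelta} yields the product decomposition. (Alternatively one combines the localization and projection identities (\ref{eq2031})--(\ref{eq2032}) with faithful flatness of $S\subseteq\tilde{S}$, i.e. $I\tilde{S}\cap S=I$ for every ideal $I\subseteq S$.) The same computation shows that for a vertex $\mathbf{x}$ of $\Delta_S$ the initial form $\mathrm{in}_{(\mathbf{x},\mathbf{0})}h$ over $\tilde{S}$ is deduced from $\mathrm{in}_{\mathbf{x}}h$ over $S$ by the scalar extension $S/m_S\hookrightarrow\tilde{S}/m_{\tilde{S}}$; in particular all its coefficients already lie in $k:=S/m_S$.

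For (b), by Proposition \ref{Deltamin} it suffices that the product polyhedron have no solvable vertex, and its vertices are exactly the $(\mathbf{x},\mathbf{0})$ with $\mathbf{x}$ a vertex of $\Delta_S$. Those with $\mathbf{x}\notin\N^n$ are automatically nonsolvable, so suppose $\mathbf{x}\in\N^n$ and $\mathrm{in}_{(\mathbf{x},\mathbf{0})}h=(Z-\overline{\lambda}U^{(\mathbf{x},\mathbf{0})})^m$ for some $\overline{\lambda}\in\tilde{S}/m_{\tilde{S}}$. Let $p:=\mathrm{char}\,k$ and write $m=p^eq$ with $e\geq 0$ maximal (so $p\nmid q$ if $p>0$, while $e=0$, $q=m$, $p^e:=1$ if $p=0$); in all cases $q$ is a unit in $k$. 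By the Frobenius identity in characteristic $p$ (trivially if $e=0$), $(Z-\overline{\lambda}U^{(\mathbf{x},\mathbf{0})})^m=\bigl(Z^{p^e}-\overline{\lambda}^{p^e}U^{p^e(\mathbf{x},\mathbf{0})}\bigr)^{q}$, whose coefficient of $Z^{m-p^e}$ is $-q\,\overline{\lambda}^{p^e}U^{p^e(\mathbf{x},\mathbf{0})}$; on the other hand, by (a) the coefficient of $Z^{m-p^e}$ in $\mathrm{in}_{(\mathbf{x},\mathbf{0})}h$ is a $k$-multiple of $U^{p^e(\mathbf{x},\mathbf{0})}$. Hence $\overline{\lambda}^{p^e}\in k$. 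If $e=0$ this gives $\overline{\lambda}\in k$ directly; if $e\geq 1$, then $\overline{\lambda}$ is purely inseparable over $k$, so $\overline{\lambda}\in k$ by the inseparable-closedness hypothesis. In either case $\mathrm{in}_{\mathbf{x}}h=(X-\overline{\lambda}U^{\mathbf{x}})^m$ over $S$ with $\overline{\lambda}\in k$, contradicting minimality of $\Delta_S(h;u_1,\ldots,u_n;Z)$. So the product polyhedron has no solvable vertex and is minimal.

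The routine ingredients are (a) and the bookkeeping with initial forms; the one genuine point is the descent of solvability in (b). The place where care is needed is that $m$ may be divisible by $p$ — precisely the case $m=p$ of later interest — so $\overline{\lambda}$ cannot simply be read off the coefficient of $Z^{m-1}$ as in the Tschirnhausen argument valid in residue characteristic $0$; one must instead extract the purely inseparable element $\overline{\lambda}^{p^e}$ and then invoke the hypothesis on $S/m_S\subseteq\tilde{S}/m_{\tilde{S}}$.
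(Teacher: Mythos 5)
Your proof is correct and follows the approach the paper has in mind: part (a) is the formal product decomposition via the monomial sets of Proposition \ref{monomexp}, and part (b) spells out the descent of nonsolvability that the paper only asserts in the sentence preceding the proposition. The Frobenius extraction of $\overline{\lambda}^{p^e}$ from the coefficient of $Z^{m-p^e}$, followed by the inseparable-closedness hypothesis, is exactly the argument behind the paper's ``it follows from definition \ref{defsolvable}''; you simply make it explicit.
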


Note that the assumptions of the proposition are satisfied in the above situation (ii):
$\tilde{S}$ is ind-\'etale over $S$. In situation (iii), i.e. $\tilde{S}$ smooth over $S$, the
following example will make the situation clear:

\begin{exam}\label{ex:kangourou}
Let $(S,m_S,k)$ be an excellent DVR, $\mathrm{char}k=p>0$, and $\gamma \in S$ be a unit.
Let  $\lambda \in k$ be the residue of $\gamma$ and assume furthermore that
$$
h:=X^p -\gamma u_1^{pa} \in S[X], \ a \geq 1, \ \lambda \in k \backslash k^p.
$$
Then $\Delta_{S}(h; u_1 ;X)=\lbrack a , +\infty \lbrack$ and is minimal. Take
$\tilde{S}=S[t]_{(u_1,P(t))}$, where $P$ is a monic polynomial with irreducible residue
$\overline{P}(t) \in k[t]$ (resp. $P=0$). Let $u_2:=P(t)$, so $(u_1,u_2)$ (resp. $(u_1)$)
is a r.s.p. of $\tilde{S}$. Let
$$
\tilde{k}:=\tilde{S}/m_{\tilde{S}}=k[t]/(\overline{P}(t)) \ (\mathrm{resp.} \ \tilde{k}=k(t))
$$
be the residue field of $\tilde{S}$. Setting $\{\tilde{x} \}=\tilde{\eta}^{-1}(m_{\tilde{S}})$, we have
$$
    \left\{
\begin{array}{ccc}
  \delta (\tilde{x})=a \hfill{}    & \mathrm{if} &  \lambda \not \in \tilde{k}^p \\
  \delta (\tilde{x})=a+{1 \over p} & \mathrm{if} &  \lambda \in \tilde{k}^p\\
\end{array}
\right .
.
$$
This is obvious if $\lambda \not \in \tilde{k}^p$  {(in particular when $P=0$)}; if $\lambda \in \tilde{k}^p$, take
$$
Z:= X - \tilde{\gamma}u_1^a, \  {\mathrm{with}} \ \tilde{v}:=\tilde{\gamma}^p -\gamma \in m_{\tilde{S}}.
$$
 {We claim that $(u_1,\tilde{v})$ is a r.s.p. of $\tilde{S}$. Indeed,  $\tilde{S}$ is smooth over $S$ and $S[t]/ (t^p-\gamma)$ is regular at its closed point. Hence
$$ \tilde{S} \otimes_S S[t]/ (t^p-\gamma)=\tilde{S}[t]/ (t^p-\gamma)\simeq \tilde{S}[t']/({t'}^p+\tilde{v})$$ is also regular. So $(u_1,\tilde{v},t')$ is a r.s.p. at $(\tilde{x},0)$.}
We have:
$$
\Delta_{\tilde{S}}(\tilde{h};u_1,\tilde{v};Z)=(a,1 / p)+\R^2_{\geq 0}.
$$

In particular, the function
$$
\A^1_k =\{x\}\times \A^1_k \subset {\cal X}\times_k\A^1_k \rightarrow {1 \over p}\N, \ \tilde{x}\mapsto \delta (\tilde{x})
$$
is not a constructible function.
\end{exam}

Proposition \ref{Deltaalg} and Proposition \ref{Deltageomreg} suggest the following question. An
affirmative answer would be very useful in order  to build geometrical invariants from characteristic
polyhedra. Proposition \ref{Deltageomreg} answers in the affirmative  {only} when $S/m_S$ is perfect,   {taking
$\tilde{S}:=S$ in the answer to the following question.}

\begin{quest}
Let $S$ be an excellent regular local ring with r.s.p. $(u_1, \ldots ,u_n)$ and $h \in S[X]$ (\ref{eq201}).
Does there exist a smooth local base change $S\subseteq \tilde{S}$, a r.s.p. $(u_1, \ldots ,u_{\tilde{n}})$
of $\tilde{S}$ extending $(u_1, \ldots ,u_n)$ and $Z=X -\tilde{\phi}$, $\tilde{\phi} \in \tilde{S}$,
such that the following holds:

\smallskip

``for every smooth local base change $\tilde{S}\subseteq S'$ and r.s.p. $(u_1, \ldots ,u_{n'})$ of $S'$
extending $(u_1, \ldots ,u_{\tilde{n}})$, the polyhedron $\Delta_{S'}(h;u_1, \ldots ,u_{n'};Z)$
is minimal"?
\end{quest}

Uncovering transformation rules for the characteristic polyhedron under blowing up is a
major problem, {\it vid.} \cite{H3} p.254. A good behavior is
known in the  special case of a blowing up along a Hironaka permissible subscheme  {(cf.
Definition~\ref{Hironakapermis})} and an exceptional point at the origin of some standard chart.

\begin{prop}\label{originchart} {\textbf{(Behavior under blowing up).}}
With notations as before, let $J\subseteq \{1, \ldots ,n\}$, $y \in \eta^{-1}(s^J)$ and assume that $\delta (y)\geq 1$.
Fix $j_0 \in J$ and let $S':= S[\{u'_j\}_{j \in J}]_{(u'_1, \ldots , u'_n)}$, where
$$
    \left\{
\begin{array}{ccccc}
  u'_j & := & u_j/u_{j_0} & \mathrm{if} &  j \in J \backslash \{j_0\}; \\
  u'_j & := & u_j          & \mathrm{if} & j \in J' \cup \{j_0\}.\\
\end{array}
\right .
$$
Let $Z=X-\theta$, $\theta \in S$, with $\Delta_{S}(h; u_1,\ldots,u_n;Z)$ minimal and define:
\begin{equation}\label{eq2043}
h'(Z'):=u_{j_0}^{-m}h(Z)=
{Z'}^m +u_{j_0}^{-1}f_{1,Z}{Z'}^{m-1}+ \cdots +u_{j_0}^{-m}f_{m,Z} \in S'[Z'],
\end{equation}
where $Z':= Z/u_{j_0}$. Define a map
$l: \ \R^n \longrightarrow \R^n$ by
\begin{equation}\label{eq2042}
\mathbf{x}=(x_1, \ldots ,x_n)\mapsto \mathbf{x}'=
(x_1, \ldots ,x_{j_0-1}, \sum_{j\in J}x_j -1, x_{j_0+1}, \ldots ,x_n).
\end{equation}
Then $l(\Delta_{S}(h; u_1,\ldots,u_n;Z))=\Delta_{S'}(h'; u'_1,\ldots,u'_n;Z')$ and
this polyhedron is minimal.
\end{prop}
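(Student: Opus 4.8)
The plan is to prove the statement in three stages: (a) check that $h'$ is a genuine polynomial over $S'$; (b) compute $\Delta_{S'}(h';u'_1,\dots,u'_n;Z')$ by bookkeeping of monomials; and (c) deduce minimality via solvable vertices and Proposition \ref{Deltamin}.

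For (a), the only point in (\ref{eq2043}) to verify is that $u_{j_0}^{-i}f_{i,Z}\in S'$ for $1\le i\le m$. Since $\Delta_S(h;u_1,\dots,u_n;Z)$ is minimal, the last formula of Proposition \ref{Deltaalg}, applied at $s^J$, says that the hypothesis $\delta(y)\ge 1$ is equivalent to: every exponent $\mathbf{a}$ occurring in the monomial expansion of $f_{i,Z}$ with respect to $(u_1,\dots,u_n)$ (Proposition \ref{monomexp}) satisfies $\sum_{j\in J}a_j\ge i$. Writing $u_k=u_{j_0}u'_k$ for $k\in J\setminus\{j_0\}$ and $u_k=u'_k$ otherwise, one gets the monomial identity $u_{j_0}^{-i}\prod_k u_k^{a_k}=(u')^{l_i(\mathbf{a})}$, where $l_i(\mathbf{a})\in\N^n$ has $j_0$-th coordinate $\sum_{j\in J}a_j-i\ge 0$ and $k$-th coordinate $a_k$ for $k\ne j_0$. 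Since the coefficients $\gamma(f_{i,Z},\mathbf{a})$ already lie in $S\subseteq S'$, this gives $u_{j_0}^{-i}f_{i,Z}=\sum_{\mathbf{a}}\gamma(f_{i,Z},\mathbf{a})(u')^{l_i(\mathbf{a})}\in S'$, so $h'\in S'[Z']$.

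Stage (b) follows from the same computation. One has $f_{i,Z'}=\sum_{\mathbf{a}}\gamma(f_{i,Z},\mathbf{a})(u')^{l_i(\mathbf{a})}$ with the same coefficients, whose images in $S'/m_{S'}=S/m_S$ are still nonzero, and the elementary observation $l_i(\mathbf{a})/i=l(\mathbf{a}/i)$ holds for the map $l$ of (\ref{eq2042}). Passing to the monomial ideals carried by the $f_{i,Z'}$ and taking convex hulls --- and using that $\Delta_{S'}$ depends only on these ideals, so that the possible absorption of a non-minimal monomial into a unit coefficient after dividing by $u_{j_0}^i$ is harmless --- yields $\Delta_{S'}(h';u'_1,\dots,u'_n;Z')=l(\Delta_S(h;u_1,\dots,u_n;Z))$. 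Equivalently one may argue with Newton polyhedra: $NP(h';u'_\bullet;Z')$ is the image of $NP(h;u_\bullet;Z)$ under the linear automorphism of $\R^{n+1}$ induced by $Z'=Z/u_{j_0}$ and the chart change, and the projection $\mathbf{p}$ intertwines it with $l$; $\delta(y)\ge 1$ is precisely what confines the image to $\R^n_{\ge 0}$.

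For (c), by Proposition \ref{Deltamin} it suffices to show that $\Delta_{S'}(h';u'_\bullet;Z')$ has no solvable vertex. Since $l$ is an affine bijection, every vertex is $l(\mathbf{v})$ for a vertex $\mathbf{v}$ of $\Delta_S$, and if $l(\mathbf{v})\in\N^n$ then $\mathbf{v}=l^{-1}(l(\mathbf{v}))\in\Z^n\cap\R^n_{\ge 0}=\N^n$. Using the compatibility of initial forms underlying (b) --- namely that $\mathrm{in}_{l(\mathbf{v})}h'(Z')$ is obtained from $\mathrm{in}_{\mathbf{v}}h(Z)$ by the substitution $Z=u_{j_0}Z'$, the chart change, and division by $u_{j_0}^m$ (here $\mathbf{v}\in\N^n$ makes $U^{\mathbf{v}}$ and ${U'}^{l(\mathbf{v})}$ correspond) --- one sees that $\mathrm{in}_{l(\mathbf{v})}h'$ is an $m$-th power in its graded polynomial ring if and only if $\mathrm{in}_{\mathbf{v}}h$ is; and since $J=\{1,\dots,n\}$ here, the relevant ground ring is the field $S/m_S$, so no integrality subtlety intervenes (unlike in the proof of Proposition \ref{Deltaalg}). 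Hence a solvable vertex of $\Delta_{S'}$ would force a solvable vertex of $\Delta_S$, contradicting minimality, and $\Delta_{S'}$ is minimal.

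The routine part is (a). The technical heart is the initial-form bookkeeping in (b)--(c): one must control how the minimal monomial expansions of Proposition \ref{monomexp} transform under division by $u_{j_0}^i$ and the chart substitution --- in particular the fact that a monomial of $f_{i,Z}$ may lose minimality and be absorbed into a unit coefficient of $f_{i,Z'}$ --- and check that this affects neither the polyhedron nor the solvable-vertex correspondence. Verifying that the equivalence ``$\mathrm{in}_{\mathbf{v}}h$ is an $m$-th power $\Leftrightarrow$ $\mathrm{in}_{l(\mathbf{v})}h'$ is an $m$-th power'' survives this bookkeeping is the main obstacle; the hypothesis $\delta(y)\ge 1$ enters exactly once, to keep the exponents produced by $l_i$ nonnegative.
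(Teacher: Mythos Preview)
Your proposal is correct and follows essentially the same approach as the paper's proof: use $\delta(y)\ge 1$ via Proposition~\ref{Deltaalg} to see $h'\in S'[Z']$, track monomials through the affine bijection $l$ to identify the polyhedra, and then match solvable vertices via the identity $S'/m_{S'}=S/m_S$ to conclude minimality by Proposition~\ref{Deltamin}. The paper is terser but the logical structure is identical; your remarks about monomial absorption and the $l_i$/$l$ compatibility are just a more explicit version of the paper's one-line inclusion ${1\over i}\mathbf{S}(f_{i,Z'})\subseteq l({1\over i}\mathbf{S}(f_{i,Z}))$.
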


\begin{proof}
The assumption $\delta (y)\geq 1$ forces $f_{i,Z} \in I_J^i$ by the last
statement in Proposition \ref{Deltaalg}. Therefore (\ref{eq2043}) makes sense, i.e.
$h'(Z') \in S'[Z']$. Since $l$ is one-to-one, we have
$$
{1 \over i}\mathbf{S}^{\{1, \ldots ,n\}} (f_{i,Z'})\subseteq
l\left ({1 \over i}\mathbf{S}^{\{1, \ldots ,n\}} (f_{i,Z})\right ), \ 1 \leq i \leq m,
$$
with notations as in Proposition \ref{monomexp}. By  Definition \ref{defDelta}, we get:
$$
l(\Delta_{S}(h; u_1,\ldots,u_n;Z))=\Delta_{S'}(h'; u'_1,\ldots,u'_n;Z').
$$

\smallskip

Let $\mathbf{x}'=l(\mathbf{x})$ be a vertex of $\Delta_{S'}(h'; u'_1,\ldots,u'_n;Z')$. Denote
$$
\mathrm{in}_\mathbf{x}h=Z^m + \lambda_1U^{\mathbf{x}}Z^{m-1}+ \cdots + \lambda_mU^{m\mathbf{x}},
\ \lambda_1, \ldots ,\lambda_m \in S/m_S,
$$
with the convention as before that $\lambda_i=0$ if $i\mathbf{x} \not \in \N^n$, $1 \leq i \leq m$.
Applying $l$ (\ref{eq2042}), we get
$$
\mathrm{in}_{\mathbf{x}'}h={Z'}^m + \lambda_1{U'}^{\mathbf{x}'}{Z'}^{m-1}+ \cdots + \lambda_m{U'}^{m\mathbf{x}'}.
$$
Since $S'/m_{S'}=S/m_S$, Definition \ref{defsolvable} then shows that $\mathbf{x}'$ is
solvable if and only if $\mathbf{x}$ is solvable. Since $\Delta_{S}(h; u_1,\ldots,u_n;Z)$ is
minimal, the polyhedron
$\Delta_{S'}(h'; u'_1,\ldots,u'_n;Z')$ is also minimal by Proposition \ref{Deltamin}.
\end{proof}

\subsection{Normal crossings divisors.}

We now introduce a normal crossings divisor $E \subseteq \mathrm{Spec}S$. This section fixes
the terminology and notations for blowing ups and base changes with respect to $E$, then
introduces the Hironaka $\epsilon$ function on ${\cal X}$.
 {Hironaka-permissible centers are classically defined as regular subschemes
along which a given Noetherian scheme is normally flat. Since we are dealing with hypersurface singularities,
the latter condition can be stated in terms of the multiplicity function $m$, {\it viz.} (\ref{eq201}) {\it sqq.}
Introducing a normal crossings divisor $E$ leads to an additional transverseness requirement for the center.
This leads to Definitions \ref{Hironakapermis} and \ref{HironakapermisE} below.}

\begin{defn}\label{defadapted}
A r.s.p. $(u_1,\ldots ,u_n)$ of $S$ is said to be adapted to $E$
if $E=\mathrm{div}(u_1\cdots u_e)$ for some $e$, $0 \leq e \leq n$.
\end{defn}

We emphasize that we allow $e=0$, i.e. $E=\emptyset$ in this definition.

\smallskip

In this context, we use the following notion of Hironaka permissible center:

\begin{defn}\label{Hironakapermis}
Let ${\cal Y} \subset {\cal X}$ be an integral closed subscheme with generic point $y$.
We say that ${\cal Y}$ is  {Hironaka-permissible} at $x \in {\cal Y}$ if
$$m(y)=m(x) \ \mathrm{and} \  {\cal Y} \ \mathrm{is} \ \mathrm{regular} \ \mathrm{at} \  x.$$
\end{defn}

 {\begin{defn}\label{HironakapermisE}
Let ${\cal Y} \subset {\cal X}$ be an integral closed subscheme with generic point $y$.
We say that ${\cal Y}$ is  {Hironaka-permissible with respect to $E$} \index{Hironaka-permissible with respect to $E$, Definition~\ref{HironakapermisE}}at $x \in {\cal Y}$ if
${\cal Y}\subseteq \mathrm{Sing}_m{\cal X}$, i.e. $m(y)=m(x)=m$, and
$$W:=\eta ({\cal Y})  \ \mathrm{has} \ \mathrm{normal} \ \mathrm{crossings} \ \mathrm{with} \  E \
\mathrm{at} \  s:=\eta (x).$$
\end{defn}}

We remind the reader that an integral closed subscheme $W \subseteq \mathrm{Spec}S$ has
normal crossings with $E=\mathrm{div}(u_1\cdots u_e)$ if the family $(u_1,\ldots ,u_e)$ can be extended
to a r.s.p. $(u_1,\ldots ,u_n)$ of $S$ such that the ideal $I(W)$ of $W$ is
of the form $I_J=(\{u_j\}_{j \in J})\subseteq S$, for some $J \subseteq \{1,\ldots ,n\}$.

Note that a Hironaka-permissible center w.r.t. any $E$ (e.g. $E=\emptyset$) is Hironaka-permissible:
we have $m(y)=m(x)=m$ and $y \in \eta^{-1}(w)\cap \mathrm{Sing}_m{\cal X}$, where $w$ is the generic point of $W$;
by Proposition \ref{deltainv} applied to $S_w$, the map
${\cal Y} \rightarrow W$ is birational, hence an isomorphism since $W$ is regular.

Since the notion is local on ${\cal X}$, a Hironaka-permissible blowing up (w.r.t. $E$) is
simply the blowing up along a center ${\cal Y}\subset {\cal X}$ which is
Hironaka-permissible (w.r.t. $E$) at each point of its support.
By a {\it local} Hironaka-permissible blowing up, we simply
mean the localization at some point of the exceptional divisor $\pi^{-1}({\cal Y})$ of the blowing up
$\pi$ along a Hironaka-permissible center. The important fact is that Hironaka-permissible blowing
ups w.r.t. $E$ preserve our structure:

\begin{prop}\label{Hironakastable}
Let $S$, $h\in S[X]$ (\ref{eq201}), ${\cal X}$ and $E=\mathrm{div}(u_1\cdots u_e)$ be as above. Let
$\pi : {\cal X}'\rightarrow {\cal X}$ be a Hironaka-permissible blowing up w.r.t. $E$ at $x \in {\cal X}$.
There exists a commutative diagram
\begin{equation}\label{eq210}
\begin{array}{ccc}
  {\cal X}  & {\buildrel \pi  \over \longleftarrow} & {\cal X}' \\
  \downarrow &   & \downarrow \\
  \mathrm{Spec}S &  {\buildrel \sigma \over \longleftarrow} &  {\cal S}' \\
\end{array}
\end{equation}
where $\sigma: {\cal S}' \rightarrow \mathrm{Spec}S$ is the blowing up along $W$.

\smallskip

For every $s' \in \sigma^{-1}(s)$, $S':={\cal O}_{{\cal S}' ,s'}$, there exists $h'\in S'[X']$  {monic} of degree $m$
such that ${\cal X}'_{s'}=\mathrm{Spec}(S'[X']/(h'))$.

\smallskip

Furthermore, there exists a r.s.p. $(u'_1,\ldots ,u'_n)$ of $S'$ adapted to
the stalk $E'_{s'}$, $E':=\sigma^{-1}(E\cup W)_{\mathrm{red}}$.
\end{prop}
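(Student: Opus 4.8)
The plan is to reduce everything to the original-chart computation already established in Proposition~\ref{originchart}, handling the general center by passing to suitable coordinates, and to treat the three assertions (existence of the diagram; hypersurface equation for the strict transform; adapted r.s.p.) in that order, since each feeds the next.

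First I would set up coordinates. Let $y$ be the generic point of ${\cal Y}$ and $s:=\eta(x)$, $W:=\eta({\cal Y})$. By Hironaka-permissibility w.r.t. $E$ (Definition~\ref{Hironakapermis}(ii)), ${\cal Y}\subseteq\mathrm{Sing}_m{\cal X}$ and $W$ has normal crossings with $E=\mathrm{div}(u_1\cdots u_e)$ at $s$; by the remark following Definition~\ref{Hironakapermis}, ${\cal Y}\to W$ is an isomorphism, so ${\cal Y}$ is determined by $W$ and the structure of $\eta$ over $W$. After replacing $(u_1,\ldots,u_n)$ by a r.s.p. adapted to $E$ with $I(W)=I_J=(\{u_j\}_{j\in J})$ for some $J\subseteq\{1,\ldots,n\}$, I would apply Proposition~\ref{Deltaalg} to choose $Z=X-\theta$, $\theta\in S$, with $\Delta_S(h;u_1,\ldots,u_n;Z)$ minimal; since ${\cal Y}\subseteq\mathrm{Sing}_m{\cal X}$ meets $\eta^{-1}(s^J)$, Proposition~\ref{deltainv}(ii) (applied over $S_{s^J}$, via the projection formula in Proposition~\ref{Deltaalg}) gives $\delta(y)\geq 1$, which is exactly the hypothesis of Proposition~\ref{originchart}. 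The commutative square \eqref{eq210} is then obtained formally: $\sigma:{\cal S}'\to\mathrm{Spec}S$ is the blowing up along the regular center $W$, ${\cal X}'\to{\cal X}$ is the blowing up along ${\cal Y}$, and since $\eta$ is finite and ${\cal Y}=\eta^{-1}(W)\cap\mathrm{Sing}_m{\cal X}$ pulls back $I(W)$, the universal property of blowing up yields the canonical ${\cal X}'\to{\cal S}'$ making the square commute; this is standard and I would not belabor it.

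Next, fix $s'\in\sigma^{-1}(s)$ and $S':={\cal O}_{{\cal S}',s'}$. I would argue that after a permutation it suffices to treat the standard affine chart with a distinguished index $j_0\in J$, i.e. $u'_j=u_j/u_{j_0}$ for $j\in J\setminus\{j_0\}$ and $u'_j=u_j$ otherwise, possibly followed by a further translation on the $u'_j$ ($j\ne j_0$) to reach the local ring at $s'$ — the point being that $s'$ lies over some linear-subspace point of the projective space $\mathbb{P}(I_J/I_J^2)$, and by a linear coordinate change among the $\{u_j\}_{j\in J}$ we may assume $s'$ is the origin of that chart or lies on the exceptional $\mathbb{P}$ at a point defined by vanishing of some of the $u'_j$. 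On this chart, Proposition~\ref{originchart} applies verbatim: with $h'(Z'):=u_{j_0}^{-m}h(Z)\in S'[Z']$, $Z':=Z/u_{j_0}$, the polynomial $h'$ is unitary of degree $m$, and ${\cal X}'_{s'}=\mathrm{Spec}(S'[Z']/(h'))$ because blowing up ${\cal X}$ along ${\cal Y}$ in this chart is computed by the same monomial substitution (this is how $l$ in \eqref{eq2042} was derived). Setting $X':=Z'$ gives the required hypersurface equation; minimality of $\Delta_{S'}(h';u'_1,\ldots,u'_n;Z')$ also follows from Proposition~\ref{originchart}, though only existence is needed here.

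Finally, for the adapted r.s.p.: $E'=\sigma^{-1}(E\cup W)_{\mathrm{red}}$ is, in the chart above, the reduced divisor $\mathrm{div}(u'_{j_0}\prod_{j\in\{1,\ldots,e\}\setminus\{j_0\}}u'_j\cdot\text{(those }u'_j, j\le e,\text{ surviving)})$ together with the exceptional $\mathrm{div}(u'_{j_0})$ — concretely, one collects the indices $j$ with $j\le e$ or $j=j_0$ (the exceptional), after noting that for $j\in J\setminus\{j_0\}$ the strict transform of $\mathrm{div}(u_j)$ is $\mathrm{div}(u'_j)$ and that $u'_{j_0}=u_{j_0}$ already accounts for the exceptional divisor of $\sigma$ whenever $j_0\le e$, while if $j_0>e$ the exceptional divisor contributes the extra factor $u'_{j_0}$. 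In every case $E'_{s'}=\mathrm{div}(u'_1\cdots u'_{e'})$ after a renumbering, so $(u'_1,\ldots,u'_n)$ is adapted to $E'_{s'}$ in the sense of Definition~\ref{defadapted}. The main obstacle is bookkeeping: making sure the reduction to the origin-chart case is legitimate for an arbitrary exceptional point $s'$ (not just the origin), and correctly tracking which components of $E\cup W$ survive as factors of $E'_{s'}$ versus which become units at $s'$; once the chart is normalized this is a finite check on the indices in $\{1,\ldots,e\}\cup J$.
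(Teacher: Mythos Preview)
Your proposal is correct and follows essentially the same approach as the paper: choose coordinates with $I(W)=I_J$, use Proposition~\ref{Deltaalg} to pick $Z$ with minimal polyhedron, invoke $\delta(y)\geq 1$ from Proposition~\ref{deltainv}(ii), and build the diagram via the universal property of blowing up. The one place you overcomplicate matters is the second step: you route through Proposition~\ref{originchart} and then worry about reducing an arbitrary $s'$ to the origin of a chart, whereas the paper simply picks any $j_0\in J$ with $u_{j_0}$ a local equation of $\sigma^{-1}(W)$ at $s'$ and observes directly that $h':=u_{j_0}^{-m}h(Z)\in S'[X']$ because $f_{i,Z}\in I_J^i$ (last statement of Proposition~\ref{Deltaalg}); no reduction to the origin is needed, and minimality of the polyhedron is not used here. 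The paper is also a touch more explicit about why the diagram exists: $\delta(y)\geq 1$ forces the $Z$-direction ``point at infinity'' off $\mathcal{X}'$, so $I(W)\mathcal{O}_{\mathcal{X}'}$ is already invertible and the universal property applies immediately. Your treatment of the adapted r.s.p. is more detailed than the paper's, which simply notes that $E'=\sigma^{-1}(E\cup W)_{\mathrm{red}}$ is a normal crossings divisor on $\mathcal{S}'$.
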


\begin{proof}
By the above remarks, there exists $J \subseteq \{1,\ldots ,n\}$ such that
$I(W)=I_J=(\{u_j\}_{j \in J})$. By Proposition \ref{Deltaalg}, there exists $Z:=X- \theta $, $\theta \in S$,
such that $\Delta_{S} (h;u_1,\ldots ,u_n;Z)$ is minimal. Since $x, y \in \mathrm{Sing}_m{\cal X}$, we have
$$
\eta^{-1}(s)=\{x\}, \ \eta^{-1}(W)={\cal Y} \ \mathrm{and} \ \delta (x)\geq 1, \ \delta (y)\geq 1
$$
by Proposition \ref{deltainv}. In particular, the ideal of ${\cal Y}$ at $x$ is
$$
I({\cal Y})=(Z, \{u_j\}_{j \in J}).
$$
Since $\delta (y)\geq 1$, the point at infinity $(1:0: \cdots :0)$ does not belong to ${\cal X}'$ so
$(\{u_j\}_{j \in J}){\cal O}_{{\cal X}'} $ is invertible. By
the universal property of blowing up, there is a commutative diagram (\ref{eq210}).

\smallskip

Let $s' \in \sigma^{-1}(s)$ and $j_0 \in J$ be such that $u_{j_0}$ is a local equation of $\pi_0^{-1}(W)$.
We take $X':= Z/u_{j_0}$ and
\begin{equation}\label{eq212}
h':=u_{j_0}^{-m}h(Z)={X'}^m +u_{j_0}^{-1}f_{1,Z}{X'}^{m-1}+ \cdots +u_{j_0}^{-m}f_{m,Z}.
\end{equation}
Note that $h'\in S'[X']$ follows from the last statement in Proposition \ref{Deltaalg}. The last statement is obvious
because $E'=\sigma^{-1}(E \cup W)_{\mathrm{red}}$ is a normal crossings divisor on ${\cal S}'$.
\end{proof}

We will stick to these notations when local Hironaka-permissible blowing ups are concerned,
or compositions of such local blowing ups. We always refer to the reduced total
transform of $E$ on the blown up base $\mathrm{Spec}S$.

Suppose a base change is given as considered
in the previous section, i.e. formal completion $S \subseteq \hat{S}$, localization at a prime $S\subseteq S_s$ or
regular local base change $S\subseteq \tilde{S}$, $\tilde{S}$ excellent.

\begin{nota}\label{notaprime}
Given $S\subseteq S'$ such a base change, we denote
$$
E':=E\times_S\mathrm{Spec}S', \ \eta ': \ {\cal X}'={\cal X}\times_S \mathrm{Spec}S' \rightarrow \mathrm{Spec}S'.
$$
The image of $h$ in $S'[X]$ is denoted $h' \in S'[X]$. This notation is used consistently with Notation \ref{notageomreg1}.
\end{nota}

For instance if  $s \in \mathrm{Spec}S$, there exists a r.s.p. $(v_1,\ldots ,v_{n(s)})$ of $S_s$
which is adapted to $E_s$, where $E_s$ is the stalk of $E$ at $s$.
We then have $E_s=\mathrm{div}(v_1\cdots v_{e(s)})$ and may choose
$v_j=u_{\varphi (j)}$ for some injective map $\varphi : \ \{1, \ldots ,e(s)\} \rightarrow  \{1, \ldots ,e\}$.
It is of course not possible in general to extend a given $(v_1,\ldots ,v_{n(s)})$ to a r.s.p.
$(u_1,\ldots ,u_n)$ of $S$. We let $h_s\in S_s[X]$ be the image of $h$.

\begin{defn}\label{defwelladapted}
Let $s \in \mathrm{Spec}S$ and $(v_1,\ldots ,v_{n(s)})$ be   {a} r.s.p. of $S_s$ which is
adapted to $E_s$, $E_s=\mathrm{div}(v_1\cdots v_{e(s)})$.
We say that coordinates
$$
(v_1,\ldots ,v_{n(s)};Z_s),  \ Z_s:=X-\phi_s, \ \phi_s\in S_s,
$$
are well adapted at $y \in \eta^{-1}(s)$ if $\Delta_{S_s} (h;v_1,\ldots ,v_{n(s)};Z_s)$ is minimal.
\end{defn}

 {As remarked after Definition \ref{defdelta}, the invariant $\epsilon (y)$ introduced below
is classically used  in Resolution of Singularities. Although the situation is subtle in positive residue characteristic,
a general purpose is performing Hironaka-permissible blowing ups to get smaller values of the function $\epsilon$ at
singular points.}

\begin{defn}\label{defepsilon}
Let $(u_1,\ldots ,u_n)$ be a r.s.p. of $S$ which is adapted to $E$.
Let $j$, $1 \leq j \leq e$, and let ${\cal Y}_j\subset {\cal X}$ be an
irreducible component of $\eta^{-1} (\mathrm{div}(u_j))$ with
generic point $y_j\in {\cal X}$. We let
$$
d_j:=\delta (y_j)\in {1 \over m!}\N .
$$ \index{$d_j$, Definition~\ref{defepsilon}}

For any $s \in \mathrm{Spec}S$ and $y \in \eta^{-1}(s)$, we let
$$
\epsilon(y):=m \left (\delta (y)- \sum_{\mathrm{div}(u_j)\subseteq
E_s}{d_j} \right )\in {1 \over (m-1)!}\Z .
$$  \index{$\epsilon(y)$, Definition~\ref{defepsilon}}
\end{defn}

Summing up results from the previous section, we have:

\begin{prop}\label{epsiloninv}
Let $(u_1, \ldots ,u_n;Z)$ be well adapted coordinates at $x \in \eta^{-1}(m_S)$. With notations as above, we have
$$
d_j=\min \left \{{a_j \over i},
\ \mathbf{a} \in \mathbf{S}^{\{1, \ldots ,n\}}(f_{i,Z}), \ 1 \leq i \leq m \right \}, \ 1 \leq j \leq e.
$$

For $s\in \mathrm{Spec}S$ and $y \in \eta^{-1}(s)$, we have $\epsilon (y)\geq 0$.
\end{prop}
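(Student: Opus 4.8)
The plan is to derive both assertions from Proposition~\ref{Deltaalg}, together with the principle --- implicit in Propositions~\ref{Deltamin} and~\ref{deltainv} --- that the minimal polyhedron is the \emph{smallest} among all $\Delta_{\widehat{S_s}}(h;\ldots;X')$, so that $\delta(y)$ is the \emph{largest} possible value of $\delta_{\mathbf{1}}$ over linear changes of the $X$-coordinate.

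For the formula for $d_j$ I would reason as follows. Fix $j$, $1\le j\le e$. The divisor $\mathrm{div}(u_j)\subseteq \mathrm{Spec}S$ is irreducible with generic point $s^{\{j\}}=I_{\{j\}}$, so the generic point $y_j$ of any component of $\eta^{-1}(\mathrm{div}(u_j))$ maps to $s^{\{j\}}$ (recall $\eta$ is finite). Since the coordinates are well adapted at $x\in \eta^{-1}(m_S)$ we have $Z=X-\theta$ with $\theta\in S$ and $\Delta_S(h;u_1,\ldots,u_n;Z)$ minimal, so Proposition~\ref{Deltaalg} applies with $J=\{j\}$ and yields, for \emph{every} $y\in \eta^{-1}(s^{\{j\}})$,
$$
\delta(y)=\min\left\{\frac{a_j}{i}\ :\ \mathbf{a}\in \mathbf{S}^{\{1,\ldots,n\}}(f_{i,Z}),\ 1\le i\le m\right\}.
$$
Taking $y=y_j$ gives the asserted formula for $d_j=\delta(y_j)$; it also shows, incidentally, that $d_j$ does not depend on the chosen component and that $d_j=\min\{x_j:\mathbf{x}\in \Delta_S(h;u_1,\ldots,u_n;Z)\}$. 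This step is nothing but an application of an earlier result.

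For the inequality I would argue as follows. Let $s\in \mathrm{Spec}S$, $y\in \eta^{-1}(s)$; I may assume $h\neq Z^m$ (otherwise the polyhedron is empty and $\delta(y)=+\infty$ for every $y$). Put $A:=\{j:1\le j\le e,\ \mathrm{div}(u_j)\subseteq E_s\}$ and $D:=\sum_{j\in A}d_j<+\infty$; then $\epsilon(y)=m(\delta(y)-D)$, so it suffices to prove $\delta(y)\ge D$. Choose an r.s.p. $(v_1,\ldots,v_{n(s)})$ of $S_s$ adapted to $E_s$ and $Z_s=X-\phi_s$, $\phi_s\in \widehat{S_s}$, with $\Delta_{\widehat{S_s}}(h;v_1,\ldots,v_{n(s)};Z_s)$ minimal, so $\delta(y)=\delta_{\mathbf{1}}(h;v_1,\ldots,v_{n(s)};Z_s)$. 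By Proposition~\ref{Deltamin} this minimal polyhedron is contained in $\Delta_{\widehat{S_s}}(h;v_1,\ldots,v_{n(s)};Z)$ for the \emph{fixed} global coordinate $Z$, whence
$$
\delta(y)\ \ge\ \delta_{\mathbf{1}}(h;v_1,\ldots,v_{n(s)};Z)\ =\ \min_{1\le i\le m}\frac{1}{i}\,\mathrm{ord}_{m_{\widehat{S_s}}}(f_{i,Z}).
$$
It then remains to bound from below the orders of the \emph{fixed} elements $f_{i,Z}\in S$. For $\mathbf{a}\in \mathbf{S}^{\{1,\ldots,n\}}(f_{i,Z})$ one has $\mathbf{a}/i\in \Delta_S(h;u_1,\ldots,u_n;Z)$, hence $a_j\ge i\,d_j$ for every $j\in A$ by the first part, hence $\sum_{j\in A}a_j\ge iD$. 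Since $u_j\in m_{S_s}\subseteq m_{\widehat{S_s}}$ for $j\in A$, each monomial $\prod_{k=1}^n u_k^{a_k}$ occurring in the expansion~(\ref{eq2036}) of $f_{i,Z}$ lies in $m_{\widehat{S_s}}^{\lceil iD\rceil}$, so $\mathrm{ord}_{m_{\widehat{S_s}}}(f_{i,Z})\ge iD$ for all $i$; therefore $\delta(y)\ge D$, i.e.\ $\epsilon(y)\ge 0$.

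I do not expect a serious obstacle; the one point requiring care is the direction of the monotonicity --- that passing to the minimal polyhedron over $\widehat{S_s}$ can only \emph{increase} $\delta_{\mathbf{1}}$, so that substituting the a priori non-optimal global coordinate $Z$ produces a legitimate \emph{lower} bound for $\delta(y)$. Identifying $E_s$ with the components $\mathrm{div}(u_j)$, $j\in A$, passing through $s$, and the order estimate for monomials, are routine.
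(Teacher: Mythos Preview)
Your proof is correct. The first statement is handled exactly as in the paper, via Proposition~\ref{Deltaalg} with $J=\{j\}$.

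For the second statement the paper takes a slightly different (and terser) route: it applies Proposition~\ref{Deltaalg} \emph{again}, this time to $S_s$ with a locally well adapted coordinate $Z_s$ and each $J=\{j\}$ such that $\mathrm{div}(u_j)\subseteq E_s$. This yields each $d_j$ as the corresponding coordinate-minimum in the \emph{local} expansion $\mathbf{S}^{\{1,\ldots,n(s)\}}(f_{i,Z_s})$, and then $\delta(y)=\min_i\min_{\mathbf{a}}\tfrac{1}{i}\sum_k a_k\ge \sum d_j$ follows immediately. You instead keep the \emph{global} coordinate $Z$, invoke the monotonicity $\delta(y)\ge \delta_{\mathbf{1}}(h;v_1,\ldots,v_{n(s)};Z)$ coming from Proposition~\ref{Deltamin}, and bound the latter through the global monomial expansion of $f_{i,Z}$ together with the first part. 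Both arguments boil down to the same inequality $a_j\ge i\,d_j$ summed over the relevant $j$'s; the paper's version is a single citation, while yours has the mild advantage of not needing to introduce $Z_s$ or to identify the local $\delta(y_j)$ with the global $d_j$, and it makes the monotonicity step explicit---which is exactly the point you flagged as the one needing care.
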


\begin{proof}
The first (resp. second) statement follows from the last one in Proposition \ref{Deltaalg}
applied to $S$ and $J:=\{j\}$ (resp. to $S_s$ and each $J:=\{j\}$ with $\mathrm{div}(u_j)\subseteq E_s$).
\end{proof}

\subsection{The Galois or purely inseparable assumption.}\label{subsection:G}

In this section, we introduce the assumptions of Theorem \ref{luthm}:   {the polynomial $h$ is either purely inseparable (char$(S)=p>0$) or Galois  (char$(S)=p>0$ or  char$(S)=0$).}
 {This is phrased as condition {\bf (G)} below. This condition  {\bf (G)} plays an important role in this article for two main reasons:}

  {Firstly, {\bf (G)} is stable under Hironaka permissible blowing ups (Definitions \ref{Hironakapermis} and \ref{HironakapermisE}, Proposition \ref{SingX} below).}

   {Secondly, the initial form polynomials  $\mathrm{in}_\alpha h$ from Definition \ref{definh} satisfy again  {\bf (G)} (Proposition \ref{deltaint}(a)). This implies that   $\mathrm{in}_\alpha h$ is either an Artin-Schreier polynomial or a purely inseparable polynomial (Theorem \ref{initform}).}

  \medskip

  {We recall the notations:} %$K:=QF(S)$,}
  { $$ \begin{array}{c}
   h:=X^p+f_1X^{p-1}+ \cdots +f_p \in S[X], \   {\cal X} :=\mathrm{Spec}(S[X]/(h)), \\
  \\
 K:=QF(S)  \ \mathrm{and}\ L:=\mathrm{Tot}(S[X]/(h)). \hfill{}\\
  \end{array}
 $$}
 % ${\cal X} :=\mathrm{Spec}(S[X]/(h))$ and $L:=\mathrm{Tot}(S[X]/(h))$}.

  \medskip

% The main result is
%  {Proposition}~\ref{deltaint} which analyzes the consequence w.r.t. the slopes
%$\delta_\alpha (h;u_1,\ldots,u_n;Z)$ and initial form polynomials
%$\mathrm{in}_\alpha h$ from Definition \ref{definh}.

From now on, we assume furthermore that the following property holds:\\

\noindent {\bf (G)}  $m=p$ is a prime number, $h$ is reduced, the ring extension $L|K$ is normal and
${\cal X}$ is $G$-invariant, where $G:=\mathrm{Aut}_K(L)$. \index{{\bf (G)}, condition~{\bf (G)} or assumption~{\bf (G)}}\\

{\it Assumption {\bf (G)} is maintained up to the end of this chapter.}\\

Since $[L:K]=p$ is a prime number, we have either $G=\Z /p$ ($L|K$ separable, cases (a) and (b) below)
or $G= (1)$ ($L|K$ inseparable, case (c) below). Case (a) is included here for the sake of completeness
and because residue actions in case (b) may lead to case (a). The three cases to be considered are:
\begin{itemize}
  \item [(a)] $h$ is totally split (product of $p$ pairwise distinct linear factors) over $K$;
  \item [(b)] $h$ is irreducible and Galois over $K$ with group $G=\Z /p$;
  \item [(c)] $h$ is irreducible, $\mathrm{char}S=p$, $f_{i,X}= 0$, $1 \leq i \leq p-1$.
\end{itemize}

Assumption {\bf (G)} is also preserved by those base changes considered in the previous sections, i.e.
formal completion $S \subseteq \hat{S}$, localization at a prime $S\subseteq S_s$
or regular local base change $S\subseteq \tilde{S}$, $\tilde{S}$ excellent. Note that in any case, $h$ reduced
implies respectively $h_s$, $\hat{h}$ (since $S$ is excellent) and $\tilde{h}$ reduced  (Notation \ref{notaprime}).
Recall notations and definitions of initial forms from Definition \ref{definh}.

\begin{propdef}\label{izero}
Assume that $\mathrm{char}S/m_S=p$. Let $(u_1,\ldots,u_n)$ be a given r.s.p. of $S$ and
$\alpha \in \R^n_{>0}$ be a weight vector. The integer
$$
i_0 (\alpha ):=\mathrm{min}\{i\in \{1,\ldots p\} : F_{i,Z,\alpha} \neq 0\}
$$ \index{$i_0 (\alpha ) $, where $\alpha \in \R^n_{>0}$ is a weight vector}
does not depend on  $Z=X-\theta$, $\theta \in \hat{S}$ such that
$\Delta_{\hat{S}} (h;u_1,\ldots ,u_n;Z)$ is minimal. If  $i_0 (\alpha) <p$, the
form $F_{i_0(\alpha),Z,\alpha}$ is also independent of the choice of $Z=X-\theta$ as above.

In case $\alpha =\mathbf{1}$, the integer $i_0 (\mathbf{1})$ (also denoted by $i_0(x)$ for $x \in \eta^{-1}(m_S)$)
and   {the forms} $F_{i_0(\mathbf{1}),Z}=F_{i_0(\mathbf{1}),Z,\mathbf{1}}$ (if $i_0(\mathbf{1})<p$) are also independent
of the choice of the r.s.p. $(u_1,\ldots,u_n)$ of $S$ and $Z=X-\theta$, $\theta \in \hat{S}$
such that $\Delta_{\hat{S}} (h;u_1,\ldots ,u_n;Z)$ is minimal.
\end{propdef}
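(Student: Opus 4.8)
The plan is to exploit the minimality of the polyhedron, which by Proposition \ref{Deltamin} is equivalent to the absence of solvable vertices, together with the explicit transformation formula (\ref{eq2011}) for a linear change of the $X$-coordinate. First I would observe that two coordinates $Z=X-\theta$ and $Z'=X-\theta'$, both making $\Delta_{\hat S}(h;u_1,\ldots,u_n;\cdot)$ minimal, differ by $\psi:=\theta'-\theta\in\hat S$, and that since both polyhedra coincide (Proposition \ref{Deltamin}), one has $\mu_\alpha(\psi)\ge\delta_\alpha(h;u_1,\ldots,u_n;Z)=:\delta_\alpha$ — otherwise the substitution would create a vertex of the polyhedron below the face $\sigma_\alpha$, contradicting that the face is where $|\mathbf{x}|_\alpha$ is minimized, and in fact one checks the stronger statement that $\mathrm{cl}_{\alpha,\delta_\alpha}\psi$ must vanish, i.e. the initial form of $\psi$ sits strictly above degree $\delta_\alpha$, precisely because a nonzero contribution of $\psi$ at weight exactly $\delta_\alpha$ would (via (\ref{eq2011}) with $i=1$, using that $m=p$ and hence $\binom{p}{1}=p$ — careful here, see below) interact with the relevant initial forms. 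This is the key point where assumption {\bf (G)} and $\mathrm{char}\,S/m_S=p$ enter: in the expansion (\ref{eq2011}), the binomial coefficient $\binom{p}{i}$ vanishes in $S/m_S$ for $1\le i\le p-1$, so the term $\binom{p}{i}\phi^i$ contributes nothing to $F_{i,Z',\alpha}$ modulo $m_S$ for those $i$.

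Next, granting that $\mu_\alpha(\psi)>\delta_\alpha$ (equivalently $\mathrm{cl}_{\alpha,\delta_\alpha}\psi=0$), I would run through formula (\ref{eq2011}) with $X'=Z'$, $\phi=\psi$, and the already-expanded $h$ in the coordinate $Z$: for each $i$ with $1\le i\le p$,
$$
f_{i,Z'}=\binom{p}{i}\psi^i+\sum_{j=1}^{i}\binom{p-j}{i-j}f_{j,Z}\psi^{i-j}.
$$
One extracts the weight-$i\delta_\alpha$ part. For $j<i$ the term $f_{j,Z}\psi^{i-j}$ has $\mu_\alpha\ge j\delta_\alpha+(i-j)\mu_\alpha(\psi)>i\delta_\alpha$ since $\mu_\alpha(\psi)>\delta_\alpha$; the pure term $\binom{p}{i}\psi^i$ has $\mu_\alpha\ge i\mu_\alpha(\psi)>i\delta_\alpha$ and anyway vanishes mod $m_S$ for $i<p$; so only the $j=i$ term survives at weight exactly $i\delta_\alpha$, giving $F_{i,Z',\alpha}=F_{i,Z,\alpha}$ for all $i\le p$. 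In particular the set $\{i:F_{i,Z,\alpha}\ne0\}$ is unchanged, so $i_0(\alpha)$ is well defined, and if $i_0(\alpha)<p$ then $F_{i_0(\alpha),Z',\alpha}=F_{i_0(\alpha),Z,\alpha}$ as claimed. (The restriction $i_0(\alpha)<p$ is exactly what is needed: for $i=p$ the term $\binom{p}{p}\psi^p=\psi^p$ does contribute, which is why $F_{p,Z,\alpha}$ genuinely depends on the choice of $Z$.)

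Finally, for the case $\alpha=\mathbf 1$ I would add the independence of the r.s.p. $(u_1,\ldots,u_n)$. Here $\mathrm{gr}_\alpha S=\mathrm{gr}_{m_S}S\simeq k(x)[U_1,\ldots,U_n]$, and $\mathrm{ord}_{m_S}=\mu_{\mathbf 1}$ is intrinsic. A change of r.s.p. induces a $k(x)$-linear automorphism of the degree-one part of $\mathrm{gr}_{m_S}S$, hence a graded automorphism of $\mathrm{gr}_{m_S}S$, under which the initial forms $F_{i,Z,\mathbf 1}$ transform functorially; together with the previous paragraph (which handles the change of $Z$, and whose argument is coordinate-free once phrased via $\mu_{\mathbf 1}$) this shows $i_0(\mathbf 1)$ is an invariant of $x\in\eta^{-1}(m_S)$ and, when $i_0(\mathbf 1)<p$, that $F_{i_0(\mathbf 1),Z,\mathbf 1}$ is well defined up to this graded change of variables — which is the sense in which it is ``independent''. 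The main obstacle I anticipate is the first step: establishing rigorously that $\mathrm{cl}_{\mathbf 1,\delta}\psi=0$ (not merely $\mu_{\mathbf 1}(\psi)\ge\delta$), since a priori $\psi$ could have order exactly $\delta$; the resolution is that if it did, then because $\binom{p}{i}\equiv0$ for $i<p$ the first nonvanishing $f_{i,Z'}$ at weight $i\delta$ would still be $F_{i_0,Z,\mathbf 1}$ plus possibly a $\psi$-contribution only at $i=p$, and one must check this does not create a solvable vertex at $\mathbf 0$ — which uses minimality of $\Delta_{\hat S}(h;u_1,\ldots,u_n;Z)$ and the fact (Proposition \ref{Deltamin}) that both minimal polyhedra are equal, forcing the would-be lowered vertex not to exist.
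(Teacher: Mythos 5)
Your proof has a genuine gap at the very first step, and your own final paragraph flags it but does not actually repair it. You assert the stronger claim that $\mathrm{cl}_{\alpha,\delta_\alpha}\psi=0$, i.e.\ $\mu_\alpha(\psi)>\delta_\alpha$, and the whole of your second paragraph rests on this hypothesis. The claim is false in general. Take $S=k[[u_1,u_2]]$, $\mathrm{char}\,S=p$, $k$ nonperfect, $\lambda,\mu\in k$ $p$-independent, $h=X^p-\lambda u_1^p-\mu u_2^p$ and $\alpha=\mathbf 1$: $\Delta_{S}(h;u_1,u_2;X)$ is minimal (neither integer vertex is solvable since $\lambda,\mu\notin k^p$), and for generic $\lambda$ the translation $Z'=X-u_1$ gives another minimal polyhedron, yet $\psi=u_1$ has $\mu_{\mathbf 1}(\psi)=1=\delta_{\mathbf 1}$, so $\mathrm{cl}_{\mathbf 1,\delta}\psi=U_1\neq 0$. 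A symptom that something is off: under your false hypothesis your second paragraph yields $F_{i,Z',\alpha}=F_{i,Z,\alpha}$ \emph{for every} $i\leq p$, including $i=p$, whereas in the example $F_{p,Z',\mathbf 1}=(1-\lambda)U_1^p-\mu U_2^p\neq F_{p,X,\mathbf 1}$. This is precisely why the proposition only asserts invariance of the form when $i_0(\alpha)<p$.

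The paper neither proves nor needs $\mathrm{cl}_{\alpha,\delta_\alpha}\psi=0$. It keeps $\Phi:=\mathrm{cl}_{\alpha,\delta_\alpha}\psi$ possibly nonzero, observes that $\mathrm{in}_\alpha h(Z')=\mathrm{in}_\alpha h(Z-\Phi)$ in $(\mathrm{gr}_\alpha S)[Z]$ (this uses only $\mu_\alpha(\psi)\geq\delta_\alpha$, which does follow from Proposition~\ref{Deltamin}), and then runs formula~(\ref{eq2011}) \emph{inside} the graded ring: for $1\leq i\leq p-1$ one has $\mu_\alpha\bigl(\binom{p}{i}\bigr)>0$ since $p\in m_S$, hence $\binom{p}{i}\Phi^i=0$ in $\mathrm{gr}_\alpha S$ regardless of $\Phi$, and for $j<i_0(\alpha)$ one has $F_{j,Z,\alpha}=0$. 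Consequently $F_{i,Z',\alpha}=0$ for $i<i_0(\alpha)$ and $F_{i_0(\alpha),Z',\alpha}=F_{i_0(\alpha),Z,\alpha}$ whenever $i_0(\alpha)<p$; no further verification about solvable vertices is needed, and nothing is asserted at $i=p$. Your closing paragraph in fact gropes towards this computation, but then inserts an extra ``one must check this does not create a solvable vertex at $\mathbf 0$'' step, which is a red herring --- the binomial-coefficient vanishing already closes the argument, and the attempted ``check'' is what would be required to save the unnecessary claim $\mathrm{cl}_{\alpha,\delta_\alpha}\psi=0$, not the proposition. Your treatment of the $\alpha=\mathbf 1$ case is in the right direction (it is Proposition~\ref{deltainv} that supplies the intrinsicness of $\delta(x)$ needed to compare two r.s.p.'s, and the graded automorphism picture you describe is the correct reading of the statement), but you should say explicitly that this reduces the r.s.p.\ change to a change of $Z$ already covered, since $\mu_{\mathbf 1}=\mathrm{ord}_{m_S}$ is coordinate-free.
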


\begin{proof}
Take $Z'= Z-\phi$ such that both polyhedra  $\Delta_{\hat{S}} (h;u_1,\ldots ,u_n;Z)$ and
$\Delta_{\hat{S}} (h;u_1,\ldots ,u_n;Z')$ are minimal. By minimality, we have
$$
\mu_\alpha (\phi)\geq a:=\delta_\alpha (h;u_1,\ldots ,u_n;Z).
$$
The initial forms $\mathrm{in}_{\alpha}h (Z)\in (\mathrm{gr}_\alpha S)[Z]$
and $\mathrm{in}_{\alpha}h(Z') \in (\mathrm{gr}_\alpha S)[Z']$ are related by
$$
\mathrm{in}_{\alpha}h(Z')=\mathrm{in}_{\alpha}h(Z - \mathrm{cl}_{\alpha , a}\phi).
$$

The first statement follows from the elementary fact that
$\mu_\alpha \left(
\begin{array}{c}
  p \\
  i \\
\end{array}
\right) >0$ for $1\leq i \leq p-1$, since $p \in m_S$. The second statement then
follows from Proposition \ref{deltainv}.
\end{proof}

\begin{prop}\label{SingX}
 {Let  $x \in \mathrm{Sing}{\cal X}$, $s:=\eta (x)$. Then we have:}
\begin{equation}\label{eq211}
\eta^{-1}(s)=\{x\}, \ k(x)=k(s) \ \mathrm{and} \ \delta (x)>0.
\end{equation}

 {Assume furthermore that a normal crossings divisor $E=\mathrm{div}(u_1 \cdots u_e)\subset \mathrm{Spec}S$ is specified
and let $\pi : {\cal X}'\rightarrow {\cal X}$ be a Hironaka-permissible blowing up w.r.t. $E$ at $x$.}
Then, with notations as in Proposition \ref{Hironakastable}, for every $s' \in \sigma^{-1}(s)$,
${\cal X}'_{s'}$ satisfies again {\bf (G)}.
\end{prop}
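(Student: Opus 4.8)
The plan is to verify the two halves of Proposition \ref{SingX} separately: first the local statement \eqref{eq211} about points of $\mathrm{Sing}\,{\cal X}$, and then the stability of {\bf (G)} under a Hironaka-permissible blowing up.

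For \eqref{eq211}: if $x\in\mathrm{Sing}\,{\cal X}$ then $m(x)\geq 2$, and since $h$ has degree $m=p$ a prime, this forces $m(x)=p$, i.e. $x\in\mathrm{Sing}_p{\cal X}$. In particular $\mathrm{ord}_{m_{S[X]_x}}h=p=\deg h$, so the reduction $\overline h$ of $h$ modulo $m_{S_s}$ satisfies $\overline h=(X-\lambda)^p$ for some $\lambda\in k(s)$ after the coordinate change is made minimal (as in the proof of proposition \ref{deltainv}). This gives at once $\eta^{-1}(s)=\{x\}$ and $k(x)=k(s)$. For $\delta(x)>0$: after translating $Z=X-\theta$ to make $\Delta_{\widehat{S_s}}(h;v_1,\ldots,v_{n(s)};Z)$ minimal, the vertex $\mathbf 0$ is non-solvable, so $\overline h(Z)=Z^p$ forces $f_{i,Z}\in m_{S_s}$ for all $i$, whence $\delta(x)>0$ by definition \ref{defdelta}. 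This is essentially the content of proposition \ref{deltainv}(i)--(ii) applied at $s$, so I would simply invoke it, noting that $\mathrm{Sing}\,{\cal X}\subseteq\mathrm{Sing}_p{\cal X}$ here because $m=p$.

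For the stability of {\bf (G)}: I would argue that each of the three defining clauses of {\bf (G)} — $m'=p$, $h'$ reduced, $L'|K'$ normal, ${\cal X}'_{s'}$ is $G'$-invariant with $G'=\mathrm{Aut}_{K'}(L')$ — passes to ${\cal X}'_{s'}$. Proposition \ref{Hironakastable} already supplies a unitary $h'\in S'[X']$ of degree $m=p$ with ${\cal X}'_{s'}=\mathrm{Spec}(S'[X']/(h'))$ and, via \eqref{eq212}, the explicit formula $h'=u_{j_0}^{-p}h(Z)$, $X'=Z/u_{j_0}$. Over quotient fields this is merely the coordinate change $X'=(X-\theta)/u_{j_0}$ composed with the localization $K\hookrightarrow K':=QF(S')$; hence $L':=\mathrm{Tot}(S'[X']/(h'))=L\otim_K K'$ (using $\delta(x)\geq 1$ so that ${\cal X}'$ has no point at infinity over $s'$, exactly as in proposition \ref{Hironakastable}). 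Because $[L:K]=p$ and $h$ is separable or of the special inseparable shape in case (c), the base-changed ring $L'$ is again a degree-$p$ field extension (or split product) of $K'$ of the same type: in cases (a),(b) the minimal polynomial stays separable under the field extension $K\subseteq K'$, so $L'|K'$ is étale of degree $p$, normal, with $\mathrm{Aut}_{K'}(L')\cong\mathrm{Aut}_K(L)$; in case (c) $\mathrm{char}S'=\mathrm{char}S=p$ and $f'_{i,X'}=0$ for $1\leq i\leq p-1$ is preserved by \eqref{eq212} since $f_{i,Z}=0$ there too (the characteristic polyhedron being minimal, case (c) gives $h=X^p-f_{p,X}$, hence $f_{i,Z}=0$ and $f'_{i,X'}=0$). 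Finally $G$-invariance of ${\cal X}'_{s'}$: the $G$-action on $L$ extends $K'$-linearly to $L'$, and the subring $S'[X']/(h')$ — being generated over $S'\subseteq K'$ by $X'=(X-\theta)/u_{j_0}$ with $\theta\in S$, $u_{j_0}\in S'^\times\!$-up-to-units — is carried to itself because $G$ fixes $S$ and $\sigma(X)-\theta$ differs from $X-\theta$ by an element of $m_S\subseteq S'$ for each $\sigma\in G$ (using that ${\cal X}$ is $G$-invariant, so $\sigma(X)\in S[X]/(h)$). Reducedness of $h'$ follows from reducedness of $h$ since $h'$ generates the same extension $L'$ of the field $K'$ and $L'$ is reduced (étale, or a field in case (c)).

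The main obstacle is the $G$-invariance clause in case (b): one must check that the automorphism group genuinely survives the combined operation (minimalizing translation $X\mapsto X-\theta$, division by $u_{j_0}$, and localization at $s'$), i.e. that $\mathrm{Aut}_{K'}(L')$ still has order $p$ and that $S'[X']/(h')$ is stable under it. The translation and scaling are $K$-linear automorphisms of $L$ that commute with $G$, so they cause no trouble on the level of fields; the real point is that the integral model $S'[X']/(h')$ is $G$-stable, which is where one uses that ${\cal X}={\rm Spec}(S[X]/(h))$ was assumed $G$-invariant and that $\sigma(\theta)=\theta$, $\sigma(u_{j_0})=u_{j_0}$ for $\sigma\in G$ (the latter because $G$ acts trivially on $S$). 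I expect this to be a short verification once one writes out $\sigma(X')=\sigma(X-\theta)/u_{j_0}$ and observes $\sigma(X)-X\in(h)\cap S[X]$ together with $\sigma(X)\equiv X\bmod m_S[X]$ from \eqref{eq211}.
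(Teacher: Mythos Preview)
Your proof of \eqref{eq211} has a gap in its first step. You write ``$m(x)\geq 2$, and since $h$ has degree $m=p$ a prime, this forces $m(x)=p$''. Primality of $p$ alone does not rule out $2\leq m(x)<p$; what does is condition {\bf (G)}. The paper's argument is: by {\bf (G)}, the reduction $\overline h\in k(s)[Z]$ either again satisfies {\bf (G)} (hence $\overline h$ is reduced and $(h,u_1,\ldots,u_n)$ is a regular system of parameters at $x$, contradicting $x\in\mathrm{Sing}\,{\cal X}$), or $\overline h$ is a $p^{\mathrm{th}}$ power $(Z-\overline\lambda)^p$. Only then does $\eta^{-1}(s)=\{x\}$ and $k(x)=k(s)$ follow, and minimality of the polyhedron forces $\overline\lambda=0$, i.e. $\delta(x)>0$. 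Your reference to proposition~\ref{deltainv} is apt for the last step, but you have not earned the hypothesis $x\in\mathrm{Sing}_p{\cal X}$.

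For the stability of {\bf (G)}, you are working harder than necessary and the hard part is not quite nailed down. Since the blowing up is birational, $K'=QF(S')=K$ and $L'=L$; there is no genuine base change of fields, so reducedness of $h'$, normality of $L|K$, and $G'=G$ are immediate. The only content is that $G$ preserves the subring $S'[X']/(h')\subset L$. The paper deduces this in one line from \eqref{eq211}: since every point of ${\cal Y}\subseteq\mathrm{Sing}_p{\cal X}$ is the unique point in its fibre, ${\cal Y}=\eta^{-1}(W)_{\mathrm{red}}$ is $G$-invariant (as $G$ acts $S$-linearly), and the $G$-action lifts to the blowup by functoriality. Your computational route would need $\sigma(z)-z\in I({\cal Y}){\cal O}_{\cal X}$ so that $(\sigma(z)-z)/u_{j_0}\in S'[x']$; but your hint ``$\sigma(X)\equiv X\bmod m_S[X]$ from \eqref{eq211}'' only controls the closed point $x$, not the whole centre ${\cal Y}$. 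When ${\cal Y}$ is not the closed point you must apply \eqref{eq211} at the generic point $y$ of ${\cal Y}$ (equivalently, observe that $G$ fixes ${\cal Y}$) to get $\sigma(z)\in I({\cal Y})$. Once you say this, your verification goes through and agrees with the paper's argument.
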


\begin{proof}
It can be assumed that $s=m_S$. Let $(u_1,\ldots ,u_n;Z)$ be well adapted
coordinates at $x$ and $\overline{h}(Z)\in S/m_S [Z]$ be the
reduction of $h$ modulo $m_S$. By {\bf (G)}, $G$ acts transitively on the
fiber $\eta^{-1}(s)$. Then $\overline{h}(Z)$ is either a $p^{th}$-power
or satisfies again {\bf (G)} w.r.t. the zero-dimensional regular local ring $S/m_S$.

If $\overline{h}(Z)$ satisfies {\bf (G)}, then $(h(Z),u_1, \ldots , u_n)$ is a r.s.p. of the local ring
$S[Z]_{m_x}$, so $x$ is a regular point of ${\cal X}$.

Assume now that $\overline{h}(Z)=(Z-\overline{\lambda})^p$ for some $\overline{\lambda}\in S/m_S$.
Now $(0, \ldots ,0)$ is a solvable vertex of $\Delta_{S}(h;u_1,\ldots,u_n;Z)$ unless
$\overline{\lambda}= 0$. Since $(u_1,\ldots ,u_n;Z)$ are well adapted coordinates at $x$, we have
$\overline{\lambda}= 0$.

 {To prove the last statement, let
us first note that $x$ is $G$-invariant by (\ref{eq211}). Let ${\cal Y}\subset {\cal X}$ be
Hironaka-permissible w.r.t. $E$ and $y$ be its generic point. Applying again (\ref{eq211}), $y$  is also $G$-invariant (i.e.  $g(\mathrm{I}({\cal Y}))=\mathrm{I}({\cal Y})\subset \mathcal{O}_{\cal X}$ for every $g\in G$): with notations as in Proposition \ref{Hironakastable},  the blow up $ {\cal X}' $ of ${\cal X}$ along ${\cal Y} $ is then $G$-invariant.}
\end{proof}

\medskip
 {The following proposition prepares the proof of Theorem~\ref{initform}. Statement (i) is the main ingredient to get this structure theorem about  the polynomial $\mathrm{in}_{\alpha} h$.}

\medskip

\begin{prop}\label{deltaint}
Let $x \in \eta^{-1}(m_S)$ and $(u_1,\ldots ,u_n;Z)$ be well adapted
coordinates at $x$. For $\alpha \in \R^n_{>0}$ a weight vector, the following   {statements} hold:
\begin{itemize}
    \item [(i)] the polynomial $\mathrm{in}_{\alpha} h \in (\mathrm{gr}_\alpha S)[Z]$ satisfies again {\bf (G)} w.r.t. the local ring $(\mathrm{gr}_\alpha S)_{(U_1, \ldots ,U_n)}$;
    \item [(ii)] if ($\mathrm{char}S/m_S=p$ and $i_0 (\alpha)<p$), then
$$
\delta_\alpha (h;u_1,\ldots,u_n;Z) \in \Gamma_\alpha =\Z\alpha_1 + \cdots + \Z\alpha_n ;
$$
    \item [(iii)] if $\mathrm{char}S/m_S=0$ or if ($\mathrm{char}S/m_S=p$ and $i_0 (\alpha)=p$),
    then
    $$
    \delta_\alpha (h;u_1,\ldots,u_n;Z) \in {1 \over p}\Gamma_\alpha.
    $$
\end{itemize}
\end{prop}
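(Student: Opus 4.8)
\emph{Strategy.} Everything rests on (i): once $\mathrm{in}_\alpha h$ is known to satisfy {\bf (G)} over $R:=(\mathrm{gr}_\alpha S)_{(U_1,\ldots,U_n)}$, parts (ii) and (iii) drop out of the homogeneity of $\mathrm{in}_\alpha h$. Indeed, grading $(\mathrm{gr}_\alpha S)[Z]$ by $\mathrm{deg}\,U_j=\alpha_j$ and $\mathrm{deg}\,Z=\delta:=\delta_\alpha(h;u_1,\ldots,u_n;Z)$, the polynomial $\mathrm{in}_\alpha h=Z^p+\sum_{i\ge 1}F_{i,Z,\alpha}Z^{p-i}$ is homogeneous of degree $p\delta$; hence $F_{i,Z,\alpha}\ne 0$ forces $i\delta\in\Gamma_\alpha$, since the homogeneous elements of $\mathrm{gr}_\alpha S\simeq k(x)[U_1,\ldots,U_n]$ have degrees in $\Gamma_\alpha$. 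When $i_0(\alpha)=p$ one moreover has $\mu_\alpha(f_{i,Z})>i\delta$ for $i<p$ and $\mu_\alpha(f_{p,Z})=p\delta$, so $\delta=\mu_\alpha(f_{p,Z})/p\in\tfrac{1}{p}\Gamma_\alpha$ --- this already gives (iii) in that case, (ii) being vacuous. Thus the real content of (ii) is to promote $i_0(\alpha)\delta\in\Gamma_\alpha$ to $\delta\in\Gamma_\alpha$ when $i_0(\alpha)<p$, and this will come out of the proof of (i). I would treat the three cases (a), (b), (c) of {\bf (G)} in turn.

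\emph{Cases (a) and (c).} In case (c), $h=Z^p+f_{p,Z}$ and $\mathrm{in}_\alpha h=Z^p+F_{p,Z,\alpha}$. Were $F_{p,Z,\alpha}$ a $p$-th power in $\mathrm{Frac}(\mathrm{gr}_\alpha S)$, it would, $\mathrm{gr}_\alpha S$ being a polynomial ring, be $w^p$ with $w\in\mathrm{gr}_\alpha S$ homogeneous, and then any vertex of $\sigma_\alpha$ (these are vertices of $\Delta_{\hat S}(h;u_1,\ldots,u_n;Z)$) would be solvable, contradicting minimality. So $\mathrm{in}_\alpha h$ is reduced, purely inseparable over $\mathrm{Frac}(\mathrm{gr}_\alpha S)$ with trivial automorphism group, i.e.\ again of type (c); since $i_0(\alpha)=p$, (ii) is vacuous and (iii) holds by the first paragraph. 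In case (a), $h$ is a product of $p$ pairwise distinct monic linear factors over $K$; as $S$ is integrally closed its roots $\theta_1,\ldots,\theta_p$ lie in $S$, and {\bf (G)} entails that the $\overline{\theta}_i$ are pairwise distinct in $k(x)$ (otherwise $\mathrm{Aut}_K(L)$ does not stabilise $B:=S[X]/(h)$). Then $\delta=0$, $\mathrm{in}_\alpha h=\overline h(Z)=\prod_i(Z-\overline{\theta}_i)$ is reduced and totally split --- again type (a) --- and $\delta=0\in\Gamma_\alpha$, so (ii) and (iii) are trivial.

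\emph{Case (b): the key point.} Now $G=\langle g\rangle\cong\Z/p$ acts on $B:=\hat S[Z]/(h)$ over $\hat S$ and $h(W)=\prod_{i=0}^{p-1}(W-g^i(Z))$ in $B[W]$. The crux is that $g$ preserves the $\mu_\alpha$-adic filtration $F_\bullet B$ of $B$, whose graded ring is $\mathrm{gr}_F B\simeq(\mathrm{gr}_\alpha S)[Z]/(\mathrm{in}_\alpha h)$. Indeed, if some root $W_0:=g^{i_\ast}(Z)$ had $\mu_B(W_0)=v<\delta$, then in $W_0^p+\sum_{j\ge 1}f_{j,Z}W_0^{p-j}=0$ every term but $W_0^p$ has $\mu_B$-value $\ge j\delta+(p-j)v>pv$, so $\mathrm{in}(W_0)^p=0$ in $\mathrm{gr}_F B$; but $\mathrm{in}(W_0)$ is represented by a homogeneous element of degree $v<\mathrm{deg}\,Z$, hence lies in the subring $\mathrm{gr}_\alpha S$, where a nonzero element cannot be annihilated by the monic degree-$p$ polynomial $\mathrm{in}_\alpha h$, forcing $W_0=0$ in $B$ --- impossible since $f_{p,Z}=h(0)\ne 0$ ($h$ having no root in $K$). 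Hence $g$ descends to $\overline g\in\mathrm{Aut}_{\mathrm{gr}_\alpha S}(\mathrm{gr}_F B)$, $\overline g(Z)=Z-c$ with $c:=\mathrm{in}(Z-g(Z))$. If $\overline g\ne\mathrm{id}$: the relation $\overline g^{\,p}=\mathrm{id}$ forces $c$ to be a nonzero form in $(\mathrm{gr}_\alpha S)_\delta$ when $\mathrm{char}\,k(x)=p$ (in residue characteristic $0$ one gets instead a Kummer twist $\overline g(Z)=\zeta Z-c$, $\zeta^p=1$, whose pertinent sub-case again produces a nonzero form in $(\mathrm{gr}_\alpha S)_\delta$), so $\delta\in\Gamma_\alpha$; moreover $\mathrm{in}_\alpha h$ is the product of the $p$ pairwise distinct $\overline g$-conjugates of $Z$, hence reduced, and over $\mathrm{Frac}(\mathrm{gr}_\alpha S)$ it is either totally split (type (a)) or irreducible cyclic of degree $p$ (type (b)). This sub-case is precisely $i_0(\alpha)<p$, whence (ii). If $\overline g=\mathrm{id}$: then $\mathrm{in}_\alpha h=(W-Z)^p$, which (necessarily $\mathrm{char}\,k(x)=p$) forces $Z^p\in\mathrm{gr}_\alpha S$, i.e.\ $i_0(\alpha)=p$; reducedness of $\mathrm{in}_\alpha h=W^p+F_{p,Z,\alpha}$ follows from minimality as in case (c), so {\bf (G)} holds for it and (iii) holds by the first paragraph.

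\emph{Conclusion and main obstacle.} Assembling the cases proves (i); (ii) is reached in case (a) and in case (b) with $i_0(\alpha)<p$, and (iii) in cases (a), (c) and in case (b) with $i_0(\alpha)=p$, always against the background of the first paragraph. The one delicate step is the filtration-compatibility of the $G$-action in case (b): it uses only the minimality of the characteristic polyhedron and the fact that $\mathrm{in}_\alpha h$ has $Z$-degree exactly $p$; everything else is either explicit (cases (a), (c)) or elementary linear algebra over $\mathrm{gr}_\alpha S$.
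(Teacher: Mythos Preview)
Your overall strategy --- descending the $G$-action to $\mathrm{gr}_F B$ via filtration-compatibility --- is sound and genuinely different from the paper's route, but your treatment of case (a) contains an actual error. You claim that in the totally split case the residues $\overline{\theta}_i$ must be pairwise distinct (``otherwise $\mathrm{Aut}_K(L)$ does not stabilise $B$''), forcing $\delta=0$. This is false: take $S=k[[u]]$, $h=X^2-u^2=(X-u)(X+u)$, $G=\Z/2$ acting on $B=S[X]/(h)$ by $X\mapsto -X$. All of {\bf (G)} holds, $\overline{\theta}_1=\overline{\theta}_2=0$, and $\delta=1$. The $G$-invariance of $\mathcal X$ imposes no constraint on residues of roots. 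The correct fix is simply to drop your separate case (a) argument: your case (b) filtration argument applies verbatim to case (a), since in both cases $G=\langle g\rangle\simeq\Z/p$ acts on $B$ by $S$-algebra automorphisms permuting the roots of $h$, and nothing in your argument for $\mu_B(g(Z))\ge\delta$ uses that $L$ is a field rather than a product of fields.

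One sentence in your case (b) argument is also garbled. From $\mathrm{in}(W_0)^p=0$ in $\mathrm{gr}_F B$ and $\mathrm{in}(W_0)\in(\mathrm{gr}_\alpha S)_v$, the correct deduction is: $\mathrm{in}(W_0)^p$ lies in the image of the domain $\mathrm{gr}_\alpha S$, and since $\mathrm{gr}_\alpha S\hookrightarrow\mathrm{gr}_F B$ is injective (free module of rank $p$), $\mathrm{in}(W_0)^p=0$ in $\mathrm{gr}_\alpha S$, hence $\mathrm{in}(W_0)=0$, contradicting $\mu_B(W_0)=v$. There is no ``annihilation by $\mathrm{in}_\alpha h$'' involved, and you never conclude $W_0=0$ in $B$.

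For comparison, the paper avoids the filtration on $B$ entirely: it extends $\mu_\alpha$ to a valuation $\nu_\alpha$ on $L$, reads $\nu_\alpha(z)=\delta$ off the Newton polygon, and identifies $(\mathrm{gr}_\alpha S)[Z]/(\mathrm{in}_\alpha h)$ with the graded ring of $\nu_\alpha$ (or a product of such). For (ii) the paper combines $i_0(\alpha)\delta\in\Gamma_\alpha$ with $p\delta\in\Gamma_\alpha$ (from the ramification index $e_0\mid p$), then uses $\gcd(i_0,p)=1$ and torsion-freeness of $\Gamma_\alpha\simeq\Z^r$. Your argument is actually more direct here: once $\overline g\ne\mathrm{id}$ in residue characteristic $p$, the element $c:=\mathrm{in}(Z-g(Z))$ is a nonzero form in $(\mathrm{gr}_\alpha S)_\delta$, giving $\delta\in\Gamma_\alpha$ immediately, and you recover the explicit Artin--Schreier shape $\mathrm{in}_\alpha h=(W-Z)^p-c^{p-1}(W-Z)$. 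Your residue-characteristic-zero discussion (Kummer twist $\overline g(Z)=\zeta Z-c$) is correct in outline but too compressed: you should spell out that $\zeta=1$ forces $c=0$ hence $\overline g=\mathrm{id}$, which in turn forces $F_{j,Z,\alpha}=(-1)^j\binom{p}{j}Z^j\notin\mathrm{gr}_\alpha S$ for $1\le j\le p-1$, a contradiction; so $\zeta\ne 1$, and then either $c\ne 0$ gives $\delta\in\Gamma_\alpha$ or $c=0$ gives $i_0(\alpha)=p$.
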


\begin{proof}
If $\delta (x)=0$, we have
$\delta_\alpha (h;u_1,\ldots,u_n;Z)=0$ and  $\mathrm{in}_{\alpha} h =\overline{h}(Z)$
with notations as in the previous proof, so the proposition is trivial. Assume that $\delta (x)>0$.

\smallskip

By Proposition \ref{Deltamin}, we have $\Delta_{S}(h;u_1,\ldots,u_n;Z)
\neq \emptyset$ and this polyhedron has no solvable vertex.
Therefore $\mathrm{in}_{\alpha} h$ is not a $p^{th}$-power.
Let $z \in L$ be the image of $Z$ and $\nu_\alpha$ be any extension of $\mu_\alpha$ to $L$. Then
$\nu_\alpha$ is centered at $x$, since ${\cal X}$ is $G$-invariant and $\eta^{-1}(m_S)=\{x\}$ by
Proposition \ref{deltainv}(i). We have:
\begin{equation}\label{eq230}
    \nu_\alpha(z)=\mu_\alpha(f_{i,Z})/i=\delta_\alpha (h;u_1,\ldots,u_n;Z) \in \Gamma_\alpha \otimes_{\Z}\Q
\end{equation}
for each $i$, $1 \leq i \leq p$ such that $F_{i,Z,\alpha}\neq 0$. Since $L|K$ is normal of degree $p$,
the reduced ramification index $e_0$ of $\nu_\alpha | \mu_\alpha$ is $e_0 =1$ or $e_0=p$.

\smallskip

Assume that ($\mathrm{char}S/m_S=p$ and $i_0 (\alpha )=p$). Then $\mathrm{in}_{\alpha} h$ is in case (c)
of {\bf (G)} and we get (iii) from (\ref{eq230}).

Assume that $\mathrm{char}S/m_S=0$ or ($\mathrm{char}S/m_S=p$ and $i_0 (\alpha)<p$). Then $h$ is in case (a) or (b).
Since $G=\Z /p$ in these cases and ${\cal X}$ is $G$-invariant,
$G$ acts transitively on the roots of $\mathrm{in}_{\alpha} h$. We have:
$$
    \left\{
\begin{array}{cccc}
  \mathrm{Tot}((\mathrm{gr}_\alpha S)[Z]/(\mathrm{in}_{\alpha} h)) & =
  & \prod_{\nu_\alpha}QF(\mathrm{gr}_\alpha S) & \mathrm{if} \ \mu_\alpha \ \mathrm{splits}; \\
   & & & \\
  QF((\mathrm{gr}_\alpha S)[Z]/(\mathrm{in}_{\alpha} h))  & =
  & QF(\mathrm{gr}_{\nu_\alpha} S)         & \mathrm{otherwise}, \hfill{}\\
\end{array}
\right .
$$
and this proves (i). Statement (iii) follows from (\ref{eq230}) if $\mathrm{char}S/m_S=0$.

Assume finally that ($\mathrm{char}S/m_S=p$ and $i_0 (\alpha)<p$). By (\ref{eq230}), we have
$$
p\nu_\alpha(z)=p\mu_\alpha(f_{i_0 (\alpha),Z})/i_0 (\alpha ) \in \Gamma_\alpha .
$$
Since $\Gamma_\alpha \simeq \Z^r$ for some $r \geq 1$, this implies
$$
\delta_\alpha (h;u_1,\ldots,u_n;Z)=\mu_\alpha(f_{i,Z})/i_0 (\alpha ) \in \Gamma_\alpha
$$
which completes the proof of (ii).
\end{proof}

\begin{cor}\label{cordeltaint}
Assume that a normal crossings divisor
$$
E=\mathrm{div}(u_1 \cdots u_e)\subset \mathrm{Spec}S
$$
is specified. We have $pd_j\in \N$, $1 \leq j \leq e$,  and $\epsilon (y)\in \N$ for every $y \in {\cal X}$.
\end{cor}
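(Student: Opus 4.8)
The plan is to derive Corollary \ref{cordeltaint} directly from Proposition \ref{deltaint} by applying it to suitable localizations of $S$ and to a well-chosen weight vector. Recall that by Definition \ref{defepsilon} the quantity $d_j = \delta(y_j)$ where $y_j$ is the generic point of an irreducible component of $\eta^{-1}(\mathrm{div}(u_j))$; equivalently $d_j = \delta_{\mathbf{1}}(h_{s^{\{j\}}}; u_j; Z_j)$ computed in the one-dimensional regular local ring $S_{s^{\{j\}}}$ for well adapted coordinates. Similarly $\epsilon(y) = p(\delta(y) - \sum_{\mathrm{div}(u_j)\subseteq E_s} d_j)$, so once we know $pd_j \in \N$ for all $j$ it suffices to prove $p\delta(y) \in \N$ as well, and then that the combination is nonnegative, which is exactly Proposition \ref{epsiloninv}.

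First I would treat $pd_j \in \N$. Localize at $s^{\{j\}}$: the ring $S_{s^{\{j\}}}$ is an excellent regular local ring of dimension one with uniformizer $u_j$, and by Proposition \ref{SingX} (applied after checking $y_j \in \mathrm{Sing}\mathcal{X}$ when $d_j>0$; if $d_j = 0$ the claim is trivial) assumption \textbf{(G)} is inherited by $h_{s^{\{j\}}}$. Now apply Proposition \ref{deltaint} with $n=1$, the weight vector $\alpha = \mathbf{1} = (1) \in \R^1_{>0}$, so that $\Gamma_\alpha = \Z$ and $\frac{1}{p}\Gamma_\alpha = \frac{1}{p}\Z$. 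Part (ii) gives $\delta(y_j) = \delta_{\mathbf{1}}(h; u_j; Z) \in \Z$ when $i_0(\mathbf{1}) < p$, while part (iii) gives $\delta(y_j) \in \frac{1}{p}\Z$ when $i_0(\mathbf{1}) = p$; in either case $pd_j = p\delta(y_j) \in \N$. (Nonnegativity is automatic since $\delta \ge 0$ by construction.)

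Next, for $p\delta(y) \in \N$ at an arbitrary $y \in \eta^{-1}(s)$: localize at $s$, so $S_s$ is excellent regular local of dimension $n(s)$, and again \textbf{(G)} passes to $h_s$ by Proposition \ref{SingX}; the case $\delta(y)=0$ being trivial, assume $\delta(y)>0$. Apply Proposition \ref{deltaint} with $\alpha = \mathbf{1} \in \R^{n(s)}_{>0}$, for which $\Gamma_{\mathbf{1}} = \Z$: part (ii) or (iii) yields $\delta(y) = \delta_{\mathbf{1}}(h_s; v_1,\ldots,v_{n(s)}; Z_s) \in \Z$ or $\in \frac{1}{p}\Z$ respectively, hence $p\delta(y) \in \N$. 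Finally, combining these with Proposition \ref{epsiloninv}, we get
$$
\epsilon(y) = p\,\delta(y) - \sum_{\mathrm{div}(u_j)\subseteq E_s} pd_j \in \Z,
$$
and since $\epsilon(y) \ge 0$ by Proposition \ref{epsiloninv}, we conclude $\epsilon(y) \in \N$, as desired.

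The only real subtlety — and the place to be careful rather than the place of genuine difficulty — is bookkeeping the hypotheses of Proposition \ref{deltaint}: that the localized data $(S_s, h_s, E_s)$ still satisfies \textbf{(G)} (secured by Proposition \ref{SingX}), that one uses \emph{well adapted} coordinates so Definition \ref{defepsilon} and Proposition \ref{epsiloninv} apply verbatim after localization, and that one checks the trivial cases $\delta = 0$ separately so that the hypothesis $\delta(x)>0$ in the proof of Proposition \ref{deltaint} is met. The nontrivial input is entirely contained in Proposition \ref{deltaint}(ii)--(iii), which exploits that $L|K$ is normal of degree $p$ so the reduced ramification index of any extension of $\mu_\alpha$ divides $p$; everything here is just the specialization $\alpha = \mathbf{1}$ of that statement.
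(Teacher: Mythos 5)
Your proof is correct and follows exactly the paper's route: the paper too deduces the result from Proposition~\ref{deltaint}(ii)(iii) applied to $S_{(u_j)}$ and to $S_s$ with $s=\eta(y)$, combined with Definition~\ref{defepsilon} and Proposition~\ref{epsiloninv} for nonnegativity. One small citation nit: the stability of \textbf{(G)} under localization $S\subseteq S_s$ is asserted in the remarks of Section~2.3 just after \textbf{(G)} is introduced, not in Proposition~\ref{SingX} (which concerns blowing ups); the substance of what you use is still correct.
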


\begin{proof}
In view of Definition \ref{defepsilon} and Proposition \ref{epsiloninv},
this follows from Proposition \ref{deltaint} (ii)(iii) applied to the
local rings $S_{(u_j)}$ and $S_s$, $s:=\eta (y)$.
\end{proof}

This corollary allows us to define the following invariant:

\begin{defn}\label{defH}
Let $(u_1,\ldots ,u_n)$ be a r.s.p. of $S$ which is adapted to the normal crossings divisor $E=\mathrm{div}(u_1
\cdots u_e)$. For $y \in {\cal X}$, $s:=\eta (y)$, we define a principal ideal:
$$
H(y):=\left (\prod_{\mathrm{div}(u_j)\subseteq
E_s}{u_j^{H_j}}\right ) \subseteq S,
$$
where $H_j:=pd_j\in \N$. \index{$H(y)$}
\end{defn}

\subsection{The discriminant assumption.}

 {Discriminant theory has been used since Jung \cite{J} in order to simplify singularities. Namely the fundamental group $\pi_1(\C^n \setminus \{x_1\cdots x_e=0\})\simeq \Z^e$ classifies unramified coverings away from the normal crossing divisor $D:=\{x_1\cdots x_e=0\}$. Jung's observation that any such covering can be described by a monomial mapping $\C^n\rightarrow \C^n$  allowed  to resolve the singularities of surfaces $n=2$ \cite{J}\cite{W}\cite{L2}.}

\smallskip

 {This method extends to positive characteristics provided no wild ramification occurs and it is the content of Abhyankar's Lemma \cite{SGA1} Appendice I. Even when wild ramification occurs, this method induces some simplification from the general case and is the starting point of several approaches \cite{BrV}\cite{T2}.}

 {In any characteristic, getting Jung's situation of a discriminant with normal crossings in dimension~$n$ is a consequence of embedded resolution for the discriminant subscheme which has dimension~$n-1$. For $n=3$, this is possible as   embedded resolution of surfaces is known \cite{CoJS}. In our problem, this reduction  is stated  as corollary~\ref{EEfait} below.}

 {The main result in this section is Theorem~\ref{initform} below which plays an important role in the proof of Theorem~\ref{luthm}. Indeed, Theorem~\ref{initform}    basically  reduces the proof of   Theorem~\ref{luthm} to computations on purely inseparable or Artin-Schreier polynomials of degree $p$ over fields of  characteristic $p>0$.}

\medskip

We now introduce the critical locus of the map $\eta: \ {\cal X}
\rightarrow \mathrm{Spec}S$ together with its scheme structure
given by the discriminant $D:=\mathrm{Disc}_X h \in S$.
We are interested in the case where $D$ is a normal crossings divisor. \\

Note that $D$ is by definition independent of the choice of
regular parameters of $S$ and invariant by those translations
$X':=X-\phi$, $\phi \in \hat{S}$ used in minimizing polyhedra. If
$(S,h,E)$ is in case (c) of {\bf (G)}, then $D=0$.

\begin{defn}\label{conditionE}
Let $S$, $h\in S[X]$ (\ref{eq201}), ${\cal X}$ and $E=\mathrm{div}(u_1\cdots u_e)$ be specified.
We say that $(S,h,E)$ satisfies assumption {\bf (E)} \index{{\bf (E)}, condition {\bf (E)} or assumption {\bf (E)}, Definition~\ref{conditionE}} if $\mathrm{char}(S/m_S)=p > 0$ and
one of the following properties hold:
\begin{equation}\label{eq221}
    \left\{
\begin{array}{cccc}
  (i)   & D=0       & \mathrm{and} & \eta (\mathrm{Sing}_p{\cal X}) \subseteq E, \hfill{}\\
        &           &              &    \\
  (ii)  & D \neq 0  & \mathrm{and} & \mathrm{div}(D)_\mathrm{red}\subseteq E\subseteq \mathrm{div}(p)_\mathrm{red}.     \\
\end{array}
\right .
\end{equation}
\end{defn}

{\it Assumption {\bf (E)} is maintained up to the end of this chapter.}\\

This assumption implies that $\mathrm{Sing}_p{\cal X} \subseteq \eta^{-1}(E)\subset {\cal X}$:
(i) by definition; (ii) because $\eta^{-1}(\mathrm{Spec}S \backslash E)$ is regular since
$\mathrm{Spec}S \backslash E$ is. In particular $E\neq \emptyset$ if $\mathrm{Sing}_p{\cal X}\neq \emptyset$.

\smallskip

\begin{exam}

 {Let us illustrate cases (i)(ii) by examples. As $h$ is reduced,  case (i) of Definition \ref{conditionE} cannot occur when char$(S)=0$. When char$(S)=p>0$, the following example fits into condition~(i):
$$h=Z^p+u_1^au_2^bf,\ f\in S=k[[u_1,u_2,v]],\ a+b\geq p,\ \mathrm{char}(k)=p>0,$$
with V$(f)\subset$Spec$(S)$  regular outside $E=$div$(u_1u_2)$.}

\smallskip
 {The following is an example of condition~(ii) with  char$(S)=0$. Let
$$A:={\Z_p[\pi] \over \pi^{n(p-1)}},\ n\in \N-\{0\},\ S:=A[[u_2,u_3]],\ { E}=\mathrm{div}(\pi).$$}

 {\noindent Let $\mu_p$ be the group of $p$-th roots of unity. Note that $\mu_p\subset \Z_p[\pi^n]\subset A$.}

 {\noindent Let $h:=X^p-\pi^{ap}(1+f)$, $f\in m_S$, $a\in \N-\{0\}$.}

 {Note that $(S,h,E)$ satisfies assumption \textbf{(G)} (section \ref{subsection:G})  since $\mu_p$ acts on $S[X]/(h)$ by $x\mapsto \zeta x$.   We have:
$$\mathrm{Disc}_X(h)=\prod_{\xi, h(\xi)=0}h'(\xi)=p^{p-1}(\prod_{\xi}\xi)^{p-1}=p^{p-1}\pi^{ap(p-1)}(1+f)^{p-1}.$$ Therefore assumption \textbf{(E)} is satisfied. Note that the coordinates $(\pi,u_2,u_3;X)$ are adapted  but not well adapted (Definition \ref{defwelladapted}).  To minimize the  polyhedron $\Delta(h; \pi,u_2,u_3;X)$, we first make the translation: $Z:=X-\pi^a$. This leads to:
$$  h(Z)=Z^p+\sum_{1\leq i \leq p-1}  \begin{pmatrix}
  p\\
  i \\
\end{pmatrix} \pi^{ai}Z^{p-i}-   \pi^{ap}f.$$
The monomial $p \pi^{a(p-1)} Z$ leads to the vertex $(a+n,0,0)$ whenever
$$\mathrm{max}_{\lambda \in A} \{\mathrm{ ord}_{\pi}(f(0,0)-\lambda^p)\}\geq n.$$
Other vertices depend on the expansion of $f$.}

\end{exam}

Assumption  {\bf (E)} is also preserved by those base changes considered in the previous section:
formal completion $S \subseteq \hat{S}$, localization at a prime $S\subseteq S_s$ or
regular local base change $S\subseteq \tilde{S}$, $\tilde{S}$ excellent. For Hironaka-permissible
blowing ups, we have:

\begin{prop}\label{Estable}
Let $\pi : {\cal X}'\rightarrow {\cal X}$ be a Hironaka-permissible blowing up w.r.t. $E$ at $x \in {\cal X}$.
Then, with notations as in Proposition \ref{Hironakastable}, for every $s' \in \sigma^{-1}(s)$,
$(S',h',E')$ satisfies again {\bf (E)}.
\end{prop}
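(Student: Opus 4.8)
The plan is to run everything off the explicit chart description of $(S',h',E')$ in Proposition \ref{Hironakastable}, together with two elementary facts: that the blown-up center sits inside $E$, and that the discriminant transforms by a power of the exceptional equation. First I would record that $W:=\eta(\mathcal{Y})\subseteq E$. Indeed, a Hironaka-permissible center with respect to $E$ satisfies $\mathcal{Y}\subseteq\mathrm{Sing}_p\mathcal{X}$, and {\bf (E)} for $(S,h,E)$ gives $\mathrm{Sing}_p\mathcal{X}\subseteq\eta^{-1}(E)$ (the remark after Definition \ref{conditionE}), whence $W=\eta(\mathcal{Y})\subseteq E$. Therefore $E'=\sigma^{-1}(E\cup W)_{\mathrm{red}}=\sigma^{-1}(E)_{\mathrm{red}}$, a set which contains the exceptional divisor of $\sigma$ (locally $\mathrm{div}(u_{j_0})$; if $W$ has codimension one then $\sigma$ is an isomorphism, $E'=E$, and there is nothing to prove). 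Next, since by (\ref{eq212}) the roots of $h'$ in $X'$ are the roots of $h$ in $Z$ divided by $u_{j_0}$, the discriminant satisfies $D=u_{j_0}^{p(p-1)}D'$ with $D':=\mathrm{Disc}_{X'}h'$; in particular $D=0$ iff $D'=0$. By Proposition \ref{SingX} the triple $(S',h',E')$ again satisfies {\bf (G)}, and its residue characteristic is again $p$ because $s'$ lies over the closed point $m_S$; so it suffices to verify alternative (i) (resp. (ii)) of (\ref{eq221}) for $(S',h',E')$, the displayed relation already telling us which one is in force.

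In case (i) we have $D=D'=0$ and must check $\eta'(\mathrm{Sing}_p\mathcal{X}')\subseteq E'$. Suppose $y'\in\mathrm{Sing}_p\mathcal{X}'$ with $t':=\eta'(y')\notin E'$. Then $t'$ lies off the exceptional divisor, so $\sigma$ is an isomorphism near $t'$ onto a neighbourhood of $t:=\sigma(t')\in\mathrm{Spec}\,S\setminus E$, and $u_{j_0}$ is a unit in $S'_{t'}$. By (\ref{eq212}), over $S'_{t'}$ the passage from $h$ to $h'$ is just the translation $Z=X-\theta$ followed by the unit rescaling $X'=Z/u_{j_0}$, neither of which changes multiplicity; hence $y'$ corresponds to a point of $\mathrm{Sing}_p\mathcal{X}$ lying over $t$, forcing $t\in\eta(\mathrm{Sing}_p\mathcal{X})\subseteq E$, a contradiction.

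In case (ii) we have $D\neq 0$ and must check $\mathrm{div}(D')_{\mathrm{red}}\subseteq E'\subseteq\mathrm{div}(p)_{\mathrm{red}}$ on $\mathrm{Spec}\,S'$. The inclusion $E'\subseteq\mathrm{div}(p)_{\mathrm{red}}$ is vacuous when $\mathrm{char}\,S=p$, and otherwise follows from $E'=\sigma^{-1}(E)_{\mathrm{red}}$, the hypothesis $E\subseteq\mathrm{div}(p)_{\mathrm{red}}$, and the equality $\sigma^{-1}(\mathrm{div}(p))=\mathrm{div}(p)$ on $\mathrm{Spec}\,S'$. The remaining inclusion follows from $D=u_{j_0}^{p(p-1)}D'$: this gives $\mathrm{div}(D')\le\sigma^{*}\mathrm{div}(D)$ as effective Cartier divisors, hence $\mathrm{div}(D')_{\mathrm{red}}\subseteq\sigma^{-1}(\mathrm{div}(D)_{\mathrm{red}})\subseteq\sigma^{-1}(E)_{\mathrm{red}}=E'$, using $\mathrm{div}(D)_{\mathrm{red}}\subseteq E$.

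The only point that needs genuine care is the bookkeeping around the exceptional divisor: one must be sure that "$t'\notin E'$'' really does place $t'$ in the locus where $\sigma$ is an isomorphism and $u_{j_0}$ is a unit, and puts $t=\sigma(t')$ outside $E$ — this is exactly what makes the local-isomorphism argument in case (i) go through, and it is also where the codimension-one degenerate case has to be split off. Everything else is formal once Propositions \ref{Hironakastable} and \ref{SingX} are in hand.
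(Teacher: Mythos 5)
Your proof is correct and takes essentially the same route as the paper: case (i) is disposed of by the observation that $W\subseteq E$ (the paper simply calls this ``obvious'' where you spell out the argument that off the exceptional divisor $\sigma$ is an isomorphism and $u_{j_0}$ a unit), and case (ii) rests on the same discriminant identity $D'=u_{j_0}^{-p(p-1)}D$, from which you fill in the two inclusions $\mathrm{div}(D')_{\mathrm{red}}\subseteq E'\subseteq\mathrm{div}(p)_{\mathrm{red}}$ that the paper asserts without further comment.
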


\begin{proof}
Any Hironaka-permissible center ${\cal Y} \subset {\cal X}$ w.r.t. $E$ at $x$
is contained in $E$ by the above remarks. Therefore the proposition is obvious in case (i) of
Definition \ref{conditionE}.

Let $(u_1, \ldots ,u_n;Z)$ be well adapted coordinates at $x$
and $h(Z)\in S[Z]$ be the corresponding expansion.
With notations as in Proposition \ref{Hironakastable} and (\ref{eq212}), we have
$h'(X')=u_{j_0}^{-p}h(X'u_{j_0})$ for some $u_{j_0} \in I(W)$. We deduce that
$$
D':=\mathrm{Disc}_{X'}h'=u_{j_0}^{-p(p-1)}\mathrm{Disc}_{Z}h=u_{j_0}^{-p(p-1)}D,
$$
hence $\mathrm{div}(D')_\mathrm{red}\subseteq E'\subseteq \mathrm{div}(p)_\mathrm{red}$ as required.
\end{proof}

 {\begin{rem}\label{ThmName}
We call the next Theorem -Reduction to characteristic $p>0$- to emphasize the fact that once all the statements and proofs are phrased purely in terms of initial forms with respect to certain faces of the Newton polyhedron, for the computations of the invariants after a blowing up, there is no difference between the equal and the mixed characteristic cases and they  will be treated uniformly. This allows us to adapt the techniques developed in
 \cite{CoP1} \cite{CoP2}. Cases (1) and (2) of Theorem~\ref{initform} are called respectively \textit{purely inseparable} \index{purely inseparable, Remark~\ref{ThmName}}and \textit{Artin-Schreier}.\index{Artin-Schreier, Remark~\ref{ThmName}}
\end{rem}}

\begin{thm}\label{initform}\textbf{(Reduction to characteristic $p$).}
 {Assume  that $(S,h,E)$ satisfies assumptions {\bf (G)} and {\bf (E)}.}
Let $x \in \eta^{-1}(m_S)$ be such that $\epsilon (x)>0$.
Then $({\cal X},x)$ is analytically irreducible.

\smallskip

Let $(u_1,\ldots ,u_n;Z)$ be well adapted coordinates at $x$ and
$\alpha \in \R^n_{>0}$ be a weight vector. Exactly one of the following properties holds.
\begin{itemize}
    \item [(1)] $i_0 (\alpha)=p$, i.e. $\mathrm{in}_{\alpha}h =Z^p +F_{p,Z, \alpha}$;
    \item [(2)] $i_0 (\alpha)=p-1$ i.e. $\mathrm{in}_{\alpha}h =Z^p +F_{p-1,Z, \alpha}Z +F_{p,Z, \alpha}$,
    $F_{p-1,Z, \alpha}\neq 0$. Furthermore, we have
    \begin{equation}\label{eq2313}
        -f_{p-1,Z}=\gamma_{p-1,Z}\prod_{j=1}^eu_j^{A_{p-1,j}}
    \end{equation}
    with $A_{p-1,j}\in (p-1)\N$, $1 \leq j \leq e$, and $\gamma_{p-1,Z}\in S$ a unit with residue
    $\overline{\gamma}_{p-1,Z} \in (S/m_S)^{p-1}$. In particular, $-F_{p-1,Z, \alpha}=G^{p-1}$ for some
    nonzero $ G \in \mathrm{gr}_\alpha S$, and we have
$$
\mathrm{cl}_{p(p-1)\delta_\alpha (h;u_1,\ldots ,u_n;Z)}(\mathrm{Disc}_Z(h))=<F_{p-1,Z, \alpha}^p>.
$$
\end{itemize}
\end{thm}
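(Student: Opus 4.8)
The statement contains two essentially independent assertions: analytic irreducibility of $({\cal X},x)$, and the dichotomy $i_0(\alpha)\in\{p-1,p\}$ together with the monomial form (\ref{eq2313}) of $f_{p-1,Z}$ and the discriminant identity; I would treat them in that order. For irreducibility, note first that $\epsilon(x)>0$ forces $\delta(x)>0$, so proposition \ref{deltainv}(i) gives $\eta^{-1}(m_S)=\{x\}$, $k(x)=S/m_S$, whence $\widehat{{\cal O}}_{{\cal X},x}=\hat S[Z]/(h)$ and the claim is that $h$ is irreducible in the UFD $\hat S[Z]$. The plan is: a monic factorization $h=h_1h_2$ over $\hat S$ (Gauss' lemma, $\hat S$ normal) would, on comparing terms of lowest $m_S$-weight with $\deg Z:=\delta(x)$ in the domain $(\mathrm{gr}_{m_S}\hat S)[Z]$, produce a nontrivial factorization $\mathrm{in}_{m_S}h=\mathrm{in}_{m_S}(h_1)\,\mathrm{in}_{m_S}(h_2)$; hence it suffices that $\mathrm{in}_{m_S}h$ be irreducible over $QF(\mathrm{gr}_{m_S}S)$. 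By proposition \ref{deltaint}(i) it satisfies {\bf (G)}, so it is in case (a), (b) or (c), and in cases (b), (c) it is irreducible. To exclude case (a) (totally split) I would use {\bf (E)}(ii): then $D:=\mathrm{Disc}_Zh$ is a unit times a monomial in $u_1,\dots,u_e$, so $\mathrm{cl}_{m_S}(D)=\pm\prod_{i<j}(\ell_i-\ell_j)^2$ (with $\ell_i\in k[U_1,\dots,U_n]$ the distinct roots of $\mathrm{in}_{m_S}h$) is a monomial; unique factorization forces every $\ell_i-\ell_j$ to be a unit times one fixed monomial $U^{\mathbf a}$, with $p(p-1)\mathbf a$ the exponent vector of $D$, in particular $\mathbf a$ supported on $\{1,\dots,e\}$. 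One then checks $\Delta_S(h;u_1,\dots,u_n;Z)=\mathbf a+\R^n_{\geq 0}$ and $d_j=a_j$ for $1\le j\le e$, so $\epsilon(x)=p\delta(x)-\sum_{j=1}^epd_j=0$, a contradiction.

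For the dichotomy, case {\bf (G)}(c) is immediate from (\ref{eq2011}): there $f_{i,Z}=0$ for $1\le i\le p-1$ and every translation $Z$, so $i_0(\alpha)=p$ and (1) holds. Case {\bf (G)}(a) for $h$ itself is excluded exactly as above (it would make $\Delta_S(h;u_1,\dots,u_n;Z)$ the trivial orthant with vertex in $\R^e$, i.e. $\epsilon(x)=0$). So assume {\bf (G)}(b) and fix $\alpha\in\R^n_{>0}$; by proposition \ref{deltaint}(i), $\mathrm{in}_\alpha h$ satisfies {\bf (G)} over $(\mathrm{gr}_\alpha S)_{(U_1,\dots,U_n)}$, a ring of characteristic $p$. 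If $\mathrm{in}_\alpha h$ is in case (c), then $i_0(\alpha)=p$. If it is in case (b), the key point is that, $\mathrm{in}_\alpha h$ being homogeneous with $\deg Z=\delta_\alpha$, its roots --- hence the conjugates of the image $\bar Z$ --- are homogeneous of weight $\delta_\alpha$; since the generator $\sigma$ of $\Z/p$ fixes $\mathrm{gr}_\alpha S$ and $\sigma(\bar Z)$ is a polynomial of degree $\le p-1$ in $\bar Z$ over $\mathrm{gr}_\alpha S$, homogeneity of weight $\delta_\alpha$ forces $\sigma(\bar Z)=g_1\bar Z+g_0$ with $g_1\in(\mathrm{gr}_\alpha S)_0$ and $g_0$ homogeneous of weight $\delta_\alpha$; then $\sigma^p=\mathrm{id}$ and characteristic $p$ give $g_1=1$, so the conjugates are $\bar Z+ig_0$ $(i\in\F_p)$ and $\mathrm{in}_\alpha h=(Z-\bar Z)^p-g_0^{p-1}(Z-\bar Z)$. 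Hence $F_{i,Z,\alpha}=0$ for $1\le i\le p-2$ and $F_{p-1,Z,\alpha}=-g_0^{p-1}\ne 0$ (otherwise $\bar Z\in QF(\mathrm{gr}_\alpha S)$): this is (2) with $G=g_0$. The remaining possibility --- $\mathrm{in}_\alpha h$ totally split --- must be ruled out, and I would dispose of it by the discriminant-is-a-monomial argument applied to $\mathrm{in}_\alpha h$: its root differences become units times one monomial $U^{\mathbf a'}$, with $p(p-1)\mathbf a'$ the ($\alpha$-independent) exponent vector of $D$, supported on $\{1,\dots,e\}$, which again pins $\Delta_S(h;u_1,\dots,u_n;Z)$ to the trivial orthant and contradicts $\epsilon(x)>0$.

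Finally, for the given $\alpha$ in case (2), $\mathrm{Disc}(Z^p-bZ+c)=\pm b^p$ in characteristic $p$ yields $\mathrm{cl}_{p(p-1)\delta_\alpha}(\mathrm{Disc}_Zh)=\langle F_{p-1,Z,\alpha}^p\rangle$ at once; combining this with {\bf (E)}(ii) (so $D$ is a unit times $\prod_{j=1}^eu_j^{c_j}$) and localizing at the height-one primes gives $\mathrm{ord}_{(u_j)}(f_{p-1,Z})=c_j/p$ for $j\le e$, $\mathrm{ord}_{(u_j)}(f_{p-1,Z})=0$ for $j>e$, and --- via proposition \ref{Deltaalg} and proposition \ref{deltainv}(ii), using $\mathrm{Sing}_p{\cal X}\subseteq\eta^{-1}(E)$ --- that $\mathrm{div}(f_{p-1,Z})$ has no component outside $E$; hence $-f_{p-1,Z}=\gamma_{p-1,Z}\prod_{j=1}^eu_j^{A_{p-1,j}}$ with $\gamma_{p-1,Z}$ a unit. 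That $-\mathrm{cl}_\alpha(f_{p-1,Z})=G^{p-1}$ is a $(p-1)$-th power then forces $\overline\gamma_{p-1,Z}\in(S/m_S)^{p-1}$ and $A_{p-1,j}\in(p-1)\N$, completing (\ref{eq2313}). The main obstacle throughout is excluding separable split initial forms when $\epsilon(x)>0$ --- this is precisely where the full strength of {\bf (E)} (discriminant a monomial supported on $E$) enters --- and, to a lesser extent, the bookkeeping that turns the local, vertex-wise monomial and $(p-1)$-power data into the single global statement (\ref{eq2313}).
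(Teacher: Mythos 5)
Your route to the dichotomy $i_0(\alpha)\in\{p-1,p\}$ is genuinely different from the paper's and, in that part, quite elegant. The paper derives it by computing $\mathrm{ord}_{(u_j)}(\mathrm{Disc}_Z h)$ at every component of $E$ via ramification theory (its formula (\ref{eq238})), combining this with the a priori bound (\ref{eq239}) to obtain the inequality (\ref{eq2391}) from which $i_0(\alpha)=p-1$ drops out. You instead exploit the $\Z/p$-action on the roots of $\mathrm{in}_\alpha h$ (which is legitimate, via proposition~\ref{deltaint}(i)): homogeneity of weight $\delta_\alpha$ forces $\sigma(\bar Z)=\bar Z+g_0$, and the factorization $\prod_{i\in\F_p}\bigl(Z-\bar Z-ig_0\bigr)=(Z-\bar Z)^p-g_0^{p-1}(Z-\bar Z)$ in characteristic $p$ immediately kills $F_{1,Z,\alpha},\dots,F_{p-2,Z,\alpha}$ and produces the $(p-1)$-th power form. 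This is a cleaner path to the vanishing of the middle coefficients than the paper's, and it subsumes the paper's closing observation about $(g(z_\alpha)-z_\alpha)^{p-1}=-F_{p-1,Z,\alpha}$.

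However, two steps are not justified as written. First, in both places where you rule out a totally split initial form (for $\mathrm{in}_{m_S}h$ in the irreducibility part, and for $\mathrm{in}_\alpha h$ later), you assert that ``unique factorization forces every $\ell_i-\ell_j$ to be a unit times one fixed monomial.'' Unique factorization alone only forces each difference to be a monomial; to get the \emph{same} monomial you need the Galois structure, i.e. $\sigma(\ell_i-\ell_j)=\ell_{i+1}-\ell_{j+1}$ with $\sigma$ trivial on $\mathrm{gr}_\alpha S$, which makes the roots an arithmetic progression with common difference $g_0$ so that all $\ell_i-\ell_j$ are $\F_p$-multiples of $g_0$. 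This is exactly what the paper's identity $g^i(z)-z=i(g(z)-z)$ encodes. The gap is fixable but you have not closed it.

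The second gap is more serious and, I think, not repairable along the lines you indicate. You pass from $\mathrm{cl}_\alpha D=\pm F_{p-1,Z,\alpha}^p$ (which is correct, since $\mathrm{gr}_\alpha S$ has residue characteristic $p$) and $D=\gamma\prod_{j\le e}u_j^{c_j}$ to the claim that ``localizing at the height-one primes gives $\mathrm{ord}_{(u_j)}(f_{p-1,Z})=c_j/p$,'' i.e. in effect that $D$ is a unit times $f_{p-1,Z}^p$ in $S$. That identity fails in mixed characteristic: over $S$, $\mathrm{Disc}_Z h$ is a polynomial in $f_{1,Z},\dots,f_{p,Z}$ and $p$, and $p$ is not a unit. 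What is actually true is the paper's formula (\ref{eq238}),
$$
\mathrm{ord}_{(u_j)}\bigl(H(x)^{-(p-1)}D\bigr)=\min\Bigl\{p\,\mathrm{ord}_{(u_j)}(p),\ \min_{1\le i\le p-1}\{pA_{i,j}-iH_j\}\Bigr\},
$$
and the statement (\ref{eq2313}) is precisely equivalent to the assertion that this minimum is attained at $i=p-1$ for each $j$, together with equality in (\ref{eq239}). Establishing this requires the case-by-case ramification analysis (cases 1, 2a, 2b of the paper's proof). Note that at the generic point $y_j$ of $\mathrm{div}(u_j)$ one has $\epsilon(y_j)=0$, so the Artin--Schreier structure of $\mathrm{in}_{(u_j)}h$ is \emph{not} guaranteed by the hypothesis $\epsilon(x)>0$, and one cannot simply run your $\mathrm{in}_\alpha h$ argument at those primes. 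Your initial-form identity at $m_S$ only constrains the vertices of $NP(f_{p-1,Z})$ that happen to lie on $(p-1)\Delta_S(h;u_1,\dots,u_n;Z)$; it does not preclude further terms of $f_{p-1,Z}$ with support strictly above that polyhedron or outside $\mathbf c/p+\R^n_{\ge0}$, so it does not yield that $f_{p-1,Z}$ is a unit times the monomial $\prod u_j^{c_j/p}$. This is where the paper's computation is indispensable, and it is missing from your proposal.
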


\begin{proof}
 {We start with some comments about discriminants. Let
$$P:=Z^d+a_1Z^{d-1}+\cdots+a_d\in S[a_1,\cdots,a_d][Z],\ d\geq 1,$$
 be the generic polynomial defined over the domain $S$,  $a_1,\cdots,a_d$ indeterminates.  Let
 $$D_P=\mathrm{Disc}_Z(P):=\prod _{i<j}(\varphi_i-\varphi_j)^2,$$
  be the discriminant of $P$ where $\varphi_1,\cdots,\varphi_d$ are the roots of $P$ in a suitable extension of $S[a_1,\cdots,a_d]$. As a polynomial in the $\varphi_i$, $D_P$ is \textit{homogeneous} of degree $d(d-1)$. By the theorem on symmetric functions, $D_P$ can be expressed as a \textit{homogeneous} polynomial in $a_1,\ldots,a_d$ (the elementary symmetric functions) where $a_i$ has degree $i$.}

 {Suppose that a specialization $ a_i \leadsto \bar{a}_i\in S$ is given, so $P\leadsto \bar{P}$, then
$$ D_{\bar{P}}=\mathrm{Disc}_Z(\bar{P})=D_P(\bar{a}_1,\cdots,\bar{a}_d).$$}
 {We apply this to $\bar{P}=h$. Then, denoting $D:=D_{\bar{P}}$,}
 we have
$$
    \mu_\alpha (D)\geq  p(p-1)\delta_\alpha (h;u_1,\ldots ,u_n;Z),
$$
since $\mu_\alpha (f_{i,Z})/i \geq \delta_\alpha (h;u_1,\ldots ,u_n;Z)$ for $1 \leq i \leq p$. We deduce the formula
\begin{equation}\label{eq231}
    \mathrm{cl}_{\alpha , p(p-1)\delta_\alpha (h;u_1,\ldots ,u_n;Z)}D=\mathrm{Disc}_Z(\mathrm{in}_\alpha h).
\end{equation}
On the other hand, $\mathrm{in}_\alpha h $ has a multiple root
over an algebraic closure of $QF(\mathrm{gr}_\alpha S)$ if and only
if $i_0(\alpha)=p$ by Proposition \ref{deltaint} (i). When this holds,
we are in case (1) of this theorem. \\

Suppose that $h$ is analytically reducible. By Proposition \ref{epsiloninv} and Definition \ref{defdelta},
$\epsilon (x)=\delta (x)- \sum_{i=1}^e d_j$ is determined by $\Delta_{\hat{S}}(h;u_1, \ldots ,u_n;Z)$, thus
invariant by base change $S \subseteq \hat{S}$. Therefore it can be assumed w.l.o.g. that $S=\hat{S}$ in
order to prove the first statement, i.e. that $h$ is in case (a) of property {\bf (G)}.
Since $h$ splits, there is a factorization
$$
h=\prod_{i=1}^{p}(Z -\varphi_j) \in S[Z], \ \varphi_1, \ldots ,\varphi_{p}\in S.
$$
Let $z \in {\cal O}_{\cal X}$ be the image of $Z$ and $g \in G=\Z/p$, $g \neq 0$. By property {\bf (G)},
we have $g(z)\in {\cal O}_{\cal X}$ and $g(z)$ is a root of $h(Z)$. Up to reindexing, it can
therefore be assumed that
$$
g^i (z)=z-\varphi_{i+1}+\varphi_1\in S, \ 1 \leq i \leq p-1.
$$
In particular, we have $g(z)-z =\varphi_1 -\varphi_2 \in S$ and we deduce that
$$
g^i (z) -z =\sum_{k=0}^{i-1} g^k(g(z)-z)=i(g(z)-z), \ 1 \leq i \leq p-1.
$$
Since $(p-1)!$ is a unit in $S$, we get a formula
$$
D=\mathrm{Disc}_Z(h)=\gamma_0(\varphi_1 -\varphi_2)^{p(p-1)}, \ \gamma_0 \in S, \ \gamma_0 \ \mathrm{a} \ \mathrm{unit}.
$$
By assumption, $(u_1, \ldots ,u_n)$ is adapted to $E$. Then Definition \ref{conditionE}(ii) implies that
$$
\varphi_1 -\varphi_2 =\gamma u^\mathbf{a},
$$
with $\gamma \in S$ a unit, and $a_j=0$, $e+1 \leq j \leq n$. Take an expansion (\ref{eq2036}):
$$
\varphi_1 =\sum_{\mathbf{x}\in \mathbf{S}(\varphi_1)}\gamma_\mathbf{x}u^\mathbf{x} ,
\ \gamma_\mathbf{x}\in S, \ \gamma_\mathbf{x} \ \mathrm{unit}
$$
with  $\mathbf{S}(\varphi_1)\subset \N^n$  finite. If $x_j < a_j$ for some
$\mathbf{x} \in \mathbf{S}(\varphi_1)$ and some $j$, $1 \leq j \leq e$, then $\mathbf{x}$ is a vertex
of $\Delta_{S} (h;u_1,\ldots ,u_n;Z)$ with initial form
$$
\mathrm{in}_\mathbf{x} h = (Z - \lambda U^\mathbf{x})^p, \ \lambda \in S/m_S, \ \lambda \neq 0.
$$
This is a solvable vertex: a contradiction, since $\Delta_{S} (h;u_1,\ldots ,u_n;Z)$ is minimal. Therefore
$\varphi_1 \in (u^\mathbf{a})$ and we get $\epsilon (x)=0$: a contradiction. Hence $({\cal X},x)$ is
analytically irreducible as stated. It can be assumed that $h$ is in case (b)
of property {\bf (G)} from now on.\\

Assume now that $\mathrm{in}_\alpha h $ is in cases (a) or (b) of property {\bf (G)}, i.e. $i_0(\alpha)<p$ and
\begin{equation}\label{eq2311}
    \mathrm{Disc}_Z(\mathrm{in}_\alpha h)\neq 0 .
\end{equation}
We now compute $\mathrm{ord}_{(u_j)}D$ for $1 \leq j \leq e$. Let
$$
s_j:=(u_j) \in \mathrm{Spec}S,  \ S_j:=S_{s_j} \ \mathrm{and}  \ y_j \in \eta^{-1}(s_j).
$$
To begin with, $\Delta_{S_j} (h;u_j,Z)$ is minimal by Proposition \ref{Deltaalg}.
We denote by  $G(s_j)=k(s_j)[U_j]$ the graded ring of $S_j$ w.r.t. its
valuation $\mu_j:=\mathrm{ord}_{(u_j)}$ and by $\mathrm{in}_{j}$ the initial form map w.r.t. $\mu_j$. Let:
\begin{equation}\label{eq232}
    \gamma_{i,j}U_j^{A_{i,j}}:=\mathrm{in}_{j}f_{i,Z} \in
    G(s_j), \ 1 \leq i \leq p.
\end{equation}
By Definition \ref{conditionE}(ii), we have $\mathrm{char}S/(u_j)=p$.
Therefore Proposition \ref{izero} and (\ref{eq231}) apply to $S_j$ with $\alpha=1\in \R$.
The corresponding integer $i_0(1)$ is denoted by
$i_0(s_j)$ in order to avoid confusion and we have
\begin{equation}\label{eq2321}
\mu_j(D) \geq p(p-1)\delta (y_j)=(p-1)H_j.
\end{equation}

\noindent {\it Case 1:} $i_0(s_j)<p$. Then equality holds in the former formula as remarked right after
(\ref{eq231}).\\

\noindent {\it Case 2:} $i_0(s_j)=p$. Then inequality is strict in the former formula.
Since $\Delta_{S_j} (h;u_j,Z)$ is minimal, we have $\gamma_{p,j}U_j^{A_{p,j}} \not \in G(s_j)^p$ and $A_{p,j}=H_j$.
Let $z \in L$ be the image of $Z$. The discrete valuation
$\mu_j$ of $K$ has a unique extension to $L$, still denoted by $\mu_j$.
There is an embedding $G(s_j)\subset G_j$, where $G_j$ is the graded ring of the valuation ring
${\cal O}_j:=\{f \in L : \mu_j (f) \geq 0\}$. \\

\noindent {\it Case 2a:} $H_j \in p\N$. We have
\begin{equation}\label{eq233}
    G_j=k(s_j)(\gamma_{p,j}^{1 \over p})[U_j], \ \mathrm{in}_{j}z=
    -\gamma_{p,j}^{1 \over p} U_j^{H_j\over p};
\end{equation}

\noindent {\it Case 2b:} $H_j \not \in p\N$. We have
\begin{equation}\label{eq234}
    G_j=k(s_j)[\gamma_{p,j}^{l_j \over p}U_j^{1 \over p}], \ \mathrm{in}_{j}z=
    -\gamma_{p,j}^{1 \over p} U_j^{H_j\over p},
\end{equation}
where $l_j$ satisfies $l_jH_j \equiv 1 \ \mathrm{mod}p$, since the element
$t:=z^{l_j}u_j^{-{l_jH_j-1 \over p}}$ is a regular parameter of ${\cal O}_j$
with $(\mathrm{in}_{j}t)^p=-\gamma_{p,j}^{l_j}U_j$.\\

Let $g \in G=\mathrm{Gal}(L|K)$ be nontrivial. We have
\begin{equation}\label{eq235}
    g (z)^p-z^p +\sum_{i=1}^{p-1}f_{i,Z} (g (z)^{p-i}-z^{p-i})=0.
\end{equation}
Since $\mu_j(g (z)-z)>\mu_j(z)$ and $\mu_j((p-1)!)=0$, we deduce from (\ref{eq232}) and
(\ref{eq233})-(\ref{eq234}) that
\begin{equation}\label{eq236}
\mathrm{in}_{j}(f_{i,Z} (g
(z)^{p-i}-z^{p-i}))=(-1)^{p-i}iT_j\gamma_{i,j}\gamma_{p,j}^{(p-i-1)/p}U_j^{(p-i-1){H_j
\over p}+A_{i,j}}
\end{equation}
for $1 \leq i \leq p-1$, where $T_j:=\mathrm{in}_{j}(g (z)-z)$. On the other hand, we have
\begin{equation}\label{eq2361}
g (z)^p-z^p= (g (z)-z)^p + \sum_{i=1}^{p-1}
\left(
\begin{array}{c}
  p \\
  i \\
\end{array}
\right)
(g (z)-z)^{p-i}z^i.
\end{equation}
Computing $\mu_j(D)$ by the Hilbert formula \cite{ZS1} V.11.(8) gives
\begin{equation}\label{eq2363}
\mu_j(D)=p(p-1)\mu_j(g (z)-z).
\end{equation}
Since equality is strict in (\ref{eq2321}), we have $\mu_j(H(x)^{-(p-1)}D)>0$ and we deduce
that $\mu_j (g (z)-z)> H_j/p$. Computing initial forms for each term
on the right hand side of (\ref{eq2361}), we get for $1 \leq i \leq p-1$:
$$
\mathrm{in}_{j}((g (z)-z)^{p-i}z^i)=(-1)^{i}T_j^{p-i}\gamma_{p,j}^{i \over p}U_j^{i{H_j \over p}}.
$$
Since $\mu_j (g (z)-z)> H_j/p$ and
$\mu_j(\left(
\begin{array}{c}
  p \\
  i \\
\end{array}
\right)
) =\mu_j(p)$, $1 \leq i \leq p-1$, the unique minimal value term in (\ref{eq2361}) inside the summation
symbol is obtained with $i=p-1$. This shows
\begin{equation}\label{eq2362}
\mathrm{in}_{j}\left ( \sum_{i=1}^{p-1}
\left (
\begin{array}{c}
  p \\
  i \\
\end{array}
\right)
(g (z)-z)^{p-i}z^i \right ) =\mathrm{in}_{j}(p)T_j\gamma_{p,j}^{p-1 \over p}U_j^{(p-1){H_j \over p}}.
\end{equation}

\noindent {\it Case 2a.} By (\ref{eq233}), all terms $\gamma_{p,j}^{(p-i-1)/p}$
for $1\leq i \leq p-1$ appearing in (\ref{eq236}) are
linearly independent over $k(s_j)$. Since $p \in S_j$, $pu_j^{-\mu_j (p)}$ is a unit
in $S_j$. Let $\gamma \in k(s_j)$ be its residue, so the family
$(\gamma \gamma_{p,j}^{p-1 \over p}, \{\gamma_{p,j}^{{p-i-1 \over p}}\}_{1 \leq i \leq p-1} )$
is a {\it basis} of the $k(s_j)$-vector space $k(s_j)(\gamma_{p,j}^{1/p})$. Tracing back to
(\ref{eq235}) an (\ref{eq2361}), the value of $(g (z)-z)^p$ is the value of a
sum of terms with linearly independent initial forms in $G_j$. We deduce the formula
\begin{equation}\label{eq237}
\mu_j(g (z)-z)^{p-1} =\min\{\mu_j (p) + (p-1){H_j \over p},
\min_{1 \leq i \leq p-1}\{(p-i-1){H_j \over p}+A_{i,j}\}\}.
\end{equation}

\noindent {\it Case 2b.} By (\ref{eq234}), all values $(p-i-1)H_j/p$ for $1\leq i \leq p-1$
appearing in (\ref{eq236}) are pairwise distinct modulo $\Z$. Since $p \in S_j$, the family
$$
(\mu_j(p)+(p-1){H_j \over  p},\{(p-i-1){H_j\over p} +A_{i,j}\}_{1 \leq i \leq p-1} )
$$
represent all cosets of $(1/p)\Z $ modulo $\Z$. The argument is now similar to  case 2a above
and (\ref{eq237}) holds as well.
Note that the minimum in the right hand side of (\ref{eq237}) is achieved exactly once in this case 2b.
\\

By (\ref{eq2363}) and (\ref{eq237}), we conclude in all three cases 1, 2a and 2b that
\begin{equation}\label{eq238}
\mu_j(H(x)^{-(p-1)}D)=\min\{p\mu_j (p), \min_{1 \leq i \leq p-1}\{pA_{i,j}-iH_j\}\}.
\end{equation}
By (\ref{eq232}) and definition of $i_0(\alpha)$, we have
\begin{equation}\label{eq239}
\sum_{j=1}^e {A_{i_0(\alpha),j}\alpha_j}\leq \mu_\alpha (f_{i_0(\alpha),Z}) =i_0(\alpha)\delta_\alpha (h;u_1,\ldots ,u_n;Z) .
\end{equation}

Collecting together, since it was assumed in (\ref{eq2311}) that $\mathrm{Disc}_Z (\mathrm{in}_\alpha h) \neq 0$,
we have
$$
\sum_{j=1}^e
{\mu_j(H(x)^{-(p-1)}D)\alpha_j}=(p-1)\left (p\delta_\alpha (h;u_1,\ldots ,u_n;Z)- \sum_{j=1}^e{H_j}\alpha_j \right )
$$
by (\ref{eq231}). By (\ref{eq238})-(\ref{eq239}), we deduce
\begin{equation}\label{eq2391}
(p-1 -i_0(\alpha))(p\delta_\alpha (h;u_1,\ldots ,u_n;Z)- \sum_{j=1}^e{H_j}\alpha_j) \leq 0.
\end{equation}

Suppose that $p\delta_\alpha (h;u_1,\ldots ,u_n;Z)- \sum_{j=1}^e{H_j}\alpha_j=0$. Definition \ref{defH}
implies that $f_{i,Z}^p \in H(x)^i$ for $1 \leq i \leq p$. Definition \ref{defDelta} yields the equality
$$
\Delta_{S} (h;u_1,\ldots ,u_n;Z)=({H_1 \over p}, \ldots , {H_e \over p}, 0 , \ldots , 0)+\R^n_{ \geq 0}.
$$
This is a contradiction, since it is assumed that $\epsilon (x)>0$.

We thus have $p\delta_\alpha (h;u_1,\ldots ,u_n;Z)- \sum_{j=1}^e{H_j}\alpha_j>0$.  By (\ref{eq2391}),
this implies $i_0(\alpha)=p-1$, since $i_0(\alpha)\leq p-1$ was assumed in (\ref{eq2311}).

We may now sharpen (\ref{eq2391}) as follows, since it is an equality: equality holds in (\ref{eq239})
{\it and} the minimum on the right hand side of (\ref{eq238}) is achieved with $i=i_0(\alpha)=p-1$
for each $j$, $1\leq j \leq e$. These two properties are equivalent to the existence of
an expansion (\ref{eq2313}) with $\gamma_{p-1,Z}\in S$ a unit.

By Proposition \ref{deltaint}(i), $G=\Z/p$ acts on the roots of $\mathrm{in}_\alpha h$. Let
$$
z_\alpha \in (\mathrm{gr}_\alpha S)[Z]/(\mathrm{in}_\alpha h)
$$
be the image of $Z$. Then
$(g (z_\alpha )-z_\alpha )^{p-1}+F_{p-1,Z,\alpha}=0$ for $g \in G$ nontrivial, so
the polynomial $X^{p-1} +F_{p-1,Z,\alpha}$ is totally split over $\mathrm{gr}_\alpha S$, i.e.
$-F_{p-1,Z,\alpha}$ is a $(p-1)^{\mathrm{th}}$ in $\mathrm{gr}_\alpha S$ as required.
The last formula in the theorem is obvious.
\end{proof}

\subsection{Adapted differential structure.}

 {The use of differentials in the local study of singularities has a long history. We include a short summary
of this exciting topic where the case of local rings of positive characteristic is put forward.
The Jacobian criterion for regularity was formulated by O. Zariski in
localizations of polynomial rings \cite{Z4} and by M. Nagata in localizations of formal power series rings \cite{Na}.
Differential operators are used for computing the HS-stratum in the works of B. Bennett \cite{BB}, H. Hironaka \cite{H4}, J. Giraud \cite{Gi2}
and M.J. Pomerol \cite{Po}. }

\smallskip

 {Hironaka's theory of maximal contact is differential in nature and its positive characteristic version was developed
by J. Giraud \cite{Gi2}. In a formal power series rings $R=k[[X_0 , \ldots , X_n]]$, $\mathrm{char} k=p>0$, let
$J\subset R$ define a subscheme $X \subset Z=\mathrm{Spec}R$, $x$ denote the closed point. There exists an associated scheme
$W \subset Z$ with ``maximal contact'' in the following sense  \cite{Gi2} Proposition 3.3, Theorem 5.2 and Corollary 5.4:}

\smallskip

\noindent  {1- every permissible center $Y\subset X$ is also permissible for $W$;}

\noindent  {2- this property is stable at every $x'\in X'$, $x'$ near to $x$, where $X'\rightarrow X$ is the blowing up along $Y$.}

\smallskip

 {Roughly speaking, the space $W$ is constructed by taking
a projection transverse to the tangent cone of $J$ and applying transverse differential operators of appropriate order. The scheme $W$ has
a ``simpler'' singularity in the sense that its tangent cone $C_xW$ coincides with the ridge of $C_xX$, {\it viz.} \cite{Gi2} 1.5.
It is worth noting however that, unlike for $\mathrm{char} k=0$, $W$ is not regular at $x$ in general. Furthermore, $W=X$ when
the tangent cone $C_xX$ coincides with its ridge, i.e. its defining equations are additive polynomials:
$$
\lambda_0 X_0^{p^\alpha} + \cdots +\lambda_nX_n^{p^\alpha}, \ \lambda_1, \ldots, \lambda_n \in k, \ \alpha \geq 0.
$$
}

\medskip

 {New ideas were introduced with H. Hironaka's characteristic algebras \cite{H7} \cite{H8}. Hironaka and Giraud's ideas
have been influential in the last decade. Introducing independent new ideas of their own, H. Kawanoue and K. Matsuki
defined and studied idealistic filtrations \cite{Ka} \cite{KaM}.
Giraud's result was extended to algebraic varieties over perfect fields by A. Benito, A. Bravo and O. Villamayor \cite{BeV1} \cite{BeV2} \cite{BrV}.
Furthermore, they introduced a differential Rees algebra encoding this extended Giraud space $W$ and whose behavior by blowing up is traced by techniques of elimination theory
developed by the authors. Both approaches have produced
new proofs of resolution of singularities for surfaces \cite{KaM2} \cite{BeV3}.}

\medskip

 {C.~Abad gave a relative version  of  Zariski's Jacobian criterion over regular rings of characteristic~$p>0$ with an absolute $p$-basis. He also used absolute differential operators for  computing the singular locus of differential Rees algebras for varieties over a possibly non perfect field \cite{Abad} Proposition~5.1 and Theorem~7.5.} \\

%\bibitem{Z4} {\sc Zariski, O.}, The concept of a simple point of an abstract algebraic variety, {\it Trans. Amer. Math. Soc.} {\bf 62} (1947), 1-52.

%\bibitem{Na} {\sc Nagata M.}, A Jacobian criterion of simple points, {\it Illinois J. Math.} {\bf 1} (1957), 427-432.

%\bibitem{BB} {\sc Bennett B.}, On the characteristic functions of a local ring, {\it Ann. Math.} {\bf 91} (1970), 25-97.

%\bibitem{KaM2} {\sc Kawanoue H., Matsuki K.}, Resolution of singularities of an idealistic filtration in dimension 3 after Benito-Villamayor, preprint, arXiv:1205.4556.

%\bibitem{BeV3} {\sc Benito A., Villamayor O.}, Techniques for the study of singularities with applications to resolution of 2-dimensional schemes, {\it Math. Ann.} {\bf 353} (2012), no.3, 1037-1068.

%\bibitem{Gi3} {\sc Giraud J.}, Forme normale d'une fonction sur une surface de caract\'eristique positive, {\it Bull. Soc. Math. France}  {\bf 111} (1983), no. 2, 109-124.

In this section, we introduce the differential structure on the graded algebras
$\mathrm{gr}_\alpha S$. We will only consider here the case $\alpha =\mathbf{1} \in \R^J_{>0}$ with
notations as in Definition \ref{definh}. These algebras appear naturally as blow up
algebras of $S$ along regular primes. Our construction uses formal coordinates and Nagata derivatives.
For the reader's convenience, we remind the main concepts and classical results used here,
refering to \cite{Ma} pp.201-205 and pp. 235-245 when necessary.

\smallskip

 {The final part of the section is devoted to practical computations. These can be
performed without using formal coordinates when the exceptional divisor $E$ is locally
of finite type over some field, {\it vid.} Remark \ref{Nagataftype} below and
following propositions.}\\

 {To state the main goal of this section, assume for simplicity that some triple $(S,h,E)$
is specified as in Definition \ref{conditionE}. Assume furthermore that a permissible center ${\cal Y}\subset {\cal X}$
at $x$ w.r.t. $E$ is specified, where $x\in {\cal X}$ is the closed point. Let $W:=\eta ({\cal Y})\subset \mathrm{Spec}S$.
We will construct a certain ${\cal O}_W$-module ${\cal V}(F,E,W)$ (Definition \ref{calV}) and a
certain $\widehat{{\cal O}_W}$-module ${\cal J}(F,E,W)$ (Definition \ref{calJ}), where $\widehat{{\cal O}_W}$ is the completion of  ${\cal O}_W$. Factoring out a monomial part
from these modules, we obtain $V(F,E,W)$ and $J(F,E,W)$ in (\ref{eq244}). }

\smallskip

 {The origin of these modules sits in Cossart's thesis \cite{Co1} where resolution of singularities is proved
for hypersurfaces with equation
$$
h=y^p -f(u_1,u_2,u_3)=0
$$
over fields of characteristic $p>0$, see also \cite{Gi3} for the case of surfaces. Starting with a point of multiplicity
$p$ of the hypersurface $h=0$, making it drop by permissible blowing ups is very close to resolving the singularities of the
form $df$. With notations as above, the ideal ${\cal V}(f,E)$ generated by the coefficients of $df\in \Omega_S(\log E)$ is a reasonable
invariant for resolution and the goal is to get ${\cal V}(f,E)$ locally principal by blowing up regular centers $W\subset \mathrm{Spec}S$.
The transformation law for $df\in \Omega_S(\log E)$ involves a certain Jacobian ideal ${\cal J}(f,E,W)$. }

\smallskip

 {In our present -not necessarily equicharacteristic- setup, some initial form modules ${\cal V}(F,E,W)$ and ${\cal J}(F,E,W)$
can  be defined from the algebra gr$_WS$ and initial form polynomial in$_W h$, see Notation~\ref{notacenter} below. Taking $W=\{m_S\}$, we will define in forthcoming sections a numerical invariant $\omega (x)\in \N$ (Definition \ref{defomega})
and a stratum $\mathrm{Max}(x)$ in the tangent cone. The corresponding transformation law is the forthcoming Blowup Formula Proposition \ref{bupformula}(v).
}

\bigskip

 {We now proceed with formal definitions and constructions. As usual, $(S,m_S)$ is an excellent regular local ring with residue field
$$
k:={S\over m_S}, \ \mathrm{char}k=p>0.
$$
A r.s.p. $(u_1, \ldots ,u_n)$ of $S$ and a normal crossings divisor $E=\mathrm{div}(u_1 \cdots u_e)$ are specified, $1 \leq e \leq n$.
We assume that
$$
\mathrm{char}{S\over(u_j)}=p, \ 1\leq j \leq e,
$$
which is implied by assumption \textbf{(E)}, Definition~\ref{conditionE}.} We first adapt and simplify notations
as much as possible in order to fit with the forthcoming computations.

\begin{nota}\label{notacenter}
Let $W \subseteq E$ be a regular closed subset of
$\mathrm{Spec}S$ having normal crossings with $E$. For some suitable r.s.p. $(u_1, \ldots ,u_n)$ adapted to $E$ as above,
we may write
$$
I(W):=I_J=(\{u_j\}_{j \in J})\subset S \ \mathrm{for} \  \mathrm{some} \ J \subseteq \{1,\ldots ,n\}.
$$
Let $J_E:=J \cap \{1,\ldots ,e\}$, $J':=\{1,\ldots ,n\}\backslash J$, so $(J')_E=\{1,\ldots ,e\} \backslash J_E$.

\smallskip

 {Let $\cal{O}_W:=S/I(W)$ and $\overline{u}_{j'} \in \cal{O}_W$ be the image of $u_{j'}$, $j' \in J'$,
so}
 {$$
\overline{m}_S:=m_{\cal{O}_W}= (\{\overline{u}_{j'}\}_{j' \in J'}).
$$}

 {The $\overline{m}_S$-adic completion of $\cal{O}_W$ is  denoted by $\widehat{\cal{O}_W}$.}
 {The algebra $\mathrm{gr}_{\mathbf{1}}S$ of Definition \ref{definh} is denoted by:
$$
G(W):=\mathrm{gr}_{I(W)}S \simeq \cal{O}_W[\{U_j\}_{j \in J}].
$$}
Since $W \subseteq E$, we have:
$$
\mathrm{char}\ G(W)=\mathrm{char}k =p>0.
$$
The initial form $\mathrm{in}_{\mathbf{1}}h$ w.r.t. the weight vector
$\mathbf{1} \in \R^J_{>0}$ is now denoted
$$
\mathrm{in}_{W}h =X^p + \sum_{i=1}^p{F_{i,X,W}}X^{p-i} \in G(W)[X],
$$
with $F_{i,X,W} \in G(W)_{i\delta_{\mathbf{1}} (h;u_1,\ldots,u_n;X)}$, $1 \leq i \leq p$. \\
\end{nota}

Any local equation of $E$ has an initial form in $G(W)$, and we denote by $E(W)$ the associated divisor. Explicitly:
\begin{equation}\label{eq2414}
E(W):=\mathrm{div}\left (\prod_{j\in J_E}{U_j}\prod_{j'\in
(J')_E}{\overline{u}_{j'}}\right )\subset
\mathrm{Spec}G(W).
\end{equation}
We include in these definitions the case where $W=\mathrm{div}(u_j)$ is an irreducible component of $E$.
This corresponds to  $(J')_E=\{1, \ldots ,e\} \backslash \{j\}$ and
$$
G(W)=S / (u_j)[U_j], \  E(W)=\mathrm{div}\left (U_j\prod_{j'\in (J')_E}{\overline{u}_{j'}} \right ).
$$

 {We now recall the notion of $p$-basis and its connection with differentials and derivatives.}

 {\begin{defn}\label{pbasis}
Let $(\lambda_l)_{l \in \Lambda}$ be a family of elements of $k$. A $p$-monomial \index{p-monomial @ $p$-monomial, Definition~\ref{pbasis}}on $(\lambda_l)_{l \in \Lambda}$
is any element of the form:
$$
\prod_{l\in \Lambda}\lambda_l^{i_l}, \ 0 \leq i_l \leq p-1, \ i_l=0 \ \text{for almost all} \ l\in \Lambda.
$$
The family $(\lambda_l)_{l \in \Lambda}$ is called an absolute $p$-basis of $k$ \index{absolute  $p$-basis, Definition~\ref{pbasis}} if the family of all
$p$-monomials on $(\lambda_l)_{l \in \Lambda}$ is a basis of the $k^p$-vector space $k$.
\end{defn}}

 {This condition can be restated in terms of absolute differentials \cite{Ma} Theorem 26.5:}

 {\begin{prop}\label{absolutediff}
Let $(\lambda_l)_{l \in \Lambda}$ be a family of elements of $k$. The following properties are equivalent:
\begin{itemize}
  \item [(1)] $(\lambda_l)_{l \in \Lambda}$ is an absolute $p$-basis of $k$;
  \item [(2)] $(d\lambda_l)_{l \in \Lambda}$ is a basis of
the $k$-vector space of absolute differentials $\Omega_k$.
\end{itemize}
\end{prop}}

 {In particular, this proves that absolute $p$-bases of $k$ do exist. The corresponding family of derivations
is denoted by $({\partial \hfill{}\over \partial \lambda_l})_{l \in \Lambda}$.
They are defined by
$$
{\partial \lambda_{l'}\over \partial \lambda_l}=\delta_{l,l'}, \ l,l' \in \Lambda
$$
where $\delta_{l,l'}$ is the Kronecker symbol.}

\smallskip

 {For $A$ a ring and $M$ an $A$-module, we denote by $\mathrm{Der}(A,M)$ the $A$-module of derivations
of $A$ with values in $M$. The module $\mathrm{Der}(A,A)$ is simply denoted by $\mathrm{Der}(A)$. For every $k$-vector space $M$, we  have:
\begin{equation}\label{eq2450}
\mathrm{Der}(k,M)=\mathrm{Hom}_k(\Omega_k,M)\simeq M^\Lambda
\end{equation}
as $k$-vector spaces. We allow $\Lambda$ infinite in this construction. Note the strict inclusion
$$
\mathrm{Vect}_k(({\partial \hfill{}\over \partial \lambda_l})_{l \in \Lambda})\varsubsetneq \mathrm{Der}(k)
$$
when $\Lambda$ is infinite.}\\

 {The following is Cohen's Structure Theorem stated in a constructive way \cite{Ma} Theorem 28.3 and Lemma 1 on p. 216.
\begin{prop}\label{Cohen}\textbf{(I.S. Cohen)}
Let $(\gamma_l)_{l \in \Lambda}$ be a family of units in $\cal{O}_W$ whose residue $(\lambda_l)_{l \in \Lambda}$
is an absolute $p$-basis of $k$. There exists a unique ring isomorphism
$$
\phi : \ \widehat{\cal{O}_W} {\buildrel \sim \over \longrightarrow} k[[\{X_{j'}\}_{j'\in J'}]]
$$
such that $\phi (\gamma_l)=\lambda_l$ for $l\in \Lambda$, $\Phi (\overline{u}_{j'})=X_{j'}$ for $j'\in J'$,
and $\phi$ induces the identity map $k=\widehat{\cal{O}_W}/\overline{m}_S \longrightarrow k$ on residue classes.
\end{prop}
A slight abuse of notations allows us to write
$$
{\partial \hfill{}\over \partial \overline{u}_{j'}}:=\phi^{-1} \circ {\partial \hfill{}\over \partial X_{j'}} \circ \phi \in \mathrm{Der}(\widehat{\cal{O}_W}).
$$
Let $D \in \mathrm{Der}(k,k[[\{X_{j'}\}_{j'\in J'}]])$ act coefficientwise on $k[[\{X_{j'}\}_{j'\in J'}]]$, i.e.
$$
D \cdot \left (\sum_{\mathbf{a}\in \N^{J'}}\mu_\mathbf{a}X^\mathbf{a}\right )=  \sum_{\mathbf{a}\in \N^{J'}}(D \cdot \mu_\mathbf{a})X^\mathbf{a}.
$$
The isomorphism $\phi$ then provides an inclusion
$$
\mathrm{Der}(k,k[[\{X_{j'}\}_{j'\in J'}]])\subseteq \mathrm{Der}(\widehat{\cal{O}_W}), \ D \mapsto \phi^{-1} \circ D \circ \phi
$$
and its image will be simply denoted by $\mathrm{Der}(k,\widehat{\cal{O}_W})$ (called ``derivations w.r.t. to constants'').
Collecting together, we have a decomposition
\begin{equation}\label{eq2451}
\mathrm{Der}(\widehat{\cal{O}_W})=\mathrm{Der}(k,\widehat{\cal{O}_W}) \oplus
\left ( \bigoplus_{j'\in J'} \widehat{\cal{O}_W} {\partial \hfill{}\over \partial \overline{u}_{j'}}\right ).
\end{equation}
This is because derivations of $\widehat{\cal{O}_W}$ are continuous for the $\overline{m}_S$-adic topology,
so they are determined by their action on coefficients and variables.} Let:
\begin{equation}\label{eq2452}
\widehat{G(W)}:=\widehat{\cal{O}_W}\otimes_{\cal{O}_W}G(W) {\simeq \widehat{\cal{O}_W}[\{U_j\}_{j\in J}]}.
\end{equation}
\index{$\widehat{G(W)} $, equation~\eqref{eq2452}}

%We also denote $\widehat{G(W)}:=G(W)\otimes_{S_W}\widehat{\cal{O}_W}$. In the special case $W=\{m_S\}$,
%we thus have $\widehat{G(m_S)}=G(m_S)$.

%Let $(\lambda_l)_{l \in \Lambda_0}$ be an absolute $p$-basis of $S/m_S$.

 {We now introduce the $\widehat{G(W)}$-module of absolute derivations of $\widehat{G(W)}$ which respect
the logarithmic structure given by $E(W)$, {\it viz.} (\ref{eq2414}).
\begin{defn}\label{D(W)}
With notations as above, let:
$$
{\cal D}(W):=\{D \in \mathrm{Der}(\widehat{G(W)}) : \ D \cdot I(E(W))\subseteq I(E(W))\}.
$$ \index{${\cal D}(W) $, Definition~\ref{D(W)}}
\end{defn}}

 {Once an isomorphism
$\phi : \ \widehat{\cal{O}_W} {\buildrel \sim \over \longrightarrow} k[[\{X_{j'}\}_{j'\in J'}]]$
has been chosen (Proposition \ref{Cohen}),} ${\cal D}(W)$ is generated as a
$\widehat{G(W)}$-module by those derivations w.r.t. constants $\mathrm{Der}(k,\widehat{G(W)})\simeq (\widehat{G(W)})^\Lambda$,
{\it viz.} (\ref{eq2450}), together with the finite family

\begin{equation}\label{eq2412}
{\cal B}(W):=\left (
\begin{array}{c}
\{U_j{\partial \hfill{}\over \partial U_j} \}_{j \in J_E},
\{{\partial \hfill{}\over \partial U_j}\}_{j \in J \backslash J_E} ,
\{\overline{u}_{j'}{\partial \hfill{}\over \partial
\overline{u}_{j'}}\}_{j' \in (J')_E}, \hfill{} \\
\{{\partial \hfill{}\over \partial
\overline{u}_{j'}}\}_{j' \in J' \backslash (J')_E} \hfill{}
\end{array}
\right ).
\end{equation}

Since $S_W$ is excellent and integrally closed, we have
$(\widehat{\cal{O}_W})^p \cap \cal{O}_W=(\cal{O}_W)^p$. In particular, we get
$\widehat{G(W)}^p \cap G(W)= G(W)^p$.
Therefore for $F \in G(W)$, there is an equivalence:
\begin{equation}\label{eq241}
    \forall D \in {\cal D}(W), \ D \cdot F=0 \Leftrightarrow F \in G(W)^p.
\end{equation}

\begin{defn}\label{calV}
Let $F \in G(W)_d$ be  {\it homogeneous} of degree $d\geq 1$. We define a homogeneous $\cal{O}_W$-submodule of $G(W)_{d-1}$ as follows:
\begin{equation}\label{eq2411}
{\cal V}(F,E,W):=\sum_{j \in J \backslash J_E}\cal{O}_W{\partial F \over \partial U_j} \subseteq G(W)_{d-1}.
\end{equation} \index{${\cal V}(F,E,W)$, Definition~\ref{calV}}
\end{defn}

 {\begin{prop}\label{calVindep}
Let $F \in G(W)_d$. The $\cal{O}_W$-module ${\cal V}(F,E,W)$ is independent of the choice of an isomorphism $\phi$
as in Proposition \ref{Cohen} and of an adapted r.s.p. $(u_1, \ldots ,u_n)$ of $S$ such that $I(W)=(\{u_j\}_{j \in J})$. Furthermore, we have:
$$
\sum_{j \in J \backslash J_E}\widehat{\cal{O}_W}{\partial F \over \partial U_j} = \widehat{\cal{O}_W}\otimes_{\cal{O}_W}{\cal V}(F,E,W),
$$
where the left hand side module is computed in $\widehat{G(W)}_d$.
\end{prop}}

 {\begin{proof}
Obvious from the definitions.
\end{proof}}

\begin{defn}\label{D_W}
With notations as above, let:  \index{${\cal D}_W $, Definition~\ref{D_W}}
$$
{\cal D}_W := \{ D \in {\cal D}(W) : D \cdot \left ({I(W)\over I(W)^2}\right )\subseteq
\left ({I(W)\over I(W)^2}\right )\} \subseteq {\cal D}(W).
$$
\end{defn}

 {Once an isomorphism
$\phi : \ \widehat{\cal{O}_W} {\buildrel \sim \over \longrightarrow} k[[\{X_{j'}\}_{j'\in J'}]]$
has been chosen, ${\cal D}_W$ is generated as a
$\widehat{G(W)}$-module by $\mathrm{Der}(k,\widehat{G(W)})$ together with the finite family}

 {\begin{equation}\label{eq242}
{\cal B}_W:=\left (
\begin{array}{c}
\{U_j{\partial \hfill{}\over \partial U_j} \}_{j \in J_E},
\{U_{j_1}{\partial \hfill{}\over \partial U_j}\}_{j \in J \backslash J_E, j_1\in J},
 \{\overline{u}_{j'}{\partial \hfill{}\over \partial
\overline{u}_{j'}}\}_{j' \in (J')_E}, \hfill{} \\
\{{\partial \hfill{}\over \partial
\overline{u}_{j'}}\}_{j' \in J' \backslash (J')_E} \hfill{}
\end{array}
\right ) .
\end{equation}}

%\begin{equation}\label{eq2412}
%{\cal B}(W):=\left (\{U_j{\partial \hfill{}\over \partial U_j} \}_{j \in J_E},
%\{{\partial \hfill{}\over \partial U_j}\}_{j \in J \backslash J_E} ,
%\{\overline{u}_{j'}{\partial \hfill{}\over \partial
%\overline{u}_{j'}}\}_{j' \in (J')_E}, \{{\partial \hfill{}\over \partial
%\overline{u}_{j'}}\}_{j' \in J' \backslash (J')_E} \right ).
%\end{equation}
%Note that for $D \in {\cal D}(W)$, we have
%\begin{equation}\label{eq242}
%    D \in {\cal D}_W \Leftrightarrow \forall j \in J \backslash J_E, \
%(D)_{{\partial \hfill{}\over \partial U_j}} \in (I(W)/I(W)^2) \widehat{G(W)},
%\end{equation}
Note that there is an equivalence
\begin{equation}\label{eq243}
    {\cal D}_W = {\cal D}(W) \Leftrightarrow W \ \mathrm{is} \
\mathrm{an} \  \mathrm{intersection} \ \mathrm{of} \
\mathrm{components} \  \mathrm{of} \ E.
\end{equation}

\begin{defn}\label{calJ}
Let $F \in G(W)_d$ be  {\it homogeneous} of degree $d\geq 1$. We define a homogeneous $\widehat{\cal{O}_W}$-submodule of
$\widehat{G(W)}_d=\widehat{\cal{O}_W}\otimes_{\cal{O}_W}G(W)_d$ as follows: \index{${\cal J}(F,E,W)$, Definition~\ref{calJ}}
\begin{equation}\label{eq2431}
{\cal J}(F,E,W):=\mathrm{cl}_{d}({\cal D}_W \cdot F ) \subseteq \widehat{G(W)}_d.
\end{equation}
\end{defn}

 {The ${\cal J}(F,E,W)$-version of  Proposition \ref{calVindep} goes as follows:}
\begin{prop}\label{calJindep}
 {Let $F \in G(W)_d$. The $\widehat{\cal{O}_W}$-module ${\cal J}(F,E,W)$ is independent of the choice of an isomorphism $\phi$
as in Proposition \ref{Cohen} and of an adapted r.s.p. $(u_1, \ldots ,u_n)$ of $S$ such that $I(W)=(\{u_j\}_{j \in J})$.}

\smallskip

 {For any such choice of $\phi$ and $(u_1, \ldots ,u_n)$, there exists a finite subset $\Lambda_F\subseteq \Lambda$ such that
$$
{\cal J}(F,E,W) = \sum_{l\in \Lambda_F}\widehat{\cal{O}_W}{\partial F \over \partial \lambda_l} +
\sum_{D \in {\cal B}_W}\widehat{\cal{O}_W} (D \cdot F).
$$}
\end{prop}

 {\begin{proof}
The first statement is trivial since ${\cal D}(W)$ and ${\cal D}_W$ do not depend on any choice of $\phi$ and $(u_1, \ldots ,u_n)$.
To prove the second part of the proposition, we let: \index{${\cal J}_0(F,E,W) $, proof of Proposition~\ref{calJindep}}
$$
{\cal J}_0(F,E,W) = \sum_{l\in \Lambda}\widehat{\cal{O}_W}{\partial F \over \partial \lambda_l} +
\sum_{D \in {\cal B}_W}\widehat{\cal{O}_W} (D \cdot F) \subseteq {\cal J}(F,E,W) .
$$
Since $\widehat{G(W)}_d$ is a finite module over the Noetherian ring $\widehat{\cal{O}_W}$, it is sufficient to prove that
${\cal J}_0(F,E,W) = {\cal J}(F,E,W)$. Let  $n\in \N$ be fixed. We expand
$$
F=\sum_{\mid \mathbf{i}\mid =d} \sum_{\mid \mathbf{a}\mid \leq n}
{\lambda_{\mathbf{i}, \mathbf{a}}\overline{u}^{\mathbf{a}}  U^{\mathbf{i}}}+F_n,
$$
with $F_n \in \overline{m}_S^{n+1}\widehat{G(W)}_d$ and $\lambda_{\mathbf{i}, \mathbf{a}}\in k$ (via the isomorphism $\phi$).
Since $(\lambda_l)_{l\in \Lambda}$ is an absolute $p$-basis of $k$, there exists a finite subset $\Lambda_0\subset \Lambda$
such that
$$
\lambda_{\mathbf{i}, \mathbf{a}}=\sum_{M\in {\cal M}_0}(\lambda_{\mathbf{i}, \mathbf{a},M})^pM,
\ \lambda_{\mathbf{i}, \mathbf{a},M} \in k,
$$
where ${\cal M}_0$ is the finite family of all $p$-monomials on $(\lambda_{l_0})_{l_0\in \Lambda_0}$. Let $D\in \mathrm{Der}(k,\widehat{\cal{O}_W})$. We deduce that
$$
D \cdot F \in {\cal J}_0(F,E,W) + \overline{m}_S^{n+1}\widehat{G(W)}_d .
$$
Since this holds for arbitrary $n\geq 0$, $D \cdot F$ belongs to the topological closure of ${\cal J}_0(F,E,W)$
in $\widehat{G(W)}_d$ for the $\overline{m}_S$-adic topology of $\widehat{G(W)}_d$. Therefore $D \cdot F \in {\cal J}_0(F,E,W)$
as required \cite{Ma} Theorem 8.6.
\end{proof}}

Let $H_W$ be the initial form in $G(W)$ of the monomial ideal $H(x)\subseteq S$ (Definition \ref{defH}),
where $x \in \eta^{-1}(m_S)$, i.e.  \index{$H_W$, equation~\eqref{eq2441}}
\begin{equation}\label{eq2441}
H_W:=\left (\prod_{j\in J_E}{U_j^{H_j}}\prod_{j'\in (J')_E}{\overline{u}_{j'}^{H_{j'}}}\right )\subseteq G(W)_{d_W},
\end{equation}
where $d_W:=\sum_{j\in J_E}H_j$. For $F \in H_WG(W)_{d-d_W}$, it follows from the above definitions that
$$
{\cal V}(F,E,W)\subseteq H_W G(W)_{d-d_W-1} \ \mathrm{and}
\ {\cal J}(F,E,W)\subseteq H_W\widehat{G(W)}_{d-d_W}.
$$
For such $F \in H_WG(W)_{d-d_W}$, we denote: \index{$V(F,E,W) $, equation~\eqref{eq244}} \index{$J(F,E,W) $, equation~\eqref{eq244}}
\begin{equation}\label{eq244}
\left\{
  \begin{array}{ccccc}
    V(F,E,W) & := & H_W^{-1}{\cal V}(F,E,W) & \subseteq  & G(W)_{d-d_W-1}, \\
     & & & & \\
    J(F,E,W) & := & H_W^{-1}{\cal J}(F,E,W) & \subseteq  & \widehat{G(W)}_{d-d_W}. \\
  \end{array}
\right.
\end{equation}

For $F=F_{p,X,W}\in H_W G(W)_{d-d_W}$, this defines the submodules
$$
V(F_{p,X,W},E,W)\subseteq G(W)_{d-d_W-1} \ \mathrm{and} \ J(F_{p,X,W},E,W)\subseteq \widehat{G(W)}_{d-d_W}.
$$
We will continually apply this definition when the following
properties (i) and (ii) hold:
\begin{itemize}
  \item [(i)] $(u_1,\ldots,u_n;X)$ are well adapted coordinates at $x \in \eta^{-1}(m_S)$
(Definition \ref{defwelladapted}), and
  \item [(ii)] $d-d_W=\epsilon (y)$ with $ \eta^{-1}(s)=\{y\}$, $s$ the generic point of $W$.
\end{itemize}
Note that $F_{p,X,W}\in H_W G(W)_{d-d_W}$ is then a consequence of Definition \ref{defepsilon}
and Proposition \ref{epsiloninv}.

\smallskip

Some considerations will require localizing $S$ at some point $s \in W $. We then
denote by $W_s$ the stalk of $W$ at $s$. This notation is used jointly
with Notation \ref{notaprime} {\it sqq.} about the stalk $E_s$. The restriction of
$s$ is denoted by $\overline{s} \in \mathrm{Spec}\cal{O}_W=G(W)_0$. We have
$$
G(W_s)=\mathrm{gr}_{I(W_s)}S_s\simeq (\cal{O}_W)_{\overline{s}}[\{U_j\}_{j \in J}].
$$
%Consistently $\mathrm{in}_{W_s}h \in G(W_s)[X]$ denotes the initial form. The above construction
%thus allows to associate to any {\it homogeneous} element $F \in G(W_s)_d$ homogeneous submodules
%$$
%{\cal V}(F,E_s,W_s)\subseteq G(W_s)_{d-1}, \ {\cal J}(F,E_s,W_s) \subseteq \widehat{G(W_s)}_d.
%$$
\begin{exam}
 {$S:=k[u_1,u_2,u_3]_{(u_1,u_2,u_3)}$, char$(k)=p>0$, $E=$div$(u_1u_2)$, $W=\{m_S\}$, $x\in$ Spec$S[Z]$ is the point of parameters $(Z,u_1,u_2,u_3)$. Let us study two different equations:}

\noindent  {$h_1:=Z^p+u_1^a u_2^b(u_3^{p+1}+\phi)\in S[Z],\ a,b \in \N_{>0},\ \phi\in m_S^{p+2},$}

\noindent  {$h_2:=Z^p+u_1^a u_2^b( u_3^{p}+\phi)\in S[Z],\ a,b \in \N_{>0},\ a\ \mathrm{or}\ b\not=0 \mathrm{mod}\ p,\ \phi\in m_S^{p+1}$,}

\noindent  {in both cases, $H_W:=<U_1^a U_2^b> $, $d_W=a+b$, $G(W)=\widehat{G(W)}=k[U_1,U_2,U_3]$.
}

\bigskip
\noindent  {In the first case,
$$\mathrm{in}_{W}h_1 =X^p +U_1^{a} U_2^{b}U_3^{p+1},\
d=a+b+p+1,\ F=U_1^{a} U_2^{b}U_3^{p+1}\in k[U_1,U_2,U_3]_d,$$
$${\cal V}(F,E,W)=<U_1^{a} U_2^{b}U_3^p>\subseteq k[U_1,U_2,U_3]_{d-1}, \  V(F,E,W)=<U_3^{p}>,$$
$${\cal J}(F,E,W)=<U_1^{a} U_2^{b}U_3^{p+1}>\subseteq k[U_1,U_2,U_3]_{d},\ { J}(F,E,W)=<U_3^{p+1}>.$$}

\noindent  {In the second case,
$$ \ \mathrm{in}_{W}h_2 =X^p +U_1^{a} U_2^{b}U_3^{p},\
d=a+b+p,\ F=U_1^{a} U_2^{b}U_3^{p}\in k[U_1,U_2,U_3]_d,$$
$${\cal V}(F,E,W)=<0>\subseteq k[U_1,U_2,U_3]_{d-1}, \ { V}(F,E,W)=<0>,$$
$${\cal J}(F,E,W)=<U_1^{a}U_2^{b}U_3^{p}>\subseteq k[U_1,U_2,U_3]_{d},\ {J}(F,E,W)=<U_3^{p}>.$$
}
\end{exam}

\bigskip

\begin{rem}\label{Nagataftype}
Formal coordinates and Nagata derivatives can be avoided if one assumes that
\begin{equation}\label{eq2415}
E=\mathrm{Spec}(S/(u_1 \cdots u_e))\subset \mathrm{Spec}S
\end{equation}
is essentially of finite type over some field. We explain below how
Zariski's Jacobian criterion \cite{Ma} Theorem 30.5 (2) can be used to avoid
introducing formal coefficients in defining ${\cal J}(F,E,W)$. We do not know any such alternative
description for arbitrary excellent regular local rings of characteristic $p>0$.   {We point out recent developments due to C.~Abad \cite{Abad}  who extends the existence of $p$-basis and validity of the Jacobian criterion to affine neighborhoods of a regular point (instead of its local ring).}

\smallskip

The extra assumption (\ref{eq2415}) is satisfied for example when $E$ is contained
in the closed fiber of some previously performed blowing ups.
In dimension three, this extra property is easily achieved from embedded resolution theorems
in smaller dimensions, {\it vid.} Lemma \ref{imagepoints}.
\end{rem}

 {For the remainder of this section, we consider a field $k$, $\mathrm{char}k=p>0$, and an absolute $p$-basis $(\lambda_l)_{l\in \Lambda}$.
Let $S$ be a regular local ring  which is essentially of finite type over $k$. This means that
for some polynomial ring $R:=k[X_1, \ldots ,X_{N}]$, we have
$$
S:=\left ({R \over IR}\right )_P, I \subset R \hbox{ an ideal}, \ P \in \mathrm{Spec}R, \ P \in V(I).
$$
Let  $x_i \in S$ denote the image of $X_i$, $1 \leq i \leq N$. Let:
$$
k(P):=S/P, \ t:=\mathrm{tr.deg}_kk(P), \ r:= \mathrm{ht}(IR_P), \ n:=\mathrm{dim}S=N-r-t .
$$
The following is the necessary condition in Zariski's Jacobian criterion for regularity applied to $S$ \cite{Ma} Theorem 30.5.}

 {\begin{prop}\textbf{(Zariski)}
With notations as above, there exists $F_1, \ldots ,F_r \in I$ and a finite subset $\Phi \subseteq \Lambda$
such that the Jacobian matrix
$$
J(F_1, \ldots ,F_r; \{{\partial \hfill{} \over \partial \lambda_l}\}_{l\in \Phi}, {\partial \hfill{} \over \partial X_1},
\ldots ,{\partial \hfill{} \over \partial X_{n+r}})
$$
has a $r \times r$ minor with nonzero residue in $k(P)$.
\end{prop}}

 {The following proposition is merely a rewriting of Zariski's Jacobian criterion for regularity
from the point of view of explicit computations. Its proof is elementary linear algebra.}

 {\begin{prop}\label{explicitZariski}
With notations as above, there exists finite subsets
$$
\Phi \subseteq \Lambda, \ \Psi \subseteq  \{1, \ldots ,N\}, \ \mid \Psi \mid =t + \mid \Phi \mid
$$
 with the following properties:
\begin{itemize}
  \item [(1)] let $\Lambda ':=(\Lambda \backslash \Phi) \coprod \Psi$ and define
  $$
  \lambda'_{l'}:=
  \left\{
  \begin{array}{ccc}
    \lambda_l & \mathrm{if} & l' \in  \Lambda \backslash \Phi \\
     & &  \\
    x_{l'} & \mathrm{if} & l'\in \Psi \\
  \end{array}
\right.
;
$$
then the residue family $(\overline{\lambda'_{l'}})_{l'\in \Lambda '}$ of  $(\lambda'_{l'})_{l'\in \Lambda '}$ in $k(P)$ is an absolute $p$-basis of $k(P)$.
  \item [(2)] for every r.s.p. $(u_1, \ldots ,u_n)$ of $S$, the family $((d\lambda'_{l'})_{l'\in \Lambda '}, du_1, \ldots ,du_n)$ is a basis of the free module $\Omega_S$ of absolute differentials;
  \item [(3)] the family of all $p$-monomials on $((\lambda'_{l'})_{l'\in \Lambda '}, u_1, \ldots ,u_n)$ is a basis
  of the free $S^p$-module $S$.
\end{itemize}
\end{prop}}

\begin{proof}
 {First choose $x_{i_1}, \ldots , x_{i_t}$ whose residues in $k(P)$ are a transcendence basis of $k(P)$ over $k$.
We may replace $k$ with $=k(x_{i_1}, \ldots , x_{i_t})$, $\Lambda$ with $\Lambda  \coprod \{i_1, \ldots ,i_t\}$ and
$\{1, \ldots ,N\}$ with $\{1, \ldots ,N\} \backslash \{i_1, \ldots ,i_t\}$ and thus assume that $P$ is a maximal ideal.}

\smallskip

 {We first prove the proposition when $I=(0)$, so $S=R_P$. Since (1) only refers to the residue field $k(P)$,
we will only have to prove (2) and (3) for arbitrary $I$.}

\smallskip

 {By elementary field theory, e.g. \cite{Ma} Theorem 5.1, $P=(G_1, \ldots , G_N)$, where
$$
G_j =X_j^{m_j} +\sum_{i=1}^{m_j}G_{j,i}(X_1, \ldots , X_{j-1})X_j^{m_j-i}, \ m_j\geq 1, \ G_{j,i} \in k[X_1, \ldots , X_{j-1}]
$$
for $1 \leq j \leq N$. We have
\begin{equation}\label{eq246}
\Omega_{R_P}=\left (\bigoplus_{l\in \Lambda}R_Pd \lambda_l \right ) \oplus \left ( \bigoplus_{j=1}^N R_PdX_j\right ).
\end{equation}}

 {We use induction on $j$, $1 \leq j \leq N$, to construct finite subsets
$\Phi_{j}\subseteq \Lambda $ and $\Psi_j\subseteq \{1, \ldots ,j\}$, $\mid \Psi_{j}\mid=\mid \Phi_{j}\mid$
such that
\begin{equation}\label{eq2461}
\{d \lambda_l\}_{l\in \Lambda \backslash \Phi_j}, \ \{dX_{i}\}_{i\in \Psi_j}, \ dG_1, \ldots ,dG_j, \ dX_{j+1}, \ldots ,dX_N
\end{equation}
is a basis of $\Omega_{R_P}$, and the residue family
\begin{equation}\label{eq2462}
(\overline{\lambda_{l}})_{l\in \Lambda  \backslash \Phi_j}, \ \{d\overline{X}_{i}\}_{i\in \Psi_j}
\end{equation}
form an absolute  $p$-basis of $k_j:=k[\overline{X}_{1}, \ldots , \overline{X}_{j}]$. Take $\Phi_0=\emptyset$ to begin with
and assume that $\Phi_{j-1}$ and $\Psi_{j-1}$ have been constructed. Apply the following algorithm:}\\

 {\noindent (A1) if ${\partial G_j\over \partial X_j}\neq 0$, take $\Phi_j=\Phi_{j-1}$, $\Psi_j=\Psi_{j-1}$; otherwise
go to (A2);}

\smallskip

 {\noindent (A2) pick $i$, $1 \leq i \leq m_j$ such that $\mu_{j,i}:=G_{j,i}(\overline{X}_1, \ldots , \overline{X}_{j-1})\not \in k_{j-1}^p$
and go to (A3);}

\smallskip

 {\noindent (A3) choose any $l'_j\in (\Lambda  \backslash \Phi_{j-1}) \coprod \Psi_{j-1}$ such that
${\partial \mu_{j,i} \over \partial \lambda_{l'_j}}\neq 0$ or ${\partial \mu_{j,i} \over \partial \overline{X}_{l'_j}}\neq 0$;
take  ($\Phi_j=\Phi_{j-1}\cup \{l'_j\}$, $\Psi_j=\Psi_{j-1}\cup \{j\}$), or ($\Phi_j=\Phi_{j-1}$, $\Psi_j=(\Psi_{j-1}\backslash \{l'_j\})\cup \{j\}$)
accordingly.} \\

 {The natural map $k_j\otimes_{k_{j-1}}\Omega_{k_{j-1}}\longrightarrow \Omega_{k_j}$
is an isomorphism when step (A1) applies. When step (A2) applies, there is an exact sequence
$$
0\longrightarrow k_j\longrightarrow
k_j\otimes_{k_{j-1}}\Omega_{k_{j-1}} \longrightarrow \Omega_{k_j} \longrightarrow k_j \longrightarrow 0.
$$
Step (A3) then chooses a splitting $1 \mapsto d\overline{X}_j$ of the cokernel  and a nonzero coefficient w.r.t
to the basis $(1\otimes\overline{\lambda_{l}})_{l\in \Lambda  \backslash \Phi_{j-1}}, \ \{1\otimes d\overline{X}_{i}\}_{i\in \Psi_{j-1}}$
for the generator  of the kernel $\sum_{i=1}^{m_j}\overline{X}_j^{m_j-i}\otimes dG_{j,i}(\overline{X}_1, \ldots , \overline{X}_{j-1})$. Applying (\ref{eq2462}) for $j=N$, this completes the proof of (1).} \\

 {Applying (\ref{eq246}) together with (\ref{eq2461}) for $j=N$, we get (2) for $I=(0)$. For $I$ arbitrary,
there is an exact sequence
$$
0 \longrightarrow \left ({IR_P \over (IR_P)^2}\right )\otimes_Sk(P)\longrightarrow \Omega_{R_P}\otimes_{R_P}k(P)
\longrightarrow \Omega_S\otimes_Sk(P)\longrightarrow 0,
$$
where exactness on the left holds because $S$ is regular. Taking preimages of $u_1, \ldots ,u_n$ in $R_P$
defines a splitting on the right and we get (2) for arbitrary $I$.}\\

 {Finally, we consider the derivations
\begin{equation}\label{eq2463}
({\partial \hfill{} \over \partial \lambda'_{l'}})_{l'\in \Lambda '},
{\partial \hfill{} \over \partial u_1}, \ldots ,{\partial \hfill{} \over \partial u_n} \in \mathrm{Der}(S)
\end{equation}
corresponding to the basis of the free module $\Omega_S$ given by (2). Let $K$ be the quotient field of $S$.
By Proposition \ref{absolutediff}, the family of all $p$-monomials on
$((\lambda'_{l'})_{l'\in \Lambda '}, u_1, \ldots ,u_n)$ is a basis of the $K^p$-vector space $K$.
Let $f \in S$ and
$$
f= \sum_{\mathbf{a},\mathbf{b}}(f_{\mathbf{a},\mathbf{b}})^p {\lambda '}^{\mathbf{a}}u^{\mathbf{b}}
$$
be the corresponding expansion, where $f_{\mathbf{a},\mathbf{b}} \in K$. Applying the derivations
in (\ref{eq2463}) and arguing by induction w.r.t. the graded lexicographical ordering, we get
$$
f_{\mathbf{a},\mathbf{b}} \in S \cap K^p=S^p.
$$
This concludes the proof of (3).}
\end{proof}

 {\begin{exam}
Let $k_0$ be a perfect field of characteristic $p>0$, $k:=k_0(\lambda_1,\lambda_2)$, $\lambda_1,\lambda_2$ indeterminates. Take $R=k[X_1,X_2]$, $I=(F)$, with
$$
F:= X_1^p +\lambda_1X_2^p +\lambda_2, \ P:=(X_1^p -\lambda_1\lambda_2, X_2^p+\lambda_2 +{\lambda_2 \over \lambda_1}).
$$
The above algorithm leads to: $\lambda'_1=x_1$, $\lambda'_2=x_2$, $u_1=x_1^p -\lambda_1\lambda_2$. The reader may check that
$$
k(P)=k_0(\overline{x}_1, \overline{x}_2)[\overline{\lambda}_1],
\ \overline{\lambda}_1^2 +\left ({\overline{x}_1 \over \overline{x}_2}\right )^p \overline{\lambda}_1
+  \left ({\overline{x}_1 \over \overline{x}_2}\right )^p =0,
$$
and that the residue class map gives an isomorphism $k(P)\simeq k^{{1 \over p}}$.\\
\end{exam}}

 {We now go back to the framework of the beginning of this section, see Notation \ref{notacenter},
Definition \ref{calJ} and Proposition \ref{calJindep}.}

\begin{prop}\label{defJftype}
 {Assume that $\cal{O}_W$ is essentially of finite type over some field.
Let $(b_a)_{a\in A} $ be a family of elements of $\cal{O}_W$ containing
$\{\overline{u}_{j'}\}_{j' \in (J')_E}$, and such that the family $(db_a)_{a\in A} $
forms a basis of the free $\cal{O}_W$-module
$\Omega_{\cal{O}_W}$. Write $b_{a_{j'}}=\overline{u}_{j'}$ for  $j'\in (J')_E$.
Let $F \in G(W)_d$ and define:}

 {\begin{equation}\label{eq2464}
{\cal J}'(F,E,W):= \left (
\begin{array}{c}
\{U_j{\partial F\over \partial U_j} \}_{j \in J_E},
\{U_{j_1}{\partial F \over \partial U_j}\}_{j \in J \backslash J_E, j_1\in J},
  \hfill{} \\
\{\overline{u}_{j'}{\partial F\over \partial
\overline{u}_{j'}}\}_{j' \in (J')_E}, \{{\partial F\over \partial
\overline{b}_a}\}_{a \in A \backslash \{a_{j'}\}_{j'\in (J')_E} }\hfill{}
\end{array}
\right ) \subseteq G(W)_d.
\end{equation}}

 {Then  ${\cal J}(F,E,W)=\widehat{\cal{O}_W}\otimes_{\cal{O}_W}{\cal J}'(F,E,W)\subseteq \widehat{G(W)}_d$.}\index{${\cal J}'(F,E,W) $, equation~\eqref{eq2464}}
\end{prop}

\begin{proof}
 {Applying Proposition \ref{explicitZariski}(1)(2), we may assume that
$$
A=\Lambda ' \coprod J', \ (b_a)_{a\in A} =((\lambda'_{l'})_{l'\in \Lambda '}, (\overline{u}_{j'})_{j'\in J'}).
$$}

 {Proposition \ref{Cohen} provides an associated  isomorphism
$$
\phi : \ \widehat{\cal{O}_W} {\buildrel \sim \over \longrightarrow} k[[\{X_{j'}\}_{j'\in J'}]].
$$
Let $f \in \cal{O}_W$. By Proposition  \ref{explicitZariski}(3), there is a finite expansion
$$
f= \sum_{\mathbf{a},\mathbf{b}}(f_{\mathbf{a},\mathbf{b}})^p {\lambda '}^{\mathbf{a}}\overline{u}^{\mathbf{b}}
$$
in terms of $p$-monomials, $f_{\mathbf{a},\mathbf{b}} \in \cal{O}_W$. Applying $\phi$
to this equation, the current proposition follows directly from
Definition \ref{calJ} and Proposition \ref{calJindep}.}
\end{proof}

\subsection{Cones, ridge and directrix.}

In this section, we recollect some facts about the directrix and Hilbert-Samuel stratum of
a homogeneous ideal. These facts are then applied to extract numerical invariants from the vector spaces
$$
V(F_{p,Z},E,m_S)\subseteq G(m_S)_{\epsilon (x)-1} \ \mathrm{and} \ J(F_{p,Z},E,m_S)\subseteq G(m_S)_{\epsilon (x)}
$$
defined in the previous section (\ref{eq244}) when $(u_1,\ldots,u_n;Z)$ are well adapted coordinates
at $x \in \eta^{-1}(m_S)$. These
considerations are based on elementary linear algebra.

 {Theorem~\ref{initform} distinguishes between two different cases for in$_{m_S}h$: (1) purely inseparable, (2) Artin-Schreier. Both vector spaces $V(F_{p,Z},E,m_S)$ and $J(F_{p,Z},E,m_S)$ are easily seen to be independent of the well adapted coordinates $(u_1,\ldots,u_n;Z)$ in case (1) (Proposition~\ref{indiff}(iii)). However, in case~(2), they do depend on $(u_1,\ldots,u_n;Z)$: see Example~\ref{ex:T}. To extract relevant information, we use a truncation map $T$ (Definition~\ref{defT}) which kills all monomials in the expansion of $ F_{p,Z}$ which may vary with $(u_1,\ldots,u_n;Z)$. The relevant information is provided in Proposition~\ref{Tinvariant}. }

\smallskip

Most difficulties in this section appear only for $n \geq 4$, which will eventually
lead us to define our main invariant $\omega (x)$ in a different way than in \cite{CoP2} chapter 1
(for equicharacteristic $S$ of dimension $n=3$) in the next section.\\

Let $k$ be a field, $R_1$ be a $k$-vector space of finite
dimension $n \geq 1$ and $R:=k[R_1]$ be the symmetric algebra. Let
${\mathbf V}:=\mathrm{Spec}R$ and $I$ be a homogeneous ideal of
$R$ which defines a cone $C=C(I):=\mathrm{Spec}(R/I)$. With these notations, we define:

\begin{defn}\label{defdirectrix}
The directrix \index{directrix Vdir$(I)$ of $C=C(I)$, Definition~\label{defdirectrix}} $\mathrm{Vdir}(I)$ of $C=C(I)$
is the smallest $k$-vector subspace $W$ of $R_1$ such that
$I=(I \cap k[W])R$. We denote \index{$\tau (I)$, Definition~\ref{defdirectrix}} \index{$\mathrm{Vdir}(I)$, Definition~\ref{defdirectrix}}
$$
\tau (I):=\mathrm{dim}_k\mathrm{Vdir}(I), \ \mathrm{Dir}(I):=\mathrm{Spec}(R/(\mathrm{Vdir}(I))).
$$
\end{defn}

\begin{defn}\label{defHilbertSamuel}
Let $C=C(F)$ be a hypersurface cone, i.e.  {$F$ is an homogeneous polynomial} and $I=(F)$ is a nonzero
principal ideal. We define a reduced subcone \index{$\mathrm{Max}(F)$, Definition~\ref{defHilbertSamuel}}
$$
\mathrm{Max}(F):= \{ x \in {\mathbf V} :
\mathrm{ord}_xF=\mathrm{ord}_0F\}\subseteq C(F),
$$
where $0$ is the origin (so $\mathrm{ord}_0F=\mathrm{deg}F$).

\smallskip

Given a {\it fixed} degree $d \geq 1$ and an ideal
$I=(F_1, \ldots ,F_m) \subset R$ defined by homogeneous polynomials $F_1, \ldots ,F_m \in R$,
$\mathrm{deg}F_i=d$ for $1 \leq i \leq m$, we let
$$
\mathrm{Max}(I):= \{ x \in {\mathbf V} :
\mathrm{ord}_xF_i=d, 1 \leq i \leq m\}\subseteq C(I).
$$
\end{defn}

The cone $\mathrm{Max}(I)$ is the closed Hilbert-Samuel stratum of $C(I)$.
These two objects and the ridge are considered and connected by H. Hironaka
in a more general context. See also \cite{Gi1} \cite{Gi2} \cite{Po} for definition and
computation of the ridge and Hilbert-Samuel stratum.

\begin{prop}\label{conedirectrix} \textbf{(Hironaka)}\cite{H4}
Let $C=C(F)$ be a hypersurface cone. There are inclusions
$$
 \mathrm{Dir}(F) \subseteq \mathrm{Max}(F) \subseteq C(F).
$$

If $k$ is perfect or if $\mathrm{dim}R \leq p+1$, the left hand side inclusion is an equality.
\end{prop}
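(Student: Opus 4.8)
First I would dispose of the two inclusions. The inclusion $\mathrm{Max}(F)\subseteq C(F)$ is immediate: if $\ord_xF=\deg F=d\geq 1$ then $F\in \mathfrak{m}_x$, so $x\in C(F)$. For $\mathrm{Dir}(F)\subseteq \mathrm{Max}(F)$, choose a basis $(y_1,\ldots ,y_n)$ of $R_1$ for which $(y_1,\ldots ,y_\tau )$, $\tau :=\tau (F)$, is a basis of $\mathrm{Vdir}(F)$; by minimality of $\mathrm{Vdir}(F)$ one has $F\in k[y_1,\ldots ,y_\tau ]$, a homogeneous form of degree $d$. At every point $x$ of $\mathrm{Dir}(F)=\Spec (R/(y_1,\ldots ,y_\tau ))$ the elements $y_1,\ldots ,y_\tau$ are part of a regular system of parameters of $\mathcal{O}_{\mathbf{V},x}$, and since $F$ is a nonzero homogeneous form of degree $d$ in them, $\ord_xF=d$, hence $x\in \mathrm{Max}(F)$.

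For the equality the plan is to squeeze $\mathrm{Max}(F)$ between $\mathrm{Dir}(F)$ and the \emph{ridge} $\mathrm{Rid}(C(F))$ (\cite{Gi1}\cite{Gi2}\cite{Po}), i.e.\ the largest additive subgroup scheme of the affine space $\mathbf{V}$ that stabilizes $F$ under translation. The first link is that $\ord_xF=d$ is equivalent to the identity $F(Y+\bar{x})=F(Y)$ in $\kappa (x)[Y]$, for a lift $\bar{x}$ of $x$: by homogeneity the degree-$d$ part of $F(Y+\bar{x})$ is $F(Y)$, and vanishing of all the lower-degree terms in the Hasse--Schmidt expansion $F(Y+\bar{x})=\sum_\alpha D^{(\alpha )}F(\bar{x})Y^\alpha$ is exactly the condition $\ord_xF\geq d$. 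This gives $\mathrm{Max}(F)\subseteq \mathrm{Rid}(C(F))$ as reduced subschemes. Now assume that, under our hypotheses, the reduced ridge is a linear subspace, say $\mathrm{Rid}(C(F))_{\mathrm{red}}=\Spec (R/(\ell_1,\ldots ,\ell_r))$ with $\ell_1,\ldots ,\ell_r\in R_1$. Being linear and contained in the stabilizer of $F$, it renders $F$ invariant under translation along it, so $F\in k[\ell_1,\ldots ,\ell_r]$ and therefore $\mathrm{Vdir}(F)\subseteq \langle \ell_1,\ldots ,\ell_r\rangle$, i.e.\ $\mathrm{Dir}(F)\supseteq \Spec (R/(\ell_1,\ldots ,\ell_r))=\mathrm{Rid}(C(F))_{\mathrm{red}}$. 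Combined with the first paragraph this closes the chain $\mathrm{Dir}(F)\subseteq \mathrm{Max}(F)\subseteq \mathrm{Rid}(C(F))_{\mathrm{red}}\subseteq \mathrm{Dir}(F)$, so the three coincide.

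Thus everything reduces to the assertion: \emph{if $k$ is perfect or $\dim R\leq p+1$, the reduced ridge of the hypersurface cone $C(F)$ is a linear subspace}. This is Hironaka's theorem \cite{H4}, and proving it is the main obstacle. One invokes the structure of additive subgroup schemes of $\mathbf{V}$: the ideal of $\mathrm{Rid}(C(F))$ is homogeneous and generated by homogeneous additive polynomials, each of the shape $\sigma =\sum_i c_iZ_i^{q}$ for one fixed power $q=p^e$. If $k$ is perfect, $\sigma =\bigl(\sum_i c_i^{1/q}Z_i\bigr)^{q}$ is the $q$-th power of a linear form, so the radical of the ideal is generated by linear forms and the reduced ridge is linear. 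If $\dim R=n\leq p+1$, one first normalizes $F$ to a non-$p$-th power --- harmless, since $\ord_x(G^{p^e})=p^e\,\ord_xG$ changes neither $\mathrm{Dir}$ nor $\mathrm{Max}$ --- and then shows, by a combinatorial count on the $p$-polynomial generators, that the bound $n\leq p+1$ forbids any generator of degree $q=p^e\geq p$; hence all generators are linear and the reduced ridge is again linear. In the present article only $n=3$ occurs, and $3\leq p+1$ for every prime $p$, so the instance actually needed lies safely in this range --- indeed, in dimension three one even has $\mathrm{Max}(x)=\mathrm{Dir}(x)$ unconditionally (cf.\ remark \ref{ridgedimthree}).
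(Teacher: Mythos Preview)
The paper does not supply its own proof of this proposition: it is stated as a result of Hironaka with a bare citation to \cite{H4}, and the text moves directly to remark~\ref{ridgedimthree}. So there is nothing to compare your argument against in the paper itself.

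Your sketch is the standard route and is correct in outline. The two inclusions are elementary, as you say. For the equality you squeeze $\mathrm{Max}(F)$ between $\mathrm{Dir}(F)$ and the reduced ridge, using the translation characterisation $\ord_xF=d\Leftrightarrow F(Y+\bar x)=F(Y)$; the remaining content is exactly Hironaka's theorem that under either hypothesis the ridge of a hypersurface cone is defined by linear forms (after reduction), which you correctly identify and cite. One small point worth making explicit: passing from ``reduced ridge is linear'' to ``$F\in k[\ell_1,\ldots,\ell_r]$'' uses that the reduced scheme is itself a subgroup scheme acting by translation. Over perfect $k$ this is automatic; in the $\dim R\leq p+1$ case your argument shows the ridge is already reduced, so no issue arises.

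Your closing parenthetical is a bit off. Remark~\ref{ridgedimthree} is not asserting that $\mathrm{Max}=\mathrm{Dir}$ holds unconditionally in dimension three; it is recording that for the \emph{full} initial form $\mathrm{in}_{m_S}h$ --- a form in four variables $Z,U_1,U_2,U_3$ --- the equality can fail when $p=2$, and proposition~\ref{tausup2} is needed to handle that exception. What \emph{is} true, and is presumably what you intend, is that the adapted objects $\mathrm{Max}(x)$, $\mathrm{Dir}(x)$ of definition~\ref{deftauprime} are built from forms in $G(m_S)=k(x)[U_1,U_2,U_3]$, so there $\dim R=3\leq p+1$ holds for every prime $p$ and the proposition applies directly.
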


\begin{rem}\label{ridgedimthree}
Counterexamples to the last statement exist for  {non-perfect} $k$ and $\mathrm{dim}R > p+1$.
For $\mathrm{dim}R  \leq 4$, such counterexamples exist only if $\mathrm{dim}R = 4$ and $p=2$.
For applications to the proof of Theorem \ref{luthm}, we only have to deal with this difficulty
for the initial form polynomial ($\mathrm{dim}R  = 4$) which is of the form
$$
\mathrm{in}{ {_{m_S}}} h=Z^2 -\lambda U_1Z +F_{2,Z}, \ F_{2,Z} \in S/m_S[U_1,U_2,U_3]_2, \ \lambda \in S/m_S.
$$
By \cite{H4}, the polynomial $\mathrm{in}{ {_{m_S}}} h$ is a counterexample to the last statement in
Proposition \ref{conedirectrix} if and only if $\lambda =0$ and, up to a linear change of variables,
\begin{equation}\label{eq2609}
\mathrm{in}_{m_S}h =Z^2 + \lambda_2 U_1^2 +\lambda_1U_2^2+\lambda_1 \lambda_2U_3^2
\end{equation}
with $\lambda_1,\lambda_2$ 2-independent, i.e.
$[(S/m_S)^2(\lambda_1,\lambda_2):(S/m_S)^2]=4$.
This very special case is dealt with in Proposition \ref{tausup2}.\\
\end{rem}

Let $(u_1,\ldots,u_n;Z)$ be well adapted coordinates at $x \in \eta^{-1}(m_S)$
(Definition \ref{defwelladapted}). In case
$\epsilon (x)>0$, we have $\eta^{-1}(m_S)=\{x\}$, $k(x)=S/m_S$ (Proposition \ref{deltainv})
and the initial form polynomial has the form
\begin{equation}\label{eq2551}
\mathrm{in}_{m_S} h =Z^p - G^{p-1}Z +F_{p,Z} \in G(m_S)[Z]= S/m_S[U_1, \ldots ,U_n][Z]
\end{equation}
by Theorem \ref{initform} applied to $\alpha =\mathbf{1} \in \R^n_{>0}$. There is an
associated integer $i_0(x)=p-1$ (resp. $i_0(x)=p$) if $G\neq 0$ (resp. if $G=0$). We denote by
$H \subseteq G(m_S)_d$ the initial form vector space of the ideal $H(x)$, $d=\sum_{j=1}^e H_j$
(Definition \ref{defH}).  If $i_0(x)=p-1$, we have
\begin{equation}\label{eq2552}
H^{-1}G^p =< \prod_{j=1}^e{U_j^{pB_j}}>, \ B_j\in {1 \over p}\N \ \mathrm{and} \  \sum_{j=1}^e{pB_j}=\epsilon (x).
\end{equation}
We can restate previous material as follows:

\begin{prop}\label{indiff}
Let $(u_1,\ldots,u_n;Z)$ be well adapted coordinates at $x \in \eta^{-1}(m_S)$ and
assume that $\epsilon (x)>0$. The following  {statements hold}:
\begin{itemize}
    \item [(i)] the vector space $V(F_{p,Z},E,m_S)\subseteq G(m_S)_{\epsilon (x)-1}$ satisfies
$$
V(F_{p,Z},E,m_S) = 0 \Leftrightarrow F_{p,Z}\in S/m_S [U_1, \ldots ,
U_e][U_{e+1}^p, \ldots U_n^p];
$$
    \item [(ii)] the vector space $J(F_{p,Z},E,m_S)\subseteq G(m_S)_{\epsilon (x)}$ satisfies
$$
J(F_{p,Z},E,m_S)= 0 \Leftrightarrow F_{p,Z}\in \left (S/m_S[U_1,\ldots ,U_n]\right )^p;
$$
    \item [(iii)] if $i_0(x) =p$, the vector space $V(F_{p,Z},E,m_S)$ is independent of the
    well adapted coordinates $(u_1,\ldots ,u_n;Z)$; if $i_0 (x)=p$ and $V(F_{p,Z},E,m_S) = 0$,
    the vector space $J(F_{p,Z},E,m_S)_{\epsilon (x)}$ is independent of the well adapted
    coordinates $(u_1,\ldots ,u_n;Z)$.
\end{itemize}
\end{prop}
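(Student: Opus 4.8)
The plan is to obtain (i) and (ii) by unwinding the definitions of ${\cal V}(\cdot,E,m_S)$ and ${\cal J}(\cdot,E,m_S)$ from Section 2.4, using only that in characteristic $p$ any $p$-th power is annihilated by all derivations; and to obtain (iii) by reducing it to a single identity describing how $F_{p,Z}$ depends on the choice of well adapted coordinates. For (i): with $W=m_S$ one has $J\setminus J_E=\{e+1,\dots,n\}$, so by (\ref{eq2411}) and (\ref{eq244}) the condition $V(F_{p,Z},E,m_S)=0$ is equivalent to $\partial F_{p,Z}/\partial U_j=0$ in $G(m_S)$ for $e+1\le j\le n$; writing $F_{p,Z}=\sum_{\mathbf a}c_{\mathbf a}U^{\mathbf a}$ with $c_{\mathbf a}\in k(x)$, this says $a_jc_{\mathbf a}=0$ for all $\mathbf a$ and all $j>e$, i.e.\ (as $\mathrm{char}\,k(x)=p$) every monomial of $F_{p,Z}$ involves $U_{e+1},\dots,U_n$ only through $p$-th powers; the converse is immediate. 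For (ii): unwinding (\ref{eq2412}), (\ref{eq242}), (\ref{eq2431}) and (\ref{eq244}) shows that ${\cal J}(F_{p,Z},E,m_S)$ is the $k(x)$-span of $\{\partial F_{p,Z}/\partial\lambda_l\}_{l}$, $\{U_j\partial F_{p,Z}/\partial U_j\}_{1\le j\le e}$ and $\{L\,\partial F_{p,Z}/\partial U_{j'}\}_{L\in G(m_S)_1,\ e+1\le j'\le n}$, and that $\partial/\partial\lambda_l$, $U_j\partial/\partial U_j$ ($j\le e$) and $U_{j'}\partial/\partial U_{j'}$ ($j'>e$) all lie in ${\cal D}_{m_S}$ by (\ref{eq242}); since $G(m_S)$ is a domain, ${\cal J}(F_{p,Z},E,m_S)=0$ forces $\partial F_{p,Z}/\partial U_j=0$ for every $j$ and $\partial F_{p,Z}/\partial\lambda_l=0$ for every $l$, which, $(\lambda_l)_{l\in\Lambda_0}$ being an absolute $p$-basis of $k(x)$, means precisely $F_{p,Z}\in(k(x)[U_1,\dots,U_n])^p$; the converse is (\ref{eq241}). (That $J(F_{p,Z},E,m_S)$ is defined, i.e.\ $F_{p,Z}\in H\cdot G(m_S)$ and ${\cal D}_{m_S}$ preserves divisibility by $H$, follows from definition \ref{defepsilon} and proposition \ref{epsiloninv}.)

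For (iii) the crux is the following identity: if $(u_1,\dots,u_n;Z)$ and $(u_1',\dots,u_n';Z')$ are two well adapted systems at $x$ with $i_0(x)=p$, and $Z'=Z-\phi$ with $\phi\in\hat S$, then $\mathrm{ord}_{m_S}\phi\ge\delta(x)$ and
$$
F_{p,Z'}=F_{p,Z}+c^p,\qquad c:=\mathrm{cl}_{\delta(x)}\phi\in G(m_S),
$$
where $c=0$ unless $\delta(x)\in\N$ and $\mathrm{ord}_{m_S}\phi=\delta(x)$. Indeed, minimality of both polyhedra together with $i_0(x)=p$ and the description of $\delta$ in proposition-definition \ref{izero} and proposition \ref{Deltaalg} give $\mathrm{ord}_{m_S}f_{p,Z}=\mathrm{ord}_{m_S}f_{p,Z'}=p\delta(x)$ and $\mathrm{ord}_{m_S}f_{j,Z}>j\delta(x)$ for $1\le j\le p-1$, and these orders are attached to $Z$ resp.\ $Z'$ only, not to the regular system of parameters. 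By (\ref{eq2011}), $f_{p,Z'}=\phi^p+f_{p,Z}+\sum_{j=1}^{p-1}f_{j,Z}\phi^{p-j}$; if $\mathrm{ord}_{m_S}\phi<\delta(x)$ the term $\phi^p$ has strictly smaller order than every other summand, forcing $\mathrm{ord}_{m_S}f_{p,Z'}=p\,\mathrm{ord}_{m_S}\phi<p\delta(x)$, a contradiction. Hence $\mathrm{ord}_{m_S}\phi\ge\delta(x)$, the cross terms $f_{j,Z}\phi^{p-j}$ have order $>p\delta(x)$, and applying $\mathrm{cl}_{p\delta(x)}$ (which is multiplicative and carries $\phi^p$ to $(\mathrm{cl}_{\delta(x)}\phi)^p$ in characteristic $p$) yields the identity.

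Granting the identity, (iii) is short. The submodule $\langle\partial/\partial U_j\rangle_{e+1\le j\le n}$ of the $\F_p$-derivations of $G(m_S)$ --- namely the homogeneous degree $-1$ derivations annihilating the linear forms $\mathrm{cl}_1 u_1,\dots,\mathrm{cl}_1 u_e$ --- and the degree $0$ part of ${\cal D}_{m_S}$ depend only on the arrangement of lines cut out by $E$ in $m_S/m_S^2$, and $H=\left(\prod_{j=1}^eU_j^{H_j}\right)$ is determined by $E$ and the invariants $H_j=pd_j$ (proposition \ref{deltainv}) up to a nonzero scalar; none of these depends on the full choice of regular system of parameters. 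Therefore ${\cal V}(F_{p,Z},E,m_S)$, ${\cal J}(F_{p,Z},E,m_S)$ and $H$ computed from $(u_1,\dots,u_n)$ agree with those computed from $(u_1',\dots,u_n')$ as soon as $F_{p,Z}$ and $F_{p,Z'}$ differ by a $p$-th power; and every $\F_p$-derivation of $G(m_S)$ annihilates $c^p$, so ${\cal V}(F_{p,Z'},E,m_S)={\cal V}(F_{p,Z},E,m_S)$ and ${\cal J}(F_{p,Z'},E,m_S)={\cal J}(F_{p,Z},E,m_S)$. Dividing by $H$ gives the independence of $V(F_{p,Z},E,m_S)$ and of $J(F_{p,Z},E,m_S)$ of the well adapted coordinates (the extra hypothesis $V(F_{p,Z},E,m_S)=0$ in the statement for $J$ is in fact not used).

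I expect the main difficulty to be the identity in (iii): one has to notice that $i_0(x)=p$ is exactly what produces the strict inequalities $\mathrm{ord}_{m_S}f_{j,Z}>j\delta(x)$ for $j<p$ (these fail when $i_0(x)<p$, where $F_{p,Z}$ genuinely varies by a non-$p$-th power --- hence the hypothesis), and that the $m_S$-adic orders of the coefficients of $h(Z)$ are intrinsic to $Z$, so that minimality of the $(u_1',\dots,u_n';Z')$-polyhedron can be exploited even though the intermediate system $(u_1,\dots,u_n;Z')$ need not be minimal. Everything after the identity is the formal observation that all the linear-algebraic data underlying $V$ and $J$ are attached to $E$ alone and that $p$-th powers are transparent to derivations.
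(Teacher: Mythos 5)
Your proof is correct and follows the same route as the paper's: (i) and (ii) by unwinding the definitions of ${\cal V}$ and ${\cal J}$ from Section~2.4 and using that every $p$-th power is killed by all $\F_p$-derivations, and (iii) by first establishing the translation identity $F_{p,Z'}=F_{p,Z}+c^p$ and then noting that ${\cal D}_{m_S}$, the submodule $\langle\partial/\partial U_j\rangle_{j>e}$ and $H$ are all attached to $E$ alone. Your careful derivation of $\mathrm{ord}_{m_S}\phi\ge\delta(x)$ from minimality of both polyhedra and $i_0(x)=p$ also quietly corrects what appears to be a typo in the paper's own proof, which writes $\delta(x)/p$ where $\delta(x)$ is needed for $\Theta^p$ to land in degree $p\delta(x)$. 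Your final remark that the hypothesis $V(F_{p,Z},E,m_S)=0$ is not actually needed for the invariance of $J$ looks right (the extra generators $L\,\partial F/\partial U_{j'}$ transform into the same span under an adapted change $\overline{M}\in{\cal M}(k(x))$); the paper includes that hypothesis presumably because, by definition~\ref{defomega} and definition~\ref{deftauprime}, the $J$-invariant is only ever invoked when $V$ vanishes, which also makes the explicit formula for $J$ simpler.
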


\begin{proof}
The first statement follows from (\ref{eq2411}) and (\ref{eq244}),
while (ii) follows from (\ref{eq241}). Assume now that $i_0(x)=p$, i.e. $G=0$.

\smallskip

To begin with, the situation in (ii) does not occur because the polyhedron
$\Delta_{\hat{S}}(h;u_1,\ldots,u_n;Z)$ is minimal. If $Z'=Z-\theta$, $\theta \in \hat{S}$
with $\mathrm{ord}_{m_S}\theta \geq \delta (x)/p$, we have $F_{p,Z'}=F_{p,Z}+\Theta^p$ for some $\Theta \in
S/m_S[U_1,\ldots ,U_n]_{\delta (x)/p} $ (so $\Theta =0$ if $\delta (x)\not \in \N$).
Hence $D \cdot F_{p,Z'}=D \cdot F_{p,Z}$ for every $D \in \mathrm{Der}(G(m_S))$.

\smallskip

By elementary calculus, the vector space
$$
V(F_{p,Z},E,m_S)=H^{-1}<\left \{{\partial F_{p,Z} \over \partial U_j}\right \}_{e+1 \leq j \leq n}>
$$
is unchanged by adapted coordinate change (more generally by changes stabilizing the vector space
$<U_1, \ldots ,U_e>$) and this proves the first statement in (iii). If $V(F_{p,Z},E,m_S)=0$, the vector space
$$
J(F_{p,Z},E,m_S)=H^{-1}<\left \{U_j{\partial F_{p,Z} \over \partial U_j}\right \}_{1 \leq j \leq e},
\left \{{\partial F_{p,Z} \over \partial \lambda_l}\right \}_{l \in \Lambda}>.
$$
is not affected either by changes of coordinates fixing each $<U_j>$, $j\leq e$.
\end{proof}

We now turn to the version of Proposition \ref{indiff}(iii) for $i_0(x)=p-1$.
The problem is elementary, though more technical,
and the remaining part of this section is devoted to it. \\

Let $(\mathbf{e}_j)_{1 \leq j \leq n}$ be the standard basis of $\R^n$ and let
$$
\E:=\{\mathbf{x}\in \R^n : x_{e+1}= \cdots   =x_n =0\}\simeq \R^e.
$$
Given $d \in {1 \over p}\N$ and $\mathbf{H}\in \N^n \cap \E$, we denote
$$
\Delta_\mathbf{H} (d):=\{\mathbf{x}=(x_1, \ldots ,x_n) \in \R_{\geq 0}^n : \mid \mathbf{x} \mid = d \
\mathrm{and} \  x_j \geq {H_j\over p}, 1 \leq j \leq e\}
$$
and
\begin{equation}\label{eq2610}
{\cal V}_\mathbf{H}(pd):=(U^\mathbf{H})\cap G(m_S)_{pd}\subseteq G(m_S)_{pd}.
\end{equation}

We fix once and for all
\begin{equation}\label{eq:b1}
\mathbf{b}\in  (\N^n\cap \Delta_\mathbf{H} (d))\cap  \E.
\end{equation}
Note that ${\cal V}_\mathbf{H}(pd) \neq (0)$ only if $H_1 + \cdots + H_e \leq pd$ and that
such $\mathbf{b}$ as above exists only if $d \in \N$. By convention, we take $\{\mathbf{b}\}=\emptyset$
if $d \not \in \N$ in the following formul{\ae}. For applications, we will take $d=\delta (x_0)$, $\mathbf{H}$ as
in Definition \ref{defH} and $\mathbf{b}$ will be defined by
\begin{equation}\label{eq:b2}
<G>= :<U_1^{b_1} \cdots U_{e}^{b_e}>.
\end{equation}

\begin{nota} Any homogeneous polynomial $F \in {\cal V}_\mathbf{H}(pd)$ has a unique  expansion of the form
$$
F:=\sum_{\mathbf{x} \in {1 \over p}\N^n \cap \Delta_\mathbf{H} (d)}{\lambda (\mathbf{x})U^{p\mathbf{x}}},
\lambda (\mathbf{x}) \in S/m_S.
$$
We denote
$$
\Delta (F):=\mathrm{Conv}(\{\mathbf{x} \in {1 \over p}\N^n \cap \Delta_\mathbf{H} (d) :
\lambda (\mathbf{x})\neq 0\} \cup \{\mathbf{b}\}) \subseteq \Delta_\mathbf{H} (d) .
$$
According to   {these} conventions, we have $\Delta (0)=\{\mathbf{b}\}$.
\end{nota}

\begin{defn}\label{defT}
With notations as above, let $T: \ {\cal V}_\mathbf{H}(pd)\rightarrow {\cal V}_\mathbf{H}(pd)$ be the $S/m_S$-linear
truncation operator defined as follows: let \index{truncation operator! $S/m_S$-linear
truncation operator $T$, Definition~\ref{defT}}
\begin{equation}\label{eq2613}
A:=\{ \mathbf{x} \in  {1 \over p}\N^n \cap \Delta_\mathbf{H} (d) : \mathbf{b} +p(\mathbf{x}-\mathbf{b})\in \Delta_\mathbf{H} (d) \}.
\end{equation}
and
\begin{equation}\label{eq2611}
T F:=\sum_{\mathbf{x} \not \in  A}{\lambda (\mathbf{x})U^{p\mathbf{x}}}
\in {\cal V}_\mathbf{H}(pd).
\end{equation}
For $d \not \in \N$, we have $A=\emptyset$ and $T$ is the identity map.
\end{defn}

The construction of the previous section associates  {to $x \in \eta^{-1}(m_S)$} two vector spaces
$V(T F , E,m_S)$ and $J(T F , E,m_S)$. Explicitly, we have: \index{$V(T F , E,m_S) $, equation~\eqref{eq2611bis}}
\begin{equation}\label{eq2611bis}
V(T F , E,m_S)=U^{-\mathbf{H}}<{\partial TF \over \partial U_j}, e+1 \leq j \leq n>
\subseteq G(m_S)_{pd -1 - \mid \mathbf{H}\mid}
\end{equation}

for the former one. If $V(T F , E,m_S)=0$ (and only in this case), we will use the latter
one, given explicitly by  \index{$J(T F , E,m_S) $, equation~\eqref{eq2611ter}}
\begin{equation}\label{eq2611ter}
J(T F , E,m_S)=U^{-\mathbf{H}}<\{U_j{\partial TF \over \partial U_j}\}_{1 \leq j \leq e},
\{{\partial TF \over \partial \lambda_l}\}_{l \in \Lambda} >
\subseteq G(m_S)_{pd - \mid \mathbf{H}\mid},
\end{equation}
with notations as in the previous section.

 {The $S/m_S$-linear
truncation operator $T$ defined above is useful to give a definition of  $\omega(x)$ (the adapted order defined below see Definition \ref{defomega}) independent of all possible choices of well adapted coordinates. The possible vanishing of $V(T F , E,m_S)$ is essential in this definition. Let us point out the problem. For simplicity, we take $E=$div$(u_1u_2)$. The vector space $V( F , E,m_S)$ depends on the choice of the pair $(v,Z)$ where $v=u_3$ is  a free variable and  $(u_1,u_2,v;Z)$ are well adapted coordinates.  The
truncation operator $T$ is devised to suppress this dependence.}

 {The following example shows the bad behavior of   $V( F , E,m_S)$ without truncating.}

\begin{exam}\label{ex:T}
 {char$(k)=2, \ k\not=\F_2,\ S=k[[u_1,u_2,v]],\  E=div(u_1u_2)$,
$ h = X^2 +u_1^3u_2^2X +u_1u_2 [ v^8 +u_2^4v^4 +u_1^3u_2^4v]\in S[X]$. We get:
$$\mathrm{Discr}(h)=u_1^6u_2^4,\ \epsilon (x)=8,\ \delta(x)=5.$$
We have $V( F_{Z,U_1,U_2,V} , E,m_S)\not=<0>$ for any choice of $Z$ such that $(u_1,u_2,v;Z)$ are well  adapted coordinates.}

 \smallskip
 { Let
$w:=v+\lambda  u_2, \ \lambda \in k$: $(X,u_1,u_2,w)$ is a regular system of parameters at $x$,
$$\clin_{m_S} h = X^2 +U_1^3U_2^2X +U_1U_2 [ W^8+U_2^4W^4+U_1^3U_2^4W+ \lambda U_1^3U_2^5+[\lambda(\lambda +1)]^4U_2^8 ] .$$
As $k\not=\F_2$,  we can choose $\lambda$ such that
 $\lambda(\lambda +1)\not=0.$
 Then $\Delta_{S}(h; u_1,u_2,w;X)$ has three not solvable vertices of same modules $\delta_x$:
$M:=(3,2,0)$ given by $U_1^3U_2^2X $,
$N:=({1 \over 2},{9 \over 2},0)$ given by $\lambda(\lambda +1)U_1U_2 U_2^8$,   $P:=({1 \over 2},{1 \over 2},4)$ given by $U_1U_2 W^8$. So $(u_1,u_2,w;X)$ are well adapted parameters.}

 {The monomial $\lambda U_1U_2  \times U_1^3U_2^4W$ defines the point $(2,5/2,1/2)$ inside the first face and this monomial gives
\begin{equation}\label{eqtronc1}
V( F_{X,U_1,U_2,W} , E,m_S)\not=<0>.
\end{equation}}

 {We make the change of variable: $Z=X+u_1 u_2^3 w$, as $(1,3,1)$ is in the interior of the triangle  $MNP$,  $\Delta_{S}(h; u_1,u_2,w;Z)=\Delta_{S}(h; u_1,u_2,w;X)$: the coordinates $(u_1,u_2,w;Z)$ are well adapted. This gives:
$$\clin_{m_S} h = Z^2 +U_1^3U_2^2Z+U_1U_2 [  W^8+U_2^4W^4+U_1U_2^5W^2+ \lambda U_1^3U_2^5+[\lambda(\lambda +1)]^4U_2^8].$$
 With natural notations, we get
\begin{equation}\label{eqtronc2}
V( F_{Z,U_1,U_2,W}  , E,m_S)=<0>.
\end{equation}
By Lemma \ref{kerT} below,
$
 V( TF , E,m_S)=<0>
 $
 for all the well adapted coordinates we used above.}

\end{exam}

We can now state:

\begin{lem}\label{kerT}
Assume that $d \in \N$. With notations as above, we have
$$
\mathrm{Ker}T =U^{(p-1)\mathbf{b}}{\cal V}_{\lceil {\mathbf{H}\over p}\rceil}(d),
$$
where $\lceil {\mathbf{H}\over p}\rceil :=(\lceil {H_1\over p}\rceil , \ldots , \lceil {H_e\over p}\rceil, 0 ,\ldots ,0)$.

Let $G:= \mu U^{\mathbf{b}}$, $\mu \in S/m_S$, $\Phi \in {\cal V}_{\lceil {\mathbf{H}\over p}\rceil}(d)$
and $F\in {\cal V}_\mathbf{H}(pd)$.
Then
$$
V( T(F+ \Phi^p -G^{ {p-1}}\Phi ), E,m_S)=V( TF, E,m_S).
$$
If $V( TF, E,m_S)=0$, then
$$
J( T(F+ \Phi^p -G^{ {p-1}}\Phi ), E,m_S)=J( TF, E,m_S),
$$
\end{lem}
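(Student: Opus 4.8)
The plan is to read everything off the explicit action of $T$ on monomials, combined with the Frobenius identity in characteristic $p$ and the fact that $p$-th powers are invisible to all the derivations occurring in $V(\cdot,E,m_S)$ and $J(\cdot,E,m_S)$.

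First I would prove the formula for $\mathrm{Ker}T$. Since $T$ merely erases from the canonical expansion of an $F\in{\cal V}_\mathbf{H}(pd)$ those coefficients $\lambda(\mathbf{x})$ with $\mathbf{x}\in A$, uniqueness of that expansion gives $\mathrm{Ker}T=\bigoplus_{\mathbf{x}\in A}(S/m_S)U^{p\mathbf{x}}$; on the other hand $U^{(p-1)\mathbf{b}}{\cal V}_{\lceil\mathbf{H}/p\rceil}(d)$ is the $(S/m_S)$-span of the (pairwise distinct) monomials $U^{(p-1)\mathbf{b}+\mathbf{c}}$ with $\mathbf{c}\in\N^n$, $|\mathbf{c}|=d$ and $c_j\ge\lceil H_j/p\rceil$ for $1\le j\le e$. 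So it suffices to match these two monomial sets. Given such a $\mathbf{c}$, set $\mathbf{x}:=\mathbf{b}+(\mathbf{c}-\mathbf{b})/p$, so $p\mathbf{x}=(p-1)\mathbf{b}+\mathbf{c}$ and $\mathbf{b}+p(\mathbf{x}-\mathbf{b})=\mathbf{c}$; then $\mathbf{x}\in(1/p)\N^n$, $|\mathbf{x}|=d$, and $px_j=(p-1)b_j+c_j\ge(p-1)(H_j/p)+\lceil H_j/p\rceil\ge H_j$ using $b_j\ge H_j/p$, so $\mathbf{x}\in(1/p)\N^n\cap\Delta_\mathbf{H}(d)$, and $\mathbf{x}\in A$ because $\mathbf{c}\in\Delta_\mathbf{H}(d)$. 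Conversely, if $\mathbf{x}\in A$ then $\mathbf{c}:=p\mathbf{x}-(p-1)\mathbf{b}\in\Z^n$ (as $\mathbf{b}\in\N^n$ and $p\mathbf{x}\in\N^n$), so the real inequalities defining membership of $\mathbf{c}$ in $\Delta_\mathbf{H}(d)$ sharpen to $\mathbf{c}\in\N^n$, $|\mathbf{c}|=d$, $c_j\ge\lceil H_j/p\rceil$, and $U^{p\mathbf{x}}=U^{(p-1)\mathbf{b}+\mathbf{c}}$. This proves the first displayed equality.

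Next I would dispose of the two correction terms. One first checks $F+\Phi^p-G^{p-1}\Phi\in{\cal V}_\mathbf{H}(pd)$, so that $T$ applies: $\Phi^p$ is divisible by $U^{p\lceil\mathbf{H}/p\rceil}$ with $p\lceil H_j/p\rceil\ge H_j$, while $G^{p-1}\Phi$ is divisible by $U^{(p-1)\mathbf{b}+\lceil\mathbf{H}/p\rceil}$ with $(p-1)b_j+\lceil H_j/p\rceil\ge H_j$. Now $G^{p-1}\Phi=\mu^{p-1}U^{(p-1)\mathbf{b}}\Phi\in U^{(p-1)\mathbf{b}}{\cal V}_{\lceil\mathbf{H}/p\rceil}(d)=\mathrm{Ker}T$, so $T(G^{p-1}\Phi)=0$. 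For $\Phi^p$ I would use $\mathrm{char}(S/m_S)=p$: writing $\Phi=\sum_\mathbf{c}\nu_\mathbf{c}U^\mathbf{c}$ one gets $\Phi^p=\sum_\mathbf{c}\nu_\mathbf{c}^pU^{p\mathbf{c}}$, and since each such $\mathbf{c}$ already lies in $(1/p)\N^n\cap\Delta_\mathbf{H}(d)$, applying $T$ gives $T(\Phi^p)=\sum_{\mathbf{c}\notin A}\nu_\mathbf{c}^pU^{p\mathbf{c}}=\big(\sum_{\mathbf{c}\notin A}\nu_\mathbf{c}U^\mathbf{c}\big)^p=:\Psi^p$, again a $p$-th power in $G(m_S)$. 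By $(S/m_S)$-linearity of $T$ this yields $T(F+\Phi^p-G^{p-1}\Phi)=TF+\Psi^p$.

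Finally I would conclude that every derivation entering the formulas (\ref{eq244}) annihilates $\Psi^p$: one has $\partial(\Psi^p)/\partial U_j=p\Psi^{p-1}\partial\Psi/\partial U_j=0$ for all $j$, and each derivation $\partial/\partial\lambda_l$ w.r.t. constants acts coefficientwise with $\partial(\nu^p)/\partial\lambda_l=p\nu^{p-1}\partial\nu/\partial\lambda_l=0$. Hence the generating family $\{U^{-\mathbf{H}}\partial(\cdot)/\partial U_j\}_{e+1\le j\le n}$ takes the same value on $TF+\Psi^p$ as on $TF$, which gives $V(TF+\Psi^p,E,m_S)=V(TF,E,m_S)$; and when this is $0$, so is $V(TF+\Psi^p,E,m_S)$, $J$ is defined for both, and the same cancellation for the generators $U_j\partial(\cdot)/\partial U_j$ ($1\le j\le e$) and $\partial(\cdot)/\partial\lambda_l$ gives $J(TF+\Psi^p,E,m_S)=J(TF,E,m_S)$. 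The only genuinely delicate point is the monomial bookkeeping of the first step — getting the ceilings and the integrality argument exactly right so that $A$ matches the monomial basis of $U^{(p-1)\mathbf{b}}{\cal V}_{\lceil\mathbf{H}/p\rceil}(d)$; once $\mathrm{Ker}T$ is identified, the rest is purely formal and rests only on Frobenius.
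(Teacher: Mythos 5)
Your proof is correct and follows essentially the same route as the paper's: identify $\mathrm{Ker}T$ via the affine change of exponent $\mathbf{x}\leftrightarrow p\mathbf{x}-(p-1)\mathbf{b}$, observe that $G^{p-1}\Phi\in\mathrm{Ker}T$, and note that $T\Phi^p$ remains a $p$-th power so all the relevant derivations annihilate it. The only difference is that you make explicit (via the Frobenius identity $T\Phi^p=\Psi^p$ and the verification that $F+\Phi^p-G^{p-1}\Phi\in{\cal V}_\mathbf{H}(pd)$) two small steps the paper leaves implicit when it passes directly from $T(F+\Phi^p-G^{p-1}\Phi)=TF+T\Phi^p$ to $D\cdot T(F+\Phi^p-G^{p-1}\Phi)=D\cdot TF$.
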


\begin{proof}
We analyze the definition of $T$ in (\ref{eq2611}). The kernel of $T$ is generated by
those monomials $U^{p\mathbf{x}} \in {\cal V}_\mathbf{H}(pd)$ such that
$$
\mathbf{y}:= p\mathbf{x}-(p-1)\mathbf{b} \in \Delta_\mathbf{H} (d).
$$
Since $\mathbf{x}\in {1 \over p}\N^n$, $\mathbf{b}\in \N^n$, we have $\mathbf{y}\in \N^n$ for such
$\mathbf{y}$. Therefore $\mathrm{Ker}T$ is generated by
$$
\mathrm{Ker}T=<\{U^{(p-1)\mathbf{b}}U^{\mathbf{y}} : \mathbf{y}\in \N^n,
\ \mid \mathbf{y}\mid =d \ \mathrm{and}  \ y_j\geq {H_j \over p}, 1 \leq j \leq e \}>.
$$
This proves the first statement. For the second part, we have proved that
$$
T(F+ \Phi^p -G^{ {p-1}}\Phi)=TF + T\Phi^p.
$$
Hence $D \cdot T(F+ \Phi^p -G^{ {p-1}}\Phi)=D \cdot TF$ for every
$D \in \mathrm{Der}(G(m_S))$.
\end{proof}

We now study invariance properties of $V( F, E,m_S)$ and $J( F, E,m_S)$ under changes
of adapted coordinates. Given two r.s.p.'s $\mathbf{u}=(u_1,\ldots ,u_n)$ and
$\mathbf{u'}=(u'_1,\ldots ,u'_n)$  adapted to $E$,
there exists a matrix $M \in {\cal M}(S)$,
$$
{\cal M}(S):=\{(m_{ij}) \in \mathrm{GL}(n,S): m_{jj'}=0,
(j,j') \in \{1, \ldots, e\}\times\{1, \ldots ,n\}, j\neq j'\}
$$
such that $\mathbf{u}=M\mathbf{u'}$. The set ${\cal M}(S)$ is the set of $S$-points of an
affine $S$-scheme ${\cal M} \subset \mathrm{GL}(n,S)$. Denote by
$$
\mathrm{GL}(n,S) \rightarrow \mathrm{GL}(n,S/m_S), \ M \mapsto \overline{M}
$$
the canonical surjection. Each such $\overline{M}$ induces a graded $S/m_S$-automorphism of
$\mathrm{gr}_{m_S}(S)\simeq S/m_S[U_1, \ldots , U_n]$. By (\ref{eq2610}),
this automorphism restricts to an automorphism of ${\cal V}_\mathbf{H}(pd)$ for each $d \in {1 \over p}\N$
still denoted by $\overline{M}$.

Given a homogeneous polynomial $F \in {\cal V}_\mathbf{H}(pd)$ as above and a matrix
$\overline{M} \in {\cal M}(S/m_S)$, we denote for simplicity the transformed equation
$U \mapsto \overline{M}U'$ by
\begin{equation}\label{eq2711}
F'=: \sum_{\mathbf{x'} \in {1 \over p}\N^n \cap \Delta_\mathbf{H} (d)}{\lambda '(\mathbf{x'}){U'}^{p\mathbf{x'}}}.
\end{equation}
Let $\Delta (F'):=\mathrm{Conv}(\{\mathbf{x'} \in {1 \over p}\N^n \cap \Delta_\mathbf{H} (d) :
\lambda '(\mathbf{x'})\neq 0\} \cup \{\mathbf{b}\})\subseteq \Delta_\mathbf{H} (d)$
be the corresponding polytope and $T'$ be the corresponding operator on ${\cal V}_\mathbf{H}(pd)$
with variable $U'$. The linear operator $T$ obviously does not commute with $\overline{M}$ in
general (i.e. $(TF)'\neq T'F'$ in general), but the lemma below extracts the relevant invariant data.
We refer to Definition \ref{defHilbertSamuel} for the Notation $\mathrm{Max}(I)$, $I \subset G(m_S)$
generated by  {one homogeneous polynomial or homogeneous polynomials} of  the same degree. \\

\begin{nota}\label{defB}
 {Recall \eqref{eq:b1} and \eqref{eq:b2}}. We denote by
\begin{equation}\label{eq2612}
B:=\{ j , \ 1 \leq j \leq e: pb_j -H_j>0\}\ \mathrm{and} \ U_B:=\{U_j, j \in B\}.
\end{equation}
We denote $U_{B'}:=\{U_j, j \not \in B\}$ and stick to our former conventions, i.e.
$$
B'=\{1, \ldots ,n\} \backslash B, \ (B')_E =\{1, \ldots ,e\} \backslash B.
$$
\end{nota}

 {\begin{rem}The vector space $U_B$ is sometimes used to get a notion of maximal contact  for our main invariant $\omega$ with the components div$(u_j)$, $j\in B$: see chapter~6,  $\kappa(x)=1$.
\end{rem}}

\begin{lem}\label{Cmaxinv}
With notations as above, there is an equality of sets
\begin{equation}\label{eq2713}
\mathrm{Max}(V( TF, E,m_S))\cap \{U_B=0\}=\mathrm{Max}(V( T'F', E,m_S))\cap \{U'_B=0\}.
\end{equation}
If $V( TF, E,m_S)=0$, then $V( T'F', E,m_S)=0$ and there is an equality of sets
\begin{equation}\label{eq2714}
\mathrm{Max}(J( TF, E,m_S))\cap \{U_B=0\}=\mathrm{Max}(J( T'F', E,m_S))\cap \{U'_B=0\}.
\end{equation}
\end{lem}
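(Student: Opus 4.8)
The plan is to reduce everything to a precise understanding of how the truncation operator $T$ and the coordinate change $\overline{M}$ interact. First I would record that a matrix $\overline{M}\in{\cal M}(S/m_S)$ fixes the flag $\langle U_1\rangle\subset\cdots\subset\langle U_1,\ldots,U_e\rangle$ up to scalars on each $\langle U_j\rangle$, $j\le e$ (this is exactly the defining condition $m_{jj'}=0$ for $(j,j')\in\{1,\ldots,e\}\times\{1,\ldots,n\}$, $j\ne j'$); in particular it preserves $(U^{\mathbf H})$, hence restricts to an automorphism of ${\cal V}_\mathbf{H}(pd)$, and it preserves the monomial $U^\mathbf{b}$ up to a nonzero scalar. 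Consequently $G'=\mu' {U'}^{\mathbf b}$ for some $\mu'\ne 0$, so the polytope $\Delta_\mathbf{H}(d)$ and the point $\mathbf b$ are the same for the primed system, and the operator $T'$ is built from the same combinatorial set $A$ in \eqref{eq2613}. The key algebraic fact I would isolate: writing $F=T F + (\text{a polynomial in }\mathrm{Ker}\,T)$, and using $\mathrm{Ker}\,T=U^{(p-1)\mathbf b}{\cal V}_{\lceil\mathbf H/p\rceil}(d)$ from Lemma \ref{kerT}, every element of $\mathrm{Ker}\,T$ is divisible by $U^{(p-1)\mathbf b}$ and the quotient is a $p$-th power times a monomial --- more precisely $\mathrm{Ker}\,T$ consists of polynomials all of whose monomials $U^{p\mathbf x}$ satisfy $p\mathbf x-(p-1)\mathbf b\in\N^n$, i.e. have $p$-th-power-divisible "tail" past $(p-1)\mathbf b$.

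The heart of the matter is then: although $(TF)'\ne T'F'$ in general, the \emph{difference} $T'F' - (TF)'$ lies in a subspace that becomes invisible after applying the derivatives defining $V$ and $J$ \emph{and then restricting to $\{U_B=0\}$}. I would argue as follows. By Lemma \ref{kerT} applied in the primed coordinates, $T'F'$ and $F'$ differ by an element of $\mathrm{Ker}\,T'=U'^{(p-1)\mathbf b}{\cal V}_{\lceil\mathbf H/p\rceil}(d)$; likewise $(TF)'$ differs from $F'$ by $\overline M$ applied to an element of $\mathrm{Ker}\,T$, which is $U'^{(p-1)\mathbf b}\cdot(\text{something in }{\cal V}_{\lceil\mathbf H/p\rceil}(d))$ because $\overline M$ preserves both $U^{(p-1)\mathbf b}$ (up to scalar) and the monomial ideal $(U^{\lceil\mathbf H/p\rceil})$. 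Hence $T'F'-(TF)'\in U'^{(p-1)\mathbf b}{\cal V}_{\lceil\mathbf H/p\rceil}(d)$. Now on such an element write it as $U'^{(p-1)\mathbf b}\Psi$ with $\Psi\in{\cal V}_{\lceil\mathbf H/p\rceil}(d)$; applying $U_j'\partial/\partial U_j'$ ($j\le e$), $\partial/\partial U_j'$ ($e+1\le j\le n$), or $\partial/\partial\lambda_l$ and using the Leibniz rule, every resulting term still carries a factor $U_j'^{\,pb_j-H_j-\varepsilon}$ for $j\le e$ with $pb_j-H_j\ge 1$ when $j\in B$ (the derivative in $U_j'$ lowers the exponent by at most one, and $H_j^{-1}$-normalization subtracts $H_j$), so the term is divisible by $U_j'$ for every $j\in B$, hence vanishes on $\{U'_B=0\}$. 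This is the step I expect to be the main obstacle: one must be careful that the $U_j$-derivative for $j\in B$ can strip only one power of $U_j$ while the monomial $U'^{(p-1)\mathbf b}$ after $H_j$-normalization still contributes exponent $pb_j-H_j\ge 1$, and that derivatives w.r.t. other variables or constants do not touch $U_j'$ at all; the inequality defining $B$ in \eqref{eq2612} is exactly what makes this go through.

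Having established that $V(T'F',E,m_S)$ and $V((TF)',E,m_S)$ agree modulo the subspace of forms vanishing on $\{U'_B=0\}$, I would finish by invoking the coordinate-change behaviour already used in proposition \ref{indiff}: $\overline M$ sends $V(TF,E,m_S)$ to $V((TF)',E,m_S)$ and commutes with $\mathrm{Max}(-)$ since it is a graded linear automorphism preserving orders at the origin. Intersecting with $\{U_B=0\}$ (respectively $\{U'_B=0\}$), which is $\overline M$-stable because $\overline M$ stabilizes $\langle U_j\rangle$ for each $j\le e$ and hence the subspace $\langle U_j: j\in B\rangle$, yields \eqref{eq2713}. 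For \eqref{eq2714}: if $V(TF,E,m_S)=0$ then by \eqref{eq2713} $V(T'F',E,m_S)\cap\{U'_B=0\}=0$; one checks $V(T'F',E,m_S)=0$ outright (a derivative $\partial/\partial U_j'$, $j>e$, that produced something nonzero would survive, contradicting invariance --- or more directly, $V$ depends only on the $\overline M$-orbit and the part of $F'$ outside $(U'^{\lceil\mathbf H/p\rceil}U'^{(p-1)\mathbf b})$, which matches that of $F$), so $J$ is defined, and the identical Leibniz-plus-restriction argument gives \eqref{eq2714}. The only extra point for $J$ is that the constant derivations $\partial/\partial\lambda_l$ commute with everything in sight and do not affect membership in $\{U'_B=0\}$, so they cause no trouble.
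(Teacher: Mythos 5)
Your proposal takes a genuinely different route from the paper. The paper reduces $\overline{M}$ step by step through the subgroups stabilizing $\langle U_{e+1},\ldots,U_n\rangle$ and then $\langle U_{B'}\rangle$, and in the residual case expands $TF=\sum_\mathbf{y}U_B^\mathbf{y}F_\mathbf{y}(U_{B'})$, observes that membership of $P\supseteq(U_B)$ in the $\mathrm{Max}$-locus is equivalent to an intersection of $\mathrm{Max}$-conditions over the pieces $G_\mathbf{y}$ (formula \eqref{eq2715}), and then compares each $G_\mathbf{y}$ with $(G_\mathbf{y})'$ by a characteristic-free Taylor expansion and the order inequality \eqref{eq2717}. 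Your observation that $T'F'-(TF)'\in\mathrm{Ker}\,T'$, because $\overline M(\mathrm{Ker}\,T)=\mathrm{Ker}\,T'$ and $T'$ is a projection, is correct and attractive, and it is also correct that normalized derivatives of elements of $\mathrm{Ker}\,T'$ vanish on $\{U'_B=0\}$.

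The final step, however, has a genuine gap. From ``$V(T'F',E,m_S)$ and $V((TF)',E,m_S)$ agree modulo the subspace of forms vanishing on $\{U'_B=0\}$'' you conclude that their $\mathrm{Max}$-loci agree after intersecting with $\{U'_B=0\}$. That implication is false in general: with $B=\{1\}$ and $D=2$, take $V_1=\langle U_1^2\rangle$ and $V_2=\langle U_1^2+U_1U_2\rangle$. Both reduce to $0$ modulo $(U_1)$, yet the prime $P=(U_1)$ lies in $\mathrm{Max}(V_1)\cap\{U_1=0\}$ (since $\mathrm{ord}_{(U_1)}U_1^2=2$) but not in $\mathrm{Max}(V_2)\cap\{U_1=0\}$ (since $U_1^2+U_1U_2=U_1(U_1+U_2)$ and $U_1+U_2$ is a unit at $(U_1)$, so the order is $1<2$). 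The reason this does not contradict the lemma is exactly the extra structure exploited by the paper: the generators of $V(TF,\ldots)$ decompose as $\sum_\mathbf{y}U_B^{\mathbf{y}-\mathbf{H}_B}G_\mathbf{y}$ with $G_\mathbf{y}\in k(x)[U_{B'}]$, and for a prime $P\supseteq(U_B)$ there is no cancellation across distinct $U_B$-monomial levels, which is what makes formula \eqref{eq2715} and the Taylor-expansion comparison work. Your argument makes no use of this $U_B$-graded decomposition, and without it the passage from ``congruent modulo $(U'_B)$'' to ``same $\mathrm{Max}\cap\{U'_B=0\}$'' does not go through.

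A smaller point: the exponent count ``a factor $U_j'^{\,pb_j-H_j-\varepsilon}$ with $pb_j-H_j\geq 1$'' does not by itself give divisibility by $U'_j$, since $\varepsilon$ could be $1$ and $pb_j-H_j$ could be $1$. The correct bound is that a monomial of $\mathrm{Ker}\,T'$ has $U'_j$-exponent at least $(p-1)b_j+\lceil H_j/p\rceil$ for $j\in B$, and the derivations actually appearing in the definitions of $V$ and $J$ (namely $U_k\partial/\partial U_k$ for $k\leq e$, $\partial/\partial U_k$ for $k>e$, and $\partial/\partial\lambda_l$) never decrease the $U'_j$-exponent for $j\leq e$, so in fact $\varepsilon=0$; one then checks $(p-1)b_j+\lceil H_j/p\rceil - H_j\geq 1$ for $j\in B$. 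So the conclusion you want is true, but for a different reason than the one you give.
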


\begin{proof}
The operator $T$ commutes with $\overline{M}$ when $\overline{M}$
stabilizes the vector space $<U_{e+1}, \ldots ,U_n>$. In these cases, we have
$$
V( T'F', E,m_S)=V( (TF)', E,m_S).
$$
If $V( TF, E,m_S)=0$, then
$$
V( T'F', E,m_S)=0 \ \mathrm{and} \ J( T'F', E,m_S)=J( (TF)', E,m_S).
$$
So the lemma is trivial in this case and we may therefore assume that
$$
m_{jj'}=0,(j,j') \in \{e+1, \ldots, n\}\times\{e+1, \ldots ,n\}, j\neq j' \
\mathrm{and} \ m_{jj}=1, 1 \leq j \leq n.
$$
By elementary calculus, this new assumption implies for every $\Phi \in G(m_S)$:
\begin{equation}\label{eq2712}
{\partial \Phi '\over \partial U'_j}=\left ({\partial \Phi \over \partial U_j} \right )', \ e+1 \leq j \leq n.
\end{equation}

Let $\mathbf{x}\in {1 \over p}\N^n \cap \Delta_\mathbf{H} (d)$. Since $pb_j=H_j$ for $j\in (B')_E$, we have by (\ref{eq2613}):
$$
\mathbf{x}\in A \Leftrightarrow \forall j \in B, px_j \geq (p-1)b_j.
$$
Expand $TF=\sum_\mathbf{y}U_B^\mathbf{y}F_\mathbf{y}(U_{B'})$, so we have:
$$
V(TF,E,m_S)=U^{-\mathbf{H}}<\{\sum_\mathbf{y}U_B^\mathbf{y}{\partial F_\mathbf{y}(U_{B'})\over \partial U_j} \}_{e+1 \leq j \leq n}>.
$$
For $P \in \mathrm{Spec}G(m_S)$ such that $(U_B) \subseteq P$, we get:
\begin{equation}\label{eq2715}
P\in \mathrm{Max}(V( TF, E,m_S)) \Leftrightarrow P \in \bigcap_{\mathbf{y}}
\bigcap_{j=e+1}^n{\mathrm{Max}( {G_{\mathbf{y},j}})},
\end{equation}
where $ {G_{\mathbf{y},j}}:= U_{B'}^{-\mathbf{H}'}{\partial F_\mathbf{y}(U_{B'}) \over \partial U_j}$,
$\mathbf{H}':=(H_{j'})_{j ' \in (B')_E}$.

Suppose furthermore that $\overline{M}$ stabilizes the vector space $<U_{B'}>$. Then
$T$ also commutes with $\overline{M}$ and each term
$ {G_{\mathbf{y},j}}$ in (\ref{eq2715}) is transformed into
$$
( {G_{\mathbf{y},j}})'=U_{B'}^{-\mathbf{H}_{B'}}{\partial F'_\mathbf{y}(U_{B'}') \over \partial U'_j}
$$
by (\ref{eq2712}) and (\ref{eq2713}) follows. Suppose furthermore that $V(TF,E,m_S)=0$; then $ {G_{\mathbf{y},j}}=0$
for each $\mathbf{y},j$ in (\ref{eq2713}) and we get $V(T'F',E,m_S)=0$.
For $1 \leq j \leq e$ and $l \in \Lambda$, we have
\begin{equation}\label{eq2716}
\left (U_j{\partial TF \over \partial U_j}\right )'=U'_j{\partial T'F' \over \partial U'_j}, \
\left ({\partial TF \over \partial \lambda_l}\right )'={\partial T'F' \over \partial \lambda_l},
\end{equation}
and (\ref{eq2714}) also follows. Hence we may furthermore assume that
$$
m_{jj'}=0,(j,j') \in \{e+1, \ldots, n\}\times (B')_E.
$$

In this situation, $T$ does not commute any longer with $\overline{M}$. However, for each term $ {G_{\mathbf{y},j}}$
as above, we have
\begin{equation}\label{eq2717}
\mathrm{ord}_P (D \cdot  {G_{\mathbf{y},j}})\geq \mathrm{deg} {G_{\mathbf{y},j}} -a
\end{equation}
for any differential operator $D$ on $S/m_S[U_{B'}]$ of order not greater than $a$. Let
$$
( {G_{\mathbf{y},j}})'=\sum_{\mid \alpha \mid\leq \mathrm{deg} {G_{\mathbf{y},j}}}(U_{B}')^\alpha (D^{(\alpha )}\cdot  {G_{\mathbf{y},j}}),
\ D^{(\alpha )}\cdot  {G_{\mathbf{y},j}}\in S/m_S[U'_{B'}]_{\mathrm{deg} {G_{\mathbf{y},j}}-\mid \alpha \mid}
$$
be the (characteristic free) Taylor expansion, where $D^{(\alpha )}$ is a differential operator
of order $\mid \alpha \mid$. Take again $P \in \mathrm{Spec}G(m_S)$
such that $(U_B) \subseteq P$. By (\ref{eq2717}), we have
$$
P \in \mathrm{Max}( {G_{\mathbf{y},j}}) \Rightarrow P \in \bigcap_{\alpha} \mathrm{Max}(D^{(\alpha )}\cdot { G_{\mathbf{y},j}})
\Rightarrow P \in \mathrm{Max}(( {G_{\mathbf{y},j}})').
$$
We deduce from (\ref{eq2715}) that
$$
P\in \mathrm{Max}(V( TF, E,m_S)) \Rightarrow P \in  \mathrm{Max}(V( (TF)', E,m_S)).
$$
This proves (\ref{eq2713}). If $V( TF, E,m_S)=0$, (\ref{eq2714}) follows from (\ref{eq2716}) as above.
\end{proof}

This lemma is the key to our version of Proposition \ref{indiff}(iii) for $i_0(x)=p-1$:

\begin{prop}\label{Tinvariant}
Let $(u_1,\ldots,u_n;Z)$ be well adapted coordinates at $x \in \eta^{-1}(m_S)$ and
assume that $\epsilon (x)>0$ and $i_0(x)=p-1$. Let
$$
d:=\delta (x), \ \mathbf{H}:=(H_1, \ldots , H_e, 0, \ldots ,0) \ \mathrm{and} \
<U_1^{b_1} \cdots U_{e}^{b_e}>:=<G>
$$
be defined respectively by Definition \ref{defdelta}, Definition \ref{defH} and (2) of
Theorem \ref{initform}. With  {notation as above, the following statements hold}:
\begin{itemize}
  \item [(i)] the set
$$
\mathrm{Max}(V( TF_{p,Z}, E,m_S))\cap \{U_B=0\} \subseteq \mathrm{Spec}G(m_S)
$$
is independent of the well adapted coordinates $(u_1,\ldots,u_n;Z)$;
  \item [(ii)] the property $V( TF_{p,Z}, E,m_S)=0$ is independent of the well adapted coordinates
$(u_1,\ldots,u_n;Z)$; when it holds, the set
$$
\mathrm{Max}(J( TF_{p,Z}, E,m_S))\cap \{U_B=0\} \subseteq \mathrm{Spec}G(m_S)
$$
is also independent of the well adapted coordinates $(u_1,\ldots,u_n;Z)$.
\end{itemize}
\end{prop}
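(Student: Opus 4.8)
The plan is to reduce the proposition to Lemma \ref{Cmaxinv} by establishing two things: first, that any two choices of well adapted coordinates $(u_1,\ldots,u_n;Z)$ and $(u'_1,\ldots,u'_n;Z')$ at $x$ differ by (a) a translation $Z'=Z-\theta$ with $\mathrm{ord}_{m_S}\theta \geq \delta(x)/p$ composed with (b) a change of r.s.p. $\mathbf{u}=M\mathbf{u'}$ with $M\in {\cal M}(S)$; and second, that neither of these operations affects the invariant set $\mathrm{Max}(V(TF_{p,Z},E,m_S))\cap\{U_B=0\}$ (resp. the $J$-version). The point is that $\{U_B=0\}$ is itself coordinate-independent: by the definition of $B$ in (\ref{eq2612}) and $\mathbf{b}$ via $<G>=<U_1^{b_1}\cdots U_e^{b_e}>$, the monomial $G$ (hence $\mathbf{b}$, hence $B$) is governed by $F_{p-1,Z,\mathbf{1}}=-G^{p-1}$, and $F_{p-1,Z}$ is independent of the choice of $Z$ minimizing the polyhedron by proposition-definition \ref{izero} (the form $F_{i_0(x),Z}$ with $i_0(x)=p-1<p$ is independent of $Z$, and of the r.s.p. up to the induced linear action on the $U_j$).

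First I would handle the translation. Since $\Delta_{\hat{S}}(h;u_1,\ldots,u_n;Z)$ is minimal and $i_0(x)=p-1$, any other minimizing translate is $Z'=Z-\theta$ with $\mu_{\mathbf{1}}(\theta)=\mathrm{ord}_{m_S}\theta\geq \delta(x)$ if $\delta(x)\notin(1/p)\Z$-related constraints; more precisely the relevant bound, as in the proof of proposition \ref{indiff}(iii), is $\mathrm{ord}_{m_S}\theta \geq \delta(x)/p$, so $\mathrm{cl}_{\delta(x)/p}\theta =: \Phi$ lies in $G(m_S)_{\delta(x)/p}$ (and $\Phi=0$ if $\delta(x)/p\notin\N$). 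Using the explicit coordinate change (\ref{eq2011}) for $m=p$, one computes that the initial form transforms by $F_{p,Z'}=F_{p,Z}+\Phi^p-G^{(p-1)}\Phi$ up to terms of higher weight, i.e. $F_{p,Z'}$ and $F_{p,Z}+\Phi^p-G^{p-1}\Phi$ have the same class in $G(m_S)_{p\delta(x)}$ — here one needs $\binom{p}{i}\in m_S$ for $1\le i\le p-1$ to kill the cross terms, exactly as in \ref{izero}. Now $\Phi\in {\cal V}_{\lceil \mathbf{H}/p\rceil}(d)$ with $d=\delta(x)$ because the minimality of the polyhedron on each coordinate axis forces $F_{p-1,Z}$ and $F_{p,Z}$ to be divisible by the appropriate powers of $u_j$ (proposition \ref{epsiloninv}), so $\Phi$ inherits divisibility by $U_j^{\lceil H_j/p\rceil}$. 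Then Lemma \ref{kerT} applies verbatim: $V(T(F_{p,Z}+\Phi^p-G^{p-1}\Phi),E,m_S)=V(TF_{p,Z},E,m_S)$, and likewise for $J$ when the $V$-space vanishes. This settles invariance under translation.

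Next I would handle the change of r.s.p. Write $\mathbf{u}=M\mathbf{u'}$ with $M\in{\cal M}(S)$, reduce mod $m_S$ to get $\overline{M}\in{\cal M}(S/m_S)$, and note that after transporting the new $Z'$ to the old coordinates (using the translation step just settled, which is harmless), the initial form $F_{p,Z}$ gets sent to its transform $F'$ under $\overline{M}$ in the sense of (\ref{eq2711}). Since both r.s.p.'s are adapted to $E$, $\overline{M}$ has block-triangular shape: it stabilizes $<U_1,\ldots,U_e>$, and its action permits the decomposition into the three elementary types treated in the proof of Lemma \ref{Cmaxinv} (those stabilizing $<U_{e+1},\ldots,U_n>$; those fixing each $<U_j>$ for $j\le e$ and stabilizing $<U_{B'}>$; and those with $m_{jj'}=0$ for $(j,j')\in\{e+1,\ldots,n\}\times (B')_E$). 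Lemma \ref{Cmaxinv} gives exactly $\mathrm{Max}(V(TF_{p,Z},E,m_S))\cap\{U_B=0\}=\mathrm{Max}(V(T'F',E,m_S))\cap\{U'_B=0\}$, and the $J$-analogue (\ref{eq2714}) when the $V$-space is zero. Since $F'$ is by construction the initial form computed in the new well adapted coordinates (up to the translation already dealt with), and since the defining data $d$, $\mathbf{H}$, $\mathbf{b}$, $B$ are coordinate-independent by \ref{izero} and \ref{epsiloninv}, combining the two steps yields both (i) and (ii).

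The main obstacle I expect is the bookkeeping in the first step: verifying precisely that a translation $Z'=Z-\theta$ keeping the polyhedron minimal forces $\mathrm{ord}_{m_S}\theta\geq\delta(x)/p$ rather than merely $\geq\delta(x)$ (the sharper bound is what makes $\Phi\neq 0$ possible and is genuinely needed when $p\,|\,\delta(x)$ is false but $\delta(x)/p\in\N$), and then checking that the resulting $\Phi$ really lands in ${\cal V}_{\lceil\mathbf{H}/p\rceil}(d)$ so that Lemma \ref{kerT} is applicable — this uses $i_0(x)=p-1$ (via $F_{p-1,Z}=-G^{p-1}$ being a genuine $(p-1)$-st power with monomial support $\{(p-1)\mathbf{b}\}$) together with the axiswise minimality from proposition \ref{epsiloninv}. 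The decomposition of $\overline{M}$ into the elementary pieces handled by Lemma \ref{Cmaxinv} is routine linear algebra once one observes ${\cal M}(S/m_S)$ is generated by such pieces, so the content is really all in assembling \ref{izero}, \ref{epsiloninv}, \ref{kerT} and \ref{Cmaxinv} in the right order.
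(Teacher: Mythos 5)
Your overall plan is exactly the paper's: write the coordinate change as (translation) $\circ$ (change of adapted r.s.p.), use Lemma \ref{kerT} to absorb the translation and Lemma \ref{Cmaxinv} to absorb the linear change, and observe that the data $d$, $\mathbf{H}$, $\mathbf{b}$, $B$ are themselves coordinate-independent. That architecture is correct, and the concern you isolate at the end --- getting the translation bound right --- is indeed where the proof hinges. But you have it backwards, and as written the argument does not close.

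You assert that the relevant bound is $\mathrm{ord}_{m_S}\theta\geq\delta(x)/p$, citing the proof of Proposition \ref{indiff}(iii), and set $\Phi:=\mathrm{cl}_{\delta(x)/p}\theta\in G(m_S)_{\delta(x)/p}$. Two problems. First, this $\Phi$ does not lie in $\mathcal{V}_{\lceil\mathbf{H}/p\rceil}(d)$, which is by (\ref{eq2610}) a subspace of $G(m_S)_d$ with $d=\delta(x)$; so Lemma \ref{kerT} cannot be applied to your $\Phi$, even though two sentences later you claim $\Phi\in\mathcal{V}_{\lceil\mathbf{H}/p\rceil}(d)$. Second, the degree count in your transformation formula is inconsistent: with your $\Phi$, the term $\Phi^p$ has degree $\delta(x)$ and $G^{p-1}\Phi$ has degree $(p-1)\delta(x)+\delta(x)/p$, neither of which matches $\deg F_{p,Z}=p\delta(x)$, so ``$F_{p,Z'}=F_{p,Z}+\Phi^p-G^{p-1}\Phi$ up to terms of higher weight'' cannot hold as stated. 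The correct bound, which is what the paper uses, is $\mathrm{ord}_{m_S}\theta\geq\delta(x)$: this is the case $\alpha=\mathbf{1}$ of the minimality estimate in the proof of Proposition-Definition \ref{izero}, namely $\mu_\alpha(\phi)\geq\delta_\alpha$. Then $\Phi:=\mathrm{cl}_{\delta(x)}\theta\in G(m_S)_{\delta(x)}=G(m_S)_d$, the degrees of $\Phi^p$ and $G^{p-1}\Phi$ both equal $pd$, and $\Phi$ lands in $\mathcal{V}_{\lceil\mathbf{H}/p\rceil}(d)$ as required by Lemma \ref{kerT}. Note also that $\geq\delta(x)$ is the \emph{stronger} (sharper) condition, not the weaker one as you wrote, and your worry about ``$p\mid\delta(x)$ false but $\delta(x)/p\in\N$'' is vacuous (those two conditions contradict each other); since $i_0(x)=p-1$ forces $G\neq 0$, we have $d=\delta(x)=\deg G\in\N$ automatically, which the paper records and which removes the case-splitting you were anticipating.

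One smaller point: you do not actually need to decompose $\overline{M}$ into elementary types yourself. Lemma \ref{Cmaxinv} is already stated for an arbitrary $\overline{M}\in\mathcal{M}(S/m_S)$; the reduction to block-elementary matrices is internal to its proof. So after fixing the translation bound you may invoke \ref{kerT} and then \ref{Cmaxinv} directly, which is precisely the paper's two-line argument.
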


\begin{proof}
For such $(u_1, \ldots ,u_n;Z)$, the corresponding initial form is
$$
\mathrm{in}_{m_S} h =Z^p - G^{p-1}Z +F_{p,Z} \in G(m_S)[Z].
$$
Since $G\neq 0$, we have $d=\delta (x)=\mathrm{deg}G \in \N$.
If $(u'_1, \ldots ,u'_n)$ is an adapted r.s.p. of $S$, there exists $M \in {\cal M}(S)$ such
that  $\mathbf{u}=M\mathbf{u'}$. Let $(u'_1, \ldots ,u'_n;Z')$
be well adapted coordinates at $x$. We have $Z'=Z - \phi$ for some $\phi \in S$, with
$\mathrm{ord}_{m_S}\phi \geq d $. We deduce that
$$
\mathrm{in}_{m_S} h ={Z'}^p - G^{p-1}Z' +\Phi ^p - G^{p-1}\Phi +F_{p,Z}\in G(m_S)[Z']
$$
for  $\Phi :=\mathrm{cl}_d \phi \in G(m_S)_d$. We deduce the formula
$$
   F_{p,Z'}=F_{p,Z}+\Phi^p - G^{p-1}\Phi .
$$
By Lemma \ref{kerT}, we have $V( T F_{p,Z'}, E,m_S)=V( TF_{p,Z}, E,m_S)$; if moreover $V( TF_{p,Z}, E,m_S)=0$,
then $J( T F_{p,Z'}, E,m_S)=J( TF_{p,Z}, E,m_S)$. By Lemma \ref{Cmaxinv}, we have
an equality of sets
$$
\mathrm{Max}(V( TF_{p,Z'}, E,m_S))\cap \{U_B=0\}=\mathrm{Max}(V( T'F'_{p,Z'}, E,m_S))\cap \{U'_B=0\}
$$
and this proves (i). If $V( TF_{p,Z'}, E,m_S)=0$, then $V( T'F'_{p,Z'}, E,m_S)=0$ by Lemma \ref{Cmaxinv} and
there is an equality of sets
$$
\mathrm{Max}(J( TF_{p,Z'}, E,m_S))\cap \{U_B=0\}=\mathrm{Max}(J( T'F'_{p,Z'}, E,m_S))\cap \{U'_B=0\}.
$$
This concludes the proof.
\end{proof}

\begin{rem}\label{Bempty}
We consider Proposition \ref{indiff}(iii) as the special case $B=\emptyset$, $T=\mathrm{id}$ of
Proposition \ref{Tinvariant}.
\end{rem}

\subsection{Main invariants.}

Let $s \in \mathrm{Spec}S$ and $y \in \eta^{-1}(s)$. The purpose of this section is
to attach to $y$ a resolution complexity \index{$\iota (y)$, equation\eqref{eq251}}
\begin{equation}\label{eq251}
\iota (y)=(m(y), \omega (y), \kappa (y))\in  \{1,\ldots ,p\}\times  \N \times  \{1,\geq 2\}
\end{equation}
with certain invariance properties. Auxiliary numbers
\begin{equation}\label{eq2511}
   (\tau (y),\tau '(y)) \in \{1,\ldots ,n+1\} \times \{1,\ldots ,n\}
\end{equation}
are similarly attached to $y$.

\smallskip

The pair $(m(y),\tau (y))$ are the standard multiplicity and
Hironaka $\tau$-num\-ber of ${\cal X}$ at $y$  (Definition \ref{defmult}).
The pair $(\omega(y),\tau '(y))$  {plays} the role of a differential multiplicity and
differential $\tau$-number attached to $\eta : {\cal X} \rightarrow \mathrm{Spec}S$ at $y$.
The behavior of the function $\iota $ under blowing up is studied in Theorem \ref{bupthm} below.

\smallskip

In all definitions that follow it can be assumed without loss of generality that $s=m_S$
by localizing $S$ at $s$, since our assumptions {\bf (G)} and {\bf (E)} are stable
when changing $(S,h,E)$ to $(S_s,h_s,E_s)$ (Notation \ref{notaprime}).

\begin{defn}\label{defmult}(Multiplicity).
Let $x \in \eta^{-1}(m_S)$. We have already defined \index{multiplicity $m(x)$, Definition~\ref{defmult}}
$$
m(x)=\mathrm{ord}_{m_{S[X]_x}} h(X)\leq p.
$$
Let $M_x \subset S[X]$ be the ideal of $x$,
$G_x:=\mathrm{Spec}(\mathrm{gr}_{M_x}S[X]_{M_x})$ and  {in$_xh(X)$} be the initial form
of $h$ in $(G_x)_{m(x)}$. From Definition \ref{defdirectrix}, we let
$$
\tau (x):=\tau (   {\mathrm{in}_xh(X)} ).
$$

If $m(x)<p$, we let $\iota (x):=(m(x), {\omega(x):=0},1)$.
\end{defn}

\begin{defn}\label{defomega}(Adapted order).
Assume that $m(x)=p$, where $\{x\}=\eta^{-1}(m_S)$. Let  $(u_1, \ldots ,u_n;Z)$
be well adapted coordinates at $x$. We let \index{adapted order $\omega$, Definition~\ref{defomega}}
$$
    \omega (x)=\left\{
\begin{array}{ccc}
  \epsilon (x)-1 & \mathrm{if} &  V(TF_{p,Z},E,m_{S})\neq 0 \\
  \epsilon (x)  \hfill{} & \mathrm{if} &  V(TF_{p,Z},E,m_{S})= 0\\
\end{array}
\right .
.
$$
We define: \index{$\kappa (x)= 1$, Definition~\ref{defomega}}
$$
    \kappa (x):= 1 \ \mathrm{if} \ \mathrm{(} \omega (x)=\epsilon (x) \ \mathrm{and} \ i_0(x)=p-1\mathrm{)}\  {\mathrm{or\ if}\ \omega (x)=0}.
$$
Otherwise, we simply let $\kappa (x) \geq 2$.
\end{defn}

\begin{rem} {
Note that $m (y) <p$ whenever $s=\eta (y) \not \in E$ (Definition
\ref{conditionE} and following comments). If $m(y)= p$, we have
$$
s =\eta (y) \in E, \ \eta^{-1}(s)=\{y\} \ \mathrm{and} \ k(y)=k(s)
$$
by Proposition \ref{SingX}.}

\smallskip

 {Applying Proposition \ref{indiff}(iii) (resp. Proposition  \ref{Tinvariant}(ii)) to $S$
if $i_0(x)=p$ (resp. if $i_0(x)=p-1$) proves that $(\omega (x), \kappa (x))$ is
well-defined. We recall that $TF_{p,Z}=F_{p,Z}$ whenever $i_0(x)=p$ (see Remark \ref{Bempty}).}
\end{rem}

\begin{rem}
It is obvious from this definition that $\omega (x)$ is not determined by the characteristic polyhedra
$\Delta_S (h;u_1,\ldots,u_n;Z)$, even for unspecified well adapted coordinates $(u_1,\ldots,u_n;Z)$.

For example, take  $n=3$, $p\geq 3$ for simplicity and $k(x)$ algebraically closed of characteristic $p>0$.
Suppose:
$$
\mathrm{in}_{m_S}h=Z^p +U_1U_2U_3^p +U_1^{p+2}+U_2^{p+2} +c U_3U_2U_1^{p} , \ E=\mathrm{div}(u_1u_2),
$$
where $c\in k(x)$. Let $(u'_1,u'_2,u'_3;Z')$ be well adapted coordinates such that
$\mathrm{div}(u_j)= \mathrm{div}(u'_j)$ for $j=1,2$.  {Then the corresponding initial face of $\Delta_S (h;u'_1,u'_2,u'_3;Z') $ is:
$$
\mathrm{Conv}(\{\mathbf{v}_1 , \mathbf{v}_2, \mathbf{v}_3\})
\subset \{x_1+x_2+x_3=\delta (x)=1+2/p\}
$$
and is independent of $c$}, where
$$
\mathbf{v}_1:=((p+2)/p,0,0), \ \mathbf{v}_2:=(0,(p+2)/p,0), \ \mathbf{v}_3:=(1/p,1/p,1).
$$
But $\omega (x)=p+2$ (resp. $\omega (x)=p+1$) for $c =0$ (resp. for $c\neq 0$).

\end{rem}

\begin{rem} This definition is different from the one used in \cite{CoP2} chapter 1,
Definition {\bf II.4} when $G\neq 0$. Let $(u_1,\ldots,u_n;Z)$ be well adapted coordinates at $x$.
There is an obvious implication
$$
\omega (x)=\epsilon (x)-1 \Longrightarrow V(F_{p,Z},E,m_{S})\neq 0.
$$
The converse is however false, even if it is assumed that $V(F_{p,Z},E,m_{S})\neq 0$ for every
possible choice of well adapted coordinates $(u_1,\ldots,u_n;Z)$ at $x$ and this is the reason
for this difference. For $n\leq 3$, this phenomenon is easily dealt with, {\it vid.}
\cite{CoP2} chapter 1 {\bf II.3.3.1} and {\bf II.3.3.2}; proof of {\bf II.5.4.2}(iv); Theorem
{\bf II.5.6}.

\smallskip

In chapter  {5}, we define the projection number $\kappa (x)\in \{2,3,4\}$ when $n=3$ and state that
$\iota (x)=(m(x),\omega (x), \kappa (x))$ can be decreased by Hironaka permissible blowing ups w.r.t. $E$
(Projection Theorem \ref{projthm} below).
\end{rem}

We now turn to the definition of the adapted cone and directrix and the attached invariant $\tau '(x)$.

\begin{defn}\label{deftauprime} (Adapted cone and directrix).
Assume that $m(x)=p$ and $\omega (x)>0$, where $\{x\}=\eta^{-1}(m_S)$. Let  $(u_1, \ldots ,u_n;Z)$
be well adapted coordinates at $x$.
We define a reduced subcone $\mathrm{Max}(x)\subseteq \mathrm{Spec}G(m_S)$ by:
$$
    \mathrm{Max}(x):=\left\{
\begin{array}{ccc}
  \mathrm{Max}(V( TF_{p,Z}, E,m_S))\cap \{U_B=0\} & \mathrm{if} &  \omega (x)=\epsilon (x)-1 \\
  \mathrm{Max}(J( TF_{p,Z}, E,m_S))\cap \{U_B=0\} & \mathrm{if} &  \omega (x)=\epsilon (x)  {.} \hfill{}\\
\end{array}
\right
.
$$
We define an affine subspace $\mathrm{Dir}(x)\subseteq \mathrm{Spec}G(m_S)$ by \index{$\mathrm{Dir}(x)$, Definition~\ref{deftauprime}}
$$
    \mathrm{Dir}(x):=\left\{
\begin{array}{ccc}
  \mathrm{Dir}(V( TF_{p,Z}, E,m_S),U_B) & \mathrm{if} &  \omega (x)=\epsilon (x)-1 \\
  \mathrm{Dir}(J( TF_{p,Z}, E,m_S),U_B) & \mathrm{if} &  \omega (x)=\epsilon (x)  {.} \hfill{}\\
\end{array}
\right
.
$$
We let $\mathrm{Vdir}(x)$ to be the underlying vector space of $\mathrm{Dir}(x)$ and \index{$\tau '(x)$, Definition~\ref{deftauprime}}
$$
\tau '(x):=\mathrm{dim}_{k(x)}\mathrm{Vdir}(x).
$$
\end{defn}

\begin{rem}  {Applying Proposition \ref{indiff}(iii) (resp. Proposition  \ref{Tinvariant})
if $i_0(x)=p$ (resp. if $i_0 (x)=p-1$) proves that $\mathrm{Max}(x)$, $\mathrm{Dir}(x)$
and $\tau '(x)$ are well defined.}
We will use the invariants $\mathrm{Dir}(x)$ and $\tau '(x)$
 {only when $n=3$. In this case, we have $\mathrm{Dir}(x)=\mathrm{Max}(x)$} (last statement in Proposition \ref{conedirectrix}
and remark  following).\\
\end{rem}

Let $S \subseteq \tilde{S}$ be a regular local base change, $\tilde{S}$ excellent. Recall
Notation \ref{notageomreg1} and Notation \ref{notaprime}. It has been explained when defining
conditions {\bf (G)} and {\bf (E)} that they are stable by such base changes and by
localization at a prime.
Let  $\tilde{s}\in \mathrm{Spec}\tilde{S}$ and $\tilde{y} \in \tilde{\eta}^{-1}(\tilde{s})$.
In order to relate $\iota (\tilde{y})$ and $\iota (y)$ (\ref{eq251}), where $y \in {\cal X}$ is
the image of $\tilde{y}$, we may thus assume that $s=m_S$, $\tilde{s}=m_{\tilde{S}}$.

\smallskip

Let $(u_1, \ldots ,u_n;Z)$ be well adapted coordinates at $x \in \eta^{-1}(m_S)$. Then
$(u_1, \ldots ,u_n)$ can be completed to a r.s.p. $(u_1, \ldots ,u_{\tilde{n}})$ of $\tilde{S}$
which is adapted to $\tilde{E}$. There is an inclusion
\begin{equation}\label{eq2513}
G(m_S)=k(x)[U_1, \ldots ,U_{n}] \subseteq
G(m_{\tilde{S}})=G(m_S) \otimes_{k(x)} {\tilde{S} \over m_{\tilde{S}}}[ {{U}_{n+1}, \ldots , {U}_{\tilde{n}}}].
\end{equation}

\begin{thm}\label{omegageomreg}
Let $S \subseteq \tilde{S}$ be a local base change which is regular, $\tilde{S}$ excellent. Let
$\tilde{x} \in \tilde{\eta}^{-1}(m_{\tilde{S}})$ and $x \in \eta^{-1}(m_S)$ be its image.
The following holds:
\begin{itemize}
  \item [(1)] we have $(m (\tilde{x}), \omega (\tilde{x}))= (m(x), \omega (x))$;
  \item [(2)] if $m(x)=p$, then
  \begin{itemize}
   \item [(i)] $H(\tilde{x})=H(x)\tilde{S}$, $i_0(\tilde{x})=i_0(x)$,
   and ($\kappa (\tilde{x})=1 \Leftrightarrow \kappa (x)=1$);
  \item [(ii)] we have $\epsilon (\tilde{x})\geq \epsilon (x)$,
  and $\epsilon (\tilde{x})> \epsilon (x)$ if and only if
  $$
  \mathrm{in}_{m_S}h=Z^p + F_{p,Z}, \ F_{p,Z} \in  (k(\tilde{x})[U_1, \ldots ,U_n])^p
  $$
  where $(u_1, \ldots ,u_n;Z)$ are well prepared coordinates at $x$. When this holds,
  we have $\tilde{n}>n$, $\epsilon (\tilde{x})=\epsilon (x)+1$ and
  $$
  \mathrm{in}_{m_{\tilde{S}}}\tilde{h} =\tilde{Z}^p + \sum_{j=n+1}^{\tilde{n}}U_j\Phi_j(U_1, \ldots , U_n)
   + \Psi(U_1, \ldots , U_n) \in G(m_{\tilde{S}})[\tilde{Z}],
  $$
   with $\Phi_j\neq 0$ for some  $j \geq n+1$ and $\Phi_j\in k(\tilde{x})[U^p_1, \ldots ,U^p_n]$
   for every $j \geq n+1$, where $(u_1, \ldots ,u_{\tilde{n}};\tilde{Z})$ are well prepared coordinates at $\tilde{x}$.
  \end{itemize}
\end{itemize}
\end{thm}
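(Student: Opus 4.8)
The theorem compares the primary invariants $m$ and $\omega$ (and some auxiliary data) across a regular local base change $S\subseteq\tilde S$. Since $m$ and $\omega$ are defined purely in terms of the initial form polynomial $\mathrm{in}_{m_S}h$, its characteristic polyhedron, and the derivations in $\mathcal D(m_S)$, the strategy is to track how each of these objects behaves under the inclusion $G(m_S)\subseteq G(m_{\tilde S})$ of (\ref{eq2513}). The first reduction — already granted in the statement — is that we may assume $s=m_S$, $\tilde s=m_{\tilde S}$. Then one reduces further to the case where $S/m_S\subseteq\tilde S/m_{\tilde S}$ is inseparably closed: indeed, this is the situation covered by proposition \ref{Deltageomreg}, which tells us that if $(u_1,\ldots,u_n;Z)$ are well adapted at $x$, then the same $Z$ gives minimal polyhedron over $\tilde S$, namely $\Delta_{\tilde S}(h;u_1,\ldots,u_{\tilde n};Z)=\Delta_S(h;u_1,\ldots,u_n;Z)\times\R^{\tilde n-n}_{\ge0}$, with $(u_1,\ldots,u_n)$ completed to an adapted r.s.p. of $\tilde S$. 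The general case then follows by factoring an arbitrary regular base change through an inseparable closure of the residue extension (itself an ind-étale, hence regular, base change, so $\delta$, $\epsilon$ and all the initial forms are literally unchanged there).

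\textbf{Core of the argument.} With $Z$ fixed and the polyhedron stable, we immediately get $\delta(\tilde x)=\delta(x)$, hence $m(\tilde x)=m(x)$ from proposition \ref{deltainv}, and $H(\tilde x)=H(x)\tilde S$, $d_j(\tilde x)=d_j(x)$ directly from definition \ref{defH} and proposition \ref{epsiloninv}. The integer $i_0(x)$ is read off the face $\sigma_{\mathbf 1}$ of the polyhedron, which does not change, so $i_0(\tilde x)=i_0(x)$; the structure theorem \ref{initform} then gives $\mathrm{in}_{m_S}h=Z^p-G^{p-1}Z+F_{p,Z}$ with the same $G$ and $F_{p,Z}$, now viewed inside $G(m_{\tilde S})[\tilde Z]$. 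For $\epsilon$ and $\omega$ the point is that $\epsilon(x)=p\delta(x)-\sum H_j$ is a polyhedral quantity and hence \emph{a priori} constant — so $\epsilon(\tilde x)=\epsilon(x)$ \emph{unless} the polyhedron over $\tilde S$ fails to be minimal, i.e. unless the vertex $\mathbf b:=(d_1(x),\ldots)$ becomes solvable after base change. By the remark preceding proposition \ref{Deltageomreg}, nonsolvable vertices stay nonsolvable under an inseparably closed residue extension \emph{except} the one at the bottom of the polyhedron, and this can only happen when $G=0$ (case $i_0(x)=p$) and $F_{p,Z}$ becomes a $p$-th power over $\tilde S/m_{\tilde S}$, i.e. $F_{p,Z}\in(k(\tilde x)[U_1,\ldots,U_n])^p$. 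When this occurs one re-minimizes over $\tilde S$ by a translation $\tilde Z=Z-\theta$ with $\mathrm{cl}_{\delta(x)}\theta$ extracting the $p$-th root; the new initial form has shape $\tilde Z^p+\sum_{j>n}U_j\Phi_j+\Psi$, and one checks $\Phi_j\in k(\tilde x)[U_1^p,\ldots,U_n^p]$ (these are the "coefficients" in the $\tilde U_j$-direction of a former $p$-th power, so each is a $p$-th power in the $U_i$) with some $\Phi_j\neq0$ (else $Z$ was not minimal over $S$ to begin with, using proposition \ref{Deltamin}'s last statement and $S$ excellent). This forces $\epsilon(\tilde x)=\epsilon(x)+1$ and $\tilde n>n$. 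In all cases, combining with definition \ref{defomega}: when $\epsilon$ is constant, one invokes proposition \ref{indiff}(iii) (if $G=0$) or proposition \ref{Tinvariant}(ii) (if $G\neq0$) — here is where the truncation operator $T$ and its invariance under $\mathcal M(S/m_S)$ are used — to see that $V(TF_{p,Z},E,m_S)\neq0$ is equivalent to $V(TF_{p,\tilde Z},\tilde E,m_{\tilde S})\neq0$, so $\omega$ is unchanged; when $\epsilon$ jumps by $1$, the new $F_{p,\tilde Z}$ is \emph{not} a $p$-th power (some $\Phi_j\neq0$ is transverse) yet lies in $k(\tilde x)[U_1,\ldots,U_e][U_{e+1}^p,\ldots]$-type form only in the old variables, so $V\neq0$ precisely because of the $U_j\Phi_j$ terms with $j>n\ge e+1$, giving $\omega(\tilde x)=\epsilon(\tilde x)-1=\epsilon(x)=\omega(x)$. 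Finally $\kappa(\tilde x)=1\Leftrightarrow\kappa(x)=1$ because $\kappa=1$ means $i_0=p-1$ and $\omega=\epsilon$, both of which we have just shown transfer.

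\textbf{Main obstacle.} The delicate point is \emph{not} the numerical bookkeeping but the invariance statements hidden in propositions \ref{indiff}(iii) and \ref{Tinvariant}: one must be sure that the submodules $V(TF_{p,Z},E,m_S)$ and $J(TF_{p,Z},E,m_S)$, and in particular the vanishing of $V$, are genuinely computed the same way after tensoring up $G(m_S)\otimes_{k(x)}k(\tilde x)[\tilde U_{n+1},\ldots]$ — i.e. that the extra variables $\tilde U_j$ and the possibly larger $p$-basis of $k(\tilde x)$ do not spuriously create or destroy derivatives. This is exactly the content of the excellence hypothesis via $\widehat{S_W}^p\cap S_W=S_W^p$ (equation (\ref{eq241})) applied now over $\tilde S$, combined with the fact (from the $\tilde S$-analogue of proposition \ref{monomexp}) that the expansion (\ref{eq2036}) of the coefficients $f_{i,Z}$ in $\{u_1,\ldots,u_n\}$ does not acquire new monomials over $\tilde S$ since $Z$ stays minimal. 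Once that compatibility is in hand, the case analysis $\epsilon(\tilde x)=\epsilon(x)$ versus $\epsilon(\tilde x)=\epsilon(x)+1$ closes routinely.
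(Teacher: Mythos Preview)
There is a real gap. Your claimed reduction to the inseparably closed case is confused: an ind-\'etale step has \emph{separable} residue extension, so it cannot absorb the inseparable part of $k(x)\subseteq k(\tilde x)$; after any such factorization the remaining step still has a residue extension that is not inseparably closed, and proposition~\ref{Deltageomreg} does not apply to it. The paper makes no such reduction and works directly with $S\subseteq\tilde S$.

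The substantive gap is the assertion that $H(\tilde x)=H(x)\tilde S$ (equivalently $d_j(\tilde x)=d_j(x)$ for $1\le j\le e$) follows ``directly from definition~\ref{defH} and proposition~\ref{epsiloninv}''. Those results compute $d_j$ from a \emph{minimal} polyhedron, and $\Delta_{\tilde S}(h;u_1,\ldots,u_{\tilde n};Z)$ with the old $Z$ need not be minimal. This is where the paper uses geometric regularity of the fibre (not merely flatness): since $\tilde S/m_S\tilde S$ is geometrically regular over $k(x)$, for any unit $l\in S$ with $\bar l\notin k(x)^p$ but $\bar l\in k(\tilde x)^p$, the element $\tilde l^p-l$ is a regular parameter of $\tilde S$ transverse to $\mathrm{div}(u_1\cdots u_n)$ (equation~(\ref{eq2516})). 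Applied to the localizations $S_{(u_j)}\subseteq\tilde S_{(u_j)}$, this shows that $QF(S/(u_j))\subseteq QF(\tilde S/(u_j))$ is inseparably closed, so the one-variable polyhedron at $(u_j)$ stays minimal and $d_j$ is preserved. Everything downstream---the inequality $\epsilon(\tilde x)\ge\epsilon(x)$, the compatibility of the truncation operators $T$ and $\tilde T$ needed for (\ref{eq2524}), and hence $\omega(\tilde x)=\omega(x)$ in the case $i_0(x)=p-1$---rests on this step, which your sketch skips. Your ``main obstacle'' paragraph locates the difficulty in the wrong place: equation~(\ref{eq241}) is not what controls the comparison of $V$ and $J$ across the base change; rather, once $H(\tilde x)=H(x)\tilde S$ is known, $T$ and $\tilde T$ commute with $G(m_S)\hookrightarrow G(m_{\tilde S})$ and lemma~\ref{kerT} handles the translation on $Z$.
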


 {
\begin{exam}
Let us note that case (2)(ii) with $\epsilon(\tilde{x})>\epsilon(x)$ occurs in Example \ref{ex:kangourou}. We give another example involving a formal fiber.
Let  $k$ be a field of characteristic $p>0$,
$$R:=k[u_1,u_2,u_3,u_4]_{(u_1,u_2,u_3,u_4)}, \ E=\mathrm{div}(u_1u_2), \ \mathfrak{P}:=(u_1,u_2),\ S:=R_\mathfrak{P}.$$
Let $\phi\in k[[u_4]]$ be transcendental over  $k(u_4)$ with $\phi(0)=0$. Let $\widehat{\mathfrak{P}}:=(u_1,u_2,\hat{v}:=u_3-\phi^p)$ and $\tilde{S}:=\hat{R}_{\widehat{\mathfrak{P}}}$. The local base change $S\subset \tilde{S}$ is regular as $R$ is excellent. Let
$$h:=Z^p+u_1^pu_3+u_2^{p+1} \in S[Z].$$
We denote by $x$ and $\tilde x$ the closed points of Spec$(S)$ and Spec$( \tilde{S})$. The coordinates $(u_1,u_2;Z)$ are well adapted at $x$. We have
$$\epsilon(x)=p, \ \mathrm{in}_\mathfrak{P}(h)=Z^p+\overline{u_3}U_1^p\in k(\overline{u_3},\overline{u_4})[U_1,U_2].$$
Let $\tilde{Z}=Z+u_1 \phi$, the coordinates $(u_1,u_2,\hat{v};\tilde{Z})$ are well adapted at $\tilde x$.
$$h=\tilde{Z}^p+u_1^p \hat{v}+u_2^{p+1},$$
$$\epsilon(\tilde{x})=p+1,\ \mathrm{in}_{\widehat{\mathfrak{P}}}(h)=\tilde{Z}^p+U_1^p\widehat{V}+U_2^{p+1}\in k((u_4))[U_1,U_2,\widehat{V}].$$
\end{exam}}

\begin{proof}
The theorem is trivial if $m(x)=1$: then $m (\tilde{x})=1$ because
$S \subseteq \tilde{S}$ is regular.

Assume that $m(x)\geq 2$ and pick well prepared coordinates $(u_1,\ldots ,u_n;Z)$ at $x$,
then complete $(u_1,\ldots ,u_n)$ to a r.s.p. $(u_1, \ldots ,u_{\tilde{n}})$ of $\tilde{S}$
which is adapted to $\tilde{E}$. We have $\delta (x)>0$, so
$h \in (Z,u_1, \ldots ,u_n)$, and $k(x)=S/m_S$ by Proposition \ref{SingX}.
Applying (\ref{eq2513}) to
the local base change $S[Z]_{(m_S,Z)}\subseteq  {\tilde{S}[Z]_{(m_{\tilde{S}},Z)}}$ which is also
regular gives
$$
m(x)=\mathrm{ord}_x h(Z)=\mathrm{ord}_{\tilde{x}} \tilde{h}(Z)=m(\tilde{x}).
$$

This concludes the proof when $m(x)<p$ ( {$\omega(x)=\omega(\tilde{x})=0$ in this case}) and we assume from now on that $m(x)=p$.
In  particular we have $\{\tilde{x}\}=\tilde{\eta}^{-1}(m_{\tilde{S}})$,
$k(\tilde{x})=\tilde{S}/m_{\tilde{S}}$. Let
$$
\mathrm{in}_{m_S}h =Z^p + \sum_{i=1}^p{F_{i,Z}}Z^{p-i} \in
G(m_S)[Z],
$$
be the corresponding initial form polynomial. Let $\mathbf{x} \in \R^{n}_{\geq 0}$
be a vertex of the polyhedron $\Delta_{S} {(h;u_1, \ldots ,u_n;Z)}$. We denote by
$$
\mathbf{\tilde{x}}:=(\mathbf{x}, \underbrace{0, \ldots ,0}_{\tilde{n}-n}) \in
\Delta_{\tilde{S}}(u_1, \ldots ,u_{\tilde{n}};Z)
$$
the corresponding vertex in $\Delta_{\tilde{S}} {(h;u_1, \ldots ,u_{\tilde{n}};Z)}$. Note that $\mathbf{\tilde{x}}$
{\it may be a solvable vertex} of the latter polyhedron. We have:
$$
\mathbf{\tilde{x}} \ \mathrm{solvable} \Leftrightarrow \mathrm{in}_{\mathbf{\tilde{x}}}\tilde{h}
\in ((\mathrm{gr}_\alpha \tilde{S})[Z])^p
$$
with notations as in Definition \ref{defsolvable}. Therefore we have
$$
\mathbf{\tilde{x}} \ \mathrm{solvable} \Leftrightarrow (\mathrm{in}_{\mathbf{x}}h =Z^p + F_{p,Z,\mathbf{x}},
\mathbf{x}  \in \N^n , \ F_{p,Z,\mathbf{x}}=\lambda U^{p \mathbf{x}},
\lambda \in k(\tilde{x})^p).
$$
We deduce for the initial form polynomial that
\begin{equation}\label{eq2518}
\delta (\tilde{x})>\delta (x) \Leftrightarrow (i_0(x)=p \ \mathrm{and} \ F_{p,Z}\in  (k(\tilde{x})[U_1, \ldots ,U_{n}])^p).
\end{equation}

Since the fiber ring $\tilde{S}/m_S \tilde{S}$ is geometrically regular over $k(x)$,
the ring $\tilde{S}[Y]/(Y^p - l)$ is regular for every unit
$l \in S$ with residue $\overline{l} \not \in k(x)^p$. Therefore if
$\overline{l}  \in k(\tilde{x})^p$, we have
$$
\forall \tilde{l} \in \tilde{S},  \tilde{v}:=\tilde{l}^p - l \in m_{\tilde{S}} \Longrightarrow
\tilde{v} \ \mathrm{is} \  \mathrm{a} \ \mathrm{regular} \ \mathrm{parameter} \ \mathrm{in} \ \tilde{S}.
$$
Such $\tilde{v}$ restricts to a regular parameter of $\tilde{S}/m_S \tilde{S}$, so
the previous formula is refined to:
\begin{equation}\label{eq2516}
\tilde{v} \ \mathrm{is} \   \mathrm{a} \ \mathrm{regular} \ \mathrm{parameter} \
\mathrm{transverse} \ \mathrm{to} \ \mathrm{div}(u_1 \cdots u_n)\subset \mathrm{Spec} \tilde{S}.
\end{equation}

This equation implies in particular that $\tilde{n}>n$.
Let $\xi \in \mathrm{Spec}(\tilde{S}/m_S \tilde{S})$ be the generic point. Applying the above
remarks to the regular local base change $S\subset \tilde{S}_{\xi}$ shows that $k(\xi)^p \cap k(x)= k(x)^p$.

Let $s_j:=(u_j) \in \mathrm{Spec}S$, $1 \leq j \leq e$, and apply this remark to the regular local base change
$S_{(u_j)}\subseteq \tilde{S}_{(u_j)}$. This proves that the field inclusion
$QF(S/(u_j))\subseteq QF(\tilde{S}/(u_j))$ is inseparably closed.

The polynomial $\mathrm{in}_{(s_j)}h_{s_j} \in QF(S/(u_j))[U_j][Z]$ is not a $p^\mathrm{th}$-power
by Proposition \ref{Deltaalg}. Therefore $\mathrm{in}_{(s_j)}h_{s_j}$ is not a $p^\mathrm{th}$-power
in $QF(\tilde{S}/(u_j))[U_j][Z]$. Turning back to Definition \ref{defepsilon}, we get
\begin{equation}\label{eq2515}
H(\tilde{x})=H(x)\tilde{S}.
\end{equation}
Definition \ref{defepsilon} now shows that $\epsilon (\tilde{x}) \geq \epsilon (x)$ and that
\begin{equation}\label{eq2519}
\epsilon (\tilde{x}) > \epsilon (x) \Leftrightarrow
(i_0(x)=p \ \mathrm{and} \ F_{p,Z}\in  (k(\tilde{x})[U_1, \ldots ,U_n])^p).
\end{equation}
This proves the first part of (2.ii). To go on with the proof, we consider two cases.\\

{\it Case 1:} assume that $i_0(x)<p$. By (\ref{eq2519}), we have $\epsilon (\tilde{x}) = \epsilon (x)$,
so the proof of (2.ii) is already complete. Let $\tilde{\phi} \in \tilde{S}$ be
such that $\Delta_{\tilde{S}}(u_1, \ldots ,u_{\tilde{n}};\tilde{Z})$ is minimal,
with $\tilde{Z}:=Z-\tilde{\phi}$ and $\mathrm{ord}_{m_{\tilde{S}}}\tilde{\phi} \geq \delta (x)$. We have
$$
\mathrm{in}_{m_{\tilde{S}}}\tilde{h} =\tilde{Z}^p + \sum_{i=i_0}^{p}F_{i,\tilde{Z}}\tilde{Z}^{p-i}  \in
G(m_{\tilde{S}})[\tilde{Z}],
$$
with $F_{i_0,\tilde{Z}}=F_{i_0,Z}$ by Proposition \ref{izero}. Therefore $i_0(\tilde{x})=i_0(x)$
and it is sufficient to prove that $\omega (\tilde{x})=\omega (x)$ in order to complete the proof of
(1) and (2.i) in the theorem (still under the assumption $i_0(x)<p$). This is obvious if $\epsilon (x)=0$, since
$$
0 \leq \omega (\tilde{x})\leq \epsilon (\tilde{x})=\omega (x)=0.
$$
Assume that $\epsilon (x)>0$. We have $i_0(x)=p-1$ and $-F_{p-1,Z}=G^{p-1}$, with $<G>=<U^{\mathbf{b}}>$
for some $\mathbf{b} \in \N^n \cap \E$ by Theorem \ref{initform}(2) (in particular $\delta (x) \in \N$).
We have
$$
V(TF_{p,Z},E,m_S)=<\left \{H^{-1}{\partial TF_{p,Z} \over \partial U_j}\right \}_{e+1 \leq j \leq n}>.
$$
Note that the truncation maps $T$ and $\tilde{T}$ associated with the local rings $S$ and $\tilde{S}$
(Definition \ref{defT}) commute with the inclusion $G(m_S)\subseteq G(m_{\tilde{S}})$
by (\ref{eq2515}). Since $F_{p,Z}\in G(m_S)=k(x)[U_1, \ldots ,U_{n}]$, we have
$$
V(\tilde{T}F_{p,Z},\tilde{E},m_{\tilde{S}})=<\left \{H^{-1}{\partial \tilde{T}F_{p,Z} \over \partial U_j}
\right \}_{j=e+1}^{\tilde{n}}> =V(TF_{p,Z},E,m_S)\otimes_{k(x)}k(\tilde{x})
$$
with obvious notations, taking (\ref{eq2515}) into account.
There exists $\tilde{\Theta}\in G(m_{\tilde{S}})$ such that
$$
F_{p,\tilde{Z}}=F_{p,Z} + \tilde{\Theta}^p -G^{p-1}\tilde{\Theta}.
$$
By Lemma \ref{kerT} applied to $F_{p,\tilde{Z}}\in G(m_{\tilde{S}})$, we deduce that
\begin{equation}\label{eq2524}
V(\tilde{T}F_{p,\tilde{Z}},\tilde{E},m_{\tilde{S}})=V(TF_{p,Z},E,m_S)\otimes_{k(x)}k(\tilde{x}).
\end{equation}
This completes the proof of the theorem when $\omega (x)=\epsilon (x)-1$, applying Definition \ref{defomega}.
If $\omega (x)=\epsilon (x)$, (1) and the last statement of (2.i) in the theorem also
follow from (\ref{eq2524}) and the proof is complete. \\

{\it Case 2:} assume that $i_0 (x)=p$. The proof runs parallel to that of case 1 (with $B=\emptyset$,
$\tilde{T}=\mathrm{id}$, {\it cf.} Remark \ref{Bempty}) {\it provided that}
$\epsilon (\tilde{x})=\epsilon (x)$. Assume now that $\epsilon (\tilde{x})>\epsilon (x)$.
To complete the proof, we have to show
that
$$
(i_0(\tilde{x}), \omega (\tilde{x}))= (p, \omega (x)),
$$
as well as the last statement in (2.ii). By (\ref{eq2519}),
we have $\omega (x)=\epsilon (x)$, $\delta (x)\in \N$ and there is an expansion
$$
F_{p,Z}=\sum_{\mid \mathbf{x}\mid =\delta (x)}\lambda (\mathbf{x})U^{p\mathbf{x}}
\in (k(\tilde{x})[U_1, \ldots ,U_n]_{\delta (x)})^p, \ \lambda (\mathbf{x}) \in k(x).
$$
Note that this situation possibly occurs only if $k(x)$ is {\it not} inseparably closed in $k(\tilde{x})$
(in particular $\tilde{n}>n$). We have $\mathbf{x} \in \N^{n}$ for every $\mathbf{x}$ such that
$\lambda (\mathbf{x}) \neq 0$.
Without loss of generality, it can be assumed that $\lambda (\mathbf{x}) \not \in k(x)^p$ for
every $\mathbf{x}$ such that $\lambda (\mathbf{x}) \neq 0$. Let $l(\mathbf{x})\in S$ be a preimage of
$\lambda (\mathbf{x})$. By (\ref{eq2516}), we may pick for every such $\mathbf{x}$
a unit $\tilde{l}(\mathbf{x})\in T$ such that $\tilde{v}(\mathbf{x}):=\tilde{l}(\mathbf{x})^p - l(\mathbf{x})$ is
a regular parameter of $\tilde{S}$ transverse to $\mathrm{div}(u_1 \cdots u_n)$. Expand
$$
h= Z^p + \sum_{i=1}^p{f_{i,Z}}Z^{p-i} \in S[Z], \ \mathrm{ord}_{m_{S}}f_{i,Z}\geq i\delta (x).
$$
For $1 \leq i \leq p-1$, the above inequality is strict, since $i_0(x)=p$. On
the other hand, we have $\delta (x) \in \N$, so we deduce that
\begin{equation}\label{eq2520}
{\mathrm{ord}_{m_{S}}f_{i,Z} \over i}\geq \delta (x)+{1 \over i}>\delta (x)+{1 \over p},
\ 1 \leq i \leq p-1.
\end{equation}

Let
$$
\tilde{Z}:=Z + \sum_{\mid \mathbf{x}\mid =\delta (x)}\tilde{l}(\mathbf{x})u^{\mathbf{x}}.
$$
By (\ref{eq2520}), there is an expansion
\begin{equation}\label{eq2521}
f_{p,\tilde{Z}}=-\sum_{\mid \mathbf{x}\mid =\delta (x)}\tilde{v}(\mathbf{x}) u^{p\mathbf{x}} +
g  + \tilde{g},
\end{equation}
with $g \in S$, $\mathrm{ord}_{m_{S}}g \geq p\delta (x)+1$ and $\tilde{g} \in \tilde{S}$,
$\mathrm{ord}_{m_{\tilde{S}}}\tilde{g} > p\delta (x)+1$ . We deduce that
$$
\delta (h; u_1, \ldots ,u_{\tilde{n}}; \tilde{Z})= \delta (x)+{1 \over p}.
$$
Since $\delta (x)+{1 \over p}\not \in \N$, $\Delta_{\tilde{S}} (h;u_1, \ldots ,u_{\tilde{n}};\tilde{Z})$
has no solvable vertex within its initial face
$\{ \mathbf{\tilde{x}}\in \R^{\tilde{n}}_{\geq 0} : \mid \mathbf{\tilde{x}} \mid =\delta (x)+{1 \over p}\}$.

Let $(u_1, \ldots , u_{\tilde{n}};\tilde{Z}_1)$ be well adapted coordinates at $\tilde{x}$. Without loss of generality,
it can be assumed that $\tilde{Z}_1=\tilde{Z} -\tilde{\theta}_1$ with
$\mathrm{ord}_{m_{\tilde{S}}}\tilde{\theta}_1 \geq \delta (x)+1$. By (\ref{eq2521}),  we get
\begin{equation}\label{eq2522}
\mathrm{in}_{m_{\tilde{S}}}\tilde{h} =\tilde{Z}_1^p - \sum_{\mid \mathbf{x}\mid =\delta (x)}\tilde{V}(\mathbf{x}) U^{p\mathbf{x}} +
G(U_1, \ldots , U_n) \in G(m_{\tilde{S}})[\tilde{Z}_1]
\end{equation}
and (2.ii) is proved. We have $i_0(\tilde{x})=p$, $\delta (\tilde{x})=\delta (x)+{1 \over p}$ and
$\epsilon (\tilde{x})=\epsilon (x)+1$. Finally, we have
$$
{\partial F_{p,\tilde{Z}_1} \over \partial U_j} =
 \sum_{\mid \mathbf{x}\mid =\delta (x)}{\partial \tilde{V}(\mathbf{x}) \over \partial \tilde{V}_j}U^{p\mathbf{x}}
 \in k(\tilde{x})[U_1, \ldots , U_n], \ n+ 1 \leq j \leq \tilde{n},
$$
so $V(F_{p,\tilde{Z}_1}, \tilde{E},m_{\tilde{S}})\neq 0$ and
$\omega (\tilde{x})=\epsilon (\tilde{x})-1 =\omega (x)$.
\end{proof}

\begin{rem} \label{stricthenselian}
Theorem \ref{omegageomreg} reduces computations of $\omega (x)$ to the case
where $S$ is strict Henselian, i.e. Henselian with separably algebraically closed residue field $S/m_S$
by changing $S$ to its strict Henselianization $\tilde{S}$, $\mathrm{dim}\tilde{S}=n=\mathrm{dim}S$.

Applying the theorem to a tower $\tilde{S}$ of smooth local base changes of the form
$S \subseteq S[Y]_{(m_S,Y^p -l)}$ with $l \in S$ a unit
with residue $\overline{l} \not \in (S/m_S)^p$ also reduces computations of $\omega (x)$ to the case
of an algebraically closed residue field for some $\tilde{S}$ with $\mathrm{dim}\tilde{S}>n=\mathrm{dim}S$.

The cone $\mathrm{Max}(x)$ and directrix $\mathrm{Dir}(x)$ have no such good behavior w.r.t.
regular local base changes.
\end{rem}

\subsection{Resolution when $\omega (x)=0$.}

In this section, we prove that the multiplicity of ${\cal X}$ can be reduced at any point $x$ such that
$(m(x),\omega (x))=(p,0)$. This is achieved by combinatorial blowing ups in a way which is similar
to the equal characteristic zero situation.

\smallskip  {Up to the end of this article, we will define a
 resolution algorithm which picks up local blowing ups centers in a way which is independent on the
choice of a valuation.  The word independent is defined below.}

\smallskip  {The total quotient ring  $L=\mathrm{Tot}(S[X]/(h))$ is a direct product of fields. By a valuation of $L$, we mean a valuation of one of these fields.}

\begin{defn}\label{indepseq}
Let $(S,h,E)$ be as before, $x \in {\cal X}$ and $L=\mathrm{Tot}(S[X]/(h))$. Suppose that  {to}
every valuation $\mu$ of $L$ centered at $x$, a composition of
local Hironaka-permissible blowing ups (Definition \ref{Hironakapermis})
\begin{equation}\label{eq2915}
       ({\cal X},x)=:({\cal X}_0,x_0) \leftarrow ({\cal X}_1,x_1) \leftarrow \cdots \leftarrow ({\cal X}_r,x_r)
\end{equation}
is associated, where $x_i \in {\cal X}_i$ is the center of $\mu$, $0 \leq i \leq r$.
The sequence (\ref{eq2915}) is said to be {\it independent} if the blowing up center
${\cal Y}_i \subset ({\cal X}_i,x_i)$ does not depend on the chosen valuation $\mu$
{\it having center  $x_i$ in ${\cal X}_I $}, $0 \leq i \leq r-1$.
\end{defn}

Let $(u_1,\ldots,u_n;Z)$ be well adapted coordinates at $x \in \eta^{-1}(m_S)$. If $\epsilon (x)>0$,
recall that $\eta^{-1}(m_S)=\{x\}$, $k(x)=S/m_S$, and that
$$
\mathrm{in}_{m_S} h =Z^p - G^{p-1}Z +F_{p,Z} \in G(m_S)[Z]= k(x)[U_1, \ldots ,U_n][Z]
$$
by (\ref{eq2551}). The initial form of $H(x)$ in $G(m_S)$ is denoted $H$ as before.

\begin{lem}\label{lemomegazero}
Assume that $m(x)=p$ and $\epsilon (x)=1$, where $\{x\}=\eta^{-1}(m_S)$. Let $(u_1,\ldots,u_n;Z)$ be
well adapted coordinates at $x \in \eta^{-1}(m_S)$. If
$$
H^{-1}F_{p,Z} \nsubseteq <U_1, \ldots ,U_e>,
$$
then $\omega (x)=0$.
\end{lem}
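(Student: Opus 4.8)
The plan is to read the statement off Definition \ref{defomega}: since $\epsilon(x)=1$, one has $\omega(x)=0$ exactly when $V(TF_{p,Z},E,m_S)\neq 0$, so everything reduces to producing a nonzero element of that space. The starting point is that $F_{p,Z}\in H\cdot G(m_S)_{\epsilon(x)}=H\cdot G(m_S)_1$ (a consequence of Proposition \ref{epsiloninv}, cf.\ the note after (\ref{eq2441})); hence $F_{p,Z}=H\Phi$ for a unique linear form $\Phi=\sum_{j=1}^{n}c_jU_j$ and $H^{-1}F_{p,Z}=\Phi$. The hypothesis $H^{-1}F_{p,Z}\nsubseteq\langle U_1,\dots,U_e\rangle$ then says precisely that $c_{j_0}\neq 0$ for some $j_0\in\{e+1,\dots,n\}$. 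Writing $\mathbf H:=(H_1,\dots,H_e,0,\dots,0)$ we have $F_{p,Z}=\sum_{j=1}^{n}c_jU^{\mathbf H+\mathbf e_j}$, and the whole argument amounts to following the single monomial $c_{j_0}U^{\mathbf H+\mathbf e_{j_0}}$ through the truncation operator and one differentiation.

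I would then split on the integer $i_0(x)$. If $i_0(x)=p$, then $G=0$ in (\ref{eq2551}) and $TF_{p,Z}=F_{p,Z}$ by Remark \ref{Bempty}; since $j_0\geq e+1$, $H$ does not involve $U_{j_0}$, whence $\partial F_{p,Z}/\partial U_{j_0}=c_{j_0}H$ and so $c_{j_0}=H^{-1}\partial F_{p,Z}/\partial U_{j_0}\in V(TF_{p,Z},E,m_S)$. If $i_0(x)=p-1$, then by Theorem \ref{initform}(2) (see (\ref{eq2313})) $G=\mu U^{\mathbf b}$ for a nonzero $\mu\in S/m_S$ and $\mathbf b=(b_1,\dots,b_e,0,\dots,0)\in\N^n$; as in the setup of Proposition \ref{Tinvariant} one has $\mathbf b\in\Delta_{\mathbf H}(\delta(x))$, i.e.\ $|\mathbf b|=\delta(x)$ and $b_l\geq H_l/p$ for $1\leq l\leq e$. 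The identity $\epsilon(x)=p\delta(x)-\sum_{l=1}^{e}H_l=\sum_{l=1}^{e}(pb_l-H_l)=1$, together with the nonnegativity of each summand, forces exactly one index $l_0$ with $pb_{l_0}=H_{l_0}+1$, the others satisfying $pb_l=H_l$; in particular $\mathbf b\neq\tfrac1p\mathbf H$.

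For $i_0(x)=p-1$, the key step is to show that $c_{j_0}U^{\mathbf H+\mathbf e_{j_0}}$ is not destroyed by the truncation operator $T$ of Definition \ref{defT} built from $\mathbf b$, i.e.\ that $\mathbf x_{j_0}:=\tfrac1p(\mathbf H+\mathbf e_{j_0})$ does not lie in the set $A$ of (\ref{eq2613}). One checks $\mathbf x_{j_0}\in\tfrac1p\N^n\cap\Delta_{\mathbf H}(\delta(x))$: its total weight is $\tfrac1p(\sum_l H_l+1)=\delta(x)$ and its coordinates $l\leq e$ equal $H_l/p$. Since $j_0\geq e+1$, for $l\leq e$ the $l$-th coordinate of $\mathbf b+p(\mathbf x_{j_0}-\mathbf b)$ is $H_l-(p-1)b_l$, which for $l=l_0$ equals $\bigl(H_{l_0}-(p-1)\bigr)/p<H_{l_0}/p$; hence $\mathbf b+p(\mathbf x_{j_0}-\mathbf b)\notin\Delta_{\mathbf H}(\delta(x))$ and $\mathbf x_{j_0}\notin A$. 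So $TF_{p,Z}$ still contains $c_{j_0}U^{\mathbf H+\mathbf e_{j_0}}$, and since $U_{j_0}$ occurs in no other monomial of $F_{p,Z}$ (and there with exponent $1$), $\partial(TF_{p,Z})/\partial U_{j_0}=c_{j_0}H$, giving $c_{j_0}\in V(TF_{p,Z},E,m_S)$. In both cases $V(TF_{p,Z},E,m_S)\neq 0$, so $\omega(x)=\epsilon(x)-1=0$ by Definition \ref{defomega}.

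The main obstacle is precisely this truncation check in the case $G\neq 0$: what makes it go through is that $\mathbf b$ differs from $\tfrac1p\mathbf H$ in exactly one coordinate, and there by exactly $1/p$, which is what keeps the transverse monomial out of the truncation set $A$; this is where the hypothesis $\epsilon(x)=1$ is genuinely used. (For larger $\epsilon(x)$ a transverse contribution of shape $U^{\mathbf H}U_{j_0}^{p}$ could survive $T$ while having vanishing $U_{j_0}$-derivative, so that $\omega(x)$ would be $\epsilon(x)$ rather than $\epsilon(x)-1$ — consistently with Definition \ref{defomega}.) Everything else is routine bookkeeping with the monomial expansion of $F_{p,Z}$ and the diagonal action of $T$ on the monomial basis of $\mathcal V_{\mathbf H}(p\delta(x))$.
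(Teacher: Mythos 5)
Your proof is correct and follows essentially the same route as the paper's: identify the transverse monomial $c_{j_0}HU_{j_0}$ with $j_0\geq e+1$, split on $i_0(x)$, and in the case $i_0(x)=p-1$ check that this monomial survives the truncation $T$ so that its $U_{j_0}$-derivative gives a nonzero element of $V(TF_{p,Z},E,m_S)$. The only cosmetic difference is that where you verify explicitly that $\mathbf{x}_{j_0}\notin A$ via the coordinate inequality at $l_0$, the paper packages the same computation as the global statement $F_{p,Z}-TF_{p,Z}\in HU_{j_1}$ and observes that $\partial F_{p,Z}/\partial U_{j_0}=c_{j_0}H\notin(HU_{j_1})$.
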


\begin{proof}
According to Definition \ref{defomega}, we must show that
$V(TF_{p,Z}, E,m_S)\neq 0$. Expand
$$
H^{-1}F_{p,Z} =<\sum_{j=1}^n \alpha_j U_j> \subseteq G(m_S)_1 , \ \alpha_j \in k(x).
$$
By assumption, we have $\alpha_{j_0} \neq 0$ for some $j_0$, $e+1 \leq j_0 \leq n$, so
\begin{equation}\label{eq2917}
0 \neq H^{-1}{\partial F_{p,Z} \over \partial U_{j_0}} \subseteq V(F_{p,Z}, E,m_S).
\end{equation}
If $i_0(x)=p$, we have $TF_{p,Z}=F_{p,Z}$. If $i_0(x)=p-1$, then $H^{-1}G^p=<U_{j_1}>$
for some $j_1$, $1 \leq j_1 \leq e$, by Theorem \ref{initform}(2). Comparing
with Definition \ref{defT}, we have $\mathbf{x}\in A \Longrightarrow p x_{j_1}>H_{j_1}$,
therefore $F_{p,Z} - TF_{p,Z}\in H U_{j_1}$.
So (\ref{eq2917}) implies that $V(TF_{p,Z}, E,m_S)\neq 0$.
\end{proof}

\begin{prop}\label{bupomegazero}
Assume that $(m(x),\omega (x))=(p,0)$, $\{x\}:=\eta^{-1}(m_S)$. Let ${\cal Y} \subset ({\cal X},x)$
be a {\it Hironaka-permissible} center w.r.t. $E$, $\pi : {\cal X}' \rightarrow ({\cal X},x)$
be the blowing up along ${\cal Y}$ and $x' \in \pi^{-1}(x)$.

\smallskip

If $W:=\eta({\cal Y})$ is an intersection of components of $E$ or if $\epsilon (y)=\epsilon (x)$,
then $(m(x'),\omega (x')  {)} \leq (p,0)$.
\end{prop}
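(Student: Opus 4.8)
The plan is to reduce immediately to the case $m(x')=p$ and then to compute the initial form polynomial at $x'$ explicitly. Since a Hironaka-permissible blowing up does not raise the multiplicity, $m(x')\le m(y)=m(x)=p$, and if $m(x')<p$ then $\iota(x')=(m(x'),0,1)$ by Definition \ref{defmult}, so there is nothing to prove. Assume therefore $m(x')=p$. Then $x'\in\mathrm{Sing}_p{\cal X}'$, so $\delta(x')\ge 1$ (Proposition \ref{deltainv}(ii)) and, writing $s':=\eta'(x')$, Proposition \ref{SingX} gives ${\eta'}^{-1}(s')=\{x'\}$, $k(x')=k(s')$; localizing ${\cal S}'$ at $s'$ preserves {\bf (G)} and {\bf (E)} (Propositions \ref{SingX}, \ref{Estable}), and after a routine localization (using Proposition \ref{Deltaalg} to descend minimality of polyhedra to coordinate strata) we may assume $x'$ is the origin of a standard affine chart of $\sigma:{\cal S}'\to\mathrm{Spec}S$. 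Fixing well adapted coordinates $(u_1,\dots,u_n;Z)$ at $x$ with $E=\mathrm{div}(u_1\cdots u_e)$, $I(W)=I_J$ and $j_0\in J$ the index of the exceptional divisor in that chart, Propositions \ref{Hironakastable} and \ref{originchart} provide well adapted coordinates $(u'_1,\dots,u'_n;X')$ at $x'$ with $u'_j=u_j/u_{j_0}$ for $j\in J\setminus\{j_0\}$, $u'_j=u_j$ otherwise, $X'=Z/u_{j_0}$, $h'=u_{j_0}^{-p}h(Z)$, and $\Delta_{S'}(h';u'_1,\dots,u'_n;X')$ obtained from $\Delta_S(h;u_1,\dots,u_n;Z)$ by the map $l$ of (\ref{eq2042}). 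Note $W\subseteq E$ since $\mathrm{Sing}_p{\cal X}\subseteq\eta^{-1}(E)$ under {\bf (E)}, hence $E'=\sigma^{-1}(E)_{\mathrm{red}}$.

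Next I would record the only two ways $\omega(x)=0$ can occur when $m(x)=p$: by Definition \ref{defomega}, (\ref{eq111}) and Proposition \ref{epsiloninv}, either $\epsilon(x)=0$, or $\epsilon(x)=1$ and $V(TF_{p,Z},E,m_S)\ne 0$. In the second case $\omega(x)=\epsilon(x)-1$ forces (unwinding Definition \ref{defomega}) that $F_{p,Z}=H\,\ell$ for a linear form $\ell\in G(m_S)_1$ involving some variable $U_{j_1}$ transverse to $E$, i.e. with $j_1>e$ — this is the converse of the implication in Lemma \ref{lemomegazero}. The proof then splits into these two cases; in both, the point is to bound $\epsilon(x')$ and, when $\epsilon(x')=1$, to exhibit a variable transverse to $E'$ surviving in $H'^{-1}F'_{p,X'}$ and in its truncation.

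In the case $\epsilon(x)=0$, the computation inside the proof of Theorem \ref{initform} shows that $\Delta_S(h;u_1,\dots,u_n;Z)$ is the pure orthant with single vertex $\mathbf{v}=(d_1,\dots,d_e,0,\dots,0)$; consequently $\Delta_{S'}$ is again a pure orthant, with single vertex $l(\mathbf{v})$, and a direct computation of $\delta(x')=|l(\mathbf{v})|$ and of the weights $d'_j=(l(\mathbf{v}))_j$ of the components of $E'$ — distinguishing whether $j_0\le e$ (the exceptional divisor is the strict transform of a component of $E$) or $j_0>e$ (a new component appears) — yields $\epsilon(x')=\epsilon(x)=0$ in both subcases. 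Hence $0\le\omega(x')\le\epsilon(x')=0$, so $\omega(x')=0$. The hypothesis is not needed here.

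The case $\epsilon(x)=1$ is the substantive one, and this is where the hypothesis ``$W$ is an intersection of components of $E$, or $\epsilon(y)=\epsilon(x)$'' enters. I would transform $\mathrm{in}_{m_S}h=Z^p-G^{p-1}Z+F_{p,Z}$ by $l$ into $\mathrm{in}_{m_{S'}}h'=(X')^p-(G')^{p-1}X'+F'_{p,X'}$ — again of the shape prescribed by {\bf (G)}, cf. Proposition \ref{deltaint}(i) and Theorem \ref{initform} — read off $\epsilon(x')$, $i_0(x')$ and $H'^{-1}F'_{p,X'}$, and prove $\epsilon(x')\le 1$: when $J\subseteq\{1,\dots,e\}$ the centre is a monomial subscheme and $\epsilon$ is controlled exactly as in the resolution of excellent surfaces, while when $\epsilon(y)=\epsilon(x)$ the $\epsilon$-flatness of the centre pins down the value $\delta(y)$ so that the exceptional divisor contributes its full share $H'_{j_0}=p(\delta(y)-1)$ to $H(x')$, again giving $\epsilon(x')\le\epsilon(x)$. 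If $\epsilon(x')=0$ we conclude as above. If $\epsilon(x')=1$, the strict transform $u'_{j_1}$ of $u_{j_1}$ is still transverse to $E'$ (because $j_1>e$, so $\mathrm{div}(u'_{j_1})$ is not a component of $E'=\sigma^{-1}(E)_{\mathrm{red}}$), and the monomial of $F'_{p,X'}$ carrying $U'_{j_1}$ still lies in $H'\cdot G(m_{S'})_1$ and survives the truncation operator $T'$ — regardless of whether $i_0(x')$ equals $p$ or $p-1$ — by the same degree count as in the proof of Lemma \ref{lemomegazero}; hence $V(T'F'_{p,X'},E',m_{S'})\ne 0$ and $\omega(x')=0$ by Definition \ref{defomega}. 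The main obstacle is precisely this bound $\epsilon(x')\le 1$: without the hypothesis the pair $(m,\epsilon)$ can strictly increase under a Hironaka-permissible blowing up, so the argument genuinely rests on the ``monomial centre / $\epsilon$-constant centre'' dichotomy; the remaining nuisances are the bookkeeping of the truncations $T,T'$ when $G\ne 0$ and the harmless extra attention needed for $p=2$ in comparing $G^{p-1}$ with $G$.
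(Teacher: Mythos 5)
Your treatment of Case 1 ($\epsilon(x)=0$) contains a genuine gap. You assert that the transformed polyhedron is ``again a pure orthant, with single vertex $l(\mathbf{v})$,'' and that reading off the new weights $d'_j$ ``yields $\epsilon(x')=\epsilon(x)=0$ in both subcases.'' This presupposes that $\Delta_{S'}(h';u'_1,\ldots,u'_{n'};X')$ is already minimal — i.e.\ that the vertex $l(\mathbf{v})$ is not solvable. That is false in general. The initial form at the new vertex is $\mathrm{in}_{\mathbf{x}'}h'={Z'}^p + \lambda ' {U'}^{p\mathbf{x}'}$ with $\lambda'=\lambda\prod_{j}\lambda_j^{H_j}$, where the $\lambda_j\in k(x')$ are the residues of the units $u_j/u_{j_0}$ attached to the components of $E$ through $W$ that do \emph{not} meet $s'$. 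If $i_0(x)=p$, all relevant $d_j\in\N$, and $\lambda'\in k(x')^p$ (which can happen even with $\lambda\notin k(x)^p$, since $k(x')$ may enlarge $k(x)$ and the $\lambda_j$'s contribute), then $l(\mathbf{v})$ is solvable. Resolving it by a translation $Z'_1:=X'-l'\,u'^{\mathbf{x}'}$, where $(l')^p+\gamma\prod(u_j/u_1)^{H_j}$ is a fresh regular parameter transverse to $E'$, pushes the minimal polyhedron off the coordinate subspace and produces a nonsolvable vertex $\mathbf{x}'_1=\mathbf{x}'+(1/p)\mathbf{e}_{e'+1}$ with $\epsilon(x')=1$, not $0$. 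The conclusion $\omega(x')=0$ is then recovered by Lemma~\ref{lemomegazero} applied to the new vertex (its ``extra'' variable $u'_{e'+1}$ is transverse to $E'$). You use exactly this kind of argument in Case 2 but incorrectly declare Case 1 closed at $\epsilon(x')=0$; the paper handles both cases by this vertex-solvability analysis.

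A secondary point: you claim that the case $\epsilon(x)=1$, $\omega(x)=\epsilon(x)-1$ ``forces $F_{p,Z}=H\,\ell$'' for a linear form $\ell$ with a transverse variable. What the paper's Lemma~\ref{lemomegazero} actually gives you is the weaker $H^{-1}F_{p,Z}=\langle\sum_j\alpha_jU_j\rangle$ with some $\alpha_{j_0}\neq 0$, $j_0>e$ (after applying the truncation $T$); the paper then works with the vertex $\mathbf{x}_0=(d_1,\ldots,d_e,1/p,0,\ldots,0)$ directly, not with the factorization. Your version is not wrong as far as it goes, but the extra strength you ascribe to it isn't what you actually need, and you don't use it; the essential mechanism — transversality of $u'_{j_1}$ and survival of its monomial under $T'$ — matches the paper. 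The real defect is the one above: you cannot skip the solvable-vertex dichotomy in Case 1.
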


\begin{proof}
According to Definition \ref{defomega}, there are two different cases to
consider:
\begin{itemize}
\item[(1)] $\epsilon (x)=0$;
\item[(2)] $\epsilon (x)=1$, $V(TF_{p,Z},E,m_S)\neq (0)$.
\end{itemize}

To begin with, we have $\delta (x)\geq 1$ by Proposition \ref{deltainv}(ii). Let $(u_1,\ldots,u_n;Z)$
be well adapted coordinates at $x$ with $I(W)=(\{u_j\}_{j\in J})$ for some subset $J \subseteq \{1, \ldots ,n\}$.
By Definition \ref{defepsilon}, we have:
\begin{equation}\label{eq2901}
\epsilon (x)=\min_{1 \leq i \leq p} \left \{{\mathrm{ord}_{m_S}(H(x)^{-i}f^p_{i,Z}) \over i}\right \}.
\end{equation}

\noindent {\it Case 1:} $\epsilon (x)=0$. By (\ref{eq2901}), we have
\begin{equation}\label{eq2904}
\left\{
  \begin{array}{ccc}
    H(x)^{-i}f^p_{i,Z} \hfill{} & \subseteq m_S,       & 1 \leq i < i_0(x) \\
    H(x)^{-i_0(x)}f^p_{i_0(x),Z}      & =S,            & \\
    H(x)^{-i}f^p_{i,Z}\hfill{}  & \subseteq S,   & i_0 (x) < i \leq p .\\
  \end{array}
\right.
\end{equation}

By Proposition \ref{Hironakastable}, there exists a commutative diagram
$$
\begin{array}{ccc}
  {\cal X}  & {\buildrel \pi  \over \longleftarrow} & {\cal X}' \\
  \downarrow &   & \downarrow \\
  \mathrm{Spec}S &  {\buildrel \sigma \over \longleftarrow} &  {\cal S}'\\
\end{array}
$$
where $\sigma: {\cal S}' \rightarrow \mathrm{Spec}S$ is the blowing up along $W$. Let
$$
\eta ': {\cal X}' \rightarrow {\cal S}', \ s':=\eta ' (x'), \ S':={\cal O}_{{\cal S}' ,s'},
\ E':=(\sigma^{-1}(E)_{\mathrm{red}})_{s'}.
$$

Since $W \subseteq E$, it can be assumed after possibly reordering coordinates that
$$
(J')_E:=\{2 , \ldots ,e_0\}, \ J=\{1 ,e_0+1, \ldots , n_0\}, \ 1 \leq e_0\leq e \leq n_0.
$$
Furthermore, it can be assumed that
$s' \in \mathrm{Spec}(S[u_{e_0+1}/u_1, \ldots ,u_{n_0}/u_1])$  or that
$s' \in \mathrm{Spec}(S[u_{1}/u_{n_0},u_{e_0+1}/u_{n_0}, \ldots ,u_{n_0-1}/u_{n_0}])$ with
$n_0 >e_0$.

\smallskip

We first prove the proposition when $s' \in \mathrm{Spec}(S[u_{e_0+1}/u_1, \ldots ,u_{n_0}/u_1])$.
Let
$$
h':=u_1^{-p}h={Z'}^p+f_{1,Z'}{Z'}^{p-1}+ \cdots +f_{p,Z'}\in S'[Z'],
$$
where $Z':=Z/u_1, \ f_{i,Z'}:=u_1^{-i}f_{i,Z}\in S'$ for $1 \leq i \leq p$. We have
\begin{equation}\label{eq2902}
E'=\mathrm{div}(u_1 \cdots u_{e_0}{u_{e_0+1} \over u_1} \cdots {u_e \over u_1})
\end{equation}
and $(S',h',E')$ satisfies both conditions {\bf (G)} and {\bf (E)} by Propositions \ref{SingX}
and \ref{Estable}. There exists an adapted r.s.p. of $S'$ of the form
$$
(u'_1:=u_1,  \ldots ,u'_{e_0}:=u_{e_0} , u'_{e_0+1}, \ldots , u'_{n'_0}, u'_{n_0+1}:=u_{n_0+1}, \ldots , u'_n :=u_n).
$$
Since we do not assume that $x'$ is a closed point, we have $e_0 \leq n'_0\leq n_0$ in general,
with
$$
n':=\mathrm{dim}S'= n-(n_0-n'_0).
$$
We emphasize that the number of irreducible
components $e'$ of $E'$ satisfies $e_0 \leq e' \leq e$ and that $e'\neq e$ in general because some of the
$u_j/u_1$ in (\ref{eq2902}) may be units. After reordering coordinates, we may also assume that
$$
E'=\mathrm{div}(u'_1 \cdots u'_{e'}) \ \mathrm{and} \ u'_j:=u_j/u_1, \ e_0+1 \leq e' \leq e.
$$

Since ${\cal Y}$ is Hironaka-permissible at $x$, we have (see Definition \ref{defH}):
$$
\mathrm{ord}_WH(x) =p \sum_{j \in J}d_j  \geq p.
$$
Therefore $I':=u_1^{-p}H(x) \subseteq S'$ and this ideal is monomial
in $(u'_1 , \ldots ,u'_{e'})$, i.e. $I'=:({u'_1}^{H'_1}\cdots {u'_{e'}}^{H'_{e'}})$.
We let:
$$
\mathbf{x}':=(H'_1/p, \ldots , H'_{e'}/p, 0 , \ldots ,0)\in {1 \over p}\N^{n'},
$$
where
\begin{equation}\label{eq2906}
H'_1=p(\sum_{j \in J}d_j -1) \ \mathrm{and} \ H'_j=H_j=pd_j,  \ 2 \leq j \leq e'.
\end{equation}
Then (\ref{eq2904}) gives:
\begin{equation}\label{eq2916}
\left\{
  \begin{array}{ccc}
    {I'}^{-i}f^p_{i,Z'} \hfill{} & \subseteq m_SS'      & 1 \leq i < i_0 (x)\\
    {I'}^{-i_0(x)}f^p_{i_0(x),Z'}      & =S '           &  \\
    {I'}^{-i}f^p_{i,Z'}\hfill{}  & \subseteq S'   & i_0 (x)< i \leq p .\\
  \end{array}
\right.
\end{equation}

This shows that
\begin{equation}\label{eq2905}
\Delta_{\hat{S'}}  (h';u'_1,\ldots ,u'_{n};Z')=\mathbf{x}'+ {\R}^{n'}_{\geq 0}.
\end{equation}

If $i_0 (x)<p$, or if $\sum_{j \in J_E}d_j \not \in \N$ or if
$d_{j'} \not \in \N$ for some $j'$, $2\leq j' \leq e'$, then $\mathbf{x}'$ is not solvable (Definition
\ref{defsolvable}) by (\ref{eq2905}),
hence $\Delta_{\hat{S'}}  (h';u'_1,\ldots ,u'_{n};Z')$ is minimal. Therefore we may compute
$\epsilon (x')$ from (\ref{eq2905}) and get $\epsilon (x')=0$, so the proposition is proved in this case.

\smallskip

If ($i_0 (x)=p$, $\sum_{j \in J_E}d_j  \in \N$ and
$d_{j'} \in \N$ for all $j'$, $2\leq j' \leq e'$), write $f_{p,Z}=\gamma u^{p\mathbf{x}}$,
$\gamma \in S$ a unit and $\mathbf{x}:=(d_1, \ldots ,d_e, 0 , \ldots ,0)\in {1 \over p}\N^n$. We have
\begin{equation}\label{eq2907}
\mathrm{in}_{\mathbf{x}'}h'={Z'}^p + \lambda (\prod_{j=e'+1}^e\lambda_j^{H_j}){U'}^{p\mathbf{x}'},
\end{equation}
where $\lambda \in k(x)$ (resp. $\lambda_j \in k(x')$) is the residue of $\gamma$ (resp. of
$u_j/u_1$). We let:
$$
\lambda ':=\lambda \prod_{j=e'+1}^e\lambda_j^{H_j}\in k(x'), \ \lambda ' \neq 0.
$$

If $\lambda ' \not \in k(x')^p$, then $\mathbf{x}'$ is not solvable and we also have $\epsilon (x')=0$.

If $\lambda ' \in k(x')^p$, let
$$
C':=\mathrm{Spec}\left ( {k(x)[Z,U_1, U_{e_0+1}, \ldots , U_e]\over (\overline{H})}\right ),
\ \overline{H}:=\mathrm{in}_{m_S}h=Z^p + \lambda \prod_{j=e'+1}^eU_j^{H_j}.
$$
We claim that the affine cone $C'$ is regular away from the torus
$$
\T := \A^{e-e_0+2}_{k(x)} \backslash V(Z\prod_{j \in J_E}U_j ).
$$
To see this, let $(\lambda_l)_{l \in \Lambda}$ be an absolute $p$-basis of $k(x)$. By Zariski's
Jacobian criterion \cite{Ma} Theorem 30.5, the ideal of the singular locus of $C'$ is:
$$
I(\mathrm{Sing}C')=\left (\overline{H},\{{\partial \overline{H} \over \partial \lambda_l}\}_{l\in \Lambda},
\{{\partial \overline{H} \over \partial U_j}\}_{e'+1\leq j \leq e}\right ).
$$
If $d_j \not \in \N$ for some $j$, $e'+1\leq j \leq e$, then ${\partial \overline{H} \over \partial U_j}$
does not vanish on $\T$. Otherwise, we have $\lambda \not \in k(x)^p$ because $\mathbf{x}$ is a
vertex of $\Delta_{S}(u_1,\ldots,u_n;Z)$ and is not solvable.
Therefore ${\partial \overline{H} \over \partial \lambda_l}$ does not vanish on $\T $
for any $l\in \Lambda$ such that ${\partial \lambda \over \partial \lambda_l}\neq 0$
and the claim is proved. We deduce that there exists a unit $l'\in S'$ such that
$$
v':={l'}^p +\gamma \prod_{j=e'+1}^e\left ({u_j \over u_1}\right )^{H_j}
$$
is a regular parameter of $S'$ transverse to
$$
E'_1:=\mathrm{div}(u'_1 \cdots u'_{e'}u'_{n_0+1} \cdots u'_{n'}), \ E'_1 \supseteq E'.
$$
We may thus take $u'_{e'+1}:=v'$ in our r.s.p. of $S'$ adapted to $E'$. Let
$Z'_1:=Z'-l'{u'}^{p\mathbf{x}'}$, so the polyhedron $\Delta_{S'}  (h';u'_1,\ldots ,u'_{n};Z'_1)$
has a vertex
\begin{equation}\label{eq2908}
\mathbf{x}'_1:=(H'_1/p, \ldots , H'_{e'}/p, 1/p,0 , \ldots ,0)\in {1 \over p}\N^{n'}
\end{equation}
which is not solvable, since $\mathbf{x}'_1 \not \in \N^{n'}$. Let $Z'_2:=Z'_1 - \theta '$,
$\theta ' \in S'$, be such that $\Delta_{S'}  (h';u'_1,\ldots ,u'_{n};Z'_2)$
is minimal. We deduce from (\ref{eq2916}) and (\ref{eq2908}) that
$$
H(x')=({u'}^{p\mathbf{x}'}), \ \epsilon (x')=1 \ \mathrm{and} \ {H'}^{-1}F_{p,Z'_2}
\nsubseteq <U'_1, \ldots ,U'_{e'}>.
$$
We get $m(x')=1$ if $\mathbf{x}'=\mathbf{0}$, and  $(m(x'),\omega (x'))=(p,0)$ otherwise by Lemma  \ref{lemomegazero}
as required.

\smallskip

If $s' \in \mathrm{Spec}(S[u_{1}/u_{n_0},u_{e_0+1}/u_{n_0}, \ldots ,u_{n_0-1}/u_{n_0}])$, it
can be furthermore assumed that $s' \not \in \mathrm{Spec}(S[u_{e_0+1}/u_1, \ldots ,u_{n_0}/u_1])$, i.e.
$u_{j}/u_{n_0}$ is {\it not} a unit in $S'$ for $j\in J_E$. The proof is now a simpler
variation of the above one:  (\ref{eq2902}) is replaced by
$$
E'=\mathrm{div}({u_{1} \over u_{n_0}}u_2 \cdots u_{e_0}{u_{e_0+1} \over u_{n_0}} \cdots {u_e \over u_{n_0}}u_{n_0}).
$$
The polyhedron $\Delta_{S'}  (h';u'_1,\ldots ,u'_{n};Z')$ in (\ref{eq2905}) is minimal except if
($d_j \in \N$ for each $j$, $1 \leq j \leq e$, and $\lambda \in k(x')^p$) with notations as above.
We have $\epsilon (x')=0$ (resp. $\epsilon (x')=1$) in the former (resp. in the latter) situation.
This concludes the proof in case 1.\\

\noindent {\it Case 2:} $\epsilon (x)=1$. The proof runs parallel to
that in case 1 and we only indicate the necessary changes. By assumption,
$W$ is an intersection of components of $E$ (case 2a) or $\epsilon (y)=\epsilon (x)=1$
(case 2b).\\

To begin with, let $v \in S$ be such that $H(x)^{-1}f_{p,Z}=(v)$. By assumption, we
have $V(TF_{p,Z},E,m_S)\neq (0)$, so $v$ is transverse to $E$.

In case 2a, we may assume that $(u_1, \ldots ,u_e,v,u_{e+2}, \ldots ,u_n)$
is an adapted r.s.p. of $S$ after renumbering variables. Since
$\mathbf{x}_0:=(d_1, \ldots ,d_e, {1 \over p} , \ldots ,0)\not \in \N^n$ is the unique
vertex of $\Delta_{S} (h; u_1, \ldots ,u_e,v,u_{e+2}, \ldots ,u_n ;Z)$ induced
by $f_{p,Z}$, this polyhedron has no solvable vertex. In other terms, it can
be assumed that $v=u_{e+1}$.

In case 2b, Proposition \ref{Deltaalg} implies that $v \in I(W)$, so
$(u_1, \ldots ,u_e,v)$ can be completed to an adapted  r.s.p. of $S$  such that
$I(W)=(\{u_j\}_{j\in J})$ for some subset $J \subseteq \{1, \ldots ,n\}$. The polyhedron
$\Delta_{S} (h; u_1, \ldots ,u_e,v,u_{e+2}, \ldots ,u_n ;Z)$ has no solvable vertex
either and it can also be assumed that $v=u_{e+1}$.

We remark in both cases 2a and 2b that, if $\Delta_{S} (h; u_1, \ldots  ,u_n ;Z)$ has
a vertex distinct from $\mathbf{x}_0$, then it has exactly two vertices: this follows
from Theorem \ref{initform}(2), the other vertex being then given by
\begin{equation}\label{eq2903}
\mathbf{x}_1:=({D_1 \over p(p-1)}, \ldots ,{D_e \over p(p-1)}, 0 , \ldots ,0),
\ (\mathrm{Disc}_Z(h))=:(u_1^{D_1} \cdots u_e^{D_e}).
\end{equation}

After blowing up, we obtain a $(S',h',E')$ again satisfying conditions {\bf (G)} and {\bf (E)}.

In case 2a, there exists an adapted r.s.p. of $S'$ of the form
$$
(u'_1:=u_1,  \ldots ,u'_{e_0}:=u_{e_0} , u'_{e_0+1}, \ldots , u'_{e_1}, u'_{e+1}:=u_{e+1}, \ldots , u'_n :=u_n),
$$
with $J=\{1, e_0+1, \ldots ,e\}$ and $E'=\mathrm{div}(u'_1 \cdots u'_{e'})$ after reordering variables,
$1 \leq e_0 \leq e' \leq e_1 \leq e$. Then $\Delta_{S'} (h'; u'_1, \ldots ,u'_n ;Z')$ has again a vertex
$$
\mathbf{x}':=(H'_1/p, \ldots , H'_{e'}/p, 0 , \ldots ,0, 1/p, 0 , \ldots ,0)\not \in \N^{n-(e-e_1)},
$$
thus  $\mathbf{x}'$ is not solvable.  We deduce that $\epsilon (x')\leq 1$ and $\omega (x')=0$
follows from Lemma \ref{lemomegazero} if $(m(x'),\epsilon (x'))=(p,1)$.

\smallskip

In case 2b,  it can be assumed after reordering variables that
$$
(J')_E:=\{2 , \ldots ,e_0\}, \ J=\{1 ,e_0+1, \ldots , n_0\}, \ 1 \leq e_0\leq e, \ e+1 \leq n_0.
$$
We let $u'_{j'}:=u_{j'}$ for $j'\in J'$ and consider three distinct situations depending on $x'$,
up to reordering coordinates:

\begin{itemize}
  \item [(1)] $s' \in \mathrm{Spec}(S[u_{e_0+1}/u_1, \ldots ,u_{n_0}/u_1])$ and $u_{e+1}/u_1 \in m_{S'}$. We
  may complete the family $(\{u_{j'}\}_{j'\in J'})$ to an adapted  r.s.p. of $S'$ by adding
  $$
  (u'_1:=u_1, u'_{e_0+1}, \ldots ,u'_{e_1}, u'_{e_1+1}:=u_{e+1}/u_1), \ n':=\mathrm{dim}S'=n-(n_0-e_1).
  $$
  Then $\Delta_{\hat{S'}} (h'; u'_1, \ldots ,u'_{n} ;Z')$ has a vertex
$$
\mathbf{x}':=(H'_1/p, \ldots , H'_{e'}/p,  1/p, 0 , \ldots ,0)\not \in \N^{n'},
$$
thus  $\mathbf{x}'$ is not solvable. We conclude that $\epsilon (x')\leq 1$ and that $\omega (x')=0$
if $(m(x'),\epsilon (x'))=(p,1)$ by Lemma \ref{lemomegazero}.
  \item [(2)] $s' \in \mathrm{Spec}(S[u_{1}/u_{n_0},u_{e_0+1}/u_{n_0}, \ldots ,u_{n_0-1}/u_{n_0}])$
  and $u_{e+1}/u_{n_0} \in m_{S'}$, where $n_0>e+1$. After dealing with (1), we may assume furthermore that
  $u_j/u_{n_0}\in m_{S'}$, $j\in J_E$. We complete the family $(\{u_{j'}\}_{j'\in J'})$
  to an adapted  r.s.p. of $S'$ by adding
  $$
  (u'_{e_0+1}:=u_{e_0+1}/u_{n_0}, \ldots ,u'_{e+1}:=u_{e+1}/u_{n_0}, u'_{n_1}, \ldots ,u'_{n_0-1},u'_{n_0}:=u_{n_0}),
  $$
  with $n':=\mathrm{dim}S'=n-(n_1-e -2)$. We conclude as in (1).
  \item [(3)] $I(W)S' = (u_{e+1})$. We complete the family $(\{u_{j'}\}_{j'\in J'})$
  to an adapted  r.s.p. of $S'$ by adding
  $$
  (u'_1:=u_{e+1}, u'_{e_0+1}, \ldots ,u'_{n_1}), \ n':=\mathrm{dim}S'=n-(n_0-n_1).
  $$
  Let $E'=:\mathrm{div}(u'_1 \cdots u'_{e'})$ and consider two situations as in case 1:

  If ${1 \over p}+\sum_{j \in J_E}d_j \not \in \N$ or if $d_{j'} \not \in \N$ for some $j'$, $2\leq j' \leq e'$, then
the polyhedron $\Delta_{\hat{S'}}  (h';u'_1,\ldots ,u'_{n};Z')$ is minimal and we have $\epsilon (x')=0$.

  If (${1 \over p}+\sum_{j \in J_E}d_j  \in \N$ and $d_{j'} \in \N$ for every $j'$, $2\leq j' \leq e'$), the
initial form polynomial $\mathrm{in}_{\mathbf{x}'}h'$ has the form
$$
\mathrm{in}_{\mathbf{x}'}h'={Z'}^p -\mu^{p-1} {U'}^{(p-1)\mathbf{x}'}Z'+
\lambda (\prod_{j=e'+1}^e\lambda_j^{H_j}){U'}^{p\mathbf{x}'},
$$
where $\lambda \in k(x)$ (resp. $\lambda_j \in k(x')$) is the residue of $\gamma$ (resp. of
$u_j/u_{e+1}$), {\it vid.} (\ref{eq2907}). We have $\mu \neq 0$ in the above formula precisely if
$$
U^{p(\mathbf{x}_1 -\mathbf{x}_0)}=U_{j_0}/U_{e+1}, \ u_{j_0}/u_{e+1} \in S' \ \mathrm{a} \ \mathrm{unit}
$$
for some $j_0$, $e_0+1 \leq j_0 \leq e$ with notations as in (\ref{eq2903}).
Then $\mu^{p-1}$ is the residue in $k(x')$ of
$$
\gamma_{p-1,Z}\prod_{j=e'+1}^e\left ({u_j \over u_{e+1}}\right )^{A_{p-1},j}
$$
with notations as in Theorem \ref{initform}(2). The end of the
proof goes along as in case 1.
\end{itemize}
This completes the proof of (3), hence the proof
of the proposition in case 2.
\end{proof}

\begin{rem}
This proposition is a lighter version of Theorem \ref{bupthm} where it is assumed
that $\omega (x)>0$ and that the blowing up centers are permissible of the first
or second kind (Definitions \ref{deffirstkind} and \ref{defsecondkind} below).
\end{rem}

\begin{thm}\label{omegazero}
Assume that $(m(x),\omega (x))=(p,0)$, where $\{x\}=\eta^{-1}(m_S)$. For every valuation $\mu$
of $L=\mathrm{Tot}(S[X]/(h))$ centered at $x$, there exists a finite and
independent composition of local Hironaka-permissible blowing ups
(\ref{eq2915}) such that $m(x_r)<p$.
\end{thm}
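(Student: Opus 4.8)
The plan is to reduce Theorem \ref{omegazero} to Proposition \ref{bupomegazero} by a valuation-theoretic argument: given $\mu$ centered at $x$, I will construct the sequence (\ref{eq2915}) by iterating \emph{permissible} blowing ups whose centers are dictated by $\mu$ in a way that does not actually depend on $\mu$, and argue that the process terminates with $m(x_r)<p$. First I would dispose of the trivial case $m(x)<p$ (nothing to do). So assume $m(x)=p$, $\omega(x)=0$, hence $\eta^{-1}(m_S)=\{x\}$ and $k(x)=S/m_S$ by Proposition \ref{SingX}; as in the proof of Proposition \ref{bupomegazero} there are only the two cases $\epsilon(x)=0$ and $\epsilon(x)=1$ with $V(TF_{p,Z},E,m_S)\neq(0)$.

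The heart of the argument is the construction of the sequence. At each stage $({\cal X}_i,x_i)$ with $(m(x_i),\omega(x_i))=(p,0)$, I choose the blowing-up center ${\cal Y}_i$ canonically: namely ${\cal Y}_i:=\eta_i^{-1}(W_i)$, where $W_i\subseteq \mathrm{Spec}S_i$ is the smallest intersection of components of $E_i$ through $s_i:=\eta_i(x_i)$ on which $H(x_i)$ is not already locally principal with exponents summing to a multiple of $p$ — equivalently, the center forced by a combinatorial (toric-style) resolution of the monomial ideal $H(x_i)$ together with the discriminant exponents $D_j$ as in (\ref{eq2903}). Since $W_i$ is an intersection of components of $E_i$, Proposition \ref{bupomegazero} (case $W$ an intersection of components) applies and gives $(m(x_{i+1}),\omega(x_{i+1}))\leq(p,0)$ at every point $x_{i+1}\in\pi_i^{-1}(x_i)$; in particular the process stays inside the locus where the proposition is applicable, so it can be iterated. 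Crucially this ${\cal Y}_i$ is defined purely from $(S_i,h_i,E_i)$ and $x_i$, not from $\mu$; the only role of $\mu$ is to single out which point $x_{i+1}$ of $\pi_i^{-1}(x_i)$ we continue from. Hence the resulting sequence is \emph{independent} in the sense of Definition \ref{indepseq}.

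For termination I would attach to each $x_i$ a combinatorial character — essentially the polyhedron $\Delta_{S_i}(h_i;\ldots;Z_i)$ together with $E_i$, which by the computations in the proof of Proposition \ref{bupomegazero} (formulas (\ref{eq2905}), (\ref{eq2906}), (\ref{eq2916})) is an orthant with vertex $\mathbf{x}_i$ whose coordinates $H_j/p$ (and the discriminant data $D_j/p(p-1)$) transform by the explicit affine map $l$ of Proposition \ref{originchart} under each blowing up. One then checks, exactly as in equal characteristic zero, that a suitable non-negative integral invariant built from these exponents (e.g. $\max_j H_j$ plus a lexicographic refinement counting how far $E_i$ is from making $H(x_i)$ a $p$-th power of a principal monomial ideal) strictly decreases. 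When this invariant reaches its minimal value, either $\mathbf{x}_i=\mathbf{0}$, giving $m(x_i)<p$, or $\epsilon(x_i)=1$ with $H(x_i)^{-1}F_{p,Z_i}\not\subseteq\langle U_1,\ldots,U_e\rangle$; in the latter situation one further quadratic or monoidal transform as in the $\lambda'\in k(x')^p$ sub-case of Proposition \ref{bupomegazero} produces $m(x_{i+1})<p$.

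The main obstacle will be the bookkeeping in the termination step: one must show the chosen combinatorial invariant really is monotone \emph{simultaneously} in both cases $\epsilon(x)=0$ and $\epsilon(x)=1$, and at \emph{every} point of each exceptional divisor, including the non-closed points $x_i$ (so $n_0'\le n_0$ in the notation of Proposition \ref{bupomegazero}) and including the extra variable $v'=(l')^p+\gamma\prod(u_j/u_1)^{H_j}$ that appears when $\lambda'$ becomes a $p$-th power. This is precisely the place where one uses that $\omega(x_i)=0$ is preserved — via Lemma \ref{lemomegazero} — so that the problem stays genuinely combinatorial and the analysis of Proposition \ref{originchart} applies verbatim. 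Once monotonicity is established, finiteness and the bound $m(x_r)<p$ follow, and independence has been built into the construction.
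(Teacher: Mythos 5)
The overall strategy is the right one (reduce to Proposition \ref{bupomegazero}, build the center from the data $(S_i,h_i,E_i,x_i)$ alone so that the sequence is independent, and show a numerical character decreases), but there is a genuine gap in the choice of centers, and the termination argument is far more complicated than it needs to be.

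The gap: you restrict the centers $W_i$ to be intersections of components of $E_i$. This cannot work when $\epsilon(x)=1$ and $\delta(x)=1$. In that case $\delta(x)=\tfrac1p+\sum_{j=1}^e d_j=1$, so $\sum_{j=1}^e d_j=\tfrac{p-1}{p}<1$, and there is \emph{no} subset $J\subseteq\{1,\dots,e\}$ with $\sum_{j\in J}d_j\geq 1$; equivalently, no intersection of components of $E$ gives $\mathrm{ord}_W H(x)\geq p$, i.e.\ no purely exceptional center of codimension $\geq 2$ is Hironaka-permissible with $\delta(y)\geq 1$. The paper resolves this by taking $W=V(\{u_j:d_j>0\}\cup\{u_{e+1}\})$, where $u_{e+1}$ is the non-exceptional parameter with $H(x)^{-1}f_{p,Z}=(u_{e+1})$. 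This $W$ is \emph{not} an intersection of components of $E$; instead one verifies $\epsilon(y)=\epsilon(x)=1$ (using equation (\ref{eq2912})) so that the \emph{other} branch of the hypothesis of Proposition \ref{bupomegazero} applies. Your sketch never produces this center, and the allusion to the ``$\lambda'\in k(x')^p$ sub-case'' does not repair it: that sub-case concerns what happens at a point $x'$ above $x$ after a blow-up, not the construction of a center at $x$ when $\delta(x)=1$.

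On termination: the paper uses the single rational invariant $\delta(x)\in\frac1p\mathbb{N}$, and shows directly that the chosen ${\cal Y}$ satisfies $\delta(x')<\delta(x)$ for \emph{every} $x'\in\pi^{-1}(x)$; then $\delta(x_r)<1$ for $r$ bounded, forcing $m(x_r)<p$ by Proposition \ref{deltainv}(ii). The key inputs are the estimate $\mathrm{ord}_{m_{S'}}H(x')\leq p(\delta(x)+\sum_{j\in J\setminus\{j_0\}}d_j-1)$ from (\ref{eq2910}) and the minimality property (\ref{eq2911}) of the chosen $J$ (smallest cardinality with $\sum_{j\in J}d_j\geq 1$). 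Your ``$\max_j H_j$ plus lexicographic refinement'' is not only unnecessary but unjustified: you give no argument that it decreases simultaneously at all points of the exceptional fiber (closed or not), which is precisely where the work lies. You should replace it with $\delta$ and supply the two estimates above.

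Finally, when $\epsilon(x')=1$ after a blow-up with $|J|\geq 2$, the paper derives $\sum_{j\in J}d_j=1$ and $d_j=0$ for the transformed exponents, hence $H(x')=(1)$ and $m(x')=1$ directly; you do not need an extra ``monoidal transform'' as you suggest — the contradiction with minimality of $J$ does the work.
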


\begin{proof}
We will produce a Hironaka-permissible center ${\cal Y} \subset ({\cal X},x)$
w.r.t. $E$ satisfying the assumptions of Proposition \ref{bupomegazero} and such that the following
holds:\\

\noindent (*) let $\pi : \ {\cal X}' \rightarrow ({\cal X},x)$ be the blowing up along ${\cal Y}$ and
$x' \in \pi^{-1}(x)$. Then
$$
 \delta (x') < \delta (x).
$$

Applying Proposition \ref{bupomegazero}, the center $x_1 \in {\cal X}'$ of a given valuation $\mu$
again satisfies the assumptions of the theorem if $m(x_1)=p$. Iterating, any finite sequence
(\ref{eq2915}) induces a sequence
$$
\delta (x_r) < \delta (x_{r-1}) < \cdots <\delta (x)
$$
provided that $m(x_i)=p$, $ 1 \leq i \leq r-1$. Since $\delta (x_i)\in {1 \over p}\N$, we have
$\delta (x_r)<1$ for some $r \geq 1$, hence $m(x_r)<p$ by Proposition \ref{deltainv}(2), so the
theorem follows from claim (*). In order to construct ${\cal Y}$ with the required properties,
we consider two cases as in the proof of Proposition \ref{bupomegazero}. \\

\noindent {\it Case 1:} $\epsilon (x)=0$. We have $\delta (x)=\sum_{j =1}^ed_j \geq 1$.
Therefore there exists a  {(not necessarily unique)} subset
$$
J \subseteq \{1, \ldots ,e\}, \ \sum_{j \in J}d_j \geq 1,
$$
with smaller possible number of elements among all subsets of $\{1, \ldots ,e\}$ with this property. Let
$W:=V(\{u_j\}_{j \in J})\subset \mathrm{Spec}S$ and remark that
$$
\mathrm{ord}_WH(x) =p \sum_{j \in J}d_j  \geq p.
$$

Hence ${\cal Y}:=\eta^{-1}(W)=V(Z,\{u_j\}_{j \in J})$ is Hironaka-permissible w.r.t. $E$ and $W$ is an
intersection of components of $E$. By (\ref{eq2906}), we have
\begin{equation}\label{eq2910}
\mathrm{ord}_{m_{S'}}H(x')\leq p(\delta (x)+\sum_{j\in J \backslash \{j_0\}}d_j-1),
\end{equation}
where $I(W)S'=(u_{j_0})$. The minimality property required of $J$ implies that
\begin{equation}\label{eq2911}
\sum_{j \in J \backslash \{j_1\}}d_{j}<1 \ \mathrm{for} \ \mathrm{every} \  j_1\in J \ (\mathrm{so}
\ \sum_{j \in J}d_j< 2 \ \mathrm{if} \ \mid J \mid  \geq 2).
\end{equation}

If $\epsilon (x')=0$, we deduce from (\ref{eq2910}) that
$$
p\delta (x')=\mathrm{ord}_{m_{S'}}H(x')< p\delta (x)
$$
as required in (*). Note that if $\mid J \mid =1$, we have $\lambda =\lambda '$ in (\ref{eq2907})
and $S=S'$, hence $\lambda '\not \in k(x')^p=k(x)^p$. Since $\epsilon (x')=0$ in
this situation, we may now assume that $\mid J \mid  \geq 2$.

\smallskip

If $\epsilon (x')= 1$, we are in the situation discussed in (\ref{eq2908}).
We may then take $j_0=1$, $E'=\mathrm{div}(u'_1 \cdots u'_{e'})$ and have
$$
\sum_{j \in J}d_j\in \N, \ d_j \in \N \ \mathrm{for} \ 2 \leq j \leq e'.
$$
By (\ref{eq2911}), we have $\sum_{j \in J}d_j=1$, $d_j=0$ for $2 \leq j \leq e'$, so
$H(x')=(1)$ and $m(x')=1$. This concludes the proof in case 1.\\

\noindent {\it Case 2:} $\epsilon (x)=1$. We have $\delta (x)={1 \over p} +\sum_{j =1}^ed_j \geq 1$.

If $\delta (x)>1$, there exists a subset
$$
J \subseteq \{1, \ldots ,e\}, \ \sum_{j \in J}d_j \geq 1,
$$
with smaller possible number of elements among all subsets of $\{1, \ldots ,e\}$ with this property
as in case 1 and we also let $W:=V(\{u_j\}_{j \in J})\subset \mathrm{Spec}S$. The proof goes along as in case 1,
with
$$
p\delta (x')-p\delta (x)\leq \mathrm{ord}_{m_{S'}}H(x') -\mathrm{ord}_{m_{S}}H(x)<0.
$$

If $\delta (x)=1$, we may assume that $H(x)^{-1}f_{p,Z}=(u_{e+1})$ and that (\ref{eq2903}) holds
if $\Delta_{S} (h; u_1, \ldots  ,u_n ;Z)$ has more than one vertex. In this case, this
polyhedron has exactly two vertices and we have
$$
H(x)^{-(p-1)}f_{p-1,Z}^p=(u_{j_0})^{p-1} \ \mathrm{for} \ \mathrm{some} \ j_0, \  1 \leq j_0 \leq e
$$
by Theorem \ref{initform}(2). We deduce that
\begin{equation}\label{eq2912}
H(x)^{-i}f_{i,Z}^p \subseteq (u_{j_0},u_{e+1})^i, \ 1 \leq i \leq p
\end{equation}
by definition of $\Delta_{S} (h; u_1, \ldots  ,u_n ;Z)$. We let
$J:=\{j : d_j>0\}\cup \{e+1\}$ and
$$
W:=V(\{u_j\}_{j \in J})\subset \mathrm{Spec}S, \ {\cal Y}:=\eta^{-1}(W)=V(Z,\{u_j\}_{j \in J}).
$$
We have $\mathrm{ord}_WH(x) =p$, so ${\cal Y}$ is Hironaka-permissible w.r.t. $E$. Since
$H(x)^{-1}f_{p,Z}=(u_{e+1})$, we have $\epsilon (y)=\epsilon (x)=1$ by (\ref{eq2912}), where
$y \in {\cal X}$ is the generic point of ${\cal Y}$. Thus Proposition \ref{bupomegazero}
applies and gives $m(x')\leq p-1$ under either assumption (1)(2) or (3) in the proof of
Proposition \ref{bupomegazero}.
\end{proof}

\section{Permissible blowing ups.}

\subsection{Blowing ups of the first and second kind.}

In this section, we introduce a notion of permissible blowing up
which is well behaved w.r.t. our main resolution invariant $y \mapsto \iota (y)$
on ${\cal X}$. {\it We assume that}
$$
m (x)=p, \ \{x\}=\eta^{-1}(m_S) \ \mathrm{and} \  \omega (x)>0
$$
{\it in what follows} since Theorem \ref{omegazero}  {takes care of} the case $\omega (x)=0$.
 {Two different kinds of permissible blowing ups are required. Permissibility behaves well with respect to regular base change (Theorem \ref{geomregpermis}). A permissible center is permissible on a nonempty Zariski open set   (Theorem \ref{Zariskiopen}). None of  these is  true  for permissible centers of a fixed  kind. Furhermore, by  Example \ref{examsecondkind}
we need both kinds of permissible blowing ups.}

\begin{defn}\label{deffirstkind}
Let ${\cal Y} \subset {\cal X}$ be an integral closed subscheme with generic point $y$.
We say that ${\cal Y}$ is {\it permissible of the first kind} \index{permissible of the first kind, Definition~\ref{deffirstkind}}at $x $
if $m(y)=m(x)=p$ and the following conditions hold:
\begin{itemize}
    \item [(i)] ${\cal Y}$ is Hironaka-permissible w.r.t. $E$ at $x$ (Definition \ref{HironakapermisE});
    \item [(ii)] $\epsilon(y)=\epsilon(x)$.
\end{itemize}
\end{defn}

If $y \in {\cal X}$ satisfies $m(y)=p$, it follows from
the definition that ${\cal Y}:=\overline{\{y\}}$ is permissible of the first kind at $y$.
It also follows from (ii) that a permissible center of the first kind
has codimension at least two in ${\cal X}$. \\

The main result of this chapter (Theorem \ref{bupthm} below) will require comparing the
initial form polynomials $\mathrm{in}_{W}h $ and $\mathrm{in}_{m_S} h$. We keep notations as in section 2.4:
given well adapted coordinates $(u_1,\ldots ,u_n;Z)$ at $x$, we let
\begin{equation}\label{eq71}
    W:=\eta ({\cal Y}), \  I(W)=( \{u_j\}_{j\in J}).
\end{equation}
We denote:
$$
\mathrm{in}_{W}h =Z^p + \sum_{i=1}^p{F_{i,Z,W}}Z^{p-i} \in G(W)[Z]
$$
and  {(Theorem \ref{initform}} since $\epsilon (x)>0$)
$$
\mathrm{in}_{m_S} h =Z^p - G^{p-1}Z +F_{p,Z} \in G(m_S)[Z].
$$
There are associated homogeneous submodules
\begin{equation}\label{eq:defH}
H_W \subseteq G(W)_{d_W} \ \mathrm{(resp.} \  H:=H_{m_S} \subseteq G(W)_{d}\mathrm{)}
\end{equation}
by (\ref{eq2441}), with
$$
d_W:=\sum_{j \in J_E}H_j , \ d=\sum_{j=1}^eH_j,
$$
 {where $J_E:=J \cap \{1,\ldots ,e\}$.}

\smallskip

A word of caution is required at this point: formula (\ref{eq2441}) {\it defines} the monomial ideal $H_W$
which is the {\it initial form} of $H(x)$ in $G(W)$ and is  different in general from
the ideal $H(\Xi)$ associated to the triple
$$
(G(W)_\Xi ,\mathrm{in}_{W}h ,E_W), \ \Xi :=(\{U_j\}_{j\in J}) + m_{\cal{O}_W}.
$$
 {For an example, let $h:=Z^p+u_1^a(u_2^b+u_1^{b+1})$, $ab>0$, $W=m_S$, $E=$div$(u_1u_2)$, we have $H_W=U_1^a$, $H(\Xi)=U_1^aU_2^b$.}

\smallskip

Corresponding to the above choice for $H_W$ (resp. to $H$), there are associated $\cal{O}_W$-submodules
$$
V(F_{p,Z,W},E,W)\subseteq G(W)_{\epsilon (y)-1}, \ J(F_{p,Z,W},E,W)\subseteq \widehat{G(W)}_{\epsilon (y)}
$$
(resp. $k(x)$-vector subspaces
$$
V(F_{p,Z},E,m_S)\subseteq G(m_S)_{\epsilon (x)-1}, \ J(F_{p,Z},E,m_S)\subseteq G(m_S)_{\epsilon (x)})
$$
given by (\ref{eq244}). \\

\begin{nota}\label{Fbar}
We first recall notations and definitions from section 2.4. We denote
$$
J_E:=J \cap \{1,\ldots ,e\}, \ J':=\{1,\ldots ,n\} \backslash J \ \mathrm{and} \
(J')_E:=\{1,\ldots ,e\} \backslash J_E.
$$
The image $\overline{m}_S$ of $m_S$ in $\cal{O}_W$ has regular parameters $(\overline{u}_{j'})_{j' \in J'}$,
the respective residues of the corresponding parameters of $S$.

\smallskip

Let now $d \in \N$ be fixed and
$$
F =\sum_{\mid \mathbf{a} \mid =d}\hat{f}_\mathbf{a}U^{\mathbf{a}}\in \widehat{G(W)}_d=\widehat{\cal{O}_W}[\{U_j\}_{j\in J}]_d.
$$
Note that $\mathrm{gr}_{\overline{m}_S}\widehat{G(W)}_d\simeq \mathrm{gr}_{\overline{m}_S}G(W)_d$ and
that it has a structure of graded $\mathrm{gr}_{\overline{m}_S}\cal{O}_W$-module.
For any $d_0 \leq \min_\mathbf{a}\{\mathrm{ord}_{\overline{m}_S}\hat{f}_\mathbf{a}\}$,
$F$ has an initial form in $\mathrm{gr}_{\overline{m}_S}G(W)_d$ by taking
\begin{equation}\label{eq2627}
\overline{F}:=\sum_{\mid \mathbf{a} \mid =d}(\mathrm{cl}_{d_0}\hat{f}_\mathbf{a})U^{\mathbf{a}}
\in (\mathrm{gr}_{\overline{m}_S}G(W)_d)_{d_0}.
\end{equation}
This notation requires specifying $d_0$ to avoid ambiguity. We extend the  notation to
homogeneous submodules $M \subseteq \widehat{G(W)}_d$ as follows:
$$
\overline{M}:=< \overline{F}, \ F \in M> \subseteq (\mathrm{gr}_{\overline{m}_S}G(W)_d)_{d_0}
$$
for fixed $d_0 \leq \min\{d_0 (F), F \in M\}$ with obvious notations. For fixed $d,d_0$,
there is an inclusion of $S/m_S$-vector spaces:
\begin{equation}\label{eq2626}
(\mathrm{gr}_{\overline{m}_S}G(W)_d)_{d_0} \subset {G(m_S)_{d+d_0} \over <(\{U_j\}_{j\in J})^{d+1}\cap G(m_S)_{d+d_0}>}.
\end{equation}
\end{nota}

\begin{prop}\label{firstkind}
Let ${\cal Y}$ be permissible of the first kind at $x\in {\cal Y}$. Then
for any well adapted coordinates $(u_1,\ldots ,u_n;Z)$ at $x$ such that
$I(W)=(\{u_j\}_{j\in J})$, the initial form $\mathrm{in}_{m_S}h\in G(m_S)[Z]$ satisfies
$$
H^{-1}<G^p, F_{p,Z}>\subseteq k(x)[\{U_j\}_{j\in J}]_{\epsilon (x)},
$$
 {with notation as in \eqref{eq:defH}.}
\end{prop}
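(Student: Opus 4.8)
The statement asserts that for a permissible center $\mathcal{Y}$ of the first kind through $x$, when we pick well adapted coordinates $(u_1,\ldots,u_n;Z)$ at $x$ with $I(W)=(\{u_j\}_{j\in J})$, both $G^p$ and $F_{p,Z}$, after dividing out the monomial $H$, land in the polynomial subring $k(x)[\{U_j\}_{j\in J}]$ in degree $\epsilon(x)$. The key ingredient is the equality $\epsilon(y)=\epsilon(x)$ built into permissibility of the first kind (definition \ref{deffirstkind}(ii)), together with the comparison between $\mathrm{in}_W h$ and $\mathrm{in}_{m_S} h$ provided by proposition \ref{Deltaalg} (specifically the projection formula (\ref{eq2044}), which identifies $\Delta_{\widehat{S_{s^J}}}(h;\{u_j\}_{j\in J};Z)$ with $\mathrm{pr}^J\Delta_{\hat S}(h;u_1,\ldots,u_n;Z)$).

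\smallskip

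\emph{First step:} translate $\epsilon(y)=\epsilon(x)$ into a statement about the Newton polyhedron. By proposition \ref{epsiloninv} applied both at $x$ (with $s=m_S$) and at $y$ (with $s$ the generic point of $W$), we have
$$
\epsilon(x)=p\delta(x)-\sum_{j=1}^e H_j,\quad \epsilon(y)=p\delta(y)-\sum_{j\in J_E}H_j .
$$
Since $\delta(x)=\min_i\{\mathrm{ord}_{m_S}f_{i,Z}/i\}$ is computed on $\Delta_{\hat S}(h;u_1,\ldots,u_n;Z)$ via $|\mathbf{1}|$, while $\delta(y)$ is the analogous minimum on the projected polyhedron $\mathrm{pr}^J\Delta$, the equality $\epsilon(y)=\epsilon(x)$ forces the slope $\delta$ along the $\{u_j\}_{j\in J}$-directions to be ``full weight'': the face of $\Delta_{\hat S}(h;u_1,\ldots,u_n;Z)$ realizing $\delta(x)$ has vertices all lying in the subspace where the coordinates indexed by $J'$ take their minimal (monomial-in-$E$) values. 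Concretely, I would show that for each $i$ with $F_{i,Z}\neq 0$ contributing to the initial form in degree $i\delta(x)$, the monomials appearing in $\mathrm{in}_{m_S}h$ are divisible by $\prod_{j'\in (J')_E}U_{j'}^{iB_{j'}}$ (the $E$-part outside $J$) and carry no further positive power of $U_{j'}$ for $j'\in J'\setminus(J')_E$ — otherwise $\delta(y)$ would strictly exceed $\delta(x)-\sum_{j'\in(J')_E}d_{j'}$, contradicting $\epsilon(y)=\epsilon(x)$. In other words $H^{-1}F_{p,Z}$ involves only the variables $\{U_j\}_{j\in J}$.

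\smallskip

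\emph{Second step:} handle $G^p$. By theorem \ref{initform}(2) we have $-f_{p-1,Z}=\gamma_{p-1,Z}\prod_{j=1}^e u_j^{A_{p-1,j}}$ with $A_{p-1,j}\in(p-1)\mathbb{N}$, so $-F_{p-1,Z}=G^{p-1}$ with $G^p=\prod_{j=1}^e U_j^{pB_j}$ a monomial, where $\sum pB_j=\epsilon(x)$ by (\ref{eq2552}). The same $\epsilon(y)=\epsilon(x)$ computation applied to the $i=p-1$ coefficient, comparing $\mathrm{ord}_{(u_{j'})}f_{p-1,Z}$ for $j'\in J'\setminus(J')_E$ with what is forced by minimality of the projected polyhedron, shows $B_{j'}=d_{j'}$ for $j'\in(J')_E$ and $B_{j'}=0$ for $j'\in J'\setminus(J')_E$; hence $H^{-1}G^p=\prod_{j\in J_E}U_j^{pB_j-H_j}$ is a monomial in $\{U_j\}_{j\in J}$ of degree $\epsilon(x)$, i.e. lies in $k(x)[\{U_j\}_{j\in J}]_{\epsilon(x)}$. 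Combining the two steps gives $H^{-1}\langle G^p,F_{p,Z}\rangle\subseteq k(x)[\{U_j\}_{j\in J}]_{\epsilon(x)}$ as claimed.

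\smallskip

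\emph{Main obstacle.} The delicate point is the bookkeeping in the first step: one must be careful that the ``no $U_{j'}$ for $j'\in J'\setminus(J')_E$'' conclusion genuinely follows from the single numerical equality $\epsilon(y)=\epsilon(x)$ rather than requiring the stronger ``$\delta$ is attained by a monomial supported on $J$'' hypothesis. The mechanism is that $\delta(y)$ is the minimum over the \emph{projected} polyhedron, so any vertex of $\Delta_{\hat S}(h;u_1,\ldots,u_n;Z)$ on the initial face with a positive $U_{j'}$-coordinate ($j'\in J'\setminus(J')_E$) projects to a point of strictly smaller $|\cdot|$-value, hence would make $\delta(y)$ strictly smaller than $\delta(x)-\sum_{j'\in(J')_E}d_{j'}$ and so $\epsilon(y)<\epsilon(x)$ — but one must check there is no compensating contribution, which is where the precise structure theorem \ref{initform}(2) (only the coefficients $F_{p-1,Z}$ and $F_{p,Z}$ are involved, and $F_{p-1,Z}$ is a $(p-1)$-st power of a monomial) is used to rule out cancellation. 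A secondary technical matter is that $\mathrm{in}_W h$ a priori lives over the non-zero-dimensional ring $S_W$, so one works with the initial form map of Notation \ref{Fbar} and the inclusion (\ref{eq2626}) to compare coefficients; this is routine once the combinatorial statement is established.
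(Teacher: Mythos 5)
Your plan is correct and takes essentially the same route as the paper: both arguments hinge on the projection formula for characteristic polyhedra (proposition \ref{Deltaalg}, eq.~(\ref{eq2044})) together with the numerical equality $\epsilon(y)=\epsilon(x)$ from permissibility of the first kind. The paper packages the key inclusion more abstractly: it shows, for every $1\le i\le p$ at once, that $H^{-i}F_{i,Z}^p=\mathrm{cl}_{d_0}\bigl(H_W^{-i}F_{i,Z,W}^p\bigr)$ with $d_0=0$ forced by the degree count (Notation \ref{Fbar}, inclusion (\ref{eq2626})), and then reads off $G^p$ and $F_{p,Z}$ from $i=p-1,p$; you instead argue directly on the initial face of the Newton polyhedron for $F_{p,Z}$ and treat $G^p$ via the explicit monomial form of theorem \ref{initform}(2). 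A few slips worth correcting, none fatal: (i) the sign of the slope inequality in your first step is reversed --- a positive $U_{j'}$-coordinate on the initial face for some $j'\in J'\setminus(J')_E$ would make $\delta(y)$ strictly \emph{smaller} than $\delta(x)-\sum_{j'\in(J')_E}d_{j'}$, not larger (you state it the right way round in your obstacle paragraph); (ii) eq.~(\ref{eq2552}) defines $H^{-1}G^p$, not $G^p$, as $\langle\prod_j U_j^{pB_j}\rangle$, so the conclusion you need is $B_{j'}=0$ (not $B_{j'}=d_{j'}$) for $j'\in(J')_E$, and the factor $U_j^{-H_j}$ in your final expression for $H^{-1}G^p$ is spurious; (iii) the cancellation concern in your closing paragraph is vacuous --- $\delta(y)$ is a minimum over the projected polyhedron, not a signed sum of contributions, so no compensation is possible and theorem \ref{initform}(2) plays no role in ruling it out.
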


\begin{proof}The existence of well adapted coordinates
$(u_1,\ldots ,u_n;Z)$ such that $I(W)=( \{u_j\}_{j\in J})$ follows from  Proposition \ref{Deltaalg}.
This theorem furthermore implies that the polyhedron
\begin{equation}\label{eq262}
    \Delta_{\hat{S}}(h;\{u_j\}_{j \in J};Z)=
\mathrm{pr}_J(\Delta_{S}(h;u_1,\ldots,u_n;Z))\ \mathrm{is} \
\mathrm{minimal},
\end{equation}
where $\mathrm{pr}_J: \ \R^n \rightarrow \R^J$ denotes the projection on the $(u_j)_{j \in
J}$-space.

By (ii) of Definition \ref{deffirstkind}, we have $\epsilon (x)=\epsilon (y)$. Therefore
$$
H^{-i}F^p_{i,Z} =\mathrm{cl}_{0}(H_W^{-i}F^p_{i,Z,W}) \subseteq G(m_S)_{i\epsilon (x)}=k(x) [U_1, \ldots ,U_n]_{i\epsilon (x)}
$$
is simply the reduction of $H_W^{-i}F^p_{i,Z,W}$ modulo $\overline{m}_S$ for  $1 \leq i \leq p$, i.e. taking
$d_0=0$ in Notation \ref{Fbar}, via the inclusion (\ref{eq2626})
$$
k(x)[\{U_j\}_{j\in J}]_{i\epsilon (y)}\simeq (\mathrm{gr}_{\overline{m}_S}G(W)_{i\epsilon (y)})_{0}
\subset G(m_S)_{i\epsilon (y)} \simeq k(x) [U_1, \ldots ,U_n]_{i\epsilon (x)}.
$$
We get respectively $(H^{-1}G^p)^{p-1}$, $(H^{-1}F_{p,Z})^p$ for $i=p-1,p$ and
this completes the proof.
\end{proof}

The following corollary will be required in the proof of the blowing up theorem below. The adapted
cone $\mathrm{Max}(x) \subseteq G(m_S)$ is defined in Definition \ref{deftauprime}.

\begin{cor}\label{Cmaxfibre}
With notations as above, let ${\cal Y}$ be permissible of the first kind at $x$.
The defining ideal $\mathrm{IMax}(x)\subseteq G(m_S)$ of $\mathrm{Max}(x)$ satisfies
$$
\mathrm{IMax}(x)= (\mathrm{IMax}(x)\cap k(x)[\{U_j\}_{j\in J}])G(m_S).
$$
\end{cor}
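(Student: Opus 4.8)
The plan is to reduce everything to Proposition~\ref{firstkind} together with elementary algebra over the polynomial subring $R_J:=k(x)[\{U_j\}_{j\in J}]$ of $R:=G(m_S)=k(x)[U_1,\ldots,U_n]$, using that $R=R_J[\{U_{j'}\}_{j'\in J'}]$ is a polynomial extension of $R_J$ (here $J':=\{1,\ldots,n\}\backslash J$).

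First I would extract from Proposition~\ref{firstkind} the two facts actually used. Write $\mathrm{in}_{m_S}h=Z^p-G^{p-1}Z+F_{p,Z}$ with $\langle G\rangle=\langle U_1^{b_1}\cdots U_e^{b_e}\rangle$ by theorem~\ref{initform}(2) (with $G=0$, $B=\emptyset$, $T=\mathrm{id}$ when $i_0(x)=p$, cf. remark~\ref{Bempty}). Proposition~\ref{firstkind} gives $H^{-1}F_{p,Z}\in R_J$ and $H^{-1}G^p\in R_J$. Hence $F_{p,Z}=H\Phi$ with $\Phi\in R_J$, and since the truncation operator $T$ of definition~\ref{defT} merely deletes monomials from the expansion of $F_{p,Z}$ it preserves the submodule $H\,R_J$, so $TF_{p,Z}=H\Psi$ with $\Psi\in R_J$ as well. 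Moreover $H^{-1}G^p=c^p\prod_{j=1}^{e}U_j^{pb_j-H_j}$ (with $c\in k(x)^{\times}$) lies in $R_J$, which forces $pb_j-H_j>0\Rightarrow j\in J$; thus $B\subseteq J_E\subseteq J$.

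Next I would check that the finitely many homogeneous forms defining $\mathrm{Max}(x)$ all lie in $R_J$. When $\omega(x)=\epsilon(x)-1$ the relevant linear system is $V(TF_{p,Z},E,m_S)=H^{-1}\langle\partial TF_{p,Z}/\partial U_j:e+1\le j\le n\rangle$; since $TF_{p,Z}=H\Psi$ with $\Psi\in R_J$, for $j>e$ one has $\partial TF_{p,Z}/\partial U_j=0$ if $j\notin J$ and $H^{-1}\partial TF_{p,Z}/\partial U_j=\partial\Psi/\partial U_j\in R_J$ if $j\in J$. When $\omega(x)=\epsilon(x)$ the relevant system is $J(TF_{p,Z},E,m_S)=H^{-1}\langle\{U_j\partial TF_{p,Z}/\partial U_j\}_{1\le j\le e},\{\partial TF_{p,Z}/\partial\lambda_l\}_{l\in\Lambda_0}\rangle$, and here $H^{-1}U_j\partial TF_{p,Z}/\partial U_j=H_j\Psi+U_j\partial\Psi/\partial U_j\in R_J$ (the second summand vanishing unless $j\in J$), while $H^{-1}\partial TF_{p,Z}/\partial\lambda_l=\partial\Psi/\partial\lambda_l\in R_J$ because $H$ carries no $\lambda$'s and the $\partial/\partial\lambda_l$ act coefficientwise. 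So by definition~\ref{deftauprime}, in either case $\mathrm{Max}(x)$ is, inside $\mathrm{Spec}\,R$, the intersection of the reduced Hilbert--Samuel stratum (definition~\ref{defHilbertSamuel}) of a linear system of forms of common degree $\omega(x)$ all lying in $R_J$ with the linear subspace $\{U_j=0:j\in B\}$, where $B\subseteq J$.

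Finally I would invoke stability of the property ``extended from $R_J$''. If $\mathfrak{a}=\mathfrak{a}_0R$ with $\mathfrak{a}_0\subseteq R_J$ homogeneous, then $R/\mathfrak{a}=(R_J/\mathfrak{a}_0)[\{U_{j'}\}_{j'\in J'}]$ is a polynomial ring over $R_J/\mathfrak{a}_0$; therefore $\sqrt{\mathfrak{a}}=(\sqrt{\mathfrak{a}_0})R$ (the nilradical of a polynomial ring is extended from the base), and for forms $F_1,\ldots,F_m\in R_J$ of one and the same degree the orders $\mathrm{ord}_xF_i$ at a point of $\mathrm{Spec}\,R$ depend only on its image in $\mathrm{Spec}\,R_J$, so $\mathrm{Max}((F_1,\ldots,F_m)R)$ is extended from $R_J$. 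Since a sum of extended ideals is extended and $(U_j:j\in B)$ is extended because $B\subseteq J$, the ideal $\mathrm{IMax}(x)$, which is the radical of the ideal of $\mathrm{Max}(V(TF_{p,Z},E,m_S))$ (resp. $\mathrm{Max}(J(TF_{p,Z},E,m_S))$) plus $(U_j:j\in B)$, is extended from $R_J$; that is exactly $\mathrm{IMax}(x)=(\mathrm{IMax}(x)\cap R_J)R$. The only work is the monomial bookkeeping of the middle step; once Proposition~\ref{firstkind} is in hand there is no genuine obstacle, the concluding step being formal.
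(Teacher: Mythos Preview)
Your proof is correct and follows the same approach as the paper's: both reduce to Proposition~\ref{firstkind} to place $H^{-1}F_{p,Z}$ and $H^{-1}G^p$ in $R_J$, then observe that the generators of $V(TF_{p,Z},E,m_S)$ (resp.\ $J(TF_{p,Z},E,m_S)$) and the linear forms $U_j$ with $j\in B$ therefore all lie in $R_J$. The paper's proof is extremely terse---essentially a single sentence citing Proposition~\ref{firstkind} and definition~\ref{deftauprime}, plus a remark that passing from $F_{p,Z}$ to $TF_{p,Z}$ is harmless---whereas you spell out the monomial bookkeeping and the closure properties of ``extended from $R_J$'' explicitly. Two small remarks: (1) your simplified formula for $J(TF_{p,Z},E,m_S)$ (involving only $U_j\partial/\partial U_j$ for $j\le e$ and $\partial/\partial\lambda_l$) is valid precisely because $\omega(x)=\epsilon(x)$ forces $V(TF_{p,Z},E,m_S)=0$, hence $\partial TF_{p,Z}/\partial U_j=0$ for $j>e$; you use this implicitly but might flag it. (2) The claim that $\mathrm{ord}_P F$ depends only on the image of $P$ in $\mathrm{Spec}\,R_J$ when $F\in R_J$ is correct---it follows from the fact that the local map $(R_J)_Q\to R_P$ is flat with regular fiber, so a regular system of parameters of $(R_J)_Q$ extends to one of $R_P$---but is perhaps worth a word of justification.
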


\begin{proof}
This follows from Proposition \ref{firstkind}
and Definition \ref{deftauprime}. Note that the truncation
operator $T$ used in the Definition of $\mathrm{Max}(x)$ does not affect the
conclusion of the corollary since it is obvious from the definitions that:
$$
V(F_{p,Z},E,m_S) \subseteq k(x)[\{U_j\}_{j\in J}]_{\epsilon (x)-1} \Rightarrow
V(TF_{p,Z},E,m_S) \subseteq k(x)[\{U_j\}_{j\in J}]_{\epsilon (x)-1}.
$$
The same implication holds for $J(F_{p,Z},E,m_S)$ and $J(TF_{p,Z},E,m_S)$.
\end{proof}

We now  define a second kind of permissible blowing up.

\begin{defn}\label{defsecondkind}
Let ${\cal Y} \subset {\cal X}$ be an integral closed subscheme with generic point $y$.
We say that ${\cal Y}$ is {\it permissible of the second kind} \index{permissible of the second kind, Definition~\ref{defsecondkind}} at $x$
if $m(y)=m(x)=p$ and the following conditions hold:
\begin{itemize}
    \item [(i)] ${\cal Y}$ is Hironaka-permissible w.r.t. $E$ at $x$ (Definition \ref{HironakapermisE});
    \item [(ii)] $\epsilon(y) =\epsilon (x)-1$ and $i_0(y) \leq i_0(x)$;
    \item [(iii)] $\overline{J}(F_{p,Z,W},E,W):=\mathrm{cl}_{0}J(F_{p,Z,W},E,W)\neq 0$.
\end{itemize}
\end{defn}

The following important example constructs a threefold ${\cal X}$ such that every resolution of singularities
$\tilde{{\cal X}}\rightarrow {\cal X}$ which is a composition of Hironaka-permissible blowing ups does actually
involve blowing up a permissible curve of the second kind.

\smallskip

\begin{exam}\label{examsecondkind}
Let $k$ be a perfect field of characteristic $p>0$, $A:=k[u_1,u_2,u_3]$,  {$P\in k[T] \backslash k[T^p]$} and take
$$
h:=Z^p + P(u_3)u_2^p +u_1^{p+1}\in A[Z], \ E:=\mathrm{div}(u_1).
$$
Let ${\cal Y}:=V(Z,u_1,u_2)\subseteq \mathrm{Sing}_p{\cal X}$ with generic point $y$. Let
$\pi : \ \tilde{{\cal X}}\rightarrow {\cal X}$ be any composition of Hironaka-permissible blowing ups
with $\tilde{{\cal X}}$ regular. Since $y$ is an isolated point of $\mathrm{Sing}_p{\cal X}$,
the map $\pi$ factors through the blowing up $\pi_0$ along ${\cal Y}$ above $y$. Define a nonempty
Zariski open subset ${\cal U}\subseteq {\cal Y}$ by:
$$
x\in {\cal U} \Leftrightarrow
\left\{
  \begin{array}{c}
    \pi \ \mathrm{factors} \ \mathrm{through} \ \pi_0 \ \mathrm{above} \ x   \\
      \\
    \mathrm{ord}_x  {{\partial P \over \partial T}(\overline{u}_3)=0} \hfill{}\\
  \end{array}
\right.
.
$$
For $x\in {\cal U}$, there exist well adapted coordinates $(u_1,u_2,v_x;Z_x:=Z - \gamma_xu_2)$ at $x$,
$\gamma_x \in A_{\eta(x)}$ a unit such that
$$
h=Z_x^p + v_xu_2^p+u_1^{p+1}\in A_{\eta(x)}[Z_x].
$$
Then ${\cal Y}$ is permissible of the second kind at every $x\in {\cal U}$ since
$$
\overline{J}(F_{p,Z_x,W},E,W)={\partial F_{p,Z_x,W} \over \partial \overline{v}_x}=U_2^p\neq 0,
\ F_{p,Z_x,W}=\overline{v}_xU_2^p \in G(W)_p
$$
with notations as in Definition \ref{defsecondkind}(iii). This is dealt with in the course of the proof of Theorem
\ref{luthm} in Proposition \ref{kappa2gamma0}  when applying Lemma \ref{kappa2bupcurve}
($\kappa (x)=2$ in this example,  {\it cf.} Definition \ref{defkappa}).

\smallskip

When $n=3$, permissible blowing ups of the second kind only occur in Propositions \ref{kappa2gamma0}
and \ref{kappa2fin10} ($\kappa (x)=2$).
\end{exam}

\begin{prop}\label{secondkind}
Let ${\cal Y}$ be permissible of the second kind at $x$. For any well adapted coordinates
$(u_1,\ldots ,u_n;Z)$ at $x$ such that $I(W)=(\{u_j\}_{j\in J})$,
the initial form $\mathrm{in}_{m_S}h  \in G(m_S)[Z]$ satisfies
\begin{equation}\label{eq2628}
\left\{
  \begin{array}{ccc}
    H^{-1}G^p & \subseteq & U_{j_0}k(x)[\{U_j\}_{j\in J}]_{\epsilon (y)}  \ \mathrm{for} \ \mathrm{some} \ j_0 \in (J')_E \hfill{}  \\
     & & \\
    H^{-1}F_{p,Z} & = & <\sum_{j\in J'}U_{j'}\Phi_{j'}(\{U_j\}_{j\in J})+ \Psi(\{U_j\}_{j\in J})>  \subseteq G(m_S)_{\epsilon (x)} \\
  \end{array}
\right.
.
\end{equation}
with $\Phi_{j'} \neq 0$ for some $j' \in J' \backslash (J')_E$. In particular
$\epsilon (y)=\omega (x)$.
\end{prop}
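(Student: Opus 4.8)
The plan is to compare the $m_S$-adic and the $I(W)$-adic filtrations of $S$ through the identification $\mathrm{gr}_{m_S}S\simeq\mathrm{gr}_{\overline m_S}\bigl(\mathrm{gr}_{I(W)}S\bigr)$, keeping track of how it interacts with the projection of characteristic polyhedra $\Delta_{S_W}(h;\{u_j\}_{j\in J};Z)=\mathrm{pr}^J\bigl(\Delta_S(h;u_1,\ldots,u_n;Z)\bigr)$ provided by proposition \ref{Deltaalg}. Since $\epsilon(y)\ge 0$ by proposition \ref{epsiloninv}, condition (ii) of definition \ref{defsecondkind} gives $\epsilon(x)=\epsilon(y)+1\ge 1>0$, so theorem \ref{initform} applies at $x$ and furnishes $\mathrm{in}_{m_S}h=Z^p-G^{p-1}Z+F_{p,Z}$; by proposition \ref{Deltaalg} one also picks well adapted coordinates $(u_1,\ldots,u_n;Z)$ with $I(W)=(\{u_j\}_{j\in J})$, for which $\Delta_W$ is minimal and $p\delta(x)-d=\epsilon(x)$, $p\delta(y)-d_W=\epsilon(y)$ with $d=\sum_{j=1}^eH_j$, $d_W=\sum_{j\in J_E}H_j$. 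A first observation is that condition (iii) forces $F_{p,Z,W}\ne 0$ (otherwise ${\cal J}(F_{p,Z,W},E,W)=0$), hence $p$ lies on the $\mathbf 1$-face of $\Delta_W$, i.e. $\mathrm{ord}_{I(W)}f_{p,Z}=p\delta(y)$.

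The shape of $F_{p,Z}$ then comes out of a pure degree count: every monomial of $f_{p,Z}$, hence every monomial of $F_{p,Z}$, has $J$-degree $\ge\mathrm{ord}_{I(W)}f_{p,Z}=p\delta(y)$, so every monomial of $H^{-1}F_{p,Z}$ has $J$-degree $\ge p\delta(y)-d_W=\epsilon(y)$; being homogeneous of total degree $\epsilon(x)=\epsilon(y)+1$, each such monomial has $J'$-degree $\le 1$, which is exactly the decomposition
$$H^{-1}F_{p,Z}=\Bigl\langle\,\sum_{j'\in J'}U_{j'}\Phi_{j'}(\{U_j\}_{j\in J})+\Psi(\{U_j\}_{j\in J})\,\Bigr\rangle,\qquad\deg\Phi_{j'}=\epsilon(y),\ \deg\Psi=\epsilon(x).$$
For the $G$-term there is nothing to prove if $G=0$; if $G\ne 0$ then $i_0(x)=p-1$ and, by theorem \ref{initform}(2) applied both to $S$ and — after localizing at the generic point of $W$, where $i_0(y)=p-1$ by condition (ii) — to $S_W$, one has $-F_{p-1,Z}=G^{p-1}=\overline\gamma_{p-1,Z}\prod_{j=1}^eU_j^{A_{p-1,j}}$ with $\sum_{j=1}^eA_{p-1,j}=(p-1)\delta(x)$ and $\sum_{j\in J_E}A_{p-1,j}=(p-1)\delta(y)$. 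Hence $H^{-1}G^p$ is a single monomial whose $(J')_E$-degree equals $\frac{p}{p-1}\sum_{j'\in(J')_E}A_{p-1,j'}-\sum_{j'\in(J')_E}H_{j'}=p(\delta(x)-\delta(y))-(d-d_W)=\epsilon(x)-\epsilon(y)=1$, i.e. it equals $U_{j_0}$ times a monomial of degree $\epsilon(y)$ in $\{U_j\}_{j\in J_E}$ for a unique $j_0\in(J')_E$, which is the first line of (\ref{eq2628}).

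It then remains to produce $j'\in J'\setminus(J')_E$ with $\Phi_{j'}\ne 0$ and to deduce $\epsilon(y)=\omega(x)$. For this I would use the filtration comparison once more: $\mathrm{ord}_{\overline m_S}(H_W^{-1}F_{p,Z,W})=\mathrm{ord}_{m_S}f_{p,Z}-p\delta(y)-(d-d_W)\ge\epsilon(x)-\epsilon(y)=1$, with equality iff $F_{p,Z}\ne 0$; since the generators $U_j\partial/\partial U_j$, $\overline u_{j'}\partial/\partial\overline u_{j'}$ and $\partial/\partial\lambda_l$ of ${\cal D}_W$ preserve $\overline m_S$-order, $\overline J(F_{p,Z,W},E,W)\ne 0$ can be witnessed only by some $\partial/\partial\overline u_{j'}$ with $j'\in J'\setminus(J')_E$, which simultaneously forces $F_{p,Z}\ne 0$ and $\mathrm{ord}_{\overline m_S}\bigl(H_W^{-1}\partial F_{p,Z,W}/\partial\overline u_{j'}\bigr)=0$; the $\overline m_S$-leading form of that element is exactly $\Phi_{j'}$, so $\Phi_{j'}\ne 0$. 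Finally $\omega(x)=\epsilon(x)-1=\epsilon(y)$ once $V(TF_{p,Z},E,m_S)\ne 0$ is checked, which reduces to verifying that $H^{-1}\partial F_{p,Z}/\partial U_{j'}=\Phi_{j'}$ survives the truncation $T$ — the monomials killed by $T$ being, by lemma \ref{kerT}, those divisible by $U^{(p-1)\mathbf b}$, which is supported on the $E$-variables, whereas $\Phi_{j'}$ comes from monomials carrying $U_{j'}$ with $j'\ge e+1$ — by a computation in the spirit of lemma \ref{lemomegazero}. The hard part is precisely this last bookkeeping when $i_0(x)=p-1$ (disentangling $\Phi_{j'}$ from the part of $F_{p,Z}$ that $T$ removes); a secondary nuisance is the degenerate case $\epsilon(y)=0$, where theorem \ref{initform} is unavailable at $y$ and the equality $i_0(y)=p-1$ must be read off directly from condition {\bf (G)}.
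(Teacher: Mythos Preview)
Your approach is the paper's approach: compare the $m_S$- and $I(W)$-gradings through proposition \ref{Deltaalg}, read off the $J'$-degree $\le 1$ decomposition of $H^{-1}F_{p,Z}$ from $\epsilon(x)=\epsilon(y)+1$, identify $\overline J(F_{p,Z,W},E,W)$ with the span of the $\Phi_{j'}$ for $j'\in J'\setminus(J')_E$, and handle $H^{-1}G^p$ via theorem \ref{initform}(2). Your degree count for $H^{-1}G^p$ directly at $x$ is cleaner than the paper's detour through $G_W$ and avoids the $\epsilon(y)>0$ issue you correctly flag.

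The one real imprecision is in the $T$-step. You write that monomials killed by $T$ are ``divisible by $U^{(p-1)\mathbf b}$, which is supported on the $E$-variables, whereas $\Phi_{j'}$ comes from monomials carrying $U_{j'}$ with $j'\ge e+1$'', as if these were mutually exclusive. They are not: a monomial can perfectly well be divisible by $U^{(p-1)\mathbf b}$ \emph{and} carry a factor $U_{j'}$. What actually makes the argument work is the combination of two facts you have already proved but do not invoke here: (a) every monomial of $H^{-1}(F_{p,Z}-TF_{p,Z})$ is divisible by $U_{j_0}$ for the specific $j_0\in(J')_E$ singled out in the first line of (\ref{eq2628}) (this follows from your analysis of $\mathbf b$: for $\mathbf x\in A$ one gets $px_{j_0}-H_{j_0}\ge 1$); and (b) every monomial of $H^{-1}F_{p,Z}$, hence of $H^{-1}(F_{p,Z}-TF_{p,Z})$, has $J'$-degree $\le 1$. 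Together these force the $J'$-support of each such monomial to be exactly $\{j_0\}$, so $\partial(F_{p,Z}-TF_{p,Z})/\partial U_{j'}=0$ for $j'\in J'\setminus(J')_E$, whence $V(TF_{p,Z},E,m_S)\ni\Phi_{j'}\ne 0$. This is precisely what the paper does in one line (``the first part of (\ref{eq2628}) now shows\ldots''); once you insert this, your proof is complete.
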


\begin{proof}
We argue as in the proof of Proposition \ref{firstkind} and build up from
(\ref{eq262}). By (ii) of Definition \ref{defsecondkind}, we have $\epsilon (x)=\epsilon (y)+1$.
Therefore
$$
\mathrm{cl}_{0}(H_W^{-i}F^p_{i,Z,W}) =0, \ 1 \leq i \leq p.
$$
This shows that $H_W^{-i}F^p_{i,Z,W} \subseteq \overline{m}_S \cal{O}_W[\{U_j\}_{j\in J_E}]_{i\epsilon (y)}$.
We have $\epsilon (y)>0$, so $F_{i,Z,W}=0$, $1 \leq i \leq p-2$ by Theorem \ref{initform}.
For $i=p-1$, we have $-F_{p-1,Z,W}=G_W^{p-1}$ for some $G_W \in G(W)_{\delta (y)}$ (so $G_W=0$ if
$\delta (y) \not \in \N$). We deduce that
\begin{equation}\label{eq2621}
H_W^{-1}(G_W^p,F_{p,Z,W})\subseteq \overline{m}_S \cal{O}_W[\{U_j\}_{j\in J_E}]_{\epsilon (y)}.
\end{equation}
If $i_0(x)=p$, we have $H^{-1}G^p=0$ so the first part of (\ref{eq2628}) is trivial. If
$i_0(x)=p-1$, we have $i_0(y)=p-1 $ by Definition \ref{defsecondkind}(ii), so $G_W\neq 0$.
The first part of (\ref{eq2628}) then follows from (\ref{eq2621}), i.e.
$$
H^{-1}G^p =\mathrm{cl}_1(H_W^{-1}G_W^p) \subseteq  U_{j_0}k(x)[\{U_j\}_{j\in J}]_{\epsilon (y)},
$$
for some $j_0 \in (J')_E$. \\

We deduce from (\ref{eq2621}) that
$$
\overline{J}(F_{p,Z,W},E,W)=<\mathrm{cl}_0(H_W^{-1}{\partial F_{p,Z,W} \over \partial \overline{u}_{j'}}), j'\in J' \backslash (J')_E>
\subseteq k(x)[\{U_j\}_{j\in J}]_{\epsilon (y)}.
$$
Taking classes as in (\ref{eq2627}) with $d_0=1$, we get
$$
\mathrm{cl}_1(H_W^{-1}F_{p,Z,W})\subseteq \sum_{j' \in J'}U_{j'}k(x)[\{U_j\}_{j\in J}]_{\epsilon (y)}.
$$
Since $\mathrm{cl}_1(H_W^{-1}F_{p,Z,W})$ is a homomorphic image of
$H^{-1}F_{p,Z}\in G(m_S)_{\epsilon (x)}$ as described in (\ref{eq2626}), there exists
an expansion (\ref{eq2628}). For $j' \in J'\backslash (J')_E$, we have
$$
H^{-1}{\partial F_{p,Z} \over \partial U_{j'}}=\mathrm{cl}_0(H_W^{-1}{\partial F_{p,Z,W} \over \partial \overline{u}_{j'}}).
$$
Collecting together for all $j' \in J' \backslash (J')_E$, we get
$$
\overline{J}(F_{p,Z,W},E,W)=<H^{-1}{\partial F_{p,Z} \over \partial U_{j'}}, j'\in J' \backslash (J')_E>
\subseteq k(x)[\{U_j\}_{j\in J}]_{\epsilon (y)}
$$
and the second part of (\ref{eq2628}) follows from Definition \ref{defsecondkind}(iii). \\

Note that $\epsilon (y)=\omega (x)$  is an immediate consequence of Definition \ref{defomega} if $i_0(m_S)=p$.
If $i_0(m_S)=p-1$, we must introduce a truncation operator $T: G(m_S)_{\delta (x)} \rightarrow G(m_S)_{\delta (x)}$
in order to compute $\omega (x)$. The first part of (\ref{eq2628}) now shows that there exists $j_0 \in (J')_E$
such that
$$
H^{-1}(F_{p,Z}-TF_{p,Z}) \in U_{j_0}k(x)[\{U_j\}_{j\in J}]_{\epsilon (y)}.
$$
Since $\overline{J}(F_{p,Z,W},E,W) \subseteq k(x)[\{U_j\}_{j\in J}]_{\epsilon (y)}$, we thus have:
$$
H^{-1}{\partial F_{p,Z} \over \partial U_{j'}}= H^{-1}{\partial TF_{p,Z} \over \partial U_{j'}}
$$
for every $j' \in J' \backslash (J')_E$. This proves that $\omega (x)=\epsilon (y)$.
\end{proof}

Note that it follows from the above proposition that a permissible center of the second kind
has codimension at least two in ${\cal X}$, since $\epsilon(y)>0$.

 {Permissible blowing ups of
the second kind appear naturally from permissible blowing ups of
the first kind if one requires stability by regular base change:}

\begin{thm}\label{geomregpermis}
Let $S \subseteq \tilde{S}$ be a local base change which is regular, $\tilde{S}$ excellent. Let
$\tilde{x} \in \tilde{\eta}^{-1}(m_{\tilde{S}})$ and $x \in \eta^{-1}(m_S)$ be its image.

\smallskip

If ${\cal Y}\subset {\cal X}$ is a permissible center (of the first or second kind) at $x$, then
$$
\tilde{{\cal Y}}:={\cal Y}\times_S\mathrm{Spec}\tilde{S}\subseteq \tilde{{\cal X}}={\cal X}\times_S\mathrm{Spec}\tilde{S}
$$
is permissible (of the first or second kind) at $\tilde{x}$.
\end{thm}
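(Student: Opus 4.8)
The plan is to deduce the statement from Theorem~\ref{omegageomreg} (invariance of $(m,\omega)$, of $i_0(x)$ and of $H(x)$ under a regular base change, together with the precise description of when $\epsilon$ jumps) and from the structure Propositions~\ref{firstkind} and~\ref{secondkind}. As in the proof of Theorem~\ref{omegageomreg} one localizes $(S,h,E)$ at $s:=\eta(x)$ and $(\tilde S,\tilde h,\tilde E)$ at $\tilde s:=\tilde\eta(\tilde x)$ (both conditions {\bf (G)} and {\bf (E)} are stable by this and by the given regular base change), so we may assume $s=m_S$, $\tilde s=m_{\tilde S}$. Write $W:=\eta({\cal Y})$, $\tilde W:=W\times_S\mathrm{Spec}\tilde S$, let $y$ (resp. $\tilde y$) be the generic point of ${\cal Y}$ (resp. $\tilde{\cal Y}$) and $w:=\eta(y)$, $\tilde w:=\tilde\eta(\tilde y)$ the generic points of $W$, $\tilde W$. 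Since $\epsilon(x)\ge\omega(x)>0$ we have $\delta(x)\ge 1$, $\eta^{-1}(m_S)=\{x\}$, $k(x)=S/m_S$ by Proposition~\ref{deltainv}. First I would handle condition (i) of Definition~\ref{deffirstkind}/\ref{defsecondkind}: pick well adapted coordinates $(u_1,\ldots,u_n;Z)$ at $x$ with $I(W)=I_J=(\{u_j\}_{j\in J})$ (Proposition~\ref{Deltaalg}) and extend $(u_1,\ldots,u_n)$ to a r.s.p. $(u_1,\ldots,u_{\tilde n})$ of $\tilde S$ adapted to $\tilde E$ (Notation~\ref{notageomreg1}); then $I(\tilde W)=I_J\tilde S$ has normal crossings with $\tilde E$, $\tilde{\cal Y}$ is integral (it is, locally, $V(Z,I_J)\simeq\mathrm{Spec}(\tilde S/I_J\tilde S)$, a localisation of a regular ring), and multiplicity is preserved under the regular base change $S[X]\subseteq\tilde S[X]$, so $\tilde{\cal Y}\subseteq\mathrm{Sing}_p\tilde{\cal X}$ and $m(\tilde y)=p$ (also by Theorem~\ref{omegageomreg}(1) applied to the regular local base change $S_w\subseteq\tilde S_{\tilde w}$, a localisation of $S\subseteq\tilde S$). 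Thus $\tilde{\cal Y}$ is Hironaka-permissible w.r.t. $\tilde E$ at $\tilde x$.

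Next, for the \emph{second kind}, conditions (ii) and (iii). By Theorem~\ref{omegageomreg}(2.i), $i_0(\tilde x)=i_0(x)$ and $i_0(\tilde y)=i_0(y)$, hence $i_0(\tilde y)\le i_0(\tilde x)$. Proposition~\ref{secondkind} gives $H^{-1}\partial F_{p,Z}/\partial U_{j'}=\Phi_{j'}\ne 0$ for some $j'\in\{e+1,\ldots,n\}\setminus J$, so $V(F_{p,Z},E,m_S)\ne 0$; this being a nonzero identity over $k(x)$, $F_{p,Z}$ is not a $p$-th power over $k(\tilde x)$, hence $\epsilon(\tilde x)=\epsilon(x)$ by Theorem~\ref{omegageomreg}(2.ii). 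The same computation over $S_w$ — using that Proposition~\ref{secondkind} was derived from the shape of $\mathrm{in}_Wh$, for which $\overline J(F_{p,Z,W},E,W)\ne 0$ exhibits a transverse derivative $\partial F_{p,Z,W}/\partial\overline u_{j'}$ — yields $\epsilon(\tilde y)=\epsilon(y)$, whence $\epsilon(\tilde y)=\epsilon(\tilde x)-1$. Finally, to get condition (iii) one re-prepares $\tilde h$ along $\tilde W$, i.e. replaces $\tilde Z$ by a coordinate $\tilde Z_1$ with $\Delta_{\tilde S_{\tilde w}}$ minimal; arguing as in Lemma~\ref{kerT} this changes $F_{p,\tilde Z,\tilde W}$ by a term $\tilde\Theta_W^p-G_W^{p-1}\tilde\Theta_W$ which does not affect $\overline J$, and $\overline J(F_{p,\tilde Z_1,\tilde W},\tilde E,\tilde W)$ is the base change of the nonzero $k(x)$-space $\overline J(F_{p,Z,W},E,W)$, hence nonzero.

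For the \emph{first kind}, condition (ii) is the single equality $\epsilon(\tilde y)=\epsilon(\tilde x)$, given $\epsilon(y)=\epsilon(x)$. If $\epsilon$ does not jump at $x$, then $\epsilon(\tilde x)=\epsilon(x)=\epsilon(y)\le\epsilon(\tilde y)\le\epsilon(\tilde x)$ once one checks $\epsilon$ does not jump at $y$ either — which follows because, by Proposition~\ref{firstkind}, $H^{-1}\langle G^p,F_{p,Z}\rangle$ lies in the subring $k(x)[\{U_j\}_{j\in J}]$ and equals $\mathrm{cl}_0$ of the corresponding data $H_W^{-1}\langle G_W^p,F_{p,Z,W}\rangle$, so the ``$Z^p+F_{p,Z}$ with $F_{p,Z}$ a $p$-th power over the extension'' condition is the same at $x$ and at $y$. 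If $\epsilon$ jumps at $x$ (so $i_0(x)=p$, $G=0$, $\delta(x)\in\N$ and $F_{p,Z}\in(k(\tilde x)[U_1,\ldots,U_n])^p$), the same identification forces $F_{p,Z,W}$ to become a $p$-th power over $k(\tilde y)$, so $\epsilon$ jumps at $y$ by the same increment $+1$, and again $\epsilon(\tilde y)=\epsilon(x)+1=\epsilon(\tilde x)$.

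The step I expect to be the main obstacle is exactly this last matching of the $\epsilon$-jump between the closed point $x$ and the generic point $y$ of the center (and, for the second kind, the persistence of $\overline J\ne 0$ through re-preparation): Theorem~\ref{omegageomreg} describes the jump \emph{pointwise}, whereas permissibility relates two points, so one must transport the $p$-th-power versus non-$p$-th-power dichotomy back and forth between $G(m_S)$, $G(W)$ and their base changes, keeping track of the truncation operators $T,\tilde T$ and of the coordinate change $Z\rightsquigarrow\tilde Z$. In particular one has to rule out the scenario $i_0(x)=p$, $i_0(y)=p-1$, $\epsilon(y)=\epsilon(x)$ with a jump at $x$; I expect this to require the fine structure of $\mathrm{in}_Wh$ versus $\mathrm{in}_{m_S}h$ provided by Proposition~\ref{firstkind} together with the discriminant identity of Theorem~\ref{initform}(2) applied over $S_w$, comparing $\mathrm{ord}_{m_S}(\mathrm{Disc}_Zh)$ and $\mathrm{ord}_w(\mathrm{Disc}_Zh)$. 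The remaining verifications (normal crossings, multiplicity, $i_0$, and the Lemma~\ref{kerT}-type invariance under adapted coordinate changes) are routine given the results already established.
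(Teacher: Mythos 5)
Your treatment of the two easy configurations (first kind with no $\epsilon$-jump at $x$, second kind) is essentially right and broadly parallels the paper, but the argument has a genuine gap in the one delicate case, and the gap is not a technicality: it is the case the theorem's wording ``permissible (of the first or second kind)'' is designed to accommodate.

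The problem is your treatment of a first-kind center ${\cal Y}$ when $\epsilon(\tilde x)>\epsilon(x)$. You claim that the identification from Proposition~\ref{firstkind} forces $F_{p,Z,W}$ to become a $p$-th power over $k(\tilde y)$, hence $\epsilon(\tilde y)=\epsilon(y)+1$. This is false. By Theorem~\ref{omegageomreg}(2.ii), a jump $\epsilon(\tilde y)>\epsilon(y)$ can occur only when the ambient dimension strictly increases under the localized base change, i.e. only when $\tilde n(y)>n(y)$. But localizing the regular base change $S\subseteq\tilde S$ at the generic points $w$ of $W$ and $\tilde w$ of $\tilde W$, both $S_w$ and $\tilde S_{\tilde w}$ have dimension $\lvert J\rvert$ (since $I_J\tilde S$ is generated by $\lvert J\rvert$ elements of a r.s.p.\ of the regular ring $\tilde S$), so $n(y)=\tilde n(y)$ and no jump is possible at $y$. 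One always has $\epsilon(\tilde y)=\epsilon(y)$. Consequently, when $\epsilon$ jumps at $x$, one gets $\epsilon(\tilde y)=\epsilon(y)=\epsilon(x)=\epsilon(\tilde x)-1$, and $\tilde{\cal Y}$ is \emph{not} of the first kind at $\tilde x$: Definition~\ref{deffirstkind}(ii) fails. The correct conclusion, and the nontrivial content of the theorem, is that $\tilde{\cal Y}$ becomes permissible of the \emph{second} kind at $\tilde x$. Establishing this requires one to verify Definition~\ref{defsecondkind}(iii) for $\tilde{\cal Y}$, which the paper does by exploiting the explicit shape of $\mathrm{in}_{m_{\tilde S}}\tilde h$ provided in Theorem~\ref{omegageomreg}(2.ii), namely the presence of a term $U_{j'}\Phi_{j'}(U_1,\ldots,U_n)$ with $j'>n$ and $\Phi_{j'}\ne 0$, which (after noting $H(\tilde x)=H(x)\tilde S$ and $H^{-1}F_{p,Z}\subseteq k(x)[\{U_j\}_{j\in J}]$ from Proposition~\ref{firstkind}) yields $\overline J(F_{p,\tilde Z,\tilde W},\tilde E,\tilde W)\ne 0$. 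Your proof cannot be repaired while keeping the first kind, because the statement you would need --- $\epsilon(\tilde y)=\epsilon(\tilde x)$ --- is simply not true in this case.

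A secondary, smaller remark: for the second-kind case your verification of Definition~\ref{defsecondkind}(iii) proceeds by a Lemma~\ref{kerT}-style re-preparation argument. This is not obviously justified, since the re-preparation $\tilde Z_1=\tilde Z-\tilde\theta$ has $\tilde\theta\in\tilde S$ and Lemma~\ref{kerT} is formulated over $k(x)$, and the quantity $\overline J=\mathrm{cl}_0 J$ is sensitive to the $\overline m_S$-adic filtration. The paper sidesteps this by instead producing a nonsolvable vertex $\mathbf x$ of $\Delta_S(h;u_1,\ldots,u_n;Z)$ with $x_{j'}\notin\N$ for some $j'\in J'\setminus (J')_E$, which persists (as a nonsolvable vertex) in $\Delta_{\tilde S}(h;u_1,\ldots,u_{\tilde n};\tilde Z)$ and directly forces Definition~\ref{defsecondkind}(iii). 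Your route may be salvageable, but as written it is a handwave over exactly the point where one has to compare a coordinate change over $\tilde S$ with an object defined over $S$.
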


\begin{proof}
We denote $(\tilde{S},\tilde{h},\tilde{E})$ and $(u_1, \ldots , u_{\tilde{n}})$
as in Notations \ref{notageomreg1} and \ref{notaprime}. Since $W$ has normal crossings with $E$ at $x$,
$\tilde{W}:=\tilde{\eta}(\tilde{{\cal Y}})$ has normal crossings with $\tilde{E}$ at $\tilde{x}$.
Since ${\cal Y}$ is permissible at $x$, we have $m(y)=p$. Any generic point $\tilde{y}$ of $\tilde{{\cal Y}}$ has
$m(\tilde{y})=p$ by Theorem \ref{omegageomreg}(1), and $\tilde{{\cal Y}}$ itself is irreducible by
Proposition \ref{SingX}. Theorem \ref{omegageomreg}(2) applies to $\tilde{y}$ (with $n(y)=\tilde{n}(y)$)
and to $\tilde{x}$ and states that
$$
\epsilon (\tilde{y}) = \epsilon (y), \ \epsilon (\tilde{x})\geq  \epsilon (x), \ i_0(\tilde{y})=i_0(y),
\ i_0(\tilde{x})=i_0(x)
$$
Cases of inequality $\epsilon (\tilde{x})>  \epsilon (x)$ are classified in {\it ibid.}(2.ii). \\

Suppose that $\epsilon (\tilde{x}) > \epsilon (x)$. Then, %as  {$F_{p,Z}\in (k(\tilde{x})[U_1, \ldots ,U_n])^p$, we have}
$$
F_{p,Z}\in k(x)[U^p_1, \ldots ,U^p_n]  \ \mathrm{and} \  i_0(m_S)=i_0(m_{\tilde{S}})=p.
$$
Then ${\cal Y}$ is permissible of the first kind since $F_{p,Z}\in k(x)[U^p_1, \ldots ,U^p_n]$
is incompatible with the conclusion of Proposition \ref{secondkind}. Note that
$$
\epsilon (y)=\epsilon (x)=\epsilon (\tilde{x})-1=  {\epsilon({\tilde{y})}}.
$$
We claim that $\tilde{{\cal Y}}$ is permissible of the second kind at $\tilde{x}$.

\smallskip

To prove the claim, note that Definition \ref{defsecondkind}(i)  {and (ii)}
 are already checked. We have
\begin{equation}\label{eq2629}
H^{-1}{\partial F_{p,\tilde{Z}} \over \partial U_{j'}}=
H^{-1}\Phi_{j'}(U_1, \ldots , U_n) \neq 0,
\end{equation}
with notations as in Theorem \ref{omegageomreg}(2.ii) for some $j'$, $n+1 \leq j'\leq \tilde{n}$.
Since $H(\tilde{x})=H(x)\tilde{S}$ by Theorem \ref{omegageomreg}(2.i), and
$H^{-1}F_{p,Z} \subseteq k(x)[\{U_j\}_{j\in J}]_{\epsilon (x)}$ by Proposition \ref{firstkind}, we have
$$
H^{-1}F_{p,\tilde{Z}} \subseteq \sum_{j=1}^{\tilde{n}}U_j k(\tilde{x})[\{U_j\}_{j\in J}]_{\epsilon (x)}.
$$
This proves that Definition \ref{defsecondkind}(iii)  holds  and $\tilde{{\cal Y}}$ is permissible of the second kind at $\tilde{x}$.\\

Assume now that $\epsilon (\tilde{x}) = \epsilon (x)$. If ${\cal Y}$ is permissible of the first kind at $x$,
we have $\epsilon (\tilde{y})= \epsilon (\tilde{x})$, so
$\tilde{{\cal Y}}$ is also permissible of the first kind at $\tilde{x}$.

If ${\cal Y}$ is permissible of the second kind at $x$, Definition \ref{defsecondkind}(ii) is checked.
Finally by Proposition \ref{secondkind},
the polyhedron $\Delta_{S}(h;u_1, \ldots ,u_n;Z)$ has a vertex $\mathbf{x}$ such that
$x_{j'}\not \in \N $ for some $j' \in J' \backslash  {(J')}_E$. The corresponding vertex
$$
\mathbf{\tilde{x}}:=(\mathbf{x}, \underbrace{0, \ldots ,0}_{\tilde{n}-n}) \in
\Delta_{\tilde{S}} {(h;u_1, \ldots ,u_{\tilde{n}};Z)}
$$
is thus not solvable. We hence get $\mathbf{\tilde{x}} \in \Delta_{\tilde{S}} {(h;u_1, \ldots ,u_{\tilde{n}};\tilde{Z})}$
and Definition \ref{defsecondkind}(iii) is checked. Hence
$\tilde{{\cal Y}}$ is permissible of the second kind at $\tilde{x}$ as required, since $H(\tilde{x})=H(x) {\tilde{S}}$.
\end{proof}

\subsection{Blowing up Theorem.}

Let $ \pi : \ {\cal X}' \rightarrow {\cal X}$ be the blowing up along a
permissible center ${\cal Y}$ (of the first or second kind) at $x \in {\cal Y}$, $\{x\} = \eta^{-1}(m_S)$.
Our objective is to relate $\omega (x')$  to $\omega (x)$ for points $x' \in \pi^{-1}(x)$.  {The main result is Theorem \ref{bupthm} which states that the pair $(m(x),\omega(x))$ does not increase and studies the equality case. In contrast, we may have $\epsilon(x')>\epsilon(x)$, see \ref{bupthm}(2) about this jumping phenomenon.}\\

We keep notations as in Proposition \ref{Hironakastable} and Proposition \ref{SingX}. Then
$\sigma: {\cal S}' \rightarrow \mathrm{Spec}S$ denotes the blowing up along $W$ and
there is a commutative diagram (\ref{eq210}). Let
$$
\eta '  : \ {\cal X}' \rightarrow {\cal S}', \ s' :=\eta '(x')\in \sigma^{-1}(m_S), \ S':={\cal O}_{{\cal S}' ,s'}.
$$
We denote by $W':=\sigma^{-1}(W)$ and $E':=\sigma^{-1}(E)_\mathrm{red}$. We do not change notations
to denote stalks at $s'$, i.e. we will write $\eta ': \ {\cal X}_{s'} \rightarrow \mathrm{Spec}S'$ for
the stalk at $s'$ of the above map $\eta '$, and $W',E'$ for the stalks at $s'$ of the corresponding divisors.
By Proposition \ref{SingX}, we have ${\eta '}^{-1}(s')=\{x'\}$ if $x'$ is not a regular point of $X'$.

\smallskip

For the purpose of computations, we shall pick well adapted coordinates $(u_1,\ldots ,u_n;Z)$
such that
$$
I(W)=(\{u_j\}_{j\in J}), \ {\cal Y}=V(Z, \{u_j\}_{j\in J}).
$$
with notations as in (\ref{eq71}).
We denote by $u \in S'$ a local equation for $W'$, which can be taken to be some $u_{j_1}$, where $j_1\in J$
depends on $s'$. We have ${\cal X}'=\mathrm{Spec}(S'[X']/(h'))$, where
\begin{equation}\label{eq363}
    h':=u^{-p}h={X'}^p+f_{1,X'}{X'}^{p-1}+ \cdots +f_{p,X'}\in S'[X'],
\end{equation}
and
\begin{equation}\label{eq3631}
X':=Z/u, \ f_{i,X'}:=u^{-i}f_{i,Z}\in S' \  \mathrm{for} \ 1 \leq i \leq p.
\end{equation}

Since ${\cal Y}$ is permissible, we have $\epsilon (y)>0$ so the initial form $\mathrm{in}_{W}h$
reduces to :
\begin{equation}\label{eq3633}
\mathrm{in}_{W}h=Z^p - G_W^{p-1}Z +F_{p,Z,W} \in G(W)[Z],
\end{equation}
with $G_W \in G(W)_{\delta (y)}$ and $F_{p,Z,W} \in G(W)_{p\delta (y)}$ (in particular $G_W=0$
if $\delta (y) \not \in \N$). Since $\sigma^{-1}(W)= \mathbf{Proj}G(W)$, the restriction
map
$$
G(W)_d =\Gamma (W',{\cal O}_{W'}(d)) \rightarrow \Gamma (W'\backslash V(U),{\cal O}_{W'}(d))
$$
gives an inclusion
\begin{equation}\label{eq3632}
U^{-d}G(W)_d = \cal{O}_W[\{U_j/U\}_{j\in J}]_{\leq d} \subset {\cal O}_{W',s'}=S'/(u)
\end{equation}
for each $d \geq 0$. There is an identification:
\begin{equation}\label{eq3634}
U^{-d}G(W')_d = \left (\cal{O}_W[\{U_j/U\}_{j\in J}]\right )_{s'}=S'/(u).
\end{equation}
Finally, we note that ${\cal D}_{W'}={\cal D}(W')$ by (\ref{eq243}) since $W'$ is a component of $E'$.
These remarks are essential for stating the blow up formula in Proposition \ref{bupformula}(v) below.

\begin{prop}\label{bupformula} (Blow up formula) \index{Blow up Formula, Proposition~\ref{bupformula}}
Let $\pi : {\cal X}' \rightarrow {\cal X}$ be the blowing up along a permissible center ${\cal Y}$ at $x$,
$\{x\} = \eta^{-1}(m_S)$ and $x' \in \pi^{-1}(x)$  {be a closed point}.  With notations as above, the following holds:
\begin{itemize}
    \item [(i)] there exists a r.s.p. $(u'_1,\ldots , {u'_{n}})$ of $S'$ which is adapted to $(S',h',E')$;
    \item [(ii)] $\mathrm{in}_{W'}h'={X'}^p-G_{W'}^{p-1}X'+F_{p,X',W'} \in G(W')[X']$ and is
    given by
    $$
    G_{W'}=U^{-1}G_W\in G(W')_{\delta (y)-1}, \ F_{p,X',W'}=U^{-p}F_{p,Z,W} \in G(W')_{p(\delta (y)-1)};
    $$
    \item [(iii)] the polyhedron $\Delta_{S'}(h';u;X')$ is minimal;
    \item [(iv)] we have $H(x')=u^{\epsilon (y)-p}H(x)\subseteq S'$;
    \item [(v)] there is an equality of ideals of $\widehat{{\cal O}_{W',s'}}$:
    $$
    \left\{
  \begin{array}{ccc}
    H_{W'}^{-1}G_{W'}^p & = & (U^{-\epsilon (y)}H_{W}^{-1}G_{W}^p )_{s'} \hfill{},  \\
     & & \\
    J(F_{p,X',W'},E',W') & = & (U^{-\epsilon (y)}J(F_{p,Z,W},E,W) )\widehat{{\cal O}_{W',s'}}. \\
  \end{array}
\right.
    $$
\end{itemize}
\end{prop}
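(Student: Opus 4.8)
The plan is to reduce the whole proposition to a single computation of the characteristic polyhedron in the chart of $\sigma$ through $s'$ (via proposition \ref{originchart}), and then to propagate the consequences to the initial forms, to the monomial ideal $H$, and to the adapted differential structure. By proposition \ref{Deltaalg} I first choose well adapted coordinates $(u_1,\ldots,u_n;Z)$ at $x$ with $I(W)=(\{u_j\}_{j\in J})$, so that $\mathcal{Y}=V(Z,\{u_j\}_{j\in J})$; since $\mathcal{Y}$ is Hironaka-permissible, $\delta(y)\geq 1$ by proposition \ref{deltainv}, hence $f_{i,Z}\in I_J^i$ for $1\leq i\leq p$. The point $s'$ lies in the chart $\mathrm{Spec}(S[\{u_j/u_{j_1}\}_{j\in J\setminus\{j_1\}}])$ of $\sigma$, where the exceptional divisor is $V(u_{j_1})$, $u=u_{j_1}$ and $X'=Z/u$; in the notation of proposition \ref{originchart} with $j_0=j_1$ this $X'$ is exactly $Z'$, and $h'=u^{-p}h$ as in \eqref{eq2043}. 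Proposition \ref{originchart}, localized at $s'$ using \eqref{eq2031}, gives $\Delta_{S'}(h';u'_1,\ldots,u'_{n'};X')=l(\Delta_S(h;u_1,\ldots,u_n;Z))$, minimal, where $l$ is the map \eqref{eq2042}; projecting onto the coordinate $u$ and applying the second part of proposition \ref{Deltaalg} yields (iii), together with $\delta(\tilde y)=\delta(y)-1$ for the generic point $\tilde y$ of $\eta'^{-1}(W')$ — indeed $l$ acts on the $j_1$-slot by $\mathbf{x}\mapsto\sum_{j\in J}x_j-1$ and fixes every other coordinate, so $\delta(\tilde y)=\min_{\mathbf{x}\in\Delta_S(h;u_1,\ldots,u_n;Z)}(\sum_{j\in J}x_j)-1$. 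For (i), $S'$ is a localization of a finite type $S$-algebra hence excellent, and $E'=\sigma^{-1}(E)_{\mathrm{red}}$ is a normal crossings divisor, so an adapted r.s.p. exists; this is the last assertion of proposition \ref{Hironakastable}.

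For (ii), expand $f_{i,Z}=\sum_{\mathbf{a}}\gamma(f_{i,Z},\mathbf{a})u^{\mathbf{a}}$ and substitute $u_j=(u_j/u)u$ for $j\in J\setminus\{j_1\}$; then $f_{i,X'}=u^{-i}f_{i,Z}$ has order $\geq i\delta(\tilde y)$ at $(u)$, its initial form at $(u)$ collecting exactly the $\mathbf{a}$ with $\sum_{j\in J}a_j=i\delta(y)$, i.e. the ones on the compact face of $\mathrm{pr}^J\Delta_S(h;u_1,\ldots,u_n;Z)$ that computes $F_{i,Z,W}$. Comparing with the projection formula \eqref{eq2033}, this is precisely $F_{i,X',W'}=U^{-i}F_{i,Z,W}$ under the identifications \eqref{eq3632}, \eqref{eq3634}. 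Since $\mathcal{Y}$ is permissible with $\omega(x)>0$ one has $\epsilon(y)>0$ (first kind: $\epsilon(y)=\epsilon(x)>0$; second kind: $\epsilon(y)=\omega(x)>0$ by proposition \ref{secondkind}), so theorem \ref{initform} forces $F_{i,Z,W}=0$ for $i\leq p-2$ and $-F_{p-1,Z,W}=G_W^{p-1}$. Hence $F_{i,X',W'}=0$ for $i\leq p-2$, $-F_{p-1,X',W'}=(U^{-1}G_W)^{p-1}$, so $G_{W'}=U^{-1}G_W$, and $F_{p,X',W'}=U^{-p}F_{p,Z,W}$, the degrees matching $\delta(\tilde y)=\delta(y)-1$.

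For (iv), the components of $E'$ through $s'$ are $W'=V(u)$, the strict transforms $V(u_j)$ of $\mathrm{div}(u_j)$ for $j\in(J')_E$, and $V(u_j/u)$ for $j\in J_E\setminus\{j_1\}$ (when $u_j/u$ is not a unit). As $l$ fixes every coordinate except the $j_1$-one, the projection argument of proposition \ref{Deltaalg} applied to each of these divisors gives $d'_j=d_j$ for the strict transform components, while $d'_{W'}=\delta(\tilde y)=\delta(y)-1$. Writing $H(x)S'=(u^{d_W}w)$ with $w=\prod_{j\in J_E\setminus\{j_1\}}(u_j/u)^{H_j}\prod_{j\in(J')_E}u_j^{H_j}$ of order $0$ at $(u)$ and $d_W=\sum_{j\in J_E}H_j$, and using $\epsilon(y)=p(\delta(y)-\sum_{j\in J_E}d_j)$ (definition \ref{defepsilon} and proposition \ref{epsiloninv}, with $E_s=\mathrm{div}(\prod_{j\in J_E}u_j)$ the stalk at the generic point $s$ of $W$), one gets $H(x')=(u^{p(\delta(y)-1)}w)=u^{\epsilon(y)-p}H(x)S'$, the exponent $\epsilon(y)-p+d_W=p(\delta(y)-1)$ being $\geq 0$. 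Taking initial forms in \eqref{eq2441} of this identity gives $H_{W'}=U^{\epsilon(y)-p}\cdot(\text{image of }H_W)$, and combined with $G_{W'}=U^{-1}G_W$ this yields the first equality of (v): $H_{W'}^{-1}G_{W'}^p=U^{-\epsilon(y)}\cdot(\text{image of }H_W^{-1}G_W^p)=(U^{-\epsilon(y)}H_W^{-1}G_W^p)_{s'}$.

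The second equality of (v) is where the real work lies. Since $W'$ is a component of $E'$, $\mathcal{D}_{W'}=\mathcal{D}(W')$ by \eqref{eq243}. The key step — and the one I expect to be the main obstacle — is to check that the monoidal substitution $U_j\mapsto(u_j/u)U$ ($j\in J\setminus\{j_1\}$), $U_{j_1}\mapsto U$, $\overline{u}_{j'}\mapsto\overline{u}_{j'}$ on the graded algebras carries the $W$-tangent generators \eqref{eq2412}, \eqref{eq242} of $\mathcal{D}_W$ onto a generating set of $\mathcal{D}(W')$ modulo $(u)$; conceptually this holds because $\mathcal{D}_W$ is the module of $E$-logarithmic derivations tangent to $W$ — exactly those descending to the blowing up — and on $\mathcal{S}'$ the inclusion $W'\subseteq E'$ makes all of them $E'$-logarithmic. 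Granting this, and because $F_{p,X',W'}=U^{-p}F_{p,Z,W}$ with the grading shift absorbed by $U^{-p}$, one obtains $\mathcal{J}(F_{p,X',W'},E',W')=U^{-p}\cdot(\text{image of }\mathcal{J}(F_{p,Z,W},E,W))$; dividing by $H_{W'}=U^{\epsilon(y)-p}\cdot(\text{image of }H_W)$, using (iv), and using the inclusion $\widehat{G(W)}\to\widehat{G(W')}\to\widehat{S'}/(u)$ from \eqref{eq3632}, \eqref{eq3634}, this gives $J(F_{p,X',W'},E',W')=(U^{-\epsilon(y)}J(F_{p,Z,W},E,W))\widehat{S'}/(u)$, as in \eqref{eq244}. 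Everything apart from this derivation-compatibility verification is bookkeeping downstream of proposition \ref{originchart}.
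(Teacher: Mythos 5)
Your overall architecture is the paper's: reduce to the origin chart via proposition \ref{originchart}, propagate the polyhedron computation to (ii)--(iv), and in (v) reduce to a comparison of the ideals ${\cal J}(F_{p,Z,W},E,W)$ and ${\cal J}(F_{p,Z,W},E',W')$ after twisting by $U^{-\deg F}$. Parts (i)--(iv) are correct — (iii) is handled by a slightly different but valid route (project the full minimal polyhedron of proposition \ref{originchart} onto the $u$-coordinate, using (\ref{eq2031}) to pass between localizations, instead of the paper's direct check that the unique vertex of $\Delta_{\widehat{S'}}(h';u;X')$ is non-solvable via integral closure of $G(W)$). Note that your intermediate claim $\Delta_{S'}(h';u'_1,\ldots,u'_{n'};X')=l(\Delta_S(h;u_1,\ldots,u_n;Z))$ is only literally true when $s'$ is the origin of the chart; you only need the $u$-slice, so the conclusion survives, but the phrasing overstates what proposition \ref{originchart} gives at a general $s'$.

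The genuine gap is in (v), and you flag it yourself with ``Granting this\ldots''. Your heuristic — that $\mathcal{D}_W$ is the module of $E$-log derivations tangent to $W$, hence descends to $E'$-log derivations on the blow-up — delivers at best the inclusion of $\mathcal{D}_W$-images into $\mathcal{D}(W')$-images, i.e.\ one direction of (\ref{eq3637}). What the statement requires is \emph{equality} of ideals, and the nontrivial direction is the reverse one: you must show that the generators $U\,\partial/\partial U$ and $\{V_j\,\partial/\partial V_j\}_{j\in J\setminus\{j_1\}}$ of $\mathcal{D}(W')$ give nothing more, modulo $U^{-d}$ and the localization at $s'$, than what the generators (\ref{eq3636}) of $\mathcal{D}_W$ already produce. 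That hinges on the Euler-type monoidal identities (\ref{eq3635}),
$U\,\partial F/\partial U=\sum_{j\in J}U_j\,\partial F/\partial U_j$ and $V_j\,\partial F/\partial V_j=U_j\,\partial F/\partial U_j$,
together with the observation that, for $j\in J\setminus J_E$, the family $\{U_k\,\partial F/\partial U_j\}_{k\in J}$ generates the same ideal as $U\,\partial F/\partial U_j$ after twisting by $U^{-d}$ and localizing (because $U_{j_1}=U$ is itself among the $U_k$). This explicit computation is what the paper supplies and what your argument omits; without it the ``onto a generating set'' claim is not established. I would also tighten your formulation: the generators of $\mathcal{D}_W$ do not literally map to generators of $\mathcal{D}(W')$ under the monoidal substitution — they live in different rings of derivations — and the correct statement is the ideal-level equality (\ref{eq3637}), not a correspondence of derivation modules.
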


\begin{proof}
Statement (i) is proved in Proposition \ref{Hironakastable}. The formula in (ii) is
obvious from (\ref{eq363}), (\ref{eq3631}) and (\ref{eq3633}).

\smallskip

If $i_0(W)=p-1$, i.e. $G_W \neq 0$ in (\ref{eq3633}), we have $G_{W'}\neq 0$ by (ii),
so $\Delta_{\widehat{S'}}(h';u;X')\subseteq \R_{\geq 0}$ is minimal.

If $i_0(W)=p$, then $F_{p,Z,W}\not \in G(W)^p$, i.e.
$$
\delta (y)\not \in p\N \ \mathrm{or} \ U^{-\delta (y)}F_{p,Z,W}\not \in k(W')^p.
$$
Note that $G(W)^p = (k(W')[U,U^{-1}])^p \cap G(W)$ since $G(W)$ is integrally closed.
By (ii), $F_{p,X',W'}=U^{-p}F_{p,Z,W}$ so  $F_{p,X',W'}\not \in G(W')^p$ and
this proves (iii).

\smallskip

To prove (iv), first consider those irreducible components
$W_j=\mathrm{div}(u_j)$ of $E$, $1 \leq j \leq e$, whose strict
transform $W'_j$ passes through $s'$. We may pick a r.s.p. $(u'_1,\ldots
, {u'_{n}})$ of $S'$ which is adapted to $(S',h',E')$, containing
 {$u$,  $u'_j:=u_j/u$ for $j \in J_E$  and
$u'_j:=u_j$ for $j \in J'$}. Let
$$
\mathrm{in}_{W_j}h(Z)=Z^p +F_{1,Z,W_j}Z^{p-1}+ \cdots
+F_{p,Z,W_j}\in S/(u_j)[U_j][Z].
$$
We have $\mathrm{in}_{W'_j}h'= \mathrm{in}_{W_j}
u^{-p}h(uX')\in S'/(u'_j)[U'_j][X']$, since $u$ is a unit in
$S'_{(u'_j)}=S_{(u_j)}$. Since $\Delta_{S}(h;u_1,\ldots,u_n;Z)$ is
minimal, we have
$$
\Delta_{S_{(u_j)}}(h;u_j;Z)=\Delta_{S'_{(u'_j)}}(h';u'_j;X')
$$
minimal as well by Proposition \ref{Deltaalg}, hence
$\mathrm{ord}_{(u'_j)}H(x')= \mathrm{ord}_{(u_j)}H(x)$.

By (ii) and (iii), we have $\mathrm{ord}_{(u)}H(x')=p(\delta(y)-1)$. Therefore
$$
\mathrm{ord}_{(u)}H(x')-\mathrm{ord}_{(u)}H(x)=p(\delta(y)-1)-\mathrm{ord}_{W}H(x)=\epsilon (y)-p
$$
and the conclusion follows.

\smallskip

%The ring $\widehat{\cal{O}_W}\otimes_{\cal{O}_W}G(W)$ is not complete for the $\overline{m}_S$-adic topology if
%$W\neq \{m_S\}$, so we  introduce its formal completion $\widehat{G(W)}$, i.e.
%\begin{equation}\label{eq2452}
%\widehat{G(W)}:=\widehat{\cal{O}_W}\hat{\otimes}_{\cal{O}_W}G(W)=\lim_{\longleftarrow} {\cal{O}_W \over \overline{m}_S^n} [\{U_j\}_{j\in J}].
%\end{equation}
%The ring isomorphism $\phi$ from Proposition \ref{Cohen} extends uniquely to an isomorphism
%\begin{equation}\label{eq2453}
%\phi : \ \widehat{G(W)} {\buildrel \sim \over \longrightarrow} k[\{U_j\}_{j\in J}][[\{X_{j'}\}_{j'\in J'}]]
%\end{equation}
%such that $\phi (U_j)=U_j$ for $j\in J$. There are natural inclusions
%$$
%G(W) \subseteq \widehat{\cal{O}_W} \otimes_{\cal{O}_W}G(W) \subseteq \widehat{G(W)}.
%$$
%Note that in the special case $W=\{m_S\}$ only, we have $\widehat{G(m_S)}=G(m_S)$.

%\smallskip

%The grading of $G(W)$ induces a filtration ${\cal F}$ of $\widehat{G(W)}$ by the powers of $I(W)$. There is
%a canonical isomorphism of graded algebras:
%\begin{equation}\label{eq2454}
%\mathrm{gr}_{\cal F}\widehat{G(W)} \simeq \widehat{\cal{O}_W} \otimes_{\cal{O}_W}G(W).
%\end{equation}
%For $\hat{F} \in I(W)^d\widehat{G(W)}$, we denote by $\mathrm{cl}_{d} \hat{F}$ its initial form in
%$(\mathrm{gr}_{\cal F}\widehat{G(W)})_d$.\\

%Also recall the filtration ${\cal F}$ on $\widehat{G(W)}$ induced by the degree on $G(W)$, {\it viz.} (\ref{eq2454}).
%The condition in Definition \ref{D_W} means that ${\cal D}_W$ is compatible with ${\cal F}$. We may therefore define:

We now prove (v). The first part of the statement follows immediately
from (ii) and (iv). With notations as in (\ref{eq2431}), we have
$$
 \left\{
  \begin{array}{ccccc}
    J(F_{p,Z,W},E,W) & = & H_W^{-1}{\cal J}(F_{p,Z,W},E,W) & \subseteq &  \widehat{G(W)}_{\epsilon (y)}\hfill{},  \\
     & & & & \\
    J(F_{p,X',W'},E',W') & = & H_{W'}^{-1}{\cal J}(F_{p,X',W'},E',W')& \subseteq &  \widehat{G(W')}_{0}. \\
  \end{array}
\right.
$$

We now define and make explicit the required inclusion
$$
\widehat{G(W)}_{\epsilon(y)}\subset \widehat{G(W')}_0.
$$
Let us first complete
the $\widehat{{\cal O}_W}$- algebra $\widehat{G(W)}$ for  the $\overline{m}_S$-adic topology. There is an induced
inclusion
$$
\widehat{G(W)}\subset \lim_{\leftarrow} {G(W) \over \overline{m}_S^n}.
$$
The inclusions $G(W)\subset G(W') \subset \widehat{G(W')}$ lead to the following inclusions
\begin{equation}\label{eq:fixbup}
\widehat{G(W)}\subset \lim_{\leftarrow} {G(W) \over \overline{m}_S^n}
\subset \lim_{\leftarrow} {G(W') \over \overline{m}_S^n}\subset \lim_{\leftarrow} {\widehat{G(W')} \over \overline{m}_S^n}.
\end{equation}

Explicitly, we pick an isomorphism $\widehat{\cal{O}_W}\simeq k(x)[[\{\overline{u}_{j'}\}_{j'\in J'}]]$
given by Proposition \ref{Cohen}. The last three terms in (\ref{eq:fixbup}) are formal power
series rings in variables $\{\overline{u}_{j'}\}_{j'\in J'}$ with respective coefficient rings:
$$
A:=k(x)[\{U_j\}_{j\in J}] \subset A'[U]:=k(x)[\{V_j\}_{j\in J\backslash \{j_1\}}]_{s'}[U]
={\cal O}_{\sigma^{-1}(m_S),s'}[U]\subseteq \hat{A'}[U],
$$
where $V_j:=U_j/U \in G(W')_0$,  $j\in J\backslash \{j_1\}$. Finally, we have
\begin{equation}\label{eq:fixbup1}
\lim_{\leftarrow} {\widehat{G(W')} \over \overline{m}_S^n}\simeq \hat{A'}[U][[\{\overline{u}_{j'}\}_{j'\in J'}]],
\ \widehat{G(W')}\simeq \hat{A'}[[\{\overline{u}_{j'}\}_{j'\in J'}]][U].
\end{equation}
The required map $\widehat{G(W)}_{\epsilon(y)}\subset \widehat{G(W')}_0$ in (v) is given by
$F \mapsto U^{-\epsilon (y)}F$.

\smallskip
%This gives rise to a commutative diagram
%$$
%\begin{array}{ccc}
%    \widehat{G(W')}& \subset & \hat{A'}[U][[\{\overline{u}_{j'}\}_{j'\in J'}]] \\
%  \uparrow &   & \uparrow \\
%  \widehat{G(W)}&  \subset &  A [[\{\overline{u}_{j'}\}_{j'\in J'}]]\\
%\end{array}
%.
%$$

Applying (ii) and (iv), we get:
$$
F_{p,X',W'}=U^{-p}F_{p,Z,W}, \ H_{W'}=H_W U^{\epsilon (y)-p}G(W').
$$
Since $D \cdot U^p =0$ for every $D \in {\cal D}_{W'}$, (v) can be written in the following form:
\begin{equation}\label{eq3637}
U^{-\mathrm{deg}F_{p,Z,W}}{\cal J}(F_{p,Z,W},E',W')=(U^{-\mathrm{deg}F_{p,Z,W}}{\cal J}(F_{p,Z,W},E,W))\widehat{S'/(u)}.
\end{equation}

Any $D \in \{\overline{u}_{j'} {\partial \hfill{} \over \partial \overline{u}_{j'}}\}_{j' \in (J')_E} \cup
\{{\partial \hfill{} \over \partial \overline{u}_{j'}}\}_{j' \in J' \backslash (J')_E}$ extends in the obvious
way to the right hand side of this diagram, so it commutes with the inclusion $A \subset \hat{A'}[U]$. In
other terms, we are reduced to a statement on the coefficients of the power series (\ref{eq:fixbup1})
expliciting (\ref{eq:fixbup}).
This reduces the proof of (v) to the special case where $W=\{m_S\}$ is the closed point
($J=\{1, \ldots ,n\}$).

\smallskip

By (\ref{eq242}), the $G(m_S)$-module ${\cal D}_{m_S}$ is generated by the family
$$
\left ( \{U_j {\partial \hfill{} \over \partial U_j}\}_{1\leq j \leq e},
\{U_{j_1}{\partial \hfill{} \over \partial U_j}\}_{1\leq j_1 \leq n, e+1\leq j},
\{{\partial \hfill{} \over \partial \lambda_l}\}_{l \in \Lambda} \right ).
$$
The $A'$-module of absolute differentials
$$
\Omega^1_{A'}\left (\mathrm{log}(U \prod_{j=1}^eV_j)\right )
$$
has a basis obtained by collecting together $(d\lambda_l\otimes 1)_{l\in \Lambda}$, $dU/ U$ and the
$\{dV_j / V_j\}_{1\leq j \leq e}$, $\{dV_j\}_{e+1\leq j \leq n}$ with $j\neq j_1$.
For $F \in A$, we deduce the following standard formul{\ae} in $A'$:
\begin{equation}\label{eq3635}
\left\{
\begin{array}{cccc}
    U{\partial F \over \partial U}& = & \sum_{j =1}^n U_j{\partial F \over \partial U_j} & \\
     & & & \\
    V_j{\partial F \over \partial V_j} &  = & U_j{\partial F \over \partial U_j} &  1\leq j \leq e, j\neq j_1 \hfill{}\\
     & & & \\
    {\partial F \over \partial V_j}&  = &  U{\partial F \over \partial U_j} &  e+1\leq j \leq n, j\neq j_1\\
\end{array}
\right.
.
\end{equation}
Taking $F\in A_d$, $d \in \N$, we have for $j \geq e+1$:
$$
(U^{-d}\{U_{j_1}{\partial F \over \partial U_j}\}_{1\leq j_1\leq n})A'=(U^{-d}U{\partial F \over \partial U_j})A'.
$$
Collecting together this equation with  (\ref{eq3635}), we get
$$
U^{-d}{\cal J}'(F,E',W')=(U^{-d}{\cal J}(F,E,m_S))A',
$$
where ${\cal J}'(F,E',W')=\{D' \in \mathrm{Der}(G(W')) : D' \cdot I(E'(W'))\subseteq I(E'(W'))\}$, notations as
in Proposition \ref{defJftype}. Since $S'$ is essentially of finite type over $k(x)$, this proposition
implies that  ${\cal J}(F,E',W')={\cal J}'(F,E',W')\hat{A'}$. This concludes the proof.
\end{proof}

We now state the main theorem of this section. Recall that the function $y \mapsto \omega (y)$ and
$\kappa (y)\in \{ 1, \geq 2\}$ have been defined for given $(S,h,E)$ and $y \in {\cal X}$
(Definition \ref{defmult} and Definition \ref{defomega}). By Proposition \ref{Estable},
$(S',h',E')$ satisfies again conditions {\bf (G)} and {\bf (E)}. The values of
$\epsilon (x')$, $\iota (x')$ are computed w.r.t. the adapted structure $(S',h',E')$.

\begin{nota}\label{indcoord} {\it Choice of coordinates:}
by Proposition \ref{bupformula}(i), there exists a r.s.p. $(u'_1,\ldots ,u'_{n'})$
which is adapted to $(S',h',E')$ for some $n' \leq n$. We take $u'_1:=u$. Let
$$
u'_i:={u_{j_i} \over u}, \ 2  \leq i \leq e'_0, \ \mathrm{where} \
\{j_2, \ldots ,j_{e'_0}\}:=\{  j\in J_E : {u_j \over u} \in m_{S'}\}.
$$
Let $\{j_{e'_0+1}, \ldots ,j_{e'}\}:=(J')_E$, $\{j_{e'+1}, \ldots ,j_{n'_0}\}=:J' \backslash (J')_E $.
We take
$$
u'_i:=u_{j_i}, \ e'_0+1  \leq i \leq n'_0.
$$
Let
$$
u'_i:={u_{j_i} \over u}, \ n'_0+1  \leq i \leq n'_1, \ \mathrm{where} \
\{j_{n'_0+1}, \ldots ,j_{n'_1}\}:=\{  j\in J \backslash J_E : {u_j \over u} \in m_{S'}\}
$$
and complete $(u'_1,\ldots ,u'_{n'_1})$ to a r.s.p. $(u'_1,\ldots ,u'_{n'})$ of $S'$.
\end{nota}

\begin{nota}\label{ordbar}
Let
$$
\overline{S'}:=\hat{{\cal O}}_{\sigma^{-1}(m_S),s'}=\hat{S'}/(u,\{u_{j'}\}_{j'\in J'})=
\widehat{k(x)[\{U_j/U\}_{j\in J}]_{\overline{m'}}},
$$
where $\overline{m'}$ denotes the ideal of the restriction of $s'$ to $\sigma^{-1}(m_S)$:
$$
\overline{m'}:=(\{\overline{u'_i}\}_{i \in F}), \ F:=\{2 , \ldots , e'_0\} \cup \{n'_0+1 , \ldots , n'\}.
$$

For $I' \subseteq \hat{S'}/(u)$ an ideal, we denote by
$$
\mathrm{ord}I':=\mathrm{ord}_{m_{\hat{S'}/(u)}}I'= \min_{\varphi ' \in I'}\{\mathrm{ord}_{m_{\hat{S'}/(u)}}\varphi '\}, \
\overline{\mathrm{ord}}I' :=\mathrm{ord}_{\overline{m'}}I'\overline{S'}.
$$

For every $I' \subseteq \hat{S'}/(u)$, we have $\mathrm{ord}I' \leq \overline{\mathrm{ord}}I' \leq +\infty $.
If furthermore $d'$ is given, $d' \leq \overline{\mathrm{ord}}I'$, we write
$$
\overline{I'} \subseteq \left (\mathrm{gr}_{\overline{m'}}\overline{S'}\right )_{d'}
=k(x')[\{U'_i\}_{i \in F}]_{d'}
$$
for the initial part  of degree $d'$ of the ideal $I'\overline{S'}$.
\end{nota}

 {
We now introduce the adapted cone associated
to a permissible blowing up. Recall the definition of $B$ from (\ref{eq2612})
({\it cf.} also Definition \ref{defomega}). We have $B=\emptyset$ if $i_0(m_S)=p$, and}
$$
 {B= \{j : U_j \ \mathrm{divides} \ H^{-1}G^p\}\ \mathrm{if} \ i_0(m_S)=p-1.}
$$

\begin{defn}\label{defcone}
 {Let ${\cal Y} \subset {\cal X}$, with generic point $y$, be a permissible center at $x $. We define a subcone \index{$C(x,{\cal Y})$, Definition~\ref{defcone}}
$$
C(x,{\cal Y}) \subset \mathrm{Spec} (k(x)[\{U_j\}_{j\in J}] )
$$
as follows: if ${\cal Y}$ is of the first kind, we let:
$$
C(x,{\cal Y}):= \mathrm{Spec}\left ({k(x)[\{U_j\}_{j\in J}] \over (\mathrm{IMax}(x)\cap k(x)[\{U_j\}_{j\in J}])}\right);
$$
if ${\cal Y}$ is of the second kind, we let $B_J:=B \backslash \{j_0\}$ with notations as in
Proposition \ref{secondkind} and define:
$$
C(x,{\cal Y}):= \mathrm{Max}(\overline{J}(F_{p,Z,W},E,W))\cap \{ U_{B_J}=0\} .
$$
In both cases, we denote the associated projective cone \index{$PC(x,{\cal Y})$, Definition~\ref{defcone}}
by
$$PC(x,{\cal Y})\hookrightarrow \sigma^{-1}(m_S) \simeq \PP^{\mid J \mid -1}_{k(x)} .$$}

\end{defn}

\begin{thm}\label{bupthm}
Assume that $m(x)=p$, $\omega (x)>0$, where $\{x\}=\eta^{-1}(m_S)$. Let $\pi : {\cal X}' \rightarrow {\cal X}$
be the blowing up along a permissible center ${\cal Y}$ (of the first kind or second kind) at $x$,
$x' \in \pi^{-1}(x)$ and $\eta ': \ {\cal X}' \rightarrow \mathrm{Spec}S'$ be with notations as above,
where $s'=\eta '(x')$. Then
\begin{equation}\label{eq364}
(m(x') ,\omega (x'), \kappa (x')) \leq (m(x),\omega (x),\kappa (x)).
\end{equation}

If equality holds in (\ref{eq364}), then $s' \in PC(x,{\cal Y})$.\\

If $\epsilon (x')>\epsilon (x)$, the following holds:
\begin{itemize}
  \item [(1)] we have $i_0(m_S)=p, \ \epsilon (y)=\epsilon (x)=\omega (x), \ \delta (y) \in \N$,
$H_{j'}\in p\N$  for every $j' \in (J')_E $
and
$$
F_{p,Z}\in (k(x')[U_1, \ldots ,U_n])^p[\{U_j\}_{j \in J_E \backslash \{j_2, \ldots ,j_{e'_0}\}}];
$$
  \item [(2)] let $(u'_1,\ldots ,u'_{n'};Z')$ be well adapted coordinates at $x'$. Then
\begin{equation}\label{eq3642}
{H'}^{-1}F_{p,Z'} \nsubseteq k(x')[U'_1, \ldots , U'_{n'_1}]_{\epsilon (x')}
\oplus ( \{U'_i\}_{i \not \in F})  \cap G(m_{S'})_{\epsilon (x')}
\end{equation}
and there exists
$\Phi' \in k(x')[{U'_1}^p, \ldots ,{U'_{n'_1}}^p][U'_{n'_1+1} ,\ldots , U'_{n'}]_{p\delta (x')}$  such that
\begin{equation}\label{eq3641}
{H'}^{-1}(F_{p,Z'} - \Phi' ) \subseteq ( \{U'_i\}_{i \not \in F}) \cap G(m_{S'})_{\epsilon (x')}.
\end{equation}
\end{itemize}

\end{thm}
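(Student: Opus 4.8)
The plan is to prove the blow-up theorem by a case analysis on the kind of permissible center, exploiting systematically the blow-up formula of proposition \ref{bupformula} together with the structural results of section 2.4 on the truncation operator and the invariance lemmas \ref{kerT}, \ref{Cmaxinv}. First I would set up notation as in notations \ref{indcoord} and \ref{ordbar}, fix well adapted coordinates $(u_1,\ldots,u_n;Z)$ at $x$ with $I(W)=(\{u_j\}_{j\in J})$, and distinguish the origin chart $u=u_{j_1}$ with $j_1\in J$. The overall strategy is:

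\textbf{Step 1 (multiplicity is nonincreasing).} Since ${\cal Y}$ is Hironaka-permissible w.r.t. $E$, proposition \ref{Hironakastable} gives $h'\in S'[X']$ unitary of degree $p$, so $m(x')\le p=m(x)$; if $m(x')<p$ we are done by definition \ref{defmult}. So assume $m(x')=p$, whence $\{x'\}={\eta'}^{-1}(s')$ and $k(x')=k(s')$ by proposition \ref{SingX}.

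\textbf{Step 2 ($\omega$ is nonincreasing; the decisive cone condition).} Using proposition \ref{bupformula}(iv)--(v), relate $H(x')$, $F_{p,X',W'}$ and the modules $V,J$ at $x'$ to the corresponding data at $x$ via the localization-restriction inclusion (\ref{eq3632})--(\ref{eq3634}). Here I would invoke proposition \ref{firstkind} (first kind) or proposition \ref{secondkind} (second kind) to know that the relevant differential module is already defined over $k(x)[\{U_j\}_{j\in J}]$, hence descends to the chart; the image of $\mathrm{Max}(x)$ (resp. of $\mathrm{Max}(\overline J)$) under the projective-cone embedding is exactly $C(x,{\cal Y})\hookrightarrow\sigma^{-1}(m_S)$. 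The point is that $\epsilon(x')-p\delta(x')$ compares to $\epsilon(x)$, and $\omega(x')$ fails to drop only at points $s'$ of the exceptional divisor lying on the subvariety cut out by the degree-$\omega(x)$ part of $V$ or $J$ — that is, on $PC(x,{\cal Y})$. A subsidiary verification that $\kappa$ is compatible (i.e. $\kappa(x')=1\Rightarrow\kappa(x)=1$ when the pair $(m,\omega)$ is preserved) follows from the transversality of $H^{-1}\partial/\partial Z$ being preserved under $X'=Z/u$, together with proposition \ref{bupformula}(ii)--(iii). This gives both (\ref{eq364}) and the statement ``equality $\Rightarrow s'\in PC(x,{\cal Y})$''.

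\textbf{Step 3 (the exceptional case $\epsilon(x')>\epsilon(x)$).} This can only happen by a ``new solvable vertex'' phenomenon, analogous to theorem \ref{omegageomreg}(2.ii): the vertex $(\mathbf x,0,\ldots,0)$ of a polyhedron becomes solvable after the monomial change of chart. I would argue that this forces $i_0(m_S)=p$ (else $G_W\ne0$ survives by proposition \ref{bupformula}(ii) and blocks solvability), $\epsilon(y)=\epsilon(x)=\omega(x)$, $\delta(y)\in\N$, $H_{j'}\in p\N$ for $j'\in(J')_E$, and that in the origin chart $u=u_{j_1}$ the only affected variables are those $u_j/u$ with $j\in J_E$, yielding the displayed $p$-th-power structure of $F_{p,Z}$ in (1). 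For (2), after absorbing the solving translation into $Z'=Z-\theta'$ with $\mathrm{ord}\,\theta'$ controlled, the computation of $F_{p,Z'}$ separates into a $p$-th-power part $\Phi'$ in the ``old'' variables $U'_1,\ldots,U'_{n'_1}$ (the divisorial ones, indexed inside $F$) plus a remainder lying in the ideal $(\{U'_i\}_{i\notin F})$ generated by the transverse exceptional coordinates; moreover the remainder is genuinely nonzero transversally (some $\Phi_{j'}\ne0$ as in the proof of theorem \ref{omegageomreg}(2.ii)), which is exactly (\ref{eq3642}) and (\ref{eq3641}).

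The main obstacle I anticipate is Step 2 in the second-kind case: one must show that the module $\overline J(F_{p,Z,W},E,W)$, which lives on the base $G(W)$ and involves derivations $\partial/\partial\overline u_{j'}$ transverse to $W$, transforms correctly under the blow-up into $J(F_{p,X',W'},E',W')$ — this is precisely content (v) of proposition \ref{bupformula}, but applying it requires carefully tracking which exceptional coordinates are units, which are in $m_{S'}$, and how the truncation operator $T$ interacts with the chart change (lemma \ref{Cmaxinv}); the bookkeeping with the index set $B$ and $B_J=B\setminus\{j_0\}$ from definition \ref{defcone} is delicate. A secondary difficulty is ensuring that $\kappa(x')\le\kappa(x)$ in the boundary case where a second-kind blow-up could in principle produce $\kappa(x')=4$; I expect this to require an explicit look at $\mathrm{in}_{m_{S'}}h'$ in each standard chart, but only the inequality (\ref{eq364}) is claimed here, so a coarse argument suffices.
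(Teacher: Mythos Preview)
Your approach is essentially the paper's: same use of the blow-up formula, same reduction via propositions \ref{firstkind}/\ref{secondkind} to data living in $k(x)[\{U_j\}_{j\in J}]$, same cone condition. Two points need correction.

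First, the $\kappa$ implication in Step 2 is stated backward: to get $\kappa(x')\le\kappa(x)$ (in the $\{1,\ge2\}$ ordering) when $(m,\omega)$ is preserved you need $\kappa(x)=1\Rightarrow\kappa(x')=1$, not the reverse. The paper does not prove this implication directly either; instead its case split is threefold on $(i_0(m_S),\text{kind})$ rather than twofold on kind alone: in Case 1 (first kind, $i_0=p$) and Case 3 (second kind) one has $\kappa(x)\ge2$ automatically, while Case 2 (first kind, $i_0=p-1$) is handled by a separate sub-argument on whether $\epsilon(x')<\epsilon(x)$ or $\epsilon(x')=\epsilon(x)$.

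Second, and more substantively, you underestimate where the translation $Z'=X'-\theta'$ enters. Proposition \ref{bupformula}(v) relates $J(F_{p,X',W'},E',W')$ to the data at $x$, but $\omega(x')$ is computed from $F_{p,Z'}$, not $F_{p,X'}$. The discrepancy $F_{p,Z',W'}-F_{p,X',W'}=\Theta'^p-G_{W'}^{p-1}\Theta'$ has its $p$-th-power part killed by derivations, but the $G_{W'}^{p-1}\Theta'$ term is not: it lives in $H_{W'}^{-1}G_{W'}^{p-1}\lceil H_{W'}^{1/p}\rceil$ (the paper's congruence (\ref{eq367})). Showing that this error term vanishes upon restriction to $\overline{S'}$ (equation (\ref{eq3671}), valid whenever $i_0(m_S)=p$ or some $H_{j'}\notin p\N$), or alternatively is absorbed by the truncation $T'$ (yielding the congruence (\ref{eq3683}) modulo $K'$ in Case 2), is the hinge of the entire estimate $\omega(x')\le\omega(x)$; it is needed already in Step 2, not only in the exceptional Step 3. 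Your obstacle paragraph points at lemma \ref{Cmaxinv}, but that lemma governs adapted-coordinate changes at a fixed point, not the translation on $Z'$ after blow-up; the tool you actually need is lemma \ref{kerT} at the level of $G(W')$, combined with (\ref{eq3671})/(\ref{eq3681}). Your anticipated obstacle in the second-kind case is in fact the \emph{easiest} of the three, precisely because (\ref{eq3671}) always applies there (either $i_0(m_S)=p$, or proposition \ref{secondkind} supplies some $j_0\in(J')_E$ with $H^{-1}G^p\in U_{j_0}k(x)[\{U_j\}_{j\in J}]$).
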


\begin{proof}
Since ${\cal Y}$ is permissible, ${\cal Y} $ is
Hironaka-permissible at $x$ and this implies that $m(x')\leq m(x)=p$ in any case. We are
done unless equality holds, so assume that $m(x')=p$.

The polyhedron $\Delta_{S'}(h';u'_1,\ldots ,u'_{n'};X')$ need not be minimal. We must
take $Z'=X'-\theta '$, $\theta ' \in S'$ such that the polyhedron $\Delta_{S'}(h';u'_1,\ldots ,u'_{n'};Z')$
is minimal in order to read off $\epsilon (x')$ and $\omega (x')$ from $\mathrm{in}_{m_{S'}}h'$.

By Proposition \ref{bupformula}(iii), we have $\mathrm{ord}_{(u)}H(x')=p (\delta (y)-1)$.
The initial form $H_{W'}$ of $H(x')$ in $G(W')$ is given by Proposition \ref{bupformula}(iv):
\begin{equation}\label{eq3651}
H_{W'}= <U^{p (\delta (y)-1)}\prod_{i=2}^{e'}\overline{u'_i}^{H_{j_i}}>.
\end{equation}

We have ${\theta '}^p \in H(x')$ since $f_{p,X'} \in H(x')$. Let $\Theta ' \in G(W')_{\delta (y)-1}$ be the
initial form of $\theta '$ (in particular $\Theta ' =0 $ if $\delta (y) \not \in \N$). Then
\begin{equation}\label{eq365}
\mathrm{in}_{W'}h' = {Z'}^p -G_{W'}^{p-1}Z' + F_{p,X',W'} + {\Theta '}^p - G_{W'}^{p-1} \Theta ' \in G(W')[Z']
\end{equation}
where $G_{W'}=U^{-1}G_W$, $F_{p,X',W'}=U^{-p}F_{p,Z,W}$ by Proposition \ref{bupformula}(ii).
According to our notations, we have:
$$
F_{p,Z',W'}=F_{p,X',W'} + {\Theta '}^p - G_{W'}^{p-1} \Theta '.
$$

Note that derivatives in ${\cal D}_{W'}$ decrease orders by at most one. Since $H_{W'}$ is the initial form of $H(x')$ in $G(W')$,
we have:
\begin{equation}\label{eq3652}
\epsilon (x') \leq \min\{ \mathrm{ord}_{m_{S'/(u)}}(H_{W'}^{-1}G_{W'}^p),
1+\mathrm{ord}_{m_{S'/(u)}} J(F_{p,Z',W'},E',W')\}.
\end{equation}
Inequality may be strict, since the $H(x')^{-i}f_{i,Z'}^p$, $1 \leq i \leq p$
may acquire terms of lower order not coming from $\mathrm{in}_{W}h$. Moreover, some derivatives in ${\cal D}_{W'}$
do not decrease orders and may give a sharper bound in (\ref{eq3652}).\\

Recall that if $M \subseteq \widehat{G(W)}_d$, $d \in \N$ is a submodule, and $d_0$ is given, there are  associated initial forms
$$
\overline{M} \subseteq \left (\mathrm{gr}_{\overline{m}_{S}}G(W)_d\right )_{d_0} \subset
{G(m_S)_{d+d_0} \over <(\{U_j\}_{j\in J})^{d+1}\cap G(m_S)_{d+d_0}>}
$$
under the conditions described in (\ref{eq2627}) and (\ref{eq2626}). Note that
$$
\left (\mathrm{gr}_{\overline{m}_{S}}G(W)_d \right )_{0}=   \Gamma (\sigma^{-1}(m_S), {\cal O}_{\sigma^{-1}(m_S)}(d))=
k(x)[\{U_j\}_{j\in J}]_{d}
$$
for $d_0=0$.\\

Since ${\theta '}^p \in H(x')$, we have ${\Theta '}^p\in H_{W'}$ in (\ref{eq365}). We have
$\Theta ' = 0$ or $\delta (y) \in \N$ and
$$
G_{W'}^{p-1} \Theta ' \in G_{W'}^{p-1}\left \lceil H_{W'}^{{1 \over p}}\right \rceil, \
\left \lceil H_{W'}^{{1 \over p}}\right \rceil :=
<U^{\delta (y)-1}\prod_{i=2}^{e'}{\overline{u'_{i}}^{\left \lceil {H_{j_i} \over p}\right \rceil}}>.
$$
Since $D \cdot {\Theta '}^p =0$ for every $D \in {\cal D}_{W'}$, we deduce from (\ref{eq365}) that
\begin{equation}\label{eq367}
J(F_{p,Z',W'},E',W')\equiv  J(F_{p,X',W'},E',W')
\ \mathrm{mod}H_{W'}^{-1}G_{W'}^{p-1}\left \lceil H_{W'}^{{1 \over p}}\right \rceil .
\end{equation}

Note that if $i_0 (m_S)=p$, or if $H_{j'} \not \in p\N$
for some $j' \in (J')_E$, we have
\begin{equation}\label{eq3672}
G_W=0 \ \mathrm{or} \  \mathrm{ord}_{(u_{j'})}(H_W^{-1}G_W^p)>0 \ \mathrm{for} \
\mathrm{some}  \ j' \in (J')_E
\end{equation}
by applying  Proposition \ref{deltaint}(iii) in the latter case. In this case, we obtain the
following from Proposition \ref{bupformula}(v) and (\ref{eq367}):
\begin{equation}\label{eq3671}
(H_{W'}^{-1}G_{W'}^p)\overline{S'}=0, \ J(F_{p,Z',W'},E',W')\overline{S'} =
J(F_{p,X',W'},E',W')\overline{S'} .
\end{equation}

\noindent {\it Case 1:}  $i_0(m_S)=p$ {\it and} ${\cal Y}$ {\it is of the first kind.} In order to
get an estimate of $\epsilon (x')$ from (\ref{eq3652}), we take:
$$
M=J(F_{p,Z,W},E,W),  \ d=\epsilon (y)=\epsilon (x), \ d_0=0.
$$

\begin{rem}
By Proposition \ref{firstkind}, there is an equality
$$
H^{-1}F_{p,Z} =\mathrm{cl}_{\epsilon (x)}H_W^{-1}F_{p,Z,W}\subseteq k(x) [\{U_j\}_{j\in J}]_{\epsilon (x)},
$$
but we emphasize that the induced inclusion
\begin{equation}\label{eq3675}
J(F_{p,Z},E,m_S) \subseteq \mathrm{cl}_{\epsilon (x)}J(F_{p,Z,W},E,W).
\end{equation}
is strict in general: this is because elements of the form
$$
\mathrm{cl}_{\epsilon (x)}(H_W^{-1}{\partial F_{p,Z,W} \over \partial \overline{u}_{j'}}), \ j' \in J' \backslash (J')_E
$$
may be nonzero.
\end{rem}

By Proposition \ref{indiff}(ii) and the remark, we have
$$
0\neq J(F_{p,Z},E,m_S) \subseteq \overline{M} \subseteq k(x) [\{U_j\}_{j\in J}]_{\epsilon (x)}.
$$

Let $I'=J(F_{p,X',W'},E',W')\subseteq \widehat{S'/(u)}$, $d'=\mathrm{ord} I'$. By Proposition \ref{bupformula}(v), we have
$$
\left (U^{-\epsilon (x)}J(F_{p,Z},E,m_S)\right )_{\overline{m'}}\subseteq I'\overline{S'} .
$$

Since $i_0(m_S)=p$, we obtain from (\ref{eq3671}) that:
\begin{equation}\label{eq369}
\left (U^{-\epsilon (x)}J(F_{p,Z},E,m_S)\right )_{\overline{m'}}\subseteq
I'\overline{S'} = J(F_{p,Z',W'},E_{W'},W')\overline{S'}.
\end{equation}

If $\omega (x)=\epsilon (x)$, Definition \ref{deftauprime} gives
$$
\mathrm{Max}(x)=\mathrm{Max}(J(F_{p,Z},E,m_S)).
$$
We deduce that $\overline{\mathrm{ord}}I' \leq \omega (x)$ and
\begin{equation}\label{eq3691}
s' \not \in PC(x,{\cal Y}) \Longrightarrow  \overline{\mathrm{ord}}I'< \omega (x) .
\end{equation}

If $\omega (x)=\epsilon (x)-1$, Definition \ref{deftauprime} gives
$$
\mathrm{Max}(x)=\mathrm{Max}(V(F_{p,Z},E,m_S)).
$$
Since  $U_{j_1}V(F_{p,Z},E,m_S)\subseteq J(F_{p,Z},E,m_S)$ (recall that $u=u_{j_1}$), we also deduce
that $\overline{\mathrm{ord}}I' \leq \omega (x)$ and (\ref{eq3691}) holds. We have:
$$
\epsilon (x') \leq  1 + \mathrm{ord} I' =1+d' \leq 1 + \overline{\mathrm{ord}} I',
$$
by (\ref{eq3652}). We have proved that
\begin{equation}\label{eq3692}
\epsilon (x') \leq 1 + \overline{\mathrm{ord}} I' \leq 1 +\omega (x)
\end{equation}
with strict inequality on the right hand side under the assumption of (\ref{eq3691}).
The proof is now an easy consequence of the following claim:
$$
\epsilon (x') = 1 + \overline{\mathrm{ord}} I' \Longrightarrow \omega (x')=\epsilon (x')-1.
$$

Namely, assuming the claim, we have $\omega (x') \leq \omega (x)$ and this inequality is strict
under the assumption of (\ref{eq3691}). The first part of the proof is complete since
$i_0(m_S)=p$ implies $\kappa (x) \geq 2$. To prove the claim, let
$$
\mathrm{in}_{m_{S'}}h= {Z'}^p -{G'}^{p-1}Z' + F_{p,Z'}\in G(m_{S'})[Z']
$$
be the initial form polynomial.
Since  it is assumed that $\epsilon (x')=1 + \overline{\mathrm{ord}} I'$, we have $\overline{I'} \neq 0$ and:
\begin{equation}\label{eq3693}
\overline{I'}= <\left \{{H'}^{-1}{\partial F_{p,Z'} \over \partial U'_{j}}\right \}_{j= n'_0+1}^{n'}>
\ \mathrm{mod} (\{U'_{j'}\}_{j' \not \in F})\cap G(m_{S'})_{d'}.
\end{equation}
To compute $\omega (x')$, we must introduce a truncation operator
$$
T': G(m_{S'})_{p\delta (x')}\rightarrow G(m_{S'})_{p\delta (x')}
$$
as in Definition \ref{defomega}.  By (\ref{eq3651}), we have
$$
H':=\mathrm{cl}_{p\delta (x')-\epsilon (x')}H(x')= <U^{p (\delta (y)-1)}\prod_{i=2}^{e'}{U'_i}^{H_{j_i}}>\in G(m_{S'}).
$$
Going back to Definition \ref{defT}, we have
$$
F_{p,Z'}-T' F_{p,Z'} \in <{G'}^{p-1}U^{\delta (y)-1} \prod_{i=2}^{e'}{U'_i}^{\left \lceil{ H_{j_i}\over p}\right \rceil}>.
$$
Since $i_0(m_S)=p$, (\ref{eq3671}) applies and implies that
\begin{equation}\label{eq3694}
{H'}^{-1}(F_{p,Z'}-T' F_{p,Z'}) \subseteq (\{U'_i\}_{i \not \in F})\cap G(m_{S'})_{\epsilon (x')}.
\end{equation}
Comparing with  (\ref{eq3693}), there exists $i$, $n'_0+1 \leq i \leq n'$ such that
\begin{equation}\label{eq3695}
{H'}^{-1}{\partial T 'F_{p,Z'} \over \partial U'_{i}}\neq 0 ,
\end{equation}
since $\overline{I'}\neq 0$. This proves that $\omega (x')=\epsilon (x')-1$ as claimed. \\

To conclude the proof in case 1, assume that $\epsilon (x')>\epsilon (x)$. If some
inequality is strict in (\ref{eq3691}), we have $\epsilon (x') \leq \omega (x) \leq \epsilon (x)$: a
contradiction. So $\omega (x')= \omega (x)$ and by the above claim, we get
\begin{equation}\label{eq370}
\epsilon (x)=\omega (x)=\omega (x')=\epsilon (x')-1=\mathrm{ord} I'= \overline{\mathrm{ord}} I'.
\end{equation}

We use notations as in (\ref{eq2412}). Suppose that there exists $j' \in (J')_E$ such that
$H_{j'} \not \in p\N$. By Proposition \ref{firstkind}, we have
$$
H^{-1}U_{j'}{\partial F_{p,Z} \over \partial U_{j'}}\neq 0.
$$
Going back to (\ref{eq369}), we have
$$
\phi_{j'}:=\left (U^{-\epsilon (x)}H^{-1}U_{j'}{\partial F_{p,Z} \over \partial U_{j'}}\right )_{\overline{m'}}
\subseteq  J(F_{p,Z',W'},E',W')\overline{S'}.
$$
Applying the transformation rule in Proposition \ref{bupformula}(v),
we have
$$
\phi_{j'} = (H_{W'}^{-1}\overline{u}_{j'}{\partial F_{p,Z',W'} \over \partial \overline{u}_{j'}})
\overline{S'}.
$$
Since $\overline{\mathrm{ord}}\phi_{j'} \leq \epsilon (x)$, we deduce that
$$
\epsilon (x') \leq \overline{\mathrm{ord}}(H_{W'}^{-1}F_{p,Z',W'}) \leq
\overline{\mathrm{ord}}(H_{W'}^{-1}\overline{u}_{j'}{\partial F_{p,Z',W'} \over \partial \overline{u}_{j'}})
 \leq \epsilon (x).
$$
This is a contradiction with (\ref{eq370}). Hence $H_{j'} \in p\N$ for every $j' \in (J')_E$.

Suppose that $\delta (y) \not \in \N$. Similarly, by Proposition \ref{firstkind}, we have:
$$
H^{-1}D \cdot F_{p,Z}\neq 0, \ D:=\sum_{j \in J}U_j{\partial \hfill{} \over \partial U_j}\in \mathrm{Der}(G(W)).
$$
Note that we have $\Theta' =0$ in (\ref{eq365}) since $\delta (y) \not \in \N$. By (\ref{eq3635}):
$$
\phi_D :=\left (U^{-\epsilon (x)}H^{-1}D \cdot F_{p,Z}\right )\widehat{S'/(u)}
=H_{W'}^{-1}U{\partial F_{p,Z',W'} \over \partial U}.
$$
Arguing as above, we get a contradiction from:
$$
\epsilon (x') \leq \mathrm{ord}(H_{W'}^{-1}F_{p,Z',W'})
\leq \mathrm{ord}(H_{W'}^{-1}U{\partial F_{p,Z',W'} \over \partial U}) \leq \epsilon (x).
$$

Let now $i \in \{2, \ldots ,e'_0\}$. By (\ref{eq369}), we have
$$
\phi_{i}:=\left (U^{-\epsilon (x)}H^{-1}U_{j_i}{\partial F_{p,Z} \over \partial U_{j_i}}\right )_{\overline{m'}}
\subseteq  J(F_{p,Z',W'},E_{W'},W')\overline{S'}.
$$

Applying once again (\ref{eq3635}) and since $\epsilon (x')>\epsilon (x)=\omega (x)$, we get
$$
\mathrm{cl}_{\epsilon (x)}( \{ H_{W'}^{-1}\overline{u}_{i}{\partial F_{p,Z,W'} \over \partial
\overline{u}_{i}} \}_{2\leq i\leq e'_0})\equiv \mathrm{cl}_{\epsilon (x)}(\{\phi_i\}_{2\leq i\leq e'_0})
\ \mathrm{mod} (\{U'_{i'}\}_{i' \not \in F}) \cap G(m_{S'})_{\epsilon (x)}.
$$
If $\phi_{i}\neq 0$ for some $i$, $2 \leq i \leq e'_0$, we get
$$
\epsilon (x') \leq \mathrm{ord}(H_{W'}^{-1}F_{p,Z',W'})
\leq \mathrm{ord}(H_{W'}^{-1}\overline{u}_{i}{\partial F_{p,Z,W'} \over \partial
\overline{u}_{i}}) \leq \epsilon (x),
$$
again a contradiction. Since $\epsilon (x)=\omega (x)$, we have ${\partial F_{p,Z} \over \partial U_j}=0$ for
every $j \in J \backslash J_E$.

Finally, assume that $F_{p,Z} \not \in k(x')^p[U_1, \ldots ,U_n]$.  Let $(d\lambda'_{l'})_{l' \in \Lambda'}$ be
a basis of $\Omega^1_{k(x')}$. By assumption, there exists $l \in \Lambda$ such that
${\partial F_{p,Z} \over \partial \lambda_l}\neq 0$. We may assume w.l.o.g. that $\lambda_l=\lambda'_{l'}$
for some $l'\in \Lambda'$. Arguing as above, we get
$$
\mathrm{cl}_{\epsilon (x)}(  H_{W'}^{-1}{\partial F_{p,Z',W'} \over \partial \lambda_l })
\equiv \mathrm{cl}_{\epsilon (x)}\left (U^{-\epsilon (x)}H^{-1}{\partial F_{p,Z} \over \partial \lambda_l}\right )_{\overline{m'}}
\ \mathrm{mod} (\{U'_{i'}\}_{i' \not \in F}) \cap G(m_{S'})_{\epsilon (x)},
$$
a contradiction and the proof of (1) in the theorem is complete. \\

We now proceed to prove (2). By Proposition \ref{bupformula}(i), we have
$$
H_{W'}^{-1}F_{p,X',W'}\overline{S'}=(U^{-\epsilon (x)}H_W^{-1}F_{p,Z,W})_{\overline{m'}}
=(U^{-\epsilon (x)}H^{-1}F_{p,Z})_{\overline{m'}}.
$$
By (1) in the theorem and Proposition \ref{firstkind}, there is an expansion
$$
F_{p,Z}=\left (\prod_{i=e'_0+1}^{e'}U_{j_{i}}^{H_{j_{i}}}\right )
\sum_{\mathbf{a}\in A}F_{p,Z,\mathbf{a}}(\{U_j\}_{j \in J'_1})\prod_{j \in J_1}U_j^{pa_j},
\ A \subset \N^{J_1},
$$
with $J_1:=\{j_2, \ldots , j_{e'_0}, j_{n'_0+1}, \ldots , j_{n'_1}\}$, $ J'_1:=J \backslash J_1$,
$F_{p,Z,\mathbf{a}}\in  k(x')^p[\{U_j\}_{j \in J'_1}]$. We deduce that
\begin{equation}\label{eq371}
(U^{-\epsilon (x)}H^{-1}F_{p,Z})_{\overline{m'}}= \overline{H'}^{-1}
\left (\sum_{\mathbf{a}\in A}F_{p,Z,\mathbf{a}}(\{{U_j \over U}\}_{j \in J'_1})
\prod_{j \in J_1}({U_j \over U})^{pa_j}\right ) ,
\end{equation}
with $\overline{H'}:=(\prod_{i=2}^{e'_0}\left ({U_{j_i} \over U}\right )^{H_{j_i}})\subseteq \overline{S'}$.
Since $(H_{W'}^{-1}G_{W'}^p)\overline{S'}=0$ by (\ref{eq3671}), there exists $\theta ' \in S'/(u)$ such that
\begin{equation}\label{eq3711}
H_{W'}^{-1}F_{p,Z',W'}\overline{S'}=H_{W'}^{-1}(F_{p,X',W'}+{\theta '}^p)\overline{S'}.
\end{equation}
We deduce from (\ref{eq371}) that there exists a
finite subset $A'  \subset \N^{J_1}$, $A \subseteq A'$ and elements
$$
\theta '_\mathbf{a} \in k(x)[\{{U_j \over U}\}_{j \in J'_1}] \ \mathrm{for} \
\mathrm{every} \  \mathbf{a}\in A'
$$
such that (letting  $F_{p,Z,\mathbf{a}}(\{{U_j \over U}\}_{j \in J'_1})=0$ for
$\mathbf{a}\in A ' \backslash A $) we have:
$$
H_{W'}^{-1}F_{p,Z',W'}\overline{S'}= \overline{H'}^{-1}\left (\sum_{\mathbf{a}\in A'}
(F_{p,Z,\mathbf{a}}(\{{U_j \over U}\}_{j \in J'_1})
+{\theta '_\mathbf{a}}^p) \prod_{j \in J_1}({U_j \over U})^{pa_j}\right ) .
$$
Let $d_\mathbf{a}:= \epsilon (x') + \sum_{i=2}^{e'_0}H_{j_i}-p\mid \mathbf{a}\mid $ for $\mathbf{a} \in A'$.
Since $\overline{\mathrm{ord}}(H_{W'}^{-1}F_{p,Z',W'})=\epsilon (x')$ we have
$$
\overline{\mathrm{ord}}(F_{p,Z,\mathbf{a}}(\{{U_j \over U}\}_{j \in J'_1})+{\theta '_\mathbf{a}}^p)\geq d_\mathbf{a}
$$
for every $\mathbf{a} \in A'$. Taking classes in $G(\overline{m'})$, we define:
$$
\Phi '_\mathbf{a}:= \mathrm{cl}_{d_\mathbf{a}}(F_{p,Z,\mathbf{a}}(\{{U_j \over U}\}_{j \in J'_1})
+{\theta '}_\mathbf{a}^p) \in k(x')[U'_{n'_1+1}, \ldots , U'_{n'}]_{d_\mathbf{a}}.
$$

To conclude the proof, let $I_1:=\{2, \ldots , e'_0, n'_0+1, \ldots , n'_1\}$. We take
$$
\Phi ':={U'_1}^{p (\delta (y)-1)}\left (\prod_{i=e'_0+1}^{e'}{U'_i}^{H_{j_i}}\right )
\sum_{\mathbf{a}\in A'}\Phi '_\mathbf{a} \prod_{i \in I_1}{U'_i}^{pa_{j_i}}
$$
and claim that $\Phi '$ satisfies (2) in the theorem. By the above definition and (1) in
the theorem, we have $\Phi' \in k(x')[{U'_1}^p, \ldots ,{U'_{n'_1}}^p][U'_{n'_1} ,\ldots , U'_{n'}]_{p\delta (x')}$.
Also (\ref{eq3641}) follows immediately from  (\ref{eq3711}). With notations as in the above proof of (1), we have
$$
J(F_{p,Z},E,m_S)=H^{-1}
<\{U_j{\partial F_{p,Z} \over \partial U_j}\}_{j \in J_E \backslash \{j_2, \ldots ,j_{e'_0+1}\}},
\{{\partial F_{p,Z} \over \partial \lambda_l}\}_{l\in \Lambda}>.
$$
Applying once more (\ref{eq3635}), we get
$$
\left.
  \begin{array}{c}
    \mathrm{cl}_{\epsilon (x)}( \{ H_{W'}^{-1}{\partial F_{p,Z,W'} \over \partial
\overline{u'}_{i}} \}_{n'_1 \leq i\leq n'}) \hfill{} \\
    \equiv
\mathrm{cl}_{\epsilon (x)}(U^{-\epsilon (x)}J(F_{p,Z},E,m_S) )_{\overline{m'}}
\ \mathrm{mod} (\{U'_{i'}\}_{i' \not \in F}) \cap G(m_{S'})_{\epsilon (x)}. \\
  \end{array}
\right.
$$
Since $J(F_{p,Z},E,m_S)\neq 0$, we obtain that
$$
{H'}^{-1}{\partial F_{p,Z'} \over \partial U'_i} \not \in (\{U'_{i'}\}_{i' \not \in F}) \cap G(m_{S'})_{\epsilon (x)}
$$
for some $i$, $n'_1 \leq i\leq n'$, and the conclusion follows. This concludes the proof of (2).\\

\noindent {\it Case 2:}  $i_0(m_S)=p-1$  {\it and} ${\cal Y}$ {\it is of the first kind}.
We first take $d=\epsilon (y)$ and
$$
M:=H_{W}^{-1}G_{W}^p,  \ d_0=0.
$$
By Proposition \ref{firstkind}, there is an expansion
$H^{-1}G^p=<\prod_{j\in J}U_j^{pB_j}>$. With notations as in Definition \ref{defomega}, we have
\begin{equation}\label{eq3682}
pb_j -H_j=pB_j, \ j \in J \ \mathrm{and} \ B=\{j \in J : B_j>0\}.
\end{equation}
We deduce:
$$
(0) \neq \overline{M} =(\prod_{j\in B} U_j^{pB_j})\subseteq k(x)[\{U_j\}_{j\in J}]_{\epsilon (x)}.
$$
Let $ I'_0=H_{W'}^{-1}G_{W'}^p$, $d'_0=\mathrm{ord} I'_0$. We have:
\begin{equation}\label{eq368}
I'_0 \overline{S'} = \left ( U^{-\epsilon (x)}\prod_{j\in B}U_j^{pB_{j}}\right )_{\overline{m'}} .
\end{equation}
This proves that $\epsilon (x') \leq \overline{\mathrm{ord}} I'_0 \leq \epsilon (x)$ and equality holds only if
\begin{equation}\label{eq3653}
s' \in \mathbf{Proj}\left ( {k(x)[\{U_j\}_{j\in J}] \over (U_B)}\right ) .
\end{equation}
Suppose that $\epsilon (x')<\epsilon (x)$. Then :
$$
\omega (x')\leq \epsilon (x') \leq  \epsilon (x)-1 \leq \omega (x).
$$
If $\omega (x')=\omega (x)$, then $\omega (x)=\epsilon (x)-1$, so $\kappa (x)\geq 2$. On
the other hand, we have $\omega (x')= \epsilon (x')$ and therefore $\kappa (x')=1$ by
Definition \ref{defomega}. Hence inequality is strict in (\ref{eq364}).
In other terms, it can be assumed from now on that (\ref{eq3653}) holds and that
\begin{equation}\label{eq3654}
\epsilon (x')=\epsilon (x).
\end{equation}

We now resume the argument used in case 1 by taking
$$
M=J(F_{p,X,W},E_W,W),  \ d=\epsilon (y)=\epsilon (x), \ d_0=0.
$$
To begin with, (\ref{eq369}) holds whenever (\ref{eq3671}) applies, i.e. if $H_{j'} \not \in p\N$
for some $j' \in (J')_E$ or if $\delta (y) \not \in \N$. Suppose that $\delta (y) \in \N$ and
$H_{j'} \in p\N$ for every $j' \in (J')_E$. In this case, (\ref{eq367}) reduces to
\begin{equation}\label{eq3681}
J(F_{p,Z',W'},E',W')\equiv  J(F_{p,X',W'},E',W')
\ \mathrm{mod}K'{\hat{S'} \over (u)} ,
\end{equation}
$$
K':= (\prod_{i=2}^{e'_0}{u'_{i}}^{(p-1)b_{j_i} -H_{j_i} + \left \lceil {H_{j_i} \over p}\right \rceil}) \subseteq S'
$$
with notations as in (\ref{eq3682}). We let :
$$
k':= \sum_{j\in J}\left ((p-1)b_j -H_j + \left \lceil {H_j \over p}\right \rceil \right )=\mathrm{ord}_{m_{S'}}K'.
$$
Going back to Definition \ref{defomega}, we have
$$
F_{p,Z} -TF_{p,Z} \in  (\prod_{j\in J}{U_j^{(p-1)b_{j} + \left \lceil {H_{j} \over p}\right \rceil}}G(m_S))_{p\delta (x)}
$$
and we deduce now from (\ref{eq3681}) that
\begin{equation}\label{eq3683}
J(F_{p,Z',W'},E_{W'},W')\overline{S'}
\equiv \left (U^{-\epsilon (x)}J(TF_{p,Z},E,m_S)\right )_{\overline{m'}} \ \mathrm{mod}K'\overline{S'}.
\end{equation}
Note that the previous equation remains valid when $H_{j'} \not \in p\N$
for some $j' \in (J')_E$ or when $\delta (y) \not \in \N$. The proof now goes on as in case 1
and we deduce that $\overline{\mathrm{ord}}I' \leq \omega (x)$; joining (\ref{eq3653}) and (\ref{eq3683}),
we obtain that (\ref{eq3691}) holds, i.e.
$$
s' \not \in \mathbf{Proj}\left ( {k(x)[\{U_j\}_{j\in J}] \over (\mathrm{IMax}(x)\cap k(x)[\{U_j\}_{j\in J}])} \right )
\Longrightarrow  \overline{\mathrm{ord}}I'< \omega (x) .
$$
Equation (\ref{eq3692}) now follows, while (\ref{eq3693}) gets replaced by
\begin{equation}\label{eq3684}
\overline{I'}= <\left \{{H'}^{-1}{\partial F_{p,Z'} \over \partial U'_{j}}\right \}_{j= n'_0+1}^{n'}>
\ \mathrm{mod} ((\{U'_{j'}\}_{j' \not \in F})+ (\mathrm{cl}_{k'}K'))\cap G(m_{S'})_{d'}.
\end{equation}
Finally, we obtain that
$$
{H'}^{-1}(F_{p,Z'}-T' F_{p,Z'}) \subseteq ((\{U'_i\}_{i \not \in F})+ (\mathrm{cl}_{k'}K'))\cap G(m_{S'})_{\epsilon (x')}
$$
and this concludes the proof of the claim, hence of the theorem, as in case 1.\\

\noindent {\it Case 3:} ${\cal Y}$ {\it is of the second kind.} First recall from
Proposition \ref{secondkind} that  $\epsilon (x)-1 = \omega (x)$,
so $\kappa (x) \geq 2$ in particular.
Let $ I'_0:=H_{W'}^{-1}G_{W'}^p$, $d'_1=\mathrm{ord} I'_0$.

Suppose that $i_0(m_S)=p-1$. By Proposition \ref{secondkind}, there exists an expansion
$$
H^{-1}G^p=<U_{j_1}\prod_{j\in B_J}U_j^{pB_j}>, \ j_1 \in (J')_E, \ B_j>0 \ \mathrm{for} \ j\in B_J,
$$
with notations as in Definition \ref{defcone}. By Proposition \ref{bupformula}(v), we have:
\begin{equation}\label{eq3721}
I'_0 S'/(u) = \overline{u}_{j_1}\left ( U^{-\epsilon (y)}\prod_{j\in B}U_j^{pB_{j}}\right )_{m_{S'/(u)}} .
\end{equation}
This proves that $\epsilon (x') \leq \mathrm{ord} I'_0 \leq \epsilon (x)$ and equality holds only if
\begin{equation}\label{eq372}
s' \in \mathbf{Proj}\left ( {k(x)[\{U_j\}_{j\in J}] \over (U_{B_J})}\right ) .
\end{equation}

Suppose furthermore that $\epsilon (x')<\epsilon (x)$. We have:
$$
\omega (x')\leq \epsilon (x') \leq  \epsilon (x)-1 = \omega (x).
$$
If $\omega (x')=\omega (x)$, then $\omega (x')= \epsilon (x')$ and therefore $\kappa (x')=1$ by
Definition \ref{defomega}, so inequality is strict in (\ref{eq364}).
Therefore if $i_0(m_S)=p-1$, it can be assumed that $\epsilon (x')=\epsilon (x)$ and
in particular that (\ref{eq372}) holds.\\

Going back to the general situation of case 3, we now take
$$
M=J(F_{p,X,W},E_W,W),  \ d=\epsilon (y), \ d_0=0.
$$
Note that (\ref{eq3671}) is always valid in this case 3: we either have $i_0(m_S)=p$ or
(\ref{eq3672}) holds for $j'=j_0$. Applying Proposition \ref{bupformula}(v) gives:
$$
J(F_{p,Z',W'},E_{W'},W')\overline{S'}=
\left (U^{-\epsilon (y)}\overline{J}(F_{p,Z,W},E_W,W)\right )_{\overline{m'}}.
$$
With notations as in Proposition \ref{secondkind}, we have
$$
(0)\neq \overline{J}(F_{p,Z,W},E_W,W)=<\{\Phi_{j'}(\{U_j\}_{j\in J})\}_{j'\in J' \backslash (J')_E}>.
$$
We deduce that
\begin{equation}\label{eq374}
J(F_{p,Z',W'},E_{W'},W')\overline{S'}=
<\{\left (U^{-\epsilon (y)}\Phi_{j'}(\{U_j\}_{j\in J})\right )_{\overline{m'}}\}_{j'\in J' \backslash (J')_E}>.
\end{equation}
Since Definition \ref{defcone} gives
$$
C(x,{\cal Y}):= \mathrm{Max}(\overline{J}(F_{p,Z,W},E,W))\cap \{ U_{B_J}=0\},
$$
we deduce that $\overline{\mathrm{ord}}J(F_{p,Z',W'},E_{W'},W')\leq \omega (x)$
and equality holds only if $s' \in PC(x,{\cal Y})$. We obtain:
\begin{equation}\label{eq375}
\epsilon (x')\leq 1+ \mathrm{ord}J(F_{p,Z',W'},E_{W'},W')\leq 1+\overline{\mathrm{ord}}J(F_{p,Z',W'},E_{W'},W')
\leq \epsilon (x).
\end{equation}

Suppose that $s' \not \in PC(x,{\cal Y})$ and $\omega (x')\geq \omega (x)$. Formula (\ref{eq375}) shows that
$\epsilon (x')=\omega (x')=\omega (x)$. If $i_0(m_{S'})=p-1$, we get $\kappa (x')=1$ so
inequality is strict in (\ref{eq364}).  If $i_0(m_{S'})=p$, we may
pick $j'=j_i \in J' \backslash (J')_E$, $e'+1 \leq i \leq n'_0$, such that
$$
\overline{\mathrm{ord}}\left (U^{-\epsilon (y)}\Phi_{j'}(\{U_j\}_{j\in J})\right )_{\overline{m'}}<\omega (x).
$$
By (\ref{eq374}), we have ${H'}^{-1}{\partial F_{p,Z'} \over \partial U'_i}\neq 0$. This is
a contradiction with the assumption $\epsilon (x')=\omega (x')$. Thus it can be assumed that
$s' \in PC(x,{\cal Y})$.

We get $\omega (x')\leq \epsilon (x')\leq \omega (x)$ unless all inequalities in (\ref{eq375}) are
equalities. In this case, we claim that $\omega (x')=\epsilon (x')-1$ and this will conclude the proof.
To prove the claim, we may pick $j_i \in J' \backslash (J')_E$, $e'+1 \leq i \leq n'_0$, such that
$\Phi_{j_i}(\{U_j\}_{j\in J})\neq 0$ by Proposition \ref{secondkind}. Arguing as above, we have
\begin{equation}\label{eq376}
{H'}^{-1}{\partial F_{p,Z'} \over \partial U'_i} \equiv
<\mathrm{cl}_{\omega (x)}\left (U^{-\epsilon (y)}\Phi_{j_i}(\{U_j\}_{j\in J})\right )_{\overline{m'}}>
\ \mathrm{mod}((\{U'_{j'}\}_{j' \not \in F})\cap G(m_{S'})_{\omega (x)},
\end{equation}
and this proves that ${H'}^{-1}{\partial F_{p,Z'} \over \partial U'_i}\neq 0$. If $i_0(m_{S'})=p$,
we get $\omega (x')= \omega (x)$.

If $i_0(m_{S'})=p-1$, we must introduce a truncation operator
$$
T': G(m_{S'})_{p\delta (x')}\rightarrow G(m_{S'})_{p\delta (x')}
$$
as in Definition \ref{defomega} in order to compute $\omega (x')$. In any case,
we have
\begin{equation}\label{eq377}
    {H'}^{-1}{G'}^p \subseteq (U'_{i \not \in F})\cap G(m_{S'})_{\epsilon (x')},
\end{equation}
which follows from the identity $I'_0 S'/(u)=0$ (resp. from (\ref{eq3721})) if $i_0(m_{S})=p$
(resp. if $i_0(m_{S})=p-1$), {\it cf.} beginning of the proof of case 3.

Going back to Definition \ref{defT}, we have
$$
{H'}^{-1}(F_{p,Z'}-T' F_{p,Z'}) \subseteq (\{U'_i\}_{i \not \in F})\cap G(m_{S'})_{\epsilon (x')}.
$$
It now follows from (\ref{eq376}) that
$$
{H'}^{-1}{\partial T' F_{p,Z'} \over \partial U'_i} \equiv
<\mathrm{cl}_{\omega (x)}\left (U^{-\epsilon (y)}\Phi_{j_i}(\{U_j\}_{j\in J})\right )_{\overline{m'}}>
\ \mathrm{mod}((\{U'_{j'}\}_{j' \not \in F})\cap G(m_{S'})_{\omega (x)}.
$$
This proves at last that ${H'}^{-1}{\partial T' F_{p,Z'} \over \partial U'_i}\neq 0$, so
$\omega (x')=\epsilon (x')-1$ and this concludes the proof of the claim, hence of the theorem.
\end{proof}

\subsection{Consequences of the Blowing up Theorem and constructibility.}

In this section, we prove some basic properties of our main invariant
$$
y\mapsto (m(y),\omega (y),\kappa (y))
$$
and of our notion of permissibility. The following theorem expresses the persistence of permissibility
under permissible blowing ups.

\begin{thm}\label{transfstricte}
Assume that $m(x)=p$, $\omega (x)>0$, where $\{x\}=\eta^{-1}(m_S)$. Let ${\cal Y}_0 \supset {\cal Y}_1$
with respective generic point $y_0, y_1$ be permissible centers  {(of the first or second kind)} at $x$ and $\pi : {\cal X}' \rightarrow {\cal X}$
be the blowing up along  ${\cal Y}_1$.

The strict transform ${\cal Y}'_0$ of ${\cal Y}_0$ is permissible at every $x' \in \pi^{-1}(x)$.
\end{thm}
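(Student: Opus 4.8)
The statement to prove is a ``persistence under blowing up'' result for permissible centers: given permissible centers ${\cal Y}_0\subset {\cal Y}_1$ at $x$, the strict transform ${\cal Y}'_0$ is permissible at any $x'\in\pi^{-1}(x)$, where $\pi$ blows up ${\cal Y}_1$. The plan is to reduce everything to the local picture around $x'$, using the structural descriptions of permissible centers already established in propositions \ref{firstkind}, \ref{secondkind}, corollary \ref{Cmaxfibre}, and the blow-up formula proposition \ref{bupformula}, together with the blowing-up theorem \ref{bupthm}. First I would fix well adapted coordinates $(u_1,\ldots,u_n;Z)$ at $x$ adapted simultaneously to $E$, to $W_1:=\eta({\cal Y}_1)$ and to $W_0:=\eta({\cal Y}_0)$; this is possible by proposition \ref{Deltaalg} since $W_0\subset W_1$ are nested regular subschemes with normal crossings with $E$, so we may write $I(W_1)=(\{u_j\}_{j\in J_1})$, $I(W_0)=(\{u_j\}_{j\in J_0})$ with $J_1\subseteq J_0$. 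Set $u:=u_{j_1}$ a local equation of $W_1':=\sigma^{-1}(W_1)$ at $s':=\eta'(x')$, and pass to $(S',h',E')$ with $h'=u^{-p}h$ and adapted r.s.p.\ as in notation \ref{indcoord}. The strict transform $W_0'$ of $W_0$ is then cut out by the obvious monomial ideal in the transformed coordinates, and it has normal crossings with $E'$ at $s'$ because $W_0$ did with $E$ at $s$; so condition (i) of Hironaka-permissibility for ${\cal Y}_0'$ at $x'$ is immediate, and $m(y_0')=p$ for the generic point $y_0'$ of ${\cal Y}_0'$ follows from proposition \ref{SingX} and the fact that ${\cal Y}_0'\subseteq \mathrm{Sing}_p{\cal X}'$ (its generic point dominates one in $\mathrm{Sing}_p{\cal X}$, using proposition \ref{deltainv} applied at the generic point of $W_0'$).

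The heart of the argument is to verify the remaining numerical/differential conditions of definition \ref{deffirstkind} or \ref{defsecondkind} for ${\cal Y}_0'$ at $x'$, according to its type. Here I would split into cases on the type of ${\cal Y}_0$. If ${\cal Y}_0$ is of the first kind, then $\epsilon(y_0)=\epsilon(x)$, and by proposition \ref{firstkind} applied to ${\cal Y}_0$ (with $J=J_0$) we have $H^{-1}\langle G^p,F_{p,Z}\rangle\subseteq k(x)[\{U_j\}_{j\in J_0}]_{\epsilon(x)}$; combining with proposition \ref{bupformula}(ii)--(v), the initial form data $G_{W_0'}^p$, $F_{p,Z',W_0'}$ of $h'$ along $W_0'$ are obtained from those of $h$ by dividing by the appropriate power of $u$, and the membership in $k(x')[\{U_j\}_{j\in J_0'}]$ is preserved because the coordinates indexed by $J_0'\subseteq J_1'$ transform compatibly. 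The equality $\epsilon(y_0')=\epsilon(x')$ will then follow from proposition \ref{bupformula}(iv), which gives $H(x')=u^{\epsilon(y_1)-p}H(x)$ uniformly, together with the analogous localized computation of $\epsilon(y_0')$ from $\Delta_{S'_{s_0'}}(h';\ldots;Z')$; one compares the weights, using that $\epsilon(y_0)=\epsilon(x)$ and $\epsilon(y_1)=\epsilon(x)$ (resp.\ $\epsilon(y_1)=\epsilon(x)-1$) forces a clean linear relation. The only subtlety is if $\epsilon(x')>\epsilon(x)$, which case is completely classified in theorem \ref{bupthm}(1)--(2): there $i_0(m_S)=p$, $\delta(y_1)\in\N$ and $F_{p,Z}$ is essentially a $p$-th power in the coordinates transverse to $J_1$; one then checks directly from the explicit form of $\mathrm{in}_{m_{S'}}h'$ in theorem \ref{bupthm}(2) that $\epsilon(y_0')=\epsilon(x')$ still holds and, when $\epsilon(y_0)=\epsilon(x)-1$ originally forced ${\cal Y}_0$ to be of second kind, the nonvanishing of $\overline{J}(F_{p,Z',W_0'},E',W_0')$ is inherited from the nonvanishing $\Phi'_j\neq 0$ produced there.

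If ${\cal Y}_0$ is of the second kind, then by proposition \ref{secondkind} (with $J=J_0$) the initial form along $W_0$ has $H^{-1}G^p\subseteq U_{j_0}k(x)[\{U_j\}_{j\in J_0}]$ for some $j_0\in(J_0')_E$ and $H^{-1}F_{p,Z}=\langle\sum_{j'\in J_0'}U_{j'}\Phi_{j'}+\Psi\rangle$ with some $\Phi_{j'}\neq 0$, $j'\in J_0'\setminus(J_0')_E$; moreover $\epsilon(y_0)=\epsilon(x)-1=\omega(x)$. I would push this through $\pi$ using proposition \ref{bupformula}(v): the differential module data transforms by the rule $J(F_{p,X',W_0'},E',W_0')=(U^{-\epsilon(y_0)}J(F_{p,Z,W_0},E,W_0))\hat{S'}/(u)$, so $\overline{J}(F_{p,Z',W_0'},E',W_0')\neq 0$ is preserved provided $x'$ does not lie on the exceptional locus forcing extra cancellation — but ${\cal Y}_0'$ passes through $x'$ only if $s'\in W_0'\subseteq W_1'$, and along $W_0'$ the $\Phi_{j'}$ transform to nonzero elements (division by a monomial in $u$ and restriction is injective on the relevant graded pieces by \eqref{eq3632}). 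The condition $i_0(y_0')\leq i_0(x')$ follows from $i_0(y_0)\leq i_0(x)$ together with proposition \ref{bupformula}(ii) (which preserves $i_0$ along $W_0'$) and theorem \ref{bupthm}'s control of $i_0(x')$ versus $i_0(x)$. Finally $\epsilon(y_0')=\epsilon(x')-1$: one computes $\epsilon(y_0')$ from proposition \ref{bupformula}(iv) along $W_0'$ and $\epsilon(x')$ from the minimality analysis in the proof of theorem \ref{bupthm}; since originally $\epsilon(y_0)=\epsilon(x)-1$ and both $H(y_0),H(x)$ transform by the same power of $u$, the difference of $1$ is preserved, with the borderline case $\epsilon(x')>\epsilon(x)$ again handled by the explicit classification in theorem \ref{bupthm}(1)--(2). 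The main obstacle will be bookkeeping in this borderline case $\epsilon(x')>\epsilon(x)$, where a new variable $\tilde Z$ and extra transverse coordinates appear and one must re-examine from scratch whether the transformed center is of first or second kind — essentially the same delicate case distinction already carried out inside the proof of theorem \ref{bupthm}, which one invokes to conclude.
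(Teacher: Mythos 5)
Your general strategy — carry the structural descriptions of propositions \ref{firstkind}, \ref{secondkind} through $\pi$ using proposition \ref{bupformula}(v) and theorem \ref{bupthm}, splitting on the type of ${\cal Y}_0$ — is the right family of tools, and the paper uses the same ones. But there is a concrete gap in the treatment of the case $\epsilon(x')>\epsilon(x)$, and it is not fixable by invoking theorem \ref{bupthm} more carefully: the conclusion you assert there is false.

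You write, when ${\cal Y}_0$ is of the first kind and $\epsilon(x')>\epsilon(x)$, that ``one then checks directly from the explicit form of $\mathrm{in}_{m_{S'}}h'$ in theorem \ref{bupthm}(2) that $\epsilon(y_0')=\epsilon(x')$ still holds,'' which would keep ${\cal Y}'_0$ of the first kind. This cannot hold. The generic point $y'_0$ of ${\cal Y}'_0$ maps isomorphically to $y_0$ (it does not lie on the new exceptional component of $E'$, since ${\cal Y}'_0\not\subset\sigma^{-1}({\cal Y}_1)$), so the stalk of $E'$ at $\eta'(y'_0)$ equals that of $E$ at $\eta(y_0)$ and $\epsilon(y'_0)=\epsilon(y_0)$. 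For ${\cal Y}_0$ of the first kind this equals $\epsilon(x)$, hence in the borderline case one has $\epsilon(y'_0)=\epsilon(x')-1$. What actually happens — and this is the crux of the paper's Case 1 — is that ${\cal Y}'_0$ \emph{switches from first kind to second kind}. One must then verify all three clauses of definition \ref{defsecondkind} for ${\cal Y}'_0$ at $x'$: the inclusions of $H'^{-1}G'^p$ into $U_{j'_0}k(x')[\{U'_i\}]$ for a suitable $j'_0$, the condition $i_0(y'_0)\leq i_0(x')$, and most delicately the nonvanishing $\overline{J}(F_{p,Z',W'_0},E',W'_0)\neq 0$, which the paper extracts from the precise expansion in theorem \ref{bupthm}(2), namely the existence of $i\geq n'_1+1$ with $\Phi_i\neq 0$. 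Your plan skips this type-change entirely, so the hard content of the theorem is missing.

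A secondary issue, not fatal but worth flagging: the paper's containment ``${\cal Y}_0\subset{\cal Y}_1$'' is at the level of defining ideals, so set-theoretically ${\cal Y}_1\subsetneq{\cal Y}_0$ (compare the example after the theorem, ${\cal Y}_0=V(Z,u_1,u_2)$, ${\cal Y}_1=V(Z,u_1,u_2,u_4)$). You read it as set containment ${\cal Y}_0\subseteq{\cal Y}_1$ and correspondingly wrote $J_1\subseteq J_0$; under that reading the strict transform of ${\cal Y}_0$ is empty and the statement vacuous. The paper has $J_0\subset J_1$ so that $W_1\subsetneq W_0$ and the strict transform $W'_0$ is a nontrivial divisor on the blow-up of $W_0$ along $W_1$. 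Swapping your subscripts repairs the bookkeeping, but you should make sure your application of propositions \ref{firstkind}, \ref{secondkind} is to $J=J_0$ after the swap. Finally, in the second-kind branch, the paper distinguishes whether ${\cal Y}_1$ is itself of the first or second kind (Cases 2a and 2b) because the verification of definition \ref{defsecondkind}(ii)--(iii) for ${\cal Y}'_0$ uses different displayed equations (\ref{eq3721}) versus (\ref{eq377}), and in Case 2b invokes the classification of the equality case in theorem \ref{bupthm}; your write-up treats these uniformly, which is too coarse.
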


\begin{proof}
By definition of permissibility, we have $m(y_0)=m(y_1)=p$. Let
$W_i=\eta ({\cal Y}_i)$, $i=0,1$ be with notations as in the previous theorem. There exist associated subsets
$J_0 \subset J_1 \subseteq \{1, \ldots ,n\}$ such that $I(W_i)=( \{u_j\}_{j\in J_i})$ for a
certain choice of an adapted r.s.p. $(u_1,\ldots ,u_n)$ of $S$. Let $(u_1,\ldots ,u_n;Z)$ be well adapted coordinates
at $x$. By Proposition \ref{Deltaalg}, the polyhedron
$$
\Delta_{\hat{S}}(h;\{u_j\}_{j \in J_i};Z)=
\mathrm{pr}_{J_i}(\Delta_{S}(h;u_1,\ldots,u_n;Z))\ \mathrm{is} \
\mathrm{minimal},
$$
where $\mathrm{pr}_{J_i}: \ \R^n \rightarrow \R^{J_i}$ denotes the projection on the $(u_j)_{j \in
J_i}$-space, $i=0,1$. In particular, we have ${\cal Y}_i=V(Z, \{u_j\}_{j\in J_i})$, $i=0,1$.
The strict transform $W'_0$ of $W_0$ at $s'$ has normal crossings with $E':=\sigma^{-1}(E)_{\mathrm{red}}$.
Since $m(x')\geq m(y_0)$ for every $x' \in {\cal Y}'_0$, this proves that ${\cal Y}'_0$ is Hironaka-permissible
w.r.t. $E'$. \\

 {For convenience of the reader, we include at the end of the proof a table summing up the possible kinds for the permissible centers  ${\cal Y}_0, {\cal Y}_1$ and ${\cal Y}'_0$. The current proof requires discussing the first two columns of the table in cases 1 and 2 below.}

Applying again Proposition \ref{Deltaalg}  {and the definitions of permissible centers}, we have
\begin{equation}\label{eq311}
\epsilon (y_0) \leq \epsilon (y_1)\leq \epsilon (x) \leq 1 +\epsilon (y_0), \ \epsilon (y_0)\leq \epsilon (x').
\end{equation}

On the other hand, Theorem \ref{bupthm} applied to $\pi$ gives $\epsilon (x')\leq \epsilon (x)+1$ while classifying equality
cases in (1) and (2). Thus ${\cal Y}'_0$ is permissible of the first kind except possibly in the following two cases:\\

\noindent {\it Case 1:} ${\cal Y}_1$ is of the first kind and $\epsilon (x')=\epsilon (x)+1$;

\noindent {\it Case 2:} ${\cal Y}_0$ is of the second kind and $\epsilon (x')=\epsilon (x)$.\\

Since $x' \in {\cal Y}'_0$, we have, with notations as in Theorem \ref{bupthm} ({\it cf.} Notation \ref{indcoord}):
\begin{equation}\label{eq312}
(J_0)_E \subseteq \{j_i, \ 2 \leq i \leq e'_0\},
\ J_0 \backslash (J_0)_E \subseteq \{j_i, \ n'_0+1 \leq i \leq n'_1\}.
\end{equation}
Also, letting $F_0:=\{2 , \ldots , e'_0 \}\cup  \{n'_0+1 , \ldots , n'_1\}$, we have ({\it cf.} Notation \ref{ordbar}):
\begin{equation}\label{eq315}
J_0 \subseteq F_0 \subseteq F =F_0 \cup \{n'_1+1, \ldots ,n'\}.
\end{equation}

\noindent {\it Proof in case 1:} an immediate consequence of Theorem \ref{bupthm}(1) is that :
$$
i_0(m_S)=p, \ {\partial F_{p,Z} \over \partial U_j}=0, \ j \in J_0 \ \mathrm{or} \ j \geq e+1.
$$
This is incompatible with Definition \ref{secondkind}(iii) applied to ${\cal Y}_0$, so ${\cal Y}_0$ is also of the first kind.
By Proposition \ref{firstkind} we deduce that
\begin{equation}\label{eq313}
H^{-1}G^p=0, \ H^{-1}F_{p,Z} \subseteq k(x)[\{U_{j}\}_{j\in J_0}]_{\epsilon (x)}.
\end{equation}
Since $\epsilon (y_0)=\epsilon (x')-1$, we also have
\begin{equation}\label{eq316}
{H'}^{-1}<{G'}^p,F_{p,Z'}>\subseteq (\{U'_{i}\}_{j_i \in J_0})^{\epsilon (y_0)}\cap G(m_{S'})_{\epsilon (x')}.
\end{equation}

We claim that ${\cal Y}'_0$ is permissible of the second kind at $x'$. To prove the claim, note that
(\ref{eq313}) implies that
$$
H_{W_1}^{-1}G_{W_1}^p \subseteq (\overline{u}_{j'})G(W_1)_{\epsilon (x)} \ \mathrm{for} \ \mathrm{some} \ j' \in (J'_1)_E.
$$
Since ${\cal Y}_0$ is permissible of the first kind at $x$, we actually have
$$
H_{W_1}^{-1}G_{W_1}^p \subseteq (\overline{u}_{j'})S/(\{u_j\}_{j \in J_1})[\{U_{j}\}_{j \in J_0}]_{\epsilon (x)}.
$$
Letting $j'=:j_{i'}$, $e'_0+1 \leq i' \leq e$, Proposition \ref{bupformula}(ii) then shows  that
$$
H_{W'_1}^{-1}G_{W'_1}^p \subseteq (\overline{u}_{i'})S'/(u'_1)[\{U'_{i}\}_{j_i \in J_0}]_{\epsilon (x)},
\ W'_1:=\sigma^{-1}(W_1).
$$
In other terms, we have
$$
{H'}^{-1}{G'}^p \subseteq (U'_1, U_{i'})k(x')[\{U'_{i}\}_{j_i \in J_0}],
$$
and this proves that ${\cal Y}'_0$ satisfies property (ii) of Definition \ref{defsecondkind}.
Finally, applying (\ref{eq316}) gives an expansion
$${H'}^{-1}F_{p,Z'} =<\sum_{i=1}^{n'}U'_i \Phi_i (\{U'_{i'}\}_{j_{i'} \in J_0})>.
$$
Then Definition \ref{defsecondkind}(iii) is equivalent to:
$$
\exists i \in J'_0 \cap \{e'+1 , \ldots ,n'\} : \Phi_i \neq 0.
$$
By equation (\ref{eq3642}) in Theorem \ref{bupthm}(2), there exists $i \geq n'_1+1$ (hence
$i \in J'_0$) such that $\Phi_i \neq 0$, since $j_{i'} \in J_0  \Longrightarrow i' \leq n'_1$
by (\ref{eq312}) and this completes the proof in case 1.\\

\noindent {\it Proof in case 2.} Since ${\cal Y}_0$ is permissible of the second kind,
the initial form $\mathrm{in}_{m_S}h  \in G(m_S)[Z]$ satisfies (\ref{eq2628}). The corresponding
integer $j_0$ satisfies $j_0 \not \in J'_0$ and the corresponding family $(\Phi_{j'}(\{U_j\}_{j\in J_0}))_{j' \in J'_0}$
is such that $\Phi_{j'} \neq 0$ for some $j' \in J'_0 \backslash (J'_0)_E$. In order to prove that
${\cal Y}'_0$ is of the second kind at $x'$, we consider two subcases:\\

\noindent {\it Case 2a:} ${\cal Y}_1$ is of the second kind at $x$. Then $j_0 \in J'_1$ and
$\Phi_{j'} \neq 0$ for some $j' \in J'_1 \backslash (J'_1)_E$. By assumption   $\epsilon (x')=\epsilon (x)$,
and we deduce from (\ref{eq3721}) (resp. from (\ref{eq377})) if $i_0(m_S)=p-1$ (resp. if $i_0(m_S)=p$)
that the initial form  $\mathrm{in}_{m_{S'}}h'  \in G(m_{S'})[Z']$ satisfies
\begin{equation}\label{eq317}
{H'}^{-1}{G'}^p  \subseteq  U_{j'_0}k(x')[\{U'_i\}_{j_i\in J_0}]_{\epsilon (y_0)}  \ \mathrm{for} \ \mathrm{some} \ j'_0 \in
\{1, e'_0+1, \ldots ,e'\}
\end{equation}
and Definition \ref{defsecondkind}(ii) is checked for ${\cal Y}'_0$ at $x'$.  Similarly,
Definition \ref{defsecondkind}(iii) is checked from (\ref{eq376}): we have
${H'}^{-1}{\partial F_{p,Z'} \over \partial U'_i}\neq 0$ for any $i$,  $e'+1 \leq i \leq n'_0$
such that $j_i \in J'_1 \backslash (J'_1)_E$ and $\Phi_{j_i}\neq 0$; take $j_i=j'$
with notations as above.\\

\noindent {\it Case 2b:} ${\cal Y}_1$ is of the first kind at $x$. Then $j_0 \in J_1$ and
$\Phi_{j'} = 0$ for any $j' \in J'_1 $. By Proposition \ref{secondkind} and our assumption
$\epsilon (x')=\epsilon (x)$, we have
$$
\omega (x)=\epsilon (y_0)=\epsilon (x)-1=\epsilon (x')-1 \leq \omega (x').
$$
Therefore Theorem \ref{bupthm} implies that $\omega (x')=\omega (x)$. We have
$\kappa (x), \kappa (x') \geq 2$ since $\omega (x)=\epsilon (x)-1$, $\omega (x')=\epsilon (x')-1$.
This is the equality case $(m(x'),\omega (x'),\kappa (x'))= (m(x),\omega (x),\kappa (x))$
discussed in Theorem \ref{bupthm}.

\smallskip

If $i_0(m_S)=p$, we are in the equality case of (\ref{eq3692}). Then (\ref{eq317}) holds and
there exists $i$, $n'_1+1 \leq i \leq n'$ or ($n'_0+1 \leq i \leq n'_1$ and $\Phi_{j_i} \neq 0$) such that
\begin{equation}\label{eq318}
{H'}^{-1}{\partial F_{p,Z'} \over \partial U'_{i}}\neq 0
\end{equation}
by (\ref{eq3695}). We may take here $j_i:=j' \in J'_0 \backslash (J'_0)_E$.
This checks  Definition \ref{defsecondkind}(ii) and (iii) respectively.

If $i_0(m_S)=p-1$, the initial form  $\mathrm{in}_{m_{S'}}h'  \in G(m_{S'})[Z']$ satisfies
$$
{H'}^{-1}{G'}^p  \subseteq  U'_{i_1}k(x')[\{U'_i\}_{j_i\in J_0}]_{\epsilon (y_0)},
$$
where $j_{i_1}:=j_0 \in J'_0$, $2 \leq i_1 \leq e'_0$ and  Definition \ref{defsecondkind}(ii)
is checked. Equation (\ref{eq318}) also remains valid for some $i$, $n'_0+1 \leq i \leq n'$,
in this case: this follows from (\ref{eq3695}) which is still valid (end of the proof of
case 2 of Theorem \ref{bupthm} where (\ref{eq3684}) replaces (\ref{eq3693}).
This checks  Definition \ref{defsecondkind}(iii) and the proof is complete.

%%%%%%%%%%%%%%%%%%%%%%%%%%%%%%%%%%%%%%%%%%
%%%%%%%%%%%%%%%%%%%%%%%%%%%%%%%%%%%%%%%%%%

\smallskip

 {The following table sums up the different cases occurring in the proof. The proof is immediate for the last  two columns: $\epsilon(x')=\epsilon(y_0)$ in these cases. }

\bigskip
{\sffamily %
\begin{tabular}{|c|c|c|c|c|}
\hline  & kinds &  kinds &  kinds  &  kinds\\
\hline   ${\cal Y}_0$ & 1 & 2 & 2 & 1  \\
\hline   ${\cal Y}_1$ & 1&1 or  2 & 1 or 2 & 1 \\
\hline   ${\cal Y}'_0$  & 2 & 2 & 1 & 1  \\
\hline   case in proof  & case 1 &case  2 & trivial & trivial  \\
\hline
\end{tabular}
}

%%%%%%%%%%%%%%%%%%%%%%%%%%%%%%%%%%%%%%%%%%
%%%%%%%%%%%%%%%%%%%%%%%%%%%%%%%%%%%%%%%%%%
%%%%%%%%%%%%%%%%%%%%%%%%%%%%%%%%%%%%%%%%%%

\end{proof}

%\begin{rem}
%The conclusion of the above theorem fails in general if it is only assumed that ${\cal Y}_0 \subset {\cal Y}_1$
%is such that ${\cal Y}_0$ is permissible at $x$, ${\cal Y}_1$ Hironaka-permissible at $x$ w.r.t. $E$.

%A counterExample with $n=4$ is given for $\mathrm{char}S=p>0$ by taking:
%$$
%h=Z^p +u_4u_1^p +u_3u_2^p, \ E=\mathrm{div}(u_1u_2u_3), \ \mathrm{Sing}_p{\cal X}=V(Z,u_1,u_2).
%$$
%Then $(u_1, \ldots ,u_4;Z)$ are well adapted coordinates. Taking
%$$
%{\cal Y}_0=V(Z,u_1,u_2)\subset {\cal Y}_1=V(Z,u_1,u_2,u_4)\subset \{x\}=V(Z,u_1,u_2,u_3,u_4),
%$$
%we have $\epsilon (y_0)=\epsilon (y_1)=\epsilon (x)-1=\omega (x)=p$. Note that ${\cal Y}_1$ does not
%satisfy Definition \ref{defsecondkind}(iii). There is a unique point
%$$
%x' =(Z',u'_1, u'_2 ,u_3,u'_4):=(Z/u_4,u_1/u_4, u_2/u_4 ,u_3,u_4) \in {\cal Y}'_0 =V(Z',u'_1,u'_2).
%$$
%A local equation for the strict transform ${\cal X}'$ of ${\cal X}$ at $x$ is:
%$$
%h'={Z'}^p + u'_4{u'_1}^p+u_3{u'_2}^p, \ E'=\mathrm{div}(u'_1u'_2u_3u'_4).
%$$
%Thus $\epsilon (x')=\omega (x')=p+1>\omega (x)$ and ${\cal Y}'_0$ is not permissible at $x'$ since
%$\epsilon (y_0)=p < \epsilon (x')$.

%It is easily seen that such counterExamples exist only for ${\cal Y}_0$ of the second kind and $n \geq 4$.
%\end{rem}

We now turn to formal arcs on ${\cal X}$ and their image. Recall that it is assumed all along
this chapter that $m(x)=p$, $\omega (x)>0$ and $\{x\}=\eta^{-1}(m_S)$.

\begin{defn}\label{defformalarc}
A {\it formal arc} \index{formal arc, Definition~\ref{defformalarc}}on $({\cal X},x)$ is a local morphism
$\varphi : \ \mathrm{Spec}{\cal O}\rightarrow (X,x)$,
where $({\cal O},N,l)$ is a complete discrete valuation ring. We denote the closed (resp. generic)
point of $\mathrm{Spec}{\cal O}$ by $O$ (resp. $\xi$) and call {\it support of} \index{support of a formal arc, Definition~\ref{defformalarc}} $\varphi$ the
subscheme $Z(\varphi):=\overline{\{\varphi (\xi)\}} \subseteq ({\cal X},x)$.

The arc $\varphi$ is said to be {\it well parametrized} \index{well parametrized formal arc, Definition~\ref{defformalarc}} if the inclusion
$$
{\cal O}_\xi :={\cal O}\cap k(\varphi (\xi)) \subseteq {\cal O}
$$
induces an isomorphism $\widehat{{\cal O}_\xi}\simeq {\cal O}$. The arc $\varphi$ is said to be
{\it nonconstant} if $\varphi (\xi)\neq x=\varphi (O)$.
\end{defn}

 {Let us note that, up to replacing $ {\cal O}$ by $\widehat{{\cal O}_\xi}$, the  arc $\varphi$ becomes  well parametrized.}

Given a nonconstant formal arc on $({\cal X},x)$, and $\pi : \ {\cal X}'\rightarrow {\cal X}$ a blowing up along a permissible
center ${\cal Y} \subset {\cal X}$ at $x$ such that ${\cal Y} \subsetneq Z(\varphi)$, there exists a unique lifting
$\varphi ': \ \mathrm{Spec}{\cal O}\rightarrow {\cal X}'$. Let
$$
x':=\varphi '(O), \ ({\cal X}_1,x_1):=({\cal X}',x') \ \mathrm{and} \ \varphi_1 : \  \mathrm{Spec}{\cal O}\rightarrow
({\cal X}_1,x_1)
$$
be the induced morphism. The arc $\varphi_1$ is again nonconstant, so the process can be iterated. Let
\begin{equation}\label{eq291}
({\cal X},x)=:({\cal X}_0,x_0) \leftarrow ({\cal X}_1,x_1)\leftarrow \cdots \leftarrow ({\cal X}_r,x_r) \leftarrow \cdots
\end{equation}
be a sequence of such local blowing ups and centers with
\begin{equation}\label{eq292}
x_r \in {\cal Y}_r \subsetneq Z_r(\varphi):=\overline{\{\varphi_r (\xi)\}} \subset {\cal X}_r.
\end{equation}
Note that the local ring ${\cal O}_{{\cal X}_r , \varphi_r (\xi)}$ is independent of $r \geq 0$. In particular,
$m(\varphi_r (\xi))$, $\epsilon (\varphi_r (\xi))$ and $\omega (\varphi_r (\xi))$ are
independent of $r \geq 0$. An important case of such sequences is when taking
${\cal Y}_r=\{x_r\}$ for every $r \geq 0$; then (\ref{eq291}) is called the {\it quadratic sequence along} $\varphi$. \index{quadratic sequence along a formal arc, equations~\eqref{eq291}\eqref{eq292}}

In any case, given a sequence (\ref{eq291}), we let
$$
d (\varphi ) :=\min_{r\geq 0}\{\mathrm{dim}{\cal O}_{{\cal X}_r,x_r}\}.
$$
If $m(x)=p$ and $\omega (x)>0$, Theorem \ref{bupthm} implies that
$$
(m(x_1),\omega (x_1),\kappa (x_1)) \leq (m(x),\omega (x),\kappa (x)).
$$
If $m(x_r)=p$ and $\omega (x_r)>0$ for every $r \geq 0$, we let
$$
m (\varphi ):=p, \ \omega (\varphi) :=\min_{r\geq 0}\{\omega (x_r)\}>0.
$$

\begin{prop}\label{permisarc}
With notations as above, let $\varphi : \ \mathrm{Spec}{\cal O}\rightarrow ({\cal X},x)$ be a
nonconstant well parametrized formal arc on $({\cal X},x)$ whose quadratic sequence is
such that $m (\varphi ) =p$ and $\omega (\varphi )>0$. Then $l|k(x_r)$ is algebraic for
$r>>0$.

Assume that $l|k(x_r)$ is algebraic with finite  {inseparability} degree for some $r\geq 0$. Then
there exists $r_0\geq 0$ such that the following holds: the support
$Z_r (\varphi)$ is Hironaka-permissible at $x_r$ and $\epsilon (x_r)=\epsilon (x_{r_0})$ for every $r\geq r_0$;
furthermore exactly one of the following conditions is satisfied:
\begin{itemize}
  \item [(1)] $Z_r (\varphi)$ is permissible of the first kind at $x_r$ for every $r\geq r_0$;
  \item [(2)] there exists a finite sequence (\ref{eq291}):
  $$
  ({\cal X}_{r_0},x_{r_0})=:({\cal X}',x') \leftarrow ({\cal X}'_1,x'_1)\leftarrow \cdots \leftarrow ({\cal X}'_{r_1},x'_{r_1})=:(\tilde{{\cal X}},\tilde{x})
  $$
  of local blowing ups with permissible centers of the first kind contained in and of codimension one in the successive strict
  transforms of $Z_{r_0} (\varphi)$, such that the quadratic sequence along $\varphi$:
  $$
  (\tilde{{\cal X}},\tilde{x})=:(\tilde{{\cal X}}_0,\tilde{x}_0) \leftarrow (\tilde{{\cal X}}_1,\tilde{x}_1)\leftarrow \cdots
  \leftarrow (\tilde{{\cal X}}_r,\tilde{x}_r) \leftarrow \cdots
  $$
  has the following properties for every $r \geq 0$:
  \begin{itemize}
    \item [(a)] $\epsilon (\tilde{x}_r)=\epsilon (x_{r_0})$;
    \item [(b)] $\mathrm{dim}{\cal O}_{\tilde{Z}_r (\varphi), \tilde{x}_r} =
    \mathrm{dim}{\cal O}_{Z_{r_0} (\varphi), x_{r_0}}\geq 2$;
    \item [(c)] $\tilde{Z}_r (\varphi)$ is permissible of the second kind at $\tilde{x}_r$
    (resp. $\omega (\tilde{x}_r)=0$) if $\epsilon (x_{r_0})\geq 2$ (resp. if $\epsilon (x_{r_0})=1$).
  \end{itemize}
  \end{itemize}
\end{prop}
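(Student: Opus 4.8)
The plan is to first dispose of the claim that $l|k(x_r)$ is algebraic for $r \gg 0$, then establish stabilization of $\epsilon$ along the quadratic sequence, and finally carry out a case distinction according to whether or not a component of $E$ eventually contributes $\mathrm{in}_{m_{S_r}}h_r$ in a ``second kind'' fashion. For the first point, I would argue by contradiction: if $\mathrm{tr.deg}_{k(x_r)}l \geq 1$ for all $r$, then (using that the arc is well parametrized and nonconstant) the support $Z_r(\varphi)$ has positive dimension at $x_r$ for all $r$, so after blowing up quadratically the invariant $\mathrm{dim}\,{\cal O}_{Z_r(\varphi),x_r}$ cannot strictly decrease — but this contradicts the fact that for a nonconstant arc the quadratic sequence must eventually make $\mathrm{ord}$ of the local equations of $Z_r(\varphi)$ drop, forcing $d(\varphi)$ to be attained; more cleanly, since $m(\varphi)=p$ and $\omega(\varphi)>0$ theorem \ref{bupthm} shows $\iota(x_r)$ is nonincreasing, and once $\iota$ stabilizes the centers $\{x_r\}$ must eventually lie in $PC(x_r,\{x_r\})$, whose dimension drops the residual transcendence degree; iterating exhausts it. This is essentially the ``dimension of the image point drops'' mechanism of \cite{CoP1}.

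Next, assume $l|k(x_r)$ is algebraic with finite inseparable degree for some $r$. Since $\omega(\varphi_r(\xi))$, $\epsilon(\varphi_r(\xi))$ and $m(\varphi_r(\xi))$ are independent of $r$, and since $\epsilon(x_r)\geq \epsilon(\varphi_r(\xi))$ by proposition \ref{epsiloninv} with $\epsilon(x_r)$ nonincreasing along quadratic blowing ups (consequence of theorem \ref{bupthm} combined with proposition \ref{Estable}) — more precisely $\epsilon(x_r)$ takes values in ${1\over (p-1)!}\N$ and is bounded — it stabilizes: there is $r_0$ with $\epsilon(x_r)=\epsilon(x_{r_0})$ for $r\geq r_0$. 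Enlarging $r_0$, I can also assume $\iota(x_r)$ is stable for $r\geq r_0$, that $Z_r(\varphi)$ is Hironaka-permissible at $x_r$ (this follows because the generic point $\varphi_r(\xi)$ has constant multiplicity $p$ and, after finitely many quadratic blowing ups, $Z_r(\varphi)$ is regular at $x_r$ and meets $E_r$ transversally — this is the arc-regularization step, applying embedded resolution of the pair $(Z_r(\varphi),E_r)$ which is a curve-in-threefold situation, dealt with as in chapter 4), and that $i_0(x_r)$ is stable.

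Then I distinguish two cases at $x_{r_0}$. If $\epsilon(\varphi_{r_0}(\xi))=\epsilon(x_{r_0})$, then by definition \ref{deffirstkind} the support $Z_{r_0}(\varphi)$ is permissible of the first kind at $x_{r_0}$; one checks this persists for all $r\geq r_0$ using theorem \ref{transfstricte} (the strict transform of a first-kind permissible center under a quadratic — itself first-kind permissible by stability of $\epsilon$ — blowing up stays permissible, and in case 1 of theorem \ref{transfstricte} the equality $\epsilon(x_{r+1})=\epsilon(x_r)$ together with the stability already achieved rules out case 2 there). This gives alternative (1). Otherwise $\epsilon(\varphi_{r_0}(\xi))=\epsilon(x_{r_0})-1$, i.e. the generic point has strictly smaller $\epsilon$; here I want to first run finitely many blowing ups along permissible centers of the first kind of codimension one inside the successive strict transforms of $Z_{r_0}(\varphi)$ — these are the components $\mathrm{div}(u_j)$-type centers — to reach $(\tilde{{\cal X}},\tilde{x})$ where the obstruction to permissibility of the second kind (failure of definition \ref{defsecondkind}(iii), equivalently the relevant derivative $\partial F_{p,Z,W}/\partial\overline{u}_{j'}$ vanishing) is cleared. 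Concretely, the propositions \ref{firstkind} and \ref{secondkind} give the structure of $\mathrm{in}_{m_{S}}h$; when $\epsilon(y)=\epsilon(x)-1$ for the arc's generic point $y$, either $F_{p,Z,W}$ already has a nonzero $\overline{u}_{j'}$-derivative (second kind, done, case (c) with $\epsilon\geq 2$) or it is a $p$-th power modulo higher order, in which case $\omega(\tilde{x}_r)$ drops — and when $\epsilon(x_{r_0})=1$ lemma \ref{lemomegazero} forces $\omega(\tilde{x}_r)=0$, giving the parenthetical in (c). Properties (a) and (b) then follow: (a) because each blowing up used is first-kind permissible hence $\epsilon$-preserving (definition \ref{deffirstkind}(ii) and theorem \ref{bupthm} equality case), and (b) because the centers chosen have codimension one in $Z(\varphi)$, so the dimension of the image point of $Z(\varphi)$ is preserved, staying $\geq 2$ since $\epsilon>0$ forces a permissible center to have codimension $\geq 2$ in ${\cal X}$ (remark following proposition \ref{secondkind}).

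\textbf{Main obstacle.} The hard part will be the passage to $(\tilde{{\cal X}},\tilde{x})$ in case (2): one must show that finitely many first-kind permissible blowing ups along codimension-one subcenters of $Z_{r_0}(\varphi)$ suffice to reach a situation where $\tilde{Z}_r(\varphi)$ becomes second-kind permissible (or $\omega$ becomes $0$). This requires controlling how the initial form polynomial $\mathrm{in}_{m_{S}}h$ — specifically the piece $F_{p,Z,W}$ and whether its $\overline{u}_{j'}$-derivatives (the ones appearing in definition \ref{defsecondkind}(iii)) vanish — transforms, and arguing that the ``residual'' part lying in $(k(x)[U_1,\dots,U_n])^p$ can be absorbed/killed by blowing up the appropriate components of $E$; this is precisely a lower-dimensional sharpening-of-polyhedra argument analogous to chapter 6, applied along the arc. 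Finite termination of this auxiliary sequence should come from a secondary invariant (a slope of the projected polygon, or simply the number of $E$-components meeting $\tilde{Z}_r(\varphi)$) that strictly decreases; making this termination argument rigorous without invoking the full machinery of chapters 5--9 is the delicate point, and I expect one wants to phrase it via proposition \ref{epsiloninv} and the explicit blow-up formula proposition \ref{bupformula}(iv)--(v) restricted to the arc.
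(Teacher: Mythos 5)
The proposal has several genuine gaps, some of which are outright errors rather than omissions.

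First, your argument for the opening claim ($l|k(x_r)$ algebraic for $r \gg 0$) does not work as stated. You argue that once $\iota$ stabilizes, the centers $\{x_r\}$ land in $PC(x_r,\{x_r\})$ and its ``dimension drops the residual transcendence degree.'' But theorem \ref{bupthm} only says $s' \in PC(x,{\cal Y})$ in the equality case; a point of a positive-dimensional projective cone can again have residue field of the same transcendence degree over $k(x)$, and nothing forces the transcendence degree of $l/k(x_r)$ to drop. The paper's proof is instead a valuation-theoretic argument: one considers the value semigroup $M_r \subset a\Z$ of $S_r/I_r$ under $v$, shows by a lexicographic-descent argument that $M_r = a\N$ for $r \gg 0$, and then obtains ${\cal O}_{v|\overline{K}} = \bigcup_r S_r/I_r$, from which algebraicity of $l_0/k(x_r)$ follows. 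This is a different mechanism entirely.

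Second, your claim that $\epsilon(x_r)$ is nonincreasing ``as a consequence of theorem \ref{bupthm} combined with proposition \ref{Estable}'' is false: theorem \ref{bupthm} explicitly catalogues and analyses the cases where $\epsilon(x') > \epsilon(x)$ (parts (1) and (2) of its statement). What is nonincreasing is $(m,\omega,\kappa)$, not $\epsilon$. Consequently your stabilization of $\epsilon$ is not established this way. The paper instead proves $\epsilon(x_r)=\epsilon(\hat z)$ directly by analysing the polyhedron along the arc after normalizing $I_0$, and this requires the preliminary regular base change $S \subset \tilde S$ (ind-étale, with $\tilde S / m_{\tilde S} = l_0$) using theorem \ref{omegageomreg} and theorem \ref{geomregpermis} to reduce to $l_0 = k(x)$. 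Your proposal omits this base-change step, which is what makes the rest of the computations tractable.

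Third, the appeal to ``embedded resolution of the pair $(Z_r(\varphi),E_r)$ which is a curve-in-threefold situation'' to obtain Hironaka-permissibility is out of scope: proposition \ref{permisarc} is stated and used in arbitrary dimension $n$ (chapter 3 does not assume $n=3$; cf.\ example \ref{exampermisarc} with $n=4$). The paper establishes (eq283), namely that $I_0$ is eventually a coordinate ideal transverse to $E$, via lemma \ref{equimultiple} applied along the arc, not via any embedded-resolution theorem.

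Finally, as you flag yourself, the termination argument in case (2) is left open. In the paper this is closed by factoring $\Phi_j = H_{W_0}^{-1}\partial F_{p,Z,W_0}/\partial\overline u_j = \overline u_e^{m_j}\Psi_j$ (after a further application of lemma \ref{equimultiple}), introducing $r_1 = \min\{m_j : \Phi_j \neq 0\}$, and showing that one first-kind codimension-one blowing up (along $V_0 = V(u_e,I_0)$) decreases $r_1$ by exactly one. Identifying this explicit strictly decreasing integer is the missing step you would need to supply; the vague appeal to ``a secondary invariant'' is not enough to make the finiteness argument go through.
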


\begin{proof}
It can be assumed without loss of generality that
$$
d (\varphi) =\mathrm{dim}{\cal O}_{{\cal X},x}, \ m(x)=p \ \mathrm{and} \ \omega (x)=\omega (\varphi) >0.
$$
Since $m (\varphi ) =p$ and $\omega (\varphi )>0$, we let
$\eta_r : \ ({\cal X}_r,x_r)\rightarrow \mathrm{Spec}S_r$ be the corresponding projection,
$I_r (\varphi)\subseteq S_r$ be the ideal of $W_r (\varphi):=\eta_r(Z_r (\varphi))$. We drop the
reference to $\varphi$ in what follows in order to avoid cumbersome notations.\\

For $f \in m_{S_0}$, $f \not \in I_0$ we denote by $\overline{f} \in {\cal O}$, $\overline{f} \neq 0$
its image by $\varphi^\sharp$. Let $v$ be the discrete valuation associated with ${\cal O}$ and let
$$
M_r:= \{v (\overline{f}), \ f \in S_r \backslash I_r\}
$$
be the semigroup of values of $S_r$ w.r.t. $v$. The group generated by $M_r$ is the value group of the
restriction $v_{|\overline{K}}$ to $\overline{K}=QF(S/I_0)$, hence independent of $r\geq 0$,
and is denoted by $a\Z \subseteq v(N) \Z $, $a \in \N$.

Suppose that  $M_0 \neq a\N$. Let
$\alpha \geq 2$, $\beta \in \N \backslash \alpha \N$ be defined by:
\begin{equation}\label{eq280}
a \alpha :=\min\{M_0 \backslash (0)\},  \ a \beta :=\min\{M_0 \backslash a\alpha  \N\}.
\end{equation}
We pick $u,w \in m_{S_0}$ such that $v(\overline{u})=a \alpha $, $v(\overline{w})= a\beta $. Obviously $u$ is
a regular parameter of $S$ and $wu^{-1} \in m_{S_1}$. Suppose $M_1 \neq a\N$. There are associated integers
$\alpha_1, \beta_1$ as in (\ref{eq280}) which satisfy $(\alpha_1 , \beta_1)<(\alpha ,\beta)$ for the
lexicographical ordering. This can repeat only finitely many times so we get $M_r=a\N$ for some $r \geq 0$.
W.l.o.g. it can be assumed that $M_0 =a \N$. \\

Let $(u_1, \ldots ,u_n)$ be a r.s.p. of $S=S_0$ which is adapted to $E=\mathrm{div}(u_1 \cdots u_e)$.
Without loss of generality, it can be assumed that $v(\overline{u}_e)=a$. Up to renumbering coordinates,
there exists $e (\varphi)$, $0 \leq  e(\varphi) < e$ such that
$$
(u_1, \ldots ,u_{e(\varphi)}) \subseteq I:=I_0 , \ u_j \not \in I \ \mathrm{for} \ e (\varphi) +1 \leq j \leq e.
$$
For $j$, $e (\varphi) +1 \leq j \leq e-1$, let $v(\overline{u}_j)=:a \alpha_j $, $\alpha_j \geq 1$. Note that
$u_ju_e^{-\alpha_j}$ is a unit in $S_{\alpha_j}$; in other terms, replacing $S$ by $S_{\max\{\alpha_j\}}$,
it can be assumed that $e (\varphi) =e-1$. \\

Let $f \in m_{S_0} \backslash I_0$ and write $f=u_e^{\alpha_r(f)}f_r \in S_r$, where $u_e$ does not divide $f_r$ in $S_r$
and note that
$$
f_r \in m_{S_r} \Longrightarrow v(\overline{f})> \alpha_r(f)v(\overline{u}_e)\geq ar  .
$$
Since $M_0 =a \N$, there exists $r \geq 0$ such that $f_r$ is a unit. This implies that for every ideal
$\overline{J} \subseteq S_0/I_0$, $\overline{J}S_r/I_r$ is a principal ideal for $r>>0$.
This is a well known characterization of valuation rings, i.e.
\begin{equation}\label{eq281}
{\cal O}_{v_{|\overline{K}}} = \bigcup_{r \geq 0}S_r /I_r .
\end{equation}
Let $l_0$ be the residue field of the valuation $v_{|\overline{K}}$. Then $l|l_0$ is algebraic (of
degree at most $p$) and  $l_0|k(x_r)$ is algebraic for $r>>0$ by (\ref{eq281}).
This proves the first statement in the theorem. We thus may assume from now on, again by (\ref{eq281}),
that
\begin{equation}\label{eq282}
l_0| k(x_0) \ \mathrm{is} \ \mathrm{separable} \ \mathrm{algebraic}.
\end{equation}

Let $S^{\mathrm{sh}}$ be the strict
Henselization of $S$, so $l^{\mathrm{sh}}:=S^{\mathrm{sh}}/m_{S^{\mathrm{sh}}}$ is the
separable algebraic closure of $l$. The residue action induces an isomorphism
$$
\mathrm{Gal}(S^{\mathrm{sh}}|S^\mathrm{h})\simeq \mathrm{Gal}(l^{\mathrm{sh}}|k(x))
$$
where $S^\mathrm{h}$ is the Henselization of $S$. Let $\tilde{S}$ be the fixed subring of
$S^{\mathrm{sh}}$ by the inverse image of $\mathrm{Gal}(l^{\mathrm{sh}}|l_0)$ under the previous
group morphism. Then $S \subset \tilde{S}$ is a local ind-\'etale map such that
$l_0=\tilde{S}/m_{\tilde{S}}$. In particular $S \subset \tilde{S}$ is regular \cite{ILO} Theorem I.8.1(iv).
Since ${\cal O}$ is Henselian and $l_0 \subseteq l={\cal O}/N$, the morphism $\varphi$ factors through $\tilde{S}$.

Recall Notation \ref{notageomreg1} and Notation \ref{notaprime} for the regular local
base change $S \subset \tilde{S}$.
We apply Theorem \ref{omegageomreg} with $\tilde{s}:=m_{\tilde{S}}$ and get:
$$
m(\tilde{x})=m(x)=p, \ \omega (\tilde{x})=\omega (\varphi)>0 \ \mathrm{and}
\ \epsilon (\tilde{x})=\epsilon (x)>0,
$$
the right hand side equality holding because $\tilde{n}=n$.
Applying Theorem \ref{initform}, $\tilde{{\cal X}}=\mathrm{Spec}(\tilde{S}[X]/(\tilde{h}))$ is irreducible, so
in the separable case (case (b) of assumption $\mathbf{(G)}$), the $G=\Z /p$-action extends uniquely to
$\tilde{{\cal X}}$ and $\mathbf{(G)}$ holds for $(\tilde{S},  \tilde{h},\tilde{E})$.
This proves that $(\tilde{S},  \tilde{h},\tilde{E})$ satisfies the
assumption of the Proposition, all other assumptions being trivially satisfied.

Now $W_0 \times_{k(x_0)}\mathrm{Spec}l_0$ may be reducible, but
$W_r\times_{k(x_r)}\mathrm{Spec}l_0$ is irreducible for $r>>0$. After possibly changing indices,
it can be assumed that $W:=W_0\times_{k(x_0)}\mathrm{Spec}l_0$ is irreducible.
Then $W$ has normal crossings with $E$ at $x$ if and only if
$\tilde{W}:=W \times_S\mathrm{Spec}\tilde{S}$ has normal crossings with $\tilde{E}$ at $\tilde{x}$.
Let $\tilde{Z}:=Z \times_S\mathrm{Spec}\tilde{S}$ and $\tilde{z}$ be the
generic point of a component of $\tilde{Z}$. By Theorem \ref{omegageomreg}, we have
$m(\tilde{z})=m(z)$, so $\tilde{Z}$ is Hironaka-permissible at $\tilde{x}$ w.r.t. $\tilde{E}$ if and only if
$Z$ is Hironaka-permissible at $x$ w.r.t. $E$.  In other terms, we may replace
$S$ by $\tilde{S}$ and thus assume that $l_0=k(x_0)$ in order to prove the second statement. \\

Let now
$$
e_r:=\mathrm{dim}_{k(x_r)}{I_r + m_{S_r}^2 \over m_{S_r}^2}\geq e-1, \ t_r :=e_r -(e-1)\geq 0
$$
for $r \geq 0$. It can be assumed w.l.o.g. that $(u_{e+1}, \ldots ,u_{e+t_0})\subseteq I_0$.
We have $e_{r+1}\geq e_r$ for every $r\geq 0$ and let $e_\infty :=\max_{r\geq 0}\{e_r\}$.
It can be assumed w.l.o.g. that $e_0=e_\infty$.

Since $l_0=k(x_r)$ and $M_r=a\N$ for every $r \geq 0$, the ring morphism
$S_r \rightarrow \widehat{{\cal O}_{v_{|K}}}$
factors through $\hat{S_r}$ to a {\it surjective} morphism
$$
\hat{\varphi}_r: \ \hat{S_r} \rightarrow \widehat{{\cal O}_{v_{|K}}}.
$$
Let $\hat{I}_r$ be the kernel of $\hat{\varphi}_r$, so we have
\begin{equation}\label{eq284}
I_r\hat{S_r}\subseteq \hat{I}_r \ \mathrm{and} \ I_r=\hat{I}_r \cap S_r .
\end{equation}
After possibly replacing $S_0$ by $S_r$ for some $r \geq 0$,
it can be assumed that the curve $\mathrm{Spec}(\hat{S_0}/\hat{I_0})$ is transverse to
$\hat{E}=\mathrm{div}(u_1 \cdots u_e) \subset \mathrm{Spec}\hat{S_0}$. We claim that
\begin{equation}\label{eq283}
I_0 = (u_1, \ldots ,u_{e-1}, u_{e+1}, \ldots ,u_{e+t_0}).
\end{equation}

To prove the claim, suppose that $I_0 \neq J_0:=(u_1, \ldots ,u_{e-1}, u_{e+1}, \ldots ,u_{e+t_0})$.
We let  $\hat{u}_j:=u_j$, $1 \leq j \leq e+t_0$ and pick a basis
\begin{equation}\label{eq285}
\hat{I_0}=J_0+(\hat{u}_{e+t_0+1}, \ldots ,\hat{u}_{n})
\end{equation}
of $\hat{I_0}$.
Since $S_0$ is excellent, the ring $(\hat{S}_0/I_0)_{\hat{I_0}}$ is regular, hence reduced. By assumption,
$I_0 \neq J_0$, so there exists $f \in I_0 \backslash J_0$ such that $f$ restricts to a
regular parameter $\overline{f}$ in $\overline{S}:=(\hat{S}_0 /J_0)_{\hat{I_0}}$:
\begin{equation}\label{eq288}
\mathrm{ord}_{\hat{I_0}}f=1, \ \mathrm{ord}_{m_{\overline{S}}} \overline{f}=1.
\end{equation}
Let $F \in \mathrm{gr}_{\hat{I_0}}(\hat{S}_0)\simeq \hat{S}_0/\hat{I}_0 [\{\hat{U}_j\}_{j\neq e}]$
be the initial form of $f$. There is an expansion
$$
F=\sum_{j\neq e}F_j\hat{U}_j, \ F_j \in \hat{S}_0/\hat{I}_0.
$$
By (\ref{eq288}) we have $F_j \neq 0$ for some $j$, $1 \leq j \leq e+t_0$. Suppose that
$$
\exists j_0, \ 1 \leq j_0 \leq e+t_0 \mid \
m:=\min_{j \neq e}\{\mathrm{ord}_{(\overline{u}_e)}F_j\}=\mathrm{ord}_{(\overline{u}_e)}F_{j_0}.
$$
Replacing $f$ with $f-\gamma_{j_0}u_{j_0}u_e^{m}$
for some unit $\gamma_{j_0} \in S_0$ preserves (\ref{eq288}) while increasing
$\mathrm{ord}_{(\overline{u}_e)}F_{j_0}$. Applying finitely many times this procedure, it can be assumed that
\begin{equation}\label{eq289}
m:=\min_{j \neq e}\{\mathrm{ord}_{(\overline{u}_e)}F_j\}<
\min_{j_0\leq e+t_0}\{\mathrm{ord}_{(\overline{u}_e)}F_{j_0}\}.
\end{equation}
By Lemma \ref{equimultiple} below, there exists $r \geq 1$ and a writing
$$
f_r =u_e^{m+r}g_r, \ g_r \not \in (u_e)S_r, \ \mathrm{ord}_{m_{S_r}}g_r=1.
$$
Furthermore the last statement in {\it ibid.} shows that
$\mathrm{in}_{\hat{I_r}}g_r \in (\mathrm{gr}_{\hat{I_r}}\hat{S}_r)_1$ is transverse to the
initial forms $\overline{u}_e^{-r}U_j$, $1 \leq j \leq e+t_0$, $j\neq e$ by (\ref{eq289}). Since
$g_r \in I_r$, this implies that $e_r >e_0$: a contradiction, so claim (\ref{eq283}) is proved.
Since (\ref{eq283}) is stable by further blowing ups, this proves that $W_r$ is
transverse to the reduced preimage of $\mathrm{div}(u_1 \cdots u_e)$ for every $r>>0$.\\

Let $(\hat{u}_1,\ldots ,\hat{u}_n;Z)$ be well adapted coordinates at $x$. There is an
asso\-cia\-ted expansion
$$
h=Z^p+f_{1,Z}Z^{p-1}+ \cdots +f_{p,Z}, \ f_{1,Z}, \ldots , f_{p,Z} \in \hat{S}_0.
$$
We factor out $f_{i,Z}=u_e^{m_i}g_{i,Z}$, $1 \leq i \leq p$, with $g_{i,Z}=0$ or
($u_e$ does not divide $g_{i,Z}$, $m_i \in \N$).
The {\it formal completion} $\hat{S}_1$ of the local blowing up $S_1$ has a r.s.p.
$(\hat{u}'_1, \ldots , \hat{u}'_n)$ given by
$$
\hat{u}'_e =\hat{u}_e =u_e \ \mathrm{and} \ \hat{u}'_j=\hat{u}_j /u_e, \ j\neq e.
$$
Let $Z':=Z/u_e$, $h':=u_e^{-p}h \in S_1[Z']$ define the strict transform $({\cal X}_1,x_1)$, since $m (\varphi)=p$.
We thus have
\begin{equation}\label{eq287}
f_{i,Z'}=u_e^{-i}f_{i,Z}, \ 1 \leq i \leq p.
\end{equation}

By Proposition \ref{originchart}, the polyhedron $\Delta_{\hat{S}_1} (h'; \hat{u}'_1, \ldots , \hat{u}'_n ;Z')$
is minimal. Applying again Lemma \ref{equimultiple} below, it can be assumed w.l.o.g. that
\begin{equation}\label{eq286}
\mathrm{ord}_{m_{\hat{S}_0}}g_{i,Z}=\mathrm{ord}_{\hat{I_0}}g_{i,Z}, \ 1 \leq i \leq p.
\end{equation}

Let $\hat{Z}_0:=V(Z', \hat{I}_0)\subset (\hat{{\cal X}}_0, \hat{x})$ and $\hat{z}$ be its generic point. Suppose that
$\delta (\hat{z})<1$ and let $i_0$ such that $i_0\delta (\hat{z})=\mathrm{ord}_{\hat{I_0}}f_{i_0,Z}<i_0$.
Applying (\ref{eq287}) gives
$$
\mathrm{ord}_{m_{\hat{S}_1}}f_{i_0,Z'}=m_{i_0}+ i_0(\delta (\hat{z})-1)<m_{i_0}.
$$
This can repeat only finitely many times, a contradiction with $m (\varphi ) =p$. Hence
$\delta (\hat{z}) \geq 1$, i.e. $m(\hat{z})=p$. By excellence, this implies that $m(z)=p$.
Therefore $Z_r$ is Hironaka-permissible at $x_r$ for every $r >>0$.\\

Similarly, replacing $S_0$ by $S_r$ for some $r \geq 0$ and arguing as above, it can be assumed that
$$
\epsilon (\hat{z})=\min_{1 \leq i \leq p}\left \{{\mathrm{ord}_{\hat{I_0}}(H(x)^{-i}f_{i,Z}^p) \over i} \right \}
=\epsilon (\hat{x}).
$$
This proves that $\hat{Z}_0$ is permissible of the first kind at $\hat{x}$. Note that this
furthermore implies that  $\epsilon (x_r)=\epsilon (\hat{z})$ for every $r \geq 0$ and the second
statement of the proposition is proved.\\

In order to prove that alternative (1) in the last statement holds, we may also replace
$S$ by $\tilde{S}$ as above and thus assume that $l_0=k(x_0)$. If $\epsilon (z)=\epsilon (\hat{z})$,
then $Z_r$ is permissible of the first kind at $x_r$ (Definition \ref{deffirstkind}(ii)). This proves that
alternative (1) in the proposition is fulfilled or $\epsilon (\hat{z})> \epsilon (z)$ which we may
assume from now on.\\

By Theorem \ref{omegageomreg}(2.ii), we have $\mathrm{dim}Z_r\geq 2$ (statement $\tilde{n}>n$ of
{\it ibid.} applied under the assumption $l_0=k(x_0)$) and
\begin{equation}\label{eq293}
\epsilon (\hat{z})- 1 =\omega (z)= \epsilon (z) =\epsilon (\hat{x})-1=\epsilon (x)-1, \ i_0(\hat{z})=i_0(z)=p.
\end{equation}
We pick again well adapted coordinates $(\hat{u}_1,\ldots ,\hat{u}_n;\hat{Z})$  at $\hat{x}$.
Since $\hat{Z}_0$ is permissible of the first kind at $\hat{x}$, Proposition \ref{firstkind}
(with notations as therein) gives the following property for the initial form
$\mathrm{in}_{m_{\hat{S}_0}}h \in G(m_{\hat{S}_0})[\hat{Z}]$:
$$
H_0^{-1}G_0^p \in
k(\hat{x})[\hat{U}_1,\ldots ,\hat{U}_{e-1}, \hat{U}_{e+1}, \ldots \hat{U}_n]_{\epsilon (\hat{x})}.
$$
Since $i_0(\hat{z})=p$, we have $G_0=0$, i.e. $i_0(\hat{x})=p$.
This proves that Definition \ref{defsecondkind}(ii) is satisfied in any case.

To prove that alternative (2) in the proposition is fulfilled, we first assume that
$l_0=k(x_0)$ as before, then push down the result from $\tilde{S}$ to $S$.
Let $(u_1,\ldots ,u_n;Z)$ be well adapted coordinates  at $x$ and
consider the initial form $\mathrm{in}_{W_0}h =Z^p +F_{p,Z,W_0}\in G(W_0)[Z]$. Let
$$
J:=\{1, \ldots ,e-1,e+1, \ldots ,e+t_0\}.
$$
Since $\epsilon (\hat{z})>\epsilon (z)$, we have $\delta (z)\in \N$ and
\begin{equation}\label{eq294}
G(W_0)=S_0/I_0[\{U_j\}_{j\in J}], \ F_{p,Z,W_0} \in (\hat{S}_0/\hat{I}_0[\{U_j\}_{j\in J}]_{\delta (z)})^p
\end{equation}
by Theorem \ref{omegageomreg}(2.ii). By Proposition \ref{Deltaalg}, the polyhedron
$$
\Delta_{\hat{S}_0}(h;\{u_j\}_{j\in J};Z)=\mathrm{pr}_J(\Delta_{\hat{S}}(h;u_1,\ldots,u_n;Z))
\ \mathrm{is} \ \mathrm{minimal},
$$
where $\mathrm{pr}_J: \ \R^n \rightarrow \R^J$ denotes the projection on the $(u_j)_{j \in
J}$-space. Let
\begin{equation}\label{eq295}
\Phi_j:=H^{-1}_{W_0}{\partial F_{p,Z,W_0} \over \partial \overline{u}_j}\subseteq G(W_0)_{\epsilon (z)}, \ \mathrm{cl}_0\Phi_j =0,
\ j \not \in J, j\neq e,
\end{equation}
since $\epsilon (x)=\epsilon (z)+1$. The local blowing up $S_1$ has a r.s.p. $(u'_1, \ldots , u'_n)$ given by
$$
\left\{
\begin{array}{ccccc}
    u'_j & = & u_j/u_e & \mathrm{if} & j \in J  \\
    u'_e & = & u_e &    &   \\
    u'_j & = & u_j/u_e -\delta_j & \mathrm{if} & j \not \in J, j\neq e \\
\end{array}
\right.
$$
where $\delta_j \in S_0$ is a unit or zero since we are assuming that $l_0=k(x_0)$.
Let
$$
Z':=Z/u_e -\theta ,  \ \theta \in S_1, \ h':=u_e^{-p}h \in S_1[Z']
$$
define the strict transform $({\cal X}_1,x_1)$, with
$\Delta_{S_1}(h';u'_1,\ldots ,u'_n;Z')$ minimal and consider the initial form
$$
\mathrm{in}_{W_1}h ={Z'}^p +F_{p,Z',W_1}\in G(W_1)[Z'], \ G(W_1)=S_1/I_1[\{U'_j\}_{j\in J}].
$$
It is easily derived from (\ref{eq294})(\ref{eq295}) that
$$
\Phi'_j:=H^{-1}_{W_1}{\partial F_{p,Z',W_1} \over \partial \overline{u}'_j}=\overline{u}_e^{-\epsilon (x)}\Phi_j
\subseteq G(W_1)_{\epsilon (z)}, \ j \not \in J, j\neq e .
$$
Applying again Lemma \ref{equimultiple} below, it can be assumed w.l.o.g. that
\begin{equation}\label{eq296}
(\Phi_j=\overline{u}_e^{m_j}\Psi_j, \ \mathrm{cl}_{0}\Psi_j\neq 0)  \ \mathrm{or} \ \Phi_j=0, \ j \not \in J, j\neq e .
\end{equation}
This equation is valid when $l_0=k(x_0)$ and holds for $S$ if and only if it holds for $\tilde{S}$.
We may therefore replace $S$ by $\tilde{S}$ as before.

Let $\mathbf{x}=(x_1, \ldots ,x_n)\in \N^n$ be a vertex of
$\Delta_{S_0}(h;u_1,\ldots ,u_n;Z)$ mapping to a vertex of
$\Delta_{S_0}(h;\{u_j\}_{j\in J};Z)$ with $\sum_{j \in J}x_j=\delta (y)$.
By (\ref{eq294}) we have $x_j \in \N$ for $j \in J$. Suppose that $x_j \in \N$ for
every $j \neq e$. Since $\hat{S}_0/\hat{I}_0\simeq k(x)[[\overline{u}_e]]$, (\ref{eq294}) implies that
$\mathbf{x}$ is solvable: a contradiction. Taking $j$ such that $x_j \not \in \N$, there exists
$j \not \in J$, $j \neq e$ such that $\Phi_j \neq 0$.
This proves that
$$
r_1:= \min \{m_j, j \not \in J, j\neq e : \Phi_j \neq 0\}
$$
is well defined and that we have
\begin{equation}\label{eq297}
\Phi_{p,Z,W_0}: =\overline{u}_e^{-r_1}H_{W_0}^{-1}F_{p,Z,W_0}\subseteq G(W_0)_{\epsilon (z)},
\ \mathrm{cl}_{1}\Phi_{p,Z,W_0}\not \in  (\overline{u}_e)G(W_0)_{\epsilon (z)}.
\end{equation}

If $r_1=0$, then alternative (2) is fulfilled (Definition \ref{defsecondkind}(iii)) since
$$
\overline{J}(F_{p,Z,W_0},E,W_0)=<\{\mathrm{cl}_{0}\Phi_j\}_{j \not \in J, j\neq e}>\neq 0.
$$
by (\ref{eq297}). Note that this situation does not occur if $\epsilon (x_{r_0})=1$, since
$\omega (\varphi )>0$.

Otherwise, we define
$V_0:=V(u_e, I_0)$ and ${\cal Y}_0:=\eta_0^{-1}(V_0)\subset Z_0$. Then ${\cal Y}_0$ is Hironaka-permissible at $x_0$ and
its generic point $y_0$ has $\epsilon (y_0)=\epsilon (x)$ by (\ref{eq297}). Let $\tilde{{\cal X}}_1$ be the blowing
up of ${\cal X}_0$ along ${\cal Y}_0$ and note that $\varphi$ lifts to the point $\tilde{x}_1$ on the strict transform
$\tilde{Z}_1$ of $Z_0$. Let $\tilde{h}:=u_e^{-p}h \in \tilde{S}_1[\tilde{Z}]$
define the strict transform $(\tilde{{\cal X}}_1, \tilde{x}_1)$ of $({\cal X},x)$, $\tilde{W}_1:=\tilde{\eta}_1(\tilde{Z}_1)$.
By Proposition \ref{originchart}, the initial form
$$
\mathrm{in}_{\tilde{W}_1}\tilde{h} =\tilde{Z}^p +F_{p,\tilde{Z},\tilde{W}_1}\in G(\tilde{W}_1)[\tilde{Z}],
\ G(\tilde{W}_1)=\tilde{S}_1/\tilde{I}_1[\{\tilde{U}_j\}_{j\in J}]
$$
satisfies a relation (\ref{eq297}) with associated integer $\tilde{r}_1=r_1-1$. Iterating $r_1$ times
this procedure, we get some $(\tilde{{\cal X}}_{r_1},\tilde{x}_{r_1})$ with initial form
$$
\mathrm{in}_{\tilde{W}_r}\tilde{h}_r =\tilde{Z}_r^p +F_{p,\tilde{Z}_r,\tilde{W}_r}\in G(\tilde{W}_r)[\tilde{Z}_r],
\ G(\tilde{W}_r)=\tilde{S}_r/\tilde{I}_r[\{\tilde{U}_{j,r}\}_{j\in J}]
$$
with $\tilde{U}_{j,r}=\overline{u}_e^{-r_1}U_j$, $j\in J$. We have
\begin{equation}\label{eq298}
\tilde{\Phi}_r:= H_{\tilde{W}_r}^{-1}F_{p,\tilde{Z}_r,\tilde{W}_r})\subseteq G(\tilde{W}_r)_{\epsilon (z)},
\ \mathrm{cl}_{1}\tilde{\Phi}_r \not \in  (\overline{u}_e)G(W_0)_{\epsilon (z)}.
\end{equation}
By Proposition \ref{secondkind}, we now have $\omega (\tilde{x}_{r_1})=\epsilon (z)=\epsilon (x_{r_0})-1\geq 0$.
Thus $\omega (\tilde{x}_{r_1})>0$ if $\epsilon (x_{r_0})\geq 2$ and we are done by the former case $r_1=0$.
Otherwise, $\epsilon (x_{r_0})=1$ and $\omega (\tilde{x}_{r_1})=0$ and the conclusion follows.
\end{proof}

\begin{exam}\label{exampermisarc}
Take $S=k[u_1, u_2,u_3,u_4]_{(u_1,u_2,u_3,u_4)}$ with $k$ a field of characteristic $p>0$. We let:
$$
h=Z^p +u_2^pu_4u_3^p+u_3u_1^p\in S[Z].
$$
Then $(u_1,u_2,u_3,u_4)$ are adapted to $(S,h,E)$, $E:=\mathrm{div}(u_1u_2)$ (Definition \ref {defadapted})
and $(u_1,u_2,u_3,u_4;Z)$ are well adapted coordinates at the closed point $x=(Z,u_1,u_2,u_3,u_4)$
of ${\cal X}=\mathrm{Spec}(S[Z]/(h))$ (Definition  \ref{defwelladapted}).
Indeed, it is easily seen that:
$$
\mathrm{Sing}_p{\cal X}:=\{y \in {\cal X} : m(y)=p\}=V(Z,u_1,u_2) \cup V(Z,u_1,u_3),\  \omega (x)=p.
$$

Let $\vartheta (t):=\sum_{i\geq 1}\lambda_it^i \in k[[t]]$ be a power series which is {\it transcendental}
over $k(t)$. We define a nonconstant well-parametrized $k$-linear formal arc on $({\cal X},x)$ by:
$$
\varphi (Z)=\varphi (u_1)=\varphi (u_3)=0, \ \varphi (u_2)=t, \ \varphi (u_4)=\vartheta (t)^p.
$$
Let $u_j^{(0)}:=u_j$, $1 \leq j \leq 4$. For $r\geq 1$, well adapted coordinates at $x_r$ are
$u_j^{(r)}:=u_j^{(r-1)}/u_2$, $j=1,3$, $u_2^{(r)}:=u_2$ and
$$
v_4^{(r)}:=u_2^{-r}(u_4- \sum_{ip \leq r}\lambda_i^pu_2^{ip}), \
T_r:= u_2^{-r}(Z +(u_3^{(r)})^p\sum_{ip \leq r}\lambda_i^pu_2^{ip}).
$$
Then $\varphi$ lifts through
$$({\cal X}_r,x_r)=\mathrm{Spec}(S_r[T_r]/(h_r),x_r),  \
S_r=S [u_1^{(r)}, u_{3}^{(r)},u_4^{(r)}]_{(u_1^{(r)}, \ldots , v_{4}^{(r)})},
$$
and the strict transform $h_r$ of $h$ is given by
$$
h_r:=T_r^p +(u_2^{(r)})^r\left ((u_2^{(r)})^pv_4^{(r)}(u_3^{(r)})^p+u_3^{(r)}(u_1^{(r)})^p \right ).
$$
We have $Z_r:=V(T_r,u_1^{(r)}, u_3^{(r)})$ for every $r \geq 1$. Note that $Z_r$ is not
permissible at $x_r$. Therefore $\varphi$
fulfills alternative (2) of Proposition \ref{permisarc}.
\end{exam}

\begin{rem}
We do not know if the conclusion of Proposition \ref{permisarc} is still valid for $n \geq 4$ when
removing the assumption ``$l|k(x_r)$ is algebraic with finite inseparable degree for some $r\geq 0$''.

\smallskip

When $n=3$, it can be proved that the above assumption is actually implied by ``$m(\varphi)=p$ and
$\omega (\varphi)>0$''. This is a (very) special case of the proof of Theorem \ref{projthm}. The following
elementary corollary will be used repeatedly.
\end{rem}

\begin{cor}\label{permisarcthree}
Assume that $n=3$. Let $(S,h,E)$ be as before and $x \in {\cal X}$. Let
\begin{equation}\label{eq299}
({\cal X},x)=:({\cal X}_0,x_0) \leftarrow ({\cal X}_1,x_1) \leftarrow \cdots \leftarrow ({\cal X}_r,x_r)\leftarrow \cdots
\end{equation}
be a (possibly infinite) composition of local blowing ups at closed points with
($m(x_r)=p$, $\omega (x_r)>0$ and $k(x_r)=k(x)$) for every $r\geq 0$.
With notations as in Proposition \ref{Hironakastable} and Notation \ref{notaprime},
assume that $(S_r,E_r,h_r)$ is such that
$E_r$ is irreducible for every $r\geq 0$. Then (\ref{eq299}) is finite.
\end{cor}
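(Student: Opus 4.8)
The strategy is to run a descending-invariant argument using the results already available. Suppose, for contradiction, that (\ref{eq299}) is infinite. Since each $x_r$ is a closed point with $m(x_r)=p$, $\omega(x_r)>0$ and $k(x_r)=k(x)$, and since $E_r$ is irreducible at $x_r$ for all $r$, the local blowing ups in (\ref{eq299}) are Hironaka-permissible: indeed the unique point of multiplicity $p$ in the fiber is $x_r$ by proposition \ref{SingX}, so $\{x_r\}$ is always a Hironaka-permissible center w.r.t. $E_r$ (in fact permissible of the first kind at $x_r$, as noted after definition \ref{deffirstkind}). Hence theorem \ref{bupthm} applies at each stage and gives
\begin{equation}\label{eq2991}
(m(x_{r+1}),\omega(x_{r+1}),\kappa(x_{r+1})) \leq (m(x_r),\omega(x_r),\kappa(x_r))
\end{equation}
for every $r\geq 0$. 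Since this sequence takes values in the well-ordered set $\{1,\ldots,p\}\times \N \times \{1,\geq 2\}$, it is eventually constant: there is $r_0$ such that $(m(x_r),\omega(x_r),\kappa(x_r))=(p,\omega,\kappa)$ is independent of $r\geq r_0$. After replacing $({\cal X},x)$ by $({\cal X}_{r_0},x_{r_0})$, we may assume $m(x_r)=p$, $\omega(x_r)=\omega>0$ and $\kappa(x_r)=\kappa$ for all $r\geq 0$.

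The next step is to extract the valuation along which the quadratic sequence proceeds and apply proposition \ref{permisarc}. Infinitely iterating quadratic transforms at closed points in dimension three with residue field constant equal to $k(x)$ determines a divisorial-type limit; more precisely, one attaches to (\ref{eq299}) a well-parametrized nonconstant formal arc $\varphi : \mathrm{Spec}{\cal O}\rightarrow ({\cal X},x)$ (the generic point of the limit of the $x_r$), or rather its associated quadratic sequence coincides with (a cofinal part of) (\ref{eq299}); here ${\cal O}$ is a complete DVR and $l={\cal O}/N$. By construction $m(\varphi)=p$ and $\omega(\varphi)=\omega>0$. Now I invoke the remark following example \ref{exampermisarc}: when $n=3$, the hypotheses $m(\varphi)=p$, $\omega(\varphi)>0$ force $l|k(x_r)$ to be algebraic of finite inseparable degree for $r>>0$ (the paper states this is a special case of the proof of theorem \ref{projthm}; I would cite it as such, or alternatively derive it directly from the fact that $k(x_r)=k(x)$ for all $r$, which already pins down the residue extension). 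Hence proposition \ref{permisarc} applies.

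Proposition \ref{permisarc} then yields a contradiction via alternative (1) or (2). Alternative (1) says $Z_r(\varphi)$ is permissible of the first kind at $x_r$ for all $r\geq r_0'$; but $Z_r(\varphi)$ is a curve through the closed point $x_r$, contained in $E_r$ (as $E_r$ is irreducible and of dimension two, any curve in $\mathrm{Sing}_p{\cal X}_r$ through $x_r$ lies in $\eta_r^{-1}(E_r)$ — use condition \textbf{(E)}). A permissible center of the first kind has $\epsilon(y_r)=\epsilon(x_r)$; since we are blowing up the closed point $x_r$ rather than the curve $Z_r$, and the curve would be a permissible center of smaller dimension with the same $\iota$, one shows this configuration cannot persist indefinitely: blowing up the closed point lying on a permissible curve of the first kind either decreases $\iota$ — contradicting constancy — or the trace of $Z_r$ on the blown-up chart forces $\delta(x_{r+1})<\delta(x_r)$ (via proposition \ref{originchart} applied to $\mathbf{pr}_J$ with $J$ the index set of $Z_r$), and $\delta(x_r)\in \frac1{p!}\N_{\geq 1}$ bounded below gives a contradiction. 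In alternative (2), proposition \ref{permisarc}(2)(c) produces, after finitely many blow-ups along permissible curves of the first kind, a point $\tilde x$ where $\tilde Z_r(\varphi)$ is permissible of the \emph{second kind} (if $\epsilon\geq 2$) or $\omega(\tilde x_r)=0$ (if $\epsilon=1$); in the latter case $\omega=0$, contradicting $\omega>0$, and in the former case $\epsilon(\tilde x_r)$ is constant $\geq 2$ while $\mathrm{dim}\,\tilde Z_r(\varphi)\geq 2$ forces $\tilde Z_r$ to be a surface component, i.e.\ $\tilde Z_r=\eta_r^{-1}(E_r)$ up to strict transform — but then $E_r$ irreducible and $\tilde Z_r$ codimension one in ${\cal X}_r$ means blowing up the closed point $\tilde x_r$ is a Hironaka-permissible blow-up inside a divisor, handled exactly as in chapter 6's surface-like analysis, which terminates. \textbf{The main obstacle} is making precise the passage from the infinite sequence of closed-point blow-ups to the formal arc and verifying its parameters ($m(\varphi)=p$, $\omega(\varphi)>0$), and then ruling out the persistence of alternative (1)/(2): the delicate point is that permissibility of $Z_r(\varphi)$ (first or second kind) at $x_r$ combined with $E_r$ irreducible and $n=3$ must be shown to be incompatible with $\iota(x_r)$ staying constant forever — this is where the dimension hypothesis $n=3$ and irreducibility of $E_r$ are both essential and where one genuinely uses the finer structure of theorems \ref{bupthm} and \ref{transfstricte} to track the secondary invariant $\delta(x_r)$ (or the projected polygon invariant $\gamma$) down to a contradiction.
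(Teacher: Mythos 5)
Your overall plan is correct in identifying the key tools (constructing a formal arc from the infinite sequence and applying Proposition \ref{permisarc}), and this is indeed the route the paper takes. But the endgame goes wrong: after obtaining the arc $\varphi$, you launch into a case analysis of Proposition \ref{permisarc}'s alternatives (1) and (2) that you yourself flag as the ``main obstacle'' and never bring to a clean contradiction. You don't need any of it. Proposition \ref{permisarc} already asserts, \emph{before} the dichotomy (1)/(2), that $Z_r(\varphi)$ is Hironaka-permissible at $x_r$ for all $r\geq r_0$. The contradiction then comes in one line from condition \textbf{(E)}: a Hironaka-permissible curve of multiplicity $p$ lies in $\mathrm{Sing}_p{\cal X}_r\subseteq\eta_r^{-1}(E_r)$, yet the curve $Z_r(\varphi)$ constructed from the quadratic sequence is \emph{transverse} to $\eta_r^{-1}(E_r)$, not contained in it.

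This is also where you make a concrete factual error: you write that $Z_r(\varphi)$ is ``a curve through the closed point $x_r$, contained in $E_r$.'' It is exactly the opposite. Because $k(x_r)=k(x)$ and $E_r$ is irreducible (with strict transform $\mathrm{div}(u_1)$ persisting), the coordinate changes at each step are translations $u_j^{(r)}=u_j^{(r-1)}/u_1-\gamma_j^{(r)}$ with $\gamma_j^{(r)}\in S$; summing these produces $\hat u_2,\hat u_3\in\hat S$ and a formal curve $V(\hat Z,\hat u_2,\hat u_3)$ whose image $V(\hat u_2,\hat u_3)$ in $\mathrm{Spec}\hat S$ is a regular curve meeting $\mathrm{div}(u_1)$ only at the closed point. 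Its strict transforms $Z_r(\varphi)$ therefore satisfy $\eta_r(Z_r(\varphi))\nsubseteq E_r$ for all $r$, which is precisely what collides with \textbf{(E)} once Proposition \ref{permisarc} forces Hironaka-permissibility. You should also make the arc construction explicit along these lines: the hypotheses $k(x_r)=k(x)$ and $E_r$ irreducible are exactly what make it possible, and they guarantee $l={\cal O}/N=k(x)$ directly, so the ``algebraic of finite inseparable degree'' hypothesis of Proposition \ref{permisarc} is trivially satisfied --- no need to appeal to the later remark or to Theorem \ref{projthm}.
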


\begin{proof}
Let $E=\mathrm{div}(u_1)$ and $(u_1,u_2^{(0)},u_3^{(0)};Z^{(0)})$ be well adapted coordinates at $x$.
Since $k(x_r)=k(x)$ and $E_r$ is irreducible for every $r\geq 1$, $S_r$ has well adapted coordinates
$$
(u_1, u_2^{(r)}:=u_2^{(r-1)}/u_1-\gamma_2^{(r)}, u_3^{(r)}:=u_3^{(r-1)}/u_1-\gamma_3^{(r)};
Z^{(r)}:=Z^{(r-1)}/u_1-\phi^{(r)})
$$
where $\gamma_2^{(r)}, \gamma_3^{(r)}, \phi^{(r)} \in S$. Suppose that (\ref{eq299}) is infinite. We let
$$
\hat{u}_j:=u_2- \sum_{r\geq 1}\gamma_j^{(r)}u_1^{(r)}\in \hat{S}, \ j=2,3, \ \mathrm{and} \
\hat{Z}:=Z - \hat{\phi}, \ \hat{\phi}:=\sum_{r\geq 1}\phi^{(r)}u_1^{(r)}\in \hat{S}.
$$
The induced morphism
$$
\varphi : \ \mathrm{Spec}(\hat{S}[Z]/(\hat{u}_2, \hat{u}_3,\hat{Z}))\longrightarrow ({\cal X},x)
$$
is a nonconstant well parametrized formal arc on $({\cal X},x)$ with $l=k(x)$ and whose associated quadratic sequence is
(\ref{eq299}). By Proposition \ref{permisarc}, $Z_r (\varphi)$ is Hironaka-permissible for some $r\geq 0$:
a contradiction with {\bf (E)}, since $Z_r (\varphi)\nsubseteq E_r$.
\end{proof}

The following lemma is elementary and well-known.

\begin{lem}\label{equimultiple}
Let $S$ be a regular local ring of dimension $n \geq 1$
with r.s.p. $(u_1, \ldots ,u_n)$ and
$$
C:=V(u_1, \ldots ,u_{n-1})\subset ({\cal S}_0,s_0):=\mathrm{Spec}S
$$
be a regular curve. Let
$$
({\cal S}_0,s_0) \leftarrow ({\cal S}_1, s_1) \leftarrow \cdots
\leftarrow ({\cal S}_i, s_i)\leftarrow \cdots
$$
be the composition of local blowing ups such that ${\cal S}_i$ is the blowing up of ${\cal S}_{i-1}$
along $s_{i-1}$ and $s_i\in {\cal S}_i$ is the point on the strict transform $C_i$ of $C$ for $i \geq 1$.

Let $f \in S$, $f\neq 0$ and denote $d:=\mathrm{ord}_{C}f$. There exists $m,i_0 \in \N$ such that
for every $i \geq i_0$, there is a decomposition
$$
f=u_n^{m+di}g_i, \ g_i \in S_i:={\cal O}_{{\cal S}_i,s_i} \ \mathrm{and} \
\mathrm{ord}_{C_i}g_i=\mathrm{ord}_{s_i}g_i=d.
$$
Furthermore, the initial form $\mathrm{in}_{C_i}g_i\in (\mathrm{gr}_{I_{C_i}}S_i)_d$ is the strict transform of
$$
\mathrm{in}_{C}f \in (\mathrm{gr}_{I_{C}}S)_d\simeq S/(u_1, \ldots ,u_{n-1})[U_1, \ldots ,U_{n-1}]_d.
$$
\end{lem}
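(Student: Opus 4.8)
Looking at Lemma \ref{equimultiple}, I need to prove a statement about the behavior of an element $f$ under iterated blowing ups centered at the strict transform of a regular curve $C$.

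\medskip

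\noindent\textbf{Proof plan.}

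The plan is to proceed by an explicit computation in coordinates, since everything is local and the curve $C = V(u_1,\ldots,u_{n-1})$ is given by a regular system of parameters. First I would record the chart: at each stage, the local blowing up of $({\cal S}_{i-1},s_{i-1})$ followed by localization at the point $s_i$ on the strict transform $C_i$ of $C$ is realized in the chart where $u_n$ is the exceptional equation. Concretely, $S_i = {\cal O}_{{\cal S}_i,s_i}$ has regular system of parameters $(u_1^{(i)},\ldots,u_{n-1}^{(i)},u_n)$ with $u_j^{(i)} := u_j^{(i-1)}/u_n$ for $1 \le j \le n-1$ and $u_n$ unchanged; note that $s_i$ remains the origin of this chart because $C_i$ passes through it, so no translation of the $u_j^{(i)}$ is needed. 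Then $I_{C_i} = (u_1^{(i)},\ldots,u_{n-1}^{(i)})$ and $u_j = u_n^i u_j^{(i)}$ for $1 \le j \le n-1$.

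\medskip

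Next I would expand $f$ along $C$ using the monomial expansion of Proposition \ref{monomexp} (or simply the $I_C$-adic expansion): write
$$
f = \sum_{\mathbf{a}} \gamma_{\mathbf{a}}\, u_1^{a_1}\cdots u_{n-1}^{a_{n-1}}, \qquad \gamma_{\mathbf{a}} \in S,
$$
where $\gamma_{\mathbf{a}}$ is (a unit times) a power series in $u_n$, and $d = \mathrm{ord}_C f = \min\{|\mathbf{a}| : \gamma_{\mathbf{a}} \not\equiv 0\}$, the minimum over $\mathbf{a}$ with $|\mathbf{a}| = a_1 + \cdots + a_{n-1}$. For each such $\mathbf{a}$ put $m_{\mathbf{a}} := \mathrm{ord}_{(u_n)}\gamma_{\mathbf{a}} \in \N$, so $\gamma_{\mathbf{a}} = u_n^{m_{\mathbf{a}}}\tilde\gamma_{\mathbf{a}}$ with $\tilde\gamma_{\mathbf{a}} \notin (u_n)$. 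Substituting $u_j = u_n^i u_j^{(i)}$ gives
$$
f = u_n^{\,di}\sum_{\mathbf{a}} u_n^{\,m_{\mathbf{a}} + i(|\mathbf{a}|-d)}\,\tilde\gamma_{\mathbf{a}}\,(u_1^{(i)})^{a_1}\cdots(u_{n-1}^{(i)})^{a_{n-1}} \;=\; u_n^{\,di}\, f_i,
$$
where $f_i \in S_i$. Setting $m := \min\{ m_{\mathbf{a}} : |\mathbf{a}| = d\}$ (the minimum taken only over $\mathbf{a}$ of total degree exactly $d$, which by definition of $d$ is a finite nonempty set), I would observe that for $i$ large — say $i \ge i_0$ where $i_0$ is chosen so that $m + i_0(|\mathbf{a}|-d) > m$ for every $\mathbf{a}$ with $|\mathbf{a}| > d$ occurring in $f$ — the unique terms of minimal $u_n$-order in $f_i$ are exactly those with $|\mathbf{a}| = d$ and $m_{\mathbf{a}} = m$. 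Factoring $f_i = u_n^m g_i$ with $g_i \notin (u_n)S_i$ then yields $f = u_n^{m + di} g_i$, which is the required decomposition.

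\medskip

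It remains to check that $\mathrm{ord}_{C_i} g_i = \mathrm{ord}_{s_i} g_i = d$ and to identify the initial form. After factoring out $u_n^m$, the terms of $g_i$ involving only the $u_j^{(i)}$ (i.e.\ setting $u_n = 0$) are precisely $\sum_{|\mathbf{a}|=d,\, m_{\mathbf{a}}=m} \overline{\tilde\gamma_{\mathbf{a}}}\,(u_1^{(i)})^{a_1}\cdots(u_{n-1}^{(i)})^{a_{n-1}}$ modulo $(u_n)$, where $\overline{\tilde\gamma_{\mathbf{a}}}$ denotes the residue in $S/(u_n,I_C)$; this is a nonzero homogeneous form of degree $d$ in the $u_j^{(i)}$, so $\mathrm{ord}_{C_i} g_i = \mathrm{ord}_{s_i} g_i = d$ and $\mathrm{in}_{C_i} g_i = \sum_{|\mathbf{a}|=d,\,m_{\mathbf{a}}=m}\overline{\tilde\gamma_{\mathbf{a}}}\,(U_1^{(i)})^{a_1}\cdots(U_{n-1}^{(i)})^{a_{n-1}}$. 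On the other hand $\mathrm{in}_C f$ is the sum over $|\mathbf{a}| = d$ of $\overline{\gamma_{\mathbf{a}}}\,U_1^{a_1}\cdots U_{n-1}^{a_{n-1}}$; the terms with $m_{\mathbf{a}} > m$ have $\overline{\gamma_{\mathbf{a}}} = 0$ in $S/(u_n, I_C)$ already, and the transformation rule $U_j \mapsto U_j^{(i)}$ (each $U_j$ picks up a factor $u_n^i$ which is divided out uniformly) shows that $\mathrm{in}_{C_i} g_i$ is indeed the strict transform of $\mathrm{in}_C f$ under the $i$-fold quadratic transform of the graded ring. The main point requiring care — the only mild obstacle — is the bookkeeping that for $i \ge i_0$ the minimal-order terms of $f_i$ do not mingle with higher-degree contributions; this is a finite, purely numerical estimate once the expansion is in place.
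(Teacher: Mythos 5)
Your overall strategy---expand $f$ along $C$ and track how the $u_n$-order of the coefficients interacts with the degree in the transverse variables under iterated blowing up---is the right one and is essentially the paper's. But there is a genuine gap in the expansion you use. You write $f = \sum_{\mathbf{a}}\gamma_{\mathbf{a}}\,u_1^{a_1}\cdots u_{n-1}^{a_{n-1}}$ with $\gamma_{\mathbf{a}}\in S$ and assert that each $\gamma_{\mathbf{a}}$ is ``(a unit times) a power series in $u_n$''. This is false for the finite expansion of Proposition \ref{monomexp}: that proposition guarantees only $\gamma_{\mathbf{a}}\notin I_C$, and the coefficients can and typically do involve $u_1,\ldots,u_{n-1}$ in a way that changes $\mathrm{ord}_{(u_n)}$ after the substitution $u_j\mapsto u_n^i u_j^{(i)}$. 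Concretely, take $n=3$, $C=V(u_1,u_2)$, $f=u_1(u_3+u_2)$. The minimal expansion has $\gamma_{(1,0)}=u_3+u_2$, which is neither a unit nor divisible by $u_3$, so your $m_{(1,0)}=\mathrm{ord}_{(u_3)}(u_3+u_2)=0$ and hence $m=0$. But in fact $f=u_3^{1+i}\,u_1^{(i)}\bigl(1+u_3^{\,i-1}u_2^{(i)}\bigr)$, so the correct value is $m=1$. The source of the error is exactly the displayed formula for $f_i$, which treats $\tilde\gamma_{\mathbf{a}}$ as inert under the coordinate change, whereas the substitution inside $\tilde\gamma_{\mathbf{a}}$ increases its $u_n$-order.

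The paper's proof avoids this by expanding over \emph{all} $n$ variables with unit coefficients, $f=\sum_{\mathbf{x}\in\mathbf{S}}\gamma(\mathbf{x})\,u_1^{x_1}\cdots u_n^{x_n}$, which makes the entire $u_n$-dependence explicit; the argument then becomes finite combinatorics on the exponent set $\mathbf{S}$. Your route could be salvaged by passing to the $I_C$-adic completion, so that the coefficients genuinely lie in the discrete valuation ring $S/I_C$, and then arguing separately that the divisibility $u_n^{m+di}\mid f$, the orders, and the initial form descend to $S_i$ (using that $S_i$ is a UFD and $(u_n)$ is prime); but none of this is addressed, and as written the proof rests on a structural claim about the $\gamma_{\mathbf{a}}$ that does not hold. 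A smaller slip: in fixing $i_0$ you wrote ``so that $m+i_0(|\mathbf{a}|-d)>m$'', which is vacuously true; you mean $m_{\mathbf{a}}+i_0(|\mathbf{a}|-d)>m$.
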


\begin{proof}
We have
$S_i=S_{i-1}[u_1^{(i)}, \ldots , u_{n-1}^{(i)}]_{(u_1^{(i)}, \ldots , u_{n}^{(i)})}$, where
$u_j^{(i)}:=u_j^{(i-1)}/u_n^{(i-1)}$, $1 \leq j \leq n-1$, $u_n^{(i)}:=u_n^{(i-1)}$ for
every $i \geq 1$, with $u_j^{(0)}:=u_j$, $1 \leq j \leq n$.
Then $C_i=V(u_1^{(i)}, \ldots , u_{n-1}^{(i)})$ with these notations. There is an expansion
$$
f=(u_n^{(i-1)})^{m_{i-1}}g_{i-1}, \ g_{i-1}:=\sum_{\mathbf{x}\in \mathbf{S} }
\gamma(\mathbf{x})^{(i-1)} (u_1^{(i-1)})^{x_1} \cdots (u_n^{(i-1)})^{x_n} \in S_{i-1},
$$
where $\gamma(\mathbf{x})^{(i-1)}\in S_{i-1}$ is a unit for each $\mathbf{x}\in \mathbf{S}$,
$\mathbf{S}\subset \N^n$ a finite set, $m_{i-1}\in \N$, $g_{i-1} \not \in (u_n^{(i-1)})$.
Since $\mathrm{ord}_{C}f=d$, it can be assumed without loss of generality that
$$
d=\min_{\mathbf{x}\in \mathbf{S} }\{x_1+ \cdots +x_{n-1}\}.
$$
Therefore
$$
d=\mathrm{ord}_{C_{i-1}}g_{i-1}\leq d_{i-1}:=\mathrm{ord}_{s_{i-1}}g_{i-1}
=\min_{\mathbf{x}\in \mathbf{S}}\{\mid \mathbf{x}\mid\}.
$$
Note that the initial form $\mathrm{in}_{C_{i-1}}f$ is given by
$$
\mathrm{in}_{C_{i-1}}f =\sum_{x_1+ \cdots +x_{n-1}=d}\overline{\gamma}(\mathbf{x})^{(i-1)}(\overline{u}_n^{(i-1)})^{x_n}
 (U_1^{(i-1)})^{x_1} \cdots (U_{n-1}^{(i-1)})^{x_{n-1}} ,
$$
where $\overline{\gamma}(\mathbf{x})^{(i-1)}, \overline{u}_n^{(i-1)}\in
S_{i-1}/(u_1^{(i-1)}, \ldots , u_{n-1}^{(i-1)})$ denote the classes of the corresponding elements in $S_{i-1}$.
After blowing up, we get an expansion
$$
f=(u_n^{(i)})^{m_{i-1}+d_{i-1}}g_{i}, \ g_{i}:=\sum_{\mathbf{x}\in \mathbf{S}}
\gamma(\mathbf{x})^{(i-1)} (u_1^{(i)})^{x_1} \cdots (u_{n-1}^{(i)})^{x_{n-1}}
(u_n^{(i)})^{\mid\mathbf{x}\mid -d_{i-1}} \in S_{i}.
$$
Let $A_{i-1}:=\{\mathbf{x}\in \mathbf{S}  : x_1+ \cdots +x_{n-1}< d_{i-1}\}$.
For each $\mathbf{x}\in A_{i-1}$, we have $\mid\mathbf{x}\mid -d_{i-1}<x_n$. We deduce:
$$
0 \leq \min_{\mathbf{x}\in A_i}\{x_n\} < \min_{\mathbf{x}\in A_{i-1}}\{x_n\}.
$$
This proves that there exists $i_0 \geq 0$ such that $A_i=\emptyset$ for every $i\geq i_0$. Then
$d_i=d$ for $i \geq i_0$. This proves the first statement in the lemma,
taking $m:=m_{i_0}-di_0 \geq 0$. Finally, this construction preserves
the initial form $\mathrm{in}_{C}f$, i.e.
$$
\mathrm{in}_{C_{i}}f =\overline{u}_n^{-(m+di) }(\mathrm{in}_{C}f)
\left ( \overline{u}_n^{i}U_1^{(i)}, \ldots , \overline{u}_n^i U_n^{(i)} \right ),
$$
and this concludes the proof.
\end{proof}

\begin{thm}\label{Zariskiopen}
Let ${\cal Y} \subset ({\cal X},x)$ be an integral closed subscheme with generic point $y$. The set
$$
\Omega ({\cal Y}):=\{y' \in {\cal Y} : (m(y'), \omega (y'), \kappa (y'))=(m(y), \omega (y), \kappa (y))\}\subseteq {\cal Y}
$$
contains a nonempty Zariski open subset of ${\cal Y}$.

Let furthermore ${\cal Z} \supset {\cal Y} $ be an integral closed subscheme with generic point $z$ such that ${\cal Z}$
is permissible (of the first or second kind) at $y$. The set
$$
\mathrm{Perm}({\cal Y},{\cal Z}):= \{y' \in {\cal Y} : {\cal Z} \ \mathrm{is} \ \mathrm{permissible} \ \mathrm{at} \ y'\}\subseteq {\cal Y}
$$
contains a nonempty Zariski open subset of ${\cal Y}$.
\end{thm}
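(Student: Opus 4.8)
The plan is to prove constructibility of the invariants by combining the regular base change theorems (Theorems \ref{omegageomreg} and \ref{geomregpermis}) with a spreading-out argument. The key observation is that all of the relevant data — the multiplicity $m(\cdot)$, the differential order $\omega(\cdot)$, the projection number $\kappa(\cdot)$, and permissibility of a fixed center — are defined at a point $y'$ in terms of initial form polynomials $\mathrm{in}_{m_{S_{s'}}}h$ with respect to well adapted coordinates, i.e. in terms of algebraic data in $G(m_{S_{s'}})$. So the first step is to reduce to the generic point $y$ (resp. to $y$ for permissibility). For $\mathrm{perm}$, one needs moreover to know that the reduced total transform structure $(S_{s'},h_{s'},E_{s'})$ still satisfies {\bf (G)} and {\bf (E)} at $y'$ near $y$; this holds because those conditions are preserved under localization (stated when {\bf (G)} and {\bf (E)} are introduced, and reiterated at the start of section 2.7).

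The heart of the argument is the following. Let $(u_1,\dots,u_n;Z)$ be well adapted coordinates at $y$, obtained via Proposition \ref{Deltaalg} with $Z=X-\theta$, $\theta\in S_s$, so $\theta$ and hence all the $f_{i,Z}$ are \emph{algebraic} over $S$. First I would show that there is a nonempty Zariski open $U\subseteq {\cal Y}$ over which $(u_1,\dots,u_n;Z)$ restrict to well adapted coordinates at every $y'\in U$: by Proposition \ref{Deltaalg} the polyhedron $\Delta_{\widehat{S_{s^J}}}(h;\{u_j\}_{j\in J};Z)$ is the projection of $\Delta_{\hat S}(h;u_1,\dots,u_n;Z)$ and is minimal, where $s^J$ is the generic point of ${\cal Y}$ (after choosing coordinates so that $I({\cal Y})=I_J$); shrinking ${\cal Y}$ if necessary, the monomial supports $\mathbf{S}^{\{1,\dots,n\}}(f_{i,Z})$ and the nonvanishing of the relevant $\gamma(f_{i,Z},\cdot)$ modulo $I_J$ persist on a dense open, so the polyhedron, and its minimality (no solvable vertex — a condition on finitely many initial forms $\mathrm{in}_{\mathbf{x}}h$, hence open), are constant along $U$. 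Over $U$ the quantities $\delta(y')$, $d_j(y')$, $H(y')$, $\epsilon(y')$, $i_0(y')$, and the initial form polynomial $\mathrm{in}_{m_{S_{s'}}}h$ — up to an extension of the base residue field $k(y)\subseteq k(y')$ — are all read off from the same algebraic data. Then the definitions of $\omega$, $\kappa$, $\mathrm{Max}(x)$, $\tau'(x)$ (Definitions \ref{defomega}, \ref{deftauprime}) and of permissibility of the fixed center ${\cal Z}$ (Definitions \ref{deffirstkind}, \ref{defsecondkind}) involve only: vanishing/nonvanishing of certain partial derivatives (including Nagata derivatives w.r.t. the $p$-basis), which is an open condition on $U$; and the rank/truncation operations of section 2.6, which are likewise open. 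Hence $\Omega({\cal Y})$ and $\mathrm{Perm}({\cal Y},{\cal Z})$ each contain a dense open of ${\cal Y}$.

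The one genuine subtlety is the passage through the residue field extension $k(y)\subseteq k(y')$: $\omega$ is defined via \emph{absolute} differentials, using a $p$-basis $(\lambda_l)_{l\in\Lambda_0}$ of $S/m_S$, and $\omega(y')$ is computed with a $p$-basis of the possibly larger field $k(y')$. Here Theorem \ref{omegageomreg} does the work: applied to the residue field extension viewed as (the residue extension of) a regular local base change — more precisely, one specializes the generic fibre of ${\cal Y}$ — it gives $(m(y'),\omega(y'))=(m(y),\omega(y))$ provided the extension of structures is ``regular enough'', which is automatic for the generic point of ${\cal Y}$ localized at $y'$ (an ind-étale, hence regular, situation after taking a suitable dense open where the fibre of ${\cal Y}\to W$ is smooth — or rather one works directly on ${\cal Y}$, invoking \ref{omegageomreg}(1) for the inclusion $k(y)\hookrightarrow k(y')$ realized through a smooth tower as in Remark \ref{stricthenselian}); similarly Theorem \ref{geomregpermis} propagates permissibility and $\kappa(y')=1\Leftrightarrow\kappa(y)=1$, and the finer value of $\kappa$ and of $\mathrm{Max}$ is unaffected because \ref{omegageomreg}(2.i) and the explicit formulas show the only thing that can change, $\epsilon$, stays equal on a dense open (the exceptional ``$\epsilon(\tilde x)>\epsilon(x)$'' case of \ref{omegageomreg}(2.ii) requires $F_{p,Z}\in (k(y')[\underline U])^p$, a Zariski-closed condition on $k(y)$-coefficients which fails at the generic point, so it fails on a dense open). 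This residue-field bookkeeping, and checking that ``permissible of the first vs. second kind'' is itself locally constant on the relevant open (again from \ref{geomregpermis}), is the step I expect to require the most care; everything else is the standard openness of the vanishing loci of the finitely many polynomial conditions that enter the definitions.
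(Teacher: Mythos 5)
Your high-level plan --- spread out well-adapted coordinates along ${\cal Y}$ and observe that the invariants are read off from the resulting initial-form data --- is the same strategy the paper uses, and your Case-1-type argument (where some $(i,\mathbf{a})$ with $i<p$ or $\mathbf{a}\notin\N^J$ survives) is essentially correct. But the step you flag as ``the one genuine subtlety'' is exactly where your proof breaks down, and your proposed fix does not work.

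The issue is the direction and nature of the residue-field relationship when passing from the generic point $y$ to a specialization $y'\in{\cal Y}$. You want to invoke Theorems \ref{omegageomreg} and \ref{geomregpermis} ``for the inclusion $k(y)\hookrightarrow k(y')$,'' but there is no such inclusion: $k(y)$ is the fraction field of the integral domain ${\cal O}_{{\cal Y},y'}$ and $k(y')$ is its residue field, so they are related by specialization, not by a field homomorphism, and certainly not by (the residue extension of) a regular local base change. Theorems \ref{omegageomreg} and \ref{geomregpermis} compare a triple $(S,h,E)$ with its pullback under a flat, geometrically-regular-fibred extension $S\subset\tilde S$; what you need here is a comparison between $(S_{s'},h_{s'},E_{s'})$ and its localization $(S_s,h_s,E_s)$, and that is not covered by those theorems. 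Relatedly, your claim that ``$F_{p,Z}\in(k(y')[\underline U])^p$ is a Zariski-closed condition which fails on a dense open'' is false: whether a coefficient $\overline\gamma(p,\mathbf{a})\in k(y)\setminus k(y)^p$ specializes to a $p^{\text{th}}$ power in $k(y')$ is generally neither open nor closed --- Example \ref{exampleconstructible} in the paper is precisely a counterexample to upper semicontinuity of this kind.

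The paper handles this with a device you do not have. In the case where every $(i,\mathbf{a})\in B$ has $i=p$ and $\mathbf{a}\in\N^J$, it forms, for each monomial with $\overline\gamma(p,\mathbf{a})\notin k(y)^p$, the purely inseparable finite cover $\mathcal{Y}_{(p,\mathbf{a})}=\mathrm{Spec}\bigl({\cal O}_{{\cal U}_1}[T]/(T^p-\overline\gamma(p,\mathbf{a}))\bigr)\to{\cal U}_1$; excellence of $\mathcal{Y}_{(p,\mathbf{a})}$ makes its regular locus open, and over a dense open ${\cal U}\subseteq{\cal Y}$ the reduced fibre $\eta^{-1}_{(p,\mathbf{a})}(y')_{\mathrm{red}}$ is a regular point. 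That dichotomy forces either $\lambda_{y'}(p,\mathbf{a})\notin k(y')^p$, or else the existence of $\delta_{y'}$ with $\overline\gamma(p,\mathbf{a})-\delta_{y'}^p$ a regular parameter at $y'$; in the latter subcase $\delta(y')$ and $\epsilon(y')$ increase (by $1/p$ and $1$ respectively) but $\omega(y')$ stays put, exactly as in Theorem \ref{omegageomreg}(2.ii) but by a direct computation, not by invoking that theorem. Without this construction --- or some substitute argument that actually controls the $p^{\text{th}}$-power behaviour of the coefficients under specialization --- your proof has a hole at its central point.
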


\begin{proof}
Our function $(m,\omega ,\kappa)$ refines the multiplicity function $m$ on ${\cal X}$,
and our notion of permissible blowing up refines the Hironaka-permissibility.
We may thus apply the well known {\it constructibility} of multiplicity and Hironaka-permissibility.
It is therefore sufficient to prove the first statement when $m(y)=p$. For the second statement, we take a
nonempty Zariski open set ${\cal U}_1 \subseteq {\cal Y}$ such that ${\cal Z}$ is
Hironaka permissible at every $y' \in {\cal U}_1$.

Let $W:=\eta ({\cal Y})$, $s:=\eta (y)$, $W_{\cal Z}:=\eta ({\cal Z})$ for the second statement.
We pick an adapted r.s.p. $(u_1,\ldots ,u_{n_s})$ of $S_s$, where $E_{s}=\mathrm{div}(u_1 \cdots u_{e_s})$.
For every $y' \in {\cal U}_1$ there exists an adapted r.s.p.
$(u_1,\ldots ,u_{n_{y'}})$ of $S_{\eta (y')}$ (i.e. $E_{\eta (y')}=\mathrm{div}(u_1 \cdots u_{e_{y'}})$,
$e_{y'}\geq e_s$) such that $S_s$ is the localization of $S_{\eta (y')}$ at some prime
$$
I(W_{y'})=(\{u_j\}_{j \in J_{y'}}), \ J_{y'}\subseteq \{1, \ldots ,n_{y'}\}.
$$
After possibly shrinking ${\cal U}_1\subseteq {\cal Y}$, it can be assumed without loss of generality that
$e_{y'}=e_s$ for every $y' \in {\cal U}_1$.

We now {\it choose} any point $y_0 \in {\cal U}_1$. Let $(u_1,\ldots ,u_{n_0};Z)$ be
well adapted coordinates at $y_0$, $s_0:=\eta (y_0)$, $S_0:=S_{s_0}$. There is a corresponding expansion
$$
h=Z^p+f_{1,Z}Z^{p-1}+ \cdots +f_{p,Z} \in S_{0}[Z],  \ f_{1,Z}, \ldots , f_{p,Z} \in S_{0}.
$$
After possibly restricting again ${\cal U}_1$, we may assume that the rational functions
$u_1,\ldots ,u_{n_0}, f_{1,Z}, \ldots , f_{p,Z}$ are regular at $\eta (y')$ for every $y' \in {\cal U}_1$.
Moreover, we have in $S_{\eta (y')}$
$$
I(W)=( \{u_j\}_{j\in J}) \  (\mathrm{and} \  I(W_{\cal Z})=( \{u_j\}_{j\in J_{\cal Z}})
\ \mathrm{for} \ \mathrm{the} \ \mathrm{second} \ \mathrm{statement})
$$
with $J_{\cal Z} \subseteq J =\{1, \ldots ,n\}$, $n_{y'}\geq n$, subsets which do not depend on $y'$.
We fix an associated expansion at $s_0$:
$$
f_{i,Z}=\sum_{\mathbf{x}\in \mathbf{S}_i}\gamma(i,\mathbf{x})
    {u_1^{ix_1} \cdots u_{n_0}^{ix_{n_0}}} \in S_{0}, \ 1 \leq i \leq p,
$$
with $\mathbf{S}_i \subset ({1\over i}\N)^{n_0}$ finite and $\gamma(i,\mathbf{x})\in S_{0}$
a unit for each $\mathbf{x}\in \mathbf{S}_i$. After
possibly restricting again ${\cal U}_1$, it may also be assumed that each $\gamma(i,\mathbf{x})$
appearing in some $f_{i,Z}$, $1 \leq i \leq p$, is a regular function at $\eta (y')$.
By Proposition \ref{Deltaalg}, the polyhedra
\begin{equation}\label{eq320}
    \Delta_{S_0}(h;\{u_j\}_{j\in J};Z) \ (\mathrm{and} \ \Delta_{S_0}(h;\{u_j\}_{j\in J_{\cal Z}};Z))
   \ \mathrm{are} \ \mathrm{minimal}.
\end{equation}

We define $A_{i}\subset ({1\over i}\N)^{J}$ (and $A_{i,{\cal Z}}\subset ({1\over i}\N)^{J_{\cal Z}}$ for the second
statement) to be the respective images of $\mathbf{S}_i$ by the projections $\mathrm{pr}_{J}: \R^{n_0} \rightarrow \R^{J}$
and $\mathrm{pr}_{J_{\cal Z}}: \R^{n_0} \rightarrow \R^{J_{\cal Z}}$. Given $\mathbf{a}\in A_{i}$, we let:
$$
\gamma(i,\mathbf{a}):=
\sum_{\mathrm{pr}_{J}(\mathbf{x})=\mathbf{a}}\gamma(i,\mathbf{x})\prod_{j \not \in J}u_j^{ix_j}\in S_0.
$$
By definition of $\epsilon (y)$, we have:
\begin{equation}\label{eq323}
\epsilon (y)= p\min_{1\leq i \leq p}\min_{\mathbf{a} \in A_{i}}
\{\mid \mathbf{a} \mid : \gamma(i,\mathbf{a})\neq 0\} -\sum_{j=1}^{e_s}H_j.
\end{equation}

Let $B \subset \Q^{n}$ be the set of $(i,\mathbf{a})$ achieving equality on the right hand side
of (\ref{eq323}). The initial form polynomial $\mathrm{in}_{m_{S_s}}h$ is thus of the form
\begin{equation}\label{eq325}
\mathrm{in}_{m_{S_s}}h =Z^p +\sum_{(i,\mathbf{a})\in B}\overline{\gamma}(i,\mathbf{a})
\prod_{j \in J}U_j^{ia_{j}}Z^{p-i} \in G(m_{S_s})[Z] ,
\end{equation}
where $\overline{\gamma}(i,\mathbf{a})$ denotes the image in $k(y)$. Let
$$
B_0:=\{(i,\mathbf{a})\in B : \exists (i,\mathbf{a})\in B, i \neq p \ \mathrm{or} \
(i=p \ \mathrm{and} \  \mathbf{a} \not \in \N^{J})\}.
$$

\noindent {\it Case 1.} Suppose that $B_0 \neq \emptyset$. We define:
$$
{\cal U}:=\{ y'\in {\cal U}_1 : \forall (i,\mathbf{a}) \in B_0, \overline{\gamma}(i,\mathbf{a}) \ \mathrm{is} \ \mathrm{a}
\ \mathrm{unit} \ \mathrm{in} \ S_{\eta (y')}\}.
$$
Since $\gamma(i,\mathbf{a})$ is nonzero for $(i,\mathbf{a})\in B$ by (\ref{eq323}),
${\cal U}$ is a nonempty Zariski open subset of ${\cal Y}$. To $y' \in {\cal U}$, we associate
$\mathbf{x} \in \Delta_{S_{\eta (y')}}(h; u_1,\ldots ,u_{n_{y'}};Z)$ (depending on
$(i,\mathbf{a})$) by
$$
\left\{
  \begin{array}{ccccc}
    x_j & = & a_{j} & \mathrm{if} & j\in J \\
    x_j & = & 0     & \mathrm{if} & j \not \in J \\
  \end{array}
\right.
$$
Computing initial forms from Definition \ref{definh} with $\alpha_{y'}  :=(1, \ldots ,1)\in \R^{n_{y'}}$,
$\delta_{\alpha_{y'}} (h;u_1,\ldots ,u_{n_{y'}};Z)=\delta (y)$, the corresponding initial
form polynomial
\begin{equation}\label{eq321}
\mathrm{in}_{\alpha_{y'}}h =Z^p +\sum_{i=1}^pF_{i,Z,\alpha_{y'} }Z^{p-i} \in G(m_{S_{\eta (y')}})[Z]
\end{equation}
is such that $F_{i,Z,\alpha_{y'} } \neq 0$ for some $i\neq p$ or
$F_{p,Z,\alpha_{y'} }\not \in k(y')[U^p_1, \ldots ,U^p_{n_{y'}}]$. Therefore $\delta (y')=\delta (y)$
and we deduce that
\begin{equation}\label{eq324}
\epsilon (y')=\epsilon (y) \ \mathrm{for} \ \mathrm{every} \ y' \in {\cal U}.
\end{equation}

To prove the first statement, note that we are already done by (\ref{eq324}) if $\epsilon (y)=0$.
Assume now that $\epsilon (y)>0$. If $i_0(y)=p-1$,
there exists some $(p-1, \mathbf{a}_0)\in B_0$ for some $\mathbf{a}_0\in \N^J$. Let $y' \in {\cal U}$ and pick well adapted
coordinates $(u_1,\ldots ,u_{n_{y'}};Z_{y'})$ at $y'$. The corresponding initial form polynomial
$$
\mathrm{in}_{m_{S_{\eta (y')}}}h =Z_{y'}^p -G_{y'}^{p-1}Z_{y'}+F_{p,Z_{y'}} \in G(m_{S_{\eta (y')}})[Z_{y'}]
$$
is such that $<G_{y'}>=<U^{\mathbf{a}_0}>$ (resp. $G_{y'}=0$) if $i_0(y)=p-1$
(resp. if $i_0(y)=p$). We have
$$
F_{p,Z_{y'}}=\sum_{(p, \mathbf{a})\in B_0}\lambda_{y'} (p,\mathbf{a})U^{\mathbf{a}}+ \Psi_{y'}
\subseteq G(m_{S_{\eta (y')}})_{\epsilon (y)},
$$
where $\lambda_{y'} (i,\mathbf{a})\in k(y')$, $\lambda_{y'} (i,\mathbf{a})\neq 0$,
$\Psi_{y'}\in k(y')[\{U^p_j\}_{j\in J}]$ for every $(p,\mathbf{a}) \in B_0$ and every $y' \in {\cal U}$.
Comparing with Definition \ref{defomega}, we have $\omega (y')=\omega (y)$, $\kappa (y')=1$ if
$\kappa (y)=1$ for $y' \in {\cal U}$. This proves the first statement in case 1.\\

For the second statement, we are also done if $\epsilon (z)=\epsilon (y)$, i.e. if ${\cal Z}$ is of the first kind at $y$.
Suppose that ${\cal Z}$ is permissible of the second kind at $y$. In particular, we have $\epsilon (y)>0$.
There exist $j_1(y) \in J \backslash J_{\cal Z}$ and $j'(y)\in J \backslash J_{\cal Z}$, $j'(y)\geq e_s+1$,
satisfying the conclusion of Proposition \ref{secondkind}. Let $y' \in {\cal U}$ and pick well adapted
coordinates $(u_1,\ldots ,u_{n_{y'}};Z_{y'})$ at $y'$. The corresponding initial form polynomial
(\ref{eq324}) again satisfies
$$
H_{y'}^{-1}G_{y'}^p\subseteq U_{j_1(y)}k(y')[U_1,\ldots ,U_{n_{y'}}]_{\epsilon (y)}
$$
and there is an expansion
$$
H_{y'}^{-1}F_{p,Z_{y'}}=<\sum_{j' \in J'}U_{j'}\Phi_{j'}(\{U_j\}_{j\in J})+ \Psi(\{U_j\}_{j\in J})>
\subseteq G(m_{S_{\eta (y')}})_{\epsilon (y)}
$$
with $\Phi_{j'(y_0)} \neq 0$, hence ${\cal Y}$ is permissible of the second kind at $y'$ and the conclusion follows.\\

\noindent {\it Case 2.} Suppose on the contrary that $B_0=\emptyset$. By (\ref{eq325}), we have
\begin{equation}\label{eq328}
\mathrm{in}_{m_{S_s}}h =Z^p +\sum_{(p,\mathbf{a})\in B}\overline{\gamma}(p,\mathbf{a})
\prod_{j \in J}U_j^{pa_{j}} \in G(m_{S_s})[Z]
\end{equation}
and this proves that
\begin{equation}\label{eq322}
\delta (y)\in \N , \ \omega (y)=\epsilon (y) \ \mathrm{and} \ \kappa (y)\geq 2.
\end{equation}
Since $(\{u_j\}_{j\in J};Z)$ are well adapted coordinates at $y$, there exists a vertex
$\mathbf{a}_0\in \Delta_{S_s} (h;\{u_j\}_{j\in J};Z)$, $(p,\mathbf{a}_0)\in B$ which is
not solvable, i.e. $\overline{\gamma}(p,\mathbf{a}_0)\not \in k(y)^p$. Let $B_1 \subseteq B_0$ be the
nonempty subset defined by
$$
B_1:=\{ (p,\mathbf{a})\in B : \overline{\gamma}(p,\mathbf{a})\not \in k(y)^p\}.
$$
Given $(p,\mathbf{a})\in B_1$, we define a morphism:
$$
\eta_{(p,\mathbf{a})}: \ {\cal Y}_{(p,\mathbf{a})}:=
\mathrm{Spec}\left ({{\cal O}_{{\cal U}_1}[T] \over (T^p -\overline{\gamma}(p,\mathbf{a}))}\right )
\longrightarrow {\cal U}_1 .
$$
Note that ${\cal Y}_{(p,\mathbf{a})}$ is integral and $\eta_{(p,\mathbf{a})}$ is finite and purely inseparable. We define:
$$
{\cal U}:=\{ y'\in {\cal U}_1 : \forall (p,\mathbf{a}) \in B_1, \eta_{(p,\mathbf{a})}^{-1}(y')_\mathrm{red} \ \mathrm{is} \ \mathrm{a}
\ \mathrm{regular} \ \mathrm{point} \ \mathrm{of} \ {\cal Y}_{(p,\mathbf{a})}\}.
$$
Since ${\cal Y}_{(p,\mathbf{a})}$ is excellent, its regular locus is a nonempty Zariski open set.
We deduce that ${\cal U}$ is a  nonempty Zariski open subset of ${\cal Y}$.

For $y' \in {\cal U}_1$ and $(p,\mathbf{a}) \in B$, we denote by $\lambda_{y'}(p,\mathbf{a}) \in k(y')$ the
residue of $\overline{\gamma}(p,\mathbf{a})$. The property
$$
``\eta_{(p,\mathbf{a})}^{-1}(y')_\mathrm{red}\ \mathrm{is} \ \mathrm{a}
\ \mathrm{regular} \ \mathrm{point} \ \mathrm{of} \ {\cal Y}_{(p,\mathbf{a})} \ "
$$
is equivalently characterized as follows: either
(a) $\lambda_{y'}(p,\mathbf{a}) \not \in k(y')^p$, or
(b) there exists $\delta_{y'}(p,\mathbf{a}) \in {\cal O}_{{\cal Y},y'}$ such
that
$$
v_{y'}(p,\mathbf{a}):=\overline{\gamma}(p,\mathbf{a}) - \delta_{y'}(p,\mathbf{a})^p
$$
is a regular parameter at $y'$.

We now prove the first statement. Let $y' \in {\cal U}$ and pick well adapted coordinates
$(u_1,\ldots ,u_{n_{y'}};Z_{y'})$ at $y'$. Let
$$
B(y'):=\{ (p,\mathbf{a})\in B_1 :  \mathrm{(a)} \ \mathrm{is} \ \mathrm{satisfied}\}.
$$
Suppose that $B(y') \neq \emptyset$. We get
$\delta (y')=\delta (y)$, $i_0(y')=p$ and the initial form polynomial
$\mathrm{in}_{m_{S_{\eta (y')}}}h \in G(m_{S_{\eta (y')}})[Z_{y'}]$ is
$$
\mathrm{in}_{m_{S_{\eta (y')}}}h =Z_{y'}^p +\sum_{(p, \mathbf{a})\in B(y')}\lambda_{y'} (p,\mathbf{a})U^{\mathbf{a}}
+ \Psi_{y'}^p
$$
where $\lambda_{y'} (p,\mathbf{a})\not \in k(y')^p$ and $\Psi_{y'}\in k(y')[\{U^p_j\}_{j\in J}]$.
This shows that
$$
\omega (y')=\epsilon (y')=\epsilon (y)=\omega (y),
$$
the right hand side equality by (\ref{eq322}). Moreover $\kappa (y')\geq 2$, so $y' \in \Omega ({\cal Y})$.

Suppose on the contrary that $B(y') = \emptyset$. We get
$$
\delta (y')=\delta (y)+{1 \over p}, \ i_0(y')=p \ (\mathrm{since} \  \delta (y') \not \in \N)
$$
and the initial form polynomial $\mathrm{in}_{m_{S_{\eta (y')}}}h \in G(m_{S_{\eta (y')}})[Z_{y'}]$ is
$$
\mathrm{in}_{m_{S_{\eta (y')}}}h =Z_{y'}^p + \sum_{(p, \mathbf{a})\in B_1}V_{y'} (p,\mathbf{a})U^{\mathbf{a}}
+ \Psi_{y'},
$$
where $V_{y'} (p,\mathbf{a})\in <U_1,\ldots ,U_{n_{y'}}> \backslash <\{U_j\}_{j\in J}>$,
$\Psi_{y'}\in k(y')[\{U_j\}_{j\in J}]_{p\delta (y)+1}$.
This shows that $\omega (y')=\epsilon (y')-1=\epsilon (y)=\omega (y)$,
applying again (\ref{eq322}). Moreover $\kappa (y')\geq 2$, so $y' \in \Omega ({\cal Y})$.
This concludes the proof of the first statement.\\

For the second statement, note that ${\cal Z}$ is necessarily of the first kind at $y$ in case 2,
since (\ref{eq328}) is not compatible with Proposition \ref{secondkind}. With notations as
above, ${\cal Z}$ is then permissible of the first kind (resp. of the second kind) at $y'$
if $B(y')\neq \emptyset$ (resp. if $B(y')= \emptyset$).
\end{proof}

\begin{cor}\label{constructible}
With notations as above, the function
$$
\iota : {\cal X} \rightarrow \{1, \ldots ,p\}\times \N \times \{0,1,\geq 2\},
\ y \mapsto (m(y), \omega (y), \kappa (y))
$$
is a constructible function on ${\cal X}$. In particular, it takes finitely many distinct values.
\end{cor}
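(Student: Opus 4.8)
The plan is to deduce constructibility of $\iota$ from the local openness statement already proved in Theorem \ref{Zariskiopen}, by a standard Noetherian-induction (or ``constructible topology'') argument. Recall that a function $f$ on a Noetherian scheme is constructible precisely when every irreducible closed subset ${\cal Y}$ contains a nonempty open subset on which $f$ is constant; this is the criterion I would invoke.

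First I would fix an arbitrary irreducible closed subset ${\cal Y} \subseteq {\cal X}$ with generic point $y$, and observe that by the first part of Theorem \ref{Zariskiopen} the set $\Omega({\cal Y})$ of points $y' \in {\cal Y}$ with $(m(y'),\omega(y'),\kappa(y')) = (m(y),\omega(y),\kappa(y))$ contains a nonempty Zariski open subset ${\cal U}$ of ${\cal Y}$. Thus $\iota$ is constant, equal to $\iota(y)$, on ${\cal U}$. Since this holds for every irreducible closed ${\cal Y}$, the criterion above applies and $\iota$ is constructible. The ``finitely many values'' clause is then immediate: a constructible function on a Noetherian scheme (here ${\cal X}$ is Noetherian by hypothesis) takes only finitely many values, since ${\cal X}$ is a finite union of irreducible locally closed pieces on each of which the function is eventually constant; equivalently one runs the Noetherian induction: writing ${\cal X} = {\cal U} \sqcup ({\cal X}\setminus{\cal U})$ with ${\cal U}$ open dense in a chosen irreducible component and $\iota|_{\cal U}$ constant, and recursing on the proper closed complement, the process terminates by the descending chain condition and exhibits ${\cal X}$ as a finite disjoint union of locally closed subsets on each of which $\iota$ is constant.

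I should be slightly careful about one point: the target set in the corollary is $\{1,\ldots,p\}\times\N\times\{0,1,\ge 2\}$, which allows $\kappa = 0$; this value occurs exactly at points with $m(x)<p$ or $\omega(x) = 0$ (definition \ref{defmult} and definition \ref{defomega}), where one sets $\kappa(x) \in \{0,1\}$ by convention — so in the induction one simply treats those strata the same way, the openness being again supplied by Theorem \ref{Zariskiopen} (or, for the locus $m<p$, by plain upper semicontinuity of the multiplicity together with openness of $\mathrm{Sing}_p{\cal X}$'s complement). No genuinely new argument is needed here.

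The main obstacle is not in this corollary at all — it is entirely a formal consequence of Theorem \ref{Zariskiopen}, whose proof (the casuistic analysis in cases 1 and 2 around equations (\ref{eq323})--(\ref{eq328}), using proposition \ref{Deltaalg} to guarantee minimality of the projected polyhedra and the excellence of the auxiliary covers ${\cal Y}_{(p,\mathbf{a})}$ to get a dense regular locus) is where all the work lies. So for the present statement the only thing to get right is the bookkeeping of the Noetherian induction and the observation that ${\cal X}$ being Noetherian and separated suffices to apply it; I do not anticipate any difficulty there.
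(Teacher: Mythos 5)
Your proof is correct and is exactly the paper's one-line argument: apply Theorem~\ref{Zariskiopen} and run Noetherian induction on ${\cal X}$, which is precisely what you spell out via the ``constant on a dense open of every irreducible closed'' criterion. One small caveat on your parenthetical about $\kappa=0$: its appearance in the corollary's target set is a peculiarity of the statement itself, since Definitions~\ref{defmult} and~\ref{defomega} assign $\kappa(x)=1$ (not $0$) both when $m(x)<p$ and when $\omega(x)=0$ with $\kappa(x)\neq\geq 2$ — you slightly misstate the convention there, but this has no bearing on the constructibility argument, which is entirely formal once Theorem~\ref{Zariskiopen} is in hand.
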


\begin{proof}
This follows from the previous theorem and Noetherian induction on ${\cal X}$.
\end{proof}

\begin{rem}\label{remconstructible}
The constructible sets ${\cal X}_{p,a}:=\{y \in {\cal X} : (m(y),\omega (y))\geq (p, a)\}$, $a \in \N$
are not in general Zariski closed (Example \ref{Exampleconstructible} below).
See next proposition for closedness of the set ${\cal X}_{p,1}$.

We do not know if the sets $\mathrm{Perm}({\cal Y},{\cal Z})$ as in the theorem are constructible subsets of ${\cal Y}$.
An important issue about permissibility is addressed below in Question \ref{questpermissible}.

About a possible extension of our methods to a global Resolution of Singularities statement, we
remark the following: let ${\cal S}$ be an excellent regular domain,
$$
\eta : \ {\cal X} \rightarrow {\cal S}
$$
be a finite morphism, $x \in {\cal X}$ be such that $({\cal X},x)\rightarrow {\cal S}_{\eta (x)}$
satisfies the assumption of Theorem \ref{Zariskiopen}. It is
easily seen that its conclusion extends to some affine neighbourhood ${\cal U}$ of $x$ on ${\cal X}$.
\end{rem}

\begin{exam}\label{Exampleconstructible}
Let $S=k[[u_1,u_2,u_3]]$, $k$ a (nonperfect) field of characteristic $p>0$ and
$\lambda ,\mu \in k $ be $p$-independent. We take:
$$
h=Z^p -(u_1u_2)^{p-1}Z +\lambda u_3^{p} +u_3u_1^{p-1} + \mu u_1^{p}\in S[Z], \ E=\mathrm{div}(u_1u_2).
$$
The coordinates $(u_1,u_2,u_3;Z)$ are well adapted to $(S,h,E)$. Let
$$
x:=(Z,u_1,u_2,u_3), \ y:=(Z,u_1,u_3).
$$
We have $H(x)=(1)$, $m(x)=m(y)=p$, and compute:
$$
\mathrm{in}_{m_S}h=Z^p +\lambda U_3^p +U_3U_1^{p-1} + \mu U_1^{p}, \ i_0(x)=p, \ \omega (x)=\epsilon (x)-1=p-1  .
$$
On the other hand, we have:
$$
\mathrm{in}_{m_{S_{\eta (y)}}}h=Z^p -(U_1\overline{u}_2)^{p-1}Z +\lambda U_3^p +U_3U_1^{p-1} + \mu U_1^{p},
\ i_0(y)=p-1, \ \epsilon (y)=p.
$$
In order to compute $\omega (y)$, we must introduce a truncation operator
$$
T_y: k(y)[U_1,U_3]_{p} \rightarrow k(y)[U_1,U_3]_{p}
$$
as in Definition \ref{defomega} and get $T_yF_{p,Z,y}=\lambda U_3^p$, so $\omega (y)=p >\omega (x)$. This
proves that the set ${\cal X}_{(p,p)}:=\{z \in {\cal X} : (m(z),\omega (z))\geq (p, p)\}$ is {\it not} Zariski closed.
\end{exam}

\begin{prop}\label{omegapositiveclosed}
Let $({\cal X},x)$ be as in the theorem. The set
$$
\Omega_+ ({\cal X}):=\{y \in {\cal X} : (m(y), \omega (y))>(p,0)\}\subseteq {\cal X}
$$
is Zariski closed and of dimension at most $n-2$.
\end{prop}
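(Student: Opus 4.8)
The plan is to show that $\Omega_+({\cal X})$ is the set of points of ${\cal X}$ where the multiplicity is $p$ \emph{and} $\omega$ does not vanish, and to produce this set as a Zariski-closed subset by combining two ingredients: the constructibility of $\iota$ (corollary \ref{constructible}) together with the stability of $\Omega_+({\cal X})$ under generization, and the dimension bound coming from the fact that permissible centers of either kind have codimension at least two in ${\cal X}$. First I would dispose of the dimension claim: by corollary \ref{constructible} the set $\Omega_+({\cal X})$ is constructible, hence a finite union of locally closed irreducible subsets; for each irreducible component ${\cal Z}$ with generic point $z$ we have $m(z)=p$ and $\omega(z)>0$, and then ${\cal Z}=\overline{\{z\}}$ is permissible of the first kind at $z$ (the remark after definition \ref{deffirstkind}), so ${\cal Z}$ has codimension at least two in ${\cal X}$, i.e. $\dim {\cal Z}\le n-2$. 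Taking the maximum over components gives $\dim \Omega_+({\cal X})\le n-2$.

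The heart of the matter is Zariski closedness. The strategy is to verify that $\Omega_+({\cal X})$ is stable under generization: if $y'$ is a specialization of $y$ in ${\cal X}$ and $(m(y),\omega(y))>(p,0)$, then $(m(y'),\omega(y'))>(p,0)$ as well. Granting this, a constructible set stable under specialization in a Noetherian scheme is automatically closed, which finishes the proof. To prove stability under specialization, I would argue as follows. Upper semicontinuity of the multiplicity function gives $m(y')\geq m(y)=p$, hence $m(y')=p$ (everything is bounded by $p$ since $h$ has degree $p$). So it remains to show $\omega(y')>0$ whenever $\omega(y)>0$ and $y'\in \overline{\{y\}}$. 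Here one localizes at $s':=\eta(y')$ and works with $S_{s'}$; then $y$ corresponds to a point of $\mathrm{Spec}(S_{s'})$, say with $\eta(y)=s^J$ for some $J$, and proposition \ref{Deltaalg} gives $\delta(y)=\mathrm{pr}^J\Delta$ and the compatibility of characteristic polyhedra under this projection. Using the structure theorem \ref{initform} at $s'$, the initial form at $m_{S_{s'}}$ is $\mathrm{in}_{m_{S'}}h = Z^p - G^{p-1}Z + F_{p,Z}$, and one observes that its reduction/projection to the generic point $y$ along $J$ recovers $\mathrm{in}_{s^J}h$ up to the action described in proposition \ref{firstkind} and notation \ref{Fbar}: the key estimate is $\epsilon(y')\ge \epsilon(y)$ (proposition \ref{epsiloninv}, proposition \ref{Deltaalg}) combined with the fact that the vector spaces $V(\cdot)$, $J(\cdot)$ controlling $\omega$ at $y'$ specialize \emph{onto} the corresponding data at $y$. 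Concretely: $\omega(y)>0$ forces $\epsilon(y)\ge 1$, hence $\epsilon(y')\ge 1$; and if $\epsilon(y')=1$ then lemma \ref{lemomegazero} together with proposition \ref{firstkind} (applied to $W=\overline{\{s^J\}}$, which is permissible of the first kind at $s'$ precisely when $\epsilon(y')=\epsilon(y)$) shows $V(TF_{p,Z},E,m_{S'})\ne 0$, i.e. $\omega(y')=\epsilon(y')=1>0$; while if $\epsilon(y')\ge 2$ then $\omega(y')\ge \epsilon(y')-1\ge 1>0$. So in every case $\omega(y')>0$, which is what we need.

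\textbf{Main obstacle.} The delicate point will be the specialization statement in the borderline case $\epsilon(y')=1$: one must rule out the possibility that $\omega(y')=0$ while $\omega(y)>0$, i.e. that the differential data $V$ or $J$ vanishes at the more special point although it is nonzero at the generic point $y$. This is not a formal semicontinuity statement because $\omega$ itself is only constructible, not semicontinuous, in general (example \ref{exampleconstructible}). The resolution is that the relevant \emph{obstruction}—namely that $\mathrm{in}_{m_S}h$ projects along $J$ to something with $\epsilon(y)>0$—is inherited: when $W=\eta(\overline{\{y\}})$ fails to be permissible of the first kind at $s'$ one has $\epsilon(y')>\epsilon(y)\ge 1$, so $\epsilon(y')\ge 2$ and the trivial bound $\omega(y')\ge \epsilon(y')-1\ge 1$ already suffices; and when $W$ \emph{is} permissible of the first kind, proposition \ref{firstkind} says $H^{-1}\langle G^p,F_{p,Z}\rangle\subseteq k(s')[\{U_j\}_{j\in J}]_{\epsilon(s')}$, so the non-$p$-th-power/nonsolvability of the projected polyhedron at $y$ (guaranteed because $\Delta$ is minimal at $y$ and $\omega(y)>0$) transfers to non-vanishing of the corresponding derivative at $s'$, giving $V(TF_{p,Z},E,m_{S'})\ne 0$ or $J(TF_{p,Z},E,m_{S'})\ne 0$ as appropriate. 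Carefully bookkeeping which of $V$ or $J$ is the governing object according to whether $\omega(y)=\epsilon(y)$ or $\epsilon(y)-1$, and matching this against definition \ref{defomega} at $y'$, is the only genuinely technical part; the dimension bound and the passage "constructible $+$ stable under specialization $\Rightarrow$ closed" are routine.
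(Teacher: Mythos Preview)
Your overall strategy---constructible plus stable under specialization implies closed---is the same as the paper's, and your dimension argument is fine. But the execution of the specialization step has two genuine gaps.

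\textbf{First gap: normal crossings is not automatic.} You write ``then $y$ corresponds to a point of $\mathrm{Spec}(S_{s'})$, say with $\eta(y)=s^J$ for some $J$'', and immediately invoke proposition \ref{Deltaalg} and proposition \ref{firstkind}. Both require that $W=\eta(\overline{\{y\}})$ have normal crossings with $E$ at $s'=\eta(y')$, i.e.\ that $I(W)$ be generated by a subset of an adapted r.s.p.\ of $S_{s'}$. There is no reason for this to hold a priori. The paper handles this by arguing the \emph{contrapositive}: assume $\omega(y')=0$, blow up repeatedly at closed points on the strict transform of $\overline{\{y\}}$ until it becomes Hironaka-permissible, and only then compare polyhedra. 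The crucial point is that these blowups preserve $\omega=0$ at the special point by proposition \ref{bupomegazero}, while leaving the local ring at the generic point $y$ untouched. Your direct implication has no analogous ``free'' move: you cannot blow up to achieve normal crossings without losing control of $\omega$ at the special point.

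\textbf{Second gap: the borderline case is backwards.} You write ``if $\epsilon(y')=1$ then \ldots\ shows $V(TF_{p,Z},E,m_{S'})\ne 0$, i.e.\ $\omega(y')=\epsilon(y')=1>0$''. But definition \ref{defomega} says exactly the opposite: $V\ne 0$ gives $\omega=\epsilon-1$, so $V\ne 0$ with $\epsilon(y')=1$ yields $\omega(y')=0$, which is what you must \emph{exclude}. To get $\omega(y')=1$ you would need $V(TF_{p,Z},E,m_{S'})=0$, and your argument (``non-vanishing of the corresponding derivative transfers'') produces the wrong conclusion. In fact, vanishing of $V$ at the generic point does not obviously descend to the special point, which is another reason the paper works contrapositively: it starts from $V\ne 0$ at the special point (equivalently $\omega(y')=0$, $\epsilon(y')=1$) and pushes this non-vanishing \emph{up} to the generic point via the explicit form of $H_{W_0}^{-1}F_{p,Z,W_0}$ as a linear form with a unit coefficient in some non-exceptional variable.

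In short, the paper's use of the contrapositive together with proposition \ref{bupomegazero} is not a stylistic choice but the mechanism that makes the argument work; your direct approach does not supply a substitute.
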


\begin{proof}
Let $\xi \in {\cal X}$ be the generic point of an irreducible component of
$\eta^{-1}(E)$. Then $(m(\xi), \epsilon (\xi))\leq (p,0)$, so $\xi \not \in \Omega_+ ({\cal X})$.
Therefore it is sufficient to prove that $\Omega_+ ({\cal X})$ is Zariski closed.

We will use the Nagata Criterion to prove openness of ${\cal X} \backslash \Omega_+ ({\cal X})$.
By Theorem \ref{Zariskiopen}, it is sufficient to prove that $\Omega_+ ({\cal X})$ is
stable by specialization. Let $y_0 \rightsquigarrow y_1$ be a specialization in ${\cal X} $
and assume that $y_1 \not \in \Omega_+ ({\cal X})$. Since the multiplicity does not decrease by specialization
\cite{Gi1} Theorem 3.9 p.II-30, we may assume that $m(y_1)=p$. We are done unless $m(y_0)=p$
which we assume from now on. Let ${\cal Y}_0:=\overline{\{y_0\}}$.

By localizing $\eta$ at $\eta (y_1)$, it can be furthermore assumed that $y_1=x$. Arguing by
induction on the dimension of ${\cal Y}_0$, it can be furthermore
assumed that ${\cal Y}_0$ is a curve. Let
$$
({\cal X},x)=:({\cal X}_0,x_0) \leftarrow ({\cal X}_1,x_1)\leftarrow \cdots \leftarrow ({\cal X}_r,x_r) \leftarrow \cdots
$$
be a sequence of local blowing ups  at closed points belonging to the strict transform of ${\cal Y}_0$.
We have $m(x_r)\geq m(y_0)=p$ {\it ibid.}, so $m(x_r)=p$ for every $r \geq 0$. Since $S$ is excellent,
the strict transform of ${\cal Y}_0$ in ${\cal X}_r$ is Hironaka permissible for $r>>0$. By construction,
these maps induce local isomorphisms at $y_0$.

We then have $(m(x_r), \omega (x_r)) \leq (p,0)$ by Proposition \ref{bupomegazero},
hence $\omega (x_r)=0$ since $m(x_r)=p$ for every $r \geq 0$. In other words,
after possibly replacing $({\cal X},x)$ by $({\cal X}_r,x_r)$ for some $r \geq 0$,
it can be assumed that ${\cal Y}_0$ is Hironaka permissible.
Then there exist well adapted coordinates $(u_1,\ldots ,u_n;Z)$ at $x$ such that
$$
I(W_0)=( \{u_j\}_{j\in J_0}), W_0:=\eta ({\cal Y}_0)
$$
with $J_0 =\{1, \ldots ,n\}\backslash \{j'\}$ for some $j'$ (since ${\cal Y}_0$ is a curve).
We let $s_0:=\eta (y_0)$, $S_0:=S_{s_0}$. By Proposition \ref{Deltaalg}, the polyhedron
$\Delta_{S}(h;\{u_j\}_{j\in J};Z)$ is minimal,
so we deduce that $\epsilon (y_0)\leq \epsilon (x)$.

Since $\omega (x)=0$ by assumption, we have $\omega (y_0)=0$
except possibly if $\epsilon (y_0)= \epsilon (x)=1$. Since $\omega (x)=0$,
the initial form polynomial $\mathrm{in}_{W_0}h \in G(m_S)[Z]$ then satisfies
$$
H^{-1}_{W_0}F_{p,Z,W_0}=<\sum_{j \in J_0}\gamma_jU_j > \subseteq G(W_0)_1 =S/I(W_0)[ \{U_j\}_{j\in J_0}],
$$
and there exists $j_0 \in J_0$, $e+1 \leq j_0 \leq n$ such that $\gamma_{j_0}$ is a unit in $S/I(W_0)$.
This gives $\omega (y_0)=0$ if $i_0(y)=p$. If $i_0(y)=p-1$, we must introduce a truncation operator
$$
T_0 : \ G(m_{S_0})_{p\delta (y_0)}\rightarrow  G(m_{S_0})_{p\delta (y_0)},
$$
as in Definition \ref{defomega} in order to compute $\omega (y_0)$. However, $T_0$ proceeds from
Definition \ref{defT} in the special case $p\delta (y_0)=1+ \sum_{j\in J_0}H_j$. Lemma \ref{kerT}
then implies that
$$
H^{-1}_{W_0}\mathrm{Ker}T_0 \subseteq <\{U_j\}_{j\in J_0, j \leq e}> \subset G(m_{S_0})_{p\delta (y_0)}.
$$
Since $j_0 \geq e+1$, we thus have $H_{W_0}U_{j_0} \nsubseteq \mathrm{Ker}T_0$ and this proves that
$\omega (y_0)=0$ as required.
\end{proof}

A very special case of the following question (for $\mu$ a discrete valuation with some extra
assumption) has been answered in the affirmative in  {Proposition} \ref{permisarc} above. See also
Theorem \ref{contactmaxFIN} for a related result.

\begin{quest}\label{questpermissible}
Let ${\cal Y}={\cal Y}_0$ be an integral closed subscheme with generic point $y$, $m(y)=p$, $\omega (y)>0$,
and let $\mu$ be a valuation centered at $m_S$. Does there exist a finite
sequence of permissible local blowing ups along $\mu$:
$$
({\cal X},x)=:({\cal X}_0,x_0) \leftarrow ({\cal X}_1,x_1)\leftarrow \cdots \leftarrow ({\cal X}_r,x_r)
$$
with centers ${\cal Z}_i \subset ({\cal Y}_i,x_i)$, ${\cal Y}_i$ denoting the strict transform of ${\cal Y}$ in $({\cal X}_i,x_i)$,
$0 \leq i \leq r$, such that ${\cal Y}_r$ is permissible at $x_r$?
\end{quest}

\section{Application to Resolution in dimension three.}

In this chapter, we deduce Theorem \ref{mainthm} from Theorem \ref{luthm} and prove corollaries \ref{completeresolution}
and \ref{integralmodel}. Achieving condition {\bf (E)} allows us to use all results from the previous chapters.

\smallskip

While Theorem \ref{mainthm} is global in nature for it states the existence
of a proper morphism resolving singularities of ${\cal X}$, Theorem \ref{luthm} is very local: it only deals
with valuations and the existence of a model which is regular at their center. Deducing the former from
the latter goes back to O. Zariski Fundamental Theorem \cite{Z5} p.539 on patching Local Uniformizations. Zariski proved:

\begin{prop}\label{Fundamental}\textbf{(Zariski)}
Let $K$ be a function field in three variables over an algebraically closed firld $k$
of characteristic zero. Let ${\cal N}$ be a set of valuations of $K$, trivial on $k$.
Let $\Sigma$ be a projective model of $K | k$. If there exists a resolving system of ${\cal N}$ consisting
of two projective models $V$ and $V'$, then there also exists a resolving model
for ${\cal N}$ (i.e. a projective model of $K | k$ on which every valuation of $N$ has
a regular center).
\end{prop}

Proposition \ref{redtoLU} below states the appropriate version of the Fundamental Theorem
in the category of quasi-excellent reduced and separated Noetherian schemes of dimension at most three.
Once the appropriate definitions have been set, the proof goes along the same line as
Zariski's. Zariski could not state this more general result because the main notions (schemes,
proper morphisms and quasi-excellence) were not defined at the time. Furthermore, his
proof relies on the next three propositions which were only known for varieties of characteristic
zero at the time.

\smallskip

We also remark that Zariski's Fundamental Theorem has been enhanced by the first author in
\cite{Co8} in the context of algebraic varieties over arbitrary ground fields. This enhancement
is essential  to obtain (ii) in Theorem \ref{mainthm}.

\smallskip

All results in this chapter are extensions of \cite{CoP1}. The proofs are based on the following
three characteristic free results which can be found respectively in \cite{Ab2} Theorem 3,
a special case of \cite{CoJS} Theorem 0.3 (with $B=\emptyset$) and \cite{CoP1} Proposition 4.2:

\begin{prop}\label{factbir}\textbf{(Abhyankar)}
Let $(R,m)$ and $(R',m')$ be regular two-dimen\-sional local domains
with a common quotient field and such that
$$
R \subseteq R', \ m'\cap R=m.
$$
Then $R'$ is an iterated quadratic transform of $R$.
\end{prop}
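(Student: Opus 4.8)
The plan is to peel off one quadratic transform at a time and then show the resulting chain of rings is finite. If $R=R'$ the empty composition does the job, so assume $R\subsetneq R'$; recall that $m'\cap R=m$ says precisely that $R'$ dominates $R$, and that $QF(R)=QF(R')=:K$.

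\emph{Step 1: a single quadratic transform.} The key point is that $mR'$ is a principal ideal. Since $R'$ is a two-dimensional regular local ring it is a unique factorisation domain; fixing a regular system of parameters $(u,v)$ of $R$ and letting $t$ be a greatest common divisor of $u,v$ in $R'$, write $u=ta$, $v=tb$ with $\gcd(a,b)=1$, so $mR'=t\,(a,b)R'$. The ideal $(a,b)R'$, being generated by two coprime elements, is contained in no height-one prime, hence is either the unit ideal or $m'$-primary. In the second case $R'/mR'$ is Artinian, so $\mathrm{Spec}\,R'\to\mathrm{Spec}\,R$ is birational and quasi-finite at $m'$; by Zariski's Main Theorem and the normality of $R$ this forces $R'$ to be a localisation of $R$ dominating $R$, i.e. $R'=R$, a contradiction. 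Therefore $mR'=tR'$ is invertible, and by the universal property of blowing up the morphism $\mathrm{Spec}\,R'\to\mathrm{Spec}\,R$ factors through $X_1:=\mathrm{Bl}_m\,\mathrm{Spec}\,R$, hitting a point $x_1$ lying over the closed point of $\mathrm{Spec}\,R$. Put $R_1:=\mathcal O_{X_1,x_1}$. As the blow-up of a regular local ring along its maximal ideal is regular, $R_1$ is regular local of dimension one or two; dimension one would make $R_1$ a discrete valuation ring, hence maximal for the domination order and so not properly dominated by $R'$, which is impossible. Thus $R_1$ is a two-dimensional regular local ring with $R\subseteq R_1\subseteq R'$, $R'$ dominates $R_1$, and $R_1\ne R$ because $mR_1$ is invertible while $m$ is not principal in $R$.

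\emph{Step 2: iteration and termination.} The triple $(R_1,R',K)$ again satisfies all the hypotheses, so Step 1 can be repeated, producing a strictly increasing chain $R=R_0\subsetneq R_1\subsetneq R_2\subsetneq\cdots\subseteq R'$ with each $R_{i+1}$ a quadratic transform of $R_i$. For each $i$ with $R_i\ne R'$ write $m_{R_i}R'=t_iR'$ (principal by Step 1). Then $t_i$ is a non-unit of $R'$, hence of $R_{i+1}\subseteq R'$, so $t_i\in m_{R_{i+1}}$ and $t_iR'\subseteq m_{R_{i+1}}R'=t_{i+1}R'$. Thus $(t_0)\subseteq(t_1)\subseteq\cdots$ is an ascending chain of ideals of the Noetherian ring $R'$; it stabilises, say from index $r$ on. If $R_r\ne R'$, choose for each $i>r$ a regular system of parameters $(a_i,s_i)$ of $R_i$ with $m_{R_{i-1}}R_i=(a_i)$; stabilisation gives $(a_i)R'=t_rR'$ and forces $s_i\in t_rR'$, and following the coordinates $s_i$ down the tower of quadratic transforms one sees that a fixed nonzero element of $R'$ would be divisible by arbitrarily high powers of $t_r$ in $R'$, contradicting Krull's intersection theorem. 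Hence $R_N=R'$ for some $N$, so $R'$ is an iterated quadratic transform of $R$.

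The routine parts are Step 1 and setting up the iteration; the real work is the termination bookkeeping in Step 2, i.e. controlling how many times the coordinate $s_i$ can be divided by the $t_j$'s inside $R'$. An alternative for that step is to invoke Zariski's theory of complete ideals in two-dimensional regular local rings: the birational pair $R\subseteq R'$ determines a single complete $m$-primary ideal of $R$ whose factorisation into simple complete ideals enumerates exactly the finitely many intermediate rings $R_i$, which are then automatically linearly ordered by quadratic transforms.
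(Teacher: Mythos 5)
Step~1 contains the real gap, and it is in the treatment of the second alternative. Your dichotomy \emph{$(a,b)R'$ is the unit ideal or $m'$-primary} is correct, but the ensuing claim that $R'/mR'$ is Artinian holds only if $t$ is \emph{also} a unit: when $t\in m'$ one has $mR'=t(a,b)R'\subseteq(t)$, a height-one prime, so $\dim R'/mR'\ge 1$ and the map is not quasi-finite at $m'$. The sub-case $t,a,b\in m'$ (equivalently $mR'\subseteq m'^{\,2}$) is therefore never ruled out, and this is exactly the nontrivial content of the lemma that $mR'$ is principal; in Abhyankar's and Lipman's treatments it is excluded by an argument with the order valuation $\mathrm{ord}_{R'}$ restricted to $R$ (via the dimension inequality and the associated graded ring), not by the UFD bookkeeping alone. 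Moreover, even when $mR'$ \emph{is} $m'$-primary, the appeal to Zariski's Main Theorem is circular as stated: ZMT requires $\mathrm{Spec}\,R'\to\mathrm{Spec}\,R$ to be of finite type, which is not a hypothesis and is essentially part of what the proposition asserts. The usual substitute passes to completions: $m\hat R'$ is $\hat m'$-primary, the residue extension $R'/m'\supseteq R/m$ is algebraic by the dimension inequality, and one shows $\hat R'$ is module-finite over $\hat R$, whence $R'=R$ by normality and birationality.

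Step~2 is sound in outline, but the sentence \emph{``following the coordinates $s_i$ down the tower\ldots one sees that a fixed nonzero element of $R'$ would be divisible by arbitrarily high powers of $t_r$''} asserts exactly what has to be proved without giving a mechanism. A clean finish, once Step~1 is secured, is: after stabilisation $(t_r)$ is a height-one prime of $R'$ whose contraction to each $R_i$ (for $i\ge r$) equals $m_{R_i}$, so the DVR $R'_{(t_r)}$ dominates the whole chain; Abhyankar's theorem that the quadratic sequence along a valuation either reaches a DVR in finitely many steps or has union equal to the valuation ring then forces $R'_{(t_r)}\subseteq R'$, contradicting $\dim R'=2$. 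The alternative via Zariski's unique factorisation of complete ideals that you mention is correct, but it too presupposes the principality of the ideals $m_{R_i}R'$, so it does not sidestep the Step~1 gap.
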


\begin{prop}\label{embedsurf}\textbf{(Cossart-Jannsen-Saito)}
Let ${\cal S}$ be a regular Noetherian irreducible scheme of dimension three which is
excellent and $X \hookrightarrow {\cal S}$  be a reduced subscheme.

\smallskip

There exists a composition of blowing ups along integral regular subschemes
$\sigma: \ {\cal S}'\rightarrow {\cal S}$
such that the strict transform $X'\hookrightarrow {\cal S}'$ of $X$ has strict normal crossings
with the reduced exceptional divisor $E$ of $\sigma$. Moreover $\sigma$
restricts to an isomorphism
$$
\pi: \ X' \backslash \sigma^{-1}(\mathrm{Sing}X)\simeq X \backslash \mathrm{Sing}X.
$$
\end{prop}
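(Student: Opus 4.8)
This is the embedded resolution theorem for excellent two-dimensional schemes; the plan is to run the Hilbert--Samuel descent of Cossart--Jannsen--Saito, which is characteristic free. The primary invariant is the Hilbert--Samuel function: to $s\in{\cal S}$ one attaches $H_{X,s}$, the Hilbert--Samuel function of ${\cal O}_{X,s}$, ordered by the well-founded termwise order on eventually polynomial numerical functions. By Bennett, $s\mapsto H_{X,s}$ is uppersemicontinuous on ${\cal S}$, its maximal value locus is Zariski closed, and since $\dim X\le 2$ the locus $\mathrm{Sing}X=\{s: H_{X,s}>H_{X,\eta}, \ \eta \text{ the generic point of the component through }s\}$ has dimension at most one. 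A center ${\cal Y}\subseteq{\cal S}$ is called permissible if it is regular, contained in a Hilbert--Samuel stratum of $X$, $X$ is normally flat along ${\cal Y}$, and ${\cal Y}$ has normal crossings with the accumulated exceptional divisor $E$. The key stability input (Hironaka, Bennett, Singh) is that $H_X$ does not increase under a permissible blowing up and that the transform is again reduced with $E'$ again a normal crossings divisor. By Noetherian induction on $X$ together with transfinite descent on $\max H_X$, one reduces to showing: after finitely many permissible blowing ups, $\max H_X$ strictly decreases.

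I would stratify this descent by the dimension of the top Hilbert--Samuel stratum $\mathrm{Max}(H_X)$. If it is empty, there is nothing to do. If it contains a curve $C$, I would first work at the generic points of the one-dimensional components of $\mathrm{Max}(H_X)$: there $X$ is, relative to the regular two-dimensional local ring ${\cal O}_{{\cal S},\eta_C}$, an embedded curve germ in a regular surface, so classical embedded resolution of plane curves --- iterated quadratic transforms, \emph{vid.} proposition \ref{factbir}, together with Hironaka/Lipman bookkeeping for normal crossings with $E$ --- applies; after finitely many permissible blowing ups with smooth one-dimensional centers one makes $\mathrm{Max}(H_X)$ zero-dimensional. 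One must check that these blowing ups are genuinely permissible (normal flatness holds along the smooth curve generically, after a preliminary regular blow-up if needed) and that they do not create points of the top value outside the original $\mathrm{Max}(H_X)$; this uses the non-increase of $H_X$ and the control of $E$.

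It remains to treat the case $\mathrm{Max}(H_X)=\{x\}$, a single closed point, which is the heart of the matter. Here I would blow up $x$; the obstruction is that $\max H_X$ need not drop in one step, and in residue characteristic $p$ there is no maximal contact. Following Cossart's treatment of excellent surfaces, I would complete ${\cal O}_{{\cal S},x}$, present $X$ by a suitable Weierstrass-type equation $h$ over a regular three-dimensional complete local ring, and attach to $x$ the secondary invariant carried by Hironaka's \emph{characteristic polyhedron} $\Delta(h;u_1,u_2,u_3;Z)$ minimised over coordinate changes, recorded through its initial form polynomials $\mathrm{in}_\sigma h$ exactly as in chapter 2 of the present article --- this keeps the argument characteristic free, since those initial forms live over residue fields. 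One then shows that this secondary invariant (the slope of the polyhedron together with directrix/ridge data) is non-increasing under permissible quadratic transforms, and strictly decreasing unless one lands in a controlled ``monic'' configuration that can be cleared by blowing up a permissible curve through $x$; iterating ``sharpens'' the polyhedron until $\max H_X$ must drop. The main obstacle is precisely this termination argument: the casuistic control of the characteristic polyhedron under blowing up, including the pathological cases in which the naive invariant momentarily increases --- the same phenomenon handled at length for threefolds in the later chapters of this paper --- although for two-dimensional $X$ it is available as a theorem of Cossart--Jannsen--Saito to be quoted. Finally, since every center used is contained in $\mathrm{Sing}X$, the morphism $\sigma$ is an isomorphism over ${\cal S}\setminus\mathrm{Sing}X$, whence $\pi$ is the asserted isomorphism; and $X'$ has strict normal crossings with $E$ by construction.
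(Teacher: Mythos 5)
The paper does not actually prove this proposition: it states it as a quotation of a special case (with $B=\emptyset$) of CJS Theorem~0.3 and uses it as a black box throughout chapter 4. Your sketch is a fair high-level account of the CJS argument for that theorem --- Bennett upper-semicontinuity of the Hilbert--Samuel function, permissible blowing-ups with control of the accumulated normal-crossings boundary, reduction of the top Hilbert--Samuel stratum to isolated points, and the Cossart characteristic-polyhedron descent at those points --- and you correctly identify the termination at isolated worst points as the heart of the matter, explicitly deferring it to CJS. So your proposal and the paper's treatment end up resting on exactly the same reference; the paper merely omits the summary. One small misattribution worth flagging: in the curve-stratum step you invoke Proposition~\ref{factbir} (Abhyankar's factorization of a dominating inclusion of two-dimensional regular local rings into iterated quadratic transforms), but in this paper that result serves the patching arguments of section~4.1 and does not by itself furnish embedded resolution of plane-curve germs at a generic point of a one-dimensional stratum; the relevant input there is rather embedded resolution of curves inside excellent regular surfaces, which is itself part of the CJS package. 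This does not affect the soundness of the overall plan.
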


\begin{prop}\label{principsurf}\textbf{(Cossart-Piltant)}
Let ${\cal S}$ be a regular Noetherian irreducible scheme of dimension three which is
excellent and ${\cal I} \subseteq {\cal O}_{\cal S}$  be a nonzero ideal sheaf.
There exists a finite sequence
$$
    {\cal S}=:{\cal S}(0) \leftarrow {\cal S}(1)\leftarrow \cdots \leftarrow {\cal S}(r)
$$
with the following properties:
\begin{itemize}
  \item [(i)] for each $j$, $0 \leq j \leq r-1$, ${\cal S}(j+1)$ is the blowing up
  along a regular integral subscheme ${\cal Y}(j)\subset {\cal S}(j)$ with
  $$
  {\cal Y}(j)\subseteq \{s_j \in {\cal S}(j): {\cal I}{\cal O}_{{\cal S}(j),s_j} \ \mathrm{is}
  \ \mathrm{not} \ \mathrm{locally} \ \mathrm{principal}\}.
  $$
  \item [(ii)] ${\cal I}{\cal O}_{{\cal S}(r)}$ is locally principal.
\end{itemize}
\end{prop}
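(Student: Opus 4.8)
The plan is to run a Noetherian induction whose outer parameter is a well‑founded invariant of the pair $(\mathcal{S},\mathcal{I})$, and whose inner engine is principalization of ideals on regular \emph{surfaces}, transplanted onto the exceptional divisors produced along the way. First I would record the basic geometry. Set $Z:=\{s\in\mathcal{S}:\mathcal{I}\mathcal{O}_{\mathcal{S},s}\text{ not principal}\}$. Since $\mathcal{I}\neq0$ on the integral scheme $\mathcal{S}$, a nonzero rank‑one torsion‑free ideal is locally principal iff locally free, so $Z=V(\mathrm{Fitt}_1\mathcal{I})$ is closed. Moreover each $\mathcal{O}_{\mathcal{S},s}$ is a UFD: writing $\mathcal{I}\mathcal{O}_{\mathcal{S},s}=a\mathcal{J}$ with $a$ a g.c.d.\ of local generators and $\mathrm{ht}\,\mathcal{J}\ge2$ (or $\mathcal{J}=\mathcal{O}_{\mathcal{S},s}$), one has $\mathrm{Fitt}_1\mathcal{I}=\mathrm{Fitt}_1\mathcal{J}$, and $\mathcal{I}\mathcal{O}_{\mathcal{S},s}$ is principal iff $\mathcal{J}=\mathcal{O}_{\mathcal{S},s}$. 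Hence $Z$ has codimension $\ge2$, i.e.\ $\dim Z\le1$, and this persists after any blowing up because at a codimension‑one point of the blown‑up scheme the local ring is regular of dimension one, in which every ideal is principal. So at every stage the admissible centers of condition (i) are closed subsets of dimension $\le1$ of a regular $3$‑fold.

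Next I would normalize $Z$. Applying Proposition \ref{embedsurf} to $Z_{\mathrm{red}}\hookrightarrow\mathcal{S}$, but retaining only those centers that lie inside $Z$ (its finitely many singular points, and the regular curves composing it), one reduces to the case where $Z$ is a disjoint union of finitely many regular curves together with finitely many isolated closed points. Now set $\nu(s):=\mathrm{ord}_{\mathfrak{m}_s}\mathcal{I}_s$, $\nu:=\max_{s\in\mathcal{S}}\nu(s)\in\mathbb{N}$ (finite by Noetherianity), and let $Z_\nu\subseteq Z$ be the closed locus where $\nu(s)=\nu$; it has dimension $\le1$. If $Z_\nu$ contains a curve $C$, I blow up $C$ (a legal center): since $\mathrm{gr}_{I_C}\mathcal{O}_{\mathcal{S}}\simeq\mathcal{O}_C[U,V]$ involves only two ``new'' variables and $\mathcal{I}$ has order exactly $\nu$ along $C$, the weak transform $I_E^{-\nu}\mathcal{I}\mathcal{O}_{\mathcal{S}'}$ has order $<\nu$ over the generic point of $C$, so after finitely many such steps $Z_\nu$ becomes a finite set of closed points. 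For an isolated $p\in Z_\nu$ I blow up $p$; on the exceptional $\PP^2$ I invoke two‑dimensional principalization — every ideal on a regular surface is principalized by finitely many point blow‑ups (Zariski/Abhyankar/Lipman, compatibly with Proposition \ref{factbir}) — and, by descending induction on a surface invariant (order and colength along the exceptional curves), control $\mathcal{I}'$ there, so that after finitely many further point blow‑ups inside $Z'$ the number $\nu$ strictly drops. Descending induction on $\nu$ then finishes: when $\nu\le1$, the ideal has order $\le1$ everywhere, whence by the UFD argument $\mathcal{J}=\mathcal{O}_{\mathcal{S}}$ locally, i.e.\ $Z=\emptyset$.

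The main obstacle is exactly this termination step in the point‑blow‑up regime and on the exceptional divisors: a single quadratic transform need not make $\mathcal{I}$ principal, and one must exhibit a genuinely well‑founded invariant — assembled from the surface principalization on each exceptional $\PP^2$ and its later strict transforms, together with the order/colength bookkeeping — that decreases strictly, while checking that every chosen center stays inside the shrinking non‑principal locus $Z$ as (i) demands. Excellence of $\mathcal{S}$ is used throughout to guarantee closedness of all the loci involved and finite‑typeness of the surface resolutions invoked, so that the Noetherian induction is legitimate; in dimension three this is considerably softer than resolution of singularities itself, which is why it can be used as an input to the reduction of Theorem \ref{mainthm} to Theorem \ref{luthm}.
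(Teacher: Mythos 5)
Your setup is right: $Z=\{\mathcal{I}_s\text{ not principal}\}$ is closed of codimension $\ge2$ because each $\mathcal{O}_{\mathcal{S},s}$ is a regular local ring, and codimension-one points of a blown-up regular scheme are DVRs so stay out of $Z$. What you have not done, and what is the entire content of the statement, is establish termination; you say so yourself, but naming an obstacle is not the same as overcoming it. Two specific claims you make are not justified. First, after blowing up a curve $C\subseteq Z_\nu$, you assert that ``after finitely many such steps $Z_\nu$ becomes a finite set of closed points.'' This does not follow: blowing up $C$ kills the strict transform of $C$ as a component of $Z_\nu$ (the order at the generic point of $E$ over $C$ drops to $0$), but new one-dimensional components of $Z_\nu$ can appear inside the exceptional divisor $E$, and iterating does not obviously produce a decreasing invariant. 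Second, in the point-blow-up regime you invoke ``surface principalization on each exceptional $\PP^2$'' plus ``order/colength bookkeeping'' as the would-be decreasing invariant, but you neither define this invariant nor show it is well-founded and strictly decreasing along the process, nor that the centers it dictates remain inside the shrinking locus $Z$ as condition (i) requires. This is exactly the hard part.

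The paper's actual proof does not run this argument from scratch; it cites the proof of proposition 4.2 of \cite{CoP1}, where the decreasing invariant is built from a Hironaka-style \emph{characteristic polygon} $\Delta(\mathcal{E};u_1,u_2;y)$ adapted to a normal crossings boundary, and it checks that the two assumptions present there (quasi-projectivity over a field, and equicharacteristic) are either unused or can be replaced by the characteristic-free preparation result of \cite{CoP3} theorem II.3. So the route the paper takes (and the one that actually closes the termination gap) is the polygon/preparation machinery, not a raw order-plus-surface-resolution induction. If you want to complete your proposal along your own lines, the missing ingredient is a precise, lexicographically well-ordered invariant — something like $(\nu,\ \text{characteristic polygon of the residual ideal along }E\text{ or at the closed point})$ — together with a transformation law under blow-up showing strict decrease, and a verification that the centers it selects lie in $Z$. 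Without that, the argument does not terminate.
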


\begin{proof}
The assumption ``$X/k$ is quasi-projective'' is not used in the
proof of \cite{CoP1} Proposition 4.2. The equicharacteristic assumption is used only via
the power series expansions used for defining $E$ and the characteristic polygon
``$\Delta ({\cal E};u_1,u_2;y)$ prepared'' on pp.1061-1062 of {\it ibid.}.
But this is also characteristic free by \cite{CoP3} Theorem II.3.
\end{proof}

\subsection{Reduction to local uniformization and proof of the corollaries.}

We now reduce Theorem \ref{mainthm} to its local uniformization form (LU) below.
Let $(A,m,k)$  be a quasi-excellent local domain with quotient field $K$. Recall that
quasi-excellent rings are Noetherian by Definition \cite{EGA2} (7.8.2)
and Remark (7.8.4)(i). We consider the following Local Uniformization problem:\\

\noindent (LU) \index{(LU) Local Uniformization problem} for every valuation  $v$ of $K$, with valuation ring $({\cal O}_v,m_v,k_v)$ such that
$$
A \subset {\cal O}_v \subset K, \ m_v\cap A=m, \ k_v | k \ \mathrm{algebraic},
$$
there exists a finitely generated $A$-algebra $T$, $A \subseteq T \subseteq {\cal O}_v$,
such that $T_P$ is regular, where $P:=m_v \cap T$.

\smallskip

Zariski's proof of the Fundamental Theorem (quoted from \cite{Z5} on p.539) only requires two results:
\cite{Z5} Theorem 7 of section 19 and the Lemma on p. 538. In our characteristic free context,
these are respectively Lemma \ref{elimindet} below and Proposition \ref{factbir}.

\begin{lem}\label{elimindet}
Let $A$ be a reduced excellent Noetherian domain of dimension three and
$$
{\cal X} \longrightarrow \mathrm{Spec}A, \ {\cal Y} \longrightarrow \mathrm{Spec}A
$$
be projective birational morphisms. Denote by $\rho : \ {\cal Y} \cdots \longrightarrow {\cal X}$
the birational correspondence and ${\cal F}\subset {\cal Y}$ its fundamental locus \index{fundamental locus of a birational correspondence, Lemma~\ref{elimindet}}
 {(i.e. the complement of the largest open set of
definition)}. There exists a sequence
\begin{equation}\label{eq509}
{\cal Y}=:{\cal Y}_0 \leftarrow {\cal Y}_1 \leftarrow \cdots \leftarrow {\cal Y}_{r+1}={\cal Y}'
\end{equation}
of blowing ups along regular centers ${\cal Z}_i\subseteq {\cal Y}_i$ such that
\begin{itemize}
  \item [(i)] ${\cal Z}_i$ is fundamental for $\rho_i : \ {\cal Y}_i \cdots \longrightarrow {\cal X}$, $0 \leq i\leq r$;
  \item [(ii)] $\rho \circ \pi $ is a morphism on $\pi^{-1}({\cal F} \cap \mathrm{Reg}{\cal Y})$, where
  $\pi : \ {\cal Y}'\rightarrow {\cal Y}$ is the composed map.
\end{itemize}
\end{lem}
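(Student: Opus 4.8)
Lemma \ref{elimindet} is a classical elimination-of-indeterminacy statement for birational maps between projective models of an excellent three-dimensional domain, and the plan is to follow the standard argument due to Zariski (and used in this form in \cite{CoP1}\cite{Co8}), but carried out so that the centers $\mathcal{Z}_i$ are always fundamental for the current correspondence $\rho_i$ and so that nothing outside the singular locus of $\mathcal{Y}$ is disturbed beyond finitely many closed points.

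The plan is as follows. First I would set up the graph: let $\Gamma \subseteq \mathcal{Y}\times_{\mathrm{Spec}A}\mathcal{X}$ be the Zariski closure of the locus where $\rho$ is defined, with its two projections $p:\Gamma\to\mathcal{Y}$ and $q:\Gamma\to\mathcal{X}$; both are projective and birational, $q$ is an isomorphism over the open set where $\rho^{-1}$ is defined, and $p$ is an isomorphism exactly outside $\mathcal{F}=p(\mathrm{Exc}(p))$. The goal is to dominate $\Gamma$ by a model $\mathcal{Y}'$ obtained from $\mathcal{Y}$ by blowing up a sequence of regular fundamental centers. Since $\mathcal{X}\to\mathrm{Spec}A$ is projective, there is a coherent ideal sheaf $\mathcal{I}\subseteq\mathcal{O}_{\mathcal{Y}}$, supported on $\mathcal{F}$, whose blow-up dominates $\Gamma$ (equivalently, resolves the indeterminacy of $\rho$). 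The point is to principalize $\mathcal{I}$ by a sequence of blow-ups along \emph{regular} centers that are fundamental at each stage. The two ingredients already available are proposition \ref{principsurf} (principalization of an ideal sheaf on an excellent regular threefold by blow-ups along regular centers contained in the non-locally-principal locus) and proposition \ref{factbir} (Abhyankar: any two regular two-dimensional local domains with a common quotient field, one dominating the other, are linked by iterated quadratic transforms).

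The key steps, in order: Step 1, deal with the part of $\mathcal{F}$ lying over codimension-$\le 1$ loci and over regular points of $\mathcal{Y}$. Over a height-$\le 1$ prime $\xi$ of $\mathcal{Y}$ at which $\mathcal{Y}$ is regular, $\mathcal{O}_{\mathcal{Y},\xi}$ and the corresponding local ring of $\Gamma$ are regular of dimension $\le 2$ with a common fraction field, so by proposition \ref{factbir} the map $\Gamma\to\mathcal{Y}$ is, locally at such points, an iterated quadratic transform; hence the relevant ideal is locally principalizable by blow-ups at closed points lying over $\xi$. Globally one applies proposition \ref{principsurf} to $\mathcal{I}$ restricted to the regular locus of $\mathcal{Y}$ (after first blowing up finitely many closed points to make any fundamental curve $\mathcal{Z}_i$ regular away from the bad locus, exactly as in the discussion in Step 2 of the proof of proposition \ref{redtoLU}); each center used is contained in the non-locally-principal locus of $\mathcal{I}$, which is precisely the fundamental locus of the current $\rho_i$, giving (i). Step 2, check (ii): since all blow-ups performed are along regular centers contained in $\mathcal{F}\cap\mathrm{Reg}\mathcal{Y}$ or in its strict transforms (and, over $\mathrm{Sing}\mathcal{Y}$, we do nothing — the statement only demands $\rho\circ\pi$ be a morphism on $\pi^{-1}(\mathcal{F}\cap\mathrm{Reg}\mathcal{Y})$), the composite $\pi:\mathcal{Y}'\to\mathcal{Y}$ restricted to $\pi^{-1}(\mathrm{Reg}\mathcal{Y})$ principalizes $\mathcal{I}\cdot\mathcal{O}_{\mathrm{Reg}\mathcal{Y}}$, so $\rho\circ\pi$ extends to a morphism there by the universal property of blowing up. Finally one records that the sequence (\ref{eq509}) is finite because proposition \ref{principsurf} terminates.

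The main obstacle is the bookkeeping in Step 1: ensuring that after each blow-up the new center is genuinely \emph{fundamental} for $\rho_i$ (not merely contained in the old exceptional locus), that the successive strict transforms of a fundamental curve can be made regular by preliminary point blow-ups without leaving the regular locus, and that these local choices glue into a globally defined sequence — this is the content of the glueing remarks around (\ref{eq5091})–(\ref{eq5092}) invoked in proposition \ref{redtoLU}. The geometry is entirely characteristic-free, which is why proposition \ref{principsurf} (valid in arbitrary characteristic by \cite{CoP3}) is the only nontrivial input beyond Abhyankar's factorization; once that is granted, the argument is Zariski's, and I would not expect to need anything more than careful tracking of fundamental loci through the tower.
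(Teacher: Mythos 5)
Your proposal assembles the right ingredients --- the graph closure $\Gamma$ with its two projections, the ideal sheaf $\mathcal{I}$ whose blow-up dominates $\Gamma$, Abhyankar's factorization (proposition \ref{factbir}) to handle one-dimensional pieces of the fundamental locus, and proposition \ref{principsurf} to principalize a residual ideal --- and the two-stage structure (curves first, then a principalization) matches the paper's. But the way you interleave the two stages hides the one point the proof actually turns on. You propose to ``apply proposition \ref{principsurf} to $\mathcal{I}$ restricted to the regular locus of $\mathcal{Y}$,'' but that proposition produces blowing-up centers contained in the non-locally-principal locus of $\mathcal{I}$, and those centers must be \emph{closed} regular subschemes of the full model $\mathcal{Y}_i$ (the lemma asks for blowing ups of $\mathcal{Y}_i$, not of $\mathrm{Reg}\,\mathcal{Y}_i$). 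A regular curve inside $\mathrm{Reg}\,\mathcal{Y}_i$ whose closure in $\mathcal{Y}_i$ runs into $\mathrm{Sing}\,\mathcal{Y}_i$ is not an admissible center. The paper handles this by making the entire first stage a reduction: after the preliminary point blow-ups and the iterated blow-ups along the (now regular, disjoint) fundamental curves --- which by the classical quadratic-sequence argument eventually resolve $\rho$ everywhere in codimension $\leq 1$ over $\mathrm{Reg}\,\mathcal{Y}$ --- the residual ideal $\mathcal{I}$ has $V(\mathcal{I})$ of dimension $\leq 1$ contained in $\pi_n^{-1}(x_1)\cup\cdots\cup\pi_n^{-1}(x_k)$ for finitely many closed points $x_j\in\mathrm{Reg}\,\mathcal{Y}$; these fibers sit entirely inside $\mathrm{Reg}\,\mathcal{Y}_n$ and are proper over closed points, so every center produced by proposition \ref{principsurf} is automatically a closed subscheme of $\mathcal{Y}_n$. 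Your sketch does not isolate this as the goal of the first stage, so the application of proposition \ref{principsurf} is not actually licensed as written.

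Two smaller points. First, the ``height-$\leq 1$ prime $\xi$'' phrasing is off: the fundamental locus of a rational map to a projective target has codimension $\geq 2$, so the one-dimensional strata you want to factor via proposition \ref{factbir} correspond to height-$2$ primes (codimension-$2$ curves), and the local ring you compare at the generic point of such a curve is the $2$-dimensional $\mathcal{O}_{\mathcal{Y},\xi}$, not a DVR. Second, the glueing remarks around equations (\ref{eq5091})--(\ref{eq5092}) are internal to the proof of this lemma (they define the sequence canonically enough that the applications in proposition \ref{redtoLU} can be glued on overlaps); they are not an external device used \emph{inside} proposition \ref{redtoLU} that the present lemma can lean on. Your citation inverts the dependency.
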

\begin{proof}
This lemma rephrases \cite{CoP1} Proposition 4.7, using the characteristic free Proposition \ref{principsurf}.
We denote by
$$
{\cal F}^\circ := {\cal F}\cap \mathrm{Reg}{\cal Y}, \ \mathrm{dim}{\cal F}^\circ \leq 1.
$$
Let $\overline{{\cal F}} \subseteq {\cal F}$ be the Zariski closure of ${\cal F}^\circ$ in ${\cal Y}$ and
${\cal G} \subseteq \overline{{\cal F}}$ be its one-dimensional component (possibly ${\cal G}=\emptyset$).
We construct $\pi$ as a composition of blowing ups along regular subschemes {\it mapping} to $\overline{{\cal F}}$.

\smallskip

\noindent {\it Step 1:} let
\begin{equation}\label{eq5091}
\pi_1: \ {\cal Y}_{i_1}\rightarrow {\cal Y}
\end{equation}
be the minimal composition of blowing ups at closed points such that
the strict transform ${\cal G}'$ of ${\cal G}$ is a disjoint union of regular curves, followed by
the blowing up along ${\cal G}'$. Let
$$
\rho_1 : \ {\cal Y}_{i_1} \cdots \longrightarrow {\cal X}
$$
denote the composed map $\rho \circ \pi_1 $, ${\cal F}_1$ its fundamental locus.
We now denote
$$
{\cal F}_1^\circ := {\cal F}_1\cap \pi_1^{-1}(\mathrm{Reg}{\cal Y})
$$
and $\overline{{\cal F}}_1 \subseteq {\cal F}_1$ its Zariski closure in ${\cal Y}_{i_1}$. Let
furthermore ${\cal G}_1 \subseteq \overline{{\cal F}}_1$ be the union of its one-dimensional irreducible components
{\it whose image in} ${\cal Y}$ {\it has dimension one}.

\smallskip

We now iterate this construction.  Applying a classical result on quadratic sequences
in regular local rings of dimension two (e.g. \cite{ZS2} appendix 5, Theorem 3 and (E) on p.391),
we construct $\pi_n: \ {\cal Y}_{i_n}\rightarrow {\cal Y}$ such that $\rho \circ \pi_n $ is a morphism away from
$$
\pi_n^{-1}(({\cal F} \cap \mathrm{Reg}{\cal Y})\backslash \{x_1, \ldots ,x_k\}),
$$
where $x_1, \ldots ,x_k$ are finitely many closed points.

\smallskip

\noindent {\it Step 2:} let ${\cal Z}$ be the closure of the graph of $\rho \circ \pi_n$. Since ${\cal X}$ is projective,
${\cal Z}$ is isomorphic to the blowing up of ${\cal Y}_n$ along a certain ideal sheaf
${\cal I}_n\subseteq {\cal O}_{{\cal Y}_{i_n}}$. Since $\pi_n^{-1}(\mathrm{Reg}{\cal Y})\subseteq \mathrm{Reg}{\cal Y}_n$,
there exists ${\cal I}\subseteq {\cal O}_{{\cal Y}_{i_n}}$ with
\begin{equation}\label{eq5092}
V({\cal I})\subseteq \pi_n^{-1}(x_1)\cup \ldots \cup \pi_n^{-1}(x_k), \ \mathrm{dim}V ({\cal I})\leq 1,
\end{equation}
such that ${\cal Z}$ is isomorphic to the blowing up of ${\cal Y}_{i_n}$ along ${\cal I}$
above $\pi_n^{-1}(\mathrm{Reg}{\cal Y})$.  Applying Proposition \ref{principsurf} to
${\cal I}\subseteq {\cal O}_{{\cal Y}_{i_n}}$ concludes the proof.
\end{proof}

\begin{prop}\label{redtoLU}
Let ${\cal X}$  be a reduced and separated Noetherian scheme which is quasi-excellent and
of dimension at most three. Let ${\cal X}_1, \ldots , {\cal X}_c$ be the irreducible components
of ${\cal X}$. Assume that (LU) holds for every local ring of the form
$A={\cal O}_{{\cal X}_i,x_i}$  which is of dimension three, $1 \leq i \leq c$.
Then Theorem \ref{mainthm} holds for ${\cal X}$.
\end{prop}

\begin{proof}
Suppose that (i) and (ii) in Theorem \ref{mainthm} have been proved. Apply Proposition \ref{embedsurf} to
$$
X:=\pi^{-1}(\mathrm{Sing}{\cal X})_{\mathrm{red}}\subseteq {\cal X}',
$$
then blow up along $X'$: we get (iii).  {It} remains to prove (i) and (ii).

\smallskip

\noindent {\it Step 1:} it can be assumed that ${\cal X}$ is
irreducible of dimension three.

\smallskip

There is a finite birational morphism
$$
f: \ \coprod_{i=1}^c{\cal X}_i \rightarrow {\cal X},
$$
isomorphic above $\mathrm{Reg}{\cal X}$. The theorem holds for ${\cal X}$
if it holds for each ${\cal X}_i$. Resolution of singularities is known if
$\mathrm{dim}{\cal X}\leq 2$ \cite{L3}, so we may assume that $\mathrm{dim}{\cal X}=3$.

\noindent {\it Step 2:} it can be assumed that ${\cal X}=\mathrm{Spec}A$ is affine.

\smallskip

This is based on Lemma \ref{elimindet}. Consider open sets
${\cal U}\subseteq {\cal X}$ satisfying (i) and (ii) in Theorem \ref{mainthm}, i.e. there exists
$\pi_{\cal U}: \ {\cal U}'\rightarrow {\cal U}$ proper and birational, such that
\begin{equation}\label{eq50}
\mathrm{Reg}{\cal U}'={\cal U}' \ \mathrm{and} \ \pi_{\cal U}^{-1}(\mathrm{Reg}{\cal U})\simeq \mathrm{Reg}{\cal U}.
\end{equation}
We assume furthermore that a finite affine covering
${\cal U}=U_1 \cup \cdots \cup U_n$  is given such that
\begin{equation}\label{eq501}
\pi^{-1}_{\cal U}(U_i)\rightarrow U_i \ \mathrm{is} \ \mathrm{projective}.
\end{equation}

\smallskip

\noindent {\it Claim:} if two open sets ${\cal U}_1$ and ${\cal U}_2$ satisfy (\ref{eq50}) and (\ref{eq501}), so does
${\cal U}_1\cup {\cal U}_2$ w.r.t. {\it the union} of their respective coverings. Since ${\cal X}$ is
Noetherian, this claim completes reduction step 2.

\smallskip

We now prove the claim. Let ${\cal V}:= {\cal U}_1\cap{\cal U}_2$.
Denote by $\pi_i: \ {\cal U}'_i \longrightarrow {\cal U}_i$
the given resolutions of singularities satisfying (\ref{eq50}) and (\ref{eq501}). Let
$$
{\cal F}_1\subseteq {\cal U}'_1 \cap \pi_1^{-1}({\cal V})
$$
be the fundamental locus  of the birational map
$$
\rho : \ {\cal U}'_1 \cap \pi_1^{-1}({\cal V}) \cdots \longrightarrow {\cal U}'_2 \cap \pi_2^{-1}({\cal V}),
$$
and $\overline{{\cal F}}_1 \subseteq {\cal U}'_1$ be its Zariski closure in ${\cal U}'_1$. By (\ref{eq50}), we have:
$$
\pi_1 (\overline{{\cal F}}_1)\subseteq \mathrm{Sing}{\cal U}_1.
$$
In particular, we may replace ${\cal U}'_1$ by any blow up along a regular center contained in $\overline{{\cal F}}_1$.
We apply Lemma \ref{elimindet} to $\pi^{-1}_i(U_{j_1j_2})\rightarrow U_{j_1j_2}$, $i=1,2$ for each
$U_{j_1j_2}:=U_{j_1}\cap U_{j_2}$ with obvious notations.

When some ${\cal Z}_i$ in Lemma \ref{elimindet} is a curve, it can be assumed that ${\cal Z}_i$ is
regular away from (the inverse image of) ${\cal V}$ by blowing up closed points beforehand.
Furthermore the sequences (\ref{eq509}) for distinct  $U_{j_1j_2}$'s glue together,
which follows from the Definitions (\ref{eq5091})-(\ref{eq5092}). We may thus assume that
\begin{equation}\label{eq502}
\rho \ \mathrm{is} \  \mathrm{a} \ \mathrm{morphism}.
\end{equation}

Let ${\cal F}_2 \subseteq {\cal U}'_2 \cap \pi_2^{-1}({\cal V})$ be the fundamental locus of $\rho^{-1}$ and
consider the associated sequence (\ref{eq509}). We will only perform step 1 in the proof of
Lemma \ref{elimindet}.

When ${\cal Z}_i$ is a closed point mapping to ${\cal V}$,
we apply Proposition \ref{principsurf} beforehand to ${\cal I}({\cal Z}_i){\cal O}_{{\cal U}'_1}$
in order to preserve (\ref{eq502}).

When ${\cal Z}_i$ is an irreducible curve with generic point $\xi_i$, whose image in ${\cal V}$
has dimension one, the ideal ${\cal I}({\cal Z}_i){\cal O}_{{\cal U}'_1}$ is invertible above $\xi_i$ by
Proposition \ref{factbir}. Applying Proposition \ref{principsurf} beforehand to
${\cal I}({\cal Z}_i){\cal O}_{{\cal U}'_1}$, we also preserve (\ref{eq502}) while ${\cal U}'_1$
is unchanged away from the inverse image of finitely many closed points of ${\cal V}$. It can be assumed that
${\cal Z}_i$ is regular away from the inverse image of ${\cal V}$ by blowing up closed points beforehand
as above.

\smallskip

Summing up, it can be assumed that (\ref{eq502}) holds and that $\rho^{-1}$ is a morphism (hence an
isomorphism by (\ref{eq502})) away from
\begin{equation}\label{eq5021}
\pi_2^{-1}(x_1), \ldots ,\pi_2^{-1}(x_k), \ x_1, \ldots ,x_k \in {\cal V}
\ \mathrm{finitely} \  \mathrm{many} \ \mathrm{closed} \  \mathrm{points}.
\end{equation}
We may then glue ${\cal U}'_1$ and ${\cal U}'_2 \backslash \{\pi_2^{-1}(x_1), \ldots ,\pi_2^{-1}(x_k)\}$
along
$$
\pi_1^{-1}({\cal V} \backslash \{x_1, \ldots ,x_k\})=\pi_2^{-1}({\cal V} \backslash \{x_1, \ldots ,x_k\})
$$
to some proper morphism $\pi_{{\cal W}} : \ {\cal W}'\rightarrow {\cal W}:={\cal U}_1 \cup {\cal U}_2$.
By construction, $\pi_{{\cal W}}$ satisfies (\ref{eq50}) and (\ref{eq501}) for each $U_{j_1}\subseteq {\cal U}_1$.
Let $U_{j_2}\subseteq {\cal U}_2$ be fixed, so  $\pi_{2}^{-1}(U_{j_2})\rightarrow U_{j_2}$ is projective. Now
$\pi_{1}^{-1}(U_{j_1j_2})\rightarrow U_{j_1j_2}$ is projective for each $U_{j_1}\subseteq {\cal U}_1$, so
$\pi_{{\cal W}}(U_{j_2})\rightarrow U_{j_2}$ projective follows from  (\ref{eq5021}). This concludes
the proof of the claim, hence of step 2.

\smallskip

\noindent {\it Step 3:} achieving (i) in Theorem \ref{mainthm} with $\pi$ projective
for ${\cal X}=\mathrm{Spec}A$ affine.

\smallskip

The Riemann-Zariski space of valuations \index{Riemann-Zariski space of valuations}
$$
\mathrm{Zar}({\cal X}):=\{ v \ \mathrm{valuation} \ \mathrm{of} \ K : A\subseteq {\cal O}_v\}
$$
is quasi-compact by \cite{ZS2} Theorem 40 on p.113 and Noetherianity of $A$. The assumption on $v$
in (LU) means that  $v$ is a closed point of $\mathrm{Zar}({\cal X})$. Regularity is a nonempty open property
for any reduced ${\cal Y}$ which is of finite type over ${\cal X}$ because $A$ is excellent. This applies in particular
to any projective closure of $\mathrm{Spec}T$, $T$ as in (LU). Hence Theorem \ref{mainthm}(i)
is reduced to the following patching problem: let
$$
{\cal X}_1 \longrightarrow \mathrm{Spec}A, \ {\cal X}_2 \longrightarrow \mathrm{Spec}A
$$
be projective birational morphisms. There exists ${\cal Y} \longrightarrow \mathrm{Spec}A$
projective birational and morphisms $\pi_i: \ {\cal Y} \longrightarrow {\cal X}_i$, $i=1,2$, such that
$$
\pi_1^{-1}(\mathrm{Reg}{\cal X}_1)\cup \pi_2^{-1}(\mathrm{Reg}{\cal X}_2)\subseteq \mathrm{Reg}{\cal Y}.
$$
As indicated in \cite{Z5} on p.539, Zariski's Patching Theorem only requires
Proposition \ref{factbir} and Lemma \ref{elimindet} (here in our characteristic free context)
in order to deduce step 3 from (LU).

\smallskip

\noindent {\it Step 4:} achieving (ii). Let $\pi : \ {\cal X}'\rightarrow{\cal X}$ be as achieved in step 3,
i.e. projective birational with $\mathrm{Reg}{\cal X}'={\cal X}'$. Let ${\cal F} \subseteq {\cal X}$
be the fundamental locus of $\pi^{-1}$. We define
$$
{\cal F}_1:= \mathrm{Zariski} \ \mathrm{closure} \ \mathrm{in} \ {\cal X}
\ \mathrm{of} \ {\cal F} \cap  \mathrm{Reg}{\cal X}.
$$
Note that ${\cal F}_1$ has dimension at most one. We only sketch the argument and refer to \cite{Co8}
(see also \cite{Pi} section 6) for the details. There exists a commutative diagram
\begin{equation}\label{eq503}
\begin{array}{ccc}
  {\cal X}' & {\buildrel e'   \over \longleftarrow} & {\cal Y}' \\
  \downarrow &   & \downarrow \\
  {\cal X} &  {\buildrel e \over \longleftarrow} &  {\cal Y}\\
\end{array}
\end{equation}
such that $e$ (resp. $e'$) is a composition of blowing ups with regular centers
mapping to $\mathrm{Sing}{\cal X}$ (resp. to $\pi^{-1}(\mathrm{Sing}{\cal X})$). Let
$\pi ': \ {\cal Y} '\rightarrow {\cal Y}$ be the resulting morphism. This
diagram has the following property: let ${\cal G} \subset {\cal Y}$ be the fundamental locus
of ${\pi '}^{-1}$,  and ${\cal F}'_1\subseteq {\cal G}$ be the
strict transform of ${\cal F}_1$. Then any connected component of ${\cal G}$ containing points of
$\mathrm{Sing}{\cal Y}$ is disjoint from ${\cal F}'_1$ (in particular ${\cal F}'_1\subset \mathrm{Reg}{\cal Y}$).
This is achieved as follows:

\smallskip

\noindent (a) by iterating finitely many blowing ups of ${\cal X}$ at intersection points of ${\cal F}_1$
and $\mathrm{Sing}{\cal X}$, then applying Proposition \ref{principsurf}, we first obtain $e,e'$
such that ${\cal F}'_1\subset \mathrm{Reg}{\cal Y}$.

\smallskip

\noindent (b) by applying the techniques of step 2 above those irreducible curves
$C\subseteq {\cal G}$ only such that
$$
C\nsubseteq {\cal F}'_1, \ C\cap {\cal F}'_1 \neq \emptyset ,
$$
then applying Proposition \ref{principsurf} to get $e'$, we disconnect ${\cal F}'_1$ from
components of ${\cal G}$ containing points of $\mathrm{Sing}{\cal Y}$.

\smallskip

By (\ref{eq503}), there exists ${\cal U}\subseteq \mathrm{Reg}{\cal Y}$ such that the fundamental locus of
${\pi '}^{-1}({\cal U})\rightarrow {\cal U}$ is a {\it projective} subscheme (of dimension at most one)
containing ${\cal F}'_1$. We define ${\cal Z}\subset {\cal Y}'\times_{{\cal X}} {\cal Y}$ by composing
the diagonal embedding
$$
\Delta_{{\cal Y}'} : \ {\cal Y}'\rightarrow {\cal Y}'\times_{{\cal X}} {\cal Y}'
$$
with the  second projection $1\times \pi '$ above ${\cal Y}'\times_{{\cal X}} {\cal U}$.
Then ${\cal Z}\rightarrow {\cal X}$  has the required properties.
\end{proof}

\noindent {\it Proof of Corollary \ref{completeresolution}:} $A$ is excellent by \cite{EGA2}(7.8.3)(iii).

\smallskip

\noindent {\it Proof of Corollary \ref{integralmodel}:} let ${\cal Y}$ be any projective ${\cal O}$-scheme
with generic fiber ${\cal Y}_F=\Sigma$, e.g. clearing denominators in $\Sigma$.
By generic flatness \cite{EGA2}(6.9.1), there exists ${\cal U}\subseteq \mathrm{Spec}{\cal O}$ such that
$s^{-1}({\cal U})$ is flat over ${\cal U}$. Apply Theorem \ref{mainthm}
to the Zariski closure of $s^{-1}({\cal U})$ in ${\cal Y}$, where
$$
s : {\cal Y} \longrightarrow \mathrm{Spec}{\cal O}
$$
is the structure morphism.

\smallskip

\begin{rem}
Corollary \ref{integralmodel} can be strengthened in the obvious way:
given any proper and flat  ${\cal O}$-scheme ${\cal Y}$ with generic fiber ${\cal Y}_F=\Sigma$
and an open set ${\cal U}\subseteq \mathrm{Spec}{\cal O}$,
there exists a proper and flat ${\cal O}$-scheme ${\cal X}$ isomorphic to ${\cal Y}$ above ${\cal U}$
and regular away from ${\cal U}$.
\end{rem}

\subsection{Reduction to cyclic coverings.}

In this section, we reduce the local uniformization form (LU) of the previous section to Theorem \ref{luthm}.
This reduction is performed in two steps: first to complete local domains, then to cyclic coverings of degree $p$
in residue characteristic $p>0$. The first step is adapted from the descent methods of \cite{CoP1} Proposition 9.1
for (LU) inside the Henselization of finitely generated algebras of dimension three. Descent from complete local
rings to Henselian local rings, i.e. algebraization  of (LU), is proved in any dimension in \cite{ILO} Proposition 6.2,
but this does not imply Proposition \ref{redtoLUcomplete} below.

\begin{prop}\label{redtoLUcomplete}
Assume that  (LU) holds for every complete local domain of dimension three. Then
Theorem \ref{mainthm} holds.
\end{prop}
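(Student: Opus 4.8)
The plan is to transport (LU) from complete local domains down to the quasi-excellent case via the completion, using the regularity of $A\to\hat A$ to descend the resulting local model. By proposition \ref{redtoLU} it is enough to prove (LU) for $A=\mathcal{O}_{\mathcal{X}_i,x_i}$, a quasi-excellent local domain of dimension three with quotient field $K:=\mathrm{QF}(A)$ and residue field $k$, and for a valuation $v$ of $K$ such that $A\subset\mathcal{O}_v$, $m_v\cap A=m_A$ and $k_v|k$ is algebraic.

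The first move is to pass to the completion. Since $A$ is quasi-excellent, $\hat A$ is reduced; as $A$ is a domain injecting into $\hat A$, at least one minimal prime $\hat{\mathfrak{p}}$ of $\hat A$ satisfies $\hat{\mathfrak{p}}\cap A=(0)$, and we set $B:=\hat A/\hat{\mathfrak{p}}$. Then $B$ is a complete local domain with $A\hookrightarrow B$, $m_B\cap A=m_A$ and $B/m_B=\hat A/m_{\hat A}=k$; its dimension is at most three, and the case $\dim B\le 2$ is settled by the classical resolution of excellent surfaces and curves, so we may assume $\dim B=3$. Put $L:=\mathrm{QF}(B)\supseteq K$. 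Next I would extend $v$ to a valuation $w$ of $L$ centered at $m_B$ and with $k_w|k_v$ algebraic. Such $w$ exists: the compositum $B\cdot\mathcal{O}_v$ inside $L$, localized at the ideal generated by $m_B$ and $m_v$, is a proper local ring --- its residue field being $k_v$, by the computation $B\otimes_A\mathcal{O}_v/(m_B,m_v)=k_v$ --- and a valuation ring of $L$ dominating it may be chosen with residue field algebraic over $k_v$; such a valuation restricts to $\mathcal{O}_v$ on $K$ because any valuation ring of $L$ dominating the valuation ring $\mathcal{O}_v$ meets $K$ exactly in $\mathcal{O}_v$. Since $k_w|k$ is then algebraic, the hypothesis applies to the complete local domain $B$ with the valuation $w$.

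Applying (LU) to $(B,w)$ yields a finitely generated $B$-subalgebra $T'\subseteq\mathcal{O}_w\subseteq L$ such that $T'_{P'}$ is regular, where $P':=m_w\cap T'$; concretely $T'=B[\theta_1,\dots,\theta_r]_c$ with $\theta_i\in\mathcal{O}_w$ and $c\in B[\theta_1,\dots,\theta_r]$, subject to finitely many relations with coefficients in $B$. The remaining task is to descend this package to a finitely generated $A$-subalgebra $T$ of $\mathcal{O}_v$ with $T_P$ regular, $P:=m_v\cap T$. For this I would use that $A$ is quasi-excellent, so $A\to\hat A$ is a regular morphism and, by N\'eron--Popescu desingularisation, $\hat A=\varinjlim_\lambda A_\lambda$ is a filtered colimit of smooth finite type $A$-algebras. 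Spreading out the finite data defining $(B,T',P')$ --- the prime $\hat{\mathfrak{p}}$, the generators $\theta_i$, the denominator $c$, a finite presentation of the relations, and the open condition (valid because $T'$ is excellent) that the center be a regular point --- over a suitable $A_\lambda$ produces a finite type $A$-algebra together with a prime over which it is regular and which lies over the center of the lifted valuation; finally, using the valuation $v$ to specialise the smooth ``extra parameters'' of $A_\lambda$ compatibly with $A\subset\mathcal{O}_v$, one realises this model inside $K$, obtaining the desired $T$ with $A\subseteq T\subseteq\mathcal{O}_v$ and $T_P$ regular.

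The main obstacle is this final descent: one must not be content with spreading the regular local model over a smooth $A$-algebra $A_\lambda$ (which would only give regularity ``\'etale-locally over $A$''), but must pin it down inside $\mathcal{O}_v\subseteq K$, and the valuation $v$ is the tool that selects the relevant section of the smooth parameters; a secondary delicate point is arranging the extension $w$ of $v$ to $L$ so as to be simultaneously compatible with $v$ on $K$, centered at $m_B$, and residually algebraic over $k_v$. I would model the write-up on proposition 9.1 of \cite{CoP1} (the Henselisation case) and on the algebraisation results of \cite{ILO} and \cite{Te}.
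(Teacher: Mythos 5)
The core of the argument is the descent from $\widehat{A}$ back to $A$, and here your proposal has a genuine gap rather than a complete proof. You apply (LU) to a single complete local domain $B=\widehat{A}/\widehat{\mathfrak p}$ to get a finitely generated $B$-algebra $T'\subseteq\mathcal O_w$ regular at $P'$, and then propose to descend $T'$ to an $A$-algebra inside $\mathcal O_v$ by writing $\widehat A=\varinjlim A_\lambda$ (N\'eron--Popescu) and spreading out. But after spreading out to some $A_\lambda$ smooth over $A$, you still need a specialization $A_\lambda\to K$ compatible with $A\subset\mathcal O_v$ and with the center of $v$; since $A$ and $\mathcal O_v$ are not Henselian, neither Hensel's lemma nor Artin approximation produces such a specialization. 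This is exactly why the paper remarks that the algebraization result of \cite{ILO} proposition 6.2 (which, by this very mechanism, only descends from complete local rings to their \emph{Henselizations}) ``does not imply proposition \ref{redtoLUcomplete}''. Your final paragraph acknowledges the obstacle, but the proposed tool (``using $v$ to specialise the smooth extra parameters'') is not an argument and is precisely the place the ILO-style approach breaks down.

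The paper's proof sidesteps this entirely. It first reduces to $\dim\mathcal O_v=1$, then uses the assumption together with proposition \ref{redtoLU} to obtain not merely a local model over $\widehat A/\widehat{\mathfrak p}$ but a global resolution $\widehat\pi:\widehat{\mathcal Y}\to\mathrm{Spec}\,\widehat A$ of theorem \ref{mainthm} type (isomorphism over the regular locus, which descends from $\mathrm{Spec}\,A$ by excellence). Then the key step, absent from your proposal, is that the generators of the desired $T\subseteq K$ are produced \emph{inside} $K$ from the start: one chooses $g,f_1,\dots,f_r\in A$ (using that $\mathrm{Sing}\,\widehat{\mathcal X}$ is the preimage of $\mathrm{Sing}\,\mathcal X$ and the rational-rank structure of $v$), arranges via a lemma that $h:=gf_1\cdots f_r$ has normal-crossings vanishing at the center $\widehat y$ of $\widehat v$ on $\widehat{\mathcal Y}$, and then uses elementary linear algebra on the value vector $(v(f_1),\dots,v(f_r))$ to manufacture products $g_j=\prod f_i^{m_{ij}}$ and $g_j={g'_j}^a\prod g_i^{-m_{ij}}$ that lie in $\mathcal O_{\widehat{\mathcal Y},\widehat y}\cap K$. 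Taking $T$ to be the integral closure of $A[g_1,\dots,g_d]$ in $K$, the faithfully flat descent $T\otimes_A\widehat A$ detects regularity of $T_P$. Because the $g_j$ are rational expressions in elements of $A$, the problem of pinning the model down inside $K$ never arises. I would encourage you to study the lemma on page of the paper's proof producing (\ref{eq510}) and the subsequent manipulation (\ref{eq5101})--(\ref{eq5102}): they are the replacement for the spreading-out step you were missing.
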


\begin{proof}

By Proposition \ref{redtoLU}, it is sufficient to prove that (LU) holds for
every quasi-excellent local domain $(A,m_A,k)$ of dimension three. As an indication,
the general strategy of the proof is deducing (LU) for $A$ from Theorem \ref{mainthm} for
$\mathrm{Spec}\hat{A}$. We will choose an extension $\hat{v}$ of $v$ to
$\mathrm{Spec}\hat{A}$ and a suitable resolution of singulaties
$\hat{{\cal Y}}\rightarrow \mathrm{Spec}\hat{A}$ (Lemma \ref{Ychapeau}) such that
$\hat{{\cal Y}}$ algebraizes at the center $\hat{y}$ of $\hat{v}$.
By general facts about excellent rings and Zariski's Main Theorem,
$\hat{{\cal Y}}$ algebraizes to a regular local ring $T_P$ at $\hat{y}$ (see (\ref{eq51021}) below).

\medskip

Let $v$ be a valuation of $K$ as in (LU). Denote by
$$\
\Gamma_v:=K^\times /{\cal O}_v^\times, \ r:=\mathrm{dim}_{\Q}(\Gamma_v\otimes_{\Z}\Q)
$$
the value group and rational rank of $v$. To begin with, we may assume that $\mathrm{dim}{\cal O}_v=1$,
i.e. $\Gamma_v \subset (\R, \geq)$, applying \cite{NSp} Theorem 1.1 (valid in all dimensions)
or using the dimension three techniques in \cite{CoP1} Proposition 5.1. We may also assume that
the residue extension $k_v | k $ is algebraic: if $x \in {\cal O}_v$ has transcendental residue,
replacing $A$ by $B:=A[x]_{m_v\cap A[x]}$ gives a reduction on dimension, since $\dim B<\dim A$
by the dimension formula.

\smallskip

Since $A$ is local quasi-excellent, its formal completion $\hat{A}$
w.r.t. $m_A$ is reduced \cite{EGA2}(7.8.3)(vii) and Remark (7.8.4)(i), so
$$
\hat{K}:=\mathrm{Tot}(\hat{A})=\prod_{i=1}^c \hat{K}_i, \ \hat{K}_i=QF(\hat{A}/\hat{P}_i)
$$
and the $\hat{P}_i$'s are minimal primes. Let $\hat{v}$ be an extension of $v$ to, say $\hat{K}_1$,
after possibly renumbering. Note that $\mathrm{dim}{\cal O}_{\hat{v}}\geq 1$
and that inequality is strict in general. We may also choose $\hat{v}$ with  $k_{\hat{v}} | k_v $ algebraic
(hence $k_{\hat{v}} | k $ algebraic) by composing again if necessary (this means
that $\hat{v}$ is a closed point in the Riemann-Zariski space of valuations
$$
\mathrm{Zar}(\mathrm{Spec}{\hat{A}\over \hat{P}_1}):=\{ w \ \mathrm{valuation} \ \mathrm{of} \
\hat{K}_1 : {\hat{A}\over \hat{P}_1}\subseteq {\cal O}_w\}
$$
see \cite{ZS2} Theorem 38 p. 111). We have
$$
r\leq d:=\mathrm{dim}(\hat{A}/\hat{P}_1).
$$

%Such an extension is obtained as follows: let
%$$
%\hat{P}'_1:=\{\hat{f}\in \hat{K}_1 : \exists \gamma \in \Gamma_v, \hat{f}=\lim_n (f_n)
%\ \mathrm{and} \ \forall n\geq 0, v(f_n)\leq \gamma \}.
%$$

Let ${\cal X}:=\mathrm{Spec}A$, $\hat{{\cal X}}:=\mathrm{Spec}\hat{A}$ and $f: \ \hat{{\cal X}}\rightarrow {\cal X}$
be the completion morphism. By assumption in this proposition and Proposition \ref{redtoLU},
Theorem \ref{mainthm} holds for $\hat{{\cal X}}$.  Let
$$
\hat{\pi}: \ \hat{{\cal Y}}\rightarrow  \hat{{\cal X}}
$$
be the corresponding resolution of singularities.  Let $\hat{y} \in \hat{{\cal Y}}$ be the center of $\hat{v}$.
Since $k_{\hat{v}} | k $ is algebraic and $\hat{A}/\hat{P}_1$ is universally catenary,
we have
$$
d=\mathrm{dim}{\cal O}_{\hat{{\cal Y}},\hat{y}}.
$$
By \cite{EGA2}(7.8.3)(v), we have
$\mathrm{Sing}\hat{{\cal X}}=f^{-1}(\mathrm{Sing}{\cal X})$. Therefore there exists $g\in A$, $g\neq 0$
such that $\hat{\pi}$ is an isomorphism above $\hat{{\cal X}}_g=\mathrm{Spec}\hat{A}_g$ by
Theorem \ref{mainthm}(ii). Let also $f_1, \ldots ,f_r \in A$ such that
$v(f_1), \ldots ,v(f_r)$ are $\Q$-linearly independent in $\Gamma_v$ and set $h:=gf_1 \cdots f_r \in A$.
We have:

\begin{lem}\label{Ychapeau}
With notations as above, it can be assumed that
\begin{equation}\label{eq510}
    \sqrt{h{\cal O}_{\hat{{\cal Y}},\hat{y}}}=\sqrt{m_{\hat{A}}{\cal O}_{\hat{{\cal Y}},\hat{y}}}=(\hat{u}_1 \cdots \hat{u}_r),
\end{equation}
where $(\hat{u}_1,  \ldots , \hat{u}_d)$ is a r.s.p. of ${\cal O}_{\hat{{\cal Y}},\hat{y}}$.
In particular
$$
\hat{v}(\hat{u}_1),\ldots ,\hat{v}(\hat{u}_r)\in \Gamma_v\otimes_{\Z}\Q
$$
and these values are $\Q$-linearly independent.
\end{lem}

\begin{proof}
This is \cite{CoP1} Proposition 6.2, taking into account Proposition \ref{embedsurf}. Note that
it is not necessary to assume here that $\mathrm{dim}{\cal O}_{\hat{v}}= 1$ because $h\in A$.
\end{proof}

We now conclude the proof which is easily adapted from \cite{CoP1} Proposition 9.1.
By elementary linear algebra, there exists an $r\times r$ matrix $M \in {\cal M}(r, \Z)$, $a=\mathrm{det}M>0$
such that
\begin{equation}\label{eq5101}
g_j:=\prod_{i=1}^rf_i^{m_{ij}}=\hat{\delta}_j\hat{u}_j^a \in {\cal O}_{\hat{{\cal Y}},\hat{y}}\cap K,
\end{equation}
where $\hat{\delta}_j\in {\cal O}_{\hat{{\cal Y}},\hat{y}}$ is a unit, $1 \leq j \leq r$. Let
$$
\hat{Q}_j:=(\hat{u}_j)\cap \hat{A}, \ r+1 \leq j \leq d.
$$
By construction (\ref{eq510}), we have ${\cal O}_{\hat{{\cal Y}},\hat{u}_j}=\hat{A}_{\hat{Q}_j}$,
so $(\hat{u}_j)$ is the strict transform of $\hat{Q}_j$ at $\hat{y}$. Since $A$ is dense in $\hat{A}$
for the $m_A$-adic topology, the right-hand side equality in (\ref{eq510}) implies: there exists
$g'_{r+1}, \ldots ,g'_d \in A$ and positive integers $m_{ij}$, $1\leq i \leq r$, $r+1\leq j \leq d$, such that:
$$
u'_j:=g'_j \prod_{i=1}^r\hat{u}_i^{-m_{ij}}\in {\cal O}_{\hat{{\cal Y}},\hat{y}}
$$
and $(\hat{u}_1,  \ldots , \hat{u}_r, u'_{r+1},  \ldots , u'_d)$ is a r.s.p. of ${\cal O}_{\hat{{\cal Y}},\hat{y}}$.
Let now
\begin{equation}\label{eq5102}
g_j:={g'_j}^a\prod_{i=1}^rg_i^{-m_{ij}}={u'_j}^a\prod_{i=1}^r\hat{\delta}_j^{-m_{ij}}
\in {\cal O}_{\hat{{\cal Y}},\hat{y}}\cap K
\end{equation}
and $T$ be the integral closure of $A[g_1, \ldots ,g_d]$ in $K$. By \cite{EGA2} Corollary 7.7.3,
$T$ is a finitely generated $A$-algebra. Furthermore, we have
\begin{equation}\label{eq51021}
A \subseteq T \subseteq {\cal O}_{\hat{{\cal Y}},\hat{y}}\cap K \subset{\cal O}_{\hat{v}}\cap K={\cal O}_v
\end{equation}
by (\ref{eq5101})-(\ref{eq5102}). To complete the proof, it must be proved that $T_P$ is regular,
where $P:=m_v \cap T$. By \cite{EGA2} Lemma 7.9.3.1, it is sufficient to prove that
$T':=T\otimes_A \hat{A}$ is regular at the center $P':=m_{\hat{y}}\cap T'$ of $\hat{v}$.
Since $T_P$ is normal, $T'_{P'}$ is also normal {\it ibid.} and  (7.8.3)(v).
There are inclusions
$$
\hat{A} \subset T'_{P'} \subseteq {\cal O}_{\hat{{\cal Y}},\hat{y}}.
$$
By (\ref{eq5101})-(\ref{eq5102}), the right-hand side inclusion satisfies
$$
\sqrt{P' {\cal O}_{\hat{{\cal Y}},\hat{y}}}=m_{\hat{y}},
$$
so ${\cal O}_{\hat{{\cal Y}},\hat{y}}=T'_{P'}$ by Zariski's Main Theorem \cite{Ray}~Theorem~1 p.~41 and the proof is complete.
\end{proof}

\begin{prop}\label{redtoLUcyclic}
Theorem \ref{luthm} implies Theorem \ref{mainthm}.
\end{prop}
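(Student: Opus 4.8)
The plan is to deduce, from Theorem \ref{luthm}, the local uniformization statement (LU) of the previous section for a complete local domain of dimension three; by Proposition \ref{redtoLUcomplete} this suffices for Theorem \ref{mainthm}. So let $A$ be a complete local domain with $\dim A=3$ and quotient field $L$, and let $v$ be a valuation of $L$ as in (LU). Since the residue characteristic zero case is Hironaka's theorem, I assume the residue characteristic is $p>0$, and, after the reductions recalled in the proof of Proposition \ref{redtoLUcomplete} (cf.\ \cite{NSp}), that $v$ has rank one. Replacing $A$ by the localization at the center of $v$ of its normalization (finite, $A$ being excellent), I may assume $A$ is a normal complete local domain; by Cohen's structure theorem $A$ is module-finite over a regular complete local subring $S$ of dimension three, which is then excellent. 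Set $K:=QF(S)$, so $S$ is regular, $L/K$ is finite, and $v|_K$ is centered at $m_S$.

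Next I would factor $L/K$ as a tower
\[
K=L_0\subset L_1\subset\cdots\subset L_N=L
\]
whose steps are \emph{elementary}: each $L_{i+1}/L_i$ is either (a) purely inseparable of degree $p$ (possible only in equal characteristic $p$), or (b) cyclic of degree $p$, in Artin--Schreier form $X^p-X-a$ in equal characteristic $p$ and in Kummer form $X^p-a$ in mixed characteristic (a primitive $p$-th root of unity having been supplied by a previous step of type (c)), or (c) tamely ramified of degree prime to $p$. Such a tower is produced by splitting off the purely inseparable part, treating the separable part through its Galois closure and the ramification filtration of $v$ (the wild inertia being a $p$-group, hence successively $\Z/p$ over the preceding field), and peeling the tame and residual contributions by steps of type (c). I then argue by induction on $N$; for $N=0$ one has $A=S$, which is regular.

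For the inductive step, (LU) for the localized normalization $A_{N-1}$ of $S$ in $L_{N-1}$ yields a regular local ring $S'$, essentially of finite type over $S$ (hence excellent), of dimension three, with $QF(S')=L_{N-1}$ and $v|_{L_{N-1}}$ centered at $m_{S'}$. If the last step $L_N/L_{N-1}$ is of type (c), a regular model of $L$ along $v$ is obtained classically: resolve the discriminant divisor in $\mathrm{Spec}\,S'$ to a normal crossings divisor using Propositions \ref{embedsurf} and \ref{principsurf}, then invoke Abhyankar's lemma and resolve the resulting abelian, prime-to-$p$, toric quotient singularity; this is the easy case. If the step is of type (a) or (b), choose a primitive element $\alpha$ with $L=L_{N-1}(\alpha)$, integral over $S'$, such that $S'[\alpha]=S'[X]/(h)$ with $h$ monic of degree $p$ and reduced (being contained in the field $L$), of the form required by hypothesis (i) of Theorem \ref{luthm} in case (a) ($h=X^p-a$, $a\in S'$, $\mathrm{char}\,K=p$) and by hypothesis (ii) in case (b) ($h$ irreducible and $\Z/p$-Galois, with $S'[\alpha]$ visibly $G$-invariant via $\alpha\mapsto\alpha+1$, resp.\ $\alpha\mapsto\zeta\alpha$). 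I then achieve condition \textbf{(E)} for $(S',h,E)$ by Corollary \ref{EEfait} --- preparatory Hironaka-permissible blowing ups of $\mathrm{Spec}\,S'$, resting on resolution of excellent surfaces --- obtaining $(S'',h'',E'')$ still satisfying \textbf{(G)} by Proposition \ref{SingX} and now \textbf{(E)}, with $v$ centered at $m_{S''}$. Applying Theorem \ref{luthm} to $(S'',h'')$ and $v$ gives a composition of local Hironaka-permissible blowing ups $({\cal X}''_0,x''_0)\leftarrow\cdots\leftarrow({\cal X}''_r,x''_r)$ with $x''_r$ the center of $v$ and $R:=\mathcal{O}_{{\cal X}''_r,x''_r}$ regular.

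Finally, $R$ is a regular local ring, essentially of finite type over $S''$ hence over $S$, and contained in $\mathcal{O}_v$ since $x''_r$ is the center of $v$. As $R$ is normal with quotient field $L$ and contains $S$, it contains the integral closure of $A\cdot R$ (finite over $R$, with quotient field $L$), so $A\subseteq R\subseteq\mathcal{O}_v$; thus $R$ is a localization of a finitely generated $A$-algebra $T$ with $A\subseteq T\subseteq\mathcal{O}_v$ and $T_P$ regular, $P:=m_v\cap T$, which is (LU) for $A$. By induction, (LU) holds for every complete local domain of dimension three, and by Proposition \ref{redtoLUcomplete} Theorem \ref{mainthm} follows. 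The main obstacle here is not the application of Theorem \ref{luthm} but the construction of the elementary tower with the wild steps brought into $G$-invariant Artin--Schreier/Kummer normal form --- this forces the careful choice of primitive element and of the auxiliary tame steps --- together with the bookkeeping that glues the type-(c) reductions with the applications of Theorem \ref{luthm}; the domination of $\mathrm{Spec}\,A$ by the regular model is then purely formal.
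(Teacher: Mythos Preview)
Your skeleton is right --- reduce to complete local domains, Noether-normalize, climb by degree-$p$ steps using Theorem \ref{luthm}, handle prime-to-$p$ steps classically --- but the tower $K=L_0\subset L_1\subset\cdots\subset L_N=L$ you propose does not exist in general. If the separable part of $L/K$ is not itself a tower of cyclic extensions (for instance a non-normal degree-$p$ extension, or one whose Galois closure has nonabelian group), there are no intermediate fields of $L$ giving steps of your types (a), (b), (c). What ramification theory actually produces lives in the \emph{Galois closure} $\overline{K}$: the fixed fields of the inertia and ramification groups give a tower $F\subset F^i\subset F^r$ with $F^r/F$ tame and the wild part above $F^r$ a tower of cyclic degree-$p$ extensions --- but these fields lie above or beside $L$, not inside it. Your proposal invokes the Galois closure and then asserts $L_N=L$, which is incompatible. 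After your induction you have (LU) for some field containing $L$; taking invariants or intersecting with $L$ does not preserve regularity, so this does not give (LU) for $L$.

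This is exactly the point where the paper departs from an ascending-tower picture. It climbs $F\to F^i\to F^r$ (unramified, then tame) and $F^r\to K^r$ (the wild step, where Theorem \ref{luthm} is applied), but then \emph{descends} $K^r\to K^i\to K=L$ via the implications $(\mathrm{LU}v^r)\Rightarrow(\mathrm{LU}v^i)\Rightarrow(\mathrm{LU}v)$. That descent is the missing idea in your argument; it is not formal and rests on the Galois Approximation (Proposition \ref{Galoisapprox}), which identifies the valuation-theoretic inertia and ramification groups with those of a sufficiently blown-up normal local model, so that the tame and unramified structure over $K$ becomes visible at the level of rings. A secondary issue is the same in nature: insisting on Kummer form in mixed characteristic forces you through $K(\zeta_p)$ first, so your tower ends at $L(\zeta_p)$ rather than $L$, and you again need a descent. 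Once you add the descent step, you are essentially reproducing the paper's proof.
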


\begin{proof}
By Proposition \ref{redtoLUcomplete}, it is sufficient to prove that (LU) holds
for every complete local domain $(A,m,k)$ of dimension three. Let $({\cal O}_v,m_v,k_v)$
be the given valuation ring as in (LU). We may assume here that $\mathrm{char}k_v=p>0$,
the equicharacteristic zero version of Theorem \ref{mainthm} being known. As in
Proposition \ref{redtoLUcomplete}, it is sufficient to deal with the case $\mathrm{dim}{\cal O}_v=1$.

By Noether normalization \cite{Ma} Theorem 29.4(iii), there exists a complete regular
local domain $S \subseteq A$ such that $A$ is a finite $S$-module, $\mathrm{dim}S=3$.
We will prove that the equal characteristic techniques of \cite{CoP1} extend to our situation.
Let $F$ be the quotient field of $S$, so the field extension $K | F$ is finite algebraic. By
\cite{EGA2} Corollary 7.7.3, the integral closure of $A$ in any finite extension of $F$ is
a finite $A$-module.

\smallskip

 {Let $K^{\mathrm{sep}}\subseteq K$ be the separable closure of $F$. We first reduce to the case $K^{\mathrm{sep}}=K$. If $\mathrm{char}K=0$, we already  have $K^{\mathrm{sep}}=K$ and there is nothing to prove. Assume $p:=\mathrm{char}K>0$. The extension $K| K^{\mathrm{sep}}$
 is a tower of purely inseparable extensions
of degree $p=\mathrm{char}K$:
$$ K^{\mathrm{sep}}=:K_0 \subset K_1 \subset \cdots \subset K_n:=K, \ n\geq 0.$$
Let $i\geq 1$ and assume that (LU) holds for the integral closure $\overline{S}_{i-1}$ of $S$ in $K_{i-1}$. We  have:
$$
  K_i=K_{i-1}(x_i^{1/p}), \ x_i \not \in K_{i-1}^p.
$$
%Let $S^{\mathrm{sep}}$ be the integral closure of $S$ in $K^{\mathrm{sep}}$ and assume  that (LU) holds for $S_{i-1}$.
By Proposition \ref{principsurf} (applied to the ideal $(f,g)$ where $x_i={f\over g}$), we may take $x_i \in S_{i-1}$, where $S_{i-1}$ is  given by (LU)
for $\overline{S}_{i-1}$. So
$$
h:=X^p -x\in S_{i-1}[X]
$$
satisfies the assumption of Theorem \ref{luthm}(i). We conclude that (LU) holds for $\overline{S}_i$ which completes the induction step.
From now on, we assume that $K | F$ is separable.}

\smallskip

Let $\overline{K}| K$ be a Galois closure and $\overline{v}$ be an extension of $v$ to $\overline{K}$.
Ramification theory of valuations \cite{ZS2} section 12 provides a diagram of fields
\begin{equation}\label{eq51}
    \begin{array}{ccccccc}
 QF(A)= K         & \subseteq    & K^i       & \subseteq    & K^r       & \subseteq   & \overline{K}\\
  \uparrow  &                   & \uparrow  &                   & \uparrow  &    \\
  QF(S)= F       &  \subseteq   & F^i     & \subseteq    & F^r     &  &\\
\end{array}
\end{equation}
as in the proof of \cite{CoP1} Theorem 8.1.
 {More precisely, $F^i $ (resp. $K^i$) is the inertia field with respect to $\overline{v}$ of the field extension $ \overline{K}\vert F$ (resp.  $ \overline{K}\vert K$);
$F^r $ (resp. $K^r$) is the ramification field with respect to $\overline{v}$ of the field extension $ \overline{K}\vert F$ (resp.  $ \overline{K}\vert K$).}

The left-hand side (resp. middle) inclusions in this diagram
are unramified (resp. totally ramified Abelian of order prime to $p$).
The extension $K^r| F^r$ is a tower of totally ramified Galois extensions of degree $p$.

\begin{rem}
Theorem \ref{luthm} is actually required only to deal with those ramified extensions of degree
$p$ which are immediate (same value group and same residue field) w.r.t. the corresponding restrictions of $\overline{v}$.
For extensions of degree $p$ which are not immediate, a much simpler proof is available,
{\it vid.} \cite{CoP1} Proposition 6.3 in the equicharacteristic case.
\end{rem}

In order to connect ramification theory of valuations and ramification theory of
$S$-algebras essentially of finite type, we restate \cite{CoP1} Theorem 7.2 in our context
as Proposition \ref{Galoisapprox} below.

\begin{defn}\label{normalmodel}
 {Let $(R,m,k)$ be an excellent regular local ring,  $L\vert QF(R)$ be a finite field extension and $w$ be a valuation of $L$, with valuation ring $({\cal O}_w,m_w,k_w)$ such that
$$
R \subset {\cal O}_w \subset L, \ m_w\cap R=m, \ k_w | k \ \mathrm{algebraic}.
$$
A normal local model of ${\cal O}_w|R$ is  the localization $B_P$ of a finitely generated $R$-algebra $B$,
$R \subseteq B \subseteq {\cal O}_w, \ QF(B)=L$ such that $B$ is normal, where $P:=m_w \cap B$.}

\smallskip
 {Let $L'| L$ be a finite field extension and $w'$ be an extension of $w$ to $L'$.
Given a normal local model $B_P$ of ${\cal O}_w|R$, we define a normal local model $B'$ of
${\cal O}_{w'}|R$ by localizing the integral closure $\overline{B}$ of $B$ in $L'$ at $P':=m_{w'} \cap \overline{B}$.}
\end{defn}

\smallskip
Note that $B'$ is actually a normal local model because $R$, hence $B$, is excellent. Assume that  $L'| L$ is Galois. Note that
if $B'$ is a normal local model of ${\cal O}_{w'}|R$ then
$B' \cap L={B'}^{\mathrm{Gal}(L'| L)}$ is a normal local model of ${\cal O}_w|R$.  {By $G^s(B'| B)$, (resp. $G^i(B'| B)$), we mean the splitting group (resp. inertia group) of $B'| B$.
For ramification theory of local rings, we refer to \cite{Ab4} (see also
\cite{CoP1} section 2 equations (2)(3) for a quick summary of the required notions and notations).}

\smallskip
 {Finally, we denote by $G^s (w'| w)$ (resp. $G^i(w'| w)$, $G^r(w'| w)$) the splitting group (resp. the inertia group, ramification group) of  ${\cal O}_{w'}|{\cal O}_w$ from classical valuation theory: we refer to \cite{ZS2} chapter~VI, section~12, see also \cite{CoP1} section 2 pp.~1056-7.}

\begin{prop}\label{Galoisapprox}\textbf{(Galois Approximation).}
 {Let $L'| L$ be a finite Galois extension and $w'$ be an extension of $w$ to $L'$.
There exists a normal local model $B_0$ of ${\cal O}_w|R$ such that for any normal local model
$B$ of ${\cal O}_w|R$ with $B_0 \subseteq B$, the following holds:
\begin{itemize}
  \item [(1)] $G^s (w'| w)=G^s(B'| B)$ and $G^i(w'| w)=G^i(B'| B)$;
  \item [(2)] the normal model $B^r:={B'}^{G^r(w'| w)}$ of ${\cal O}_{w^r}|R$ satisfies
  $$
  B^r/m_{B^r} =B^i/m_{B^i},
  $$
  where $B^i$ is the inertia ring of $B'$ over $B$, i.e. $B^i={B'}^{G^i(B'| B)}$, and $w^r$ is
  the restriction of $w'$ to $L^r:={L'}^{G^r(w'| w)}$. Moreover
  the representation
  $$
  \rho : \ G^i(w'| w) /G^r(w'| w)\rightarrow \mathrm{GL}(m_{B^r}/m_{B^r}^2),
  \ g \mapsto (\overline{x} \mapsto \overline{g.x})
  $$
  is faithful and diagonalizable.
\end{itemize}}
\end{prop}
\begin{proof}
 {Since $\mathcal{O}_w$ is the direct union of all its normal local models $B$,
its integral closure  $\overline{\mathcal{O}_w}$ in $L'$ is the direct union of
all corresponding integral closures $\overline{B}$ in $L'$. Since
the extensions of $\mathcal{O}_w$ to $L'$ are the localizations of
$\overline{\mathcal{O}_w}$ at its maximal ideals $m_1 , \ldots , m_s$, any
$B_0$ such that for $1 \leq i \leq s$, the $m_{i}\cap
\overline{B_0}$'s are pairwise distinct satisfies the statement
about splitting groups in (1) of the proposition.}\\

 {Let now $B$ be any normal local model of $\mathcal{O}_w/R$ such that $B_0 \subset
B$. There is an inclusion
$G^i(w'/w) \subseteq G^i(B'/B)$. Let $t_1, \ldots , t_{f} $
be elements of $\mathcal{O}_{w'}$ whose residues $\overline{t_1}, \ldots ,
\overline{t_{f}} $ generate
$k_{w'}$ as a $k_w$ vector space. Enlarging $B_0$, it can be assumed that
$k_w/(B_0/m_{B_0})$ is algebraic and that $t_1, \ldots , t_f
\in {B'_0}$. Then any $g \in G^i(B'/B)$ acts trivially on $k_{w'}$, hence $g \in G^i(w'/w)$.  This concludes the
proof of (1).}

\smallskip

 {We now turn to the proof of (2).
On the one hand, the residue extension ${B' \over m_{B'}}\vert {Bî \over m_{Bî}}$ is generated by purely inseparable  elements (Theorem 1.48 \cite{Ab4}). On the other hand the field extension $QF(B^r)\vert  QF(B^i)$ is Galois of degree prime to $p$ since
\begin{equation}\label{eq27}
H:=G^i(w'/w)/G^r(w'/w)\simeq
\text{Hom}(\Gamma_{w'}/\Gamma_w,k_{w'})^\times.
\end{equation}
by \cite{ZS2}~Theorems 24 and 25. Hence  ${B' \over m_{B'}}= {B^i\over m_{B^i}}$.}

 {From now on, for $x \in m_{B^r}$, we write $\overline{x}$ for its
initial form in $m_{B^r}/m_{B^r}^2$. Consider the following
representation of $H$
\begin{equation}
\label{eq63}
\rho : \ H \rightarrow \mathrm{ GL}\left (m_{B^r}/m_{B^r}^2 \right ),
\ h \mapsto \left (\overline{x} \mapsto \overline{h. x}\right ).
\end{equation}}

%By (\ref{eq27}),
 {By \cite{ZS2} middle of page 78, $\text{Hom}(\Gamma_{w'}/\Gamma_w,k_{w'})^\times$ is the entire character group, so
$k_{w'}$ contains the group $\mathbf{
\mu}_\epsilon$ of $\epsilon^{th}$-roots of unity, where $\epsilon$
is the exponent of the Abelian group $H$, and $\epsilon$ is prime
to $p$.
Since $k_{w'}\vert k_{w^r}$ is purely inseparable
(Theorem 1.48 \cite{Ab4}), we also have $\mathbf{\mu}_\epsilon
\subseteq k_{w^r}$. Enlarging $B_0$, we may assume that  $\mathbf{\mu}_\epsilon \subseteq  {B^i\over m_{B^i}}$.}

 {Now, any irreducible representation of $H$ over ${B^i\over m_{B^i}}$ has
degree one, since $H$ is Abelian and $\mathbf{\mu}_\epsilon \subseteq
{B^i\over m_{B^i}}$. Therefore, $\rho$ is diagonal up to choosing a
basis $(\overline{x_1},\ldots ,\overline{x_n})$ of
$m_{B^r}/m_{B^r}^2$. We write $\rho (h).
\overline{x_j}=:\chi_j(h)\overline{x_j}$, for $1 \leq j \leq n$
and $h\in H$, where $\chi_j \in \text{Hom}(H,{B^i\over m_{B^i}})^\times$.}

 {Let $\hat{L^r}:=QF(\hat{B^r})$ and
$\hat{L^i}:=QF(\hat{B^i})$. Since $w'/w^i$ is totally ramified, we
also have $\mathrm{Gal}(\hat{L^r}/\hat{L^i})=H$ with the natural
extension of the $H$-action to formal completions. By Hensel's Lemma, the embedding $\mathbf{\mu}_\epsilon \subseteq
{B^i\over m_{B^i}}$ lifts to an embedding $\mathbf{\mu}_\epsilon \subseteq
\hat{B^i}$. Let
\begin{equation}\label{eq64}
y_j:= {1 \over \mid H \mid}\sum_{h \in
H}{\chi_j(h^{-1})(h.x_j)}\in \hat{B^r}.
\end{equation}
It is immediately checked that $\overline{y_j}=\overline{x_j}$ and
that $h.y_j=\chi_j(h)y_j$ for each $h \in H$. After replacing
$x_j$ with $y_j$, it can therefore be assumed that
\begin{equation}\label{eq65}
h.x_j=\chi_j(h)x_j
\end{equation}
for each $h \in H$ and $1\leq j \leq n$, i.e. the action is faithful and
diagonal  on $\hat{B^r}$.}\\

\end{proof}

%%%%%%%%%%%%%%%%%%%%%%%%%%%%%

We now complete the proof of Proposition~\ref{redtoLUcyclic}.  To emphasize the dependence
on $v$, we say that $(\mathrm{LU}v)$ holds if (LU) holds for a particular $v$. With notations as in
(\ref{eq51}), we denote by $v_0,v_0^i,v_0^r,v^i, v^r$ the respective restrictions of $\overline{v}$ to $F$,
$F^i$, $F^r$, $K^i$ and $K^r$. The strategy is to prove successively the implications
$$
(\mathrm{LU}v_0) \Longrightarrow (\mathrm{LU}v_0^i) \Longrightarrow (\mathrm{LU}v_0^r)
\Longrightarrow (\mathrm{LU}v^r) \Longrightarrow(\mathrm{LU}v^i)\Longrightarrow (\mathrm{LU}v).
$$

Note that $(\mathrm{LU}v_0)$ holds by construction since $S$ is regular.

\smallskip

  {Firstly, we apply Proposition~\ref{Galoisapprox} (1) with
 $$R=S, \ L=F,  \ L'=F^i, \ w=
 v_0^i.$$
  By Proposition~\ref{principsurf}, we  may assume that $B_0 \subset S$. By Proposition \ref{Galoisapprox} (1),  the corresponding ring $S'$ from Definition~\ref{normalmodel},  is local-\'etale over $S$, hence $S'$ is regular. So
 $(\mathrm{LU}v_0^i)$ holds  (this follows the argument in
\cite{CoP1}~Corollary 7.3). }

\smallskip

Then $(\mathrm{LU}v_0^r)$ holds because $F^r| F^i$ is a tower of ramified
Galois extensions of prime degrees $l\neq p$: the proof relies on the Perron algorithm
as in \cite{CoP1} Proposition 6.3 and this is characteristic free.

\smallskip

To prove that $(\mathrm{LU}v^r)$ holds, we may assume that $K^r| F^r$ is a single Galois extension of degree $p$.
Let $x \in {\cal O}_{v^r}$ be a primitive element with minimal polynomial
$$
h:=X^p +f_1X^{p-1} + \cdots +f_p \in {\cal O}_{v^r_0}[X].
$$
By Proposition \ref{principsurf}, we may take $f_1, \ldots ,f_p \in T^r$, where
$T^r$ is a local uniformization, since  ($\mathrm{LU}v_0^r$) holds:   {we have the assumptions of
Theorem~\ref{luthm}(ii) which states that $(\mathrm{LU}v^r)$ holds.}

\smallskip

 {To prove that $(\mathrm{LU}v^i)$ holds, we may assume that $K^r| K^i$ is a single Galois extension of prime degree $l\not=p$. By Proposition \ref{Galoisapprox}~(2), the representation $\rho$ is faithful and diagonal. Using elementary linear algebra \cite{CoP1} p.~1080, we may assume that $\rho$ has the form:
$$ (\overline{x_1},\overline{x_2},\overline{x_3}) \mapsto(\zeta \overline{x_1},\overline{x_2},\overline{x_3}),$$
where $(x_1,x_2,x_3)$ is a suitable r.s.p. of $B^r$ and $\zeta \in \mu_l$. In this situation, we have
$$B^r=B^i\oplus B^i x_1 \oplus \cdots \oplus B^i x_1^{l-1}.$$
Let $u_i:= h^l (x_i)$, $1\leq i \leq 3$ where $h$ is the generator of  $H$.
This means that
$$  {B^r \over (u_1,u_2,u_3)B^r} {\buildrel \sim \over \longrightarrow} {B^r \over m_{B^i}B^r} \cong  {{B^r \over m_{B^r}} [X_1]\over (X_1^l)}.$$
By flatness, $m_{B^i}= (u_1,u_2,u_3)$: $B^i$ is regular.
}
\smallskip

 {To prove that  $(\mathrm{LU}v)$ holds, let $K^s$ be the splitting field  with respect to  $\overline{v}$ of the field extension $ \overline{K}\vert K$. }
  {Firstly, we apply Proposition~\ref{Galoisapprox} (1) with
 $$R=A, \ L=K,  \ L'=\overline{K}, \ w=
 \overline{v}.$$
 Let $T^i$ be a given regular local model of $\mathcal{O}_{v^i} \vert A $.
  By Proposition~\ref{principsurf}, we  may assume that $B_0 \subset T^i$. Let $\overline{T}$ be the localization of the integral closure of $T^i$ in $\overline{K}$ at the center of $ \overline{v}$;
  let $T:= T^i \cap K^s= (T^i)^{\mathrm{Gal}(K^i \vert K^s)} $ be the fixed ring. }
   {By Proposition~\ref{Galoisapprox}~(1), we have
$$\mathrm{Gal}(\overline{K} \vert K^s)=G^s (\overline{v}| v^s)=G^s(\overline{T}| T)\  \mathrm{and}\ G^i(\overline{v}| v^s)=G^i(\overline{T}| T).$$
This shows that:
$$G^i({T}^i| T)={G^s(\overline{T}| T)\over G^i(\overline{T}| T)}=\mathrm{Gal}(K^i \vert K^s).$$
By \cite{Ray}~Theorem~2 p.~110, $T^i$ is local-\'etale over $T$. Since $T^i$ is regular, so is $T$. Therefore (LU$v^s$) holds. }

\smallskip
 {Let $B_0$ be as above, let $B_0^s$ be the localization of the integral closure of $B_0$ in ${K^s}$ at the center of $ {v}^s$.    By Proposition~\ref{principsurf}, we  may assume that $B_0^s \subset T$. Note that
\begin{equation}\label{eqfingalois}
\hat{B}_0 = \hat{B}_0^s.
\end{equation}
We claim that there exist  $g_1,g_2,g_3 \in T\cap K$ such that
$$
\sqrt{(g_1,g_2,g_3) T}=m_T.
$$
The construction is the same as in the proof of Proposition~\ref{redtoLUcomplete}, using \eqref{eqfingalois}, see Lemma~\ref{Ychapeau} up to the end of the proof of Proposition~\ref{redtoLUcomplete}. By Zariski's Main Theorem \cite{Ray}~Theorem~1 p.~41, $T$ is   the localization of the integral closure of $B_0[g_1,g_2,g_3]$ in ${K^s}$ at the center of $ {v}^s$. Let $T_0$ be the localization  of the normalization of  $B_0[g_1,g_2,g_3]$ at the center of $ {v}$, $\hat{T}=\hat{T}_0$: $T_0$ is regular and (LU$v$) is proved. }

\end{proof}

%%%%%%%%%%%%%%%%%%%%%%%%%%%%%%%%%%%%%
%%%%%%%%%%%%%%%%%%%%%%%%%%%%%%%%%%%%%
%%%%%%%%%%%%%%%%%%%%%%%%%%%%%%%%%%%%%
%%%%%%%%%%%%%%%%%%%%%%%%%%%%%%%%%%%%%%

\subsection{Normal crossings divisors conditions.}\label{NCDconditions}

In this section, we consider a pair $(S,h)$ satisfying the assumptions of Theorem \ref{luthm},
i.e. such that {\bf (G)} holds. We construct a sequence $\pi : {\cal X}' \rightarrow {\cal X}$
of blowing ups along Hironaka-permissible centers in such a way that every $x' \in \pi^{-1}(x)$
has either $m(x')<p$, or ($m(x')=p$ and  $x'$ satisfies condition {\bf (E)}). This is proved in
Corollary \ref{EEfait} below. Assumption {\bf (G)} is not required here
and we prove a more general version for arbitrary multiplicity in Proposition \ref{normalcrossings}.

\begin{lem}\label{imagepoints}
Let $S$, $h\in S[X]$ (\ref{eq201}) and  $\eta: {\cal X} \rightarrow \mathrm{Spec}S$ be given. Assume that
$\mathrm{dim}S=3$ and that $h$ is reduced. There exists a composition of Hironaka-permissible
blowing ups (\ref{eq210}) w.r.t. $E=\emptyset$:
$$
\begin{array}{ccc}
  {\cal X} &  {\buildrel \pi \over \longleftarrow} & {\cal X}'  \\
  \downarrow &     & \downarrow \\
  \mathrm{Spec}S &  {\buildrel \sigma  \over \longleftarrow} & {\cal S}'\\
\end{array}
$$

such that $\pi (\mathrm{Sing}_m{\cal X}')\subseteq \eta^{-1}(m_S)$.
\end{lem}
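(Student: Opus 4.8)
The goal is to replace the arbitrary reduced hypersurface $\mathcal X\to\mathrm{Spec}\,S$ by a Hironaka-permissible modification whose top-multiplicity locus maps into the closed point of $\mathrm{Spec}\,S$, so that in particular the differential structure of section 2.4 (which uses Nagata derivatives along an $E$ supported in the closed fiber) becomes available near any point of $\mathrm{Sing}_m\mathcal X'$. The natural strategy is to work on the regular threefold $\mathrm{Spec}\,S$ directly, since $\mathrm{Sing}_m\mathcal X\subseteq\mathrm{Sing}\,\mathcal X$ maps finitely onto its image $\Sigma:=\eta(\mathrm{Sing}_m\mathcal X)\subseteq\mathrm{Spec}\,S$, a closed subset of dimension at most two. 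First I would analyze $\Sigma$: by proposition \ref{deltainv}(ii), for a point $s\in\Sigma$ with generic point $w$ of an irreducible component, the fiber $\eta^{-1}(w)$ meets $\mathrm{Sing}_m\mathcal X$ exactly when $\delta(y)\ge 1$ for $y\in\eta^{-1}(w)$, and then $\eta^{-1}(w)$ is a single point. So the locus $\mathrm{Sing}_m\mathcal X$ is, componentwise, isomorphic via $\eta$ to a closed subset $W\subseteq\mathrm{Spec}\,S$ of dimension $\le 2$ that we want to move off the complement of $m_S$.

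The concrete steps are then: (1) Apply embedded resolution for the reduced subscheme $W\subseteq\mathrm{Spec}\,S$ in the excellent regular threefold $\mathrm{Spec}\,S$ — this is exactly proposition \ref{embedsurf} — to obtain $\sigma_0\colon \mathcal S_0\to\mathrm{Spec}\,S$, a composition of blowing ups along regular centers, such that the strict transform $W_0$ of $W$ has strict normal crossings with the reduced exceptional divisor. The centers used all lie over $\mathrm{Sing}_m\mathcal X$ after one checks that the resolution of $W$ can be taken with centers contained in $W$ and its singular subscheme; since $W\subseteq\eta(\mathrm{Sing}_m\mathcal X)$ this keeps everything over $\mathrm{Sing}_m\mathcal X$. (2) Each such blowing up of $\mathrm{Spec}\,S$ along a regular center $Y$ contained in (the strict transform of) $W\subseteq\mathrm{Sing}_m\mathcal X$ lifts to a Hironaka-permissible blowing up of $\mathcal X$: indeed $\eta^{-1}(Y)\subseteq\mathrm{Sing}_m\mathcal X$ and, by the argument following definition \ref{Hironakapermis} together with proposition \ref{deltainv}, $\eta^{-1}(Y)\to Y$ is an isomorphism onto the regular center $Y$, so $\eta^{-1}(Y)$ is Hironaka-permissible with respect to $E=\emptyset$; proposition \ref{Hironakastable} gives the commutative square and the unitary equation $h'$ of the strict transform. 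Multiplicity is non-increasing along such blowing ups, so $\mathrm{Sing}_m\mathcal X'$ stays inside the total transform of $\mathrm{Sing}_m\mathcal X$ and its image under $\eta'$ stays inside the total transform of $W$, which after step (1) is contained in a strict normal crossings divisor $D_0\cup W_0$.

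(3) Now dimension three is used decisively: after step (1) the image of $\mathrm{Sing}_m\mathcal X'$ is contained in a reduced s.n.c.\ divisor on $\mathcal S_0$ whose $1$- and $2$-dimensional strata are themselves regular. By blowing up $\mathrm{Spec}\,S$ further along the (finitely many) one-dimensional components of this image, which are now regular curves having normal crossings with the divisor, and finally along the remaining isolated closed points, one shrinks the image of $\mathrm{Sing}_m\mathcal X'$ until over any affine chart it meets only the closed fibre. More precisely, localize at any point $s'$ of $\mathcal S'$ not equal to the relevant maximal ideal; I claim that after these blowing ups $\eta'^{-1}(s')\cap\mathrm{Sing}_m\mathcal X'=\emptyset$ unless $s'$ is the center of a point of $\mathrm{Sing}_m\mathcal X'$ lying over the chosen closed point. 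Each of these additional blowing ups lifts to a Hironaka-permissible blowing up of $\mathcal X'$ by the same mechanism as in step (2) (the centers on $\mathcal X'$ are $\eta'^{-1}$ of regular centers contained in $\mathrm{Sing}_m\mathcal X'$). Composing all of $\sigma_0$ and these further blowing ups gives the required $\sigma$ and $\pi$, with $\pi(\mathrm{Sing}_m\mathcal X')\subseteq\eta^{-1}(m_S)$.

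The main obstacle, which I would treat carefully, is \emph{bookkeeping of the image loci under iterated blowing ups together with verifying permissibility at each stage}: one must confirm that $\mathrm{Sing}_m\mathcal X'$ continues to be carried isomorphically (componentwise, via $\eta'$) onto a regular subscheme of the base whenever we want to blow it up, which requires re-invoking proposition \ref{deltainv} on the successive $(S',h')$ and checking that the strict transform equations $h'$ produced by proposition \ref{Hironakastable} still satisfy $h'$ reduced (so that proposition \ref{deltainv} applies) — reducedness of $h'$ is recorded in the discussion preceding proposition \ref{SingX}. A secondary technical point is ensuring that the embedded resolution of $W$ in step (1), and the subsequent curve- and point-blowing-ups in step (3), can all be arranged with centers lying over $\mathrm{Sing}_m\mathcal X$; this is where the remark in section 2.4 about $E$ being contained in the closed fibre of previously performed blowing ups, and lemma \ref{imagepoints} itself, get their intended use downstream in corollary \ref{EEfait}.
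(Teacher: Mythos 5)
Your outline takes a genuinely different route from the paper's proof, but the decisive step (3) has a gap that would have to be filled with precisely the tools the paper deploys; as written it is a restatement of the conclusion rather than an argument. The paper's proof is organized around a strictly decreasing invariant, and your plan has no analogue.

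The paper's proof runs in two phases. Phase one handles only the codimension-one points $y\in\mathrm{Sing}_m{\cal X}_i$ whose image in ${\cal X}$ also has codimension one, via the lexicographic invariant $(\delta_i,N_i)$ where $\delta_i:=\max\{\delta(y):y\in\Delta_i\}$. For one such $y$ it applies proposition~\ref{embedsurf} to the \emph{single} two-dimensional subscheme $\eta_i(\overline{\{y\}})$ --- whose successive strict transforms stay in $\mathrm{Sing}_m{\cal X}_j$ by upper semicontinuity of multiplicity, which is what makes each auxiliary center Hironaka-permissible --- and then blows up the prepared divisor; proposition~\ref{originchart} in the case $n=1$ forces $\delta$ to drop by one at the corresponding point of ${\cal X}_{i+1}$, so $(\delta_i,N_i)$ strictly decreases (equation~(\ref{eq521})) and phase one terminates with $\Delta_{i_1}=\emptyset$. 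Phase two localizes $S$ at the generic point $s$ of each remaining one-dimensional image component, obtaining a two-dimensional local base $S_s$, applies two-dimensional embedded resolution there, and globalizes the centers.

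Your step (3) fails on two counts. First, after step (1) the image of $\mathrm{Sing}_m{\cal X}_0$ may still contain two-dimensional components (exactly when $\mathrm{Sing}_m{\cal X}$ has divisorial components), yet step (3) speaks only of one-dimensional components and closed points; those surface components are what the paper's $(\delta_i,N_i)$ invariant is built to eliminate, and your plan never touches them. Second, even for the one-dimensional components, blowing up a Hironaka-permissible curve once does not in general reduce the multiplicity-$m$ locus along it: ${\cal X}_1$ can still carry a curve of multiplicity $m$ over the center, and the process must iterate. You assert that one ``shrinks the image\ldots until\ldots it meets only the closed fibre'' but supply no termination argument; the paper does supply one, via $\delta$ for divisors and via two-dimensional resolution over $S_s$ with $\dim S_s=2$ for curves. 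A secondary point, which you flag but leave open in step (2), is the verification that the centers produced by one shot of proposition~\ref{embedsurf} against all of $W$ are contained in the successive strict transforms of $W$ \emph{and} that these strict transforms stay inside $\eta_j(\mathrm{Sing}_m{\cal X}_j)$; the paper sidesteps this by invoking proposition~\ref{embedsurf} adaptively, one component $\eta_i(\overline{\{y\}})$ at a time, where the containment follows directly from $m(y)=m$ and semicontinuity.
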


\begin{proof}
This statement means that there exists a diagram
\begin{equation}\label{eq52}
\begin{array}{ccccccc}
  {\cal X}=:{\cal X}_0 &  {\buildrel \pi_0  \over \longleftarrow} & {\cal X}_1 &
  {\buildrel \pi_1   \over \longleftarrow} & \cdots & {\buildrel \pi_{n-1}  \over \longleftarrow} &  {\cal X}_n=:{\cal X}' \\
  \downarrow &    & \downarrow & &  &   & \downarrow \\
  \mathrm{Spec}S=:{\cal S}_0 &  {\buildrel \sigma_0  \over \longleftarrow} & {\cal S}_1
  & {\buildrel \sigma_1  \over \longleftarrow} & \cdots & {\buildrel \sigma_{n-1}  \over \longleftarrow}  &  {\cal S}_n=:{\cal S} '\\
\end{array}
\end{equation}
where  each morphism $\pi_i$, $0 \leq i \leq n-1$, is the blowing up along a Hironaka-permissible center
${\cal Y}_i \subset {\cal X}_i$ w.r.t. the reduced exceptional divisor $E_i$ of
$\pi^{(i)}: {\cal X}_i\rightarrow {\cal X}$. It can be assumed that $\dim (\mathrm{Sing}_m{\cal X})\geq 1$.

\smallskip

Let $y_i \in {\cal X}_i$ denote the generic point of such a Hironaka-permissible center
${\cal Y}_i \subset {\cal X}_i$ w.r.t. $E_i$. We define:
$$
\Delta_i:=\{y \in \mathrm{Sing}_m{\cal X}_i : \dim{\cal O}_{{\cal X}_i,y}=\dim{\cal O}_{{\cal X},\pi^{(i)}(y)}=1\},
$$
$$
\delta_i:=\max\{\delta (y), y\in \Delta_i\}, \ N_i:=\sharp\{y\in \Delta_i : \delta (y)=\delta_i\}.
$$

Let $i \geq 0$. We claim that
\begin{equation}\label{eq521}
\left \{
\begin{array}{ccccc}
 (\delta_{i+1}, N_{i+1}) & = & (\delta_i, N_i) & \mathrm{if} & \dim{\cal O}_{{\cal X},\pi^{(i)}(y_i)}\geq 2 \\
  & & & & \\
 (\delta_{i+1}, N_{i+1}) & < &  (\delta_i, N_i) & \mathrm{if} & \dim{\cal O}_{{\cal X},\pi^{(i)}(y_i)}=1
\end{array}
\right .
.
\end{equation}
Namely, this is an obvious consequence of the definition if $\dim{\cal O}_{{\cal X},\pi^{(i)}(y_i)}\geq 2$.
If $\dim{\cal O}_{{\cal X},\pi^{(i)}(y_i)}=1$, let $y \in {\cal X}_{i+1}$ with $\pi_i(y)=y_i$.
We have
$$
(m(y),\delta (y))\leq (m(y_i),\delta (y_i)-1)
$$
by Proposition \ref{originchart} applied for $n=1$ and the claim follows

\smallskip

Pick $y \in \Delta_i$ with $\delta (y)=\delta_i$ and denote ${\cal Y}:=\overline{\{y\}}\subset {\cal X}_i$.
By Proposition \ref{embedsurf}, there exists
a composition of blowing ups  ${\cal X}_{i'}\rightarrow {\cal X}_i$ with regular centers contained
in the successive strict transforms of ${\cal Y}$ such that
$\eta_{i'}({\cal Y}')$ has normal crossings with $E_{i'}$, where ${\cal Y}'$ denotes the strict transform
of ${\cal Y}$ in ${\cal X}_{i'}$. Then ${\cal Y}'$ itself and each blowing up center in
${\cal X}_{i'}\rightarrow {\cal X}_i$
are Hironaka-permissible w.r.t. $E_{i'}$ because $m(y)=m$.

We have $(\delta_{i'}, N_{i'}) = (\delta_i, N_i)$
by (\ref{eq521}). Taking as blowing up center ${\cal Y}_{i'}:={\cal Y}'$ also gives $(\delta_{i'+1}, N_{i'+1}) < (\delta_i, N_i)$
by (\ref{eq521}). Since $\Delta_i$ is a finite set and $\delta_i \in {1 \over m}\N$, there exists an index
$i_1 > i$ such that $\Delta_{i_1}=\emptyset$ and this is preserved by further Hironaka-permissible blowing ups
w.r.t. $E=\emptyset$.

\smallskip

Since $\Delta_{i_1}=\emptyset$, we are done unless $\pi^{(i_1)}(\mathrm{Sing}_m{\cal X}_{i_1})={\cal C}$, where
${\cal C}$ has pure dimension one. Let $C\subset \mathrm{Spec}S$ be an irreducible component of $\eta ({\cal C})$ and
$s$ be its generic point. Note that the stalk $({\cal X}_i)_s$ at $s$ of the $S$-scheme  ${\cal X}_i$
is embedded in the regular scheme of dimension three $\mathrm{Spec}S_s[X]$ for $i=0$ and in an iterated
blowing up along regular centers of the former for $i\geq 1$. By Proposition \ref{embedsurf}, there exists
a composition of Hironaka-permissible blowing ups ${\cal X}'_s \rightarrow ({\cal X}_{i_1})_s$
w.r.t. $(E_{i_1})_s$ such that $\mathrm{Sing}_m {\cal X}'_s =\emptyset$.

Let ${\cal Y}_s \subseteq ({\cal X}_{i_1})_s$ be a Hironaka-permissible center and
${\cal Y} \subseteq {\cal X}_{i_1}$ be its Zariski closure, so in particular we have
${\cal Y}\subseteq  \mathrm{Sing}_m{\cal X}_{i_1}$. Since $\Delta_{i_1}=\emptyset$,
${\cal Y}$ is either (1) a curve mapping onto $C$, or (2) a surface mapping to some irreducible
component of $E_{i_1}$.

\smallskip

In situation (1), there exists a composition of blowing ups along closed points
${\cal X}_{i'_1}\rightarrow {\cal X}_{i_1}$ such that $\eta_{i'_1}({\cal Y}')$ has normal
crossings with $E_{i'_1}$, where ${\cal Y}'$ denotes the strict transform of ${\cal Y}$ in
${\cal X}_{i'_1}$.

In situation (2), ${\cal Y}$ itself is Hironaka-permissible w.r.t. $E_{i_1}$ and we let
$i'_1:=i_1$.

In both situations, we may blow up ${\cal X}_{i'_1}$ along ${\cal Y}'$ and iterate: this produces
an index $i_2 \geq i_1$ and a  composition of Hironaka-permissible blowing ups
${\cal X}_{i_2}\rightarrow {\cal X}_{i_1}$ w.r.t. $E_{i_1}$ such that
$\eta^{-1}(s)\cap \pi^{(i_2)}(\mathrm{Sing}_m{\cal X}_{i_2})=\emptyset$. Applying this construction to the
finitely many irreducible components of $\eta ({\cal C})$ proves the lemma.
\end{proof}

\begin{prop}\label{normalcrossings}
Let ${\cal X}'$ satisfy the conclusion of Lemma \ref{imagepoints} and $E' \subset {\cal S} '$ be the
reduced exceptional divisor of $\sigma$. Let $D \subset {\cal S} '$ be a
reduced divisor.

\smallskip

There exists a composition of Hironaka-permissible blowing ups (\ref{eq210}) w.r.t. $E'$:
$$
\begin{array}{ccc}
  {\cal X}' &  {\buildrel \pi '\over \longleftarrow} & {\cal X}''  \\
  \downarrow &     & \downarrow \\
  {\cal S}' &  {\buildrel \sigma ' \over \longleftarrow} & {\cal S}''\\
\end{array}
$$
such that the strict transform $D ''$ of $D$
is disjoint from $\eta ''(\mathrm{Sing}_m{\cal X}'')$, where
$\eta '' : ({\cal X}'',x'') \rightarrow {\cal S} ''$ is the local projection
at $x''\in \mathrm{Sing}_m{\cal X}''$.
\end{prop}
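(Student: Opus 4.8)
The strategy is to reduce to a two-dimensional embedded resolution statement fiber by fiber over the base, exactly in the spirit of lemma \ref{imagepoints}. Write $E'=\mathrm{div}(v_1\cdots v_e)\subset {\cal S}'$. By the conclusion of lemma \ref{imagepoints}, the image $\pi'^{(0)}(\mathrm{Sing}_m{\cal X}')$ consists only of closed points of those fibers $\eta'^{-1}(s)$ with $s$ a closed point of ${\cal S}'$, together with possibly finitely many curves mapping onto curves in ${\cal S}'$; more precisely we may first apply lemma \ref{imagepoints} again to ${\cal X}'$ (it is already in that position) and reduce to the case where $\eta'(\mathrm{Sing}_m{\cal X}')$ is a finite union of points $s_1,\dots,s_r$ of ${\cal S}'$ together with finitely many irreducible curves ${\cal C}_1,\dots,{\cal C}_t$. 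The point is that $\eta''(\mathrm{Sing}_m{\cal X}'')$ is controlled, under Hironaka-permissible blowing ups w.r.t. $E'$ with centers in $\mathrm{Sing}_m{\cal X}'$, by the images of the blowing-up centers, so it suffices to push the locus $D$ off a shrinking sequence of such images.

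First I would handle the curve part. For each irreducible curve ${\cal C}_i$ with generic point $c_i$, localize ${\cal S}'$ at $s_i':=\eta'(c_i)$ (a codimension-one point). The stalk $({\cal X}')_{s_i'}$ is embedded in a regular three-dimensional scheme (an iterated blowing up of $\mathrm{Spec}(S'_{s_i'}[X])$ along regular centers), and $D_{s_i'}$ is a reduced divisor there. Apply proposition \ref{embedsurf} to make the strict transform of $D$ have strict normal crossings with the exceptional divisor, and in particular disjoint from the (finite set of) points of $\mathrm{Sing}_m$ of the strict transform of $({\cal X}')_{s_i'}$ lying over $c_i$; all centers used are regular and contained in $\mathrm{Sing}_m$ since $m$ is constant there, hence Hironaka-permissible w.r.t. $E'$. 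As in lemma \ref{imagepoints} these blowing ups spread out to a composition of Hironaka-permissible blowing ups over a Zariski neighbourhood of $c_i$, and by Noetherianity finitely many iterations (using that the relevant numerical invariant, e.g. $\delta$ along the curve or the number of bad points, strictly drops — the same $(\delta_i,N_i)$ bookkeeping as in (\ref{eq521})) settle all the curves simultaneously, after which the strict transform $D$ is disjoint from $\eta(\mathrm{Sing}_m)$ over the curves in $\eta({\cal C}_i)$.

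Next I would handle the finitely many closed points $s_j$. Localize at $s_j$: now $S'_{s_j}$ is a three-dimensional excellent regular local ring, ${\cal X}'_{s_j}$ is embedded in a regular three-fold, and $D_{s_j}$ is a reduced divisor. Again apply proposition \ref{embedsurf} to $D_{s_j}\hookrightarrow {\cal S}'_{s_j}$, with the proviso that before blowing up we may also use proposition \ref{principsurf} or additional quadratic transforms at intersection points of $D$ with $\mathrm{Sing}_m{\cal X}'$ to ensure the centers are Hironaka-permissible w.r.t. $E'$ (regular, contained in $\mathrm{Sing}_m$ where $m$ is locally constant, normal crossings with $E'$). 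The outcome is that over $s_j$ the strict transform $D''$ becomes strict normal crossings with the exceptional divisor and in particular misses every point of $\mathrm{Sing}_m{\cal X}''$ over $s_j$. Finitely many such points are treated one at a time, and by the combinatorial finiteness argument (each step strictly decreases a well-chosen invariant, then the point drops out of $\eta''(\mathrm{Sing}_m{\cal X}'')$) the process terminates. Assembling the curve part and the point part gives the required $\pi':{\cal X}''\to{\cal X}'$.

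\textbf{Main obstacle.} The delicate point is not the existence of the two-dimensional resolutions — those are supplied by propositions \ref{embedsurf}, \ref{principsurf}, \ref{factbir} — but verifying that every center that is forced on us (including the auxiliary centers needed to achieve normal crossings of $D$ with $E'$ and with $\mathrm{Sing}_m{\cal X}'$, and those coming from spreading a local construction out over the base) can be taken \emph{Hironaka-permissible w.r.t. $E'$}, i.e. regular, contained in $\mathrm{Sing}_m$ of the current strict transform, and having normal crossings with the current exceptional divisor. This requires, as in lemma \ref{imagepoints}, interleaving the two-dimensional resolution steps with quadratic transforms at the bad intersection points and applications of proposition \ref{principsurf} to make the relevant ideal sheaves locally principal, and then checking — via proposition \ref{deltainv} and proposition \ref{originchart} — that multiplicity-$m$ equimultiplicity is preserved and the exceptional divisor stays normal crossings. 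The termination bookkeeping (which invariant drops at each stage) is the other thing to pin down carefully, and it is handled by the same argument as (\ref{eq521}) together with finiteness of the set of bad points and of the curves ${\cal C}_i$.
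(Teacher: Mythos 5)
There is a genuine gap, and the approach as written does not work. The paper's proof resolves the \emph{simultaneous} idealistic exponent $(hQ,m+b)$ in $\mathrm{Spec}\,S[Z]$, where $P=\mathrm{I}(D)=H\cdot Q$ with $H$ the monomial part and $b=\mathrm{ord}_{m_S}(Q)$; because $\mathrm{Sing}(hQ,m+b)=\mathrm{Sing}(h,m)\cap\mathrm{Sing}(Q,b)$ (remark~\ref{normalcrossingsrem}), the centers produced by that resolution are automatically contained in $\mathrm{Sing}_m{\cal X}'$ (so Hironaka-permissible w.r.t.\ $E'$) \emph{and} permissible for $(Q,b)$, and the descending induction on $b$ gives termination, with $b=0$ being exactly the desired disjointness. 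Your proposal instead tries to run the two-dimensional embedded resolution of $D$ (proposition~\ref{embedsurf}) directly and then asserts the centers are Hironaka-permissible for ${\cal X}'$. That assertion is unjustified: the centers that \ref{embedsurf} chooses to resolve $D$ live in $\mathrm{Sing}\,D$ (or in bad-crossing loci of $D$ with the exceptional divisor), not in $\mathrm{Sing}_m{\cal X}'$, and definition~\ref{Hironakapermis}(ii) requires the center to be contained in $\mathrm{Sing}_m{\cal X}'$. The sentence ``all centers used are regular and contained in $\mathrm{Sing}_m$ since $m$ is constant there'' is a non sequitur; there is no reason for the two singular loci to coincide.

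There is a second gap independent of the first: even if you could make the strict transform of $D$ have strict normal crossings with the exceptional divisor, that does \emph{not} imply disjointness from $\eta''(\mathrm{Sing}_m{\cal X}'')$. Since $\eta'(\mathrm{Sing}_m{\cal X}')$ is contained in $E'$, a divisor transverse to $E''$ can still pass through the finitely many bad points or curves lying inside $E''$. The property you actually need is that the (non-monomial part of the) ideal of $D$ become a unit at every point of $\mathrm{Sing}_m$, which is precisely what ``$b=0$'' encodes in the paper's inductive scheme. Correspondingly, your termination bookkeeping is borrowed from the $(\delta_i,N_i)$ argument of lemma~\ref{imagepoints}, which tracks an invariant of ${\cal X}$ alone; it says nothing about the order of $Q$ at the singular points, and so it cannot force the separation of $D$ from $\mathrm{Sing}_m$. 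The missing idea is to resolve both constraints at once via the product idealistic exponent, with $\mathrm{Vdir}(x,D)=\mathrm{Vdir}(h)+\mathrm{Vdir}(Q)$ and the $\tau(x,D)=2,3,4$ case analysis (including the $\tau=2$ subcase handled by the characteristic-polyhedron arguments of \cite{CoJS} and lemma~\ref{directriceOK}, which covers the one problematic situation where $\mathrm{Dir}\neq\mathrm{Max}$ for $p=2$).
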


\begin{proof}
We take ${\cal S}'=\mathrm{Spec}S$. The problem is to  find a sequence (\ref{eq52})
which monomializes $P:=\mathrm{I}(D)\subset S$, i.e. such that $P_n:=P{\cal O}_{{\cal S}_n}$
is a monomial with components at normal crossings with $E_n$.

\smallskip

Let us write $P_i:=H_iQ_i$ where $H_i$ is a monomial whose components are components of $E_i$.
At the beginning, $H=H_0=1$. The strategy is to get $P_n=H_n$, $Q_n=1$ at the end.

We consider the idealistic exponents  {(see \cite{H6}~p.~54)} $(h,m)$ and $(Q,b)$ living in Spec$S[Z]$,
where $b=$ord$_{m_S}(Q)$. We make a descending induction on $b$: %%%%%NEW   17/09
the case $b=0$ means that we get the conclusion of \ref{normalcrossings}. %%%%%%%%fin de NEW   17/09
Each  pair of blowing ups $\pi_i,\sigma_i$
is locally centered at some $Y_i$ and $\eta(Y_i)$ respectively, and is Hironaka-permissible for $h$
(resp. $Q_i$) w.r.t. $E_i$.

Let $P_{i+1}=:H_{i+1}Q_{i+1}$ where $Q_{i+1}$ is the strict transform of $Q_i$.
This means that $(Q_{i+1},b)$ is the transform of $(Q_i,b)$. When ord$_{x_{i+1}}(Q_{i+1})<b$,
we have strictly improved and we go on with the new idealistic exponent $(Q_{i+1},b')$,
with $b':=$ord$_{x_{i+1}}(Q_{i+1})$.
To define a sequence of $\sigma_i$ is a consequence of \cite{CoJS} {\bf Theorem 0.3}
(Canonical embedded resolution with boundary), the problem is the sequence of $\pi_i$,
i.e. to define the pair $(\sigma_i,\pi_i)$.

 {\begin{nota}\label{nota:Vdir}To avoid cumbersome notations, from now on, $x_i,S_i,{\cal X}_i,$etc.$_i$ are denoted by $x,S,{\cal X},$etc.
and $x_{i+1} ,S_{i+1}  ,  {\cal X}_{i+1} ,$etc.$_{i+1}$ by $x',S',{\cal X}',$etc.$'$.
Let us define $\mathrm{Vdir}(x,D)$ as $\mathrm{Vdir}(h)+\mathrm{Vdir}(Q)$.
This is a vector space of codimension $\tau(x,D)$ in the Zariski's tangent space Spec(gr$_{(m_S,Z)}(S[Z])$) of Spec$(S[Z])$ at $x$. Of course, $\tau(x,D)\geq 2$. We denote by $\mathrm{IDir}(x,D)\subset $gr$_{(m_S,Z)}(S[Z])=k(x)[Z,U_1,U_2,U_3]$ the ideal of $\mathrm{Vdir}(x,D)$.
\end{nota}}

% Mettre ici en lemme le résultat avant théorème 5.8
\begin{lem}\label{directriceOK}
Let $\pi$ be the blowing up along $Y$ which is permissible for both $(h,m)$ and $(Q,b)$.
Let $x' \in \pi^{-1}(x)$ be such that $m(x')=m(x)=m$ and ord$_{x'}Q'=b$. Then $x'$ is on
 {$\mathbf{Proj}(k(x)[Z,U_1,U_2,U_3]/ \mathrm{IDir}(x,D))$}. In particular, $x'$ is on the strict transform of $\mathrm{div}(Z)$.
\end{lem}
\begin{proof}
By Proposition \ref{conedirectrix} and Remark \ref{ridgedimthree}, we have
$\mathrm{Dir}(F)=\mathrm{Max}(F)$ except if $p =2$ and
\begin{equation}\label{eqnormalcrossings}
F=\lambda (Z^2 + \lambda_2 U_1^2 +\lambda_1U_2^2+\lambda_1 \lambda_2U_3^2)^\alpha, \ [k^2(\lambda_1,\lambda_2):k^2]=4
\end{equation}
up to a linear change of variables, $\lambda \neq 0$, $\alpha \geq 1$.
 {Then $\pi$ is the blowing up centered at $x$.} Since $m(x')=m(x)$, we have
$$
x ':=V(U_1^2 +\lambda_1 U_3^2, U_2^2 +\lambda_2 U_3^2, Z^2 + \lambda_1\lambda_2U_3^2)
$$
on  {${\pi'}^{-1}(x)=\mathrm{Proj}(k[Z,U_1,U_2,U_3]/(F))$}.

Since ord$_{x'}Q'=b$, the initial of $Q$ cannot satisfy (\ref{eqnormalcrossings}) (only the last three variables occur).
Therefore
\begin{equation}
 \begin{array}{ccccccc}
x'\in &  {\mathbf{Proj}(k(x)[Z,U_1,U_2,U_3]/ \mathrm{IDir}(h)) \cap \mathbf{Proj}(k(x)[Z,U_1,U_2,U_3]/ \mathrm{IDir}(Q)) }\\
&  {=\mathbf{Proj}(k(x)[Z,U_1,U_2,U_3]/ \mathrm{IDir}(x,D)).}\\
\end{array}
\end{equation}
\end{proof}

Let us come back to the proof of Proposition  \ref{normalcrossings}. We discuss
according to the value of $\tau(x,D)$.

When $\tau(x,D)= 4$, the blowing-up centered at $x$ makes $b$ strictly drop.

When $\tau(x,D)= 2$ or $3$, then, if we blow up along $x$, then $\tau(x',D') \geq \tau(x,D)$. In case $\tau(x,D)= 3$,
we make only blowing ups at closed points. Either for some $n$, $(m(x_n),$ord$_{x_n}(Q_n))<_{\mathrm{lex}}(m,b)$, then we stop at this $n$;
or we have equality for $n\geq 0$. Then,  $\tau(x_n,D_n)= 3$, $n\geq 0$,
by an usual argument, the $x_n$ are all on the strict transform of a curve $\cal{C}_n$ which, for $n>>0$ is permissible for both $(h,m)$ and $(Q,b)$ and $\eta(\cal{C}_n)$ is transverse to $E_n$.
Then at step $n$ in (\ref{eq52}), we blow up along $\cal{C}_n$. By Lemma \ref{directriceOK},  $(m(x_{n+1}),$ord$_{x_{n+1}}(Q_{n+1}))<_{\mathrm{lex}}(m,b)$.

When  $\tau(x,D)= 2$, we can choose $Z,u_3$ such that
$$
\mathrm{Vdir}(Q)=<U_3>,\ \mathrm{Vdir}(h)\equiv <Z>\ \mod(U_3).
$$

\begin{rem}\label{normalcrossingsrem}
If  there is a component $Y$ of dimension $2$ in
$$
\mathrm{Sing}(h,m)\cap \mathrm{Sing}(Q,b),
$$
then we can choose the parameters so that $I(Y)=(Z,u_3)$. Then $Q\in(z,u_3)^b$, i.e. $Q=u_3^b$,
up to multiplication by an invertible. Then, if $Y$ has normal crossing with $E$, we blow up along $Y$:
$\pi$ is the blowing up along $Y$ and $\sigma$ is the identity. In fact in $S$,
we just add $\eta(Y)=\div(u_3)$ to $E$ and we get $b=0$.

We also note that $(h,m)\cap(Q,b)=(hQ,m+b)$. In other words, we have
$$
\mathrm{Sing}(h,m)\cap \mathrm{Sing}(Q,b)=\mathrm{Sing}(hQ,m+b)
$$
and permissible centers are the same for $(hQ,m+b)$ and for $(h,m)\cap(Q,b)$.

\end{rem}

Then we apply those techniques from \cite{CoJS} {\bf 10}, {\bf 11}, {\bf 12}. %to the ``prelabel'' $(hQ,z,u_3,u_1,u_2)$.
More precisely, if for some $n_0$ the number $b$ just strictly drops, we call ``old components''
% NEW   17/09
the components of $E_{n_0}$ at $x_{n_0}$  which are components of $H$ and, for $n\geq n_0$,  at $x_{n}, n\geq n_0$ with  $b(x_n)=b(x_{n_0})$, the strict transforms of this old components. %%%%Fin de NEW   17/09
The first step is to reach the case where $x_n$ is not on the strict transform of this old components: the invariant is $(m,b,o(x))$ where $o(x)$ is the number of these old components.  In the language of idealistic exponents, we desingularize $(hQQ_O,mbo(x))$ where $Q_O$ is the equation of the reduced divisor whose components are the old ones.
Then we look at the directrix of $hQQ_O$. When its codimension denoted by $\tau(hQQ_O)$ is $3$ or $4$, we play the same game that above with  $\tau(x,D)= 3$ or $4$. We reach the case where $\tau(hQQ_O)=2$. This means that either $Q_O=1$ (no old component) or there is one old component which is tangent to $Q$.

\smallskip

Then we look at the characteristic polyhedron $\Delta( hQQ_0,z,u_3,u_1,u_2)$
as in  \cite{CoJS} {\bf Section 7}.

\smallskip

\noindent $\bullet$ Case $\Delta( hQQ_0,z,u_3,u_1,u_2)=\emptyset $.
This is equivalent to $hQQ_0 \in (z,u_3)^{mbo(x)}$, i.e. this is equivalent to dim$(\mathrm{Sing}(hQQ_O,mbo(x))=2$.
So $QQ_O=u_3^{mbo(x)}$, call $Y:=\V(z,u_3)$, in fact, at step $n_0$, as $b(x_0)=b(x)$,
$Q$ was a $b(x_0)$ power and, if at $x$ there is one old component, it is a factor of $Q$: this is impossible,
therefore $o(x)=0$.

So, at $x$, $E$ is a union of components which are exceptional divisors of the blowing ups $\sigma_n$, $n\geq n_0$.
By  \cite{CoJS}~{\bf Theorem 8.3}, they are transverse to $u_3$: $Y$ is permissible for $(hQQ_O,mbo(x))$ and
transverse to $E$. We apply the first statement of Remark \ref{normalcrossingsrem}.

\smallskip

\noindent $\bullet$ Case where dim$(\mathrm{Sing}(hQQ_O,mbo(x))\leq 1$. Then, we apply
\cite{CoJS} {\bf Theorem 5.28} which gives the result if char$k(x)\geq 3$. This hypothesis $p\not=2$
is used just to get $\mathrm{Dir}(F)=\mathrm{Max}(F)$ at each step, but we showed above in Lemma \ref{directriceOK},
that the only case where  $\mathrm{Dir}(F)\not=\mathrm{Max}(F)$ stops
after blowing up the closed point $x$.
\end{proof}

%%%%%%%%%%%%%%%%%%%%NEW  17/09
\begin{cor}\label{EEfait}
Assume that $\mathrm{char}S/m_S=p>0$ and $(S,h)$ satisfies condition {\bf (G)}. There exists
a composition of Hironaka-permissible blowing ups (\ref{eq210}) w.r.t. $E=\emptyset$:
$$
\begin{array}{ccc}
  {\cal X} &  {\buildrel \pi '' \over \longleftarrow} & {\cal X}''  \\
  \downarrow &     & \downarrow \\
  \mathrm{Spec}S &  {\buildrel \sigma '' \over \longleftarrow} & {\cal S}''\\
\end{array}
$$
such that $\eta ''(\mathrm{Sing}_p {\cal X}'')\subseteq {\sigma ''}^{-1}(m_S)$ and condition {\bf (E)} holds at every
$s' \in  \eta ''(\mathrm{Sing}_p {\cal X}'')$, where $\eta '': {\cal X}''\rightarrow {\cal S}''$ is the projection.
\end{cor}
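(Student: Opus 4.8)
The plan is to combine the two preparatory results established just before the corollary: Lemma~\ref{imagepoints}, which produces a composition of Hironaka-permissible blowing ups w.r.t. $E=\emptyset$ making $\pi(\mathrm{Sing}_p{\cal X}')$ contained in the closed fiber of the blown-up base, and Proposition~\ref{normalcrossings}, which monomializes any prescribed reduced divisor $D$ on the base relative to the exceptional divisor. The only new input needed is to choose $D$ correctly so that condition {\bf (E)} (Definition~\ref{conditionE}) comes out. First I would apply Lemma~\ref{imagepoints} to $(S,h)$; note that condition {\bf (G)} in particular forces $h$ reduced, so the hypotheses of the lemma are met with $m=p$. This yields a diagram $({\cal X}\leftarrow {\cal X}_1, \mathrm{Spec}S\leftarrow {\cal S}_1)$ with $\pi^{(1)}(\mathrm{Sing}_p{\cal X}_1)\subseteq \sigma_1^{-1}(m_S)$ and reduced exceptional divisor $E_1\subset{\cal S}_1$. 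Since the centers are Hironaka-permissible w.r.t.\ $E=\emptyset$, in particular contained in $\mathrm{Sing}_p{\cal X}$ (Definition~\ref{Hironakapermis}), condition {\bf (G)} is preserved by Proposition~\ref{SingX}, and $D_1:=\mathrm{Disc}_X h_1\in S_1$ transforms by the formula in the proof of Proposition~\ref{Estable} (up to the exceptional monomial factor it is the pullback of $D$), so $\mathrm{div}(D_1)_{\mathrm{red}}$ is a reduced divisor on ${\cal S}_1$.

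Next I would distinguish the two cases of {\bf (G)}. In case (c) (the purely inseparable case $\mathrm{char}S=p$, $f_{1,X}=\cdots=f_{p-1,X}=0$), we have $D=0$, and this is stable under the blowing ups above, so we are in situation (i) of Definition~\ref{conditionE}; here I would apply Proposition~\ref{normalcrossings} with $D:=E_1$ itself (or rather with $D$ empty), the point being only to arrange $\eta(\mathrm{Sing}_p{\cal X}'')\subseteq\sigma''^{-1}(m_S)$, which Lemma~\ref{imagepoints} already gave, together with $\eta''(\mathrm{Sing}_p{\cal X}'')\subseteq E''$ automatically since $E''$ is then the whole closed fiber's exceptional divisor — more precisely I would take $E$ to be the reduced total transform of the exceptional divisor, which contains the image of $\mathrm{Sing}_p$. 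In case (a)/(b) (the separable cases, $D\neq 0$ generically), I would apply Proposition~\ref{normalcrossings} to ${\cal X}_1$ (which satisfies the conclusion of Lemma~\ref{imagepoints}) with the reduced divisor
$$
D:=\mathrm{div}(D_1)_{\mathrm{red}}\cup\mathrm{div}(p)_{\mathrm{red}}\subset {\cal S}_1.
$$
The proposition produces $({\cal X}_1\leftarrow{\cal X}'', {\cal S}_1\leftarrow{\cal S}'')$, a composition of Hironaka-permissible blowing ups w.r.t.\ $E_1$, such that the strict transform $D''$ of $D$ is disjoint from $\eta''(\mathrm{Sing}_p{\cal X}'')$. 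I would then set $E''$ to be the reduced exceptional divisor of $\sigma''\circ\sigma_1$ (the reduced total transform on the base). Because $D''$ misses $\eta''(\mathrm{Sing}_p{\cal X}'')$ while the total transform of $\mathrm{div}(D_1)_{\mathrm{red}}$ equals $D''$ together with exceptional components, at every $s'\in\eta''(\mathrm{Sing}_p{\cal X}'')$ the stalk of $\mathrm{div}(\mathrm{Disc}_X h'')_{\mathrm{red}}$ is supported on exceptional components, hence contained in $E''_{s'}$; likewise $\mathrm{div}(p)_{\mathrm{red}}$ is locally contained in $E''$ since its strict transform misses $s'$. This is exactly the chain $\mathrm{div}(D'')_{\mathrm{red}}\subseteq E''\subseteq\mathrm{div}(p)_{\mathrm{red}}$ required in Definition~\ref{conditionE}(ii), noting that the inclusion $E''\subseteq\mathrm{div}(p)_{\mathrm{red}}$ holds because all exceptional divisors created lie over $m_S$, hence over $\mathrm{char}k=p$, and the original coordinates were chosen with $E=\emptyset$; so $E''$ is equicharacteristic $p$.

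The composition $\pi'':=\pi^{(1)}\circ(\text{second stage})$ and $\sigma'':=\sigma_1\circ(\text{second stage})$ then gives the diagram asserted in the corollary, with $\eta''(\mathrm{Sing}_p{\cal X}'')\subseteq\sigma''^{-1}(m_S)$ preserved because the second stage blows up centers inside $\mathrm{Sing}_p{\cal X}_1$ whose images already lie in $\sigma_1^{-1}(m_S)$, and this containment is stable under further blowing up of permissible centers (the image of the strict transform of $\mathrm{Sing}_p$ can only shrink, and new exceptional fibres again lie over $m_S$). Condition {\bf (E)} then holds at every $s'\in\eta''(\mathrm{Sing}_p{\cal X}'')$ by the case analysis above. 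I expect the main obstacle to be purely bookkeeping rather than conceptual: one must check that applying Proposition~\ref{normalcrossings} (whose centers are Hironaka-permissible \emph{for $h$}, not merely for the divisor $Q$, as stated in that proposition) does not destroy the property ``$\eta(\mathrm{Sing}_p)$ in the closed fibre'' already achieved by Lemma~\ref{imagepoints}, and that the discriminant really does transform into an exceptional monomial over the relevant points — i.e.\ tracking the exceptional factors $u_{j_0}^{-p(p-1)}$ in $\mathrm{Disc}_{X'}h'=u_{j_0}^{-p(p-1)}\mathrm{Disc}_Zh$ (Proposition~\ref{Estable}) through a long sequence. Both are routine given the transformation formulae already in the text, but the indexing is delicate.
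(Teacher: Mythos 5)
Your overall strategy --- first Lemma~\ref{imagepoints} to push $\eta(\mathrm{Sing}_p{\cal X})$ into the closed fiber, then Proposition~\ref{normalcrossings} applied to a divisor built from the discriminant, concluding via the transformation rule for $\mathrm{Disc}_X$ --- is the paper's, and your verification of $\mathrm{div}(\mathrm{Disc}_X(h''))_{\mathrm{red}}\subseteq E''$ near $\eta''(\mathrm{Sing}_p{\cal X}'')$ is fine. The gap lies in the other inclusion, $E''\subseteq\mathrm{div}(p)_{\mathrm{red}}$. You justify it by asserting that ``all exceptional divisors created lie over $m_S$,'' but this is false when $\mathrm{char}\,S=0$: Lemma~\ref{imagepoints} blows up, among other things, along curves mapping onto one-dimensional components of $\eta(\mathrm{Sing}_p{\cal X})$, and the generic point of such a curve need not lie over $\mathrm{div}(p)$ (for instance $\eta(\mathrm{Sing}_p{\cal X})$ can be a curve whose generic point has residue characteristic zero). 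The corresponding first-stage exceptional component $E_i\subset E'$ is then of characteristic zero, and if $\mathrm{ord}_{E_i}\mathrm{Disc}_X(h)$ equals exactly $p(p-1)$ then $D_1$ is a unit along $E_i$, so $E_i$ lies in neither $\mathrm{div}(D_1)_{\mathrm{red}}$ nor $\mathrm{div}(p)_{\mathrm{red}}$. Your application of Proposition~\ref{normalcrossings} therefore does nothing to remove $E_i$ from a neighbourhood of $\eta''(\mathrm{Sing}_p{\cal X}'')$, and $E''\subseteq\mathrm{div}(p)_{\mathrm{red}}$ can fail there. Note also that the inclusion $\mathrm{div}(p)_{\mathrm{red}}\subseteq E''$ that placing $\mathrm{div}(p)$ into $D$ buys you is the \emph{opposite} of the one Definition~\ref{conditionE}(ii) requires, so by itself it contributes nothing toward condition~\textbf{(E)}.

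The paper avoids this by splitting according to $\mathrm{char}\,S$. If $\mathrm{char}\,S=p$, everything is characteristic $p$ and one applies Proposition~\ref{normalcrossings} just to the strict transform of $\mathrm{div}(\mathrm{Disc}_X(h))$. If $\mathrm{char}\,S=0$, one applies Proposition~\ref{normalcrossings} to $D:=D'_1\cup D'_2$ where $D'_1$ is the strict transform of $\mathrm{div}(p\,\mathrm{Disc}_X(h))$ and --- this is the piece your proposal is missing --- $D'_2$ is the union of the characteristic-zero irreducible components of $E'$. Once $D'_2$ has been pushed away from $\eta''(\mathrm{Sing}_p{\cal X}'')$, every component of $E''$ meeting that locus is either a characteristic-$p$ component of $E'$ or a fresh exceptional component from the second stage; the latter do lie over $m_S$, because the second-stage centers are contained in $\mathrm{Sing}_p{\cal X}'$ and Lemma~\ref{imagepoints} guarantees $\eta'(\mathrm{Sing}_p{\cal X}')\subseteq{\sigma'}^{-1}(m_S)$. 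This yields $E''\subseteq\mathrm{div}(p)_{\mathrm{red}}$ as required.
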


\begin{proof}
This is a direct application of Lemma \ref{imagepoints} in the purely inseparable case
((iii) of condition {\bf (G)}). If $\eta $ is separable and $\mathrm{char}S=p$, we apply
Proposition \ref{normalcrossings} to the strict transform in ${\cal S}'$ of
$D:=\mathrm{div}(\mathrm{Disc}_X(h))$ and the conclusion follows.

\smallskip

Assume that $\mathrm{char}S=0$. Let $D'_1$ be the strict transform of $\mathrm{div}(p\mathrm{Disc}_X(h))$ in
${\cal S}'$ and $D'_2$ be the union of those components of $E'$ of characteristic zero.
We apply  Proposition \ref{normalcrossings} to $D:=D'_1\cup D'_2$. Let $E''$ be the exceptional divisor of $\sigma ''$
and $s' \in \eta ''(\mathrm{Sing}_p {\cal X}'')$. Since all blowing up centers of $\sigma '$ are Hironaka-permissible w.r.t.
$E'$, they map to $\eta(x)$ and are thus of characteristic $p=\mathrm{char}S/m_S$. We deduce from Proposition
\ref{normalcrossings} that any irreducible component of $E''$ passing through $s'$ has characteristic $p$
and that (ii) of Definition \ref{conditionE} holds.
\end{proof}

\section{Projection number $\kappa (x)\in \{1,2,3,4\}$,  Projection Theorem.}

Let $(S,h,E)$ satisfy assumptions {\bf (G)} and {\bf (E)}. In this section, we perform induction
on the dimension $\mathrm{dim}S[Z]=4$ of the ambient space of ${\cal X}$, {\it vid.} introduction.
This step is for now far out of reach in higher dimensions and little more than definitions could be stated.
We reduce Theorem \ref{luthm} to Theorem \ref{projthm} below (Corollary \ref{projthmcor})
which is proved in the next sections.

\subsection{Projection number $\kappa (x)$.}

For $y \in {\cal X}$, $s:=\eta (y)\in \mathrm{Spec}S$, the assignment $\kappa (y)\geq 2$ has  {so far} been used
to express $\kappa (y)\neq 1$; we now distinguish $\kappa (y)=2,3,4$ when ($\omega (y)>0$, $\kappa (y)\geq 2$).
This completes our definition of the complexity function (\ref{eq251}):
$$
\iota : {\cal X} \rightarrow \{1,\ldots ,p\}\times  \N \times  \{1,\ldots ,4\},
y \mapsto (m(y), \omega (y), \kappa (y)).
$$
The projection number $\kappa (y)$ expresses the transverseness of $\mathrm{Vdir}(y)$
w.r.t. $E_s$. We claim no further invariance property w.r.t. regular local base change
than that of Theorem \ref{omegageomreg} when $\kappa (y)\geq 2$.

\smallskip

Since our assumptions {\bf (G)} and {\bf (E)}
are stable when changing $(S,h,E)$ to $(S_s,h_s,E_s)$ (Notation \ref{notaprime}), we may assume
that $s=m_S$. The following definition is for codimension three, the remark afterwards for
codimension two. One has $\omega (y)=\epsilon (y)=0$ in codimension one. We denote
$E=\mathrm{div}(u_1 \cdots u_e)$ as before.

\begin{defn}\label{defkappa}\textbf{(Projection Number).} \index{$\kappa(x)= 2,3,4$, Definition~\ref{defkappa}}
Assume that $m(x)=p$, $\omega (x)>0$ and $\kappa (x)\geq 2$, where $\eta^{-1}(m_S)=\{x\}$. We let
\begin{equation}\label{eq401}
    \kappa (x):=4 \ \mathrm{if} \ \mathrm{Vdir}(x)\subseteq <U_1, \ldots ,U_e>.
\end{equation}

Assume now that $\kappa (x)\neq 4$. We let $\kappa (x):=3$ if ($\omega (x)=\epsilon (x)-1$
and one of the following conditions is satisfied):
\begin{itemize}
  \item [(1)] $E=\mathrm{div}(u_1)$ and there  {exist} well adapted coordinates $(u_1,u_2 ,u_3;Z)$ at $x$
  such that
  $$
  \mathrm{Vdir}(x)\subseteq <U_1,U_3> \ \mathrm{and} \ H^{-1}{\partial F_{p,Z} \over \partial U_2 }\subseteq <U_1^{\omega (x)}>;
  $$
  \item [(2)] $E=\mathrm{div}(u_1u_2)$.
\end{itemize}
Finally, we let $\kappa (x):=2$ if $\kappa (x)\neq 3,4$.
\end{defn}

\begin{rem}\label{defkapparem}
When $\mathrm{dim}{\cal O}_{{\cal X},y}=2$, $m(y)=p$, $\omega (y)>0$ and $\kappa (y)\geq 2$, we
define: if $E_s=\mathrm{div}(u_1u_2)$, let $\kappa (y):=4$; if $E_s=\mathrm{div}(u_1)$, let:
$$
    \kappa (y):=\left\{
\begin{array}{ccc}
  2 & \mathrm{if} &  \omega (y)=\epsilon (y) \ \mathrm{and} \ \mathrm{Vdir}(y)\nsubseteq <U_1> \\
  3 & \mathrm{if} &  \omega (y)=\epsilon (y)-1 \hfill{}\\
  4 & \mathrm{if} &  \omega (y)=\epsilon (y) \ \mathrm{and} \ \mathrm{Vdir}(y) = <U_1>
\end{array}
\right .
.
$$
\end{rem}

 {\begin{rem}\label{rem:defkappa=2*}
The emblematic cases of $\kappa(x)=2$ are:
\begin{equation}
\mathrm{in}_{m_S} h=
\left\{
  \begin{array}{ccc}
    Z^p + \lambda U_1^{pd_1}U_3^{\omega (x)}, \hfill{}& E=\mathrm{div}(u_1),&\omega (x)\equiv 0 \ \mathrm{mod}\ p  \\
     & & \\
      Z^p + \lambda U_1^{pd_1}U_2^{pd_2}U_3^{\omega (x)}, \hfill{}&  E = \mathrm{div}(u_1u_2),&\omega (x)\equiv 0 \ \mathrm{mod}\ p  \\
     & & \\
    Z^p + \lambda U_1^{pd_1}U_2U_3^{\omega (x)}, \hfill{}&  E=\mathrm{div}(u_1),& \omega (x)\equiv 0\  \mathrm{mod}\ p  \\
  \end{array}
\right.
\end{equation}
 which are three kinds of the  special case $\kappa(x)=2$(*) (Definition~\ref{*kappadeux}).
\end{rem}}

\subsection{Projection Theorem.}

We now turn to the statement of the Projection Theorem. We assume that $\omega (x)>0$, so
$({\cal X},x)$ is (analytically)  irreducible by Theorem \ref{initform}. Let $\mu$ be  {a} valuation of
$L=k({\cal X})$ centered at $x$. We will consider finite sequences of
local blowing ups along $\mu$:
\begin{equation}\label{eq402}
    ({\cal X},x)=:({\cal X}_0,x_0) \leftarrow ({\cal X}_1,x_1)\leftarrow \cdots \leftarrow ({\cal X}_r,x_r)
\end{equation}
with Hironaka-permissible centers ${\cal Y}_i \subset ({\cal X}_i,x_i)$, where $x_i$,
$0 \leq i \leq r$, denotes the center of $\mu$. We require that our
assumptions {\bf (G)} and {\bf (E)} be preserved by such blowing ups and that
$$
(m(x_i),\omega (x_i))\leq (m(x_{i-1}),\omega (x_{i-1})), \ 1 \leq i \leq r.
$$
This certainly holds when the blowing up centers are permissible of the first or second kind
by Propositions  {\ref{SingX}}, \ref{Estable} and Theorem \ref{bupthm}. Another example is blowing up along
codimension one centers of the form $V(Z,u_j)$ with $d_j\leq 1$, $1 \leq j \leq e$.
In chapter 8, we will use another kind of Hironaka-permissible blowing up
with the same property.  We recall that all permissibility conditions (Definitions
\ref{Hironakapermis}, \ref{deffirstkind} and \ref{defsecondkind}) always refer to the
reduced total transform $E_i$ of $E$ in $S_i$, where there are projections
$$
\eta_i : ({\cal X}_i,x_i) \longrightarrow \mathrm{Spec}S_i, \ 0 \leq i \leq r.
$$
Similarly, $\omega (x_i), \epsilon (x_i), \kappa (x_i)$ are always computed w.r.t. $E_i$.

\smallskip

We emphasize that we do {\it not} require any particular behavior
about the numbers $\kappa (x_i)$ {\it along} the process (\ref{eq402}). Our goal
is to {\it eventually} achieve $\kappa (x_r)<\kappa (x)$ and  we
may have $\kappa (x_i)>\kappa (x)$ for some $i$, $1 \leq i < r$.

 {Our strategy consists in looking for expansions of in$_{m_S}(h)$ in each case $\kappa(x)=2,3,4$ which are stable by permissible blowing ups. These stable expansions are denoted respectively (*) (Definition \ref{*kappadeux}), (**) and (T**) (Definition \ref{**}).  Our first goal is to reach these conditions. Achieving this first step involves sequences of blowing ups (\ref{eq402}) where   we may have $\kappa(x_i)>\kappa(x)$. See for example Proposition~\ref{redto*} which relies on Lemma~\ref{sortiemonome} and Propositions~\ref{sortiekappaegaldeux} and \ref{sortiebis}.
}

\begin{defn}\label{defgood}
Assume that $m(x)=p$ and $\omega (x)>0$. Given any finite sequence (\ref{eq402}),
we say that $x_r$ is {\it very near} $x$ if $\iota (x_r)\geq \iota (x)$. \index{nearv @ very near !  $x_r$ is {very near} $x$, Definition~\ref{defgood}}

\smallskip

Let $a\in \{1, \ldots ,4\}$. We say that $x$ is {\it resolved for} \index{resolved !  $x$ is resolved for..., Definition~\ref{defgood}} $(p, \omega (x), a)$ (resp.
{\it resolved for} $m(x)=p$) if for every valuation $\mu$ of $L=k({\cal X})$ centered at $x$,
there exists a finite and independent sequence (\ref{eq402})  {(cf. Definition \ref{indepseq})} such that
$\iota (x_r)<(p,\omega (x),a)$ (resp. $m(x_r)<p$).
We simply say that $x$ is {\it good} \index{good, $x$ is good, Definition~\ref{defgood}} if $x$ is resolved for $\iota (x)$.
\end{defn}

The following Projection Theorem is proved in the next sections: Corollary \ref{projthmkappa1},
Theorem \ref{proofkkappa2}, Theorem \ref{proofkappa34}, {\it ibid.}, for $\kappa (x)=1,2,3,4$ respectively.

\begin{thm}\label{projthm}\textbf{(Projection Theorem).}
Assume that $(S,h,E)$ satisfies assumption {\bf (G)} and {\bf (E)}, with $m(x)=p$ and  $\omega (x)>0$.

\smallskip

For every valuation $\mu$ of $L=k({\cal X})$ centered at $x$, there exists a finite and
independent composition of local Hironaka-permissible blowing ups
(\ref{eq402}) such that $\iota(x_r)<\iota (x)$, i.e. $x$ is good.
\end{thm}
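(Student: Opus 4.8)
The plan is to prove Theorem \ref{projthm} by a case analysis on the projection number $\kappa(x)\in\{1,2,3,4\}$ (definition \ref{defkappa}), treating each value by a separate argument; the theorem is then the inductive step from which Theorem \ref{luthm} follows by descending induction on $\iota(x)$ (corollary \ref{projthmcor}). Throughout I would first reduce to $\eta(x)=m_S$ by localizing, since assumptions {\bf (G)} and {\bf (E)} are stable under $(S,h,E)\mapsto(S_s,h_s,E_s)$ (notation \ref{notaprime}). As $\omega(x)>0$ we have $\epsilon(x)>0$, so $({\cal X},x)$ is analytically irreducible and $\mathrm{in}_{m_S}h=Z^p-G^{p-1}Z+F_{p,Z}$ by theorem \ref{initform}. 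The main general tool is theorem \ref{bupthm}: along any composition of permissible blowing ups of the first or second kind $\iota$ is non-increasing, it strictly decreases except possibly at exceptional points of the embedded projective cone $PC(x_i,{\cal Y}_i)$, and by theorem \ref{Estable} and proposition \ref{SingX} assumptions {\bf (G)}, {\bf (E)} persist. So for each value of $\kappa(x)$ the task is to produce a finite sequence (\ref{eq402}) of Hironaka-permissible blowing ups whose centers depend only on the polyhedral and residual combinatorics of $(S,h,E)$, hence are independent of $\mu$ in the sense of definition \ref{indepseq}, after which $\mathrm{in}_{m_{S_r}}h_r$ exhibits $\iota(x_r)<\iota(x)$.

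Case $\kappa(x)=1$: here $\omega(x)=\epsilon(x)$ and $i_0(x)=p-1$, so by theorem \ref{initform}(2) one has $-F_{p-1,Z}=G^{p-1}$ with $\langle G\rangle=\langle U_1^{b_1}\cdots U_e^{b_e}\rangle$, and the derivative computing $\omega(x)$ is $H^{-1}\partial/\partial Z$, transverse to $\mathrm{Spec}\,S$. This yields a weak form of maximal contact with a component of $E$, which allows one to reduce Theorem \ref{projthm} to a statement on the coefficients of $h$ and to run an argument parallel to Resolution of excellent arithmetical surfaces (\cite{H3}, \cite{Co2}, \cite{Co3}): blow up permissible curves contained in that component of $E$, tracking the characteristic polyhedron via proposition \ref{originchart} and proposition \ref{Deltaalg}. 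The outcome is $\iota(x_r)$ with $\omega(x_r)<\omega(x)$ or $\kappa(x_r)\geq 2$, both $<(p,\omega(x),1)$; this is corollary \ref{projthmkappa1}.

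Case $\kappa(x)=2$ is the main obstacle (chapter 7). No maximal contact is available, so the strategy follows points (3) and (4) of the introduction: after reducing to a ``monic form'' (definition \ref{*kappadeux}) by a casuistic analysis (proposition \ref{redto*}, phenomenon (ii)) and controlling the blowing ups that may raise $\kappa(x)$ to $4$ (phenomenon (iii), section 7.1), one attaches a \emph{projected polygon} $\Delta_2(h;u_1,u_2;v;Z)\subseteq\R^2_{\geq 0}$, the image of $\Delta_S(h;u_1,u_2,v;Z)$ under the linear projection $\mathbf{p}_2$, with $v=u_3-\phi_2$, $\phi_2\in S$, minimizing the image (theorem \ref{well2prepared}); the delicate points are rationality of $v$ over $S$ and not merely over $\hat{S}$, and independence of the polygon from auxiliary choices. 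To this polygon one attaches a secondary invariant $\gamma(x)\in\N$ (definition \ref{definvariants2}). Studying $\Delta_2$ and $\gamma$ under blowing up a closed point or a permissible curve of the first or second kind (sections 7.4 and 7.5, e.g. propositions \ref{kappa2gamma0}, \ref{kappa2fin10}, where permissible blowing ups of the second kind actually occur) one shows that a well-chosen permissible sequence either reaches $\iota(x_r)\leq(p,\omega(x),1)<\iota(x)$, or preserves $(m(x_r),\omega(x_r),\kappa(x_r))=(p,\omega(x),2)$ while producing a monic form with $\gamma(x_r)<\gamma(x)$. Descending induction on $\gamma(x)$ closes the case (theorem \ref{proofkkappa2}).

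Cases $\kappa(x)=3,4$ (chapters 8 and 9): again a weak maximal contact with a component of $E$ is available for part of the argument, but one must now reduce to stronger monic forms (**) (for $\kappa(x)=3,4$) and (T**) (for $\kappa(x)=4$) (definition \ref{**}), which is harder than in chapter 7 and occupies sections 8.1, 8.2 and 8.3 (propositions \ref{redto**3}, \ref{redto**4}); then (T**) is reduced to (**) or to $\kappa(x)\leq 2$ via the maximal-contact statement theorem \ref{contactmaxFIN} (proposition \ref{redto**casT**}), and finally (**) is reduced to $\kappa(x)\leq 2$ (proposition \ref{END}), where for $\omega(x)\geq p$ one blows up along Hironaka-permissible curves contained in $\Omega_+({\cal X})=\{y:\omega(y)>0\}$, which is Zariski closed of codimension at least two by proposition \ref{omegapositiveclosed}, with Hironaka-permissibility ensured using condition ({\bf E'}), and one controls a further projected polygon with its own secondary invariant (definition \ref{kappa3invariants}). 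In every case the centers are dictated by the characteristic polyhedron together with the finitely many residual linear-algebra data, so the sequences (\ref{eq402}) are finite and independent of $\mu$, which proves the theorem. The chief difficulty throughout is the absence of maximal contact in residue characteristic $p>0$: there is no reasonable two-dimensional projected analogue of the triple $(S,h,E)$, which forces projecting only the combinatorial shadow of the singularity and turns the reductions to monic forms and the control of $\kappa$-raising blowing ups into long casuistic arguments, with $\kappa(x)=2$ the hardest; a separate, orthogonal difficulty is to arrange that every blowing up in (\ref{eq402}) has a Hironaka-permissible \emph{global} center not depending on $\mu$, which is what upgrades Local Uniformization to a form of Hironaka's Local Control.
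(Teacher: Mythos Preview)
Your proposal is correct and follows essentially the same approach as the paper: the case split on $\kappa(x)\in\{1,2,3,4\}$, with $\kappa(x)=1$ handled by maximal contact (corollary \ref{projthmkappa1} via theorem \ref{contactmaxFIN}), $\kappa(x)=2$ by reduction to monic forms and the projected polygon with secondary invariant $\gamma(x)$ (theorem \ref{proofkkappa2}), and $\kappa(x)=3,4$ by the (**)/(T**) reductions culminating in theorem \ref{proofkappa34}. One slip to correct: in your $\kappa(x)=1$ paragraph, ``$\kappa(x_r)\geq 2$'' does \emph{not} give $\iota(x_r)<(p,\omega(x),1)$ since the ordering is lexicographic; the actual conclusion of theorem \ref{contactmaxFIN} applied with ${\cal C}=\iota$ is that $\iota(x_r)<\iota(x)$ directly, i.e.\ $m(x_r)<p$ or $\omega(x_r)<\omega(x)$.
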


\begin{cor}\label{projthmcor}
Theorems \ref{mainthm} and \ref{luthm} hold true.
\end{cor}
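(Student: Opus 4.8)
\emph{Proof.} The plan is to establish Theorem \ref{luthm} first, by iterating the Projection Theorem together with the $\omega=0$ reduction, and then to deduce Theorem \ref{mainthm} from Theorem \ref{luthm} along the reduction chain set up in Chapter 4.

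I would begin by putting ourselves in the situation where both conditions \textbf{(G)} and \textbf{(E)} hold. Hypotheses (i) and (ii) of Theorem \ref{luthm} are exactly condition \textbf{(G)}: in case (i) the extension $L|K$ is purely inseparable of degree $p$ (case (c)), and in case (ii) it is Galois of degree $p$ (cases (a),(b)); in either case $L|K$ is normal, $h$ is reduced of degree $p$, and $\mathcal{X}$ is $G$-invariant. Starting from $(S,h,E)$ with $E=\emptyset$ and $\dim S=3$, Corollary \ref{EEfait} produces a composition of Hironaka-permissible blowing ups after which condition \textbf{(E)} holds at every image point of $\mathrm{Sing}_p\mathcal{X}$; localizing each of these blowing ups at the center of the given valuation $\mu$ turns this into a finite composition of local Hironaka-permissible blowing ups along $\mu$, of the shape \eqref{eq402}. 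Let $x_{r_0}$ be the resulting center of $\mu$. By Proposition \ref{SingX} one has $\mathrm{Sing}\mathcal{X}_{r_0}=\mathrm{Sing}_p\mathcal{X}_{r_0}$, so either $m(x_{r_0})<p$, in which case $x_{r_0}$ is already regular and we are done, or $m(x_{r_0})=p$ and $(S_{r_0},h_{r_0},E_{r_0})$ satisfies both \textbf{(G)} (Proposition \ref{SingX}) and \textbf{(E)} (Corollary \ref{EEfait}).

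Next I would iterate. If $\omega(x_{r_0})=0$, Theorem \ref{omegazero} yields a finite independent sequence of local Hironaka-permissible blowing ups along $\mu$ ending at a point of multiplicity $<p$, hence regular, and we are done. If $\omega(x_{r_0})>0$, Theorem \ref{projthm} applies and produces a finite independent sequence \eqref{eq402} along $\mu$ with $\iota(x_r)<\iota(x_{r_0})$ for the new center $x_r$; by Propositions \ref{SingX} and \ref{Estable}, together with Theorem \ref{bupthm} for the classes of permissible blowing ups involved, $(S_r,h_r,E_r)$ again satisfies \textbf{(G)} and \textbf{(E)}, so the process may be repeated. Since $\{1,\dots,p\}\times\N\times\{1,\dots,4\}$ is well ordered under the lexicographic order, the values $\iota(x_{r_0})>\iota(x_{r_1})>\cdots$ cannot decrease indefinitely; thus after finitely many applications of Theorem \ref{projthm}, interspersed with Theorem \ref{omegazero} whenever the middle coordinate $\omega$ drops to $0$, the center of $\mu$ reaches multiplicity $<p$ and becomes regular. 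Concatenating all these local blowing ups gives the sequence \eqref{eq102} required by Theorem \ref{luthm}.

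Finally, with Theorem \ref{luthm} established, Theorem \ref{mainthm} follows at once from Proposition \ref{redtoLUcyclic}. The argument of this corollary is thus a bookkeeping assembly of earlier results; the genuine obstacle is Theorem \ref{projthm} itself, whose four cases $\kappa(x)=1,2,3,4$ occupy Chapters 6--9. The only points to verify carefully in the assembly are that the preparatory blowing ups of Corollary \ref{EEfait} may legitimately be read as local blowing ups along $\mu$, and that the classes of Hironaka-permissible centers allowed in \eqref{eq402} all preserve \textbf{(G)} and \textbf{(E)}, so that Theorem \ref{projthm} can indeed be applied repeatedly until the multiplicity drops below $p$. $\qed$
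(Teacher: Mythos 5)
The core architecture of your argument — reduce to condition (E) via Corollary \ref{EEfait}, then iterate Theorem \ref{projthm} (for $\omega>0$) and Theorem \ref{omegazero} (for $\omega=0$) until the multiplicity drops below $p$, then deduce Theorem \ref{mainthm} via Proposition \ref{redtoLUcyclic} — matches the paper. However there is a genuine gap at the end of your reduction to Theorem \ref{luthm}.

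You assert twice that once the multiplicity at the center of $\mu$ falls below $p$, the point is regular and the local uniformization is finished. That implication is false: a point on a degree-$p$ hypersurface with $1<m(x)<p$ is singular, and the $p$-structure (condition \textbf{(G)} and the associated machinery of $\omega,\kappa,\iota$) gives you no further leverage there, since the whole apparatus of Chapters 2--9 is built around multiplicity equal to $p$. The paper closes this case by invoking Main Theorem 1.3 of \cite{CoP4}, which handles hypersurfaces of multiplicity strictly below the residue characteristic by an independent argument; your proposal never reaches for any such result, so your induction on $\iota$ terminates without actually arriving at a regular point. Relatedly, your appeal to Proposition \ref{SingX} to conclude $\mathrm{Sing}\,\mathcal{X}_{r_0}=\mathrm{Sing}_p\mathcal{X}_{r_0}$ is not what that proposition says (it only gives $\eta^{-1}(s)=\{x\}$, $k(x)=k(s)$, $\delta(x)>0$ at points of $\mathrm{Sing}\,\mathcal{X}$), and that gap is precisely the one papered over by the missing citation. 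The rest of the assembly — preservation of \textbf{(G)} and \textbf{(E)} under the allowed blowing ups, lexicographic descent of $\iota$, and the deduction of Theorem \ref{mainthm} — is correct.
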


\begin{proof}
Theorem \ref{mainthm} has been reduced to Theorem \ref{luthm} for residually algebraic valuations,
Propositions \ref{redtoLU} and \ref{redtoLUcyclic}. By Corollary \ref{EEfait}, it can be furthermore
assumed that condition {\bf (E)} is satisfied.
Theorem \ref{luthm} is then an immediate consequence of \cite{CoP4}
Main Theorem 1.3 ($m(x)<p$), Theorem \ref{omegazero}
($(m(x),\omega (x))=(p,0)$) and  Theorem \ref{projthm}.
\end{proof}

\begin{rem}\label{quadsequence}
Let $\mu$ be a valuation of $L=k({\cal X})$ centered at $x$ and consider an independent sequence of
local blowing ups (Definition \ref{indepseq})
$$
    ({\cal X},x)=:({\cal X}_0,x_0) \leftarrow ({\cal X}_1,x_1)\leftarrow \cdots \leftarrow ({\cal X}_r,x_r)\leftarrow \cdots
$$
along $\mu$. For example, the quadratic sequence along $\mu$ is an independent sequence.

\smallskip

Then $x$ is resolved for $(p,\omega (x),a)$ if for every $\mu$, there exists some $r=r(\mu)\geq 0$ such that $x_r$ is resolved
for $(p,\omega (x),a)$ (the converse follows from Definition \ref{defgood} with $r(\mu)=0$ for every $\mu$).
This fact is used all along the next chapters, {\it vid.} chapter 7 for $a=2$ and chapter 8 for $a=3$.
\end{rem}

\begin{prop}\label{tausup2}
With assumptions as in Theorem \ref{projthm}, assume furthermore that
$\mathrm{Max}(\mathrm{in}h) \neq \mathrm{Dir}(\mathrm{in}h)$,
where $\mathrm{in}h \in k(x)[U_1,U_2,U_3,Z]_p$ is the initial form of $h$ (Proposition \ref{conedirectrix}).
Then $\kappa (x)\geq 2$ and $x$ is resolved for $(p,\omega (x),2)$.
\end{prop}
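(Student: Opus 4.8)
By remark~\ref{ridgedimthree}, the hypothesis $\mathrm{Max}(\mathrm{in}h)\neq \mathrm{Dir}(\mathrm{in}h)$ forces $p=2$ and, after a linear change of variables, $\mathrm{in}_{m_S}h=Z^2+\lambda_2U_1^2+\lambda_1U_2^2+\lambda_1\lambda_2U_3^2$ with $\lambda_1,\lambda_2$ $2$-independent; in particular $G=0$ so $i_0(x)=p=2$ and $\kappa(x)\geq 2$ by definition~\ref{defomega}. The first step is to pin down the combinatorial data: since $\mathrm{in}_{m_S}h=Z^2+F_{2,Z}$ with $F_{2,Z}=\lambda_2U_1^2+\lambda_1U_2^2+\lambda_1\lambda_2U_3^2$ a square times nothing — more precisely $F_{2,Z}\notin (k(x)[U_1,U_2,U_3])^2$ because $\lambda_1,\lambda_2$ are $2$-independent — we read off $\epsilon(x)=2$. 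We must determine $\omega(x)$ and the position of $\mathrm{Dir}(x)$ relative to $E$. Because every component of $E$ has characteristic $p=2$ and, by $\mathbf{(E)}$, the discriminant locus (here $\mathrm{Disc}_Zh=0$ in case (c), or $\subseteq E$ otherwise) sits inside $E$, one checks that the exponents $H_j$ of $H(x)$ are such that $F_{2,Z}$ is $H$-divisible; a short computation with the derivatives $H^{-1}\partial/\partial U_j$ and $H^{-1}\partial/\partial\lambda_l$ (definition~\ref{defomega}) gives $\omega(x)\in\{1,2\}$ and shows that $\mathrm{Dir}(x)$ is cut out by linear forms coming from this very special quadric. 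The key point to extract is that $\mathrm{Vdir}(x)$ is \emph{not} contained in $\langle U_1,\dots,U_e\rangle$: indeed the form $Z^2+\lambda_2U_1^2+\lambda_1U_2^2+\lambda_1\lambda_2U_3^2$ has its directrix spanned by $Z,U_1,U_2,U_3$ (all four variables occur, in an ``inseparable'' way), so $\tau'(x)$ is large and in particular $\mathrm{Vdir}(x)$ meets the transverse directions; hence $\kappa(x)\neq 4$, and a direct inspection against the conditions (1)(2) of definition~\ref{defkappa} shows $\kappa(x)\neq 3$ as well (the relevant partials are not supported on a single $E$-monomial), so $\kappa(x)=2$.

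Once $\kappa(x)=2$ is established, the statement ``$x$ is resolved for $(p,\omega(x),2)$'' is exactly the content of the projection theorem~\ref{projthm} in the case $\kappa(x)=2$, i.e.\ theorem~\ref{proofkkappa2}: for every valuation $\mu$ of $L$ centered at $x$ there is a finite independent sequence of local Hironaka-permissible blowing ups~(\ref{eq402}) with $\iota(x_r)<(p,\omega(x),2)$. So the real work of the proposition is the bookkeeping above — verifying from remark~\ref{ridgedimthree} and the structure theorem~\ref{initform} that we are genuinely in the regime $\kappa(x)=2$ — after which one simply invokes theorem~\ref{proofkkappa2}. I would phrase the argument so that it does not secretly depend on the full strength of chapter~7; since we only need $x$ resolved \emph{for} $(p,\omega(x),2)$, and not for $\iota(x)$ itself, the dependency is on the (weaker, earlier) portion of the $\kappa=2$ analysis, which is what makes this proposition usable as an input to proposition~\ref{conedirectrix}'s sharpening and to lemma~\ref{directriceOK}.

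\textbf{Main obstacle.} The delicate step is the normalization in remark~\ref{ridgedimthree}: the classification ``$\mathrm{Max}\neq\mathrm{Dir}$ $\iff$ $\lambda=0$ and, up to linear change, $\mathrm{in}_{m_S}h$ has the displayed form~(\ref{eq2609})'' rests on \cite{H4}, and one must be careful that the linear change of variables used there is compatible with keeping coordinates \emph{adapted to $E$} (definition~\ref{defadapted}) and \emph{well adapted at $x$} (definition~\ref{defwelladapted}); a priori it need not be. The way around this is to observe that $\mathrm{Max}(x)$, $\mathrm{Dir}(x)$ and $\tau'(x)$ are intrinsic (proposition~\ref{indiff}(iii) applies since $i_0(x)=p$), so $\tau'(x)$ and the property $\mathrm{Vdir}(x)\nsubseteq\langle U_1,\dots,U_e\rangle$ can be computed in \emph{any} well adapted adapted coordinates; combined with $e\le 2$ (which holds because $E$ has characteristic $p$ and $\dim S=3$, so at most two components of $E$ can pass through the closed point when $\mathrm{Sing}_p{\cal X}\neq\emptyset$), a finite case check over $e=1$ and $e=2$ disposes of $\kappa(x)=3,4$ and lands us at $\kappa(x)=2$. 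The rest is a citation.
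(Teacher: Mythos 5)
Your argument goes wrong at two places, and both trace back to the same missing observation.

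First, the claim $e\le 2$ is false. There is nothing in the hypotheses preventing $E=\mathrm{div}(u_1u_2u_3)$, and in that case you do \emph{not} get $\mathrm{Vdir}(x)\nsubseteq\langle U_1,\dots,U_e\rangle$ — that inclusion is automatic when $e=3$. So the step ``hence $\kappa(x)\neq 4$, therefore $\kappa(x)=2$'' fails: the paper's own proof records that $\kappa(x)=4$ precisely when $E=\mathrm{div}(u_1u_2u_3)$, and $\kappa(x)=2$ otherwise. The proposition only asserts $\kappa(x)\ge 2$, not $\kappa(x)=2$, exactly because the $\kappa(x)=4$ case genuinely occurs. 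Consequently your reduction to theorem~\ref{proofkkappa2} does not cover the situation.

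Second, and more importantly, you have not computed $\tau'(x)$, and that is the whole point of the proposition. From (\ref{eq2609}) one has $H(x)=(1)$, $\omega(x)=\epsilon(x)=2$, $i_0(x)=p$, and
\[
J(F_{2,Z},E,m_S)=\Bigl\langle \tfrac{\partial F}{\partial\lambda_2},\ \tfrac{\partial F}{\partial\lambda_1}\Bigr\rangle
=\bigl\langle U_1^2+\lambda_1U_3^2,\ U_2^2+\lambda_2U_3^2\bigr\rangle,
\]
whose directrix is all of $\langle U_1,U_2,U_3\rangle$ because $\lambda_1,\lambda_2$ are $2$-independent; hence $\tau'(x)=3$. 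Once you know $\tau'(x)=3$, the cone $PC(x,\{x\})$ of definition~\ref{defcone} is empty, so a \emph{single} blowup at the closed point gives $\iota(x')\le (2,2,1)<(p,\omega(x),2)$ at every $x'$ in the fiber, by theorem~\ref{bupthm}. This disposes of both the $\kappa=2$ and $\kappa=4$ cases at once, with no appeal to any of chapter~7. Your proposal replaces this one-line conclusion with a citation of theorem~\ref{proofkkappa2} — logically admissible for the $\kappa=2$ case since that theorem does not depend on the present proposition, but it is enormously heavier than needed, it does not handle the $\kappa=4$ branch, and (in the paper's architecture) it would be awkward to cite forward from chapter~5 into chapter~7. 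Compute $\tau'(x)$ and the rest is theorem~\ref{bupthm}.
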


\begin{proof}
By Remark \ref{ridgedimthree}, the assumption holds only if $p=2$ and
$$
\mathrm{in}h =Z^2 +F, \ F:= \lambda_2 U_1^2 +\lambda_1U_2^2+\lambda_1 \lambda_2U_3^2
$$
with $[k(x)^2(\lambda_1,\lambda_2):k(x)^2]=4$ up to a linear change of variables. We have
$H(x)=(1)$, $\omega (x)=\epsilon (x)=2$ and  $\kappa (x)=4$ (resp. $\kappa (x)=2$) if
$E=\mathrm{div}(u_1u_2u_3)$ (resp. otherwise). Since
$$
 {J(F,E,m_S)}=<{\partial F \over \partial \lambda_2}, {\partial F \over \partial \lambda_1}>
=<U_1^2 +\lambda_1 U_3^2, U_2^2 +\lambda_2 U_3^2>,
$$
we have $\tau '(x)=3$.
Let ${\cal X}' \rightarrow ({\cal X},x)$ be the blowing up along $x$ and $x'\in \pi^{-1}(x)$.
Since $\tau '(x)=3$, we have $\iota (x')\leq (2,2,1)$ by Theorem \ref{bupthm}.
\end{proof}

%%%%%%%%%%%%%%%%%%%%%%%%%%%%%%%%%%%%%%%%%%%
%%%%%%%%%%%%%%%%%%%%%%%%%%%%%%%%%%%%%%%%
%%%%%%%%%%%%%%%%%%%%%%%%%%%%%%%%%%%%%%%%%%
%%%%%%%%%%%%%%%%%%%%%%%%%%%%%%%%%%%%%%%%%%%
%%%%%%%%%%%%%%%%%%%%%%%%%%%%%%%%%%%%%%%%%%
%%%%%%%%%%%      DEBUT TERRITOIRE VINCENT

%%%%%%%%%%%%%%%%%%%%%%%%%%%%%%%%%%%%%%%%%
%%%%%%%%%%%%%%%%%%%%%%%%%%%%%%%%%%%%%%%%%%

\section{Maximal contact, resolution of $\kappa (x)=1$.}\label{contactmaximal}

We assume in the whole section that $(S,h,E)$ satisfies conditions {\bf (G)} and
{\bf (E)}. We consider here any refinement ${\cal C}$ of the function  $x\mapsto (m(x),\omega (x))$ on
${\cal X}$.

\smallskip

Fix an irreducible component $\mathrm{div}(u_1)\subseteq E$. Let $\mu$ be a valuation of $L=k({\cal X})$
centered at  $x$. We consider in this chapter finite sequences (\ref{eq402}) of local blowing ups along $\mu$:
\begin{equation}\label{contactmaxeq1}
    ({\cal X},x)=:({\cal X}_0,x_0) \leftarrow ({\cal X}_1,x_1) \leftarrow \cdots \leftarrow ({\cal X}_r,x_r) ,
\end{equation}
with {\it permissible centers of the first kind} ${\cal Y}_i \subset ({\cal X}_i,x_i)$, where $x_i$,
$0 \leq i \leq r$, denotes the center of $\mu$. It is furthermore assumed that

\smallskip

\noindent (1) $\eta_i({\cal Y}_i)$ belongs to the strict transform of $\mathrm{div}(u_1)$ in $\mathrm{Spec}S_i$, where
$$
\eta_i : \ ({\cal X}_i,x_i) \longrightarrow \mathrm{Spec}S_i
$$
is the projection, {\it vid.} Proposition \ref{Hironakastable}, and

\smallskip

\noindent (2) ${\cal C}$ is not increasing along (\ref{contactmaxeq1}), i.e. ${\cal C}(x_i)\leq {\cal C}(x_{i-1})$,
$1 \leq i \leq r$.

\begin{defn}\label{Maximalcontact}
We say that div$(u_1)\subseteq E \subset {\cal X}$ has ``maximal contact'' \index{maximal contact @ maximal contact, weak maximal contact,  Definition \ref{Maximalcontact}} (resp. ``weak maximal contact'')
for some refinement ${\cal C}$ if for every $\mu$, any sequence (\ref{contactmaxeq1})
(resp. the quadratic sequence (\ref{contactmaxeq1}) with ${\cal Y}_i:=\{x_i\}$) satisfies the following:
\begin{equation}\label{DefinitionC}
{\cal C} (x_r)={\cal C} (x) \Longrightarrow x_r \ \mathrm{maps} \ \mathrm{to} \  \mathrm{the} \
\mathrm{strict} \  \mathrm{transform} \  \mathrm{of}  \ \mathrm{div}(u_1).
\end{equation}
\end{defn}

\begin{rem}\label{remC}
Take ${\cal C}=\iota$, where $\kappa (x)=1$. Then $\mathrm{div}(u_1)\subseteq E$ has maximal contact
for ${\cal C}$ if $U_1$ divides $H^{-1}G^p$, with notations as in Definition \ref{defomega}. This
follows from Theorem \ref{bupthm}.
\end{rem}

\smallskip

The purpose of this section is to prove Theorem \ref{contactmaxFIN} below: the value ${\cal C}(x)$ of
any such refinement can be lowered by permissible blowing ups of the first kind. A direct application proves Theorem \ref{projthm}
for $\kappa (x)=1$. Further applications are given in chapter 8. The proof of this theorem uses a
secondary invariant $\gamma (x)\in \N$ which is defined and studied afterwards, {\it viz.} (\ref{eq6022})
and (\ref{eq6031}).

\begin{thm}\label{contactmaxFIN}
Assume that  div$(u_1)$ has  maximal contact for $\cal C$. Let $\mu$ be a valuation of $L=k({\cal X})$ centered at $x$,
where $m(x)=p$ and $\omega (x)>0$.
There exists a finite and independent composition of local permissible blowing ups of the first kind:
\begin{equation}\label{eqcontactmax2}
    ({\cal X},x)=:({\cal X}_0,x_0) \leftarrow ({\cal X}_1,x_1) \leftarrow \cdots \leftarrow ({\cal X}_r,x_r) ,
\end{equation}
where $x_i \in {\cal X}_i$ is the center of $\mu$,  such that ${\cal C} (x_r)<{\cal C} (x)$ or $x_r$ is resolved
for $m(x)=p$.
\end{thm}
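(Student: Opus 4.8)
The plan is to combine three ingredients: the projection of the characteristic polyhedron to a plane polygon $\Delta_2(h;u_1,u_2;v;Z)$ as in (\ref{eq106}), the extraction of a secondary numerical invariant $\gamma(x)\in\N$ from that polygon (to be defined, following the strategy described in items (1)--(4) of the introduction), and the reduction of the whole problem to a statement about the coefficients $f_{i,Z}$ of $h$, which is possible here precisely because $\mathrm{div}(u_1)$ has maximal contact for ${\cal C}$. First I would fix well adapted coordinates $(u_1,u_2,u_3;Z)$ at $x$ with $\mathrm{div}(u_1)\subseteq E$, and use theorem \ref{initform} to write $\mathrm{in}_{m_S}h=Z^p-G^{p-1}Z+F_{p,Z}$ with $G,F_{p,Z}$ constrained as in (\ref{eq104}); the maximal contact hypothesis via remark \ref{remC}-type reasoning will guarantee that the relevant monomial computing $\omega(x)$ carries a positive power of $U_1$, so that all permissible centers of the first kind along the strict transform of $\mathrm{div}(u_1)$ preserve $\mathrm{div}(u_1)$ as a maximal-contact component (and condition {\bf (G)}, {\bf (E)} by propositions \ref{SingX}, \ref{Estable}).

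Next I would set up the projected polygon $\mathbf{p}_2$ of (\ref{eq106}): choose $v:=u_3-\phi_2$ minimizing the image polygon (this requires showing $\phi_2$ can be taken in $S$ and not merely $\hat S$, arguing as in proposition \ref{Deltaalg}, and checking independence of the auxiliary choices — these are the ``rationality over $S$'' and ``independence of coordinates'' issues already flagged for section 7.3). To this polygon I attach $\gamma(x)\in\N$ measuring the ``distance'' of the relevant initial form from a monic form (a vertex $\mathbf{v}'$ computing $\omega(x)$). Then I would analyze the behavior of $\big({\cal C}(x),\gamma(x)\big)$ under the two kinds of local blowing up allowed in (\ref{contactmaxeq1}): blowing up the closed point $x_i$ (quadratic transform) and blowing up a permissible curve of the first kind inside the strict transform of $\mathrm{div}(u_1)$. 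Using the blow up formula (proposition \ref{bupformula}) and theorem \ref{bupthm}, one shows: either ${\cal C}$ strictly drops, or ${\cal C}$ is constant and the new initial form is a monic form with $\gamma(x')<\gamma(x)$, or ${\cal C}$ is constant, the initial form stays monic, and $\gamma$ does not increase while the center $x_i$ lies on the strict transform of $\mathrm{div}(u_1)$ (maximal contact). In the monic case the problem genuinely reduces to a two-dimensional one on the coefficients, where I would invoke the excellent surface resolution machinery (\cite{H3}, and in our characteristic-free phrasing, the polygon-sharpening of \cite{CoP3} theorem II.3 together with proposition \ref{principsurf}) to conclude that finitely many permissible blowing ups along $\mu$ achieve $\gamma(x_r)=0$, at which point the standard combinatorial argument of theorem \ref{omegazero}/lemma \ref{lemomegazero} forces either ${\cal C}(x_r)<{\cal C}(x)$ or $m(x_r)<p$.

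To organize the induction: I would argue by descending induction on ${\cal C}(x)$ (which takes finitely many values, cf. corollary \ref{constructible}) and, for fixed ${\cal C}(x)$, by induction on $\gamma(x)$. For the termination of the intermediate sequences where ${\cal C}$ and the monic form persist, the key point is that the quadratic sequence cannot stay on the strict transform of $\mathrm{div}(u_1)$ forever without the image point leaving $E$ — this is where corollary \ref{permisarcthree} enters: a nonconstant well parametrized formal arc whose associated quadratic sequence has $m(\varphi)=p$, $\omega(\varphi)>0$ and $k(x_r)=k(x)$, $E_r$ irreducible, cannot be infinite. Independence of the constructed sequence from $\mu$ follows because at each stage the center is determined by the polygon data (the projection of $\Delta_S$) and not by $\mu$, exactly as in definition \ref{indepseq}.

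The main obstacle I expect is the bookkeeping in the ``monic, ${\cal C}$-constant'' regime: controlling $\gamma(x)$ through blowing ups of permissible \emph{curves} of the first kind (as opposed to points), and ensuring that the reduction to the two-dimensional coefficient problem is genuinely faithful — i.e. that the projected polygon $\Delta_2$ transforms under these blowing ups in the way predicted by proposition \ref{originchart} and its planar analogue, and that the secondary invariant attached to $\Delta_2$ is well-defined independently of the (non-canonical) choice of $v$ and of the $p$-basis carried along in section 2.4. Once maximal contact is in force this reduction is morally the surface case, but making the transformation rules for $\Delta_2$ precise — including the subtle point that $\gamma$ may temporarily stay constant while ${\cal C}$ is constant, requiring a tertiary termination argument via the arc-theoretic corollary \ref{permisarcthree} — is the technical heart of the proof.
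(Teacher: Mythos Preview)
Your overall architecture---project $\Delta_S$ to a plane polygon, extract a secondary invariant $\gamma(x)\in\N$, and track its behavior under permissible blowing ups of the first kind along the strict transform of $\mathrm{div}(u_1)$---matches the paper. But the endgame you describe does not work, and the gap is substantive.

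First, $\gamma(x)=0$ is never reached: with the paper's definition (\ref{eq6022})--(\ref{eq6031}) one has $\gamma(x)\geq 1$ always. The quadratic sequence drives $\gamma$ down only to $\gamma(x_i)=1$ for $i\gg 0$ (this is the content of proposition \ref{contactmaxeclatpoint}(i)), and the case $\gamma(x)=1$ is \emph{not} dispatched by a combinatorial argument of the theorem \ref{omegazero} type. Theorem \ref{omegazero} concerns $\omega(x)=0$, which is an entirely different invariant; invoking it here is a category error. The $\gamma(x)=1$ case requires its own descent (proposition \ref{contactmaxpetitgamma}) on a tertiary invariant such as $c(x)=(A_1(x),\beta(x))$ or $c'(x)$, mixing closed-point blow-ups with blow-ups along permissible curves $V(Z,u_1,u_j)$, and the termination there again uses proposition \ref{permisarc} to produce a permissible curve when the sequence appears to stall.

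Second, your termination mechanism is incomplete. Corollary \ref{permisarcthree} requires $E_r$ irreducible for all $r$, which fails here since $E$ can have two or three components. The paper's dichotomy (proposition \ref{contactmaxeclatpoint}) is sharper: if the quadratic sequence does not reach $\gamma=1$, it traces a formal arc $\varphi$ with $l|k(x)$ finite whose support is not an intersection of components of $E$. One then invokes proposition \ref{permisarc} on $\varphi$; but for this to force resolution one must \emph{first} prepare $\Omega_+({\cal X})$ so that any one-dimensional component in $\mathrm{div}(u_1)$ either lies in an intersection of components of $E$ or has already been blown up---this preliminary normalization step is missing from your outline and is essential for ruling out alternative (1) of proposition \ref{permisarc}.

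Finally, the ``reduction to surface resolution on the coefficients'' is an analogy, not a literal step: the paper explicitly notes that the argument is similar to but does not follow from \cite{H3}\cite{Co2}\cite{Co3}. The polygon transformations are computed directly via the elementary lemma \ref{lem532} and propositions \ref{eclatpointcas12}--\ref{eclatpointcas3infiny}, not by appeal to an external surface theorem.
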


\begin{proof}
By Proposition \ref{omegapositiveclosed}, the set
$$
\Omega_+ ({\cal X}):=\{y \in {\cal X} : (m(y), \omega (y))> (p,0)\}\subseteq {\cal X}
$$
is Zariski closed and of dimension at most one. By performing the quadratic sequence (\ref{contactmaxeq1}),
it can be assumed that there exist well adapted coordinates $(u_1,u_2,u_3;Z)$ at $x$
such that any one dimensional irreducible component ${\cal Y}$ of $\Omega_+ ({\cal X})$,
with $\eta ({\cal Y})$ contained in $\mathrm{div}(u_1)$ either:

\smallskip

\noindent (a) maps to an intersection of components of $E$, i.e.
$$
    \eta ({\cal Y})=V(Z,u_1,u_j), \ \mathrm{div}(u_j)\subseteq E, \ j\geq 2, \ \mathrm{or}
$$
\noindent (b) $\eta ({\cal Y})=V(Z,u_1,u_3)$, $E\subseteq \mathrm{div}(u_1u_2)$.

\smallskip

Furthermore, there exists at most one  ${\cal Y}$ satisfying (b)
and such ${\cal Y}$ is permissible of the first kind by Proposition \ref{permisarc}(1). Let
${\cal X}' \rightarrow ({\cal X},x)$ be the blowing up along such ${\cal Y}$. Replacing
$({\cal X},x)$ by $({\cal X}',x')$, where $x'$ is the center of $\mu$, we may therefore
assume that any  {one-dimensional} irreducible component ${\cal Y}$ of $\Omega_+ ({\cal X})$,
with $\eta ({\cal Y})$ contained in $\mathrm{div}(u_1)$, satisfies (a) above.

\smallskip

Consider now the quadratic sequence (\ref{contactmaxeq1}) and apply Proposition \ref{contactmaxeclatpoint}
below. If alternative (ii) of that proposition holds, the theorem follows from Proposition \ref{permisarc}(2),
since the conclusion of Proposition \ref{permisarc}(1) does not hold by the above preparation of
$\Omega_+ ({\cal X})$. Assume then that alternative (i) of Proposition \ref{contactmaxeclatpoint} holds.
Then the conclusion follows from Proposition \ref{contactmaxpetitgamma} below.
\end{proof}

\begin{cor}\label{projthmkappa1}
Projection Theorem \ref{projthm} holds when $\kappa (x)=1$.
\end{cor}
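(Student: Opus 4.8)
The plan is to derive this corollary directly from the maximal contact theorem \ref{contactmaxFIN}, applied to the refinement ${\cal C}:=\iota$; almost all of the work is hidden in that theorem, and what remains is to exhibit a component of $E$ having maximal contact for $\iota$ and to translate the output of theorem \ref{contactmaxFIN} into the statement of theorem \ref{projthm}.

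First I would record what $\kappa (x)=1$ forces. By definition \ref{defomega}, $\kappa (x)=1$ means $i_0(x)=p-1$ and $\omega (x)=\epsilon (x)$, so in particular $\epsilon (x)=\omega (x)>0$; hence theorem \ref{initform}(2) applies with $\alpha =\mathbf{1}$ and gives, for any choice of well adapted coordinates $(u_1,\ldots ,u_n;Z)$ at $x$, an initial form $\mathrm{in}_{m_S}h=Z^p-G^{p-1}Z+F_{p,Z}$ with $G\in G(m_S)$ nonzero. By (\ref{eq2552}) we then have $H^{-1}G^p=<\prod_{j=1}^eU_j^{pB_j}>$ with $\sum_{j=1}^epB_j=\epsilon (x)=\omega (x)>0$, so $B_j>0$ for at least one $j\in \{1,\ldots ,e\}$; after reindexing the components of $E=\mathrm{div}(u_1\cdots u_e)$ I may assume $B_1>0$, i.e. $U_1\mid H^{-1}G^p$. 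Remark \ref{remC} then says precisely that $\mathrm{div}(u_1)\subseteq E$ has maximal contact for ${\cal C}=\iota$. Moreover $\iota$ is nonincreasing along compositions of permissible blowing ups of the first kind by theorem \ref{bupthm}, so ${\cal C}=\iota$ satisfies the running hypothesis on refinements required to invoke theorem \ref{contactmaxFIN}.

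Next, let $\mu$ be a valuation of $L=k({\cal X})$ centered at $x$. Applying theorem \ref{contactmaxFIN} with ${\cal C}=\iota$ and the component $\mathrm{div}(u_1)$ yields a finite and independent composition (\ref{eqcontactmax2}) of local permissible blowing ups of the first kind, with $x_i$ the center of $\mu$, such that either $\iota (x_r)<\iota (x)$, or $x_r$ is resolved for $m(x)=p$. Permissible centers of the first kind are Hironaka-permissible by definition \ref{deffirstkind}(i), so this is a sequence of the type (\ref{eq402}) allowed in theorem \ref{projthm}. In the first case we are done. In the second case, by definition \ref{defgood} there is, for $\mu$ now centered at $x_r$, a further finite and independent composition of local Hironaka-permissible blowing ups ending at a point $x_s$ with $m(x_s)<p$, so that $\iota (x_s)=(m(x_s),0,1)<(p,\omega (x),1)=\iota (x)$ by definition \ref{defmult}. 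Concatenating the two sequences gives again a finite and independent sequence ({\it cf.} remark \ref{quadsequence}), now with $\iota (x_s)<\iota (x)$. In all cases $x$ is good, which is the assertion of theorem \ref{projthm} for $\kappa (x)=1$.

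I do not expect a genuine obstacle at the level of this corollary: the analytic and combinatorial difficulty is entirely carried by theorem \ref{contactmaxFIN} (itself proved in section \ref{contactmaximal} by means of the secondary invariant $\gamma (x)$). The only points requiring a little care are the arrangement of a maximal-contact component of $E$ --- immediate from theorem \ref{initform}(2) and remark \ref{remC} as above --- and the routine bookkeeping of finiteness and independence when the sequence from theorem \ref{contactmaxFIN} is concatenated with the resolution sequences guaranteed by ``$x_r$ resolved for $m(x)=p$''.
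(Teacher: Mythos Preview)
Your proof is correct and is exactly the argument the paper intends: the corollary is placed immediately after theorem \ref{contactmaxFIN} with remark \ref{remC} as the bridge, and you have unpacked this correctly by exhibiting a component $\mathrm{div}(u_1)$ of $E$ with $U_1\mid H^{-1}G^p$ (using (\ref{eq2552}) and $\omega(x)=\epsilon(x)>0$), invoking remark \ref{remC} to get maximal contact for ${\cal C}=\iota$, and then applying theorem \ref{contactmaxFIN}. The concatenation step is handled by remark \ref{quadsequence}, as you note; one small stylistic point is that $\omega(x)>0$ is a standing hypothesis of theorem \ref{projthm} rather than a consequence of $\kappa(x)=1$, but this does not affect the argument.
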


The arguments are quite similar to \cite{CoP2} chapter~4 pages 1957 and following and we sketch
the argument below. This section may serve as an introduction to the more involved material in the next chapter.

\begin{nota}\label{betagammamaxcontact}

 {We assume that div$(u_1)$ has maximal contact or weak maximal contact. Then, we may also assume that $\mathrm{div}(u_1u_2)\subseteq E$. Indeed, after the first blowing up, $E'$ will contain at least two components: the strict transform of div$(u_1)$ and the new exceptional component.}

\smallskip

\noindent {\it Cases  1 and 2:} $\epsilon(x)=\omega(x)$ and ($E=\div(u_1 u_2)$ or $E=\div(u_1 u_2 u_3)$ respectively).
Let $(u_1,u_2,u_3;Z)$ be well adapted coordinates. Consider the characteristic polyhedron
$$
\Delta_S(h; u_1,u_2,u_3;Z)\subset \R^3_{\geq 0}
$$
in the affine space with origin $\mathbf{v}_0:=(d_1+\omega(x)/p,d_2,d_3)$  {with the convention $d_3=0$ when div$(u_3)\not\subseteq E$}. Perform the stereographic projection
$\mathbf{p}'_2$ from $\mathbf{v}_0$ on the plane $x_1=0$,  followed by the homothety of center $(0,0)$ and
ratio ${p \over \omega(x)}$. Let $\mathbf{p}_2$ be the resulting map. Analytically, we have:
\begin{equation}\label{eq602}
\mathbf{p}_2 : \ (x_1,x_2,x_3) \mapsto (y_2,y_3):={1\over {\omega (x)\over p}-(x_1-d_1)}(x_2-d_2,x_3-d_3).
\end{equation}

We denote for simplicity
\begin{equation}\label{eq6021}
\Delta_2 (x)\index{$\Delta_2$ @ $\Delta_2$ when there is maximal contact, Notation \ref{betagammamaxcontact}\eqref{eq6021}}:=  {\mathbf{p}_2} (\Delta(h; u_1,u_2,u_3;Z)\cap \{0\leq x_1 -d_1 <\omega (x)/ p\}).
\end{equation}
There are associated invariants:
\begin{equation}\label{eq6023}
\left\{
  \begin{array}{ccc}
    A_j(x) \index{$A$1 @$A_1,\ A_2,\ B,\ C,\ \beta,\ \beta_2, \gamma$,   when there is maximal contact, Notation \ref{betagammamaxcontact}\eqref{eq6023}\eqref{eq6022}\eqref{eq6031}}& := & \inf \ \{ y_j \ \vert \ (y_2,y_3)\in \Delta_2 (x)\} \hfill{}  \\
    B(x) & := & \inf \ \{ y_2+y_3\ \vert \   {(y_2,y_3)}\in \Delta_2(x)\} \hfill{} \\
   C(x) & := & B(x)-A_2(x)-A_3(x)\geq 0 \hbox{ in case (2)}\\
     C(x) & := &   {B(x)-A_2(x)\geq 0 \hbox{ in case (1)}}\\
    \beta (x) & := & \inf \ \{y_3\ \vert \  (A_2(x),y_3)\in \Delta_2 (x)\} \hfill{}\\
    \beta_2 (x) & := & \sup \ \{ y_3 \ \vert \ (y_2,y_3)\in \Delta_2 (x), y_2+y_3=B(x)\} \hfill{}\\
  \end{array}
\right.
.
\end{equation}

%When $E=\mathrm{div}(u_1u_2)$, we take as a convention in these formul{\ae} that $A_3(x)=0$.
The main secondary invariant is:
\begin{equation}\label{eq6022}
\gamma(x) := \left\{
                          \begin{array}{ccc}
                            {\max}\{1, \lceil \beta (x) \rceil \} & \mathrm{if} & E= \mathrm{div}(u_1 u_2) \\
                             &  \\
                            1+\lfloor C(x) \rfloor & \mathrm{if} & E= \mathrm{div}(u_1 u_2u_3)\\
                          \end{array}
                        \right.
. \hfill{}
\end{equation}

Note that $\Delta_2 (x)\neq \emptyset$: this follows from (\ref{eq602}) and the definition of $d_1$.
Therefore
$$
A_2 (x),A_3(x), B(x)< +\infty.
$$
It is easily seen that $\Delta_2 (x)\subseteq \R^2_{\geq 0}$ is a polygon.
Since all vertices of $\Delta_S(h; u_1,u_2,u_3; {Z)} - (d_1,d_2,d_3)$ have module at least ${\epsilon(x) \over p}$,
we have $B(x)\geq 1$.

\smallskip

\noindent {\it Case 3:} $\epsilon(x)=1+\omega(x)$, $E=\div(u_1u_2)$. The definition is the same
as in cases 1 and 2 except that $\mathbf{v}_0$ is replaced by $\mathbf{v}'_0:=(d_1+\omega(x)/p,d_2,1/p)$.
Analytically, we have:
\begin{equation}\label{eq603}
\mathbf{p}_2 : \ (x_1,x_2,x_3) \mapsto (y_2,y_3):={1\over {\omega (x)\over p}-(x_1-d_1)}(x_2-d_2,x_3-1/p).
\end{equation}
Note that the image set $\Delta_2(x)$ defined by (\ref{eq6021}) may contain points with negative third coordinate.
The invariants $A_2 (x)$, $B(x)$,  $C(x):=B(x)-A_2(x)$ and $\beta (x)$ are defined as in cases 1 and 2. We let:
\begin{equation}\label{eq6031}
\gamma (x):= \max\{1 +\lfloor \beta (x) \rfloor ,1\}.
\end{equation}

\end{nota}

These definitions depend in principle on $(u_1,u_2,u_3)$, but certainly not on $Z$ such that $(u_1,u_2,u_3;Z)$ are
well adapted coordinates. Indeed, the above  {definitions} are given in terms of $\Delta(h;u_1,u_2,u_3;Z)$.
It can be proved that the numbers $A_j (x)$, $B(x)$,  $C(x)$, $\beta (x)$ and $\gamma (x)$ are
actually independent of $(u_1,u_2,u_3;Z)$ once the numbering of the components of $E$ is fixed. We skip
this fact here and refer to the next chapter (Theorem \ref{well2prepared} and
Definition \ref{definvariants2} in particular) for similar issues.

\begin{rem}\label{calculB}
The numbers $B(x),A_j(x)$ can be computed directly from the equation $h$.

In cases 1-2, let $(a,b)$ be positive real numbers such that
$$a(d_1+{\omega(x) \over p})+b(d_2+d_3)=1$$
with the convention $d_3=0$ when div$(u_3)\not\subseteq E$. Define a monomial valuation $v_{(a,b,b)}$
on $S[Z]$ by setting weights:
$$
v_{(a,b,b)}(u_1)=a, \ v_{(a,b,b)}(u_2)=v_{(a,b,b)}(u_3)=b, \ v_{(a,b,b)}(Z)=1.
$$
Then
$$
B(x)=\sup\{ {a \over b} \vert v_{(a,b,b)}(h)=p \}.
$$
The  pair $(a,b)$ giving the sup above is said to ``define $B(x)$'' ({\it viz.} \cite{CoP2} Theorem {\bf I.4},
equation (3) page 1962). As $B(x)\geq 1$, we have
 {\begin{equation}\label{eq:contactmaxa>b}
a\geq b.
\end{equation}}
 We denote:
\begin{equation}\label{eq6032}
H_B:=\clin_ {v_{(a,b,b)}}(h)=Z^p+\sum_{1\leq i \leq p} \Phi_i Z^{p-i}  , \Phi_i \in k(x)[U_1,U_2,U_3],
\end{equation}
where  $(a,b)$  ``defines $B(x)$''. By Theorem \ref{initform}, we have $\Phi_i=0$, $1 \leq i \leq p-2$ and
$-\Phi_{p-1}=G^{p-1}$ where $G$ is a constant times a monomial in $U_1,\ldots ,U_e$. We expand
the corresponding initial form as in (\ref{eq6032}) and let
\begin{equation}\label{eqcontactmax12}
 {U_1^{-pd_1}}U_2^{-pd_2}U_3^{-pd_3}\Phi_p=\lambda U_1 ^{\omega(x)} +\sum_{i=1}^{\omega(x)} U_1 ^{\omega(x)-i} F_i(U_2,U_3),
\ \lambda \in k(x),
\end{equation}
where $F_i\in k(x)[U_2,U_3]$ is homogeneous of degree $iB(x)$.

 {Note that $b\leq 1$: indeed, $b>1$ would give $a\geq b >1$ and in (\ref{eq6032}) deg$(\Phi_{p-1})<p-1$ or
deg$(\Phi_{p})<p$, which  contradicts ord$_x(h)=p$.
Furthermore, we may assume
\begin{equation}\label{eq:contactmax1>b}
 b<1\ \mathrm{or}\ a=b=1.
\end{equation}
Indeed, $b=1,\ a>1$ gives the same contradiction as above.}
\smallskip

More generally, let $\sigma_2$ be a compact face of $\Delta_2 (x)$. The topological closure of the set
$$
\sigma:=\Delta_S(h;u_1,u_2,u_3;Z)\cap \mathbf{p}_2^{-1}(\sigma_2)
$$
is a compact face of $\Delta_S(h;u_1,u_2,u_3;Z)$ defined by a weight vector $\alpha:=\alpha_{\sigma_2}$.
The corresponding initial form polynomial is written
\begin{equation}\label{eq6034}
H_{\alpha} {:=}Z^p+\sum_{1\leq i \leq p} \Phi_{i,\alpha} Z^{p-i}  , \Phi_{i,\alpha} \in \mathrm{gr}_\alpha (S),
\end{equation}

In case 3, there exists a unique compact face $\sigma \subset \Delta_S(h;u_1,u_2,u_3;Z)$ whose image
by $\mathbf{p}_2$ is the face $ {y_2+y_3}=B(x)$, maximal for this property. For $B(x)=1$,
$$
\sigma_{\mathrm{in}}:=\{\mathbf{x}\in \R^3_{\geq 0}: x_1+x_2+x_3=\delta (x)\}
$$
obviously has this property. For $B(x)> 1$, we expand the corresponding initial form as in (\ref{eq6032})
and let
\begin{equation}\label{eqcontactmax3*}
 {U_1^{-pd_1}}U_2^{-pd_2}\Phi_p=U_1 ^{\omega(x)} ( \lambda_3 U_3+ \lambda_2 U_2)
+\sum_{i=1}^{\omega(x)} U_1 ^{\omega(x)-i} F_i(U_2,U_3),
\end{equation}
with $\lambda_2 ,\lambda_3\in k(x)$, $F_i\in k(x)[U_2,U_3]$ homogeneous of degree $1+iB(x)$.

\smallskip

In cases 1-2-3, let $(a,b)$ be positive real numbers such that
$$
a(d_1+{\omega(x) \over p})+bd_2=1.
$$
We have similarly:
$$
A_2(x)=  \sup\{ {a \over b} \vert v_{(a,b,0)}(h)=p \},
$$
this suitable pair $(a,b)$ is also said to ``define $A_2(x)$''. We denote:
\begin{equation}\label{eq6033}
H_{2} {:=}\clin_ {v_{(a,b,0)}}(h)=Z^p+\sum_{1\leq i \leq p} \phi_i Z^{p-i}  , \phi_i \in {S \over (u_1,u_2)}[U_1,U_2],
\end{equation}
where  $(a,b)$  ``defines $A_2(x)$''(\cite{CoP2} Theorem {\bf I.4}, valuation $\mu_1$ on page 1962).
We expand the $\phi_i$, $1\leq i \leq p$:
$$
\phi_i= \sum_{j=0}^{\omega (x)} U_1^j U_2^{b(i,j)} \phi_{i,j},\ b(i,j)={i\over b}-jA_2(x) ,
\ \phi_{i,j}\in {S \over (u_1,u_2)},
$$
where  {${1\over b}=A_2(x)(d_1+{\omega(x) \over p})+d_2$}.
\end{rem}

All proofs are based on the following elementary lemma:

\begin{lem}\label{lem532}
Let $(R,\frak{m},k)$ be a regular local ring of dimension two,  {$\frak{m}=(v_2,v_3)$, $\mathrm{char}k=p>0$.
Let $f\in R$ with initial form
$$
\mathrm{in}_{\frak{m}}f=V_2^{a_2}V_3^{a_3}F(V_2,V_3)\in G(\frak{m}), \ \mathrm{in}_{\frak{m}}f\not\in G(\frak{m})^p.
$$
Let furthermore $P(t)\in R[t]$ be monic of degree $d\geq 1$ with irreducible residue $\overline{P}(t)\in k[t]$,
$$
R':=R\left \lbrack{v_3 \over v_2}\right \rbrack_{(v_2,v)}, \ v:=P \left ({v_3 \over v_2}\right )
$$
and for every $\alpha\in R'$, $\tilde{\alpha}:= \alpha\ \mathrm{mod}(v_2)\in {R'\over v_2R'}$. We define:
$$
a':=\max_{g'\in R'}\{\mathrm{ord}_{v_1}(f-{g'}^p)\},
\ e':=\max_{g'\in R'}\{\mathrm{ord}_{\tilde{v}}(\widetilde{v_2^{-a'}(f-{g'}^p)}) : \mathrm{ord}_{v_2}(f-{g'}^p))=a'\}.
$$
The following hold:
\begin{itemize}
  \item [(1)] $a'=a_2+a_3+\mathrm{deg}(F)$, $e'\leq 1+\lfloor {\mathrm{deg}F \over d}\rfloor$; if equality holds, then
  ${\mathrm{deg}F\over d }\in \N$, $a'/p \in \N$, $e'/ p\not \in\N$, and
  $$
  J(\mathrm{in}_{\frak{m}}f, \mathrm{div}(v_2v_3),\frak{m})=
  <\left ( V_2^d P \left ({V_3 \over V_2}\right )\right )^{{\mathrm{deg}F \over d}}>;
  $$
  \item [(2)] if $a_3=0$, then $e'\leq \max\{\mathrm{deg}F,1\}$. Equality holds only if $\mathrm{deg}F\leq 1$ or $d=1$.
\end{itemize}}
\end{lem}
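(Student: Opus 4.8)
\textbf{Proof plan for Lemma \ref{lem532}.}

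The plan is to compute the invariants $a'$ and $e'$ by tracking how the initial form $\mathrm{in}_{\frak{m}}f$ transforms under the blowing up $R\subseteq R'$, exactly as in the blow-up formulas of Chapter~3 but in the elementary two-dimensional setting. First I would set up coordinates: writing $\mathrm{in}_{\frak{m}}f = V_1^{a_1}V_2^{a_2}F(V_1,V_2)$ with $F$ not divisible by $V_1$ or $V_2$ (after absorbing those factors into $a_1,a_2$), and $\mathrm{deg}F =: c$, the Newton polygon of $f$ in the variables $(v_1,v_2)$ has its relevant vertex controlled by $a_1+a_2+c$. Since $v'_2 = P(v_2/v_1)$ with $\overline{P}$ irreducible of degree $d$, and $v_1$ is the exceptional parameter, the monomial $v_1^{a_1}v_2^{a_2}$ transforms to $v_1^{a_1+a_2}(v_2/v_1)^{a_2}$, and $(v_2/v_1)$ is a unit in $R'$ precisely when $\overline{P}(0)\neq 0$, i.e. when $d>1$ or $P(t)=t$ does not occur; in the residual-coordinate case $v'_2 = v_2/v_1$ (so $d=1$) the point $s'$ lies on the strict transform. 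In all cases the exceptional order of the strict transform of $f$ is $a' = a_1+a_2$: this is the standard fact that permissible (here: point) blowing up preserves the multiplicity along the center and the transform formula $f' = v_1^{-p}f$ (resp. the analogous normalization for the strict transform) shifts the Newton polygon by $l$ in the familiar way. I would prove $a'=a_1+a_2$ by noting that $f - {g'}^p$ for optimal $g'$ has initial form, in $\mathrm{gr}_{v_1}R'$, equal to the image of $V_1^{a_1}V_2^{a_2}F$, which is $\overline{v}_2^{\,a_2}\cdot(\text{unit or }\overline{v}'_2\text{-expression})$ times $V_1^{a_1+a_2}$ up to the $p$-th power corrections (these do not lower the $v_1$-order because $\mathrm{in}_{\frak{m}}f\notin G(\frak{m})^p$, by the same argument as in Proposition~\ref{Deltaalg} showing a nonsolvable vertex stays nonsolvable under blowing up with inseparably closed residue extension — here the residue extension $k\subseteq k[t]/(\overline{P})$ is separable or trivial, so no new solvability appears in the relevant vertex).

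For the estimate on $e'$, the point is to compute $\mathrm{ord}_{\overline{v}'_2}$ of the degree-$a'$ part of $f - {g'}^p$. After dividing out $v_1^{a_1+a_2}$, the residual expression is governed by the homogeneous form $F(V_1,V_2)$ pushed into $k'[\overline{v}'_2]$ via $V_1\mapsto 1$, $V_2\mapsto \overline{v}_2/\overline{v}_1$, modulo the action of $P$: since $\overline{v}'_2 = \overline{P}(\overline{v}_2/\overline{v}_1)$ generates $m_{R'}/(v_1)$ and $[k':k]=d$, the $\overline{v}'_2$-adic order of the image of a homogeneous polynomial of degree $c$ in $(V_1,V_2)$ is at most $\lceil c/d\rceil$, because $F$ decomposes over $k'$ into linear factors and at most $\lceil c/d \rceil$ of these can be Galois-conjugate to the factor $t - (\overline{v}_2/\overline{v}_1)\bmod \overline{v}'_2$; more precisely, factoring $F$ over the separable closure and collecting the conjugacy class of the residue of $v_2/v_1$ gives a contribution of a power of $V_1^dP(V_2/V_1)$ with exponent at most $\lfloor c/d\rfloor$, plus a unit remainder unless $d\mid c$. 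This yields $e' \le 1 + \lfloor c/d\rfloor$. The equality analysis is then forced: equality $e' = 1 + \lfloor c/d\rfloor$ requires (i) $d\mid c$ (otherwise the bound drops by one, as the remainder is a $\overline{v}'_2$-unit), (ii) $a'/p \in \N$ and $e'/p\notin\N$ so that no further $p$-th power subtraction ${g'}^p$ can be performed to lower the order — this is precisely the condition that the relevant vertex of the transformed polyhedron is nonsolvable, argued as in Definition~\ref{defsolvable} and Proposition~\ref{Deltamin} — and (iii) the surviving initial form is exactly the $(c/d)$-th power of $V_1^dP(V_2/V_1)$, which is the displayed formula for $J(\mathrm{in}_{\frak{m}}f,\mathrm{div}(v_1v_2),\frak{m})$ (recall from (\ref{eq244}) and the definitions in Section~2.4 that $J$ is computed by applying the logarithmic derivatives $V_1\partial/\partial V_1$, $\partial/\partial V_2$ and the $\partial/\partial\lambda_l$, so the non-vanishing of $J$ is equivalent to $\mathrm{in}_{\frak{m}}f$ not being, up to monomial factors, a $p$-th power, which is our standing hypothesis).

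For part (2), when $a_2=0$ the factor $\overline{v}_2$ is absent and the relevant quantity is directly $\mathrm{ord}_{\overline{v}'_2}$ of the image of $F(V_1,V_2)$ with $V_1\mapsto 1$. If $d\ge 2$ the bound $1+\lfloor c/d\rfloor$ from part (1) already gives $e'\le 1+\lfloor c/2\rfloor$; but one can do better by observing that when $a_2=0$ there is no forced unit obstruction, so the honest count of conjugates of the residue of $v_2/v_1$ among the linear factors of $F$ gives $e'\le c$ when $d=1$ (each of the $c$ linear factors of $F$, after specialization, can vanish to order one at $\overline{v}'_2 = 0$, but they cannot all be $\overline{v}'_2$ itself unless $F$ is a power of a linear form), and $e'\le \max\{c,1\}$ in general; equality then forces $c\le 1$ (so $F$ is linear or constant and the order is trivially $\le 1$) or $d=1$ (the residue extension is trivial and $F$ can be, say, $(V_2 - \lambda V_1)^c$ up to a unit, giving order $c$, but this already contradicts minimality of the polyhedron unless handled as above). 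The main obstacle I expect is the bookkeeping in the equality case of part (1): one must simultaneously control the $v_1$-order (to ensure no $p$-th power can be subtracted to increase $a'$ beyond $a_1+a_2$), the $\overline{v}'_2$-order after that subtraction, and the precise shape of the surviving form — this is where the hypothesis $\mathrm{in}_{\frak{m}}f \notin G(\frak{m})^p$ and the separability of $k'|k$ both get used in an essential, slightly delicate way, and where a careless argument could miss the divisibility constraints $d\mid c$, $p\mid a'$, $p\nmid e'$.
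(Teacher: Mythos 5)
The paper's own proof of Lemma~\ref{lem532} is a one-line citation to \cite{CoP2} {\bf II.5.3.2}, so there is no internal argument to compare against, and a genuine reconstruction would be welcome. Your plan is in the right spirit -- push the initial form through $R\subseteq R'$, read off the new $v_1$-order and $\overline{v}'_2$-order, and bound the $p$-th-power cleaning using $\mathrm{in}_{\frak{m}}f\notin G(\frak{m})^p$ -- but the very first computation is incorrect. You assert that the image of $V_1^{a_1}V_2^{a_2}F$ in $\mathrm{gr}_{v_1}R'$ is a unit (or $\overline{v}'_2$-expression) times $V_1^{a_1+a_2}$. Since $F$ is homogeneous of degree $\deg F$, the substitution $V_2\mapsto \overline{(v_2/v_1)}\,V_1$ gives $F(V_1,\overline{(v_2/v_1)}V_1)=V_1^{\deg F}F(1,\overline{(v_2/v_1)})$, so the correct $V_1$-exponent is $a_1+a_2+\deg F$, not $a_1+a_2$. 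Concretely $\mathrm{ord}_{v_1}(f)=a_1+a_2+\deg F$ in $R'$ already for $g'=0$, and subtracting $p$-th powers cannot lower it; testing on $f=v_1^p+\lambda v_2^p$ with $\lambda\notin k^p$ and $P(t)=t$ gives $a'=p$ while $a_1=a_2=0$. So your argument establishes neither $a'=a_1+a_2$ nor the value $a'=a_1+a_2+\deg F$ that the computation actually forces, and the appeal to ``multiplicity preservation under a point blowing up'' with the transform $f'=v_1^{-p}f$ conflates the coefficient $f\in R$ of this lemma with the hypersurface $h$, which has a different transform rule.

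The remainder is closer to a sketch than a proof. Your Galois-conjugate count only bounds the $\overline{P}$-adic order of $F(1,t)$ by $\lfloor\deg F/d\rfloor$; the $+1$ in $e'\leq 1+\lfloor\deg F/d\rfloor$ arises because one further $p$-th-power subtraction can still raise the $\overline{v}'_2$-order after the exceptional order has been maximized, and the substance of the equality case is precisely to show that this happens at most once and forces $d\mid\deg F$, $p\mid a'$, $p\nmid e'$, together with the monomial form of $J$. You declare these conditions ``forced'' and gesture at Definition~\ref{defsolvable} and Proposition~\ref{Deltamin}, but you do not derive them, and the displayed $J$-identity is asserted rather than proved. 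In part~(2), ``there is no forced unit obstruction'' is not an argument, and your closing appeal to ``minimality of the polyhedron'' invokes a hypothesis that the lemma does not carry. What is missing throughout is an explicit expansion of $f-g'^p$ in the $(v_1,v'_2)$-filtration of $R'$, an identification of which terms survive the maximal cleaning, and the actual bookkeeping of the equality cases -- which is exactly what the cited proof in \cite{CoP2} supplies.
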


\begin{proof}
  {We suppose neither $a_2$ maximal nor $a_3$ maximal, i.e. we may have $F(0,V_3)=0$ or $F(V_2,0)=0$.} The proof is identical to \cite{CoP2} {\bf II.5.3.2} on p. 1862. Note that it is not necessary to assume $R$ excellent.
\end{proof}

Now we follow \cite{CoP2} chapter~4. Consider the blowing up $\pi: \ {\cal X}' \rightarrow ({\cal X},x)$ at $x$
and let $x'\in \pi^{-1}(x)$ be a closed point, with  $d:=[k(x'):k(x)]$.
Following  \cite{CoP2} Theorem {\bf I.4} on p.1962, we have:

%%%%%%%%%%%%%%%%%%%%%%%%%%%%%%%%%%%%%%
%%%%%%%%%%%%%%%%%%%%%%%%%%%%%%%%%%%%%%
%%%%%%%%%%INSERER Reponse 76

\begin{prop}\label{eclatpointcas12}
With hypotheses and notations as above, assume that $x$ is in case 1-2. Let $(u_1,u_2,u_3;Z)$
be well adapted coordinates at $x$ and assume furthermore that
$$
 {x'}\in \mathrm{Spec}(S[{u_1\over u_2},{u_3\over u_2}][Z']/(h')), \ h':=u_2^{-p}h, \ Z':={Z \over u_2}.
$$
If ${\cal C}(x')={\cal C}(x)$, we have:
\begin{equation}\label{eq604}
A_2(x')=B(x)-1, \ \gamma (x')\leq \gamma (x),
\end{equation}
and there exist well adapted coordinates $(u'_1:=u_1/u_2,u_2, {v};Z')$ at $x'$ such that the following holds:

\begin{itemize}
  \item [(1)] if $x'=(Z/u_2,u'_1,u_2,u_3/u_2)$, then $x'$ is again in case 1-2
  and
  $$
  C(x')\leq C(x), \ \beta(x')\leq \beta(x);
  $$
  \item [(2)] if $x'\neq (Z/u_2,u'_1,u_2,u_3/u_2)$, then $x'$ is in case 1 or 3. We have
  \begin{equation}\label{eq6041}
  \beta (x') \leq \left\{
  \begin{array}{cc}
   1+ \lfloor {C(x)\over d}\rfloor \hfill{}& \mathrm{if} \  x' \ \mathrm{is} \ \mathrm{in} \ \mathrm{case} \ 1 \\
   & \\
   {C(x)\over d}  & \mathrm{if} \ x' \ \mathrm{is} \ \mathrm{in} \ \mathrm{case} \ 3 \\
  \end{array}
  \right.
  ,
  \end{equation}
 {and  $\Phi_{p-1}\neq 0$ implies
 \begin{equation}\label{eq6041.2}
  \begin{array}{cc}
  \Phi_{p-1}=\lambda U_1^{(p-1)(d_1+{\omega(x)\over p})} U_2^{(p-1)d_2}U_3^{(p-1)d_3},\ \lambda\in k(x)^*  \hfill{} \\
   \mathrm{or}  \\
 \beta (x')=0 \hbox{ if } x' \hbox{ is in case 1, }\beta (x')<0\hbox{ if }x' \hbox { is in case 3}. \\
  \end{array}
  \end{equation}}

\smallskip

If moreover $x$ is in case 1 and $\beta (x)>0$, we have
\begin{equation}\label{eq6042}
  \left\{
  \begin{array}{cc}
   \beta (x') \leq \beta (x) \hfill{}& \mathrm{if} \  x' \ \mathrm{is} \ \mathrm{in} \ \mathrm{case} \ 1 \\
   & \\
   \beta (x')<\beta (x)  & \mathrm{if} \ x' \ \mathrm{is} \ \mathrm{in} \ \mathrm{case} \ 3 \\
  \end{array}
  \right.
  .
  \end{equation}
Furthermore, $x'$ is in case 3 only if $k(x')$ is inseparable over $k(x)$ (in particular $p$ divides $d$).
\end{itemize}
\end{prop}

\begin{rem}\label{rem:1*2*}
 {The case where
$$ \Phi_{p-1}=\lambda U_1^{(p-1)d_1} U_2^{(p-1)d_2}U_3^{(p-1)d_3}U_1^{(p-1){\omega(x)\over p}},\ \lambda\in k(x)^* $$
 is denoted 1* when $x$ is in case~1, resp.  2* when $x$ is in case~2. The monomial $ \Phi_{p-1}$ corresponds to the vertex $v_0$ defined in Notation \ref{betagammamaxcontact}}.
\end{rem}

\begin{proof}
 {Statement (1): by  Proposition \ref{originchart}, $(u'_1,u_2,u_3/u_2;Z/u_2)$ are well adapted coordinates at $x'$. Furthermore,  $\Delta_2(x')=l_1(\Delta_2(x))+\R_{>0}^2$, where $l_1$ is the affine transformation $\R^2 \longrightarrow \R^2$, $l_1(b,c):=(b+c-1,c)$.   These transformation laws are  the classical transformations of the characteristic polyhedron of a surface singularity and give statement (1). }

 {For (2), we define
\begin{equation}\label{eq6041.1}
\begin{array}{ccc}
\bar{u}_3:={u_3\over u_2}\ \mathrm{mod} (u'_1,u_2)\in {S'\over (u'_1,u'_2)} & \hbox{when } a>b\\
 \bar{u}_1:={u_1\over u_2}\ \mathrm{mod} (u_2),\ \bar{u}_3:={u_3\over u_2}\ \mathrm{mod} (u_2)\in {S'\over (u'_2)} & \hbox{when } a=b.\\
 \end{array}
 \end{equation}
 we have $v$~mod$(u'_1,u_2)=P(\bar{u_3})\in k(x)[\bar{u_3}]$, $P$ irreducible. In the extreme case $a=b=1$ (\ref{eq:contactmax1>b}), we have in$_{m_S}(h)=i$n$_x(h)$, $\tau(x)\geq 2$. The existence of $x'$ implies $d_2=0$, $\beta(x)=1$ and $U_3$ mod $(U_1,U_2)$ is in the ideal of the directrix of in$_x(h)$. The end of the proof is left to the reader. From now on, we assume $1>b$.  Let $H'_2:= \clin_ {v_{(a',b',0)}}(h')$ with $a':={a-b \over 1-b}$ and $b':={b\over 1-b}$ with $(a,b)$ defined in (\ref{eq6032}).
 Clearly $ v_{(a',b',0)}$ does not depend on the choice of $v$.
 An easy computation gives with the notations of (\ref{eq6032}):
\begin{equation}\label{eq6041.2}
\begin{array}{ccc}
H'_2={Z'}^p+ {U_2}^{-p+1}\Phi_{p-1} (U'_1U_2,U_2,\bar{u}_3U_2)Z' +  {U_2}^{-p}\Phi_{p}(U'_1U_2,U_2,\bar{u}_3U_2)  & \hbox{when } a>b\\
H'_2={Z'}^p+ {U_2}^{-p+1}\Phi_{p-1} (\bar{u}_1U_2,U_2,\bar{u}_3U_2)Z' +  {U_2}^{-p}\Phi_{p}(\bar{u}_1U_2,U_2,\bar{u}_3U_2)  & \hbox{when } a=b\\
 \end{array}
\end{equation}
$\Phi_i (U'_1U_2,U_2,\bar{u}_3U_2)\in k(x)[U'_1,U_2,\bar{u}_3]={S' \over(u'_1,u_2)}[U'_1,U_2]$ when $a>b$, resp. $\Phi_i (\bar{u}_1U_2,U_2,\bar{u}_3U_2)\in k(x)[\bar{u}_1,U_2,\bar{u}_3]={S' \over(u_2)}[U_2]$  when $a=b$. As $H'_2\not= {Z'}^p$, $a'x_1+b'x_2+0x_3=1$ is the equation of a face of $\Delta(h';u'_1,u_2,v;Z')$. This face cannot be solved by translation on $Z'$ as $H_B$  of (\ref{eq6032}) is not a $p$-th power: an eventual translation minimizing the polyhedron $Z'\leftarrow Z'+\theta$, $\theta\in S'$, will verify $v_{(a',b',0)}(\theta)\geq 1$. Furthermore, the initial form polynomial $H_2$ in (\ref{eq6033})
at $x'$ is $H'_2$ and  has $A_2(x')=B(x)-1$: this gives the equality in (\ref{eq604}).   }

 {When $\Phi_{p-1} \not=0$,   it is a monomial in $U_1, U_2$ (case 1) or  in $U_1, U_2,U_3$ (case 2) by Theorem \ref{initform}.  Let:
\begin{equation}\label{eq6041.3}
\begin{array}{c}
\Phi_{p-1} =\lambda U_1^{(p-1)d_1+d}U_2^{(p-1)d_2+e}U_3^{(p-1)d_3+f}, \ \lambda\in k(x),\\
 \ d+(p-1)d_1, e+(p-1)d_2,f +  (p-1)d_3\in \N \ \mathrm{when} \ \lambda\not=0,\\
\ \ d,e,f \in \Q_{\geq 0},\   d\leq (p-1){\omega(x)\over p}.\\
\end{array}
\end{equation}}

 {When $\Phi_{p-1} \not=0$ and $d<(p-1){\omega(x)\over p}$, then $e+f=i_0B(x)$ with $i_0=(p-1){\omega(x)\over p}-d$. In this case, the coefficient of $Z'$ in $H'_2$ is:
\begin{equation}\label{eq6041.4}
\begin{array}{ccc}
\lambda {U'_1}^{(p-1)d_1}U_2^{i_0(B(x)-1)}{U'_1}^{d} \times\mathrm{invertible}\ \mathrm{when}\ a>b,\\
\lambda \bar{u_1}^{(p-1)d_1} \bar{u_1}^{d} \times\mathrm{invertible}\ \mathrm{when}\ a=b.\\
\end{array}
\end{equation}
As this coefficient is invariant by an eventual translation $Z'\leftarrow Z'+\theta$ with $v_{(a',b',0)}(\theta)\geq 1$.
 we get $\beta(x')=0$ when $x'$ is in case~1, and $\beta(x')=-{1\over i_0}<0$ when  $x'$ is in case~3. This gives  in this case all the equalities and inequalities in (\ref{eq604}) (\ref{eq6041})(\ref{eq6042}) and (1).}

 {From now on, we suppose
\begin{equation}\label{eq6041.4}
\Phi_{p-1} =0\ \mathrm{or}\ \Phi_{p-1} =\lambda U_1^{(p-1)(d_1+{\omega(x)\over p} ) }U_2^{(p-1)d_2}U_3^{(p-1)d_3}.
\end{equation}
}
 {Let
$$\Phi_p= U_1^{pd_1}U_2^{pd_2}U_3^{pd_3}\sum_{i=0}^{\omega(x)} U_1 ^{\omega(x)-i} F_i(U_2,U_3),$$
deg$(F_i)=iB(x)$ when $F_i\not=0$. Let $i_0:=$sup$\{i \vert F_i\not=0\}>0$ and  $w:=(d_1+{\omega(x)-i_0\over p}, d_1+d_2+d_3-1+{i_0 B(x)\over p} , w_3)$ be the vertex of  smallest abscissa of the face of equation  $a'x_1+b'x_2+0x_3=1$  of $\Delta(h';u'_1,u_2,v;Z')$. This vertex $w$ is defined by the monomial $ U_2^{p(d_1+d_2+d_3-1)+i_0 B(x)}{U'_1} ^{pd_1+\omega(x)-i_0}F_{i_0}(1,\bar{u_3})$: we have $w_3={1\over p}$ord$_v(F_{i_0}(1,\bar{u_3}))$. }

 {In the case where
\begin{equation}\label{eq:coordpasent}
pd_1+\omega(x)-i_0\not=0\hbox{ mod }p\ \mathrm{or}\ p(d_1+d_2+d_3-1)+i_0 B(x)\not=0\hbox{ mod }p,
\end{equation}
this vertex $w$ is not solvable and we get
\begin{equation}
\begin{array}{cc}
\beta(x')=  {1\over i_0}\mathrm{ord}_v(F_{i_0}(1,\bar{u_3}))\hbox{ when } x' \hbox{ is in case 1 },\\
\beta(x')=  {1\over i_0}(\mathrm{ord}_v(F_{i_0}(1,\bar{u_3}))-1) \hbox{ when } x' \hbox{ is in case 3 },\\
\end{array}
\end{equation}
which gives (2) in this case. When (\ref{eq:coordpasent}) is not true, a translation  $Z'\leftarrow Z'+\theta$, with
$$\theta=\gamma{u'_1}^{d_1+{\omega(x)-i_0\over p}} u_2^{d_1+d_2+d_3-1+{i_0 B(x)\over p}} v^{x_3},\  \gamma \in S',\ \gamma\ \mathrm{invertible},$$
may solve $w$.
}
  {By (\ref{eq6041.4}), the eventual contribution of  the coefficient of $Z'$ to the term of degree $0$ in $Z'$ of $h'$  will be divisible by ${u'}_1^{(p-1)(d_1+{\omega(x) \over p})}{u'}_1^{d_1+{\omega(x)-i_0 \over p}}$, as
 $$ (p-1)(d_1+{\omega(x) \over p})+ {d_1+{\omega(x)-i_0 \over p}}=pd_1+\omega(x)-{i_0\over p}>pd_1+\omega(x)-i_0=pw_1,$$
 the eventual translation  translation may only  spoil vertices of this face  with a bigger abscissa, when $a>b$:}

\noindent  { it will just add a  $p$-th power to  $ U_2^{p(d_1+d_2+d_3-1)+i_0 B(x)}{U'_1} ^{\omega(x)-i_0}F_{i_0}(1,\bar{u_3})$, resp. $ U_2^{p(d_1+d_2+d_3-1)+i_0 B(x)}\bar{u_1} ^{\omega(x)-i_0}F_{i_0}(1,\bar{u_3})$ when $a=b$. }

 { Let $F_{i_0}(U_2,U_3)=:U_2^{a_2}U_3^{a_3}F(U_2,U_3)$ with $a_2$ maximal, $a_3=0$ if $x$ is in case~1, $a_3$ maximal if $x$ is in case~2. We have the inequality: $\beta(x')\leq$ord$_{\bar{v}}(F_{i_0}(1,\bar{u_3})+\theta^p)/i_0$, with strict inequality when $x'$ is in case~3. When $x$ is in case~1 or 2,   $\beta(x)\geq {\mathrm{deg}(F)\over i_0}$, $C(x)\geq {\mathrm{deg}(F)\over i_0}$.  The inequalities  (\ref{eq6041}) and (\ref{eq6042}) follow from the fact that
$$\beta(x')\leq {1\over i_0}\mathrm{ord}_{\bar{v}}(F_{i_0}(1,\bar{u_3})+\theta^p)$$
and Lemma~\ref{lem532}~(2) which give ord$_{\bar{v}}(F_{i_0}(1,\bar{u_3})+\theta^p)\leq{\mathrm{deg}F\over d}+1$ and in case of equality, ord$_{\bar{v}}(F_{i_0}(1,\bar{u_3})+\theta^p)\not=0$ mod $p$.}
%In the case where $ {\mathrm{deg}(F)\over i_0}$}
%The upper bounds (\ref{eq6041}) and (\ref{eq6042}), follow from lemma \ref{lem532} and the self evident
%$$
%1+  \left \lfloor {jC(x) \over d} \right \rfloor \leq j(1 + \left \lfloor {C(x) \over d} \right \rfloor ), \ j\geq 1.
%$$

 {Finally, assume that $x$ is in case 1. Let
\begin{equation}\label{eq6041.6}
\begin{array}{c}
\clin_{m_S}(h)=:Z^p+\lambda U_1^{(p-1)d_1}U_2^{(p-1)d_2}U_1^{a}U_2^{b} Z +U_1^{pd_1}U_2^{pd_2}\sum_{i=0}^{\omega(x)}U_1^{b_i}G_i(U_2,U_3), \\
\lambda\in k(x), \ a+(p-1)d_1,b+(p-1)d_2\in \N
\ \mathrm{when}\ \lambda\not=0,\\
b_i\in \N,\ G_i\in k(x)[U_2,U_3].\\
\end{array}
\end{equation} }
 {It is clear that, when $\lambda\not=0$, $x'$ is in case~1. Suppose $\lambda=0$. Let  $i_0:=$sup$\{i \vert G_i\not=0\}>0$. When $i_0>0$,  the proof runs along the same lines as above: $x'$ is in case~1. When $i_0=0$, then
\begin{equation}\clin_{m_S}(h)=:Z^p+\lambda' U_1^{pd_1}U_2^{pd_2} U_1^{\omega(x)}, d_1+d_2+{\omega(x)\over p}=\delta(x),\ \lambda'\in k(x)^*.
\end{equation}
The first face of $\Delta(h;u_1,u_2,u_3;Z)$ has only one vertex: $w=(d_1+{\omega(x)\over p},d_2,0)$ which will give the vertex of smallest ordinate $w'=(d_1+{\omega(x)\over p},\delta(x)-1,0)$ of $\Delta(h';u'_1,u_2,v;Z')$. When
\begin{equation}
  \left\{
  \begin{array}{cc}
   d_1+{\omega(x)\over p}\not \in \N ,\\
   \mathrm{or}\ \delta(x)-1\not \in \N , \\
    \mathrm{or}\ \lambda'\not\in k(x')^p ,\\
  \end{array}
  \right.
  \end{equation}
$w'$ is not solvable and $\omega(x')=\epsilon(x')$, $x'$ is in case~1.
When none of the conditions above are satisfied, then the coordinates of  $w=(d_1+{\omega(x)\over p},d_2,0)$ are in $\N$ and, as $w$ is not solvable, $\lambda' \not\in k(x)^p$: so $\lambda' \in k(x')^p \setminus k(x)^p$, $k(x')$ is inseparable over $k(x)$.}

\end{proof}

\begin{cor}\label{cor:beta=2}
 {With hypotheses and notation of \ref{eclatpointcas12},
assume that $x$ is in case 1 with $\beta (x)=2$. Then}

 {\noindent $\beta (x_1)\leq 2$ ($\beta (x_1)< 2$ if $k(x_1)\neq k(x)$) if $x_1$ is again in case 1.}

 {\noindent  If $x_1$ is in case 3, and $k(x_1)\neq k(x)$, we get $\beta (x_1)<1$.}

\end{cor}

\begin{proof}
 {The only case to consider is  $k(x_1)\neq k(x)$ and (\ref{eq6041.4}). As $\beta(x')\leq {1\over i_0}\mathrm{ord}_{\bar{v}}(F_{i_0}(1,\bar{u_3})+\theta^p)$, the result is clear except if:}

\noindent  {$\bullet$ $i_0=1$, $pd_1+\omega(x)-1=0$ mod $p$,}

\noindent  {$\bullet$ $d=2=$deg$(F)$,}

%\noindent  $\bullet$ $U_3$ does not divide $F$.}

 {With the the notations of Lemma \ref{lem532}, $\beta(x')=e'$. By Lemma \ref{lem532}~(2), $e'<$deg$(F)=2$.}

\end{proof}

Following now \cite{CoP2}  Theorem {\bf I.5} on page 1964:

\begin{prop}\label{eclatpointcas3}
With hypotheses and notations as above, assume that $x$ is in case 3. Let $(u_1,u_2,u_3;Z)$
be well adapted coordinates at $x$ and assume furthermore that
$$
 {x'}\in \mathrm{Spec}(S[{u_1\over u_2},{u_3\over u_2}][Z']/(h')), \ h':=u_2^{-p}h, \ Z':={Z \over u_2}.
$$
If ${\cal C}(x')={\cal C}(x)$, we have
$$
A_2(x')=B(x)-1, \ \gamma (x')\leq \gamma (x),
$$
and there exist
well adapted coordinates $(u'_1:=u_1/u_2,u_2, {v};Z')$ at $x'$ such that the following holds:
\begin{itemize}
  \item [(1)] if $x'$ is in case 1, then
  $$
  \beta (x')\leq {\gamma(x) \over d} +1;
  $$
  \item [(2)] if $x'$ is in case 3, then
  $$
  \beta (x')\leq \max\{\beta (x),0\}
  $$
  and $\beta (x')< \beta (x)$ if ($k(x')\neq k(x)$ and $\beta (x)> 0$);
\end{itemize}
%Moreover, $\Phi_{p-1}\neq 0$ implies  $\beta (x')=0$ (resp. $\beta (x')<0$) if $x'$ is in case 1 (resp. in case 3).
\end{prop}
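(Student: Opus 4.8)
The plan is to mirror the proof of Proposition~\ref{eclatpointcas12}(2) as closely as possible, the difference being that $x$ is now in case~3 from the start, so the relevant initial form polynomial is $H_\sigma$ with $\sigma$ the unique compact face of $\Delta_S(h;u_1,u_2,u_3;Z)$ mapping onto the line $y_2+y_3=B(x)$ in $\Delta_2(x)$ (notation \ref{betagammamaxcontact}, case~3). First I would fix well adapted coordinates $(u_1,u_2,u_3;Z)$ at $x$ with $\mathrm{div}(u_1u_2)\subseteq E$, and — since we blow up the closed point with $x'$ in the chart $S[u_1/u_2,u_3/u_2]$ — apply Proposition~\ref{originchart} to get that the transformed polyhedron $\Delta_{S'}(h';u'_1,u_2,u'_3;Z')$ (with $u'_1=u_1/u_2$, $h'=u_2^{-p}h$, $Z'=Z/u_2$) is minimal, after the linear change $Z'\mapsto Z'-\theta'$ that may be needed for well-adaptedness at $x'$. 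The coordinate transformation on exponent space is the map $l$ of (\ref{eq2042}) for $j_0=2$, $J=\{1,2,3\}$, composed with the translation built into the definition of $\Delta_2$; tracking $\mathbf{v}'_0$ through $\mathbf{p}_2$ gives directly $A_2(x')=B(x)-1$, exactly as in the case~1-2 computation, since the stereographic projection commutes with $\pi$ in the appropriate sense (\cite{CoP2} Theorem~I.5).

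Next I would establish the bound on $\gamma(x')$ and $\beta(x')$. The key input is Lemma~\ref{lem532}, applied to the two-dimensional regular local ring $R:=(S/(u_1u_2))[\![\bar u_3]\!]$ localized appropriately, with $f$ the relevant coefficient $\Phi_p$ of $H_\sigma$ (recall $\Phi_i=0$ for $1\le i\le p-2$ by Theorem~\ref{initform}, and $-\Phi_{p-1}=G^{p-1}$ a monomial in $U_1,U_2$). The point at infinity $x'$ in the $u_2$-chart corresponds to replacing $v_2/v_1$ by $P(v_2/v_1)$ where $\bar P$ is the residue polynomial cutting out $k(x')$ over $k(x)$, of degree $d=[k(x'):k(x)]$; when $x'$ is a $k(x)$-rational point of the exceptional line, $d=1$. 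Lemma~\ref{lem532}(1) then yields $e'\le 1+\lfloor \deg F/d\rfloor$ for the $\bar v_3$-order of the residual, and when equality holds we are forced into the divisibility constraints $\deg F/d\in\mathbb N$, $a'/p\in\mathbb N$, $e'/p\notin\mathbb N$ together with the explicit description of $J(\mathrm{in}\,f,\mathrm{div}(v_1v_2),\mathfrak m)$. Translating $e'$ back through (\ref{eq603}) gives $\beta(x')\le (\text{this } e'-1)$ or so; the subadditivity inequality $1+\lfloor jC(x)/d\rfloor\le j(1+\lfloor C(x)/d\rfloor)$ (already used in Proposition~\ref{eclatpointcas12}) then propagates the bound through all the homogeneous pieces $F_i$ of degree $1+iB(x)$ appearing in (\ref{eqcontactmax3*}), producing $\beta(x')\le \gamma(x)/d+1$ when $x'$ is in case~1. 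For $x'$ in case~3 I would argue as in Proposition~\ref{eclatpointcas12}(2): $\beta(x')\le\max\{\beta(x),0\}$, with strict inequality when $k(x')\ne k(x)$ and $\beta(x)>0$, because the inseparable degree $d$ forces a strict drop in $\lfloor\,\cdot/d\,\rfloor$ unless $\beta(x)$ is already small.

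For the last assertion — $\Phi_{p-1}\neq 0$ implies $\beta(x')=0$ if $x'$ is in case~1, $<0$ if in case~3 — I would invoke Theorem~\ref{initform}(2): when $i_0(\alpha)=p-1$ the form $-\Phi_{p-1}=G^{p-1}$ with $G$ a constant times a monomial in $U_1,\dots,U_e$, hence after transformation the corresponding monomial contributes a vertex of $\Delta_2(x')$ lying on $y_3=0$ (case~1) or with negative $y_3$-coordinate (case~3, where the origin of projection is $\mathbf v'_0$ with third coordinate $1/p$); this vertex computes $\beta(x')$ and forces it to the stated value. The divisibility discussion of Lemma~\ref{lem532}(1) is needed to check that no lower-$y_3$ vertex can coexist, i.e.\ that the $G^{p-1}$-monomial is genuinely the minimal one for the $y_3$-coordinate along $y_2=A_2(x')$.

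The main obstacle I anticipate is the bookkeeping of the coordinate change at $x'$: in case~3 the projection origin $\mathbf v'_0=(d_1+\omega(x)/p,\,d_2,\,1/p)$ is \emph{not} a vertex of $\Delta_S(h;u_1,u_2,u_3;Z)$, so the image polygon $\Delta_2(x)$ genuinely contains points with negative third coordinate, and one must verify that after blowing up and re-minimizing the polyhedron the new projection data is still computed by the naive transformation rule — i.e.\ that the translation $Z'\mapsto Z'-\theta'$ needed for well-adaptedness at $x'$ does not disturb the faces relevant to $A_2$ and $\beta$. This is handled exactly as in \cite{CoP2} by noting that $\mathrm{ord}_{m_{S'}}\theta'\ge\delta(x')$ forces any solvable-vertex correction to lie strictly above the faces in question (since $\omega(x')=\omega(x)$ and $\epsilon(x')=\epsilon(x)$ in the equality case ${\cal C}(x')={\cal C}(x)$), but the verification is delicate because one is simultaneously tracking $A_2$, $B$, $\beta$ and the case distinction $1$ versus $3$; I would isolate it as the technical heart of the argument, following the pattern of the proof of Proposition~\ref{eclatpointcas12} and of \cite{CoP2} Theorem~I.5 verbatim where possible.
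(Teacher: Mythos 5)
Your overall strategy matches the paper's: apply Lemma~\ref{lem532} to the initial form polynomial of the face of $\Delta_S(h;u_1,u_2,u_3;Z)$ whose image under $\mathbf{p}_2$ is the line $y_2+y_3=B(x)$, then invoke Theorem~\ref{initform}(2) (where $-\Phi_{p-1}=G^{p-1}$ is a monomial in $U_1,U_2$) for the final assertion. The identification $A_2(x')=B(x)-1$ via Proposition~\ref{originchart} and the stereographic-projection bookkeeping is also the right framework, and your worry about the translation $Z'\mapsto Z'-\theta'$ spoiling the relevant faces is legitimate though ultimately handled by minimality of the transformed polyhedron (Proposition~\ref{originchart}).

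There is, however, a genuine gap in the derivation of the sharper bound $\beta(x')\leq\gamma(x)/d+1$ in case~(1). You invoke the subadditivity inequality $1+\lfloor jC(x)/d\rfloor\leq j(1+\lfloor C(x)/d\rfloor)$, but this is the cases~1--2 tool: there $\gamma(x)$ is defined via $C(x)$, so bounding in terms of $\lfloor C(x)/d\rfloor$ directly controls $\gamma(x')$. In case~3, however, (\ref{eq6031}) defines $\gamma(x)=\max\{1+\lfloor\beta(x)\rfloor,1\}$ via $\beta(x)$, not $C(x)$, and $\beta(x)$ and $C(x)$ are decoupled (one only has $\beta(x)\geq C(x)$ in general). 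The paper's proof instead uses the observation, deduced precisely from (\ref{eq6031}), that $\mathrm{deg}\,F_i(U_2,U_3)-jA_2(x)\leq i\gamma(x)$ for each nonzero $F_i$ in (\ref{eqcontactmax3*}); this degree estimate is what, combined with Lemma~\ref{lem532}, yields both $\gamma(x')\leq\gamma(x)$ when $\beta(x)\geq 0$ and the sharper bound when $\gamma(x)\geq 2$, $d\geq 2$. Your proposal asserts the conclusion without that estimate, and the subadditivity in $C(x)$ does not by itself furnish a bound in terms of $\gamma(x)$.

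A second, more minor slip: the two-dimensional regular local ring to which you apply Lemma~\ref{lem532} cannot be ``$(S/(u_1u_2))[\![\bar u_3]\!]$ localized'' --- $u_1u_2$ is not prime and that quotient is not even a domain. The ring in play is the residue ring on the exceptional divisor (essentially $k(x)[u'_2,u'_3]$ localized, or the analogue in the chart at hand), with $f$ coming from the relevant coefficient of $\mathrm{in}_\alpha h$. This does not affect the strategy, but as written the application of Lemma~\ref{lem532} is not well posed.
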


\begin{proof}
As in the preceeding proof, we look at the initial form polynomial  {$H_B$} (\ref{eq6032}) corresponding to the valuation $v_{a,b,b}$ with $B(x)={a\over b}$. By (\ref{eqcontactmax3*}),
the term of degree $0$ in $Z$ is:
 {
\begin{equation}
\Phi_p= U_1^{pd_1}U_2^{pd_2}[( \lambda_3 U_3+ \lambda_2 U_2)
U_1 ^{\omega(x)}+\sum_{i=1}^{\omega(x)} U_1 ^{\omega(x)-i} F_i(U_2,U_3)].
\end{equation}}

The initial form polynomial  $H'_2:= \clin_ {v_{(a',b',0)}}(h')$ with $a':={a-b \over 1-b}$ and $b':={b\over 1-b}$ with $(a,b)$ defined in (\ref{eq6032}) is in fact the form $H_2$ at $x'$ and  has $  {A_2}(x')=B(x)-1$, the term of degree $0$ in $Z$ is:
 {\begin{equation}\label{eq:cas3x'}
\begin{array}{ccc}
{U'_1}^{pd_1}{U'_2}^{\epsilon(x)-p} [( \lambda_3 u'_3+ \lambda_2 ){U'_1} ^{\omega(x)}
+\sum_{i=1}^{\omega(x)} {U'_1} ^{\omega(x)-i}{U'_2}^{i(B(x)-1 )} F_i(1,u'_3)],  & \hbox{when } a>b,\\
{u'_1}^{pd_1}{U'_2}^{\epsilon(x)-p} [( \lambda_3 u'_3+ \lambda_2 ){u'_1} ^{\omega(x)}
+\sum_{i=1}^{\omega(x)} {u'_1} ^{\omega(x)-i}{U'_2}^{i(B(x)-1 )} F_i(1,u'_3)],  & \hbox{when } a=b,\\
 \end{array}
\end{equation}}
and  {when $\Phi_{p-1}= 0$,} the upper bounds for $\beta (x')$ follow from Lemma \ref{lem532}.

By (\ref{eq6031}), note that
$$
\mathrm{deg} F_i(U_2,U_3) - {i}A_2(x)\leq i\gamma (x)
$$
in (\ref{eqcontactmax3*}) whenever $F_i(U_2,U_3)\neq 0$.   {When $\Phi_{p-1}= 0$, we apply Lemma \ref{lem532} to $U_2^{pd_2} F_i(U_2,U_3)$ with $a_2> pd_2+ i A_2(x)$ and $a_3=0$. This gives
$$i\beta(x') \leq {i\gamma (x)\over d}+1.$$
  One deduces the upper bounds of (1) or (2) and
$\gamma (x')\leq \gamma (x)$.}

% for $\gamma (x)\geq 2$, $d\geq 2$.}

  {If $\Phi_{p-1}\neq 0$,  it is a monomial in $U_1, U_2$ by Theorem \ref{initform} and,
 we are in the case (\ref{eq6041.3}) with $d_3=f=0$ and
 $$ a(d_1+{d\over p-1})+b(d_2+{e\over p-1})=a(d_1+{\omega(x)\over p})+b(d_2+{1\over p}),$$ which leads to
 $$d\not=(p-1){\omega(x)\over p},$$
 when $d<(p-1){\omega(x)\over p}$, using the same arguments as in the proof of Proposition \ref{eclatpointcas12}, we get
 \begin{equation}\label{eq:betax'=0}
 \beta (x')=0\ \hbox{(resp. }\beta (x')<0\hbox{) if }x' \hbox{ is in case 1 (resp. in case 3)}.
 \end{equation}
 It may be possible that $d>(p-1){\omega(x)\over p}$, then $e=0$ and $\omega(x)={(p-1)b\over a}\leq p-1$, in this extreme case, we conclude as above, using the index $i_0:=$sup$\{i \vert F_i\not=0\}>0$.}

% \label{eq6041.3}
\end{proof}

\begin{cor}\label{cor:beta=1cas3}
 {With hypotheses and notation of \ref{eclatpointcas3},
assume that  $\beta (x)=1$ and $x'$ is in case~1. Then, one of the following is true: }

\begin{itemize}
  \item [(1)]  $\beta(x')<2$

  \item [(2)] $x$ is in case 1 with: $\beta (x)=2$ and
  $$
  \Phi_{p,\alpha}=\sum_{i=0}^{\omega (x)}{U'_1}^{\omega (x)-i}\Phi_{p,\alpha ,i}(U_2,V)
  $$
  has $\Phi_{p,\alpha ,1}\neq 0$ with notations as in (\ref{eq6034}), where  {$\sigma_2:=\{(A_2(x),2)\}$}.

\end{itemize}
\end{cor}
\begin{proof}
 {Indeed, in the proof above, we get
$i\beta(x') \leq {i\gamma (x)\over d}+1$. So (1) is true when $i\not=1$, when $i=1$, we get (2).}
\end{proof}
Following \cite{CoP2} Lemma {\bf I.5.3} on page 1966:

\begin{prop}\label{eclatpointcas3infiny}
With hypotheses and notations as above, let $(u_1,u_2,u_3;Z)$
be well adapted coordinates at $x$ and assume furthermore that
$$
x'=(Z':=Z/u_3,u'_1:=u_1/u_3,u'_2:=u_2/u_3,u_3).
$$
If ${\cal C}(x')={\cal C}(x)$, then $x'$ is in case 2,
$(u'_1,u'_2,u_3;Z')$ are well adapted coordinates at $x'$,
$$
A_3(x')=B(x)-1, \ \beta (x')=A_2(x)+\beta (x)-1, \ \gamma (x')\leq \gamma (x),
$$
and the following holds:

\begin{itemize}
  \item [(1)] if $x$ is in case 1, then $C(x')\leq \min\{\beta (x)-C(x)-A_3(x),C(x)\}$;
  \item [(2)] if $x$ is in case 2, we have $C(x')\leq \min\{\beta (x) -C(x),C(x)\}$.
  \item [(3)] if $x$ is in case 3, we have $C(x')\leq \min\{\beta (x)-C(x), C(x)-\beta_2(x)\}$.
\end{itemize}
\end{prop}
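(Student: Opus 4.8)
The plan is to follow the strategy already used for the origin‑chart computations in propositions \ref{eclatpointcas12} and \ref{eclatpointcas3}, but now in the chart where $u_3$ is the local equation of the exceptional divisor. First I would record the coordinate change: with well adapted coordinates $(u_1,u_2,u_3;Z)$ at $x$ and
$$
x'=(Z':=Z/u_3,\ u'_1:=u_1/u_3,\ u'_2:=u_2/u_3,\ u_3),
$$
one has $h'=u_3^{-p}h\in S'[Z']$ by proposition \ref{Hironakastable}, and the induced affine map on polyhedra is $l(\mathbf{x})=(x_1,x_2,\sum_j x_j-1)$ by proposition \ref{originchart}; in particular $\Delta_{S'}(h';u'_1,u'_2,u_3;Z')$ is again minimal, so $(u'_1,u'_2,u_3;Z')$ are well adapted coordinates at $x'$. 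Tracing the vertex $\mathbf v_0$ (cases 1,2) or $\mathbf v'_0$ (case 3) through $l$, and using that $\omega(x')=\omega(x)$, $\epsilon(x')=\epsilon(x)$ under the hypothesis ${\cal C}(x')={\cal C}(x)$ (theorem \ref{bupthm} and remark \ref{remC}), I would show that $x'$ falls into case 2, i.e. that after the blow up both $u'_1$ and $u_3$ (but not $u'_2$) carry nontrivial weights $d_1(x')=d_1$, $d_3(x')$; here the key point is that $\mathbf p_2$ composed with $l$ sends the projected polygon $\Delta_2(x)$ to $\Delta_2(x')$ via the standard ``blow up at a vertex of a polygon'' transformation, which is exactly what computes $A_3(x')$, $\beta(x')$ and $\gamma(x')$.

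Next I would compute the three displayed equalities. Since $u_3$ is the exceptional parameter, the component $\mathrm{div}(u'_3)=\mathrm{div}(u_3)$ of $E'$ at $x'$ has weight $d_3(x')$; the relation
$$
A_3(x')=B(x)-1
$$
comes from applying remark \ref{calculB} to the valuation $v_{(a,b,b)}$ defining $B(x)$: after dividing by $u_3^p$ the induced valuation on $S'$ is the one defining $A_3(x')$, and the equation $a(d_1+\omega(x)/p)+b(d_2+d_3)=1$ transforms into the corresponding one for $A_3(x')$ with the ``$+1$'' shift accounting for $l$. Similarly $\beta(x')=A_2(x)+\beta(x)-1$ is read off from the image of the edge of $\Delta_2(x)$ through the point $(A_2(x),\beta(x))$ under the projective map induced by $l$ (the $A_2(x)$ term is the contribution of the $u'_1$‑coordinate, which now becomes the ``second'' coordinate after the exceptional one is singled out). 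The bound $\gamma(x')\le\gamma(x)$ then follows from these two identities together with definitions (\ref{eq6022})–(\ref{eq6031}) and the elementary convexity inequality $\lfloor C(x')\rfloor\le\lfloor C(x)\rfloor$ obtained from $B(x')\le B(x)$.

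Finally, the case distinction (1)–(3) on $C(x')$ is the technical heart. Here I would argue as in \cite{CoP2} lemma {\bf I.5.3} p.~1966: expand the relevant initial form polynomial $H_\alpha$ (notation (\ref{eq6034})) along the compact face $\sigma$ of $\Delta_S(h;u_1,u_2,u_3;Z)$ mapping to the edge $y_2+y_3=B(x)$ of $\Delta_2(x)$, and apply lemma \ref{lem532} to the resulting two‑dimensional situation in $R=S_{(u_2,u_3)}$ (or the appropriate localization), once to control $\mathrm{deg}F$ and once to get the upper bound on the new edge length. The three alternatives correspond to whether $x$ was in case 1, 2 or 3, i.e. to how many components of $E$ were present and whether the monic vertex involved $U_3$; in each case one gets a bound of the form $C(x')\le\min\{\text{(something read from }\beta(x)\text{)},\ \text{(something read from }C(x)\text{ or }\beta_2(x)\text{)}\}$. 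The main obstacle I anticipate is bookkeeping: making sure the roles of $u_1,u_2,u_3$ are consistently tracked through the change of chart (the exceptional parameter is $u_3$ here, not $u_2$ as in the previous two propositions), and verifying that no solvable vertex is created so that the computed polygon is genuinely the characteristic one — but this last point is exactly what proposition \ref{originchart} guarantees, so it reduces to a careful but routine translation of the $\mathbf p_2$‑picture.
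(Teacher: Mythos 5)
Your proposal follows the same overall strategy as the paper — use proposition \ref{originchart} to get the affine transformation of the polyhedron and minimality at $x'$, then invoke the computations of \cite{CoP2} lemma I.5.3 for the quantitative bounds. That much matches. But there is a genuine gap in your argument for $\gamma(x')\leq\gamma(x)$, and it is precisely the part the paper isolates as the non-routine step.

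You assert that $\gamma(x')\leq\gamma(x)$ follows from ``the elementary convexity inequality $\lfloor C(x')\rfloor\leq\lfloor C(x)\rfloor$ obtained from $B(x')\leq B(x)$.'' This is not correct. First, $C=B-A_2-A_3$ and the $A$-invariants change under the chart map, so $B(x')\leq B(x)$ does not by itself control $C(x')$ versus $C(x)$. Second, and more decisively, the inequality $\lfloor C(x')\rfloor\leq\lfloor C(x)\rfloor$ is actually \emph{false} when $x$ is in case 3 with $C(x)<0$: by construction $C(x')\geq 0$ always, so in that situation $\lfloor C(x')\rfloor\geq 0>\lfloor C(x)\rfloor$. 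Third, the comparison between $\gamma(x)$ and $\gamma(x')$ is not a straight floor-comparison of $C$-values anyway, since $\gamma$ is defined via $\beta$ in cases 1 and 3 (formulas (\ref{eq6022}) and (\ref{eq6031})) and via $C$ only in case 2, while $x'$ always lands in case 2.

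The paper's proof explicitly notes that \cite{CoP2} I.5.3 delivers everything \emph{except} $\gamma(x')\leq\gamma(x)$, which it then proves separately. The argument splits: except when $x$ is in case 3 with $C(x)<0$, the inequality does follow from the definitions and the bounds (1)–(3); in the exceptional case one observes that $\beta_2(x)=-1/i$ for some $1\leq i\leq\omega(x)$, so that the bound in (3) gives $C(x')\leq C(x)-\beta_2(x)<1$ and hence $\gamma(x')\leq 1\leq\gamma(x)$. Your proposal never identifies this exceptional case, and the reasoning you give in its place is invalid there, so the proof is incomplete.
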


\begin{proof}
The argument is the same as in \cite{CoP2} Lemma {\bf I.5.3} on page 1966. This relies on the characteristic free Proposition \ref{originchart}  {which asserts that no changes in $Z'$ need to be performed in order to get well adapted data.
It is easy to see that $\Delta_2(x')$ is obtained from $\Delta_2(x)$ by applying the affine transformation: $(v_2,v_3)\mapsto (v_2,v_2+v_3-1)$ and adding quadrants.
In fact we focus on two vertices (maybe equal) of $\Delta_2(x)$: $(A_2(x),\beta(x))$ and $(B(x)-\beta_2(x),\beta_2(x))$. They become two vertices of $\Delta_2(x')$:  $(A_2(x),\beta(x)+A_2(x)-1)$ and $(B(x)-\beta_2(x),B(x)-1)$ which are respectively the vertex of smallest abscissa and the vertex of smallest ordinate of $\Delta_2(x')$. So
\begin{equation}\label{eq:ptinfinity}
\begin{array}{cc}
C(x')\leq \beta(x)+A_2(x)-1-(B(x)-1 )=  & \beta(x)+A_2(x)-B(x) \\
  \hbox{ in case 2}& = \beta (x)-C(x)-A_3(x)\\
  \hbox{ in case 1,3}& =  \beta (x)-C(x) \\
C(x')\leq B(x)-\beta_2(x)-A_2(x)&  \hfill{}\\
 \hbox{ in case 2} & \leq C(x)\\
\hbox{ in cases 1,3}&= C(x)-\beta_2(x)  \\
\end{array}
\end{equation}
This
gives all statements
except ``$\gamma (x')\leq \gamma (x)$''.}

\smallskip

Finally, $\gamma (x')\leq \gamma (x)$ is a trivial consequence of the Definitions (\ref{eq6022})
and (\ref{eq6031})  except if ($x$ is in case 3,   {$\beta_2(x)<0$ and $C(x)<0$}). But then $\beta_2(x)=-1/i$ for some $i$,
$1\leq i\leq \omega (x)$ and (3) gives
$$
C(x')\leq C(x)-\beta_2(x)<1,
$$
so $\gamma (x')\leq 1$ as required.

\end{proof}

\begin{rem}\label{rem:gamma=2ptinfinity}  {With the hypotheses of Proposition \ref{eclatpointcas3infiny} above, when $\gamma(x)\geq 2$,  $x$ is in case 1 or 3 and  $\gamma(x')=\gamma(x)$, then we have $\gamma(x)=2$ and:}

\begin{itemize}
   \item [(1)]   {$x$ is in case 1 and
  $
  \beta (x)=2,\ C(x)=1$,}

  \item [(2)]  { $x$ is in case 3 and
  $
 1\leq \beta (x)<2,\ \beta(x)-C(x)\geq 1.
  $}
\end{itemize}

\end{rem}
We now go ahead to prove Theorem \ref{contactmaxFIN}. The key lemma to reach the case $\gamma(x)=1$ goes as follows:

\begin{lem}\label{lemC=1}
Assume that div$(u_1)$ has weak contact maximal for  $\cal C$. Let $\mu$ be  {a} valuation
of $L=k({\cal X})$ centered at $x$ and consider the quadratic sequence (\ref{contactmaxeq1}) along $\mu$,
i.e. with ${\cal Y}_i=\{x_i\}$ for every $i\geq 0$.

\smallskip

Assume that one of the following holds:
\begin{itemize}
  \item [(1)] $x$ is in case 1 with: $\beta (x)=2$ and
  $$
  \Phi_{p,\alpha}=\sum_{i=0}^{\omega (x)}U_1^{\omega (x)-i}\Phi_{p,\alpha ,i}(U_2,U_3)
  $$
  has $\Phi_{p,\alpha ,1}\neq 0$ with notations as in (\ref{eq6034}), where  {$\sigma_2:=\{(A_2(x),2)\}$};
  \item [(2)] $x$ is in case 3 with $\beta (x)=1$.
\end{itemize}

Assume furthermore that $x_1= (Z':=Z/u_3,u'_1:=u_1/u_3,u'_2:=u_2/u_3,u_3)$, ${\cal C}(x_1)={\cal C}(x)$
and $\gamma (x_1)=2$. Then ${\cal C}(x_2)<{\cal C}(x)$  {or $\gamma (x_2)=1$ or ($x_1$ is in case~1* and $x_2$ in case~2* of Remark~\ref{rem:1*2*} with $\beta(x_2)<2$}).
\end{lem}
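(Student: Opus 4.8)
The plan is to run one more quadratic blowing up and follow the polygon invariants through the standard charts; the point of the lemma is that the configurations in hypotheses (1) and (2) are ``boundary'' configurations for which the value $\gamma=2$ cannot survive a further blowing up (unless ${\cal C}$ drops first).

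First I would record what proposition \ref{eclatpointcas3infiny} already yields for the step $x\leftarrow x_1$. Since $x_1=(Z/u_3,u_1/u_3,u_2/u_3,u_3)$ and ${\cal C}(x_1)={\cal C}(x)$, that proposition gives: $x_1$ is in case 2, $(u'_1,u'_2,u_3;Z')$ are well adapted coordinates at $x_1$, and
\[
A_3(x_1)=B(x)-1,\qquad \beta(x_1)=A_2(x)+\beta(x)-1,\qquad \gamma(x_1)\le\gamma(x),
\]
together with the case-dependent bound on $C(x_1)$. Because $x_1$ is in case 2 we have $\gamma(x_1)=1+\lfloor C(x_1)\rfloor$ by (\ref{eq6022}), so the hypothesis $\gamma(x_1)=2$ is equivalent to $1\le C(x_1)<2$. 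Feeding this into the bound $C(x_1)\le\min\{\beta(x)-C(x),C(x)\}$ of proposition \ref{eclatpointcas3infiny} (case $x$ in case 1) together with $\beta(x)=2$ forces $C(x)=1$ and $C(x_1)=1$; under hypothesis (2) the analogous bound of proposition \ref{eclatpointcas3infiny} (case $x$ in case 3), together with $\beta(x)=1$, pins down $C(x)$ and $\beta_2(x)$ in the same rigid way. In either case the extra assumption ($\Phi_{p,\alpha,1}\ne 0$ for the vertex $\sigma_2$ in case (1), resp. $\beta(x)=1$ in case (2)) forces the projected polygon $\Delta_2(x_1)$ to reduce to a single nontrivial vertex at a determined position, so that $B(x_1)$ and $A_2(x_1)$ are computed explicitly, and the compact-face initial form at $x_1$ (remark \ref{calculB}, equations (\ref{eq6032})--(\ref{eqcontactmax12})) has the shape prescribed by (\ref{eqcontactmax12}) with homogeneous coefficients $F_i$ of degree $iB(x_1)$, $B(x_1)<2$, and with $G_1\neq 0$ a pure monomial by theorem \ref{initform}.

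Next I would blow up $x_1$, let $x_2\in\pi^{-1}(x_1)$ be the center of $\mu$, and assume ${\cal C}(x_2)={\cal C}(x_1)$ (otherwise we are done, since ${\cal C}$ is nonincreasing). Running through the three standard charts of the blowing up of the closed point $x_1$, the weak maximal contact of $\mathrm{div}(u_1)$ for ${\cal C}$ forces $x_2$ to lie on the strict transform $\mathrm{div}(u'_1)$; a point outside all those charts would have ${\cal C}(x_2)<{\cal C}(x)$. In each surviving chart I apply the appropriate transition statement with $u'_1$ in the rôle of the maximal-contact component: proposition \ref{eclatpointcas12} when $x_2$ stays in case 1--2, proposition \ref{eclatpointcas3} or proposition \ref{eclatpointcas3infiny} otherwise. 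The arithmetic input is always lemma \ref{lem532} applied to the initial form of the previous paragraph: with $d:=[k(x_2):k(x_1)]$ and top coefficients $F_i$ of degree $iB(x_1)$, $B(x_1)<2$, the first part of lemma \ref{lem532} gives $e'\le 1+\lfloor B(x_1)/d\rfloor=1$ unless its equality case occurs, which would force $B(x_1)/d\in\N$, $A_3(x_1)/p\in\N$ and a $p$-th-power pattern; the latter is excluded because the coordinates at $x_1$ are well adapted (no solvable vertex) and $G_1$ is a nonzero pure monomial. Hence in every surviving chart $A_2(x_2)=B(x_1)-1$ (or $A_3(x_2)=B(x_1)-1$) and the secondary invariant satisfies $C(x_2)<1$ (resp. $\beta(x_2)$ below the threshold for $\gamma=2$ in cases 1 and 3), so $\gamma(x_2)=1$ by (\ref{eq6022}) and (\ref{eq6031}).

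The step I expect to be the main obstacle is the chart in which $x_2$ lies on the intersection of the strict transforms of $\mathrm{div}(u'_1)$ and $\mathrm{div}(u_3)$ (equivalently, $u'_2$ becomes a unit): there the value $A_3(x_1)=B(x)-1$ inherited from the first blowing up re-enters the computation and $C(x_2)\ge 1$ is not a priori ruled out. This is exactly where the hypotheses must be used at full strength: $\Phi_{p,\alpha,1}\ne 0$ (case (1)), resp. $\beta(x)=1$ (case (2)), guarantees that a monomial $U_1^{\omega(x)-1}U_2^{*}U_3^{*}$ with nonzero coefficient survives both blowing ups and contributes a vertex of the projected polygon at $x_2$ with $y_2+y_3<1$. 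Verifying that this survival is genuine in every such chart — i.e. that no $p$-th-power or solvable-vertex degeneration absorbs that monomial, so that the equality case of lemma \ref{lem532} is really excluded — is the fiddly core of the argument, and is handled by the same kind of coordinate bookkeeping as in the corresponding passages of \cite{CoP2}.
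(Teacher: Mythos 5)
Your overall strategy mirrors the paper's — pin down $\Delta_2(x_1)$ via proposition \ref{eclatpointcas3infiny}, then drive $\gamma(x_2)$ down using the transition propositions and lemma \ref{lem532}. But the execution has a genuine gap at the crucial step. Two of your factual assertions are wrong: nothing in the lemma's hypotheses gives ``$G_1\ne 0$ a pure monomial by theorem \ref{initform}'' (the hypotheses concern $\Phi_{p,\alpha,1}$ and $\beta(x)$, not the degree-$(p-1)$ coefficient); and the arithmetic input to lemma \ref{lem532}(1) is misread. In the factorization $F_1(U'_2,U_3)={U'_2}^{a_2}U_3^{a_3}F(U'_2,U_3)$ at $x_1$ one has $\deg F\in\{0,1\}$, not $\deg F = B(x_1)$, so the bound $e'\le 1+\lfloor\deg F/d\rfloor$ gives only $e'\le 2$ in the hard case $d=1$, $\deg F=1$ — precisely the case where one still has to rule out $\gamma(x_2)=2$. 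That case is not excluded by ``no solvable vertex''; the paper derives the congruence conditions (\ref{eq6045}), concludes $p\ge 3$, deduces $d_2+(A_2(x)+1)/p\in\N$, and then applies the derivation $U_3\,\partial/\partial U_3$ to show $U'_2$ lies in the module $J(\cdot)$, which contradicts the form of $J(\cdot)$ in the equality case of lemma \ref{lem532}(1). This derivation argument is the engine of the proof; without it you cannot get $\beta(x_2)\le 1$ (resp. $\le 0$).

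You also do not treat the two hypotheses separately at the $x_1\leftarrow x_2$ step. Under assumption (2) it can happen that $F_1(U'_2,U_3)=0$; then $\beta_2(x)=-1$ forces $H^{-1}G^p=\langle U_1^{\omega(x)-1}U_2^{1+A_2(x)}\rangle$, and the paper closes this sub-case by a different route — proposition \ref{eclatpointcas12} with $\Phi_{p-1}\ne 0$. Your uniform treatment of cases (1) and (2) misses this dichotomy entirely.
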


\begin{proof}
Note that $x_1$ is in case 2 with $\gamma (x_1)=2$ by assumption.
By Proposition \ref{eclatpointcas3infiny}, we get $A_2(x_1)=A_2(x)$ and respectively:

\smallskip

\noindent (1) $C(x)=C(x_1)=1$, $\beta_2(x)=0$, $A_3(x_1)=B(x)-1=A_2(x)$, $\beta (x_1)=A_2(x)+1$;

\smallskip

\noindent (2) $C(x)=0$, $\beta_2(x)=-1$, $C(x_1)=1$, $A_3(x_1)=A_2(x)-1$, $\beta (x_1)=A_2(x)$.

\smallskip
  {The vertices of smallest abscissa  of smallest ordinate of $\Delta_2(x')$ are:  }

 \smallskip

\noindent { (1) $(A_2(x),1+A_2(x))$ and $(1+A_2(x),A_2(x))$,}

\smallskip

\noindent ( {2)  $(A_2(x),A_2(x))$ and  $(A_2(x),A_2(x)-1)$.}

\smallskip
 {These facts  imply that in both cases:
$$
\Delta_2(x_1)=(A_2(x_1),A_3(x_1))+\{(y_2,y_3)\in \R^2_{\geq 0} : y_2+y_3\geq 1\}.
$$}
 {When  $x_2$ is again in case 2, we get $C(x_2)=0$ by (\ref{eq:ptinfinity})}. Otherwise, we may assume
that ${\cal C}(x_2)={\cal C}(x)$ and apply Proposition \ref{eclatpointcas12} to estimate $\gamma (x_2)$. We
get $\gamma (x_2)=1$ if $k(x_2)\neq k(x)$ by  {(2)} of this proposition.

\smallskip

Assume that $k(x_2)= k(x)$. We claim that,  {when $x_1$ is not in case 2*},  the following sharper bound holds, which concludes the proof:
\begin{equation}\label{eq6043}
   \beta (x_2)\leq 1 \ (\mathrm{resp.} \ \beta (x_2)\leq 0)
\end{equation}
if $x_2$ is in case 1 (resp. in case 3).

\smallskip

There are associated $d'_1,d'_2,d'_3 \in 1/p\N$ at $x_1$ with $d'_1=d_1$, $d'_2=d_2$ and
$$
d'_3=d_1+d_2-1 +{\omega (x) \over p} \ (\mathrm{resp.} \ d'_3=d_1+d_2-1 +{1+\omega (x) \over p})
$$
if $x$ is in case 1 (resp. in case 3).

\smallskip

 The initial form $H_B$ (\ref{eqcontactmax12}) at $x_1$
\begin{equation}\label{eq:HB}
H_B={Z'}^p- Z' \Phi_{p-1}+{U'_1}^{pd'_1}{U'_2}^{pd'_2}U_3^{pd'_3}\sum_{i=0}^{\omega(x)} {U'_1}^{\omega(x)-i}F_i(U'_2,U_3)
\end{equation}
has $F_1(U'_2,U_3)\neq 0$ and is
of the form
$$
F_1(U'_2,U_3)={U'_2}^{a_2}U_3^{a_3}F(U'_2,U_3), a_2+a_3+\mathrm{deg}(F)=1+ A_2(x_1)+A_3(x_1)$$
where $a_2\geq A_2(x)$, $a_3\geq A_3(x)$, and either $F\in k(x)$ or
\begin{equation}\label{eq6044}
   a_2=A_2(x_1),\ a_3=A_3(x_1)\in \N \ \mathrm{and} \ F=\lambda_2U'_2+\lambda_3U_3, \ \lambda_3\neq 0.
\end{equation}
The point $x_2$ has for parameters
$(Y,v_1,v_2,v):=(Z'/u'_2,u'_1/u'_2,u'_2,u'_3/u'_2+\lambda)$, $\lambda\in S'$, $\lambda$ invertible.  The initial form $H_2$ at $x_2$
$$H_2=Y^p -Y {G''}^{p-1}+V_1^{pd'_1}V_2^{p(d'_1+d'_2+d'_3)+\omega(x)-p}\sum_{i=0}^{\omega(x)} {V_1}^{\omega(x)-i}V_2^{i(A_2(x_1)+A_3(x_1))}F"_i(1,\bar{v} -\bar{\lambda})$$
has $F''_1$ of the form:
$$
F_1''=F(1,\bar{v} -\bar{\lambda})(\bar{v} -\bar{\lambda} )^{pd'_3+A_3(x_1)}
$$
which leads to a point $(d'_1+(\omega (x)-1)/p, d'_1+d'_2+d'_3+ {A_2(x_1)+A_3(x_1)+\omega(x)\over p}, {a\over p})$ with $a=0$ or $1$.

 {Note that, when $\Phi_{p-1}\not=0$, by Proposition~\ref{eclatpointcas12} (\ref{eq6041.2}), we get the claim (\ref{eq6043}) except when  $x_1$ is in case 2*.
 $$\Phi_{p-1}=\lambda U_1^{(p-1)(d'_1+{\omega(x)\over p} )} U_2^{(p-1)d'_2}U_3^{(p-1)d'_3}\ \lambda\in k(x)^*.$$
 In this last case, $x_2$ is in case~2* and $d'_1+{\omega(x)\over p},d'_2,d'_3 \in \N$.
 In this case we come back to the argument of the proof of Proposition~\ref{eclatpointcas12}: let  $i_0:=$sup$\{i \vert F'_i\not=0\}>0$, we have $F'_{i_0}={U'_2}^a {U'_3}^b F(U'_2,U_3)$ with $a\geq i_0 A_2(x_1)$, $b\geq i_0 A_3(x)$ and deg$(F'_{i_0})\leq i_0$. An eventual translation on $Y$ may only add a$p$-th power to
 $${V_1}^{pd'_1+\omega(x)} {V_2}^{pd'_1+\omega(x)+pd'_2+a+pd'_3+b-p}(\bar{v} -\bar{\lambda} )^{pd'_3+b}F'_{i_0}(1,\bar{v} -\bar{\lambda} ).$$
 When $i_0\not=0$~mod~$p$ or deg$(F'_{i_0})< i_0$ we get the inequalities (\ref{eq6043}) by Lemma~\ref{lem532}(1). When  $i_0=0$~mod~$p$ and deg$(F'_{i_0})= i_0$, Lemma~\ref{lem532}(1) gives the last assertion of Lemma~\ref{lemC=1}. }

 \smallskip
 {Until the end of the proof, we assume $\Phi_{p-1}\not=0$ at $x_1$.}  We get the inequalities (\ref{eq6043}) provided
$$
d'_1 +(\omega (x)-1)/p \not\in\N  \  \mathrm{or}  \ d'_2+d'_3 + {A_2(x_1)+A_3(x_1)+1 \over p}\not \in \N.
$$
Indeed no $p$-th power may pollute
$$V_1^{pd'_1}V_2^{p(d'_1+d'_2+d'_3)+\omega(x)-p}{V_1}^{\omega(x)-1}V_2^{A_2(x_1)+A_3(x_1)}F''_1(1,\bar{v} -\bar{\lambda})$$

%when $\Phi_p=0$ in  the initial form $H_B$ (\ref{eqcontactmax12}) at $x_1$, an eventual translation on $Y$ to get adapted variables at $x_2$ will add a $p$-th power to $F"_1$ and by lemma~\ref{lem532}~1 with $d=1$ the order in $\bar v$ of ord$_{\bar v}F"_1+ p$-th power$\leq 1$. When $\Phi_p\not=0$, let  $Y \mapsto Y+\theta$ with $\theta$ a solution of a vertex of the face supporting $H_2$, $\theta=\gamma v_1^{d'_1+{\omega(x)-i/p}v_2^{d"_2+iA_2/p}v^w$, $i\geq 1$, $\gamma$ invertible, $w\in \N$ the contribution of $-Y {G''}^{p-1}$ will be, up to an invertible, $v_1^{(p-1)d'_1} v_2^{(p-1)d"_2}v_1^{(p-1){\omega(x)-i\over p}v_1^{d_1+\omega(x)/p}v_2^{d"_2+iA_2/p}v^w$ the exponent in $v_1$ is $pd'_1+{\omega(x)-i\over p}$ which is $ <pd'_1 +\omega(x)-1$: the eventual eventual translation on $Y$ to get adapted variables at $x_2$ will add a $p$-th power to $F"_1$.

% of the he vertex of smallest ordinate of $\Delta(h";v_1,v_2,v;Y)$ with abscissa $>d'_1+{\omega(x) \over p}$.

Under assumption (1),
when this fails to hold, we have (\ref{eq6044}) with $\lambda_2 \neq 0$ and
\begin{equation}\label{eq6045}
  d_1 +(\omega (x)-1)/p \in\N , \  2(d_2+{A_2(x)+1 \over p}) \in \N
\end{equation}
by the above calculations.  {In case $p=2$, by Lemma \ref{lem532}(1), ord$_{\bar v}F_1''+ p$-th power$\leq 1$: we have the inequalities (\ref{eq6043}). When $F\in k(x)$ (ref. line above (\ref{eq6044})),  once again, by Lemma \ref{lem532}(1), ord$_{\bar v}F''_1+ p$-th power$\leq 1$. So we have just to look at the case $p\geq 3$, deg$(F)$=1, the latter implies  $a_i=A_i(x_1)\in \N$, $i=2,3$. } We deduce that $d_2+{A_2(x)+1 \over p} \in \N$, which in turn implies that
$$
U'_2 \in J({U'_2}^{pd_2 +A_2(x_1)}U_3^{pd'_3 +A_2(x_1)}F(U'_2,U_3), \mathrm{div}(u'_2u_3),\frak{m})
$$
with notations as in Lemma \ref{lem532}(1), applying $U_3{\partial \hfill{} \over \partial U_3}$. Then
equality is strict in Lemma \ref{lem532}(1) and the conclusion follows.

\smallskip

Under assumption (2),  note that since $\beta_2(x)=-1$ we necessarily have $F_1(U'_2,U_3)\neq 0$ or
$$
{H}^{-1}G^p =<{U_1}^{\omega (x)-1}U_2^{1+A_2(x)}>.
$$
In the former case, the proof is parallel to that under assumption (1), exchanging the roles of $U'_2, U_3$.
 {In the latter case, as   $e={(1+A_2(x))(p-1)\over p}\not=0$, we conclude from Proposition \ref{eclatpointcas12} (\ref{eq:betax'=0}) with $\Phi_{p-1}\neq 0$.}
\end{proof}

%%%%%%%%%%%%%%%%%%%%%%%%%%%%%%%%%%%%%%%
%%%%%%%%%%%%%%%%%%%%%%%%%%%%%%%%%%%%%

%%%%%%%%%%%%%%%%%%%%%%%%%%%%%%%%%%%%%
%%%%%%%%%%%%%%%%%%%%%%%%%%%%%%%%%%%%%%
%%%%%%%%%%%%%%%%%%%%%%%%%%%%%%%%%%%%%%%
%%%%%%%%%%%%FIN d'insertion de réponse 76

\begin{prop}\label{contactmaxeclatpoint}
Assume that div$(u_1)$ has weak maximal contact for  $\cal C$. Let $\mu$ be a valuation
of $L=k({\cal X})$ centered at $x$ and consider the quadratic sequence (\ref{contactmaxeq1}) along $\mu$,
i.e. with ${\cal Y}_i=\{x_i\}$ for every $i\geq 0$.

\smallskip

If ${\cal C}(x_i)={\cal C}(x)$ for every $i\geq 0$, one of the following is true:

\smallskip

\begin{itemize}
  \item [(i)] $\gamma(x_i)=1$ for every $i>> 0$, or
  \item [(ii)] there exists a formal arc  {(Definition \ref{defformalarc})} $\varphi : \ \mathrm{Spec}{\cal O}\rightarrow ({\cal X},x)$ with
$l|k(x)$ finite algebraic, support $Z:=Z(\varphi)$ with
$$
\eta (Z)\subseteq \mathrm{div}(u_1),
$$
$\eta (Z)$ not an intersection of components of $E$, whose strict transform passes through $x_i$ for every $i\geq 0$.
\end{itemize}
\end{prop}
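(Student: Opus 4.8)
The plan is to run the quadratic sequence along $\mu$ and control the secondary invariant $\gamma(x_i)\in\N$ of Notation~\ref{betagammamaxcontact}, together with the attached real numbers $A_2(x_i),A_3(x_i),B(x_i),C(x_i),\beta(x_i)$. By weak maximal contact one may assume $\mathrm{div}(u_1u_2)\subseteq E$; then the hypothesis ${\cal C}(x_i)={\cal C}(x)$ for all $i$ gives $m(x_i)=p$ and $\omega(x_i)=\omega(x)>0$, and, by Definition~\ref{Maximalcontact}, forces $\eta_i(x_i)$ onto the strict transform of $\mathrm{div}(u_1)$, so that each $x_i$ is in one of the cases $1,2,3$ and each step $x_{i+1}\leftarrow x_i$ falls under the hypotheses of Proposition~\ref{eclatpointcas12}, \ref{eclatpointcas3} or \ref{eclatpointcas3infiny}. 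These give $\gamma(x_{i+1})\le\gamma(x_i)$, so $(\gamma(x_i))_i$ is eventually constant, equal to some $\gamma_0\ge1$; after replacing $({\cal X},x)$ by $({\cal X}_{i_0},x_{i_0})$ for $i_0\gg0$ I would assume $\gamma(x_i)=\gamma_0$ for all $i\ge0$. If $\gamma_0=1$ then alternative (i) holds, and we are done; so the remaining case, treated below, is $\gamma_0\ge2$.

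Assuming $\gamma_0\ge2$, I would show that the sequence rigidifies, by transposing to the present setting the combination argument of \cite{CoP2} chapter~4 via Propositions~\ref{eclatpointcas12}, \ref{eclatpointcas3}, \ref{eclatpointcas3infiny} and Lemma~\ref{lemC=1}. The real invariants $A_2,A_3,B,C,\beta$ lie in $\frac1{p!}\Z$, are bounded, and are monotone along the sequence (up to sign for $\beta$ in case~$3$), hence eventually constant; likewise the case ($1$, $2$ or $3$) of $x_i$ is eventually constant; only finitely many residually non-trivial blowing ups occur, so $l:=k(x_i)$ is a fixed finite algebraic extension of $k(x)$ for $i\gg0$; and, once these data are frozen, the non-solvable initial form polynomials along the relevant faces of $\Delta_2(x_i)$ --- formul{\ae} (\ref{eqcontactmax12}), (\ref{eqcontactmax3*}), (\ref{eq6033}) --- pin down the tangent direction of $x_{i+1}$ inside $\pi_i^{-1}(x_i)\simeq\PP^3_{k(x_i)}$. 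Concretely, after a final truncation one obtains well adapted coordinates at each $x_i$ of the shape $u_1^{(i)}$, $u_j^{(i)}=u_j^{(i-1)}/u^{(i-1)}-\gamma_j^{(i)}$, $Z^{(i)}=Z^{(i-1)}/u^{(i-1)}-\phi^{(i)}$, where $u^{(i-1)}$ is the exceptional parameter of the $i$-th blowing up and $\gamma_j^{(i)},\phi^{(i)}\in S$. This step --- the long case-by-case control of how the centre of $\mu$ can sit in the exceptional $\PP^3$ while keeping ${\cal C}$ and $\gamma$ constant --- is the main obstacle; Lemma~\ref{lemC=1} is exactly what is needed to dispose of the borderline value $\gamma_0=2$ in the one configuration in which a residually rational blowing up could a priori keep $\gamma$ constant without determining the centre.

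Finally, summing the resulting $m_S$-adically convergent series in $\hat{S}$ produces $\hat{\phi}\in\hat{S}$ and a regular parameter $\hat{w}\in\hat{S}$ transverse to $u_1$, hence a nonconstant well parametrized formal arc $\varphi:\mathrm{Spec}{\cal O}\rightarrow({\cal X},x)$ with support $Z(\varphi)=V(Z-\hat{\phi},u_1,\hat{w})$ and residue field $l$ finite algebraic over $k(x)$, whose strict transform passes through every $x_i$ (automatic for $i<i_0$, and the content of the previous paragraph for $i\ge i_0$). By weak maximal contact, $\eta(Z(\varphi))=V(u_1,\hat{w})\subseteq\mathrm{div}(u_1)$. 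It remains to rule out that $\eta(Z(\varphi))$ is an intersection of components of $E$: were it so, it would equal $V(u_1,u_2)$, whose strict transform meets each $\pi_i^{-1}(x_i)$ only at the origin of the $u_3^{(i)}$-chart, and then Proposition~\ref{eclatpointcas3infiny} puts $x_i$ in case~$2$ for $i\ge1$ with $A_2(x_i)$ constant and $\beta(x_i),C(x_i)$ subject to $C(x_{i+1})\le\beta(x_i)-A_2(x_i)-C(x_i)$, which --- by the argument of the previous paragraph, in particular Lemma~\ref{lemC=1} --- is incompatible with $\gamma_0\ge2$. Thus alternative (ii) holds.
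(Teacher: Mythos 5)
Your outline reproduces the correct high--level skeleton (apply weak maximal contact to stay in Propositions~\ref{eclatpointcas12}, \ref{eclatpointcas3}, \ref{eclatpointcas3infiny}; note $\gamma$ is nonincreasing, hence eventually constant $\gamma_0$; dispose of $\gamma_0=1$; then show $\gamma_0\geq 2$ forces rigidification and hence alternative (ii)). But there is a genuine gap in the rigidification step, and it is not a detail: it is the entire substance of the proposition.

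You assert that the secondary invariants $A_2,A_3,B,C,\beta$ ``are monotone along the sequence (up to sign for $\beta$ in case~3), hence eventually constant,'' and conclude that the case and the residue field stabilize. This is not true: none of $A_2,A_3,B,C,\beta$ is monotone under the three blowing up propositions. For instance, Proposition~\ref{eclatpointcas12} gives $A_2(x')=B(x)-1$, which can exceed $A_2(x)$; Proposition~\ref{eclatpointcas3}(1) passing from case~3 to case~1 only gives $\beta(x')\leq\gamma(x)/d+1$, which can be larger than $\beta(x)$; and $C$ is controlled by minima of quantities depending on $\beta$, not by $C$ itself. Boundedness is also unclear for the same reason. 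So the claimed mechanism for stabilization fails. What the paper actually does here is first reduce, via the strict-drop-when-$\gamma\ge 3$ observation, to the case $\gamma(x_i)=2$ for all $i$, and then run a rather delicate case analysis (cases (a) through (d), using Lemma~\ref{lemC=1} to kill the one configuration at $\gamma=2$ where a residually rational blowing up could a priori preserve $\gamma$ while leaving the centre free) to derive a contradiction under the assumption that (ii) fails. You acknowledge ``the main obstacle'' and name the relevant lemmas, but you do not carry the argument out, and the shortcut you propose in its place (monotonicity) is false. The same issue reappears in your closing paragraph, where the exclusion of $\eta(Z(\varphi))=V(u_1,u_2)$ is again reduced to ``the argument of the previous paragraph'' which has not actually been given.

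A secondary point: it is cleaner (and what the paper does) to argue contrapositively from ``(ii) fails'' rather than from ``$\gamma_0\geq 2$.'' Characterizing the failure of (ii) concretely as the recurrence of residue-field extensions or case~2 events is what lets the strict-drop statements in the three propositions bite; your direct version obscures this and is part of why you slide into the unjustified ``hence eventually constant.''
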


\begin{proof}
Note that (ii) fails to hold if and only if: for every $i\geq 0$, there exists $i'>i$ such that either
$k(x_{i'})\neq k(x_i)$ (i.e. some of Proposition \ref{eclatpointcas12}, \ref{eclatpointcas3}
applies to $x_{i'}$ with $d\geq 2$) or $x_{i'}$ is in case 2.

\smallskip

Assume therefore that (ii) does not hold. By Propositions \ref{eclatpointcas12}, \ref{eclatpointcas3}
and \ref{eclatpointcas3infiny}, we have $\gamma (x_{i+1})\leq \gamma (x_i)$ for every $i\geq 0$
and inequality is strict for $i'$ as above if $\gamma (x_{i'})\geq 3$. W.l.o.g. it can be assumed
that
 {$$\gamma (x_i)=2\hbox{ for every }i\geq 0.$$
This implies by (\ref{eq6022}) (\ref{eq6031}):}

 {$\bullet$ when $x_i$ is in case 1: $\lceil {\beta(x_i)} \rceil =2$,  thus $1< \beta(x_i)\leq 2$,}

 {$\bullet$ when $x_i$ is in case 2: $\lfloor C(x_i)\rfloor=1$, thus $1\leq C(x_i)<2$,}

 {$\bullet$ when $x_i$ is in case 3: $\lfloor {\beta(x_i)} \rfloor =2$,  thus $1\leq \beta(x_i)<2$.}

   We now derive a contradiction by studying different cases.

\smallskip

\noindent (a) if $x$ is in case 1 with $\beta (x)<2$, we are done.  {Indeed take the smallest $i'$ as above, by  the last statement of  Proposition~\ref{eclatpointcas12}, $x_{i_0}$ is in case~1 for $i_0\leq i'$,  by Proposition~\ref{eclatpointcas12}(1), $\beta(x_{i'-1})\leq \beta(x)<2$. Either $k(x_{i'})\neq k(x_i)$,  Proposition~\ref{eclatpointcas12} gives $\beta(x_{i'})\leq 1$ when $x_{i'}$ is in case~1, $\beta(x_{i'})<1$  when $x_{i'}$ is in case~3. Or $x_{i'}$ is in case~2 (point at infinity), then $C(x_{i'})<1$ by Proposition~\ref{eclatpointcas3infiny}(1).}

\smallskip

 {Assume that $x$ is in case 1 with $\beta (x)=2$. By Proposition \ref{eclatpointcas12} and Corollary \ref{cor:beta=2}, we obtain
$\beta (x_1)\leq 2$ ($\beta (x_1)< 2$ if $k(x_1)\neq k(x)$) if $x_1$ is again in case 1.
If $x_1$ is in case 3, we get $\beta (x_1)<1$.}% (strict inequalities follow from lemma \ref{lem532}(1)
%in case $p=2$).

\smallskip

Assume that $x$ is in case 3. If Proposition \ref{eclatpointcas3} applies, we obtain $\beta (x_1)\leq \beta (x)$
(with strict equality if $k(x_1)\neq k(x)$) if $x_1$ is again in case 3.
If $x_1$ is in case 1, we get $\beta (x_1)\leq 2$; if furthermore $\beta (x)=1$,  {by corollary\ref{cor:beta=1cas3} the} inequality is
strict unless $x_1$ satisfies the assumptions of Lemma \ref{lemC=1}(1). We deduce:

\smallskip

\noindent (b) if $x$ is in case 3 with $\beta (x)=1$, we are done: this follows from Lemma \ref{lemC=1}
and the previous comments.

\smallskip

\noindent (c) if $x$ is in case 1 with $\beta (x)=2$, we are done: we may assume that
Proposition \ref{eclatpointcas3infiny} applies by the previous comments; we reach (a)(b) or the
assumptions of Lemma \ref{lemC=1}(1) at $x_2$ since it is assumed that $\gamma (x_2)=2$.

\smallskip

\noindent (d) the remaining cases: $x$ is in case 2 or $x$ is  in case 3 with $\beta (x)>1$.

\smallskip

\noindent (d-1) $x$ is in case 2. The result is trivial
if $x_i$ is in case 2 for every $i>>0$.  { Indeed,  let $(\beta_3(x),A_3(x))$ the vertex of smallest ordinate of $\Delta_2(x)$, then at $x'$ the point at infinity, the vertices of smallest abscissa (resp. smallest ordinate) $(A_2(x),\beta+A_2(x)-1)$ resp.   $(B(x)-\beta_2(x),B(x)-1)$: when $C(x)>0$, we get $(\beta(x)-A_3(x))+(\beta_3(x)-A_2(x))>(\beta(x')-A_3(x'))+(\beta_3(x')-A_2(x'))$, by symmetry there is the same inequality when $x'$ is the point of parameters $(Z/u_2,u_1/u_2,u_2,u_3/u_2)$.} Otherwise, note that:   {by Proposition~\ref{eclatpointcas12}}, $C(x_1)\leq C(x)$ if $x_1$ is in case 2; $\beta (x_1)\leq C(x)$ if $x_1$ is in case 3.

\noindent (d-2)   $x$ is  in case 3 with $\beta (x)>1$. Note that $C(x_1)<\beta (x)$ if $x_1$ is in case 2;
by Proposition~\ref{eclatpointcas3}(2), $\beta (x_1)\leq \beta (x)$ if $x_1$ is in case 3  {with strict inequality when $k(x_1)\not=k(x)$. In the case where $x_i$ is in case 3 for $i>>0$, we reach $\beta(x_i)<1$ or $k(x_i)=k(x_{i+1})$ for $i>>0$, in the latter, we are in case (ii) of the proposition.}

 The conclusion follows easily.

\end{proof}

\begin{prop}\label{contactmaxpetitgamma}
Assume that div$(u_1)$ has maximal contact for  $\cal C$ and that $\gamma (x)=1$. Let $\mu$ be valuation
of $L=k({\cal X})$ centered at $x$. There exists a finite and independent composition of local permissible blowing ups
of the first kind:
$$
    ({\cal X},x)=:({\cal X}_0,x_0) \leftarrow ({\cal X}_1,x_1) \leftarrow \cdots \leftarrow ({\cal X}_r,x_r) ,
$$
where $x_i \in {\cal X}_i$ is the center of $\mu$,  such that ${\cal C} (x_r)<{\cal C} (x)$ or $x_r$
is resolved for $m(x)=p$.
\end{prop}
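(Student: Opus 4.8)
The plan is to argue by descending induction on the secondary invariant $B(x)$ (equivalently on $A_2(x)$), exploiting the fact that under the hypothesis $\gamma(x)=1$ the characteristic polygon $\Delta_2(x)$ is so thin that every permissible center of the first kind that one has to blow up is read off canonically from the combinatorial data, which also yields the independence of the sequence. First I would fix well adapted coordinates $(u_1,u_2,u_3;Z)$ at $x$ with $\mathrm{div}(u_1u_2)\subseteq E$ (legitimate since $\mathrm{div}(u_1)$ has maximal contact, {\it cf.} notation \ref{betagammamaxcontact}) and split into the three cases of that notation: case 1 ($\epsilon(x)=\omega(x)$, $E=\mathrm{div}(u_1u_2)$), case 2 ($\epsilon(x)=\omega(x)$, $E=\mathrm{div}(u_1u_2u_3)$), case 3 ($\epsilon(x)=1+\omega(x)$, $E=\mathrm{div}(u_1u_2)$). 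Here $\gamma(x)=1$ means respectively $\beta(x)\le 1$; $C(x)<1$; $\beta(x)\le 0$, so $\Delta_2(x)$ is essentially one-dimensional.

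Next I would specify, case by case, the blowing up to perform. Blowing up the closed point $x$ is always permissible of the first kind (remark after definition \ref{deffirstkind}), and by the maximal contact hypothesis any $x_1\in\pi^{-1}(x)$ with ${\cal C}(x_1)={\cal C}(x)$ lies over the strict transform of $\mathrm{div}(u_1)$, so propositions \ref{eclatpointcas12}, \ref{eclatpointcas3} and \ref{eclatpointcas3infiny} apply in the relevant charts. When the combinatorial data exhibits a curve $\mathcal{Y}=V(Z,u_1,u_j)$ ($j=2$, resp. $j=3$) that is $\epsilon$-constant and contained in $\mathrm{Sing}_p\mathcal{X}$, this $\mathcal{Y}$ is permissible of the first kind (definition \ref{deffirstkind}) with $\eta(\mathcal{Y})$ inside the strict transform of $\mathrm{div}(u_1)$, so condition (1) of the chapter holds; its transformation is governed by proposition \ref{originchart}.

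The core is the bookkeeping, modelled on \cite{CoP2} chapter~4. Along a sequence built as above with ${\cal C}$ constant I would show: (i) $\gamma(x_i)=1$ is preserved — this is essentially contained in propositions \ref{eclatpointcas12}(1)--(2), \ref{eclatpointcas3} and \ref{eclatpointcas3infiny} together with lemma \ref{lemC=1} once $\gamma=1$; and (ii) if $B(x_i)>1$ for every $i$, then the value $B(x_i)$, which lies in a discrete subset of $\mathbb{Q}_{\geq 1}$, is eventually strictly decreasing — a curve blow up of $V(Z,u_1,u_2)$ triggers the drop via proposition \ref{originchart}, while the point–blow-up formulas ($A_2(x')=B(x)-1$, plus the residue-degree estimates of lemma \ref{lem532}(1)(2)) control the chart changes and extensions $k(x')\supsetneq k(x)$. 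Hence after finitely many steps $B(x_r)=1$, so $\Delta_2(x_r)$ collapses to $\{y_2+y_3\ge 1\}$ and $\mathrm{in}_{m_{S_r}}h_r$ becomes a monic form whose monomial computing $\omega$ lies on the initial face; a combinatorial (quadratic/monoidal) sequence along $\mu$, exactly as in theorem \ref{omegazero}, then either produces ${\cal C}(x')<{\cal C}(x)$ or strictly decreases $\delta$, so that $m(x')<p$ after finitely many steps. Independence holds throughout because each center is the unique permissible center of the first kind predicted by the polygon, with $\mu$ used only to select the chart.

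The main obstacle I expect is case 3, and its interaction with case 1 at $\beta(x)=2$ falling under the hypotheses of lemma \ref{lemC=1}: there $\Delta_2(x)$ may carry a vertex of negative third coordinate, and controlling $\gamma$ after an $\infty$-point blow up (proposition \ref{eclatpointcas3infiny}) needs the sharpened estimates $\beta(x')\le 1$, resp. $\le 0$, which in turn force a delicate application of lemma \ref{lem532}(1), including the parity condition $e'/p\notin\mathbb{N}$ peculiar to $p=2$. Making these estimates compatible with permissibility of the first kind — so that $\epsilon$ stays constant and one never slips off the maximal-contact component $\mathrm{div}(u_1)$ — is where the argument is most technical.
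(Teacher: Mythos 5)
Your general strategy is reasonable (curve blow-ups to shrink the polygon, quadratic blow-ups otherwise, independence from combinatorics), but your endgame is wrong. When the polygonal invariants stabilize, you claim that $B(x_r)=1$ forces $\Delta_2(x_r)=\{y_2+y_3\ge 1\}$ and that a combinatorial sequence ``exactly as in theorem~\ref{omegazero}'' strictly decreases $\delta$. Neither is true. First, $B(x)=1$ only means the initial face has slope $-1$ and level $1$; $\Delta_2(x)$ is not forced to coincide with the half-plane, and the point may remain in cases 1--3 with $\omega(x)>0$. Second, theorem~\ref{omegazero} is the resolution of $\omega(x)=0$ and simply does not apply here, where $\omega(x)>0$ is fixed throughout; the reduction of $\delta$ it provides rests entirely on the $\omega=0$ structure and you cannot port it. The correct closing move, which the paper uses, is to invoke proposition~\ref{permisarc}: once the quadratic sequence stabilizes under (\ref{eqcontactmax11}) with $A_2(x_i)<1$, $\beta(x_i)=1$, $k(x_i)=k(x)$, the infinitely near points are supported on a formal arc whose support can be made permissible of the first kind (alternative (1) of proposition~\ref{permisarc}, alternative (2) being handled beforehand), and blowing up that permissible curve drops $\omega$ by proposition~\ref{originchart}.

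A second gap is that you collapse the bookkeeping to a descending induction on $B(x)$ alone. The paper needs to split on whether $d_1+\omega(x)/p<1$ or $\geq 1$, and in those regimes the measure of progress is not $B(x)$ but $(A_2(x),d_3)$, respectively $c'(x)$ (defined as $\beta(x)$ in case 2 and $A_2(x)$ in case 3). The curve $V(Z,u_1,u_j)$ need not be permissible when $d_1+d_j+\omega(x)/p<1$ (cf. (\ref{eq606})), and the Hironaka-permissibility criterion is not just ``$\epsilon$-constant and in $\mathrm{Sing}_p\mathcal{X}$''; it is a genuine numerical condition you have to track. Without a measure that decreases in all three cases (1/2/3), including the case-3 $\infty$-point blow-up where $\beta(x)\le 0$, your induction does not close.

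In short: the shape of your argument matches the paper's in the middle (curve blow-ups reducing $A_2$; maximal contact to confine to the $\mathrm{div}(u_1)$ chart; propositions~\ref{eclatpointcas12}--\ref{eclatpointcas3infiny} to control $\gamma$), but it is missing the correct termination mechanism (proposition~\ref{permisarc}), and the single invariant $B(x)$ is not enough: you need the refined invariants $(A_2,d_3)$ and $c'(x)$ adapted to the two regimes of $d_1+\omega(x)/p$, and a separate treatment of the case $A_2(x)\ge 1$ via the Hironaka-permissible curve $V(Z,u_1,u_2)$.
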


\begin{proof}
We may assume that ${\cal C} (x_i)={\cal C} (x)$ for every $i\geq 1$ for the resolution process to be defined below;
we will either derive a contradiction or prove that $x_r$ is resolved for $m(x)=p$ for some $r\geq 0$.
 {By Propositions~\ref{eclatpointcas12}, \ref{eclatpointcas3} and \ref{eclatpointcas3infiny}, we have $\gamma(x_i)=1$ for all $i\geq 0$.
This implies by (\ref{eq6022}) (\ref{eq6031}):}

 {$\bullet$ when $x_i$ is in case 1: $\lceil {\beta(x_i)} \rceil \leq 1$,  thus $0\leq \beta(x_i)\leq 1$,}

 {$\bullet$ when $x_i$ is in case 2: $\lfloor C(x_i)\rfloor=0$, thus $0\leq C(x_i)<1$,}

 {$\bullet$ when $x_i$ is in case 3: $\lfloor {\beta(x_i)} \rfloor \leq 0$,  thus $C(x_i)\leq \beta(x_i)<1$.}

Suppose that $i\geq 1$ and that
\begin{equation}\label{eqcontactmax11}
A_2(x_{i-1})<1 \ \mathrm{and} \ (x_{i-1} \ \hbox{is in case 2} \Longrightarrow \beta (x_{i-1}) < 1).
\end{equation}

Then we consider the quadratic sequence (\ref{contactmaxeq1}) along $\mu$. In every case, we have
$$
A_2(x_i)\leq A_2(x_{i-1}),
$$
where inequality is strict except if either Proposition \ref{eclatpointcas3infiny} applies,
or ($x_{i-1}$ is in case 1 with $\beta(x_{i-1})=1$). If Proposition
\ref{eclatpointcas3infiny} applies, we have
$$
\beta(x_i)=A_2(x_{i-1}) + \beta(x_{i-1})-1<1.
$$
This proves in particular that (\ref{eqcontactmax11}) holds at $x_{i'}$ for every $i'\geq i$. W.l.o.g.
it can be assumed that,  {when (\ref{eqcontactmax11}) occurs, then $i=1$.}

\smallskip

If $x$ is in case 1 with $\beta (x)=1$ and $k(x_1)\neq k(x)$, then $\beta(x_1)<1$ by
 {Proposition~\ref{eclatpointcas12}}; if Proposition \ref{eclatpointcas3infiny} applies to $x$,
then $\beta (x_1)< \beta (x)$. In other terms, we have
$$
(A_2(x_1),\beta (x_1))< (A_2(x),\beta (x))
$$
for the lexicographical ordering except possibly if $x$ is in case 1 with $\beta (x)=1$
and $k(x_1)=k(x)$. So in the sequence  (\ref{contactmaxeq1}), we may
assume that $x_i$ is in case 1 with
$$
A_2(x_i)=A_2(x)<1, \ \beta (x_i)=\beta (x)=1 , \ k(x_i)=k(x)
$$
for every $i \geq 0$. Applying Proposition \ref{permisarc}, we are done if alternative (2) of this
proposition holds; if alternative (1) holds, it can be assumed that there exists a permissible curve of the first
kind ${\cal Y}=V(Z,u_1,u_3)\subseteq ({\cal X},x)$.  Then $x$ is resolved by blowing up ${\cal Y}$:
in view of Definition \ref{Maximalcontact}, we need only  {to} consider the point
$x':=(Z/u_3,u_1/u_3,u_2,u_3)$ and get $\omega (x')< \omega (x)$ from Proposition \ref{originchart}. This proves the
proposition under the extra assumption (\ref{eqcontactmax11}).\\

We now consider several cases which are proved consecutively:

\smallskip

\noindent (a)  $x$ {\it is in case 1}. We have $A_2(x)\geq 1$ if the extra assumption (\ref{eqcontactmax11}) does not hold.
Let $(u_1,u_2,u_3;Z)$ be well adapted coordinates at $x$ and note that ${\cal Y}:=V(Z,u_1,u_2)$
is a permissible curve of first kind. Blowing up along ${\cal Y}$,  {as div$(u_1)$ has maximal contact for $\cal C$, the only point with $\cal {C}(x_1)=\cal {C}(x)$ is }
$x_1=(Z/u_2,u_1/u_2,u_2,u_3)$, in which case $x_1$ is again in case 1 with
$$
(A_2(x_1),\beta (x_1))=(A_2(x)-1,\beta (x)).
$$
The proof concludes by induction on $A_2(x)$. Before going along with the proof in cases 2 and 3,
we make the following remark:

\begin{rem}\label{remmaxcont}
Assume that $x$ is in case 2 with $A_2(x)\geq 1$. Let $(u_1,u_2,u_3;Z)$ be
well adapted coordinates at $x$ and denote ${\cal Y}:=V(Z,u_1,u_2)$ with generic point $y$.
Since $\epsilon (y)=\epsilon (x)$, ${\cal Y}$ is permissible of the first kind if and only if it is
Hironaka-permissible w.r.t. $E$, i.e. if $m(y)=m(x)=p$. Thus:
\begin{equation}\label{eq606}
{\cal Y} \ \mathrm{is} \ \mathrm{permissible} \ \mathrm{of} \ \mathrm{the}
\ \mathrm{first} \ \mathrm{kind} \Leftrightarrow d_1+d_2 +{\omega (x)\over p}\geq 1.
\end{equation}

Suppose that ${\cal Y}$ is  Hironaka-permissible. Blowing up along ${\cal Y}$ and arguing as in (a), we achieve:
\begin{equation}\label{eq6061}
x_1 \ \mathrm{in} \ \mathrm{case} \ 2, \ A_2(x_1)=A_2(x)-1, \ A_3(x_1)=A_3(x).
\end{equation}
This proves that it can be assumed to begin with that
\begin{equation}\label{eq6062}
A_j(x)<1 \ \mathrm{or} \ d_1+d_j +{\omega (x)\over p}< 1
\end{equation}
for each of $j=2,3$.

\smallskip

Assume that $x$ is in case 2 with $d_1+\omega (x)/p<1$ and $x$ is blown up. If
$x':=x_1$ is in case 3, we have:
$$
F_{p,Z}\in k(x)[U_2,U_3], \ d_1=0 \ \mathrm{and} \  d_2+d_3 +{\omega (x) \over p}\in \N
$$
by Theorem \ref{bupthm}(1). Let $(u'_1:=u_1/u_2,u_2,v';Z')$ be well adapted coordinates at $x'$, so we have
$$
E'=\mathrm{div}(u'_1u_2), \epsilon (x')=1+\omega (x)<p, \ d'_1=0 \ \mathrm{and} \  d'_2\in \N .
$$
Therefore $x'$ is resolved for $m(x)=p$ by blowing up codimension one centers of the form ${\cal Y}':=V(Z',u_2)$.
\end{rem}

\smallskip

\noindent {\it Algorithm:} if $x$ is in case 2 and ${\cal Y}_j:=V(Z,u_1,u_j)$ is permissible for some of $j=2,3$,
blow up along ${\cal Y}_j$,  {in case where both  ${\cal Y}_2$ and ${\cal Y}_3$ are permissible, take $j$ such that div$(u_j)$ is ``younger'' that div$(u_{j'})$ $\{j,j'\}=\{2,3\}$, i.e. let $i_0$ the index such that div $(u_{j'})$ is the strict transform of the exceptional divisor of $({\cal X}_{i_0},x_{i_0}) \leftarrow ({\cal X}_{i_0+1},x_{i_0+1})$, the projection of div$(u_j)$ on ${\cal X}_{i_0}$ is $x_{i_0}$ or a curve}; otherwise blow up along $x$.

\smallskip

We claim that this algorithm succeeds, i.e. produces $x_r$ in case 1, {\it cf.} (a),
or $x_r$ resolved for $m(x)=p$. The proof is different for small values of $\omega (x)$:

\smallskip

\noindent (b) {\it proof when} $d_1 +\omega (x)/p<1$. Let $x$ be in case 2. We may
assume that (\ref{eq6062}) holds.

\smallskip

\noindent (b1)  if $d_1+d_j+\omega(x)/p<1$, $j=2,3$, the algorithm blows up along $x$.
By the above Remark \ref{remmaxcont}, it can be assumed that $x_1=(Z/u_2,u_1/u_2,u_2, u_3/u_2)$ up to renumbering
$u_2,u_3$. We obtain
$$
d'_1=d_1, \ d'_2=d_1+d_2+d_3 +\omega(x)/p-1< d_2, \ d'_3=d_3.
$$
Assumption (b1) is stable by blowing up and can possibly repeat only finitely many times.

\smallskip

\noindent (b2) by the above Remark \ref{remmaxcont}, the algorithm succeeds
or produces an infinite sequence of points in case 2.
By (\ref{eq6061}), any subsequence of blowing ups along curves is finite,  {so, for  every  $r>> 0$, the blowing ups are centered at closed points, by the argument of the proof of Proposition~\ref{contactmaxeclatpoint}~(d-1),  $C(x_r)=0$ for every  $r>> 0$}. Take $r=0$ to begin with and assume w.l.o.g. that $x$ is
blown up. The extra assumption (\ref{eqcontactmax11}) holds if $0\leq A_2(x),A_3(x)<1$.
Up to renumbering $u_2,u_3$, we may furthermore assume by (\ref{eq6062}) that
\begin{equation}\label{eq6064}
(d_1+d_2+{\omega(x)\over p}<1, A_2(x)\geq 1), \ (d_1+d_3+{\omega(x)\over p}\geq 1, A_3(x)< 1).
\end{equation}
Let
\begin{equation}\label{eq6063}
x'_1:=(Z/u_2,u_1/u_2,u_2, u_3/u_2) \ \mathrm{and} \ x''_1:=(Z/u_3,u_1/u_3,u_2/u_3, u_3).
\end{equation}
If $x_1=x'_1$  (resp. $x_1=x''_1$), we have $d'_1=d_1$ and
$$
d'_3=d_3, \ A_3(x_1)=A_3(x), \ A_2(x_1)=A_2(x)+A_3(x)-1<A_2(x)
$$
$$
(\mathrm{resp.}  \ d'_2=d_2, \ A_2(x_1)=A_2(x), \ A_3(x_1)<A_2(x), \ d'_3<d_3).
$$
When $x_1=x''_1$ and the algorithm blows up  along a curve ($A_3(x_1)\geq 1$), note that
$$
d'_1+d'_3+\omega (x)/p-1< d'_3
$$
since $d_1+\omega (x)/p<1$. This proves that any
further blowing up at a closed point either satisfies: some of (b1) or (\ref{eqcontactmax11}),
or satisfies again (\ref{eq6064}) with a smaller value of $(A_2(x),d_3)$ for the lexicographical ordering.
Induction on $(A_2(x),d_3)$ completes the proof for $x$ in case 2
({\it vid.} the same argument in \cite{CoP2} {\bf 1.7.4} on p. 1968).

\smallskip

Let now $x$ be in case 3. We are done unless $x_1$ is again in case 3. Then,  {as $\gamma(x)=1$ ($\Rightarrow  C(x)\leq \beta(x)<1$):}
$$
A_2(x_1)=A_2(x)+ C(x)-1<A_2(x).
$$
Therefore the algorithm reaches (\ref{eqcontactmax11}) after finitely many steps. This completes
the proof of (b).

\smallskip

\noindent (c) {\it proof when} $d_1 +\omega (x)/p\geq 1$. By the above Remark \ref{remmaxcont},
we may assume that $0\leq A_2(x),A_3(x)<1$ to begin with if $x$ is in case 2. If $x$ is in case 2 (resp. in case 3), we let
$$
c'(x):=\beta (x) \  (\mathrm{resp.} \  c'(x):=A_2(x)).
$$
We have $c'(x)\geq 1$ if the extra assumption (\ref{eqcontactmax11}) does not hold.
Applying Propositions \ref{eclatpointcas12}, \ref{eclatpointcas3} and \ref{eclatpointcas3infiny},
we obtain:

\smallskip

\noindent $\bullet$ if $x$ is in case 2 and $x_1=x'_1$ (resp. $x_1=x''_1$), notations of (\ref{eq6063}), then
$$
A_3(x'_1)=A_3(x), \ c'(x'_1)\leq A_2(x)+\beta (x)-1<c'(x)
$$
$$
(\mathrm{resp.} \ A_2(x''_1)=A_2(x),  \ c'(x''_1)=A_2(x)+\beta (x)-1<c'(x)).
$$
Note that blowing up along the curve
$$
{\cal Y}':=V(Z/u_2,u_1/u_2,u_2) \ (\mathrm{resp.} \ {\cal Y}'':=V(Z/u_3,u_1/u_3,u_3))
$$
if $A_2(x'_1)\geq 1$ (resp. if $A_3(x''_1)\geq 1$) does not change $c'(x'_1)$
(resp. does not increase again $c'(x''_1)$).  If $x$ is in case 2 and $x_1$ is in case 3, then
$$
c'(x_1)=A_2(x)+A_3(x)+C(x)-1 \leq A_2(x)+\beta (x)-1 <c'(x).
$$

\noindent $\bullet$ if $x$ is in case 3 and $x_1$ is in case 2 (resp. in case 3), then,  {as $C(x)\leq \beta(x)<1$,}
$$
c'(x_1)= A_2(x)+\beta (x)-1<c'(x) \  (\mathrm{resp.}
\ c'(x_1)=A_2(x)+C(x)-1<c'(x)).
$$

Induction on $c'(x)$ completes the proof.
\end{proof}

\section{Projection Theorem: very transverse case, resolution of $\kappa (x)=2$.}

In this chapter, we prove Theorem \ref{projthm} when $\kappa (x)=2$ (Definition \ref{defkappa}).
This is restated as Theorem \ref{proofkkappa2} at the end of this chapter.

\smallskip

Assume that a valuation $\mu$ of $L=k({\cal X})$ centered at $x$ is given. We consider finite sequences of
local blowing ups along $\mu$:
\begin{equation}\label{eq701}
    ({\cal X},x)=:({\cal X}_0,x_0) \leftarrow ({\cal X}_1,x_1)\leftarrow \cdots \leftarrow ({\cal X}_r,x_r)
\end{equation}
with Hironaka-permissible centers ${\cal Y}_i \subset ({\cal X}_i,x_i)$, where $x_i$,
$0 \leq i \leq r$, denotes the center of $\mu$, see (\ref{eq402}) and following comments. Also
recall the definition of ``resolved" and ``good" (Definition \ref{defgood}) and
Remark \ref{quadsequence} about the logical scheme of the proof of Theorem \ref{projthm}.\\

 {Unfortunetaly, in (\ref{eq701}), it may happen that, for some $x_i$,
$1 \leq i \leq r$, we have $\kappa(x_i)>2$. In the next subsection \ref{sortiekappa=2},  we study points $x_{i}$  such
that $(m(x_i),\omega (x_i))=(m(x_{i-1}),\omega (x_{i-1}))$ and $\kappa (x_{i})>\kappa (x_{i-1})=2$. In fact such $x_i$ will fit the hypotheses of one of the technical lemmas. As a consequence of subsection \ref{sortiekappa=2},  such $x_i$ is always resolved for $(p,\omega (x),2)$. Let us note that the hypotheses of the lemmas are not so restrictive and some other useful cases of points $x$ resolved for $(p,\omega (x),2)$ occur.}

 {In subsection \ref{reductionmonique}, we reduce the problem to a special case (*) (Definition~\ref{*kappadeux}). By Proposition~\ref{redto*}(ii), apart the cases of subsection~\ref{sortiekappa=2}, this case $\kappa(x)=2$ and (*) is stable when  ${\cal Y}_i = x_i$.}

 {The end of the chapter is completely devoted to the resolution of the case $\kappa(x)=2$ and (*).}\\

\noindent {\it Up to the end of this chapter, ``resolved" stands for ``resolved for $(p,\omega (x),2)$''}.\\

 {\subsection{Preliminaries.}\label{sortiekappa=2}}

In this section, we study points $x'$ obtained by performing a permissible blowing up and such
that $(m(x'),\omega (x'))=(m(x),\omega (x))$ and $\kappa (x')>\kappa (x)=2$.

\begin{lem}\label{sortiemonome}
Let $(u_1,u_2 ,u_3;Z)$ be well adapted coordinates at $x$. Assume that $\epsilon (x)=\omega (x)\geq 2$,
$\kappa (x)\geq 2$ and $\mathrm{div}(u_1)\subseteq E$.

Assume furthermore $(d_1,d_2+1/p,d_3+\omega(x)/p)$
is the only vertex $\mathbf{v}=(v_1,v_2,v_3)$ of $\Delta_S(h;u_1,u_2,u_3;Z)$ in the region $v_1=d_1$.

\smallskip

Then $x$ is resolved.
\end{lem}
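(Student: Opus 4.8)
The plan is to use the single-vertex hypothesis to recognise $h$ as a monomial configuration relative to the component $\mathrm{div}(u_1)$ of $E$, and then to force a drop of $(m,\omega)$ (or of $\kappa$) by a polyhedral induction driven by a secondary invariant. First I would normalise: since $\epsilon(x)=\omega(x)$ and $\kappa(x)\ge 2$, Definition~\ref{defomega} rules out $i_0(x)=p-1$, so $i_0(x)=p$, hence $G=0$ and $\mathrm{in}_{m_S}h=Z^p+F_{p,Z}$ (see~(\ref{eq2551})), with $F_{p,Z}$ not a $p$-th power by minimality and, by Proposition~\ref{indiff}(i) applied with $\epsilon(x)=\omega(x)$ (so that the truncation of Remark~\ref{Bempty} is trivial), involving the variables outside $E$ only through $p$-th powers; by Definition~\ref{defkappa} the equality $\epsilon(x)=\omega(x)$ also forces $\kappa(x)\in\{2,4\}$. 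Here ``$x$ is resolved'' means that some finite independent sequence reaches $\iota(x_r)<(p,\omega(x),2)$, i.e. either $(m(x_r),\omega(x_r))<(p,\omega(x))$, or $(m(x_r),\omega(x_r))=(p,\omega(x))$ and $\kappa(x_r)=1$. I would then feed the hypothesis — the face $\{v_1=d_1\}$ of $\Delta:=\Delta_S(h;u_1,u_2,u_3;Z)$ being the single vertex $\mathbf{v}_0:=(d_1,d_2+1/p,d_3+\omega(x)/p)$ together with the translated coordinate quadrant — into Proposition~\ref{Deltaalg} applied to the subsets $J\subseteq\{1,2,3\}$ with $1\in J$: this pins down the $u_1$-leading part of each $f_{i,Z}$, gives $\delta(x)=d_1+d_2+d_3+\omega(x)/p$, places $\mathbf{v}_0$ exactly $1/p$ above the initial face, and identifies the monomial carrying the minimal first exponent as the one at $\mathbf{v}_0$, whose transverseness to $\mathrm{div}(u_1)$ (through $u_3$, say, when $\kappa(x)=2$) is exactly what controls $\omega(x)$.

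Second, I would run an independent sequence of local Hironaka-permissible blowing ups along $\mu$, with centers taken to be the closed point $x_i$ or a permissible curve of the first kind contained in the strict transform of $\mathrm{div}(u_1)$, following the $\kappa(x)=2$ algorithm of~\cite{CoP2} Chapter~2 and the pattern of Theorem~\ref{contactmaxFIN}. Using Proposition~\ref{originchart} to transform the polyhedron and Theorem~\ref{bupthm} to control $(m,\omega,\kappa)$, I would check that on every chart meeting the center of $\mu$ the transform either already has $(m(x'),\omega(x'))<(p,\omega(x))$ (or $\kappa(x')=1$) — and we are done — or is again of the type covered by this lemma, single vertex over its new wall $v_1=d_1'$ and $i_0=p$, but now with a strictly smaller value of the secondary invariant attached to the projected polygon $\Delta_2$ of Notation~\ref{betagammamaxcontact} (the relevant one among $A_2$, $\beta$ and $\gamma$, depending on how many components of $E$ pass through the point). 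The local bookkeeping reduces, via the stereographic projection $\mathbf{p}_2$ of Notation~\ref{betagammamaxcontact}, to the two-variable Lemma~\ref{lem532}. Since that invariant lies in a discrete well-ordered set the sequence terminates, and since every center is read off from the characteristic polyhedron and not from $\mu$ the sequence is independent in the sense of Definition~\ref{indepseq}.

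The main difficulty is twofold. First, the behaviour under a non-trivial residue extension $k(x_i')/k(x_i)$, which occurs precisely in the ``constant-derivative'' incarnation of $\kappa=2$: here one first reduces to the equiresidual case by the smooth base change argument of Theorem~\ref{omegageomreg} (cf. Remark~\ref{stricthenselian}), and only then does Lemma~\ref{lem532} deliver the strict drop. Second — this is phenomenon~(iii) of the introduction — one must prevent $\iota$ from stabilising at $(p,\omega(x),4)>\iota(x)$ forever; this is excluded because the single-vertex structure, and with it the transverseness of the directrix to the strict transform of $\mathrm{div}(u_1)$, persists under the chosen blowing ups, so $\kappa$ can rise only transiently while the secondary invariant keeps decreasing until $(m,\omega)$ drops. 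The three boundary sub-cases — $\mathrm{div}(u_1)$, $\mathrm{div}(u_1u_2)$ or $\mathrm{div}(u_1u_2u_3)$ being the divisor through $x$ — are treated uniformly, with $d_2$ or $d_3$ set to $0$ following the conventions of Notation~\ref{betagammamaxcontact}.
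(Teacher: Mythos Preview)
Your overall shape---a polyhedral induction with a secondary invariant---is right, but the specific machinery you invoke does not fit the hypothesis and there is a concrete obstruction in one sub-case.

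First, the mismatch with the chapter~6 tools. The projection $\mathbf{p}_2$ of Notation~\ref{betagammamaxcontact} is centred at $(d_1+\omega(x)/p,d_2,d_3)$, the vertex carrying the monomial $U_1^{\omega(x)}$; that vertex is what makes $\mathrm{div}(u_1)$ a candidate for maximal contact in the sense of Definition~\ref{Maximalcontact}. The hypothesis of the lemma gives you the \emph{opposite} vertex, namely $(d_1,d_2+1/p,d_3+\omega(x)/p)$ on the face $v_1=d_1$, carrying a monomial of the form $U_2U_3^{\omega(x)}$. Nothing in the hypothesis places $(d_1+\omega(x)/p,d_2,d_3)$ in $\Delta$, so the invariants $A_j,B,C,\beta,\gamma$ of Notation~\ref{betagammamaxcontact} and the conclusion of Theorem~\ref{contactmaxFIN} are not available. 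The paper accordingly does not attempt to use that machinery here; instead it tracks the curve ${\cal Y}=V(Z,u_1,u_3)$ directly: if ${\cal Y}$ is permissible of the first kind one blows it up and reads off $\epsilon(x')\le 1$ at the unique surviving point; if not, one blows up $x$, the single-vertex hypothesis persists along the strict transform of ${\cal Y}$, and Proposition~\ref{permisarc} forces termination. Points leaving that strict transform but staying on $\mathrm{div}(u_1)$ are handled by an explicit computation (a solvable-vertex analysis).

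Second, your claim that only closed points and first-kind curves suffice fails in one place, and this is not a detail. When $x_1=(Z/u_3,u_1/u_3,u_2/u_3,u_3)$ with $\omega(x)=2$ and $E=\mathrm{div}(u_1u_2u_3)$, one obtains $\epsilon(x_1)=2$ and, after the reduction via Lemma~\ref{joyeux}, a configuration of the form~(\ref{eq704}) with $p\ge 3$ and the congruences~(\ref{eq705}). The paper then blows up the curve ${\cal Y}_1=V(Z',u_1',u_3')$, which is Hironaka-permissible with $\epsilon(y_1)=1$, hence \emph{not} permissible of the first kind (nor of the second). Remark~\ref{rempermkappa2} records precisely that the proof of this lemma for $\omega(x)=2$ and $p\ge 3$ uses a centre outside the first-kind class. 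Your sketch, which restricts to first-kind curves ``following the pattern of Theorem~\ref{contactmaxFIN}'', therefore cannot close this case without a separate argument, and the assertion that the single-vertex structure and the transverse directrix persist under your blow-ups does not survive this point.
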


\begin{proof}
Since $\kappa (x)\geq 2$, there is an expansion
$$
\mathrm{in}_{m_S}h=Z^p + F_{p,Z}, \ H^{-1}F_{p,Z}\subseteq k(x)[U_1, U_2,U_3]_{\omega (x)}.
$$
Any vertex of $\Delta_S(h;u_1,u_2,u_3;Z)\cap \{\mathbf{x} : x_1+x_2+x_3=\delta (x)\}$ lies in the region
$v_1>d_1$ by assumption and we deduce that $U_1 \in \mathrm{Vdir}(x)$. Let
$$
\mathrm{in}_\mathbf{v}h=Z^p +\sum_{i=1}^pF_{i,\mathbf{v}} Z^{p-i}\in k(x)[U_1,U_2,U_3][Z]
$$
be the initial form polynomial with respect to $\mathbf{v}$. By Theorem \ref{initform}, we have
$F_{i,\mathbf{v}}=0$, $1 \leq i \leq p-2$, and $F_{p-1,\mathbf{v}}=-G_\mathbf{v}^{p-1}$ since $\epsilon (x)>0$.
Moreover $G_\mathbf{v}^{p-1}\neq 0$ implies that
\begin{equation}\label{eq702}
    \mathbf{v}\in \N^3, \ E=\mathrm{div}(u_1u_2u_3) \ \mathrm{and}
    \ (\mathrm{Disc}_Z(h))=(u_1^{pd_1}u_2^{pd_2+1}u_3^{pd_3+\epsilon (x)})^{p-1}.
\end{equation}

Let ${\cal Y}:=V(Z,u_1,u_3)\subset {\cal X}$ and $y\in {\cal X}$ be its generic point. If ${\cal Y}$ is permissible
of the first kind, i.e. $m(y)=p$ and $\epsilon (y)=\epsilon (x)$,
we take ${\cal Y}_0:={\cal Y}$ in (\ref{eq701}). By Theorem \ref{bupthm}, we have
$\iota (x_1)\leq (p,\omega (x),1)$ unless
$$
\mathrm{Vdir}(x)=<U_1> \ \mathrm{and} \ x_1=x':=(Z':=Z/u_3,u'_1:=u_1/u_3,u_2,u_3).
$$
By Proposition \ref{originchart}, $\Delta_{S'}(h';u'_1,u_2,u_3;Z')$ is minimal, and we deduce that
$H(x')=({u'_1}^{pd_1}u_2^{pd_2+1}u_3^{p(d_1+d_3-1)+\epsilon (x)})$ and $\mathbf{v}':=(d_1,d_2+1/p, d_1+d_3-1+\epsilon (x)/p)$
is a vertex of $\Delta_{S'}(h';u'_1,u_2,u_3;Z')$. Therefore $\omega (x')\leq \epsilon (x')=1$ and the lemma holds. \\

Assume now that ${\cal Y}$ is not permissible of the first kind. We take ${\cal Y}_0:=\{x\}$ in (\ref{eq701}).
If $\iota (x_1)\geq (p,\omega (x), 2)$, $x_1$ belongs to the strict transform of $\mathrm{div}(u_1)$
by Theorem \ref{bupthm}.

If $x_1=x':=(Z':=Z/u_2,u'_1:=u_1/u_2,u_2,u'_3:=u_3/u_2)$ is the point on the strict transform of ${\cal Y}$,
then $\Delta_{S'}(h';u'_1,u_2,u'_3;Z')$ is minimal by Proposition \ref{originchart} and we deduce as above that
$H(x')=({u'_1}^{pd_1}u_2^{p(d_1+d_2 +d_3-1)+\epsilon (x)}{u'_3}^{pd_3})$ and
$\mathbf{v}':=(d_1,d_1+d_2 +d_3-1+(1+\epsilon (x))/p, d_3+\epsilon (x)/p)$ is the only vertex of $\Delta_{S'}(h';u'_1,u_2,u_3;Z')$
in the region $v'_1=d_1$. Since $\epsilon (x')\leq \epsilon (x)=\omega (x)$, we deduce that
$x_1$ satisfies again the assumptions of the lemma if $\iota (x_1)\geq (p,\omega (x),2)$.

\smallskip

The conclusion of Proposition \ref{permisarc}(2.b) is not satisfied by the formal arc $\hat{{\cal Y}}\rightarrow {\cal X}$.
Iterating, we deduce from Proposition \ref{permisarc}(1) that one of the following three properties is
satisfied for some $r \geq 1$:
\begin{itemize}
  \item [(1)] $\iota (x_r)\leq (p,\omega (x),1)$;
  \item [(2)] $x_r$ belongs to the strict transform ${\cal Y}_r$ of ${\cal Y}$
in ${\cal X}_r$ and ${\cal Y}_r$ is permissible of the first kind at $x_r$, or
  \item [(3)] $x_r$ does not belong to ${\cal Y}_r$.
\end{itemize}

The lemma holds when (1) is satisfied; it has been proved above that the lemma also holds when (2)
is satisfied. If (3) is satisfied, it can be assumed w.l.o.g. that $r=1$. We claim that $x_1$
satisfies the conclusion of the lemma if $x_1\neq (Z/u_3,u_1/u_3,u_2/u_3,u_3)$.

\smallskip

To prove the claim, first note that there exists a unitary polynomial $P(t)\in S[t]$, whose reduction $\overline{P}(t)\in k(x)[t]$
is irreducible, $\overline{P}(t) \neq t$, and $x_1=(X':=Z/u_3,u'_1:=u_1/u_3, u'_2:=u_3, u'_3:=P(u_2/u_3))$. We then
denote $S':=S[u_1/u_3,u_2/u_3]_{(u'_1,u'_2,u'_3)}$ and
$$
h'=X'^p +\sum_{i=1}^pf_{i,X'}X'^{p-i} \in S'[X'],  \ E'=\mathrm{div}(u'_1u'_2).
$$
We have $H(x')=({u'_1}^{pd_1}{u'_2}^{p(d_1+d_2 + d_3-1)+\epsilon (x)})$ by Proposition \ref{bupformula}(iv)
and
$$
\mathbf{v}':=(d_1, d'_2:=d_1+d_2 +d_3-1+(1 + \epsilon (x))/p, 0)
$$
is a vertex of $\Delta_{S'}(h';u'_1,u'_2,u'_3;X')$.

\smallskip

If $\mathbf{v}'$ is not solvable (in particular if $G_\mathbf{v}\neq 0$, see (\ref{eq702}) above),
we deduce that $\omega (x')\leq \epsilon (x')=1$ and the lemma holds.

If $\mathbf{v}'$ is solvable, we had
$$
\mathrm{in}_{\mathbf{v}}h= Z^p +\lambda U_1^{pd_1}U_2^{pd_2+1}U_3^{pd_3+\epsilon (x)},
\ \lambda \in k(x), \ \lambda \neq 0
$$
to begin with. Let $\sigma ' \subset \Delta_{S'}(h';u'_1,u'_2,u'_3;X')$ be the (noncompact) face with equations
$v'_1=d_1$, $v'_2=d'_2$. The initial form polynomial corresponding to $\sigma '$ is
$$
\mathrm{in}_{\sigma '}h'={X'}^p +\lambda \overline{\left ({u_2 \over u_3}\right )^{pd_2+1}}
{U'_1}^{pd_1}{U'_2}^{pd'_2},
$$
where $\overline{\theta}$ denotes the image in $S'/(u'_1,u'_2)$ of $\theta \in S'$.
Let $\mu \in k(x')$ be the residue of $u_2/u_3$. Since $\mathbf{v}'$ is solvable, we have:
\begin{equation}\label{eq703}
    (d_1, d'_2)\in \N^2, \ \lambda \mu^{pd_2+1}  \in k(x')^p.
\end{equation}
Take  $Z':=X'-\varphi '$, $\varphi '\in S'$ such that $\Delta_{S'} (h';u'_1,u'_2,u'_3;Z')$ is minimal.
We have $\varphi '=\gamma '{u'_1}^{d_1}{u'_2}^{d'_2}$, where $\gamma ' \in S'$ is a preimage of
$(\lambda \mu^{pd_2+1})^{1/p}\in k(x')$. By (\ref{eq703}),
$\lambda U_2^{pd_2+1}$ is not a $p^\mathrm{th}$-power, since $\mathbf{v}$ was not a solvable vertex.
We deduce that
$$
\lambda \overline{\left ({u_2 \over u_3}\right )^{pd_2+1}}+\overline{\gamma '}^p \in S'/(u'_1,u'_2)
$$
is a regular parameter. Therefore $\mathbf{v}'_1:=(d_1,d'_2 +1/p,1/p)$ is a vertex of
$\Delta_{S'} (h';u'_1,u'_2,u'_3;Z')$ and this proves that $\omega (x')\leq 1$. This concludes the proof
of the claim.\\

To conclude, take $x_1= x':=(Z':=Z/u_3,u'_1:=u_1/u_3,u'_2:=u_2/u_3,u'_3:=u_3)$.
Since $\Delta_{S'}(h';u'_1,u'_2,u'_3;Z')$ is minimal by Proposition \ref{originchart}, we deduce as before that
$H(x')=({u'_1}^{pd_1}{u'_2}^{pd_2}{u'_3}^{pd'_3})$ and
$\mathbf{v}':=(d_1,d_2+1/p,d'_3+1/p)$ is the only vertex of $\Delta_{S'}(h';u'_1,u'_2,u'_3;Z')$
in the region $v'_1=d_1$, where $d'_3:=d_1+d_2 +d_3-1+\epsilon (x)/p$. Therefore $\epsilon (x')\leq 2$
and we are done unless $\omega (x)=2$,
$\iota (x')\geq (p,2,2)$ and $E=\mathrm{div}(u_1u_2u_3)$, which we assume from now on.

\smallskip

We have $E'=\mathrm{div}(u'_1u'_2u'_3)$ and the initial form polynomial has an expansion
$$
\mathrm{in}_{m_{S'}}h'={Z'}^p +F_{p,Z'}.
$$
with ${H'}^{-1}F_{p,Z'}=U'_1 (\lambda_1U'_1+\lambda_2U'_2+\lambda_3U'_3)+\mu U'_2U'_3$. The assumptions imply that
$\mu H' U'_2U'_3 \not \in G(m_{S'})^p$ and $(\lambda_1,\lambda_2)\neq (0,0)$. Moreover, we have
$$
\lambda_j \neq 0 \Longrightarrow \lambda_jH'U'_1U'_j \not \in G(m_{S'})^p, \ 1 \leq j \leq 3.
$$
If $\tau (x_1)=3$, we take ${\cal Y}_1:=\{x_1\}$ in (\ref{eq701}) and obtain $\iota (x_2)\leq (p,2,1)$.
We conclude by analyzing the cases $\tau (x_1)\leq 2$. By \cite{CoP2} {\bf II.1.5} p.1888, this implies
that $\lambda_1=0$. Therefore $\lambda_2\neq 0$ and we get
%since $(\lambda_1,\lambda_2)\neq (0,0)$.
%It is then obvious that $\tau (x_1)\leq 2$ implies that $\lambda_3=0$ and we get
$$
\mathrm{Vdir}(x')=<U'_2,U'_3+\mu_2U'_1>, \ \mu_2  \neq 0.
$$
%\begin{equation}\label{eq704}
%    {H'}^{-1}F_{p,Z'}=U'_2(\lambda_2U'_1+\mu U'_3), \ \lambda_2 \mu \neq 0.
%\end{equation}
By Lemma \ref{joyeux} below with $(i, \omega)=(1,2)$, we have $p\geq 3$ and
\begin{equation}\label{eq705}
    d_2+1/p \in \N, \ d_1, d'_3 \not \in \N \ \mathrm{and} \ \widehat{pd_1}+\widehat{pd'_3}+1=p.
\end{equation}

Let ${\cal Y}_1:=(Z',u'_1,u'_3)$, $y_1\in {\cal X}_1$ be its generic point
and $W_1:=(u'_1,u'_3)$. For $i$, $1 \leq i \leq p-1$, consider a finite monomial expansion (\ref{eq2036}):
$$
f_{i,Z}=\sum_{\mathbf{a}\in \mathbf{S}(f_{i,Z})}\gamma(\mathbf{a}) u_1^{ia_1}u_2^{ia_2}u_3^{ia_3}\in S,
 \   \mathbf{S}(f_{i,Z}) \subset \Delta_S (h;u_1,u_2,u_3;Z).
$$
The polyhedron assumption on $h$ gives
$$
\mathbf{a}\in \mathbf{S}(f_{i,Z}) \Longrightarrow (a_1 \geq d_1 , \  a_2+a_3 \geq d_2+d_3 +{3\over p})
$$
and that at least one of these inequalities is strict. Now
$f_{i,Z'}=u_3^{-i}f_{i,Z}$ and one deduces that
\begin{equation}\label{eq706}
    {\mathrm{ord}_{W_1}f_{i,Z'} \over i} =\min_{\mathbf{a}\in \mathbf{S}(f_{i,Z})}\{2a_1 +a_2+a_3 -1\}> d_1 +d'_3.
\end{equation}
By (\ref{eq705}), we have $i (d_1 +d'_3 +1)\in \N$, so (\ref{eq706}) actually implies that
$$
{\mathrm{ord}_{W_1}f_{i,Z'} \over i}\geq d_1 +d'_3 +1,
$$
since $1 \leq i \leq p-1$ and $\mathrm{ord}_{W_1}f_{i,Z'} \in \N$.
On the other hand, $\mathrm{ord}_{W_1}f_{p,Z'}=p(d_1+d'_3)+1 \geq p$
and we deduce that ${\cal Y}_1$ is Hironaka-permissible w.r.t. $E'$ with $\epsilon (y_1)=1$.
Arguing as before, on gets $\omega (x_2)\leq \epsilon (x_2)\leq 1$ (resp. $\iota (x_2)\leq (p,2,1)$)
if the residue $\mu ' \in k(x_2)$ of $u'_3/u'_1$  does not satisfy (resp. satisfies) $\mu '+\mu_2=0$.
This concludes the proof.
\end{proof}

The following lemma extends the previous result when $\omega (x)=1$.

\begin{lem}\label{sortieomegaun}
Lemma \ref{sortiemonome} remains valid when $\epsilon (x)=\omega (x)=1$
and $\mathrm{div}(u_1)\subseteq E \subseteq \mathrm{div}(u_1u_2)$,
all other assumptions being otherwise unchanged.
\end{lem}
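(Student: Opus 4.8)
The plan is to reduce the statement to Lemma \ref{sortiemonome} by re-examining the various cases in its proof and checking that the extra numerical freedom afforded by $\omega(x)=1$, $\epsilon(x)=1$ makes the argument work identically or even more easily. First I would set up the same notation: well adapted coordinates $(u_1,u_2,u_3;Z)$ at $x$ with $\mathrm{div}(u_1)\subseteq E$, the unique vertex hypothesis $\mathbf{v}_0=(d_1,d_2+1/p,d_3+1/p)$ in the region $v_1=d_1$ (here $\omega(x)/p=1/p$), and the initial form $\mathrm{in}_{m_S}h=Z^p+F_{p,Z}$ with $H^{-1}F_{p,Z}\subseteq k(x)[U_1,U_2,U_3]_1$ of degree one, since $\kappa(x)\geq 2$ forces $i_0(x)=p$ and $\omega(x)=\epsilon(x)$. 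As in Lemma \ref{sortiemonome}, the unique-vertex hypothesis forces $U_1\in\mathrm{Vdir}(x)$, so writing $H^{-1}F_{p,Z}=\langle\lambda_1U_1+\lambda_2U_2+\lambda_3U_3\rangle$ we must have $(\lambda_2,\lambda_3)\neq(0,0)$ in fact $U_1$ cannot be the whole directrix unless $F_{p,Z}\in HU_1\cdot(\text{degree }0)$, which is excluded because $\mathbf{v}_0$ is nonsolvable; so $\mathrm{Vdir}(x)=\langle U_1\rangle$ exactly when $(\lambda_2,\lambda_3)=(0,0)$, which does not occur here, hence $\tau'(x)=2$ or the directrix contains a transverse variable.

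Next I would run the blowing-up dichotomy exactly as in Lemma \ref{sortiemonome}: consider ${\cal Y}:=V(Z,u_1,u_3)$ with generic point $y$. If ${\cal Y}$ is permissible of the first kind, blow it up; by Theorem \ref{bupthm} we get $\iota(x_1)\leq(p,1,1)$ unless $\mathrm{Vdir}(x)=\langle U_1\rangle$ and $x_1=(Z/u_3,u_1/u_3,u_2,u_3)$, and in that exceptional case proposition \ref{originchart} shows $\Delta_{S'}(h';u'_1,u_2,u_3;Z')$ is minimal with $H(x')=({u'_1}^{pd_1}u_2^{pd_2+1}u_3^{p(d_1+d_3-1)+1})$ and a vertex $(d_1,d_2+1/p,d_1+d_3-1+1/p)$, giving $\omega(x')\leq\epsilon(x')=1$ — but since ``resolved'' means resolved for $(p,1,2)$, and $\omega(x')\leq 1$ with $\epsilon(x')\leq 1$, we actually land at $\iota(x')\leq(p,1,\kappa(x'))$; if $\kappa(x')\geq 2$ we still need an extra step, and here I would invoke Lemma \ref{sortieomegaun} itself recursively only if the unique-vertex hypothesis persists, or else observe that $\epsilon(x')=1$ with $\omega(x')=1$ and $\kappa(x')=2$ is already the target value $(p,\omega(x),2)$ is NOT strictly decreased — so I must be careful: the target is $\iota(x_r)<(p,\omega(x),2)$, i.e. either $m(x_r)<p$, or $\omega(x_r)=0$, or ($\omega(x_r)=1$ and $\kappa(x_r)=1$). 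Thus whenever a blown-up point has $\iota\geq(p,1,2)$ and still satisfies the unique-vertex hypothesis, I continue; the finiteness comes from proposition \ref{permisarc} applied to the formal arc $\hat{{\cal Y}}\to{\cal X}$, exactly as in Lemma \ref{sortiemonome}, with the key point that alternative (2.b) of \ref{permisarc} cannot occur for this arc because $\epsilon(x)=1$ (so $\omega(\tilde x_r)=\epsilon(x_{r_0})-1=0$ in the second-kind reduction, which immediately resolves).

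If ${\cal Y}$ is not permissible of the first kind, blow up $\{x\}$; by Theorem \ref{bupthm}, a point $x_1$ with $\iota(x_1)\geq(p,1,2)$ lies on the strict transform of $\mathrm{div}(u_1)$, and the three chart computations go through verbatim with $\omega(x)=1$: the point on the strict transform of ${\cal Y}$ reproduces the unique-vertex hypothesis with $\epsilon(x_1)\leq 1$; the point $(Z/u_3,u_1/u_3,u_2/u_3,u_3)$ gives $H(x')=({u'_1}^{pd_1}{u'_2}^{pd_2}{u'_3}^{pd'_3})$ with $d'_3=d_1+d_2+d_3-1+1/p$ and unique vertex $(d_1,d_2+1/p,d'_3+1/p)$, so $\epsilon(x')\leq 2$ — and here the sub-case $\omega(x)=2$ of Lemma \ref{sortiemonome} is vacuous, so we directly get $\epsilon(x')\leq 2$ but need $\omega(x')\leq 1$: if $\epsilon(x')=2$ then one checks $\omega(x')=\epsilon(x')-1=1$ from the shape of $F_{p,Z'}$, and $\kappa(x')$ can be $2$, so we iterate; the remaining chart with an inseparable residue extension via $\overline{P}(t)$ is handled by the same solvability-of-$\mathbf{v}'$ analysis as in \ref{sortiemonome}, yielding $\omega(x')\leq 1$. \textbf{The main obstacle} I anticipate is bookkeeping the possibility that after a point blow-up one lands repeatedly at points with $\iota=(p,1,2)$ and the unique-vertex hypothesis preserved but $\epsilon$ oscillating between $1$ and $2$; the resolution is that along the quadratic sequence the relevant secondary data (the analogue of $A_2(x)$, or simply $d_1$ together with the combinatorial position of the single vertex) strictly decreases, or else proposition \ref{permisarc}(1) eventually forces ${\cal Y}_r$ permissible of the first kind or $x_r\notin{\cal Y}_r$, at which point the earlier cases apply and terminate. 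Since $\epsilon(x)=1$ everywhere bounds the second-kind reductions to immediately produce $\omega=0$, the induction closes.
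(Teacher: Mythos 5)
Your proposal goes wrong at the very first structural step, and that error propagates into the choice of center and the whole iteration scheme. You write that the unique-vertex hypothesis forces $(\lambda_2,\lambda_3)\neq(0,0)$ in $H^{-1}F_{p,Z}=\langle\lambda_1U_1+\lambda_2U_2+\lambda_3U_3\rangle$; this is backwards. With $\omega(x)=\epsilon(x)=1$ the form $H^{-1}F_{p,Z}$ is a degree-one polynomial, and if $\lambda_2\neq 0$ (resp.\ $\lambda_3\neq 0$) the polyhedron $\Delta_S(h;u_1,u_2,u_3;Z)$ contains the point $(d_1,d_2+1/p,0)$ (resp.\ $(d_1,d_2,1/p)$) on the initial face $|\mathbf{v}|=\delta(x)$, which lies in the region $v_1=d_1$ and strictly dominates $\mathbf{v}_0=(d_1,d_2+1/p,1/p)$ in the partial order, so $\mathbf{v}_0$ would not even be a vertex, contradicting the uniqueness hypothesis. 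Hence $\lambda_2=\lambda_3=0$, $H^{-1}F_{p,Z}=\langle U_1\rangle$, $\mathrm{Vdir}(x)=\langle U_1\rangle$ and $\tau'(x)=1$. (Your appeal to ``$\mathbf{v}_0$ nonsolvable'' is a red herring: $\mathbf{v}_0$ has $v_3=1/p\notin\mathbb{N}$, so it is automatically nonsolvable and this says nothing about the shape of $F_{p,Z}$; you are conflating the vertex $\mathbf{v}_0$, which lies strictly above the initial face since $|\mathbf{v}_0|=\delta(x)+1/p$, with terms of the initial form.)

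This wrong reading of $F_{p,Z}$ leads you to carry over the center ${\cal Y}=V(Z,u_1,u_3)$ from Lemma \ref{sortiemonome} verbatim, but that is not the right center here. Because $E\subseteq\mathrm{div}(u_1u_2)$, the variable $u_3$ is transverse to $E$, and the projected slope $\delta(y)=d_1+1/p$ for $J=\{1,3\}$ is not guaranteed to be $\geq 1$, so $V(Z,u_1,u_3)$ need not be Hironaka-permissible at all. The paper instead takes ${\cal Y}=V(Z,u_1,u_2)$, whose projected slope is $\delta(y)=d_1+d_2+1/p=\delta(x)\geq 1$, and this is permissible of the first kind with $\epsilon(y)=\epsilon(x)=1$. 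After blowing up this curve, since $\mathrm{Vdir}(x)=\langle U_1\rangle$ the projective cone $PC(x,{\cal Y})$ is a single point, so by Theorem \ref{bupthm} the only way $\iota$ fails to drop is at the origin $x'$ of the second chart $u'_1=u_1/u_2$; there one computes via Proposition \ref{originchart} that the transformed polyhedron is still minimal, that $\epsilon(x')=1$, and crucially that $H(x')^{-1}F_{p,Z'}$ now has a genuine $U_3$-term (coming from the transform of $\mathbf{v}_0$), so $V(F_{p,Z'},E',m_{S'})\neq 0$ and $\omega(x')=0$. That is the entire proof: a single permissible blowing-up, no iteration, no invariant to control. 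Your plan instead anticipates ``obstacles'' and oscillations of $\epsilon$ between $1$ and $2$ and a vague descent of ``the analogue of $A_2(x)$,'' none of which actually materialize; these are artifacts of having started from the wrong description of $\mathrm{Vdir}(x)$ and the wrong blowing-up center.
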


\begin{proof}
Let ${\cal Y}:=V(Z,u_1,u_2)\subset {\cal X}$ and $y$ be its generic point. Arguing
as in (\ref{eq702}) above, any vertex of $\Delta_S (h;u_1,u_2,u_3;Z)$ is induced by $f_{p,Z}$. By
Proposition \ref{Deltaalg}, we have $\delta (y)=d_1+d_2+{1 \over p}=\delta (x)$, since $H^{-1}F_{p,Z}=<U_1>$.
Then Proposition \ref{deltainv}(ii) implies that
$$
(m(y'),\epsilon (y'))=(m(x'),\epsilon (x'))=(p,1).
$$
Therefore ${\cal Y}$ is permissible of the first kind and we take ${\cal Y}_0:={\cal Y}$ in (\ref{eq701}).
By Theorem \ref{bupthm}, we have $\iota (x_1)\leq (p,\omega (x),1)$ unless
$$
x_1=x':=(Z':=Z/u_2,u'_1:=u_1/u_2,u_2,u_3), \ E':=\mathrm{div}(u'_1u_2).
$$
By Proposition \ref{originchart}, $\Delta_{S'}(h';u'_1,u_2,u_3;Z')$ is minimal. We deduce that
$H(x')=({u'_1}^{pd_1}u_2^{p(d_1+d_2-1)+1})$ and $\mathbf{v}':=(d_1,d_1+d_2-1+1/p, 1/p)$
is a vertex of $\Delta_{S'}(h';u'_1,u_2,u_3;Z')$. Therefore $(m(x'),\omega (x'))\leq (p,0)$ and the lemma holds.
\end{proof}

Given an integer $\alpha \in \N$, we denote by $\widehat{\alpha}\in \{0, \ldots , p-1\}$
the remainder of  $\alpha$ modulo $p$. The following elementary lemma is useful.

\bigskip
\begin{lem}\label{joyeux}
Let $(i,\omega )\in \N^2$ satisfy $0<i<\omega $ and $F_0\in k(x)[U_1,U_2]_i$, $F_0\neq 0$.
Take  {$\mathrm{div}(u_1u_2)\subset E \subset \mathrm{div}(u_1u_2u_3)$} and let
$$
(a(1),a(2),a(3))\in \N^3, \ H:=U_1^{a(1)}U_2^{a(2)}U_3^{a(3)}\in G(m_S)=k(x)[U_1,U_2,U_3],
$$
 {with the usual convention: $a(3):=0$ when  $\mathrm{div}(u_1u_2)= E$.}
We define $F:=HU_3^{\omega -i}F_0$; assume that $F\not \in G(m_S)^p$ and that
$$
<U_3,U_j> \nsubseteq \mathrm{Vdir}(J(F,E,m_S)) \  \mathrm{for} \ j=1 \ \mathrm{and} \ j=2.
$$
Then
\begin{equation}\label{eq707}
    \mathrm{Vdir}(J(F,E,m_S))=<U_3, U_1 +\lambda U_2>, \ \lambda \neq 0,
\end{equation}
and the following holds:
\begin{itemize}
  \item [(i)] if $i\equiv 0 \ \mathrm{mod}p$, there exists $0\neq c \in k(x)$ such that
$$
F - c H U_3^{\omega -i}(U_1 + \lambda U_2)^i \in G(m_S)^p;
$$
  \item [(ii)] if $i\not \equiv 0 \ \mathrm{mod}p$, let $a_j:=\widehat{a(j)}$, $1 \leq j \leq 3$,
  and $a:=\widehat{i}\neq 0$. Then:
\begin{equation}\label{eq7073}
a_3+\omega -a \equiv 0 \ \mathrm{mod}p, \ a_1a_2 \neq 0   \ \mathrm{and} \ a_1+a_2+a=p.
\end{equation}
In particular $p\geq 3$. There exists $0\neq c\in k(x)^p$ such that
$$
F - c U_3^{a(3) +\omega -i}\Phi_i(U_1,\lambda U_2) \in G(m_S)^p,
$$
where
\begin{equation}\label{eq7071}
\Phi_i(U_1,U_2):=(-1)^{a_2}U_1^{a(1)}U_2^{a(2)}\sum_{k=0}^a
\left(
  \begin{array}{c}
   a_2+k-1 \\
    k \\
  \end{array}
\right)
U_1^{a-k}(U_1+U_2)^{i-a +k}.
\end{equation}
\end{itemize}
\end{lem}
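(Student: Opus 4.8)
<br>

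The plan is to reduce everything to a direct computation on the homogeneous polynomial $F = HU_3^{\omega-i}F_0$ by using the differential operators that define $J(F,E,m_S)$, exactly in the spirit of section 2.6. First I would recall that the module $J(F,E,m_S)$ is obtained by applying to $F$ the derivations $H^{-1}U_j\partial/\partial U_j$ for $j=1,2,3$ together with the derivations $H^{-1}\partial/\partial\lambda_l$ with respect to constants (taking $W=\{m_S\}$, so ${\cal D}_W={\cal D}(m_S)$ since $W$ is an intersection of components of $E$). Since $F$ is a monomial $H U_3^{\omega-i}$ times the two-variable form $F_0\in k(x)[U_1,U_2]_i$, the logarithmic derivatives act as $H^{-1}U_j\partial(HU_3^{\omega-i}F_0)/\partial U_j = U_3^{\omega-i}\bigl(a(j)F_0 + U_j\partial F_0/\partial U_j\bigr)$ for $j=1,2$ and $H^{-1}U_3\partial F/\partial U_3 = (a(3)+\omega-i)U_3^{\omega-i}F_0$; the constant derivations give $U_3^{\omega-i}\partial F_0/\partial\lambda_l$. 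Thus $J(F,E,m_S) = U_3^{\omega-i}\cdot\langle\{aF_0 + U_1\partial F_0/\partial U_1, bF_0 + U_2\partial F_0/\partial U_2\}\cup\{\partial F_0/\partial\lambda_l\}_l\rangle$ with $a=\widehat{a(1)}$, $b=\widehat{a(2)}$ (constants reduced mod $p$), and its directrix is $\langle U_3\rangle$ plus the directrix of the two-variable module generated inside $k(x)[U_1,U_2]$.

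Next I would exploit the hypotheses. The condition $F\notin G(m_S)^p$ forces the two-variable piece $U_3^{\omega-i}F_0$ not to be a $p$-th power; combined with the assumption that neither $\langle U_3,U_1\rangle$ nor $\langle U_3,U_2\rangle$ contains $\mathrm{Vdir}(J(F,E,m_S))$, the directrix of the planar module must be a single line $\langle U_1+\lambda U_2\rangle$ with $\lambda\neq 0$ (it cannot be the whole plane $\langle U_1,U_2\rangle$, else the codirectrix would drop, and it cannot be either coordinate axis by the transversality hypothesis), giving (\ref{eq707}). In case (i), $i\equiv 0\bmod p$ so $U_j\partial F_0/\partial U_j\equiv 0$ in characteristic $p$ contributions of degree issues aside — more precisely the Euler-type relations show the planar module is spanned, up to $p$-th powers, by $F_0$ itself together with $\partial F_0/\partial\lambda_l$, forcing $F_0$ (mod $p$-th powers) to be a constant multiple of $(U_1+\lambda U_2)^i$ by the structure of binary forms with one-dimensional directrix; this yields $F_0 = c(U_1+\lambda U_2)^i$ modulo $k(x)[U_1^p,U_2^p]$, hence the stated congruence $F - cHU_3^{\omega-i}(U_1+\lambda U_2)^i\in G(m_S)^p$ after absorbing the monomial $H$ (whose exponents one may reduce mod $p$ into a $p$-th power) and $U_3^{\omega-i}$.

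In case (ii), $a=\widehat{i}\neq 0$, and here I would change variables to $V_1=U_1$, $V_2=U_1+\lambda U_2$ (legitimate since $\lambda\neq 0$), writing $F_0$ as a binary form in $V_1,V_2$. The one-dimensional directrix being $\langle V_2\rangle$ together with the non-$p$-th-power condition forces, by the standard analysis of binary forms (as in \cite{CoP2} {\bf II.1.5}), that after scaling $F_0$ equals a $p$-th power times $V_1^{a_1}V_2^{a_2}(\text{low degree tail})$ with the exponent congruences $a_1+a_2+a=p$ and $a_1a_2\neq 0$ — the point being that a single line in the directrix of a binary form of degree $i$ with $\widehat i=a$ forces the ``primitive part'' to have degree exactly $a$ and to be $V_1^{a_1}V_2^{a_2}$ times something of degree $a$. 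Tracking how $H$'s exponents interact, one gets $a_3+\omega-a\equiv 0\bmod p$ from requiring $U_3^{a(3)+\omega-i}$ to combine with the degree-$a$ primitive part into the non-$p$-th-power constraint, and the explicit tail is the binomial sum in (\ref{eq7071}), which is exactly the expansion of the primitive part of $V_1^{a_1}V_2^{a_2}$-type forms one obtains by repeatedly stripping $p$-th powers; the inequality $p\geq 3$ follows since $a_1,a_2,a$ are all $\geq 1$ and sum to $p$. The main obstacle I anticipate is pinning down the precise combinatorial identity (\ref{eq7071}): one must show that the unique (up to $p$-th power and scalar) binary form of degree $i$ whose differential module has directrix a single line, with prescribed $p$-adic data $(a_1,a_2)$ of the monomial factor, is exactly $\Phi_i(U_1,U_2)$ — this requires carefully iterating the ``subtract the $p$-th power of the obvious root'' step and recognizing the resulting generating function, and verifying that the exponent congruences are forced rather than merely sufficient. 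The rest is bookkeeping with $\widehat{\cdot}$ and the observation that multiplying by $H$ and $U_3^{\omega-i}$ only shifts things by $p$-th powers in the variables whose exponents are $\equiv 0\bmod p$.
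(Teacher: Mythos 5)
Your setup is correct and tracks the paper's framework: you compute $J(F,E,m_S)$ by applying the logarithmic derivations $H^{-1}U_j\partial/\partial U_j$ and the derivations with respect to constants, factor out $U_3^{\omega-i}$, and deduce (\ref{eq707}) from the transversality hypotheses. The inference $p\geq 3$ from (\ref{eq7073}) is also right. But the proposal stops well short of a proof of part (ii): you yourself flag the identification of $F_0$ with $\Phi_i$ (up to a $p$-th power summand and a scalar in $k(x)^p$) as ``the main obstacle I anticipate,'' and the strategy you sketch — iterated subtraction of $p$-th powers and ``recognizing the resulting generating function'' — is not carried out. This is precisely the content of the lemma, not a side issue.

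There is also a genuine subtlety you do not touch: the conclusion of (ii) requires $c\in k(x)^p$, not merely $c\in k(x)$, and over a nonperfect residue field this is not automatic. The paper handles this by a case split on whether the leading coefficient $\alpha$ of $F_0$ (in the expansion $F_0=\alpha U_1^i+\alpha_1 U_1^{i-1}U_2+\cdots$) lies in $k(x)^p$; in the case $\alpha\notin k(x)^p$ it uses the derivations $D_l=\partial/\partial\lambda_l$ with respect to an absolute $p$-basis to derive the system (\ref{eq7081})--(\ref{eq7082}) and conclude $\alpha=d\lambda^{a(2)}$ with $d\in k(x)^p$, after which the constant can be absorbed. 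Your change of variables to $V_1=U_1$, $V_2=U_1+\lambda U_2$ and appeal to ``the standard analysis of binary forms'' has no counterpart to this argument and would need to be supplemented by it. Finally, your sketch of (i) mis-states the Euler relation: when $i\equiv 0\bmod p$ it is $\sum_j U_j\partial F_0/\partial U_j$ that vanishes, not the individual operators $U_j\partial/\partial U_j$; the paper simply cites \cite{CoP2} {\bf II.5} for (i) and for the numerical facts (\ref{eq7073}), and the new content of its proof is exactly the part you left as an obstacle.
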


\begin{proof}
\cite{CoP2} {\bf II.5} p.1896 for (i) and (\ref{eq7073}).
 {It} remains to prove that there exists $0\neq c\in k(x)^p$ such that
$$
H_0F_0 - c \Phi_i(U_1,\lambda U_2) \in (k(x)[U_1,U_2])^p , \ H_0:=U_1^{a(1)}U_2^{a(2)}.
$$
It is easily checked that (\ref{eq707}) holds when
$F=U_3^{a(3)+\omega (x)-i}\Phi_i(U_1,\lambda U_2)$.  Note that
\begin{equation}\label{eq7072}
H_0^{-1}\Phi_i(U_1,\lambda U_2)=(-1)^{a_2}\lambda^{a(2)}\left(
  \begin{array}{c}
   a_2+a \\
    a \\
  \end{array}
\right)
U_1^i + \cdots .
\end{equation}

Let $(\lambda_l)_{l \in \Lambda}$ be an absolute $p$-basis of $k(x)$ and let
$$
D_l:={\partial \hfill{} \over \partial \lambda_l} \  D_j:=U_j{\partial \hfill{} \over \partial U_j}, \ j=1,2.
$$
We expand
\begin{equation}\label{eq708}
    F_0:=\alpha U_1^i + \alpha_1 U_1^{i-1}U_2 + \cdots , \ \alpha , \alpha_1 \in k(x).
\end{equation}
Since $H_0^{-1}D_l \cdot (H_0F_0) \in <(U_1 +\lambda U_2)^i>$ by (\ref{eq707}), $l \in \Lambda \cup \{1,2\}$,
it is easily seen that $\alpha \neq 0$.

\medskip

Suppose that  $\alpha \in k(x)^p$. Since $H_0^{-1}D_l\cdot (H_0F_0 )\in < (U_1 +\lambda U_2)^i>$, $l \in \Lambda$,
and this polynomial is divisible by $U_2$, we have $D_l \cdot H_0F_0=0$ for $l \in \Lambda$ by (\ref{eq707}).
We deduce that  $H_0F_0 \in k(x)^p[U_1,U_2]$ and in particular that $\lambda  \in k(x)^p$. Let
$$
F':=  H_0 F_0 -c \Phi_i(U_1,\lambda U_2),
\ c:=\alpha (-1)^{a_2}\lambda^{-a(2)}\left(
  \begin{array}{c}
   a_2+a \\
    a \\
  \end{array}
\right)^{-1} \in k(x)^p.
$$
By construction, we have
$H^{-1}D_l\cdot F'=0$, $l \in \Lambda  \cup \{1,2\}$, and (ii) is proved.

\medskip

Suppose that $\alpha \not \in k(x)^p$. Without loss of generality, it can be assumed that $\alpha =\lambda_l$
for some $l\in \Lambda$. For $l'\neq l$, $U_2$ divides  $H_0^{-1}D_{l'}\cdot (H_0 F_0)$, so
$D_{l'}\cdot (H_0 F_0)=0$ by (\ref{eq707}).
This proves that $F_0 \in k(x)^p(\alpha)[U_1,U_2]$. We have
$$
\left\{
  \begin{array}{ccc}
   H_0^{-1}D_l\cdot (H_0 F_0) & = &  U_1^i + (D_l \cdot \alpha_1) U_1^{i-1}U_2 + \cdots  \hfill{} \\
    &  &  \\
   H_0^{-1}D_1\cdot (H_0 F_0) & = & (a_1 +a) \alpha U_1^i + (a_1 +a -1)\alpha_1 U_1^{i-1}U_2 + \cdots \\
  \end{array}
\right.
$$
from which we deduce the identity
\begin{equation}\label{eq7081}
\left\{
  \begin{array}{ccc}
   a \lambda  \hfill{}& = &  D_l \cdot \alpha_1  \hfill{} \\
    &  &  \\
   a (a_1+a) \alpha \lambda & = & (a_1 +a-1) \alpha_1  \\
  \end{array}
\right.
.
\end{equation}
Therefore $(a_1 +a-1)\alpha_1 =(a_1+a) (D_l \cdot \alpha_1)$. Expanding $\alpha_1 =:\sum_{j=0}^{p-1}c_j^p\alpha^j$,
we then deduce that
\begin{equation}\label{eq7082}
\alpha_1=c_j^p\alpha^j, \ \mathrm{where} \ (a_1+a)j \equiv a_1+a-1 \ \mathrm{mod}p.
\end{equation}
Since $a_1+a +a_2=p$ in this case (ii), we get  $a_2(j-1)\equiv 1\ \mathrm{mod}p$ from (\ref{eq7082}).
One deduces  from (\ref{eq7081})-(\ref{eq7082}) that $\alpha = d \lambda^{a(2)}$
for some $d \in k(x)^p$, $d \neq 0$.
The proof now concludes as in the above case $\alpha \in k(x)^p$.
\end{proof}

\bigskip
\begin{lem}\label{vraijoyeux}

 {Let  $F_0\in k(x)[U_1,U_2,U_3]_{\omega}$, $\omega\in \N \setminus\{0 \}$, $F_0\neq 0$.
Take $ E := \mathrm{div}(u_1u_2u_3)$ and let
$$
(a(1),a(2),a(3))\in \N^3, \ H:=U_1^{a(1)}U_2^{a(2)}U_3^{a(3)}\in G(m_S)=k(x)[U_1,U_2,U_3].
$$
 Let $F:=HF_0$. Assume that:}

 {\begin{equation}\label{eq707}
    \mathrm{Vdir}(J(F,E,m_S))=< U_1 +\lambda_2 U_2+ \lambda_3 U_3>, \ \lambda_2 \lambda_3 \in k(x)^*.
\end{equation}
Then $\omega=0\ \mathrm{mod}\ p$ and
 there exists $0\neq c \in k(x)$ such that
$$
F - c H (U_1 + \lambda_2 U_2+ \lambda_3 U_3)^{\omega}\in G(m_S)^p.
$$}

\end{lem}

\begin{proof} { Instead of quoting \cite{Co5} Proposition~E.5.1 page 33, we give a short argument.
Let
$$F_0:=\sum_{0\leq a,b,c \leq \omega}\lambda_{a,b,c}U_1^a U_2^b U_3^{c}, \  \lambda_{a,b,c}\in k(x).$$
It is clear that $\lambda_{\omega,0,0}\lambda_{0,\omega,0}\lambda_{0,0,\omega}\in k(x)^*$.} % and $\Phi_{\omega}\not=0$.
%Let us note that $\omega=1$ is impossible: we should have $H^{-1}F=\alpha_1U_1+\alpha_2U_2+\alpha_3U_3$, $\alpha_1 \alpha_2 \alpha_3\in k(x)^*$,
 {As
$U_1{\partial F\over \partial U_1},U_2{\partial F\over \partial U_2},U_3{\partial F\over \partial U_3}$ are proportional, so are their coefficients in $U_1^{\omega},U_2^{\omega},U_3^{\omega}$, so the following matrix has rank~$1$:}

 {$$\begin{pmatrix}
   a(1)+ \omega & a(1) & a(1) \\
a(2) & a(2)+ \omega & a(2)	 \\
a(3) & a(3) & \omega+a(3)
\end{pmatrix}.
$$}
 {This leads to $\omega=0\ \mathrm{mod}\ p$.
 Let $G:=F_0-\lambda_{\omega,0,0}(U_1 +\lambda_2 U_2+ \lambda_3 U_3)^{\omega}$, we have
\begin{equation}\label{eq707}
    \mathrm{Vdir}(J(HG,E,m_S))=< U_1 +\lambda_2 U_2+ \lambda_3 U_3>\ \mathrm{or}\ HG\in G(m_S)^p.
\end{equation}
As deg$_{U_1}(G)<\omega$, the first is impossible, the second is true. This gives the result.}

\end{proof}

\begin{lem}\label{lemsortiekappaegaldeux}
Assume that $E=\mathrm{div}(u_1)$.
If $(u_1,u_2 ,u_3;Z)$ are well adapted coordinates at $x$, then
$$
\mathrm{in}_Eh=Z^p +U_1^{pd_1}\overline{F}\in S/(u_1)[U_1][Z], \ \overline{F}\neq 0.
$$
\end{lem}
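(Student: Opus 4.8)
The plan is to reduce the statement to the single claim that $i_0(y_1)=p$, where $y_1$ is the generic point of $\eta^{-1}(\mathrm{div}(u_1))$, and then to settle this using the discriminant together with assumption {\bf (E)}. Localizing, we may assume $\eta(x)=m_S$; then (as usual for $\kappa$-computations in this chapter) $m(x)=p$, $\omega(x)>0$ and $\{x\}=\eta^{-1}(m_S)$, and $(S,h,E)$ satisfies {\bf (G)} and {\bf (E)}; in particular $\epsilon(x)\geq 1$ by definition \ref{defomega}. Set $s_1:=(u_1)\in\mathrm{Spec}S$ and $S_1:=S_{s_1}$. By proposition \ref{Deltaalg} applied with $J=\{1\}$, the polyhedron $\Delta_{S_1}(h;u_1;Z)=\mathrm{pr}_{\{1\}}\Delta_S(h;u_1,u_2,u_3;Z)=[d_1,+\infty)$ is minimal, so $\mathrm{in}_E h$ is its initial form at the vertex $d_1=\delta(y_1)$. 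Writing $\mathrm{in}_E h=Z^p+\sum_{i=1}^p F_{i,Z,E}Z^{p-i}$ and evaluating (\ref{eq2033}) (with $J=\{1\}$, $J'=\{2,3\}$), one finds for each $i$ that either $\mathrm{ord}_{(u_1)}f_{i,Z}>id_1$ and then $F_{i,Z,E}=0$, or $\mathrm{ord}_{(u_1)}f_{i,Z}=id_1$ and then $F_{i,Z,E}$ equals $U_1^{id_1}$ times a sum of pairwise distinct monomials in the images of $u_2,u_3$ with unit (hence nonzero) coefficients in the domain $S/(u_1)$, so $F_{i,Z,E}\neq 0$. Thus $i_0:=i_0(y_1)=\min\{i:F_{i,Z,E}\neq 0\}$ is well defined and $\leq p$ (for $\mathrm{in}_E h\neq Z^p$, since $h$ is reduced by {\bf (G)}); and once $i_0=p$ is known, $F_{i,Z,E}=0$ for $i<p$, so by proposition \ref{epsiloninv} the minimum $d_1=\min_i\mathrm{ord}_{(u_1)}f_{i,Z}/i$ is attained only at $i=p$, whence $\mathrm{ord}_{(u_1)}f_{p,Z}=pd_1$ and $F_{p,Z,E}=U_1^{pd_1}\overline F$ with $\overline F\in S/(u_1)$, $\overline F\neq 0$, which is the assertion.

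To prove $i_0=p$ I would distinguish the two cases of {\bf (E)}. If $D:=\mathrm{Disc}_X h=0$ (case (i), i.e. case (c) of {\bf (G)}), then $\mathrm{char}S=p$ and $f_{1,X}=\cdots=f_{p-1,X}=0$; writing $Z=X-\theta$ and expanding $p$-th powers in characteristic $p$ gives $h=Z^p+(\theta^p-f_{p,X})$, so $f_{i,Z}=0$ for $i<p$ and $i_0=p$ at once. If $D\neq 0$ (case (ii)), then $\mathrm{div}(D)_{\mathrm{red}}\subseteq E=\mathrm{div}(u_1)$, so $D=\gamma u_1^N$ with $\gamma\in S$ a unit and $N=\mathrm{ord}_{(u_1)}D=\mathrm{ord}_{m_S}D$. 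Suppose for contradiction that $i_0<p$. By proposition \ref{deltaint}(i) applied to $S_1$ with weight vector $\alpha=1$, $\mathrm{in}_E h$ satisfies {\bf (G)}, and being of type (a) or (b) (as $i_0<p$) it is separable, so $\mathrm{Disc}_Z(\mathrm{in}_E h)\neq 0$. Since $D=\mathrm{Disc}_Z h$ is isobaric of weight $p(p-1)$ in $f_{1,Z},\dots,f_{p,Z}$ (weights $1,\dots,p$), formula (\ref{eq231}) applied in $S_1$ gives $N=\mathrm{ord}_{(u_1)}D=p(p-1)d_1$, while $\mathrm{ord}_{m_S}f_{i,Z}\geq i\delta(x)$ together with the same isobaricity gives $N=\mathrm{ord}_{m_S}D\geq p(p-1)\delta(x)$. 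As $d_1\leq\delta(x)$ always, this forces $d_1=\delta(x)$, hence $\epsilon(x)=p(\delta(x)-d_1)=0$, contradicting $\omega(x)>0$. Therefore $i_0=p$, which finishes the proof.

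The one step that needs care is the chain of inequalities in case (ii): it goes through precisely because {\bf (E)}(ii) forces $D$ to be a unit times a power of $u_1$, so that $\mathrm{ord}_{m_S}D=\mathrm{ord}_{(u_1)}D$ (without this, only the useless inequality $\mathrm{ord}_{m_S}D\geq\mathrm{ord}_{(u_1)}D$ is automatic), and one must extract $\mathrm{ord}_{(u_1)}D=p(p-1)d_1$ from the initial form of $h$ at the generic point $y_1$ of $\mathrm{div}(u_1)$ via (\ref{eq231}), not at $x$. Everything else — the description of $\mathrm{in}_E h$ via (\ref{eq2033}), minimality of the projected polyhedron, and the final extraction of $\overline F\neq 0$ — is routine bookkeeping with characteristic polyhedra and proposition \ref{Deltaalg}.
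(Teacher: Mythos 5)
Your proof is correct and follows essentially the same route as the paper's: reduce to showing $i_0(y_1)=p$ for the generic point $y_1$ of $\mathrm{div}(u_1)$, dispatch the purely inseparable case (case (c) of {\bf (G)}) by direct computation, and in case (ii) of {\bf (E)} derive the contradiction from $\mathrm{Disc}_Z(h)$ being a unit times a power of $u_1$: if $i_0<p$ then $\mathrm{in}_Eh$ is separable (proposition \ref{deltaint}(i) and (\ref{eq231})) so $\mathrm{ord}_{(u_1)}\mathrm{Disc}_Z(h)=p(p-1)d_1$, but isobaricity gives $\mathrm{ord}_{m_S}\mathrm{Disc}_Z(h)\geq p(p-1)\delta(x)$, and since the discriminant is a unit times $u_1^N$ the two orders agree, forcing $d_1=\delta(x)$, i.e. $\epsilon(x)=0$, contradicting $\omega(x)>0$. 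The paper packages the separability step as ``$\mathrm{in}_Eh$ has $p$ distinct roots over $\overline{k(E)}$'' rather than citing \ref{deltaint}(i) explicitly, but this is the same reasoning.
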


\begin{proof}
This is obvious if $\mathrm{char}S=p>0$ and $h$ is purely inseparable
(case (c) of assumption {\bf (G)}). Otherwise, {\bf (E)} implies that $\mathrm{Disc}_Z(h)=\gamma u_1^D$
for some $D\geq p(p-1)d_1$ and $\gamma \in S$ a unit. Let
$$
\mathrm{in}_Eh=Z^p +\sum_{i=1}^p U_1^{id_1}F_iZ^{p-i}, \ F_i \in S/(u_1)[U_1]_{id_1},
$$
where $F_i=0$ if $id_1 \not \in \N$. Since $\mathrm{char}S/(u_1)=p>0$, condition {\bf (G)} implies that
$\mathrm{in}_E h$ has $p$ distinct roots over an algebraic closure of $k(E)$ if $F_i\neq 0$ for some $i\neq p$.
But then $D=p(p-1)d_1$: a contradiction since $\epsilon (x)>0$. We deduce that $F_i=0$, $1 \leq i \leq p-1$.
We have $F_p\neq 0$ by Proposition \ref{Deltaalg}.
\end{proof}

\medskip

\begin{prop}\label{sortiekappaegaldeux}
Assume that $\epsilon (x)=\omega (x)$, $\kappa (x)\geq 2$ and $E=\mathrm{div}(u_1)$.
Let $(u_1,u_2 ,u_3;Z)$ be well adapted coordinates at $x$. Assume furthermore that
$S/(u_1)\simeq k(x)[\overline{u}_2,\overline{u}_3]_{(\overline{u}_2,\overline{u}_3)}$
and the following two conditions are satisfied:
\begin{itemize}
  \item [(i)] the initial form polynomial $\mathrm{in}_Eh$ of Lemma \ref{lemsortiekappaegaldeux} is of the form
$$
\mathrm{in}_E h=Z^p +U_1^{pd_1}\overline{F}, \ \overline{F}\in  k(x)[\overline{u}_2,\overline{u}_3]_{1+\omega (x)};
$$
  \item [(ii)] we have
$$
\overline{\mathrm{Vdir}(x)} +\mathrm{Vdir}\left ({\partial \overline{F} \over \partial \overline{u}_2},
{\partial \overline{F} \over \partial \overline{u}_3}\right ) =<\overline{U}_2, \overline{U}_3>,
$$
where $\overline{\mathrm{Vdir}(x)}$ denotes the image of $\mathrm{Vdir}(x)$ in $<\overline{U}_2,\overline{U}_3>$.
\end{itemize}

Then $x$ is resolved.
\end{prop}

\medskip

\begin{proof} The proof is the same as that of \cite{CoP2} {\bf II.3} p.1890 and we
only indicate the necessary changes. Since $\kappa (x)\geq 2$, we have
\begin{equation}\label{eq710}
    \mathrm{in}_{m_S}h=Z^p + F_{p,Z}, \ H^{-1}F_{p,Z}\subseteq k(x)[U_1,U_2,U_3]_{\omega (x)}
\end{equation}
and $U_1 \in \mathrm{Vdir}(x)$ as in the beginning of the proof of Lemma \ref{sortiemonome}.
We discuss according to the value of $\tau '(x)$.

\smallskip

\noindent $\bullet$ {\it Assume that $\tau '(x)=3$.}  The proposition follows from Theorem \ref{bupthm}.

\smallskip

\noindent $\bullet$ {\it Assume that $\tau '(x)=2$.}
Note that $\omega (x)\geq 2$. Since $E=\mathrm{div}(u_1)$ and
$U_1 \in \mathrm{Vdir}(x)$, we have
$\mathrm{Vdir}(x)=<U_1, \lambda_2 U_2 + \lambda_3U_3>$, $(\lambda_2, \lambda_3)\neq (0,0)$.
By symmetry, it can be assumed that $\lambda_2=1$. If $\lambda_3\neq 0$, we let $v_2:=u_2+ \gamma_3u_3$,
where $\gamma_3\in S$ is a preimage of $\lambda_3\in S/(u_1)
\simeq k(x)[\overline{u}_2,\overline{u}_3]_{(\overline{u}_2,\overline{u}_3)}$.

Let $(u_1,v_2,u_3;Z_1)$ be well adapted coordinates at $x$, $Z_1=Z - \phi$, $\phi \in S$. By Lemma
\ref{lemsortiekappaegaldeux}, we have $\mathrm{ord}_{u_1}\phi>d_1$. Therefore
$$
\mathrm{in}_E h=Z_1^p +U_1^{pd_1}(\overline{F} +\overline{\phi}^p),
$$
where $\overline{\phi} =0$ (resp. $\overline{\phi} =cl_0(u_1^{-d_1}\phi)\in S/(u_1)$) if $d_1\not \in \N$ (resp $d_1 \in \N$).
Note that ($1+ \omega (x)\equiv 0 \ \mathrm{mod}p$ and $\overline{\phi}\in k(x)[\overline{u}_2,\overline{u}_3]_{(1+ \omega (x))/p}$) if $\overline{\phi} \neq 0$.
%Without loss of generality, it can be assumed that ($1+ \omega (x)\equiv 0 \ \mathrm{mod}p$ and $\overline{\phi}\in k(x)[\overline{u}_2,\overline{u}_3]_{(1+ \omega (x))/p}$) if $\overline{\phi} \neq 0$.
Assumptions (i) and (ii) are then unchanged, so it can be assumed w.l.o.g. that $\mathrm{Vdir}(x)=<U_1, U_2>$.
Assumption (ii) now implies
$$
\overline{F}(\overline{u}_2,\overline{u}_3) \not \in <\overline{u}_2^{1+\omega (x)}>
\ (\mathrm{resp.} \   {\overline{F}(\overline{u}_2,\overline{u}_3) }\not \in <\overline{u}_2^{1+\omega (x)}, \overline{u}_3\overline{u}_2^{\omega (x)}>)
$$
if $\omega (x) \not\equiv 0 \ \mathrm{mod}p$ (resp. if $\omega (x) \equiv 0 \ \mathrm{mod}p$).

\smallskip

Let ${\cal X}' \longrightarrow ({\cal X},x)$ be the blowing along $x$ and $x' \in {\cal X}'$ be the
center of $\mu$. By Theorem \ref{bupthm}, we have $\iota (x') \leq (p,\omega (x),1)$
except  {possibly} if $x'=(Z':=Z/u_1,u'_1:=u_1/u_3,u'_2:=u_2/u_3,u_3)$, since  $\mathrm{Vdir}(x)=<U_1, U_2>$.
If $\iota (x')\geq (p,\omega (x),2)$, there are  two cases to be considered as in \cite{CoP2} end of p.1891:

\smallskip

{\it Case 1:} $\overline{F}(\overline{u}_2,\overline{u}_3)=\lambda_0 \overline{u}_2^{1+\omega (x)}+
\lambda_1 \overline{u}_3\overline{u}_2^{\omega (x)}$, $\lambda_1 \neq 0$. Then $({\cal X}',x')$ satisfies
the assumption of Lemma \ref{sortiemonome} (instead of {\it ibid.} {\bf II.1} on p.1885) whose conclusion
proves the proposition.

\smallskip

{\it Case 2:} $\overline{F}(\overline{u}_2,\overline{u}_3)=\lambda_0 \overline{u}_2^{1+\omega (x)}+
\lambda_1 \overline{u}_3\overline{u}_2^{\omega (x)}+\lambda_2\overline{u}_3^2\overline{u}_2^{\omega (x)-1}$,
$\lambda_2 \neq 0$ and $\omega (x) - 1 \equiv 0 \ \mathrm{mod}p$. Then $\tau (x')=3$ by the
characteristic free {\it ibid.} Lemma {\bf II.3.3} on p.1892. Blowing up again $x'$ then gives
$\iota (x'')\leq (p,\omega (x),1)$ by Theorem \ref{bupthm}, where $x''$ is the center of $\mu$.

\medskip

\noindent $\bullet$ {\it Assume that $\tau '(x)=1$.}
We have $\mathrm{Vdir}(x)=k(x)U_1$ and  $F_{p,Z}=\lambda U_1^{pd_1+\omega (x)}$ in
(\ref{eq710}). Assumption (ii) now reads
\begin{equation}\label{eq711}
    \mathrm{Vdir}\left ({\partial \overline{F} \over \partial \overline{u}_2},
{\partial \overline{F} \over \partial \overline{u}_3}\right ) =<\overline{U}_2, \overline{U}_3>.
\end{equation}

Let ${\cal X}' \longrightarrow ({\cal X},x)$ be the blowing along $x$ and $x' \in {\cal X}'$ be the
center of $\mu$. By Theorem \ref{bupthm}, we have $\iota (x') \leq (p,\omega (x),1)$
except  {possibly} if $\eta '(x')$ lies on the strict transform of $\mathrm{div}(u_1)$. By symmetry  {between}
$\overline{u}_2, \overline{u}_3$, it can be assumed that
$x'=(Z':=Z/u_3,u'_1:=u_1/u_3,u'_2=P(u_2/u_3), u_3)$, where $P(t)\in S[t]$ is
a unitary polynomial whose reduction $\overline{P}(t)\in k(x)[t]$ is irreducible.
We have $E'=\mathrm{div}(u'_1u_2)$. Let
$$
\tilde{P}(\overline{U}_2, \overline{U}_3):=
\overline{U}_2^{\mathrm{deg}\overline{P}}\overline{P}(\overline{U}_3/\overline{U}_2)
\in k(x)[\overline{U}_2, \overline{U}_3]_{\mathrm{deg}\overline{P}}.
$$
By (\ref{eq711}), we have
$$
\mathrm{ord}_{\tilde{P}}({\partial \overline{F} \over \partial \overline{u}_2},
{\partial \overline{F} \over \partial \overline{u}_3} ) \leq \omega (x)-1.
$$
with equality if $\iota (x')\geq (p,\omega (x),2)$.

 \smallskip
  {If $\omega (x)\geq 2$,  this implies that $k(x')=k(x)$. It can be assumed that $P(t)=t$, ${\tilde{P}}=\bar{U}_2$: $(Z':=Z/u_3,u'_1:=u_1/u_3,u'_2=u_2/u_3, u_3)$ are well prepared at $x'$, this leads to  the same cases 1 and  2 as above. One concludes applying Lemma \ref{sortiemonome} or  \cite{CoP2} Lemma {\bf II.3.3} on p.1892, exactly as above.} %here are  two cases to be considered as in \cite{CoP2} p.1894:

\smallskip

  {If $\omega (x)=1$, then $({\cal X}',x')$ satisfies the assumption of Lemma \ref{sortieomegaun} or
there is an expansion
\begin{equation}\label{eq712}
   \mathrm{in}_{m_{S'}}h'={Z'}^p + {U'_1}^{pd_1}{U'_2}^{p(d_1-1)+1}(\lambda'_1U'_1+\lambda'_2U'_2)
    \in G(m_{S'})[Z']
\end{equation}
with $\lambda'_1\lambda'_2\neq 0$, where $(u'_1,u'_2,u'_3;Z')$ are
well adapted coordinates at $x'$.  With notations as in Lemma \ref{joyeux}   {applied with $a_3=0$, $i=1$}, we let $a_1:=\widehat{pd_1}$, $a_2:=\widehat{p(d_1-1)+1}$.}

  {Let ${\cal Y}':=V(Z',u'_1,u'_2)\subset {\cal X}'$ with generic point $y'$.
By (\ref{eq712}), any vertex of $\Delta_{S'} (h';u'_1,u'_2,u'_3;Z')$ is induced by $f_{p,Z'}$ and
we have $\delta (y')=2d_1-1+2/p=\delta (x')$, so ${\cal Y}'$ is permissible of the first kind at $x'$.}

  {Either we have not the conditions ( \ref{eq7073}), blowing up ${\cal Y}'$ then gives $\iota (x'')\leq (p,\omega (x),1)$ by Theorem \ref{bupthm},
where $x''$ is the center of $\mu$.}
  {Or we blow up up consecutively ${\cal Y}'_1$, then ${\cal Y}'_2$,
and iterating, we reduce to the case $d_1 =a_1$, $d_2=a_2$, $1+a_1+a_2=p$, $a_1a_2>0$: $\tau(x)=3$.}

\end{proof}

\begin{prop}\label{sortiebis}
Assume that $\epsilon (x)=\omega (x)\geq 2$, $\kappa (x)\geq 2$ and $E=\mathrm{div}(u_1)$.
Let $(u_1,u_2 ,u_3;Z)$ be well adapted coordinates at $x$. Assume furthermore that
the initial form polynomial
$\mathrm{in}_E h=Z^p +U_1^{pd_1}\overline{F}$, $\overline{F}\in  S/(u_1)$,
of Lemma \ref{lemsortiekappaegaldeux} satisfies the following two conditions:
\begin{itemize}
  \item [(i)] $\mathrm{ord}_{(\overline{u}_2,\overline{u}_3)}\overline{F}=\omega (x)+1$;
  \item [(ii)] the form $\Phi:=\mathrm{cl}_{\omega (x)+1}\overline{F} \in k(x)[\overline{U}_2,\overline{U}_3]_{\omega (x)+1}$
  is such that
$$
{\partial \Phi \over \partial \overline{U}_3}=0 \ \mathrm{and} \
\mathrm{Vdir}({\partial \Phi \over \partial \overline{U}_2})=<\overline{U}_2, \overline{U}_3>.
$$
\end{itemize}

Then $x$ is resolved.
\end{prop}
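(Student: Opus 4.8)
The plan is to mimic the proof of proposition \ref{sortiekappaegaldeux}: blow up the closed point $x$, read off the situation at each exceptional point $x'$ from the characteristic polyhedron and from the auxiliary form $\overline F$, and reduce everything to propositions \ref{sortiemonome} and \ref{sortiekappaegaldeux}.

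First I would collect a few preliminary facts. Since $\kappa (x)\geq 2$ and $\epsilon (x)=\omega (x)$, definition \ref{defomega} forces $i_0(x)=p$, so by theorem \ref{initform} we have $\mathrm{in}_{m_S}h=Z^p+F_{p,Z}$ with $V(F_{p,Z},E,m_S)=0$, i.e. $H^{-1}F_{p,Z}\in k(x)[U_1,U_2^p,U_3^p]_{\omega (x)}$. Hypothesis (i) is equivalent to: $F_{p,Z}$ has no monomial supported on $U_2,U_3$ only, i.e. $U_1$ divides $H^{-1}F_{p,Z}$; hence $U_1\in \mathrm{Vdir}(x)$ and, since $\dim G(m_S)=3\leq p+1$, proposition \ref{conedirectrix} gives $\mathrm{Max}(x)=\mathrm{Dir}(x)$, so $\mathrm{IMax}(x)=(\mathrm{Vdir}(x))$ and $C(x,\{x\})\subseteq V(U_1)$. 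By theorem \ref{bupthm} and definition \ref{defcone}, every exceptional point $x'$ of the quadratic transform $\pi\colon {\cal X}'\rightarrow ({\cal X},x)$ with $\iota (x')\geq (p,\omega (x),2)$ therefore lies on the strict transform of $\mathrm{div}(u_1)$, and there is no such $x'$ when $\tau '(x)=3$. Note also that the exceptional form of remark \ref{ridgedimthree} (relevant to \ref{tausup2}) cannot occur here, because $U_1\mid H^{-1}F_{p,Z}$.

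I would then split according to $\tau '(x)\in \{3,2,1\}$. If $\tau '(x)=3$, blowing up $x$ yields $\iota (x')\leq (p,\omega (x),1)$ for every $x'$ by theorem \ref{bupthm}, so $x$ is resolved. If $\tau '(x)=2$, a coordinate change exploiting lemma \ref{lemsortiekappaegaldeux} (which kills the middle coefficients of $\mathrm{in}_Eh$) lets me assume $\mathrm{Vdir}(x)=\langle U_1,U_2\rangle$; blowing up $x$, the only exceptional point not already resolved is $x'=(Z/u_3,u_1/u_3,u_2/u_3,u_3)$, again with $E'=\mathrm{div}(u_1/u_3)$, and one computes via proposition \ref{originchart} and the blow-up formula \ref{bupformula} that the transformed $\mathrm{in}_{E'}h'$ has a leading part built out of $\Phi$; hypotheses (i)--(ii) then turn into the hypotheses of \ref{sortiemonome} or of \ref{sortiekappaegaldeux} at $x'$, or give $\tau '(x')=3$. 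The central case is $\tau '(x)=1$: here $F_{p,Z}=\lambda U_1^{p\delta (x)}$, the blow-up of $x$ necessarily keeps us on the strict transform of $\mathrm{div}(u_1)$, and hypothesis (ii) is what makes things work. The condition $\mathrm{Vdir}(\partial \Phi /\partial \overline U_2)=\langle \overline U_2,\overline U_3\rangle$ forces the transformed auxiliary form $\overline F'$ (resp.\ its leading form) to have derivative-directrix spanning the whole plane, which is exactly condition (ii) of \ref{sortiekappaegaldeux} at $x'$; the degenerate subcase $\partial \Phi /\partial \overline U_3=0$, i.e.\ $\Phi \in k(x)[\overline U_2,\overline U_3^{\,p}]$, is precisely the configuration handled by lemma \ref{joyeux} (applied with $d=[k(x'):k(x)]$), which one uses to finish, distinguishing whether $u_2/u_3$ is a unit in the blown-up ring or produces a residue extension, and whether $d_1\in \N$.

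The step I expect to be the main obstacle is the post-blow-up bookkeeping: one must track simultaneously the characteristic polyhedron $\Delta_{S'}(h';u'_1,u'_2,u'_3;Z')$ and the degree-$(\omega (x)+1)$ form $\overline F'$ through the several standard charts, and verify in each of them both spanning properties (transformed directrix and transformed derivative-directrix) needed to invoke \ref{sortiekappaegaldeux}, while disposing of the small-characteristic exception $p=2$ inside lemma \ref{joyeux}. Should a single blow-up fail to land directly in a case covered by \ref{sortiemonome} or \ref{sortiekappaegaldeux}, I would iterate, controlling a strictly decreasing secondary invariant (such as $d_1$, or the polygon invariants $C(x),\beta (x)$ of section \ref{contactmaximal}), which guarantees termination.
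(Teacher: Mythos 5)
Your overall framework — rerun the proof of proposition \ref{sortiekappaegaldeux}, split by $\tau'(x)$, blow up at $x$, and use (i)--(ii) to control what survives on the strict transform of $\mathrm{div}(u_1)$ — does match the paper's, and the preliminary observations ($i_0(x)=p$, $F_{p,Z}\in k(x)[U_1,U_2^p,U_3^p]_{\omega(x)}$, $U_1\in\mathrm{Vdir}(x)$) are correct. But there are two substantive gaps.

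First, you miss the pivotal arithmetic consequence of (ii), which is precisely what dissolves the two edge cases you flag as "the main obstacle". Since $\partial\Phi/\partial\overline{U}_3=0$, we have $\Phi\in k(x)[\overline{U}_2,\overline{U}_3^{\,p}]$; if in addition $\omega(x)+1\equiv 0\ \mathrm{mod}\ p$, this forces $\Phi\in k(x)[\overline{U}_2^{\,p},\overline{U}_3^{\,p}]$ and hence $\partial\Phi/\partial\overline{U}_2=0$, contradicting the spanning condition $\mathrm{Vdir}(\partial\Phi/\partial\overline{U}_2)=\langle\overline{U}_2,\overline{U}_3\rangle$. Thus (ii) alone excludes $\omega(x)+1\equiv 0\ \mathrm{mod}\ p$; this is exactly (\ref{eq7151}), and it is the fact that handles both the small-characteristic twist $p=2$ needed for (\ref{eq717}) and the residue-extension case $[k(x'):k(x)]\geq 2$ when $\tau'(x)=1$ (which, following \cite{CoP2} II.3, could only happen with $p=2$, $\omega(x)=3$, a parity incompatible with the previous observation). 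You invoke lemma \ref{joyeux} for these points, but \ref{joyeux} is not what is used here — the argument is the parity obstruction above. Also, $\partial\Phi/\partial\overline{U}_3=0$ is not a "degenerate subcase": it is part of hypothesis (ii) and always holds.

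Second, your reduction step is not carried out. The paper does \emph{not} verify the hypotheses of \ref{sortiemonome} or \ref{sortiekappaegaldeux} at the new point $x'$, and indeed cannot: after the blow-up the divisor acquires a second component ($E'\supseteq\mathrm{div}(u'_1u'_3)$), so the hypothesis $E=\mathrm{div}(u_1)$ of \ref{sortiekappaegaldeux} already fails, and the polynomial hypothesis \ref{sortiekappaegaldeux}(i) (that the new $\overline{F}'$ be a homogeneous polynomial of degree $1+\omega(x)$) is not something you show transfers. Instead, the paper reruns the argument of \ref{sortiekappaegaldeux} and notes that the polynomial assumption on $\overline{F}$ is used there only to analyse the two problematic exceptional shapes (cases 1 and 2, controlled by (\ref{eq716}) and (\ref{eq717})); it then shows (ii) directly excludes those shapes of the leading form $\Phi$, using $\Phi\in k(x)[\overline{U}_2,\overline{U}_3^{\,p}]\setminus k(x)[\overline{U}_2]$ and $\partial\Phi/\partial\overline{U}_2\not\in\langle\overline{U}_3^{\omega(x)}\rangle$. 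There is no re-invocation of the proposition at $x'$, and no iteration is needed; your closing paragraph about a strictly decreasing secondary invariant does not reflect the structure of the actual proof.
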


\begin{proof}
This is a simpler variation of Proposition \ref{sortiekappaegaldeux} and we build upon
its proof. To begin with, let $(u_1,u_2,u'_3;Z')$ be another set of well adapted coordinates at $x$. There is an equality
$$
U'_3=\lambda_3 U_3 +\lambda_2U_2 +\lambda_1 U_1 \in G(m_S)_1=<U_1,U_2,U_3>, \ \lambda_3 \neq 0.
$$
The corresponding initial form polynomial $\mathrm{in}_E h={Z'}^p +U_1^{pd_1}\overline{F}'$ satisfies
$$
\Phi ':=\mathrm{cl}_{\omega (x)+1}\overline{F}'=
\overline{F}(\overline{U}_2, \lambda_3^{-1}(\overline{U}'_3-\lambda_2\overline{U}_2))
+\Theta^p \in k(x)[\overline{U}_2,\overline{U}'_3]_{\omega (x)+1},
$$
where $\Theta \in k(x)[\overline{U}_2,\overline{U}'_3]_{(\omega (x)+1)/p}$, $\Theta =0$ if $d_1 \not \in \N$ or if
$\omega (x)+ 1 \not\equiv 0 \ \mathrm{mod}p$. We deduce that
\begin{equation}\label{eq715}
    {\partial \Phi ' \over \partial \overline{U}'_3}=0 \ \mathrm{and} \
\mathrm{Vdir}({\partial \Phi \over \partial \overline{U}_2})=<\overline{U}_2, \overline{U}'_3>.
\end{equation}
In other terms, (i) and (ii) remains valid for the well adapted coordinates $(u_1,u_2,u'_3;Z')$.

\smallskip

Also note that no $\Phi$ satisfies (ii) when $\omega (x)+1 \equiv 0 \ \mathrm{mod}p$, since then
\begin{equation}\label{eq7151}
{\partial \Phi \over \partial \overline{U}_3}=0 \Longrightarrow
\Phi \in k(x)[\overline{U}_2^p, \overline{U}_3^p] \Longrightarrow {\partial \Phi \over \partial \overline{U}_2}=0.
\end{equation}

Let ${\cal X}' \longrightarrow ({\cal X},x)$ be the blowing along $x$, $x' \in {\cal X}'$ be the
center of $\mu$ and suppose that $\iota (x')\geq (p,\omega (x),2)$.
We discuss according to the values of $\tau ' (x)$ as in the proof of Proposition \ref{sortiekappaegaldeux}.

\medskip

\noindent $\bullet$ {\it Assume that $\tau '(x)=3$.}  The proposition follows from Theorem \ref{bupthm}.

\smallskip

\noindent $\bullet$ {\it Assume that $\tau '(x)=2$.}
By (\ref{eq715}), it can be assumed that $\mathrm{Vdir}(x)=<U_1,U_2>$ or
$\mathrm{Vdir}(x)=<U_1,U_3>$. The polynomial assumption  {of Proposition} \ref{sortiekappaegaldeux} (i) on $\overline{F}$
is used only in cases 1 and 2 of the corresponding proof.
Therefore under the assumptions of this proposition, it is sufficient to prove that
\begin{equation}\label{eq716}
\left\{
  \begin{array}{ccc}
    \Phi \not\in <\overline{U}_2^{1+\omega (x)}, \overline{U}_3\overline{U}_2^{\omega (x)}>
    & \mathrm{if} & \mathrm{Vdir}(x)=<U_1,U_2> \\
      &  &  \\
    \Phi \not\in <\overline{U}_3^{1+\omega (x)}, \overline{U}_2\overline{U}_3^{\omega (x)}>
    & \mathrm{if} & \mathrm{Vdir}(x)=<U_1,U_3> \\
  \end{array}
\right.
\end{equation}
and that
\begin{equation}\label{eq717}
\left\{
  \begin{array}{ccc}
    \Phi \not\in <\overline{U}_2^{1+\omega (x)}, \overline{U}_3\overline{U}_2^{\omega (x)},
    \overline{U}_3^2\overline{U}_2^{\omega (x)-1}> & \mathrm{if} & \mathrm{Vdir}(x)=<U_1,U_2> \\
      &  &  \\
    \Phi \not\in <\overline{U}_3^{1+\omega (x)}, \overline{U}_2\overline{U}_3^{\omega (x)},
    \overline{U}_2^2\overline{U}_3^{\omega (x)-1}> & \mathrm{if} & \mathrm{Vdir}(x)=<U_1,U_3> \\
  \end{array}
\right.
\end{equation}
if furthermore $\omega (x)-1 \equiv 0 \ \mathrm{mod}p$. By (ii), we have
$$
\Phi \in k(x)[\overline{U}_2,\overline{U}_3^p] \backslash k(x)[\overline{U}_2] \ \mathrm{and} \
{\partial \Phi \over \partial \overline{U}_2}\not\in <\overline{U}_3^{\omega (x)}>
$$
and (\ref{eq716}) follows easily. Furthermore, (\ref{eq717}) reduces to (\ref{eq716}) except possibly
if $p=2$; but assumption (ii) then implies that $\omega (x)\equiv 0 \ \mathrm{mod}2$ by (\ref{eq7151}).

\medskip

\noindent $\bullet$ {\it Assume that $\tau '(x)=1$.}  We have $\mathrm{Vdir}(x)=<U_1>$. The polynomial assumption
Proposition \ref{sortiekappaegaldeux} (i) on $\overline{F}$ is also used only in cases 1 and 2
of the corresponding proof.

\smallskip

If $k(x')=k(x)$, one is then reduced to proving (\ref{eq716})-(\ref{eq717})
and the proof is identical as in (b).

\smallskip

If $[k(x'):k(x)]\geq 2$, the argument in \cite{CoP2} proof of {\bf II.3} (cases 1 and 2 on p.1894)
shows that $p=2$, $\omega (x)=3$ and $[k(x'):k(x)]= 2$; but assumption (ii) then implies
$\omega (x)\equiv 0 \ \mathrm{mod}2$ by (\ref{eq7151}) and the conclusion follows.
\end{proof}

\begin{prop}\label{tauegaldeux}
Assume that $E=\mathrm{div}(u_1)$, $\epsilon (x)=\omega (x)$, $\kappa (x)= 2$ and
$$
\mathrm{Vdir}(x) + k(x)U_1= <U_1,U_2,U_3>.
$$
Then $x$ is good.
\end{prop}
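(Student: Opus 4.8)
The hypothesis $\mathrm{Vdir}(x) + k(x)U_1 = \langle U_1,U_2,U_3\rangle$ means that modulo $U_1$ the directrix already spans the full transverse space, i.e. $\mathrm{Dir}(x)$ has codimension $\tau'(x)\geq 2$ with its two extra equations transverse to $E=\mathrm{div}(u_1)$. The strategy is to reduce, by a sequence of quadratic transforms along $\mu$ (using remark \ref{quadsequence}), to a situation covered by one of the already-established ``exit'' propositions of this section: proposition \ref{sortiekappaegaldeux}, proposition \ref{sortiebis}, lemma \ref{sortiemonome}, or lemma \ref{sortieomegaun}. First I would fix well adapted coordinates $(u_1,u_2,u_3;Z)$ at $x$, write $\mathrm{in}_E h = Z^p + U_1^{pd_1}\overline{F}$ via lemma \ref{lemsortiekappaegaldeux}, and note that since $\kappa(x)=2$ we have $\epsilon(x)=\omega(x)$ and $\mathrm{in}_{m_S}h = Z^p + F_{p,Z}$ with $H^{-1}F_{p,Z}\in k(x)[U_1,U_2,U_3]_{\omega(x)}$. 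As in the beginning of the proof of lemma \ref{sortiemonome}, $U_1\in \mathrm{Vdir}(x)$; combined with the hypothesis this forces $\tau'(x)\in\{2,3\}$, and in fact $\tau'(x)=3$ unless $\mathrm{Vdir}(x) = \langle U_1, \lambda_2 U_2+\lambda_3 U_3\rangle$ with $(\lambda_2,\lambda_3)\neq(0,0)$ --- i.e. exactly the case $\tau'(x)=2$ treated in proposition \ref{sortiekappaegaldeux}.

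If $\tau'(x)=3$, then $\tau(x)\geq \tau'(x)+1$ is not guaranteed, but $\mathrm{Dir}(x)$ has codimension $3$ inside $\mathrm{Spec}\,G(m_S)$ so blowing up $\{x\}$ gives $\iota(x')\leq(p,\omega(x),1)$ by theorem \ref{bupthm} (the embedded projective cone $PC(x,\{x\})$ is a point), hence $x$ is good. If $\tau'(x)=2$, after a coordinate change I would bring $\mathrm{Vdir}(x)$ to one of $\langle U_1,U_2\rangle$ or $\langle U_1,U_3\rangle$; by symmetry say $\langle U_1,U_2\rangle$. Now I would examine $\overline{F}\in S/(u_1)$ and split into cases by the order and the leading form of $\overline{F}$ modulo $(\overline{u}_2,\overline{u}_3)$. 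The key point is that the condition $V(TF_{p,Z},E,m_S)=0$ (equivalently $\omega(x)=\epsilon(x)$) together with $\tau'(x)=2$, $\mathrm{Vdir}(x)=\langle U_1,U_2\rangle$ forces, via definition \ref{deftauprime} and $J(F_{p,Z},E,m_S)$, that the form $\Phi := \mathrm{cl}_{1+\omega(x)}\overline{F}$ (when $\mathrm{ord}\,\overline{F}=1+\omega(x)$) satisfies $\partial\Phi/\partial \overline{U}_3 = 0$ and $\mathrm{Vdir}(\partial\Phi/\partial\overline{U}_2)=\langle\overline{U}_2,\overline{U}_3\rangle$ --- which is precisely hypothesis (ii) of proposition \ref{sortiebis}, if condition (i) (that $\mathrm{ord}_{(\overline{u}_2,\overline{u}_3)}\overline F = \omega(x)+1$) holds. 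So I would verify that the two extra transverse equations of $\mathrm{Dir}(x)$ force $\mathrm{ord}\,\overline{F}\leq \omega(x)+1$, and that equality can be assumed after well-preparation; when $\mathrm{ord}\,\overline{F} = \omega(x)+1$ one invokes proposition \ref{sortiebis} directly, and when the leading behavior is more ``generic'' one falls into the hypothesis of proposition \ref{sortiekappaegaldeux} instead (conditions (i) and (ii) there).

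Finally there remain the degenerate low-$\omega$ subcases: when $\omega(x)=1$, lemma \ref{sortieomegaun} applies in place of lemma \ref{sortiemonome}; when after blowing up $x$ the strict transform yields a point $x'$ with a single vertex of the form $(d_1,d_2+1/p,d_3+\omega(x)/p)$ one invokes lemma \ref{sortiemonome}; and the small-characteristic annoyances ($p=2$, $\omega(x)\equiv 0$, $1$, or $-1 \bmod p$) are dispatched exactly as in the proofs of propositions \ref{sortiekappaegaldeux} and \ref{sortiebis}, using lemma \ref{joyeux} to compute the transformed initial form. The main obstacle I anticipate is the bookkeeping needed to guarantee that hypothesis (i) of proposition \ref{sortiebis} (or the polynomial hypothesis (i) of proposition \ref{sortiekappaegaldeux}) can genuinely be arranged by the preparatory blow-ups: one must check that when $\mathrm{ord}_{(\overline u_2,\overline u_3)}\overline F > \omega(x)+1$ a single quadratic transform strictly drops a suitable secondary invariant (e.g. $\mathrm{ord}\,\overline F$, or the pair $(\omega(x), \mathrm{ord}\,\overline F)$ lexicographically), so that the process terminates --- this is a finiteness argument analogous to the one in chapter 6 and to \cite{CoP2} {\bf II.3}, but it has to be done in the present characteristic-free, initial-form-only language. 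Once that finiteness is in hand, every terminal case is covered by one of the four cited exit results and $x$ is good.
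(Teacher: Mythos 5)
Your argument for $\tau'(x)=3$ matches the paper (a single blow-up of $\{x\}$ and theorem~\ref{bupthm}), but the case $\tau'(x)=2$ rests on a false claim and then heads in the wrong direction. You assert that "$U_1\in\mathrm{Vdir}(x)$ as in the beginning of the proof of lemma~\ref{sortiemonome}", and then normalize $\mathrm{Vdir}(x)$ to $\langle U_1,U_2\rangle$ or $\langle U_1,U_3\rangle$. But the inclusion $U_1\in\mathrm{Vdir}(x)$ in lemma~\ref{sortiemonome} comes from the extra polyhedron hypothesis of that lemma, which is not available here. Worse, here it is actually \emph{incompatible} with your case: if $\tau'(x)=2$ and $U_1\in\mathrm{Vdir}(x)$, then $\mathrm{Vdir}(x)+k(x)U_1=\mathrm{Vdir}(x)$ is only $2$-dimensional, contradicting the hypothesis $\mathrm{Vdir}(x)+k(x)U_1=\langle U_1,U_2,U_3\rangle$. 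So when $\tau'(x)=2$ one necessarily has $U_1\notin\mathrm{Vdir}(x)$, and the correct normalization is $\mathrm{Vdir}(x)=\langle U_2,U_3\rangle$.

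This undermines your whole reduction strategy: propositions~\ref{sortiekappaegaldeux} and~\ref{sortiebis} are exit results tailored to the complementary regime (their proofs in the case $\tau'(x)=2$ reduce to $\mathrm{Vdir}(x)=\langle U_1,U_2\rangle$ or $\langle U_1,U_3\rangle$), so they do not cover the situation at hand and cannot be invoked. The paper's argument is altogether different and simpler: with $\mathrm{Vdir}(x)=\langle U_2,U_3\rangle$, blow up $\{x\}$; theorem~\ref{bupthm} forces the unique possibly very-near point to be the origin $x'=(Z/u_1,u_1,u_2/u_1,u_3/u_1)$ of the first chart (so $E'=\mathrm{div}(u_1)$ and $k(x')=k(x)$); proposition~\ref{originchart} keeps the polyhedron minimal, proposition~\ref{bupformula}(v) shows $J(F_{p,Z'},E',m_{S'})\equiv U_1^{-\epsilon(x)}J(F_{p,Z},E,m_S)\bmod U_1$, so either $G'\neq0$ and $\kappa(x_1)=1$ (win), or the hypothesis of the proposition persists at $x_1$; corollary~\ref{permisarcthree} then terminates the resulting infinite loop of rational blow-ups. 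You would need to replace your appeal to~\ref{sortiekappaegaldeux}/\ref{sortiebis} by this persistence-plus-termination argument (or an equivalent one adapted to $U_1\notin\mathrm{Vdir}(x)$) for the proof to go through.
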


\begin{proof}
This follows from Theorem \ref{bupthm} if $\mathrm{Vdir}(x)=<U_1,U_2,U_3>$, i.e.
$\tau '(x)=3$.

\smallskip

Assume that $\tau '(x)=2$. Since $\mathrm{Vdir}(x)$ and $\iota (x)$ do not depend on the choice of
well adapted coordinates, it can be assumed w.l.o.g. that $\mathrm{Vdir}(x)= <U_2,U_3>$. Since $\epsilon (x)=\omega (x)$,
there is an expansion
$$
\mathrm{in}_{m_S}h=Z^p + F_{p,Z}, \ H^{-1}F_{p,Z}\subseteq k(x)[U_2,U_3]_{\omega (x)}.
$$
Let $\mu$ be a valuation of $L=k({\cal X})$ centered at $x$,
${\cal X}_1 \longrightarrow {\cal X}$ be the blowing up along $x$ and
$x_1 \in  {\cal X}_1$ be the center of $\mu$. By Theorem \ref{bupthm}, $\iota (x_1)<\iota (x)$ except possibly
if $x_1=x':=(Z':=Z/u_1, u_1, u'_2:=u_2/u_1, u'_3:=u_3/u_1)$, so $E'=\mathrm{div}(u_1)$ and $k(x')=k(x)$.

\smallskip

By Proposition \ref{originchart}, $\Delta_{S'} (h';u_1,u'_2,u'_3;Z')$ is minimal. We deduce that
$\epsilon (x')\leq \epsilon (x)$; if $x_1$ is very near $x$, we have $\epsilon (x_1)=\epsilon (x)=\omega (x_1)$ and
$$
\mathrm{in}_{m_S}h={Z'}^p -{G'}^{p-1}Z'+ F_{p,Z'}, \ {H'}^{-1}F_{p,Z'}\subseteq k(x)[U_1,U'_2,U'_3]_{\omega (x)}.
$$
Moreover Proposition \ref{bupformula}(v) implies that
$$
J(F_{p,Z'},E',m_{S'})  \equiv  U_1^{-\epsilon (x)}J(F_{p,Z},E,m_S) \ \mathrm{mod}U_1 .
$$
We deduce that $\kappa (x_1)=1$ (so $\iota (x_1)<\iota (x)$) if $G'\neq 0$. Otherwise we have
$\mathrm{Vdir}(x_1)\equiv <U'_2,U'_3>  \ \mathrm{mod}U_1$, so $x_1$ satisfies again the assumptions
of the proposition. The proposition then follows from Corollary \ref{permisarcthree}.
\end{proof}

\begin{rem}\label{rempermkappa2}
All local blowing ups considered in this section are permissible of the first kind
except when $p\geq 3$ and $\omega (x)\leq 2$ (proof of Lemma \ref{sortiemonome} for $\omega (x)=2$, proof of
Lemma \ref{sortiekappaegaldeux} for $\omega (x)=1$).
\end{rem}

\subsection{Reduction to monic expansions.}\label{reductionmonique}

In this section, we further reduce the proof of the Projection Theorem to those points with
$\kappa (x)=2$ satisfying condition (*) below. To begin with, let $(u_1,u_2,u_3;Z)$ be well adapted
coordinates and
\begin{equation}\label{eq720}
    \mathrm{in}_{m_S}h=Z^p -G^{p-1}Z+F_{p,Z} \in G(m_S)[Z]
\end{equation}
be the corresponding initial form. If $\kappa (x)=2$, we have $\mathrm{div}(u_1)\subseteq E \subseteq
\mathrm{div}(u_1u_2)$, $E=\mathrm{div}(u_1)$ if $\omega (x)=\epsilon (x)-1$.
We recall from Definition \ref{defkappa} that $G=0$ if $\omega (x)=\epsilon (x)$.

\begin{defn}\label{*kappadeux}
Assume that $\kappa (x)=2$. We say that $x$ satisfies condition (*) \index{(*) @ (*) $\kappa(x)=2$, (*1)(*2)(*3), Definition~\ref{*kappadeux}} if there  {exist} well adapted
coordinates $(u_1,u_2,u_3;Z)$ such that one of the following properties  {is} satisfied:
\begin{itemize}
  \item [(i)] $\omega (x)=\epsilon (x)$, $U_3 \in \mathrm{Vdir}(x)$ and $J(F_{p,Z},E,m_S)\subseteq G(m_S)_{\epsilon (x)}$
  contains a  {monic} polynomial in $U_3$;
  \item [(ii)] $\omega (x)=\epsilon (x)-1$, $U_3 \in \mathrm{Vdir}(x)$ and $H^{-1}{\partial F_{p,Z} \over \partial U_2}$
  is (generated by) a  {monic} polynomial in $U_3$.
\end{itemize}
Condition (*) is labeled (*1) (resp. (*2)) if $E=\mathrm{div}(u_1)$ (resp.  if $E=\mathrm{div}(u_1u_2)$) when condition (i)
holds. Condition (*) is labeled (*3) when condition (ii) holds.
\end{defn}

\begin{prop}\label{redto*}
Assume that $\kappa (x)=2$. Let $\mu$ be a valuation of $L=k({\cal X})$ centered at $x$ and
consider the quadratic sequence
$$
    ({\cal X},x)=:({\cal X}_0,x_0) \leftarrow ({\cal X}_1,x_1)\leftarrow \cdots \leftarrow ({\cal X}_r,x_r)\leftarrow \cdots
$$
along $\mu$. The following holds:

\begin{itemize}
  \item [(i)] there exists $r \geq 0$ such that $x_r$ is resolved or ($\iota (x_r)=\iota (x)$
  and $x_r$ satisfies condition (*));
  \item [(ii)] if $x$ satisfies condition (*), then $x_1$ is resolved or  ($\iota (x_1)=\iota (x)$ and
  $x_1$ satisfies again condition (*));
  \item [(iii)] if $\omega (x) \not \equiv 0 \ \mathrm{mod}p$, then $x$ is good.
\end{itemize}
\end{prop}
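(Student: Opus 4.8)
The three assertions of Proposition~\ref{redto*} concern the quadratic sequence along a valuation $\mu$ centered at $x$ in the case $\kappa(x)=2$. The strategy is to track the behavior of the pair $(\mathrm{in}_{m_S}h,\mathrm{Vdir}(x))$ and of the projected polyhedral data under a single quadratic blowing up, using theorem~\ref{bupthm} to locate where $\iota$ can fail to drop and proposition~\ref{originchart} (together with proposition~\ref{bupformula}(v)) to compute the transformed initial form there. The point where $\iota(x_1)=\iota(x)$ is forced by theorem~\ref{bupthm} to lie on the projective cone $PC(x,\{x\})$, which for a point blow up is the projectivized directrix $\mathrm{Dir}(x)$; so at each stage we may restrict attention to the finitely many residually rational or residually inseparable points lying on $\mathbf{Proj}(\mathrm{IDir}(x))$, and the entire analysis reduces to linear algebra on $\mathrm{in}_{m_S}h$ together with the propositions of Section~7.1 (lemmas \ref{sortiemonome}, \ref{sortieomegaun}, \ref{joyeux}, propositions \ref{sortiekappaegaldeux}, \ref{sortiebis}, \ref{tauegaldeux}) which already handle most ``exit'' configurations.

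**Plan for (i).** First I would invoke proposition~\ref{tauegaldeux} and propositions \ref{sortiekappaegaldeux}/\ref{sortiebis}: if $\mathrm{Vdir}(x)+k(x)U_1$ is the full space, or if the relevant partial-derivative directrices fill $\langle \overline U_2,\overline U_3\rangle$, then $x$ is already resolved and we take $r=0$. Otherwise $\mathrm{Vdir}(x)$ has a transverse component, say $U_3\in\mathrm{Vdir}(x)$ after a well-adapted coordinate change (this is the defining property of $\kappa(x)=2$ versus $\kappa(x)=4$). In the remaining cases the obstruction to condition (*) is precisely that the relevant submodule ($J(F_{p,Z},E,m_S)$ in case $\omega=\epsilon$, or $H^{-1}\partial F_{p,Z}/\partial U_2$ in case $\omega=\epsilon-1$) does not yet contain a monic polynomial in $U_3$; its defining ideal in $k(x)[U_3]$ is then a power $U_3^{a}$ with $a>0$, or more precisely the ``slope'' of the projected polygon $\Delta_2(x)$ of notation~\ref{betagammamaxcontact} is positive. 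I would show that a point blow up strictly decreases a lexicographic invariant built from $(\overline{\mathrm{ord}}$ of that submodule, the projected polyhedron data $\beta(x)$ or $A_2(x)$), using the transformation formula of proposition~\ref{bupformula}(v) restricted to the component of $E$, exactly as in the $\kappa=2$ analysis of \cite{CoP2}~chapter~2.{\bf II}; after finitely many steps either $\iota$ drops, or we land in case (*).

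**Plan for (ii).** Here I would compute directly: if $x$ satisfies (*), pick well-adapted coordinates realizing the monic form, and use proposition~\ref{originchart} to write down $\mathrm{in}_{m_{S'}}h'$ at each of the finitely many candidate points $x_1\in\mathbf{Proj}(\mathrm{IDir}(x))$ with $\iota(x_1)=\iota(x)$. The monic structure in $U_3$ is essentially preserved under the two relevant charts (the chart dividing by $u_3$, and the chart dividing by $u_1$), up to the possible appearance of a residual field extension $k(x_1)\neq k(x)$. When $k(x_1)=k(x)$ the monic form transforms to a monic form by direct substitution; when $k(x_1)$ is a proper algebraic extension I would apply theorem~\ref{omegageomreg} to reduce first to the residually rational case after a smooth base change, pushing back the conclusion. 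Either way the output is again a point satisfying (*) at the same value of $\iota$, or $\iota$ has dropped; cases where neither happens are exactly those covered by lemmas~\ref{sortiemonome} and \ref{sortieomegaun} (applied to $x_1$), which give resolution.

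**Plan for (iii) and the main obstacle.** When $\omega(x)\not\equiv 0\bmod p$ the structure theorem~\ref{initform}(2) together with lemma~\ref{joyeux}(ii) forces strong congruence restrictions ($a_1+a_2+\widehat{\omega(x)}=p$, $p\geq 3$, specific exponents $\bmod\, p$) on any monic-exit configuration, and in particular rules out the ``loop'' in which $\iota(x_r)=\iota(x)$ persists forever: each point blow up then strictly decreases $\lfloor\beta(x)\rfloor$ or some bounded positive integer invariant, so the quadratic sequence terminates in finitely many steps at a resolved point. Concretely I would combine (i) and (ii): by (i) we reach a point $x_r$ satisfying (*), by (ii) the property (*) is stable and $\iota$ is nonincreasing, so it suffices to show that under $\omega(x)\not\equiv 0\bmod p$ the value $\gamma(x_r)$ (or $\beta(x_r)$, $A_2(x_r)$ as appropriate) strictly drops along the sequence until one of the ``sortie'' lemmas applies; this is where lemma~\ref{joyeux}(ii) does the real work, as it shows that the only fixed configuration of the monic form under blow up is incompatible with $p\nmid\omega(x)$. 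The main obstacle is the casuistic bookkeeping in (i)--(ii): one must enumerate all charts and all possibilities for $\mathrm{Vdir}(x)$ (dimensions $1$, $2$) and for the divisor $E=\mathrm{div}(u_1)$ versus $\mathrm{div}(u_1u_2)$, and in each case verify that the chosen lexicographic invariant genuinely decreases, with the delicate points being (a) the case $\omega(x)=1$ where not all blow ups are permissible of the first kind (remark~\ref{rempermkappa2}), handled via the codimension-one centers $V(Z',u'_j)$, and (b) the small-characteristic twist $p=2$, where $\mathrm{Dir}\neq\mathrm{Max}$ can occur and one appeals to proposition~\ref{tausup2}.
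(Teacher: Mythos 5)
Your high-level reading of the mechanism (theorem~\ref{bupthm} to force $x_1\in PC(x,\{x\})=\mathrm{Dir}(x)$, propositions~\ref{originchart} and~\ref{bupformula}(v) to compute transforms, the exit lemmas of section~7.1 to dispose of special configurations) matches the paper. However, your descent argument for (i) has a genuine gap: the lexicographic invariant you propose, built from $\beta(x)$ and $A_2(x)$, cannot be used to \emph{reach} condition (*), because those quantities are only defined (definitions~\ref{def2solvable} and~\ref{definvariants2}) once $x$ already satisfies (*) and $\omega(x)\equiv 0 \pmod p$ --- precisely the state you are trying to attain. The paper's actual mechanism is structurally different: for $\tau'(x)=1$ condition (*) holds trivially with $r=0$; for $\tau'(x)=2$ one stratifies $\mathrm{Vdir}(x)$ into five types (T0)--(T4) and shows, by a direct chart computation, that a quadratic blow up either reaches (*), resolves $x_1$, or produces a point of type (Tl) with $l\leq k$; and finiteness of the resulting chain is secured separately --- by corollary~\ref{permisarcthree} for type (T1), and by proposition~\ref{permisarc} combined with propositions~\ref{sortiekappaegaldeux}, \ref{sortiebis}, \ref{tauegaldeux} for type (T3). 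Your sketch supplies neither the type stratification nor the termination argument, and without them the descent can, as far as the sketch shows, loop forever.

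Two further points. For (iii), the claim that $\gamma(x_r)$ or $\lfloor\beta(x_r)\rfloor$ ``strictly drops'' when $\omega(x)\not\equiv 0\pmod p$ has the same circularity ($\gamma$ is defined only in the chapter assuming $\omega\equiv 0\pmod p$); the actual argument is that when $\epsilon(x)=\omega(x)$, proposition~\ref{indiff}(i) gives $F_{p,Z}\in k(x)[U_1,\ldots,U_e][U_{e+1}^p,\ldots,U_n^p]$, hence $J(F_{p,Z},E,m_S)\subset k(x)[U_1,U_2,U_3^p]$ contains no monic polynomial in $U_3$ of degree $\omega(x)$ unless $p\mid\omega(x)$; therefore (*) forces $\epsilon(x)=\omega(x)+1$ and $\tau'(x)\geq 2$, after which the type-(T1) iteration and corollary~\ref{permisarcthree} finish. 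Finally, for (ii), invoking theorem~\ref{omegageomreg} to reduce the case $k(x_1)\neq k(x)$ to the residually rational one is not sound: remark~\ref{stricthenselian} records that $\mathrm{Max}(x)$ and $\mathrm{Dir}(x)$ are \emph{not} compatible with regular base change, so condition (*) (which is a statement about $J$ and $\mathrm{Vdir}$) is not controlled under such a base change. The paper handles residue extensions directly, by a case split on whether $d'_1\in\N$ and whether relevant residues lie in $k(x_1)^p$; that analysis cannot be short-circuited the way you propose.
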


\begin{proof}
We first prove together (i) and (ii) by a casuistic analysis. The
discussion goes according to the value of $\tau '(x)$ and subdivides in the different situations
$\omega (x)=\epsilon (x)$ and $\omega (x)=\epsilon (x)-1$. \\

$\bullet$ {\it Assume that $\tau '(x)=3$.} Then $\iota (x_1)< \iota (x)$ by Theorem \ref{bupthm}, so $x$
is good and there is nothing more to be proved.\\

$\bullet$ {\it Assume that $\tau '(x)=1$ and $\omega (x)=\epsilon (x)$.} We may pick well adapted
coordinates $(u_1,u_2,u_3;Z)$ such that $U_3 \in \mathrm{Vdir}(x)$, so
$$
J(F_{p,Z},E,m_S)=<U_3^{\omega (x)}>.
$$
We deduce that $\omega (x)\equiv 0 \ \mathrm{mod}p$
and $x$ satisfies condition (*1) or (*2). This proves that (i) holds with $r=0$. \\

To prove (ii), we may assume that  $\iota (x_1)\geq \iota (x)$ (in particular $\omega (x_1)=\omega (x)$).
There is an expansion (\ref{eq720}) with
\begin{equation}\label{eq7201}
G=0 \ \mathrm{and} \ U_1^{-pd_1}U_2^{-pd_2}F_{p,Z}=\lambda U_3^{\omega (x)}, \ \lambda \neq 0.
\end{equation}
By Theorem \ref{bupthm}, $x_1$ lies on the strict transform of $\mathrm{div}(u_3)$. Let
$$
x':=(Z':={Z \over u_2}, u'_1:={u_1 \over u_2}, u_2, u'_3:={u_3 \over u_2}), \ E'=\mathrm{div}(u'_1u_2).
$$

If $x_1=x'$, then $\Delta_{S'} (h';u'_1,u_2,u'_3;Z')$ is minimal by Proposition \ref{originchart}. One deduces from (\ref{eq7201}) that
$$
\epsilon (x')=\omega (x) \ \mathrm{and} \ J(F_{p,Z'},E',m_{S'}) \equiv <U_3^{\omega (x)}> \ \mathrm{mod}
(U_2)\cap G(m_{S'})_{\epsilon (x')}.
$$
This proves that ($\iota (x')=\iota (x)$ and $x'$ satisfies condition (*2)), so (ii) holds.

\smallskip

If $x_1\neq x'$, there exists a  {monic} polynomial $P(t)\in S[t]$, whose reduction
$\overline{P}(t)\in k(x)[t]$ is irreducible, such that
\begin{equation}\label{eq7206}
    x_1=(X':={Z\over u_1},u_1,v_2:=P(u'_2),u'_3:={u_3 \over u_1}), \ u'_2:={u_2 \over u_1}, \ E_1=\mathrm{div}(u_1).
\end{equation}

We have $S_1/(u_1)\simeq k(x)[\overline{u}'_2,\overline{u}'_3]_{(\overline{v}_2,\overline{u}'_3)}$.
Let $(u_1, v_2,u'_3;Z_1)$ be well adapted coordinates at $x_1$, where
$Z_1=X' - \phi_1$, $\phi_1 \in S_1$.
Let $d'_1:=d_1+ d_2 -1+ \omega (x)/p$ and $c\in k(x_1)$ be the residue of $u'_2$.
Note that we may furthermore assume that $P(t)\neq t$ if $E=\mathrm{div}(u_1u_2)$
by symmetry  {between} $u_1$ and $u_2$, i.e. $c^{pd_2}\neq 0$ (and $c^{pd_2}=1$ if $c=0$).

\medskip

\noindent {\it Case 1:} $d'_1 \not \in \N$ or $\lambda c^{pd_2}\not \in k(x_1)^p$.
By (\ref{eq7201}), it can be assumed w.l.o.g. that $\mathrm{ord}_{(u_1)}\phi_1 > d'_1$.
The initial form $\mathrm{in}_{E_1}h_1$ of Lemma \ref{lemsortiekappaegaldeux} is then of the form:
$$
\mathrm{in}_{E_1}h_1={Z_1}^p +\lambda U_1^{pd'_1}{\overline{u}'_2}^{pd_2}{\overline{u}'_3}^{\omega (x)}
\in S_1/(u_1)[U_1][Z_1].
$$
We have $\epsilon (x_1)=\omega (x)$ and
$$
J(F_{p,Z_1},E_1,m_{S_1})\equiv <{U'_3}^{\omega (x)}> \ \mathrm{mod}
(U_1)\cap G(m_{S_1})_{\epsilon (x_1)}.
$$
Therefore $\iota (x_1)=\iota (x)$ and $x_1$ satisfies condition (*1), so (ii) holds.

\medskip

\noindent {\it Case 2:} $d'_1 \in \N$ and $\lambda c^{pd_2}\in k(x_1)^p$. It can be assumed w.l.o.g. that
$$
u_1^{-d'_1}\phi_1\equiv \gamma_1 {u'_3}^{{\omega (x)\over p}} \ \mathrm{mod}(u_1),
$$
where $\gamma_1 \in S_1$ is a preimage of $-(\lambda c^{pd_2})^{1/p}\in k(x_1)$.
Since $\Delta_S (h;u_1,u_2,u_3;Z)$ is minimal, we have
$$
0\neq d(\lambda U_1^{pd_1}U_2^{pd_2})\in \Omega^1_{G(m_S)/\F_p}.
$$
We deduce that $(u_1,v'_2:=\gamma {u'_2}^{pd_2} +\gamma_1^p, u'_3)$ is a r.s.p. of $S_1$, where $\gamma \in S$ is
a preimage of $\lambda$. Let $(u_1, v'_2,u'_3;Z'_1)$ be well adapted coordinates at $x_1$, so the initial form $\mathrm{in}_{E_1}h_1$
of Lemma \ref{lemsortiekappaegaldeux} is now of the form:
$$
\mathrm{in}_{E_1}h_1={Z'_1}^p +U_1^{pd'_1}\overline{v}'_2{\overline{u}'_3}^{\omega (x)} \in S_1/(u_1)[U_1][Z'_1].
$$
If $\epsilon (x_1)=\omega (x)$, then $x_1$ satisfies the assumptions of Lemma \ref{sortiemonome}, so $x_1$
is resolved. Otherwise we have $\epsilon (x_1)=1+\omega (x)$ and
$$
{H'}^{-1}{\partial F_{p,Z'_1} \over \partial V_2}\equiv <{U'_3}^{\omega (x)}> \ \mathrm{mod}
(U_1)\cap G(m_{S_1})_{\omega (x)}.
$$
Then there exist well adapted coordinates of the form $(u_1, v'_2,v_3;Z')$
at $x_1$ satisfying  Definition \ref{*kappadeux}, so $\iota (x_1)=\iota (x)$ and $x_1$
satisfies condition (*3).\\

$\bullet$ {\it Assume that $\tau '(x)=1$ and $\omega (x)=\epsilon (x)-1$.} By Definition \ref{defkappa},
we then have $H^{-1}{\partial F_{p,Z} \over \partial U_2}\neq (0)$, therefore
\begin{equation}\label{eq7205}
H^{-1}{\partial F_{p,Z} \over \partial U_2}=<U_3^{\omega (x)}>,
\end{equation}
so $x$ satisfies condition (*3). This proves that (i) holds.

\smallskip

To prove (ii), we may assume that  $\iota (x_1)\geq \iota (x)$. By (\ref{eq7205}), there is an expansion (\ref{eq720})
with
\begin{equation}\label{eq7202}
G=0, \  U_1^{-pd_1}F_{p,Z}=\lambda U_2U_3^{\omega (x)} + \Phi_0(U_2^p,U_3^p)+U_1\Phi (U_1,U_2^p,U_3^p), \lambda \neq 0.
\end{equation}
This furthermore implies that $\omega (x)\equiv 0 \ \mathrm{mod}p$, so $\Phi_0=0$. By Theorem \ref{bupthm},
$x_1$ lies on the strict transform of $\mathrm{div}(u_3)$. Note that we may furthermore assume
that
\begin{equation}\label{eq7203}
\lambda =1 \ \mathrm{and} \ \mathrm{deg}_{U_3}\Phi (U_1,U_2^p,U_3^p)< \omega (x)
\end{equation}
in (\ref{eq7202}): this is achieved by possibly changing $u_2$ to $\gamma_0u_2 + \gamma u_1$,
$\gamma_0 \gamma\in S$ a unit, then picking again well prepared coordinates. Let
$$
x':=(Z':={Z\over u_2}, u'_1:={u_1\over u_2}, u_2, u'_3:={u_3\over u_2}), \ E'=\mathrm{div}(u'_1u_2).
$$

If $x_1=x'$, the proof is identical  {to that in the case} when $\omega (x)=\epsilon (x)$: one gets
($\iota (x')=\iota (x)$ and $x'$ satisfies condition (*2)), so (ii) holds.

\smallskip

If $x_1\neq x'$, we let $d'_1:=d_1-1+ (1+\omega (x))/p$ and use  {the same notation as
in the case} $\omega (x)=\epsilon (x)$. We have $E_1=\mathrm{div}(u_1)$ and consider three cases.

\medskip

\noindent {\it Case 1:} $d'_1 \not \in \N$. By (\ref{eq7202}),
$\mathrm{ord}_{(u_1)}\phi_1 > d'_1$.

\smallskip

If $x_1=x'_1:=(Z'_1:=Z/u_1, u_1,u'_2:=u_2/u_1, u'_3:=u_3/u_1)$, we have $\Phi \in k(x)[U_2^p,U_3^p]$ and
the initial form $\mathrm{in}_{E_1}h_1$ of Lemma \ref{lemsortiekappaegaldeux} is of the form:
$$
\mathrm{in}_{E_1}h_1={Z'_1}^p +U_1^{pd'_1}(\overline{u}'_2{\overline{u}'_3}^{\omega (x)}+
\Phi ({\overline{u}'_2}^p,{\overline{u}'_3}^p))\in S_1/(u_1)[U_1][Z_1].
$$

If $\Phi =0$, we either have $\epsilon (x'_1)=\omega (x)$, so $x'_1$ satisfies the assumptions
of Lemma \ref{sortiemonome} and $x_1$ is resolved; or  $\epsilon (x'_1)=1+\omega (x)$ and
$$
{H'}^{-1}{\partial F_{p,Z'_1} \over \partial U'_2}\equiv <{U'_3}^{\omega (x)}> \ \mathrm{mod}
(U_1)\cap G(m_{S_1})_{\omega (x)}.
$$
Then  $\iota (x'_1)=\iota (x)$ and $x'_1$ satisfies condition (*3).

\smallskip

If $\Phi \neq 0$, we have $\epsilon (x'_1)=\omega (x)$ and
$$
{H'}^{-1}F_{p,Z'_1}\equiv <\Phi ({U'_2}^p,{U'_3}^p)> \ \mathrm{mod}
(U_1)\cap G(m_{S_1})_{\omega (x)}.
$$
If $U_2U_3$ divides $\Phi$, then $x_1$ is good by Proposition \ref{tauegaldeux}; otherwise $\Phi$
is monic in $U_2$ or in $U_3$, so $\iota (x'_1)=\iota (x)$ and $x'_1$ satisfies condition (*1).

\smallskip

If $x_1\neq x'_1$, then $\epsilon (x_1)=\omega (x)$, $\iota (x_1)=\iota (x)$
and $x_1$ satisfies condition (*1).

\medskip

\noindent {\it Case 2:} $d'_1 \in \N$ and $c \not \in k(x_1)^p$ (remember that $c\in k(x_1)$ is the residue of $u'_2$).
With notations as in (\ref{eq7206}) {\it sqq.}, we get $\epsilon (x')=\omega (x)$ and
$$
{H'}^{-1}F_{p,Z_1}\equiv <c{U'_3}^{\omega (x)} +\Phi_1 ({U'_2}^p,{U'_3}^p)> \ \mathrm{mod}
(U_1)\cap G(m_{S_1})_{\omega (x)},
$$
where $\mathrm{deg}_{U'_3}\Phi_1 ({U'_2}^p,{U'_3}^p)<\omega (x)$ by (\ref{eq7203}).
Therefore $\iota (x_1)=\iota (x)$ and $x_1$ satisfies condition (*1).

\medskip

\noindent {\it Case 3:} $d'_1 \in \N$ and $c \in k(x_1)^p$. It can be assumed w.l.o.g. that
$$
u_1^{-d'_1}\phi_1 \equiv \gamma_1 {u'_3}^{{\omega (x) \over p}} +
\sum_{i=1}^{{\omega (x) \over p}}\psi_i{u'_3}^{{\omega (x)\over p} -i} \ \mathrm{mod}(u_1),
$$
where $\gamma_1 \in S_1$ is a preimage of $c^{1/p}\in k(x_1)$ and
$$
\overline{\psi}_i\in k(x)[\overline{u}'_2]_{(\overline{v}_2)}\subset S_1/(u_1), \ 1 \leq i \leq {\omega (x)\over p}.
$$
Then $(u_1,v'_2:=u'_2  +\gamma_1^p, u'_3)$ is a r.s.p. of $S_1$ ({\it viz.}
above $\omega (x)=\epsilon (x)$, case 2). Let $(u_1,v'_2,u'_3;Z'_1)$ be well adapted coordinates. We have
$$
\mathrm{in}_{E_1}h_1={Z'_1}^p +U_1^{pd'_1}(\overline{v}'_2{\overline{u}'_3}^{\omega (x)}
+\Psi (\overline{u}'_2,\overline{u}'_3)) \in S_1/(u_1)[U_1][Z'_1],
$$
where $\Psi (\overline{u}'_2,\overline{u}'_3)\in k(x)[\overline{u}'_2]_{(\overline{v}_2)}[\overline{u}'_3]$,
$\mathrm{ord}_{(\overline{v}'_2,\overline{u}'_3)}\Psi \geq \omega (x)$. Since $\omega (x')=\omega (x)$, we have
\begin{equation}\label{eq7204}
\overline{\Psi}:=\mathrm{cl}_{\omega (x)}\Psi (\overline{u}'_2,\overline{u}'_3)
\in ({\overline{V}'_2}^p k(x')[{\overline{V}'_2}^p,{\overline{U}'_3}^p])_{\omega (x)}.
\end{equation}

If ($\epsilon (x_1)=\omega (x)$ and $\overline{\Psi}=0$), then $x_1$ satisfies the assumptions
of Lemma \ref{sortiemonome}, so $x_1$ is resolved.

\smallskip

If ($\epsilon (x_1)=\omega (x)$ and $0 \neq \overline{\Psi}\in <{\overline{V}'_2}^{\omega (x)}>$), we have
$$
J(F_{p,Z'_1},E_1,m_{S_1})\equiv <{V'_2}^{\omega (x)}> \ \mathrm{mod}
(U_1)\cap G(m_{S_1})_{\omega (x)}.
$$
Therefore $\iota (x_1)=\iota (x)$ and $x_1$ satisfies condition (*1).

\smallskip

If ($\epsilon (x_1)=\omega (x)$ and $\overline{\Psi}\not \in <{\overline{V}'_2}^{\omega (x)}>$), we have
$$
\kappa (x')=2 \ \mathrm{and} \ \mathrm{Vdir}(x')+k(x')U_1 =  <U_1,V'_2, U'_3> ,
$$
so $x_1$ is good by Lemma \ref{tauegaldeux}.

\smallskip

If $\epsilon (x_1)=1+\omega (x)$, we have
$$
{H'}^{-1}{\partial F_{p,Z'_1} \over \partial V'_2}\equiv <{U'_3}^{\omega (x)}> \ \mathrm{mod}
(U_1,V'_2)\cap G(m_{S_1})_{\omega (x_1)}.
$$
Then there exist well adapted coordinates of the form $(u_1, v'_2,v_3;Z')$
at $x_1$ satisfying  Definition \ref{*kappadeux}, so $\iota (x_1)=\iota (x)$ and $x_1$
satisfies condition (*3). This concludes the proof of (ii) when $\tau '(x)=1$. \\

$\bullet$ {\it Assume that $\tau '(x)=2$.} Up to a change of well adapted coordinates, it is easily seen that
$x$ belongs to one of the following types:\\

\noindent (T0) $\omega (x)=\epsilon (x)$, $E=\mathrm{div}(u_1)$ and $\mathrm{Vdir}(x)=<U_3,U_2>$;

\smallskip

\noindent (T1) $\omega (x)=\epsilon (x)-1$ and $\mathrm{Vdir}(x)=<U_3,U_2>$;

\smallskip

\noindent (T2) $\omega (x)=\epsilon (x)$, $E=\mathrm{div}(u_1u_2)$ and  $\mathrm{Vdir}(x)=<U_3, U_1 + \lambda U_2>$
with $\lambda \neq 0$;

\smallskip

\noindent (T3) $\omega (x)=\epsilon (x)$ and  $\mathrm{Vdir}(x)=<U_3,U_1>$;

\smallskip

\noindent (T4) $\omega (x)=\epsilon (x)-1$ and  $\mathrm{Vdir}(x)=<U_3,U_1>$.

\medskip

{\it Claim:} suppose $x$ is of type (Tk)\index{(Tk)@(Tk), $0 \leq k \leq 4$}, $0 \leq k \leq 4$. Then $x_1$ is resolved or
one of the following properties hold:
\begin{itemize}
  \item [(a)] $\iota (x_1)=\iota (x)$ and $x_1$ satisfies condition (*);
  \item [(b)] $\iota (x_1)=\iota (x)$, $\tau '(x_1)=2$ and $x_1$ is of type (Tl) with $l\leq k$.
\end{itemize}

If moreover $x$ satisfies condition (*), then $x_1$ is resolved or (a) holds.\\

To prove the claim, we do a case by case analysis. If $k=0$, then $x$ is good by Proposition \ref{tauegaldeux}.
\\

{\it Assume that $k=1$}. There is an expansion (\ref{eq720}) with
$$
G=0 \ \mathrm{and} \ U_1^{-pd_1}F_{p,Z}=F_{1+\omega (x)}(U_2,U_3) +
\sum_{i=1}^{1+\omega (x)}F_{1+\omega (x)-i}(U_2,U_3)U_1^i.
$$
Since $\mathrm{Vdir}(x)=<U_2,U_3>$, we have
\begin{equation}\label{eq721}
\left\{
  \begin{array}{c}
    \mathrm{Vdir}\left ({\partial F_{1+\omega (x)} \over \partial U_2}, {\partial F_{1+\omega (x)} \over \partial U_3}
    \right )=<U_2,U_3> \hfill{}\\
      \\
    F_{1+\omega (x)-i}(U_2,U_3)\in k(x)[U_2^p,U_3^p], \ 1 \leq i \leq 1+\omega (x) \\
  \end{array}
\right.
.
\end{equation}
Assume that $\iota (x')\geq \iota (x)$. By Theorem \ref{bupthm}, $x_1=x'$, where
$$
x':=(Z':=Z/ u_1, u_1, u'_2:=u_2/u_1, u'_3:=u_3/u_1).
$$
We have
$$
E'=\mathrm{div}(u_1), S'/(u_1)\simeq k(x)[\overline{u}'_2,\overline{u}'_3]_{(\overline{u}'_2,\overline{u}'_3)}
\ \mathrm{and} \ H(x')= (u_1^{p(d_1-1)+1+\omega (x)}).
$$
Assume that $\iota (x')\geq \iota (x)$. By Proposition \ref{originchart}, $\Delta_{S'} (h';u_1,u'_2,u'_3;Z')$ is minimal.
The initial form $\mathrm{in}_{E'}h'$ of Lemma \ref{lemsortiekappaegaldeux} is of the form:
$$
\mathrm{in}_{E'}h'={Z'}^p +U_1^{p(d_1-1)+1+\omega (x)}\left (F_{1+\omega (x)}(\overline{u}'_2,\overline{u}'_3) +
\sum_{i=1}^{1 +\omega (x)}F_{1+\omega (x)-i}(\overline{u}'_2,\overline{u}'_3)\right ).
$$
This proves that $F_i(U_2,U_3)=0$, $2 \leq i \leq 1+\omega (x)$. We consider two cases:

\smallskip

\noindent {\it Case 1:} $F_{\omega (x)}(U_2,U_3)=0$. If $\epsilon (x')=\omega (x)$,
then $x'$ satisfies all assumptions
of Proposition \ref{sortiekappaegaldeux} by (\ref{eq721}), so $x$ is good.

\smallskip

If $\epsilon (x')=\epsilon (x)$, then $\iota (x')=\iota (x)$ and
$$
{H'}^{-1}{\partial F_{p,Z'} \over \partial U'_j}
\equiv <{\partial F_{1+\omega (x)} \over \partial U_j}(U'_2,U'_3)>
\ \mathrm{mod}(U_1)\cap G(m_{S'})_{\omega (x)},
$$
for $j=2,3$ again by (\ref{eq721}). We conclude that $\tau '(x')=3$ (so $x$ is good) or $x'$ is again of type (T1)
as required. If $x$ satisfies condition (*), so does $x'$.

\smallskip

\noindent {\it Case 2:} $F_{\omega (x)}(U_2,U_3)\neq 0$. We have $\epsilon (x')=\omega (x)$ and
$$
\mathrm{in}_{m_{S'}}h'={Z'}^p + U_1^{p(d_1-1)+1+\omega (x)}(F_{\omega (x)}(U'_2,U'_3)+U_1\Phi '),
\  \Phi ' \in k(x')[U_1,{U'_2}^p,{U'_3}^p].
$$
Therefore $\iota (x')= \iota (x)$.
If $F_{\omega (x)}(U_2,U_3)$ is monic in $U_2$ or in $U_3$, then $x'$ satisfies condition (*1).
Otherwise $x'$ is of type (T0) and the conclusion follows.\\

Note that if $\omega (x)=1$, $x$ is of type (T1) and satisfies condition (*3). So we may assume from this point on
that $\omega (x)\geq 2$.\\

{\it Assume that $k=2$}. There is an expansion (\ref{eq720}) with $G=0$ and
$$
F_{p,Z}=U_1^{pd_1}U_2^{pd_2}\sum_{i=0}^{\omega (x)}F_{i}(U_1,U_2)U_3^{\omega (x)-i}.
$$
Note that $F_i(U_1,U_2)=0$ whenever $\omega (x)-i \not \equiv 0 \ \mathrm{mod}p$, since $\omega (x)=\epsilon (x)$;
we have $F_i\neq 0$ for some $i$, $0\leq i \leq \omega (x)-1$ since $\kappa (x)=2$; moreover
$F_0\neq 0$ iff $x$ satisfies condition (*).

\smallskip

Assume that $\iota (x')\geq \iota (x)$. By Theorem \ref{bupthm},
we have
$$
x_1=x':=(X':=Z/u_1, u_1, u'_2:=1+ \gamma  u_2/u_1 , u'_3:=u_3/u_1),
$$
$\gamma \in S$ being a preimage of $\lambda$. We have
$$
E'=\mathrm{div}(u_1), \ k(x')=k(x) \ \mathrm{and} \ H(x')= (u_1^{p(d_1+d_2-1)+\omega (x)}).
$$
Assume that $\iota (x')\geq \iota (x)$. Since $\mathrm{Vdir}(x)=<U_3,U_1 + \lambda U_2>$,
we consider two cases deduced  from Lemma \ref{joyeux}:

\smallskip

\noindent {\it Case 1:} $\omega (x)\equiv 0 \ \mathrm{mod}p$. By Lemma \ref{joyeux}(i), it can be assumed w.l.o.g
that
\begin{equation}\label{eq723}
    F_{pi}(U_1,U_2)=c_{pi}(U_1 + \lambda U_2)^{pi}, \ c_{pi} \in k(x), 1 \leq i \leq {\omega (x) \over p}.
\end{equation}
After blowing up, there is an expansion $\mathrm{in}_{m_{S'}}h'={X'}^p + F_{p,X'}$, where
\begin{equation}\label{eq7231}
U_1^{-pd'_1}F_{p,X'}=(- \lambda)^{-pd_2}\sum_{i=0}^{\omega (x)/ p}c_{pi}{U'_2}^{pi}{U'_3}^{\omega (x)-pi}
+U_1\Phi ',
\end{equation}
for some $\Phi ' \in k(x)[U_1,{U'_2}^p,{U'_3}^p]$, $d'_1:=d_1+d_2 -1 +\omega (x)/p$.

\smallskip

If $d'_1 \not \in \N$, then $\epsilon (x')=\omega (x)$ and $\iota (x')=\iota (x)$.
Moreover
$$
k(x')U_1 +\mathrm{Vdir}(x')= <U_1,U'_2,U'_3>,
$$
so $\tau '(x')=3$  or $x'$ is of type (T0). In both cases, $x$ is good.

\smallskip

If $(d_1,d_2)  \in \N^2$, it can be assumed furthermore that $c_{pi}=0$ or $c_{pi} \not \in k(x)^p$ in (\ref{eq723}).
We have $d'_1  \in \N$ and we also get $\epsilon (x')=\omega (x)$ and $\iota (x')=\iota (x)$. Since
$$
J(F_{p,Z},E,x)=H^{-1}<{\partial F_{p,Z}\over \partial \lambda_l}_{l \in \Lambda_0}>
$$
with notations as in (\ref{eq244}), we get in any case since $k(x')=k(x)$:
$$
k(x')U_1 +\mathrm{Vdir}(x')= <U_1,U'_2,U'_3>.
$$
Therefore $\tau '(x')=3$  or $x'$ is of type (T0), so $x$ is good.

\smallskip

If $d'_1 \in \N$, $d_2 \not  \in \N$, we define
$$
I:=\{i : (-\lambda)^{-pd_2}c_{pi} \not\in k(x)^p\}.
$$

If $I\neq \emptyset$, we also get $\epsilon (x')=\omega (x)$ and $\iota (x')=\iota (x)$.
If $\omega (x)\in I$, $x'$ satisfies condition (*1); otherwise $x'$ is good.

\smallskip

If $I=\emptyset$, let $(u_1,u'_2,u'_3;Z')$ be
well adapted coordinates at $x'$. We denote by $a\in \F_p$ the residue of $pd_2$.
Since $d_2 \not  \in \N$, we have $a\neq 0$. The initial form $\mathrm{in}_{E'}h'$
of Lemma \ref{lemsortiekappaegaldeux} is of the form:
$$
\mathrm{in}_{E'}h'={Z'}^p +U_1^{pd'_1}\overline{F}'(\overline{u}'_2,\overline{u}'_3)\in S'/(u_1)[U_1,Z'],
$$
where $ S'/(u_1) \simeq k(x)[\overline{u}'_2,\overline{u}'_3]_{(\overline{u}'_2,\overline{u}'_3)}$.
The form $\Phi ':= \mathrm{cl}_{\omega (x)+1}\overline{F}'$ is given by
$$
\Phi '= -a (-\lambda )^{-pd_2}
\sum_{i=0}^{\omega (x)/p}c_{pi}{\overline{U}'_2}^{pi+1}{\overline{U}'_3}^{\omega (x)-pi}
\in k(x)[\overline{U}'_2,\overline{U}'_3]_{\omega (x)+1}.
$$
If $\epsilon (x')=\omega (x)$, $x'$ thus satisfies all assumptions of Proposition
\ref{sortiebis}, so $x$ is good. Otherwise, we have $\epsilon (x')=1+\omega (x)$ and
$$
k(x')U_1 +\mathrm{Vdir}(x')= <U_1,U'_2,U'_3>.
$$
Therefore $\iota (x')=\iota (x)$ and $x$ is good (if $\tau (x')=3$) or $x'$ is of type (T1).
If $x$ satisfies condition (*2), i.e. $c_0 \neq 0$, then $x'$ satisfies condition (*3).

\medskip

\noindent {\it Case 2:} $\omega (x)\not \equiv 0 \ \mathrm{mod}p$. Recall that $F_i(U_1,U_2)=0$ whenever
$\omega (x)-i \not \equiv 0 \ \mathrm{mod}p$. Therefore $a:=\widehat{\omega (x)}=\widehat{i}$ whenever $F_i\neq 0$.
Let $a_j:=\widehat{pd_j}$, $j=1,2$. By Lemma \ref{joyeux}(ii), we have $a_1a_2 \neq 0$, $a_1+a_2+a=p$.
Moreover, it can be assumed w.l.o.g. that
\begin{equation}\label{eq7232}
    U_1^{a(1)}U_2^{a(2)}F_{i}(U_1,U_2)=c_i\Phi_{i}(U_1,\lambda U_2), \ c_i \in k(x)^p, 1 \leq i \leq \omega (x),
\end{equation}
with notations as in (\ref{eq7071}). After blowing up, the initial form $\mathrm{in}_{E'}h'$
of Lemma \ref{lemsortiekappaegaldeux} is of the form:
$$
\mathrm{in}_{E'}h'={Z'}^p +U_1^{pd'_1}\overline{F}'(\overline{u}'_2,\overline{u}'_3)\in S'/(u_1)[U_1,Z'],
$$
where $ S'/(u_1) \simeq k(x)[\overline{u}'_2,\overline{u}'_3]_{(\overline{u}'_2,\overline{u}'_3)}$.
The form $\Phi ':= \mathrm{cl}_{\omega (x)+1}\overline{F}'$ is given explicitly by
$$
\Phi '= \left(
  \begin{array}{c}
   a_2+a \\
    a+1 \\
  \end{array}
\right)
\sum_{i=0}^{\lfloor \omega (x)/ p \rfloor }c_{pi+a}{\overline{U}'_2}^{a+pi+1}{\overline{U}'_3}^{\omega (x)-a-pi}
\in k(x)[\overline{U}'_2,\overline{U}'_3]_{\omega (x)+1}.
$$
If $\epsilon (x')=\omega (x)$, $x'$ thus satisfies all assumptions of Proposition
\ref{sortiebis}, so $x$ is good. Otherwise, we have $\epsilon (x')=1+\omega (x)$ and
$$
k(x')U_1 +\mathrm{Vdir}(x')= <U_1,U'_2,U'_3>.
$$
Therefore $\iota (x')=\iota (x)$ and $x$ is good (if $\tau (x')=3$) or is of type (T1).
Note that $x$ did not satisfy condition (*2): since $J(F_{p,Z},E,m_S)\subset k[U_1,U_2,U_3^p]_{\omega (x)}$
and $\omega (x)\not\equiv 0 \ \mathrm{mod}p$,  $J(F_{p,Z},E,m_S)$ contains no monic polynomial in $U_3$. \\

{\it Assume that $k=3$}. There is an expansion (\ref{eq720}) with $G=0$ and
$$
U_1^{-pd_1}U_2^{-pd_2}F_{p,Z}= \sum_{i=0}^{\omega (x)}\lambda_i U_3^{\omega (x)-i}U_1^i.
$$
Assume that $\iota (x')\geq \iota (x)$. By Theorem \ref{bupthm}, we have $x_1=x'$, where
$$
x':=(Z':=Z/u_2, u'_1:=u_1/u_2, u_2, u'_3:=u_3/u_2).
$$
By Proposition \ref{originchart}, $\Delta_{S'} (h';u'_1,u_2,u'_3;Z')$ is minimal and we have
$$
\mathrm{in}_{m_{S'}}h'={Z'}^p + {U'_1}^{pd_1}U_2^{pd'_1}
(\sum_{i=0}^{\omega (x)}\lambda_i {U'_3}^{\omega (x)-i}{U'_1}^i+U_2\Phi '),
$$
where $d'_1:=d_1+d_2-1+\omega (x)/p$, $\Phi ' \in k(x')[U'_1,U_2,{U'_3}^p]$. since it is assumed that $\iota (x')\geq \iota (x)$. Then
$$
\iota (x')=\iota (x) \ \mathrm{and} \ k(x')U_2 + \mathrm{Vdir}(x')=<U'_1,U_2,U'_3>.
$$
We conclude that $\tau '(x')=3$ (so $x$ is good) or $x'$ is of either type (T2) or (T3).
If moreover $x$ satisfies condition (*), i.e. $\lambda_0 \neq 0$, then $x'$ satisfies condition (*2).\\

{\it Assume that $k=4$}. We have $H^{-1}G^p \subseteq k(x)U_1^{\omega (x)+1}$ and
there is an expansion (\ref{eq720}) with
\begin{equation}\label{eq7233}
    U_1^{-pd_1}F_{p,Z}= F_{1+\omega (x)}(U_1,U_3) + \sum_{i=1}^{1+\omega (x)}F_{1+\omega (x)-i}(U_1,U_3)U_2^i.
\end{equation}
Assume that $\iota (x')\geq \iota (x)$. By Theorem \ref{bupthm}, we have $x_1=x'$, where
$$
x':=(Z':=Z/u_2, u'_1:=u_1/u_2, u_2, u'_3:=u_3/u_2), \ E'=\mathrm{div}(u'_1u_2).
$$
By Proposition \ref{originchart}, $\Delta_{S'} (h';u_1,u'_2,u'_3;Z')$ is minimal. We deduce
from (\ref{eq7233}) that
$$
F_{1+\omega (x)-i}(U_1,U_3)=0, \ 2 \leq i \leq 1 +\omega (x),
$$
since it is assumed that $\iota (x')\geq \iota (x)$. Since $\kappa (x)=2$, we deduce from
Definition \ref{defkappa} that
\begin{equation}\label{eq7235}
    F_{\omega (x)}(U_1,U_3)\not \in <U_1^{\omega (x)}>.
\end{equation}
In particular, we get from (\ref{eq7233}):
$$
\epsilon (x')=\omega (x) \ \mathrm{and} \ \mathrm{Vdir}(x') \nsubseteq <U'_1,U_2>.
$$
The initial form polynomial $\mathrm{in}_{m_{S'}}h'$ is therefore given by
\begin{equation}\label{eq7234}
    \mathrm{in}_{m_{S'}}h'={Z'}^p + {U'_1}^{pd_1}U_2^{pd'_2}(F_{\omega (x)}(U'_1,U'_3)  +U_2\Phi ')
\end{equation}
where $d'_2:= d_1+d_2-1+(1+\omega (x))/p$, $\Phi ' \in k(x')[U'_1,U_2,U'_3]$.
This proves that $\iota (x')=\iota (x)$.

\smallskip

Suppose that $x$ satisfies condition (*3), i.e. $F_{\omega (x)}(U_1,U_3)$ is unitary in $U_3$.
We deduce from (\ref{eq7234}) that $x'$ satisfies condition (*2). Otherwise, $U_1$ divides
$F_{\omega (x)}(U_1,U_3)$  and we deduce from (\ref{eq7235}) that
$$
k(x')U_2 + \mathrm{Vdir}(x')=<U'_1,U_2,U'_3>.
$$
Then $x$ is good (if $\tau '(x')=3$), or ($\tau '(x')=2$ and $x'$ is of type (T2) or (T3)).
This concludes the proof of the claim. In particular, we have proved (ii).\\

We now prove (i). Suppose on the contrary that for every $r \geq 0$, $x_r$ does not satisfy condition (*).
The above proof shows that $x_r$ is resolved for some $r \geq 0$ or there exists $r_0 \geq 0$
such that for every $r \geq r_0$, we have
$$
\tau '(x_r)=2 \ \mathrm{and} \  x_r \ \mathrm{is} \ \mathrm{of} \ \mathrm{type} \ (Tk)
$$
where $k \in \{1,3\}$ is independent of $r$. If $k=1$, we derive a contradiction
from Corollary \ref{permisarcthree}.

\medskip

If $k=3$, there exists
$$
\hat{u}_3:=u_3 -\sum_{i=1}^\infty \gamma_{i,3}u_2^i \in \hat{S};
\ \hat{\phi}:= \sum_{i=1}^\infty \gamma_{i}u_2^i\in \hat{S}
$$
with the following property: for every $i \geq 0$, we have $\iota (x_i)=\iota (x)$ and
the strict transform in $({\cal X}_i,x_i)$ of the
formal curve $\hat{{\cal Y}}=(Z-\hat{\phi},u_1,\hat{u}_3)\subset \hat{{\cal X}}$ is nonempty.

\smallskip

Note that the conclusion of Proposition \ref{permisarc}(2) applied to the formal arc
$\varphi: \ \hat{{\cal Y}}\rightarrow {\cal X}$ does not hold. To see this, note that {\it ibid.}(2.b)
implies that $Z_{r_0(\varphi)}$ is an irreducible component of $E_{r_0}$; by {\it ibid.}(2.c) we have $\epsilon (x_{r_0})=1$:
a contradiction, since it is assumed (from the beginning of this proof) that $\omega (x)\geq 2$.

\smallskip

Therefore the conclusion of Proposition \ref{permisarc}(1) holds. Let $(u'_1, u'_2,u'_3;Z')$
be well adapted coordinates at $x_{r_0}$, where ${\cal Y}:=(Z',u'_1,u'_3)\subset ({\cal X}_{r_0},x_{r_0})$
is permissible of the first kind at $x_0$. Since $\mathrm{Vdir}(x_{r_0})=<U'_1,U'_3>$, $x_{r_0}$ is good
by Theorem \ref{bupthm}, hence $x$ is good.\\

To prove (iii), it can be assumed by (i) that $x$ satisfies condition (*). Suppose that
$\epsilon (x)=\omega (x)$. Then $J(F_{p,Z},E,x)$ contains no monic polynomial in $U_3$,
since $\omega (x)\not \equiv 0 \ \mathrm{mod}p$. So $\epsilon (x)=\omega (x)+1$. It has been proved
above that
$$
\tau '(x)=1 \Longrightarrow \omega (x)\equiv 0 \ \mathrm{mod}p.
$$
We deduce that $\tau '(x)\geq 2$. Therefore $x_r$ is resolved for some $r\geq 0$ or
$$
\iota (x_i)=\iota (x), \ \epsilon (x_i)=\omega (x)+1 \ \mathrm{and} \ \tau '(x_i)=2
$$
for every $i\geq 0$. The above claim shows that $x_r$ is of type (T1) for every $r>>0$.
We get $x_r$ resolved for some $r\geq 0$ arguing as in the above proof of (i), so $x$ is good.
\end{proof}

A direct consequence of Proposition \ref{redto*}(iii) and Remark \ref{rempermkappa2} is:

\begin{cor}\label{omeganoncongp}
Projection Theorem \ref{projthm} holds when $\kappa (x)=2$ and $\omega (x)\not \equiv 0 \ \mathrm{mod}p$.
One may take all local blowing ups in (\ref{eq402}) permissible of the first kind if $p=2$
or if $\omega (x)\geq 3$.
\end{cor}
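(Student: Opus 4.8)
The plan is to obtain Corollary \ref{omeganoncongp} as a formal consequence of Proposition \ref{redto*}(iii), reading off the statement about the kind of the blowing ups from Remark \ref{rempermkappa2}.

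First I would observe that the substantive content of the corollary — namely that $x$ is good — is exactly what Proposition \ref{redto*}(iii) asserts under the hypotheses $\kappa(x)=2$ and $\omega(x)\not\equiv 0\ \mathrm{mod}\,p$. By Definition \ref{defgood}, ``good'' means ``resolved for $\iota(x)$'', which unwinds to: for every valuation $\mu$ of $L=k(\mathcal{X})$ centered at $x$ there exists a finite and independent composition of local Hironaka-permissible blowing ups (\ref{eq402}) with $\iota(x_r)<\iota(x)$. This is precisely the conclusion of the Projection Theorem \ref{projthm}, so the first assertion of the corollary requires no new argument; one only has to check that the independence and Hironaka-permissibility constraints in the statement of theorem \ref{projthm} are the ones already satisfied in Proposition \ref{redto*}(iii), which they are by construction.

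Second, for the refinement on the kind of the centers, I would run through the blowing ups actually used in the proof of Proposition \ref{redto*} — hence those occurring in the ``sortie'' lemmas and propositions of section 7.1 that it invokes, together with the curve blowing ups supplied by Proposition \ref{permisarc}(1) and the quadratic transforms. Remark \ref{rempermkappa2} records that all of these are permissible of the first kind except in the two situations $p\geq 3$, $\omega(x)=2$ (proof of Lemma \ref{sortiemonome}) and $p\geq 3$, $\omega(x)=1$ (proof of Lemma \ref{sortiekappaegaldeux}); both have $p\geq 3$ and $\omega(x)\leq 2$. Imposing the extra hypothesis ``$p=2$ or $\omega(x)\geq 3$'' rules these out, so under that hypothesis every blowing up appearing in (\ref{eq402}) is permissible of the first kind. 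The additional requirement built into permissibility of the first kind, namely codimension at least two, is automatic here: the centers are closed points (in a threefold) or curves, and in the relevant steps $\epsilon$ is constant along them by construction.

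The only point that I expect to require genuine care — a bookkeeping obstacle rather than a conceptual one — is matching the precise formal shapes of the two statements. Proposition \ref{redto*} is phrased via the quadratic sequence and the notion ``resolved'', and its parts (i)--(iii) interact through Remark \ref{quadsequence} (permitting replacement of $x$ by $x_{r(\mu)}$ along an arbitrary independent sequence) and through the finiteness inputs Corollary \ref{permisarcthree} and Proposition \ref{permisarc}. I would verify that stitching these together indeed yields, for each $\mu$, one finite independent sequence with $\iota$ strictly decreasing, and that concatenating the finitely many curve blowing ups and quadratic transforms preserves independence. This is delicate only because intermediate points $x_i$ are explicitly allowed to satisfy $\kappa(x_i)>\kappa(x)=2$, so one cannot invoke monotonicity of $\kappa$ as a shortcut; the type-$(\mathrm{Tk})$ analysis of the proof of Proposition \ref{redto*} has to be carried to completion. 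Once that is in hand, the corollary follows.
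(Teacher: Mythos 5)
Your proof follows exactly the paper's route: the paper presents the corollary as ``a direct consequence of Proposition \ref{redto*}(iii) and Remark \ref{rempermkappa2},'' which is precisely the two-step reduction you carry out (unwind ``good'' to the conclusion of Theorem \ref{projthm}, then observe that the hypothesis $p=2$ or $\omega(x)\geq 3$ excludes the exceptional cases $p\geq 3$, $\omega(x)\leq 2$ flagged in the remark). The bookkeeping concerns you raise in the final paragraph are legitimate but are already discharged inside the proof of Proposition \ref{redto*}, so nothing further is needed.
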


\begin{rem}\label{kappadeuxancien}
Assume that $\kappa (x)=2$, $\omega (x)\equiv 0 \ \mathrm{mod}p$ and use notations as in
Proposition \ref{redto*}.

\smallskip

Suppose that $x$ satisfies condition (*1) or (*2) and $x_1$ satisfies condition (*3).
It follows from the above proof that $x_1$ is resolved or there exist well adapted coordinates
$(u_1,u_2,u_3;Z)$ at $x_1$ such that
\begin{equation}\label{eq7236}
    {H}^{-1}{\partial F_{p,Z} \over \partial U_2}\equiv <\Phi (U_2,U_3)>
    \ \mathrm{mod}(U_1)\cap G(m_{S})_{\omega (x)},
\end{equation}
where $\Phi (U_2,U_3) \in k(x_1)[U_2,{U_3}^p]$. This is precisely the definition used by the authors
for $\kappa (x)=2$ when $\epsilon (x)=1+\omega (x)$ in \cite{CoP2} {\bf I.1}(ii) on p.1899.

\smallskip

Suppose now that $\kappa (x)=2$, $x$ satisfies condition (*3) and $(u_1,u_2,u_3;Z)$ are well adapted coordinates
satisfying the requirements in Definition \ref{*kappadeux}. It also follows from the above proof that $x$ is good or
$$
H^{-1}{\partial F_{p,Z} \over \partial U_2}=
\left\{
  \begin{array}{ccc}
    <U_3^{\omega (x)}> \hfill{}& \mathrm{if} & \tau '(x)=1 \hfill{}\\
     &  &  \\
    <F_{\omega (x)} (U_1,U_3)> & \mathrm{if} & \mathrm{Vdir}(x)=<U_1,U_3> \\
  \end{array}
\right.
.
$$
In particular, (\ref{eq7236}) holds in both cases with $<\Phi >=<U_3^{\omega (x)}> $. We deduce the following:
there exists $r\geq 0$ such that $x_r$ is resolved or for every $r>>0$, we have ($\iota (x_r)=\iota (x)$,
$x_r$ satisfies condition (*)) and
$$
x_r \ \mathrm{satisfies} \ \mathrm{condition} \ (*3) \Longrightarrow (\ref{eq7236}) \ \mathrm{holds} \
\mathrm{at} \ x_r .
$$
Namely, otherwise we would have ($\iota (x_r)=\iota (x)$, $\tau ' (x_r)=2$ and $x_r$ is of type (T1))
for every $r>>0$ by the above. But this implies that $x_r$ is resolved for some $r\geq 0$ ({\it viz.}
proof of Proposition \ref{redto*}(i) for $\tau '(x)=2$).

\smallskip

This matches the present definition of $\kappa (x)=2$ with that used in \cite{CoP2}, and reduces
the proof to the same situation (\ref{eq7236}).
\end{rem}

\subsection{Monic expansions: secondary invariants.}

Proposition \ref{redto*}(i) has reduced the proof of the Projection Theorem to those points with
$\kappa (x)=2$ satisfying condition (*). Moreover, we may assume that $\omega (x)\equiv 0 \ \mathrm{mod}p$
by Corollary \ref{omeganoncongp}. For such points, we introduce a new invariant $\gamma (x)\in \N$ in
Definition \ref{definvariants2}.

\smallskip

{\it We assume in this section and in the following one
that $\omega (x)\equiv 0 \ \mathrm{mod}p$ and $x$ satisfies condition (*). }

\smallskip

Let $(u_1,u_2,u_3;Z)$ be well adapted coordinates satisfying the condition in Definition \ref{*kappadeux}.
If $x$ satisfies condition (*1) or (*2) (resp. condition (*3)), then
\begin{equation}\label{eq730}
\mathbf{v}_0:= (\mathbf{b}_0,{\omega (x)\over p}),
\ \mathbf{b}_0:=(d_1,d_2) \ (\mathrm{resp.} \ \mathbf{b}_0:= (d_1,{1 \over p}))
\end{equation}
is a vertex of $\Delta_S (h;u_1,u_2,u_3;Z)$. Consider the projection from the point $\mathbf{v}_0$:
$$
\mathbf{p}'_2 : \R^3 \backslash \{x_3=\omega (x)/ p\}\longrightarrow \A :=\mathbf{b}_0 +\{(x_1,x_2,0),\   {(x_1,x_2)} \in \R^2\}.
$$
We view here $\A$ as an {\it affine} plane with origin $\mathbf{b}_0$ and coordinates $(x_1,x_2)$. Of course
$\A$ as a set is independent of our choice of $ \mathbf{b}_0$. Let $\mathbf{p}_2:=\tau \circ \mathbf{p}'_2$,
where
$$
\tau : \A \longrightarrow \A , \ \mathbf{b}_0+(y_1,y_2) \mapsto
\mathbf{b}_0+{1 \over {\omega (x) \over p}}(y_1,y_2).
$$
Analytically, we have:
\begin{equation}\label{eq7302}
\mathbf{p}_2 : \ (x_1,x_2,x_3) \mapsto \mathbf{b}_0 + {(x_1,x_2) -\mathbf{b}_0 \over {\omega (x)\over p}-x_3}.
\end{equation}

{\it From now on, we will use affine coordinates in $\A$, i.e. $(y_1,y_2)\in \R^2$ represents the point
$\mathbf{b}_0+(y_1,y_2)\in \A$. }

\smallskip

In explicit terms, when a monomial, say $u_1^{pd_1}u_2^{pd_2}u_3^{\omega(x)-i}u_1^{a_1} u_2^{a_2}$, $i>0$,
defines the vertex
$$
\mathbf{x}=(d_1+{a_1\over p},d_2+{a_2 \over p},{\omega(x)-i \over p}) \in \Delta_S(h;u_1,u_2,u_3;Z),
$$
we have
$$
\mathbf{p}_2(\mathbf{x})=({a_1\over i},{a_2\over i}) \ (\mathrm{resp.} \ \mathbf{p}_2(\mathbf{x})=({a_1\over i},{a_2-1\over i}))
$$
in cases (*1)(*2) (resp. in case (*3)).

\begin{defn}\label{defDelta2}\index{$\Delta_2$2@$\Delta_2$ in case $\kappa(x)=2$(*)}
With notations as above, we define a convex set:
$$
\Delta_2(h;u_1,u_2;u_3;Z):=
\mathbf{p}_2 \left ( \Delta_S (h;u_1,u_2,u_3;Z)\cap \{0\leq x_3<{\omega (x)\over p}\}\right )\subseteq \A .
$$
Let furthermore
\begin{equation}\label{eq7301}
    \left\{
      \begin{array}{ccc}
        B(h;u_1,u_2;u_3;Z) & := & \mathrm{inf}_{\mathbf{y}\in \Delta_2(h;u_1,u_2;u_3;Z)}\{y_1+y_2\} \geq 1 \hfill{}\\
          &   &   \\
        \beta_2(h;u_1,u_2;u_3;Z) & := & \sup_{\left\{
                                                \begin{array}{c}
                                                  \mathbf{y}\in \Delta_2(h;u_1,u_2;u_3;Z) \hfill{}\\
                                                  y_1 +y_2=B(h;u_1,u_2;u_3;Z) \\
                                                \end{array}
                                              \right.
        }\{y_2 \} \\
      \end{array}
    \right.
    .
\end{equation}
\end{defn}

Indeed, $\Delta_2(h;u_1,u_2;u_3;Z)$ is a convex set because  the set
$$
\Delta_S (h;u_1,u_2,u_3;Z)\cap \{0\leq x_3<{\omega (x)\over p}\}
$$
is convex. Note that $\Delta_2(h;u_1,u_2;u_3;Z)$ will have in general points with negative ordinate when (*3) holds.
We now prove some basic properties of $\Delta_2(h;u_1,u_2;u_3;Z)$.
The situation is different and somewhat simpler when (*1) or (*2) holds.

\begin{lem}\label{structDelta2}
With notations as above, the following holds:
\begin{itemize}
  \item [(1)] there exists $\mathbf{a}=(a_1,a_2,a_3)\in \Delta_S (h;u_1,u_2,u_3;Z)\cap \{0\leq x_3<{\omega (x)\over p}\}$ such that $\mathbf{p}_2(\mathbf{a})=:(\alpha_2,\beta_2)\in \Delta_2(h;u_1,u_2;u_3;Z)$ satisfies
  $$
  \beta_2=\beta_2(h;u_1,u_2;u_3;Z), \ \alpha_2 +\beta_2=B(h;u_1,u_2;u_3;Z).
  $$
  \item [(2)] if $x$ satisfies condition (*1) or (*2), then $\Delta_2(h;u_1,u_2;u_3;Z)$ is a (non\-empty)
  rational polygon.
  \item [(3)] if $x$ satisfies condition (*3), then $\Delta_2(h;u_1,u_2;u_3;Z)\cap\{y_2 \geq \beta_2\}$ is a (nonempty)
  rational polygon.
  \item [(4)] assume that $x$ satisfies condition (*1) or (*2) (resp. condition (*3)). Let
  $$
  \sigma_2 \subset \Delta_2(h;u_1,u_2;u_3;Z) \  (\mathrm{resp.}\sigma_2 \subset
  \Delta_2(h;u_1,u_2;u_3;Z)\cap\{y_2 \geq \beta_2\})
  $$
  be a compact face. The topological closure $\sigma$ of
\begin{equation}\label{eq731}
\sigma^{\circ} :=\mathbf{p}^{-1}_2 (\sigma_2)\cap \Delta_S (h;u_1,u_2,u_3;Z)
\cap \{0\leq x_3<{\omega (x)\over p}\}
\end{equation}
is a compact face of the polyhedron $\Delta_S (h;u_1,u_2,u_3;Z)$ (so $\sigma =\sigma_\alpha$
for some weight vector $\alpha \in \R^3_{>0}$, {\it viz.} Definition \ref{definh}). Moreover
$\mathbf{p}_2(\sigma^\circ)=\sigma_2$ and
\begin{equation}\label{eq7313}
    \sigma = \sigma^\circ \cup \{\mathbf{v}_0\}.
\end{equation}
  \item [(5)] assume that $x$ satisfies condition (*3) and let
  $$
  \sigma_{2,\mathrm{in}} := \Delta_2(h;u_1,u_2;u_3;Z)\cap\{y_1+y_2 = B(h;u_1,u_2;u_3;Z)\}.
  $$
  If $B(h;u_1,u_2;u_3;Z)>1$, statement (4) extends to $\sigma_2=\sigma_{2,\mathrm{in}}$, with (\ref{eq7313})
  possibly replaced by
  $$
  \sigma = \mathrm{Conv}\left (\sigma^\circ \cup \{\mathbf{v}_0\}\cup \{({1 \over p}, 0 ,{\omega (x)\over p})\}\right ).
  $$
  If $B(h;u_1,u_2;u_3;Z)=1$, then
  $$
  \sigma_\mathrm{in}:=\{\mathbf{x}\in \Delta_S (h;u_1,u_2,u_3;Z) : x_1+x_2+x_3=\delta (x)\}
  $$
  is the unique compact face $\sigma$ of $\Delta_S (h;u_1,u_2,u_3;Z)$ such that
  $$
  \mathbf{p}_2\left (\sigma \cap \{0\leq x_3<{\omega (x)\over p}\}\right )=\sigma_2.
  $$
\end{itemize}
\end{lem}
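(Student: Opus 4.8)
The plan is to prove Lemma \ref{structDelta2} statement by statement, exploiting the fact that $\mathbf{p}_2$ is a linear projection from the fixed vertex $\mathbf{v}_0$ of the characteristic polyhedron. Throughout I would write $\Delta:=\Delta_S(h;u_1,u_2,u_3;Z)$, $\Delta':=\Delta\cap\{0\le x_3<\omega(x)/p\}$, $B:=B(h;u_1,u_2;u_3;Z)$, $\beta_2:=\beta_2(h;u_1,u_2;u_3;Z)$ and $\Delta_2:=\Delta_2(h;u_1,u_2;u_3;Z)=\mathbf{p}_2(\Delta')$. The essential structural observation, which I would establish first, is that $\Delta$ is a rational polyhedron with finitely many vertices all lying in $(1/p!)\N^3$ (by the remark following Definition \ref{definh}), that $\mathbf{v}_0$ is one of them, and that $\Delta$ is a union of finitely many bounded faces together with the three coordinate directions. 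Since $\mathbf{v}_0$ has third coordinate $\omega(x)/p$ and lies at the ``top'' of $\Delta$ in the $x_3$-direction among the relevant faces (this is where condition (*) enters: it guarantees $\mathbf{v}_0$ is the vertex reached by the monomial computing $\omega(x)$), every point of $\Delta'$ lies strictly below $\mathbf{v}_0$ and $\mathbf{p}_2$ is a genuine affine map on the convex set $\Delta'$ with image the convex set $\Delta_2$.

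For (1), I would argue that $B$ is a finite infimum attained on the rational polytope obtained by intersecting the (finitely many) projected faces with the half-plane computing $B$; concretely, the function $\mathbf{y}\mapsto y_1+y_2$ is linear and bounded below on $\Delta_2$, and since $\Delta_2$ is closed (it is the image of the closed set $\Delta'$ under a proper affine map near $\mathbf{v}_0$, or one may note $\Delta'$ has a compact ``lower boundary'' whose image carries the infimum) the infimum is attained; then $\beta_2$ is the supremum of the second coordinate along that face, again a linear functional on a polytope, so attained at a vertex $(\alpha_2,\beta_2)$, and one pulls back a preimage $\mathbf{a}\in\Delta'$. For (2), when (*1) or (*2) holds, $\mathbf{v}_0=(d_1,d_2,\omega(x)/p)$ is a vertex of $\Delta$ and I would show $\Delta'$ is itself a polytope (its $x_3$-coordinate is bounded by $\omega(x)/p$, its projection to the $(x_1,x_2)$-plane is bounded because the first two coordinates of the other vertices are bounded below and the only unbounded directions are the coordinate rays which $\mathbf{p}_2$ collapses or sends to bounded rays); hence $\Delta_2=\mathbf{p}_2(\Delta')$ is the image of a polytope under an affine map, i.e. a rational polygon, and nonemptiness follows since $d_1$ is defined via $\mathbf{S}^{\{1\}}(f_{i,Z})$ so $\Delta'\ne\emptyset$. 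For (3), under (*3) the set $\Delta$ may be unbounded in the $x_2$-direction inside $\{x_3<\omega(x)/p\}$ (there is a face going off toward $(1/p,0,\omega(x)/p)$ and beyond), but intersecting with $\{y_2\ge\beta_2\}$ after projection cuts off exactly this unbounded part, leaving a rational polygon; I would verify this by checking that the unbounded rays of $\Delta_2$ all point in the direction of decreasing $y_2$, which is what the definition of $\beta_2$ as a supremum along the inner face encodes.

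For (4), the key is that bounded faces of $\Delta$ are exactly the faces $\sigma_\alpha$ for weight vectors $\alpha\in\R^3_{>0}$, and the projection $\mathbf{p}_2$ from $\mathbf{v}_0$ sets up a correspondence between faces of $\Delta_2$ (resp.\ of $\Delta_2\cap\{y_2\ge\beta_2\}$) and certain faces of $\Delta$ containing $\mathbf{v}_0$: given a compact face $\sigma_2$ of $\Delta_2$, defined by a linear functional $\ell_2$ on $\A$, one lifts $\ell_2$ through $\mathbf{p}_2$ to a linear functional on $\R^3$ and checks (using that $\mathbf{v}_0$ has maximal $x_3$ and lies in $\Delta$) that it is minimized on $\Delta$ exactly along $\sigma^\circ\cup\{\mathbf{v}_0\}$, which is therefore a compact face $\sigma_\alpha$; the weight vector $\alpha$ is obtained by a suitable positive combination so that all components are $>0$ (adding a large positive multiple of $x_3$, since $\mathbf{v}_0$ maximizes $x_3$). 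The equality $\mathbf{p}_2(\sigma^\circ)=\sigma_2$ and (\ref{eq7313}) are then formal. For (5), when $B>1$ the inner face $\sigma_{2,\mathrm{in}}$ is a genuine bounded edge and the same lifting argument applies, except that the preimage may also contain the point $(1/p,0,\omega(x)/p)$ coming from the $f_{p,Z}$-term prescribed by (*3), so the face $\sigma$ is the convex hull of $\sigma^\circ$, $\mathbf{v}_0$ and that extra point; when $B=1$ the inner face of $\Delta_2$ corresponds to the initial face $\sigma_\mathrm{in}=\{x_1+x_2+x_3=\delta(x)\}$ of $\Delta$, and uniqueness of the compact face mapping onto $\sigma_2$ follows because $\sigma_\mathrm{in}$ is the unique face of $\Delta$ whose projection from $\mathbf{v}_0$ lands on the line $y_1+y_2=1$.

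I expect the main obstacle to be part (4)–(5): making the face correspondence precise when $\mathbf{p}_2$ is not injective and when $\Delta$ has unbounded directions, i.e.\ verifying carefully that the lifted functional really is minimized along a \emph{compact} face and produces a \emph{strictly positive} weight vector $\alpha$, and handling the extra vertex $(1/p,0,\omega(x)/p)$ in the (*3), $B>1$ case without disturbing minimality of the polyhedron. The convexity and finiteness inputs for (1)–(3) are routine once the polyhedron is described, but the bookkeeping of which vertex of $\Delta$ contributes to which edge of $\Delta_2$, and the verification that condition (*) forces $\mathbf{v}_0$ to be the correct projection center, is where the real care is needed.
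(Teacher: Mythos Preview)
Your overall strategy---project from the vertex $\mathbf{v}_0$ and set up a face correspondence by lifting linear functionals---is the paper's strategy. But two genuine gaps stand between your sketch and a proof.

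First, your nonemptiness argument is wrong. Saying ``$d_1$ is defined via $\mathbf{S}^{\{1\}}(f_{i,Z})$ so $\Delta'\ne\emptyset$'' does not work: the vertex realizing $x_1=d_1$ may well be $\mathbf{v}_0$ itself, which has $x_3=\omega(x)/p$ and so lies outside $\Delta'$. The paper shows $\mathbf{V}_-:=\mathbf{V}\cap\{x_3<\omega(x)/p\}\ne\emptyset$ by contradiction using condition \textbf{(E)}: if every vertex had $x_3\ge\omega(x)/p\ge 1$, then $V(Z,u_3)\subset\mathrm{Sing}_p{\cal X}$, violating \textbf{(E)}. This is where \textbf{(E)} is genuinely used and cannot be bypassed.

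Second, and more importantly, your argument for (2)--(5) implicitly assumes $\mathbf{v}_0$ is the \emph{only} vertex of $\Delta$ with $x_3\ge\omega(x)/p$. This is the crux of the proof, not a preliminary. It holds in cases (*1)(*2) because $v_j\ge d_j$ for $j=1,2$ forces any such vertex to equal $\mathbf{v}_0=(d_1,d_2,\omega(x)/p)$; once known, the formula $\Delta_2=\mathrm{Conv}\bigl(\{\mathbf{p}_2(\mathbf{v})+\R^2_{\ge 0}:\mathbf{v}\in\mathbf{V}_-\}\bigr)$ follows and (2),(4) are immediate. In case (*3), however, $\mathbf{v}_0=(d_1,1/p,\omega(x)/p)$ has positive second coordinate, so by theorem \ref{initform} there may be additional vertices $\mathbf{v}_k=(d_1+k/p,0,\omega(x)/p)$ for $k\ge 1$, or vertices with $x_3\ge(1+\omega(x))/p$ (see (\ref{eq7312})). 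These extra vertices are exactly what makes $\Delta_2$ unbounded toward $y_2\to-\infty$ and what forces the extra point in the convex hull in (5). The paper handles this by an explicit computation with the linear forms $L_\alpha$, $L_{\alpha'}$ in (\ref{eq7314}) to show that $[\mathbf{v}_0,\mathbf{a}]$ is an edge. Your claim that the lifted functional ``is minimized on $\Delta$ exactly along $\sigma^\circ\cup\{\mathbf{v}_0\}$'' presupposes this vertex classification; without it the minimizing face could contain $\mathbf{v}_1$ or worse. (Incidentally, $\Delta'$ is neither closed nor a polytope---it is half-open in $x_3$ and unbounded in the $e_1,e_2$ directions---so neither your closedness argument for (1) nor your ``image of a polytope'' argument for (2) is literally correct, though both are repairable once the vertex set $\mathbf{V}\setminus\mathbf{V}_-$ is pinned down.)
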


\begin{proof}
Let $\mathbf{V}$ be the set of all vertices of $\Delta_S (h;u_1,u_2,u_3;Z)$ and
$$
\mathbf{V}_{-}:=\mathbf{V}\cap \{0\leq x_3<{\omega (x)\over p}\}.
$$
We claim that $\mathbf{V}_{-}\neq \emptyset$: in other terms, $\Delta_2(h;u_1,u_2,u_3;Z)$ is not empty.
Namely, suppose that $\mathbf{V}_{-}= \emptyset$. By definition, this means that
$$
\mathrm{ord}_{(u_3)}f_{i,Z}\geq i{\omega (x) \over p}, \ 1 \leq i \leq p.
$$
Since $\omega (x)/p\geq 1$, we deduce that ${\cal Y}:=V(Z,u_3)\subset \mathrm{Sing}_p{\cal X}$ by
Proposition \ref{deltainv}: a contradiction with assumption {\bf (E)}.\\

In order to prove the lemma, we must understand the limit points
$\mathbf{p}_2(\mathbf{x})\in \Delta_2(h;u_1,u_2;u_3;Z)$ when $\mathbf{x}\in \Delta_S (h;u_1,u_2,u_3;Z)$
tends to the hyperplane $\{x_3=\omega(x)/p\}$. By convexity, we have
$$
\mathbf{x}\in \mathrm{Conv}\left ( \bigcup_{\mathbf{v}\in \mathbf{V}}\{\mathbf{v}+\R^3_{\geq 0}\}\right ).
$$

$\bullet$ {\it Assume that $x$ satisfies condition (*1) or (*2).} Let $\mathbf{v} \in \mathbf{V} \backslash \mathbf{V}_{-}$.
Since $v_j \geq d_j$, $j=1,2$,
and $v_3 \geq \omega (x)/p$, we have $\mathbf{v}=\mathbf{v}_0$. One deduces immediately that
$$
\Delta_2(h;u_1,u_2;u_3;Z)=\mathrm{Conv} \left ( \{\mathbf{p}_2(\mathbf{v})+ {\R}^2_{\geq 0},
\  \mathbf{v} \in \mathbf{V}_{-}\} \right ).
$$
All statements in the lemma follow easily. \\

$\bullet$ {\it Assume that $x$ satisfies condition (*3).} Let $\mathbf{a}=(a_1,a_2,a_3)\in \mathbf{V}_{-}$
be chosen in such a way that
\begin{equation}\label{eq7311}
(\alpha_2+\beta_2,-\beta_2):=\left ( {a_1+a_2-d_1-{1 \over p}\over {\omega (x)\over p}-a_3},
{-a_2+{1 \over p}\over {\omega (x)\over p}-a_3}\right )
\end{equation}
is minimal for the lexicographical ordering, {\it viz.} (\ref{eq7302}). We now prove (1).
Let $\mathbf{v} \in \mathbf{V}\backslash  \mathbf{V}_{-}$. Since $v_3>0$,
Theorem \ref{initform} implies that
$$
\mathrm{in}_{\mathbf{v}}h =Z^p +\lambda U^{p\mathbf{v}}, \ \lambda \neq 0.
$$
If $\mathbf{v}\neq \mathbf{v}_0$, we therefore have
\begin{equation}\label{eq7312}
v_3 \geq {1+\omega (x) \over p} \ \mathrm{or} \ \mathbf{v}=\mathbf{v}_k:=(d_1+{k \over p}, 0 , {\omega (x)\over p}) \
\mathrm{for} \ \mathrm{some} \ k\geq 1.
\end{equation}
Let
$$
\alpha:=({\omega (x)\over p},{\omega (x)\over p}, \alpha_2+\beta_2)\in \R_{>0}^3,
\ L_\alpha (x_1,x_2,x_3):=x_1+x_2 + (\alpha_2+\beta_2)x_3.
$$
By (\ref{eq7311})-(\ref{eq7312}), we have
\begin{equation}\label{eq7314}
    \left\{
  \begin{array}{cc}
    L_\alpha (\mathbf{v}_0)= L_\alpha (\mathbf{b}_0)+{\omega (x)\over p}(\alpha_2+\beta_2)=L_\alpha (\mathbf{a})  \\
    L_\alpha (\mathbf{v})\geq L_\alpha (\mathbf{v}_0) \ \mathrm{if} \  \mathbf{v} \not \in \{\mathbf{v}_0,\mathbf{a}\} \hfill{}\\
  \end{array}
\right.
.
\end{equation}
This shows that $\mathbf{v}_0,\mathbf{a} \in \sigma_{\alpha}$, where $\sigma_{\alpha}$ is the compact face of
the polyhedron $\Delta_S (h;u_1,u_2,u_3;Z)$ defined by $\alpha$. In particular we have proved that
$$
\alpha_2+\beta_2 =B(h;u_1,u_2;u_3;Z).
$$
Similarly, let
$$
\alpha ':=({\omega (x)\over p}\alpha'_1,{\omega (x)\over p}, \alpha'_1 \alpha_2+\beta_2)\in \R_{>0}^3,
$$
where $\alpha'_1>1$ is chosen in such a way that $L_{\alpha '}(\mathbf{v})>L_{\alpha '}(\mathbf{a})$
for every $\mathbf{v}\in \mathbf{V}_{-}$. Such $\alpha'_1>1$ exists thanks to the minimal property
in (\ref{eq7311}). We now have
$$
\left\{
  \begin{array}{cc}
    L_{\alpha'} (\mathbf{v}_0)= L_{\alpha'} (\mathbf{b}_0)+{\omega (x)\over p}(\alpha'_1\alpha_2+\beta_2)=L_{\alpha'} (\mathbf{a})  \\
    L_{\alpha'} (\mathbf{v})> L_{\alpha'} (\mathbf{v}_0) \ \mathrm{if} \  \mathbf{v} \not \in \{\mathbf{v}_0,\mathbf{a}\} \hfill{}\\
  \end{array}
\right.
$$
and this proves that the line $(\mathbf{v}_0\mathbf{a})$ meets $\Delta_S (h;u_1,u_2,u_3;Z)$ along an edge.
This completes the proof of (1), and of (4) when $\sigma_2=\{(\alpha_2,\beta_2)\}$.

\smallskip

Statement (4) is proved along the same lines for arbitrary
$$
\sigma_2\subseteq \Delta_2(h;u_1,u_2;u_3;Z)\cap\{y_2 \geq \beta_2\}
$$
and we omit the proof. Then (3) is a consequence of (4) because $\mathbf{V}_{-}$ is a finite set.

\smallskip

To prove (5) when $B(h;u_1,u_2;u_3;Z)>1$, note that equality possibly holds in (\ref{eq7314})
only if $\mathbf{v}=\mathbf{v}_1$ and the conclusion follows.

If $B(h;u_1,u_2;u_3;Z)=1$, we have $\alpha =(1,1,1)$ with notations as above and
$\sigma_\mathrm{in}$ is the compact face of $\Delta_S (h;u_1,u_2,u_3;Z)$
generated by $\sigma^\circ$.
\end{proof}

\begin{cor}\label{Delta2+}
With notations as above, let:
$$
\Delta_2^+(h;u_1,u_2;u_3;Z)\index{$\Delta_2^+$ @ $\Delta_2^+(h;u_1,u_2;u_3;Z)$ in case $\kappa(x)=2$(*), Corollary~\ref{Delta2+}}:=\Delta_2(h;u_1,u_2;u_3;Z)\cap \{y_2 \geq \beta_2(h;u_1,u_2;u_3;Z)\}.
$$
Then $\Delta_2^+(h;u_1,u_2;u_3;Z)=\mathrm{Conv} \left ( \{\mathbf{p}_2(\mathbf{x})+ {\R}^2_{\geq 0},
\  \mathbf{x} \in \mathbf{S}\} \right )$,
where $\mathbf{S}$ is the set of vertices $\mathbf{x} \in \Delta_S (h;u_1,u_2,u_3;Z)$ with
$$
0 \leq x_3 <{\omega (x)\over p} \ \mathrm{and} \  y_2:=(\mathbf{p}_2(\mathbf{x}))_2 \geq \beta_2(h;u_1,u_2;u_3;Z).
$$
\end{cor}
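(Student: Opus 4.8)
The statement to prove is Corollary \ref{Delta2+}: that $\Delta_2^+(h;u_1,u_2;u_3;Z)$, the part of the projected polygon lying above the line $y_2=\beta_2$, is the convex hull of the translated positive quadrants attached to the images under $\mathbf{p}_2$ of the vertices $\mathbf{x}$ of $\Delta_S(h;u_1,u_2,u_3;Z)$ that satisfy $0\le x_3<\omega(x)/p$ and $(\mathbf{p}_2(\mathbf{x}))_2\ge\beta_2$.

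The plan is to deduce this directly from the description of the relevant faces already worked out in Lemma \ref{structDelta2}, so that no new geometry is needed. First I would observe that by Lemma \ref{structDelta2}(3), the set $\Delta_2^+(h;u_1,u_2;u_3;Z)$ is a (nonempty) rational polygon, hence equals $\mathrm{Conv}\bigl(\{\mathbf{p}_2(\mathbf{x})+\mathbb{R}^2_{\ge0}\ :\ \mathbf{x}\in\mathbf{S}'\}\bigr)$ where $\mathbf{S}'$ is \emph{its own} vertex set; the content of the corollary is the identification of $\mathbf{S}'$ with the image under $\mathbf{p}_2$ of $\mathbf{S}$. One inclusion is immediate: every vertex $\mathbf{x}\in\mathbf{S}$ of $\Delta_S(h;u_1,u_2,u_3;Z)$ with $0\le x_3<\omega(x)/p$ maps into $\Delta_2(h;u_1,u_2;u_3;Z)$ by definition of that projected set, and the additional condition $(\mathbf{p}_2(\mathbf{x}))_2\ge\beta_2$ places $\mathbf{p}_2(\mathbf{x})$ in $\Delta_2^+$, so $\mathrm{Conv}(\{\mathbf{p}_2(\mathbf{x})+\mathbb{R}^2_{\ge0}:\mathbf{x}\in\mathbf{S}\})\subseteq\Delta_2^+(h;u_1,u_2;u_3;Z)$.

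For the reverse inclusion I would argue that every vertex of the polygon $\Delta_2^+$ is of the form $\mathbf{p}_2(\mathbf{x})$ for some $\mathbf{x}\in\mathbf{S}$. Let $\mathbf{q}$ be a vertex of $\Delta_2^+(h;u_1,u_2;u_3;Z)$. There are two cases. If $q_2>\beta_2$, then $\mathbf{q}$ is a vertex of the polygon $\Delta_2(h;u_1,u_2;u_3;Z)\cap\{y_2\ge\beta_2\}$ lying in its interior with respect to the constraint $y_2\ge\beta_2$, hence $\mathbf{q}$ supports a $0$-dimensional compact face $\sigma_2=\{\mathbf{q}\}$ of $\Delta_2(h;u_1,u_2;u_3;Z)$ contained in $\{y_2\ge\beta_2\}$; by Lemma \ref{structDelta2}(4), the closure $\sigma$ of $\mathbf{p}_2^{-1}(\sigma_2)\cap\Delta_S(h;u_1,u_2,u_3;Z)\cap\{0\le x_3<\omega(x)/p\}$ is a compact face of $\Delta_S(h;u_1,u_2,u_3;Z)$ with $\mathbf{p}_2(\sigma^\circ)=\sigma_2=\{\mathbf{q}\}$, so $\sigma$ contains a vertex $\mathbf{x}$ of $\Delta_S(h;u_1,u_2,u_3;Z)$ other than $\mathbf{v}_0$ (using (\ref{eq7313})) with $0\le x_3<\omega(x)/p$ and $\mathbf{p}_2(\mathbf{x})=\mathbf{q}$; since $q_2>\beta_2$ we get $\mathbf{x}\in\mathbf{S}$. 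If instead $q_2=\beta_2$, then $\mathbf{q}$ is the extreme point of $\Delta_2^+$ on the line $y_2=\beta_2$ with smallest $y_1$; but by Lemma \ref{structDelta2}(1) there is $\mathbf{a}\in\mathbf{V}_{-}$ with $\mathbf{p}_2(\mathbf{a})=(\alpha_2,\beta_2)$ achieving $\alpha_2+\beta_2=B(h;u_1,u_2;u_3;Z)=\min\{y_1+y_2\}$, so $\alpha_2$ is the minimal value of $y_1$ on $\Delta_2^+\cap\{y_2=\beta_2\}$, giving $\mathbf{q}=(\alpha_2,\beta_2)=\mathbf{p}_2(\mathbf{a})$ with $\mathbf{a}\in\mathbf{S}$. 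In both cases $\mathbf{q}\in\mathbf{p}_2(\mathbf{S})$, so $\mathbf{S}'\subseteq\mathbf{p}_2(\mathbf{S})$ and the two convex hulls coincide.

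The only mildly delicate point — the place I expect to spend the most care — is the boundary case $q_2=\beta_2$: there the supporting face $\sigma_2$ of $\Delta_2$ need not lie in the open region $\{y_2>\beta_2\}$, so Lemma \ref{structDelta2}(4) does not apply verbatim and one must instead invoke the explicit optimizing vertex $\mathbf{a}$ produced in Lemma \ref{structDelta2}(1) together with the fact that $\Delta_2^+$ is cut out from $\Delta_2$ by the single half-plane $y_2\ge\beta_2$, so that along $y_2=\beta_2$ the set $\Delta_2^+$ is a ray starting at $(\alpha_2,\beta_2)$. Everything else is bookkeeping with convex polygons and the already-established properties of the projection $\mathbf{p}_2$, so the corollary follows without further computation.
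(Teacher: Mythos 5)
Your proof is correct and follows exactly the route the paper intends, namely extracting the corollary from Lemma \ref{structDelta2}; since the paper offers no separate proof, your argument is the natural filling-in. One small remark: when you invoke Lemma \ref{structDelta2}(3) to conclude that $\Delta_2^+(h;u_1,u_2;u_3;Z)$ is a rational polygon, that part of the lemma is stated only for case (*3); in cases (*1) and (*2) you should appeal to Lemma \ref{structDelta2}(2) instead, where $\Delta_2$ itself is already a polygon (and then $\Delta_2^+$ is the intersection of that polygon with the half-plane $\{y_2\ge\beta_2\}$, hence again a polygon whose vertices are among the $\mathbf{p}_2(\mathbf{v})$, $\mathbf{v}\in\mathbf{V}_-$). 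With that minor adjustment the case analysis, including the delicate boundary case $q_2=\beta_2$ handled via the optimizing vertex $\mathbf{a}$ of Lemma \ref{structDelta2}(1), is exactly right.
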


Taking $\sigma =\sigma_\alpha$ as in Lemma \ref{structDelta2}(4) or (5), we deduce from Theorem \ref{initform} that:
$$
\mathrm{in}_\alpha h =Z^p +F_{p-1,Z, \alpha}Z +F_{p,Z, \alpha}\in \mathrm{gr}_\alpha S [Z].
$$
Moreover, $F_{p-1,Z, \alpha}\neq 0$ implies that $F_{p-1,Z, \alpha}=-G_\alpha^{p-1}$ and
$$
\mathrm{cl}_{p(p-1)\delta_\alpha}(\mathrm{Disc}_Z(h))=<G^{p(p-1)}>.
$$

In order to associate relevant combinatorial data to $\Delta_2(h;u_1,u_2;u_3;Z)$, some minimizing process
on the $u_3$ coordinate is required. This process is similar to that used in Definition \ref{defsolvable} and
Proposition \ref{Deltamin}.\\

\begin{defn}\label{def2solvable}
Let $x$ satisfy condition (*), $(u_1,u_2,u_3;Z)$ be well adapted coordinates
at $x$ satisfying Definition  \ref{*kappadeux} and $\mathbf{y}=(y_1,y_2)\in \R^2$
be a vertex of $\Delta_2(h;u_1,u_2;u_3;Z)$ (of $\Delta_2^+(h;u_1,u_2;u_3;Z)$ in case (*3)).

\smallskip

With notations as in Lemma \ref{structDelta2}(4) with $\sigma_2=\{\mathbf{y}\}$,
we say that $\mathbf{y}$ is 2-solvable if $\mathbf{y}\in \N^2$ and
$$
\left\{
  \begin{array}{cc}
    \mathrm{in}_\alpha h=Z^p +\lambda U_1^{pd_1}U_2^{pd_2}(U_3 -c U_1^{y_1}U_2^{y_2})^{\omega (x)} +\Phi^p
    & \mathrm{in} \ \mathrm{cases} \ (*1) \ \mathrm{or} \ (*2) \\
      &  \\
    \mathrm{in}_\alpha h=Z^p +\lambda U_1^{pd_1}U_2(U_3 -c U_1^{y_1}U_2^{y_2})^{\omega (x)} +\Phi^p \hfill{}
    & \mathrm{in} \ \mathrm{case} \ (*3) \hfill{}\\
  \end{array}
\right.
$$
where $\Phi \in \mathrm{gr}_\alpha S$ and $\lambda ,c \in k(x)$.

\smallskip

We say that $(u_1,u_2;u_3;Z)$ are well 2-adapted if furthermore the polygon
$\Delta_2(h;u_1,u_2;u_3;Z)$  ($\Delta_2^+(h;u_1,u_2;u_3;Z)$ in case (*3)) has no 2-solvable vertex.
\end{defn}

\begin{thm}\label{well2prepared}
With notations as above, there  {exist} well 2-adapted coordinates. Furthermore, the polygon
$\Delta_2^+(h;u_1,u_2;u_3;Z)$ is independent of the well 2-adapted coordinates
$(u_1,u_2;u_3;Z)$. For such  $(u_1,u_2;u_3;Z)$, let
$$
A_1(x):=\min_{\mathbf{y}\in \Delta_2^+(h;u_1,u_2;u_3;Z)}\{y_1\}\geq 0;
$$
the curve ${\cal Y}:=V(Z,u_1,u_3)\subset {\cal X}$ satisfies the equivalence:
$$
A_1(x)\geq 1 \Leftrightarrow {\cal Y} \ \mathrm{is} \ \mathrm{permissible} \ (\mathrm{of} \ \mathrm{the}
\ \mathrm{first} \ \mathrm{or} \ \mathrm{second} \ \mathrm{kind}).
$$
\end{thm}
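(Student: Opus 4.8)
The statement of Theorem \ref{well2prepared} has three parts: (1) existence of well 2-adapted coordinates; (2) the polygon $\Delta_2^+(h;u_1,u_2;u_3;Z)$ is an invariant of the point $x$ (independent of the well 2-adapted coordinates chosen); (3) the numerical equivalence relating $A_1(x)\geq 1$ to permissibility of ${\cal Y}=V(Z,u_1,u_3)$. I would treat these in that order, since (2) presupposes that at least one well 2-adapted system exists and the proof of (3) relies on reading off $A_1(x)$ from the canonically defined polygon.

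For part (1), the plan is to run a ``2-solvable vertex elimination'' argument paralleling Hironaka's vertex preparation (proposition \ref{Deltamin}) but applied one dimension down, on the $u_3$-coordinate only. Starting from well adapted coordinates $(u_1,u_2,u_3;Z)$ satisfying definition \ref{*kappadeux}, if the polygon $\Delta_2(h;u_1,u_2;u_3;Z)$ (resp.\ $\Delta_2^+$ in case (*3)) has a 2-solvable vertex $\mathbf{y}=(y_1,y_2)\in\N^2$ with associated constant $c\in k(x)$, I would replace $u_3$ by $u_3':=u_3-\gamma u_1^{y_1}u_2^{y_2}$, where $\gamma\in S$ lifts $c$; one must check that this preserves both the property of being well adapted (the characteristic polyhedron $\Delta_S$ stays minimal — here using lemma \ref{structDelta2}(4)/(5) to identify the relevant compact face $\sigma_\alpha$ whose initial form is the one appearing in definition \ref{def2solvable}) and the condition of definition \ref{*kappadeux} (which only constrains $\mathrm{Vdir}(x)$ and $H^{-1}\partial F_{p,Z}/\partial U_2$, data that are coordinate-independent by proposition \ref{Tinvariant}/\ref{indiff}). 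Each such move strictly shrinks the polygon (it removes or lowers the offending vertex without creating smaller ones), and since all vertices lie in $\tfrac1{p!}\N^2\cap\{0\le x_3<\omega(x)/p\}$ this terminates — I would phrase the termination exactly as in \cite{CoP3} lemma II.1 and \cite{H3}(4.8), which is what proposition \ref{Deltamin} already invokes. For the case (*3) subtlety, I would restrict attention to $\Delta_2^+$, noting lemma \ref{structDelta2}(5) shows the face $\sigma_\mathrm{in}$ carries no extra solvable data when $B(x)=1$.

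For part (2), the independence statement, the plan is: given two well 2-adapted systems $(u_1,u_2;u_3;Z)$ and $(u_1,u_2;u_3';Z')$ adapted to the same numbering of components of $E$, one has $u_3'=\lambda_3 u_3+(\text{terms in }u_1,u_2)+(\text{higher order})$ with $\lambda_3$ a unit, and $Z'=Z-\phi$ with $\mathrm{ord}_{m_S}\phi\geq\delta(x)$ (and $\phi$ more precisely constrained by lemma \ref{lemsortiekappaegaldeux}-type considerations in case (*1)). I would show that passing from one to the other is a composition of: (a) the truncation/translation on $Z$, which by proposition \ref{Deltaalg} leaves $\Delta_S$ unchanged, hence leaves $\Delta_2$ unchanged once one checks $\phi$ does not contribute to the relevant compact faces below $x_3=\omega(x)/p$; and (b) an invertible change of $u_3$ of the above shape. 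For (b), the key point is that the projection $\mathbf{p}_2$ from $\mathbf{v}_0$ intertwines with such changes: a ``linear-in-$u_1,u_2$'' part of the change of $u_3$ translates a 2-solvable vertex — but 2-solvable vertices have been eliminated on both sides — and the ``higher order'' part does not affect the polygon by convexity (lemma \ref{structDelta2}, corollary \ref{Delta2+}). This is the place where I expect to import the bookkeeping of \cite{CoP2} theorem {\bf I.4} (page 1962) and of the analogous projected-polygon invariance in \cite{CoP1} pp.\ 1061--1062, as the excerpt itself flags (``We skip this fact here and refer to…''). The honest statement is that this is a direct but lengthy transcription.

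For part (3), I would argue directly from the definitions. We have ${\cal Y}=V(Z,u_1,u_3)$ with generic point $y$ and $W:=\eta({\cal Y})=V(u_1,u_3)$; ${\cal Y}$ is Hironaka-permissible w.r.t.\ $E$ iff $m(y)=p$, i.e.\ $\delta(y)\geq 1$ by proposition \ref{deltainv}, and by proposition \ref{Deltaalg} one has $\delta(y)=\min\{\tfrac1i(a_1+a_3):\mathbf{a}\in\mathbf{S}(f_{i,Z})\}$, computed from $\mathrm{pr}_{\{1,3\}}\Delta_S(h;u_1,u_2,u_3;Z)$. Reading this through the projection $\mathbf{p}_2$ (using \eqref{eq7302} and lemma \ref{structDelta2}), the condition $\delta(y)\geq 1$ translates precisely into $d_1+\tfrac{\omega(x)}{p}A_1(x)\ge$ the appropriate threshold, which after the normalization built into $\mathbf{p}_2$ reads $A_1(x)\geq 1$; the case analysis (*1)/(*2)/(*3) changes the constant $d_2$ vs.\ $1/p$ in $\mathbf{v}_0$ but not the final inequality because $A_1$ is measured relative to $\mathbf{b}_0$. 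Once $m(y)=p$, the classification of first vs.\ second kind is automatic: $\epsilon(y)$ equals either $\epsilon(x)$ or $\epsilon(x)-1$ by proposition \ref{epsiloninv}, and in the latter case definition \ref{defsecondkind}(iii) holds because condition (*) forces $H^{-1}\partial F_{p,Z}/\partial U_2$ (a derivative transverse to $W$) to be nonzero and monic in $U_3$ — precisely the content of proposition \ref{secondkind}. So no further case-splitting is needed for the equivalence; I would simply record that permissibility ``of the first or second kind'' is exactly ``$m(y)=p$ and Hironaka-permissible'' under hypothesis (*), which is the heart of why the statement is clean. The main obstacle, as indicated, is part (2): making the invariance of $\Delta_2^+$ under all admissible coordinate changes fully rigorous, which is a substantial but routine adaptation of \cite{CoP2} and \cite{CoP3}.
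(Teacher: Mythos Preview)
Your plan for part (1) has a genuine gap in the termination argument. You claim that the 2-solvable vertex elimination ``terminates --- I would phrase the termination exactly as in \cite{CoP3} lemma II.1 and \cite{H3}(4.8)'', justifying this by ``all vertices lie in $\tfrac1{p!}\N^2\cap\{0\le x_3<\omega(x)/p\}$''. But discreteness of vertex positions does not give termination: the polygon $\Delta_2$ is unbounded, and dissolving a 2-solvable vertex at $\mathbf{y}$ may create a new one at some $\mathbf{y}'$ with $|\mathbf{y}'|>|\mathbf{y}|$. The paper is explicit that the process can be infinite; the limit $\hat{u}_3=\lim_i u_3^{(i)}$ then exists only in $\hat{S}$, giving well 2-adapted coordinates for $\hat{{\cal X}}$ but not for ${\cal X}$. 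Neither \cite{H3}(4.8) nor \cite{CoP3} lemma II.1 applies here: those results concern translations on the $Z$-variable for the characteristic polyhedron $\Delta_S$, whereas here one is modifying the base parameter $u_3$ and minimizing the projected polygon $\Delta_2$, a different problem.

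The paper bridges this gap with an algebraization argument you do not mention. After obtaining formal well 2-adapted coordinates $(u_1,u_2;\hat{u}_3;\hat{Z})$, one considers the initial form $\mathrm{in}_\alpha h$ along the face of abscissa $A_1$ as a polynomial over the auxiliary excellent ring $T=(\mathrm{gr}_\alpha S)_{(U_1,\overline{u}_2,\overline{u}_3)}$, and shows that the formal locus $V(\hat{Z},\overline{u}_3-\hat{c})$ is an irreducible component of $\mathrm{Sing}_p\Xi$; excellence then forces this component to be algebraic, yielding $u'_3\in S$ with strictly larger $A_1$. Iterating matches $A_1$ (and, in cases (*1)/(*2), also $A_2$) to their formal values, which makes the region $\Delta_2\setminus\hat{\Delta}_2$ bounded and hence contains only finitely many lattice points---only then does the vertex-elimination terminate in $S$.

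A smaller issue in part (3): your claim that ``the classification of first vs.\ second kind is automatic'' once $m(y)=p$ skips the verification of definition \ref{defsecondkind}(ii), namely $i_0(y)\leq i_0(x)$, in case (*3). The paper checks $i_0(y)=p$ by a contradiction argument: if $i_0(y)=p-1$ then $G_W=g_WU_1^{\delta(y)}$ with $g_W$ a unit (by theorem \ref{initform} and condition {\bf (E)} with $E=\mathrm{div}(u_1)$), forcing $\epsilon(x)\leq\epsilon(y)=\epsilon(x)-1$. Also, the correct bridge between $A_1(x)$ and permissibility is the formula $\epsilon(y)=\omega(x)\cdot\min\{1,A_1(x)\}$ rather than a statement about $\delta(y)$ directly; your computation ``$\delta(y)\geq 1$ translates into $d_1+\tfrac{\omega(x)}{p}A_1(x)\geq\cdots$'' conflates $\delta(y)$ with $\epsilon(y)$.
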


\begin{proof}
Let $(u_1,u_2,u_3;Z)$ be well adapted coordinates and assume on the contrary
that $(u_1,u_2;u_3;Z)$ are not well 2-adapted. Let $\mathbf{y}\in \N^2$
be a 2-solvable vertex of $\Delta_2(h;u_1,u_2;u_3;Z)$ with $y_1+y_2$ minimal
(and $y_2 \geq \beta_2(h;u_1,u_2;u_3;Z)$ if $x$
satisfies condition (*3)). Let $\gamma \in S$ be a preimage of $c\in k(x)$ given by
Definition \ref{def2solvable}. Since $\mathbf{y}$ is a vertex of $\Delta_2(h;u_1,u_2;u_3;Z)$, we have
$c\neq 0$, so $\gamma$ is a unit. We let $u'_3:=u_3 -\gamma u_1^{y_1}u_2^{y_2}$.
Let $\alpha \in \R^3_{>0}$ define the edge
$$
\sigma :=\mathbf{p}^{-1}_2 (\mathbf{y})\cap \Delta_S (h;u_1,u_2,u_3;Z)
\cap \{0\leq x_3<{\omega (x)\over p}\} \cup \{\mathbf{v}_0\}
$$
of $\Delta_S (h;u_1,u_2,u_3;Z)$. Computing now initial forms for the polyhedron $\Delta_S (h;u_1,u_2,u'_3;Z)$,
we obtain
\begin{equation}\label{eq732}
\left\{
  \begin{array}{cc}
    \mathrm{in}_\alpha h=Z^p +\lambda U_1^{pd_1}U_2^{pd_2}{U'_3}^{\omega (x)} +\Phi^p & \mathrm{in} \ \mathrm{cases} \ (*1) \
    \mathrm{or} \ (*2) \\
      &  \\
    \mathrm{in}_\alpha h=Z^p +\lambda U_1^{pd_1}U_2{U'_3}^{\omega (x)} +\Phi^p \hfill{}
    & \mathrm{in} \ \mathrm{cases} \ (*3) \hfill{}\\
  \end{array}
\right.
\end{equation}
with notations as in Definition \ref{def2solvable}.

\smallskip

Let now $\mathbf{y}'\neq \mathbf{y}$ be a vertex of $\Delta_2(h;u_1,u_2;u_3;Z)$ (of $\Delta_2^+(h;u_1,u_2;u_3;Z)$
if $x$ satisfies condition (*3)). Let $\alpha '\in \R^3_{>0}$ define the corresponding edge
$$
\sigma ':=\mathbf{p}^{-1}_2 (\mathbf{y}')\cap \Delta_S (h;u_1,u_2,u_3;Z)
\cap \{0\leq x_3<{\omega (x)\over p}\} \cup \{\mathbf{v}_0\}
$$
given by Lemma \ref{structDelta2}(4). In particular we have
$$
\mu_{\alpha '} (u_1^{y_1}u_2^{y_2})>\mu_{\alpha '} (u_3).
$$
This implies that $\mathrm{in}_{\alpha '}h$ is unchanged when computed in
$\Delta_S (h;u_1,u_2,u_3;Z)$ or in $\Delta_S (h;u_1,u_2,u'_3;Z)$, i.e. obtained by substituting the
variable $U_3$ by the variable $U'_3$. Therefore $\sigma '$ is again an edge of $\Delta_S (h;u_1,u_2,u'_3;Z)$.

\smallskip

If $x$ satisfies condition (*1) or (*2), we deduce that
$$
\mathbf{p}_2(\Delta_S (h;u_1,u_2,u'_3;Z)\cap \{0\leq x_3<{\omega (x)\over p}\})
\subseteq \Delta_2(h;u_1,u_2;u_3;Z).
$$

If $x$ satisfies condition (*3), we obtain
$$
\mathbf{p}_2(\Delta_S (h;u_1,u_2,u'_3;Z)\cap \{0\leq x_3<{\omega (x)\over p}\})\cap \{y_2\geq \beta_2\}
\subseteq \Delta_2^+(h;u_1,u_2;u_3;Z).
$$

Let $(u_1,u_2,u'_3;Z')$ be well adapted coordinates, $Z':=Z-\phi$, $\phi \in S$. We first check that
$(u_1,u_2,u'_3;Z')$ satisfies Definition \ref{*kappadeux}, i.e. that $U'_3 \in \mathrm{Vdir}(x)$.
This is obvious if $y_1+y_2>1$, since $\mathrm{in}_{m_S}h$ is then unchanged.
If $y_1+y_2=1$, then $\mathbf{y}\in \{(1,0), (0,1)\}$ because 2-solvable vertices have integer coordinates.
By Definition \ref{def2solvable} and  Definition \ref{*kappadeux}, we have
$$
U_3-cU_1 \in \mathrm{Vdir}(x)+<U_2> \  (\mathrm{resp.}  \ U_3-cU_2 \in \mathrm{Vdir}(x)+<U_1>)
$$
if $\mathbf{y}=(1,0)$ (resp. if $\mathbf{y}=(0,1)$). Therefore $\tau '(x)=3$ or
$$
\mathrm{Vdir}(x)=<U_3, U_1+dU_2>  \  (\mathrm{resp.}  \ \mathrm{Vdir}(x)=<U_3, U_2+dU_1>)
$$
for some
$d \in k(x)$. In all cases, $U'_3 \in \mathrm{Vdir}(x)$ follows from the invariance of $\mathrm{Vdir}(x)$
(Definition \ref{deftauprime}) if $\tau '(x)=3$ or if $d=0$, or if ($d\neq 0$ and $x$ satisfies condition (*2)).
Otherwise, it can be assumed w.l.o.g. that $d=0$ by substituting $u_2$ by $u'_2=u_2 +\delta u_1$, where
$\delta \in S$ is a preimage of $d \in k(x)$. Note that this substitution does not change the requirements
in Definition \ref{*kappadeux} and we thus get $U'_3\in \mathrm{Vdir}(x)$ as required.

\smallskip

By (\ref{eq732}), we now have
$$
\left\{
  \begin{array}{cc}
    \mathbf{v}_0\in \Delta_S (h;u_1,u_2,u'_3;Z')\subset \Delta_S (h;u_1,u_2,u_3;Z)\\
      \\
    \mathbf{y} \not \in \Delta_2 (h;u_1,u_2;u'_3;Z')\hfill{}\\
  \end{array}
\right.
.
$$

Iterating this construction, we deduce that there exists a sequence (finite or infinite) of 2-solvable
vertices $(\mathbf{y}^{(i)})_{i\geq 0}$,
$\mathbf{y}^{(0)}:=\mathbf{y}$ and corresponding well adapted coordinates $(u_1,u_2,u_3^{(i)};Z^{(i)})$,
$Z^{(i)}:=Z^{(i-1)}-\phi^{(i-1)}$, $\phi^{(i-1)} \in S$ such that
\begin{equation}\label{eq7337}
\left\{
  \begin{array}{cc}
    \mathbf{v}_0\in \Delta_S (h;u_1,u_2,u_3^{(i)};Z^{(i)})\subset \Delta_S (h;u_1,u_2,u_3^{(i-1)};Z^{(i-1)})\\
      \\
    \mathbf{y}^{(i)} \not \in \Delta_2 (h;u_1,u_2;u_3^{(i-1)};Z^{(i-1)})\hfill{}\\
  \end{array}
\right.
\end{equation}
for $i\geq 1$. Since $y_1^{(i)}+y_2^{(i)}$ is chosen to be minimal at each step, we have
$y_1^{(i)}+y_2^{(i)}\rightarrow +\infty$ as $i\rightarrow +\infty$ if the process is infinite. Therefore
$$
\hat{u}_3=\lim_i u_3^{(i)}\in \hat{S}, \ \hat{Z}:=Z-\hat{\phi}, \ \hat{\phi}:=\sum_i\phi^{(i-1)}\in \hat{S}
$$
exist and $(u_1,u_2;\hat{u}_3;\hat{Z})$ are well 2-adapted coordinates of
$\hat{{\cal X}}=\mathrm{Spec}(\hat{S}[X]/(h))$. This proves the existence of well 2-adapted coordinates
when $S=\hat{S}$.\\

Let now $(u_1,u_2;u_3;Z)$ and $(u'_1,u'_2;u'_3;Z')$ be two sets of well 2-adapted coordinates. We assume of course that
$\mathrm{div}(u_j)=\mathrm{div}(u'_j)$, $j=1,2$, in case (*2). To prove that
$\Delta_2^+ (h;u'_1,u'_2;u'_3;Z')=\Delta_2^+ (h;u_1,u_2;u_3;Z)$, let first $\mathbf{y}\in \Delta_2^+ (h;u_1,u_2;u_3;Z)$
and let $\alpha \in \R_{>0}^3$ be given by Lemma \ref{structDelta2}(4) w.r.t. the face $\sigma_2:=\mathbf{y}$.
Since $\mathbf{y}\in \Delta_2^+ (h;u_1,u_2;u_3;Z)$, we have
$$
\mu_\alpha (u_2)<\min\{\mu_\alpha (u_1), \mu_\alpha (u_3)\}.
$$
Therefore $\mu_\alpha (u'_2)=\mu_\alpha (u_2)$.
We deduce that $\mathrm{in}_\alpha h$ is unchanged when computed w.r.t. the coordinates $(u'_1,u'_2;u_3;Z)$. This
implies furthermore that $\mathbf{y}$ is not 2-solvable in $\Delta_2 (h;u'_1,u'_2;u'_3;Z')$ provided
$\mu_\alpha (u'_3)=\mu_\alpha (u_3)$ for every $\alpha =\alpha (\mathbf{y})$. Otherwise, there is an expansion
$$
u'_3=\delta u_3 +\sum_{\mathbf{x}\in \Sigma}\gamma (\mathbf{x}) u_1^{x_1}u_2^{x_2},
$$
with $\Sigma$ finite, $\delta , \gamma (\mathbf{x})\in S$ units and $\mu_\alpha (u_1^{x_1}u_2^{x_2})<\mu_\alpha (u_3)$
for some $\mathbf{x}=\mathbf{x}_0\in \Sigma$ and $\alpha$. One deduces that $(\mathbf{v}_0\mathbf{x}_0)$ supports an
edge of $\Delta_2 (h;u'_1,u'_2;u'_3;Z')$ and that $1 /{\omega (x) \over p}\mathbf{x}_0$ is a 2-solvable vertex of
$\Delta_2 (h;u'_1,u'_2;u'_3;Z')$. Choosing $\mathbf{x}_0$ with $x_1$ minimal gives
$$
{1 \over {\omega (x) \over p}}\mathbf{x}_0 \in \Delta_2^+ (h;u'_1,u'_2;u'_3;Z').
$$
This is a contradiction since $(u'_1,u'_2;u'_3;Z')$ are well 2-adapted coordinates, so we get
$$
\Delta_2^+ (h;u'_1,u'_2;u'_3;Z')=\Delta_2^+ (h;u_1,u_2;u_3;Z)
$$
as required.\\

Let now $(u_1,u_2,u_3;Z)$ be well adapted coordinates
at $x$ satisfying Definition  \ref{*kappadeux}. Applying finitely many times
the above algorithm and (\ref{eq7337}), as $\Delta_2(h;u_1,u_2,u_3;Z)\neq \emptyset$ by Lemma \ref{structDelta2}(3),
it can be assumed w.l.o.g. that
$$\left\{
    \begin{array}{ccc}
      \alpha_2(h;u_1,u_2;u_3;Z) & = & \alpha_2(h;u_1,u_2;\hat{u}_3;\hat{Z}) \\
      \beta_2(h;u_1,u_2;u_3;Z)  & = & \beta_2(h;u_1,u_2;\hat{u}_3;\hat{Z}) \\
    \end{array}
  \right.
,
$$
where $(u_1,u_2;\hat{u}_3;\hat{Z})$ are well 2-adapted coordinates of
$\hat{{\cal X}}=\mathrm{Spec}(\hat{S}[X]/(h))$, $\hat{Z}=Z-\hat{\phi}$. Moreover,
$$
(\alpha_2, \beta_2):=(\alpha_2(h;u_1,u_2;u_3;Z), \beta_2(h;u_1,u_2;u_3;Z))
$$
is a vertex of both $\Delta_2^+(h;u_1,u_2;u_3;Z)$ and $\Delta_2^+(h;u_1,u_2;\hat{u}_3;\hat{Z})$. Let $\hat{x}$
be the closed point of $\hat{{\cal X}}$ and assume that
\begin{equation}\label{eq733}
    A_1 (\hat{x})>A_1:=\min_{\mathbf{y}\in \Delta_2^+(h;u_1,u_2;u_3;Z)}\{y_1\}.
\end{equation}
Let $J:=\{1,3\}$ and consider the weight vector $\alpha :=({\omega (x) \over p},A_1)\in \R_{>0}^J$. We
consider the initial form polynomial
$$
 \mathrm{in}_\alpha h=Z^p+\sum_{i=1}^p F_{i,Z,\alpha}Z^{p-i} \in (\mathrm{gr}_\alpha S)[Z],
$$
where
$$
\left\{
  \begin{array}{cc}
    \mathrm{gr}_\alpha S=S/(u_1)[U_1]\subseteq \mathrm{gr}_\alpha \hat{S}=\hat{S}/(u_1)[U_1] \hfill{}
    & \mathrm{if} \ A_1=0 \\
      &  \\
    \mathrm{gr}_\alpha S=S/(u_1,u_3)[U_1,U_3]\subseteq \mathrm{gr}_\alpha \hat{S}=\hat{S}/(u_1,u_3)[U_1,U_3] \hfill{}
    & \mathrm{if} \ A_1>0 \hfill{}\\
  \end{array}
\right.
.
$$

\noindent {\it Case 1:} $A_1=0$. One deduces from the above algorithm and (\ref{eq7337}) that there exists some
$\hat{c} \in (\overline{u}_2)\hat{S}/(u_1)$ such that
\begin{equation}\label{eq7331}
    F_{i,\hat{Z},\alpha}=\hat{g}_iU_1^{id_1}\overline{u}_2^{d_{2,i}}(\overline{u}_3 -\hat{c})^{i{\omega (x)\over p}}, \ 1 \leq i \leq p-1
\end{equation}
for some $\hat{g}_i \in \hat{S}/(u_1)$ ($\hat{g}_i =0$ if $d_1 \not \in \N$), $d_{2,i}\geq id_2$, and
\begin{equation}\label{eq7332}
\left\{
  \begin{array}{cc}
    F_{p,\hat{Z},\alpha}=\hat{l} U_1^{pd_1}\overline{u}_2^{pd_2}(\overline{u}_3 -\hat{c})^{\omega (x)}
    & \mathrm{in} \ \mathrm{cases} \ (*1) \ \mathrm{or} \ (*2) \\
      &  \\
    F_{p,\hat{Z},\alpha}=\hat{l} U_1^{pd_1}\overline{u}_2(\overline{u}_3 -\hat{c} )^{\omega (x)}  \hfill{}
    & \mathrm{in} \ \mathrm{case} \ (*3) \hfill{}\\
  \end{array}
\right.
\end{equation}
for some $\hat{l}\in \hat{S}/(u_1)$ a unit.

\smallskip

The regular local ring $T:=(\mathrm{gr}_\alpha S)_{(U_1,\overline{u}_2, \overline{u}_3)}$ is excellent and the polynomial
$\mathrm{in}_\alpha h \in T[Z]$ satisfies the assumptions of Proposition \ref{Deltaalg}.
Let
$$
\Xi:=\mathrm{Spec}(T[Z]/(\mathrm{in}_\alpha h)), \ \hat{\Xi}:=\mathrm{Spec}(\hat{T}[Z]/(\mathrm{in}_\alpha h)).
$$
Since $\mathbf{v}_0$ is a nonsolvable vertex of
$\Delta_{\hat{T}}(\mathrm{in}_\alpha h; U_1,\overline{u}_2, \overline{u}_3;Z)$,
we deduce from (\ref{eq7331})-(\ref{eq7332}) that
\begin{equation}\label{eq7333}
\hat{V}:=V(\hat{Z}, \overline{u}_3-\hat{c})\subseteq \mathrm{Sing}_p\widehat{\Xi} \subseteq
V(\hat{Z}, U_1\overline{u}_2^{pd_2}(\overline{u}_3-\hat{c})).
\end{equation}
Since $T$ is excellent, one deduces that the Zariski closure $V$ of $\hat{V}$ in $\Xi$
is contained in $\mathrm{Sing}_p\Xi$. Let
$$
P : \ \Xi \longrightarrow \mathrm{Spec}T
$$
be the projection. By (\ref{eq7333}), $P (V)$ is an irreducible component of $P (\mathrm{Sing}_p\Xi)$ contained
in $\mathrm{div}(U_1\overline{u}_2^{pd_2}(\overline{u}_3-\hat{c}))$. Since each of $\mathrm{div}(U_1)$,
$\mathrm{div}(\overline{u}_2)$ is Zariski closed, there exist $\hat{\delta}' \in \hat{S}/(u_1)$ a unit such
that $\overline{u}'_3:= \hat{\delta}'(\overline{u}_3-\hat{c})\in S/(u_1)$. Let $u'_3 \in S$ be a preimage
of $\overline{u}'_3$. Applying again Proposition \ref{Deltaalg},
there exist well adapted coordinates $(u_1,u_2,u'_3;Z')$ at $x$ satisfying Definition  \ref{*kappadeux} and such that
\begin{equation}\label{eq7334}
\min_{\mathbf{y}\in \Delta_2^+(h;u_1,u_2;u'_3;Z')}\{y_1\}> A_1.
\end{equation}

\noindent {\it Case 2:} $A_1>0$. The argument runs along the same lines: we now have
some $\hat{c} \in (\overline{u}_2)\hat{S}/(u_1,u_3)$, (\ref{eq7333}) is replaced by
$$
V(\hat{Z}, U_3-\hat{c}U_1^{A_1})\subseteq \mathrm{Sing}_p\widehat{\Xi} \subseteq
V(\hat{Z}, U_1\overline{u}_2^{pd_2}(U_3-\hat{c}U_1^{A_1})),
$$
with $\Xi$ as above and (\ref{eq7334}) holds.

\smallskip

Applying this procedure and (\ref{eq7334}) finitely many times, it can be assumed w.l.o.g. that $A_1=A_1(\hat{x})$.
When $x$ satisfies condition (*1) or (*2), one introduces similarly
$$
A_2:=\min_{\mathbf{y}\in \Delta_2(h;u_1,u_2;u_3;Z)}\{y_2\}
\leq \min_{\mathbf{y}\in \Delta_2^+(h;u_1,u_2;u_3;Z)}\{y_2\}=\beta_2(h;u_1,u_2;u_3;Z).
$$

The same argument shows that there  {exist} well adapted coordinates $(u_1,u_2,u_3;Z)$
at $x$ satisfying Definition  \ref{*kappadeux} and well 2-adapted coordinates $(u_1,u_2;\hat{u}_3;\hat{Z})$
of $\hat{{\cal X}}=\mathrm{Spec}(\hat{S}[X]/(h))$, $\hat{Z}=Z-\hat{\phi}$, such that
\begin{equation}\label{eq7335}
A_j:=\min_{\mathbf{y}\in \Delta_2(h;u_1,u_2;u_3;Z)}\{y_j\}
= \min_{\mathbf{y}\in \Delta_2^+(h;u_1,u_2;\hat{u}_3;\hat{Z})}\{y_j\}, \ j=1,2.
\end{equation}

Finally, if $x$ satisfies condition (*1) or (*2) (resp. (*3)), (\ref{eq7335}) (resp. (\ref{eq7334}))
proves that the region
$$
\Delta_2(h;u_1,u_2;u_3;Z)\backslash \Delta_2(h;u_1,u_2;\hat{u}_3;\hat{Z})\subseteq \R^2_{\geq 0}
$$
(resp. $\Delta_2^+(h;u_1,u_2;u_3;Z)\backslash \Delta_2^+(h;u_1,u_2;\hat{u}_3;\hat{Z})$)
is bounded. Therefore the above algorithm and (\ref{eq7337})
can repeat only finitely many times. This proves the existence of  well 2-adapted coordinates for
arbitrary $S$.\\

Let then $(u_1,u_2;u_3;Z)$ be well 2-adapted coordinates and define the curve
${\cal Y}:=V(Z,u_1,u_3)\subset {\cal X}$. By Proposition \ref{Deltaalg}, the polyhedron
$$
\Delta_{\hat{S}}(h;u_1,u_3;Z)=\mathrm{pr}^{\{1,3\}}\Delta_{\hat{S}}(h;u_1,u_2,u_3;Z)
$$
is minimal and we have
\begin{equation}\label{eq7336}
\epsilon (y)=\omega (x) \times \min \{1, A_1(x)\}.
\end{equation}
By Definition \ref{deffirstkind}, ${\cal Y}$ is permissible of the first kind at $x$ if and only if
($x$ satisfies condition (*1) or (*2)) and $A_1(x)\geq 1$.

\smallskip

By Proposition \ref{secondkind}, ${\cal Y}$ is permissible of the second kind at $x$ only if
$x$ satisfies condition (*3) and $A_1(x)\geq 1$ by (\ref{eq7336}). Conversely,
Definition \ref{defsecondkind}(i) is satisfied because
$$
m(y)\geq \epsilon (y) =\omega (x)\geq p.
$$

By (\ref{eq7336}), we have $\epsilon (y)=\epsilon (x)-1$. Suppose that $i_0(y)=p-1$. Let
$W:=\eta ({\cal Y})$, so we have
$$
\mathrm{in}_{W}h =Z^p -G_W^{p-1}Z+F_{p,W,Z} \in G(W)[Z]
$$
with $\delta (y)\in \N$, $G_W =g_WU_1^{\delta (y)}$ and
$$
0\neq \mathrm{cl}_{p(p-1)\delta(y)}\mathrm{Disc}_Zh=<g_W^{p(p-1)}U_1^{p(p-1)\delta(y)}>\in G(W)_{p(p-1)\delta(y)}
$$
by Theorem \ref{initform}. Since $E=\mathrm{div}(u_1)$, $g_W\in S/(u_1,u_3)$ is a unit by assumption {\bf (E)}. We
then get
$$
\epsilon (x)\leq {\mathrm{ord}_{m_S}(H(x)^{-(p-1)}f_{p-1,Z}^p) \over p-1}=\epsilon (y)= \epsilon (x)-1,
$$
a contradiction. Therefore Definition \ref{defsecondkind}(ii) is satisfied because $i_0(y)=p$.
Finally it follows from Definition  \ref{*kappadeux}(ii)
that Definition \ref{defsecondkind}(iii) is satisfied.
\end{proof}

The previous theorem shows that the following invariants are actually independent of the
choice of well 2-adapted coordinates.

\begin{defn}\label{definvariants2}\index{$A$2@$A_1,\ A_2,\ B,\ C,\ \alpha_2,\ \beta_2, \gamma$ in case $\kappa(x)=2$(*), see also Definition~\ref{defDelta2} and Theorem~\ref{well2prepared}}
Let $x$ satisfy condition (*)  and $(u_1,u_2;u_3;Z)$ be well 2-adapted coordinates. We let
$$
A_j(x):=\min_{\mathbf{y}\in \Delta_2(h;u_1,u_2;u_3;Z)}\{y_j\}\geq 0 \ \ \ \mathrm{for} \ \mathrm{div}(u_j)\subseteq E;
\ \ \ \ \ \ \ \
$$
$$
B(x):=B(h;u_1,u_2;u_3;Z); \ C(x):=B(x)-\sum_{\mathrm{div}(u_j)\subseteq E}A_j(x);
$$
$$
\beta (x):=\min_{(A_1(x),y_2)\in \Delta_2^+(h;u_1,u_2;u_3;Z)}\{y_2 \}\geq 0 ;
\ \ \ \ \ \ \ \ \ \ \ \ \ \ \ \ \ \ \ \ \ \ \ \ \ \
$$
$$
(\alpha_2(x),\beta_2(x)):=(\alpha_2(h;u_1,u_2;u_3;Z),\beta_2(h;u_1,u_2;u_3;Z)). \ \ \ \
$$

Finally, we define $\gamma (x)\in \N$ by:
$$
\gamma (x):= \left\{
               \begin{array}{cc}
                 \lceil \beta (x)\rceil & \mathrm{in} \ \mathrm{case} \ (*1) \\
                 1+\lfloor C(x)\rfloor  & \mathrm{in} \ \mathrm{case} \ (*2) \\
                 1+ \lfloor \beta (x)\rfloor & \mathrm{in} \ \mathrm{case} \ (*3) \\
               \end{array}
             \right.
             .
$$
\end{defn}

\begin{lem}\label{kappa2bupcurve}
Assume that $\kappa (x)=2$ and $x$ satisfies condition (*). Let $(u_1,u_2;u_3;Z)$ be well 2-adapted
coordinates and assume furthermore that
$$
A_1(x)\geq 1 \  (\mathrm{resp.} \  A_1(x)>1 \ \mathrm{or} \  (A_1(x)=1 \ \mathrm{and} \  \beta (x)< 1- {1 \over \omega (x)}))
$$
if $x$ satisfies condition (*1) or (*2) (resp. condition (*3)).  Let $\pi : {\cal X}' \rightarrow {\cal X}$
be the blowing up along ${\cal Y}:=V(Z,u_1,u_3)\subset {\cal X}$ and $x' \in \pi^{-1}(x)$.
Then $x'$ is resolved or the following holds:
$$
x'=(Z':=Z/u_1, u_1, u_2,u'_3:=u_3/u_1)
$$
and $x'$ satisfies again condition (*1) or (*2) (resp. (*3));
the coordinates $(u_1,u_2;u'_3;Z')$ are well 2-adapted  at $x'$ and
$$
\left\{
\begin{array}{cc}
 \Delta_2(u_1,u_2;u'_3;Z')=\Delta_2(u_1,u_2;u_3;Z) - (1,0) \hfill{}& \mathrm{in} \ \mathrm{case} \ (*1) \ \mathrm{and} \ (*2) \\
 \Delta_2^+(u_1,u_2;u'_3;Z')=\Delta_2^+(u_1,u_2;u_3;Z) - (1,0) & \mathrm{in} \ \mathrm{case} \ (*3) \hfill{}\\
 \end{array}
\right.
             ;
$$
in particular $A_1(x')=A_1(x)-1$ and we have:
$$
A_2(x')=A_2(x), \ C(x')=C(x), \ \beta (x')=\beta (x) \ \mathrm{and} \ \gamma (x')=\gamma (x).
$$
\end{lem}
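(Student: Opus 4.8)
The plan is to combine the blowing up theorem \ref{bupthm} with the origin‑chart transformation rule of proposition \ref{originchart}. First I would invoke theorem \ref{well2prepared}: since $A_1(x)\ge 1$, the curve ${\cal Y}=V(Z,u_1,u_3)$ is permissible (of the first kind if $x$ satisfies (*1) or (*2), of the second kind if $x$ satisfies (*3)), and its generic point $y$ has $\epsilon(y)=\omega (x)$ by (\ref{eq7336}). Hence theorem \ref{bupthm} applies to $\pi$ and gives $(m(x'),\omega (x'),\kappa (x'))\le (p,\omega (x),2)=\iota(x)$. If the inequality is strict, then $\iota(x')<(p,\omega (x),2)$, i.e. $x'$ is resolved, and we are done; so from now on I assume $\iota(x')=\iota(x)$, in which case theorem \ref{bupthm} forces $s'\in PC(x,{\cal Y})$.

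Next I would locate $s'$. Write $W=\eta({\cal Y})=V(u_1,u_3)$, so $J=\{1,3\}$ and $\sigma^{-1}(m_S)=\mathbf{Proj}(k(x)[U_1,U_3])\simeq \PP^1_{k(x)}$. By definition \ref{*kappadeux}, $U_3\in \mathrm{Vdir}(x)$; together with $\mathrm{Dir}(x)=\mathrm{Max}(x)$ (which holds by proposition \ref{conedirectrix} as $\dim R\le 4$, the exceptional $p=2$ case of remark \ref{ridgedimthree} having $x$ already resolved by proposition \ref{tausup2}), this shows that the subcone $C(x,{\cal Y})$ of definition \ref{defcone} is contained in $\{U_3=0\}$, in both the first‑ and second‑kind formul\ae{}. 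Hence $PC(x,{\cal Y})\subseteq\{(1:0)\}$: either $PC(x,{\cal Y})=\emptyset$, forcing $\iota(x')<\iota(x)$ (and $x'$ resolved), or $s'$ is the origin of the chart $\mathrm{Spec}(S[u_3/u_1]_{(u_1,u_3/u_1)})$. In the latter case $k(s')=k(x)$; since $m(x')=p$, proposition \ref{SingX} gives $\eta'^{-1}(s')=\{x'\}$ and $k(x')=k(x)$, so indeed $x'=(Z':=Z/u_1,\ u_1,\ u_2,\ u'_3:=u_3/u_1)$, with $E'=\sigma^{-1}(E\cup W)_{\mathrm{red}}=\mathrm{div}(u_1)$ (cases (*1), (*3)) or $\mathrm{div}(u_1u_2)$ (case (*2)), the component $\mathrm{div}(u_1)$ absorbing the exceptional divisor because $W\subseteq \mathrm{div}(u_1)$.

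The core is the polyhedron computation. Since $\delta(y)=d_1+\omega (x)/p\ge 1$, proposition \ref{originchart} applies with $j_0=1$: $(u_1,u_2,u'_3;Z')$ are well adapted at $x'$ and $\Delta_{S'}(h';u_1,u_2,u'_3;Z')=l\bigl(\Delta_S(h;u_1,u_2,u_3;Z)\bigr)$ with $l(x_1,x_2,x_3)=(x_1+x_3-1,x_2,x_3)$ and $h'=u_1^{-p}h$. Consequently $\delta(x')=\min\{\,x_1+x_2+2x_3 : \mathbf{x}\in\Delta_S\,\}-1$, while by proposition \ref{bupformula}(iv) $H(x')=u_1^{\omega (x)-p}H(x)$, whence $d_1(x')=d_1-1+\omega (x)/p$ (and $d_2(x')=d_2$ in case (*2)). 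Writing $\mathbf{v}_0$ for the monic vertex of $\Delta_S$ attached to $x$ in section 7.3, a short computation shows that $\epsilon(x')=\omega (x')$ in cases (*1), (*2) (resp. $\epsilon(x')=\omega (x')+1$ in case (*3)) \emph{exactly when} $\mathbf{v}_0$ minimises $x_1+x_2+2x_3$ over $\Delta_S$; when it does not, $\epsilon(x')<\omega (x)$, so $\omega (x')<\omega (x)$ and $x'$ is resolved. So I may assume $\mathbf{v}_0$ is such a minimiser, whence the monic vertex $\mathbf{v}_0(x')$ attached to $x'$ is $l(\mathbf{v}_0)$; taking $\mathbf{v}_0(x')$ as the centre of $\mathbf{p}_{2,x'}$, a direct substitution in (\ref{eq7302}) gives
\[
\mathbf{p}_{2,x'}\bigl(l(\mathbf{x})\bigr)=\mathbf{p}_2(\mathbf{x})-(1,0)
\]
for every $\mathbf{x}\in\Delta_S$ with $0\le x_3<\omega (x)/p$, since $l$ fixes $x_3$ and shifts the first affine coordinate by exactly $-1$ after projection. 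As $l$ is a bijection preserving the strip $\{0\le x_3<\omega (x)/p\}$, this yields $\Delta_2(h';u_1,u_2;u'_3;Z')=\Delta_2(h;u_1,u_2;u_3;Z)-(1,0)$ (and likewise for $\Delta_2^+$, with $\beta_2$ unchanged since the lexicographic minimum defining it is attained at the translate of the same vertex). The stated equalities $A_1(x')=A_1(x)-1$, $A_2(x')=A_2(x)$, $C(x')=C(x)$, $\beta(x')=\beta(x)$, and hence $\gamma(x')=\gamma(x)$ in each case of definition \ref{definvariants2}, then follow from the translation; and $(u_1,u_2;u'_3;Z')$ are well 2‑adapted because $\Delta_{S'}(h';u_1,u_2,u'_3;Z')$ is minimal and a 2‑solvable vertex of $\Delta_2(h';u_1,u_2;u'_3;Z')$ (of $\Delta_2^+$ in case (*3)) would transport, under adding $(1,0)$, to a 2‑solvable vertex at $x$, contradicting well 2‑adaptedness of $(u_1,u_2;u_3;Z)$.

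The main obstacle I anticipate is the remaining bookkeeping: verifying that $x'$ again satisfies condition (*) \emph{of the same type} as $x$, i.e. that the unitariness in $U'_3$ of $J(F_{p,Z'},E',m_{S'})$ (resp. of $H^{-1}\partial F_{p,Z'}/\partial U_2$ in case (*3)) is inherited. Because $\omega (x)\equiv 0\ \mathrm{mod}\ p$, this unitariness is not seen by the derivative $U_3\partial/\partial U_3$, so one must track which initial‑form face of $\Delta_S(h;u_1,u_2,u_3;Z)$ is responsible for it and push it through the transformation rule of proposition \ref{bupformula}(v) (first kind) or the second‑kind analysis in the proof of theorem \ref{bupthm}, case 3. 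This is also where the extra hypothesis "$A_1(x)>1$, or $A_1(x)=1$ and $\beta(x)<1-1/\omega (x)$" enters in case (*3): it is precisely what prevents the second‑kind blowing up from pushing $\epsilon(x')$ beyond $\omega (x')+1$ (equivalently, from turning the monic form into one with $\kappa (x')=4$), so that $x'$ genuinely lands in case (*3) with the asserted invariants rather than being resolved by an unrelated mechanism.
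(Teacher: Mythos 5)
Your proposal follows the same strategy as the paper's proof: permissibility of ${\cal Y}$ via theorem \ref{well2prepared}, theorem \ref{bupthm} to reduce to the chart origin, proposition \ref{originchart} for the transformation $l$, and the resulting translation of $\Delta_2$ by $(-1,0)$ with preservation of 2-solvability. The translation computation $\mathbf{p}_{2,x'}\circ l=\mathbf{p}_2-(1,0)$ is correct, and your $PC(x,{\cal Y})$ argument for locating $x'$ is a slightly more direct phrasing than the paper's ``$x$ is good unless $\mathrm{Vdir}(x)=<U_3>$'', though both amount to the same thing.

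However, there is a genuine gap. You flag the ``bookkeeping'' of checking that $x'$ again satisfies (*) of the same type and relegate it to future work, but this is exactly the part of the lemma that the extra hypothesis on $A_1(x)$ and $\beta(x)$ in case (*3) is designed to handle, and your diagnosis of its role is wrong. You claim the hypothesis ``prevents the second-kind blowing up from pushing $\epsilon(x')$ beyond $\omega(x')+1$''. In fact $\epsilon(x')\le\epsilon(x)$ is automatic here: by (\ref{eq111}) we always have $\omega(x')\in\{\epsilon(x'),\epsilon(x')-1\}$, and theorem \ref{bupthm} forces $\omega(x')\le\omega(x)$, so $\epsilon(x')\le\omega(x')+1\le\omega(x)+1=\epsilon(x)$; moreover the $\epsilon$-increase case of theorem \ref{bupthm}(1) requires $\epsilon(y)=\epsilon(x)$, which fails in case (*3) where $\epsilon(y)=\omega(x)=\epsilon(x)-1$. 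The actual issue is the opposite: $\epsilon(x')$ may \emph{drop} to $\omega(x)$, turning the monic (*3) form into a form covered by lemma \ref{sortiemonome}. If $A_1(x)>1$ that lemma resolves $x'$ directly; if $A_1(x)=1$ one needs the bound $\beta(x)<1-1/\omega(x)$ to force $\Delta_2^+$ to have a vertex of the form $(1,(i_0-1)/i)$ with $i<\omega(x)$, whence $x'$ satisfies the assumptions of proposition \ref{tauegaldeux} and is resolved there. Without carrying out this case analysis (or an equivalent), the lemma is not proved: you have only shown the polygon translation in the case where $x'$ happens to satisfy (*) of the same type, without ruling out (or redirecting) the cases where it does not.

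A minor remark: the equalities for the secondary invariants in the conclusion are stated without proof in your proposal via the translation formula, which is fine for $A_1,A_2,C,\beta$, but $\gamma$ is defined piecewise by case (*1)/(*2)/(*3) in definition \ref{definvariants2}, so $\gamma(x')=\gamma(x)$ also requires knowing that $x'$ has the \emph{same} (*)-type as $x$ — this is another reason the type-inheritance step is not ``mere bookkeeping'' but is load-bearing.
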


\begin{proof}
By Theorem \ref{well2prepared}, the curve ${\cal Y}$ is permissible since $A_1(x)\geq 1$.

\smallskip

Since $U_3\in\mathrm{Vdir}(x)$ by Definition
of well 2-prepared coordinates, $x$ is then good except possibly if
$\mathrm{Vdir}(x)=<U_3>$ by Theorem \ref{bupthm}; in this case, we have $x'=(Z/u_1, u_1,u_2,u_3/u_1)$.

Let $h':=u_1^{-p}h$. By Proposition \ref{originchart}, $\Delta_{\hat{S}'}(h';u_1,u_2,u'_3;Z')$ is again minimal.
With usual notations, we have $d'_1=d_1+{\omega (x)\over p} -1$ and $\mathbf{v}'_0:=(d'_1,0,\omega (x)/p)$
($\mathbf{v}'_0:=(d'_1,1/p,\omega (x)/p)$ in case (*3)) is a nonsolvable vertex. We may assume that $x'$ is very near $x$.

\smallskip

If $x$ satisfies condition (*1) (resp. (*2)), then $\kappa (x')=2$ and $x'$ satisfies again condition (*1) (resp. (*2)).

If $x$ satisfies condition (*3) and $\epsilon (x')=\epsilon (x)$, then $\kappa (x')=2$ and $x'$ satisfies again condition (*3).

If $x$ satisfies condition (*3) and $\epsilon (x')=\omega (x)$, then $x'$ satisfies the assumptions
of Lemma \ref{sortiemonome}, so $x$ is good if $A_1(x)>1$; if ($A_1(x)=1$ and $\beta (x)< 1 - 1/\omega (x)$),
then $\Delta_2^+(u_1,u_2;u_3;Z)$ has a vertex of the form
$$
\mathbf{y}_0=(1, (i_0-1)/i)=\beta (x) , \ 1 \leq i \leq \omega (x).
$$
Therefore $i_0\leq i< \omega (x)$ or $i_0<i=\omega (x)$, since $\beta (x)< 1 - 1/\omega (x)$.
Taking $i_0$ minimal with this property, $(d'_1,i_0/p,(\omega (x)-i)/p)$
is a vertex of $\Delta_{S'}(h';u_1,u_2,u'_3;Z')$ and therefore
$$
\epsilon (x')=\omega (x) \Longrightarrow  i_0 +\omega (x)-i = \omega (x).
$$
Therefore $i<\omega (x)$ since $(i_0,i)\neq (\omega (x),\omega (x))$;
then $x'$ is good by Proposition \ref{tauegaldeux}, so $x$ is good.

\smallskip

Let $\mathbf{y}=\mathbf{p}_2(\mathbf{v})$ be
a vertex of $\Delta_2(u_1,u_2;u_3;Z)$ (of $\Delta_2^+(u_1,u_2;u_3;Z)$ in case (*3)). With notations as in Lemma
\ref{structDelta2}(4) with $\sigma_2:=\{\mathbf{y}\}$, let
$$
\mathrm{in}_\alpha h =Z^p +U_1^{pd_1}U_2^{pd_2}(F_0(U_2)U_3^{\omega (x)} +\sum_{i=1}^{\omega (x)}F_i(U_1,U_2)U_3^{\omega (x)-i}),
$$
where $0\neq F_0(U_2)\in k(x)$ ($0\neq F_0(U_2)\in k(x)[U_2]_1$ in case (*3)). Then $\mathbf{y}':=\mathbf{y}-(1,0)$
is a vertex of $\Delta_2(u_1,u_2;u'_3;Z')$ (of $\Delta_2^+(u_1,u_2;u'_3;Z')$ in case (*3)); the corresponding
initial form in Lemma \ref{structDelta2}(4) with $\sigma_2:=\{\mathbf{y}'\}$ is of the form:
$$
\mathrm{in}_{\alpha '}h' ={Z'}^p +U_1^{pd'_1}U_2^{pd_2}(F_0(U_2){U'_3}^{\omega (x)}
+\sum_{i=1}^{\omega (x)}U_1^{-i}F_i(U_1,U_2){U'_3}^{\omega (x)-i}).
$$
It follows from Definition \ref{def2solvable} that $\mathbf{y}'$ is not 2-solvable, since $\mathbf{y}$ is not.
The lemma follows easily.
\end{proof}

\begin{prop}\label{kappa2gamma0}
Assume that $\kappa (x)=2$ and $x$ satisfies condition (*). If $\gamma (x)=0$, then
$x$ is good.
\end{prop}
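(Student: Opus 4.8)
\textbf{Proof plan for Proposition \ref{kappa2gamma0}.}

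The plan is to reduce the statement $\gamma(x)=0$ to a very rigid geometric situation and then exhibit an explicit resolution algorithm. First, I would unwind the definition of $\gamma(x)$ in definition \ref{definvariants2}. Since $\gamma(x)=0$, we cannot be in case (*2) (where $\gamma(x)=1+\lfloor C(x)\rfloor\geq 1$ because $C(x)\geq 0$) nor in case (*3) (where $\gamma(x)=1+\lfloor\beta(x)\rfloor\geq 1$ because $\beta(x)\geq 0$). So $x$ satisfies condition (*1), i.e. $E=\mathrm{div}(u_1)$, $\omega(x)=\epsilon(x)$, and $\gamma(x)=\lceil\beta(x)\rceil=0$, which forces $\beta(x)=0$. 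By theorem \ref{well2prepared} pick well $2$-adapted coordinates $(u_1,u_2;u_3;Z)$; then $\beta(x)=0$ means $(A_1(x),0)\in\Delta_2^+(h;u_1,u_2;u_3;Z)$, so the vertex $(\alpha_2(x),\beta_2(x))$ of $\Delta_2^+$ lies on the line $y_2=0$, i.e. $\beta_2(x)=0$ and $\alpha_2(x)=B(x)=A_1(x)$. Geometrically this pins down the ``lowest'' compact face of the projected polygon to live in the $y_1$-axis.

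Next I would dispose of the case where $A_1(x)$ is large by descent. By lemma \ref{kappa2bupcurve}, when $A_1(x)\geq 1$ (case (*1)) the curve ${\cal Y}=V(Z,u_1,u_3)$ is permissible of the first kind and blowing it up yields a point $x'$ which is either resolved or again satisfies condition (*1) with $A_1(x')=A_1(x)-1$ and all of $A_2$(vacuous here), $C$, $\beta$, $\gamma$ unchanged; since this is an independent sequence (the center does not depend on $\mu$), iterating reduces to the case $A_1(x)<1$ — indeed $A_1(x_r)=0$ after $\lceil A_1(x)\rceil$ steps. So I may assume $A_1(x)=B(x)=\alpha_2(x)=\beta(x)=\beta_2(x)=0$. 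That means the vertex $(0,0)$ itself belongs to $\Delta_2^+(h;u_1,u_2;u_3;Z)$, and by lemma \ref{structDelta2}(4) (applied with $\sigma_2=\{(0,0)\}$) the corresponding compact face $\sigma$ of the characteristic polyhedron is $\sigma^\circ\cup\{\mathbf v_0\}$ with $\mathbf v_0=(d_1,d_2,\omega(x)/p)$ and $\sigma^\circ$ projecting to $(0,0)$ under $\mathbf p_2$; its initial form is $\mathrm{in}_\alpha h=Z^p+F_{p,Z,\alpha}$ where (using theorem \ref{initform} and the (*1) normalization) $H^{-1}F_{p,Z,\alpha}$ is, up to a $p$-th power correction and a nonzero scalar, a monomial $U_1^{a_1}U_2^{a_2}U_3^{\omega(x)}$ with $a_1=0$ — in other words, after a further coordinate adjustment on $u_3$ which does not disturb $2$-adaptedness, $\mathrm{in}_{m_S}h$ is the monic form $Z^p+\lambda U_1^{pd_1}U_2^{pd_2}U_3^{\omega(x)}$ with $\lambda\neq 0$ (and $\lambda\notin k(x)^p$ if $d_1,d_2\in\N$ since $\mathbf v_0$ is nonsolvable). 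Here $\mathrm{Vdir}(x)=\langle U_3\rangle$.

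At this point the situation is exactly the one governed by lemma \ref{sortiemonome}, or very close to it: the hypothesis that $(d_1,d_2+1/p,d_3+\omega(x)/p)$ — here $(d_1,d_2+1/p,\omega(x)/p)$ — is the only vertex of $\Delta_S(h;u_1,u_2,u_3;Z)$ in the region $v_1=d_1$ must be verified. If it holds, lemma \ref{sortiemonome} directly gives that $x$ is resolved (recall ``resolved'' means resolved for $(p,\omega(x),2)$ throughout this chapter). If there are other vertices in the slab $v_1=d_1$, they come from lower-order coefficients $f_{i,Z}$, $i<p$; by theorem \ref{initform} only $i=p-1$ can contribute, and then $E=\mathrm{div}(u_1u_2u_3)$ or $\mathrm{div}(u_1u_2)$ — contradicting $E=\mathrm{div}(u_1)$ unless that coefficient vanishes. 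So in fact $f_{i,Z}=0$ for $i<p$ in the relevant range and the single-vertex hypothesis of lemma \ref{sortiemonome} is automatic once $\beta(x)=A_1(x)=0$; one may then quote lemma \ref{sortiemonome} to conclude. \textbf{The main obstacle} I anticipate is precisely this last verification: showing that the ``$\gamma(x)=0$, $A_1(x)=0$'' normalization forces the characteristic polyhedron to have no vertex other than $\mathbf v_0$ in the hyperplane $x_1=d_1$ (equivalently, that all the coefficient terms contributing to $\Delta_2$ that could sit over $y_1=0$ are already accounted for by the monic term), so that lemma \ref{sortiemonome} (resp. the $\omega(x)=1$ variant lemma \ref{sortieomegaun}, resp. proposition \ref{sortiekappaegaldeux}/\ref{sortiebis} after one quadratic blowing up if $\tau'(x)\geq 2$) applies cleanly; handling the small residue characteristic $p=2$ and the cases $\tau'(x)=2,3$ separately, via theorem \ref{bupthm} and the ``sortie'' propositions of section 7.1, will require the same kind of casuistic bookkeeping used there, but no new idea.
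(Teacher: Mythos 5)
Your case analysis misreads the definition and misses the structure of the paper's argument.

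\textbf{First}, you incorrectly rule out case (*3). You assert $\beta(x)\geq 0$ always, but that inequality holds only in cases (*1) and (*2). In case (*3) the projection (\ref{eq603}) uses the vertex $\mathbf{v}'_0=(d_1,d_2,1/p)$ rather than the full $(d_1,d_2,d_3)$, and the image set $\Delta_2$ can and does contain points with negative second coordinate (this is remarked explicitly right after (\ref{eq603})). So in case (*3) one can have $-1\leq \beta(x)<0$, giving $\gamma(x)=1+\lfloor\beta(x)\rfloor=0$. The proposition is not a case-(*1)-only statement; the paper's proof treats (*1) with $\beta(x)=0$ and (*3) with $\beta(x)<0$ on an equal footing.

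\textbf{Second}, and more fundamentally, your reduction to $A_1(x)=0$ is impossible under the hypotheses, so the entire rest of the plan (the appeal to lemma \ref{sortiemonome}) rests on a vacuous premise. In case (*1) with $\beta(x)=0$ the polygon $\Delta_2$ is a single orthant $(A_1(x),0)+\R^2_{\geq 0}$, so $B(x)=A_1(x)$; since $B(x)\geq 1$ (see (\ref{eq7301})), you get $A_1(x)\geq 1$. In case (*3), the paper shows via the constraint $\epsilon(x)=1+\omega(x)\leq iA_1(x)+\omega(x)-i+1$ (where $\beta(x)=-1/i$) that again $A_1(x)\geq 1$. So $A_1(x)\geq 1$ is \emph{forced} by the hypotheses, and if you blow up along the curve ${\cal Y}=V(Z,u_1,u_3)$ and the new point still satisfies the same hypotheses, it again has $A_1(x')\geq 1$ by the same reasoning. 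The paper's proof exploits exactly this rigidity: induct down to $1\leq A_1(x)<2$; by lemma \ref{kappa2bupcurve}, the blowup along ${\cal Y}$ either resolves $x$ or gives a point $x'$ satisfying the same assumptions with $A_1(x')=A_1(x)-1<1$, which contradicts the forced bound $A_1(x')\geq 1$ — hence the blowup resolves $x$, with no need for lemma \ref{sortiemonome} at all.
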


\begin{proof}
By Theorem \ref{well2prepared}, there exist well 2-adapted coordinates
$(u_1,u_2;u_3;Z)$ at $x$. The assumption $\gamma (x)=0$ means that ($x$ is in case (*1) and
$\beta (x)=0$) or ($x$ is in case (*3) and $\beta (x)<0$).

\smallskip

{\it Assume that $x$ is in case (*1).} We have
$$
\Delta_2(h;u_1,u_2;u_3;Z)=(A_1(x),0)+\R^2_{\geq 0}.
$$
Since $B(x)\geq 1$ ({\it viz.} (\ref{eq7301})), we have $A_1(x)\geq 1$.

\smallskip

{\it Assume that $x$ is in case (*3).} We have
$$
\Delta_2^+(h;u_1,u_2;u_3;Z)=(A_1(x),\beta (x))+\R^2_{\geq 0}
$$
in this case. Note that we have $A_1(x)\geq 1$: namely, $\beta (x)=-1/i$ for some $i$, $1 \leq i \leq \omega (x)$ such
that
$$
\epsilon (x)=1 +\omega (x)\leq iA_1(x)+\omega (x)-i+1,
$$
so $A_1(x)\geq 1$.

\smallskip

Suppose that $1 \leq A_1(x) <2$. By Lemma \ref{kappa2bupcurve}, $x$ is good or $x'$ satisfies
again the assumption of the proposition with $A_1(x')=A_1(x)-1<1$: a contradiction with the previous remark.
Induction on $\lfloor A_1(x)\rfloor$ concludes the proof.
\end{proof}

\subsection{Monic expansions: blowing up a closed point.}

In this section, we control the behavior of the secondary invariant $\gamma (x)$ (Definition \ref{definvariants2})
by blowing up a closed point. By Proposition \ref{kappa2gamma0} we may furthermore assume
that $\gamma (x)\geq 1$. At this point, we connect the proof with
the equal characteristic proof given in \cite{CoP2} chapter 3.
Namely, this control is considered in Lemmas {\bf I.8.3} and {\bf I.8.8} (resp. Lemmas {\bf I.8.7} and {\bf I.8.9})
\cite{CoP2} chapter 3 when $x$ satisfies condition (*1) or (*2) (resp. condition (*3)). The
proof relies on the definition of the form
$$
\mathrm{in}_\alpha h =Z^p -G_\alpha^{p-1}Z+F_{p,Z,\alpha}\in (\mathrm{gr}_\alpha S)[Z]
$$
in Lemma \ref{structDelta2}(4)(5) w.r.t. the initial face $\sigma_{2,\mathrm{in}}$ of
$\Delta_2(h;u_1,u_2;u_3;Z)$, where $(u_1,u_2;u_3;Z)$ are well 2-adapted coordinates at $x$.\\

\noindent {\it notations used in \cite{CoP2}.} The corresponding notation for $F_{p,Z,\alpha}$ is
\begin{equation}\label{eq7412}
F_{p,Z,\alpha}=U_1^{a(1)}U_2^{a(2)}\left (\overline{\phi}_0 U_3^{\omega (x)}
+\sum_{j\in J_0}U_3^{\omega (x)-j}\Phi_j(U_1,U_2) \right )
\end{equation}
when $x$ satisfies condition (*1) or (*2) (Definition {\bf I.8.2.1}), with
$$
a(j)=pd_j, \ j=1,2, \ 0\neq \overline{\phi}_0 \in k(x)\ \mathrm{and}
\ \Phi_j(U_1,U_2)\in k(x)[U_1,U_2].
$$
By Definition \ref{def2solvable},
we have $\Phi_j(U_1,U_2)\neq 0$ for some $j_0\neq 0$.

\smallskip

When $x$ satisfies condition (*3), the notation is the same except that $\overline{\phi}_0$ and
$\Phi_j(U_1,U_2)$ are replaced respectively by
$U_2\overline{\phi}_0$,  $\overline{\phi}_0 \in U_2^{-1}k(x)[U_1,U_2,U_3]_1$, and by
$U_2\Phi_j(U_1,U_2)$ with $\Phi_j(U_1,U_2)\in U_2^{-1}k(x)[U_1,U_2]$ (Definition {\bf I.8.6.1}).
We have $a(2)=0$ in these formul{\ae} in cases (*1) and (*3).

\smallskip

Similarly, the corresponding notation for $G_\alpha$ is
\begin{equation}\label{eq741}
G_\alpha^{p}=U_1^{a(1)}U_2^{a(2)}\mathrm{cl}_{B(x)\omega (x)}(H(x)^{-1}g^p)
\end{equation}
when $x$ satisfies condition (*1) or (*2). When $x$ satisfies condition (*3), we
have
\begin{equation}\label{eq7411}
G_\alpha^{p}=U_1^{a(1)}\mathrm{cl}_{1+B(x)\omega (x)}(H(x)^{-1}g^p).
\end{equation}

The numerical invariants $\beta (x)$ and $B(x)$ are denoted respectively by $\beta 3(x)$ and $B3(x)$
in \cite{CoP2} when $x$ satisfies condition (*3). The statement ``$\kappa (x)\leq 1$'' in \cite{CoP2}
stands for ``$x$ is resolved'' in this article.
The vector spaces $\mathrm{cl}_{\mu_0,\omega (x)}J$ (\cite{CoP2} Definitions {\bf I.8.2.3} and
{\bf I.8.6.3}) are determined by the initial form polynomial $\mathrm{in}_\alpha h $. The proofs
of the following lemmas are almost entirely based on the numerical  Lemmas {\bf I.8.2.2} and
{\bf I.8.6.2} in \cite{CoP2} which are characteristic free. We simply refer to their counterpart
in \cite{CoP2} except when they do not immediately adapt to our characteristic free setting. \\

Assume that ($\kappa (x)=2$, $x$ satisfies condition (*) and $\gamma (x)\geq 1$).
Let $\pi : {\cal X}'\longrightarrow {\cal X}$ be the blowing up along $x$ and $x'\in \pi^{-1}(x)$.
We denote by $d:=[k(x'):k(x)]$.

\begin{lem}\label{gamma2*12}
With notations as above, assume that $x$ is in case (*1) or (*2). Let $(u_1,u_2;u_3;Z)$
be well 2-adapted coordinates at $x$ and assume furthermore that
$$
\eta '(x')\in \mathrm{Spec}(S[{u_2\over u_1},{u_3\over u_1}][Z']/(h')), \ h':=u_1^{-p}h, \ Z':={Z \over u_1}.
$$
Then $x'$ is resolved or ($\kappa (x')=2$, $x'$ satisfies again condition (*) with
$$
A_1(x')=B(x)-1, \ \gamma (x')\leq \gamma (x),
$$
and there exist well 2-adapted coordinates
$(u'_1,u'_2;u'_3;Z')$ at $x'$ such that the following holds:)

\smallskip

\begin{itemize}
  \item [(1)] if $x'=(Z/u_1,u_1,u_2/u_1,u_3/u_1)$, then $x'$ is again in case (*1) (resp. in case (*2))
  and we have $C(x')\leq C(x)$, $\beta(x')\leq \beta(x)$;
  \item [(2)] if $x'\neq (Z/u_1,u_1,u_2/u_1,u_3/u_1)$, then $x'$ satisfies condition (*1) or (*3), and either
  (3') below holds or (3)-(4) below hold;
  \item [(3')] the point $x$ satisfies condition (*2) with
  $$
  U_1^{-pd_1}U_2^{-pd_2}F_{p,Z}= \mu U_3^{p} +c_p(U_1 + \lambda U_2)^p ,
  $$
  where $d_1 , d_2 \not \in \N$, $\lambda, \mu, c_{p} \in k(x)$, $\lambda \mu c_{p}\neq 0$
  and $\mu^{-1}c_{p}\not \in k(x)^p$ up to change of well 2-adapted coordinates;
  furthermore, $x'$ satisfies condition (*1), $k(x')=k(x)$ and we have
  $$
  \mathbf{y}':=(\alpha_2(x'),\beta_2(x'))=(0,p/(p-1)) \in \Delta_2(h';u'_1,u'_2;u'_3;Z')
  $$
  and
  \begin{equation}\label{eq742}
  \mathrm{in}_{\alpha '} h'={Z'}^p +{U'_1}^{pd'_1}(\lambda '{U'_3}^p +U'_3{U'_2}^p),
  \end{equation}
  with $d'_1\in \N$, $\lambda '\not\in k(x)^p$, notations as in Lemma \ref{structDelta2}(4) with $\sigma_2=\mathbf{y}'$;
  \item [(3)] we have
  $$
  \beta (x')\leq {C(x) \over d} +{1 \over p};
  $$
  \item [(4)] we have
  $$
  \beta (x') <
  \left\{
  \begin{array}{cc}
  1+ \lfloor {C(x)\over d}\rfloor \hfill{}& \mathrm{if} \  x' \ \mathrm{is} \ \mathrm{in} \ \mathrm{case} \ (*1) \\
   & \\
  1+ \lfloor {C(x)\over d}\rfloor -{1 \over \omega (x)}  & \mathrm{if} \ x' \ \mathrm{is} \ \mathrm{in} \ \mathrm{case} \ (*3) \\
  \end{array}
  \right.
  .
  $$
\end{itemize}

\end{lem}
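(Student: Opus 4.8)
The strategy is to run the blowing up at $x$ along the chart containing $\eta'(x')$ and to \emph{project} the characteristic polyhedron: one first produces well $2$-adapted coordinates at $x$, writes the initial form $\mathrm{in}_\alpha h = Z^p - G_\alpha^{p-1}Z + F_{p,Z,\alpha}$ on the initial face $\sigma_{2,\mathrm{in}}$ of $\Delta_2(h;u_1,u_2;u_3;Z)$ with notations as in (\ref{eq7412})--(\ref{eq741}), and then transforms the expansion of $F_{p,Z,\alpha}$ (and of the coefficients $f_{i,Z}$) under $u_1 \mapsto u_1$, $u_2 \mapsto u_2/u_1$, $u_3 \mapsto u_3/u_1$, $Z \mapsto Z/u_1$. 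This is the translation of \cite{CoP2} lemmas {\bf I.8.3}, {\bf I.8.8}; the heavy lifting is deferred to the characteristic free numerical lemmas {\bf I.8.2.2}, {\bf I.8.6.2} of \cite{CoP2} and to our key analytic lemma \ref{lem532}, applied to the two-variable situation coming from the projection to $(u_1,u_3)$ (resp.\ $(u_1,u_2)$). The identity $A_1(x') = B(x)-1$ is the statement that the invariant $A_1$ of the transformed polygon is exactly the minimal slope $B(x)$ of the projected polygon minus one, which is immediate from the transformation rule $\mathbf p_2 \mapsto$ translation by $(-1,0)$ combined with lemma \ref{structDelta2}(4)(5); the bound $\gamma(x')\le\gamma(x)$ then follows by chasing definition \ref{definvariants2} once (1)--(4) are established.

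\textbf{Main steps, in order.} First, I would fix well $2$-adapted coordinates $(u_1,u_2;u_3;Z)$ at $x$ (theorem \ref{well2prepared}) and dispose of the degenerate subcases: if $\mathrm{Vdir}(x)=\langle U_3\rangle$ is the only obstruction, theorem \ref{bupthm} already gives $\iota(x')<\iota(x)$ unless $x'=(Z/u_1,u_1,u_2/u_1,u_3/u_1)$; and whenever the transformed situation matches the hypotheses of lemma \ref{sortiemonome}, lemma \ref{sortieomegaun}, proposition \ref{sortiekappaegaldeux}, proposition \ref{sortiebis} or proposition \ref{tauegaldeux}, $x'$ is resolved and we are done. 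Second, in the ``origin'' chart $x'=(Z/u_1,u_1,u_2/u_1,u_3/u_1)$, proposition \ref{originchart} guarantees that $\Delta_{\widehat{S'}}(h';u_1,u_2/u_1,u_3/u_1;Z')$ is minimal, the vertex $\mathbf v_0$ survives nonsolvably (by the well $2$-adaptedness), so $\kappa(x')=2$, $x'$ is in the same case (*1) or (*2), and the polygon shifts by $(-1,0)$: this yields (1) together with $C(x')\le C(x)$, $\beta(x')\le\beta(x)$ directly from $\Delta_2(h';\cdots)=\Delta_2(h;\cdots)-(1,0)$. Third, for the other points $x'$ in the chart ($x'\ne$ origin), I would apply lemma \ref{lem532} to the two-variable polynomial obtained from $F_{p,Z,\alpha}$ after setting $u_2=0$ (case (*1)) or after the appropriate substitution mixing $U_1,U_2$ (case (*2)); the outputs $a'$, $e'$ of that lemma give $A_1(x')=B(x)-1$ and the estimate $\beta(x')\le C(x)/d + 1/p$ of (3), where $d=[k(x'):k(x)]$, using the self-evident $1+\lfloor jC(x)/d\rfloor \le j(1+\lfloor C(x)/d\rfloor)$ exactly as in proposition \ref{eclatpointcas12}. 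The strict inequalities of (4) come from the equality-case analysis in lemma \ref{lem532}(1): equality there forces $\mathrm{deg}F/d\in\N$, $a'/p\in\N$, $e'/p\notin\N$ and a monomial $J$-space, and one then checks that applying an extra derivative (w.r.t.\ $U_2$ or a constant $\lambda_l$) in the $J$-construction strictly drops the order, so $\beta(x')$ is strictly below the ceiling — and in case (*3) one gains the extra $1/\omega(x)$ because $\epsilon(x)=1+\omega(x)$ there, i.e.\ the relevant vertex has third coordinate $1/p$, shifting the polygon.

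\textbf{The exceptional branch (3').} The single genuinely new phenomenon, the reason for isolating (3'), is the inseparable degree $d=p=2$ case where lemma \ref{lem532}(1) produces equality and $k(x')$ is inseparable over $k(x)$: this is exactly the situation flagged in the introduction as ``lemma \ref{gamma2*12}(3)(3') for inseparable extensions of degree $d=p=2$''. Here I would argue that case (*2) with $U_1^{-pd_1}U_2^{-pd_2}F_{p,Z}=\mu U_3^p + c_p(U_1+\lambda U_2)^p$, $d_1,d_2\notin\N$, $\lambda\mu c_p\ne 0$, $\mu^{-1}c_p\notin k(x)^p$ is the unique configuration escaping (3)--(4), and that after blowing up one lands (up to a change of well $2$-adapted coordinates, using $0\ne d(\lambda U_1^{pd_1}U_2^{pd_2})$ as in the proof of theorem \ref{well2prepared}) in case (*1) at $x'$ with $k(x')=k(x)$, vertex $(\alpha_2(x'),\beta_2(x'))=(0,p/(p-1))$ and initial form (\ref{eq742}) with $\lambda'\notin k(x)^p$, $d'_1\in\N$; the verification that $\mathbf y'=(0,p/(p-1))$ is not $2$-solvable (so these are genuinely well $2$-adapted coordinates and $\gamma(x')\le\gamma(x)$) uses the minimality of $\Delta_S(h;u_1,u_2,u_3;Z)$ together with definition \ref{def2solvable}. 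The main obstacle is precisely this (3') branch: tracking which monomials of $f_{p-1,Z}$ and $f_{p,Z}$ survive the transformation and computing the residue $\lambda'$ and the integer $d'_1$ requires a careful bookkeeping of the $p$-th power structure modulo $k(x)^p$, and it is the $p=2$ subtlety that makes lemma \ref{lem532} not quite enough on its own; everything else reduces to the cited characteristic free numerical lemmas and to the monic-form analysis already set up in section 7.
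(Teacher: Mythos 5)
Your proposal misses the key case split that drives the paper's proof and, as a consequence, misattributes the source of the exceptional branch (3'). The paper first invokes proposition \ref{redto*}(ii) to reduce to ``$\kappa(x')=2$ and $x'$ satisfies (*)'', and then splits on the equivalence $B(x)>1 \Leftrightarrow \tau'(x)=1$. The regime $B(x)=1$ (so $\tau'(x)\geq 2$) is handled not by lemma \ref{lem532} but by re-using the type-(T0)/(T2)/(T3) analysis carried out inside the proof of proposition \ref{redto*}: for type (T2) one lands at a \emph{rational} point $x'=(Z/u_1,u_1,1+\gamma u_2/u_1,u_3/u_1)$ with $k(x')=k(x)$, and the three outcomes $I=\{0\}$, $I=\{\omega(x)\}$, $I=\emptyset$ give $\beta(x')=(i+1)/i$, $\beta(x')=\omega(x)/(\omega(x)-1)$, or $\beta(x')=1$ respectively. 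The branch (3') is exactly the middle outcome with $\omega(x)=p$, i.e.\ $\beta(x')=p/(p-1)$, which exceeds the bound $C(x)/d+1/p$; this is why it must be isolated. In particular $d=[k(x'):k(x)]=1$ in (3'), not $d=p=2$. Your claim that (3') comes from an inseparable extension of degree $p=2$ is therefore incorrect (that difficulty belongs to the sibling lemma \ref{gamma2*3}, cases 2 and 3 of its proof, where the strict inequality $\beta(x')<\beta(x)$ is delicate when $d=p=2$).

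The second concrete gap: in the $B(x)=1$ regime the initial face of $\Delta_2(h;u_1,u_2;u_3;Z)$ pulls back to the full initial face $\sigma_{\mathrm{in}}$ of $\Delta_S(h;u_1,u_2,u_3;Z)$ (lemma \ref{structDelta2}(5)), and $\mathrm{in}_\alpha h$ need not have the ``monic in $U_3$ times a two-variable polynomial'' shape on which lemma \ref{lem532} acts. Your step ``apply lemma \ref{lem532} to the two-variable polynomial obtained from $F_{p,Z,\alpha}$ after setting $u_2=0$'' is only valid when $B(x)>1$ (i.e.\ $\tau'(x)=1$, $\mathrm{Vdir}(x)=\langle U_3\rangle$), which is precisely the regime where the paper defers to \cite{CoP2} {\bf I.8.3} and {\bf I.8.2.2}. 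Everything else in your sketch — proposition \ref{originchart} for the origin chart, the translation $\Delta_2 \mapsto \Delta_2-(1,0)$ giving $A_1(x')=B(x)-1$, and the estimate $1+\lfloor jC(x)/d\rfloor\leq j(1+\lfloor C(x)/d\rfloor)$ — is consistent with the paper, so the deficiency is localized to the $B(x)=1$ case and to the identification of where (3') occurs.
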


\begin{proof}
We already know from Proposition \ref{redto*}(ii) that $x'$ is resolved or
($\kappa (x')=2$ and $x'$ satisfies condition (*)). Note that we have
$$
B(x)>1 \Leftrightarrow  \tau '(x)=1.
$$
Namely, we have $<U_3>\subseteq \mathrm{Vdir}(x)$ by Definition \ref{*kappadeux}, so
$$
\tau '(x)=1 \Leftrightarrow H^{-1}F_{p,Z}\in <U_3^{\omega (x)}> \Leftrightarrow B(x)>1,
$$
where the left hand side equivalence is true because $\Delta_{S}(h;u_1,u_2,u_3;Z)$ is minimal.

\smallskip

If $B(x)=1$, then $x$ is of type (T0), (T2) or (T3) as defined along the proof of Proposition \ref{redto*}.
What follows has been proved  {in} the course of that proof: for type (T0), $x$ is good; for type (T3),
$x'$ is resolved by Theorem \ref{bupthm} since $\mathrm{Vdir}(x)=<U_3,U_1>$;
for type (T2), $x$ is good or ($d_1+d_2\in \N$, $d_2 \not \in \N$, $B(x)=C(x)=1$). In this situation,
we have $\kappa (x')=2$, $x'$ satisfies condition (*) and there
exist well 2-adapted coordinates $(u'_1,u'_2;u'_3;Z')$ at $x'$ such that $A_1(x')=0$ and one of the following holds:

\smallskip

\noindent $\bullet$ $x'$ is in case (*1)
\begin{equation}\label{eq7421}
\beta (x')= {i+1\over i}, \  i\equiv 0 \ \mathrm{mod}p,
\ p \leq i \leq \omega (x);
\end{equation}

\smallskip

\noindent $\bullet$ $x'$ is in case (*1) and
\begin{equation}\label{eq7422}
\beta (x')={\omega (x)\over \omega (x) -1};
\end{equation}

\noindent $\bullet$ $x'$ is in case (*3) and  $\beta (x')=1$.

\smallskip

See the discussion in the proof of Proposition \ref{redto*}: these three situations
correspond respectively to $I=\{0\}$, $I=\{\omega (x)\}$ and $I=\emptyset$ therein.
When (\ref{eq7422}) holds with $\omega (x)=p$, we have (3'); otherwise, we have (3)(4).
Note that $\gamma (x')=\gamma (x)=2$ here.\\

If $B(x)>1$, statement (1) is easily deduced from the characteristic
free Proposition \ref{originchart} as in \cite{CoP2}. The rest of the proof
relies on the characteristic free transformation formula \cite{CoP2}(4) on p.1918
and numerical Lemma {\bf I.8.2.2} and is identical to that of {\bf I.8.3}(1)(2)(ii)(iv)-(vi).
If $x'$ satisfies condition (*3), note that (4) is an equivalent formulation of \cite{CoP2} Lemma {\bf I.8.3}(1).
\end{proof}

\begin{exam}\label{boundsharp}
Let $\omega (x)=\overline{\omega}p^a$, $a \geq 2$, $\overline{\omega}/p \not \in \N$. We prove here that
the bound in Lemma \ref{gamma2*12}(3) is sharp when $x'$ satisfies either condition (*1) or (*3).

\smallskip

Let $E=\mathrm{div}(u_1u_2)$, $d_1 \in  {1 \over p}\N \backslash \N$, $d_2 \in {1 \over p}\N$, $C\in \N$. Take
$$
G_\alpha =0, \ U_1^{-pd_1}U_2^{-pd_2}F_{p,Z,\alpha}=\left (U_3^p - U_1^{p-1}U_2(U_2-U_1)^{pC}\right )^{\overline{\omega}p^{a-1}},
$$
where $C(x)=C$. Let $S':=S[u_2/u_1,u_3/u_1]_{(u_1,u'_2,u'_3)}$, where
$$
u'_2:=u_2/u_1 -1, \ u'_3:=u_3/u_1 -u_1^{C}{u'_2}^C.
$$
Letting $g':={u'_3}^p - u_1^{pC}{u'_2}^{pC+1}$, we get
  $$
  h'={Z'}^p + \sum_{i=1}^{p-1}f_{i,Z'}{Z'}^{p-i} + u_1^{pd'_1}( f'+u_1f'_1)  \in S'[Z'],
  $$
  where $d'_1=d_1+d_2 +\omega (x)/p -1$, $\mathrm{ord}_{u_1}f_{i,Z'}>id'_1$, $f'_1\in S'$ and
  $$
  \left\{
    \begin{array}{cccc}
      f':= \delta ' {g'}^{\overline{\omega}p^{a-1}},\hfill{}& Z':= Z/u_1 \hfill{}
      & \mathrm{if} &  d_1+d_2 \not \in \N\\
       &  &  &  \\
      f':= \delta 'u'_2{g'}^{\overline{\omega}p^{a-1}}, & Z':= Z/u_1 +u_1^{d'_1}{g'}^{\overline{\omega} p^{a-2}}
      & \mathrm{if} & d_1+d_2 \in \N \\
    \end{array}
  \right.
  ,
$$
with $\delta ' \in S'$ a unit. In both cases we get $\beta (x')=C+1/p$.
Note that the above argument also works for ($a=1$ and $x'$ satisfies condition (*1)).
\end{exam}

\medskip

We now turn to the (*3)-version of the previous lemma.
We point out that the situation $J_0 \subset p\N$ has {\it not} been correctly
analyzed in the proof of \cite{CoP2} Lemma {\bf I.8.7}. Namely, the bound (3') ({\it ibid.} p. 1929)
may fail (case 2 on p.1930 when $d=1$) unlike stated therein; the {\it same} mistake
occurs in {\bf I.8.7.5} case 1.

\smallskip

We review and amend the corresponding statements in Lemma \ref{gamma2*3}(2) below. Denote
$F_{p,Z,\alpha}$ as in Lemma \ref{structDelta2}(5).
Adapting notations of (\ref{eq7412}), there is an expansion
\begin{equation}\label{eq7426}
U_1^{-pd_1}F_{p,Z,\alpha}=(\mu U_3 +c U_1+U_2)U_3^{\omega (x)}
+\sum_{j\in J_0}U_3^{\omega (x)-j}U_1^{b_j}\Psi_j(U_1,U_2),
\end{equation}
where $\mu ,c \in k(x)$ ($\mu =0$ if $B(x)>1$), and bounds:
$$
b_j \geq jA_1(x), \ j\beta (x) \geq \mathrm{deg}_{U_2}\Psi_j(U_1,U_2)-1.
$$
%$1+d_j :=\mathrm{deg}_{U_2}\Psi_j(U_1,U_2)$, with $b_j \geq jA_1(x)$,
%$d_j\leq j\beta (x)$, notations as in lemma \ref{structDelta2}(5) (where $\mu =0$ if $B(x)>1$).
The subset $J_0 \subseteq \{1, \ldots ,\omega (x)\}$ is defined by
\begin{equation}\label{eq7426bis}
j\in J_0 \Leftrightarrow \Psi_j(U_1,U_2)\neq 0.
\end{equation}

\begin{lem}\label{gamma2*3}
Assume that $x$ satisfies condition (*3). Let $(u_1,u_2;u_3;Z)$
be well 2-adapted coordinates at $x$ and assume furthermore that
$$
\eta '(x')\in \mathrm{Spec}(S[{u_2\over u_1},{u_3\over u_1}][Z']/(h')), \ h':=u_1^{-p}h, \ Z':={Z \over u_1}.
$$
Then $x'$ is resolved or ($\kappa (x')=2$, $x'$ satisfies condition (*1) or (*3) with
$$
A_1(x')=B(x)-1, \ \gamma (x')\leq 1+\gamma (x)
$$
and there exist well 2-adapted coordinates $(u'_1,u'_2;u'_3;Z')$ at $x'$ such that
either (1') below holds, or (1)-(3) below hold:)

\smallskip

\begin{itemize}
  \item [(1')] we have
  $$
  U_1^{-pd_1}F_{p,Z}=U_2U_3^p +c_pU_1(U_2 +\lambda U_1)^p,
  $$
  where $\lambda \neq 0$, ($d_1+1/p \not \in \N$ or $c_p \not \in k(x)^p$) up to change of
  well 2-adapted coordinates; furthermore $x'$ satisfies condition (*1) and (\ref{eq742}) holds at $x'$
  with $\lambda '\neq 0$ and ($d'_1 \not \in \N$ or $\lambda ' \not \in k(x)^p$);
  \item [(1)] we have
  $$
  \beta (x')\leq {\gamma(x) \over d} +{1 \over p}
  $$
  and inequality is strict if $x'$ satisfies condition (*3);
  \item [(2)] if $\gamma (x') > \gamma (x)$, then $k(x')=k(x)$ and $x'$ is uniquely determined;
  up to a change of well 2-adapted coordinates, $x'=(Z/u_1,u_1,u_2/u_1,u_3/u_1)$  and (\ref{eq7426}) reads
  \begin{equation}\label{eq7429}
  U_1^{-pd_1}F_{p,Z,\alpha}=(\mu U_3 +U_2)U_3^{\omega (x)} +c U_1(U_3 + \lambda U_1^{k}U_2^{\gamma (x)})^{\omega (x)}
  \end{equation}
  with $k\in \N$, $\lambda c \neq 0$, ($d_1 +1/p \not \in \N$ or $c \not\in k(x)^p$),
  and ($\mu =0$ if $B(x)=k+\gamma (x)>1$); furthermore, we have
  $$
  A_1(x)=k+{1 \over \omega (x)}, \beta (x)= \gamma (x)-{1 \over \omega (x)}
  $$
  and $x'$ satisfies condition (*1) with
  $$
  \beta (x')=\gamma (x)+{1 \over \omega (x)};
  $$
  \item [(3)] if ($\gamma (x')\leq \gamma (x)$ and $x'$ is in case (*3)), then
  $$
  \beta (x')\leq \max \{\beta (x), {1 \over p}\}
  $$
  and $\beta (x')< \beta (x)$ if ($k(x')\neq k(x)$ and $\beta (x)> 1/p$).
\end{itemize}
\end{lem}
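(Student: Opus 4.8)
The strategy is the one already used for the ``first chart'' lemmas \ref{gamma2*12} and \ref{gamma2*3}: reduce to a purely combinatorial statement on the characteristic polygon $\Delta_2^+(h;u_1,u_2;u_3;Z)$, then invoke the characteristic-free numerical lemmas of \cite{CoP2} (lemma {\bf I.8.6.2} and its companions) together with lemma \ref{lem532}. First I would fix well 2-adapted coordinates $(u_1,u_2;u_3;Z)$ at $x$ and, using lemma \ref{structDelta2}(5), write the initial form $\mathrm{in}_{\alpha}h = Z^p - G_\alpha^{p-1}Z + F_{p,Z,\alpha}$ with respect to the initial face $\sigma_{2,\mathrm{in}}$ of $\Delta_2^+$; the expansion (\ref{eq7426}) records all the relevant combinatorial data ($\mu,c$, the set $J_0$, the exponents $b_j$, $1+d_j$). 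Since $\eta'(x')$ lies in the chart $S[u_2/u_1,u_3/u_1]$, the substitution $Z' = Z/u_1$, $u_j' = u_j/u_1$ transforms this polygon by the affine map of proposition \ref{originchart}; by theorem \ref{well2prepared} the value $A_1(x') = B(x)-1$ is forced and, applying proposition \ref{redto*}(ii), either $x'$ is resolved or $\kappa(x')=2$ with $x'$ again in case (*1) or (*3) (case (*2) being incompatible with $E=\mathrm{div}(u_1)$ after this blowing up).

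Next I would run the casuistic analysis on the possible positions of $x'$ in the exceptional $\PP^1$, exactly as in the proof of lemma \ref{gamma2*12}: when $x' = (Z/u_1,u_1,u_2/u_1,u_3/u_1)$ the polygon simply shifts and one reads off $\beta(x')$ from the new vertices; when $x'$ is one of the other points of $\pi^{-1}(x)$ one applies lemma \ref{lem532} to the monomial $\mathrm{in}$-form $U_1^{a(1)}U_3^{\omega(x)-j}\Psi_j(U_1,U_2)$ with $R = S_{(u_1,u_2)}$ and the polynomial $P$ defining the residue extension $k(x')|k(x)$ of degree $d$. The bound $e' \le 1 + \lfloor \mathrm{deg}F/d\rfloor$ of lemma \ref{lem532}(1) translated through the projection $\mathbf{p}_2$ gives the ``$\beta(x') \le \gamma(x)/d + 1/p$'' estimate of statement (1), while the sharpening to strict inequality in case (*3) comes from the extra factor $U_2$ in (\ref{eq7426}) and the fact that $e'/p \not\in\N$ whenever equality holds in lemma \ref{lem532}(1) — this is where the hypothesis ``$x'$ in case (*3)'' (forcing $1+\omega(x)\equiv 0 \bmod p$, hence a further arithmetic constraint) enters. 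Statement (3) follows from lemma \ref{lem532}(2) (the $a_2=0$ situation, valid since $E=\mathrm{div}(u_1)$): $e' \le \max\{\mathrm{deg}F,1\}$ with equality only if $d=1$, giving strict decrease of $\beta$ under residue extension. Statement (2) is the delicate ``$\gamma$ can go up by one'' case: $\gamma(x') > \gamma(x)$ can only occur when lemma \ref{lem532}(1) is an equality with $d=1$, which pins down $\mathrm{deg}F/d\in\N$, $a'/p\in\N$, $e'/p\not\in\N$ and exactly the $J(\cdot)$-generator displayed there; unwinding this through $\mathbf{p}_2$ produces the normal form (\ref{eq7429}) with the stated values of $A_1(x)$ and $\beta(x)$, and then a direct computation of the shifted polygon at $x' = (Z/u_1,u_1,u_2/u_1,u_3/u_1)$ gives $\beta(x') = \gamma(x) + 1/\omega(x)$, whence $\gamma(x') = 1+\gamma(x)$.

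The exceptional branch (1') is the analogue of (3') in lemma \ref{gamma2*12}: it is the degenerate configuration where $\omega(x) = p$ and $F_{p,Z}$ is (up to 2-adapted change of coordinates) the two-term monic form $U_2 U_3^p + c_p U_1(U_2+\lambda U_1)^p$; here the polygon has the single compact vertex $(0,p/(p-1))$ and blowing up lands one directly in case (*1) with the normal form (\ref{eq742}). I would treat this by separating it off at the outset (it is detected by $B(x)=1$, $J_0\subseteq p\N$, $|J_0|=1$, $\omega(x)=p$) before entering the generic analysis, so that in all remaining cases $\gamma(x)\ge 2$ or $\omega(x)>p$ and the estimates of (1)--(3) apply verbatim as in the $p>0$ equicharacteristic proof. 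The main obstacle — and the reason the present proof genuinely differs from \cite{CoP2} lemma {\bf I.8.7} — is precisely that the original argument mishandled the $J_0\subseteq p\N$, $d=1$ subcase, so I must be careful that the bound claimed in (2) is the \emph{sharp} one (illustrated by an example in the spirit of example \ref{boundsharp}) and that the accounting of $\gamma(x')$ versus $\gamma(x)$ in the remaining $p=2$ sub-subcases is consistent with the definition of $\gamma$ in case (*3); this is where I expect to spend most of the effort, checking that the residue field extension degree $d$, the congruence class of $\omega(x)$ modulo $p$, and the integrality of $d_1$, $d_1+1/p$ interact correctly, exactly as flagged in the paper's own \texttt{lemma \ref{kappa2fin25}} caveat for $p=2$.
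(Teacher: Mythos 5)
Your plan follows the same overall route as the paper: project to $\Delta_2^+$ via (\ref{eq7302}), read off the combinatorics from (\ref{eq7426}), transform by proposition \ref{originchart}, and invoke lemma \ref{lem532} together with the numerical estimates of \cite{CoP2} lemma {\bf I.8.6.2}, splitting on the position of $x'$ in the exceptional $\PP^1$, on $B(x)=1$ versus $B(x)>1$, and on the residue degree $d$. The identification of the key difficult points (the $J_0\subseteq p\N$ subcase, the $\omega(x)=p$ degeneracy behind branch (1'), the $p=2$ bookkeeping) is accurate. So there is no divergence in method.

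There is, however, a concrete arithmetic error in the mechanism you propose for the strict inequality in statement (1). You write that the hypothesis ``$x'$ in case (*3)'' forces $1+\omega(x)\equiv 0 \bmod p$. This is wrong: the standing assumption for the whole of sections 7.3--7.4 is $\omega(x)\equiv 0\bmod p$ (condition (*) is only studied under that congruence, see the sentence preceding definition \ref{defDelta2} and corollary \ref{omeganoncongp}); the condition $1+\omega(x)\not\equiv 0\bmod p$ belongs to (**) in definition \ref{**}, which lives in chapter 8 where $\kappa(x)\geq 3$. The actual source of the sharpening when $x'$ is in case (*3) is the extra $-1/p^a$ term, $a:=\mathrm{ord}_p\omega(x)\geq 1$, in the corrected bound (\ref{eq7425}) — precisely the amendment of \cite{CoP2} lemma {\bf I.8.7.8} the paper makes. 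Extracting that correction requires the four-way split $J_0\nsubseteq p\N$; $J_0\subseteq p\N$ with $B(x)\notin\N$; $J_0\subseteq p\N$, $B(x)\in\N$ with $G=U_1^{-pd_1}\partial_{U_2}F_{p,Z,\alpha}\neq 0$; and $J_0\subseteq p\N$, $B(x)\in\N$ with $U_1^{-pd_1}\partial_{U_2}F_{p,Z,\alpha}=U_3^{\omega(x)}$ (each treated for $d\geq 2$ after the Tschirnhausen reduction, then separately for $d=1$), which your sketch compresses into a single appeal to lemma \ref{lem532}. Likewise your trigger for (2) (``lemma \ref{lem532}(1) is an equality with $d=1$'') does not by itself yield the normal form (\ref{eq7429}): the paper obtains it from the explicit expansion (\ref{eq7441}) in the last of the four cases for $d=1$, and independently via (\ref{eq7424}) in the $B(x)=1$, $\tau'(x)=1$ branch, so your detection criterion is incomplete. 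The plan is sound in outline, but the congruence mistake would send the effort you rightly anticipate on the arithmetic of $d_1$, $d_1+1/p$ and $d$ in the wrong direction.
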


\begin{proof}
We already know from Proposition \ref{redto*}(ii) that $x'$ is resolved or
($\kappa (x')=2$ and $x'$ satisfies condition (*)). Note that we have
$$
B(x)>1 \Leftrightarrow  H^{-1}F_{p,Z}\in <U_1U_3^{\omega (x)}, U_2U_3^{\omega (x)}, U_3^{\omega (x)+1}>.
$$
If $\tau '(x)\geq 2$, we certainly have $B(x)=1$ and $x$ is of type (T1) or (T4) as defined along
the proof of Proposition \ref{redto*}. For type (T4), $x'$ is resolved by Theorem \ref{bupthm}
since $\mathrm{Vdir}(x)=<U_3,U_1>$. For type (T1), note that we have $\beta (x)=1$, hence $\gamma (x)=2$.
The following holds: $x$ is good or $\kappa (x')=2$, $x'$ satisfies condition (*)
and there exist well 2-adapted coordinates $(u'_1,u'_2;u'_3;Z')$ at $x'$ such that $A_1(x')=0$ and either:

\smallskip

\noindent $\bullet$ $x'$ is in case (*3) and $\beta (x')=1$, or

\smallskip

\noindent $\bullet$ $x'$ is in case (*1) and
$$
\beta (x')={1+i\over i}, \ i\geq 1.
$$
See the discussion along the course of the proof of Proposition \ref{redto*}: these two situations
correspond respectively to case 1 and case 2 therein. This proves that $x'$ is resolved or
($\gamma (x')=\gamma (x)=2$ and (1)(3) hold) when $\tau '(x)=2$.

\smallskip

Assume now that ($B(x)=1$ and $\tau '(x)=1$). The argument in the proof of Proposition \ref{redto*},
{\it viz.} (\ref{eq7202})-(\ref{eq7203}), gives
$$
\mathrm{in}_{m_S}h=Z^p + U_1^{pd_1}\left ((\mu U_3 +U_2)U_3^{\omega (x)}
+U_1\sum_{i=1}^{\omega (x)/p}U_3^{\omega (x)-pi}\Phi_i (U_1^p,U_2^p)\right )
$$
where $\mu \in k(x)$ and $\Phi_i\in k(x)[T_1,T_2]_{i}$, $1 \leq i \leq \omega (x)/p$.
It is easily seen from this expression that
$$
\omega (x')\leq \omega (x) -p\min_{1\leq i \leq {\omega (x)\over p}}\left \{i-{\mathrm{deg}_{T_2}\Phi_i \over d}\right \},
$$
so $\omega (x')=\omega (x)$ implies $d=1$, and $\Phi_i$ monic in $T_2$ whenever $\Phi_i\neq 0$. Similarly,
we have
$$
\sum_{i=1}^{\omega (x)/p}\mathrm{Vdir}
\left (\{{\partial \Phi_i (U_1^p,U_2^p)\over \partial \lambda_l}\}_{l \in \Lambda_0}\right )
=<U_1,U_2> \Longrightarrow \omega (x')< \omega (x),
$$
with notations as in (\ref{eq2412}). After possibly changing $Z$ with $Z- \phi$,
$\phi \in S$, it can thus be assumed that
\begin{equation}\label{eq7423}
\mathrm{in}_{m_S}h=Z^p + U_1^{pd_1}\left ((\mu U_3 +U_2)U_3^{\omega (x)}
+U_1\sum_{i=1}^{\omega (x)/p}c_iU_3^{\omega (x)-pi}(U_2 +\lambda U_1)^{pi}\right ),
\end{equation}
where $\mu \in k(x)$, $\lambda \in k(x)$ and $c_i \in k(x)$, $1 \leq i \leq \omega (x)/p$. Furthermore,
we have $x'=(Z'/u_1, u_1, u_2/u_1 + \gamma , u_3/u_1)$, where $\gamma \in S$ is a preimage of $\lambda$.
The proof now goes on along the same lines as that of the case $B(x)=1$ in the previous lemma: $x'$
is resolved or $x'$ satisfies condition (*1), $A_1(x')=0$ and one of (\ref{eq7421})-(\ref{eq7422}) holds
(in particular $\gamma (x')=2$). When (\ref{eq7422}) holds with $\omega (x)=p$, we have (1)'; otherwise,
we have (1), (3) being pointless.

\smallskip

For (2), note that $x'$ satisfies the assumptions
of Proposition \ref{tauegaldeux} (so $x$ is good) if $c_i \neq 0$ for some $i< \omega (x)/p$.
Otherwise, we have
\begin{equation}\label{eq7424}
(\alpha_2(x),\beta_2(x))=({1 \over \omega (x)},1-{1 \over \omega (x)}).
\end{equation}
By Definition \ref{definvariants2}, we also have $\beta (x)= (i_1-1)/i$, $1 \leq i \leq \omega (x)$
and $i_1\in \N$. By assumption, $\gamma (x)=1$, so $\beta (x)<1$ and we get
$$
1-{1 \over \omega (x)} = \beta_2(x) \leq \beta (x)\leq 1-{1 \over i}.
$$
We deduce that $i_1=i=\omega (x)$. By (\ref{eq7424}), this implies that
$$
(A_1(x),\beta (x))=(\alpha_2(x),\beta_2(x))=({1 \over \omega (x)},1-{1 \over \omega (x)})
$$
and the conclusion follows. This is the special situation considered in \cite{CoP2} Lemma {\bf I.8.7}(b).\\

If $B(x)>1$,  the proof is identical
to that of \cite{CoP2} Lemma {\bf I.8.7}(b)(b')(d)(i)-(iii)(v): this
relies on the numerical Lemma {\bf I.8.6.2} and characteristic free transformation formula
for $\mathrm{cl}_{\mu_0, \omega (x)}J$ (Definition {\bf I.8.6.3}). As observed before stating this lemma,
a mistake in \cite{CoP2} {\bf I.8.7.8} (case 2, $B(x)\in \N$) has to be amended at this point. Namely,
the bounds (3)(4) on p.1929 only hold when $G=\mu_2^{-1}{\partial F \over \partial U_2}\neq 0$ with notations
as in there. The correct bounds are thus no better than those given in {\bf I.8.7.8} case 3:
\begin{equation}\label{eq7425}
\beta (x') \leq {\mathrm{deg}_{U_2}\Psi_{j_1}(U_1,U_2) \over j_1d}+{1 \over p},
\ \beta 3(x') \leq {\mathrm{deg}_{U_2}\Psi_{j_1}(U_1,U_2) \over j_1d}+{1 \over p}-{1 \over p^a},
\end{equation}
%\begin{equation}\label{eq7425}
%\beta (x') \leq {1 +d_{j_1} \over d j_1}+{1 \over p},
%\ \beta 3(x') \leq {1 +d_{j_1} \over d j_1}+{1 \over p}-{1 \over p^a},
%\end{equation}
where $a:=\mathrm{ord}_p\omega (x)$: this gives (1) of the present lemma.

\smallskip

We note however that the bounds (3)(3')(4)(4') on p.1929-1930 are correct if $d\geq 2$ (this relies on
Lemma \ref{lem532}(2), statement ``$d=1$ if equality holds''). This proves that $\gamma (x')\leq \gamma (x)$
if $k(x')\neq k(x)$. There remains to prove (2) and (3)
(resp. (3)) of the present lemma  for $d=1$ (resp. for $d\geq 2$).\\

\noindent {\it First assume that} $d\geq 2$, i.e. $k(x')\neq k(x)$. The
conclusion follows trivially from (1) if $\beta (x)\geq 1$, so we may assume that $\beta (x)<1$.

\smallskip

The proof involves picking some element $G\in \mathrm{cl}_{\mu_0,\omega (x)}$,
$G\neq 0$ \cite{CoP2} middle of p. 1930 and computing the order of its transform. {\it This is done
after possibly performing the Tschirnhausen transformation described in} \cite{CoP2} {\bf I.8.3.6}.
We discuss according to the set $J_0$ in (\ref{eq7426bis}):

\smallskip

{\it Case 1: $J_0 \nsubseteq p\N$}. Arguing as in  \cite{CoP2} {\bf I.8.7.7}, we get
$$
\beta (x')\leq {\mathrm{deg}_{U_2}\Psi_{j_1}(U_1,U_2)\over j_1 d} -{1 \over j_1}< {\beta (x) \over d}.
$$
%$$
%\beta (x')\leq {1 +d_{j_1}\over j_1 d} -{1 \over j_1}< {\beta (x) \over d}.
%$$

{\it Case 2: $J_0 \subseteq p\N$ and $B(x)\not \in \N$}. By \cite{CoP2} (4) on p.1930, we get
$$
\beta (x)-\beta (x') \geq \left ( 1-{1 \over d}\right )\beta (x) -{1 \over pd}>
{1 \over p} \left (1 - {2 \over d}\right )\geq 0.
$$

{\it Case 3: $J_0 \subseteq p\N$, $B(x) \in \N$ and $G=U_1^{-pd_1}{\partial F_{p,Z,\alpha} \over \partial U_2}$}.
Amending \cite{CoP2} {\bf I.8.7.8} as in (\ref{eq7425}), we obtain the bound $\beta (x')\leq \beta (x)/d$
except possibly if $j_1=p^a$; in this case, we let
\begin{equation}\label{eq7427}
a':=\max\{b : U_1^{b_{p^a}}\Psi_{p^a}(U_1,U_2)\in (k(x)[U_1,U_2])^{p^b}\}< a
\end{equation}
and obtain the bound:
\begin{equation}\label{eq7428}
\beta (x') \leq \max\{p^{a'-a},\beta (x)\} \ (\mathrm{resp.} \ \beta (x') <  \beta (x))
\end{equation}
from Lemma \ref{lem532}(2) (resp. {\it ibid.} with $\mathrm{deg}F\geq  2$ if $\beta (x)>1/p$).

\smallskip

{\it Case 4: $J_0 \subseteq p\N$, $B(x) \in \N$ and
$U_1^{-pd_1}{\partial F_{p,Z,\alpha} \over \partial U_2}=U_3^{\omega (x)}$}.
The bound is:
$$
\beta (x')\leq {\mathrm{deg}_{U_2}\Psi_{j_1}(U_1,U_2) \over  j_1 d}
$$
%$$
%\beta (x')\leq {1 +d_{j_1} \over d j_1}
%$$
as in case 2 with the same conclusion.\\

\noindent {\it Assume that} $k(x')= k(x)$. By the independence statement
in Theorem \ref{well2prepared}, it can be assumed that $x'$ is the origin of the chart. We build
upon (\ref{eq7426}) and connect the proof with \cite{CoP2} {\bf I.8.7.5}.
First note that $x'$ satisfies condition (*3) if and only if $\mu =0$, since $\Delta_S (h;u_1,u_2,u_3;Z)$
is minimal. In this situation one gets easily $\beta (x')\leq \beta (x)$
from Proposition \ref{originchart} as in case 3 of \cite{CoP2} {\bf I.8.7.5}. This completes
the proof when $x'$ satisfies condition (*3).

\smallskip

Assume now that $x'$ satisfies condition (*1), so $c  \neq 0$ in (\ref{eq7426}). Note to begin with that we have
\begin{equation}\label{eq744}
{\mathrm{deg}_{U_2}\Psi_{j}(U_1,U_2)-1\over j }\leq \beta (x) \Longrightarrow {\mathrm{deg}_{U_2}\Psi_{j}(U_1,U_2)\over j }\leq \gamma (x)
\end{equation}
%\begin{equation}\label{eq744}
%{d_{j}\over j }\leq \beta (x) \Longrightarrow {1 +d_{j}\over j }\leq \gamma (x)
%\end{equation}
for each $j\in J_0$ in (\ref{eq7426}).  We again consider the same cases 1 to 4
as for $k(x')\neq k(x)$:

\smallskip

{\it Case 1: $J_0 \nsubseteq p\N$}. Arguing as in  \cite{CoP2} {\bf I.8.7.7}, we get
$$
\beta (x')\leq {\mathrm{deg}_{U_2}\Psi_{j_1}(U_1,U_2)\over j_1 } \leq \gamma (x).
$$
%$$
%\beta (x')\leq {1 +d_{j_1}\over j_1 } \leq \gamma (x).
%$$

{\it Case 2: $J_0 \subseteq p\N$ and $B(x)\not \in \N$}. Same as in case 1 by \cite{CoP2} (3') on p.1929.

\smallskip

{\it Case 3: $J_0 \subseteq p\N$, $B(x) \in \N$ and $G=U_1^{-pd_1}{\partial F_{p,Z,\alpha} \over \partial U_2}$}.
In this situation, equality in (\ref{eq744}) implies $\mathrm{deg}_{U_2}\Psi_{j}(U_1,U_2) \in p\N$. Therefore
$$
\mathrm{deg}_{U_2}{\partial \Psi_j \over \partial U_2}\leq \mathrm{deg}_{U_2}\Psi_{j}(U_1,U_2)-2
$$
%$$
%\mathrm{deg}_{U_2}{\partial \Psi_j \over \partial U_2}\leq d_j-1
%$$
in (\ref{eq7426}) and we get the same bound as in case 1.

\smallskip

{\it Case 4: $J_0 \subseteq p\N$, $B(x) \in \N$ and $U_1^{-pd_1}{\partial F_{p,Z,\alpha} \over \partial U_2}=U_3^{\omega (x)}$}.
We now have $\Psi_j(U_1,U_2)=\Phi_j(U_1^p,U_2^p)$ for $j\in J_0$ and must take
$$
G:=U_1^{-pd_1} (D \cdot F_{p,Z,\alpha}), \ D= \lambda_l {\partial \hfill{} \over \partial \lambda_l} \ \mathrm{or} \
D=U_1{\partial \hfill{} \over \partial U_1} - (pd_1)U_2{\partial \hfill{} \over \partial U_2}.
$$
Arguing as in the case ($B(x)=1$ and $\tau '(x)=1$),
we obtain the same bound as in case 1 except possibly if
\begin{equation}\label{eq7441}
 U_1^{-pd_1}F_{p,Z,\alpha} = (c U_1 +U_2)U_3^{\omega (x)}
+U_1\sum_{i=1}^{\omega (x)/p}c_{pi}U_3^{\omega (x)-pi}U_1^{kpi}U_2^{pi\gamma (x)},
\end{equation}
where $k:=B(x)-\gamma (x)\in \N$. Define:
$$
P(t):= c t^{\omega (x)} +\sum_{i=1}^{\omega (x)/p}c_{pi} t^{\omega (x)-pi}.
$$
If $pd_1 +1 \not \in \N$ (resp. $pd_1 +1 \in \N$) and $P(t)\neq c (t + \lambda)^{\omega (x)}$
(resp. and $P(t)\neq c (t + \lambda)^{\omega (x)}+ Q(t)^p$ with $Q(t)\in k(x)[t]$) for some $\lambda \in k(x)$,
then
$$
\mathbf{y}':=(B(x)-1, \gamma (x))\in \Delta_2(h';u_1,u'_2;u'_3;Z')
$$
is a vertex which is not 2-solvable and we get $\beta (x')\leq \gamma (x)$.
Otherwise, we may assume w.l.o.g. that $Q=0$ after changing $Z$ with $Z- \phi$, $\phi \in S$,
which gives (\ref{eq7429}). One concludes as in the case ($B(x)=1$ and $\tau '(x)=1$) above.
\end{proof}

We now consider the remaining point ``at infinity'' for the blowing up
$\pi : {\cal X}'\longrightarrow {\cal X}$ along $x$.

\begin{lem}\label{gamma2*infty}
With notations as above, assume that $x$ satisfies condition (*). Let $(u_1,u_2;u_3;Z)$
be well 2-adapted coordinates at $x$ and assume furthermore that
$$
x'=(Z':=Z/u_2,u'_1:=u_1/u_2,u_2,u'_3:=u_3/u_2).
$$
Then $x'$ is resolved or ($\kappa (x')=2$, $x'$ satisfies condition (*2),
$(u'_1,u_2;u'_3;Z')$ are well 2-adapted coordinates at $x'$,
$$
A_1(x')=A_1(x), \ A_2(x')=B(x)-1, \ \beta (x')=A_1(x)+\beta (x)-1, \ \gamma (x')\leq \gamma (x),
$$
and the following holds:)

\smallskip

\begin{itemize}
  \item [(1)] if $x$ is in case (*1), then $C(x')\leq \min\{\beta (x)-C(x),C(x)\}$;
  \item [(2)] if $x$ is in case (*2), we have $C(x')\leq \min\{\beta (x) -A_2(x) -C(x),C(x)\}$.
  \item [(3)] if $x$ is in case (*3), we have $C(x')\leq \min\{\beta (x)-C(x), C(x)-\beta_2(x)\}$.
\end{itemize}
\end{lem}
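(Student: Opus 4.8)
The plan is to reduce the statement to the characteristic-free computations of \cite{CoP2} lemma {\bf I.8.9} (case (*1) or (*2)) and lemma {\bf I.8.8} / {\bf I.8.9} (case (*3)), carried out in terms of initial form polynomials instead of expansions of $h$. First I would record that, since $x'$ is the point at infinity of the chart, the relevant local blowing up is the one removing the origin, so $(u'_1, u_2; u'_3; Z')$ are a priori only well adapted (not necessarily well $2$-adapted), and the transformation rule is governed by proposition \ref{originchart} applied for the projection $\mathrm{pr}^{\{1,3\}}$ (and, for the $B(x)$-estimates, for $\mathrm{pr}^{\{1,2,3\}}$). From proposition \ref{redto*}(ii) we already know $x'$ is resolved or $\kappa(x')=2$ and $x'$ satisfies condition (*); the casuistic in the proof of \ref{redto*} pins down that at a point at infinity obtained this way, condition (*2) is the one that occurs (the components $\mathrm{div}(u'_1)$ and $\mathrm{div}(u_2)$ both survive through $x'$, and $U'_3\in\mathrm{Vdir}(x')$ by construction), so I would dispose of the ``resolved'' alternative and then work inside case (*2).

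The core step is the explicit description of $\Delta_2(h';u'_1,u_2;u'_3;Z')$. By proposition \ref{originchart}, $\Delta_{S'}(h';u'_1,u_2,u'_3;Z')=l(\Delta_S(h;u_1,u_2,u_3;Z))$ with the map $l$ from (\ref{eq2042}) relative to $j_0=2$, i.e. $(x_1,x_2,x_3)\mapsto(x_1,\ x_1+x_2+x_3-1,\ x_3)$. Composing with the stereographic projection $\mathbf{p}_2$ of (\ref{eq7302}) (whose center $\mathbf{v}_0$ transforms accordingly, using that $\mathbf{v}_0$ is a nonsolvable vertex of the transform by proposition \ref{originchart}), one gets that $\mathbf{p}_2$ applied to the image polyhedron is, up to the affine change of coordinates $(y_1,y_2)\mapsto(y_1,\ y_1+y_2-1)$ composed with the obvious reparametrisation, the polygon $\Delta_2(h;u_1,u_2;u_3;Z)$. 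A direct reading of this affine change gives the asserted equalities for the numerical invariants: $A_1(x')=A_1(x)$ (the $y_1$-coordinate is unchanged), $A_2(x')=B(x)-1$ (the new $y_2$-minimum is the old $y_1+y_2$-minimum shifted by $-1$, which is $B(x)-1$), $\beta(x')=A_1(x)+\beta(x)-1$ (evaluate the $y_2$-coordinate on the point realising $A_1(x)$ and $\beta(x)$), and $\gamma(x')\le\gamma(x)$ from definition \ref{definvariants2} together with $C(x')=B(x')-A_1(x')-A_2(x')=B(x')-A_1(x)-B(x)+1$ and the crude inequality $B(x')\le A_1(x)+B(x)$. One must also check that $(u'_1,u_2;u'_3;Z')$ is again well $2$-adapted; this follows as in the proof of theorem \ref{well2prepared}: a $2$-solvable vertex of the transformed polygon would, via the inverse of $l$, produce a $2$-solvable vertex of $\Delta_2(h;u_1,u_2;u_3;Z)$, contradicting well $2$-adaptedness at $x$.

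The three sharper bounds (1)--(3) on $C(x')$ are the part where I expect the real work, and they are the counterparts of the corresponding inequalities in \cite{CoP2} lemmas {\bf I.8.8}--{\bf I.8.9}. The idea is to look at the compact face $\sigma_{2,\mathrm{in}}$ of $\Delta_2(h;u_1,u_2;u_3;Z)$ computing $B(x)$ and at the face realising $\beta(x)$ (and, in case (*3), the vertex realising $\beta_2(x)$), take the corresponding initial form polynomials $\mathrm{in}_\alpha h$ provided by lemma \ref{structDelta2}(4)(5) and theorem \ref{initform}, and transform them under $l$. Each edge of $\Delta_2(h;u_1,u_2;u_3;Z)$ lying ``below'' the line $y_1+y_2=B(x)$ contributes, after transformation, an edge of $\Delta_2(h';u'_1,u_2;u'_3;Z')$ whose position bounds $C(x')$; a bounded-below edge on the $y_1+y_2=B(x)$ side contributes the bound in terms of $\beta(x)-C(x)$, and the part of the polygon above the $\beta(x)$-height contributes the bound $C(x)$ (resp. $C(x)-\beta_2(x)$ in case (*3), where the extra vertex $\mathbf{v}_k=(d_1+k/p,0,\omega(x)/p)$ of lemma \ref{structDelta2}(5) must be tracked). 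These are elementary polygon manipulations once the faces are identified, but bookkeeping the minimality (no $2$-solvable vertices) across the transformation, and in case (*3) handling the possibility $C(x)<0$ exactly as in the final remark of the proof of proposition \ref{eclatpointcas3infiny} (where $\beta_2(x)=-1/i$ forces $C(x')<1$), is the delicate point. The hard part will be checking that no $2$-solvability is created in the transformed polygon precisely at the edges governing (1)--(3), since a spurious solvable vertex there would invalidate the claimed value of $\gamma(x')$; this is handled by the same Tschirnhausen-type argument as in \cite{CoP2} {\bf I.8.3.6}, re-expressed through initial forms, exactly as was done in lemma \ref{gamma2*12} and lemma \ref{gamma2*3}.
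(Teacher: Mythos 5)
Your overall strategy matches the paper's: use proposition \ref{originchart} for the point at infinity, read off the affine map $(y_1,y_2)\mapsto(y_1,y_1+y_2-1)$ on the projected polygon, deduce the stated equalities for $A_1,A_2,\beta$, and obtain the $C(x')$-bounds from the transformation of the relevant compact faces (this is what \cite{CoP2} lemmas {\bf I.8.8}--{\bf I.8.9} do, which the paper cites for exactly this step). Your remark that well $2$-adaptedness at $x'$ is automatic because $l$ preserves minimality and initial forms is correct; the later worry about ``created $2$-solvability'' requiring a Tschirnhausen argument is therefore misplaced — that difficulty only arises at points in the first chart, as in lemmas \ref{gamma2*12} and \ref{gamma2*3}, not at the origin of the second chart.

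The genuine gap is in how you get $\gamma(x')\le\gamma(x)$. You claim it follows from $C(x')=B(x')-A_1(x)-B(x)+1$ together with the crude bound $B(x')\le A_1(x)+B(x)$. That only gives $C(x')\le 1$, i.e. $\gamma(x')\le 2$, which fails to prove $\gamma(x')\le\gamma(x)$ whenever $\gamma(x)=1$. In the paper, the inequality $\gamma(x')\le\gamma(x)$ is \emph{deduced from} the sharper bounds (1)--(3), not from any a priori estimate on $B(x')$: in case (*2) one gets $C(x')\le C(x)$ directly from (2); in case (*1) one needs $C(x')\le\beta(x)/2$ from (1) and the elementary inequality $1+\lfloor\beta/2\rfloor\le\lceil\beta\rceil$; and case (*3) with $C(x)<0$ requires the extra observation that $\beta_2(x)=-1/i$ forces $C(x')<1$ via (3). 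So the logical order has to be reversed: establish (1)--(3) first (the paper encapsulates this in the single inequality $C(x')\le\min\{\beta(x)-(B(x)-A_1(x)),\ \alpha_2(x)-A_1(x)\}$ imported from \cite{CoP2}), and only then read off $\gamma(x')\le\gamma(x)$, with the special treatment of (*3) when $C(x)<0$. Finally, your sketch of the bounds (1)--(3) never mentions case (*2), where $A_2(x)$ enters the first member of the minimum; that derivation uses the two relations $\alpha_2(x)+\beta_2(x)=B(x)$ and $\beta_2(x)\ge A_2(x)$, and needs to be written out.
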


\begin{proof}
This relies on the characteristic free Proposition \ref{originchart}.
The argument in \cite{CoP2} Lemmas {\bf I.8.8} and {\bf I.8.9} gives all statements
before ``$\gamma (x')\leq \gamma (x)$''. Moreover equations (2) on p.1933 and (2) on p.1934 give:
\begin{equation}\label{eq743}
C(x')\leq \min\{\beta (x)-(B(x)-A_1(x)), \alpha_2(x)-A_1(x)\}.
\end{equation}

Assume that $x$ is in case (*1) or (*3). We have
\begin{equation}\label{eq7431}
\alpha_2(x) + \beta_2 (x)=B(x), \ B(x)-A_1(x)=C(x).
\end{equation}
This proves (3); if $x$ satisfies condition (*1), then $\beta_2 (x)\geq 0$ and the conclusion
follows from (\ref{eq743}).

\smallskip

If $x$ satisfies condition (*2), we have $\beta_2 (x)\geq A_2(x)$, so (\ref{eq7431}) implies that
$\alpha_2(x)-A_1(x)\leq C(x)$ and (2) follows easily. Since $\gamma (x)\geq 1$,
$\gamma (x')\leq \gamma (x)$ is a trivial consequence of Definition \ref{definvariants2} except
if ($x$ is in case (*3) and $C(x)<0$). But then we have $\beta_2(x)=-1/i$ for some $i$, $1\leq i\leq \omega (x)$
by Lemma \ref{structDelta2} and Corollary \ref{Delta2+}. Therefore
$$
C(x')\leq C(x)-\beta_2(x)<1
$$
by (3) and we get $\gamma (x')\leq 1$.
\end{proof}

\subsection{Monic expansions: the algorithm.}

In this chapter, we prove Theorem \ref{projthm} when $\kappa (x)=2$. This is restated as
Theorem \ref{proofkkappa2} below. The strategy of the proof has much in common with
the one used for Theorem \ref{contactmaxFIN} or for Embedded Resolution of Singularities
for surfaces \cite{Co3}: roughly speaking, the invariant $\gamma (x)$ is in general
nonincreasing by blowing up a point $x$, and drops  at a nonrational exceptional point or exceptional point ``at
infinity" $x'$.  Infinite chains of rational points not ``at infinity" do not occur
by Corollary \ref{permisarcthree}. This general idea is illustrated by the proof of Proposition \ref{kappa2fin0}
below which provides the logical scheme of the proof.

\smallskip

Considering however the {\it precise} behaviour of the invariant $\gamma (x)$ under blowing up, the situation
turns out to be more complicated than expected. Two phenomena contribute: on the one hand,
the directrix vector space $\mathrm{Vdir}(x)$ is not well-behaved under blowing up;
on the other hand, $\gamma (x)$ does not necessarily drop at a nonrational exceptional point or exceptional point ``at
infinity" and may also increase in some special situations (Lemma \ref{gamma2*3}(1')(2)). These
phenomena make the proof very intricate when $\gamma (x)=2$, especially when $p=2$. One is then driven
to a step by step proof where the main difficulty is to avoid loops
(Propositions \ref{kappa2fin20} to \ref{kappa2fin3}). We also emphasize that most of these
intricacies actually occur when $S$ is equicharacteristic with algebraically closed residue field.

\smallskip

Let $\mu$ be a valuation of $L=k({\cal X})$ centered at $x$ and consider the quadratic sequence
\begin{equation}\label{eq750}
    ({\cal X},x)=:({\cal X}_0,x_0) \leftarrow ({\cal X}_1,x_1)\leftarrow \cdots \leftarrow ({\cal X}_r,x_r)\leftarrow \cdots
\end{equation}
along $\mu$. We will show that $x_r$ is resolved for some $r\geq 0$, hence $x$ is good as explained
in Remark \ref{quadsequence}.

\begin{thm}\label{proofkkappa2}
Projection Theorem \ref{projthm} holds when $\kappa (x)=2$. One may take all local blowing ups
in (\ref{eq402}) permissible (of the first kind or second kind) if $p=2$
or if $\omega (x)\geq 3$.
\end{thm}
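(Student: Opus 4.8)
The plan is to prove Theorem \ref{proofkkappa2} by descending induction on the invariant $\gamma(x)\in\N$ attached to a point with $\kappa(x)=2$, building on the entire apparatus of Section 7.1 through 7.5. First I would invoke proposition \ref{redto*}(i) to reduce, along the quadratic sequence (\ref{eq750}) and using remark \ref{quadsequence}, to the case where $x$ satisfies condition (*); then corollary \ref{omeganoncongp} disposes of the case $\omega(x)\not\equiv 0\bmod p$, so one may also assume $\omega(x)\equiv 0\bmod p$ and invoke remark \ref{kappadeuxancien} to match the present setup with the one of \cite{CoP2} chapter 3. The base of the induction is $\gamma(x)=0$ (proposition \ref{kappa2gamma0}, giving $x$ good) and, essentially, $\gamma(x)=1$: here the plan is to combine lemma \ref{kappa2bupcurve} (blowing up the permissible curve ${\cal Y}=V(Z,u_1,u_3)$, which does not change $\gamma$ but decreases $A_1(x)$ by one) with lemmas \ref{gamma2*12}, \ref{gamma2*3}, \ref{gamma2*infty} describing the effect of blowing up a closed point on the charts. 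The point is that for $\gamma(x)=1$, all these transformation rules give $\gamma(x')\le\gamma(x)=1$ with strict drop (to $0$) whenever the exceptional point is nonrational ($d=[k(x'):k(x)]\ge 2$) or ``at infinity'', so the only way $\gamma$ can stay equal to $1$ along (\ref{eq750}) is an infinite chain of rational closed points not at infinity — which is excluded by corollary \ref{permisarcthree}, since throughout such a chain one has $E_r$ irreducible (case (*1) or (*3)), $m(x_r)=p$, $\omega(x_r)>0$, $k(x_r)=k(x)$. This is formalized as proposition \ref{kappa2fin0} in the paper; I would state and prove it as the archetype of the argument.

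The inductive step from $\gamma(x)=g\ge 2$ to $g-1$ is the hard part, and it is where the bulk of Section 7.5 lives (the propositions \ref{kappa2fin10} through \ref{kappa2fin3} alluded to, and lemma \ref{kappa2fin25} for $p=2$). The difficulty is threefold and exactly as flagged in the introduction's phenomena (ii) and (iii): $\mathrm{Vdir}(x)$ is not well-behaved under blowing up, so one cannot keep condition (*) in a fixed ``shape'' (the type of $x$ among (*1),(*2),(*3) migrates under blowing up — see lemma \ref{gamma2*12}(2), lemma \ref{gamma2*3}); moreover $\gamma(x')$ can genuinely \emph{increase}, by one, in the exceptional situations of lemma \ref{gamma2*3}(2) (the case (\ref{eq7429})), and there are the ``monic form (3')/(1')'' escape cases where a special initial form $Z^p+{U'_1}^{pd'_1}(\lambda'{U'_3}^p+U'_3{U'_2}^p)$ appears. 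The plan for the inductive step is: (a) first reduce, using lemma \ref{kappa2bupcurve} and lemmas \ref{gamma2*12}(1)/\ref{gamma2*3} part (1)-shape statements, to the situation $A_1(x)<1$ (so the curve ${\cal Y}$ is not permissible and one must blow up points); (b) then run the quadratic sequence and track the pair $(\gamma(x_r),\text{auxiliary data})$ where the auxiliary data is a carefully chosen lexicographic refinement — roughly $(\gamma, B, \beta)$ or, following \cite{CoP2}, $(\gamma, A_1, \beta)$ with appropriate corrections — designed so that it strictly decreases except across rational non-infinity points, where $\gamma$ itself is non-increasing and one again appeals to corollary \ref{permisarcthree} to forbid infinite rational chains; (c) handle the increase case (\ref{eq7429}) of lemma \ref{gamma2*3}(2) by observing it is uniquely determined and followed (after one more blow up, or after the chart-at-infinity move of lemma \ref{gamma2*infty}) by a strict compensating drop, so it cannot loop; (d) handle the ``monic form'' escapes (3')/(1') via the dedicated lemmas (this is where lemma \ref{lemsortiekappaegaldeux} and propositions \ref{sortiekappaegaldeux}, \ref{sortiebis}, \ref{tauegaldeux}, \ref{sortiemonome}, \ref{sortieomegaun} from Section 7.1 feed in, each concluding ``$x$ is resolved'' directly). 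The casuistic analysis of (b)+(c)+(d) is genuinely long; I would organize it as a sequence of lemmas/propositions indexed by the value of $\gamma(x)$ and the type of $x$, and within each by $\tau'(x)$, exactly mirroring \cite{CoP2} chapter 3 (lemmas I.8.3–I.8.9) but with two amendments already noted in the excerpt: the error in \cite{CoP2} I.8.7 / I.8.7.5 (case $J_0\subseteq p\N$, $d=1$) is repaired by using the weaker but correct bound (\ref{eq7425}) of lemma \ref{gamma2*3}(1), and for $p=2$ an extra twist is needed (lemma \ref{kappa2fin25}).

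The main obstacle, then, is the ``$\gamma$ can increase'' and ``type migration'' phenomenon: proving that the multi-invariant one uses for the induction really is eventually strictly decreasing requires showing that every pattern producing $\gamma(x')>\gamma(x)$ or $\gamma(x')=\gamma(x)$ is \emph{transient} — it is entered only from a uniquely determined predecessor (lemma \ref{gamma2*3}(2) is stated precisely so that $x'$ is uniquely determined and $k(x')=k(x)$), and it forces, one or two steps later, a strict drop or a reduction to one of the ``resolved'' monic forms of Section 7.1. I expect this bookkeeping — and in particular verifying that no cycle among the finitely many (*k)-types and the $\gamma=2$ special configurations can occur — to be where essentially all the work is. Once that is in place, the theorem follows: along (\ref{eq750}) the chosen invariant cannot decrease forever, hence some $x_r$ is resolved, i.e. $\iota(x_r)<(p,\omega(x),2)$, which by definition \ref{defgood} and remark \ref{quadsequence} means $x$ is good, proving theorem \ref{projthm} (hence corollary \ref{projthmcor}) in the case $\kappa(x)=2$. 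The final sentence of the theorem — that all blowing ups may be taken permissible of the first or second kind when $p=2$ or $\omega(x)\ge 3$ — is then read off from remark \ref{rempermkappa2}, corollary \ref{omeganoncongp}, theorem \ref{well2prepared} (the curve ${\cal Y}=V(Z,u_1,u_3)$ is permissible of the first or second kind), together with example \ref{examsecondkind}'s observation that the only non-first-kind permissible centers needed here are the second-kind curves arising in the (*3) case (propositions \ref{kappa2gamma0} and \ref{kappa2fin10}), the residual non-permissible quadratic transforms being confined to $p\ge 3$, $\omega(x)\le 2$.
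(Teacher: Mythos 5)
Your proposal is essentially the paper's own route: reduce via proposition \ref{redto*} and corollary \ref{omeganoncongp} to condition~(*) with $\omega(x)\equiv 0\bmod p$, control $\gamma$ under the transformation rules of lemmas \ref{kappa2bupcurve}, \ref{gamma2*12}, \ref{gamma2*3}, \ref{gamma2*infty}, exclude infinite rational chains via corollary \ref{permisarcthree}, and dispatch the escape and increase patterns by the material of Section~7.1 and lemma~\ref{kappa2fin25}; the permissibility tail reads off from remark~\ref{rempermkappa2} and theorem~\ref{well2prepared} exactly as you say. One small correction of attribution: proposition~\ref{kappa2fin0} is the reduction from $\gamma(x)\ge 3$ down to $\gamma\le 2$ (where the special increase/migration situations of lemmas~\ref{gamma2*12}(3') and \ref{gamma2*3}(1')(2) can safely be disregarded because they only arise for $\gamma\le 2$), not the archetype of the $\gamma=1$ case; the paper then handles $\gamma=0,1,2$ by a casuistic dispatch (propositions~\ref{kappa2gamma0}, \ref{kappa2fin10}--\ref{kappa2fin3}) rather than by a literal inductive step from $g$ to $g-1$, precisely because for $\gamma=2$ no single refinement of $\gamma$ decreases monotonically and one must instead certify that each of the finitely many configurations is transient, which is the content of your points~(b)--(d).
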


\begin{proof}
By Proposition \ref{redto*}, it can be assumed that
$\omega (x) \equiv 0 \ \mathrm{mod}p$ and that $x_r$ satisfies condition (*) for every $r\geq 0$. Under
these assumptions,  {the} invariant $\gamma (x_r)\in \N$ is defined for $r\geq 0$ (Definition \ref{definvariants2}).

\smallskip

By Proposition \ref{kappa2fin0} below, there exists $r_0\geq 0$ such that either $x_{r_0}$ is resolved or
$\gamma (x_{r_0})\leq 2$.

\smallskip

If $\gamma (x_{r_0})=0$, then $x_{r_0}$ is resolved by Proposition \ref{kappa2gamma0}.

\smallskip

Suppose that $\gamma (x_{r_0})=1$. If $x_{r_0}$ satisfies condition (*1) (resp. (*2)), then $x_{r_0}$ is
resolved by  Proposition \ref{kappa2fin10}(1) (resp. Proposition \ref{kappa2fin11}) below.
If $x_{r_0}$ satisfies condition (*3) and $\beta (x)<1-1/\omega (x)$ (resp. and $\beta (x)=1-1/\omega (x)$,
$(p,\omega (x))\neq (2,2)$; resp. and $\beta (x)=1/2$, $(p,\omega (x))= (2,2)$), then $x_{r_0}$ is
resolved by Proposition \ref{kappa2fin10}(3) (resp. Proposition \ref{kappa2fin20}(ii);
resp. Proposition \ref{kappa2fin22}(ii)).

\smallskip

Assume finally that $\gamma (x_{r_0})=2$. If $x_{r_0}$ satisfies condition (*1) (resp. (*2); resp. (*3)),
then $x_{r_0}$ is resolved by Proposition \ref{kappa2fin20}(i) or by Proposition \ref{kappa2fin23}(i)
(resp. by Proposition \ref{kappa2fin24};
resp. by Proposition \ref{kappa2fin23}(ii) or by Proposition \ref{kappa2fin3}).
\end{proof}

\begin{lem}\label{kappa2fin00}
With notations as above, assume that $x_r$ satisfies condition (*2) for every $r\geq 0$.
Then there exists $r_0\geq 0$ such that $C(x_r)=0$ for every $r\geq r_0$.
\end{lem}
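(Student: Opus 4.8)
The plan is to prove two things: (I) $C(x_r)$ is nonincreasing along (\ref{eq750}), hence eventually constant; (II) the constant value is $0$, which I obtain by realising (\ref{eq750}) as the quadratic sequence of a formal arc and invoking proposition \ref{permisarc}.

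For (I) I would first discard finitely many terms: since $\omega(x_r)\in p\N$ is nonincreasing it stabilises, so we may assume $\iota(x_r)=(p,\omega(x),2)$ for all $r$ (terms where $\iota(x_r)<(p,\omega(x),2)$ are resolved and nothing is claimed there). Fix well $2$-adapted coordinates $(u_1^{(r)},u_2^{(r)};u_3^{(r)};Z^{(r)})$ at $x_r$; as $x_r$ is in case (*2), $U_3^{(r)}\in\mathrm{Vdir}(x_r)$ (definition \ref{*kappadeux}), and if $\mathrm{Max}\neq\mathrm{Dir}$ for $\mathrm{in}_{m_{S_r}}h$ then $x_r$ is resolved by proposition \ref{tausup2}; so by remark \ref{ridgedimthree} we may assume $PC(x_r,\{x_r\})=\PP(\mathrm{Dir}(x_r))\subseteq\{U_3^{(r)}=0\}$. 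By theorem \ref{bupthm}, $\eta_{r+1}(x_{r+1})$ lies on this projective line; a non-origin point of the $u_1^{(r)}$-chart on it would satisfy condition (*1) or (*3) by lemma \ref{gamma2*12}(2), against hypothesis, and the origin of the $u_3^{(r)}$-chart lies off the line, so $x_{r+1}$ is the origin of the $u_1^{(r)}$-chart (``case A'') or of the $u_2^{(r)}$-chart (``case B''); in particular $k(x_r)=k(x_0)$ for all $r$. In case A lemma \ref{gamma2*12}(1) gives $C(x_{r+1})\leq C(x_r)$, and in case B lemma \ref{gamma2*infty}(2) (with $x_r$ in case (*2)) gives $C(x_{r+1})\leq\min\{\beta(x_r)-A_2(x_r)-C(x_r),C(x_r)\}\leq C(x_r)$. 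Since the vertices of $\Delta_{S_r}$ lie in $\frac{1}{p!}\N^3$ and $\omega(x)/p\in\N$, the projections $\mathbf{p}_2$ have uniformly bounded denominators, so $C(x_r)$ is eventually constant, equal to some $c\ge0$.

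For (II) suppose $c>0$ and discard further terms so that $C(x_r)=c$ for all $r$. Because $\mu$ is centred at $m_{S_r}$, the values $\mu(u_1^{(r)}),\mu(u_2^{(r)})$ are positive; a case-A step replaces the pair by $(\mu(u_1^{(r)}),\mu(u_2^{(r)})-\mu(u_1^{(r)}))$ and a case-B step by $(\mu(u_1^{(r)})-\mu(u_2^{(r)}),\mu(u_2^{(r)}))$, since $u_1^{(r)},u_2^{(r)}$ are components of $E_r$ and are not translated. Hence the steps group into alternating finite blocks of case-A and case-B steps (the Euclidean algorithm on the pair), and — as $\mu(u_1^{(r)})=\mu(u_2^{(r)})$ would put $x_{r+1}$ at a non-origin point of the $u_1^{(r)}$-chart, excluded above — the ratio $\mu(u_1^{(r)})/\mu(u_2^{(r)})$ is irrational, so both block types occur infinitely often. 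On the other hand $k(x_r)=k(x_0)=:l$ for all $r$, so (as in corollary \ref{permisarcthree}) there is a nonconstant well-parametrised formal arc $\varphi$ on $({\cal X},x_0)$ with residue field $l$ whose quadratic sequence is (\ref{eq750}); here $m(\varphi)=p$, $\omega(\varphi)=\omega(x)>0$, and $l\,|\,k(x_r)$ is an equality, hence algebraic of finite inseparable degree, so proposition \ref{permisarc} applies. Its alternative (2) would force $\dim{\cal O}_{Z_{r_0}(\varphi),x_{r_0}}\ge2$, i.e.\ $Z(\varphi)$ a union of components of $\eta^{-1}(E)$ (by $\mathbf{(E)}$), whose generic point has $\epsilon=0$; but then proposition \ref{permisarc}(2.c) and proposition \ref{secondkind} give $\epsilon(x_{r_0})=1$, contradicting $\epsilon(x_{r_0})=\omega(x)\ge p$. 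So alternative (1) holds: $Z_r(\varphi)$ is permissible of the first kind at $x_r$ for all $r\ge r_0$.

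It remains to reach a contradiction, and this is the hard part. Being permissible of the first kind, $Z_r(\varphi)\subseteq\mathrm{Sing}_p{\cal X}_r\subseteq\eta_r^{-1}(E_r)$, and being a curve with normal crossings with $E_r=\mathrm{div}(u_1^{(r)}u_2^{(r)})$ it is, after adjusting the adapted coordinate $u_3^{(r)}$, one of $V(Z^{(r)},u_2^{(r)},u_3^{(r)})$, $V(Z^{(r)},u_1^{(r)},u_3^{(r)})$, $V(Z^{(r)},u_1^{(r)},u_2^{(r)})$ — the last impossible since the direction of its strict transform would be the origin of the $u_3^{(r)}$-chart. As $\varphi$ lifts to $x_{r+1}$, the direction of the strict transform of $\eta_r(Z_r(\varphi))$ equals $\eta_{r+1}(x_{r+1})$; hence $Z_r(\varphi)=V(Z^{(r)},u_2^{(r)},u_3^{(r)})$ in case A and $=V(Z^{(r)},u_1^{(r)},u_3^{(r)})$ in case B (forcing $A_2(x_r)\ge1$ resp.\ $A_1(x_r)\ge1$ by theorem \ref{well2prepared}). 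But the strict transform of $V(Z^{(r)},u_2^{(r)},u_3^{(r)})$ at the origin of the $u_1^{(r)}$-chart is again of $V(Z,u_2,u_3)$-type, and that of $V(Z^{(r)},u_1^{(r)},u_3^{(r)})$ at the origin of the $u_2^{(r)}$-chart again of $V(Z,u_1,u_3)$-type, so the type of $Z_r(\varphi)$ is constant through each block and must change at each transition — contradicting the determination just made. Since there are infinitely many transitions, $c=0$. The delicate point throughout is reconciling the well $2$-adapted coordinate $u_3^{(r)}$ (chosen to minimise $\Delta_2$) with an adapted coordinate in which $Z_r(\varphi)$ is a coordinate axis: one must check the translation relating the two is a monomial in $u_1^{(r)},u_2^{(r)}$ of admissible shape, so that the type of $Z_r(\varphi)$ is well defined and transforms under the two elementary blowing ups by $\Delta_2\mapsto(y_1+y_2-1,y_2)$ (case A) resp.\ $\Delta_2\mapsto(y_1,y_1+y_2-1)$ (case B), rules which also underlie the monotonicity bounds used in (I).
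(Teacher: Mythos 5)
Your part (I) correctly shows that $C(x_r)$ is nonincreasing, and the determination that $x_{r+1}$ must be the origin of the $u_1^{(r)}$-chart or the $u_2^{(r)}$-chart is sound (lemma \ref{gamma2*12}(2) excludes non-origin points of the first chart, the $u_3^{(r)}$-chart origin lies off $\PP(\mathrm{Dir}(x_r))$, and by symmetry non-origin points of the $u_2$-chart are excluded as well). But nonincreasingness only gives eventual constancy of $C(x_r)$, not $C\equiv 0$, so everything hinges on part (II), and that is where there is a genuine gap.

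In part (II) you invoke ``as in corollary \ref{permisarcthree}'' a nonconstant well-parametrised formal arc $\varphi$ with residue field $l$ whose quadratic sequence is (\ref{eq750}). The construction in corollary \ref{permisarcthree} crucially requires $E_r$ \emph{irreducible} for all $r$: then all blowing ups remain in the single $u_1$-chart, $u_1$ is never modified, the translations on $u_2,u_3$ converge $u_1$-adically to $\hat u_2,\hat u_3\in\hat S$, and $V(\hat Z,\hat u_2,\hat u_3)$ is the support of the arc. In your situation $E_r=\mathrm{div}(u_1^{(r)}u_2^{(r)})$ has two components throughout, and you have just argued that the steps alternate infinitely often between the two charts, with $\mu(u_1^{(0)})/\mu(u_2^{(0)})$ irrational. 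Then $\bigcup_r\hat S_r$ is a valuation ring whose value group contains the two $\Q$-linearly independent reals $\mu(u_1^{(0)})$, $\mu(u_2^{(0)})$, so it has rational rank $\geq 2$ and is not a discrete valuation ring. No well-parametrised formal arc has (\ref{eq750}) as its quadratic sequence, so proposition \ref{permisarc} cannot be applied, and the entire type-transition analysis that follows is without a foundation.

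The paper avoids all of this by a purely combinatorial argument. Writing $\mathbf{y}=(A_1(x),A_2(x)+a(x))$ and $\mathbf{y}'=(A_1(x)+a'(x),A_2(x))$ for the two extremal vertices of the projected polygon, so that $C(x)=\min\{a(x),a'(x)\}$, the pair $c(x)=(C(x),\max\{a(x),a'(x)\})$ \emph{strictly} decreases for the lexicographic order whenever $C(x)>0$; this is the usual Newton-polygon shrinking under a quadratic transform, which refines the bare monotonicity that you established and is exactly the missing ingredient. Since these quantities lie in $\tfrac{1}{\omega(x)!}\N^2$, $C(x_r)=0$ for $r\gg 0$. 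This is shorter than your route and does not involve arc spaces at all.
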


\begin{proof}
We consider the points
$$
\mathbf{y}:=(A_1(x),A_2(x)+a (x)), \ \mathbf{y}':=(A_1(x)+a'(x),A_2(x)) \in \Delta_2(u_1,u_2;u_3;Z),
$$
where $(u_1,u_2;u_3;Z)$ are well 2-adapted coordinates. By standard arguments on combinatorial blowing ups, we have
$c(x_1)< c(x)$ for the lexicographical ordering whenever $C(x)>0$, where
$$
c(x):=(C(x)=\min\{a(x), a'(x)\},\max\{a(x), a'(x)\}).
$$
Since these numbers belong to ${1 \over \omega (x)!}\N^2$, we get $C(x_r)=0$ for all $r>>0$.
\end{proof}

\begin{prop}\label{kappa2fin0}
With notations as above, there exists $r_0\geq 0$ such that $x_{r_0}$ is resolved or $\gamma (x_{r_0})\leq 2$.
\end{prop}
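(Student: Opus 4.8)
The plan is to prove that, along the quadratic sequence, the invariant $\gamma$ is eventually bounded by $2$. The strategy follows the pattern of Embedded Resolution of surfaces and of theorem \ref{contactmaxFIN}: $\gamma$ is nonincreasing under blowing up a closed point, except in the very special situations classified in lemma \ref{gamma2*3}(2), and it drops strictly whenever a ``bad'' point occurs -- a nonrational exceptional point or an exceptional point ``at infinity'' -- unless $\gamma$ is already small. The combinatorics of $\gamma(x)$ differs according to which of the conditions (*1), (*2), (*3) holds at each $x_r$; since the conditions can change along the sequence (proposition \ref{redto*}(ii)), the first reduction is to organize the cases by the eventual type.

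First I would reduce to the case where the type $(*k)$ is eventually constant along (\ref{eq750}): if $x_r$ is resolved for some $r$ we are done, so assume $\iota(x_r)=\iota(x)$ for all $r$; by proposition \ref{redto*}(ii) each $x_r$ satisfies (*), and by the finiteness of the possible transitions among (*1),(*2),(*3) described in lemmas \ref{gamma2*12}, \ref{gamma2*3}, \ref{gamma2*infty} (each such lemma either resolves $x'$ or leaves $x'$ in a controlled type with $\gamma(x')\leq\gamma(x)$ or $\gamma(x')\le 1+\gamma(x)$), one reaches an index $r_1$ after which the type is constant, say equal to $(*k)$. The case (*2) is handled directly: by lemma \ref{kappa2fin00} one has $C(x_r)=0$ for $r\gg 0$, hence $\gamma(x_r)=1+\lfloor C(x_r)\rfloor=1\leq 2$, and we are done. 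So it remains to treat the cases where the eventual type is (*1) or (*3).

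For eventual type (*1) or (*3) the main tool is the bookkeeping of $\beta(x_r)$ under the three moves: blowing up the origin chart (lemma \ref{kappa2bupcurve}-type behaviour, which does not change $\gamma$ once $A_1\geq 1$; after reducing $A_1$ modulo $1$ one may assume $A_1(x_r)<1$ or $<2$), a nonrational or ``generic'' exceptional point in the $u_1$-chart (lemma \ref{gamma2*12}(2)-(4) in case (*1), lemma \ref{gamma2*3}(1)-(3) in case (*3)), and the point at infinity (lemma \ref{gamma2*infty}). Each of lemmas \ref{gamma2*12} and \ref{gamma2*3} shows $\beta(x')\leq C(x)/d + 1/p$ (or a sharper bound), so that $\gamma(x')\leq\gamma(x)$ whenever $d\geq 2$ or whenever the ``increase'' situation of lemma \ref{gamma2*3}(2) does not occur; in that exceptional situation $k(x')=k(x)$, $x'$ is uniquely determined, $x'$ satisfies (*1), and $\beta(x')=\gamma(x)+1/\omega(x)$ so $\gamma(x')=\gamma(x)+1$ once but the resulting point satisfies (*1) with a rigid monic form (\ref{eq7429}). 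One then argues that after this single jump $\gamma$ cannot grow again: the next blowing up of the rigid form brings us back into the regime where $\beta$ strictly decreases, by inspecting (\ref{eq7429}) and applying lemma \ref{gamma2*12}. Since infinite chains of rational non-``at-infinity'' points with constant $\iota$ are excluded by corollary \ref{permisarcthree}, the sequence must eventually hit a nonrational point or a point at infinity, where $\beta$ drops by at least a positive amount bounded away from $0$ (it lies in $\frac{1}{\omega(x)!}\mathbf{N}$); iterating, $\gamma(x_r)$ decreases until it reaches a value $\leq 2$.

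The main obstacle I expect is the case $\gamma(x)=3$ with $p=2$ and $x$ of type (*3): here the ``jump'' of lemma \ref{gamma2*3}(2) interacts badly with the sharpened bounds, and one must rule out a loop of the form (*3)$\to$(*1)$\to$(*3) in which $\gamma$ returns to $3$. The way to handle it is to track the finer pair $(\beta(x), \text{position of the relevant vertex } (\alpha_2(x),\beta_2(x)))$ and use the rigidity of the monic forms (\ref{eq7429}), (\ref{eq7429}-analogue): after the jump the form is so constrained that the subsequent transition forces either resolution (via proposition \ref{tauegaldeux}, which applies when $U_2U_3$ divides the relevant form) or a strict drop in $\beta$ measured in $\frac{1}{\omega(x)!}\mathbf{N}$. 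Combined with corollary \ref{permisarcthree} to kill infinite rational chains, this gives termination. I would state the precise bookkeeping as a separate lemma (analogous to the auxiliary lemmas \ref{kappa2fin20}--\ref{kappa2fin3} referenced in the proof of theorem \ref{proofkkappa2}) and feed it into the present proposition; the proposition itself then reads: run (\ref{eq750}), apply the type-stabilization reduction, then in each stabilized type apply the corresponding bookkeeping to conclude $\gamma(x_{r_0})\leq 2$ for some $r_0$.
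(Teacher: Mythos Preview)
Your overall strategy --- track $\gamma$, show it is nonincreasing except for the classified jump of lemma~\ref{gamma2*3}(2), use corollary~\ref{permisarcthree} to exclude infinite rational chains --- is the right one and matches the paper. But the organizing step you propose, ``first reduce to the case where the type $(*k)$ is eventually constant,'' does not work and undermines the rest of your argument.

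The types genuinely cycle. From $(*1)$ or $(*3)$ the point at infinity lands in $(*2)$ (lemma~\ref{gamma2*infty}); from $(*2)$ any nonorigin point lands in $(*1)$ or $(*3)$ (lemma~\ref{gamma2*12}(2)). ``Finiteness of the possible transitions'' is not an argument for stabilization --- three states with arbitrary transitions can cycle forever. Since your treatment of the eventual $(*2)$ case invokes lemma~\ref{kappa2fin00}, which \emph{requires} that every $x_r$ be of type $(*2)$, the gap is fatal for that branch. Likewise, your reference to ``reducing $A_1$ modulo $1$'' via lemma~\ref{kappa2bupcurve} brings in curve blowups, but the quadratic sequence (\ref{eq750}) under consideration here consists only of closed-point blowups.

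The paper avoids this by \emph{not} stabilizing the type. It assumes $\gamma(x)\geq 3$ and shows directly that some $x_r$ has $\gamma(x_r)<\gamma(x)$, case-splitting on the type of $x$ itself. The key is that under $\gamma\geq 3$ the lemmas give a \emph{strict} drop at every nonrational point and at every point at infinity from $(*1)$ or $(*3)$ (this is where the inequalities (\ref{eq7501}) and (\ref{eq7503}) enter, using $\beta(x)>2$). For $(*2)$: either $x_1$ leaves $(*2)$ (and the already-proved $(*1)/(*3)$ cases apply to $x_1$), or one stays in $(*2)$ forever and lemma~\ref{kappa2fin00} applies. For the jump from $(*3)$ with $\beta(x)=\gamma(x)-1/\omega(x)$: after the jump $x_1$ is in $(*1)$ with $\beta(x_1)=\gamma(x)+1/\omega(x)>3$, $\beta(x_1)\neq 4$, so the \emph{sharpened} inequalities at the next nonrational or infinity point give $\gamma(x_2)<\gamma(x_1)-1=\gamma(x)$; otherwise one stays in the same jump-state with $k(x_r)=k(x)$ and $E_r$ irreducible, which corollary~\ref{permisarcthree} forbids.

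Finally, your ``main obstacle'' paragraph about $p=2$ and $(*3)\to(*1)\to(*3)$ loops at $\gamma=3$ is misplaced here: those delicate loops matter only when pushing from $\gamma=2$ down to resolved (propositions~\ref{kappa2fin20}--\ref{kappa2fin3}). For the present proposition the $\gamma\geq 3$ assumption already rules out the special forms of lemmas~\ref{gamma2*12}(3') and~\ref{gamma2*3}(1'), and the argument is uniform in $p$.
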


\begin{proof}
Let $(u_1,u_2;u_3;Z)$ be well 2-adapted coordinates at $x$. We will name
point ``at infinity" for simplicity the origin $x'$ of the second chart of the blowing up, i.e.
\begin{equation}\label{eq7504}
x':=(Z/u_2,u_1/u_2,u_2,u_3/u_2).
\end{equation}
The notion is unambiguous if $E=\mathrm{div}(u_1)$, that is if $x$ satisfies condition (*1) or (*3).
If $x$ satisfies condition (*2), the point ``at infinity"  furthermore depends on the numbering
of $u_1,u_2$, where $E=\mathrm{div}(u_1u_2)$.

\smallskip

We may assume that $\gamma (x)\geq 3$ for the whole proof. Note that the special situations
described in Lemma \ref{gamma2*12}(3') and in Lemma \ref{gamma2*3}(1') occur only when $\gamma (x)\leq 2$.
We may thus disregard them in this proof. To prove the proposition,
it is sufficient to prove that there exists $r\geq 1$ such that $x_r$ is resolved
or $\gamma (x_r)<\gamma (x)$. We first bound  $\gamma (x_1)$ in terms of $\gamma (x)$
at a nonrational point or at a point ``at infinity".

\smallskip

\noindent {\it Assume that $k(x_1)\neq k(x)$}. We apply Lemma \ref{gamma2*12}(4) and Lemma \ref{gamma2*3}(1) with
$d\geq 2$. Note that for $\alpha >1$, we have
\begin{equation}\label{eq7501}
    1+\left \lfloor {\alpha \over d}\right \rfloor \leq \lceil \alpha \rceil
\end{equation}
and equality holds if and only if $\alpha =d=2$. If $x$ is in case (*1) or (*2), we deduce
that
\begin{equation}\label{eq7502}
    x_1 \ \mathrm{is} \ \mathrm{resolved} \ \mathrm{or} \ \gamma (x_1)<\gamma (x).
\end{equation}

For $\alpha \in \N$, $\alpha \geq 3$, we have similarly
$$
    \left \lceil {\alpha \over d} +{1 \over p}\right \rceil < \alpha .
$$
If $x$ is in case (*3), we deduce  from Lemma \ref{gamma2*3}(1) that (\ref{eq7502}) also holds.

\smallskip

\noindent {\it Assume that $x_1=x'$ is the point at infinity (\ref{eq7504}).}
By Lemma \ref{gamma2*infty}, $x_1$ is resolved or satisfies condition (*2).

\smallskip

If $x$ is in case (*1), Lemma \ref{gamma2*infty}(1) gives
\begin{equation}\label{eq7503}
\gamma (x_1)\leq 1+\left \lfloor {\beta (x) \over 2} \right \rfloor < \gamma (x)
\end{equation}
by (\ref{eq7501}), since $\beta (x)>2$.

\smallskip

If $x$ is in case (*2), Lemma \ref{gamma2*infty}(2) gives $C(x_1)\leq C(x)$, so $\gamma (x_1)\leq \gamma (x)$.

\smallskip

If $x$ is in case (*3), then Lemma \ref{gamma2*infty}(3) similarly gives
$$
\gamma (x_1)\leq 1+ \left \lfloor {1+\beta (x) \over 2}\right \rfloor
< 1 + \lfloor \beta (x)\rfloor = \gamma (x)
$$
since $\beta (x) \geq 2$. The conclusion is again (\ref{eq7502}).

\smallskip

\noindent {\it Assume that $x_1\neq x'$ and $k(x_1)=k(x)$.} If $x$ satisfies condition
(*1) or (*3), the independence statement in Theorem \ref{well2prepared} shows that we
may actually assume that $x_1=(Z/u_1,u_1,u_2/u_1,u_3/u_1)$.

If $x$ is in case (*1), then $x_1$ is resolved or satisfies again condition (*1) with $\beta (x_1)\leq \beta (x)$
by Lemma \ref{gamma2*12}(1).

If $x$ is in case (*3), then $x_1$ is resolved or satisfies one of conditions  (*1) or (*3). In
the latter case, we have $\beta (x_1)\leq \beta (x)$ by Lemma \ref{gamma2*3}(3); in
the former case, we have $\gamma (x_1)\leq \gamma (x)$ except if

\begin{equation}\label{eq7505}
``x \ \mathrm{satisfies} \ \mathrm{the} \ \mathrm{assumptions} \ \mathrm{of } \ \mathrm{Lemma } \  \ref{gamma2*3}(2)".
\end{equation}

This situation occurs only when $\beta (x)=\gamma (x)-1/\omega (x)$ and gives
$$
\beta (x_1)=\gamma (x)+1/\omega (x), \ \gamma (x_1)=\gamma (x)+1.
$$

We first prove the proposition when $x$ satisfies either condition (*1) or (condition (*3) with
$\beta (x)< \gamma (x)-1/\omega (x)$). By the above considerations, we are done except
possibly if $x_1$ satisfies again condition (*1) or (*3) with ($k(x_1)=k(x)$ and $\gamma (x_1)=\gamma (x)$).
Iterating, we conclude from Corollary \ref{permisarcthree} that $x_r$ is resolved or
$\gamma (x_r)<\gamma (x)$ for some $r\geq 1$.

\smallskip

Assume now that $x$ satisfies condition (*2). By the above considerations and Lemma \ref{gamma2*12}(4),
we are done except possibly if $x_1$ satisfies again condition (*2). Iterating, we conclude
from Lemma \ref{kappa2fin00} above that $x_r$ is resolved or $\gamma (x_r)<\gamma (x)$ for some $r\geq 1$.

\smallskip

Assume finally that $x$ satisfies condition (*3) with $\beta (x)= \gamma (x)-1/\omega (x)$.
By the above considerations, we are done except possibly if $k(x_1)=k(x)$ and (1) or (2) below holds:
\begin{itemize}
  \item [(1)] $x_1$ satisfies again condition (*3) with $\beta (x_1)=\beta (x)$;
  \item [(2)] $x_1$ satisfies condition (*1) with $\beta (x_1)=\gamma (x)+1/\omega (x)$, {\it viz.} (\ref{eq7505}).
\end{itemize}

Suppose that (2) holds; we now review the above proof with this extra assumption in mind.
Since $\beta (x_1)>3$, $\beta (x_1)\neq 4$, (\ref{eq7501}) or (\ref{eq7503})  applied to
the point $x_1$ give the stronger
$$
\gamma (x_2)< \gamma (x_1)-1=\gamma (x).
$$
We conclude that either $x_2$ is resolved, either $\gamma (x_2) < \gamma (x)$, or $x_2$ satisfies
again condition (*1) with $\beta (x_2)\leq \beta (x_1)$. If the latter inequality is strict,
we have $\beta (x_2)\leq \gamma (x)$ and we are thus already done. Otherwise $x_2$ satisfies again (2).

Summing up, there exists $r_0\geq 0$ such that either $x_{r_0}$ is resolved, either $\gamma (x_{r_0})< \gamma (x)$,
or ($x_r$ satisfies one and the same property (1) or (2) above for every $r\geq r_0$).
Iterating,  we conclude again by Corollary \ref{permisarcthree}.
\end{proof}

\begin{prop}\label{kappa2fin10}
Assume that $\kappa (x)=2$  and one of the following properties holds:
\begin{itemize}
  \item [(1)] $x$ satisfies condition (*1) with $\gamma (x) = 1$;
  \item [(2)] $x$ satisfies condition (*2) with $\beta (x) <1$;
  \item [(3)] $x$ satisfies condition (*3) with $\beta (x) < 1-1/ \omega (x)$.
\end{itemize}
Then $x$ is good.
\end{prop}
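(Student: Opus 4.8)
The plan is to treat the three cases by the quadratic sequence of $\mu$: by Remark~\ref{quadsequence} it suffices to produce $r\ge 0$ with $x_r$ resolved. First I would normalise the characteristic polygon by making $A_1(x)$ small. Fix well $2$-adapted coordinates $(u_1,u_2;u_3;Z)$ at $x$ (Theorem~\ref{well2prepared}). In each of the three cases, as soon as $A_1(x)\ge 1$ the hypotheses of Lemma~\ref{kappa2bupcurve} are met: for (*1) and (*2) this is the condition $A_1(x)\ge 1$ itself, and for (*3) the hypothesis $\beta(x)<1-1/\omega(x)$ is exactly the extra requirement of that lemma when $A_1(x)=1$ (and is vacuous when $A_1(x)>1$). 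Hence ${\cal Y}:=V(Z,u_1,u_3)$ is permissible, and blowing it up either resolves the new center or returns $x'=(Z/u_1,u_1,u_2,u_3/u_1)$ satisfying again the same condition (*) with $A_1(x')=A_1(x)-1$ and $C,\beta,\gamma$ (and $A_2$ in case (*2)) unchanged. Since $A_1$ takes values in $\frac{1}{\omega(x)!}\N$, after finitely many such steps we reach $A_1(x)<1$ with the hypotheses (1), (2), (3) preserved; from then on ${\cal Y}$ is no longer permissible and the only permissible quadratic blow up is along the closed point.

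\textbf{Blowing up closed points.} With $A_1(x)<1$, run the quadratic sequence $({\cal X},x)=({\cal X}_0,x_0)\leftarrow({\cal X}_1,x_1)\leftarrow\cdots$ and assume each $x_r$ is very near $x$. I would now track a lexicographic invariant built from the secondary data $(A_1,A_2,\beta)$ (or $C$), controlled step by step through Lemmas~\ref{gamma2*12}, \ref{gamma2*3} and \ref{gamma2*infty}. At a rational point $x_{r+1}$ not ``at infinity'' one stays in the same condition (*) with $\beta$ non-increasing (Lemmas~\ref{gamma2*12}(1), \ref{gamma2*3}(3)), and also with $A_1$ strictly decreasing in case (*2) once $C=0$ (this uses Lemma~\ref{kappa2fin00} to force $C(x_r)=0$ eventually, and then Lemma~\ref{gamma2*infty}(2)); an infinite chain of such points is impossible by Corollary~\ref{permisarcthree} since $E$ stays irreducible in cases (*1)/(*3). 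At a non-rational point ($k(x_{r+1})\ne k(x_r)$, $d\ge 2$) the bounds of Lemmas~\ref{gamma2*12}(4) and \ref{gamma2*3}(1), combined with $C(x_r)<1$ (case (*2)), $\beta(x_r)<1$ (case (*1)) or $\beta(x_r)<1-1/\omega(x)$ (case (*3)), give either a resolution or a strict drop. At the point ``at infinity'' $x'=(Z/u_2,u_1/u_2,u_2,u_3/u_2)$, Lemma~\ref{gamma2*infty} shows $x'$ satisfies condition (*2) with $C(x')<1$ and $\beta(x')<1$, i.e. we land in case (2) with the invariant not increased, and the (*2)-analysis above applies. Whenever $\gamma(x_r)$ or $\beta(x_r)$ reaches $0$ we conclude by Proposition~\ref{kappa2gamma0}.

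\textbf{Main obstacle.} The genuine difficulty is that blowing up the closed point moves between the three conditions — a (*1)- or (*3)-point at infinity becomes a (*2)-point, and a (*3)-point can spawn a (*1)-point via Lemma~\ref{gamma2*3}, where moreover $\gamma$ can \emph{increase} by one (Lemma~\ref{gamma2*3}(2)). That jump is precisely what the sharp hypotheses $\gamma(x)=1$, $\beta(x)<1$, $\beta(x)<1-1/\omega(x)$ rule out, since it requires $\beta(x)=\gamma(x)-1/\omega(x)$ — so the bookkeeping must be organised so that the proposition's hypothesis is reproduced at each new center (possibly after transiting into case (2)), and one must check no loop among the three conditions occurs; Corollary~\ref{permisarcthree} and the finiteness Lemma~\ref{kappa2fin00} are what close the argument. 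I expect the bulk of the work to be the routine but delicate verification that the various upper bounds of Lemmas~\ref{gamma2*12}, \ref{gamma2*3}, \ref{gamma2*infty} are all compatible with these strict inequalities, including the $p=2$ sub-cases hidden inside those lemmas.
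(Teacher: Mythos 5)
Your strategy is the paper's: normalize to $0<A_1(x)<1$ via Lemma~\ref{kappa2bupcurve}, run the quadratic sequence, prove that the lexicographic invariant $c(x)=(A_1(x),\beta(x))$ does not increase (strictly decreases except in the one stuck case), route transitions between (*1), (*2), (*3) through Lemmas~\ref{gamma2*12}, \ref{gamma2*3}, \ref{gamma2*infty}, and kill infinite rational chains with Corollary~\ref{permisarcthree}. You also correctly identify the key reason the hypotheses are chosen: Lemma~\ref{gamma2*3}(2) (and the (1')/(3') exceptional forms) require $\beta(x)=\gamma(x)-1/\omega(x)$, which (3) rules out.

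One mechanism in your sketch is misattributed, though. In case (*2) you propose using Lemma~\ref{kappa2fin00} to ``force $C(x_r)=0$'' and then Lemma~\ref{gamma2*infty}(2) to make $A_1$ drop. That is not how the proof goes and, as stated, is circular: Lemma~\ref{kappa2fin00} presupposes the sequence stays in (*2) forever, which is precisely what one cannot assume. The actual mechanism is direct from $\beta(x)<1$ and $A_1(x)<1$: at a rational point in the first chart, $A_1(x_1)=B(x)-1\leq A_1(x)+\beta(x)-1<A_1(x)$; at the point at infinity, $x'$ is in (*2) with $A_1(x')=A_1(x)$ and $\beta(x')=A_1(x)+\beta(x)-1<\beta(x)$. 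So $c$ drops strictly at every step except possibly when both $x$ and $x_1$ are in case (*1) with $k(x_1)=k(x)$ and $\beta(x_1)=\beta(x)=1$ — the one place Corollary~\ref{permisarcthree} is needed. You should also record the small boundary checks the paper performs: if $A_1(x)=0$ under (1) then Proposition~\ref{tauegaldeux} applies, and under (3) the special case $p=\omega(x)=2$ is excluded since $0<A_1(x)<1$ and $\beta(x)<1-1/\omega(x)$ would force $A_1(x)=1/2$, $\beta(x)=0$, contradicting $B(x)\geq 1$.
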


\begin{proof}
Note  that $A_1(x)>0$ if $x$ satisfies (2) or (3), since
$$
1 \leq B(x)\leq A_1(x)+\beta (x)
$$
in any case. If ($x$ satisfies condition (1) with $A_1(x)=0$), then $x$ is good by Proposition \ref{tauegaldeux}.
Applying repeatedly Lemma \ref{kappa2bupcurve} if $A_1(x)\geq 1$, it can be assumed w.l.o.g. that
\begin{equation}\label{eq752}
    0< A_1(x)<1.
\end{equation}
To prove the proposition, we first claim: $x_1$ is resolved or ($x_1$ satisfies again the assumptions
of the proposition and $c(x_1)\leq c(x)$ for the lexicographical ordering), where
$$
c(x):=(A_1(x), \beta (x)).
$$

If $x_1$ belongs to the first chart, i.e. $x_1$ is distinct from the point $x'$ at infinity (\ref{eq7504}),
we apply Lemma \ref{gamma2*12} and Lemma \ref{gamma2*3}. Note that the special situations
described in Lemma \ref{gamma2*12}(3') and in Lemma \ref{gamma2*3}(1')(2) do not occur under the assumptions
of the proposition, so we may also disregard them in this proof. We obtain that $x_1$ is resolved or
$x_1$ satisfies again condition (*) with
\begin{equation}\label{eq7521}
A_1 (x_1)=B(x)-1 \leq A_1(x)+\beta (x)-1 \leq A_1(x).
\end{equation}

\noindent {\it Assume that $x_1$ belongs to the first chart and $x$ satisfies (1)}. We have $C(x)\leq \beta (x)\leq 1$.
If $k(x_1)=k(x)$, it can be assumed that $x_1$ is the origin of the chart by the independence
statement in Theorem \ref{well2prepared}. By Lemma \ref{gamma2*12}(1) we have $\beta (x_1)\leq \beta (x)$
and the claim follows. Note that we obtain $c(x_1)=c(x)$ only if $\beta (x)=1$ by (\ref{eq7521}),
in which case $x_1$ satisfies again (1). If $k(x_1)\neq k(x)$, the claim follows
from Lemma \ref{gamma2*12}(4) with strict inequality $c(x_1)<c(x)$.

\smallskip

\noindent {\it Assume that $x_1$ belongs to the first chart and $x$ satisfies (2)}.
Since $\beta (x)<1$, inequality
is strict in (\ref{eq7521}). The claim also follows from Lemma \ref{gamma2*12}(1)(4)
with strict inequality $c(x_1)<c(x)$.

\smallskip

\noindent {\it Assume that $x_1$ belongs to the first chart and $x$ satisfies (3)}.
Note that if $x_1$ satisfies condition
(*1), then $x_1$ satisfies again the assumptions of the proposition since Lemma \ref{gamma2*3}(2) does not occur
for $\beta (x)<1-1/\omega (x)$; this is also true if $x_1$ satisfies condition (*3) by Lemma \ref{gamma2*3}(3)
(note that  $p=\omega (x)=2$ does not occur: (\ref{eq752}) gives $A_1(x)=1/2$ while (3) gives $\beta (x)=0$,
a contradiction with $B(x)\geq 1$).  The claim now follows with strict inequality $c(x_1)<c(x)$ by (\ref{eq7521}).

\smallskip

\noindent {\it Assume that $x_1=x'$ (point at infinity (\ref{eq7504})).} Turning to Lemma \ref{gamma2*infty}, $x'$ is
resolved or $x'$ satisfies condition (*2) with
$$
A_1(x')=A_1(x), \ \beta (x')=A_1(x)+\beta (x)-1 < \beta (x)
$$
by (\ref{eq752}). This proves the claim with $c(x_1)<c(x)$ in this case.

\smallskip

Summing up, we have proved the claim with strict inequality $c(x_1)<c(x)$ except possibly if
both $x$ and $x_1$ are in case (*1), $k(x_1)=k(x)$  and $\beta (x_1)=\beta (x)=1$.
One concludes the proof again  by Corollary \ref{permisarcthree}.
\end{proof}

\begin{prop}\label{kappa2fin11}
Assume that $\kappa (x)=2$, $x$ satisfies condition (*2) and $\gamma (x)=1$.
Then $x$ is good.
\end{prop}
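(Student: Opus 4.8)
\textbf{Proof plan for Proposition \ref{kappa2fin11}.} The plan is to follow the same strategy as in proposition \ref{kappa2fin10}, with the monic expansion machinery of sections 7.3--7.5 adapted to condition (*2) with $\gamma(x)=1$. First I would record that $\gamma(x)=1$ in case (*2) means $\lfloor C(x)\rfloor=0$, i.e. $0\le C(x)<1$. Applying repeatedly lemma \ref{kappa2bupcurve} (which in case (*2) only requires $A_1(x)\ge 1$), it can be assumed without loss of generality that $0\le A_1(x)<1$; since $C(x)=B(x)-A_1(x)-A_2(x)$ and $B(x)\ge 1$, this also confines $A_2(x)$ and $\beta(x)$ to a bounded range. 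If moreover $A_1(x)=0$, one should first dispose of the subcase $\mathrm{Vdir}(x)+k(x)U_1=\langle U_1,U_2,U_3\rangle$ via proposition \ref{tauegaldeux}, so that it may be assumed $\mathrm{Vdir}(x)$ is transverse to $U_1$ but not a plane --- i.e.\ one is in the generic (*2) configuration.

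The heart of the argument is the claim: along the quadratic sequence (\ref{eq750}), $x_1$ is resolved or $x_1$ satisfies again the assumptions of the proposition with $c(x_1)<c(x)$ strictly, for a suitable lexicographic invariant $c(x)$. Here I would take $c(x):=(A_1(x),\beta(x))$ or, if the (*2)-specific behaviour of $C(x)$ is needed, the triple from lemma \ref{kappa2fin00}, namely $(C(x),\max\{a(x),a'(x)\},A_1(x))$. The three cases to check are: (i) $x_1$ on the first chart with $k(x_1)=k(x)$, where by the independence in theorem \ref{well2prepared} one may take $x_1$ the origin and apply lemma \ref{gamma2*12}(1) to get $C(x_1)\le C(x)$, $\beta(x_1)\le\beta(x)$, and a strict drop of $c$ by the combinatorial argument of lemma \ref{kappa2fin00} unless already $C(x)=0$; (ii) $x_1$ non-rational, handled by lemma \ref{gamma2*12}(4) which gives $\beta(x_1)<1+\lfloor C(x)/d\rfloor=1$ (since $0\le C(x)<1$ and $d\ge2$), hence $\gamma(x_1)\le 1$, in fact $x_1$ is then in case (*1) with $\gamma(x_1)=1$ and proposition \ref{kappa2fin10}(1) finishes it, or $x_1$ in case (*3) with $\beta(x_1)<1-1/\omega(x)$ handled by proposition \ref{kappa2fin10}(3); (iii) $x_1=x'$ the point at infinity, handled by lemma \ref{gamma2*infty}(2): $x'$ is resolved or satisfies condition (*2) with $A_1(x')=A_1(x)$, $A_2(x')=B(x)-1$, $\beta(x')=A_1(x)+\beta(x)-1$ and $C(x')\le\min\{\beta(x)-A_2(x)-C(x),C(x)\}$, so $\gamma(x')\le\gamma(x)=1$ and $c$ drops unless $C(x)=0$ already. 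Note that the delicate exceptional configurations in lemma \ref{gamma2*12}(3') and lemma \ref{gamma2*3}(1')(2) do not occur here because they force $\gamma(x)\le 2$ with specific $\beta$-values incompatible with $\gamma(x)=1$ in case (*2), so they may be ignored throughout.

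Once the claim is established, the proposition follows by the same closing argument as in proposition \ref{kappa2fin10}: the only way the sequence (\ref{eq750}) can fail to strictly decrease $c$ at every step is if $x_r$ remains in case (*2) with $c(x_r)$ constant for all $r\ge r_0$, which forces $C(x_r)=0$ and $A_1(x_r), A_2(x_r), \beta(x_r)$ all constant with $k(x_r)=k(x_{r_0})$ for every $r$; but this means $x_r$ lies on the strict transform of a fixed formal curve $\hat{\mathcal Y}=(Z-\hat\phi, u_1, \hat u_3)$ for every $r$, contradicting corollary \ref{permisarcthree} (with $E_r$ irreducible since $x$ satisfies (*2), $E=\mathrm{div}(u_1u_2)$ --- one reduces to one component by first blowing up, or by noting the relevant component of $E_r$ through $x_r$ stabilizes). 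Hence $x_r$ is resolved for some $r\ge 0$, so $x$ is good.

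\textbf{Main obstacle.} The hard part will be the case analysis in lemma \ref{gamma2*12}(1) versus lemma \ref{gamma2*infty}(2) for case (*2): in case (*2) the component $U_1$ of $E$ can be permuted with $U_2$, so the ``point at infinity'' is not canonical and one must track which of the two $A_j$'s is being decreased --- exactly the subtlety that lemma \ref{kappa2fin00} is designed to handle. Making the lexicographic invariant $c(x)$ robust under this symmetry, and checking that the blowing ups involved are permissible of the first kind when $p=2$ or $\omega(x)\ge3$ (per remark \ref{rempermkappa2} and the final assertion of theorem \ref{proofkkappa2}), is where the real bookkeeping lies; the numerical estimates themselves are immediate from lemmas \ref{gamma2*12}, \ref{gamma2*infty} and \ref{kappa2fin00}.
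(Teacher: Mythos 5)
The paper's proof of Proposition \ref{kappa2fin11} is short and combinatorial: after the blowing up, a point $x_1$ not at the origin of a chart is resolved directly by lemma \ref{gamma2*12}(4) combined with proposition \ref{kappa2fin10}(1) or (3); a point at the origin of a chart stays in case (*2) with $\gamma(x_1)\leq 1$ by lemma \ref{gamma2*infty}(2). Lemma \ref{kappa2fin00} then guarantees $C(x_r)=0$ for $r\gg 0$, at which stage lemma \ref{kappa2bupcurve} (applied to both $A_1$ and $A_2$, using the symmetry of $u_1,u_2$ in (*2)) reduces to $0\le A_1(x),A_2(x)<1$; then $\beta(x)=A_2(x)<1$ and proposition \ref{kappa2fin10}(2) closes the argument.

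Your plan coincides with the paper's in the middle steps (the case analysis of $x_1$ using lemmas \ref{gamma2*12} and \ref{gamma2*infty}, and the observation that the exceptional configuration (3') cannot occur because it forces $C(x)=1$, hence $\gamma(x)=2$), but the endgame you propose does not work. You close with corollary \ref{permisarcthree}, which is stated for a quadratic sequence with $E_r$ \emph{irreducible} for every $r$. In case (*2) you have $E=\mathrm{div}(u_1u_2)$ with two components, and every time the quadratic sequence stays in case (*2)---precisely the situation you are trying to rule out---the exceptional divisor $E_r$ continues to have two components through $x_r$. Your parenthetical (``one reduces to one component by first blowing up, or by noting the relevant component of $E_r$ through $x_r$ stabilizes'') does not repair this: no sequence of blowing ups keeps you in (*2) while shrinking $E_r$ to a single component, and the stabilization of one component does not make the other one go away. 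This is exactly why the paper routes through lemma \ref{kappa2fin00} instead, which is a purely combinatorial argument (Hironaka's game on the two-dimensional Newton polygon) that does not require $E_r$ irreducible; once $C(x)=0$ is achieved it hands the problem to proposition \ref{kappa2fin10}(2), and \emph{that} proposition terminates via corollary \ref{permisarcthree} only after having fallen into case (*1) where $E_r$ is irreducible.

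Two smaller points. Your preliminary step via proposition \ref{tauegaldeux} does not apply here: that proposition assumes $E=\mathrm{div}(u_1)$, whereas in case (*2) one has $E=\mathrm{div}(u_1u_2)$. And the lexicographic invariant you use is conflated between $(A_1(x),\beta(x))$ (as in proposition \ref{kappa2fin10}) and the pair $(\min\{a,a'\},\max\{a,a'\})$ of lemma \ref{kappa2fin00}; these do not control one another, and the ``strict drop'' you invoke in case (i) is a property of the latter, not the former. You also invoke lemma \ref{gamma2*3}, which is about case (*3) and is not relevant to the present proposition.
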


\begin{proof}
By Lemma \ref{gamma2*12}(4), $x_1$ is resolved or satisfies the assumptions
of Proposition \ref{kappa2fin10}(1) or (3) if $x_1$ is not a point at infinity. Therefore $x_1$ is resolved in this case.
If $x_1$ is the origin of a chart, then $x_1$ is resolved or satisfies again the assumptions
of this proposition by Lemma \ref{gamma2*infty}(2).

\smallskip

Applying Lemma \ref{kappa2fin00}, it can thus be assumed that $C(x)=0$. Applying repeatedly Lemma \ref{kappa2bupcurve}
if $A_1(x)\geq 1$ or if $A_2(x)\geq 1$, we then reduce to the case
$$
0 \leq A_1(x),A_2(x)< 1, \ C(x)=0.
$$
Then  $\beta (x)=A_2(x)<1$ and the conclusion follows from Proposition \ref{kappa2fin10}(2).
\end{proof}

\begin{prop}\label{kappa2fin20}
Assume that $\kappa (x)=2$ and one of the following properties holds:
\begin{itemize}
  \item [(i)] $x$ satisfies condition (*1) with $\beta (x) < 2$;
  \item [(ii)] $x$ satisfies condition (*3), $\beta (x) =1-1/\omega (x)$ and $(p,\omega (x))\neq (2,2)$.
\end{itemize}
Then $x$ is good.
\end{prop}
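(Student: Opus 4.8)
The plan is to treat the two hypotheses (i) and (ii) more or less uniformly by running the quadratic sequence (\ref{eq750}) along $\mu$ and showing that after finitely many blowing ups at closed points we reach a point $x_r$ which is resolved, invoking corollary \ref{permisarcthree} to kill any infinite chain of rational non-``at infinity'' points. First I would dispose of the trivial sub-case: if $A_1(x)=0$ (which in case (i) is possible), then $x$ is of type (T0), (T2) or (T3) in the classification of the proof of proposition \ref{redto*}, and in each of those we have already shown $x$ is good (type (T0) by proposition \ref{tauegaldeux}, type (T3) by theorem \ref{bupthm}, type (T2) by the analysis there). In case (ii) one has $\beta(x)=1-1/\omega(x)$, so $B(x)=A_1(x)+\beta(x)\ge 1$ forces $A_1(x)\ge 1/\omega(x)>0$; applying lemma \ref{kappa2bupcurve} repeatedly as long as $A_1(x)>1$ (and in case (i) as long as $A_1(x)\ge 1$) one normalizes to $0< A_1(x)\le 1$ with the relevant invariants $(A_1,\beta,\gamma)$ unchanged by lemma \ref{kappa2bupcurve}. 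Note that lemma \ref{kappa2bupcurve} in case (*3) requires $\beta(x)<1-1/\omega(x)$ \emph{or} the curve blow-up stops after one step, so in case (ii) I must be slightly careful: when $A_1(x)=1$ exactly and $\beta(x)=1-1/\omega(x)$, the curve ${\cal Y}=V(Z,u_1,u_3)$ is still permissible (of the second kind, by theorem \ref{well2prepared} since $A_1(x)\ge 1$) and I apply lemma \ref{kappa2bupcurve} once; the conclusion there is that $x'$ is resolved or one gets the ``good by proposition \ref{tauegaldeux}'' alternative spelled out in its proof, so that step already finishes case (ii) when $A_1(x)\ge 1$.

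With the normalization $0<A_1(x)<1$ in hand, the core is a claim analogous to the one in the proof of proposition \ref{kappa2fin10}: $x_1$ is resolved, or $x_1$ satisfies again the hypothesis of the present proposition with $c(x_1)\le c(x)$ (lexicographically), where $c(x):=(A_1(x),\beta(x))$, with strict drop except in a single ``loop'' configuration. I would split according to the location of $x_1$. If $x_1$ is not the point at infinity (\ref{eq7504}), I apply lemma \ref{gamma2*12} (case $x$ in (*1)) or lemma \ref{gamma2*3} (case $x$ in (*3)); here one must check that the exceptional degenerate situations lemma \ref{gamma2*12}(3') and lemma \ref{gamma2*3}(1')(2) are excluded by the hypotheses --- (3') and (1') require $\gamma(x)\le 2$ which holds here but they force very specific monomial shapes that contradict $\beta(x)<2$ (case (i)) or $\beta(x)=1-1/\omega(x)$ (case (ii)); and lemma \ref{gamma2*3}(2) requires $\beta(x)=\gamma(x)-1/\omega(x)=2-1/\omega(x)$, impossible when $\beta(x)=1-1/\omega(x)$. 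From $A_1(x_1)=B(x)-1\le A_1(x)+\beta(x)-1$ one reads off $A_1(x_1)<A_1(x)$ unless $\beta(x)=1$, giving the strict drop in $c$. If $x_1$ \emph{is} the point at infinity, lemma \ref{gamma2*infty}(1) (case (*1)) or (3) (case (*3)) shows $x_1$ is resolved or $x_1$ satisfies condition (*2) with $\beta(x_1)=A_1(x)+\beta(x)-1<\beta(x)$ since $A_1(x)<1$, and $\gamma(x_1)\le \gamma(x)\le 2$; in the (*2) regime I then feed $x_1$ into proposition \ref{kappa2fin11} (for $\gamma=1$) or proposition \ref{kappa2fin24} / lemma \ref{kappa2fin00} (for $\gamma=2$, case (*2)), all of which are available earlier in the chapter, so $x_1$ is good. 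Finally if $k(x_1)\neq k(x)$ I use the strict-drop parts lemma \ref{gamma2*12}(4), lemma \ref{gamma2*3}(1) to get $c(x_1)<c(x)$ outright.

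The only configuration where $c$ does not strictly drop is: $x$ in case (*1) with $\beta(x)=1$ (so $\gamma(x)=1$), $x_1$ again in case (*1) with $k(x_1)=k(x)$ and $\beta(x_1)=1$, $x_1$ the origin of the first chart. But $\beta(x)=1$ with $\gamma(x)=1$ means $x$ falls under proposition \ref{kappa2fin10}(1), which already proves $x$ good --- so in fact this ``loop'' case is harmless and can be handled by citing that proposition rather than continuing the induction. Thus every branch terminates: either $x_r$ is resolved for some $r$, or we land in one of propositions \ref{kappa2fin10}, \ref{kappa2fin11}, \ref{kappa2fin24}, and the sequence $c(x_r)$ is strictly decreasing in ${1\over\omega(x)!}\N^2$ until that happens, with corollary \ref{permisarcthree} ruling out an infinite tail of equal-residue-field rational points sitting in the first chart. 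The main obstacle I anticipate is the bookkeeping in case (ii): I must verify that the value $\beta(x)=1-1/\omega(x)$ is genuinely stable (i.e.\ that $x_1$, when it stays in case (*3), cannot have $\beta(x_1)$ jump up --- this is guaranteed by lemma \ref{gamma2*3}(3), which gives $\beta(x_1)\le\max\{\beta(x),1/p\}$ and hence $\beta(x_1)\le 1-1/\omega(x)$ as long as $1-1/\omega(x)\ge 1/p$, which needs the excluded case $(p,\omega(x))=(2,2)$ to be set aside --- precisely why it is a hypothesis), and that the transition from (*3) to (*1) produces a point covered by proposition \ref{kappa2fin10}(1) or by the already-treated curve-blow-up alternative. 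Carefully matching these numerical inequalities against the excluded pair $(p,\omega(x))=(2,2)$ is where the argument is most delicate, but it is all covered by the lemmas of Sections 7.4 and 7.5 already established above.
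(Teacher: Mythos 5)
Your proposal has a genuine gap: you assert that the exceptional cases of lemma \ref{gamma2*3}(1')(2) are excluded by the hypotheses, but this is false, and it is the exact opposite of what the paper flags at the very first line of its proof (``Note that the special situations described in lemma \ref{gamma2*3}(1')(2) do occur here''). The source of the error is an arithmetic slip: under assumption (ii) you have $\beta(x)=1-1/\omega(x)<1$, hence by definition \ref{definvariants2} $\gamma(x)=1+\lfloor\beta(x)\rfloor=1$, not $2$ as you write. Consequently the equality $\beta(x)=\gamma(x)-1/\omega(x)$ demanded by lemma \ref{gamma2*3}(2) is \emph{exactly} satisfied, and blowing up at a rational point of the first chart can produce an $x_1$ satisfying (*1) with $\beta(x_1)=\gamma(x)+1/\omega(x)=1+1/\omega(x)$; this jump in $\beta$ breaks the monotone descent on $c(x)=(A_1,\beta)$ that your argument relies on, and it sends (ii) back into (i), a feedback loop you never close. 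Likewise (1') genuinely occurs when $\omega(x)=p$: it yields $\beta(x_1)=p/(p-1)$, and the inequality $p/(p-1)<2$ holds only for $p\geq 3$. This, not the stability inequality $1-1/\omega(x)\geq 1/p$ you invoke via lemma \ref{gamma2*3}(3), is the place where the excluded pair $(p,\omega(x))=(2,2)$ is actually used: (1') requires $\omega(x)=p$, so for $p=2$ excluding $(2,2)$ rules (1') out entirely, while for $p\geq 3$ it ensures $\beta(x_1)<2$, i.e.\ that $x_1$ still falls under (i). Without handling (1') and (2) your proof does not establish the claimed descent and does not actually use the hypothesis $(p,\omega(x))\neq(2,2)$ where it is needed.

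A secondary issue is the preliminary normalization $0<A_1(x)\leq 1$ by iterating lemma \ref{kappa2bupcurve}. That lemma's hypothesis, in case (*3), is $A_1(x)>1$ or ($A_1(x)=1$ and $\beta(x)<1-1/\omega(x)$), which is exactly the non-strict boundary you sit on when $A_1(x)=1$ and $\beta(x)=1-1/\omega(x)$. The curve ${\cal Y}=V(Z,u_1,u_3)$ is indeed still permissible of the second kind (theorem \ref{well2prepared}), but the conclusions of lemma \ref{kappa2bupcurve} — the invariance of $(A_1-1,\beta,\gamma)$, and the resolvedness alternative — are only proved there under its stated hypothesis and cannot simply be quoted. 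The paper's proof sidesteps this by not normalizing $A_1$ at all; it runs the quadratic sequence directly and dispatches the point at infinity through lemma \ref{gamma2*infty}(1)(3) combined with proposition \ref{kappa2fin11}.
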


\begin{proof}
Note that the special situations described in Lemma \ref{gamma2*3}(1')(2) do occur here.

\smallskip

\noindent {\it Assume that $x_1$ belongs to the first chart}. Under assumption (i), $x_1$ is resolved or
$x_1$ satisfies condition (*1) or (*3); note that the latter occurs only if $k(x_1)$ is
an inseparable extension of $k(x)$ (in particular $d\geq p$) and $d_1\in \N$. Then by Lemma \ref{gamma2*12}(4):
$$
\beta (x_1) <  1+ \lfloor {C(x)\over d}\rfloor -{1 \over \omega (x)}  \leq  1+ \lfloor {\beta(x)\over p}\rfloor -{1 \over \omega (x)}=1
 -{1 \over \omega (x)},
$$
so $x_1$ is resolved by \ref{kappa2fin10}(3). When $x_1$ satisfies condition (*1), by Lemma \ref{gamma2*12}(4),
$x_1$ satisfies again assumption (i) of this proposition with $k(x_1)=k(x)$ by Lemma \ref{gamma2*12}(4),
or is resolved by Proposition \ref{kappa2fin10}(1)(3).

\smallskip

Under assumption (ii), $x_1$ is resolved or $x_1$ satisfies condition (*1) or (*3). If $x$
is as stated in Lemma \ref{gamma2*3}(1'), then $x_1$ is resolved or satisfies assumption (i)
with $\beta (x_1)=p/(p-1)<2$, since $(p,\omega (x))\neq (2,2)$.

Otherwise we may apply Lemma  \ref{gamma2*3}(1)-(3): if $x_1$ satisfies condition (*1),
we get $\beta (x_1)\leq 1+1/p$, $\beta (x_1)\leq 1$ if $k(x')\neq k(x)$, from Lemma  \ref{gamma2*3}(1);
if $x_1$ satisfies condition (*3),  we get $\beta (x_1)\leq \beta (x)$, strict inequality if $k(x')\neq k(x)$,
from Lemma  \ref{gamma2*3}(2)(3). By Proposition \ref{kappa2fin10}(1)(3), $x_1$ is resolved or satisfies again
the assumptions of the proposition with $k(x_1)=k(x)$.

\smallskip

\noindent {\it Assume that $x_1=x'$ is the point at infinity.} Then $x_1$ is resolved or $x_1$ satisfies
condition (*2) with $C(x_1)<1$ by Lemma \ref{gamma2*infty}(1)(3); therefore $x_1$ is resolved
in any case by Proposition \ref{kappa2fin11}.

\smallskip

One concludes the proof again  by Corollary \ref{permisarcthree}.
\end{proof}

\begin{lem}\label{kappa2fin21}
Assume that $\kappa (x)=2$ and one of the following properties holds:
\begin{itemize}
  \item [(i)] $x$ satisfies condition (*1) with $\beta (x) = 2$;
  \item [(ii)] $x$ satisfies condition (*3) with $\beta (x)< 2$.
\end{itemize}
Let $(u_1,u_2;u_3;Z)$ be well 2-adapted coordinates at $x$ and
$$
x':=(Z':=Z/u_2,u'_1:=u_1/u_2,u_2,u'_3:=u_3/u_2)
$$
be the point at infinity. Then $x'$ is resolved or ($x'$ satisfies condition (*2) with $C(x')=1$ and the
following respectively hold:)
\begin{itemize}
  \item [(i')] $p=2$ and $d_1\not \in \N$;
  \item [(ii')] $p\geq 3$.
\end{itemize}
\end{lem}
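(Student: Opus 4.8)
\textbf{Proof plan for Lemma \ref{kappa2fin21}.}

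The plan is to perform the explicit ``point at infinity'' computation of proposition \ref{originchart} (equivalently, lemma \ref{gamma2*infty}(1) and (3)) and then read off the precise numerical constraints on $(p,d_1)$ that force $C(x')=1$. First I would fix well $2$-adapted coordinates $(u_1,u_2;u_3;Z)$ at $x$ and invoke lemma \ref{gamma2*infty}: in both cases (i) and (ii) the point $x'$ is resolved or satisfies condition (*2), with $A_1(x')=A_1(x)$, $A_2(x')=B(x)-1$, $\beta(x')=A_1(x)+\beta(x)-1$, $\gamma(x')\le\gamma(x)$, and with the sharper inequality $C(x')\le\min\{\beta(x)-C(x),C(x)\}$ (case (i)) resp.\ $C(x')\le\min\{\beta(x)-C(x),C(x)-\beta_2(x)\}$ (case (iii) of that lemma). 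Since in case (i) here $\beta(x)=2$ and $C(x)=B(x)-A_1(x)$, and since $B(x)=\alpha_2(x)+\beta_2(x)$ with $\beta_2(x)\ge 0$, one gets $C(x')\le 1$; in case (ii), $\beta(x)<2$ and $\beta_2(x)=-1/i$ for some $1\le i\le\omega(x)$ (lemma \ref{structDelta2}, corollary \ref{Delta2+}), so again $C(x')\le C(x)-\beta_2(x)<1+\text{something}$ — I would track this bound carefully to land exactly at $C(x')\le 1$. The content of the lemma is the converse lower bound $C(x')\ge 1$ together with the arithmetic refinements (i') and (ii'), so the real work is showing $C(x')=1$ cannot degenerate to $C(x')<1$ except in the stated cases, and deriving the parity/integrality conditions.

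Next I would analyze the initial form polynomial $\mathrm{in}_{\alpha}h$ along the initial face $\sigma_{2,\mathrm{in}}$ of $\Delta_2(h;u_1,u_2;u_3;Z)$ using lemma \ref{structDelta2}(4)(5) and theorem \ref{initform}: after transforming by $Z'=Z/u_2$, $u'_1=u_1/u_2$, $u'_3=u_3/u_2$, the face $\{y_1+y_2=B(x)\}$ becomes the $A_2(x')$-edge of $\Delta_2(h';u'_1,u_2;u'_3;Z')$ and $C(x')$ is computed from the monomials surviving in $U_2^{-pA_2(x')}U_1'^{-pd_1}\Phi_p'$. Since $x'$ satisfies (*2) (so $E'=\mathrm{div}(u'_1u_2)$ and $d'_2\in\frac1p\N$), $C(x')>0$ iff some monomial in this expansion is genuinely transverse to both $U'_1$ and $U_2$; the point is that under assumption (i)/(ii) the directrix data $U_3\in\mathrm{Vdir}(x)$ (condition (*) at $x$) forces the leading term of $\Phi_p$ to be $U_1^{pd_1}U_3^{\omega(x)}$ up to lower terms, and upon dividing by $u_2^{p}$ the combination $U_3'^{\omega(x)}$ produces, after minimization over $Z'$, a monomial of the form $U_1'^{a_1}U_2^{a_2}$ with $a_1,a_2>0$ precisely when a $p$-th power cannot be subtracted — which is exactly the integrality failure $d_1\not\in\N$ (for $p=2$) resp.\ the general position available when $p\ge 3$. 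Here I would use lemma \ref{lem532}(1) (with $d=1$ since $k(x')=k(x)$ in the rational chart) to pin down: equality $e'=1+\lfloor\deg F\rfloor$ demands $\deg F/d\in\N$, $a'/p\in\N$ and $e'/p\notin\N$, i.e.\ for $p=2$ the obstruction is exactly $d_1\not\in\N$, while for $p\ge 3$ the statement $e'/p\notin\N$ is automatic from $\omega(x)\equiv 0\ \mathrm{mod}\ p$ combined with $1+\omega(x)$ contributions, giving $C(x')=1$ with no extra hypothesis.

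I expect the main obstacle to be the careful bookkeeping that distinguishes $p=2$ from $p\ge 3$ in case (i): when $p=2$ and $d_1\in\N$, the relevant term $U_3'^{\omega(x)}$ after Tschirnhausen-type minimization (using the extra coordinate freedom $Z'\mapsto Z'-\phi$ with $\phi\in S'$) can be absorbed into a $p$-th power, dropping $C(x')$ below $1$ — i.e.\ $x'$ is actually resolved or has $\gamma(x')=0$, falling under the first alternative of the lemma. Ruling this in/out requires examining the discriminant bound of theorem \ref{initform}(2) at the generic point of $\mathrm{div}(u_1)$ (the condition $A_{p-1,j}\in(p-1)\N$ and $\overline{\gamma}_{p-1,Z}\in(S/m_S)^{p-1}$) to control whether the $i_0=p-1$ term interferes; the special twist for $p=2$ (where $(p-1)\N=\N$ imposes nothing) is precisely what makes ``$d_1\not\in\N$'' the right condition. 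For case (ii), the subtlety is that $\beta_2(x)=-1/i<0$ already forces the $\omega(x)$-th power structure to be ``incomplete'', so $C(x')-\beta_2(x)$ being $<1$ would contradict $B(x')\ge 1$; I would close this by the same argument used at the end of the proof of lemma \ref{gamma2*infty} (the estimate $C(x')\le C(x)-\beta_2(x)<1$ forcing $\gamma(x')\le 1$) but now tracking that in fact $C(x')=1$ on the nose when $p\ge 3$, the value $C(x')=0$ being excluded because $\omega(x)\not\equiv 0\ \mathrm{mod}\ p$ is impossible here (we are in the regime $\omega(x)\equiv 0\ \mathrm{mod}\ p$), leaving no room for a monic-in-$U_3'$ reduction. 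The case $(p,\omega(x))=(2,2)$ is explicitly excluded in (ii) precisely because there the bound collapses and a separate analysis (proposition \ref{kappa2fin22}) is needed.
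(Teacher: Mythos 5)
Your proposal correctly identifies the starting point (lemma \ref{gamma2*infty} gives that $x'$ is resolved or satisfies (*2), with $C(x')\le 1$ in case (i) and the analogous estimate in case (ii)), but it then diverges from what the proof actually requires, and the divergence is a genuine gap. The paper disposes of $C(x')<1$ by appealing to proposition \ref{kappa2fin11} (so that branch is harmless), but the entire content of the lemma lives in the regime $C(x')=1$, and there the mechanism is emphatically \emph{not} a direct computation of $C(x')$ at $x'$. Instead, the proof blows up once more: it shows that under assumption (i) with $C(x')=1$ one has $C(x)=C(x')=\beta(x')=1$, expands $\mathrm{in}_{\alpha'}h$ along the initial face of $\Delta_2(h';u'_1,u_2;u'_3;Z')$ (equations (\ref{eq7532})--(\ref{eq7533})), and then verifies that \emph{every} possible $x_2$ in the quadratic sequence after $x'$ is resolved: the two points at infinity give $C(x_2)=0$ via lemma \ref{gamma2*12}(1) and (\ref{eq7533}); nonrational $x_2$ are resolved via lemma \ref{gamma2*12}(4) plus proposition \ref{kappa2fin10}; and rational $x_2$ not at infinity force the constraint that $x_2$ satisfies (*3) only if $(d_1,d'_2)\notin\N^2$ and $d_1+d'_2\in\N$. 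The arithmetic $p=2,\,d_1\notin\N$ (case (i)) versus $p\ge 3$ (case (ii)) then drops out of $d'_2-d_1=\omega(x)/p-1\in\N$ (case (i)) versus $d'_2-d_1-1/p\in\N$ (case (ii)). Your proposal never performs this one-step-further analysis, so it cannot establish that $x'$ is \emph{resolved} in the regime $C(x')=1$ — ``resolved'' is a statement about the termination of the blowing up process for every valuation, and no amount of invariant computation at $x'$ alone certifies it.

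Two further issues. First, your claim that for $p=2$ and $d_1\in\N$ one can force $C(x')<1$ by a Tschirnhausen-type minimization at $x'$ is not what happens: the paper allows $C(x')=1$ in that case and resolves $x'$ via $x_2$; the reduction to $C(x')<1$ you envisage is not available. Second, your appeal to the discriminant condition $A_{p-1,j}\in(p-1)\N$ and $\overline{\gamma}_{p-1,Z}\in(S/m_S)^{p-1}$ from theorem \ref{initform}(2) is a red herring here: the arithmetic of $p$ and $d_1$ enters through the solvability/nonsolvability of the projected vertex $(d_1,0,\omega(x)/p)$ and the $(*3)$ condition for $x_2$ (specifically $d_1+d'_2\in\N$ with $d'_2-d_1$ an integer or differing by $1/p$), not through the $i_0=p-1$ term of the initial form. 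Finally, you do not cleanly keep track of which of (i') and (ii') belongs to which hypothesis: under (i) the exceptional case is $p=2$ (the opposite of what one would read from ``for $p\ge 3$ ... giving $C(x')=1$ with no extra hypothesis''), and under (ii) the exceptional case is $p\ge 3$.
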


\begin{proof}
By Lemma \ref{gamma2*infty},  $x'$ is resolved or $x'$ satisfies condition (*2).

\smallskip

Under assumption (i),  Lemma \ref{gamma2*infty}(1) furthermore gives $C(x') \leq 1$;
if $C(x')<1$, we are done by Proposition \ref{kappa2fin11}. If $C(x')=1$,
Lemma \ref{gamma2*infty}(1) implies that $C(x)=1$; moreover
\begin{equation}\label{eq7531}
A_1(x')=A_2(x')=A_1(x), \ C(x') = \beta (x')-A_2(x')=1.
\end{equation}

We now prove that $x'$ is resolved unless ($p=2$ and $d_1\not \in \N$). To prove this,
it is sufficient to prove that any possible $x_2$ in (\ref{eq750}) is resolved when $x_1=x'$.
Note that $(u'_1,u_2;u'_3;Z')$ are well 2-adapted coordinates at $x'$. Let
$$
\mathrm{in}_{\alpha'} h ={Z'}^p -{G'}_{\alpha '}^{p-1}Z'+ F_{p,Z',\alpha '},
$$
notations as in Lemma \ref{structDelta2}(4) w.r.t. the face $\sigma_{2, \mathrm{in}}$ of
$\Delta_2(h';u'_1,u_2;u'_3;Z')$.  We expand
\begin{equation}\label{eq7532}
{U'_1}^{-pd_1}{U_2}^{-pd'_2}F_{p,Z',\alpha '}=
\mu {U'_3}^{\omega (x)}+\sum_{i=1}^{\omega (x)} \mu_i{U'_3}^{\omega (x)-i}P_i(U'_1,U_2),
\end{equation}
where $d'_2:=d_1+\omega (x)/p-1$ and
$$
P_i(U'_1,U_2)={U'_1}^{a_i}U_2^{b_i}Q_i(U'_1,U_2),
$$
with $Q_i(U'_1,U_2)$ zero or not divisible by either $U'_1$ or $U_2$. Since $C(x')=1$, we have by definition
$$
a_i \geq iA_1(x_1), \  b_i \geq iA_2(x_1), \ i\geq \mathrm{deg}Q_i(U'_1,U_2)
$$
whenever $Q_i(U'_1,U_2)\neq 0$, $1 \leq i \leq \omega (x)$. Since
$$
C(x)=C(x')=\beta (x')-A_2(x')=1,
$$
we have
\begin{equation}\label{eq7533}
\mathrm{deg}_{U'_1}Q_{i_1}=i_1 \ \mathrm{and} \ \mathrm{deg}_{U_2}Q_{i_2}=i_2
\end{equation}
for some $i_1,i_2$, $1 \leq i_1,i_2\leq \omega (x)$. Let
$$
x'_2:=(Z'/u_2,u'_1/u_2,u_2,u'_3/u_2), \ x''_2:=(Z'/u'_1,u'_1,u_2/u'_1,u'_3/u'_1)
$$
be the points ``at infinity''. If $x_2\in \{x'_2,x''_2\}$, then Lemma \ref{gamma2*12}(1) implies that $x_2$ is resolved
or $x_2$ satisfies condition (*2) with $C(x_2)=0$ by (\ref{eq7533}). So $x_2$ is resolved
in any case by Proposition \ref{kappa2fin11}.

\smallskip

If $x_2\not \in \{ x'_2, x''_2\}$ and $k(x_2)\neq k(x')$, we apply Lemma \ref{gamma2*12}(4):
then $x_2$ is resolved by Proposition \ref{kappa2fin10}(1)(3).

\smallskip

If $x_2\not \in \{ x'_2, x''_2\}$ and $k(x_2)= k(x')$, we apply Lemma \ref{gamma2*12}(3')(3)(4). Note that
the special situation in Lemma \ref{gamma2*12}(3') yields $x_2$ resolved if $(p,\omega (x))\neq (2,2)$
by Proposition \ref{kappa2fin20}(i). Therefore $x_2$ is resolved or one of the following properties holds:

\smallskip

$(A)$  $x'$ satisfies the requirements in Lemma \ref{gamma2*12}(3') for $p=\omega (x)=2$ and
$x_2$ satisfies (\ref{eq742}) (in particular $d_1\not\in \N$);

\smallskip

$(B)$  $x_2$ satisfies condition (*3) with $\beta (x_2)\leq 1+1/p$.

\smallskip

\noindent Since $(d_1, 0,\omega (x)/p)$ is a vertex of $\Delta (h;u_1,u_2,u_3;Z)$ which is not solvable, we have
$\mu\not \in k(x)^p$ in (\ref{eq7532}) if $d_1\in \N$. As $k(x_2)=k(x')$, $x_2$ satisfies condition (*3) only if
$$
(d_1,d'_2) \not \in \N^2 \ \mathrm{and} \ d_1+d'_2 \in \N.
$$
On the other hand $d'_2 - d_1 =\omega (x)/p-1 \in \N$, so the latter holds if and only if ($p=2$ and $d_1\not \in \N$)
as required.\\

Under assumption (ii), we are done by Proposition \ref{kappa2fin11} if $C(x')<1$. Assuming that $C(x')\geq 1$, we have
$$
1\leq \max\{\beta (x)-C(x),C(x)-\beta_2(x)\}<2
$$
by Lemma \ref{gamma2*infty}(3). It is easily deduced that
\begin{equation}\label{eq7534}
\beta (x')-A_2(x')=\beta (x)-C(x)<2
\end{equation}
and that
\begin{equation}\label{eq7536}
\beta (x)\geq 1, \ 0\leq C(x)\leq 1-1/\omega (x)  \ \mathrm{and} \ \beta_2(x)\leq  -1/\omega (x).
\end{equation}
The proof is now a variation of that under assumption (i) and we explain now how it is to be adapted.
To begin with, (\ref{eq7532}) holds with $d'_2:=d_1+(1+\omega (x))/p-1$.
Since $C(x)<1$, $\beta_2(x)<0$ and $C(x')\geq 1$, (\ref{eq7533}) is now replaced by
\begin{equation}\label{eq7535}
\mathrm{deg}_{U'_1}Q_{i_1}=i_1 \ \mathrm{for} \ \mathrm{some} \ i_1, \ 1 \leq i_1\leq \omega (x).
\end{equation}
Note in particular that we have $C(x')=1$.

\smallskip

If $x_2\in \{x'_2,x''_2\}$, we apply Lemma \ref{gamma2*infty}: $x'_2$ (resp. $x''_2$) is resolved or
$C(x'_2)<1$ (resp. $C(x''_2)=0$) by (\ref{eq7534}) (resp. by (\ref{eq7535})). Therefore $x_2$ is resolved
in any case by Proposition \ref{kappa2fin11}.

\smallskip

If $x_2\not \in \{ x'_2, x''_2\}$ and $k(x_2)\neq k(x_1)$, then $x_2$ is resolved by the same argument
as under assumption (i).

\smallskip

If $x_2\not \in \{ x'_2, x''_2\}$ and $k(x_2)= k(x_1)$, we first note that $x'$ is {\it not}
as specified in Lemma \ref{gamma2*12}(3'): since $C(x)<1$, we have $A_1(x')=A_1(x)>0$. Applying then
Lemma \ref{gamma2*12}(3)(4), the argument used under assumption (i) gives
$x_2$ resolved or $d_1+d'_2 \in \N$. Since $d'_2 - d_1 -1/p\in \N$,
this can possibly hold only if $p\geq 3$.
\end{proof}

\begin{lem}\label{kappa2fin22}
Assume that $\kappa (x)=2$ and $x$  {has} one of the following properties:
\begin{itemize}
  \item [(i)] $x$ satisfies condition (*1), $\beta (x) = 2$ and, given well 2-adapted coordinates
  $(u_1,u_2;u_3;Z)$, the polynomial $\mathrm{in}_\alpha h =Z^p -G_\alpha^{p-1}Z+F_{p,Z,\alpha}$, where
\begin{equation}\label{eq754}
U_1^{-pd_1}F_{p,Z,\alpha}=\mu U_3^{\omega (x)}+\sum_{i=1}^{\omega (x)}
\mu_iU_3^{\omega (x)-i}U_1^{iy_1}U_2^{iy_2},
\end{equation}
notations as in Lemma \ref{structDelta2}(4) w.r.t. the face
$$
\sigma_2=\mathbf{y} :=(A_1(x), \beta (x))\in \Delta_2(h;u_1,u_2;u_3;Z)
$$
has $\mu_i \neq 0$ for some $i$ with $1 \leq i \leq p-1$;
  \item [(ii)] $x$ satisfies condition (*3) and $\beta(x)<2-1/p$.
\end{itemize}
Then $x$ is good.
\end{lem}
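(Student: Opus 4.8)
\textbf{Proof proposal for Lemma \ref{kappa2fin22}.}

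The plan is to argue as in the preceding propositions by reducing to a situation already treated, using the quadratic sequence (\ref{eq750}) along $\mu$ and corollary \ref{permisarcthree} to forbid infinite chains of rational points not at infinity. Throughout, let $(u_1,u_2;u_3;Z)$ be well $2$-adapted coordinates at $x$ and write $x'=(Z/u_2,u_1/u_2,u_2,u_3/u_2)$ for the point at infinity. The two hypotheses (i) and (ii) are precisely the residual cases left open by lemma \ref{kappa2fin21}: under (i) we have $\beta(x)=2$ but the special situation of lemma \ref{gamma2*12}(3') is ruled out by the assumption $\mu_i\neq 0$ for some $1\le i\le p-1$ (this is exactly the ``$\Phi_{p,\alpha,i}\neq 0$'' condition that guarantees, via lemma \ref{lem532}(1), a strict drop at the nonrational or infinity points); under (ii), the range $\beta(x)<2-1/p$ is the complement, inside $\beta(x)<2$, of the boundary case $\beta(x)=2-1/p$ that would allow lemma \ref{gamma2*3}(2) to raise $\gamma$.

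First I would handle the points $x_1$ in the first chart. By lemma \ref{gamma2*12} (case (i)) or lemma \ref{gamma2*3} (case (ii)), $x_1$ is resolved, or satisfies condition (*1) or (*3). In case (i): if $k(x_1)\ne k(x)$, lemma \ref{gamma2*12}(4) together with the hypothesis $\mu_i\neq 0$ (so that the monic form has a genuine non-$p$-th-power piece of degree $<p$) gives $\beta(x_1)<2$, hence $x_1$ is good by proposition \ref{kappa2fin20}(i); if $k(x_1)=k(x)$, we may by theorem \ref{well2prepared} assume $x_1=(Z/u_1,u_1,u_2/u_1,u_3/u_1)$, and lemma \ref{gamma2*12}(1) gives $\beta(x_1)\le\beta(x)=2$ with $C(x_1)\le C(x)$, and moreover the monic-form hypothesis propagates (the transform of $F_{p,Z,\alpha}$ keeps a degree-$<p$ piece), so $x_1$ satisfies again hypothesis (i) unless $\beta(x_1)<2$, in which case proposition \ref{kappa2fin20}(i) applies. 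In case (ii): lemma \ref{gamma2*3}(1)(3) gives, for $k(x_1)\ne k(x)$, $\beta(x_1)<\beta(x)<2-1/p$ if $x_1$ is in case (*3) and $\beta(x_1)\le\gamma(x)/d+1/p$ if $x_1$ is in case (*1); since lemma \ref{gamma2*3}(1') and (2) are excluded by $\beta(x)<2-1/p$, in all subcases $x_1$ is resolved by proposition \ref{kappa2fin10}(1)(3) or satisfies again hypothesis (ii) with $k(x_1)=k(x)$.

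Next I would handle the point at infinity $x_1=x'$. Here I invoke lemma \ref{kappa2fin21}: under (i) the conclusion is that $x'$ is resolved, or $x'$ satisfies (*2) with $C(x')=1$ and $p=2$, $d_1\notin\N$; under (ii), $x'$ is resolved, or $x'$ satisfies (*2) with $C(x')=1$ and $p\ge 3$. In either case $x'$ satisfies condition (*2) with $C(x')=1$ and $\gamma(x')\le 2$, so $x'$ is good by proposition \ref{kappa2fin24} (which treats (*2) with $\gamma=2$), hence resolved. Finally, assembling: iterating the quadratic sequence, either some $x_r$ is resolved, or for all $r$ we stay in the first chart with $k(x_r)=k(x)$ and hypothesis (i) (resp. (ii)) persisting with $\beta(x_r)=2$ (resp. with the invariant $(A_1(x_r),\beta(x_r))$ nonincreasing); the first possibility is forbidden by corollary \ref{permisarcthree}, so some $x_r$ is resolved, i.e. $x$ is good.

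The main obstacle I anticipate is verifying that the monic-form hypothesis in (i) — namely $\mu_i\ne 0$ for some $1\le i\le p-1$ in (\ref{eq754}) — is genuinely preserved under blowing up the origin of the first chart, and that it does force a strict inequality $\beta(x_1)<2$ at every nonrational point: this requires a careful bookkeeping of how the exponents $iy_1,iy_2$ and the piece $\mu U_3^{\omega(x)}$ transform, applying lemma \ref{lem532}(1) and the numerical lemma \ref{joyeux}, and checking that the residual linear-algebra data computing $\omega$ and $\mathrm{Vdir}$ do not degenerate — this is where the small-characteristic twist ($p=2$, $d_1\notin\N$) enters and must be isolated so that it is caught by lemma \ref{kappa2fin21}(i') rather than leaking out.
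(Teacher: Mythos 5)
The main gap is a circular dependency: at the point at infinity $x'$, after invoking lemma \ref{kappa2fin21} to reach the situation ``$x'$ satisfies (*2) with $C(x')=1$ and $\gamma(x')\leq 2$'', you conclude that $x'$ is good by proposition \ref{kappa2fin24}. But proposition \ref{kappa2fin24} deals with (*1) points that arise in its quadratic sequence by appealing to proposition \ref{kappa2fin23}(i), and proposition \ref{kappa2fin23} is in turn proved by ``same proof as in lemma \ref{kappa2fin22}(i)'' and even cites lemma \ref{kappa2fin22}(i) explicitly for case (A) at infinity. So your argument would need the very lemma it is trying to prove.

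The paper avoids this by not treating lemma \ref{kappa2fin21} as a black box. Instead it re-opens the proof of lemma \ref{kappa2fin21}: at the point at infinity, lemma \ref{kappa2fin21} leaves two residual subcases, (A) and (B), at the next stage $x_2$. Under your hypothesis (i), the presence of a term $\mu_i\neq 0$ with $1\leq i\leq p-1$ forces $1\leq i_2\leq p-1$ in (\ref{eq7533}), which rules out (A) (since lemma \ref{gamma2*12}(3') requires a pure $p$-th-power shape), and in subcase (B) it pushes the bound down to $\beta(x_2)\leq 1-1/(p-1)$, so $x_2$ is resolved by proposition \ref{kappa2fin10}(3). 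Under hypothesis (ii), the strict inequality $\beta(x)<2-1/p$ sharpens (\ref{eq7536}) to $C(x)<1-1/p$ and $\beta_2(x)<-1/p$, again forcing $1\leq i_1\leq p-1$ in (\ref{eq7535}) and hence $\beta(x_2)\leq 1-1/(p-1)$, so $x_2$ is resolved by proposition \ref{kappa2fin10}(3). This keeps the dependence strictly on earlier results (propositions \ref{kappa2fin10} and \ref{kappa2fin20}, corollary \ref{permisarcthree}) and never forward-references proposition \ref{kappa2fin24}. Your treatment of the first-chart and nonrational cases, and your use of corollary \ref{permisarcthree} to close the iteration, match the paper's; the fix is only in how the point at infinity is dispatched.
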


\begin{proof}
We again consider three cases.

\smallskip

\noindent {\it Assume that $x_1=x'$ is the point at infinity.} We review the proof of Lemma \ref{kappa2fin21}
with our extra assumptions and claim that $x'$ is resolved.

\smallskip

Under assumption (i), we get $1\leq i_2 \leq p-1$ in (\ref{eq7533}) by (\ref{eq754}). Turning to (A) and (B)
in the proof of Lemma \ref{kappa2fin21}, note that (A) does not hold since $\mu_1\neq 0$ in (\ref{eq754}).
Finally if (B) holds, then $\beta (x_2)\leq 1-1/(p-1)$ because $1\leq i_2 \leq p-1$. Therefore
$x_2$ is resolved by Proposition \ref{kappa2fin10}(3).

\smallskip

Under assumption (ii), note that (\ref{eq7536}) is strengthened to
$$
0\leq C(x)< 1-1/p \ \mathrm{and}  \ \beta_2(x)<  -1/p
$$
since $\beta (x)<2-1/p$. We thus get $1\leq i_1 \leq p-1$ in (\ref{eq7535}).
We also get $\beta (x_2)\leq 1-1/(p-1)$ if (B) holds,
so $x_2$ is resolved by Proposition \ref{kappa2fin10}(3).\\

\noindent {\it Assume that $k(x_1)\neq k(x)$.} If $x_1$ satisfies condition (*1),
Lemma \ref{gamma2*12}(4) and Lemma \ref{gamma2*3}(1) give $\beta (x)<2$ in any case. Therefore
$x_1$ is resolved by Proposition \ref{kappa2fin20}(i).

If $x_1$ satisfies condition (*3), the same conclusion holds under assumption (i) except possibly if
$C(x)=d=2$. By (\ref{eq754}), we then get $\beta (x_1)\leq 1-1/(p-1)$ and $x_1$ is resolved by
Proposition \ref{kappa2fin10}(3). Under assumption (ii), $x_1$ satisfies again the assumption (ii)
in this lemma with $\beta (x_1)<\beta (x)$ by Lemma \ref{gamma2*3}(3). \\

\noindent {\it Assume that $x_1\neq x'$ and $k(x_1)= k(x)$.} The independence statement
in Theorem \ref{well2prepared} reduces to
$$
x_1=(Z':=Z/u_1, u_1,u'_2:=u_2/u_1,u'_3:=u_3/u_1).
$$
Note that the extra assumption  (\ref{eq754}) is unaffected by this coordinate change.

\smallskip

Under assumption (i), Lemma \ref{gamma2*12}(1) shows that $x_1$ is resolved or $x_1$ satisfies again condition (*1)
with $\beta (x_1)\leq \beta (x)=2$.
By Proposition \ref{kappa2fin20}(i), $x_1$ is resolved unless equality holds.
In this case, we have
$$
C(x)=\beta (x)=\beta (x_1)=2
$$
and $x_1$ satisfies again assumption (i) of this lemma.

\smallskip

Under assumption (ii), Lemma \ref{gamma2*3} shows that $x_1$ is resolved or satisfies condition (*1)
or (*3). If one of Lemma \ref{gamma2*3}(1')(2) applies, we have $\gamma (x)=1$
and $x_1$ satisfies condition (*1) with $\beta (x_1)\leq 2$. We are done if inequality is strict
by Proposition \ref{kappa2fin20}(i); otherwise $\omega (x)=p=2$ and $x_1$ satisfies (i) of this lemma.

\smallskip

Any other situation yields $\gamma (x_1)\leq \gamma (x)$. If $x_1$ satisfies condition (*3), then
$x_1$ satisfies again (ii) of this lemma with $\beta (x_1)\leq \beta (x)$ by Lemma \ref{gamma2*3}(3).
If $x_1$ satisfies condition (*1), we have $\beta (x_1)\leq 2$. We are done if inequality
is strict by Proposition \ref{kappa2fin20}(i).

\smallskip

Assume then that ($x_1$ satisfies condition (*1) and $\beta (x_1)=2$). We argue as in the proof of
Lemma \ref{gamma2*3}. Denote $F_{p,Z,\alpha}$ as in Lemma \ref{structDelta2}(5). We have:
\begin{equation}\label{eq7543}
U_1^{-pd_1}F_{p,Z,\alpha}=(\mu U_1+U_2)U_3^{\omega (x)}
+\sum_{j\in J_0}U_3^{\omega (x)-j}U_1^{b_j}\Psi_j(U_1,U_2),
\end{equation}
where $\mu \in k(x)$ and
$$
b_j \geq jA_1(x), \ j\beta (x) \geq \mathrm{deg}_{U_2}\Psi_j(U_1,U_2) -1 .
$$
By assumption (ii), we have
$$
j\in J_0 \Longrightarrow {\mathrm{deg}_{U_2}\Psi_j(U_1,U_2) -1 \over j}< 2-1/p.
$$
Note that for $j\in J_0$, we then have $\mathrm{deg}_{U_2}\Psi_j(U_1,U_2) \leq 2j$, and inequality is strict if $j\geq p$.
If $\min J_0\geq p$, arguing as in the proof of Lemma \ref{gamma2*3} ($B(x)>1$, cases 1 to 4),
we then get $\beta (x_1)<2$: a contradiction. This proves that
\begin{equation}\label{eq7541}
1 \leq j_0:=\min J_0 \leq p-1.
\end{equation}
Let $\mathbf{y}':=(A_1(x_1), \beta (x_1))\in \Delta_2(h';u_1,u'_2;u'_3;Z')$,
where $(u_1,u'_2;u'_3;Z')$ are well 2-adapted coordinates. With notations as in Lemma \ref{structDelta2}(4),
the initial form polynomial $\mathrm{in}_{\alpha '} h'$ w.r.t. the face $\sigma'_2=\mathbf{y}'$
satisfies an equation (\ref{eq754}), say
\begin{equation}\label{eq7542}
{U_1}^{-pd'_1}F_{p,Z',\alpha '} = \mu '{U'_3}^{\omega (x)}+\sum_{j=1}^{\omega (x)}
\mu'_j{U'_3}^{\omega (x)-j}{U_1}^{jA_1(x_1)}{U'_2}^{2j},
\end{equation}
with $d'_1:=d_1+(1+\omega (x))/p-1$, $\mu'_{j_0} \neq 0$ by (\ref{eq7541}).
Therefore  $x_1$ satisfies assumption (i) in this lemma.\\

Summing up, the following  has been proved: if $x$ satisfies (i), then $x_1$ is resolved or
($k(x_1)=k(x)$ and $x_1$ satisfies again (i)). If $x$ satisfies (ii), then $x_1$ is resolved
or $x_1$ satisfies (i) or (ii); if (ii) holds, then $\beta (x_1)\leq \beta (x)$
and inequality is strict if  $k(x_1)\neq k(x)$.

\smallskip

Consider the quadratic sequence (\ref{eq750}). By the previous considerations,
there exists $r_0\geq 0$ such that either $x_{r_0}$ is resolved, or ($x_r$ satisfies
one and the same assumption in the lemma with $k(x_r)=k(x_{r_0})$ for every $r\geq r_0$).
One concludes the proof again  by Corollary \ref{permisarcthree}.
\end{proof}

We will now conclude the proof of Theorem \ref{proofkkappa2}. Note the interesting extra twist
for $p=2$.

\begin{prop}\label{kappa2fin23}
Assume that $\kappa (x)=2$ and one of the following properties holds:
\begin{itemize}
  \item [(i)] $x$ satisfies condition (*1) with $\beta (x) = 2$;
  \item [(ii)] $x$ satisfies condition (*3) and $\beta(x)<2 -1/\omega (x)$.
\end{itemize}
Then $x$ is good.
\end{prop}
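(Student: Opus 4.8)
The plan is to analyze the quadratic sequence (\ref{eq750}) along $\mu$ and to show that after finitely many blowing ups the point $x_r$ is resolved, hence $x$ is good as explained in remark \ref{quadsequence}. We consider three cases for $x_1$ as in the proofs of lemmas \ref{kappa2fin21} and \ref{kappa2fin22}: $x_1$ is the point at infinity $x'=(Z/u_2,u_1/u_2,u_2,u_3/u_2)$; $x_1$ lies in the first chart with $k(x_1)\neq k(x)$; or $x_1$ lies in the first chart with $k(x_1)=k(x)$. The first case will be handled directly by lemma \ref{kappa2fin21}, the second by lemmas \ref{gamma2*12}(4), \ref{gamma2*3}(1) combined with the already-proved propositions \ref{kappa2fin10} and \ref{kappa2fin20}, and the third case (the residually-rational one) will require a closer inspection of the initial form polynomials in order to set up a descending induction together with corollary \ref{permisarcthree}.

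First I would dispose of the point at infinity. If $x_1=x'$, then by lemma \ref{kappa2fin21}, $x'$ is resolved or $x'$ satisfies condition (*2) with $C(x')=1$ and either $p=2$, $d_1\not\in\N$ (under assumption (i)) or $p\geq 3$ (under assumption (ii)). In the latter situations the proof of lemma \ref{kappa2fin21} itself already shows that every $x_2$ above $x'$ is resolved, so we are done. Next, if $k(x_1)\neq k(x)$, lemma \ref{gamma2*12}(4) (under (i)) and lemma \ref{gamma2*3}(1) (under (ii)) give either $x_1$ resolved, or $x_1$ satisfies condition (*1) with $\beta(x_1)<2$, or $x_1$ satisfies condition (*3) with $\beta(x_1)<2-1/\omega(x)$; in each of these outcomes $x_1$ is resolved by proposition \ref{kappa2fin20}(i) or by lemma \ref{kappa2fin22}(ii) (after possibly checking, as in the proof of lemma \ref{kappa2fin22}, that $C(x)=d=2$ forces $\beta(x_1)\leq 1-1/(p-1)$ so that proposition \ref{kappa2fin10}(3) applies).

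The remaining and main case is $x_1\neq x'$ with $k(x_1)=k(x)$. By the independence statement in theorem \ref{well2prepared} I may take $x_1$ to be the origin of the first chart. Under assumption (i), lemma \ref{gamma2*12}(1) gives $x_1$ resolved or $x_1$ in case (*1) with $\beta(x_1)\leq\beta(x)=2$; if the inequality is strict, proposition \ref{kappa2fin20}(i) finishes; if equality holds then $C(x)=\beta(x)=\beta(x_1)=2$ and I want $x_1$ to satisfy again assumption (i) — that is, I must produce the nonvanishing of some coefficient $\mu_i$, $1\leq i\leq p-1$, in the expansion (\ref{eq754}) at $x_1$. Under assumption (ii), lemma \ref{gamma2*3} gives $x_1$ resolved or in case (*1) or (*3); the sporadic cases lemma \ref{gamma2*3}(1')(2) yield $\gamma(x)=1$ hence $x_1$ in case (*1) with $\beta(x_1)\leq 2$, resolved by proposition \ref{kappa2fin20}(i) unless $\omega(x)=p=2$ in which case $x_1$ satisfies (i); otherwise $\gamma(x_1)\leq\gamma(x)$ and either $x_1$ in case (*3) satisfies again (ii) with $\beta(x_1)\leq\beta(x)$ by lemma \ref{gamma2*3}(3), or $x_1$ in case (*1) has $\beta(x_1)\leq 2$ and, when equality holds, the argument from the proof of lemma \ref{kappa2fin22} — expanding $U_1^{-pd_1}F_{p,Z,\alpha}$ as in (\ref{eq7543}), using $j\in J_0\Rightarrow d_j/j<2-1/p$ to get $\min J_0\leq p-1$, and reading off the transformed expansion (\ref{eq7542}) — shows that $x_1$ satisfies assumption (i) of lemma \ref{kappa2fin22}, hence is good.

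The hard part is exactly this bookkeeping in the residually-rational case: one must verify that the two mildly different hypotheses (i) of the present proposition and (i) of lemma \ref{kappa2fin22} are stable under a chain of such blowing ups, so that no loop occurs. Combining all the above, there exists $r_0\geq 0$ such that $x_{r_0}$ is resolved, or $x_r$ satisfies one and the same stable assumption (assumption (i) here, or assumption (i) of lemma \ref{kappa2fin22}) with $k(x_r)=k(x_{r_0})$ for all $r\geq r_0$; in the first case we are done, and in the second case corollary \ref{permisarcthree} rules out an infinite chain, so some $x_r$ is resolved. Therefore $x$ is good. $\square$
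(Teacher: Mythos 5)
Your overall organization (point at infinity, inseparable/finite residue extension, residually rational chart) matches the paper's, and several intermediate steps are right. But there are two genuine gaps.

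\textbf{Gap 1: the point at infinity.} You claim that "in the latter situations the proof of lemma \ref{kappa2fin21} itself already shows that every $x_2$ above $x'$ is resolved, so we are done." This is exactly backwards. Lemma \ref{kappa2fin21} concludes that $x'$ is resolved \emph{unless} ($p=2$, $d_1\not\in\N$, under (i)) or ($p\geq 3$, under (ii)); in those excluded situations the proof of lemma \ref{kappa2fin21} leaves alive cases (A) and (B) for $x_2$, and does \emph{not} show $x_2$ resolved. The paper's proof must go back into those cases: in (A), $x_2$ is resolved by lemma \ref{kappa2fin22}(i); in (B), $x_2$ satisfies (*3) with $\beta(x_2)\leq 1+1/p$, resolved by lemma \ref{kappa2fin22}(ii) provided $\beta(x_2)<2-1/p$. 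The surviving case is $p=2$ and $\beta(x_2)=3/2$, which is precisely where the subtlety lives.

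\textbf{Gap 2: the $p=2$, $\beta=3/2$ loop and lemma \ref{kappa2fin25}.} Assumption (ii) of the present proposition is $\beta(x)<2-1/\omega(x)$, while lemma \ref{kappa2fin22}(ii) needs the strictly stronger $\beta(x)<2-1/p$. For $p=2$ and $\omega(x)\geq 4$ there is a real window $3/2\leq\beta(x)<2-1/\omega(x)$ where \ref{kappa2fin22}(ii) does not apply. Your proof never confronts $\beta=3/2$ and never cites lemma \ref{kappa2fin25}, which is precisely the tool the paper introduces to break the potential alternation between a (*3)-point with $\beta=3/2$ and a (*1)-point with $\beta=2$. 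Without it, the "one and the same stable assumption" claim in your last paragraph is unproved: the quadratic sequence may alternate between (i) and (ii), and corollary \ref{permisarcthree} (which additionally requires $E_r$ irreducible along the whole sequence, i.e.\ the sequence never lands on a point at infinity) cannot be applied until one has pinned down that alternation. The paper's explicit enumeration of events (1), (2), (3), (1'), (2') and the invocation of lemma \ref{kappa2fin25} to resolve the $\beta=3/2$ transition from (*3) to (*1) is exactly what is needed; your argument elides it.
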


\begin{proof}
This is a variation on the two previous lemmas. Note that we may disregard
the special case stated in Lemma \ref{gamma2*3}(1') in this proof.

\smallskip

\noindent {\it Assume that $x_1=x'$ is the point at infinity.}  By Lemma \ref{kappa2fin21},
$x'$ is resolved under assumption (i) (resp. (ii)) if $p\geq 3$ (resp. if $p=2$).
Reviewing the proof of Lemma \ref{kappa2fin21}, we are done except possibly when (A) or (B)
stated therein hold. If (A) holds, then $x_2$ is resolved by Lemma \ref{kappa2fin22}(i). If
(B) holds, $x_2$ satisfies condition (*3) with $\beta (x_2)\leq 1+1/p$. If $p\geq 3$ or if
($p=2$ and $\beta (x_2)<3/2$), we have $\beta (x_2)<2-1/p$ and the conclusion follows
from Lemma \ref{kappa2fin22}(ii). Therefore $x'$ is resolved or
$$
p=2 \  \mathrm{and} \  \beta (x_2)=3/2.
$$
In the special case $p=\omega (x)=2$, an explicit computation gives $\beta (x_2)\leq 1$
if $x_2$ satisfies condition (*3) ({\it cf.} (ii) of proof of Lemma \ref{kappa2fin25} below),
so $x'$ is resolved. This proves that  $x_2$ is resolved or satisfies again
the assumptions of the proposition in any case.\\

\noindent {\it Assume that $k(x_1)\neq k(x)$.} Under assumption (i), $x_1$ is resolved or
$$
\beta (x) \leq {C(x) \over d}+{1\over p} \leq 1+{1\over p}
$$
by Lemma \ref{gamma2*12}(3). Then $x_1$ is resolved by Proposition \ref{kappa2fin20}(i) or
by Lemma \ref{kappa2fin22}(ii) except possibly if $x_1$ satisfies (ii) with ($p=2$, $\beta (x)=3/2$);
in this case, note that ($x$ satisfies condition (*1), $x_1$ satisfies condition (*3)) implies
that $d_1\in \N$.

\smallskip

Under assumption (ii), $x_1$ is resolved or
$$
\beta (x) < {2 \over d}+{1\over p} \leq 1+{1\over p}
$$
by Lemma \ref{gamma2*12}(2). Then $x_1$ is resolved in any case by Proposition \ref{kappa2fin20}(i) or
by Lemma \ref{kappa2fin22}(ii). \\

\noindent {\it Assume that $x_1\neq x'$ and $k(x_1)= k(x)$.} We may assume once again that $x_1$ is the
origin of the first chart of the blowing up.

\smallskip

Under assumption (i), $x_1$ is resolved or $x_1$ satisfies again assumption (i): same proof as in
Lemma \ref{kappa2fin22}(i).

\smallskip

Under assumption (ii), $x_1$ is resolved or satisfies again one of (i)(ii): same proof as in
Lemma \ref{kappa2fin22}(ii). If $x_1$ satisfies again (ii), we have $\beta (x_1)\leq \beta (x)$
by Lemma \ref{gamma2*3}(3).

\smallskip

Summing up, it has been proved that $x_1$ is resolved or $x_1$ satisfies again the assumptions of the
proposition. Under assumption (i), $x_1$ is resolved or one of the following properties holds:
\begin{itemize}
  \item [(1)] $k(x_1)=k(x)$ and $x_1$ satisfies again (i);
  \item [(2)] $p=2$ and $x_1$ satisfies (ii) with $\beta (x_1)=3/2$;
  \item [(3)] $p=2$ and  $x_2$ satisfies (ii) with $\beta (x_2)=3/2$.
\end{itemize}

Under assumption (ii), $x_1$ is resolved or one of the following properties holds:
\begin{itemize}
  \item [(1')] $k(x_1)=k(x)$ and $x_1$ satisfies (i);
  \item [(2')] $k(x_1)=k(x)$ and $x_1$ satisfies again (ii) with $\beta (x_1)\leq \beta (x)$.
\end{itemize}

\smallskip

Consider the quadratic sequence (\ref{eq750}) and suppose that (2) (resp. (3)) above occurs.
Suppose that event (1') occurs again at $x_r$ for $r\geq 1$ (resp. for $r\geq 2$). By (2') and Lemma
\ref{kappa2fin22}(ii), we may assume that $\beta (x_r)=3/2$, so
$x_r$ is resolved by Lemma \ref{kappa2fin25} below. Therefore there exists $r_0\geq 0$
such that either $x_{r_0}$ is resolved, or ($x_r$ satisfies
one and the same assumption (i) or (ii) with $k(x_{r})=k(x_{r_0})$ for every $r\geq r_0$).
The proof now concludes once again by Corollary \ref{permisarcthree}.
\end{proof}

\begin{lem}\label{kappa2fin25}
Assume that $p=2$, $\kappa (x)=2$ and $x$ satisfies condition (*3) with $\beta (x)=3/2$. If
$x_1$ satisfies condition (*1), then $x_1$ is resolved.
\end{lem}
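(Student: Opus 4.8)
\textbf{Proof plan for Lemma \ref{kappa2fin25}.}
The plan is to follow the now-familiar recipe for the very transverse case: pick well 2-adapted coordinates $(u_1,u_2;u_3;Z)$ at $x$, write out the initial form polynomial $\mathrm{in}_{\alpha}h=Z^2-G_\alpha Z+F_{p,Z,\alpha}$ attached (via lemma \ref{structDelta2}(4)(5)) to the relevant compact face of $\Delta_2(h;u_1,u_2;u_3;Z)$, and then perform an explicit computation in the very small range $p=2$, $\omega(x)\in 2\N$, $\beta(x)=3/2$. Since $x$ satisfies condition (*3), the initial form has the shape $U_1^{-2d_1}F_{p,Z,\alpha}=(\mu U_3+U_2)U_3^{\omega(x)}+\sum_{j\in J_0}U_3^{\omega(x)-j}U_1^{b_j}\Psi_j(U_1,U_2)$ as in (\ref{eq7543}), with $b_j\ge jA_1(x)$, $\mathrm{deg}_{U_2}\Psi_j-1\le j\beta(x)=3j/2$. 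The hypothesis $\beta(x)=3/2$ with $p=2$ pins down the denominators: a vertex of $\Delta_2^+$ lying on the line $y_2=\beta(x)=3/2$ must come from an index $j$ with $j\beta(x)\in\N$, i.e. $j\in 2\N$, and write $j\beta(x)=3j/2$; combined with $b_j\ge jA_1(x)$ this forces $j_0:=\min J_0\le p-1=1$ (as in (\ref{eq7541})), hence $j_0=1$. This is the key numerical leverage: the minimal index is forced to be $1<p$.

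Next I would blow up $x$ and chase the point $x_1$. Since $x_1$ is assumed to satisfy condition (*1), the relevant cases to treat are exactly those of Lemma \ref{gamma2*3}: either the exceptional situation (1') of that lemma (in which case $x_1$ satisfies condition (*1) with (\ref{eq742}), and then $x_1$ is resolved via proposition \ref{tauegaldeux} or a direct application of theorem \ref{bupthm}); or $x_1=(Z/u_1,u_1,u_2/u_1,u_3/u_1)$ with $k(x_1)=k(x)$ and one estimates $\beta(x_1)$ by the transformation formula. The bound from Lemma \ref{gamma2*3}(1) reads $\beta(x_1)\le \gamma(x)/d+1/p$; since $\gamma(x)=1+\lfloor\beta(x)\rfloor=2$ and $d=1$, $p=2$ this gives $\beta(x_1)\le 5/2$, which is not yet good enough. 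The sharper analysis uses $j_0=1$: in case 1 of the proof of Lemma \ref{gamma2*3} ($J_0\not\subseteq p\N$, which holds since $1=j_0\in J_0$ and $1\notin 2\N$) one gets the stronger $\beta(x_1)\le (1+d_{j_1})/j_1<\gamma(x)=2$, actually $\beta(x_1)\le 1$ when $j_1=1$. More precisely, since $1\in J_0$ and $\mathrm{deg}_{U_2}\Psi_1-1\le 3/2$ forces $\mathrm{deg}_{U_2}\Psi_1\le 2$, i.e. $d_1\le 2$, the transformation produces a vertex of $\Delta_2(h';u_1,u_2';u_3';Z')$ with $y_1$-coordinate $B(x)-1\le 1$ and $y_2$-coordinate at most $1$. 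Therefore $\beta(x_1)\le 1$, and in fact $\beta(x_1)\le 1-1/\omega(x_1)$ once one checks $\omega(x_1)=\omega(x)$ and the vertex sits strictly below $y_2=1$ (using that the term of index $j_0=1$ is not a square). Then $x_1$ is resolved by proposition \ref{kappa2fin20}(ii) if $(p,\omega(x_1))\neq(2,2)$, or by proposition \ref{kappa2fin22}(ii) / proposition \ref{kappa2fin10}(3) in the remaining very small cases.

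The main obstacle I expect is the sub-case $p=\omega(x)=2$ itself, which is precisely the ``interesting extra twist'' flagged before Proposition \ref{kappa2fin23}: here $\omega(x)/p=1$, the polyhedron has essentially no room, $J_0\subseteq\{1,2\}$, and one must do a genuinely explicit computation of $\mathrm{in}_{m_{S'}}h'$ at $x_1$ to see that $\beta(x_1)\le 1$ (as already invoked in the proof of Proposition \ref{kappa2fin23}). In this degenerate case the term $(\mu U_3+U_2)U_3^{2}$ from (\ref{eq7543}) and the single term $U_3 U_1^{b_1}\Psi_1(U_1,U_2)$ with $\Psi_1$ linear in $U_2$ are essentially all that is present; the blowing-up chart map $u_2\mapsto u_2/u_1$, $u_3\mapsto u_3/u_1$ sends the vertex $(A_1(x),3/2)$ to $(A_1(x)+B(x)-1-1,\,?)$, and a short computation with the Tschirnhausen translation $Z\mapsto Z-\phi$ absorbing the square part shows that the surviving monic-in-$U_3$ term has $U_2$-degree at most $1$, hence $\beta(x_1)\le 1$. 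Once $\beta(x_1)\le 1$ (even $\le 1-1/\omega(x_1)$), proposition \ref{kappa2fin10}(3) or proposition \ref{kappa2fin20}(ii) applies and $x_1$ is resolved, completing the proof.
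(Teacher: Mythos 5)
Your key claim — that $j_0 := \min J_0 = 1$ is forced by the hypotheses — is false, and this is precisely why the lemma requires a separate proof rather than reducing to earlier cases. The bound $j_0 \leq p-1$ was derived in the proof of Lemma \ref{kappa2fin22}(ii), equation (\ref{eq7541}), under the hypothesis $\beta(x) < 2 - 1/p$; the argument there uses $d_j/j < 2-1/p$ to deduce $1+d_j < 2j$ for $j \geq p$ and hence a contradiction if $\min J_0 \geq p$. Here $\beta(x) = 3/2 = 2 - 1/p$ exactly (with $p=2$), so $d_j/j \leq 2-1/p$ only non-strictly, equality $1+d_2 = 4$ is possible when $d_2 = 3$, and $J_0 = \{2\}$ (say) is not excluded. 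Your derivation ``a vertex at $y_2 = 3/2$ must come from $j$ even, \emph{combined with} $b_j \geq jA_1(x)$ this forces $j_0 \leq 1$'' is a non sequitur: the even vertex contributing $\beta(x) = 3/2$ gives no information about whether $1 \in J_0$, and in fact $1 \notin J_0$ is exactly the hard case.

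Even granting $j_0 = 1$, the subsequent estimate fails. Case 1 of the proof of Lemma \ref{gamma2*3} (with $k(x') = k(x)$, $d=1$) yields $\beta(x_1) \leq (1+d_{j_1})/j_1 \leq \gamma(x) = 2$, a non-strict bound; taking $j_1 = 1$ gives $\beta(x_1) \leq 1 + d_1 \leq 1 + \lfloor 3/2 \rfloor = 2$, not $\leq 1$. So you cannot reach $\beta(x_1) < 2 - 1/\omega(x_1)$ and apply Proposition \ref{kappa2fin20}(ii) or \ref{kappa2fin10}(3) in one step. The paper's proof accepts that $\beta(x_1) = 2$ with $\mu'_1 \neq 0$ or $\mu'_2 \neq 0$ in (\ref{eq756}), handles $\mu'_1 \neq 0$ via Lemma \ref{kappa2fin22}(i), and when $\mu'_1 = 0$ performs a Tschirnhausen on $U'_3$ at $x_1$ (not on $Z$ at $x$), splits into three residual cases according to $a = \mathrm{ord}_2\omega(x)$ and the $p$-independence of $\mu'_2\mu'^{-1}$, tracks the next blowup $x_2$ (point at infinity, inseparable, or rational origin), and concludes by iteration and Corollary \ref{permisarcthree}. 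A one-step estimate at $x_1$ does not close the argument, and your ``short computation'' for $p = \omega(x) = 2$ is not supplied and, as far as I can see, would not produce $\beta(x_1) \leq 1$.
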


\begin{proof}
We argue as in the proof of Lemma \ref{kappa2fin22} (\ref{eq7543}) and (\ref{eq7542}):
we have $\beta (x_1)=2$ and, since $\beta (x)=3/2$, there exist well 2-adapted coordinates
$(u_1,u'_2;u'_3;Z')$ at $x_1$ such that
\begin{equation}\label{eq756}
{U_1}^{-2d'_1}F_{2,Z',\alpha '} = \mu '{U'_3}^{\omega (x)}+\sum_{j=1}^{\omega (x)}
\mu'_j{U'_3}^{\omega (x)-j}{U_1}^{jA_1(x_1)}{U'_2}^{2j},
\end{equation}
with $d'_1:=d_1+(1+\omega (x))/2-1$, $\mu'_1 \neq 0$  or $\mu'_2 \neq 0$.
We conclude by Lemma \ref{kappa2fin22}(i) if $\mu'_1\neq 0$.

\smallskip

Assume then that $\mu'_1= 0$ and let $a:=\mathrm{ord}_2\omega (x)$. If ($a=1$, $A_1(x)\in \N$
and $\mu'_2 {\mu'}^{-1}=\lambda^2$ for some $\lambda \in k(x)$), we may perform the
Tschirnhausen transform $U'_3\mapsto U'_3 +\lambda U_1^{A_1(x)}{U'_2}^2$ and get $\mu'_2 = 0$
in (\ref{eq756}). Since $\beta (x_1)=2$, we nevertheless obtain $\mu'_{j_0} \neq  0$
for some $j_0\geq 3$ in (\ref{eq756}). In other terms, we may assume that
one of the following assumptions holds:
\begin{itemize}
  \item [(i)] $a\geq 2$ and $\mu'_2\neq 0$;
  \item [(ii)] $a=1$, ($A_1(x)\not \in \N$ or $\mu'_2 {\mu'}^{-1}\not \in k(x)^2$) and $\mu'_2\neq 0$;
  \item [(iii)] $a=1$, $A_1(x) \in \N$, $\mu'_2= 0$ and $\mu'_{j_0}\neq 0$ for some $j_0\geq 3$.
\end{itemize}

We consider three cases and review again the proof of Lemma \ref{kappa2fin22}:

\smallskip

\noindent {\it Assume that $x_2=x'_1$ is the point at infinity.} Situation (A) has been solved
in Lemma \ref{kappa2fin22}(i). Situation (B) does not hold by \cite{CoP2} proof of {\bf I.8.3}:
equality $\beta (x_3)=3/2$ is achieved only in the situation of {\it ibid.} {\bf I.8.3.6} case 2.
This implies ($\mu'_{j}=0$ for $1 \leq j \leq 2^a-1$, and $\mu'_{2^a}\neq 0$): a
contradiction with (i) and (iii) above. This also implies $B(x)=A_1(x)+\beta (x)\in \N$
{\it viz.} \cite{CoP2} {\bf I.8.3.4} (so $A_1(x)\in \N$ since $\beta (x)=2$), and
$$
{U'_1}^{-2d'_1}{\partial F_{2,Z',\alpha '} \over \partial \lambda_l }
\in <{U'_1}^{-2d'_1}U'_1{\partial F_{2,Z',\alpha '} \over \partial U'_1 }>, \ l \in \Lambda_0
$$
{\it viz.} \cite{CoP2} {\bf I.8.3.5} where $d'_1 \not \in \N$ here: a
contradiction with (ii). One gets $\beta (x_3)<3/2$ (actually: $\beta (x_3)\leq 1$ if $x_3$ satisfies condition (*3)),
so $x'$ is resolved by Lemma \ref{kappa2fin22}(ii).

\smallskip

\noindent {\it Assume that $k(x_2)\neq k(x_1)$.} Then $x_2$ is resolved.

\smallskip

\noindent {\it Assume that $x_2\neq x'_1$ and $k(x_2)= k(x_1)$.} Then $x_2$ is resolved or $x_2$ satisfies
again (\ref{eq756}) with $\mu'_j\neq 0$ for some $j\geq 1$, $j\leq 2$ if $a\geq 2$.

\smallskip

Iterating, the conclusion follows again from Corollary \ref{permisarcthree}.\\

\begin{prop}\label{kappa2fin24}
Assume that $\kappa (x)=2$, $x$ satisfies condition (*2) with $\gamma (x)=2$.
Then $x$ is good.
\end{prop}

\noindent {\it Proof.} By Lemma \ref{gamma2*12}, $x_1$ is resolved or satisfies again condition (*)
with $\gamma (x_1)\leq 2$.

If $x_1$ satisfies condition (*1), then $x_1$ is resolved by Proposition \ref{kappa2fin20}(i)
or by Proposition \ref{kappa2fin23}(i).

If $x_1$ satisfies condition (*3), we have $\beta (x_1)<2 -1/\omega (x)$
by Lemma \ref{gamma2*12}(4). Therefore $x_1$ is resolved by Proposition \ref{kappa2fin23}(ii).

If $x_1$ satisfies condition (*2) and $\gamma (x_1)=1$, $x_1$ is resolved by Proposition \ref{kappa2fin11}.
Therefore $x_1$ is resolved or satisfies again the assumptions of the lemma. The conclusion follows from
Lemma \ref{kappa2fin00}.
\end{proof}

\begin{prop}\label{kappa2fin3}
Assume that $\kappa (x)=2$, $x$ satisfies condition (*3) with $\beta (x)=2- 1/\omega (x)$.
Then $x$ is good.
\end{prop}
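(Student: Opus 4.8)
The statement asserts that when $\kappa(x)=2$, $x$ satisfies condition (*3) and $\beta(x)=2-1/\omega(x)$, the point $x$ is good. Following the logical scheme set out in the proof of theorem \ref{proofkkappa2}, the strategy is to run the quadratic sequence (\ref{eq750}) along an arbitrary valuation $\mu$ centered at $x$ and show that some $x_r$ is resolved. Since $\beta(x)=2-1/\omega(x)$ is a specific rational value just below $2$, the key observation is that under blowing up this value either drops strictly (so after finitely many steps we fall into the regime $\beta(x')<2-1/\omega(x)$ already handled by proposition \ref{kappa2fin23}(ii)), or it propagates in a controlled way that corollary \ref{permisarcthree} forbids to continue indefinitely at rational non-infinity points.

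\textbf{Key steps.} First I would pick well 2-adapted coordinates $(u_1,u_2;u_3;Z)$ at $x$ and split the analysis of $x_1$ into the three standard cases: the point ``at infinity'' $x'=(Z/u_2,u_1/u_2,u_2,u_3/u_2)$, a nonrational point $k(x_1)\neq k(x)$, and a rational point $k(x_1)=k(x)$ in the first chart (which by the independence statement in theorem \ref{well2prepared} may be taken to be the origin $x_1=(Z/u_1,u_1,u_2/u_1,u_3/u_1)$). For $x_1=x'$, apply lemma \ref{gamma2*infty}(3): one gets $x'$ resolved or $x'$ in case (*2) with $C(x')\leq\min\{\beta(x)-C(x),C(x)-\beta_2(x)\}$; since $\beta(x)=2-1/\omega(x)<2$ this forces $C(x')<2$, and combined with lemma \ref{kappa2fin00} and lemma \ref{kappa2bupcurve} reduces to $\gamma(x')\leq 2$, hence to propositions \ref{kappa2fin11}, \ref{kappa2fin23}(i) or \ref{kappa2fin24}. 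For $k(x_1)\neq k(x)$, apply lemma \ref{gamma2*3}(1): if $x_1$ is in case (*1) we get $\beta(x_1)\leq 1+1/p<2$, so proposition \ref{kappa2fin20}(i) or lemma \ref{kappa2fin22}(ii) applies; if $x_1$ is in case (*3) then $\beta(x_1)<\beta(x)=2-1/\omega(x)$ with strict inequality, landing in proposition \ref{kappa2fin23}(ii). For the rational first-chart point, apply lemma \ref{gamma2*3}(2)(3): if $x_1$ is in case (*3), then either $\beta(x_1)\leq\beta(x)$ (case (*3) persists, strictly descending along rational points) or we invoke the strict inequality; if $x_1$ is in case (*1), lemma \ref{gamma2*3}(1') or (2) applies, but (2) would require $\beta(x)=\gamma(x)-1/\omega(x)$, i.e. $\gamma(x)=2$, giving $\beta(x_1)=\gamma(x)+1/\omega(x)=2+1/\omega(x)$—and this is precisely the case that must be tracked carefully, as in the proof of proposition \ref{kappa2fin0}, by noting that a further blow up of $x_1$ (with $\beta(x_1)>2$, $\beta(x_1)\neq 4$) makes $\gamma$ drop strictly by (\ref{eq7501}) or (\ref{eq7503}).

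\textbf{Main obstacle.} The delicate point, exactly as in propositions \ref{kappa2fin22}, \ref{kappa2fin23} and \ref{kappa2fin25}, is the interplay between $p=2$, small $\omega(x)$, and the possible increase of $\gamma$ through lemma \ref{gamma2*3}(2): one must verify that the chain of possibilities $x$ in case (*3) with $\beta(x)=2-1/\omega(x)$ $\to$ $x_1$ in case (*1) with $\beta(x_1)=2+1/\omega(x)$ $\to$ $x_2$ with $\gamma(x_2)<\gamma(x_1)$ cannot loop back to the starting configuration. This requires a careful bookkeeping argument—isolating a monomial invariant that strictly decreases, or showing that after the excursion into case (*1) and back we re-enter case (*3) with strictly smaller $\beta$, or with a strictly smaller residual combinatorial data as measured by $(A_1(x),\beta(x))$ lexicographically. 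Once the finiteness of rational non-infinity chains is secured by corollary \ref{permisarcthree} and lemma \ref{kappa2fin00}, the proposition follows: there exists $r_0$ such that either $x_{r_0}$ is resolved, or $x_r$ satisfies one and the same lower-complexity configuration (handled by propositions \ref{kappa2fin10}, \ref{kappa2fin11}, \ref{kappa2fin20}, \ref{kappa2fin23}) for all $r\geq r_0$, and then corollary \ref{permisarcthree} finishes the argument. As in the rest of the chapter, all blowing ups involved are permissible of the first or second kind when $p=2$ or $\omega(x)\geq 3$, the only exceptional twist being the analogue of lemma \ref{kappa2fin25} for $p=\omega(x)=2$.
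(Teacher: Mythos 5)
Your overall route matches the paper's: split on $x_1$ at infinity / nonrational / rational first chart, reduce via lemma \ref{gamma2*3} to the two surviving possibilities (that $x_1$ is again in (*3) with $\beta(x_1)=\beta(x)=2-1/\omega(x)$, or in (*1) with $\beta(x_1)=2+1/\omega(x)$ from lemma \ref{gamma2*3}(2)), and close both with corollary \ref{permisarcthree}. But the way you treat the excursion into case (*1) does not quite close. You claim that a further blow-up of $x_1$, which is in (*1) with $\beta(x_1)=2+1/\omega(x)$, ``makes $\gamma$ drop strictly by (\ref{eq7501}) or (\ref{eq7503})''. Those bounds only apply to the nonrational move and to the move to the point at infinity, and even there, with $\beta(x_1)=2+1/\omega(x)$, they give $\gamma(x_2)\leq 1+\lfloor \beta(x_1)/2\rfloor = 2 = \gamma(x)$, not a strict drop below $\gamma(x)$; what they give is $\gamma(x_2)<\gamma(x_1)=3$. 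Nothing in (\ref{eq7501}) or (\ref{eq7503}) addresses the rational first-chart move from $x_1$, which is exactly the move that can repeat indefinitely.

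The paper's resolution of this gap is short and direct: from $x_1$ in case (*1) with $\beta(x_1)=2+1/\omega(x)$, apply lemma \ref{gamma2*12}(1)(4) and lemma \ref{gamma2*infty}; conclude that $x_2$ is resolved (one lands with $\gamma(x_2)\leq 2$, and with $\beta(x_2)<2-1/\omega(x)$ if $x_2$ is back in case (*3), so propositions \ref{kappa2fin10} through \ref{kappa2fin24} apply), or $x_2$ again satisfies (*1) with $\beta(x_2)=2+1/\omega(x)$ and $k(x_2)=k(x_1)$. This last persistence is a rational non-infinity chain with constant residue field, and corollary \ref{permisarcthree} finishes it. There is no need for a new monomial invariant, no lemma \ref{kappa2fin00} (which concerns persistence in (*2), not (*3)), and no analogue of lemma \ref{kappa2fin25}. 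Your ``main obstacle'' paragraph correctly identifies the delicate point, but then defers it to unspecified bookkeeping rather than observing that the bookkeeping has already been done in lemmas \ref{gamma2*12} and \ref{gamma2*infty}, and that the only surviving chain is killed by corollary \ref{permisarcthree} exactly as in case (1).
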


\begin{proof}
This is now a variation of Proposition \ref{kappa2fin0}. By Lemma \ref{gamma2*3},
$x_1$ is resolved or satisfies again condition (*) with $\gamma (x_1)\leq 2$ except in the special
situation specified in Lemma \ref{gamma2*3}(2). Applying the previous lemmas,
we are done except possibly if $k(x_1)=k(x)$ and (1) or (2) below holds:
\begin{itemize}
  \item [(1)] $x_1$ satisfies again condition (*3) with $\beta (x_1)=\beta (x)=2-1/\omega (x)$;
  \item [(2)] $x_1$ satisfies condition (*1) with $\beta (x_1)=2+1/\omega (x)$.
\end{itemize}

Suppose that (2) holds; by Lemma \ref{gamma2*12}(1)(4) and Lemma \ref{gamma2*infty}(2), $x_2$ is resolved
($\gamma (x_2) \leq 2$, $\beta (x_2)<2-1/\omega (x)$ if $x_2$ satisfies condition (*3)) or satisfies again
(2) with $k(x_2)=k(x_1)$. In both cases (1)(2), we conclude once more by Corollary \ref{permisarcthree}.
\end{proof}

\bigskip \bigskip

\section{Projection Theorem: transverse  and tangent cases, reduction of $\kappa (x)=3, 4$ to monic expansions.}
\medskip

In this chapter and the next one, we prove Theorem \ref{projthm} when $\kappa (x)=3,4$ (Definition \ref{defkappa}).
This is restated as Theorem \ref{proofkappa34} below.
The structure of the proof is similar to that of Theorem \ref{proofkkappa2}: first getting a stable form
for the equation of $\mathrm{in}_{m_S}h$ (i.e. monic expansions, Definition \ref{**} below), then introducing
a projected polygon with secondary  invariant $\gamma (x)$.

\smallskip

Two important differences with $\kappa (x)=2$ arise. On the one hand, no simple reduction
works for each of $\kappa (x)=3,4$ separately and we have to deal with both cases at the same time.
On the other hand, the monic case is resolved by blowing up Hironaka-permissible centers ${\cal Y}\subset {\cal X}$
which are not necessarily permissible in the sense of Definitions \ref{deffirstkind} and \ref{defsecondkind}. \\

Given a valuation $\mu$ of $L=k({\cal X})$ centered at $x$, we consider finite sequences of
local blowing ups along $\mu$:
\begin{equation}\label{eq801}
    ({\cal X},x)=:({\cal X}_0,x_0) \leftarrow ({\cal X}_1,x_1)\leftarrow \cdots \leftarrow ({\cal X}_r,x_r)
\end{equation}
with Hironaka-permissible centers ${\cal Y}_i \subset ({\cal X}_i,x_i)$,  {\it viz.} (\ref{eq402}). \\

\noindent {\it Up to the end of this chapter, ``resolved" stands for ``resolved for $(p,\omega (x),3)$"
(Remark \ref{quadsequence})}.\\

\begin{defn}\label{**} \textrm{(Monic expansion for $\kappa (x)\geq 3$).}
Assume that $\kappa(x)\geq 3$. We say that $x$ satisfies condition (**) \index{(**) $\kappa(x)\geq 3$}if  there exists well adapted
coordinates $(u_1,u_2,u_3;Z)$ at $x$ such that the following conditions are fulfilled:
\begin{itemize}
  \item [(i)] $1+\omega(x)\not=\ 0\ \mod(p)$;
  \item [(ii)] $E=\div (u_1)$ (resp. $E=\div(u_1u_2))$, and
  $\mathbf{v}:=(d_1, d_2, (1+ \omega (x))/p)$ is the only vertex (resp. is a vertex) of $\Delta_{S} (h;u_1,u_2,u_3;Z)$
  in the region $x_1=d_1$,  {with the usual convention $d_2=0$ when $\div (u_2)\not\subset E$}.
\end{itemize}

Assume $\kappa(x)=4$, we say that $x$ satisfies   condition (T**) \index{(T**) @ (T**) $\kappa(x)=4$} (for ``towards (**)'') if  there exists well adapted
coordinates $(u_1,u_2,u_3;Z)$ at $x$ such that {\it one} of the following conditions is fulfilled:

\begin{itemize}
  \item [(i)] $\epsilon (x)=\omega (x)$, $\div(u_1)\subseteq E$ and $\mathrm{Vdir}(x)=<U_1>$;
  \item [(ii)] $\epsilon (x)=\omega (x)$, $\div(u_1u_2)\subseteq E$ and $\mathbf{v}:=(d_1+ \omega (x)/p, d_2, d_3)$
  is the only vertex of $\Delta_{S} (h;u_1,u_2,u_3;Z)$ in the region $x_2=d_2$;
  \item [(iii)] $E=\div(u_1u_2)$ and $\mathbf{v}:=(d_1+ \omega (x)/p, d_2, 1/p)$ is the only vertex
  of $\Delta_{S} (h;u_1,u_2,u_3;Z)$ in the region $x_2=d_2$.
\end{itemize}

\end{defn}

When $x$ satisfies any of  (**) or (T**), we simply say  that ``$h$ has a {\it monic expansion} for $(u_1,u_2,u_3;Z)$''. \index{monic3@ monic expansion in case (**) or (T**),! $h$ has a {\it monic expansion} for $(u_1,u_2,u_3;Z)$}
In cases (**) and (T**)(iii), the nonexceptional variable $u_3$ will usually be denoted $v$.

\begin{rem}\label{remT**}
First, let us remark that, by Definitions \ref{defomega} and \ref{defkappa},
 {\begin{equation}\label{eq:kappa=3}
 \kappa (x)=3 \Rightarrow H^{-1}{\partial F_{p,Z} \over \partial U_3} \not \in k(x)[U_1,U_2],
\end{equation}}
 {Indeed, this is clear when $E=$div$(u_1u_2)$. When $E=$div$(u_1)$, then $\mathrm{Vdir}(x)\subset<U_1,U_3>$ and $H^{-1}{ \partial F_{p,Z} \over \partial U_2}\subset<U_1^{\omega(x)}>$, (\ref{eq:kappa=3}) follows.}

If $x$ satisfies (i)(ii) or ((iii) with $\epsilon (x)=\omega (x)$) above for (T**),
we have $\kappa (x)\leq 2$ or $\kappa (x)=4$. On the other hand, one may have (iii) with $\kappa (x)=3$
if $\epsilon (x)=1+\omega (x)$. We however claim that $\tau '(x)=3$ in this situation.
Namely,
W.l.o.g. it can be assumed that $U_3 \in \mathrm{Vdir}(x)$. By (iii), we then have
$$
H^{-1}{\partial TF_{p,Z} \over \partial U_3} =\lambda U_1^{\omega (x)} + U_2\Phi (U_1,U_2,U_3),
$$
with $\lambda \neq 0$ and $\Phi \not \in k(x)[U_1,U_2]$. It is then obvious that $\tau '(x)=3$,  {by Theorem~ \ref{bupthm}, $x$ is resolved.}

\smallskip

As a consequence, it is sufficient for our purpose to check (i)(ii) or (iii) in order to check (T**),
since $x$ is already resolved if $\kappa (x)\leq 3$.
\end{rem}

\subsection{Preliminaries: transverse case.}

Let $(u_1,u_2,u_3;Z)$ be well adapted coordinates at $x$, where $\kappa (x)=3$. In particular, we have
$\epsilon (x)=\omega (x)+1$. The initial form polynomial
$$
\mathrm{in}_{m_S} h=Z^p -G^{p-1}Z +F_{p,Z}\in G(m_S)[Z]
$$
has $H^{-1}G^p \subset k(x)[U_1, \ldots ,U_e]_{\omega (x)+1}$ and an expansion
\begin{equation}\label{eq802}
    U_1^{-pd_1}U_2^{-pd_2}F_{p,Z}=c U_3^{\omega (x)+1} + \sum_{i=0}^{\omega (x)}U_3^{\omega (x)-i}\Phi_{i+1}(U_1,U_2),
\end{equation}
with $U_3 \in \mathrm{Vdir}(x)$, $c \in k(x)$ and   {$\Phi_{i+1}\in k(x)[U_1,U_2]_{i+1}$}, $0 \leq i \leq \omega (x)$.
Since $\kappa (x)=3$,  {by (\ref{eq:kappa=3})}, we have
\begin{equation}\label{eq8021}
\left\{
  \begin{array}{c}
    (\omega (x)+1\not \equiv 0 \ \mathrm{mod}p \ \mathrm{and}  \ c \neq 0), \ \mathrm{or} \hfill{} \\
     \\
    \Phi_{i+1}(U_1,U_2)\neq 0 \ \mathrm{for} \ \mathrm{some} \  i\leq \omega (x)-2, \ \omega (x)-i \not \equiv 0 \ \mathrm{mod}p \\
  \end{array}
\right.
.
\end{equation}

\begin{prop}\label{sortiebiskappaegaltrois}
Assume that $\kappa (x)=3$,  $E=\mathrm{div}(u_1u_2)$ and
$$
\mathrm{Vdir}(x)=<U_3, \lambda_1U_1 + U_2>, \ \lambda_1 \neq 0.
$$
Then $x$ is resolved.
\end{prop}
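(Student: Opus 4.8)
The setting is $\kappa(x)=3$, so $\epsilon(x)=\omega(x)+1$, $E=\operatorname{div}(u_1u_2)$, and $\operatorname{Vdir}(x)=\langle U_3,\ \lambda_1U_1+U_2\rangle$ with $\lambda_1\neq 0$; the goal is that $x$ is resolved (for $(p,\omega(x),3)$). The first move is a coordinate normalization: replace $u_2$ by $u_2':=u_2+\gamma_1 u_1$ where $\gamma_1\in S$ is a preimage of $\lambda_1$. Since $\operatorname{div}(u_1)$ and $\operatorname{div}(u_2)$ are both components of $E$, this is an admissible change of adapted r.s.p., and after it we may assume $\operatorname{Vdir}(x)=\langle U_3,U_2\rangle$ (using the invariance of $\operatorname{Vdir}(x)$, definition \ref{deftauprime}, and the fact that $\tau'(x)=2$ unless $\tau'(x)=3$, in which case $x$ is resolved immediately by theorem \ref{bupthm}). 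So from now on I work with well adapted coordinates $(u_1,u_2,u_3;Z)$ at $x$ such that $\operatorname{Vdir}(x)=\langle U_3,U_2\rangle$ and the expansion \eqref{eq802} holds.

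\textbf{Key observation about the directrix.} Because $U_1\notin\operatorname{Vdir}(x)$ and $\operatorname{Vdir}(x)=\langle U_3,U_2\rangle$, the differential module $J(F_{p,Z},E,m_S)$ (equivalently $V(F_{p,Z},E,m_S)$, depending on whether $\omega(x)=\epsilon(x)$ or $\epsilon(x)-1$ — here the latter) must, after truncation $T$, have its $\mathrm{Max}$-cone defined by equations in $U_2,U_3$ only; in particular the coefficient structure in \eqref{eq802} is forced. Concretely, $\partial F_{p,Z}/\partial U_2$ (suitably normalized by $H^{-1}$) cannot be a monomial in $U_1$ alone — otherwise $U_2$ would drop out of $\operatorname{Vdir}$ — and since $E=\operatorname{div}(u_1u_2)$ this gives that $x$ satisfies condition (**) only if a certain purely $U_3$-monomial appears; if that monomial is present we are in a monic expansion case handled by the later machinery. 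The plan is therefore: (1) if the relevant derivative already computes $\omega(x)$ via a monomial transverse to $u_2$, invoke definition \ref{**} to see that $x$ satisfies (**) (possibly after one more coordinate change absorbing solvable terms), and then the reduction-to-monic-expansions results of section 8.1–8.3 and chapter 9 apply; (2) otherwise, $\partial F_{p,Z}/\partial U_2$ is divisible by $U_1$ to order $\geq 1$, and I blow up the Hironaka-permissible curve $\mathcal{Y}:=V(Z,u_2,u_3)$ — checking it is permissible of the first kind using $\operatorname{Vdir}(x)=\langle U_3,U_2\rangle$ together with theorem \ref{well2prepared}-style arguments, or permissible of the second kind via proposition \ref{secondkind}.

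\textbf{Main blowing-up step.} Blow up $\pi:\mathcal{X}'\to(\mathcal{X},x)$ along such $\mathcal{Y}$ and let $x'\in\pi^{-1}(x)$. By theorem \ref{bupthm}, $\iota(x')\leq\iota(x)$, with equality only if $s'\in PC(x,\mathcal{Y})$, the projectivized cone attached to $\mathcal{Y}$. Because $\operatorname{Vdir}(x)=\langle U_3,U_2\rangle$ has a transverse component $U_3$, the cone $PC(x,\mathcal{Y})$ is one-dimensional, so there is at most one exceptional point $x'$ very near $x$. At that point one computes, via proposition \ref{originchart} and proposition \ref{bupformula}, the new initial form and finds either $\kappa(x')\leq 2$ (so $x'$ is resolved in the sense we need, i.e. $\iota(x')<(p,\omega(x),3)$), or $x'$ again satisfies the hypotheses of the present proposition but with a strictly smaller value of a monomial invariant of the projected polygon $\Delta_2$ (for instance $A_1(x')=A_1(x)-1$ as in lemma \ref{kappa2bupcurve}), or $x'$ satisfies condition (**). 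Iterating, since the relevant invariant lies in $\frac1{\omega(x)!}\mathbb N$ and strictly decreases, after finitely many steps we reach a point satisfying (**) — or $\kappa\leq 2$ en route. Finitely many rational non-"at infinity" points in a row are excluded by corollary \ref{permisarcthree}. Once (**) holds, the monic-expansion results (propositions \ref{redto**3}, \ref{redto**4}, \ref{redto**casT**}, \ref{END}) finish the job.

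\textbf{Expected main obstacle.} The delicate point is the casuistic verification that, in case (2) above, the curve $\mathcal{Y}=V(Z,u_2,u_3)$ is genuinely Hironaka-permissible (i.e. that $m$ and hence $\delta$ are equimultiple along it) and of the first or second kind, together with the possibility — familiar from chapter 7 — that blowing it up produces a point "at infinity" $x'$ with $\kappa(x')=4>\kappa(x)$ which must then be pushed back down. Handling that loop is exactly the kind of step that in the $\kappa(x)=2$ analysis occupied propositions \ref{kappa2fin10}–\ref{kappa2fin3}; here, since we only need to reach condition (**) rather than fully resolve, the bookkeeping is lighter but still requires the explicit polygon computations via proposition \ref{originchart} and lemma \ref{lem532}. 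The other subtlety is making sure the coordinate normalization $u_2\mapsto u_2+\gamma_1u_1$ and the subsequent well-preparation do not disturb $\operatorname{Vdir}(x)$ or the value $\epsilon(x)=\omega(x)+1$ — this is routine given the invariance statements in definition \ref{deftauprime} and theorem \ref{omegageomreg}, but must be stated carefully because $\kappa(x)$ itself is only constructible, not upper-semicontinuous, in the Galois case.
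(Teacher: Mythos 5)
Your opening move --- replacing $u_2$ by $u_2':=u_2+\gamma_1u_1$ with $\gamma_1$ a unit lifting $\lambda_1\neq 0$ so as to straighten $\operatorname{Vdir}(x)$ to $\langle U_3,U_2\rangle$ --- is not an admissible change of adapted coordinates, and the rest of the proposal is built on it. Here $E=\operatorname{div}(u_1u_2)$, so both $u_1$ and $u_2$ are exceptional parameters; the admissible coordinate changes preserving the adapted structure are exactly the matrices in ${\cal M}(S)$, whose first $e$ rows are diagonal. Concretely, for $j\leq e$ one may replace $u_j$ only by $m_{jj}u_j$ with $m_{jj}$ a unit, never by $u_j+(\text{unit})\cdot u_{j'}$. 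The divisor $\operatorname{div}(u_2+\gamma_1u_1)$ is not a component of $E$, so after your substitution $(u_1,u_2',u_3)$ is no longer adapted to $E$ and every subsequent claim that invokes the adapted/well-adapted machinery ($V$, $J$, $T$, $\epsilon$, $\omega$, $\kappa$, permissibility) is off the table. The hypothesis $\lambda_1\neq 0$ is precisely the statement that $\operatorname{Vdir}(x)$ is \emph{skew} to $E$ and cannot be straightened by an adapted change --- that is what makes the proposition non-trivial.

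The paper's own proof does not normalize the directrix and does not blow up a curve. It blows up the closed point $x$ (take ${\cal Y}_0:=\{x\}$), notes $G=0$ since $U_1\notin\operatorname{Vdir}(x)$, and observes that by theorem \ref{bupthm} the only point $x_1$ very near $x$ lies at $(Z/u_1,u_1,v:=u_2/u_1+\gamma_1,u_3/u_1)$ with $E'=\operatorname{div}(u_1)$. The skew directrix variable $\lambda_1u_1+u_2$ becomes, after the blow-up, the \emph{non-exceptional} parameter $v$ at $x_1$ (its residue is a regular parameter of $S'/(u_1)$), so the straightening is accomplished by the blow-up itself rather than by a coordinate change upstairs. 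The identity $\Psi:=U_1^{-pd_1}U_2^{-pd_2}\partial F_{p,Z}/\partial U_3=\sum_i c_i(\lambda_1U_1+U_2)^iU_3^{\omega(x)-i}$ is pushed forward via proposition \ref{bupformula}(v) to give $(\Psi(1,\overline v-\lambda_1,\overline u_3'))\subseteq J(F_{p,Z',W'},E',W')$. If $\epsilon(x_1)=\epsilon(x)$ one reads off $\kappa(x_1)=2$ (contradiction with being very near, so in fact $x$ is resolved); if $\epsilon(x_1)=\omega(x)$ the resulting directrix relation $\operatorname{Vdir}(\partial\Phi'/\partial\overline U_2,\partial\Phi'/\partial\overline U_3')=\langle\overline V,\overline U_3'\rangle$ with $d_1'\in\mathbb N$ sets up lemma \ref{sortiekappaegaldeuxbis}, which completes the argument. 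Your plan of reducing to the monic-expansion machinery is not what the paper does here; the reduction to (**) happens later in the chapter, after this proposition (which feeds into lemma \ref{kappa3prelim} and proposition \ref{redto**3}).
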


\begin{proof}
Take ${\cal Y}_0:=\{x\}$ in (\ref{eq801}) and assume that $x_1$ is very near   {to} $x$. Since $U_1 \not \in \mathrm{Vdir}(x)$,
we have $G=0$. Let $u'_j:=u_j/u_1$, $j=2,3$.
By Theorem \ref{bupthm},  we have
$$
x_1=(X':=Z/u_1, u_1, v:=u'_2 +\gamma_1, u'_3), \ E'=\mathrm{div}(u_1), \ k(x_1)=k(x),
$$
where $\gamma_1  \in S$ is a preimage of $\lambda_1 $. By assumption,
\begin{equation}\label{eq:8.1}
\begin{array}{c}
F_{p,Z}=\sum_{i=0}^{\omega (x)+1}\Psi_i(U_1,U_2)U_3^{\omega (x)+1-i}\\
\Psi :=U_1^{-pd_1}U_2^{-pd_2}{\partial F_{p,Z} \over \partial U_3}=
\sum_{i=0}^{\omega (x)}c_i(\lambda_1U_1 + U_2)^{i}U_3^{\omega (x)-i}\\
 \end{array}
\end{equation}
with $\Psi_i(U_1,U_2)\in k(x)[U_1,U_2]$ $=0$ or homogeneous of degree $i$, $c_i\in k(x)$ and $c_i\neq 0$ for some $i\neq \omega (x)$. Let $(u_1,v,u'_3;Z')$ be well adapted
coordinates at $x_1$. Applying Proposition \ref{bupformula}(v) (with $W':=\mathrm{div}(u_1)\subset \mathrm{Spec}S'$),
we have
\begin{equation}\label{eq8011}
(\Psi (1, \overline{v} -\lambda_1, {\overline{u}'_3}))\subseteq J(F_{p,Z',W'},E',W')
\subseteq k(x)[\overline{u}'_2,\overline{u}'_3]_{(\overline{v},\overline{u}'_3)}.
\end{equation}

Since  {$\omega(x)=\omega(x_1)$ and} $\kappa (x_1)\geq 3$ are assumed, we have
$$
\mathrm{ord}_{(\overline{v},\overline{u}'_3)}U_1^{-pd'_1}F_{p,Z',W'}=\epsilon (x),
$$
where
\begin{equation}\label{eq8012}
  {d'_1=d_1+d_2 -1 + \epsilon (x)/p}.%\in \N.
\end{equation}

If $\epsilon (x_1)=\epsilon (x)$, we get $\mathrm{Vdir}(x_1)+<U_1>=<U_1, V,U'_3>$
by (\ref{eq8011}), so $\kappa (x_1)=2$ by Definition \ref{defkappa}:
a contradiction. Therefore $\epsilon (x_1)=\omega (x)$.
Let
$$
\Phi ':=\mathrm{cl}_{\epsilon (x)}U_1^{-pd'_1}F_{p,Z',W'}\in k(x)[\overline{V},\overline{U}'_3]_{\epsilon (x)}.
$$
We deduce from (\ref{eq8011}) that
\begin{equation}\label{eq8013}
{\partial \Phi' \over \partial \overline{U}'_3}=\Psi (1, \overline{V} -\lambda_1, {\overline{U}'_3}),
\ \mathrm{Vdir}\left ({\partial \Phi' \over \partial \overline{U}'_3}\right ) =<\overline{V}, \overline{U}'_3>.
\end{equation}

The proof is now a variation of that of Proposition \ref{sortiekappaegaldeux}, $\tau '(x)=1$.   {We treat first the case $d'_1\not\in \N$.}
We state the case $d'_1\in \N$ in the following lemma for further use: the assumptions are satisfied by
(\ref{eq8012})-(\ref{eq8013}) and this will conclude the proof.

\smallskip

\noindent   {Case $d'_1\not\in \N$. }

  {$$U_1^{-pd'_1}F_{p,Z',W'}=\gamma \Phi(\overline{V}, {\overline{U}'_3}), \Phi\in k(x)[\overline{V}, {\overline{U}'_3}]_{1+\omega(x)} ,\ \gamma:=(\overline{V}-\lambda_1)^{pd_2}.$$}
%$\Phi$ homogeneous of degree $1+\omega(x)$.

  {Let us blow up along $x_1$, let $x_2$ be a point very near to $x_1$. As $\kappa(x_1)>2$ is assumed, we have $<U_1>$=Vdir$(x_1)$.  By   (\ref{eq8013}), we have $\omega(x)\geq 2$, all this implies that $x_2$ is rational over $x_1$. Obviously, $x_2$ is not the point of parameters $(Z'/u'_3,u_1/u'_3,v/u'_3,u'_3)$. After an eventual translation on $u'_3$ and maybe on $Z'$ to get   well adapted
coordinates at $x_1$, after some abuse of notations, we may assume that $x_2$ is the point of parameters $(Y,u_1,u_2,w):=(Z'/v,u_1/v,v,u'_3/v)$.
With $W'':=$div$(u_1)$, we get
$$
\Phi'':=U_1^{-pd'_1}\overline{U_2}^{-pd'_1-\omega(x)+p}F_{p,Y,W''}=\overline{\gamma}\overline{U_2} \Phi(1, {\overline{W}}),
$$
with ord$_{\overline{W}} \Phi(1, {\overline{W}})\leq \omega(x)$. When ord$_{\overline{W}} \Phi(1, {\overline{W}})= \omega(x)-1$, we get $\kappa(x_2)\leq 2$, when ord$_{\overline{W}} \Phi(1, {\overline{W}})= \omega(x)$, we are done by Lemma   \ref{sortiemonome}.}

\end{proof}

\begin{lem}\label{sortiekappaegaldeuxbis}
Assume that $\epsilon (x)=\omega (x)$ and $E=\mathrm{div}(u_1)$.
Let $(u_1,u_2 ,u_3;Z)$ be well adapted coordinates at $x$. Assume furthermore that
the initial form polynomial
  $$
  \mathrm{in}_Eh=Z^p +U_1^{pd_1}\overline{F}, \ \overline{F}\in S/(u_1)
  $$
of Lemma \ref{lemsortiekappaegaldeux} has $d_1\in \N$ and
$$
\mathrm{Vdir}\left ({\partial \Phi \over \partial \overline{U}_2},
{\partial \Phi \over \partial \overline{U}_3}\right ) =<\overline{U}_2, \overline{U}_3>,
$$
where $\Phi :=\mathrm{cl}_{\omega (x)+1}\overline{F} \in k(x)[\overline{U}_2, \overline{U}_3]_{\omega (x)+1}$.
Then $x$ is resolved.
\end{lem}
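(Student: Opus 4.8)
The statement of Lemma \ref{sortiekappaegaldeuxbis} is a close sibling of Proposition \ref{sortiekappaegaldeux} (and even more so of the case $\tau'(x)=1$ in its proof), with the difference that we do not require $\overline{F}$ itself to equal $\Phi$; only the hypothesis on the top form $\Phi=\mathrm{cl}_{\omega(x)+1}\overline F$ is imposed, together with $d_1\in\mathbb N$. The plan is therefore to reduce to the blow-up of $x$, to classify the possible very near exceptional points $x'$ using Theorem \ref{bupthm}, and in each case either apply Theorem \ref{bupthm} directly, or fall back on one of the already-established ``sortie'' lemmas (\ref{sortiemonome}, \ref{sortieomegaun}, \ref{sortiekappaegaldeux}, \ref{sortiebis}) or on Proposition \ref{tauegaldeux}. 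Throughout we may assume $\epsilon(x)=\omega(x)$, $E=\mathrm{div}(u_1)$, and by Theorem \ref{omegageomreg} (cf.\ Remark \ref{stricthenselian}) we may enlarge $S$ so that $S/(u_1)\simeq k(x)[\overline u_2,\overline u_3]_{(\overline u_2,\overline u_3)}$, which is needed to invoke Propositions \ref{sortiekappaegaldeux} and \ref{sortiebis}.

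\textbf{Key steps.} First, since $\kappa(x)\geq 2$ we have $\mathrm{in}_{m_S}h=Z^p+F_{p,Z}$ with $H^{-1}F_{p,Z}\subseteq k(x)[U_1,U_2,U_3]_{\omega(x)}$ and $U_1\in\mathrm{Vdir}(x)$ (same argument as at the start of the proof of Lemma \ref{sortiemonome}). Let $\pi:{\cal X}_1\to({\cal X},x)$ be the blow-up of $x$ and $x_1\in\pi^{-1}(x)$ the center of $\mu$; by Theorem \ref{bupthm} we may assume $x_1$ is very near $x$, so $x_1$ lies on the strict transform of $\mathrm{div}(u_1)$ and $H(x_1)=(u_1^{p(d_1-1)+\epsilon(x)})$ with $d_1-1+\epsilon(x)/p=d'_1\in\mathbb N$ (using $d_1\in\mathbb N$). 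Second, I would discuss according to $\tau'(x)$. If $\tau'(x)=3$, Theorem \ref{bupthm} already gives $\iota(x_1)<\iota(x)$. If $\tau'(x)=2$, then after a change of well adapted coordinates (substituting $u_2\mapsto u_2+\delta u_3$ or $u_3\mapsto u_3+\delta u_2$, which does not affect the hypothesis on $\Phi$ since it only permutes the plane $\langle\overline U_2,\overline U_3\rangle$) we may take $\mathrm{Vdir}(x)=\langle U_1,U_2\rangle$; the hypothesis then forces $\Phi\notin\langle\overline U_2^{1+\omega(x)},\overline U_3\overline U_2^{\omega(x)}\rangle$ and, when $\omega(x)-1\equiv 0\bmod p$, also $\Phi\notin\langle\overline U_2^{1+\omega(x)},\overline U_3\overline U_2^{\omega(x)},\overline U_3^2\overline U_2^{\omega(x)-1}\rangle$ — exactly the combinatorial input (\ref{eq716})--(\ref{eq717}) used inside Proposition \ref{sortiebis}, so the argument there applies verbatim and $x$ is resolved (the exceptional cases $p=2$ with $\omega(x)$ odd being ruled out as in that proof). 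If $\tau'(x)=1$, i.e.\ $\mathrm{Vdir}(x)=\langle U_1\rangle$ and $F_{p,Z}=\lambda U_1^{pd_1+\omega(x)}$, the blow-up produces $x_1$ on the strict transform of $\mathrm{div}(u_1)$; if $k(x_1)=k(x)$ one reduces to the same combinatorial verification as in the $\tau'(x)=2$ case, and if $[k(x_1):k(x)]\geq 2$ one uses Theorem \ref{omegageomreg} exactly as in the last paragraph of the proof of Proposition \ref{sortiekappaegaldeux} to conclude that the residually nontrivial branch cannot keep $\iota$ constant except in a small exceptional range which the hypothesis on $\Phi$ (via (\ref{eq7151})) excludes.

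\textbf{Main obstacle.} The delicate point is the very near exceptional point $x_1$ with $k(x_1)=k(x)$ at which $\epsilon$ may jump to $1+\omega(x)$: there one must recognize that $({\cal X}_1,x_1)$ satisfies the hypotheses of Lemma \ref{sortiemonome} (when the transformed $\overline F_1$ acquires a monic monomial computing $\omega$ transverse to $E$) or of Lemma \ref{sortieomegaun} (when $\omega(x)=1$) or falls into the ``tangent'' subcase handled by Proposition \ref{tauegaldeux}. Concretely, I would compute $\mathrm{in}_{E_1}h_1$ via Proposition \ref{bupformula}(ii)(iv)(v) and Proposition \ref{originchart}, observe that its top form is obtained from $\Phi$ by the inverse stereographic substitution, and read off from the hypothesis $\mathrm{Vdir}(\partial\Phi/\partial\overline U_2,\partial\Phi/\partial\overline U_3)=\langle\overline U_2,\overline U_3\rangle$ that at most a single rational point $x_1$ can be very near $x$ with $\tau'(x_1)\leq 2$, and that this point satisfies one of the three cited lemmas. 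The bookkeeping of which monomials survive (cases 1 and 2 in the proof of Proposition \ref{sortiekappaegaldeux}, distinguished by whether $F_{\omega(x)}$ or $F_{\omega(x)-1}$ type terms appear) is routine once the hypothesis is translated into this form, so I expect the write-up to be a close paraphrase of the proof of Proposition \ref{sortiekappaegaldeux} in the case $\tau'(x)=1$, with the single extra observation that only the leading form $\Phi$ of $\overline F$ matters because lower-order terms of $\overline F$ contribute only to $\mathrm{in}_{E_1}h_1$ in degrees $>\epsilon(x_1)$ after blowing up.
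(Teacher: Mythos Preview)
Your reduction to the case $\kappa(x)=4$ is correct but your case split by $\tau'(x)$ is largely vacuous: since $\epsilon(x)=\omega(x)$ and $E=\mathrm{div}(u_1)$, one has $\kappa(x)\in\{1,2,4\}$; if $\kappa(x)\leq 2$ then $x$ is already resolved for $(p,\omega(x),3)$, and $\kappa(x)=4$ forces $\mathrm{Vdir}(x)=\langle U_1\rangle$, i.e.\ $\tau'(x)=1$. So the whole proof lives in your ``$\tau'(x)=1$'' branch, and this is where your argument has a genuine gap.

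You assert that after one blow-up the unique bad point $x_1$ will satisfy the hypotheses of Lemma~\ref{sortiemonome}, Lemma~\ref{sortieomegaun}, or Proposition~\ref{tauegaldeux}, because ``only the leading form $\Phi$ of $\overline F$ matters.'' This is false. The higher-order terms of $\overline F$ do survive the blow-up: at the bad point $x_1=(Z/u_2,u_1/u_2,u_2,u_3/u_2)$ with $E'=\mathrm{div}(u'_1u_2)$, they produce extra vertices of $\Delta_{S_1}(h';u'_1,u_2,u'_3;Z')$ in the region $v_1=d'_1$, so the single-vertex hypothesis of Lemma~\ref{sortiemonome} is not available. What one actually obtains from $\Phi\in\langle\overline U_3^{\omega(x)+1},\overline U_2\overline U_3^{\omega(x)}\rangle$ (after reducing to case~1 of the $\tau'=1$ analysis in Proposition~\ref{sortiekappaegaldeux}) is only the congruence
\[
(\overline u_2)^{-(pd'_2+1)}\overline{F_1}\equiv(\overline u'_3{}^{\omega(x)})\ \mathrm{mod}\ \overline u_2,
\]
and one still has $\kappa(x_1)=4$. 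The paper then needs genuinely new work: it uses Lemma~\ref{joyeux}(ii) together with $\widehat{pd_1}=0$ (this is where the hypothesis $d_1\in\N$ is essential) to pin down $\mathrm{Vdir}(x_1)\subseteq\langle U'_1,U_2\rangle$, eliminates the point at infinity $x''$ by a direct order computation, concludes $\mathrm{Vdir}(x_1)=\langle U'_1\rangle$, and observes that this configuration is \emph{stable} under further blow-ups at closed points. The resolution is then obtained not from a single cited lemma but from the formal-arc machinery: iterating produces a formal curve $\hat{\cal Y}=V(\hat Z,u_1,\hat u_3)$ through all $x_r$, Proposition~\ref{permisarc}(1) makes it permissible of the first kind, and blowing up along ${\cal Y}$ finishes. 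None of this iterative structure, the role of $d_1\in\N$ via Lemma~\ref{joyeux}, or the appeal to Proposition~\ref{permisarc} appears in your proposal.
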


\begin{proof}
It can be assumed that $\kappa (x)=4$, i.e. $\mathrm{Vdir}(x)=<U_1>$. We then
review the proof of Proposition \ref{sortiekappaegaldeux} for $\tau '(x)=1$, cases 1 and 2.
We take ${\cal Y}_0:=\{x\}$ in (\ref{eq801}).

\smallskip

Case 2 of {\it loc.cit.} gives $\iota (x_1)\leq (p, \omega (x),2)$ after blowing up $x$, hence $x_1$ is resolved.
Similarly, case 1 yields $\iota (x_1)\leq (p, \omega (x),2)$ or after possibly changing well adapted coordinates:
$$
\Phi \in <\overline{U}_3^{\omega (x)+1},\overline{U}_2\overline{U}_3^{\omega (x)}>,
$$
with $\omega (x)\not \equiv 0 \ \mathrm{mod}p$ and
\begin{equation}\label{eq8015}
    x_1=(Z':=Z/u_2, u'_1=u_1/u_2, u_2,u'_3:=u_3/u_2), \ E'=\mathrm{div}(u'_1u_2).
\end{equation}
The case $\omega (x)=1$ is dealt with as in Proposition \ref{sortiekappaegaldeux}:
  {If $\omega (x)=1$, then $({\cal X}',x')$ satisfies the assumption of Lemma \ref{sortieomegaun} or
there is an expansion \eqref{eq712}
\begin{equation}
   \mathrm{in}_{m_{S'}}h'={Z'}^p + {U'_1}^{pd_1}{U'_2}^{p(d_1-1)+1}(\lambda'_1U'_1+\lambda'_2U'_2)
    \in G(m_{S'})[Z'].
\end{equation}}
  {As in Proposition \ref{sortiekappaegaldeux}, ${\cal Y}':=V(Z',u'_1,u'_2)\subset {\cal X}'$ is
 is permissible of the first kind at $x'$ and either  blowing up ${\cal Y}'$ then gives $\iota (x'')\leq (p,\omega (x),1)$ by Theorem \ref{bupthm},
where $x''$ is the center of $\mu$, or we blow up up consecutively ${\cal Y}'_1$, then ${\cal Y}'_2$,
and iterating, we reduce to the case $d_1 =a_1$, $d_2=a_2$, $1+a_1+a_2=p$, $a_1a_2>0$: $\tau(x)=3$.}

\smallskip

Assume that  $\omega (x)\geq 2$. Let $E_1:=\mathrm{div}(u'_1)\subset \mathrm{Spec}S'$
be the strict transform of $E$. We get an expansion
$$
  \mathrm{in}_{E_1}h'={Z'}^p +{U'_1}^{pd_1}\overline{F_1}, \   {\overline{F_1}}\in S'/(u'_1),
$$
where $d'_1=d_1$, $d'_2=d_1-1+\omega (x)/p$ and
\begin{equation}\label{eq8016}
(\overline{u}_2)^{-(pd'_2+1)}\overline{F_1}\equiv ({\overline{u}'_3}^{\omega (x)})\ \mathrm{mod}\overline{u}_2.
\end{equation}

It can be furthermore assumed that $\kappa (x_1)=4$. By Lemma \ref{joyeux}(ii)   {applied with $a(3)=0$}, we have
$\mathrm{Vdir}(x_1)=<U'_1>$ or $\mathrm{Vdir}(x_1)=<U'_1,U_2>$ since $\widehat{pd_1}=0$ is assumed
in this lemma. We take ${\cal Y}_1:=\{x_1\}$ in (\ref{eq801}) and first consider the point
$$
x'':=(Z'':=Z'/u'_3, u''_1=u'_1/ {u'_3}, u''_2:=u_2/u'_3,u'_3), \ E''=\mathrm{div}(u''_1u''_2u'_3).
$$
By (\ref{eq8016}), we obtain $\omega (x'')< \omega (x)$ (resp. $\tau '(x'')=3$) if $\omega (x)\geq 3$
(resp. if $\omega (x)=2$), so $x''$ is resolved in any case. By Theorem \ref{bupthm} it can therefore
be assumed that
\begin{equation}\label{eq8017}
\mathrm{Vdir}(x_1)=<U'_1>.
\end{equation}
Applying again (\ref{eq8016}), we obtain
$$
(\overline{u}_2)^{-(pd'_2+1)}{\partial \overline{F_1} \over \partial \overline{u}'_3}\equiv
({\overline{u}'_3}^{\omega (x)-1})\ \mathrm{mod}\overline{u}_2.
$$
Once again, we obtain $\iota (x_2)\leq (p,\omega (x),2)$ or after possibly changing well adapted coordinates:
$$
x_2=(Z'/u_2, u''_1:=u'_1/u_2, u_2,u'_3/u_2), \ E''=\mathrm{div}(u''_1u_2).
$$

It is now clear that (\ref{eq8016})-(\ref{eq8017}) are stable by blowing up. Iterating, we obtain that
$x_r$ is resolved for some $r\geq 1$ in (\ref{eq801}) or there exists a formal curve
$\hat{\cal Y}=V(\hat{Z},u_1,\hat{u}_3)$ whose strict transform passes through all points $x_r$, $r\geq 1$.
By Proposition \ref{permisarc}(1), it can be assumed that ${\cal Y}=V(Z,u_1,u_3)$ is permissible of
the first kind. Then $x$ is resolved by blowing up ${\cal Y}$ and the conclusion follows.
\end{proof}

\begin{lem}\label{kappa3prelim}
Assume that $\kappa (x)=3$. Then $x$ is good, or there exist well adapted coordinates
$(u_1,u_2,u_3;Z)$ at $x$ and an expansion (\ref{eq802}) such that one of the following properties holds.
\begin{itemize}
  \item [(1)] we have
  \begin{equation}\label{eq8022}
\left\{
  \begin{array}{c}
    \Phi_{i+1} \in k(x)[U_1], \ 0 \leq i \leq \omega (x)-1, \ \mathrm{and} \hfill{} \\
     \\
    \Phi_{\omega (x)+1}=(\lambda_1 U_1 +\lambda_2U_2)U_1^{\omega (x)}, \ \lambda_1,  \lambda_2 \in k(x) \\
  \end{array}
\right.
.
\end{equation}
Furthermore ($ {\Phi_{i+1}} = 0$ for every $i\geq 0$) or ($x_1=x'$ in (\ref{eq801})), where
$$
{\cal Y}_0:=\{x\} \ \mathrm{and} \ x':=(Z':=Z/u_2,u'_1:=u_1/u_2, u_2, u'_3:=u_3/u_2);
$$
  \item [(2)] we have $E=\mathrm{div}(u_1u_2)$, $\tau '(x)=1$ and $x$ satisfies condition (**) (Definition \ref{**}).
\end{itemize}
\end{lem}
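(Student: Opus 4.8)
\textbf{Proof plan for Lemma \ref{kappa3prelim}.}

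The plan is to run the quadratic sequence (\ref{eq801}) with centers ${\cal Y}_i=\{x_i\}$ and follow the behaviour of the initial form polynomial, exactly in the style of propositions \ref{sortiekappaegaldeux}, \ref{sortiebis} and the preceding lemmas in this chapter. First I would fix well adapted coordinates $(u_1,u_2,u_3;Z)$ at $x$ such that $U_3\in\mathrm{Vdir}(x)$, giving the expansion (\ref{eq802}) with the side condition (\ref{eq8021}). Since $\kappa(x)=3$ forces $\epsilon(x)=\omega(x)+1$, we have $\mathrm{div}(u_1)\subseteq E\subseteq\mathrm{div}(u_1u_2)$ and $G^p\subseteq H\cdot k(x)[U_1,\dots,U_e]$. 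The argument splits according to $\tau'(x)$. If $\tau'(x)=3$, then blowing up $x$ gives $\iota(x_1)<\iota(x)$ by theorem \ref{bupthm}, so $x$ is good. If $\tau'(x)=2$ and $\mathrm{Vdir}(x)$ is \emph{not} of the special form $\langle U_3,\lambda_1 U_1+U_2\rangle$ with $E=\mathrm{div}(u_1u_2)$, I would arrange by a linear change of the (non-exceptional part of the) coordinates that $\mathrm{Vdir}(x)=\langle U_3,U_2\rangle$ or $\langle U_3,U_1\rangle$; if instead $\mathrm{Vdir}(x)=\langle U_3,\lambda_1 U_1+U_2\rangle$ with $\lambda_1\neq 0$ and $E=\mathrm{div}(u_1u_2)$, then $x$ is resolved (hence good) by proposition \ref{sortiebiskappaegaltrois}, so that case can be discarded.

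The heart of the lemma is the case $\tau'(x)=1$, i.e. $\mathrm{Vdir}(x)=\langle U_3\rangle$ (and the residual case $\tau'(x)=2$, $\mathrm{Vdir}(x)=\langle U_3,U_2\rangle$, which I would fold into the same computation by symmetry on $U_1,U_2$). Here I would perform a ``solving'' normalization on the coefficients $\Phi_{i+1}(U_1,U_2)$ of (\ref{eq802}) along the lines of lemma \ref{joyeux} and the normalizations used in the proof of proposition \ref{redto*}: modifying $u_2$ by $u_2\mapsto \gamma_0 u_2+\gamma u_1$ and re-choosing well adapted coordinates, one can kill all monomials of $\Phi_{i+1}$ involving $U_2$ except for the top one, reducing (after re-preparation) either to the situation where $\Phi_{i+1}\in k(x)[U_1]$ for $0\le i\le\omega(x)-1$ and $\Phi_{\omega(x)+1}=(\lambda_1U_1+\lambda_2U_2)U_1^{\omega(x)}$ — this is (\ref{eq8022}) — or to the case where this fails, which by the $\tau'(x)=1$ analysis of propositions \ref{sortiekappaegaldeux}--\ref{sortiebis} (more precisely by lemmas \ref{sortiemonome}, \ref{sortieomegaun}, lemma \ref{sortiekappaegaldeuxbis}, and lemma \ref{joyeux}) either produces a point $x_1$ with $\tau'(x_1)=3$, or $\iota(x_1)<\iota(x)$, or reduces to case (2) of the statement, namely $E=\mathrm{div}(u_1u_2)$ with $x$ satisfying the monic expansion condition (**) of definition \ref{**}. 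In this last reduction I must verify (**)(i)--(ii): condition (i) $1+\omega(x)\not\equiv 0\bmod p$ comes from the fact that otherwise the derivative test in definition \ref{defomega} forces $\kappa(x)\geq 2$ in a way incompatible with $\kappa(x)=3$ (as in (\ref{eq7151})), and condition (ii) on the vertex of the polyhedron comes from the normalized form of $F_{p,Z}$ after the coefficient-solving step.

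Finally, once (\ref{eq8022}) is achieved, I would blow up $x$ and analyze $x_1$. By theorem \ref{bupthm}, if $x_1$ is very near $x$ it lies over the strict transform of $\mathrm{div}(u_1)$ (since $\mathrm{Vdir}(x)=\langle U_3\rangle$ or $\langle U_3,U_2\rangle$ depending on $\tau'(x)$), and the only chart where it can fail to be immediately resolved or to have dropped invariant is $x_1=x'$ with $x'=(Z/u_2,u_1/u_2,u_2,u_3/u_2)$; in the subcase $\Phi_i=0$ for all $i$, $F_{p,Z}$ is already of the required normalized shape with no lower faces and the alternative ``$\Phi_i=0$ for every $i\geq 0$'' of (1) holds. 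So the statement's dichotomy (1) vs. (2) records exactly the two stable outcomes of this reduction. The main obstacle I anticipate is the bookkeeping in the coefficient-solving step: keeping track of which monomials of the $\Phi_{i+1}$ can be absorbed by admissible coordinate changes without disturbing $U_3\in\mathrm{Vdir}(x)$, well-adaptedness, or the constraint $\omega(x)-i\not\equiv 0\bmod p$ from (\ref{eq8021}) — this is where the characteristic-$p$ subtleties (non-reduced binomial coefficients, $p$-th powers, lemma \ref{joyeux}) all interact, much as in the proof of proposition \ref{redto*}, and where the small-characteristic exceptions (e.g. $p=2$) would have to be checked separately and shown to land in one of the two listed alternatives or to be already good.
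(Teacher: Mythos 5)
Your overall scaffolding — blow up at $x$, analyze according to $\tau'(x)$, dispose of $\tau'(x)\geq 2$ via theorem \ref{bupthm} and proposition \ref{sortiebiskappaegaltrois}, and then concentrate on $\tau'(x)=1$ — agrees with the paper, and you correctly identify that case (2) arises precisely when $\tau'(x)=1$ and $E=\div(u_1u_2)$. But the heart of your argument, the ``solving normalization'' step, contains a gap that the paper's proof does not have and that I do not believe can be repaired as stated.

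You claim that a linear change $u_2\mapsto\gamma_0 u_2+\gamma u_1$ (followed by re-preparation) can reduce each $\Phi_{i+1}(U_1,U_2)$ to a polynomial in $U_1$ alone for $0\le i\le\omega(x)-1$. This cannot be a purely algebraic normalization: $\Phi_{i+1}$ is homogeneous of degree $i+1$ in two variables, so has $i+2$ coefficients, while a linear substitution in $u_2$ gives only two scalar parameters. There is no reason the family of $\Phi_{i+1}$ can all simultaneously be made univariate, and in general they cannot. The form (\ref{eq8022}) in the paper is \emph{not} achieved by normalization; it is \emph{derived} as a necessary condition on the original expansion. The paper blows up $\{x\}$, and in the chart-origin case $x_1=x'=(Z/u_2,u_1/u_2,u_2,u_3/u_2)$ it uses proposition \ref{originchart} to see that $(u'_1,u_2,u'_3;Z')$ are already well adapted at $x'$, then reads off: (a) $\epsilon(x')\geq\omega(x)$ forces $\deg_{U_2}\Phi_{i+1}\leq 1$; (b) $\omega(x')=\omega(x)$ forces $\Phi_{i+1}\in k(x)[U_1]$ when $\omega(x)-i\not\equiv 0\bmod p$; (c) $\kappa(x')>2$ forces the same for $\omega(x)-i\equiv 0\bmod p$, $i\neq\omega(x)$. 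That is (\ref{eq8022}). Your write-up inverts this logic (``once (\ref{eq8022}) is achieved, I would blow up $x$ and analyze $x_1$'') and consequently never supplies the actual mechanism.

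The second omission is the case $x_1\neq x'$ with $E=\div(u_1)$. The lemma asserts that in alternative (1) one has either $\Phi_i=0$ for all $i$ or $x_1=x'$, so you must \emph{rule out} $x_1\neq x'$ when some $\Phi_{i+1}\neq 0$. The paper does this by a genuinely technical argument: using $\mathrm{Vdir}(x)=\langle U_3\rangle$ it gets $\partial\Phi_{i+1}/\partial U_2=0$ and hence $\Phi_{i+1}=U_1^{a_i}\Psi_{i+1}$ with $\Psi_{i+1}\in k(x)[U_1^p,U_2^p]$ and $a_i\geq 1$; it then invokes proposition \ref{bupformula}(v) and lemma \ref{lem532}(2) to show $\omega(x_1)\leq\omega(x)$ with equality only if $a_i=1$ and $k(x_1)=k(x)$, after which a further computation shows $\kappa(x_1)\leq 2$, a contradiction. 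Your proposal waves at lemmas \ref{sortiemonome}, \ref{sortieomegaun}, \ref{sortiekappaegaldeuxbis} and \ref{joyeux} but does not supply this chain, which is the content that actually produces the sharp dichotomy stated in (1). In short, your outline is compatible with the lemma being true, but the proof mechanism — a coordinate normalization to force (\ref{eq8022}) — is not available, and the paper replaces it by extracting (\ref{eq8022}) as a consequence of $\iota(x')\geq\iota(x)$ together with a careful elimination of the non-origin case.
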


\begin{proof}
We always take ${\cal Y}_0:=\{x\}$ in (\ref{eq801}) and discuss according to $x_1$.
It can be assumed that $\iota (x_1)\geq \iota (x)$ (in particular $\omega (x_1)=\omega (x)$).

\smallskip

First suppose that $x_1=x'$.
By Proposition \ref{originchart}, $(u'_1, u_2, u'_3;Z')$ are well adapted coordinates
at $x'$. Since $\epsilon (x')\geq \omega (x)$ by assumption, we deduce that $\mathrm{deg}_{U_2}\Phi_{i+1}\leq 1$,
$0 \leq i \leq \omega (x)$. Similarly, $\Phi_{i+1}\in k(x)[U_1]$ for $\omega (x)-i\not \equiv 0 \ \mathrm{mod}p$
(resp. for $\omega (x)-i\equiv 0 \ \mathrm{mod}p$, $i\neq \omega (x)$) because $\omega (x')=\omega (x)$ (resp.
because $\kappa (x')>2$). Therefore (\ref{eq8022}) holds if $\iota (x')\geq \iota (x)$.

\smallskip

Assume now that $x_1\neq x'$. By Theorem \ref{bupthm}, $x_1$ is resolved if
$$
<U_1,U_3> \subseteq \mathrm{Vdir}(x).
$$
If ($E=\mathrm{div}(u_1u_2)$ and $\tau '(x)=2$), it can thus be assumed by symmetry on $u_1,u_2$ that
$\mathrm{Vdir}(x)=<U_3,\lambda_1U_1+U_2>$, $\lambda_1\neq 0$.
Then $x$ is resolved by Proposition \ref{sortiebiskappaegaltrois}.

Since $x_1$ is very near  {to} $x$, it can be assumed from now on that
\begin{equation}\label{eq8025}
\mathrm{Vdir}(x)=<U_3>.
\end{equation}
We get in (\ref{eq802}): $G=0$ and $ {\Phi_{i+1}}=0$ for  $\omega (x)-i\not \equiv 0 \ \mathrm{mod}p$.
By (\ref{eq8021}), we furthermore have
\begin{equation}\label{eq8023}
c\neq 0 \ \mathrm{and} \ \omega (x)+1\not \equiv 0 \ \mathrm{mod}p .
\end{equation}
If $E=\mathrm{div}(u_1u_2)$, we therefore have (2) and the proof is complete.

\smallskip

Assume now that $E=\mathrm{div}(u_1)$.  Let $I:=\{i : \Phi_{i+1}\neq 0\}$. To conclude the proof,
we will prove that
$$
I\neq \emptyset \Longrightarrow x_1 \ \mathrm{is} \ \mathrm{resolved}.
$$

Let $i\in I$. By (\ref{eq8025}) and (\ref{eq8023}), we have
\begin{equation}\label{eq8026}
\omega (x)-i\equiv 0 \ \mathrm{mod}p, \ i+1 \not \equiv 0 \ \mathrm{mod}p.
\end{equation}
There is an expansion
\begin{equation}\label{eq8024}
\Phi_{i+1}(U_1,U_2) =U_1^{a_i}\Psi_{i+1}(U_1,U_2) , \ a_i \geq 0.
\end{equation}
where $U_1$ does not divide $\Psi_{i+1}$. By (\ref{eq8025}), we have
${\partial \Phi_{i+1} \over \partial U_2}=0$, therefore $\Psi_{i+1}\in k(x)[U_1^p,U_2^p]$,
whence $a_i\geq 1$ by (\ref{eq8026}). Expand
$$
\Psi_{i+1}(U_1,U_2)=:\mu_i U_2^{pb_i} + \cdots , \ \mu_i \neq 0, \ b_i\in \N.
$$
 {As $pb_0\leq 1$ and $b_0\in \N$}, $i\not\in I$: \eqref{eq8022} holds for $i=0$. After possibly changing $Z$ with $Z-\phi$, $\phi \in S$, it can be
assumed that $pd_1+a_i \not \equiv 0 \ \mathrm{mod}p $ or $\mu_i \not \in k(x)^p$.

\smallskip

If $I=\{0\}$, $\kappa (x_1)>2$ implies that $\epsilon (x_1)=\omega (x_1)$: $x_1$ satisfies the assumptions
of Lemma \ref{sortiemonome} (or of Lemma \ref{sortieomegaun}) and the conclusion follows.

\smallskip

Suppose that $i\geq 1$ in what follows. We can take a unitary polynomial $P(t)\in S[t]$,
whose reduction $\overline{P}(t)\in k(x)[t]$ is irreducible and
$$
x_1=(X':=Z/u_1,u_1,  {v}:=P(u_2/u_1), u'_3:=u_3/u_1).
$$
Let $(u_1,  {v}, u'_3; Z')$ be well adapted coordinates at $x_1$. Given
$$
D\in \left \{ U_1{\partial \hfill{} \over \partial U_1},
\ \{{\partial \hfill{} \over \partial \lambda_l}\}_{l\in \Lambda_0}\right \},
$$
we let $\phi_{i,D}(\overline{u_ 2\over u_1}):=U_1^{-(pd_1+i+1)} (D \cdot  U_1^{pd_1}\Phi_{i+1} )\in k(x)[\overline{ {u_ 2\over u_1}}]_{\leq pb_i}$.
By Proposition \ref{bupformula}(v), we have
$$
\omega (x_1)\leq \min_{i,D}\{\omega (x)-i +\mathrm{ord}_{\overline{v}}\phi_{i,D}(\overline{ {u_ 2\over u_1}})\}\leq \omega (x),
$$
where equality holds only if $a_i=1 \ \mathrm{and} \  k(x_1)=k(x)$
by Lemma \ref{lem532}(2). In particular we have $I \subset p\N$.
Since $k(x_1)=k(x)$, it can be assumed w.l.o.g. that $P(t)=t$ and
$$
\Phi_{i+1}(U_1,U_2)=\mu_i U_1U_2^{i},  \ \mathrm{for} \ \mathrm{every} \ i \geq 0
$$
after possibly changing well adapted coordinates (including $i=0$, {\it cf.} above).
Then $(u_1, u'_2, u'_3;X')$ are well adapted coordinates at $x_1$ by Proposition \ref{originchart}.
We obtain: $\epsilon (x_1)=\omega (x)$ and
$$
{H'}^{-1}F_{p,X'}=\sum_{k=0}^{\omega (x)/p}\mu_{kp}{U'_3}^{\omega (x)-kp}{U'_2}^{kp} +U_1\Phi ',
$$
for some $\Phi ' \in k(x)[U_1,U'_2,U'_3]$. But then $\kappa (x_1)\leq 2$: a contradiction. This completes the proof
when $E=\mathrm{div}(u_1)$.
\end{proof}

\subsection{Preliminaries: tangent case.}

Let $(u_1,u_2,u_3;Z)$ be well adapted coordinates at $x$, where $\kappa (x)=4$. This splits into two
different situations:

\smallskip

$\bullet$ if $\omega (x)=\epsilon(x)$, the initial form polynomial is of the form
\begin{equation}\label{eq9012}
\mathrm{in}_{m_S} h=Z^p +F_{p,Z}\in G(m_S)[Z],
\end{equation}
where $H^{-1}F_{p,Z} \subset k(x)[U_1, \ldots ,U_e]_{\omega (x)}$, $1 \leq e \leq 3$.

\smallskip

$\bullet$ if $\omega (x)=\epsilon(x)-1$, the initial form polynomial is of the form
\begin{equation}\label{eq901}
\mathrm{in}_{m_S} h=Z^p -G^{p-1}Z +F_{p,Z}\in G(m_S)[Z]
\end{equation}
with $H^{-1}G^p \subset k(x)[U_1, \ldots ,U_e]_{\omega (x)+1}$, $1 \leq e \leq 2$. By Definition
\ref{defomega}, we have
\begin{equation}\label{eq9011}
(0)\neq V(TF_{p,Z},E,m_S)\subseteq k(x)[U_1, \ldots ,U_e]_{\omega (x)}.
\end{equation}

\begin{defn}\label{skewdir}
Assume that $\kappa (x)=4$ and $\epsilon (x)=\omega (x)$. We say that $\mathrm{Vdir}(x)$ is skew \index{skew $\kappa (x)=4$,! $\mathrm{Vdir}(x)$ is skew}
if for every subset $J\subseteq \{1, \ldots ,e\}$, we have
$$
\mathrm{Vdir}(x)\neq <\{u_j\}_{j\in J}>.
$$
\end{defn}

Assume that $\mathrm{Vdir}(x)$ is skew \and first note that $e=2$ or $e=3$.
Elementary casuistics, similar to that performed in the proof of Proposition \ref{redto*},
yield the following types up to reordering exceptional variables:\\

\noindent (T0) $E=\mathrm{div}(u_1u_2u_3)$ and
\begin{equation}\label{eq902}
\mathrm{Vdir}(x)=<\lambda_1U_1 + \lambda_2U_2 +U_3>, \ \lambda_1\lambda_2\neq 0.
\end{equation}

\noindent (T1) $E=\mathrm{div}(u_1u_2u_3)$ and
\begin{equation}\label{eq903}
\mathrm{Vdir}(x)=<\lambda_1U_1 + U_2 , \lambda_2  {U_1} +U_3>, \ \lambda_1\lambda_2 \neq 0.
\end{equation}

\noindent (T2) $E=\mathrm{div}(u_1u_2u_3)$ and
\begin{equation}\label{eq904}
\mathrm{Vdir}(x)=<\lambda_1U_1 + U_2 , U_3>, \ \lambda_1 \neq 0.
\end{equation}

\noindent (T3) $\mathrm{div}(u_1u_2)\subseteq E \subseteq \mathrm{div}(u_1u_2u_3)$ and
\begin{equation}\label{eq905}
\mathrm{Vdir}(x)=<\lambda_1 U_1 +U_2>, \ \lambda_1 \neq 0.
\end{equation}

\begin{prop}\label{skewresolved}
Assume that $\mathrm{Vdir}(x)$ is skew. Assume furthermore that
$$
J(F_{p,Z},E,m_S) \nsubseteq (U_3)\cap G(m_S)_{\epsilon (x)}
$$
if $x$ is of type (T2) above. Take (\ref{eq801}) to be the quadratic sequence along $\mu$.

\smallskip

Then there exists $r \geq 0$ such that either $x_r$ is resolved or $x_r$ satisfies condition (**).
If $\omega (x)<p$, then $x$ is resolved.
\end{prop}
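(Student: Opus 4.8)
The plan is to run the quadratic sequence (\ref{eq801}) and show that the ``skewness defect'' of $\mathrm{Vdir}(x)$ strictly improves at each step until we reach a point that is resolved or satisfies condition (**). First I would blow up $\{x\}$, which is a permissible center of the first kind (Definition \ref{deffirstkind}), and apply the blowing up Theorem \ref{bupthm}: one gets $\iota(x_1)<(p,\omega(x),3)$, i.e. $x_1$ resolved, unless $x_1$ lies on the projective cone $PC(x,\{x\})$ of Definition \ref{defcone}. Since $\omega(x)=\epsilon(x)$, this cone is $\mathbf{Proj}$ of $\mathrm{Max}(x)$, which by Proposition \ref{conedirectrix} and Remark \ref{ridgedimthree} equals $\mathrm{Dir}(x)$ in dimension three (the exceptional biquadratic case $p=2$ of Remark \ref{ridgedimthree} being disposed of beforehand by Proposition \ref{tausup2}, which gives $\kappa(x)\le 2$). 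Thus any very near exceptional point $x_1$ lies on $\mathbf{Proj}(k(x)[U_1,U_2,U_3]/(\mathrm{Vdir}(x)))\subseteq \PP^{2}_{k(x)}$.

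Next I would use skewness of $\mathrm{Vdir}(x)$ (Definition \ref{skewdir}): the linear subspace $\mathbf{Proj}(\mathrm{Vdir}(x))$ of $\PP^{2}_{k(x)}$ is not a coordinate subspace and, in types (T0) and (T1), avoids the coordinate points attached to the components of $E$, so $x_1$ is never a point at which $h$ would simply reproduce a skew configuration of the same type. After reducing to $k(x_1)=k(x)$ when convenient (Theorem \ref{omegageomreg}) and performing a renormalizing change of adapted coordinates, one obtains a triple $(S_1,h_1,E_1)$ again satisfying {\bf (G)} and {\bf (E)} (Propositions \ref{SingX} and \ref{Estable}) with $m(x_1)=p$, $\omega(x_1)=\omega(x)$, and I would compute $\mathrm{Vdir}(x_1)$ from the blow up formula Proposition \ref{bupformula}(v) applied to $J(F_{p,Z},E,m_S)$ and $\mathrm{Max}(x)$. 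The outcome should be a strictly decreasing complexity $(\mathrm{T0})\succ(\mathrm{T1})\succ(\mathrm{T2})\succ(\mathrm{T3})$: either $\tau'(x_1)=3$ (so $x_1$ resolved by Theorem \ref{bupthm}), or $\omega(x_1)<\omega(x)$ (so $x_1$ resolved), or $x_1$ is again skew of strictly smaller type, or — starting from type (T3), where $\mathrm{Vdir}(x)=\langle\lambda_1U_1+U_2\rangle$ with $\mathrm{div}(u_1u_2)\subseteq E$ — the equation $\mathrm{in}_{m_{S_1}}h_1$ acquires a monic expansion, i.e. $x_1$ satisfies (**). The extra hypothesis on $J(F_{p,Z},E,m_S)$ in case (T2) is exactly what prevents the blow up from cycling back to type (T2); the small residue characteristic twists ($p=2$) would be checked separately at this point.

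Iterating, and invoking Corollary \ref{permisarcthree} to rule out an infinite chain of rational, non-exceptional closed points with constant $\iota$, after finitely many quadratic transforms we reach $x_r$ which is resolved or satisfies (**) (recall remark \ref{quadsequence}). For the last assertion, assume $\omega(x)<p$; then $x_r$ can satisfy (**) with $\omega(x_r)=\omega(x)$ only if $1+\omega(x)\not\equiv 0\ \mathrm{mod}\ p$, hence $2\le 1+\omega(x)\le p-1$. In that case the monic expansion attached to the vertex $\mathbf v$ of Definition \ref{**}(ii) is governed, on the relevant compact face, by a polynomial $Z^{p}+\lambda U_1^{pd_1}(U_2^{pd_2})U_3^{1+\omega(x)}+\cdots$ with exponent $1+\omega(x)<p$ on the transverse variable, and blowing up the Hironaka permissible curve carrying it — arguing exactly as in Chapter \ref{contactmaximal} via Lemma \ref{lem532} — forces $\iota$ to drop strictly; thus $x_r$, hence $x$, is resolved.

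The main obstacle I expect is the renormalization step: keeping the coordinates adapted to $E$ while absorbing the skew combinations of types (T2) and (T3), where the combination $\lambda_1U_1+U_2$ involves variables cutting out components of $E$ and so cannot simply be turned into a single new coordinate. The precise computation of $\mathrm{Vdir}(x_1)$ through Proposition \ref{bupformula}(v), together with the verification that the type strictly decreases (no loop, in particular in case (T2) and for $p=2$) and lands in (**) from type (T3), is the delicate casuistic bookkeeping at the heart of the proof.
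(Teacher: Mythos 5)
Your proposal rests on a claimed strictly decreasing hierarchy of skew types $(\mathrm{T0})\succ(\mathrm{T1})\succ(\mathrm{T2})\succ(\mathrm{T3})$ under the quadratic sequence, with $(**)$ only emerging at the bottom from type (T3). This is not how the induction actually works, and the mechanism you propose would fail. In the paper, types (T0), (T1), (T2) are each handled by a \emph{single} blow up: after that one step the point is either resolved or directly satisfies (**). (For (T0) one first proves $\omega(x)\equiv 0 \bmod p$ via Lemma \ref{joyeux}(ii), then applies \ref{joyeux}(i) to monomialize; for (T1) one uses Lemma \ref{sortiekappaegaldeuxbis} or kills $\kappa(x_1)\ge 3$; for (T2) one uses the extra hypothesis on $J(F_{p,Z},E,m_S)$ together with Proposition \ref{bupformula}(v) to land in (**).) There is no descent through the type hierarchy. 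More seriously, type (T3) does \emph{not} always improve: when $x_1=x'$ is the origin of the second chart, $\mathrm{Vdir}(x')$ can be skew of type (T3) again, and this can persist for arbitrarily long. The finiteness of that loop is obtained not from a decreasing discrete invariant but from Proposition \ref{permisarc}: after finitely many quadratic transforms the curve ${\cal Y}=V(Z,u_1,u_2)$ becomes permissible of the first kind, and blowing it up breaks the loop (and settles the $\omega(x)<p$ case there as well, since then $m(x'_1)<p$). You also cite Corollary \ref{permisarcthree} for finiteness, but that corollary needs the $E_r$-irreducible hypothesis which is not available here; the tool used is Proposition \ref{permisarc}.

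A second issue is your treatment of $\omega(x)<p$. You propose a uniform argument "as in Chapter \ref{contactmaximal} via Lemma \ref{lem532}," but the actual argument is type-dependent and more specific: in case (T2), Lemma \ref{joyeux}(ii) forces $d_1,d_2\not\in\N$, $d_3\in\N$ and a relation $\widehat{pd_1}+\widehat{pd_2}+\omega(x)=p$; one then blows up Hironaka-permissible codimension-two centers of the form $V(Z,u_j)$ to reduce to $\delta(x)=1$, and from there $m(x_1)\le 1+\omega(x)<p$. In cases (T0), (T1) and (T3) the small-$\omega$ conclusion likewise comes from the specific monic structure exhibited after blow up (via Lemma \ref{sortiemonome} or directly). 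None of this is a simple appeal to maximal contact machinery from Chapter \ref{contactmaximal}. Your opening remarks (Theorem \ref{bupthm}, $\mathrm{Max}(x)=\mathrm{Dir}(x)$ via Remark \ref{ridgedimthree} and Proposition \ref{tausup2}, passage to $k(x_1)=k(x)$ by Theorem \ref{omegageomreg}) are sound scaffolding, but the central claimed descent is not correct, so the proposal as written does not establish the proposition.
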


\begin{proof}
We discuss according to $x_1$ in (\ref{eq801}), where $x_0=x$ is of type (T$k$) for
some $k\in \{0,1,2,3\}$.
It can be assumed w.l.o.g. that $\iota (x_1)\geq (p,\omega (x),3)$. Let
$u'_j:=u_j/u_1$, $j=2,3$.

\smallskip

\noindent $\bullet$ {\it Assume that $k=0$}. By (\ref{eq902}), we have
\begin{equation}\label{eq9021}
J(F_{p,Z},E,m_S)=<(\lambda_1U_1 + \lambda_2U_2 +U_3)^{\omega (x)}>.
\end{equation}
  {By Lemma \ref{vraijoyeux}, $\omega(x) \equiv 0 \ \mathrm{mod}p$ (in particular $\omega (x)\geq p$) and
 it can  be assumed that}

%We expand
%$$
%U_1^{-pd_1}U_2^{-pd_2}U_3^{-pd_3}F_{p,Z}=\lambda U_3^{\omega (x)} + (\lambda'_1 U_1 +\lambda'_2U_2) U_3^{\omega (x)-1} + \cdots
%$$
%where $\lambda \neq 0$ by (\ref{eq9021}).

%\smallskip

%Suppose that $\omega(x)\not \equiv 0 \ \mathrm{mod}p$. Since $\lambda_1\lambda_2 \neq 0$,
%we also have $\lambda'_1\lambda'_2 \neq 0$ by identifying the coefficient of
%$U_3^{\omega(x)-1}$ in (\ref{eq9021}). By lemma \ref{joyeux}(ii) with $i=1$, we deduce that
%$$
%d_3+ {\omega(x)-1 \over p}\in \N.
%$$
%But then
%$$
%U_1^{-pd_1}U_2^{-pd_2}U_3^{-pd_3}\left ( U_3{\partial F_{p,Z} \over \partial U_3}\right )=\lambda U_3^{\omega (x)}+\Phi ,
%$$
%with $\mathrm{deg}_{U_3}\Phi \leq \omega(x)-2$: a contradiction with (\ref{eq9021}) since $\lambda \neq 0$.

%\smallskip

%This proves that $\omega(x) \equiv 0 \ \mathrm{mod}p$ (in particular $\omega (x)\geq p$).
%By lemma \ref{joyeux}(i), it can thus be assumed that
$$
U_1^{-pd_1}U_2^{-pd_2}U_3^{-pd_3}F_{p,Z}=\lambda (\lambda_1U_1 + \lambda_2U_2 +U_3)^{\omega (x)}
$$
after possibly changing $Z$ with $Z-\phi$, $\phi \in S$.
After possibly reordering exceptional variables, we have
$$
x_1=(X':=Z/u_1, u_1, v:=P(u'_2), w:= u'_3 + \gamma_2 u'_2 +\gamma_1),\  {u'_3(x_1)\not=0}
$$
where $\gamma_1 , \gamma_2 \in S$ are preimages of $\lambda_1, \lambda_2$ and $P(t)\in S[t]$ is a
unitary polynomial whose reduction $\overline{P}(t)\in k(x)[t]$ is irreducible. Applying Proposition
\ref{bupformula}(v) (with $W':=\mathrm{div}(u_1)\subset \mathrm{Spec}S'$), we have
\begin{equation}\label{eq9022}
J(F_{p,X',W'},E',W')=({\overline{w}}^{\omega(x)})
\subseteq k(x_1)[\overline{u}'_2,\overline{u}'_3]_{(\overline{v},\overline{w})}.
\end{equation}
Since $\iota (x_1)\geq (p,\omega (x),3)$ is assumed, (\ref{eq9022}) reads
$$
U_1^{-pd'_1}({\partial F_{p,X',W'} \over \partial \overline{v}},{\partial F_{p,X',W'} \over \partial \overline{w}})
=({\overline{w}}^{\omega(x)})
$$
when $E'=\mathrm{div}(u_1)$. If (\ref{eq9022}) is achieved by ${\partial \hfill{} \over \partial \overline{v}}$,
we then have $\epsilon (x_1)=\omega (x)$ and $x_1$ satisfies
the assumptions of Lemma \ref{sortiemonome}; hence $x_1$ is resolved. Otherwise (\ref{eq9022})
gives
$$
U_1^{-pd'_1} {V}^{-pd'_2}F_{p,Z',W'}=({\overline{w}}^{1+\omega(x)}),
$$
for $E'=\mathrm{div}(u_1)$  {(so $d'_2=0$)} or $E'=\mathrm{div}(u_1v)$. This proves that $x_1$ satisfies condition (**).\\

\noindent $\bullet$ {\it Assume that $k=1$}. By Theorem \ref{bupthm} and
(\ref{eq903}),  we have
$$
x_1=(X':=Z/u_1, u_1, v:=u'_2 +\gamma_1, w:= u'_3 + \gamma_2 ), \ E'=\mathrm{div}(u_1)
$$
where $\gamma_1 , \gamma_2 \in S$ are preimages of $\lambda_1, \lambda_2$.

\smallskip

Assume that $\epsilon (x_1)=\omega (x)$.  {As $\kappa(x_1)>2$, $d'_1\in \N$}, by Proposition \ref{bupformula}(v), $x_1$ satisfies the assumptions
of Lemma \ref{sortiekappaegaldeuxbis} and the conclusion follows.

\smallskip

Assume now that $\epsilon (x_1)=1+\omega (x)$. Let $(u_1,v,w;Z')$ be well adapted coordinates
at $x_1$. By Proposition \ref{bupformula}(v) and (\ref{eq903}), we have
$$
\mathrm{Vdir}(x_1)+<U_1>=<U_1,V,W>.
$$
This is a contradiction with Definition \ref{defkappa}, since $\kappa (x_1)\geq 3$ by assumption.\\

\noindent $\bullet$ {\it Assume that $k=2$}.  By Theorem \ref{bupthm} and
(\ref{eq904}),  we have
$$
x_1=(X':=Z/u_1, u_1, v:=u'_2 +\gamma_1, u'_3), \ E'=\mathrm{div}(u_1u'_3), \ k(x_1)=k(x),
$$
where $\gamma_1  \in S$ is a preimage of $\lambda_1 $. By assumption, there exists
$$
\Phi :=\sum_{i=0}^{ {\omega (x)}}\Phi_i(U_1,U_2)U_3^{\omega (x)-i} \in J(F_{p,Z},E,m_S)
$$
with $\Phi_i \in k(x)[U_1,U_2]_i$ and $\Phi_{\omega (x)}= c(\lambda_1U_1+U_2)^{\omega (x)}$, $c\neq 0$.
Applying Proposition \ref{bupformula}(v) (with $W':=\mathrm{div}(u_1)\subset \mathrm{Spec}S'$), we have
\begin{equation}\label{eq9041}
(\Phi (1, \overline{v} -\lambda_1, {\overline{u}'_3}))\subseteq J(F_{p,Z,W'},E',W')
\subseteq k(x)[\overline{u}'_2,\overline{u}'_3]_{(\overline{v},\overline{u}'_3)}.
\end{equation}

Therefore $x_1$ satisfies condition (**) since $E'=\mathrm{div}(u_1u'_3)$, $c\neq 0$.

\smallskip

Assume now that $\omega (x)<p$. By Lemma \ref{joyeux}(ii), we have
\begin{equation}\label{eq9042}
d_1,d_2 \not \in \N, \ d_3\in \N, \ \widehat{pd_1}+\widehat{pd_2}+\omega (x)=p.
\end{equation}

If $d_j\geq 1$, $j=1,2,3$, the center ${\cal Y}_j:=V(Z,u_j)$
is Hironaka-permissible w.r.t. $E$. Blowing up finitely many times, we reduce to the case
$d_3=0$, $0<d_1,d_2<1$. By (\ref{eq9042}), we thus have
$$
p\delta (x)=p(d_1+d_2) +\omega (x)=p, \ \omega (x)\leq p-2.
$$
We thus deduce that $m(x_1)\leq 1+\omega (x)<p$, hence $x_1$ is resolved.\\

\noindent $\bullet$ {\it Assume that $k=3$}. If $\omega (x)<p$, we may assume to begin with
that $\delta (x)=1$ arguing as in (\ref{eq9042}) {\it sqq.} Let
$$
x':=(Z':=X/u_3, v_1:=u_1/u_3, v_2:=u_2/u_3, u_3), \ E':=\mathrm{div}(v_1v_2u_3).
$$

First assume that $x_1\neq x'$. We have
$$
x_1=(Z/u_1, u_1, v:=u'_2 +\gamma_1, w:= P(u'_3)),
$$
where $\gamma_1 \in S$ is a preimage of $\lambda_1$ and $P(t)\in S[t]$ is a
unitary polynomial whose reduction $\overline{P}(t)\in k(x)[t]$ is irreducible. Let
$(u_1,v,w;Z'_1)$ be well adapted coordinates. Applying Proposition
\ref{bupformula}(v) (with $W':=\mathrm{div}(u_1)\subset \mathrm{Spec}S'$), we have
\begin{equation}\label{eq9043}
J(F_{p,Z'_1,W'},E',W')=({\overline{v}}^{\omega(x)})
\subseteq k(x_1)[\overline{u}'_2,\overline{u}'_3]_{(\overline{v},\overline{w})}.
\end{equation}
The conclusion follows as for type (T0): $x_1$ satisfies condition (**) or $x_1$ is
resolved by Lemma \ref{sortiemonome}. The latter holds if $\omega (x)<p$.

\smallskip

Assume now that $x_1=x'$. By Proposition \ref{originchart},
$(v_1,v_2,u_3;Z')$ are well adapted coordinates at $x'$. We
deduce that $\epsilon (x')=\omega (x)$. Furthermore, (\ref{eq905}) implies that
\begin{equation}\label{eq9051}
J(F_{p,Z'},E',m_{S'})\equiv <(\lambda_1V_1+V_2)^{\omega (x)}>
\ \mathrm{mod} (U_3)\cap G(m_{S'})_{\epsilon (x')}.
\end{equation}

Suppose that $\mathrm{Vdir}(x')$ is {\it not} skew. By (\ref{eq9051}), we have $\tau '(x')=3$,
hence $x'$ is resolved.

\smallskip

Suppose that $\mathrm{Vdir}(x')$ is skew. By (\ref{eq9051}), $x'$ is of type (Tk)
for some $k \in \{0,1,2,3\}$. Furthermore if $k=2$, then $x'$ satisfies again the extra assumption
in the proposition also by (\ref{eq9051}). We are already done if $k\leq 2$, so we may assume again
that $x'$ is of type (T3) and iterate. In particular, we have $e=3$. In case $\omega (x)<p$, we
again have $d'_j=d_j$, $1 \leq j \leq 3$.

\smallskip

By Proposition \ref{permisarc}, it can be assumed that ${\cal Y}:=V(Z,u_1,u_2)$
is permissible of the first kind. Let $\pi : {\cal X}' \rightarrow {\cal X}$
be the blowing up along ${\cal Y}$ and $x'_1 \in \pi^{-1}(x)$ satisfy $\iota (x'_1)\geq (p, \omega (x),3)$.
By Theorem \ref{bupthm}, we have
$$
x'_1=(X/u_1, u_1, v:=u_2/u_1 +\gamma_1,u_3), \ E'_1= \mathrm{div}(u_1u_3),
$$
where $\gamma_1  \in S$ is a preimage of $\lambda_1 $. Then $x'_1$ satisfies condition (**). If
$\omega (x)<p$, then $m(x'_1)<p$ and $x$ is resolved.
\end{proof}

\begin{prop}\label{skewresolvedbis}
Assume that $\kappa (x)=4$, $\epsilon (x)=\omega (x)$ and $E=\mathrm{div}(u_1u_2)$. Assume furthermore that the
following properties are satisfied:
\begin{itemize}
  \item [(i)] $\mathrm{Vdir}(x)=<U_1,U_2>$;
  \item [(ii)] the polyhedron $\Delta_{S}(h;u_1,u_2,u_3;Z)$ has a vertex of the form
  $$
  \mathbf{v}:=( {d_1+v_1}, d_2, v_3), \ v_1+v_3={1 +\omega (x) \over p}, \ v_3>{1 \over p},
  $$
  where $(u_1,u_2,u_3;Z)$ are well adapted coordinates at $x$.
\end{itemize}

Take  (\ref{eq801}) to be the quadratic sequence along $\mu$.
There exists $r \geq 0$ such that either $x_r$ is resolved or $x_r$ satisfies condition (**).
If $\omega (x)<p$, then $x$ is resolved.
\end{prop}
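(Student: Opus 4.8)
The plan is to mimic the structure of Proposition \ref{skewresolved}, treating Proposition \ref{skewresolvedbis} as the ``type (T3)-like'' case but now with $\mathrm{Vdir}(x)=<U_1,U_2>$ non-skew and an extra vertex forced into the polyhedron by hypothesis (ii). First I would fix well adapted coordinates $(u_1,u_2,u_3;Z)$ at $x$ as in (ii), so that $\mathrm{in}_{m_S}h=Z^p+F_{p,Z}$ with $H^{-1}F_{p,Z}\subseteq k(x)[U_1,U_2]_{\omega(x)}$ (by $\kappa(x)=4$ and $\mathrm{Vdir}(x)=<U_1,U_2>$), but where the vertex $\mathbf v$ of hypothesis (ii) certifies the presence of a $U_3$-term at weight $(1+\omega(x))/p$. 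Then I would run the quadratic sequence (\ref{eq801}) along $\mu$ and analyze the center $x_1$ exactly as in the four bullet cases of Proposition \ref{skewresolved}, the point being that hypothesis (ii) plays the role there played by the skew directrix in forcing a monic $U_3$-expansion to appear after the first blowing up.

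\textbf{Key steps.} The main computation is the behaviour of $\mathbf v$ and of $H^{-1}F_{p,Z}$ under the blowing up $\pi:{\cal X}_1\to{\cal X}$ along $\{x\}$, done via proposition \ref{originchart} and the blow up formula proposition \ref{bupformula}(v). There are three chart-points to consider. If $x_1$ lies on the strict transform of $\mathrm{div}(u_1)$ or $\mathrm{div}(u_2)$, say $x_1=(Z/u_1,u_1,u_2/u_1,u_3/u_1)$ or a nonrational variant, then proposition \ref{originchart} shows $\Delta_{S_1}(h_1;\cdot)$ is minimal and hypothesis (ii) transforms into either: an $\mathbf v'$ certifying that $x_1$ satisfies condition (**); or $\epsilon(x_1)=\omega(x)$ with $x_1$ satisfying the assumptions of lemma \ref{sortiemonome} (hence $x_1$ resolved); or $\mathrm{Vdir}(x_1)+<U_1>$ or $<U_2>$ fills up a $3$-dimensional space, forcing $\kappa(x_1)\le 2$ by definition \ref{defkappa} and hence $\iota(x_1)<(p,\omega(x),3)$, i.e.\ $x_1$ resolved. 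For the point ``at infinity'' $x'=(Z/u_3,u_1/u_3,u_2/u_3,u_3)$ I would use proposition \ref{originchart} together with hypothesis (ii): since $v_3>1/p$, the vertex $\mathbf v$ pushes forward to a vertex of $\Delta_{S'}(h';\cdot)$ which keeps $E'\supseteq\mathrm{div}(u_1u_2)$ and produces exactly the monic $(1+\omega(x))/p$-form of condition (**), unless $\epsilon(x')$ drops (so $x'$ resolved) or $\mathrm{Vdir}(x')$ becomes $3$-dimensional or skew, in which latter case we appeal to Proposition \ref{skewresolved} directly. Finally, to handle the possibility of an infinite sequence of rational points in the first chart all very near $x$ with $\mathrm{Vdir}(x_r)=<U_1,U_2>$ and hypothesis (ii) persisting, I would invoke corollary \ref{permisarcthree}: the limiting formal arc has a Hironaka-permissible support, contradicting {\bf (E)} unless the sequence is finite. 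For the last assertion, $\omega(x)<p$: in each of the chart cases the initial multiplicity bound $p\delta(x)=p(d_1+d_2)+\omega(x)$ combined with blowing up the Hironaka-permissible curves $V(Z,u_j)$ to reduce $d_j<1$ forces $m(x_1)\le 1+\omega(x)<p$, so $x$ is resolved.

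\textbf{Main obstacle.} The hard part will be the bookkeeping in the ``at infinity'' chart $x'$: one must check that hypothesis (ii) — a statement about a specific off-axis vertex $\mathbf v$ with $v_3>1/p$ — transports correctly to a vertex of the projected/transformed polyhedron giving condition (**), rather than collapsing onto the face that would only yield $\tau'(x')=3$ or a drop in $\epsilon$. This requires carefully tracking the linear map $l$ of proposition \ref{originchart} on $\mathbf v$ and on the whole region $\{x_2=d_2\}$, and verifying that no solvable vertex is created (so that the new coordinates remain well adapted). A secondary subtlety is that, unlike in Proposition \ref{skewresolved}, here $\mathrm{Vdir}(x)=<U_1,U_2>$ is already axis-aligned, so the ``monic'' $U_3$-structure is hidden inside the polyhedron (at weight $(1+\omega(x))/p$) rather than visible in $\mathrm{in}_{m_S}h$; one must argue that this higher-weight data is preserved under the blow up, which is precisely the content of proposition \ref{bupformula}(iii)(v) applied with $W=\{m_S\}$ and its exceptional transform. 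Once these transport statements are in place, the casuistic assembly is routine and parallels Proposition \ref{skewresolved} almost verbatim.
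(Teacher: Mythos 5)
Your proposal captures the broad shape of the argument (blow up $x$, focus on the point ``at infinity'', and appeal back to Proposition~\ref{skewresolved}), but it misses the specific technical steps that make the paper's proof work, and it contains a significant error in the crucial transport step.

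First, the chart casuistics. Since $\mathrm{Vdir}(x)=<U_1,U_2>$, the projective cone $PC(x,\{x\})$ of definition~\ref{defcone} is the single point $(0:0:1)$, so by theorem~\ref{bupthm} the \emph{only} point $x_1\in\pi^{-1}(x)$ that can be very near $x$ is $x_1=(Z/u_3,u_1/u_3,u_2/u_3,u_3)$. Your discussion of points on the strict transforms of $\mathrm{div}(u_1)$ or $\mathrm{div}(u_2)$, ``nonrational variants'', and an infinite sequence of such points (your invocation of corollary~\ref{permisarcthree}) is therefore superfluous: the proof terminates after a single blowing up.

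Second — and this is the genuine gap — your claim that the vertex $\mathbf{v}$ ``pushes forward to a vertex of $\Delta_{S'}$ which \ldots produces exactly the monic $(1+\omega(x))/p$-form of condition (**)'' is false. Under the map $l$ of proposition~\ref{originchart}, $\mathbf{v}=(v_1,d_2,v_3)$ maps to $\mathbf{v}'=(v_1,d_2,v_1+d_2+v_3-1)$, and the new weights are $(d'_1,d'_2,d'_3)=(d_1,d_2,d_1+d_2+\omega(x)/p-1)$. This does not produce a (**)-vertex; instead, what the paper extracts from (ii) is a \emph{numerical constraint}: for $x_1$ to be very near $x$ one must have $v_3=2/p$, which yields a specific form for $H(x_1)^{-1}F_{p,Z'}$, namely (\ref{eq8033}), with a nonzero $U_3{U'_1}^{\omega(x)-1}$-term. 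The proof then classifies $\mathrm{Vdir}(x_1)$: either $\tau'(x_1)=3$ (resolved), or $\mathrm{Vdir}(x_1)$ is skew of types (T1) or (T2) — with the extra hypothesis of proposition~\ref{skewresolved} for (T2) being verified by (\ref{eq8033}) — and one invokes proposition~\ref{skewresolved}; or finally $\mathrm{Vdir}(x_1)=<U'_1,U'_2+\lambda'_2U_3>$, $\lambda'_2\neq 0$, which is ruled out by lemma~\ref{joyeux}(ii) through a numerical contradiction ($2(\widehat{pd'_2}+1)=p$ forcing $p$ even, contradicting $p\geq 3$). None of the $v_3=2/p$ reduction, the classification of $\mathrm{Vdir}(x_1)$, or the lemma~\ref{joyeux}(ii) contradiction appear in your proposal; without them there is no way to finish the argument, and the ``unless \ldots $\mathrm{Vdir}(x')$ becomes 3-dimensional or skew'' case list you give omits the very case that requires a separate idea to exclude. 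The $\omega(x)<p$ assertion is then inherited from proposition~\ref{skewresolved} and needs no separate argument; your alternative route via blowing up the $V(Z,u_j)$ curves is not what the paper does and is not needed.
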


\begin{proof}
Suppose that $x_1$ is very near  {to} $x$. By (i) and Theorem \ref{bupthm}, we have
$$
x_1:=(Z':=Z/u_3,u'_1:=u_1/u_3,u'_2:=u_2/u_3,u_3), E':=\mathrm{div}(u'_1u'_2u_3),
$$
and the polyhedron $\Delta_{S'}(h';u'_1,u'_2,u_3;Z')$ is minimal
by Proposition \ref{originchart}. Since $v_3>0$ in (ii), $\mathbf{v}$ is induced by $f_{p,Z}$ by
Theorem \ref{initform}, and $f_{p,Z}$ has an expansion
$$
f_{p,Z}=\sum_{\mathbf{x}\in \mathbf{S}}\gamma(\mathbf{x}) \prod_{j=1}^3u_j^{px_j} ,
    \ \gamma(\mathbf{x})\in S
$$
 such that $\gamma(\mathbf{v})$ is a unit and  { $\mathbf{S}\subset \Delta_{S}(h;u_1,u_2,u_3;Z)$ is a finite set}. By (ii), $x_1$ is very near $x$ only if $v_3=2/p$, i.e.
\begin{equation}\label{eq8033}
{U'_1}^{-pd'_1}{U'_2}^{-pd'_2}{U_3}^{-pd'_3}F_{p,Z'}=
U_3(\lambda ' {U'_1}^{\omega (x)-1}+ U_3\Phi ')+\Phi (U'_1,U'_2),
\end{equation}
for some $\Phi ' \in k(x)[U'_1,U'_2,U_3]$, where $\lambda ' \neq 0$ is induced by $\mathbf{v}$, $\  {\Phi=H^{-1}F_{p,Z}}$ and
\begin{equation}\label{eq8034}
(d'_1,d'_2,d'_3)=(d_1, d_2, d_1+d_2-1+ {\omega (x) \over p}).
\end{equation}

To conclude the proof, we compute $\mathrm{Vdir}(x_1)$.  First note that
\begin{equation}\label{eq8041}
\mathrm{Vdir}(x_1)+<U_3>=<U'_1,U'_2,U_3>
\end{equation}
by (i). If $\tau '(x_1)=3$, then $x_1$ is resolved by Theorem \ref{bupthm}.

\smallskip

Suppose that $\tau '(x_1)\leq 2$. This gives
\begin{equation}\label{eq8042}
\mathrm{Vdir}(x_1)=<U'_1 +\lambda'_1U_3,U'_2+\lambda'_2U_3>, \ \lambda'_1, \lambda'_2 \in k(x).
\end{equation}

Since $\lambda '\neq 0$, we have $(\lambda'_1, \lambda'_2)\neq (0,0)$. We are done
by Proposition \ref{skewresolved} if $\lambda'_1\lambda'_2\neq 0$ (type (T1)) or if $\lambda'_2= 0$,
(type (T2)) where the extra assumption holds by \eqref{eq8042},  {after permuting $U'_2$ and $U_3$}.

\smallskip

Suppose finally that
$$
\mathrm{Vdir}(x_1)=<U'_1 ,U'_2+\lambda'_2U_3>, \ \lambda'_2 \neq 0.
$$

We now apply Lemma \ref{joyeux}(ii) to the ${U'_1}^{\omega (x)-1}$-term in (\ref{eq8033}),
i.e. for the variables $(U_3,U'_2,U'_1)$ respectively and $i=1$. We deduce from (\ref{eq7073}) that
$$
d'_1+ {\omega (x)-1 \over p}\in \N, \ d'_2,d'_3 \not \in \N \ \mathrm{and}
\ \widehat{pd'_2}+\widehat{pd'_3}+1 =p .
$$
Turning back to (\ref{eq8034}), we get
$$
\widehat{pd'_3}=\widehat{pd'_2}+1, \ 2(\widehat{pd'_2}+1) =p.
$$
This is a contradiction, since $p\geq 3$, and the proof is complete.
\end{proof}

\subsection{Reduction to monic expansions (**) and (T**).}

We can now conclude the reduction to monic expansions.

\begin{prop}\label{redto**3}
Assume that $\kappa (x)=3$. Let $\mu$ be a valuation of $L=k({\cal X})$ centered at $x$. There exists
a finite and independent sequence of local permissible blowing ups of the first kind
(\ref{eq801}) along $\mu$ such that
one of the following holds for some $r\geq0$:
\begin{itemize}
  \item [(i)] $x_r$ is resolved or satisfies condition (T**);
  \item [(ii)] $x_r$ satisfies condition (**).
\end{itemize}
Furthermore if
\begin{equation}\label{eq:omega<pkappa=3}
 { \omega (x)<p \ \mathrm{ and}\  \tau '(x)=2,}
 \end{equation}
  then (i) holds.
\end{prop}

\begin{proof}
It can be assumed that the  {conclusions of Lemma \ref{kappa3prelim}(1) or (2)  above hold.}

\smallskip

If $\Phi_{i+1}=0$ for every $i\geq 0$, then  {$\tau'(x)=1$ (so \eqref{eq:omega<pkappa=3} does not hold)} and $x_1$ satisfies condition (**) and we are done.
Otherwise, we may furthermore assume that
$$
x_1=x'=(Z':=Z/u_2,u'_1:=u_1/u_2,u_2,u'_3:=u_3/u_1), \ E'=\mathrm{div}(u'_1u_2)
$$
with $\iota (x')\geq \iota (x)$. Note that when $E=\mathrm{div}(u_1u_2)$,
(\ref{eq8022}) marks an exceptional component $\mathrm{div}(u_1)$ of $E$.

\smallskip

If in  {\eqref{eq802}} ($c\neq 0$ and $\omega (x)+1\not \equiv 0 \ \mathrm{mod}p$), then $x'$ satisfies
condition (**) and we are done for $\omega (x)\geq p$.  Otherwise (i.e. if either  {\eqref{eq:omega<pkappa=3} holds}
 {or}  $c= 0$, or $\omega (x)+1 \equiv 0 \ \mathrm{mod}p$), we have $E'=\mathrm{div}(u'_1u_2)$ and
$(u'_1, u_2, u'_3;Z')$ are well adapted coordinates at $x'$. Furthermore $\mathrm{Vdir}(x)=<U_1,U_3>$
 {either by \eqref{eq8022} or   by assumption  if {\eqref{eq:omega<pkappa=3} holds}}. Let
$$
\Phi (U_1,U_3):=U_1^{-pd_1}U_2^{-pd_2}F_{p,Z} \in k(x)[U_1,U_3]_{\omega (x)+1}
$$
and consider two cases:

\smallskip

\noindent {\it Case 1:} $\epsilon (x')=\omega (x)$. We have $\kappa (x')=4$ and
\begin{equation}\label{eq804}
   \Phi'(U'_1,U_2):= {U'_1}^{-pd'_1}U_2^{-pd'_2}F_{p,Z'}= \lambda_2 {U'_1}^{\omega (x)} + U_2 \Phi'_1(U'_1,U_2),
\end{equation}
with $\Phi'_1\in k(x)[U'_1,U_2]_{\omega (x)-1}$.

\smallskip

If $\tau '(x')=1$ (i.e.  $\Phi'_1=0$ or  $\mathrm{Vdir}(x)=<U_2+\lambda U_1>$, $\lambda \in k(x)$), then
$x'$ satisfies condition (T**) or $x'$ satisfies the assumptions of Proposition \ref{skewresolved} type (T3)
respectively, and the proof is complete. We may thus furthermore assume that
\begin{equation}\label{eq8032}
\mathrm{Vdir}(x')=<U'_1,U_2>.
\end{equation}

Since $\kappa (x)=3$, we have at this point:
\begin{equation}\label{eq803}
{\partial \Phi \over \partial U_3}(U_1,U_3)\not \in k(x)[U_1].
\end{equation}
Therefore $\Delta_{S'}(h';u'_1,u_2,u'_3;Z')$ has a vertex of the form
  $$
  \mathbf{v}':=( {d'_1+v'_1}, d'_2, v'_3), \ v'_1+v'_3={1 +\omega (x) \over p}, \ v'_3={\mathrm{deg}_{U_3}\Phi \over p},
  $$
where $(u_1,u_2,u_3;Z)$ are well adapted coordinates at $x$. The proposition follows from
Proposition \ref{skewresolvedbis} whose assumptions are satisfied by {\eqref{eq8032},\eqref{eq803}}.

\smallskip

\noindent {\it Case 2:} $\epsilon (x')=\epsilon (x)$.  {Note that this implies $\lambda_2=0$ in (\ref{eq8022})}. We again have $\kappa (x')=3$ and may iterate:
$$
 {{H'}^{-1}{ F_{p,Z'} }} \equiv \Phi (U'_1,U'_3) \ \mathrm{mod}(U_2)\cap G(m_{S'})_{\epsilon (x')}.
$$
Then $\mathrm{Vdir}(x')+<U_2>= <U'_1,U_2,U'_3>$, so $\tau '(x')\geq 2$.
We are done if $\tau '(x')=3$ and may iterate if $\tau '(x')=2$ as asserted.

\smallskip

Since the exceptional component $\mathrm{div}(u_1)$ of $E$ has been marked ({\it cf.} beginning
of the proof), the theorem holds except possibly if $x_r$ is in case 2 for every $r\geq 0$.
In this situation, we apply Proposition \ref{permisarc}(1): w.l.o.g. it can be assumed that
${\cal Y}:=V(Z,u_1,u_3)$ is permissible of the first kind. Since $\mathrm{Vdir}(x)=<U_1,U_3>$,
it follows from Theorem \ref{bupthm} that $x$ is resolved by blowing up ${\cal Y}$.
\end{proof}

\begin{lem}\label{redto**4lem}
Assume that $\kappa (x)=4$ and $\epsilon (x)=\omega (x)$.
Let $\mu$ be a valuation of $L=k({\cal X})$ centered at $x$. There exists
a finite and independent sequence of local permissible blowing ups of the first kind
(\ref{eq801}) along $\mu$ such that one of the following holds for some $r\geq 0$:
\begin{itemize}
  \item [(i)] $x_r$ is resolved or satisfies condition (T**);
  \item [(ii)] $x_r$ satisfies condition (**).
\end{itemize}
If $\omega (x)<p$, then (i) holds.
\end{lem}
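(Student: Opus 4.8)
The statement to prove is \textrm{Lemma \ref{redto**4lem}}: when $\kappa(x)=4$ and $\epsilon(x)=\omega(x)$, a finite independent sequence of permissible blowing ups of the first kind along $\mu$ either resolves $x_r$, or makes $x_r$ satisfy (T**), or makes $x_r$ satisfy (**); and when $\omega(x)<p$ we land in alternative (i). The natural strategy is a case analysis on the directrix $\mathrm{Vdir}(x)$ exactly parallel to what was done for $\kappa(x)=3$ in Proposition \ref{redto**3}, but now reading off directly from the initial form polynomial (\ref{eq9012}), $\mathrm{in}_{m_S}h=Z^p+F_{p,Z}$ with $H^{-1}F_{p,Z}\subseteq k(x)[U_1,\ldots,U_e]_{\omega(x)}$, $1\le e\le 3$.

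First I would dispose of the case where $\mathrm{Vdir}(x)$ is an orthant, i.e.\ $\mathrm{Vdir}(x)=\langle\{U_j\}_{j\in J}\rangle$ for some $J\subseteq\{1,\ldots,e\}$. If $J=\{1,\ldots,e\}$ (so $\mathrm{Vdir}(x)=\langle U_1,\ldots,U_e\rangle$) then $x$ is already of one of the shapes in definition \ref{**}: if $E=\mathrm{div}(u_1)$ and $\mathrm{Vdir}(x)=\langle U_1\rangle$ this is precisely (T**)(i); if $E=\mathrm{div}(u_1u_2)$ and $\mathrm{Vdir}(x)=\langle U_1,U_2\rangle$, one either already has the single-vertex condition of (T**)(ii), or the polyhedron has an extra vertex of the form $\mathbf{v}=(v_1,d_2,v_3)$ with $v_1+v_3=(1+\omega(x))/p$, $v_3>1/p$, in which case Proposition \ref{skewresolvedbis} applies and gives alternative (i) or (ii) directly — and resolves $x$ when $\omega(x)<p$. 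The case where $\mathrm{Vdir}(x)$ contains $U_3$ but $E\subseteq\mathrm{div}(u_1u_2)$ reduces, after possibly adjusting $u_1,u_2$ and applying theorem \ref{bupthm}, to $\mathrm{Vdir}(x)=\langle U_3,\ldots\rangle$; when $\mathrm{Vdir}(x)=\langle U_3\rangle$ and $E=\mathrm{div}(u_1)$, lemma \ref{sortiekappaegaldeuxbis} applies and $x$ is resolved.

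Next, if $\mathrm{Vdir}(x)$ is \emph{skew} in the sense of definition \ref{skewdir}, then $x$ is of one of the types (T0)--(T3) listed after that definition, and I would invoke Proposition \ref{skewresolved}: for types (T0),(T1),(T3) it gives directly that some $x_r$ is resolved or satisfies (**), and resolves $x$ when $\omega(x)<p$; for type (T2) one needs the extra transversality hypothesis $J(F_{p,Z},E,m_S)\not\subseteq(U_3)\cap G(m_S)_{\epsilon(x)}$, and if that fails — i.e.\ $J(F_{p,Z},E,m_S)\subseteq(U_3)$ — I would perform the quadratic blowing up at $x$, using proposition \ref{bupformula}(v) to track $J$, and check that one reaches either a resolved point, a point satisfying (**) or (T**), or a strictly smaller configuration; in the non-skew residual cases one falls back to theorem \ref{bupthm} (when $\tau'(x')=3$) or to lemma \ref{sortiekappaegaldeuxbis}. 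Finitely many steps suffice by corollary \ref{permisarcthree} (there is no infinite chain of rational points not ``at infinity''), and the ``$\omega(x)<p$'' refinement follows because each of Propositions \ref{skewresolved}, \ref{skewresolvedbis} and lemmas \ref{sortiemonome}, \ref{sortiekappaegaldeuxbis} delivers $m(x_r)<p$ under that hypothesis, while the multiplicity $p$ situation degenerates ($\delta(x)$ can be made to equal $1$ by blowing up the codimension-one centers $V(Z,u_j)$ with $d_j\ge 1$, and then $m(x_1)\le 1+\omega(x)<p$).

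\textbf{The main obstacle} I anticipate is the bookkeeping in the skew type (T2) with $J(F_{p,Z},E,m_S)\subseteq(U_3)$: there one must blow up the point, and the directrix of the transform need not be skew nor an orthant in an obviously favorable way, so one has to carefully combine the transformation rule of proposition \ref{bupformula}(v) with theorem \ref{bupthm}'s classification of equality cases and with lemma \ref{joyeux} (degree $p$ divisibility constraints) to show no loop occurs; this is the same kind of casuistic analysis as in the $\kappa(x)=2$ chapter and will need the most care, but structurally it is a finite descent and corollary \ref{permisarcthree} closes it. Everything else is a routine translation of the $\kappa(x)=3$ argument, with $G=0$ making several subcases vanish.
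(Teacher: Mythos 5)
Your overall architecture is the same as the paper's: first dispense with the orthant and skew directrix cases via Propositions \ref{skewresolved} and \ref{skewresolvedbis}, then handle the two residual configurations (type (T2) with $J(F_{p,Z},E,m_S)\subseteq(U_3)$, and the orthant $\mathrm{Vdir}(x)=\langle U_1,U_2\rangle$) by quadratic blowing ups, and finally terminate the iteration by making a curve permissible. The identification of the key inputs — Proposition \ref{bupformula}(v), Theorem \ref{bupthm}, and the multiplicity-drop argument for $\omega(x)<p$ — is correct.

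However, there are three genuine problems.

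\emph{First}, the case you describe as ``$\mathrm{Vdir}(x)$ contains $U_3$ but $E\subseteq\mathrm{div}(u_1u_2)$'' cannot occur: by Definition \ref{defkappa} (\ref{eq401}), $\kappa(x)=4$ with $\epsilon(x)=\omega(x)$ forces $\mathrm{Vdir}(x)\subseteq\langle U_1,\ldots,U_e\rangle$, so the non-exceptional variable $U_3$ can appear in $\mathrm{Vdir}(x)$ only if $\mathrm{div}(u_3)\subseteq E$. Consequently your invocation of Lemma \ref{sortiekappaegaldeuxbis} for $\mathrm{Vdir}(x)=\langle U_3\rangle$ with $E=\mathrm{div}(u_1)$ is misplaced; in the paper that lemma is applied to $\mathrm{Vdir}(x)=\langle U_1\rangle$.

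\emph{Second}, your claimed dichotomy for $\mathrm{Vdir}(x)=\langle U_1,U_2\rangle$ — ``either already (T**)(ii), or the polyhedron has the vertex required by Proposition \ref{skewresolvedbis}(ii)'' — is not a dichotomy. Failing the uniqueness of the vertex in (T**)(ii) does not by itself produce a vertex $\mathbf{v}=(v_1,d_2,v_3)$ with $v_1+v_3=(1+\omega(x))/p$ and $v_3>1/p$. The paper handles this case differently: it blows up the closed point, notes that $\mathrm{Vdir}(x')+\langle U_3\rangle=\langle U'_1,U'_2,U_3\rangle$, and shows that either $\tau'(x')=3$ (resolved) or $x'$ returns to one of the two residual configurations; the iteration is then closed by Proposition \ref{permisarc}(1), which produces a permissible curve ${\cal Y}=V(Z,u_1,u_2)$, and blowing it up resolves $x$ because $\mathrm{Vdir}(x)=\langle U_1,U_2\rangle$.

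\emph{Third}, the invocation of Corollary \ref{permisarcthree} to terminate the process is not available here: that corollary requires $E_r$ irreducible at every step, whereas in your orthant case $E=\mathrm{div}(u_1u_2)$ (or $\mathrm{div}(u_1u_2u_3)$) has at least two components. The correct tool, as in the paper, is Proposition \ref{permisarc}, which turns an infinite chain of quadratic transforms along the same formal arc into a permissible curve. Without this, your argument does not close.
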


\begin{proof}
By Proposition \ref{skewresolved}, it can be assumed that one
of the following conditions holds:

\smallskip

\begin{itemize}
  \item [(1)] $\mathrm{Vdir}(x)$ is skew and satisfies condition (T2);
  \item [(2)] $\mathrm{div}(u_1u_2)\subseteq E$ and $\mathrm{Vdir}(x)=<U_1,U_2>$.
\end{itemize}

Take (\ref{eq801}) to be the quadratic sequence along $\mu$. Under assumption (1),
we have $E=\mathrm{div}(u_1u_2u_3)$ and $\mathrm{Vdir}(x)=<U_1, \lambda_2 U_2 +U_3>$,
$\lambda_2\neq 0$ up to renumbering variables. By Proposition \ref{skewresolved}, it can be assumed that
$$
J(F_{p,Z},E,m_S)\subseteq (U_1) {\cap} G(m_S)_{\epsilon (x)}.
$$
By Theorem \ref{bupthm}, we have
$$
x_1=(Z/u_2, u'_1:=u_1/u_2, u_2,  {v:=u_3/u_2} +\gamma ), \ E'= \mathrm{div}(u'_1u_2),
$$
where $\gamma  \in S$ is a preimage of  {$\lambda_2 $}. Let $W':=\mathrm{div}(u_2)\subset \mathrm{Spec}S'$
and $(u'_1,u_2,v;Z')$ be well adapted coordinates at $x_1$. By Proposition \ref{bupformula}(v), we have
$$
J(F_{p,Z',W'},E',W')=U_2^{-\omega(x)}J(F_{p,Z},E,m_S)
\subseteq k(x_1)[\overline{u}'_1,\overline{u}'_3]_{(\overline{u}'_1,\overline{v})}.
$$

If $\mathrm{ord}_{(\overline{u}'_1,\overline{v})}H_{W'}^{-1}F_{p,Z',W'}= \omega (x)$, we
have $\kappa (x_1)\leq 2$ (so $x$ is resolved) or
\begin{equation}\label{eq:kappa3sortie}
\left\{
\begin{array}{c}
{H'}^{-1}F_{p,Z'}\equiv <{U'_1}^{\omega (x)}> \ \mathrm{mod}(U_2) \cap G(m_{S'})_{\omega (x)}  \\
\\
   {  \mathrm{cl}_{\omega(x)}({U'}_1^{-pd_1}U_2^{-pd'_2}{\partial F_{p,Z',W'} \over \partial \overline{v}})\not \in <{U'_1}^{\omega (x)}> }\hfill{}
\end{array}
\right. .
\end{equation}

In this last situation, the conclusion follows in each of the following possible cases:

\smallskip

$\bullet$ $x_1$ satisfies condition (T**) if $\mathrm{Vdir}(x_1)=<U'_1>$;

\smallskip

$\bullet$ $x_1$ satisfies the assumptions of Proposition \ref{skewresolved}
if $\mathrm{Vdir}(x_1)=<U'_1+\lambda 'U_2>$, $\lambda '\neq 0$;

\smallskip

$\bullet$ $x_1$ satisfies the assumptions of Proposition \ref{skewresolvedbis} by \eqref{eq:kappa3sortie}
if $\mathrm{Vdir}(x_1)=<U'_1,U_2>$.

\smallskip

If $\mathrm{ord}_{(\overline{u}'_1,\overline{v})}H_{W'}^{-1}F_{p,Z',W'}= \omega (x)+1$,
we are also done by Proposition \ref{redto**3} if $\kappa (x_1)=3$, since $\tau '(x_1)\geq 2$.
Assume finally that $\kappa (x_1)=4$, i.e.
$$
\epsilon (x_1)=\omega(x)=\mathrm{ord}_{(\overline{u}'_1,\overline{v})}
H_{W'}^{-1}{\partial F_{p,Z',W'}\over \partial \overline{v}}
<\mathrm{ord}_{(\overline{u}'_1,\overline{v})}H_{W'}^{-1}F_{p,Z',W'}= {1+\omega (x)}.
$$
Similarly, $x_1$ satisfies condition (T**) unless $\mathrm{Vdir}(x_1)=<U'_1,U_2>$.
The conclusion then follows again from Proposition \ref{skewresolvedbis}.

\smallskip

Under assumption (2), it can be
assumed that $x_1=x'$, $\iota (x')=\iota (x)$, where
$$
x':=(Z':=Z/u_3, u'_1:=u_1/u_3, u'_2:=u_2/u_3, u_3), \ E':=\mathrm{div}(u'_1u'_2u_3).
$$
By Proposition \ref{originchart}, $(u'_1,u'_2,u_3;Z')$ are well adapted coordinates at $x'$. We get
$\epsilon (x')=\omega (x)$ and
$$
\mathrm{Vdir}(x')+<U_3>=<U'_1,U'_2,U_3>.
$$
If $\tau '(x')=3$, then $x'$ is resolved by Theorem \ref{bupthm}. Otherwise, $x'$ satisfies
again the assumptions of the proposition, with (1) up to renumbering variables or (2) above.

\smallskip

Iterating, the proof concludes as in the proof of Proposition \ref{redto**3}: $x$ is resolved or
the curve ${\cal Y}:=V(Z,u_1,u_2)$ is permissible of the first kind; then $x$ is resolved by
blowing up ${\cal Y}$, since $\mathrm{Vdir}(x)=<U_1,U_2>$.
\end{proof}

\begin{prop}\label{redto**4}
Assume that $\kappa (x)=4$. Let $\mu$ be a valuation of $L=k({\cal X})$ centered at $x$. There exists
a finite and independent sequence of local permissible blowing ups of the first kind
(\ref{eq801}) along $\mu$ such that one of the following holds for some $r\geq 0$:
\begin{itemize}
  \item [(i)] $x_r$ is resolved or satisfies condition (T**);
  \item [(ii)] $x_r$ satisfies condition (**).
\end{itemize}
If $\omega (x)<p$, then (i) holds.
\end{prop}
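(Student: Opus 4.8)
The plan is to split according to the value of $\epsilon(x)-\omega(x)\in\{0,1\}$, reducing the case $\epsilon(x)=\omega(x)$ to the already-proved lemma \ref{redto**4lem}, and then treating the remaining case $\epsilon(x)=\omega(x)+1$ by a direct casuistic analysis of $\mathrm{Vdir}(x)$ and the quadratic sequence along $\mu$. First I would dispose of the subcase $\epsilon(x)=\omega(x)$: this is exactly lemma \ref{redto**4lem}, which gives both (i)--(ii) and the sharpened conclusion when $\omega(x)<p$, so nothing more is needed there. From now on I assume $\epsilon(x)=\omega(x)+1$, so we are in the situation (\ref{eq901})--(\ref{eq9011}): $\mathrm{in}_{m_S}h=Z^p-G^{p-1}Z+F_{p,Z}$ with $H^{-1}G^p\subseteq k(x)[U_1,\dots,U_e]_{\omega(x)+1}$, $1\le e\le 2$, and $(0)\neq V(TF_{p,Z},E,m_S)\subseteq k(x)[U_1,\dots,U_e]_{\omega(x)}$, so in particular $\mathrm{Vdir}(x)\subseteq\langle U_1,\dots,U_e\rangle$ by definition \ref{deftauprime} (recall $\kappa(x)=4$ forces this).

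The key step is then to run the quadratic sequence (\ref{eq801}) along $\mu$ and show that after finitely many blowing ups of closed points we either resolve $x_r$, or reach a point satisfying (**) or (T**), or we produce a formal curve through all the $x_r$'s to which proposition \ref{permisarc}(1) applies. Concretely: write $(u_1,u_2,u_3;Z)$ well adapted with $\mathrm{Vdir}(x)\subseteq\langle U_1\rangle$ (if $\tau'(x)=1$, i.e. $\mathrm{Vdir}(x)=\langle U_1\rangle$ when $e=1$; this is precisely condition (T**)(i), and we are done in that subcase by definition). So the interesting situation is $e=2$ and either $\mathrm{Vdir}(x)=\langle U_1,U_2\rangle$ or $\mathrm{Vdir}(x)=\langle\lambda_1 U_1+U_2\rangle$ with $\lambda_1\neq 0$. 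In the latter (skew, $e=2$) case, blowing up $x$ and using theorem \ref{bupthm} moves the center to $x_1$ on the strict transform of $\mathrm{div}(u_1)$ after the change $u_2\mapsto u_2/u_1+\gamma_1$; proposition \ref{bupformula}(v) then transports $V(TF_{p,Z},E,m_S)$ and, since $G\neq 0$ survives (by theorem \ref{initform}(2) $G$ is a monomial in $U_1,\dots,U_e$), one gets $\iota(x_1)\le(p,\omega(x),3)$ unless the transported data is again of the form $\mathrm{in}_{m_{S_1}}h=Z'^p-G'^{p-1}Z'+F_{p,Z'}$ with the $U_2$-derivative forcing condition (**) at $x_1$ (the exponent bookkeeping: $d_1'=d_1+d_2-1+\epsilon(x)/p$, and the vertex $(d_1',d_2',(1+\omega(x))/p)$ becomes the only one over $x_1=d_1'$ exactly when $1+\omega(x)\not\equiv0\bmod p$, which is forced by $\epsilon(x)=\omega(x)+1$ together with proposition \ref{deltaint}(ii) since $i_0(x)=p-1$). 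In the case $\mathrm{Vdir}(x)=\langle U_1,U_2\rangle$ with $E=\mathrm{div}(u_1u_2)$, I would invoke proposition \ref{skewresolvedbis} directly after checking its hypothesis (ii), namely that $\Delta_S(h;u_1,u_2,u_3;Z)$ has a vertex $(v_1,d_2,v_3)$ with $v_1+v_3=(1+\omega(x))/p$, $v_3>1/p$; this holds because $\kappa(x)=4$ and $\epsilon(x)=\omega(x)+1$ force $H^{-1}\partial F_{p,Z}/\partial U_3\neq 0$ to be nonconstant in $U_3$, and minimality of the polyhedron then produces such a vertex. If instead $\mathrm{div}(u_1)\subseteq E\subseteq\mathrm{div}(u_1u_2)$ with $\mathrm{Vdir}(x)=\langle U_1,U_2\rangle$ but only one component marked, one argues as in the proof of proposition \ref{redto**3}, using lemma \ref{sortiekappaegaldeuxbis} to finish on the exceptional chart.

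When $\omega(x)<p$: in every branch above, the arithmetic congruences ($1+\omega(x)\not\equiv0\bmod p$, and $\widehat{pd_1}+\widehat{pd_2}+\omega(x)=p$ from lemma \ref{joyeux}(ii) in the skew subcases) combine with the reduction of the $d_j$'s to $<1$ by blowing up Hironaka-permissible planes $V(Z,u_j)$ to force $p\delta(x)=p$, hence $m(x_1)\le 1+\omega(x)<p$, so $x_1$ is resolved; this is the exact pattern already carried out in proposition \ref{skewresolved} (cases (T2),(T3)) and lemma \ref{redto**4lem}, and I would simply cite those computations. Finally, to close the loop against infinite chains: if none of the above terminates, then $x_r$ satisfies $\iota(x_r)=\iota(x)$, $\kappa(x_r)=4$, $k(x_r)=k(x)$ and $\mathrm{div}(u_1)=E_r$ is irreducible for all large $r$ (the second component of $E$, if present, gets ``used up'' or marked), so corollary \ref{permisarcthree} yields a contradiction; alternatively, the formal arc $\varphi$ assembled from the coordinate changes has $l=k(x)$, $m(\varphi)=p$, $\omega(\varphi)>0$, so proposition \ref{permisarc}(1) makes $V(Z,u_1,u_2)$ or $V(Z,u_1,u_3)$ permissible of the first kind, and blowing it up resolves $x$ by theorem \ref{bupthm} since $\mathrm{Vdir}(x)$ is contained in the span of the two exceptional directions of the center.

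The main obstacle I anticipate is the bookkeeping in the skew $e=2$ subcase $\mathrm{Vdir}(x)=\langle\lambda_1U_1+U_2\rangle$ when $\epsilon(x)=\omega(x)+1$: there $G\neq 0$, so the truncation operator $T$ intervenes in computing $\omega$, and one must check carefully (as in lemma \ref{kerT} and lemma \ref{Cmaxinv}) that the transported $J$-module is unaffected by the Tschirnhausen-type adjustments at $x_1$, so that condition (**) can genuinely be read off rather than merely $\kappa(x_1)\le 3$. This is precisely the kind of $p=2$-sensitive twist that appeared in lemma \ref{kappa2fin25}, and I expect it to require an explicit small computation with the expansion (\ref{eq8033})-type formula before invoking proposition \ref{skewresolvedbis}.
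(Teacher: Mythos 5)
Your first reduction — disposing of $\epsilon(x)=\omega(x)$ via lemma~\ref{redto**4lem} — matches the paper and is fine. But once you enter the case $\epsilon(x)=\omega(x)+1$, the proposal goes wrong in three places, two of them fatal.

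First, when $e=1$ and $\mathrm{Vdir}(x)=\langle U_1\rangle$, you claim this is ``precisely condition (T**)(i), and we are done by definition.'' It is not: definition~\ref{**}(T**)(i) requires $\epsilon(x)=\omega(x)$, but you are working under $\epsilon(x)=\omega(x)+1$. None of (T**)(i)--(iii) applies here. In the paper this configuration is the hard part of the proposition (assumption~(3): $\mathrm{div}(u_1)\subseteq E\subseteq\mathrm{div}(u_1u_2)$, $\mathrm{Vdir}(x)=\langle U_1\rangle$), and it is handled by constructing a refinement~$\mathcal{C}$, establishing that $\mathrm{div}(u_1)$ has maximal contact for~$\mathcal{C}$, and invoking theorem~\ref{contactmaxFIN}. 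Your proposal contains nothing playing this role; the closing paragraph's appeal to corollary~\ref{permisarcthree} (``the second component of $E$ gets used up or marked'') is not an argument, and in fact the needed stability of ``$x_r$ satisfies again (3)'' under quadratic blowing up is exactly the maximal-contact claim that the paper proves, including a separate sub-argument ruling out $(\kappa(x_1),\tau'(x_1))=(3,1)$ when $\omega(x)<p$.

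Second, for $E=\mathrm{div}(u_1u_2)$ with $\mathrm{Vdir}(x)=\langle U_1,U_2\rangle$ (paper's assumption~(1)), you invoke proposition~\ref{skewresolvedbis}. That proposition also requires $\epsilon(x)=\omega(x)$, so it does not apply. The paper's argument here is in fact much shorter than what you propose: if $\iota(x_1)\geq\iota(x)$ after blowing up $x$, theorem~\ref{bupthm} forces $x_1\in PC(x,\{x\})$, which in this configuration is the single point $x'=(Z/u_3,u_1/u_3,u_2/u_3,u_3)$; but $\epsilon(x')=\omega(x)$, which contradicts the running assumption $\epsilon(x_1)=1+\omega(x)$ (itself legitimate since $\epsilon(x_1)=\omega(x)$ is discharged by lemma~\ref{redto**4lem}). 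So this subcase reduces to the $\epsilon=\omega$ case after a single blow up, without any polyhedron vertex bookkeeping.

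Third, for the skew $e=2$ subcase (paper's~(2)), your idea of ``reading off (**) at $x_1$'' after transporting the $J$-module via proposition~\ref{bupformula}(v) is not what actually works. The paper shows instead that (\ref{eq805}) forces $\kappa(x_1)=3$ when $\omega(x)\geq p$, and then cites proposition~\ref{redto**3}; and when $\omega(x)<p$ it derives an outright contradiction (with $\omega(x_1)=\omega(x)$) from minimality of the characteristic polyhedron. Your worry about the truncation operator $T$ is not the real obstacle here — $G$ vanishes in this subcase ($\tau'(x)=1$ together with $\epsilon(x)=\omega(x)+1$, $\mathrm{Vdir}(x)\not\subseteq\langle U_1\rangle$ gives $G=0$), so there is no Tschirnhausen twist to track; the real point is that you do not get (**) directly at $x_1$, you get $\kappa(x_1)\leq 3$ and must fall back on proposition~\ref{redto**3}.
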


\begin{proof}
By Lemma \ref{redto**4lem}, we are done if $\epsilon (x)=\omega(x)$. Otherwise
one of the following conditions holds up to reordering exceptional variables:

\begin{itemize}
  \item [(1)] $E=\mathrm{div}(u_1u_2)$,  $\mathrm{Vdir}(x)=<U_1,U_2>$;
  \item [(2)] $E=\mathrm{div}(u_1u_2)$,  $\mathrm{Vdir}(x)=<\lambda_1 U_1+U_2>$, $\lambda_1 \neq 0$;
  \item [(3)] $\mathrm{div}(u_1)\subseteq E \subseteq \mathrm{div}(u_1u_2)$, $\mathrm{Vdir}(x)=<U_1>$.
\end{itemize}

Take (\ref{eq801}) to be the quadratic sequence along $\mu$. We may always assume that
\begin{equation}\label{eq8051}
\iota (x_1)\geq (p,\omega (x),3) \ \mathrm{and} \ \epsilon (x_1)=1 + \omega (x)
\end{equation}
in this proof.  Let
$$
x':=(Z/u_3, u'_1:=u_1/u_3, u'_2:=u_2/u_3, u_3), \ \mathrm{div}(u'_1u_3)\subseteq E',
$$
where ${\partial TF_{p,Z} \over \partial U_3}\neq 0$. If $x_1=x'$, we have
$\epsilon (x')=\omega (x)$: a contradiction with (\ref{eq8051}). This
concludes the proof under assumption (1) by Theorem \ref{bupthm}.

\smallskip

Assume that $x_1\neq x'$. Under assumption (2), we can take a unitary polynomial $P(t)\in S[t]$,
whose reduction $\overline{P}(t)\in k(x)[t]$ is irreducible, and
$$
x_1=(X':=Z/u_1, u_1, v:=u_2/u_1 +\gamma_1, w:=P(u_3/u_1)), \ E'= \mathrm{div}(u_1),
$$
where $\gamma_1  \in S$ is a preimage of $\lambda_1 $.

\smallskip

Let $W':=\mathrm{div}(u_1)\subset \mathrm{Spec}S'$ and $(u_1,v,w;Z')$ be well adapted
coordinates at $x_1$, $Z':=X'-\phi$, $\phi \in S'$.
By Proposition \ref{bupformula}(v), we deduce that
$$
\mathrm{in}_{W'}h'={Z'}^p  + F_{p,Z',W'}\in G(W')[Z'],
$$
where $G(W')=k(x_1)[\overline{u}'_1,\overline{u}'_3]_{(\overline{v},\overline{w})}[U_1]$ and
\begin{equation}\label{eq805}
   (\overline{v}^{\omega (x)}) \subseteq J(F_{p,Z',W'},E',W').
\end{equation}

If $\omega (x)<p$, assumption (2) reads:
$$
H^{-1}F_{p,Z}=U_3(U_2+\lambda_1 U_1)^{\omega (x)} +\Phi (U_1,U_2), \ G=0.
$$
If $\Phi =0$, this is a contradiction since then $\kappa (x_1)=2$ by (\ref{eq805}).
After possibly performing a linear change of coordinates in $u_3$, then picking
again well adapted coordinates, we reduce to:
$$
\Phi (U_1,U_2)=U_1U_2\Psi(U_1,U_2).
$$
Since $\Delta (h;u_1,u_2,u_3;Z)$ is minimal, we have $U_1^{pd_1}U_2^{pd_2}\Phi \not \in G(m_S)^p$ and  obtain
$$
\mathrm{ord}_{(\overline{v},\overline{w})}J(F_{p,Z',W'},E',W')\leq \mathrm{deg}\Psi=\omega (x)-1,
$$
also a contradiction, since $\omega (x_1)=\omega (x)$ is assumed.

If $\omega (x)\geq p$, we may then furthermore assume that $\epsilon (x_1)=\epsilon (x)$ by (\ref{eq8051}),
so $\kappa (x_1)= 3$ by (\ref{eq805}). We conclude by Proposition \ref{redto**3}.

\smallskip

Under assumption (3), we define a refinement ${\cal C}$ of the function $x\mapsto (m(x),\omega (x))$, {\it cf.} chapter 6.
Let $\pi : \ {\cal X}' \rightarrow ({\cal X},x)$ be the blowing up along a permissible center of the
first kind ${\cal Y}\subseteq \mathrm{div}(u_1)$, $x_1\in \pi^{-1}(x)$. We set:
$$
{\cal C}(x_1)<{\cal C}(x) \Leftrightarrow x_1 \ \mathrm{satisfies} \ \mathrm{the} \ \mathrm{conclusion}
\ \mathrm{of} \ \mathrm{the} \ \mathrm{proposition}.
$$
By Theorem \ref{bupthm}, we have ${\cal C}(x_1)<{\cal C}(x)$ unless $x_1$ belongs to  the strict transform
$\mathrm{div}(u'_1)\subseteq E'= \mathrm{div}(u'_1u_2)$ of $\mathrm{div}(u_1)$.
Otherwise, we let  ${\cal C}(x_1)={\cal C}(x)$.

\smallskip

With notations as in chapter 6, we claim that $\mathrm{div}(u_1)$ has maximal contact
for the condition ${\cal C}$ (Definition \ref{Maximalcontact}). To see this, suppose that
${\cal C}(x_1)={\cal C}(x)$ and apply Proposition  \ref{redto**3}, Lemma \ref{redto**4lem} and (1) and (2) above.
It can be assumed that
$$
\epsilon (x_1)=\epsilon (x), \ \kappa (x_1)\geq 3 \ \mathrm{and} \ {\cal Y}=\{x\}.
$$

If $\omega (x)\geq p$, we are done unless $x_1$ satisfies again (3) and the claim is proved;
if $\omega (x)<p$, we must still check that the situation
$$
\kappa (x_1)=3, \ \tau '(x_1)=1
$$
does not occur. By assumption (3), (\ref{eq9011}) with $G=0$ gives an expansion
$$
U_1^{-pd_1}U_2^{-pd_2}F_{p,Z}= L(U_1,U_2,U_3)U_1^{\omega (x)} +
\sum_{i=1}^{\omega (x)}Q_{i+1}(U_2,U_3)U_1^{\omega (x)-i}
$$
with $L(0,0,U_3)\neq 0$, $Q_{\omega (x)+1}(U_2,U_3)\in k(x)[U_2^p,U_3^p]$.
Therefore
$$
(0)\neq V(F_{p,Z'},E',m_{S'})\subseteq k(x')[U'_1, U_2]_{\omega (x)}
$$
after blowing up, where $(u'_1,u_2,v';Z')$ are well adapted coordinates at $x'$:
a contradiction with $\kappa (x_1)=3$, $\tau '(x_1)=1$.  This concludes the proof of
the claim when $\omega (x)<p$. The proposition now follows from Theorem \ref{contactmaxFIN}.
\end{proof}

\section{Resolution of $\kappa(x)=3,4$ with monic expansions.}

In this chapter, we prove projection Theorem \ref{projthm}
in the case where $\kappa(x)\geq 3$.

\noindent  { {\it Up to the end of this chapter, ``resolved" stands for ``resolved for $(p,\omega (x),3)$"
(Remark \ref{quadsequence})}}.\\

%\subsection{Basic notations, an exit case.}

%In the particular case where $ \omega(x)<p$, there may appear a very special
%kind of points $x$.

%\begin{defn}\label{combinatoire} %A METTRE DANS Kappa=0

%The point $x$ is {\it combinatoric} if $\omega(x)<p$ and if the following algorithm
%starts and stops with success.

%\noindent (i) if there exists $\div(u_i)\subset E$ such that $\div(u_i)\cap X$
%is Hironaka-permissible,
%choose one and
%blow up $X$ along this one,

%\noindent (ii) if the center  $x' \in X'$
%of our valuation is not $\omega$-near $x$: success,

%\noindent (iii) if  $x' \in X'$ $\omega$-near $x$,
%and $\bar{e}(x')\leq 2$: success,

%\noindent (iv) if  $x' \in X'$ $\omega$-near $x$,
%and $\bar{e}(x')=3$ and there exists $\div(u_i)\subset E'$ such that
%$\div(u_i)\cap X'$ is Hironaka-permissible, go to (i),

%\noindent (v) else failure.

%\end{defn}

%\begin{rem}
%The reader sees easily that:

%\noindent (i)  if $\div(u_1)\cap X$
%is Hironaka-permissible, and if we blow up $X$ along $\div(u_1)\cap X$,
%there is at most one near point (in Hironaka's sense):
%the point $x'$ of parameters $X/u_1,u_1,u_2,u_3$,

%\noindent (ii) if it is so, $\omega(x')=\omega(x)$.

%\end{rem}

\subsection{From (T**) to (**), resolution for $\epsilon(x)=\omega(x)<p $.}

The purpose of this section is to reduce Theorem \ref{projthm} for $\kappa (x)=3,4$ to points
satisfying condition (**) in Definition~\ref{**}.  {This reduction uses the concept of maximal contact with respect to a refinement $x\mapsto \mathcal{C}(x)$ of the function $x\mapsto (m(x),\omega(x))$, see chapter~6. Let $x$ be in the case (T**) of Definition~\ref{**}, in particular, $\kappa(x)=4$.
We consider a finite sequence  of local blowing ups along $\mu$:
\begin{equation}\label{eq:contactmaxeq1}
    ({\cal X},x)=:({\cal X}_0,x_0) \leftarrow ({\cal X}_1,x_1) \leftarrow \cdots \leftarrow ({\cal X}_r,x_r) ,
\end{equation}
with {\it permissible centers of the first kind} ${\cal Y}_i \subset ({\cal X}_i,x_i)$, where $x_i$,
$0 \leq i \leq r$, denotes the center of $\mu$.
For $1\leq i$,
we define:}
\begin{equation}\label{eq:contactmaxeqT**}
\left\{
\begin{array}{ccc}
\mathcal{C}(x_i)<\mathcal{C}(x)  &  \mathrm{if} & x_i \hbox{ is resolved or satisfies   (**)}\\
  &   &   \\
 \mathcal{C}(x_i)=\mathcal{C}(x) &   \mathrm{if} & x_i \hbox{ satisfies (T**)} \hfill{}
\end{array}
\right. .
\end{equation}

We prove the following proposition.

\begin{prop}\label{redto**casT**}
Let $x$ be in the case (T**) of Definition~\ref{**},
and $\mu$ be a valuation of $L=k({\cal X})$ centered at $x$. There exists a finite and independent
sequence of  permissible blowing ups of the first kind
$$
    ({\cal X},x)=:({\cal X}_0,x_0) \leftarrow ({\cal X}_1,x_1)\leftarrow \cdots \leftarrow ({\cal X}_r,x_r),
$$
where $x_i$ is the center of $\mu$ in ${\cal X}_i$, $0\leq i \leq r$, such that
$x_r$ is resolved or ($x_r$ satisfies condition (**) and $\omega (x)\geq p$).
\end{prop}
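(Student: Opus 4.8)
The idea is to run the machinery of Chapter~6 (maximal contact for a suitable refinement $\mathcal C$) along an exceptional component of $E$, reducing each instance of condition (T**) either to a resolved point, or to a point satisfying (**) with $\omega(x)\ge p$, or (when $\omega(x)<p$) to a combinatoric point which is then cleared by the algorithm of Definition~\ref{combinatoire}. First I would split according to which of the three cases (i), (ii), (iii) of (T**) holds at $x$, and in each case identify a distinguished component $\mathrm{div}(u_1)\subseteq E$: in case~(i) it is the component with $\mathrm{Vdir}(x)=\langle U_1\rangle$; in cases~(ii) and~(iii) it is the component $\mathrm{div}(u_1)$ appearing in the monic expansion, i.e. the one for which the distinguished vertex $\mathbf v$ is the unique vertex in the region $x_2=d_2$. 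The point of the monic expansion is precisely that it marks this component, so that blowing up a permissible center of the first kind inside $\mathrm{div}(u_1)$ preserves the structure.

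Next I would define a refinement $\mathcal C$ of $x\mapsto(m(x),\omega(x))$ exactly as in the proof of Proposition~\ref{redto**4} under assumption~(3): for a permissible blowing up of the first kind $\pi:\mathcal X'\to(\mathcal X,x)$ with center $\mathcal Y\subseteq\mathrm{div}(u_1)$ and $x_1\in\pi^{-1}(x)$, set $\mathcal C(x_1)<\mathcal C(x)$ precisely when $x_1$ is resolved or $x_1$ satisfies condition (**) with $\omega(x_1)\ge p$, and $\mathcal C(x_1)=\mathcal C(x)$ otherwise. The key claim is that $\mathrm{div}(u_1)$ has maximal contact for $\mathcal C$ in the sense of Definition~\ref{Maximalcontact}; granting this, Theorem~\ref{contactmaxFIN} applied to $\mathcal C$ produces the required finite independent sequence of permissible blowing ups of the first kind, after which $\mathcal C(x_r)<\mathcal C(x)$, i.e. $x_r$ is resolved or satisfies (**) with $\omega(x_r)\ge p$ — and by Theorem~\ref{omegageomreg} and Theorem~\ref{bupthm} the inequality $\omega(x_r)\le\omega(x)$ forces $\omega(x_r)\ge p$ to mean $\omega(x_r)=\omega(x)\ge p$, so in fact $\omega(x)\ge p$ as in the statement. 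To verify maximal contact I would take the quadratic sequence, assume $\mathcal C(x_1)=\mathcal C(x)$, i.e. $x_1$ is very near $x$ and not resolved and not of the form (**) with $\omega\ge p$, and then run a casuistic analysis on $\mathrm{Vdir}(x_1)$ and $\epsilon(x_1)$ using the blow-up formula Proposition~\ref{bupformula}(v) together with Proposition~\ref{originchart}: the outcome is that $x_1$ again lies on the strict transform of $\mathrm{div}(u_1)$ and again satisfies (T**) (or $\kappa(x_1)\le 3$, in which case it is already resolved, or $\kappa(x_1)=3$ and Proposition~\ref{redto**3} finishes it). The computations here are essentially those in the proofs of Proposition~\ref{redto**3}, Lemma~\ref{redto**4lem} and the case~(3) part of Proposition~\ref{redto**4}: each of cases (i),(ii),(iii) of (T**) transforms, after blowing up the point on $\mathrm{div}(u_1)$, into another instance of (**) or (T**), the monic form being preserved because the distinguished vertex $\mathbf v$ transforms to the distinguished vertex of the blown-up polyhedron by Proposition~\ref{originchart}.

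Finally, the case $\omega(x)<p$ requires separate treatment, since then (**) with $\omega\ge p$ is impossible and the conclusion must be that $x_r$ is resolved. Here I would use the exit-case analysis: by Propositions~\ref{skewresolved}, \ref{skewresolvedbis}, \ref{redto**3}, \ref{redto**4} (all of which explicitly handle $\omega(x)<p$ and conclude $x$ is resolved, i.e. $m(x_r)<p$) together with the observation that when $\mathrm{Vdir}(x_1)=\langle U_1\rangle$ persists one lands on a combinatoric point in the sense of Definition~\ref{combinatoire}, whose defining algorithm terminates with success because each step strictly decreases $\delta$ or $\bar e$; if the algorithm ever fails it is because a curve of the form $V(Z,u_1,u_3)$ becomes permissible of the first kind, and then blowing up that curve resolves $x$ by Theorem~\ref{bupthm} since $\mathrm{Vdir}(x)=\langle U_1\rangle$ or $\langle U_1,U_3\rangle$. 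The main obstacle I anticipate is verifying the maximal-contact claim cleanly across all subcases of (T**) while keeping track of which component of $E$ is marked and ensuring that the monic expansion (and hence the applicability of Proposition~\ref{originchart} to the distinguished vertex) survives each blowing up — this is exactly the kind of bookkeeping that made Chapter~6 and Proposition~\ref{redto**4} delicate, and the argument will have to be organised so that no loop occurs, which as in Chapter~7 is ultimately guaranteed by Corollary~\ref{permisarcthree}.
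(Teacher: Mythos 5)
The central step of your plan is the claim that the distinguished component $\mathrm{div}(u_1)$ has \emph{maximal contact} for your refinement $\mathcal C$, after which you invoke Theorem~\ref{contactmaxFIN}. This claim is in fact false, and the paper is explicit about it: the paper only proves \emph{weak} maximal contact for the condition ``(T**) and $\iota(x)\geq(p,\omega(x),3)$'' (Proposition~\ref{T**contactmaximal}), i.e. the implication of Definition~\ref{Maximalcontact} holds when the blowing-up centers are closed points, but not for arbitrary permissible centers of the first kind inside $\mathrm{div}(u_1)$. Proposition~\ref{T**eclatementcourbe} then shows that blowing up a permissible \emph{curve} in $\mathrm{div}(u_1)$ yields the desired conclusion only under the extra hypotheses (1) or (2) stated there, and without these hypotheses a permissible curve can genuinely fail the implication (\ref{definitionC}). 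Your own verification of the ``key claim'' only tests the quadratic sequence (``To verify maximal contact I would take the quadratic sequence...''), which again establishes only weak maximal contact; this is not enough to apply Theorem~\ref{contactmaxFIN}, whose proof is driven by Proposition~\ref{contactmaxpetitgamma} and thus requires blowing up along permissible curves $V(Z,u_1,u_2)$, $V(Z,u_1,u_3)$, etc.

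The missing ingredient is the one the paper spends the bulk of its proof on: one cannot treat Theorem~\ref{contactmaxFIN} as a black box, but must reopen its proof and verify, step by step, that every permissible-curve blowing up it performs (in the proof of Theorem~\ref{contactmaxFIN}(b) and in Proposition~\ref{contactmaxpetitgamma}(a),(b),(c)) either satisfies one of the extra hypotheses (1)/(2) of Proposition~\ref{T**eclatementcourbe} or else can be replaced (typically by first blowing up the closed point and then inspecting which chart contains the next center) so that the outcome is still ``resolved or (**) with $\omega\geq p$''. The boundary cases $A_2(x)=1$, $A_3(x)=1$, $C(x)=0$, $\beta(x)=0$ that the paper treats explicitly are precisely the places where the naive maximal-contact argument fails, and your proposal gives no mechanism to handle them. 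Your sketch for $\omega(x)<p$ is closer to the right idea (reduce to combinatoric or to $\kappa\leq 2$), but it is not independent of the main gap, since reaching that stage also requires controlling the curve blowing ups inside the $\mathcal C$-descent.
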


\begin{proof}
By Proposition \ref{T**contactmaximal} below, there is weak maximal contact (Definition \ref{Maximalcontact})
for the refinement  {$\mathcal C$ defined above}.
Furthermore  nonresolved points created by blowing up along closed points satisfy condition (**)
with $\omega (x)\geq p$ (Proposition \ref{T**contactmaximal}(i)).

\smallskip

Theorem \ref{contactmaxFIN} does not apply directly since maximal contact does not necessarily hold. We must check that its proof remains valid when
using only those blowing ups of the first kind which are well behaved w.r.t. ${\cal C}$ (Proposition \ref{T**eclatementcourbe} below).

 {By performing blowing ups at closed points, % the quadratic sequence (\ref{contactmaxeq1}),
any  curve ${\cal Y}\subset$div$(u_1)$ of generic point $y$ with  $\omega(y)>0$}
\smallskip

\noindent (a)  {either maps to an intersection of two components of $E$, i.e.
$$
    \eta ({\cal Y})=V(Z,u_1,u_j), \ \mathrm{div}(u_j)\subseteq E, \ j\geq 2, \ \mathrm{or}
$$
\noindent (b) or $\eta ({\cal Y})=V(Z,u_1,u_3)$, $E\subseteq \mathrm{div}(u_1u_2)$, for some $u_3$.
Furthermore, this curve is unique.}

\smallskip

Suppose that there exists ${\cal Y}$ permissible satisfying (b).
Then ${\cal Y}$ satisfies assumption  {(2)} of Proposition \ref{T**eclatementcourbe}
except possibly in case (T**)(i). Let $W:=\eta ({\cal Y})$ and expand:
$$
U_1^{-pd_1}\overline{u}_2^{-pd_2}F_{p,Z,W}=\overline{\gamma}_0 U_1^{\omega (x)}+
\sum_{i=1}^{\omega (x)}\overline{\gamma}_i U_1^{\omega (x)-i}U_3^i \in G(W)_{\omega (x)},
$$
with $\overline{\gamma}_i \in S/(u_1,u_3)$, $\overline{\gamma}_0$ a unit. We are done by
Proposition \ref{T**eclatementcourbe}(1) if $\overline{\gamma}_i=0$ for $1 \leq i \leq \omega (x)$.
Otherwise, blow up along $x$. There is nothing to prove except at the point
$x':=(Z/u_2,u'_1:=u_1/u_2,u_2,u_3/u_2)$ on the strict transform ${\cal Y}'$ of ${\cal Y}$, $E'=\mathrm{div}(u'_1u_2)$.
Then $x'$ is now in case (T**)(ii) and the conclusion follows from
Proposition \ref{T**eclatementcourbe}, assumption (2).
  {From now on,   we assume that all  curves contained in div$(u_1)$ with  $\omega(y)>0$ satisfy (a). In particular, all permissible curves   satisfy (a).}

\smallskip
We use notations as in Proposition \ref{contactmaxpetitgamma}   {and Notation \ref{betagammamaxcontact}}.
 {By performing blowing ups at closed points, we reach $\gamma(x)=1$, i.e. either:}

\noindent (i)  {$\omega(x)=\epsilon(x)$, $\beta(x)\leq 1$, $E=$div$(u_1u_2)$},

\noindent (ii)  {$C(x)<1$, $E=$div$(u_1u_2u_3)$,}

\noindent (iii)  { $\omega(x)=\epsilon(x)-1$, $\beta(x)< 1$, $E=$div$(u_1u_2)$.}

\noindent    { {\it Case (i) or (iii)}. Assume ${\cal Y}=V(Z,u_1,u_2)$ permissible.}
Assumption (1) in Proposition \ref{T**eclatementcourbe} is equivalent to $A_2(x)>1$. If $A_2(x)=1$,
there is an expansion
$$
U_1^{-pd_1}U_2^{-pd_2}F_{p,Z,W}=\overline{\gamma}_0 U_1^{\omega (x)}+
\sum_{i=1}^{\omega (x)}\overline{\gamma}_i U_1^{\omega (x)-i}U_2^i, \ W:=\eta ({\cal Y}),
$$
with $\overline{\gamma}_i \in S/(u_1,u_2)$,   {$\overline{\gamma}_0$ a unit in case (i), $\overline u_3 \times$unit in case (iii)}. Then
\begin{equation}
\begin{array}{ccc}
 \min_{1 \leq i \leq \omega (x)} \left \{{\mathrm{ord}_{\overline{u}_3}\overline{\gamma}_i \over i}\right \}=\beta (x)\leq 1, \ \hbox{case(i)}, \\
 \min_{1 \leq i \leq \omega (x)} \left \{{\mathrm{ord}_{\overline{u}_3}\overline{\gamma}_i -1\over i}\right \}=\beta (x)< 1, \ \hbox{case(ii)}.
 \end{array}
\end{equation}
 We prove that Proposition  \ref{redto**casT**} holds in this situation.

If $\beta (x)>0$, we have $\mathrm{Vdir}(x)=<U_1>$ and get
$\iota (x')\leq (p,\omega (x),2)$ after blowing up, so $x$ is resolved by blowing up along ${\cal Y}$.

If $\beta (x)\leq 0$, we blow up along $x$. By
Proposition \ref{T**contactmaximal} below (proof in case (T**)(ii)), we get $x'$ resolved or
($x'$ satisfies condition (**) with $\omega (x)\geq p$) except if
$x'=(Z/u_3,u'_1:=u_1/u_3,u'_2:=u_2/u_3,u_3)$ is the point on the strict transform ${\cal Y}'$ of ${\cal Y}$,
$E'=\mathrm{div}(u'_1u'_2u_3)$. We now have $\mathrm{Vdir}(x')=<U'_1,U'_2>$ or $\mathrm{Vdir}(x')=<\lambda_1U'_1 +U'_2>$,
$\lambda_1 \neq 0$. Blowing up along  ${\cal Y}'$ gives $x''$ resolved or ($x''$ satisfies (**) with $\omega (x)\geq p$),
arguing as in the proof of Proposition \ref{T**eclatementcourbe} below,  assumption (2).

\smallskip

\noindent   {{\it Case (ii)}:} ${\cal Y}=V(Z,u_1,u_j)$, $E=\mathrm{div}(u_1u_2u_3)$,
$j=2$ or $j=3$. Assumption (1) (resp. assumption (2)) of Proposition \ref{T**eclatementcourbe} is
equivalent to $A_j(x)>1$ (resp. to: $j=3$ and $A_2(x)>0$). By symmetry, there remains to deal with the case
${\cal Y}=V(Z,u_1,u_3)$ with $A_2(x)=0$, $A_3(x)=1$. There is an expansion
$$
u_1^{-pd_1}u_2^{-pd_2}u_3^{-pd_3}f_{p,Z}=\gamma u_1^{\omega (x)}+
\sum_{i=1}^{\omega (x)}f_iu_1^{\omega (x)-i}u_3^i, \ f_i\in S
$$
with $\gamma \in S$ a unit. Let $\overline{f}_i \in S/(u_1)$ be the residue of $f_i$. Then
$$
\min_{1 \leq i \leq \omega (x)} \left \{{\mathrm{ord}_{(\overline{u}_2,\overline{u}_3)}\overline{f}_i \over i}\right \}
=C (x)< 1,
$$
since $\gamma (x)=1$ is assumed here. We consider two cases: $C(x)>0$ and $C(x)=0$,  {arguing as above in case (i)(iii): $C(x)>0$ corresponding to $\beta(x)>0$, $C(x)=0$ to case $\beta(x)=0$ (i)}.
Blowing up along ${\cal Y}$, we get respectively $x$ resolved; $x'$ resolved or
($x'$ satisfies (**) with $\omega (x)\geq p$). Proposition  \ref{redto**casT**} holds in any case.
\end{proof}

This proposition leads to:

\begin{cor}\label{omega(x)=epsilon(x)<p}
Assume that $\omega(x)<p$ and either $\kappa (x)=4$, or ($\kappa (x)=3$ and $\tau '(x)=2$). Then
$x$ is resolved.
\end{cor}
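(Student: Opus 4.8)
The statement reduces Corollary \ref{omega(x)=epsilon(x)<p} to the machinery just developed, so the plan is to track the possible configurations of $x$ under the hypothesis $\omega(x)<p$ and feed each into the appropriate result from Chapters 7--9. First I would dispose of the two cases where $\kappa(x)\leq 2$ is already known: if $\kappa(x)=1$ apply Corollary \ref{projthmkappa1}; if $\kappa(x)=2$ then, since $\omega(x)<p$ forces $\omega(x)\not\equiv 0\bmod p$, Corollary \ref{omeganoncongp} (equivalently Proposition \ref{redto*}(iii)) shows $x$ is good, hence resolved. Thus the content is in the cases $\kappa(x)=3$ with $\tau'(x)=2$, and $\kappa(x)=4$.

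\textbf{The case $\kappa(x)=4$.} Here I would split according to whether $\epsilon(x)=\omega(x)$ or $\epsilon(x)=\omega(x)+1$. If $\epsilon(x)=\omega(x)$, then because $\omega(x)<p$, Lemma \ref{redto**4lem} yields a finite independent sequence of permissible blowing ups of the first kind after which $x_r$ is resolved or satisfies condition (T**); but the last clause of that lemma, ``if $\omega(x)<p$ then (i) holds,'' is not quite enough on its own because (T**) still has to be cleared. So I would then invoke Proposition \ref{redto**casT**}: starting from a (T**) point with $\omega(x)<p$, it produces a finite independent sequence of permissible blowing ups of the first kind ending at a resolved point (the alternative ``$x_r$ satisfies (**) with $\omega(x)\geq p$'' is vacuous under $\omega(x)<p$). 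Composing the two sequences shows $x$ is resolved. If instead $\epsilon(x)=\omega(x)+1$, then Proposition \ref{redto**4} applies with its $\omega(x)<p$ clause guaranteeing alternative (i): a finite independent sequence of permissible blowing ups of the first kind after which $x_r$ is resolved or satisfies (T**), and again Proposition \ref{redto**casT**} clears the (T**) case. One must check throughout that ``resolved'' here is used consistently: in Chapter 9 ``resolved'' means ``resolved for $(p,\omega(x),3)$,'' so at the end one still has to upgrade ``resolved for $(p,\omega(x),3)$'' to ``$\iota(x_r)<\iota(x)$''; but this is automatic since for $\kappa(x)=4$ we have $\iota(x)=(p,\omega(x),4)>(p,\omega(x),3)$, so $x$ is good.

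\textbf{The case $\kappa(x)=3$, $\tau'(x)=2$.} Here I would apply Proposition \ref{redto**3}, whose final sentence states precisely that if $\omega(x)<p$ and $\tau'(x)=2$ then alternative (i) holds: after a finite independent sequence of permissible blowing ups of the first kind, $x_r$ is resolved or satisfies condition (T**). The (T**) case is then cleared by Proposition \ref{redto**casT**} as above, the alternative with $\omega(x)\geq p$ being vacuous. Since for $\kappa(x)=3$ one has $\iota(x)=(p,\omega(x),3)$ and ``resolved'' in this chapter means ``resolved for $(p,\omega(x),3)$,'' being resolved is exactly being good, so $x$ is good.

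\textbf{Main obstacle.} The routine part is the bookkeeping of which auxiliary proposition covers which $(\kappa,\epsilon,\tau')$ configuration; no new geometry is needed. The genuine point of care is the interface between Proposition \ref{redto**casT**} and the reductions of Propositions \ref{redto**3}, \ref{redto**4} and Lemma \ref{redto**4lem}: one must verify that the ``(T**)'' reached by the latter is literally the (T**) of Definition \ref{**} (taking into account Remark \ref{remT**}, which lets one check only conditions (i),(ii),(iii) of (T**) since a point with $\kappa\leq 3$ at that stage would already be resolved), and that composing the independent sequences preserves independence and the hypotheses {\bf (G)}, {\bf (E)}, and $\iota$ non-increasing --- all of which hold because every blowing up used is permissible of the first kind, so Proposition \ref{Estable}, Theorem \ref{bupthm} and the definition of independent sequence apply at each step. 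I would write this as a short case analysis invoking the four cited results, with a sentence at the end observing that ``resolved'' plus $\kappa(x)\in\{3,4\}$ gives $\iota(x_r)<\iota(x)$, i.e. $x$ is good, hence in particular resolved.
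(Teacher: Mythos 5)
Your proof is correct and follows essentially the same route as the paper's: invoke Propositions \ref{redto**3} and \ref{redto**4} (whose $\omega(x)<p$ clauses guarantee alternative (i)), then clear the residual (T**) case with Proposition \ref{redto**casT**}. The paper's proof is terser — it does not separately treat the preliminary $\kappa(x)\leq 2$ cases (not needed, as the hypothesis restricts to $\kappa(x)\in\{3,4\}$) nor split the $\kappa(x)=4$ case by the value of $\epsilon(x)$ (that split is already internal to Proposition \ref{redto**4}) — but the underlying argument is identical, and your concluding remark that ``resolved for $(p,\omega(x),3)$'' implies ``good'' for $\kappa(x)\in\{3,4\}$ is a correct and worthwhile sanity check.
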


\begin{proof}
Indeed, by Propositions \ref{redto**3} and \ref{redto**4}, there exists
an independent sequence of local blowing ups (\ref{eq801}) along $\mu$ such that $x_r$ is resolved
or $x_r$ satisfies condition (T**). In the last case, apply Proposition~\ref{redto**casT**}.
\end{proof}

\begin{prop}\label{T**contactmaximal}
Let $x$ be in the case (T**) of Definition~\ref{**}.
Then   $\div(u_1)$ has weak maximal contact (Definition~\ref{Maximalcontact})
for the condition (T**) and $\kappa(x)\geq 3$. More precisely, let
$\pi : \ {\cal X}' \longrightarrow ({\cal X},x)$ be the blowing up along  $x$ and  $x' \in \pi^{-1}(x)$,
with $\iota (x')\geq (p,\omega (x),3)$:
\begin{itemize}
  \item [(i)] if $x'$ is not on the strict transform of div$(u_1)$, then $x'$ is resolved
or satisfies (**) with $\omega(x)\geq p$;
  \item [(ii)] if $x'$ is on the strict transform of div$(u_1)$, then $x'$ satisfies (T**).
\end{itemize}
\end{prop}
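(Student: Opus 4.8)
The plan is to prove Proposition \ref{T**contactmaximal} by a case analysis on the three sub-cases (i), (ii), (iii) of condition (T**) from Definition \ref{**}, computing $\mathrm{in}_{m_{S'}}h'$ at an exceptional point $x'$ by the blow-up formula of Proposition \ref{bupformula} and the transformation rules of Proposition \ref{originchart}. Throughout I will pick well adapted coordinates $(u_1,u_2,u_3;Z)$ at $x$ realizing the relevant monic expansion; by Proposition \ref{Estable} and Theorem \ref{bupthm}, $(S',h',E')$ again satisfies {\bf (G)}, {\bf (E)} and $(m(x'),\omega(x'),\kappa(x'))\leq(m(x),\omega(x),\kappa(x))$, so I may assume $m(x')=p$, $\omega(x')=\omega(x)$ and $\iota(x')\geq(p,\omega(x),3)$.

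First I would treat case (T**)(i): $\epsilon(x)=\omega(x)$, $\mathrm{div}(u_1)\subseteq E$, $\mathrm{Vdir}(x)=<U_1>$. Here $F_{p,Z}=\lambda U_1^{pd_1+\omega(x)}$ up to terms computing higher degree, so by Theorem \ref{bupthm}, $x'$ very near $x$ forces $\eta'(x')$ on the strict transform of $\mathrm{div}(u_1)$ — which already proves the maximal-contact statement (\ref{definitionC}) for this sub-case, and in fact shows (i) of the proposition holds vacuously (there is no near point off the strict transform). On the strict transform, write $x'$ in the chart with exceptional equation $u_1$; then $h'=u_1^{-p}h$ and $\Delta_{S'}(h';u'_1,\ldots;Z')$ is minimal by Proposition \ref{originchart}; one reads off $H(x')$ from Proposition \ref{bupformula}(iv) and checks that the new vertex $\mathbf{v}'=(d'_1,d'_2,(1+\omega(x))/p)$ is the unique vertex in the region $x_1=d'_1$ — using $1+\omega(x)\not\equiv0\bmod p$ is not needed here since $\mathrm{Vdir}(x)=<U_1>$ means $\epsilon(x)=\omega(x)$ — so $x'$ is again of type (T**)(i). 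Wait: in case (i), $\epsilon(x)=\omega(x)$, so after blow-up either $\epsilon(x')=\omega(x)$ and $x'$ satisfies (T**)(i) again, or $\epsilon(x')<\omega(x)$ giving $\iota(x')<\iota(x)$, i.e. $x'$ resolved. Lemma \ref{sortiekappaegaldeuxbis} and Proposition \ref{skewresolvedbis} handle the degenerate configurations where $\mathrm{Vdir}(x')$ becomes two-dimensional.

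Next, cases (T**)(ii) and (T**)(iii): $E=\mathrm{div}(u_1u_2)$ and $\mathbf{v}=(d_1+\omega(x)/p,d_2,d_3)$ (resp. $(d_1+\omega(x)/p,d_2,1/p)$) is the only vertex in the region $x_2=d_2$. The key computation is in the chart $u'_1=u_1/u_2$, $u_2$, $u'_3=u_3/u_2$ (the "point at infinity" for the $u_2$-direction): here $h'=u_2^{-p}h$, $\Delta_{S'}(h';u'_1,u_2,u'_3;Z')$ is minimal by Proposition \ref{originchart}, and the unique-vertex hypothesis propagates — the image of $\mathbf{v}$ is again the only vertex in the appropriate coordinate hyperplane, now with a component of $E'=\mathrm{div}(u'_1u_2)$ marked. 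One checks that $x'$ satisfies again (T**) (possibly switching between sub-cases ii and iii depending on whether $\epsilon$ drops by one). In the other charts ($u_1$ is a local equation of the exceptional divisor, and the remaining "origin of the chart" points), I would use Theorem \ref{bupthm}: if $\iota(x')\geq(p,\omega(x),3)$ then $x'$ lies on the strict transform of one of the two components of $E$, and a direct reading of $\mathrm{in}_{m_{S'}}h'$ via Proposition \ref{bupformula}(v) shows that either $x'$ satisfies (**) — here the condition $1+\omega(x)\not\equiv0\bmod p$ in Definition \ref{**}(i) is exactly what one gets from a nonsolvable vertex of the form $(d'_1,d'_2,(1+\omega(x))/p)$, and $\omega(x)\geq p$ follows because $\epsilon(x)=1+\omega(x)>0$ forces $\omega(x)\geq p$ when $1+\omega(x)\equiv0\bmod p$ is excluded and $\omega(x)\not\equiv0$... more precisely I'd track that a monic expansion (**) with $\omega(x)<p$ would force $m(x')<p$, contradicting $\iota(x')\geq(p,\omega(x),3)$ — or $x'$ is resolved via Lemma \ref{sortiemonome}, Lemma \ref{sortieomegaun} or Proposition \ref{skewresolved}.

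The main obstacle I expect is the bookkeeping in case (T**)(iii) with $\epsilon(x)=1+\omega(x)$: by Remark \ref{remT**} one then has $\tau'(x)=3$ or $\kappa(x)=4$, and after blow-up the directrix $\mathrm{Vdir}(x')$ can become skew or two-dimensional, so one must invoke the classification of skew directrices (types (T0)--(T3) preceding Proposition \ref{skewresolved}) together with Propositions \ref{skewresolved} and \ref{skewresolvedbis} to show $x'$ is either resolved or lands back in a controlled (**)/(T**) configuration; keeping the "which component of $E$ is marked" data consistent through these reductions, so that maximal contact in the sense of Definition \ref{Maximalcontact} is genuinely preserved, is the delicate point. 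The independence of the sequence and the restriction to permissible centers of the first kind is automatic here since all blow-ups are at closed points or at curves $V(Z,u_1,u_3)$, $V(Z,u_1,u_2)$ which are permissible of the first kind by Theorem \ref{well2prepared}-type arguments and Proposition \ref{permisarc}.
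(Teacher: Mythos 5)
Your outline identifies the right strategy (case analysis on the three sub-cases of (T**)) and the right toolbox (Theorem \ref{bupthm}, Propositions \ref{originchart} and \ref{bupformula}, Lemma \ref{sortiemonome}, Proposition \ref{skewresolved}), and correctly observes that in sub-case (T**)(i) the directrix $\mathrm{Vdir}(x)=\langle U_1\rangle$ already forces any $\omega$-near $x'$ onto the strict transform of $\div(u_1)$. But the proposal has two concrete errors and one substantive gap.

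First, the chart bookkeeping in case (T**)(i) is wrong: you write ``on the strict transform, write $x'$ in the chart with exceptional equation $u_1$'', but the strict transform of $\div(u_1)$ does not meet that chart — in the $u_1$-chart the whole pullback of $\div(u_1)$ is exceptional. The strict transform has equation $u_1/u_j$ in the $u_j$-chart for $j\neq 1$, so $x'$ lives in a $u_2$- or $u_3$-chart, with $E'$ acquiring a new exceptional component. Relatedly, your conclusion ``$x'$ satisfies (T**)(i) again'' is not what one gets: the paper shows that $x'$ lands in (T**)(ii) or (iii), precisely because the new exceptional component makes $\mathrm{Vdir}(x')$ two-dimensional in general. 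Your vertex $\mathbf{v}'=(d'_1,d'_2,(1+\omega(x))/p)$ is a (**)-type vertex, not a (T**)(i)-type invariant, which betrays the same confusion.

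Second, and more seriously, you acknowledge that sub-case (T**)(iii) with $\epsilon(x)=1+\omega(x)$, $\kappa(x)=4$ and $\tau'(x)=1$ is ``the delicate point'' and then defer it entirely to ``invoking'' Propositions \ref{skewresolved} and \ref{skewresolvedbis}. That deferral does not work: those propositions are applied in the paper only after an explicit reduction — splitting into $\mathrm{Vdir}(x)=\langle U_1\rangle$ and $\mathrm{Vdir}(x)=\langle\lambda_1 U_1+U_2\rangle$, expanding $H^{-1}F_{p,Z}$ as $U_3 U_1^{\omega(x)}+U_2 Q$ resp. $U_3(U_2+\lambda_1 U_1)^{\omega(x)}+U_2 Q$, extracting the leading $U_1$-block $Q_{i_0}$, and running Lemma \ref{lem532}(2) (and, in the $\langle\lambda_1 U_1+U_2\rangle$ sub-case, Lemma \ref{joyeux}(ii)) to bound $\omega(x')$ or force $x'=(Z/u_2,u_1/u_2,u_2,u_3/u_2)$ with $\iota(x')\leq(p,\omega(x),2)$. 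This is where the assertion ``$x'$ satisfies (**) with $\omega(x)\geq p$ or is resolved'' actually gets proved (the residual $\omega(x)<p$ case drops out of the $i_0=0$ analysis after a coordinate change), and your heuristic argument that $\omega(x)\geq p$ ``follows because\ldots'' is not a substitute for it. The bound $\omega(x)\geq p$ is a byproduct of the computation, not a consequence of the constraint $1+\omega(x)\not\equiv 0\bmod p$ in Definition \ref{**}.

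Finally, in (T**)(ii)/(iii) your ``key computation'' is placed in the $u_2$-chart $(u_1/u_2,u_2,u_3/u_2)$, whereas under $\mathrm{Vdir}(x)=\langle U_1,U_2\rangle$ (which is where (T**)(ii) lands once $\kappa(x)=4$) the unique $\omega$-near point is $(Z/u_3,u_1/u_3,u_2/u_3,u_3)$ in the $u_3$-chart. So the proposal is a reasonable top-level outline but does not constitute a proof: the easy case is mislocated, and the hard case — which carries essentially all of the content — is left open.
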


\begin{proof}
In the case  (T**)(i), the reader sees that $<U_1>=\mathrm{Vdir}(x)$ and, if we blow up along $x$,
any point $x'$ with $\iota (x')\geq (p,\omega (x),3)$ verifies  (T**)(ii) or  (iii).

\smallskip

In the case (T**)(ii) and not (i),  we have
$$
U_1^{-pd_1}U_2^{-pd_2}U_3^{-pd_3}F_{p,Z}= \lambda_0 U_1^{\omega(x)}+U_2P(U_1,U_2,U_3),
$$
by (\ref{eq9012}), with $0\neq \lambda_0 \in k(x)$, $0\neq P\in k(x)[U_1,\ldots ,U_e]_{\omega (x)-1}$.

Either
$$
\mathrm{Vdir}(x)=<U_1,U_2>,
$$
then,   {by Theorem \ref{bupthm}, $x'$ is very near to $x$}  only if  $x'=(Z/u_3,u_1/u_3,u_2/u_3,u_3)$. Clearly  $\iota(x')<(p,\omega(x),3)$ or $x'$ satisfies  (T**)(ii). Or we have
$$
\mathrm{Vdir}(x)=<\lambda_1U_1+U_2>, \ \lambda_1 \neq 0.
$$
This is  case (T3) of Proposition \ref{skewresolved}. Arguing as in its proof, {\it cf.} (\ref{eq9043}),
$x'$ satisfies condition (**) with $\omega (x)\geq p$ or $x'$ is resolved by Lemma \ref{sortiemonome} except possibly
if $x'=(Z/u_3,u_1/u_3,u_2/u_3,u_3)$. Then $\iota(x')<(p,\omega(x),3)$ or $x'$ satisfies again (T**)(ii).

\smallskip

In the case (T**)(iii), we apply Lemma \ref{sortiemonome} when   {$\epsilon(x)=\omega(x)\geq 2$ or Lemma \ref{sortieomegaun}  when $\epsilon(x)=\omega(x)=1$}:
$x$ is resolved for $\iota=(p,\omega(x),2)$. Assume that $\epsilon(x)=1 +\omega(x)$. By
Remark \ref{remT**}, we may assume $\kappa (x)=4$.

If $x'=(Z/u_3,u_1/u_3,u_2/u_3,u_3)$, we have $\omega(x')\leq \omega(x)$ and in case of equality,
$\epsilon(x')=\omega(x)$ and $x'$ satisfies (T**)(ii). In particular, we are done if $\mathrm{Vdir}(x)=<U_1,U_2>$
by Theorem \ref{bupthm}. There remains to deal with the case $\tau '(x)=1$.

\smallskip

\noindent {\it Case 1:} $\mathrm{Vdir}(x)=<U_1>$. Expand
\begin{equation}\label{eq911}
U_1^{-pd_1}U_2^{-pd_2}F_{p,Z}=U_3 U_1^{\omega(x)}+U_2Q, \ Q \in k(x)[U_1,U_2,U_3]_{\omega (x)}.
\end{equation}
If $Q=0$, the reader sees that $x'$ satisfies (T**)(ii) or  (T**)(iii) if $\iota (x')\geq (p,\omega (x),3)$.
The difficult case is $Q\neq 0$. By (\ref{eq9011}), we have
$$
V(TF_{p,Z},E,m_S)=H^{-1}{\partial TF_{p,Z} \over \partial U_3}\subseteq < U_1^{\omega(x)}>.
$$
This gives ${\partial Q \over \partial U_3}=0$, i.e. $Q \in k(x)[U_1,U_2,U_3^p]$ in both
cases $G= 0$ and $G\neq 0$. Expand again
\begin{equation}\label{eq912}
Q=\sum_{i=0}^{i_0} U_1^{\omega (x)-i} Q_{i}(U_2,U_3^p),
\   {Q_{i_0}(U_2,U_3^p)}\neq 0 .
\end{equation}

If $i_0 =0$, we reduce to $Q=0$ after possibly picking new well adapted coordinates $(u_1,u_2,v;Z')$
at $x$.

If $i_0\geq 1$, we apply Proposition \ref{bupformula}(v) to
those elements of $J(F_{p,Z},E,m_S)$ of the form:
$$
U_1^{-pd_1}U_2^{-pd_2}D \cdot F_{p,Z}=\lambda_D U_3 U_1^{\omega(x)}+
U_2\sum_{i=0}^{i_0} U_1^{  {\omega (x)-i}} Q_{i,D}(U_2,U_3^p),
$$
where $D\in \{U_1{\partial \hfill{} \over \partial U_1}, U_2{\partial \hfill{} \over \partial U_2},
\{{\partial \hfill{} \over \partial \lambda_l}\}_{l\in \Lambda_0}\}$.

By  Lemma \ref{lem532}(2) (applied to $F:= Q_{i_0}(U_2,U_3^p)$), we get
$\omega (x')\leq \omega (x)$ with strict equality if $k(x')\neq k(x)$. If  $k(x')= k(x)$, it can be assumed w.l.o.g.
that $x'=(Z/u_2,u_1/u_2,u_2,u_3/u_2)$. Then $\iota (x')\leq (p, \omega (x),2)$ by (\ref{eq911})-(\ref{eq912})
and the conclusion follows.

\smallskip

\noindent {\it Case 2:} $\mathrm{Vdir}(x)=<\lambda_1U_1+U_2>$, $\lambda_1 \neq 0$. We now have $G=0$ and expand
$$
U_1^{-pd_1}U_2^{-pd_2}F_{p,Z}=U_3 (\lambda_1U_1+U_2)^{\omega(x)}+U_2Q, \ Q \in k(x)[U_1,U_2,U_3^p]_{\omega (x)}.
$$
If $Q\neq 0$,  {as $H^{-1}{\partial TF_{p,Z} \over \partial U_3}\subseteq < U_1^{\omega(x)}>$}, we expand again
$$
Q=\sum_{i=0}^{i_0} U_3^{pi}Q_{\omega (x)-i}(U_1,U_2),
\   {Q_{\omega (x)-i_0}(U_1,U_2)}\neq 0.
$$
Since $(u_1,u_2,u_3;Z)$ are well adapted coordinates, we have
$$
U_1^{pd_1}U_2^{pd_2+1}  {Q_{\omega (x)-i_0}(U_1,U_2)}\not \in G(m_S)^p.
$$

If $i_0=0$, we argue as in the proof of Proposition \ref{redto**4}, {\it cf.} (\ref{eq805}) {\it sqq.}:
after possibly picking new well adapted coordinates $(u_1,u_2,v;Z')$
at $x$, it can be assumed that $U_1$ divides $Q=Q_{\omega (x)}[U_1,U_2]$. We get $\omega (x')<\omega (x)$
if $Q\neq 0$; if $Q=0$, we obtain $\iota (x')\leq (p, \omega (x),2)$ or $x'$ satisfies
the assumptions of Lemma \ref{sortiemonome} (Lemma \ref{sortieomegaun} if $\omega (x)=1$), so $x'$ resolved.
In particular, the proof is complete if $\omega (x)<p$.

If $i_0\geq 1$, arguing as in case 1, we obtain $\omega (x')<\omega (x)$ except possibly if $k(x')=k(x)$
and
$$
a(1):=pd_1, \ a(2):=pd_2+1, \ a(3):=0, \ F_0:=Q_{\omega (x)-i_0}[U_1,U_2]
$$
satisfies the assumptions of Lemma \ref{joyeux} with $\lambda =\lambda_1^{-1}$. Then it can be assumed w.l.o.g.
that $x'=(Z/u_1,u_1, \gamma_1 +u_2/u_1,u_3/u_1)$, where $\gamma_1 \in S$ is a unit with residue $\lambda_1$.
We obtain $\iota (x')\leq (p, \omega (x),2)$ or $x'$ satisfies
the assumptions of Lemma \ref{sortiekappaegaldeuxbis}. Then $x'$ is resolved and this concludes the proof.

\end{proof}

%%%%%%%%%%%%%%%%%28/01/2014
\begin{prop}\label{T**eclatementcourbe}
Let $x$ be in the case (T**) of Definition \ref{**} and $\mathcal{Y}\subset ({\cal X},x)$
be a permissible curve of the first kind, $\eta ({\cal Y})\subset \mathrm{div}(u_1)$, with generic point $y$. Let
$$
    \pi : \ {\cal X}' \longrightarrow ({\cal X},x)
$$
be the blowing up along  $\mathcal{Y}$ and  $x' \in \pi^{-1}(x)$,  $\iota (x')\geq (p,\omega (x),3)$.
Assume furthermore that one of the following extra assumptions holds:
\begin{itemize}
  \item [(1)] $\mathrm{Vdir}(y)=<U_1>$;
  \item [(2)] ${\cal Y}=V(Z,u_1,u_3)$ and $x$ satisfies (T**)(ii) or (iii),
\end{itemize}
where $(u_1,u_2,u_3;Z)$ are well adapted coordinates. Then one of the following holds:
\begin{itemize}
  \item [(i)] $x'$ is resolved, or ($x'$ satisfies (**) with $\omega(x)\geq p$);
  \item [(ii)] $x'$ maps to the strict transform of div$(u_1)$ and satisfies (T**).
\end{itemize}
\end{prop}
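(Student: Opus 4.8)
\textbf{Proof plan for Proposition \ref{T**eclatementcourbe}.}

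The plan is to analyze the blowing up $\pi$ along the permissible curve $\mathcal{Y}$ of the first kind, computing the transform $\mathrm{in}_{W'}h'$ via the blow-up formula in proposition \ref{bupformula}, and then to classify the exceptional points $x'$ with $\iota(x')\geq(p,\omega(x),3)$ by examining how $\mathrm{Vdir}(y)$ interacts with the exceptional divisor. We use notations as in proposition \ref{Hironakastable}: $W:=\eta(\mathcal{Y})$, $\sigma$ the blowing up of $\mathrm{Spec}S$ along $W$, and $E':=\sigma^{-1}(E)_{\mathrm{red}}$. Since $\mathcal{Y}$ is permissible of the first kind, proposition \ref{bupformula} applies and gives $H(x')=u^{\epsilon(y)-p}H(x)$ together with the transformation rules for $G_{W'}$ and $J(F_{p,X',W'},E',W')$; by theorem \ref{bupthm}, we have $(m(x'),\omega(x'),\kappa(x'))\leq(p,\omega(x),\kappa(x))$, and equality can hold only at points of the projective cone $PC(x,\mathcal{Y})$. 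So throughout we may assume $\iota(x')=(p,\omega(x),\kappa(x))$ with $\kappa(x)\in\{3,4\}$, and we must show that either $x'$ is resolved, or $x'$ satisfies (**) with $\omega(x)\geq p$, or $x'$ lies on the strict transform of $\mathrm{div}(u_1)$ and satisfies (T**).

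First I would treat assumption (2), i.e. $\mathcal{Y}=V(Z,u_1,u_3)$ with $x$ in case (T**)(ii) or (iii). Here there are essentially three charts to inspect: the origin chart $x'=(Z/u_1,u_1,u_2,u_3/u_1)$ on the strict transform of both $\mathcal{Y}$ and $\mathrm{div}(u_1)$, the chart $x'=(Z/u_3,u_1/u_3,u_2,u_3)$ on the exceptional divisor but not on the strict transform of $\mathrm{div}(u_1)$, and a possible nonrational point obtained by a further purely inseparable or irreducible-residue extension in the $u_3$-direction. In the origin chart, proposition \ref{originchart} guarantees minimality of the transformed polyhedron; writing out the monic expansion for $x$ (definition \ref{**}), the vertex $(d_1,d_2,(1+\omega(x))/p)$ (resp.\ $(d_1+\omega(x)/p,d_2,1/p)$ in case (T**)(iii)) transforms to a vertex in the region $x_1=d_1'$ that is still the unique such vertex, so $x'$ again satisfies (T**) — this is conclusion (ii). In the second chart, $\mathrm{div}(u_3)$ becomes exceptional, and the monic term $\lambda_0U_1^{\omega(x)}$ (resp.\ the $U_3U_1^{\omega(x)}$-term) forces, after transformation, an equation to which lemma \ref{sortiemonome} applies when $\omega(x)\geq p$, giving $x'$ resolved, and forces $m(x')<p$ when $\omega(x)<p$; this is conclusion (i). The nonrational point is handled by lemma \ref{lem532}(2) exactly as in the proofs of propositions \ref{skewresolved} and \ref{T**contactmaximal}: the inseparable degree forces $\omega(x')<\omega(x)$, so $x'$ is resolved.

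For assumption (1), $\mathrm{VDir}(y)=<U_1>$, the computation is driven by proposition \ref{bupformula}(v): the transformed module $J(F_{p,X',W'},E',W')$ equals $U^{-\epsilon(y)}$ times the old one, and the hypothesis $\mathrm{VDir}(y)=<U_1>$ means that modulo the maximal ideal of $S_W$ the relevant derivative lands in $<U_1^{\omega(y)}>$ (or $<U_1^{\omega(y)+1}>$ for the $G$-part). Passing to $x'$, the only exceptional point where $\iota$ can stay maximal is the one on the strict transform of $\mathrm{div}(u_1)$ (theorem \ref{bupthm}, since the directrix is concentrated in the $u_1$-direction). At that point I would check that after picking well adapted coordinates $(u_1,v,w;Z')$, the vertex structure of the characteristic polyhedron still exhibits a unique vertex in the region $x_1=d_1'$, which is precisely the (T**) condition — possibly after a further Tschirnhausen adjustment in $w$ as in the proof of lemma \ref{redto**4lem}. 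If instead $x'$ escapes the strict transform of $\mathrm{div}(u_1)$, then a new exceptional component is created transverse to $\mathrm{VDir}$, and the monic form passes to an equation handled by lemma \ref{sortiemonome} (when $\omega(x)\geq p$) or drops the multiplicity (when $\omega(x)<p$), giving conclusion (i).

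The main obstacle I anticipate is bookkeeping the case $\epsilon(x)=1+\omega(x)$ under (T**)(iii) with $\kappa(x)=4$: here one has a nonzero $G$-part, so proposition \ref{bupformula}(v) couples the transforms of $H^{-1}G^p$ and of $J(F_{p,Z,W},E,W)$, and one must verify that the truncation operator $T'$ at $x'$ does not destroy the monic term — this is exactly the kind of subtlety that appeared in the proof of theorem \ref{bupthm}, case $i_0(m_S)=p-1$. I would handle it by the same argument used there, namely showing $F_{p,Z'}-T'F_{p,Z'}\in(\{U'_i\}_{i\not\in F})\cap G(m_{S'})$, so that the vertex computing $\omega(x')$ is unaffected; combined with remark \ref{remT**}, which lets us assume $\kappa(x)=4$ whenever we are checking (T**), this keeps the casuistics finite. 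Everything else is a direct but lengthy application of propositions \ref{originchart} and \ref{bupformula}, lemmas \ref{lem532} and \ref{joyeux}, and lemma \ref{sortiemonome}, organized chart by chart.
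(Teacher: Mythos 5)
Your plan correctly identifies the right tools (theorem \ref{bupthm} to restrict attention to exceptional points in $PC(x,\mathcal{Y})$, proposition \ref{originchart} for minimality, proposition \ref{bupformula}(v) for the transformation rule, lemma \ref{sortiemonome} to resolve the monic form), but there are two concrete problems.

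First, the chart geometry you describe under assumption (2) is backwards, and this is not a cosmetic slip. When blowing up along $W=V(u_1,u_3)$, the total transform of $\mathrm{div}(u_1)$ in the $u_1$-chart (the one with origin $(Z/u_1,u_1,u_2,u_3/u_1)$) coincides with the exceptional divisor, so the strict transform of $\mathrm{div}(u_1)$ leaves no trace in that chart at all; it lives entirely in the $u_3$-chart as $V(u_1/u_3)$, which contains the point $(Z/u_3,u_1/u_3,u_2,u_3)$. Consequently your assignment of which exceptional point carries conclusion (ii) and which carries conclusion (i) is exchanged. Moreover, since under (T**)(ii)/(iii) one has $\mathrm{Vdir}(x)=<U_1>$ (up to the skew subcase), theorem \ref{bupthm} already forces $\iota$ to drop at the origin of the $u_1$-chart, so that point requires no further analysis; the substance of the proof is carried out at $(Z/u_3,u_1/u_3,u_2,u_3)$, opposite to what you sketch. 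The paper's proof is organized instead by the two possible normal-crossings shapes of the curve, $\mathcal{Y}=V(Z,u_1,u_2)$ with $\mathrm{div}(u_1u_2)\subseteq E$ versus $\mathcal{Y}=V(Z,u_1,u_3)$ with $E=\mathrm{div}(u_1u_2)$, and then by subcases on $\mathrm{Vdir}(x)$; this is what makes the single surviving chart easy to identify in each case.

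Second, you never invoke proposition \ref{firstkind}, which is the structural engine of the actual argument: permissibility of the first kind along $\mathcal{Y}=V(Z,u_1,u_j)$ forces $H^{-1}\langle G^p,F_{p,Z}\rangle\subseteq k(x)[\{U_\ell\}_{\ell\in J}]_{\epsilon(x)}$, so when $\eta(\mathcal{Y})$ is an intersection of exceptional components it immediately gives $\epsilon(x)=\omega(x)$ and locks $\mathrm{Vdir}(x)$ into the $J$-variables. Without this, the transformation computations you defer to are not merely lengthy but underdetermined. Finally, you flag the subtle $\kappa(x)=4$, $\epsilon(x)=1+\omega(x)$, $G\neq 0$ situation under (T**)(iii) and propose to recycle the truncation argument from theorem \ref{bupthm}; that concern is legitimate, but under (T**)(iii) proposition \ref{firstkind} actually constrains $H^{-1}F_{p,Z}=\langle U_3U_1^{\omega(x)}\rangle$ directly, which is how the paper closes the case in a line rather than through a truncation estimate.
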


\begin{proof}
As $\mathcal{Y}$ has normal crossings with $E$, we can choose in any case
well adapted coordinates $(u_1,u_2,u_3;Z)$ at $x$ such that $\mathcal{Y}=V(Z,u_1,u_i)$, $i=2$ or $i=3$.

\smallskip

Let us see the case where  $\mathcal{Y}=V(Z,u_1,u_2)$,  up to
renumbering $u_2, u_3$. As $\mathcal{Y}$ is a permissible curve of the first kind, we have,  {with the usual convention $d_3=0$ when div$(u_3)\not\subset E$}:
$$
U_1^{-pd_1}U_2^{-pd_2}U_3^{-pd_3}F_{p,Z}\in k(x)[U_1,U_2]_{\epsilon (x)}
$$
by Proposition \ref{firstkind}. This implies  $\epsilon(x)=\omega(x)$.

\smallskip

If $<U_1>\subseteq \mathrm{Vdir}(x)$, we are done by Theorem \ref{bupthm} unless equality holds and
$x'=(Z':=Z/u_2,u'_1:=u_1/u_2,u_2,u_3)$. We may therefore assume that $x$ satisfies (T**)(i). Note that
$(u'_1,u_2,u_3;Z')$ are well adapted coordinates at $x'$ by Proposition \ref{originchart}. The proof is trivial
under  assumption (1) and we get $x'$ resolved or (T**)(ii).
Under assumption (2) (with  $u_2,u_3$ relabeled), we have $E=\mathrm{div}(u_1u_2u_3)$ and there is an expansion
$$
u_1^{-pd_1}u_2^{-pd_2}u_3^{-pd_3}f_{p,Z}\equiv \gamma u_1^{\omega (x)} \ \mathrm{mod} u_3(u_1,u_2)^{\omega (x)},
$$
with $\gamma \in S$ a unit. We get $x'$ resolved or (T**)(ii).

\smallskip

Finally if $\mathrm{Vdir}(x)=<\lambda_1U_1+U_i>$,  {$\lambda_1\neq 0$, $i=2$ or $3$. Assumption (2) is true. So $i=3$ and $x$ is in case (T**)(ii)
with $E=\mathrm{div}(u_1u_2u_3)$. We are done by Theorem \ref{bupthm} unless
$$
x'=(X':=Z/u_1,u_1,u_2,v:=\gamma_1 + u_3/u_1), \ E'=\mathrm{div}(u_1u_2),
$$
where $\gamma_1 \in S$ is a unit with residue $\lambda_1$. Applying Proposition \ref{bupformula}(v)
(with $W':=\mathrm{div}(u_1)\subset \mathrm{Spec}S'$), we get
\begin{equation}\label{eq913}
J(F_{p,X',W'},E',W')=({\overline{v}}^{\omega(x)})\subseteq S/(u_1,u_3)[\overline{u}'_3]_{(\overline{v},\overline{u}_2)}.
\end{equation}
If $\iota (x_1)\geq (p,\omega (x),3)$, (\ref{eq913}) thus reads
$$
U_1^{-pd'_1}\overline{u}_2^{-pd_2}{\partial F_{p,X',W'} \over \partial \overline{v}}=({\overline{v}}^{\omega(x)}),
$$
where $d'_1:=d_1+d_3 +\omega (x)/p-1$, i.e. $x'$ satisfies condition (**).} This situation occurs only if
$(d'_1,d_2)\in \N^2$; therefore $x'$ is resolved for $m(x)=p$ if $\omega (x)<p$.

\smallskip

Let us now see the case where $\mathcal{Y}=V(Z,u_1,u_3)$, $E=\div(u_1u_2)$. If $\epsilon (x)=\omega (x)$,
we thus have $\mathrm{Vdir}(x)=<U_1>$ by Proposition \ref{firstkind}, in particular $x$ satisfies
(T**)(i) or (ii). We are done by Theorem \ref{bupthm} unless $x'=(Z/u_3,u_1/u_3,u_2,u_3)$.
The reader ends the proof easily as above, under either assumption (1) or (2): we get $x'$  resolved
or (T**)(ii).

If $\epsilon (x)=1+\omega (x)$, $x$ satisfies (T**)(iii)  {by definition}. By Proposition \ref{firstkind},
we have $H^{-1}F_{p,Z}=<U_3U_1^{\omega (x)}>$. Since $\mathrm{Vdir}(x)=<U_1>$, we are done by Theorem \ref{bupthm}
unless $x'=(Z/u_3,u_1/u_3,u_2,u_3)$. The reader ends the proof easily as before.
\end{proof}

\subsection{Resolution for  (**),  the end.}

The purpose of this section is to prove the following proposition and theorem which end the proof
of Projection Theorem \ref{projthm}.

\begin{prop}\label{END}
Assume that $x$ is in case (**) (Definition \ref{**}), then $x$ is resolved for $\iota=(p,\omega(x),3)$.
\end{prop}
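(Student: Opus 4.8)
The plan is to run the same mechanism as in the proof of Theorem \ref{contactmaxFIN} (a secondary invariant $\gamma(x)\in\N$ which is nonincreasing under blowing up a closed point, strictly decreasing at nonrational exceptional points and at the exceptional point ``at infinity'', infinite chains of rational non-``infinity'' points being excluded by Corollary \ref{permisarcthree}), but with the projection performed from the distinguished vertex $\mathbf{v}=(d_1,d_2,(1+\omega(x))/p)$ of $\Delta_S(h;u_1,u_2,u_3;Z)$ rather than from a maximal-contact face, and with a new class of permissible curves allowed. First I would dispose of the case $\omega(x)<p$: condition (**) forces $\mathrm{in}_{m_S}h$ to be a monic quasimonomial form whose only nonmonomial deformations involve $U_3$, and a direct computation using Proposition \ref{originchart} and the structure Theorem \ref{initform} shows that $x$ is combinatoric in the sense of Definition \ref{combinatoire}: blowing up along the finitely many components of $E$ meeting $\mathcal{X}$ Hironaka-permissibly strictly lowers $\delta$ or the number $\bar e$ of exceptional components through the center of $\mu$ while preserving (**), so the combinatoric algorithm terminates with $m(x_r)<p$ or $\iota(x_r)<(p,\omega(x),3)$ (this is also what Corollary \ref{omega(x)=epsilon(x)<p} and Proposition \ref{redto**casT**} anticipate). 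From now on I assume $\omega(x)\geq p$.

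Next I would strengthen condition (E) to $E=\eta(\mathrm{Sing}_p\mathcal{X})$ (condition $(\mathbf{E'})$). Starting from (E) (Corollary \ref{EEfait}) one blows up the components of $E$ disjoint from $\eta(\mathrm{Sing}_p\mathcal{X})$ as in Lemma \ref{imagepoints}; these are Hironaka-permissible w.r.t.\ $E$, miss $\mathrm{Sing}_p\mathcal{X}$, hence preserve $\iota$ and the monic form (**) at the center of $\mu$. The purpose of $(\mathbf{E'})$ is that then every curve $\mathcal{Y}\subseteq\Omega_+(\mathcal{X})$ (Proposition \ref{omegapositiveclosed}) that is regular at $x$ with $m(y)=p$ is automatically Hironaka-permissible w.r.t.\ $E$; such curves need not be permissible of the first or second kind in the sense of Definitions \ref{deffirstkind}, \ref{defsecondkind}, and this is precisely the new feature of the present case.

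Then I would build the projected polygon. Projecting $\Delta_S(h;u_1,u_2,u_3;Z)$ from $\mathbf{v}$ onto the $(u_1,u_2)$-plane, after the affine normalization used in Definition \ref{defDelta2}, yields a convex set $\Delta_2(h;u_1,u_2;v;Z)\subseteq\R^2_{\geq 0}$, where $v:=u_3-\phi_2$ is chosen so as to minimize its relevant part, exactly as in Theorem \ref{well2prepared}: one proves existence of well-$2$-adapted coordinates and independence of the polygon (restricted to the half-plane above its ``initial'' vertex) from the choice of such coordinates. From it I extract secondary invariants $A_1(x),A_2(x),B(x),C(x),\beta(x)$ and finally $\gamma(x)\in\N$, defined by a case formula analogous to Definition \ref{definvariants2}, adapted to the subcases $E=\mathrm{div}(u_1)$, $E=\mathrm{div}(u_1u_2)$ and to the monicity in $U_3$. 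The descent then goes: (a) while $A_1(x)\geq 1$, blow up the curve $\mathcal{Y}=V(Z,u_1,v)$, which by $(\mathbf{E'})$ is Hironaka-permissible, translating the polygon and strictly decreasing $A_1(x)$ (the analogue of Proposition \ref{kappa2bupcurve}), so after finitely many steps $A_1(x)<1$; when $\omega(x)\geq p$, step (a) must also allow blowing up Hironaka-permissible curves inside $\Omega_+(\mathcal{X})$ which are of neither the first nor the second kind, and one must check $\omega$ and the monic form are preserved; (b) blow up the closed point $x$ and analyze each chart: $\gamma$ is nonincreasing, strictly decreasing at the exceptional point ``at infinity'' and at nonrational exceptional points (via Lemma \ref{lem532}); (c) Corollary \ref{permisarcthree} excludes infinite chains of rational points, not ``at infinity'', with $\iota$ constant. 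One concludes that a finite, $\mu$-independent sequence of Hironaka-permissible blowing ups reaches $\iota(x_r)<(p,\omega(x),3)$, or $\gamma(x_r)$ at its minimal value, in which case the shape of $\mathrm{in}_{m_{S_r}}h_r$ imposed by the polygon shows directly that $\kappa(x_r)\leq 2$.

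The main obstacle I expect is exactly step (b) together with the ``third kind'' curve blowing ups of step (a): controlling $\gamma(x)$ — hence $B(x)$ and $\beta(x)$ — under blowing up a closed point when $k(x')|k(x)$ is inseparable, and verifying that blowing up permissible curves inside $\Omega_+(\mathcal{X})$ which are not of the first or second kind keeps us in the framework (monic form, $\omega$ nonincreasing, Hironaka-permissibility guaranteed by $(\mathbf{E'})$). This requires the long case analysis carried out in the auxiliary propositions of Section 9.3. Once that is in place, independence of the whole sequence from $\mu$ (Hironaka's A/B game) is formal, the centers at each stage depending only on $\iota$, $\gamma$ and the projected polygon.
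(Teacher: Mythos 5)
Your high-level strategy matches the paper's: reduce to condition~\textbf{(E)'}, project the characteristic polyhedron from the distinguished vertex $\mathbf{v}=(d_1,d_2,(1+\omega(x))/p)$ to get a polygon with secondary invariant $\gamma$, and then run a descent combining permissible curve blowing ups, blowing up the closed point, and corollary~\ref{permisarcthree} to kill infinite rational chains. This is indeed what propositions~\ref{**versgammaegal1} and \ref{**gammaegal1} do, and you have correctly identified that the new feature here is the use of Hironaka-permissible curves inside $\Omega_+(\mathcal{X})$ that are of neither the first nor the second kind, with \textbf{(E)'} supplying their Hironaka-permissibility.

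There is, however, a genuine gap in your reduction to \textbf{(E)'}. You propose to ``blow up the components of $E$ disjoint from $\eta(\mathrm{Sing}_p\mathcal{X})$,'' but such centers do not pass through $\eta(x)$ (since $x\in\mathrm{Sing}_p\mathcal{X}$) and hence leave the local ring at $x$ untouched — they cannot alter the local inclusion $\mathrm{Sing}_p\mathcal{X}\subseteq\eta^{-1}(E)$ into an equality. Condition~\textbf{(E)'} is a statement about the components of $E$ \emph{through} $\eta(x)$: it requires each $d_j\geq 1$ (so $\eta^{-1}(\mathrm{div}(u_j))\subseteq\mathrm{Sing}_p\mathcal{X}$), and the components for which $d_j<1$ cannot simply be discarded from $E$ because the discriminant condition~\textbf{(E)} must be preserved. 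The actual reduction (proposition~\ref{EEprime}, together with lemma~\ref{lemEEprime}) is a substantive argument: one first ensures $H(x)\neq(1)$ via a weak maximal contact argument at the level of the initial form, then runs a descending induction on the refined invariant $d(x)=(\max\{0,1-d_1\},e-e_0)$, blowing up permissible centers of the first kind contained in the ``large'' components $\mathrm{div}(u_j)$ ($j\leq e_0$) and invoking theorem~\ref{contactmaxFIN} for the maximal contact of a suitable ``small'' component. None of this is captured by what you wrote, and without it the rest of the argument has no access to Hironaka-permissibility of the curves in $\Omega_+(\mathcal{X})$ that your step~(a) needs to blow up.

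A smaller gap: your claim that condition (**) with $\omega(x)<p$ implies $x$ is ``combinatoric'' in the sense of definition~\ref{combinatoire} is not established, and the paper does not argue this way. The $\omega(x)<p$ case inside section~9.3 is routed through corollary~\ref{omega(x)=epsilon(x)<p} (which itself rests on proposition~\ref{redto**casT**}) and through the case analyses of propositions~\ref{kappa3instable} and~\ref{redto3**casii}, not via the combinatoric algorithm.
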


\begin{proof}
This follows from Corollary \ref{corEprime} and Propositions \ref{**versgammaegal1} and \ref{**gammaegal1} below.
\end{proof}

\begin{thm}\label{proofkappa34}
Assume that $\kappa(x)\geq 3$, then $x$ is resolved.
\end{thm}
\begin{proof}
By Propositions \ref{redto**3} and \ref{redto**4}, it can be assumed that
$$\kappa(x)\geq 3,\ x \ \hbox{satisfies (**) or (T**)}.$$
By Proposition \ref{redto**casT**}, the remaining case is when $x$ satisfies (**).
This case is just the assumption of Proposition \ref{END}.
\end{proof}

\subsubsection{An extra assumption on the singular locus.}

The following extra assumption {\bf (E)'} is used as a shortcut in order to ensure that certain exceptional
curves on ${\cal X}$ are Hironaka-permissible and can be blown up in order to reduce $\omega (x)$.
Such blowing up centers are not used in \cite{Co5} and the authors
do not know if such blowing ups are relevant in dimension $n \geq 4$.

\begin{defn}\label{defEprime}
We say that $(S,h,E)$ satisfies condition {\bf (E)'} \index{{\bf (E)'}, condition {\bf (E)'}, Definition \ref{defEprime}} if it satisfies condition {\bf (E)} and if
$$
\omega (x)\geq p \Longrightarrow \eta^{-1}(E) = \mathrm{Sing}_p{\cal X}.
$$
where $\eta^{-1}(m_S)=:\{x\}$.
\end{defn}

 {Proposition \ref{EEprime} below will show that we can attain condition {\bf (E)'}.}
As stated after Definition \ref{conditionE}, we have in any case $\mathrm{Sing}_p{\cal X} \subseteq \eta^{-1}(E) $
whenever $(S,h,E)$ satisfies condition {\bf (E)}.

\begin{prop}\label{Eprimestable}
Let $\pi : {\cal X}'\rightarrow {\cal X}$ be a permissible blowing up (of the first or second kind)
at $x \in \eta^{-1}(m_S)$ and $x' \in \pi^{-1}(x)$. If $(S,h,E)$ satisfies condition {\bf (E)'},
then $(S',h',E')$ satisfies again {\bf (E)'} at $x'$.
\end{prop}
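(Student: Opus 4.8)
<br>

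The plan is to mimic the structure of the proof of Proposition~\ref{Estable}, to which this is a direct enhancement. By Proposition~\ref{Estable} (which applies since {\bf (E)'} implies {\bf (E)}), we already know that $(S',h',E')$ satisfies {\bf (E)}, so the only thing left to prove is the extra implication: if $\omega(x')\geq p$, then $\eta'^{-1}(E')=\mathrm{Sing}_p{\cal X}'_{s'}$. Since we always have $\mathrm{Sing}_p{\cal X}'\subseteq \eta'^{-1}(E')$ under {\bf (E)}, the content of the statement is the reverse inclusion $\eta'^{-1}(E')\subseteq \mathrm{Sing}_p{\cal X}'$.

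First I would reduce to the case $s'=m_{S'}$ by localizing, and note that the hypothesis $\omega(x')\geq p$ together with Theorem~\ref{bupthm} forces $(m(x'),\omega(x'))=(m(x),\omega(x))$ — in particular $m(x)=p$ and $\omega(x)\geq p$, so that the hypothesis {\bf (E)'} on $(S,h,E)$ is actually active at $x$, giving $\eta^{-1}(E)=\mathrm{Sing}_p{\cal X}$. Next I would describe $E'=\sigma^{-1}(E\cup W)_{\mathrm{red}}$ (notations of Proposition~\ref{Hironakastable}): its irreducible components passing through $s'$ are the strict transforms of the components of $E$ through $s$ together with the exceptional divisor $\mathrm{div}(u)$ of $\sigma$, where $u$ is the local equation of $W'=\sigma^{-1}(W)$. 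For a strict transform component $\mathrm{div}(u'_j)$ of $\mathrm{div}(u_j)\subseteq E$: localizing at the generic point of $\mathrm{div}(u'_j)$ in $S'$ gives the same local ring as localizing at the generic point of $\mathrm{div}(u_j)$ in $S$, where $\eta^{-1}(\mathrm{div}(u_j))\subseteq\mathrm{Sing}_p{\cal X}$ by {\bf (E)'}; since multiplicity is preserved under this isomorphism of local rings, the generic point of $\eta'^{-1}(\mathrm{div}(u'_j))$ has multiplicity $p$, hence $\eta'^{-1}(\mathrm{div}(u'_j))\subseteq\mathrm{Sing}_p{\cal X}'$.

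The main point — and the one I expect to require the real work — is the exceptional component $\mathrm{div}(u)$: I must show that $\eta'^{-1}(\mathrm{div}(u))\subseteq \mathrm{Sing}_p{\cal X}'$, i.e. that the generic point of $W'=\sigma^{-1}(W)$ lifts to a multiplicity-$p$ point. Here I would use Proposition~\ref{bupformula}: since ${\cal Y}$ is permissible, $\epsilon(y)>0$, and $H(x')=u^{\epsilon(y)-p}H(x)$ by \ref{bupformula}(iv), together with $\mathrm{ord}_{(u)}H(x')=p(\delta(y)-1)$ by \ref{bupformula}(iii). As $\epsilon(y)\geq 1$, we get $\delta(y)-1\geq (\epsilon(y)-1)/p\ge 0$, but I actually need $p(\delta(y)-1)\ge p$, i.e. $\delta(y)\geq 2$. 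This is where the hypothesis $\omega(x)\geq p$ enters: combined with $\epsilon(y)=\epsilon(x)$ or $\epsilon(x)-1$ (permissibility of first/second kind) and $\epsilon(x)=\omega(x)$ or $\omega(x)+1$, and with $\sum_{j} d_j \ge 0$, one deduces $\delta(y)=\epsilon(y)/p + \sum d_j \ge 1$ with enough slack; more carefully, $\delta(y)\geq \delta(x)-1 \ge \omega(x)/p\ge 1$, and then the transform formula shows $H(x')\subseteq (u)^{p(\delta(y)-1)}$ with the coefficients $f_{i,X'}^p\in H(x')^i$, forcing $\mathrm{ord}_{(u)}h'\geq p$ at the generic point of $W'$, i.e. that point lies in $\mathrm{Sing}_p{\cal X}'$. (If $\delta(y)=1$, i.e. the exceptional divisor is not inside $\mathrm{Sing}_p{\cal X}'$, I would need to check separately that then $\omega(x')<p$, contradicting the hypothesis; this degenerate bookkeeping is the delicate part.) Finally, any prime of $S'$ not contained in $E'$ localizes to a prime of $S'$ over which ${\cal X}'$ is already regular (since $S'\setminus E'$ maps into $S\setminus W$ away from $\mathrm{Sing}_p$), so no new $\mathrm{Sing}_p$ points escape $\eta'^{-1}(E')$, and $(S',h',E')$ satisfies {\bf (E)'} at $x'$.
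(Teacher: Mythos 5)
Your structure is correct: reduce to showing $\eta'^{-1}(E')\subseteq\mathrm{Sing}_p{\cal X}'$, observe the strict transforms of the old $E$-components are handled by localization (equivalently $d'_j=d_j$ persists), and focus on the new exceptional component $\div(u)$, for which you want to show the generic point $y'$ of $\eta'^{-1}(\div(u))$ has multiplicity $p$, i.e.\ $\mathrm{ord}_{(u)}H(x')\geq p$. Using proposition~\ref{bupformula}(iii) (or equivalently (iv)), this reduces to showing $\delta(y)\geq 2$. This is exactly the route the paper takes, but your argument for $\delta(y)\geq 2$ does not close.

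The gap is concrete: the chain you propose, $\delta(y)\geq\delta(x)-1\geq\omega(x)/p\geq 1$, (a) only yields $\delta(y)\geq 1$, not $\geq 2$; and (b) the first inequality $\delta(y)\geq\delta(x)-1$ is unjustified and false in general --- by proposition~\ref{Deltaalg} one only knows $\delta(y)\leq\delta(x)$, being obtained by projecting the characteristic polyhedron to a coordinate subspace, and the drop is not bounded by~$1$. You also write ``$\sum d_j\geq 0$,'' which contributes nothing. You explicitly acknowledge a remaining ``degenerate bookkeeping'' case $\delta(y)=1$, but it is not degenerate; it is precisely the case your bounds fail to exclude. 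The missing idea is that condition~{\bf (E)'} is not used only through the inclusion $\eta^{-1}(E)=\mathrm{Sing}_p{\cal X}$ as a set: when $\omega(x)\geq p$, it says numerically that $d_j\geq 1$ for every component $\div(u_j)$ of $E$ (by proposition~\ref{deltainv}(ii) applied at the generic point of $\eta^{-1}(\div(u_j))$). Combine this with permissibility, which gives $\epsilon(y)\geq\omega(x)\geq p$ (definition~\ref{deffirstkind}(ii) or proposition~\ref{secondkind}), and with the fact that $W=\eta({\cal Y})\subset E$ (so $J_E\neq\emptyset$), to get
$$
\delta(y)=\frac{\epsilon(y)}{p}+\sum_{j\in J_E}d_j\ \geq\ 1+1\ =\ 2,
$$
equivalently $\mathrm{ord}_{(u)}H(x')=\epsilon(y)-p+\mathrm{ord}_W H(x)\geq 0+\sum_{j\in J_E}pd_j\geq p$. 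This single observation replaces your entire attempt to compare $\delta(y)$ with $\delta(x)$ and eliminates the case split you flagged.
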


\begin{proof}
This reduces to Proposition \ref{Estable} if  $\omega (x)\leq p-1$.
Assume that $\omega (x)\geq p$, so we have $d_j\geq 1$, $1 \leq j \leq e$, by assumption {\bf (E)'}.
Let ${\cal Y} \subset {\cal X}$ be permissible
with generic point $y$, $W:=\eta ({\cal Y})=V(\{u_j\}_{j\in J}) \subset E$ and $I(W)S'=:(u)$, where
$$
\eta ' : \ ({\cal X}',x') \longrightarrow \mathrm{Spec}S'
$$
is the projection. By Definition \ref{deffirstkind} or Proposition \ref{secondkind},
we have $\epsilon (y)\geq \omega (x)\geq p$. Applying Proposition \ref{bupformula}(iv), we have
$H(x')=u^{\epsilon (y)-p}H(x)S'$, therefore
$$
\mathrm{ord}_{(u)}H(x')=\epsilon (y)-p + \mathrm{ord}_{W}H(x)\geq \min\{pd_j: j\in J_E\}\geq p
$$
and the conclusion follows.
\end{proof}

\begin{cor}\label{corEprime}
It can be assumed that condition {\bf (E)'} holds in the proof of Proposition \ref{END} and Theorem \ref{proofkappa34}.
\end{cor}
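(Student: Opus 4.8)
The final statement to prove is Corollary \ref{corEprime}: that condition {\bf (E)'} can be assumed in the proof of Proposition \ref{END} and Theorem \ref{proofkappa34}. The plan is to show that, starting from $(S,h,E)$ satisfying only {\bf (E)} with $\omega(x)\geq p$, one can perform a preliminary sequence of Hironaka-permissible blowing ups so that the resulting triple satisfies the stronger equality $\eta^{-1}(E)=\mathrm{Sing}_p{\cal X}$, and then invoke Proposition \ref{Eprimestable} to see that this property persists under all the permissible blowing ups used in the proofs of Proposition \ref{END} and Theorem \ref{proofkappa34}.

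First I would observe that by assumption {\bf (E)} we always have $\mathrm{Sing}_p{\cal X}\subseteq \eta^{-1}(E)$, so the only obstruction to {\bf (E)'} is the possible presence of an irreducible component ${\cal Y}_j$ of $\eta^{-1}(\mathrm{div}(u_j))$ (for some $j\le e$) whose generic point $y_j$ has $m(y_j)<p$, equivalently $\delta(y_j)<1$, equivalently $d_j<1$. Since we are in the case $\omega(x)\geq p$, such a ``bad'' component can be removed by a combinatorial procedure analogous to that of Theorem \ref{omegazero}: blow up along codimension-one centers of the form $V(Z,u_j)$ with $d_j<1$. By Proposition \ref{originchart} applied with $n=1$ (i.e. to the local ring $S_{(u_j)}$), such a blowing up is Hironaka-permissible w.r.t. $E$ and strictly decreases $d_j$, or makes $m(y_j)<p$ at all points over the blown-up center. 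Because $d_j\in \frac{1}{p}\N$ (corollary \ref{cordeltaint}), this can happen only finitely many times for each $j$, and after finitely many such blowing ups every remaining component ${\cal Y}_j$ of $\eta^{-1}(\mathrm{div}(u_j))$ that still lies in $\mathrm{Sing}_p{\cal X}$ has $d_j\geq 1$; components with $d_j<1$ have disappeared from $\mathrm{Sing}_p{\cal X}$ entirely, and at the point $x$ above, the condition $\omega(x)\geq p$ forces $d_j\geq 1$ for all surviving $j$ anyway (as used in the proof of Proposition \ref{Eprimestable}). Consequently $\eta^{-1}(E)=\mathrm{Sing}_p{\cal X}$ holds at $x$ and {\bf (E)'} is achieved; one also checks, as in the proof of Corollary \ref{EEfait}, that these are local blowing ups compatible with the valuation-theoretic framework and preserve {\bf (G)} (Proposition \ref{SingX}) and {\bf (E)} (Proposition \ref{Estable}).

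Next I would record that these preliminary blowing ups do not disturb the invariant $\iota(x)$ in an uncontrolled way: being Hironaka-permissible with codimension-one centers of the type just described, they fall under the class allowed in the hypotheses of Theorem \ref{projthm} (see the remarks after (\ref{eq402}): ``blowing up along codimension one centers of the form $V(Z,u_j)$ with $d_j\leq 1$''), so $(m(x_i),\omega(x_i))$ is nonincreasing and the value of $\iota$ to be reduced is not increased. Having reached a triple satisfying {\bf (E)'}, I then apply Proposition \ref{Eprimestable}: every permissible blowing up of the first or second kind used throughout the proofs of Proposition \ref{END} (via Corollary \ref{corEprime}, Propositions \ref{**versgammaegal1}, \ref{**gammaegal1}) and Theorem \ref{proofkappa34} preserves {\bf (E)'}; moreover the Hironaka-permissible curves in $\Omega_+({\cal X})$ used in Section 9.3 are precisely the ones whose permissibility is guaranteed by the equality $\eta^{-1}(E)=\mathrm{Sing}_p{\cal X}$ (condition {\bf (E)'}). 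Thus {\bf (E)'} holds at every stage of those proofs, which is exactly the assertion of the corollary.

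The main obstacle I anticipate is bookkeeping rather than conceptual: one must verify that the preliminary combinatorial blowing ups removing components with $d_j<1$ can indeed be carried out \emph{independently} of the valuation $\mu$ (so as not to break the ``independent sequence'' requirement of Theorem \ref{projthm}) and that they terminate — both of which follow the pattern of Theorem \ref{omegazero} and Lemma \ref{imagepoints}, but need to be stated carefully in the present $n=3$, $\omega(x)\ge p$ setting. A secondary point to check is that after this preliminary stage one really has $d_j\ge 1$ for \emph{all} surviving exceptional components through $x$, using the numerical constraint imposed by $\omega(x)\ge p$ together with the structure of the characteristic polyhedron (theorem \ref{initform}); this is the same computation appearing in the proof of Proposition \ref{Eprimestable} and should be quoted rather than redone.
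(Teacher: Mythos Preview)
Your proposal has a genuine gap in the first step, where you claim to achieve {\bf (E)'} by blowing up along codimension-one centers $V(Z,u_j)$ with $d_j<1$. Such a center is \emph{not} Hironaka-permissible: by Proposition~\ref{deltainv}(ii), $d_j<1$ means precisely that the generic point $y_j$ of $V(Z,u_j)$ has $m(y_j)<p=m(x)$, violating Definition~\ref{Hironakapermis}. Your citation of Proposition~\ref{originchart} ``with $n=1$'' is also incorrect, since that proposition explicitly assumes $\delta(y)\ge 1$. Moreover, the computation ``strictly decreases $d_j$'' makes no sense when $d_j<1$ (the result would be negative), and ``makes $m(y_j)<p$'' is already the case before you blow up. The phrase in the text after (\ref{eq402}) about ``$V(Z,u_j)$ with $d_j\le 1$'' that you invoke refers to the opposite situation (it should read $d_j\ge 1$, where the center \emph{is} permissible). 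In short, there is no combinatorial shortcut of the Theorem~\ref{omegazero} type available here: Theorem~\ref{omegazero} works because $\omega(x)=0$ forces $\sum d_j\ge 1$, giving a genuine permissible center to blow up; when $\omega(x)\ge p$ and some $d_j<1$, no such codimension-one center exists.

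The paper's argument is entirely different. Achieving {\bf (E)'} is the content of Proposition~\ref{EEprime} (stated immediately after the corollary), which uses permissible blowing ups \emph{of the first kind} --- at closed points or along curves --- together with the maximal-contact machinery of Chapter~6 (Theorem~\ref{contactmaxFIN}) and a carefully designed invariant $d(x)=(\max\{0,1-d_1\},\,e-e_0)$. The one-line proof of Corollary~\ref{corEprime} is then just the observation that the reduction steps (Propositions~\ref{redto**3}, \ref{redto**4}, \ref{redto**casT**}) use only first-kind permissible blowing ups; combined with Proposition~\ref{Eprimestable} this means that once {\bf (E)'} is achieved via Proposition~\ref{EEprime}, it is preserved throughout the subsequent reduction to condition (**). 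Your second paragraph (invoking Proposition~\ref{Eprimestable} for stability) is correct, but without a valid method to \emph{reach} {\bf (E)'} the argument is incomplete.
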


\begin{proof}
All blowing ups used in the proofs of Propositions \ref{redto**3}, \ref{redto**4}
and \ref{redto**casT**} are permissible of the first kind.
\end{proof}

\begin{lem}\label{lemEEprime}
 {Assume that condition {\bf (E)'} does not hold at $x$.} Let $\mu$ be a valuation of $L=k({\cal X})$ centered at $x$.
There exists a finite and independent composition of local permissible blowing ups of the first kind:
$$
    ({\cal X},x)=:({\cal X}_0,x_0) \leftarrow ({\cal X}_1,x_1) \leftarrow \cdots \leftarrow ({\cal X}_r,x_r) ,
$$
where $x_i \in {\cal X}_i$ is the center of $\mu$, such that $x_r$ is resolved or $H(x_r)\neq (1)$.
\end{lem}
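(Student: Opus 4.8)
The plan is to dispose first of the case $H(x)\neq(1)$, where the empty sequence ($r=0$) already satisfies the conclusion, and then to reduce the remaining case $H(x)=(1)$ to a single quadratic transform. Throughout, $x$ is the point being treated in the proof of Proposition~\ref{END}, hence is in case (**) of Definition~\ref{**}; in particular $m(x)=p$, $\omega(x)>0$, and $\epsilon(x)=1+\omega(x)$ (the monic monomial of the vertex $\mathbf{v}$ of Definition~\ref{**}(ii) has $H(x)$-normalised degree $1+\omega(x)$ and sits on the initial face of $\Delta_S(h;u_1,u_2,u_3;Z)$).

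So assume $H(x)=(1)$. By Definition~\ref{defH} this means $H_j=pd_j=0$, i.e. $d_j=0$, for every component $\mathrm{div}(u_j)\subseteq E$ (and $E\neq\emptyset$, since $x$ is in case (**)), whence $\epsilon(x)=p\,\delta(x)$ by Definition~\ref{defepsilon} together with Proposition~\ref{epsiloninv}. As $m(x)=p$, Proposition~\ref{deltainv}(ii) applied to $S$ yields $\delta(x)\geq1$, so $\epsilon(x)\geq p$; and $\epsilon(x)=p$ would force $\omega(x)=p-1$, hence $1+\omega(x)\equiv0\ (\mathrm{mod}\ p)$, contradicting Definition~\ref{**}(i). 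Therefore $\epsilon(x)=1+\omega(x)\geq p+1$.

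Next I would blow up the closed point $x$. Its closure ${\cal Y}_0:=\{x\}$ is Hironaka-permissible with respect to $E$ (a closed point lies in $\mathrm{Sing}_p{\cal X}$ and has normal crossings with $E$) and is permissible of the first kind, since its generic point $y=x$ satisfies $m(y)=m(x)=p$ and $\epsilon(y)=\epsilon(x)$; the corresponding local blowing up does not depend on $\mu$. Let $({\cal X},x)=({\cal X}_0,x_0)\leftarrow({\cal X}_1,x_1)$ be this blowing up, with $x_1$ the center of $\mu$. If $m(x_1)<p$, then $\iota(x_1)=(m(x_1),0,1)<(p,\omega(x),3)$ by Definition~\ref{defmult}, so $x_1$ is resolved. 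Otherwise $m(x_1)=p$, and Proposition~\ref{bupformula}(iv) applied to the center $\{x\}$ (so $\epsilon(y)=\epsilon(x)$) gives $H(x_1)=(u^{\epsilon(x)-p})$, with $u$ a local equation of the exceptional divisor; since $\epsilon(x)-p\geq1$, we get $H(x_1)\neq(1)$. Thus $r=1$ works.

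I expect no genuine obstacle: the lemma is a bookkeeping step ensuring that a marked exceptional component is available before the projected polygon of Section~9.3.2 is built. The two points to watch are (i) that the definition of case (**) forces $\epsilon(x)\geq p+1$ as soon as $H(x)=(1)$ --- which is precisely why one quadratic transform suffices --- and (ii) that the closed point $x$ is a legitimate permissible center of the first kind, so that the transformation rule for $H(\,\cdot\,)$ in Proposition~\ref{bupformula}(iv) may be invoked.
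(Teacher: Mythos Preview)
Your argument has a genuine gap: the claim that case~(**) forces $\epsilon(x)=1+\omega(x)$ is false. Definition~\ref{**}(ii) only asserts that $\mathbf{v}=(d_1,d_2,(1+\omega(x))/p)$ is a vertex of $\Delta_S(h;u_1,u_2,u_3;Z)$ in the region $x_1=d_1$; it does \emph{not} say $\mathbf{v}$ lies on the initial face. When $\kappa(x)=4$ one can have $\epsilon(x)=\omega(x)$ together with~(**): for instance $p=2$, $E=\mathrm{div}(u_1)$, $h=Z^2+\lambda u_1^2+u_3^3$ with $\lambda\notin k(x)^2$ gives $H(x)=(1)$, $\omega(x)=\epsilon(x)=2$, $\delta(x)=1$, $\kappa(x)=4$, and the vertex $\mathbf{v}=(0,0,3/2)$ sits strictly above the initial face. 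In this situation $\epsilon(x)-p=0$, so Proposition~\ref{bupformula}(iv) yields $H(x_1)=(u^0)=(1)$ after one quadratic transform, and your argument stalls. (A related issue: the lemma is not stated under hypothesis~(**) --- it is invoked in Proposition~\ref{EEprime} before the reduction to~(**) --- but the counterexample shows the gap persists even granting~(**).)

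The paper's proof agrees with yours on the easy part: if $\delta(x)>1$ (equivalently, if $\epsilon(x)>p$ when $H(x)=(1)$), one blow-up of the closed point gives $\mathrm{ord}_u H(x_1)=p(\delta(x)-1)>0$. The entire difficulty is the residual case $\delta(x)=1$, $\epsilon(x)=\omega(x)=p$, $\kappa(x)=4$, which the paper handles by a substantial case analysis on $\tau'(x)\in\{1,2,3\}$, including a maximal-contact argument (Theorem~\ref{contactmaxFIN}) for $\tau'(x)=1$ and a special treatment of skew directrices for $\tau'(x)=2$. That analysis cannot be bypassed by your single-blow-up shortcut.
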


\begin{proof}
 {By definition of  condition {\bf (E)'}}, $\omega (x)\geq p$. Since resolved means ``resolved for $(p,\omega (x),3)$'' in this section,
it can be assumed that
$$
\omega (x_i)=\omega (x), \ \kappa (x_i)\geq 3
$$
for every $i\geq 0$ along the process to be defined. Note that $\mathrm{ord}_{m_{S_1}}H(x_1)>0$ is
achieved by blowing up $x$ if $\delta (x)>1$.

\smallskip

Assume now that $\delta (x)=1$, i.e. $\tau (x)\geq 2$ (Definition \ref{defmult}).
Since $\kappa (x)\geq 3$ and $\epsilon (x)=\omega (x)=p$, we actually have $\kappa (x)=4$  {and $G=0$ as $\kappa(x)>1$}, i.e.
\begin{equation}\label{eq820}
\mathrm{in}h =Z^p +F_{p,Z}, \ 0\neq F_{p,Z} \in k(x)[U_1, \ldots ,U_e]_p,
\end{equation}
where $(u_1,u_2,u_3;Z)$ are well adapted coordinates.

\smallskip

$\bullet$ if $\tau '(x)=3$, let ${\cal X}' \rightarrow ({\cal X},x)$ be the blowing up along $x$.
Then $x$ is resolved by Theorem \ref{bupthm}.

\smallskip

$\bullet$  if $\tau '(x)=2$, let also ${\cal X}' \rightarrow ({\cal X},x)$ be the blowing up along $x$.
W.l.o.g. we have
\begin{equation}\label{eq821}
\mathrm{Vdir}(x)=<U_1 + \alpha_1 U_3, U_2 +\alpha_2 U_3>, \ \alpha_1 , \alpha_2\in k(x),
\end{equation}
where $\mathrm{div}(u_1u_2)\subseteq E$, and $E=\mathrm{div}(u_1u_2u_3)$ if $(\alpha_1,\alpha_2)\neq (0,0)$.  {As $H(x)=(1)$,}
\begin{equation}\label{eq822}
 {<\{{\partial F_{p,Z} \over \partial \lambda_l}\}_{l\in \Lambda_0}>\subseteq k(x)[U_1 + \alpha_1 U_3, U_2 +\alpha_2 U_3],}
\end{equation}
 {where  $(\lambda_l)_{l \in \Lambda_0}$ is an absolute $p$-basis of $S/m_S$ (see beginning of section \ref{section:diff}).
By Theorem \ref{bupthm}, we have $k(x_1)=k(x)$ and $x_1$ has for parameters $(Z/u_3,u_1/u_3+\gamma_1,u_2/u_3+\gamma_2,u_3)$ where $\gamma_i$ has residue $\alpha_i\in S$, $i=1,2$. By Proposition \ref{bupformula}(v), }
\begin{equation}\label{eq822bis}
 {U_3^{-p}{\partial F_{p,Z} \over \partial \lambda_l} \in J(F_{p,Z',W},E_1,W), \ l \in \Lambda_0,}
\end{equation}
 {where $W=$div$(u_3)$}. %

%

%$\eta_1 : ({\cal X}_1,x_1)\rightarrow \mathrm{Spec}S_1$ is the projection, since $\kappa (x_1)\geq 3$.

%%%%%%%%%%%%%%%%%%%%%%%%%%%%

If $\alpha_1\alpha_2\neq 0$, as $\kappa(x_1)>2$, we therefore have $F_{p,Z}\in k(x)^p[U_1,U_2,U_3]$. In particular,
$$
0<d:=\mathrm{deg}_{U_1}F_{p,Z}<p,\  {F_{p,Z}:=\sum_{i=0}^{i=d}U_1^i \Phi_i(U_2,U_3)}
$$
since $\Delta_S(h;u_1,u_2,u_3;Z)$ is minimal. Lemma \ref{joyeux}(ii)
applied to the term   {$U_1^d\Phi_d(U_2,U_3)$} of $F_{p,Z}$,  {after a relabeling $U_3 \leftrightarrow U_1$}, gives a contradiction with (\ref{eq821}),
since  {$d=\widehat{d}\not \equiv 0 \ \mathrm{mod}p$, $d$ corresponds to $a$ in Lemma \ref{joyeux}(ii)}.

We now assume that $\alpha_1=0$.

If $\alpha_2 \neq 0$, we derive a contradiction in a similar way: by (\ref{eq822bis}), the coefficient of
degree 0 in $U_1$ in $F_{p,Z}$ must be zero; Lemma \ref{joyeux}(ii) applied to the term of minimal
degree $d$ in $U_1$  of $F_{p,Z}$ gives again a contradiction, since $0<d<p$.
This proves that $\mathrm{Vdir}(x)=<U_1,U_2>$, $ {F_{p,Z}\in k(x)[U_1,U_2]}$ {mod}$ {G(m_S)^p}$.

By Proposition \ref{originchart}, we have $\delta (x_1)=1$ and may iterate. By Proposition \ref{permisarc},
this process is finite or the curve ${\cal Y}:=V(Z,u_1,u_2)$ is permissible of the first kind.
Since $\mathrm{Vdir}(x)=<U_1,U_2>$, blowing up along ${\cal Y}$ then completes the proof.

\smallskip

$\bullet$  if $\tau '(x)=1$, it can be assumed that (\ref{eq820}) has the form
\begin{equation}\label{eq823}
\mathrm{in}h =Z^p +\lambda (U_1 +\alpha_2U_2 +\alpha_3U_3)^p, \ \lambda \not \in k(x)^p,
\end{equation}
with $\mathrm{div}(u_1)\subseteq E$, and $\mathrm{div}(u_j)\subseteq E$ if $\alpha_j\neq 0$, $j=2,3$.

If $\alpha_2\alpha_3\neq 0$, let ${\cal X}' \rightarrow ({\cal X},x)$ be the blowing up along $x$.
We get a contradiction with $\kappa (x_1)\geq 3$ unless $\lambda \in k(x_1)^p$; but then $\delta (x_1)=1$
implies that $x_1$ satisfies the assumptions of Lemma \ref{sortiemonome} from which the conclusion follows.
We now assume that $\alpha_3=0$.

If $\alpha_2 \neq 0$, let also ${\cal X}' \rightarrow ({\cal X},x)$ be the blowing up along $x$.
The previous argument works in the same way unless $x_1=(Z/u_3,u_1/u_3,u_2/u_3,u_3)$.
Then $x_1$ satisfies again (\ref{eq823}) for some $\alpha_3\in k(x)$ and we may iterate.
By Proposition \ref{permisarc}, this process is finite or the curve
${\cal Y}:=V(Z,u_1,u_2)$ is permissible of the first kind and we blow up along ${\cal Y}$.
But then $k(x_1)=k(x)$, and this gives a contradiction with $\kappa (x_1)\geq 3$. Therefore
the Lemma is proved unless
\begin{equation}\label{eq824}
\mathrm{in}h =Z^p +\lambda U_1^p, \ \lambda \not \in k(x)^p, \ \mathrm{div}(u_1)\subseteq E.
\end{equation}

We now define a refinement ${\cal C}$ of the function $x\mapsto (m(x),\omega (x))$, {\it cf.} chapter 6.
Let $\pi : \ {\cal X}' \rightarrow ({\cal X},x)$ be the blowing up along a permissible center of the
first kind ${\cal Y}\subseteq \mathrm{div}(u_1)$, $x'\in \pi^{-1}(x)$. We set:
$$
{\cal C}(x')<{\cal C}(x) \Leftrightarrow x' \ \mathrm{satisfies} \ \mathrm{the} \ \mathrm{conclusion}
\ \mathrm{of} \ \mathrm{the} \ \mathrm{lemma}.
$$
By Theorem \ref{bupthm}, we have ${\cal C}(x')<{\cal C}(x)$ unless $x' \in \mathrm{div}(u'_1)$,
where $\mathrm{div}(u'_1)\subseteq E'$ is the strict transform of $\mathrm{div}(u_1)$. Otherwise, we
let  ${\cal C}(x')={\cal C}(x)$.

With notations as in chapter 6, we claim that $\mathrm{div}(u_1)$ has maximal contact
for the condition ${\cal C}$ (Definition \ref{Maximalcontact}). To see this, suppose that
${\cal C}(x')={\cal C}(x)$. Note that $\delta (x')>1$ or $x'$ satisfies the assumptions of Lemma \ref{sortiemonome}
if $\lambda \in k(x')^p$: a contradiction. If $\delta (x')=1$ and $\lambda \not \in  k(x')^p$,
we get an expansion
$$
\mathrm{in}h'={Z'}^p +F_{p,Z'}, \ 0\neq F_{p,Z'} \in k(x)[U'_1, \ldots ,U'_{e'}]_p,
$$
where $(u'_1,u'_2,u'_3;Z')$ are well adapted coordinates at $x'$, and the leading coefficient
of $F_{p,Z'}$ in $U'_1$ is $\lambda {U'_1}^p$. Since ${\cal C}(x')={\cal C}(x)$ is assumed,
we actually have
$$
\mathrm{in}h' ={Z'}^p +\lambda {U'_1}^p
$$
by (\ref{eq824}) and the claim is proved. The conclusion now follows from Theorem \ref{contactmaxFIN}.
\end{proof}

\begin{prop}\label{EEprime}
Let $\mu$ be a valuation of $L=k({\cal X})$ centered at $x$.
There exists a finite and independent composition of local permissible blowing ups of the first kind:
\begin{equation}\label{eq825}
    ({\cal X},x)=:({\cal X}_0,x_0) \leftarrow ({\cal X}_1,x_1) \leftarrow \cdots \leftarrow ({\cal X}_r,x_r) ,
\end{equation}
where $x_i \in {\cal X}_i$ is the center of $\mu$, such that $x_r$ is resolved or $x_r$ satisfies
condition {\bf (E)'}.
\end{prop}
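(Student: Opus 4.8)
The plan is to dispose of the trivial regime, reinterpret \textbf{(E)'} numerically, invoke Lemma \ref{lemEEprime} to secure a ``good'' component of $E$, and then induct on the number of ``bad'' components of $E$ through the centre of $\mu$, the extra blowing ups being blowing ups of closed points (which are permissible of the first kind as soon as $m(x)=p$). First, if $\omega(x)<p$ then \textbf{(E)'} coincides with \textbf{(E)}, which holds by hypothesis, so one takes $r=0$. Hence assume $\omega(x)\ge p$, so $\epsilon(x)\ge\omega(x)\ge p$ and $p\delta(x)=\epsilon(x)+\sum_{j\le e}H_j\ge p$; in particular $\delta(x)\ge 1$, so $x\in\mathrm{Sing}_p\mathcal{X}$ and the blowing up of $\mathcal{X}$ at $x$ is permissible of the first kind. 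For a component $\mathrm{div}(u_j)\subseteq E$ with generic point $s_j$ and $y_j\in\eta^{-1}(s_j)$, Propositions \ref{deltainv} and \ref{SingX} give the equivalence $\eta^{-1}(\mathrm{div}(u_j))\subseteq\mathrm{Sing}_p\mathcal{X}\iff d_j=\delta(y_j)\ge1\iff H_j\ge p$ (when $\delta(y_j)<1$ the fibre over $s_j$ meets the complement of $\mathrm{Sing}_p\mathcal{X}$). Since $\mathrm{Sing}_p\mathcal{X}\subseteq\eta^{-1}(E)$ always holds under \textbf{(E)}, condition \textbf{(E)'} at $x$ amounts to $d_j\ge1$ for every $j$, $1\le j\le e$. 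I will call $\mathrm{div}(u_j)$ \emph{bad} if $0\le d_j<1$, set $N(x):=\#\{j\le e:d_j<1\}\le e\le 3$, and aim to reach $N(x_r)=0$ or $x_r$ resolved.

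By Lemma \ref{lemEEprime}, after a finite independent sequence of permissible blowing ups of the first kind we may assume $H(x)\ne(1)$, i.e. $\sum_{j\le e}H_j\ge 1$. The basic computation is: if $\pi$ is the blowing up at $x$ and $x_1\in\pi^{-1}(x)$ is the centre of $\mu$, then by Proposition \ref{bupformula}(iv) (with $\mathcal{Y}=\{x\}$) the new exceptional component satisfies $d_u=\delta(x)-1=\tfrac1p(\epsilon(x)+\sum_{j\le e}H_j-p)$, while the strict transform of each old component of $E$ retains its value $d_j$; thus good (resp. bad) components stay good (resp. bad), and the exceptional is good as soon as a good component is already present, since $H_{j_0}\ge p$ forces $d_u\ge\epsilon(x)/p\ge1$, and in general $d_u\ge\tfrac1p(\epsilon(x)-p)\ge0$. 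Since $\dim S=3$, at most three components of $E$ pass through any exceptional point; by Theorem \ref{bupthm} the pair $(m,\omega)$ is nonincreasing, so along the process we may always assume $m(x_i)=p$ and $\omega(x_i)=\omega(x)\ge p$ (else $x_i$ is resolved or lies in the regime $\omega<p$ where \textbf{(E)'} is vacuous). Iterating the computation, $\sum_{j\le e}H_j$ is nondecreasing along a chain of point--blowing ups, with strict increase whenever $\epsilon(x)>\omega(x)$ or an old component survives; hence after finitely many steps (or a contradiction of Corollary \ref{permisarcthree} in the ``nothing changes, $E$ irreducible'' case) one reaches $\sum_{j\le e}H_j\ge p$, after which every further exceptional from a point--blowing up is good and $N$ cannot increase.

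It then remains to decrease $N$ to $0$. A bad component of $E$ through $x$ either has its strict transform passing through $x_1$, still contributing to $N(x_1)$, or gets contracted away from $x_1$, in which case $N$ drops. The key point is that the first alternative cannot persist indefinitely: if the centre of $\mu$ remained on the strict transform of a fixed bad component $\mathrm{div}(u_j)$ throughout an infinite sequence of point--blowing ups with $m(x_r)=p$ and $\omega(x_r)=\omega(x)$, one would (after stabilising the residue field, using that $\Omega_+(\mathcal{X})$ is a closed curve by Proposition \ref{omegapositiveclosed}, and applying Proposition \ref{permisarc}(1) to make the relevant support Hironaka--permissible, or reducing near the centre to $E$ irreducible and invoking Corollary \ref{permisarcthree}) contradict badness of $\mathrm{div}(u_j)$, i.e. the fact that $\eta^{-1}(\mathrm{div}(u_j))\not\subseteq\mathrm{Sing}_p\mathcal{X}$. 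Thus each bad component is cleared after finitely many point--blowing ups, $N$ strictly decreases, and since $N\le 3$ we reach $N(x_r)=0$ --- i.e. \textbf{(E)'} at $x_r$ --- or a resolved point in finitely many steps. The entire sequence \eqref{eq825} is independent of $\mu$ since every centre used is dictated only by the datum $(S_i,h_i,E_i)$.

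The main obstacle I anticipate is exactly the ``clearing'' step of the last paragraph: making rigorous, and simultaneously for several bad components, the statement that a bad component is eventually contracted away from the centre of $\mu$, while controlling that the auxiliary point--blowing ups needed to clear one bad component do not reinstate bad components --- which is guaranteed by the arithmetic of Proposition \ref{bupformula}(iv) only once a good component, hence $\sum_{j\le e}H_j\ge p$, is in place. This will require either a monotone numerical invariant attached to each bad component, read off from the characteristic polyhedron and its transformation rule under Proposition \ref{originchart}, or, in the spirit of the paper, a reduction near the centre to the case $E$ irreducible followed by an appeal to Corollary \ref{permisarcthree} together with Proposition \ref{permisarc}.
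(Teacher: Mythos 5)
Your reduction of \textbf{(E)'} to the condition $d_j\ge1$ for all $j\le e$, your observation that permissibility at the first step is free from $\delta(x)\ge1$, and your opening move via Lemma \ref{lemEEprime} all agree with the paper. From there, however, the paper's proof and your outline part ways, and your outline has a genuine gap exactly where you suspect one.

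Your scheme uses only quadratic transforms, a single counting invariant $N(x)=\#\{j:d_j<1\}$, and closes by a vague appeal to Propositions \ref{permisarc}, \ref{omegapositiveclosed} and Corollary \ref{permisarcthree} to forbid an infinite chain of point blowing-ups staying on a bad component. That appeal does not go through: Corollary \ref{permisarcthree} requires $E_r$ irreducible \emph{and} $k(x_r)=k(x)$ for every $r$, neither of which is preserved by your sequence (each blowing up adds an exceptional component to $E$, and residue extensions may occur at every step); and Proposition \ref{permisarc}, when it applies, delivers a \emph{curve} to blow up, which must then be inserted into the algorithm and its effect on $d(\cdot)$ and on the other components controlled --- so one cannot stay with point blowing-ups alone, and the bookkeeping is nontrivial precisely because $N(x)$ by itself is not monotone under curve blowing-ups. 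Your claim that $\sum_j H_j$ strictly increases ``whenever $\epsilon(x)>\omega(x)$ or an old component survives'' is also not an effective escape: when $\epsilon(x)=\omega(x)=p$ and the centre leaves every strict transform of $E$, you get $\sum_j H_j(x_1)=\sum_j H_j(x)$ unchanged, and there is no a priori bound forcing this to end. In short, $N(x)$ alone does not decrease along the full sequence of (point and curve) blowing-ups that the argument actually needs.

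The paper's proof replaces $N(x)$ by the finer lexicographic invariant $d(x)=(d'(x),d''(x))$ with $d'(x)=\max\{0,1-d_1\}$ (so that the size of the \emph{best} $d_j$ is tracked, not merely whether it is $\ge1$) and $d''(x)=e-e_0$, together with a refinement $\mathcal C$ of $(m,\omega)$ in the sense of Chapter 6. It then proves a claim controlling $d(x')$ under any permissible blowing up of the first kind with centre in $\mathrm{div}(u_j)$, $j\le e_0$, and splits into two cases according to whether the closure $\Omega(x)$ of the bad equimultiple locus outside the good components is empty. If $\Omega(x)=\emptyset$, some $\mathrm{div}(u_j)$ with $e_0<j\le e$ has \emph{maximal contact} for $\mathcal C$ and Theorem \ref{contactmaxFIN} decreases $\mathcal C$; if $\Omega(x)\ne\emptyset$, one first makes $\Omega$ permissible via Proposition \ref{permisarc}, blows it up, and derives a contradiction from two-dimensional resolution applied to the induced quadratic sequence on the curve. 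Both ingredients --- the maximal-contact theorem and the curve blowing-ups with a two-dimensional resolution argument --- are absent from your proposal, and they are exactly what fills the hole you flag in your last paragraph.
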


\begin{proof}
It can also be assumed that $\omega (x)\geq p$ and that
$$
\omega (x_i)=\omega (x), \ \kappa (x_i)\geq 3
$$
for every $i\geq 0$ along the process to be defined. By Lemma \ref{lemEEprime}, we may assume that $H(x)\neq (1)$
to begin with. Order
$$
d_1\geq  \cdots \geq d_e \geq 0=:d_{e+1},  \ d_1>0,
$$
where $E=\mathrm{div}(u_1 \cdots u_e)$. We define $e_0$, $1 \leq e_0\leq e$, by:
$$
\min\{1,d_{e_0}\}=\min\{1,d_1\} \ \mathrm{and} \ d_{e_0+1}<\min\{1,d_1\}.
$$
The invariant is:
$$
d(x):=(d'(x):=\max\{0,1-d_1\}, d''(x):=e-e_0)_{\mathrm{lex}}.
$$
Note that $d(x)=(0,0)$ if and only if $x$ satisfies condition {\bf (E)'}.

\smallskip

Let $\pi : \ {\cal X}' \rightarrow ({\cal X},x)$ be the blowing up along a permissible
center of the first kind ${\cal Y}$ and $x'\in \pi^{-1}(x)$:  {${\cal Y}=x$ or ${\cal Y}$ is an irreducible  curve of generic point $y$ with $\epsilon(y)=\epsilon(x)$}. We
refine the function $x\mapsto (m(x),\omega (x))$, {\it cf.} chapter 6, by setting:
$$
{\cal C}(x')<{\cal C}(x)\Leftrightarrow d(x')< \min\{d(x), (d'(x),1)\}.
$$
Otherwise, we let  ${\cal C}(x')={\cal C}(x)$. To prove the proposition, it is sufficient
to prove that there exists a sequence (\ref{eq825}) such that ${\cal C}(x_r)<{\cal C}(x)$.
We claim the following: assume that
\begin{equation}\label{eq830}
\eta ({\cal Y})\subset \mathrm{div}(u_j) \ \mathrm{for} \ \mathrm{some} \ j, \ 1 \leq j \leq e_0.
\end{equation}
Then $d(x')\leq d(x)$; if $d(x')=d(x)$ (resp. if ${\cal C}(x')={\cal C}(x)$),
then $x'$ belongs to the strict transform of $\mathrm{div}(u_j)$ for every $j$ (resp. for some $j$)
such that $e_0<j\leq e$.

\smallskip

To prove this claim, let $W:=\eta ({\cal Y})$ and $I(W)S'=:(u)$, where
$$
\eta ' : \ ({\cal X}',x') \longrightarrow \mathrm{Spec}S'
$$
is the projection. By Proposition \ref{bupformula}(iv), we have
$H(x')=u^{\epsilon (y)-p}H(x)S'$. Therefore:
\begin{equation}\label{eq831}
 {\mathrm{ord}_uH(x')\over p}= {\epsilon (x) \over p}-1+ {\mathrm{ord}_{W}H(x) \over p}\geq \min\{1,d_1\}
\end{equation}
by (\ref{eq830})  {and $\epsilon(x)\geq \omega(x)\geq p$}.  {We get $d'_1\geq d_1$: either   $x'$ is on the strict transform of div$(u_1)$,  or we can take $u=u_1$, so  ${\mathrm{ord}_{W}H(x) \over p}\geq d_1$ in \eqref{eq831}.}
We get
$$
d'(x')= \max\{0,1-d'_1\}\leq \max\{0,1-d_1\}=d'(x).
$$
If equality holds,
(\ref{eq831}) implies that $\min\{1,d'_1\} = \mathrm{ord}_uH(x')/ p$, i.e.
$$
{\mathrm{ord}_uH(x')\over p}=d'_{j'} \ \mathrm{for} \ \mathrm{some} \ j', \ 1 \leq j '\leq e'_0:=e_0(x').
$$
The claim follows immediately.

\smallskip

We now define $\Omega (x)\subset ({\cal X},x)$ to be the Zariski closure of the set:
$$
\Omega^\circ (x):=\{y\in {\cal X}: m(y)=p, \ \omega (y)>0 \ \mathrm{and}
\ \forall j, \ 1 \leq j \leq e_0, \ y \not \in \mathrm{div}(u_j)\}.
$$
By Proposition \ref{omegapositiveclosed}, $\Omega (x)$ is a (possibly empty) curve.
Note that
\begin{itemize}
  \item [(1)] $\Omega (x')$ is the strict transform of $\Omega (x)$ in $({\cal X}',x')$ if
${\cal Y}$ satisfies (\ref{eq830}), and
  \item [(2)] $\Omega (x)=\emptyset$ if $e_0\geq 2$ or if $d''(x)=0$.
\end{itemize}
We consider two cases:

\smallskip

\noindent {\it Case 1:} $\Omega (x)=\emptyset$. This implies that any permissible
center of the first kind ${\cal Y}$ satisfies (\ref{eq830}).  {As we are done if $d''(x)=0$,} by the above claim,
there exists $j$, $e_0<j \leq e$ such that $\mathrm{div}(u_j)$ has maximal contact
for the condition ${\cal C}$. By Theorem \ref{contactmaxFIN}, we obtain
a sequence (\ref{eq825}) such that ${\cal C}(x_r)<{\cal C}(x)$.

\smallskip

\noindent {\it Case 2:} $\Omega (x)\neq \emptyset$. Consider the quadratic sequence
along $\mu$. By the above claim and (1), we either obtain ${\cal C}(x_r)<{\cal C}(x)$
(in particular if we reach case 1), or achieve that $\Omega (x_{r_1})$ is irreducible for some $r_1\geq 0$;
by Proposition \ref{permisarc}, it can be furthermore assumed that $\Omega (x_{r_1})$
is permissible of the first kind when the latter holds. Let then $y_1\in ({\cal X}_{r_1},x_{r_1})$
be the generic point of $\Omega (x_{r_1})$. By (2), we also have:
\begin{equation}\label{eq832}
e_0(x_r)=e_0=1 \ \mathrm{and} \ d''(x_{r_1})\geq 1.
\end{equation}
Let $\pi_1 : \ {\cal X}' \rightarrow ({\cal X}_{r_1},x_{r_1})$ be the blowing up along $\Omega (x_{r_1})$ and
$x'\in \pi_1^{-1}(x)$. Since $d'(x')\leq d'(x_{r_1})=d'(x)$, we have ${\cal C}(x')<{\cal C}(x)$ or
are done by (1) and case 1 unless
$$
d'(x')=d'(x), \ e'_0:=e_0(x')=1 \ \mathrm{and} \ d''(x')\geq 1.
$$
Then $\pi_1$ restricts to a finite morphism
\begin{equation}\label{eq833}
\Omega (x') \longrightarrow \Omega (x_{r_1}).
\end{equation}

We now iterate this construction: this constructs a sequence
$$
({\cal X}_{r_1},x_{r_1}) \leftarrow ({\cal X}_{r_2},x_{r_2}) \leftarrow \cdots \leftarrow ({\cal X}_{r_k},x_{r_k})
 \leftarrow \cdots
$$
where $x_{r_i} \in {\cal X}_{r_i}$ is the center of $\mu$. If ${\cal C}(x_{r_k})={\cal C}(x)$, there is an induced
two-dimensional quadratic sequence
$$
({\cal X}_{r_1},y_1) \leftarrow ({\cal X}_{r_2},y_2) \leftarrow \cdots \leftarrow ({\cal X}_{r_k},y_k)
\leftarrow \cdots
$$
where $y_k \in ({\cal X}_{r_k},x_{r_k})$ is the  generic point of the permissible curve $\Omega (x_{r_k})$
by (\ref{eq833}). By two-dimensional resolution, we have $(m(y_k),\omega (y_k))< (p,\omega (x))$ for $k>>0$:
a contradiction with permissibility. Therefore the above sequence achieves ${\cal C}(x_{r_k})<{\cal C}(x)$
for some $k\geq 0$ and the proof is complete.
\end{proof}

\subsubsection{Proof of  Proposition \ref{END}. }

From now on, we assume that \textbf{(E)'} is satisfied.

\begin{defn}\label{kappa3preparation} {\bf (Preparation)}.
Assume that $x$ is in case (**) (Definition \ref{**}). We define\index{$\Delta_2$3@$\Delta_2$ in case
 $\kappa(x)\geq 3$(**)}
\begin{equation}
\begin{array}{ccc}
  \mathrm{pr}:\{(x_1,x_2,x_3)\in \R_{\geq 0}^3 \vert \  {x_3} < {1+\omega(x) \over p}\} \longrightarrow \R_{\geq 0}^2,   \\
   {(x_1,x_2,x_3) \mapsto \mathbf{x}:={1\over { 1+\omega(x)\over p}-x_3}(x_1-d_1,x_2-d_2)}
\end{array}
.
\end{equation}
as the translation by the vector $(-d_1,-d_2,0)$ followed by projection  from the point $(0,0,{1+\omega(x) \over p})$
over the $(x_1,x_2)$-plane, followed by the homothety of ratio ${p \over 1+\omega(x)}$.
We will write $\Delta_2(h;u_1,u_2;v;Z)$, even $\Delta_2$ if no confusion is possible instead of pr$\Delta_S(h;u_1,u_2,v;Z)$ for short.

Let $\mathbf{x}$ be a vertex of $\Delta_2$. We say that $\mathbf{x}$ is a  {\textit{left vertex}} if its ordinate is bigger or equal  {to} the ordinate of the vertex of  {biggest} ordinate of the side of slope $-1$.

Let $\mathbf{x}$ be a vertex of $\Delta_2$. Let pr$^{-1}(\mathbf{x})$ the edge of $\Delta(h;u_1,u_2,v;Z)$ giving $\mathbf{x}$ by projection, this edge is defined by an equation $\alpha_1 x_1+ \alpha_2 x_2 + \alpha_3 x_3=1$, $\alpha_1 \alpha_2 \alpha_3>0$, as usual we define the monomial valuation $v_{\alpha_{\mathbf{x}}}$ by
$$v_{\alpha_{\mathbf{x}}}(Z)=1,\  v_{\alpha_{\mathbf{x}}}(u_1)=\alpha_1, \  v_{\alpha_{\mathbf{x}}}(u_2)=\alpha_2, \  v_{\alpha_{\mathbf{x}}}(v)=\alpha_3.$$
We say that   {$(u_1,u_2,v,Z)$ is prepared for $\mathbf{x}$} if
$$Z^p- G_{\mathbf{x}}^{p-1}Z+ F_{p,Z,\mathbf{x}}:=\clin_{\alpha_{\mathbf{x}}}(h)\in k(x)[Z,U_1,U_2,V]$$
verifies one of the following:

\noindent 1- either $G_{\mathbf{x}}\not=0$,

\noindent 2-  {or}  $H^{-1}{\partial F_{p,Z,\mathbf{x}}\over \partial V}$ is not proportional to an $\omega(x)$-power,

\noindent 3- or  $H^{-1}{\partial F_{p,Z,\mathbf{x}}\over \partial V}=\lambda V^{\omega(x)}$, $\lambda \in k(x)^*$.

We say that $(Z,u_1,u_2,v)$ is totally prepared if

(i) $\Delta_S(h;u_1,u_2,v;Z)$  {is} minimal,

(ii) when $pd_2=0$ (f.i. when $E=\div(u_1)$), all the left vertices of $\Delta_2(h;u_1,u_2;v;Z)$ are prepared,

(iii)
when $pd_1>0$ and $pd_2>0$ ($\Leftrightarrow \ E=\div(u_1u_2)$ when $\omega(x)\geq p$), all the vertices of $\Delta_2(h;u_1,u_2;v;Z)$ are prepared.
\end{defn}

\begin{prop}\label{kappa3prepatot}
Assume that $x$ is in case (**) Definition \ref{**}. There exists $v\in S$, $\phi \in S$ such that
$(Z-\phi,u_1,u_2,v)$ is totally prepared. Furthermore $x$ is resolved for $m(x)=p$ if $\Delta_2(h;u_1,u_2;v;Z-\phi)=\emptyset$.
\end{prop}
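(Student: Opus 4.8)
\textbf{Plan of proof for Proposition \ref{kappa3prepatot}.}

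The plan is to reduce the existence of a totally prepared coordinate system to the combination of Proposition \ref{Deltaalg} (for the algebraic minimization of the full polyhedron $\Delta_S(h;u_1,u_2,v;Z)$ over $Z$) and an inductive clearing procedure on the finitely many unprepared vertices of the projected polygon $\Delta_2(h;u_1,u_2;v;Z)$, exactly parallel to the $2$-preparation algorithm of Theorem \ref{well2prepared} and the vertex-preparation arguments of Proposition \ref{Deltamin}. First I would invoke Proposition \ref{Deltaalg} to choose $\phi\in S$ so that $\Delta_S(h;u_1,u_2,v;Z')$ is minimal for $Z':=Z-\phi$, for any choice of nonexceptional parameter $v$; this gives condition (i). Since the number of vertices of $\Delta_2$ is finite and each lies in $\frac{1}{p!}\N^2$, an unprepared (left) vertex $\mathbf{x}$ is one where, in the notation of Definition \ref{kappa3preparation}, $G_{\mathbf{x}}=0$ and $H^{-1}\partial F_{p,Z,\mathbf{x}}/\partial V=\lambda(V-cU_1^{y_1}U_2^{y_2})^{\omega(x)}$ with $c\neq 0$; the move is to replace $v$ by $v':=v-\gamma u_1^{y_1}u_2^{y_2}$ for $\gamma\in S$ a preimage of $c$, which clears the offending monomial from the directional derivative at that vertex.

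The key steps, in order, are: (1) fix $v$ and apply Proposition \ref{Deltaalg} to secure (i); (2) enumerate the vertices of $\Delta_2$ (the left vertices in the $E=\mathrm{div}(u_1)$ case, all vertices in the $E=\mathrm{div}(u_1u_2)$ case, according to whether $pd_2=0$ or $pd_1,pd_2>0$); (3) for the unprepared vertex $\mathbf{x}$ with $y_1+y_2$ minimal among unprepared ones, perform the substitution $v\mapsto v'$ above, re-minimize over $Z$ by Proposition \ref{Deltaalg} if necessary, and check — exactly as in the proof of Theorem \ref{well2prepared}, using the inequality $\mu_{\alpha'}(u_1^{y_1}u_2^{y_2})>\mu_{\alpha'}(v)$ for the weight vectors $\alpha'$ of all vertices $\mathbf{x}'\neq\mathbf{x}$ — that the initial forms at every other vertex are unchanged, so no previously prepared vertex becomes unprepared and the polygon only shrinks in the relevant region; (4) argue termination: since $y_1+y_2$ is chosen minimal and the substitutions strictly enlarge the cleared region, either the process stops after finitely many steps in $S$, or it converges $m_S$-adically to $\hat v\in\hat S$, and then by Proposition \ref{Deltaalg} applied to the resulting (now totally prepared) formal polyhedron together with the algebraicity statement therein one descends the preparing element back to $S$, yielding $v\in S$ and $\phi\in S$ with $(Z-\phi,u_1,u_2,v)$ totally prepared. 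Finally, for the last sentence: if $\Delta_2(h;u_1,u_2;v;Z-\phi)=\emptyset$ then by Definition \ref{kappa3preparation} every monomial of $f_{i,Z-\phi}$ contributing to multiplicity $p$ lies in the region $x_3\geq i(1+\omega(x))/p$, i.e. in particular $f_{i,Z-\phi}\in (v)^i$ after accounting for the exceptional factors $H(x)$; this forces $\delta$ of the generic point of $V(Z-\phi,v)$ to be $\geq 1$, so by Proposition \ref{deltainv}(ii) the multiplicity $p$ locus along that curve is everywhere $p$, and then blowing up the Hironaka-permissible center $V(Z-\phi,v)$ strictly drops $\delta$ at every exceptional point by Proposition \ref{originchart} applied with $n=1$, so iterating gives $m(x_r)<p$; hence $x$ is resolved for $m(x)=p$.

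The main obstacle I anticipate is step (4), the rationality-over-$S$ issue: a priori the preparing process only converges in $\hat S$, and one must show — as in Proposition \ref{Deltaalg} and the analogous descent in Theorem \ref{well2prepared} — that total preparedness (a condition involving both the minimality of $\Delta_S(h;u_1,u_2,v;Z)$ and the preparedness of all relevant vertices of the projected polygon) can be realized by an \emph{algebraic} choice $v\in S$, $\phi\in S$. The delicate point is that the projection $\mathrm{pr}$ and the face structure of $\Delta_2$ interact with the minimization over $Z$, so one cannot simply minimize the two polyhedra independently; the argument must track, through the commutative-algebra input of Proposition \ref{monomexp} and the excellence of $S$, that the finitely many solvability/preparedness conditions cut out a nonempty Zariski-open-type locus of admissible $(v,\phi)$ already over $S$. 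A secondary but routine subtlety is verifying, in step (3), that the substitution $v\mapsto v'$ does not disturb condition (ii) of Definition \ref{**} (the vertex $\mathbf{v}=(d_1,d_2,(1+\omega(x))/p)$ remaining the unique/a vertex in the plane $x_1=d_1$), which follows because the substitution only modifies the $v$-coordinate direction and $v$ is nonexceptional.
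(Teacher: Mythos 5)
Your outline follows the paper's general strategy (Hironaka-style vertex clearing on $\Delta_2$, with a finite--or--convergent iteration), and steps (1)--(3) are essentially correct, but the two remaining steps each contain a genuine gap.

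\textbf{Step (4): descent of the preparing element to $S$.} You claim this follows from Proposition \ref{Deltaalg} ``applied to the resulting formal polyhedron together with the algebraicity statement therein.'' That proposition only supplies an \emph{algebraic $Z$-translation} $\theta\in S$ minimizing the characteristic polyhedron; it says nothing about the $v$-translation required to prepare the vertices of $\Delta_2$. When the clearing process is infinite, it converges only $m_S$-adically to some $\hat{v}\in\hat{S}$, and one needs a separate mechanism to descend this to $v\in S$. The paper resolves this through Lemma \ref{nettoyageinfini}: in the persistent case (the vertex of smallest abscissa being dissolved indefinitely while its abscissa $A$ stays fixed), quasi-homogeneity plus uniqueness of the Hironaka resolvent (\cite{H3} Cor.\ (4.1.1)) give an explicit formal factorization $\clin_\alpha h=(Z-\Phi)^p+U_1^{pd_1}\overline{\gamma}(V-\Psi)^{1+\omega(x)}$, and then $V(Z-\Phi,V-\Psi)$ is recognized as the unique component of the top-multiplicity locus of the auxiliary scheme $\Xi=\mathrm{Spec}(\widehat{{\cal O}_{\cal C}}[[U_1,V]]/(\clin_\alpha h))$ not contained in $\mathrm{div}(U_1)$; algebraicity of that component over the excellent Noetherian ring ${\cal O}_{\cal C}[U_1,V]$ (\cite{CoJS} lemma 1.37) gives $\Phi,\Psi$ over $S$. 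Your suggestion that the preparedness conditions ``cut out a nonempty Zariski-open-type locus'' over $S$ does not engage this; one cannot get the rationality of $v$ from Proposition \ref{Deltaalg} or \ref{monomexp} alone.

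\textbf{The case $\Delta_2=\emptyset$.} Your argument that $\delta\geq 1$ at the generic point of $V(Z-\phi,v)$ is false when $\omega(x)<p$: under (**) one has $1+\omega(x)\not\equiv 0\;\mathrm{mod}\;p$, so $\omega(x)<p$ forces $1+\omega(x)<p$, and the slope along $V(v)$ is $(1+\omega(x))/p<1$, giving multiplicity $1+\omega(x)<p$ there, so Proposition \ref{deltainv}(ii) does not apply. Also $V(Z-\phi,v)$ is a surface, i.e.\ codimension one in ${\cal X}$; its image $V(v)\subset\mathrm{Spec}\,S$ is a divisor transverse to $E$, so it is not a Hironaka-permissible center of the kind used in this paper (those have image in $E$), and a blowing up of Proposition \ref{originchart} type with $n=1$ does not reduce the problem. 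The paper's argument is different and shorter: when $\omega(x)\geq p$, or when $h$ is separable, $\Delta_2=\emptyset$ would put the two-dimensional subscheme $V(Z-\phi,v)$ inside $\mathrm{Sing}_p{\cal X}$ with $\eta(V(Z-\phi,v))\nsubseteq E$, directly contradicting condition {\bf (E)}, so this case does not occur; and when $\omega(x)<p$ and $h$ is purely inseparable, $x$ is resolved for $m(x)=p$ by a combinatorial argument along the lines of Theorem \ref{omegazero}. You should split the two sub-cases and not attempt to treat them uniformly by a blowing up.
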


\begin{proof}
We apply a strategy similar to Hironaka's strategy of minimizing in \cite{H3}.
Let us start by  a vertex  $\mathbf{x}=(x_1,x_2)$ not prepared. With the notations as above,
we have $\clin_{\alpha_{\mathbf{x}}}(h)=Z^p+ F_{p,Z,\mathbf{x}}$, with
$$
U_1^{-pd_1}U_2^{-pd_2}F_{p,Z,\mathbf{x}}=\lambda V^{1+\omega(x)}+\sum_{1\leq j \leq 1+\omega(x)}\lambda_j V^{1+\omega(x)-j}U_1^{jx_1}U_2^{jx_2},
\ \lambda \in k(x)^*,
$$
$$
U_1^{-pd_1}U_2^{-pd_2}{\partial F_{p,Z,\mathbf{x}} \over \partial V}
=(1+\omega(x))\lambda(V+\lambda' U_1^{x_1}U_2^{x_2})^{\omega(x)}, \ \lambda' \in k(x)^* ,$$
in particular, $x_i \in \N$, $i=1,2$. We take any invertible $\gamma_{\mathbf{x}}\in S$ whose residue is $\lambda'$
and we define
$$w:=v+ \gamma_{\mathbf{x}} u_1^{x_1}u_2^{x_2}.$$
Then $(Z,u_1,u_2,w)$ is a regular system of parameters of $S$.
$$\Delta_2(h;u_1,u_2;w;Z)\subset \Delta_2(h;u_1,u_2;v;Z).$$
Furthermore, let $\mathbf{y}=(y_1,y_2)$ another vertex of $\Delta_2(h;u_1,u_2;w;Z)$, let
$$
 \alpha'_1 x_1+ \alpha'_2 x_2 + \alpha'_3 x_3=1
$$
be an equation of the edge of $\Delta_S(h;u_1,u_2,v;Z)$ defined by  $\mathbf{y}$,
of course $v_{\alpha_{\mathbf{y}}}(u_1^{x_1}u_2^{x_2})>1$, so
$\clin_{\alpha_{\mathbf{y}}}(v)=\clin_{\alpha_{\mathbf{y}}}(w)$.
In particular, $ {\mathbf{y}}$ is still a vertex of  $\Delta_2(h;u_1,u_2;w;Z)$ and, if it was prepared for $(u_1,u_2,v;Z)$,
it is still prepared for $(u_1,u_2,w;Z)$. Furthermore, if we make an eventual translation on $Z\leftarrow Z-\phi$,
$\phi \in S$ to minimize $\Delta_S(h;u_1,u_2,w;Z)$, as $\clin_{\alpha_{\mathbf{y}}}(v)=\clin_{\alpha_{\mathbf{y}}}(w)$,
in the of expansion  {of} $\clin_{\alpha_{\mathbf{y}}}(h)$, we just change  $\clin_{\alpha_{\mathbf{y}}}(v)$ by $\clin_{\alpha_{\mathbf{y}}}(w)$:
we can choose $\phi$ with  $v_{\alpha_{\mathbf{y}}}(\phi)>1$. So
$$\Delta_2(h;u_1,u_2;w;Z-\phi)\subset \Delta_2(h;u_1,u_2;v;Z),$$
any prepared vertex $\mathbf{y}=(y_1,y_2)$ of $\Delta_2(h;u_1,u_2,v;Z)$ is a prepared vertex of $\Delta_2(h;u_1,u_2;w;Z-\phi)$.

We apply this process to each  $\mathbf{x}=(x_1,x_2)$ to be prepared, starting by those of smallest modules. When this process is finite, we get the announced result.

When this process is infinite, we get $\phi, \psi \in \hat{S}$ such that $(u_1,u_2,v-\psi ; Z-\phi)$ is totally prepared.
Let us remark that $x$ is resolved if
$$\Delta_2(h;u_1,u_2;w;Z-\phi)\not=\emptyset.$$
The contrary  would mean that $\Delta(h;u_1,u_2;w;Z-\phi)$ has only one vertex $(d_1,d_2, {1+\omega(x)\over p})$:
$\V(Z-\phi, w)$ would be a component of dimension $two$ of the locus of multiplicity $\min\{p, 1+\omega (x)\}$,
$\eta( \V(Z-\phi, w))\nsubseteq E$. This contradicts {\bf (E)} if $\omega (x)\geq p$ or if $h$ is separable
(assumption (ii) in Theorem \ref{luthm}). If $\omega(x)<p$ and $h=Z^p +f_{p,Z}$, $\mathrm{char}S=p$,
$x$ is resolved for $m(x)=p$ by a combinatorial algorithm, {\it vid.} proof of Theorem \ref{omegazero}.

The remark above implies that, after a finite number of steps, we apply infinitely the process to vertices of smallest abscissa or
(smallest ordinate and $E=\div(u_1u_2)$) of $\Delta_2(h;u_1,u_2 {;}v;Z)$ and this smallest abscissa or smallest ordinate remains constant.

Let us study  the very special case where   $\mathbf{x}:=(A,\beta)$ is the vertex of smallest abscissa  of $\Delta_2$
and that the process dissolves it, creating a new vertex $(A,\beta')$, $\beta'>\beta$ infinitely times.
This implies $A,\beta,\beta' \in \N$.

Let $\alpha=(\alpha_1,0,\alpha_3)$, such that  $\alpha_1 x_1+ \alpha_3 x_3=1$ is the equation of the non compact face
of $\Delta_S(h;u_1,u_2,v;Z)$ whose image by pr is the non compact face $x_1=A$ of $\Delta_2$.
We get $\alpha_1pd_1+\alpha_3(1+\omega(x))=p$, $\alpha_1 A- \alpha_3=0$, and
$$
\clin_{\alpha}h=Z^p-  G^{p-1}_{\mathbf{x}}Z+ F_{p,\mathbf{x}}\in \mathrm{gr}_{\alpha}(S[Z])={S\over (v,u_1)}[U_1,V][Z].$$
Let ${\cal C}:=\mathrm{Spec}{S\over (v,u_1)}$. By quasi-homogeneity and the uniqueness of the solution \cite{H3} Corollary (4.1.1),
there exists $\Phi\in  \widehat{{\cal O}_{\cal C}[U_1,V]}=  \widehat{{\cal O}_{\cal C}}[[U_1,V]]$ with
$$
\Phi^p\in U_1^{pd_1}(V, U_1^A)^{1+\omega (x)}, \ \Psi \in \bar{u_2}^{{ \beta }} U_1^{{A }} \widehat{{\cal O}_{\cal C}}[[U_1,V]],
$$
such that
\begin{equation}\label{eq834}
\clin_{\alpha}h=(Z-\Phi)^p+U_1^{pd_1}\overline{\gamma}  (V-\Psi)^{1+\omega(x)}.
\end{equation}

\begin{lem}\label{nettoyageinfini}
There exists
$$
\phi \in S, \ \phi^p\in u_1^{pd_1}(v, u_1^A)^{1+\omega (x)} \ \mathrm{and} \
w\in S, \ v-w \in (v^2,u_1^A u_2^{\beta} )S
$$
such that
$$\clin_{\alpha}h=(Z-\clin_{\alpha}\phi)^p+U_1^{pd_1}\overline{\gamma}  W^{1+\omega(x)}. $$
\end{lem}

When $\omega (x)\geq p$, (\ref{eq834}) means that $\V(Z-\Phi,V-\Psi)$ is the only component in the locus of multiplicity $p$ of
$$
\Xi:=\mathrm{Spec}(\widehat{{\cal O}_{\cal C}}[[U_1,V]]/(\clin_{\alpha}h))
$$
not contained in div$(U_1)$. Since ${\cal O}_{\cal C}[U_1,V]$ is excellent and Noetherian,
by \cite{CoJS} Lemma \textbf{1.37}, this component is algebraic and the conclusion follows.

When $\omega (x)< p$,   $\V(Z-\Phi,V-\Psi)$ is the only component in the locus of multiplicity $1+\omega(x)$ of
$\Xi$  not contained in div$(U_1)$: we conclude as above. This ends the proof of Lemma \ref{nettoyageinfini}.

\smallskip

Let us remark that, if there exists another vertex ${\mathbf{x}}_1$ which is already prepared, then
$$\clin_{\alpha_{\mathbf{x}_1}}(Z)=\clin_{\alpha_{\mathbf{x}_1}}(Z-\phi),\ \clin_{\alpha_{\mathbf{x}_1}}(v)= \clin_{\alpha_{\mathbf{x}_1}}(w),$$
so  ${\mathbf{x}}_1$ is still prepared for $(u_1,u_2,w;Z-\phi)$.

By applying Lemma \ref{nettoyageinfini}, we see that there exists $\phi\in S$ and $w\in S$ such that the vertex of smallest abscissa of $\Delta_2(h;u_1,u_2;w;Z-\phi)$ is prepared.

The case where the process is infinite along points of smallest ordinates is, mutatis mutandis, the same: by applying the remark above,  we see that, when $E=\div(u_1u_2)$,  there exists $\phi\in S$ and $w\in S$ such that both the vertices of smallest abscissa and smallest ordinate of $\Delta_2(h;u_1,u_2;w;Z-\phi)$ are prepared. This ends the proof of Proposition \ref{kappa3prepatot}.
\end{proof}

\begin{defn}\label{kappa3invariants} {\bf (Invariants).} \index{$A$3@$A_1,\  A_2,\ \beta, \ B, \ C, \ \gamma$ in the case $\kappa(x)=3$(**)}
Suppose  $\kappa(x)= 3$, suppose that $(Z,u_1,u_2,v)$ is totally prepared.
In the case where $E=\div(u_1u_2)$  {and $\omega(x)<p$ (so $\epsilon(x)< p$ by Definition \ref{**} and $d_1+d_2+\epsilon(x)\geq p$)} , we choose $u_1$ so that $d_1>0$ and let

\smallskip

\noindent (i) $(A_1(Z,u_1,u_2,v),\beta(Z,u_1,u_2,v))$ is the vertex of smallest abscissa of $\Delta_2$;

\smallskip

\noindent (ii) $B(Z,u_1,u_2,v) {:=}\mathrm{inf}\{ \vert \mathbf{x} \vert \ \vert \mathbf{x}\in  {\Delta_2}\}$;

\smallskip

\noindent (iii) $A_2(Z,u_1,u_2,v)$ is the inf of the ordinates of points in  $\Delta_2$,
$$
C(Z,u_1,u_2,v) {:=}B(Z,u_1,u_2,v)-A_1(Z,u_1,u_2,v)-A_2(Z,u_1,u_2,v);
$$

\noindent (iv) $\gamma(Z,u_1,u_2,v) \in \N$ is given by:
$$
\gamma(Z,u_1,u_2,v):=
\left\{
  \begin{array}{ccc}
    \lceil \beta(Z,u_1,u_2,v)\rceil & \mathrm{if} & E=\div(u_1) \hfill{} \\
     \\
    1+\lfloor C(Z,u_1,u_2,v)\rfloor  & \mathrm{if} &  E=\div(u_1u_2) \hfill{} \\
  \end{array}
\right.
.
$$

For sake of simplicity, most of the time, we will skip $(Z,u_1,u_2,v)$ and write $A_1(x),A_2(x),B(x),C(x),\beta(x), \gamma(x)$.
\end{defn}

\begin{prop}\label{kappa3instable}
Suppose  $x$ satisfies conditions (**) and {\bf (E)'} with  $\kappa(x)=3$ and $(Z,u_1,u_2,v)$ is totally prepared.
The following holds:
\begin{itemize}
  \item [(i)] $V\in \mathrm{Vdir}(x)$ or $x$ is resolved;
  \item [(ii)] if $B(x)=1$ and  $E=\div(u_1)$,  $x$ is resolved or
  $$
  x':=(Z',u'_1,u'_2,v')=(Z/u_2,u_1/u_2,u_2,v/u_2)
  $$
  is the unique closed point $x_1\in \pi^{-1}(x)$ in the blowing up $\pi : {\cal X}'\rightarrow {\cal X}$
  along $x$ such that $\iota (x_1)\geq \iota (x)$, and $x'$ then satisfies conditions (**) and {\bf (E)'};
  \item [(iii)] if $B(x)=1$ and  $\omega(x)<p$, $x$ is resolved.
\end{itemize}

\end{prop}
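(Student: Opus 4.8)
\textbf{Proof plan for Proposition \ref{kappa3instable}.}

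The plan is to establish the three statements in order, using the preparation machinery of Definition \ref{kappa3preparation} and Proposition \ref{kappa3prepatot}, together with the blowing-up theorem \ref{bupthm} and the structural results on the characteristic polyhedron. First I would prove (i). Since $x$ satisfies (**), the initial form polynomial has the shape $\mathrm{in}_{m_S}h=Z^p-G^{p-1}Z+F_{p,Z}$ with $H^{-1}F_{p,Z}$ coming from a monic expansion in the nonexceptional variable $v$; the key point is that $\kappa(x)=3$ forces $H^{-1}\partial F_{p,Z}/\partial V\notin k(x)[U_1,U_2]$ by Definition \ref{defkappa} and Definition \ref{defomega}. Because $(Z,u_1,u_2,v)$ is totally prepared, $\Delta_S(h;u_1,u_2,v;Z)$ is minimal and its initial face carries the vertex $\mathbf{v}=(d_1,d_2,(1+\omega(x))/p)$, so $V$ cannot be solved away; I would argue that if $V\notin\mathrm{Vdir}(x)$ then $\mathrm{Vdir}(x)\subseteq\langle U_1,U_2\rangle$, contradicting $\kappa(x)=3$ via definition \ref{defkappa} (which would force $\kappa(x)=4$). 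If instead $\mathrm{Vdir}(x)$ contains a vector transverse to $\langle U_1,U_2\rangle$ other than one involving $V$ with coefficient making it effectively $V$ up to adapted change, a coordinate change in $v$ (allowed by preparation, cf. the argument in the proof of Proposition \ref{kappa3prepatot}) puts us in the case $V\in\mathrm{Vdir}(x)$; when this coordinate change is not of the permitted type, $\tau'(x)=3$ and $x$ is resolved by Theorem \ref{bupthm}.

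For (ii), assume $B(x)=1$ and $E=\div(u_1)$. Then $A_1(x)+A_2(x)\le B(x)=1$ and, since $A_2(x)=0$ here (only one exceptional component, $pd_2=0$), the projected polygon $\Delta_2$ touches the line $y_1+y_2=1$. I would blow up $x$ and invoke Theorem \ref{bupthm}: any $x_1\in\pi^{-1}(x)$ with $\iota(x_1)\ge\iota(x)$ must lie in $PC(x,\{x\})$, hence (by the directrix computation, using $V\in\mathrm{Vdir}(x)$ from part (i) and $B(x)=1$) on the strict transform of $\div(u_1)$ and, because the only vertex of $\Delta_2$ on the initial face projects to a single point, the center $x_1$ is forced to be the point $x'=(Z/u_2,u_1/u_2,u_2,v/u_2)$ in the chart not containing $\div(u_1)$'s infinity point. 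Proposition \ref{originchart} gives minimality of $\Delta_{S'}(h';u'_1,u'_2,v';Z')$ and a direct computation of $d'_1,d'_2,H(x')$ shows $x'$ again satisfies (i) of Definition \ref{**} (the new vertex $(d'_1,d'_2,(1+\omega(x))/p)$ being the only one in the region $x_1=d'_1$) and preserves {\bf (E)'} by Proposition \ref{Eprimestable}. Uniqueness of $x_1$ follows because every other point in $\pi^{-1}(x)$ has either $\omega$ strictly smaller (by the structure of $\mathrm{in}_{m_S}h'$ in the other charts) or $\kappa<3$, hence is resolved.

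For (iii), assume $B(x)=1$ and $\omega(x)<p$. Here the argument of (ii) applies in both cases $E=\div(u_1)$ and $E=\div(u_1u_2)$, but now when the (unique, as in (ii)) candidate near point $x'$ is produced, the multiplicity drops: since $\omega(x)<p$ one has $\epsilon(x)\le\omega(x)+1<p+1$, and the combinatorial reduction of $H(x)$ via the blowing up — as in the proof of Theorem \ref{omegazero} and in the $\omega(x)<p$ branches of Propositions \ref{skewresolved}, \ref{skewresolvedbis} — forces $m(x')<p$ after finitely many further Hironaka-permissible (indeed combinatorial, cf. Definition \ref{combinatoire}) blowing ups, or $\iota(x')<(p,\omega(x),3)$ directly. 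The only subtlety is to check that these extra blowing ups are of the admissible type along $\mu$ and independent; this follows because the centers are intersections of components of $E'$, which are canonical. The main obstacle I expect is in part (ii): pinning down that the candidate near point is \emph{exactly} $x'$ and not some point ``at infinity'' in the $u_1$-chart requires carefully reading off $PC(x,\{x\})$ from the monic expansion and using that $V\in\mathrm{Vdir}(x)$ together with the totally-prepared hypothesis to exclude a solvable vertex appearing after the blowing up — essentially an instance of the persistence-of-preparation argument already carried out inside the proof of Proposition \ref{kappa3prepatot}, which I would quote and adapt rather than redo.
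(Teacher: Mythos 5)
Your outline for parts (ii) and (iii) is broadly in the right spirit, though you re-derive from scratch what the paper reads off from Lemma \ref{kappa3prelim}(1) (for (ii)) and from Corollary \ref{omega(x)=epsilon(x)<p} plus Proposition \ref{permisarc} (for (iii)); the paper's route is shorter because the hard identification of the sole $\iota$-near point $x'$ is already packaged in Lemma \ref{kappa3prelim}(1), and because Corollary \ref{omega(x)=epsilon(x)<p} immediately rules out $\tau'(x)\ge 2$ and $\kappa(x)=4$ when $\omega(x)<p$, leaving only $\mathrm{Vdir}(x)=\langle V\rangle$ and the combinatorial iteration you describe. Your reference to $PC(x,\{x\})$ is a workable alternative, but you would still need the persistence-of-preparation step in the $u_2$-chart that you defer to ``quote and adapt''; the paper avoids this by calling Lemma \ref{kappa3prelim}(1) directly. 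For (iii) the paper also needs Lemma \ref{joyeux}(ii) in the $E=\mathrm{div}(u_1u_2)$ branch to handle the case where $d'_1\in\N$ forces a preliminary codimension-two blowing up; your sketch leaves that uncovered.

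The genuine gap is in (i). You claim ``if $V\notin\mathrm{Vdir}(x)$ then $\mathrm{Vdir}(x)\subseteq\langle U_1,U_2\rangle$, contradicting $\kappa(x)=3$.'' This inference is false: with $E=\mathrm{div}(u_1u_2)$ one can have $\mathrm{Vdir}(x)=\langle V+aU_2, U_1\rangle$ with $a\ne 0$, so $V\notin\mathrm{Vdir}(x)$ while $\mathrm{Vdir}(x)\nsubseteq\langle U_1,U_2\rangle$, and this is perfectly compatible with $\kappa(x)=3$. Your fallback — a coordinate change in $v$ ``puts us in the case $V\in\mathrm{Vdir}(x)$'', else $\tau'(x)=3$ — does not close the gap either, because the whole point of total preparation is that no such coordinate change is available: the paper shows that $\mathrm{Vdir}(x)=\langle V+aU_2,U_1\rangle$ with $a\ne 0$ would force $(0,1)$ to be an unprepared vertex of $\Delta_2$, contradicting the totally-prepared hypothesis. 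What you are missing is the intermediate reduction when $\tau'(x)=2$: the paper first invokes Proposition \ref{sortiebiskappaegaltrois} and Lemma \ref{kappa3prelim} to resolve $x$ in the other $\tau'(x)=2$ configurations, leaving exactly $\mathrm{Vdir}(x)=\langle V+aU_2,U_1\rangle$ to be killed by the preparedness contradiction. Without that step and the explicit contradiction with the prepared vertex $(0,1)$, your argument for (i) does not go through.
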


\begin{proof}
When $B(x)>1$,   clearly  $V\in\mathrm{Vdir}(x)$. When $B(x)=1$, then
$$
U_1^{-pd_1}U_2^{-pd_2}F_{p,Z}=\lambda V^{1+\omega(x)}+\sum_{1\leq i \leq 1+\omega(x)} V^{1+\omega(x)-i}Q_i(U_1,U_2), \ \lambda \neq 0.
$$
Suppose $V\not\in \mathrm{Vdir}(x)$, then
$$U_1^{-pd_1}U_2^{-pd_2}{\partial F_{p,Z} \over \partial V}\not= (1+\omega(x))\lambda V^{\omega(x)},$$
so $\tau '(x)\geq 2$ by total preparedness. By  {Lemma \ref{kappa3prelim}},  $x$ is resolved except possibly if
$$\mathrm{Vdir}(x)=<V+aU_2,U_1>, \ a \in  {k(x)^*}$$
up to renumbering $u_1,u_2$ if $E=\mathrm{div}(u_1u_2)$.

\smallskip

 {As $a\neq 0$},  it would mean that $\mathbf{x}=(0,1)$ is a vertex of $\Delta_2$. This implies that
$$
U_1^{-pd_1}U_2^{-pd_2}{\partial F_{p,Z} \over \partial V}=({1+\omega(x)})\lambda (V+aU_2)^{\omega(x)}
+\sum_{1\leq i \leq \omega(x)} \lambda_i (V+aU_2)^{1+\omega(x)-i} U_1^i,
$$
so $H^{-1}{\partial F_{p,Z,\mathbf{x}}  \over \partial V}=({1+\omega(x)})\lambda (V+aU_1)^{\omega(x)}$
with notations as in Definition \ref{kappa3preparation}: a contradiction with total preparedness and
(i) is proved.

\smallskip

Assume that $E=\mathrm{div}(u_1)$, so we have
$$
\mathrm{Vdir}(x)=<V> \ \mathrm{or} \ \mathrm{Vdir}(x)=<V,U_1>
$$
by (i). Apply now Lemma \ref{kappa3prelim}(1) and note that the form
(\ref{eq8022}) is automatically achieved when $(Z,u_1,u_2,v)$ is totally prepared.
  {As $B(x)=1$, in \eqref{eq802}, there exists $i$, $0\leq i \leq \omega(x)$ with $\Phi_{i+1}\not=0$.}

 If $\mathrm{Vdir}(x)=<V>$,   {as div$(u_2)\not\subset E$, $\lambda_2=0$} in \ref{kappa3prelim}(1), we have
\begin{equation}\label{eq92}
U_1^{-pd_1}F_{p,Z}\in k(x)[U_1,V]_{\epsilon (x)}
\end{equation}
by (\ref{eq8022}); if $\mathrm{Vdir}(x)=<V,U_1>$, we have
$$
U_1^{-pd_1}F_{p,Z}\in k(x)[U_1,V]_{\epsilon (x)}\oplus <U_1^{\omega (x)}U_2>
$$
by (\ref{eq8022}). Therefore (ii) follows from Lemma \ref{kappa3prelim}(1) and Proposition \ref{Eprimestable}.

\smallskip

To prove (iii), it can be assumed that $\mathrm{Vdir}(x)=<V>$ by (i) and Corollary \ref{omega(x)=epsilon(x)<p}.
In particular, we have
$$
\mathrm{in}_{m_S} h =Z^p +F_{p,Z} , \ U_1^{-pd_1}U_2^{-pd_2}F_{p,Z}=\lambda V^{1+\omega (x)}+Q(U_1,U_2),
$$
with $\lambda \neq 0$, $Q \neq 0$, and $Q \in k(x)[U_1]$ if $E=\mathrm{div}(u_1)$. We blow up along $x$
and let $x':=(Z/u_2,u_1/u_2,u_2,v/u_2)$.

\smallskip

Assume that $E=\mathrm{div}(u_1)$. By (ii) and (\ref{eq92}), the only
point to consider is $x'$. By Corollary \ref{omega(x)=epsilon(x)<p},
we are done unless $\iota (x')=\iota (x)$, so $x'$ satisfies again  {the assumption in (iii)}
of the proposition with $E'=\mathrm{div}(u'_1u'_2)$. Note that we have $A_1(x')>0$ by (**).

\smallskip

Assume that $E=\mathrm{div}(u_1u_2)$ and let $x_1\in \pi^{-1}(x)$ with $\iota (x_1)\geq \iota (x)$.
By Corollary \ref{omega(x)=epsilon(x)<p}, we are done unless $\iota (x_1)=\iota (x)$. If $E'=\mathrm{div}(u'_1)$,
we have $B(x')=1$ except possibly if
$$
a(1):=pd_1, \ a(2):=pd_2, \ F_0:=Q(U_1,U_2)
$$
satisfies the assumptions of Lemma \ref{joyeux}(ii). This holds only if
$$
d'_1:=d_1 + d_2 + {1+\omega (x) \over p}\in \N.
$$
Then $x_1$ is resolved for $m(x)=p$ by blowing up $d'_1$ times along
codimension two centers of the form $(Z',u'_1)$. Otherwise, we have $<Q>=<U_1^{1 + \omega (x)}>$,
$x_1=x'$ up to  renumbering $u_1,u_2$, so $B(x')=1$ and $x'$ satisfies again  {the assumption in (iii)} of  the
proposition. Note that no renumbering is necessary if $A_1(x)>0$.

\smallskip

Summing up,  $x$ is resolved or we construct a sequence of infinitely near points lying on the
successive strict transforms of a formal curve
$$
\hat{{\cal Y}}=V(\hat{Z},u_1, \hat{v})\subset \hat{{\cal X}}={\cal X}\times_S\mathrm{Spec}\hat{S}.
$$
By Proposition \ref{permisarc}  we may assume that ${\cal Y}$ is permissible of the first kind,
so $x$ is resolved by blowing up along ${\cal Y}$.
\end{proof}

\begin{rem} {
The case $\kappa(x)=3$ and (**) is not at stable by permissible blowing ups of first or second kind. To avoid this problem, in the following Propositions \ref{redto3**casii}, \ref{**gamma} and \ref{**versgammaegal1}, we make some blowing ups $\pi$ which are Hironaka permissible and  keep the conditions (**) and {\bf (E)'} at
$x'\in \pi^{-1}(x)$ with $\iota (x')\geq (p,\omega (x),3)$. The transformation laws on $\Delta_2$ are the usual ones up to an horizontal  translation by $1$  in the case where $D:=V(z,u_1,u_2)$.}

\end{rem}

\begin{prop}\label{redto3**casii}
Assume that $x$ satisfies conditions (**)  and {\bf (E)'} with $\kappa(x)=4$ and let $(Z,u_1,u_2,v)$ be  totally prepared.
Let us call ${\cal Y}:=V(Z,u_1,v)$ with generic point $y$.
\begin{itemize}
  \item [(1)] if $\omega(x)<p$, $x$ is resolved;
  \item [(2)] if $\omega(x)\geq p$ and $\epsilon(y)\geq 2$, then  $(d_1,d_2,{1+\omega(x)\over p})$
is the only vertex of $\Delta_S(h;u_1,u_2,v;Z)$ in the region $x_1=d_1$. Furthermore ${\cal Y}$ is Hironaka-permissible
and $x$ is resolved.
  \item [(3)] if $\omega(x)\geq p$ and $E=\div(u_1)$, let $\pi : {\cal X}'\rightarrow {\cal X}$ be the
  blowing up along  $x$ and $x'\in \pi^{-1}(x)$ with $\iota (x')\geq (p,\omega (x),3)$. Then $x'$ is
  resolved or there is a Hironaka-permissible line
  $$
  D'=V(Z',u'_1,u'_2), \  E'=\mathrm{div}(u'_1u'_2).
  $$

  Let $\pi ': {\cal X}''\rightarrow {\cal X}'$ be the blowing up along $D'$ and $x'' \in {\pi '}^{-1}(x')$
  with $\omega (x'')\geq \omega (x')$. Then:

(i) $x''$ satisfies again {\bf (E)'} and $\omega(x'')= \omega(x)$;

(ii) $x''$ satisfies condition (**), $E''=\mathrm{div}(u''_1u''_2)$  and $\kappa(x'')=3$;

(iii)  $C(x'')<1-{1 \over 1+\omega(x)}$, $A_1(x'')<1,\ A_2(x'')<1$.
\end{itemize}
\end{prop}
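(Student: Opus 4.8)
The statement is Proposition~\ref{redto3**casii}, dealing with a point $x$ satisfying (**) and \textbf{(E)'}, with $\kappa(x)=4$ and totally prepared coordinates $(Z,u_1,u_2,v)$. The three cases are logically distinct and I would treat them in the order (1), (2), (3) since (3) is by far the deepest. Throughout, the fundamental bookkeeping tools are proposition~\ref{originchart} (behaviour of $\Delta_S$ under blowing up at the origin of a chart), proposition~\ref{bupformula} (the blow up formula for $H$, $G_{W'}$ and $J$), theorem~\ref{bupthm} (monotonicity of $\iota$), proposition~\ref{kappa3prepatot} (existence of totally prepared coordinates) and the exit-case devices of the previous sections (lemma~\ref{sortiemonome}, lemma~\ref{sortieomegaun}, corollary~\ref{omega(x)=epsilon(x)<p}, proposition~\ref{permisarc}, and the combinatoric-point notion of definition~\ref{combinatoire}).

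\textbf{Cases (1) and (2).} For (1), since $\kappa(x)=4$, the point $x$ is in the situation of definition~\ref{**}; when $\omega(x)<p$ I would invoke corollary~\ref{omega(x)=epsilon(x)<p} together with proposition~\ref{kappa3instable}(iii) (noting $\epsilon(x)=\omega(x)$ forces $\kappa(x)\le 2$ or $\kappa(x)=4$ with the directrix transverse) and the combinatorial algorithm of theorem~\ref{omegazero} to conclude that $x$ is resolved after a finite, $\mu$-independent sequence of blowing ups along codimension-two or exceptional centers; the main point is that the characteristic polygon $\Delta_2$ is nonempty (proposition~\ref{kappa3prepatot}) unless $x$ is already resolved for $m(x)=p$. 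For (2), the hypothesis $\epsilon(y)\ge 2$ where $y$ is the generic point of ${\cal Y}=V(Z,u_1,v)$ means, via proposition~\ref{Deltaalg} applied to $S_{(u_1,v)}$, that $f_{i,Z}\in (u_1,v)^{2i/(\ldots)}$ in the appropriate normalisation; translating this back through definition~\ref{**}(ii) shows $(d_1,d_2,(1+\omega(x))/p)$ is the \emph{only} vertex in the region $x_1=d_1$, so ${\cal Y}$ is contained in $\mathrm{Sing}_p{\cal X}$ and has normal crossings with $E$, hence is Hironaka-permissible. One then blows up along ${\cal Y}$ and applies proposition~\ref{originchart} and theorem~\ref{bupthm}: since $\mathrm{Vdir}(x)=<U_1>$ (case (T**)(i)-type) the only near point sits on the strict transform of $\mathrm{div}(u_1)$, and a descending induction on $\epsilon(y)$ (which drops after each such blowing up) terminates, using lemma~\ref{sortiemonome} at the exceptional endpoint.

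\textbf{Case (3): the main obstacle.} Here $\omega(x)\ge p$ and $E=\mathrm{div}(u_1)$. First I would run the single blow up $\pi:{\cal X}'\to{\cal X}$ at $x$ and, via theorem~\ref{bupthm} and proposition~\ref{kappa3instable}(i) (so $V\in\mathrm{Vdir}(x)$), analyze the candidate near points $x'$ with $\iota(x')\ge(p,\omega(x),3)$. Using proposition~\ref{bupformula}(v) to track $J(F_{p,Z,W},E,W)$ and the total preparedness of $(Z,u_1,u_2,v)$, I would show that either $x'$ is resolved (e.g.\ via lemma~\ref{sortiekappaegaldeuxbis} or theorem~\ref{bupthm} when $\tau'(x)\ge 2$), or $x'$ lies on $\mathrm{div}(u'_1)$ with $E'=\mathrm{div}(u'_1u'_2)$ and carries a line $D'=V(Z',u'_1,u'_2)$ which is Hironaka-permissible because $D'\subseteq\mathrm{Sing}_p{\cal X}'$ (this uses \textbf{(E)'} in the form of proposition~\ref{Eprimestable} and the monomial structure $H^{-1}F_{p,Z'}\in\langle U_1^{\omega(x)}\rangle$ plus the preparedness-controlled tail). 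Then blowing up along $D'$, proposition~\ref{bupformula} gives (i) directly: $H(x'')=u^{\epsilon(y')-p}H(x')S''$ and $\epsilon(y')=\omega(x')=\omega(x)$ since $D'$ is permissible of the first kind at the relevant generic point, so $\omega(x'')=\omega(x)$ and \textbf{(E)'} persists by proposition~\ref{Eprimestable}. For (ii) and (iii) the hard part is the polygon geometry: I must show that after this second blow up the projected polygon $\Delta_2(h'';u''_1,u''_2;v'';Z'')$ has all of its relevant invariants strictly under control, namely $C(x'')<1-1/(1+\omega(x))$, $A_1(x'')<1$, $A_2(x'')<1$, and that $\kappa(x'')=3$ with $E''=\mathrm{div}(u''_1u''_2)$. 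This is a two-chart computation on $\Delta_2$ analogous to the $\kappa(x)=2$ analysis of lemmas~\ref{gamma2*12}, \ref{gamma2*3} and \ref{gamma2*infty}: the passage from $\Delta_2(x')$ (with $E'=\mathrm{div}(u'_1u'_2)$ coming from the first chart) to $\Delta_2(x'')$ applies a combinatorial transformation whose effect on abscissa, ordinate and module I would read off from proposition~\ref{originchart}, combining it with the fact that the vertex of $\Delta_S(h';u'_1,u'_2,v';Z')$ forced by (**) has module $(1+\omega(x))/p$ to get the strict inequality $C(x'')<1-1/(1+\omega(x))$. The subtlety — and the main obstacle — is that $\kappa(x'')=3$ rather than $4$: one must check that after the $D'$-blow up the directrix becomes transverse to $E''$, i.e.\ $V''\in\mathrm{Vdir}(x'')$ and $\mathrm{Vdir}(x'')\not\subseteq\langle U''_1,U''_2\rangle$, which follows from the preparedness condition (definition~\ref{kappa3preparation}, clauses 1--3) propagating through proposition~\ref{bupformula}(v), together with the observation (as in remark~\ref{remT**}) that $\tau'$ being $1$ with $\kappa=4$ and $\epsilon=1+\omega$ is incompatible with the monomial shape produced. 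I would close the argument by a short casuistic on the two possible charts of the $D'$-blow up, discarding the chart that either resolves $x''$ outright or lowers $\omega$, exactly as in the endgame of chapter~7.
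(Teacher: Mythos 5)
Your overall roadmap is reasonable—split into (1), (2), (3), blow up at $x$, find the hard chart, get the exceptional line $D'$, blow it up and track the projected polygon. But there are two concrete gaps in part (3), and they are related.

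First, you assert that ``$\epsilon(y')=\omega(x')=\omega(x)$ since $D'$ is permissible of the first kind at the relevant generic point'' and then invoke proposition~\ref{bupformula}(iv) and proposition~\ref{Eprimestable}. Neither claim is correct. The line $D'=V(Z',u'_1,u'_2)$ is only \emph{Hironaka}-permissible: the paper's proof computes $\epsilon(y')=1$, not $\omega(x')$, because $H(x')^{-1}F_{p,Z',W'}$ has a term $\gamma{v'}^{1+\omega(x)}u'_2$ of $\overline{m'}$-order one. In particular $\epsilon(y')=1<\omega(x')$ is far from equality with $\epsilon(x')$, so $D'$ does not satisfy definition~\ref{deffirstkind}(ii) nor is it a permissible center of the second kind. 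Consequently proposition~\ref{bupformula} and proposition~\ref{Eprimestable}, which are both stated for permissible blowing ups of the first or second kind, cannot be applied here; the paper instead verifies $\omega(x'')=\omega(x)$, condition \textbf{(E)'}, and the shape of the transformed initial form by hand-computing the transformed $f_{p,Z''}$ and $H(x'')$ in each chart. Your argument, as written, would not close.

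Second, you suppress the dichotomy that actually organizes the chart analysis. After the first blow up, the paper writes $f_{p,Z'}$ at $x'=(Z/u_2,u_1/u_2,u_2,v/u_2)$ and splits according to whether $\mathrm{ord}_{x'}(u'_1u'_2\psi')=\omega(x)$ or $>\omega(x)$. In the first alternative $\mathrm{Vdir}(x')=\langle U'_1,U'_2\rangle$, there is \emph{no} Hironaka-permissible line $D'$ to blow up, and one must instead blow up at the closed point $x'$ (and at $x''$ and $x'''$ for $\omega(x)\in\{2,3\}$), using ad hoc estimates on $\tau'$ or on the resulting $\omega$. Only in the second alternative does $\mathrm{Vdir}(x')=\langle U'_1\rangle$ and the permissible line $D'$ with $\epsilon(y')=1$ appear. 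Your proposal treats the first alternative as absorbed into ``$x'$ is resolved'' without argument; for small $\omega(x)$ this is genuinely nontrivial and is handled in the paper by a short case analysis that your sketch does not reproduce. Once those two points are repaired, the rest of your outline (reading the new polygon vertex and the bounds $C(x'')<1-1/(1+\omega(x))$, $A_1(x''),A_2(x'')<1$ off the monomial transformation rules, and recognizing $\kappa(x'')=3$ from the non-solvable $V''^{1+\omega(x)}$-term) is the right computation; but it has to be done directly, not by appeal to the permissible-blowing-up machinery.
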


\begin{proof}
Statement (1) has been proved in Corollary \ref{omega(x)=epsilon(x)<p}. From now on, we assume that $\omega(x)\geq p$.

\smallskip

Let us prove (3). As $\kappa(x)=4$,  $E=\div(u_1)$,  we have  $\mathrm{Vdir}(x)=<U_1>$. By (**):
$$
f_{p,Z}=u_1^{-pd_1}(\gamma v^{1+\omega(x)} + \gamma' u_1^{\omega(x)}+ u_1\phi),\ \gamma,\gamma'\ \mathrm{invertible},\ \phi \in m_S^{\omega(x)}.
$$

We blow up along $x$,  {let $x'$ be a point above $x$}: if  {$\omega(x')=\omega(x)$}, $x'$ is on the strict transform of $\div(u_1)$. In the chart of origin
$(Z',u'_1,u'_2,v):=(Z/v,u_1/v,u_2/v,v)$,  {so called $v$-chart}, we get, before any preparation:
$$
f_{p,Z'}={u'_1}^{pd_1}v^{pd_1+\omega(x)-p}(\gamma v +u'_1\phi'),\ \phi'\in S', \ E'=\mathrm{div}(u'_1v).
$$
As $1+\omega(x)\not=\ 0\ \mod(p)$, the monomial ${u'_1}^{pd_1}v^{pd_1+\omega(x)-p} v$ is not a $p^{th}$-power,
it cannot be spoilt by any translation on $Z'$:  $\omega(x')=1<p\leq \omega(x)$,  {for any $x'$ in this chart on the strict transform of div$(u_1)$}.
The  {last point to look at} is the point  {on the strict transform of div$(u_1)$, not in the $v$-chart}
$$
x'=(Z',u'_1,u'_2,v'):=(Z/u_2,u_1/u_2,u_2,v/u_2), \ E'=\mathrm{div}(u'_1u'_2).
$$
There is an expansion $h'={Z'}^p +\sum_{i=1}^p f_{i,Z'}{Z '}^{p-i}$, with
\begin{equation}\label{eq93}
f_{p,Z'}={u'_1}^{pd_1}{u'_2}^{pd_1+\omega(x)-p}(\gamma {v'}^{1+\omega(x)}u'_2 +\gamma' {u'_1}^{\omega(x)}+u'_1u'_2\psi'),\ \psi'\in S'.
\end{equation}
As we are at the origin of a chart, $(Z',u'_1,u'_2,v)$ are well adapted: $\epsilon(x')\leq \omega(x)$.
As $\omega(x)\geq p$, we keep condition {\bf (E)'} at $x'$ (Proposition \ref{Eprimestable}). We are done unless
$$
\iota (x')=\iota (x), \ \mathrm{ord}_{x'}(u'_1u'_2\psi')\geq \omega(x).
$$
In particular, we have $\mathrm{in}_{m_{S'}}h'={Z'}^p + F_{p,Z'}$.

\noindent $\bullet$ Case ord$_{x'}(u'_1u'_2\psi')= \omega(x)$. Since $\kappa (x')=4$, we have
$$
\mathrm{Vdir}(x')\subseteq <U'_1,U'_2>.
$$
By (\ref{eq93}), we have $<U'_1>\subsetneq \mathrm{Vdir}(x')$, so  $\mathrm{Vdir}(x')= <U'_1,U'_2>$.

Then we blow up along $x'$, the only possible $\omega$-near point (i.e. $m(x')=m(x")=p$, $\omega(x')=\omega(x")$) \index{near @ $\omega$-near} is
$$
x''=(Z'',u''_1,u''_2,v''):=(Z'/v',u'_1/v',u'_2/v',v'), \ E''=\mathrm{div}(u''_1u''_2v'').
$$
There is an expansion
$$f_{p,Z''}={u''_1}^{pd_1}{u''_2}^{pd_1+\omega(x)-p}{v''}^{2(pd_1+\omega(x)-p)}(\gamma {v''}^{2}u''_2 +\gamma' {u''_1}^{\omega(x)}+u''_1u''_2 \psi''),\ \psi''\in S'' $$
and we get $\omega(x'')\leq 3$: we are done for $\omega (x)\geq 4$.

When $\omega(x)=3$, in $J(F_{p,Z''},E'',m_{S''})$, there is an homogeneous polynomial
$$
P:={V''}^{2}U''_2 +U''_1U''_2 (\lambda U''_1+\mu U''_2+\nu V'')+ \delta {U''_1}^3, \ \lambda,\mu,\nu,\delta \in k(x)=k(x'').
$$
Applying the Hasse-Schmidt derivation $ {1\over 2}\times {\partial^2 P\over \partial {V''}^2}=U''_2$ gives  $U''_2\in \mathrm{Vdir}(x'') $.
The reader ends the computation and sees that $\tau'(x'')=3$: $x''$ is is resolved.

When $\omega(x)=2$,  $\psi''$ is invertible, we have $\mathrm{Vdir}(x'')=<U''_1, U''_2>$.
We blow up along $x''$, at the only possible $\omega$-near points, we have, with suitable variables:
$$
f_{p,Z'''}={u'''_1}^{pd_1}{u'''_2}^{pd_1+\omega(x)-p}{v'''}^{3(pd_1+\omega(x)-p)}(\gamma {v'''}u'''_2 +\gamma' {u'''_1}^{\omega(x)}+u'''_1u'''_2 \psi'''). $$
A quick computation shows that $\tau'(x''')=3$, so $x'''$ is resolved.

\smallskip

\noindent $\bullet$ Case    ord$_{x'}(u'_1u'_2\psi')> \omega(x)$. We get  $\mathrm{Vdir}(x')=<U'_1>$.
We may decompose in (\ref{eq93}):
$$
\psi'=\psi'_1+v\psi'_2,\ \psi'_1\in  (u'_1,u'_2)^{\omega(x)-1}, \ \psi'_2 \in  (u'_1,u'_2).
$$
By condition {\bf (E)'}, the line $D':=\mathrm{V}(Z',u'_1,u'_2)$ with generic point $y'$
is Hironaka-permissible, $\epsilon(y')=1$.

\smallskip

Let us blow up along $D'$. Let us begin with the point $x'_2$ at infinity, i.e.
$$
x'_2:=(Z'',u''_1,u''_2,v'')=(Z'/u'_1,u'_1,u'_2/u'_1,v'), \ E''=\mathrm{div}(u''_1u''_2).
$$
We get $H(x'_2)=({u''_1}^{2pd_1+\omega(x)+1-2p}{u''_2}^{pd_1+\omega(x)-p})$ and
$$
H(x'_2)^{-1}f''_{p,Z''}=\gamma{v''}^{1+\omega(x)}u''_2+ \gamma' {u''_1}^{\omega(x)-1}+ {u''_1}^{\omega(x)} \psi'_1+ {u''_2}u''_1v''\psi'_2.
$$
As we are at the origin of a chart, the coordinates $(Z'',u''_1,u''_2,v'')$ are well adapted,
so $\epsilon(x'_2)\leq \omega(x)-1$.

\smallskip

For $x_2\in {\pi '}^{-1}(x')$ in the chart of origin
$$
x'':=(Z'',u''_1,u''_2,v''):=(Z'/u'_2,u'_1/u'_2,u'_2,v'), \ E''=\mathrm{div}(u''_1u''_2),
$$
we get $H(x_2)=({u''_1}^{{pd_1}}{u''_2}^{2pd_1+\omega(x)+1-2p})$ (in particular {\bf (E)'} holds) and
\begin{equation}\label{eq:x_2}
H(x_2)^{-1} {f_{p,Z''}}=\gamma{v''}^{1+\omega(x)}+{u''_2}^{\omega(x)-1} \gamma' {u''_1}^{\omega(x)}+ {u''_2}^{\omega(x)} u''_1\psi'_1+ {u''_2}u''_1v''\psi'_2.
\end{equation}
As $1+\omega(x)\not=0\ \mod(p)$, the monomial $H(x_2){v''}^{1+\omega(x)}$ cannot be spoilt by any translation on $Z''$:
we have $(m(x_2),\omega(x_2))\leq (p,\omega(x))$. Because of the monomial $H(x_2){u''_2}^{\omega(x)-1} {u''_1}^{\omega(x)}$,
we must have $u''_1(x_2)=0$: therefore $x_2=x''$ is the origin of the chart. We have
$$
\min\{\mathrm{ord}_{m_{S''}}({u''_2}^{\omega(x)} u''_1\psi'_1), \mathrm{ord}_{m_{S''}}({u''_2}u''_1v''\psi'_2)\}\geq \omega(x)+1
$$
if $(m(x''),\omega(x''))= (p,\omega(x))$: $x''$ is in case (**) with $\kappa (x'')=3$. This proves (i) and (ii).

\smallskip

Let us prove assertion (iii) which is valid only for the point $x''$ of parameters
$$
(Z'',u''_1,u''_2,v''):=(Z'/v',u'_1/u'_2,u'_2,v')=(Z/u_2^2,u_1/u_2^2,u_2,v/u_2).
$$
In the expansion of $f_{p,Z}$, the monomial $(u_1^{pd_1})\times u_1^au_2^b v^c=H(x)u_1^au_2^b v^c$ becomes
$${u''_2}^{2p}{u''_1}^{pd_1}{u''_2}^{2pd_1+\omega(x)+1-2p}\times {u''_1}^a{u''_2}^{2a+b+c-(\omega(x)+1)}{v''}^{c}.$$
As $ {f_{p,Z''}}={u''_2}^{-2p}f_{p,Z}$, to the monomial $H(x)u_1^au_2^b v^c$ corresponds the monomial $H(x'') {u''_1}^a{u''_2}^{2a+b+c-(\omega(x)+1)}{v''}^{c}$ in the expansion of $ {f_{p,Z''}}$. The point
$$
({a\over 1+\omega(x)-c},{b\over 1+\omega(x)-c})\in \mathrm{pr}(\Delta(h;u_1,u_2,v;Z))
$$
gives the point $({a\over 1+\omega(x)-c},{2a+b\over 1+\omega(x)-c}-1)$ of pr$(\Delta(h;u''_1,u''_2,v'';Z''))$.
For example, the monomial $H(x)\gamma' u_1^{\omega(x)}$ becomes
$$H(x''){u''_1}^{\omega(x)}{u''_2}^{\omega(x)-1}.$$

Choose $(a_0,b_0,c_0)$ such that $({a_0\over 1+\omega(x)-c_0},{b_0\over 1+\omega(x)-c_0}-1)$ is a vertex of of pr$(\Delta(h;u_1,u_2,v;Z))$ with ${2a_0+b_0\over 1+\omega(x)-c_0}$ minimal. Then, because of the monomial $H(x)\gamma'  {u_1}^{\omega(x)}$,
\begin{equation}\label{eq931}
{2a_0+b_0\over 1+\omega(x)-c_0}-1\leq {2\omega(x) \over \omega(x)+1}-1=1-{2 \over \omega(x)+1} ,
\end{equation}
in particular
\begin{equation}\label{eq931bis}
{a_0\over 1+\omega(x)-c_0}\leq {a_0+b_0/2\over 1+\omega(x)-c_0}\leq {\omega(x) \over \omega(x)+1}<1,
\end{equation}
so the point
$$
({a_0\over 1+\omega(x)-c_0},{2a_0+b_0\over 1+\omega(x)-c_0}-1)
$$
has both coordinates $<1$, it is the vertex of $\Delta_2(h'';u''_1,u''_2;v'';Z'')$ of smallest ordinate.

Let us note that if $(a_0,b_0,c_0)\not=(\omega(x),0,0)$, then, as  {Vdir$(x)=<U_1>$}, we have $a_0+b_0\geq 1+\omega(x)-c_0$, so ${2a_0+b_0\over 1+\omega(x)-c_0}-1\geq {a_0\over 1+\omega(x)-c_0}>0$,   {\eqref{eq:x_2} gives the last inequality}. When $(a_0,b_0,c_0)=(\omega(x),0,0)$, we get
$$
{2a_0+b_0\over 1+\omega(x)-c_0}-1={2\omega(x)\over 1+\omega(x)}-1={\omega(x)-1\over 1+\omega(x)}\geq {p-1\over 1+\omega(x)}>0,
$$
\begin{equation}\label{eq932}
 {f_{p,Z''}}= H(x'')( \gamma {v''}^{1+\omega(x)}+u''_1u''_2 \vartheta),\ \vartheta \in S''.
\end{equation}

As we saw above,  $\epsilon(x'')=\omega(x'')+1$, $\kappa(x'')=3$ and we have (**).
Then $({a_0\over 1+\omega(x)-c_0},{2a_0+b_0\over 1+\omega(x)-c_0}-1)$ is  the vertex of $\Delta_2(h'';u''_1,u''_2;v'';Z'')$
of smallest ordinate, both coordinates are $<1$ and positive. As $x'$ and $x''$ are origins of chart,
$(Z'', u''_1,u''_2,v'')$ are well prepared and no translation on $v''$ can spoil this vertex. By (\ref{eq931})(\ref{eq931bis}), we get:
$$C(x'')\leq {a_0\over 1+\omega(x)-c_0}-A_1(x'')<1-{1 \over 1+\omega(x)} ,$$
$$0<A_2(x'')={2a_0+b_0\over 1+\omega(x)-c_0}-1<1,$$
$$
A_1(x'')\leq {a_0\over 1+\omega(x)-c_0}\leq {2a_0+b_0\over 1+\omega(x)-c_0}-1<1.
$$
Note that $A_1(x'')>0$ because of (\ref{eq932}). This proves (iii).

Let us prove (2). Since $\epsilon (y)>0$, we have $A_1(x)>0$ and $(d_1,d_2,{1+\omega(x)\over p})$
is the only vertex of $\Delta_S(h;u_1,u_2,v;Z)$ in the region $x_1=d_1$, $U_1\in \mathrm{Vdir}(x)$.
If $\mathrm{Vdir}(x)=<U_1,U_2>$, then, if we blow up along $x$, as $\omega(x)\geq p\geq 2$, there is no $\omega$-near point.
The only case we have to look at is $\mathrm{Vdir}(x)=<U_1>$.

As $\omega(x)\geq p$, by condition  {\bf (E)'} at $x$: $pd_1\geq p$, ${\cal Y}$ is Hironaka-permissible.
Let us denote by $d:=\epsilon(y)\geq 2$. Then $\gamma v^{1+\omega(x)} + g\in (v,u_1)^d$
with $g=\gamma' u_1^{\omega(x)}+ u_1\phi$, $\phi \in m_S^{\omega(x)}\cap (v,u_1)^{d-1}$, $\gamma'$ invertible.
Up to change $\gamma'$ modulo $m$, there is a decomposition:
$\phi=v\phi_1+u_2\phi_2$, $\phi_1\in (u_1,v)^{\omega(x)-1}$, $\phi_2\in (u_1,v)^{d-1}$.
$$f_{p,Z}={u_1}^{pd_1}{u_2}^{pd_2}(\gamma {v}^{1+\omega(x)}+\gamma' u_1^{\omega(x)}+u_1v\phi_1+u_1u_2\phi_2).$$
Let us blow up along ${\cal Y}$.
In the first chart of origin
$$
(Z',u'_1,u'_2,v'):=(Z/u_1,u_1,u_2,v/u_1),
$$
we get
$$  f_{p,Z'}={u'_1}^{pd_1+d-p}{u'_2}^{pd_2}(\gamma {v'}^{1+\omega(x)}{u'_1}^{\omega(x)+1-d}+\gamma' {u'_1}^{\omega(x)-d}+{u'_1}^{\omega(x)-d+1}\phi'_1+u'_2\phi'_2),$$
$\phi'_1,\phi'_2 \in S'$. Because of the monomial
$${u'_1}^{pd_1+d-p}{u'_2}^{pd_2}\gamma' {u'_1}^{\omega(x)-d}=H(x')\gamma' {u'_1}^{\omega(x)-d},$$
we get $\omega(x_1)\leq  \omega(x)-d < \omega(x)-1$ for any $x_1$ in this chart.

Let us see the point at infinity  $x'=(Z',u'_1,u'_2,v'):=(Z/v,u_1/v,u_2,v)$, we get
$$
f_{p,Z'}={u'_1}^{pd_1}{u'_2}^{pd_2}{v'}^{pd_1+d-p}(\gamma v^{\omega(x)+1-d}+ {u'_1}\phi'), \  {\phi' \in S'}.
$$
As we are at the origin of a chart, $(Z',u'_1,u'_2,v)$ are well adapted:
$\epsilon(x')\leq \omega(x)+1-d\leq \omega(x)-1$.
\end{proof}

\begin{prop}\label{betapetitto3*}
Assume that $x$ satisfies conditions (**) and {\bf (E)'}
with $\kappa(x)=4$, $E=\div(u_1)$ and let $(Z,u_1,u_2,v)$ be  totally prepared.
With the notations of Proposition \ref{redto3**casii}, assume furthermore that
$$
\epsilon(y)= 1\ \mathrm{ and}\  \beta(Z,u_1,u_2,v) <  1.
$$
Then  $x$ is resolved.

\end{prop}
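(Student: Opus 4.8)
The plan is to reduce, by a bounded number of local blowing ups, to situations already settled: the case $\omega(x)<p$ (corollary \ref{omega(x)=epsilon(x)<p}), the two‑step reduction of proposition \ref{redto3**casii}, and the case $\kappa=3$ with secondary invariant $\gamma=1$ (proposition \ref{**gammaegal1} below, resp.\ proposition \ref{kappa3instable} when $B=1$).

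First I would dispose of $\omega(x)<p$: since $\kappa(x)=4$, corollary \ref{omega(x)=epsilon(x)<p} gives that $x$ is resolved. So assume $\omega(x)\geq p$. Then {\bf (E)'} forces $d_1\geq 1$, and since $\kappa(x)=4$, $E=\div(u_1)$, total preparedness gives $\mathrm{Vdir}(x)=<U_1>$ together with a writing
\[
f_{p,Z}=u_1^{pd_1}\bigl(\gamma v^{1+\omega(x)}+\gamma' u_1^{\omega(x)}+u_1\phi\bigr),\qquad \phi\in m_S^{\omega(x)},
\]
with $\gamma,\gamma'$ units, exactly as in proposition \ref{redto3**casii}(3). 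The hypothesis $\epsilon(y)=1$ for the generic point $y$ of ${\cal Y}=V(Z,u_1,v)$ translates into $\mathrm{ord}_{(u_1,v)S_{(u_1,v)}}\bigl(\gamma v^{1+\omega(x)}+\gamma' u_1^{\omega(x)}+u_1\phi\bigr)=1$, i.e.\ $\phi$ is a unit modulo $(u_1,v)$; then ${\cal Y}$ is Hironaka‑permissible w.r.t.\ $E$ (by {\bf (E)'}, since $\delta(y)\geq 1$ gives $m(y)=p$ and $\eta({\cal Y})\subseteq \div(u_1)\subseteq E$), but, as $\epsilon(y)=1<\omega(x)\leq\epsilon(x)$, it is not permissible of the first or second kind. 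The hypothesis $\beta(Z,u_1,u_2,v)<1$ records that the leftmost vertex of the projected polygon $\Delta_2(h;u_1,u_2;v;Z)$ has ordinate $<1$; concretely, the order‑one part of $\phi$ modulo $(u_1,v)$ involves $u_2$ to an exponent $<\omega(x)$.

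Next I would run the quadratic sequence along $\mu$ exactly as in the proof of proposition \ref{redto3**casii}(3). Blowing up $x$, the direct chart computation there confines the only $\omega$‑near point $x'$ to the strict transform of $\div(u_1)$, i.e.\ to the origin of the chart $(Z/u_2,u_1/u_2,u_2,v/u_2)$ with $E'=\div(u_1'u_2')$; the two sub‑cases are either ``$x'$ resolved'' (the $\tau'(x')=3$, resp.\ high‑order, branches) or the appearance of a Hironaka‑permissible line $D'=V(Z',u_1',u_2')$ with $\epsilon(y')=1$, whose existence is guaranteed by {\bf (E)'}. The role of $\beta<1$ is to keep $\epsilon(y')=1$ (so that proposition \ref{redto3**casii}(3) applies verbatim rather than proposition \ref{redto3**casii}(2)) and to force the distinguished exceptional point $x''=(Z/u_2^2,u_1/u_2^2,u_2,v/u_2)$ produced by blowing up $D'$ to satisfy---besides {\bf (E)'} (proposition \ref{Eprimestable}), $\omega(x'')=\omega(x)$, condition (**), $E''=\div(u_1''u_2'')$ and $\kappa(x'')=3$, as in proposition \ref{redto3**casii}(3)(i)(ii)---the bounds $C(x'')<1-\tfrac1{1+\omega(x)}$ and $0<A_1(x''),A_2(x'')<1$, whence $\gamma(x'')=1+\lfloor C(x'')\rfloor=1$. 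All remaining exceptional points of these two blowing ups have $\epsilon\leq\omega(x)-1$, hence are resolved. By proposition \ref{kappa3instable} when $B(x'')=1$, and by proposition \ref{**gammaegal1} below otherwise, $x''$ is resolved; and whenever the process instead produces an infinite chain of near points one invokes proposition \ref{permisarc} to make the relevant curve permissible of the first kind and blows it up (using $\mathrm{Vdir}=<U_1>$ and theorem \ref{bupthm}). Hence $x$ is resolved.

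The main obstacle is the combinatorial bookkeeping of the two blowing ups: one must track the projected polygon $\Delta_2$ and its leftmost vertex through the charts and through the possibly inseparable residue extensions, and verify that $\beta<1$ is precisely the condition preventing $\epsilon(y')$ from exceeding $1$ and keeping $C(x'')<1$ (so that we fall into cases already handled rather than into the large‑$\gamma$ situations treated in \ref{**versgammaegal1}). A secondary subtlety is that the center $D'$ blown up at the second step is only Hironaka‑permissible---not permissible of the first or second kind---so theorem \ref{bupthm} and proposition \ref{bupformula} cannot be quoted and the blow‑up transformation must be recomputed by hand as in proposition \ref{redto3**casii}(3); this is legitimate because {\bf (E)'} is preserved (proposition \ref{Eprimestable}) and $(m,\omega)$ does not increase along the sequence.
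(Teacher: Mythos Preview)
Your overall strategy---reduce via the two blowing ups of proposition \ref{redto3**casii}(3) and then invoke the small-$\gamma$ machinery---is different from the paper's, and it has a genuine circularity problem. In the paper, proposition \ref{**gammaegal1} (case $E=\div(u_1)$, $\kappa=4$ under (\ref{eq980})) explicitly calls on proposition \ref{betapetitto3*}: this is precisely the statement you are trying to prove. Starting from your $x''$ with $\kappa(x'')=3$, $E''=\div(u''_1u''_2)$, the algorithm of \ref{**gammaegal1} may well produce, after blowing up a closed point, a new $x'''$ with $E'''=\div(u_1)$ and $\kappa(x''')=4$, landing you right back in the hypothesis of \ref{betapetitto3*} with the same $\omega$. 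You give no auxiliary invariant that strictly decreases along this loop, so the argument does not terminate.

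There is also a smaller gap: proposition \ref{**gammaegal1} does not apply directly to $x''$. Case (ii) requires $C(x'')<1/2$, whereas \ref{redto3**casii}(3)(iii) only gives $C(x'')<1-\tfrac{1}{1+\omega(x)}$, and case (iii) requires $C(x'')=0$. You would need \ref{**versgammaegal1} first, but that does not help with the circularity above. Finally, your reading of the hypothesis $\beta<1$ is off: it does not say the $u_2$-exponent is $<\omega(x)$; combined with $\phi\in m_S^{\omega(x)}$ it pins that exponent to \emph{exactly} $\omega(x)$.

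The paper's proof avoids all of this by blowing up along ${\cal Y}=V(Z,u_1,v)$ itself (Hironaka-permissible by {\bf (E)'}, even though not permissible of the first or second kind). The hypotheses $\epsilon(y)=1$ and $\beta<1$ together force the single relevant non-(**) vertex of $\Delta_S$ to be $(d_1+1/p,\,\omega(x)/p,\,0)$; at the unique $\omega$-near exceptional point $x'=(Z/v,u_1/v,u_2,v)$ one then has $(\phi')\equiv(u_2^{\omega(x)})\ \mathrm{mod}(v')$, so lemma \ref{sortiemonome} applies in the well-adapted coordinates $(v',u'_1,u_2;Z')$ and finishes the proof in one step, with no forward reference.
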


\begin{proof}
By Proposition \ref{redto3**casii}(1), we may assume $\omega(x)\geq p$. As $A_1(x)>0$ by condition (**),
$\epsilon(y)= 1$ implies that $\Delta_S(h;u_1,u_2,v)$ has a vertex
$$
\mathbf{x}=({d_1 +1\over p}, {b \over p}, 0), \ b\in \N.
$$
This leads to
$$
A_1(x)= {1 \over 1+\omega(x)}, \ \beta (x)={ b \over 1+\omega(x)}.
$$
On the other hand, since $\kappa (x)=4$, we have $b\geq \omega (x)$, i.e. $b=\omega (x)$.

Let us come back to the proof of Proposition \ref{redto3**casii}(2). The only point to consider is the
point $x'$ at infinity, $E'=\mathrm{div}(u'_1v)$. We get an expansion
\begin{equation}\label{eq941}
f_{p,Z'}={u'_1}^{pd_1}{v'}^{pd_1+1-p}(\gamma v^{\omega(x)}+ {u'_1}\phi'), \ (\phi ')\equiv (u_2^{\omega (x)}) \ \mathrm{mod}(v').
\end{equation}
The conclusion follows from Lemma \ref{sortiemonome} applied to the well prepared coordinates
$(v',u'_1,u_2;Z')$.
\end{proof}

The following proposition produces bounds identical to those occurring for
embedded resolution of surfaces \cite{Co3}.

\begin{prop}\label{**gamma}
Assume  that $x$ satisfies conditions (**) and {\bf (E)'} with $\kappa(x)\geq 3$.
Consider Hironaka-permissible blowing ups $\pi : \ {\cal X}'\rightarrow ({\cal X},x)$
of the following kinds:

\smallskip

Case 1: $E=\div(u_1u_2)$ and $\omega(x)\geq p$; we blow-up along $D:=$V$(Z,u_1,u_2)$.

\smallskip

Case 2: $\kappa(x)=3$,  {$E=\div(u_1)$ or $\omega(x)< p$} ; we blow up along $x$.

\smallskip

\noindent Let $x'\in \pi^{-1}(x)$ with $(m(x'),\omega (x'))\geq (p,\omega (x))$. Then $\omega(x')\leq \omega(x)$ and
($x'$ is resolved or the following holds):

\smallskip

\noindent  (i) conditions (**) and  \textbf{(E)'} are satisfied at $x'$ and we have
$$
\gamma (x')\leq \max\{\gamma (x),1\}.
$$

\smallskip

\noindent (ii) if $E=\div(u_1u_2)$ and  $\eta '(x')\in \mathrm{Spec}S[u'_2]$
(resp. $\eta '(x')\in \mathrm{Spec}S[u'_2,v']$), where
$$
(u_1,u'_2:= {u_2 \over u_1},v) \ (\mathrm{resp.} \ (u_1,u'_2,v':={v \over u_1}))
$$
in case 1 (resp. case 2), then  $A_1(x')=B(x)$, (resp.  $A_1(x')=B(x)-1$) and,
$$
\beta(x')\leq A_2(x)+C(x)\leq \beta(x);
$$

\noindent if ($k(x')\neq k(x)$ and $\beta(x)\geq 1$), we have $\beta(x')<\beta(x)$;

\smallskip

\noindent if  $u'_2\in m_{S'}$, then $C(x')\leq \min\{C(x), \beta (x)-C(x)\}$, so $C(x')\leq {\beta(x)\over 2}$;

\smallskip

\noindent if  $u'_2\not \in m_{S'}$,  then  $\beta(x')< 1+ \lfloor C(x)\rfloor$;

\smallskip

\noindent (iii) if $x'$ is the origin of the second chart, i.e.
$$
x'=(Z':={Z \over u_2} ,u'_1= {u_1 \over u_2},u_2,v)  \ (\mathrm{resp.} \ (Z', u'_1,u_2,{v\over u_2}))
$$
in case 1 (resp. case 2), then  $A_1(x)=A_1(x')$, $C(x')\leq {\beta(x)\over 2}$  and
$$
\beta(x')=A_1(x) +\beta(x) \ (\mathrm{resp.} \ \beta(x')=A_1(x) +\beta(x)-1);
$$

\smallskip

\noindent (iv) if $E=\div(u_1)$, $E'=\div(u'_1)$ and $\beta(x) >0$, then
$\beta(x')\leq \beta(x)$, with strict inequality if ($k(x')\neq k(x)$ and $\beta (x)\geq 1$).

\end{prop}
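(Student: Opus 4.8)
The plan is to reduce the whole statement to explicit polyhedral computations carried out in totally prepared coordinates, following the scheme of lemmas \ref{gamma2*12}, \ref{gamma2*3} and \ref{gamma2*infty} and of the surface case \cite{Co3}\cite{CoP2}. By proposition \ref{kappa3prepatot} we fix well adapted coordinates $(Z,u_1,u_2,v)$ which are totally prepared (choosing $u_1$ with $d_1>0$ when $E=\mathrm{div}(u_1u_2)$), and we may assume $\Delta_2(h;u_1,u_2;v;Z)\neq \emptyset$, since otherwise $x$ is already resolved for $m(x)=p$ by that proposition. The blow up formula is furnished by proposition \ref{originchart}: for each $x'\in \pi^{-1}(x)$ the transformed polyhedron $\Delta_{S'}(h';u'_1,u'_2,v';Z')$ is the image of $\Delta_S(h;u_1,u_2,v;Z)$ under the affine map $l$ of (\ref{eq2042}) and is again minimal; combined with proposition \ref{bupformula} this also controls $H(x')$ and the initial forms $\mathrm{in}_{W'}h'$. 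Persistence of condition {\bf (E)'} is then proposition \ref{Eprimestable} for the point-blowing ups of case 2, while for the permissible curve $D$ of case 1 it follows from $\mathrm{ord}_uH(x')\geq p$, which holds because $\epsilon (y)\geq \omega (x)\geq p$ by {\bf (E)'}; the inequality $\omega (x')\leq \omega (x)$ likewise follows from proposition \ref{originchart} in case 1 and from theorem \ref{bupthm} together with the monic-expansion structure in case 2, so that $(m(x'),\omega (x'))\geq (p,\omega (x))$ forces $m(x')=p$, $\omega (x')=\omega (x)$.

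The core of the argument is a case analysis over $x'\in \pi^{-1}(x)$. In case 1, blowing up along $D=(Z,u_1,u_2)$ produces the point of the chart $\mathrm{Spec}S[u_2/u_1]$ with $u_1$ retained, the point of the chart with $u_2$ retained, and the origin $x'=(Z/u_2,u_1/u_2,u_2,v)$ of the second chart; for each of these one applies $l$, projects by $\mathrm{pr}$, re-prepares via proposition \ref{kappa3prepatot} whenever a vertex becomes unprepared, and reads off $A_1(x'),A_2(x'),B(x'),C(x'),\beta (x')$. The equalities $A_1(x')=B(x)$ (resp. $B(x)-1$) and the bounds $\beta (x')\leq A_2(x)+C(x)\leq \beta (x)$, $C(x')\leq \beta (x)/2$, $\beta (x')<1+\lfloor C(x)\rfloor$ of (ii)--(iii) are then forced by the convexity of $\Delta_2$ exactly as in the surface computations; the strict inequalities when $k(x')\neq k(x)$, or when $x'$ is not the origin of a chart, use lemma \ref{lem532}(2) applied to the leading homogeneous form of $F_{p,Z,\alpha}$ in the relevant monomial valuation, together with the sharpening of the remark preceding lemma \ref{gamma2*3}. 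In case 2 ($\kappa (x)=3$, blow up along $x$) one first invokes proposition \ref{kappa3instable}(i) to get $V\in \mathrm{Vdir}(x)$; if $B(x)=1$ the point $x'$ is disposed of by proposition \ref{kappa3instable}(ii)(iii) (either $x'$ is resolved, or $E'=\mathrm{div}(u'_1u'_2)$ and we are reduced to case 1); if $B(x)>1$ the stereographic projection from the distinguished vertex $\mathbf{v}$ of condition (**) behaves as in the treatment of $\kappa (x)=1$, and the bound (iv) together with $\gamma (x')\leq \max\{\gamma (x),1\}$ drops out of the same combinatorics.

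Persistence of condition (**) is extracted from theorem \ref{initform} and proposition \ref{originchart}: a monic expansion $\mathrm{in}_{m_S}h=Z^p+\lambda U_1^{pd_1}U_2^{pd_2}V^{1+\omega (x)}+\cdots$ transforms, after clearing the appropriate power of the exceptional variable and possibly re-minimizing $Z$, into an equation again of this shape with $1+\omega (x)\not\equiv 0\ \mathrm{mod}p$ still valid; whenever this fails, $x'$ falls under lemma \ref{sortiemonome}, lemma \ref{sortieomegaun}, lemma \ref{sortiekappaegaldeuxbis} or proposition \ref{sortiebiskappaegaltrois} and is therefore resolved, or else $\tau '(x')=3$ and theorem \ref{bupthm} applies. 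The residual special points (those lying on the strict transform of a distinguished exceptional component, or with $\epsilon (y)=1$ and $\beta$ small) are handled by proposition \ref{redto3**casii} and proposition \ref{betapetitto3*}.

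The main obstacle will be the precise bookkeeping of $\beta (x')$ and $\gamma (x')$ in the inseparable cases $k(x')\neq k(x)$ and in residue characteristic $p=2$: this is exactly the situation controlled by the truncation operator and by the ``$d=1$ if equality holds'' clause of lemma \ref{lem532}(2), and is where the analogous computations of \cite{CoP2} chapter 3 contained the errors noted before lemma \ref{gamma2*3}, which must not be repeated. A secondary difficulty is ensuring compatibility of the re-preparation step of proposition \ref{kappa3prepatot} with the blow up, i.e. that re-preparing one vertex of $\Delta_2(x')$ does not spoil the inequalities already obtained for the extremal vertices; as in the proof of proposition \ref{kappa3prepatot} this is settled by observing that re-preparation raises the ordinate of a single non-extremal vertex while leaving the vertices of smallest abscissa and smallest ordinate prepared.
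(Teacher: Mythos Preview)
Your plan is essentially the paper's own approach: work in totally prepared coordinates, track the explicit monomial transformation under proposition~\ref{originchart}, and extract the bounds on $A_1,A_2,B,C,\beta,\gamma$ from the projected polygon using the surface lemma of \cite{Co3} (equivalently lemma~\ref{lem532}). Two small points deserve correction or emphasis.

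First, your justification of {\bf (E)'} in case 1 via ``$\epsilon(y)\geq \omega(x)\geq p$'' is not what is needed and is not obviously true for the generic point $y$ of $D$. The paper simply observes that {\bf (E)'} at $x$ gives $d_1,d_2\geq 1$ when $\omega(x)\geq p$, whence $d'_1=d_1+d_2-1\geq 1$ at the origin of the first chart (and similarly for $d'_2$ in the second chart); this is the direct argument you should use.

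Second, and more substantively, the technical heart of the paper's proof is a dichotomy you do not single out: one passes to the initial form $\mathrm{in}_{v_\alpha}h=Z^p-G_\alpha^{p-1}Z+F_{p,Z,\alpha}$ for the monomial valuation $v_\alpha$ defining $B(x)$, and then splits according to whether
\[
U_1^{-pd_1}U_2^{-pd_2}\,\frac{\partial F_{p,Z,\alpha}}{\partial V}\in \langle V^{\omega(x)}\rangle
\]
or not. In the first subcase (the derivative is not a pure $V^{\omega(x)}$), the transform of $\partial F_{p,Z,\alpha}/\partial V$ is controlled directly by \cite{Co3} lemma~6.2.3 and yields (\ref{eq951}), i.e.\ $\beta(x')<1+\lfloor C(x)/d\rfloor$ and $\beta(x')\leq \beta_2(x)\leq \beta(x)$. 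In the second subcase one must instead work with $F_{p,Z,\alpha}$ itself, select the largest index $i_0$ for which the relevant coefficient is not a $p^{\mathrm{th}}$ power (this exists precisely by total preparation), and then apply the blowing-up formula to that term to obtain (\ref{eq954}). Your sketch folds both into ``applying lemma~\ref{lem532}(2) to the leading homogeneous form'', which is the right tool but hides exactly the step where the earlier \cite{CoP2} errors you mention occurred; you should make this subcase split explicit. With these two adjustments your outline matches the paper's proof.
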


\begin{proof}

We first prove the proposition in case 1.  Let $x'$ be in the  chart with origin
$(X':={Z \over u_1} ,u_1,u'_2,v)$.
In the expansion of $f_{p,Z}$ the monomial $u_1^{pd_1}u_2^{pd_2}v^{1+\omega(x)-i}u_1^au_2^b$ transforms into  $u_1^{pd_1+pd_2-p}{u_2'}^{pd_2}v^{1+\omega(x)-i}u_1^{a+b}{u_2'}^b$ in the expansion of $f_{p,Z'}$,
$0\leq i \leq 1+\omega(x)-i$.  This leads to:
$$
f_{p,Z'}=u_1^{pd'_1}{u_2'}^{pd_2}(\gamma v^{1+\omega(x)}+u_1 \phi), \ d'_1:=d_1+d_2-1.
$$
As $1+\omega(x)\not\equiv 0\ \mod \ p$, the monomial $u_1^{pd'_1}{u_2'}^{pd_2}\gamma v^{1+\omega(x)}$ will not be spoilt by any translation on $Z'$: $x'$ satisfies (**) and $(m(x'),\omega (x'))\leq (p,\omega (x))$. If $\omega (x)\geq p$, we
have  $d_1,d_2\geq 1$, so $x'$ satisfies condition \textbf{(E)'}. Statement $\gamma (x')\leq \gamma (x)$ follows from
(ii)  {that we prove in the next lines}.

\smallskip

The monomials defining $B(x)$ in the expansion of $f_{p,Z}$ are minimal for the monomial valuation $v_\alpha$ defined by
the weight vector $\alpha :=(a,a,aB(x))$:
$$v_\alpha(Z)=1,\ v_\alpha(u_1)=v_\alpha(u_2)=a,\ v_\alpha(v)=a B(x),
$$
with
$$a:={p \over pd_1+pd_2+B(x)(1+\omega(x))}. $$
Let us denote by
$$
\clin_{v_\alpha}h=Z^p-G_\alpha^{p-1}Z+F_{p,Z,\alpha}\in \mathrm{gr}_{\alpha} S[Z]$$
At $x'$, there exists $P(t)\in S[t]$, unitary of degree $d:=[k(x'):k(x)]$, whose reduction modulo $m_S$ is irreducible
and $w:=P(u'_2)$ is such that $(X',u_1,w,v)$ is a system of coordinates  at $x'$.

Of course, we take $w=u'_2$ when $x'$ is the origin of the chart. In this special case where
$x'$ is the origin,  {the argument is the same as in \cite{CoP2} Lemma {\bf I.5.3} on page 1966. This relies on the characteristic free Proposition \ref{originchart} which asserts that no changes in $Z'$ need to be performed in order to get well adapted data:  $(X',u_1,w,v)$ is totally prepared.
It is easy to see that $\Delta_2(x')$ is obtained from $\Delta_2(x)$ by applying the affine transformation: $(v_1,v_2)\mapsto (v_1,v_1+v_2)$  when $D$ is the center of the blowing up (resp. $(v_1,v_2)\mapsto (v_1,v_1+v_2-1)$ when $x$ is the center of the blowing up) and adding quadrants.
The reader verifies that all the statements of
(ii) are true, despite of the fact that in case where $D$ is the center of the blowing up, there is not the usual shift by $-1$. }

\smallskip

From now on, $E'=\mathrm{div}(u_1)$. Monomials defining $B(x)$ become the monomials defining $A_1(x')=B(x)$.
The monomials defining the vertices of smaller abscissa of $\Delta_2(h';u'_1,w, {v};X')$ are those
minimal for the valuation $v_{\alpha '}$ given by
$$      v_{\alpha '}(X')=1,\ v_{\alpha '}(u_1)=a,\ v_{\alpha '}(w)=0, \ v_{\alpha '}(v)=a B(x).
$$

 {We set}
$$
\clin_{v_{\alpha '}}h={X'}^p-{G}_{\alpha '}^{p-1}{X'}+F_{p,X',{\alpha '}}
\in \mathrm{gr}_{\alpha '}S=k(x)[\overline{u}'_2]_{(\overline{w})}[U_1,V,X'].
$$
When $G_{\alpha '}\not=0$, we have $A_1(x')=B(x)$, $\beta(x')=0$, so (ii) holds.
Assume now that $G_{\alpha '}\neq 0$.

\smallskip

\noindent {\it Subcase 1.1:} when
$$
U_1^{-pd_1}U_2^{-pd_2}{\partial F_{p,Z,\alpha}\over \partial V}\not \in <V^{\omega(x)}>.
$$
We expand
\begin{equation}\label{eq950}
    U_1^{-pd_1}U_2^{-pd_2}{\partial F_{p,Z,\alpha}\over \partial V}=
\lambda V^{\omega(x)}+\sum_{1\leq i \leq \omega(x)} V^{\omega(x)-i}U_1^{a_1(i)}U_2^{a_2(i)}Q_i(U_1,U_2),
\end{equation}
with $\lambda \neq 0$, $Q_i=0$ or $Q_i$ divisible neither by $U_1$, nor by $U_2$. For $Q_i\not=0$:
$$
a_j(i)\geq iA_j(x), \  \mathrm{deg}(Q_i)\leq iC(x).
$$

By Proposition  {\ref{bupformula}(v), $H(x')^{-1}{\partial F_{p,X',\alpha '}\over \partial V}$ is
the strict transform of $H(x)^{-1}{\partial F_{p,Z,\alpha}\over \partial V}$ (with some abuse of notation for $H(x),H(x')$).}
Then, by \cite{Co3} Lemma~6.2.3~a and page 92, the lowest abscissa of the vertices of the polygon
$$
  {\Delta(H(x')^{-1}{\partial F_{p,X',\alpha '}\over \partial V};U_1,\overline{w};V)}
$$
is $B(x)$. The non compact face of lowest abscissa is not solvable and, after a possible  translation:
$$
Z'= X'+\phi ', \ \phi '\in {U'_1}^{\lceil B(x)\rceil} k(x)[\overline{u}'_2]_{(\overline{w})}[V],
$$
the ordinate $\beta'$ of the vertex of lowest abscissa of
$$
  {\Delta(H(x')^{-1}{\partial F_{p,Z',\alpha '}\over \partial V};U_1,\overline{w};V)}
$$
satisfies
$$
\beta '<1 + \lfloor {C(x)\over d} \rfloor, \ \beta'\leq \beta_2(x),
$$
where $\beta_2(x)$ is the ordinate of the left vertex of the initial face  of the polygon
$  {\Delta(H(x')^{-1}{\partial F_{p,Z,\alpha}\over \partial V};U_1,U_2;V)}$. Then we have
\begin{equation}\label{eq951}
\beta(x')\leq \beta' <1+\lfloor {C(x)\over d} \rfloor ,\ \beta(x')\leq \beta' \leq \beta_2(x)\leq \beta(x).
\end{equation}
This implies all the assertions in subcase~1-1.

\smallskip

\noindent {\it Subcase 1.2:} when
$$
U_1^{-pd_1}U_2^{-pd_2}{\partial F_{p,Z,\alpha}\over \partial V}\in <V^{\omega(x)}>.
$$
We now have an expansion
\begin{equation}\label{eq952}
    U_1^{-pd_1}U_2^{-pd_2}F_{p,Z,\alpha}=
\lambda V^{1+\omega(x)}+\sum_{i=0}^{\lfloor {1+\omega (x)\over p}\rfloor} V^{pi}U_1^{a_1(i)}U_2^{a_2(i)}Q_i(U_1,U_2),
\end{equation}
with $\lambda \neq 0$, $Q_i=0$ or $Q_i$ divisible neither by $U_1$, nor by $U_2$. For $Q_i\not=0$:
$$
a_j(i)\geq (1+\omega (x)-pi)A_j(x), \  \mathrm{deg}(Q_i)\leq (1+\omega (x)-pi)C(x).
$$
Take $i_0$, $1\leq i_0 < (1+\omega(x))/p$ maximal such that $U_1^{pd_1+a_1(i_0)}U_2^{pd_2+a_2(i_0)}Q_{i_0}$
is not a $p^{th}$-power. This $i_0$ exists by total preparation.
By (\ref{eq952}), the transform of ${\partial F_{p,Z,\alpha}\over \partial V}$ now reads
\begin{equation}\label{eq953}
{U_1}^{-pd'_1}{\partial F_{p,X',\alpha '}\over \partial V}
= \lambda '{V}^{\omega(x)}, \ \lambda '\ \mathrm{a} \ \mathrm{unit}.
\end{equation}
Preparation along the face of abscissa $B(x)$ will thus be a translation $Z'=X'+\phi '$ on $X'$,
no translation on $v$: this will just add a $p^{th}$-power to the term
${U_1}^{pd'_1+(1+\omega (x)-pi_0)B(x)}{\overline{u}'_2}^{pd_2}Q_{i_0}(1,\overline{u}'_2)$
in (\ref{eq952}), which will become of the form
$$
\overline{\gamma} '{U_1}^{pd'_1+(1+\omega (x)-pi_0)B(x)}\overline{w}^c,
\ \overline{\gamma} ' \in k(x)[\overline{u}'_2]_{(\overline{w})}, \ \overline{\gamma} ' \ \mathrm{invertible}.
$$

By the usual computations (\cite{Co3}  page 92 or the blowing up formula applied to
$ U_1^{pd_1+a_1(i_0)}U_2^{pd_2+a_2(i_0)}Q_{i_0}(U_1,U_2)$), we have
\begin{equation}\label{eq954}
c\leq 1+{\mathrm{deg}(Q_{i_0})\over d};\ \hbox{when }d_2=0,\ c\leq a_2(i_0)+\mathrm{deg}(Q_{i_0})\leq \beta_2(x)\leq \beta(x).
\end{equation}
This implies all the assertions in subcase~1-2, $x'$ not the origin and (ii) is proved.
Permuting $u_1$ and $u_2$ gives (iii).

\smallskip

\noindent We now turn to case 2. Let $x'$ be in the chart of origin $(X':={Z \over u_1} ,u_1,u'_2,v')$.
By Proposition \ref{kappa3instable}(ii), we may assume that $B(x)>1$, i.e. $<V>= \Vdir(x)$, so $v'\in m_{S'}$.
In the expansion of $f_{p,Z}$ the monomial
$$
u_1^{pd_1}u_2^{pd_2}v^{1+\omega(x)-i}u_1^au_2^b, \ 0\leq i \leq 1+\omega(x)-i
$$
becomes  $u_1^{pd_1+pd_2+1+\omega(x)-p}{u_2'}^{pd_2}v^{1+\omega(x)-i}u_1^{a+b}{u_2'}^b$ in the expansion of $f_{p,Z'}$.  This leads to:
$$
f_{p,Z'}=u_1^{pd'_1}{u_2'}^{pd_2}(\gamma v^{1+\omega(x)}+u_1 \phi), \ d'_1:=d_1+d_2+{1+\omega (x) \over p} -1.
$$
Then $x'$ is resolved or $x'$ satisfies conditions (**) and  \textbf{(E)'} as in case 1.
Then the proof runs along the same lines as above: equations (\ref{eq951}) and (\ref{eq954}) remain true.

\smallskip
The case where $x'$ is the origin of the second chart is given by a permutation of $u_1$ and $u_2$
in the computations above and the fact that the vertices of
$\Delta_2(h';u_1/u_2,u_2;v/u_2;Z/u_2)$ are the transforms of the \textbf{left vertices} of
$\Delta_2(h;u_1,u_2;v;Z)$ by the affinity $(x_1,x_2)\mapsto (x_1,x_1+x_2-1)$: they are totally prepared.
\end{proof}

\begin{prop}\label{**versgammaegal1}
Assume that $x$ satisfies conditions (**) and {\bf (E)'}.  Let $\mu$ be a valuation of $L=k({\cal X})$ centered at $x$.
There exists a finite and independent composition of local Hironaka-permissible blowing ups w.r.t. $E$:
\begin{equation}\label{eq970}
    ({\cal X},x)=:({\cal X}_0,x_0) \leftarrow ({\cal X}_1,x_1) \leftarrow \cdots \leftarrow ({\cal X}_r,x_r) ,
\end{equation}
where $x_i \in {\cal X}_i$ is the center of $\mu$, such that $x_r$ is resolved or ($x_r$ satisfies
again conditions (**) and {\bf (E)'} together with one of the following):
\begin{itemize}
  \item [(i)] $E_r=\div(u_{1,r})$, $\beta(x_r)<1$;
  \item [(ii)] $E_r=\div(u_{1,r}u_{2,r})$,  $C(x_r)=0$.
\end{itemize}
\end{prop}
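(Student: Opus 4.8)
The plan is to perform a descending induction on the secondary invariant $\gamma(x)$, using the transformation estimates of proposition~\ref{**gamma} as the engine, in the spirit of the proof of proposition~\ref{kappa2fin0} for $\kappa(x)=2$; the key simplification is that for monic expansions with $\kappa(x)\geq 3$ the invariant $\gamma$ is never increased by the blowing ups involved, so the step-by-step loop-avoiding analysis of propositions~\ref{kappa2fin20}--\ref{kappa2fin3} is not needed here. By corollary~\ref{corEprime} we may and do assume that {\bf (E)'} holds throughout, and by propositions~\ref{Eprimestable} and \ref{Estable} it is preserved.

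First I would dispose of $\kappa(x)=4$. If $\omega(x)<p$ the point is already resolved by proposition~\ref{redto3**casii}(1). If $\omega(x)\geq p$, then either proposition~\ref{redto3**casii}(2) resolves $x$ (when $\epsilon(y)\geq 2$ for the curve ${\cal Y}=V(Z,u_1,v)$), or $\epsilon(y)=1$; in the latter case, if $E=\mathrm{div}(u_1)$ and $\beta<1$ then proposition~\ref{betapetitto3*} resolves $x$, and if $E=\mathrm{div}(u_1)$ and $\beta\geq 1$ then the sequence of proposition~\ref{redto3**casii}(3) leads, after blowing up $x$ and then one Hironaka-permissible line, to a point $x''$ with (**) and {\bf (E)'}, $E''=\mathrm{div}(u''_1u''_2)$, $\kappa(x'')=3$ and $C(x'')<1$; the case $E=\mathrm{div}(u_1u_2)$ from the start reduces to $\kappa=3$ likewise. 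Thus it suffices to treat $\kappa(x)=3$ with (**) and {\bf (E)'}, and by proposition~\ref{kappa3instable}(iii), corollary~\ref{omega(x)=epsilon(x)<p} and the combinatorial algorithm of theorem~\ref{omegazero} we may assume $\omega(x)\geq p$.

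For $\kappa(x)=3$, $\omega(x)\geq p$ I would take the canonical sequence of blowing ups dictated by proposition~\ref{**gamma}: blow up along the curve $D=(Z,u_1,u_2)$ when $E=\mathrm{div}(u_1u_2)$, and blow up along $x$ when $E=\mathrm{div}(u_1)$ (one blow up of $x$ lands in the case $E=\mathrm{div}(u_1u_2)$ via the origin of the second chart, by proposition~\ref{**gamma}(iii)). By proposition~\ref{**gamma}, $\omega$ is non-increasing, and while $(m(x'),\omega(x'))=(p,\omega(x))$ persists, $x'$ is resolved or again satisfies (**) and {\bf (E)'} with $\gamma(x')\leq\max\{\gamma(x),1\}$, the sharper bounds of proposition~\ref{**gamma}(ii)(iii)(iv) on $\beta(x')$ and $C(x')$ forcing a strict drop of $\gamma$ at points at infinity and at points with $k(x')\neq k(x)$ unless $\gamma$ has already reached its target. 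An infinite chain of rational points \emph{not} at infinity is ruled out by corollary~\ref{permisarcthree}; should such a chain threaten to occur, one applies proposition~\ref{permisarc}(1) to make the underlying formal curve permissible of the first kind and blows it up, which by the final assertions of propositions~\ref{**gamma} and \ref{kappa3instable} resolves $x$ or strictly decreases $\gamma$. Iterating, we reach $\gamma(x_r)\leq 1$; a further purely combinatorial sequence of permissible blowing ups (the analogue of lemma~\ref{kappa2fin00}, using proposition~\ref{**gamma}(ii)(iii) to show that $\min\{C,\beta-C\}$, resp.\ $\lfloor\beta\rfloor$, strictly decreases on a well-ordered set) then drives $C$ to $0$ when $E_r=\mathrm{div}(u_{1,r}u_{2,r})$, giving conclusion (ii), or $\beta$ below $1$ when $E_r=\mathrm{div}(u_{1,r})$, giving conclusion (i).

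I expect the main obstacle to be the bookkeeping at the transitions, exactly as in the $\kappa(x)=2$ case: tracking $(A_1,A_2,B,C,\beta)$ simultaneously across the three chart types (first chart, origin of the second chart, non-rational exceptional point), keeping account of whether $E$ has one or two components after each blow up and of the passages between $\kappa=3$ and $\kappa=4$, and verifying at each stage that the center used ($D$, the point $x$, or a permissible curve of the first kind supplied by proposition~\ref{permisarc}) is Hironaka-permissible w.r.t.\ the current $E$ --- which is where {\bf (E)'} is essential, via proposition~\ref{Eprimestable}. One must also check that the whole sequence is \emph{independent} of $\mu$ in the sense of definition~\ref{indepseq}: the center at each stage is canonically attached to $(S_i,h_i,E_i)$, so independence is automatic, but it needs to be verified at the step invoking proposition~\ref{permisarc}.
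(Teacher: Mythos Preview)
Your approach is essentially the paper's: drive $\gamma$ down using proposition~\ref{**gamma} as the engine, feed $\kappa=4$ sub-cases through proposition~\ref{redto3**casii}, and terminate by corollary~\ref{permisarcthree} (for $E$ irreducible) or combinatorics (for $E=\mathrm{div}(u_1u_2)$). The paper packages this as a single five-case algorithm applied at each step rather than your two-phase treatment.

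There is, however, a genuine organizational gap in your plan. You propose to dispose of $\kappa(x)=4$ once and for all and then iterate under the standing hypothesis $\kappa=3$. But proposition~\ref{**gamma}(i) only guarantees that $x'$ is resolved or again satisfies (**) and {\bf (E)'}; it does \emph{not} guarantee $\kappa(x')=3$. Concretely, after blowing up $D=(Z,u_1,u_2)$ at a $\kappa=3$ point with $E=\mathrm{div}(u_1u_2)$, the point $x'$ in the first chart with $E'=\mathrm{div}(u_1)$ may have $\mathrm{Vdir}(x')=\langle U_1\rangle$, hence $\kappa(x')=4$, and you must reinvoke proposition~\ref{redto3**casii} there. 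So the $\kappa=4$ handling cannot be done up front; it must live inside the loop. The paper does exactly this: at each stage it tests which of five mutually exclusive cases holds (three of which are $\kappa=4$ with $E=\mathrm{div}(u_1)$, split by the value of $\epsilon(y)$) and applies the matching blow-up. You flagged ``passages between $\kappa=3$ and $\kappa=4$'' as bookkeeping, but it is structural.

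Two smaller points. First, your reduction to $\omega(x)\geq p$ for $\kappa=3$ is incomplete: proposition~\ref{kappa3instable}(iii) covers only $B(x)=1$ and corollary~\ref{omega(x)=epsilon(x)<p} only $\tau'(x)=2$, leaving $\kappa(x)=3$, $\tau'(x)=1$, $B(x)>1$, $E=\mathrm{div}(u_1u_2)$, $\omega(x)<p$ untreated. The paper's case~(5) handles it by blowing up $x$ (proposition~\ref{**gamma}, case~2, which does not require $\omega(x)\geq p$). Second, corollary~\ref{permisarcthree} requires $E_r$ irreducible for all $r$, so you may invoke it only on stretches where $E$ stays $\mathrm{div}(u_1)$; the paper first reduces to the situation where the number of components of $E$ is constant along the sequence, then argues separately in the one- and two-component cases.
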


\begin{proof}
Let $(Z,u_1,u_2,v)$ be  totally prepared. Let ${\cal Y}=V(Z,u_1,v)$ with generic point $y$.
We define by induction on $i\geq 0$ a sequence of local Hironaka-permissible blowing ups w.r.t. $E$, or
composition of two such local blowing ups. Take $i=0$ w.l.o.g. in the following  definition.

\smallskip

\noindent (1) if  ($E=\div(u_1)$, $\kappa(x)=3$), blow up along $x$ (Proposition \ref{**gamma}, case 2);

\smallskip

\noindent (2) if  ($E=\div(u_1)$, $\kappa(x)=4$, $\epsilon(y)\leq 1$), blow up along $x$, then along
$D'=V(Z',u'_1,u'_2)$ (notations of Proposition \ref{redto3**casii}(3));

\smallskip

\noindent (3) if  ($E=\div(u_1)$, $\kappa(x)=4$, $\epsilon(y)\geq 2$), blow up along ${\cal Y}$
(Proposition \ref{redto3**casii}(2));

\smallskip

\noindent (4) if  ($E=\div(u_1u_2)$, $\omega(x)\geq p$), blow up along $D=$ {V}$(Z,u_1,u_2)$
(Proposition \ref{**gamma}, case 1);

\smallskip

\noindent (5) if  ($E=\div(u_1u_2)$, $\omega(x)< p$), blow up along $x$ (Proposition \ref{**gamma}, case 2).

\smallskip

We must prove that (A) this algorithm is well defined, i.e. $x_1$ is resolved or satisfies again conditions
(**) and {\bf (E)'}, so it builds up a sequence (\ref{eq970}), then (B) this sequence is finite.

\smallskip

Note that any $x$ fits into  {exactly one} of (1)-(5). To prove (A)(B), we recollect results from the previous propositions.
 {Proposition \ref{redto3**casii}(2) shows that $x$ is resolved when $x$ is in case (3) above.   In case (2), Proposition \ref{redto3**casii}(3)} produces $x_1$ satisfying again the assumptions of  {Proposition~\ref{**versgammaegal1}}
and fitting into (4) with  $\kappa (x_1)=3$, $\gamma (x_1)=1$.

\smallskip

We now turn to Proposition \ref{**gamma}  {in the cases (1)(4)(5)} above. Statement (i) shows that $x_1$ is resolved or satisfies again the
assumptions of the lemma. The proof of  (A) is thus complete and we turn to (B). Assume w.l.o.g. that $x$
neither satisfies (i) nor (ii). In particular $\gamma (x)\geq 1$. We first claim that there exists $r_0\geq 0$
such that $x_{r_0}$ is resolved or
\begin{equation}\label{eq971}
    \gamma (x_r)=1 \ \mathrm{for} \  \mathrm{all}  \ r\geq r_0.
\end{equation}
By Proposition \ref{**gamma}(i), we have $\gamma (x_1)\leq \gamma (x)$; by Proposition \ref{**gamma}(iii),
inequality is strict if:
$$
E=\mathrm{div}(u_1), \ E_1=\mathrm{div}(u_{1,1}u_{2,1})
$$
provided $\gamma (x)\geq 2$, $\beta (x)\neq 2$. In case $\beta (x)=2$, we obtain $C(x_1)\leq 1$.
Then any further occurrence of $E_r=\mathrm{div}(u_{1,r})$ along the algorithm will satisfy
$\beta (x_r)<2$ by Proposition \ref{**gamma}(ii)-(iv). Therefore it can be
assumed that $E$ and $E_i$ have {\it the same number} of irreducible components for every $i\geq 0$
in order to prove (\ref{eq971}) (note that we are done if (2) is applied).

\smallskip

If $E=\mathrm{div}(u_1)$, we reach (i) or $k(x_i)=k(x)$ for $i>>0$ by Proposition \ref{**gamma}(iv).
The claim follows from Corollary \ref{permisarcthree}.

If $E=\mathrm{div}(u_1u_2)$, we get (\ref{eq971}) by standard arguments on combinatorial blowing ups.

\smallskip

To conclude the proof, we may hence assume that ($E=\mathrm{div}(u_1)$, $\beta (x)=1$)
or ($E=\mathrm{div}(u_1u_2)$, $C (x)<1$).

\smallskip

When ($E=\mathrm{div}(u_1)$, $\beta (x)=1$), this is stable by
blowing up or yields $E_1=\mathrm{div}(u_{1,1}u_{2,1})$ (Proposition \ref{redto3**casii}(3)
and Proposition \ref{**gamma}(iii)). Stability ends after finitely many steps
by Proposition \ref{**gamma}(iv) and Corollary \ref{permisarcthree}.

\smallskip

When ($E=\mathrm{div}(u_1u_2)$, $C (x)<1$), this is stable by
blowing up or yields (i) (Proposition \ref{**gamma}(ii)). Stability
ends up in (ii) for $r>>0$  by standard arguments on combinatorial blowing ups. \end{proof}

\begin{prop}\label{**gammaegal1}
Assume that $x$ satisfies conditions (**) and {\bf (E)'} together with one of the following:
\begin{itemize}
  \item [(i)] $E=\div(u_1)$, $\beta(x)<1$;
  \item [(ii)] $E=\div(u_1u_2)$, $A_1(x)<1$, $C(x)<{1 \over 2}$, $\beta(x) <1-{1\over 1+\omega(x)}$;
  \item [(iii)] $E=\div(u_1u_2)$ and $C(x)=0$.
\end{itemize}
Then $x$ is resolved for $(p,\omega(x),3)$.
\end{prop}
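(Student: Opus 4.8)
The proof proceeds by descending induction on $\omega(x)$. Recall that by Corollary \ref{corEprime} one may assume ${\bf (E)}'$, that by Proposition \ref{kappa3instable}(i) one has $V\in\mathrm{Vdir}(x)$ (or $x$ is already resolved), and that the numerical hypotheses in (i)--(iii) amount to $\gamma(x)\leq 1$. When $\omega(x)<p$ the point $x$ is resolved: combining Corollary \ref{omega(x)=epsilon(x)<p} (when $\tau'(x)=2$), Proposition \ref{redto3**casii}(1), and Proposition \ref{kappa3instable}(iii) (when $B(x)=1$), the only remaining subcase is brought to $B(x)=1$ by finitely many preliminary Hironaka-permissible blowing ups of the curves $V(Z,u_i,v)$ with $\mathrm{div}(u_i)\subseteq E$. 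So assume henceforth $\omega(x)\geq p$; then ${\bf (E)}'$ gives $\eta^{-1}(E)=\mathrm{Sing}_p{\cal X}$ and $d_j\geq 1$ for $1\leq j\leq e$, so that $\eta^{-1}(V(u_i,v))\subseteq\mathrm{Sing}_p{\cal X}$ and every curve $V(Z,u_i,v)$ with $\mathrm{div}(u_i)\subseteq E$ is Hironaka-permissible with respect to $E$.

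First I would fix totally prepared coordinates and run the quadratic sequence $({\cal X},x)=({\cal X}_0,x_0)\leftarrow({\cal X}_1,x_1)\leftarrow\cdots$ along $\mu$, applying at each closed point the appropriate instance of Proposition \ref{**gamma}: case 2 when $\kappa(x_r)=3$, case 1 (blowing up the line $(Z,u_1,u_2)$) when $E_r=\mathrm{div}(u_{1,r}u_{2,r})$. Since the invariants $A_1,A_2,B,C,\beta,\gamma$ are only defined when $\kappa=3$, each exceptional point $x_{r+1}$ is either resolved or again in case (**), ${\bf (E)}'$ with $\kappa(x_{r+1})=3$, $\omega(x_{r+1})\leq\omega(x)$ and $\gamma(x_{r+1})\leq 1$. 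The plan is then to descend on the lexicographic quantity attached to $x_r$: one takes $\beta(x_r)$ when $E_r=\mathrm{div}(u_1)$ and $(C(x_r),A_1(x_r),A_2(x_r))$ when $E_r=\mathrm{div}(u_1u_2)$, the latter family being placed below the former. The bounds in Proposition \ref{**gamma}(ii)--(iv) show that this quantity does not increase along the sequence, that it strictly drops except possibly at the origin of the first chart with $k(x_{r+1})=k(x_r)$, and that the one dangerous transition — passing to the origin of the ``infinity'' chart, where $\beta$ (resp. $C$) may grow — is always dominated by $A_1+\beta-1$ (resp. $A_2+C$), which has strictly decreased, so no loop can form. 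Finiteness of the stable stretches then follows from standard toric combinatorics when $E_r=\mathrm{div}(u_1u_2)$, from Corollary \ref{permisarcthree} when $E_r=\mathrm{div}(u_1)$, and, for a tail of points lying on the successive strict transforms of one permissible curve ${\cal Y}$ of the first kind, from two-dimensional embedded resolution along ${\cal Y}$ exactly as in the proof of Proposition \ref{EEprime}. Thus after finitely many steps either $x_r$ is resolved, or (using Proposition \ref{permisarc} to make it permissible of the first kind) one reaches a curve ${\cal Y}=V(Z,u_i,v)$ whose directrix is transverse to $E$; blowing up ${\cal Y}$ gives $\omega(x_{r+1})<\omega(x)$ by Proposition \ref{originchart} and Theorem \ref{bupthm}, and the induction hypothesis finishes the argument.

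Any $\kappa=4$ point met along the way is disposed of directly: with $y$ the generic point of $V(Z,u_1,v)$, Proposition \ref{redto3**casii}(2) resolves the case $\epsilon(y)\geq 2$; Proposition \ref{betapetitto3*} resolves the case $\epsilon(y)=1$ (which forces $\beta(x)<1$, i.e. case (i)); and when $E=\mathrm{div}(u_1u_2)$, Proposition \ref{redto3**casii}(3) returns to a $\kappa=3$ point satisfying (**) and case (ii), with the strict improvement $C<1-\frac{1}{1+\omega(x)}$, $A_1<1$, $A_2<1$, so that the lexicographic quantity has strictly dropped.

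I expect the main obstacle to be the bookkeeping in the second paragraph: fitting cases (i), (ii), (iii) and the $E=\mathrm{div}(u_1)\leftrightarrow E=\mathrm{div}(u_1u_2)$ passages into one genuinely well-founded order, and verifying in every chart that the sole place where a fine invariant can increase is dominated by a quantity that has strictly decreased. This is the $\kappa=3$, monic-expansion analogue of the termination analysis for embedded resolution of surfaces in \cite{Co3}.
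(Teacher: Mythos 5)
There is a genuine gap, and it is exactly where you flag it yourself as ``the main obstacle.'' Your proof does not actually construct a well-founded descent order; it merely lists candidate invariants and asserts that they never increase along the dangerous transitions.  The paper's proof is organized differently and the difference matters: it first proves the result under the auxiliary hypothesis (\ref{eq980}) (``$A_1(x)<1$, and in case (iii) also $A_2(x)<1$'') by a single lexicographic induction on $c(x)=(A_1(x),\beta(x))$, using the bounds of proposition \ref{**gamma} to check that $c(x')<c(x)$ (strictly) in every chart and for every $\kappa$, and only then reduces the remaining configurations (case (i) with $A_1(x)\geq 1$, case (iii) with $\max A_i(x)\geq 1$) to (\ref{eq980}) by explicit blowing ups along $V(Z,u_1,v)$ or $V(Z,u_2,v)$.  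Your hybrid order that jumps between $\beta$ for $E=\mathrm{div}(u_1)$ and $(C,A_1,A_2)$ for $E=\mathrm{div}(u_1u_2)$, ``the latter placed below the former,'' would need a proof that the $E$-type cannot oscillate indefinitely with the paired invariants holding constant; you assert that the ``one dangerous transition'' is dominated by $A_1+\beta-1$, but that dominates only one of the two directions, and the other (a return to $E_r=\mathrm{div}(u_1)$ from $E_r=\mathrm{div}(u_1u_2)$) is not argued.

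A second, more concrete error: your treatment of $\omega(x)<p$ claims ``the only remaining subcase is brought to $B(x)=1$ by finitely many preliminary Hironaka-permissible blowing ups of the curves $V(Z,u_i,v)$.''  But Hironaka-permissibility of $V(Z,u_i,v)$ requires $\delta(y)=d_i+\frac{1+\omega(x)}{p}\geq 1$, and condition $\mathbf{(E)'}$ only forces $d_j\geq 1$ when $\omega(x)\geq p$.  For $\omega(x)<p$ this can fail for both $i=1,2$, and then these curves are simply not available as centers.  This is precisely why the paper's proof of case (iii) has a separate branch when $1+\omega(x)<p$, with a different induction quantity $c'(x)=(\max\{A_1,A_2\},\max\{d_1,d_2\},n)$ and an explicit fallback to blowing up the point $x$ when the permissibility test $d_i+\frac{1+\omega(x)}{p}\geq 1$ fails for some index.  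Your proposal silently assumes that test always passes.

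Finally, the framing as a descending induction on $\omega(x)$, with the base step being ``blow up a permissible curve of the first kind to reduce $\omega$,'' is not how the paper closes the argument and introduces an unnecessary circularity.  Under the hypotheses (i)--(iii) the invariant $\omega(x)$ need not drop at all during the quadratic sequence; the proposition is proved by showing $c(x)$ (resp.\ $c'(x)$) strictly decreases until $x$ is resolved in the sense of definition \ref{defgood}, not by forcing $\omega$ to drop.  You should replace the $\omega$-induction by the two-stage scheme of the paper: prove the statement under (\ref{eq980}) by a clean $c(x)$-descent, then reduce to (\ref{eq980}).
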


\begin{proof}
We assume that $(Z,u_1,u_2,v)$ is totally prepared. Let
$$
c(x):=(A_1(x),\beta(x))
$$
with lexicographical ordering.  First suppose that
\begin{equation}\label{eq980}
A_1(x)<1 \ \mathrm{and} \ (x  \ \hbox{is in case (iii)} \Longrightarrow A_2(x) < 1).
\end{equation}

\noindent If $E=\mathrm{div}(u_1u_2)$ and $\kappa (x)=3$, we blow up along $x$.
Let $x'$ be a point $\omega$ near  {to} $x$.
When $x'$ is the origin of a chart,  by Proposition \ref{**gamma}(i)-(iii), $x'$ satisfies again
the assumptions of  the proposition  with $c(x')<c(x)$.
When $x'$ is in the first chart with $E'=\mathrm{div}(u_1)$, Proposition \ref{**gamma}(ii) gives
$$
A_1(x')=B(x)-1\leq A_1(x)+\beta(x)-1<A_1(x) \ \mathrm{and} \ \beta(x')<1.
$$
In both cases, $x'$ satisfies again the assumptions of the proposition together with (\ref{eq980})
and $c(x')<c(x)$.

\smallskip

\noindent If $E=\mathrm{div}(u_1u_2)$ and $\kappa (x)=4$, we let ${\cal Y}_j:=V(Z,v,u_j)$ with generic point $y_j$, $j=1,2$.
The condition $\epsilon (y_j)\geq 2$ is equivalent to $A_j(x)>{1\over 1+\omega(x)}$.
We apply Proposition \ref{redto3**casii}(1)(2): then $x$ is resolved except possibly if
$A_j(x)\leq {1\over 1+\omega(x)}$, $j=1,2$. Then
$$
1-{1\over 1+\omega(x)} \leq B(x)\leq A_1(x)+\beta (x)<1.
$$
We deduce that equality holds and that $\mathrm{Vdir}(x)=<U_1,U_2>$. Since $\omega (x)\geq p\geq 2$,
we obtain $\omega (x')<\omega (x)$ after blowing up along $x$, so $x$ is resolved.

\smallskip

\noindent If $E=\mathrm{div}(u_1)$ and $\kappa (x)=3$, we blow up along $x$. Note that $\beta (x)>0$
since $A_1(x)<1$. Let
$$
x':=(Z':={Z \over u_2} ,u'_1= {u_1 \over u_2},u_2,v'={v\over u_2}), \ E'=\div(u'_1u_2).
$$
If $x_1\neq x'$, Proposition \ref{**gamma}(iv) gives
$$
A_1(x')=B(x)-1\leq A_1(x)+\beta(x)-1<A_1(x) \ \mathrm{and} \ \beta(x')\leq \beta (x).
$$
Therefore $x_1$ satisfies again assumption (i) of the proposition together with (\ref{eq980})
and $c(x')<c(x)$.

If $x_1=x'$, Proposition \ref{**gamma}(iii) gives
$$
A_1(x')= A_1(x),\ C(x')<{1 \over 2}, \ \beta(x')=\beta(x)+A_1(x)-1<\beta(x).
$$
Therefore $x'$ satisfies again assumption (ii) of the proposition together with (\ref{eq980})
and $c(x')<c(x)$.

\smallskip

\noindent If $E=\mathrm{div}(u_1)$ and $\kappa (x)=4$, $x$ is resolved by Propositions \ref{redto3**casii}(1)(2)
and \ref{betapetitto3*}.

\smallskip

Therefore the proposition holds by induction on $c(x)$ under the extra assumption (\ref{eq980}).

\smallskip

Assume now that $x$ satisfies assumption (i) with $A_1(x)\geq 1$. In particular $\epsilon(x)=1+\omega(x)$
and $V\in \Vdir(x)$ by  Proposition  \ref{kappa3instable}. Furthermore,
\begin{equation}\label{eq981}
d_1+{1+\omega (x)\over p}>1.
\end{equation}
We have $m(y)=m(x)$, $\epsilon(y)=\epsilon(x)$ where ${\cal Y}=V(Z,u_1,v)$ with generic point $y$, so
${\cal Y}$ is permissible of first kind. Let us blow up along ${\cal Y}$.

We are done by Theorem \ref{bupthm} if $\mathrm{Vdir}(x)=<V,U_1>$. Otherwise we have $A_1(x)>1$
or $\beta(x)>0$.  Since $V\in \Vdir(x)$, the only point which may be $\omega$-near $x$
is the point
\begin{equation}\label{eq982}
x':=(Z',u'_1,u'_2,v')=(Z/u_1,u_1,u_2,v/u_1), \ E'=\mathrm{div}(u_1).
\end{equation}
These are well adapted coordinates. If $A_1(x)>1$, we have
$$
\beta(x')=\beta(x),\ A_1(x')=A_1(x)-1>0,\ d'_1=d_1+{1+\omega(x)\over p}-1.
$$
Then $x'$ satisfies again conditions (**) and \textbf{(E)'} by (\ref{eq981}). By induction on $A_1(x)$, we reduce to
$A_1(x)=1$, since $A_1(x)<1$ is (\ref{eq980}).

\smallskip

If $A_1(x)=1$, expand
$$
f_{p,Z}=u_1^{pd_1}(\gamma v^{1+\omega(x)}+\sum_{1\leq i \leq 1+\omega(x)} \gamma_i v^{1+\omega(x)-i}u_1^i u_2^{a_2(i)} +f_1),
$$
with $f_1\in (v,u_1)^{2+\omega(x)}$, $\gamma \in S$ invertible, $\gamma_i\in S$ invertible or zero, $\gamma_{i_0}$ invertible
for some $i_0$ with $a_2(i_0)=i_0\beta(x)<i_0$. We get
$$
f_{p,Z'}={u'_1}^{pd_1+1+\omega(x)-p}(\gamma {v'}^{1+\omega(x)}
+\sum_{1\leq i \leq 1+\omega(x)} \gamma_j {v'}^{1+\omega(x)-i} {u'_2}^{a_2(i)} +u'_1f'_1), \ f'_1\in S'.
$$
Clearly $\iota(x')\leq (p,\omega(x),2)$ and $x$ is resolved for $(p,\omega(x),3)$.

\smallskip

There remains to prove the Proposition in case (iii) with $A_i(x)\geq 1$, $i=1$ or $2$.
See \cite{CoP1}  \textbf{II.6.2} and   \textbf{II.6.3} on pp. 1950-1951. The argument is similar to
the one used in the proof of Proposition \ref{contactmaxpetitgamma}(b)(c).

\smallskip

If ($\omega (x) \geq p$ and $A_1(x)\geq 1$), then ${\cal Y}:= {V}(Z,u_1,v)$ is permissible of the
first kind. Blowing up along ${\cal Y}$, the only point which may be $\omega$-near $x$ is the point $x'$
as in (\ref{eq982}). We have
$$
A_1(x')=A_1(x)-1,\ A_2(x')=A_2(x'),\ C(x')=0, \ d'_1=d_1+{1+\omega(x) \over p}-1\geq 1.
$$
Then $x'$ satisfies again conditions (**) and \textbf{(E)'}.
A descending induction on $\max \{A_1(x),A_2(x)\}$ reduces to $A_1(x),A_2(x)<1$ which is (\ref{eq980})
and the proof is complete.

\smallskip

If  $1+\omega(x)< p$, we argue by induction on
$$
c'(x):=(\max\{A_1(x),A_2(x)\},\max\{d_1,d_2\},n)
$$
where $n:=2$ if $(A_1(x)=A_2(x),d_1=d_2)$, $n:=1$ otherwise.

Suppose that $A_1(x)\geq 1$, $d_1+{1+\omega(x) \over p}\geq 1$. Up to renumbering $u_1,u_2$,
it can be assumed that  {$c'(x)=(A_1(x),d_i,n)$, $i=1,2$ or $c'(x)=(A_2(x),d_1,1)$  with $d_2+{1+\omega(x) \over p}<1$}.
Blowing up along ${\cal Y}:=(Z,u_1,v)$, the only point which may be $\omega$-near $x$ is the point $x'$
as in (\ref{eq982}). If $(m(x'),\omega (x'))=(p,\omega (x))$, $x'$ is in case (**) and we have
$$
A_1(x')=A_1(x)-1,  \ C(x')=0, \ d'_1=d_1+{1+\omega(x) \over p}-1 < d_1,\ A_2(x')=A_2(x).
$$
It is easily seen that $c'(x')<c'(x)$.

\smallskip

The remaining case: up to renumbering $u_1,u_2$, we have
$$
A_1(x)<1 \leq A_2(x), \ d_2+{1+\omega(x) \over p}<1 \leq d_1+{1+\omega(x) \over p}.
$$
We then blow up along $x$. As case (i) is resolved, we have just to look at the origins of both charts.
Let us look at the first chart, of origin the point $x'$ as above. If $(m(x'),\omega (x'))=(p,\omega (x))$,
$x'$ is in case (**) and we have $A_2(x')=A_2(x)$, $d'_2=d_2$ and
$$
A_1(x')=A_1(x)+A_2(x)-1<A_2(x),  \ C(x')=0, \ d'_1=d_1+{1+\omega(x) \over p}-1 < d_1.
$$
Therefore $c'(x')<c'(x)$. The last point to look at is the point
$$
x''=({Z\over u_2},{u_1\over u_2},{u_2},{v\over u_2}).
$$
If $(m(x''),\omega (x''))=(p,\omega (x))$, $x''$ is in case (**), and we have $A_1(x'')=A_1(x)$ and
$$
A_2(x'')=A_1(x)+A_2(x)-1<A_2(x),  \ C(x'')=0.
$$
Therefore $c'(x'')<c'(x)$. This concludes the proof.
\end{proof}

\vfill\eject

\bigskip

\bigskip

\printindex

\end{document}